\renewcommand \thepart {\@Roman\c@part}
\renewcommand\part{%
  \newpage\thispagestyle{plain}
  \if@noskipsec \leavevmode \fi
  \par
  \addvspace{4ex}%
  \@afterindentfalse
  \secdef\@part\@spart}
\def\@part[#1]#2{%
  \ifnum \c@secnumdepth >\m@ne
    \refstepcounter{part}%
    \addcontentsline{toc}{part}{\thepart\hspace{1em}#1}%
  \else
    \addcontentsline{toc}{part}{#1}%
  \fi
  {\parindent \z@ \raggedright
    \interlinepenalty \@M
    \normalfont
    \ifnum \c@secnumdepth >\m@ne
      \Large\bfseries \partname\nobreakspace\thepart
      \par\nobreak
    \fi
    \huge \bfseries #2%
    \par}%
  \nobreak
  \vskip 3ex
  \@afterheading}
\def\@spart#1{%
  \addcontentsline{toc}{part}{#1}%
  {\parindent \z@ \raggedright
    \interlinepenalty \@M
    \normalfont
    \huge \bfseries #1\par}%
  \nobreak
  \vskip 3ex
  \@afterheading}
\title[Extremal models and direct integrals in affine logic]{Extremal models and direct integrals\\ in affine logic}
\author{Itaï Ben Yaacov}
\address{
  Universit\'e Claude Bernard Lyon 1 \\
  Institut Camille Jordan \\
  Lyon \\
  France}
\urladdr{\url{https://math.univ-lyon1.fr/~begnac/}}
\author{Tomás Ibarlucía}
\address{Universit\'e Paris Cit\'e \\
  CNRS \\
  IMJ-PRG \\
  F-75006 Paris \\
  France.}
\urladdr{\url{https://webusers.imj-prg.fr/~tomas.ibarlucia}}
\author{Todor Tsankov}
\address{
  Universit\'e Claude Bernard Lyon 1 \\
  Institut Camille Jordan \\
  Lyon \\
  France}
\email{\url{tsankov@math.univ-lyon1.fr}}
\thanks{Research supported by ANR projects AGRUME (ANR-17-CE40-0026) and GeoMod (ANR-19-CE40-0022), and by the Institut Universitaire de France.}
\newcounter{quotethmcnt}
\newcommand{\mynewthm}[3][]{
  \newaliascnt{#2}{thmnum}
  \newtheorem{#2}[#2]{#3}
  \aliascntresetthe{#2}
  \newtheorem*{#2*}{#3}
  \expandafter\newcommand\csname #2autorefname\endcsname{#3}
  \expandafter\renewcommand\csname the#2\endcsname{\thethmnum}
}
\newcommand{\mynewthmdecons}[3][]{
  \newaliascnt{#2}{thmnum}
  \newtheorem{#2}[#2]{#3}
  \aliascntresetthe{#2}
  \newtheorem*{#2*}{#3}
  \expandafter\renewcommand\csname the#2\endcsname{\thethmnum}
}
\def\equationautorefname~#1\null{(#1)}
\def\itemautorefname~#1\null{#1}
\theoremstyle{definition}
\theoremstyle{remark}
\numberwithin{equation}{section}
\setlist[enumerate,1]{label=(\roman*), font=\normalfont}
\setlist[enumerate,2]{label=(\arabic*), font=\normalfont}
\newcommand{\fJ}{\mathfrak{J}}
\newcommand{\fM}{\mathfrak{M}}
\newcommand{\rest}{{\restriction}}
\newcommand{\fI}{\mathfrak{I}}
\newcommand{\half}{\tfrac{1}{2}}
\newcommand{\Los}{{\L}o{\'s}\xspace}
\newcommand{\density}{\mathfrak{d}}
\newcommand{\cco}{\overline{\mathrm{co}}}
\newcommand{\aff}{\mathrm{aff}}
\newcommand{\ext}{\mathrm{ext}}
\newcommand{\nat}{\textsc{\tiny{CR}}}
\newcommand{\eca}{\mathrm{eca}}
\newcommand{\Bau}{\mathrm{Bau}}
\newcommand{\Mor}{\mathrm{Mor}}
\newcommand{\Rand}{\mathrm{R}}
\newcommand{\Ra}{\mathrm{Ra}}
\newcommand{\CR}{\mathrm{CR}}
\newcommand{\cont}{\mathrm{cont}}
\newcommand{\qf}{\mathrm{qf}}
\newcommand{\APrA}{\mathrm{APrA}}
\newcommand{\PrA}{\mathrm{PrA}}
\newcommand{\RV}{\mathrm{RV}}
\newcommand{\HS}{\mathrm{HS}}
\newcommand{\AHS}{\mathrm{AHS}}
\newcommand{\Class}{{\mathrm{Class}}}
\newcommand{\ClassL}{{\mathrm{Class}_\cL}}
\newcommand{\PMP}{\mathrm{PMP}}
\newcommand{\FPMP}{\mathrm{FPMP}}
\newcommand{\scx}{\textsc{x}}
\newcommand{\fd}{\mathfrak{d}}
\newcommand{\fs}{\mathfrak{s}}
\newcommand{\op}{\mathrm{op}}
\DeclareMathOperator{\tR}{R}
\DeclareMathOperator{\Sub}{Sub}
\DeclareMathOperator{\Stab}{Stab}
\DeclareMathOperator{\Fix}{Fix}
\DeclareMathOperator{\IRS}{IRS}
\DeclareMathOperator{\ot}{ot}
\newcommand{\Cstar}{$\mathrm{C}^\ast$}
\newcommand{\TvN}{\mathrm{TvN}}
\renewcommand{\phi}{\varphi}
\renewcommand{\eps}{\varepsilon}
\begin{document}
\maketitle

\begin{abstract}
  Affine logic is a fragment of continuous logic, introduced by Bagheri, in which only \emph{affine} functions are allowed as connectives. This has the effect of endowing type spaces with the structure of compact convex sets. We study \emph{extremal models} of affine theories (those that only realize extreme types), and the ways and conditions under which all models can be described from the extremal ones.

  We introduce and develop the general theory of \emph{measurable fields} of metric structures and their \emph{direct integrals}.
  One of our main results is an extremal decomposition theorem for models of \emph{simplicial} theories, that is, affine theories whose type spaces form Choquet simplices.
  We prove that every model of a simplicial theory can be (uniquely) decomposed as a direct integral of extremal models.
  This generalizes known decomposition results (ergodic decomposition, tracial von Neumann factor decomposition), and moreover, holds without any separability hypothesis.

  Two extreme kinds of simplicial theories are \emph{Bauer theories}, whose extreme types form a closed set, and \emph{Poulsen theories}, whose extreme types form a dense set.
  We show that Keisler randomizations of continuous theories are, essentially, the same thing as affine Bauer theories.
  We establish a dichotomy result: a complete simplicial theory is either Bauer or Poulsen.

  As part of our analysis, we adapt many results and tools from continuous logic to the affine or extremal contexts (definability, saturation, type isolation, categoricity, etc.).
  We also provide a detailed study of the relations between continuous logic and affine logic.

  Finally, we present several examples of simplicial theories arising from theories in discrete logic, Hilbert spaces, probability measure-preserving systems, and tracial von Neumann algebras.
\end{abstract}

\newpage

\setcounter{tocdepth}{1}
\tableofcontents

\newpage

\addtocontents{toc}{\protect\vspace{5pt}}


\part*{Introduction}

Model theory studies the tripartite relationship binding mathematical structures, first-order formulas, and type spaces.
In classical logic, structures are discrete mathematical objects, and the relationship between formulas and types is given by the Stone duality between Boolean algebras and totally disconnected compact Hausdorff spaces.
Continuous logic is a generalization of classical logic that has been highly successful as a logical setting for real-valued (usually, complete metric) mathematical structures, such as those arising in functional analysis and related areas.
Here, the duality between formulas and types is the Gelfand duality (or its real-valued version) between commutative \Cstar-algebras and compact Hausdorff spaces, and formulas correspond to continuous functions on the compact type spaces.
Many ideas from classical logic can be adapted to continuous model theory, often with new features and some additional complexity.
However, a number of key notions, constructions, and phenomena from functional analysis or measure theory have no analogue in the discrete universe and new model-theoretic ideas are needed to incorporate them.

Not long after the beginnings of continuous logic, S.-M. Bagheri and his co-authors initiated the study of a natural fragment thereof, which they called \emph{linear logic} \cite{Bagheri2010, Bagheri2014a, Bagheri2014, Bagheri2021}.
In this fragment, the connectives are restricted to the \emph{affine} ones: addition, multiplication by real scalars, and the constant formula $\bOne$.
The continuous logic quantifiers, $\sup$ and $\inf$, remain unchanged.
Instead of a real algebra, formulas form an \emph{order unit space} (an ordered vector space with a distinguished element $\bOne$, satisfying certain axioms), and type spaces form \emph{compact convex sets}.
The relationship between formulas and types is governed in this case by another important (albeit maybe less well-known) duality, due to Kadison.
In particular, formulas can be identified with affine functions on appropriate compact convex sets, and for this reason we prefer to call this fragment \emph{affine logic} (terminology that has also been adopted in recent works of Bagheri \cite{Bagheri2024pp,Bagheri2024p}).

The significance of affine logic seems to have been overlooked by a large part of the model theory community.
In this paper, building on the work of Bagheri, we develop some of its fundamentally new features and show how the study of the affine fragment of continuous logic opens the door to a model-theoretic treatment of many aspects of continuous mathematical structures that were previously elusive. As a notable outcome, our work provides a unified framework and generalization of several integral decomposition constructions and results from functional analysis and ergodic theory.

\section*{Background and motivation}
The original motivation of Bagheri's paper \cite{Bagheri2010} was a construction analogous to that of ultraproducts, named \emph{ultramean} in \cite{Bagheri2014a,Bagheri2014}, where the ultrafilter is replaced by a finitely additive probability measure on the algebra of all subsets of the index set, and the value of the predicates is defined by integration.
Affine formulas are preserved by this construction, just as arbitrary continuous formulas are preserved by ultraproducts. In the light of this observation, an appropriate compactness theorem for this logic was established in \cite{Bagheri2014a, Bagheri2014}, and thereafter many affine counterparts of basic fundamental results of continuous logic were deduced. These papers also discussed two important examples of affine theories: the theory of probability algebras, and its expansion with an aperiodic automorphism. Later, in \cite{Bagheri2021}, Bagheri proved a version of the Keisler--Shelah isomorphism theorem for affine equivalence.

However, a crucial new feature of affine logic remained unexplored. As type spaces form compact convex sets, a distinguished class of types as well as a distinguished class of models appear: the \emph{extreme types} and the \emph{extremal models}.

A central role in the theory of compact convex sets is played by their extreme points and by various decomposition theorems that allow to represent any point in the set as the barycenter of a probability measure concentrated on the extreme points. Perhaps the earliest result of this kind is the Krein--Milman theorem which ensures the existence of extreme points and provides a decomposition theorem in terms of measures concentrated on the closure of the set of extreme points (see \cite[\textsection1]{Phelps2001}). The first general decomposition theorem for measures concentrated exactly on the set of extreme points was proved by Choquet, in the case where the compact convex set is metrizable, and it was later shown by Bishop and de Leeuw that the metrizability assumption can be dropped at the expense of some technical complications (as the set of extreme points may no longer be measurable). These decomposition results have proved to be of great importance in analysis and in ergodic theory.

The main goal of this paper is to analyze the structure of extreme types (extreme points of the type spaces), of extremal models of affine theories, i.e., those that only realize extreme types, and of the models that can decomposed into extremal components. This analysis involves a subtle interplay between compactness (as type spaces are compact) and the lack thereof: in general, the set of extreme types is not closed, and ultraproducts of extremal models are not necessarily extremal. This tension is one of the most interesting features of the theory.

The general theory of compact convex sets, which will of course be fundamental in our analysis, is enriched by the presence of quantifiers. A crucial basic instance of this is that if one restricts an extreme type in the variables $xy$ to the variable $x$, one obtains an extreme type. In other words, the variable restriction maps preserve extreme points, which is of course not true for general affine maps between compact convex sets.

Our work is also guided by the introduction of ideas of ergodic theory into model theory. Indeed, extremal models are a generalization of ergodic systems, and many intuitions and methods from this field work in our more general setting. Conversely, we expect that the development of affine logic will find applications in measured group theory, and in this paper, we lay some of the groundwork for this.

\section*{Summary of the main results}

The selected results discussed in the subsections below are to be found in Parts \ref{part:Direct-integrals}-\ref{part:Examples} of the present work. The beginning of the paper (\autoref{part:The-logic}) is devoted to a self-contained presentation of affine logic and the development of the general theory of extreme types and extremal models. Even though the results are mostly analogues to ones in continuous logic, the proofs often require new ideas. Some of the main points of \autoref{part:The-logic} are the following:
\begin{itemize}
\item We give an alternative proof of Bagheri's compactness theorem for affine logic, by a combination of the usual compactness theorem for continuous logic and the Hahn--Banach separation theorem.
\item We prove that extremal models always exist, and enjoy many desirable properties.
\item We obtain affine or extremal versions of several fundamental theorems of continuous logic, notably concerning definability, saturation, omitting types, and atomic models.
\end{itemize}

\subsection*{Direct integrals of metric structures}

A fundamental construction of affine logic is the \emph{convex combination} of several structures.
For example, given $M$ and $N$ we may define
\begin{gather*}
  K = \half M \oplus \half N,
\end{gather*}
whose elements are pairs in $M \times N$ and whose basic predicates are defined by $P^K(a,b) = \half P^M(a) + \half P^N(b)$.
It is easy to observe that affine formulas and affine theories are preserved by convex combinations of models. This construction can be seen as a particular case of Bagheri's ultrameans, but is in a sense orthogonal to the ultraproduct construction, which ultrameans also encompass. We generalize convex combinations in a different manner, by considering \emph{measurable fields} and \emph{direct integrals} of structures over measurable spaces equipped with $\sigma$-additive probability measures (which cannot be seen as a case of Bagheri's construction).

Measurable fields of Hilbert spaces and their direct integrals are important in functional analysis, representation theory of groups, and operator algebras. See, for instance, \cite{Nielsen1980}. With a different viewpoint and terminology (and in a dual form that is peculiar to its setting), direct integrals are also fundamental in ergodic theory, most notably via the ergodic decomposition theorem. Other constructions, such as fields of Polish groups, have also been considered.
In almost all cases appearing in the literature, only fields of separable structures (typically, over standard probability spaces) are studied, as the general non-separable case appears to be at odds with the fact that measures are only $\sigma$-additive.

We introduce in \autoref{sec:direct-integrals} a general notion of a measurable field of $\cL$-structures, for any given language $\cL$, with no restrictions on the size of the structures or the underlying probability space. From a measurable field $(M_\omega : \omega \in \Omega)$ based on a probability space $(\Omega, \mu)$, we can define an associated direct integral,
\begin{gather*}
  K = \int^\oplus_\Omega M_\omega \ud \mu(\omega),
\end{gather*}
which is again an $\cL$-structure. An appropriate version of \Los's theorem holds (see \ref{th:Los}).
\begin{theorem*}
  For every affine $\cL$-formula $\varphi$ and tuple $f\in K^n$, the function $\omega \mapsto \varphi^{M_\omega}\bigl(f(\omega) \bigr)$ is $\mu$-measurable and
  \begin{gather*}
    \varphi^K(f) = \int_\Omega \varphi^{M_\omega}\bigl(f(\omega) \bigr) \ud \mu(\omega).
  \end{gather*}
\end{theorem*}
\noindent Ensuring the measurability of the integrand in the non-separable case is the major technical obstacle, and the condition that makes our definition work is inherently model-theoretic.

Our construction of direct integrals generalizes several of the notions from the literature mentioned above. Also, Bagheri's ultramean can be recovered as a direct integral of a field of ultraproducts; we explain this in \autoref{sec:comp-with-bagheri}. In addition, as we comment further below, direct integrals allow for a streamlined description of the models of \emph{Keisler randomizations}.

Beyond the connections with the existing literature, direct integrals of measurable fields of models constitute one of our main tools for the study of affine theories. A key construction (see \autoref{sec:DirectIntegralExtremalModels}) shows how to produce direct integrals of extremal models of a given theory. The underlying probability space is obtained from a type space $\tS^\aff_\kappa(T)$ with an appropriate boundary measure (i.e., a measures concentrated on the extreme points), and the field of models arises naturally from the crucial observation that a measure on $\tS^\aff_\kappa(T)$ whose barycenter is the type of an enumeration of a model, must itself concentrate on types of enumerations of models. In this way, similarly to the Choquet representation of points of convex sets as barycenters of boundary measures, we have the following (see \ref{prop:DirectIntegralExtremalModelsConstruction}).

\begin{theorem*}
  Let $T$ be an affine theory and let $M$ be a model of $T$. Then there is a measurable field $(M_\omega:\omega\in\Omega)$ of extremal models of $T$ such that
  \begin{gather}\label{intro:eq:M-preceq-int}\tag{*}
    M\preceq^\aff \int_\Omega^\oplus M_\omega \ud\mu(\omega).
  \end{gather}
\end{theorem*}

The investigation of the conditions under which one can get equality in \autoref{intro:eq:M-preceq-int} is one of the main contributions of the paper, which we discuss in the next subsection.

Direct integrals are a fundamental device in the understanding of the connections between affine logic and continuous logic. For instance, a structure is a model of the affine part $Q_\aff$ of a continuous theory $Q$ if and only if it embeds affinely into a direct integral of models of $Q$ (see \ref{th:general-models-Qaff}). See also \ref{th:affine-classes} for a characterization of affinely axiomatizable continuous logic theories in terms of preservation under convex combinations and \emph{direct multiples}.

Finally, let us mention that our construction of direct integrals has been employed by Farah and Ghasemi in \cite{Farah2023p} to prove a version of the Feferman--Vaught theorem for direct integrals of fields of structures (in the separable setting), and thereby deduce a preservation result for elementary equivalence under tensor products of tracial von Neumann algebras. We will revisit their result and prove a generalization of one of their main theorems in \autoref{sec:continuity-direct-integral}.

\subsection*{Simplicial theories, extremal decomposition, and a Bauer--Poulsen dichotomy}

A compact convex set is a \emph{Choquet simplex} if every point can be represented as the barycenter of a \emph{unique} boundary measure.
We will call an affine theory \emph{simplicial} if all its type spaces $\tS^\aff_x(T)$ are Choquet simplices.
As we will show, many important examples of affine theories are indeed simplicial.

One of our main results (see \autoref{sec:ExtremalDecomposition}) is the existence and uniqueness of extremal decompositions for models of simplicial theories.

\begin{theorem*}
  Let $T$ be a simplicial affine theory. Then every model of $T$ is isomorphic to a direct integral of extremal models of $T$.

  Moreover, this extremal decomposition is unique, up to an appropriate notion of isomorphism of measurable fields.
\end{theorem*}

Applied to the theory of probability measure-preserving actions of a given countable group $\Gamma$, our theorem generalizes the ergodic decomposition theorem from $\Gamma$-systems on standard probability spaces to arbitrary ones (see \autoref{sec:PMP}). Similarly, it provides a generalization to the non-separable setting of the factor decomposition theorem of von Neumann algebras, in the tracial case (see \autoref{sec:tracial-von-neumann}). These applications require establishing that the corresponding affine theories $\PMP_\Gamma$ and $\TvN$ are simplicial, which we manage to do via a transfer principle from quantifier-free type spaces, despite the presumably very complicated nature of the full type spaces $\tS^\aff_x(\PMP_\Gamma)$ and $\tS^\aff_x(\TvN)$.

Among Choquet simplices, which can be quite diverse, two classes are of particular significance. On the one hand, \emph{Bauer simplices} are those for which the extreme points form a closed set. More concretely, a compact convex set is a Bauer simplex if and only if it is affinely homeomorphic to $\cM(X)$, the convex set of Radon probability measures on a compact space $X$. On the other hand, we have simplices whose extreme points form a dense set. In fact, in the metrizable case, there is only one non-trivial such, called the \emph{Poulsen simplex}. It is universal in the sense that any other metrizable simplex can be embedded as a face of the Poulsen simplex. As it turns out, most simplices that appear naturally in ergodic theory and operator algebras tend to be either Bauer or Poulsen.

We undertake a detailed analysis of the affine theories all of whose type spaces belong to one of these classes, i.e., \emph{Bauer theories} and \emph{Poulsen theories}. Their study provides significant insights into the connections between affine and continuous logic, and is carried out in \autoref{part:conn-with-cont}. Let us state here two important consequences of our analysis, proved in \autoref{sec:Poulsen}.

\begin{theorem*}
  Let $T$ be a non-degenerate, affinely complete, simplicial theory. Then $T$ is either a Bauer theory or a Poulsen theory.
\end{theorem*}

\begin{cor*}
  Let $T$ be a simplicial affine theory and let $M$ and $N$ be extremal models of $T$. If $M\equiv^\aff N$, then $M\equiv^\cont N$, that is, affine equivalence implies elementary equivalence in continuous logic.

  More generally, the affine theory of a model $M$ of a simplicial theory determines the distribution of the continuous theories of the models $M_\omega$ appearing in the extremal decomposition of $M$.
\end{cor*}

The last point proves and generalizes a statement conjectured by Farah and Ghasemi \cite{Farah2023p} in the context of tracial von Neumann algebras and continuous logic (which was also confirmed independently, in that setting, by Gao and Jekel \cite{Gao2024pp}).

Below we highlight some of our specific findings and examples concerning Bauer and Poulsen theories.

\subsection*{Bauer theories, and continuous theories with affine reduction} As the extreme types of a Bauer theory form a closed set, its extremal models are closed under ultraproducts and form an elementary class in continuous logic. The extremal structure of a Bauer theory $T$ is thus coded by the continuous logic theory $T_\ext$, defined as the common elementary theory of its extremal models.

On the other hand, let us say that a continuous theory $Q$ has \emph{affine reduction} if every continuous logic formula can be approximated arbitrarily well by affine formulas, modulo $Q$. We have a correspondence between Bauer affine theories and continuous theories with affine reduction (see \autoref{sec:Bauer-theories}).

\begin{theorem*}
  If $T$ is a Bauer theory, then $T_\ext$ has affine reduction. Conversely, if $Q$ is a continuous logic theory with affine reduction, then $Q_\aff$ is a Bauer theory, and the extremal models of $Q_\aff$ are precisely the models of $Q$.

  In either case, letting $Q=T_\ext$ (equivalently, $T=Q_\aff$), we have natural affine homeomorphisms
  \begin{gather*}
    \tS^\aff_x(T) \cong \cM(\tS^\cont_x(Q))
  \end{gather*}
  between the spaces of affine types of $T$ and the Bauer simplices of Radon probability measures on the continuous logic type spaces of $Q$.
\end{theorem*}

If a continuous theory $Q$ has affine reduction, our extremal decomposition theorem then provides a description of all models of the affine part $Q_\aff$: these are precisely the direct integrals of models of $Q$.

Similarly to quantifier elimination, affine reduction is a syntactical property which can be achieved by a definitional expansion of the language. Given a continuous theory $Q$, we let $Q_\Bau = (Q_\Mor)_\aff$ denote the affine part of the Morleyization of $Q$, which we call the \emph{Bauerization} of $Q$.

Theories of random variables taking values in the models of a given classical or continuous logic theory have already been studied in the literature, by the name of Keisler randomizations. They constitute an important source of examples of continuous logic theories, especially because of their preservation properties. We show that the Bauerization of a continuous theory $Q$ is bi-interpretable with the randomization of $Q$ in this sense. As a consequence, we obtain a novel and satisfying description of models of randomized theories. This is done in \autoref{sec:Randomizations}.

\begin{theorem*}
  Let $Q$ be a continuous logic theory. Then every model of the Keisler randomization of $Q$ is a direct integral of models of $Q$, in the appropriate language.
\end{theorem*}

A basic but fundamental example of a Bauer theory is the theory $\PrA$ of probability measure algebras. Indeed, probability algebras are the affine counterpart of the rather trivial structure, from the viewpoint of continuous logic, consisting of just two points. (For a precise statement, $\PrA$ is interdefinable with the Bauerization of the theory of two named points at distance one.) As observed by Bagheri from the very beginning, every affine theory with a non-trivial model $M$ has arbitrarily large models -- for instance, all direct multiples $L^1(\Omega,M)=\int^\oplus_\Omega M\ud\mu(\omega)$. On the other hand, an affine theory may have a bound on the size of its \emph{extremal} models. In \autoref{sec:Bauer-theories}, we prove that $\PrA$ as well as other similar examples, all having a unique, compact extremal model, are indeed Bauer theories, and moreover that they enjoy affine quantifier elimination.

In general, establishing that a concrete continuous theory has affine reduction, particularly via explicit, syntactical means, seems to be a difficult task. In \autoref{sec:criterion-affine-reduction}, we isolate a sufficient condition for affine reduction, in terms of the automorphism group of the direct multiple $L^1(\Omega,M)$, which can be applied to continuous theories having a separable, saturated model $M$. This applies in the following important example (see \autoref{sec:ex:HS}).

\begin{theorem*}
  The continuous theory $\HS_\infty$ of spheres of infinite-dimensional Hilbert spaces has affine reduction.
\end{theorem*}

Given the model-theoretic nature of our proof, we do not know, for instance, how to approximate the continuous logic formula $\ip{x, y}^3$ by affine formulas. If $\HS$ is the continuous logic theory of the spheres of arbitrary Hilbert spaces, our results also imply that the extremal models of $\HS_\aff$ are precisely the spheres of Hilbert spaces.

Discrete, classical logic theories can be seen as continuous theories such that in every model, all basic predicates take only the values $0$ or $1$. Our method also applies to prove the following (see \autoref{sec:ex:classical}).

\begin{theorem*}
  Let $Q$ be a classical, complete theory in a single-sorted language. Then $Q$ has affine reduction.
\end{theorem*}

In particular, for complete, single-sorted classical theories, the space of affine types $\tS^\aff_x(Q_\aff)$ can be identified with the space $\cM(\tS^\cont_x(Q))$ of \emph{Keisler measures}, which has been studied intensively in recent years.

Finally, as already mentioned before, the theory $\TvN$ of tracial von Neumann algebras is affine and simplicial. It is in fact a Bauer theory. Recall that a von Neumann algebra is called a \emph{factor} if it has trivial center.
\begin{theorem*}
  The theory of tracial von Neumann algebras is an affine Bauer theory, and its extremal models are precisely the factors.
\end{theorem*}

Tracial von Neumann algebras were axiomatized in continuous logic in \cite{Farah2014}. We give a self-contained presentation in \autoref{sec:tracial-von-neumann}.

\subsection*{The convex realization property and Poulsen theories}

There are several significant intermediate classes of formulas between affine and continuous logic, starting with \emph{convex} formulas, obtained as maximums of finite collections of affine formulas. A major role in the understanding of the connections between affine and continuous logic is played by those structures that are \emph{existentially closed for convex formulas}, i.e., they have approximate witnesses for any conjunction of affine conditions that can be realized in an affine extension. This property is enjoyed by any direct integral over an atomless probability space.

As it turns out, one can axiomatize in continuous logic the class of structures that are existentially closed for convex formulas. For this we identify an intrinsic elementary condition, which we call the \emph{convex realization property}, and which, roughly speaking, says that for every tuple $\bar a$ in a model and tuple $\bar \lambda$ of positive real numbers with $\sum_i \lambda_i = 1$, there is $b$ in the model which behaves approximately (that is, with respect to finitely many formulas and up to $\eps$) like the element $\bigoplus_i \lambda_i a_i$ in the convex self-combination $\bigoplus_i\lambda_i M$. \emph{A posteriori}, we show that every affine extension of structures with the convex realization property is elementary in continuous logic, which, for instance, implies the following (see \ref{th:DirectIntegralPreservesEquivalence}).

\begin{theorem*}
  Let $(M_\omega:\omega\in\Omega)$ and $(N_\omega:\omega\in\Omega)$ be measurable fields of structures over an atomless probability space. If $M_\omega\equiv^\aff N_\omega$ for almost every $\omega\in\Omega$, then
  \begin{gather*}
    \int_\Omega^\oplus M_\omega\ud\mu(\omega) \equiv^\cont \int_\Omega^\oplus N_\omega\ud\mu(\omega).
  \end{gather*}
\end{theorem*}

Given an affine theory $T$, the continuous theory $T_\CR$ obtained by adding the axiom scheme of the convex realization property can be seen as an \emph{atomless completion} of $T$ in continuous logic. It is indeed the common continuous theory of all atomless direct integrals of models of $T$, and we have the following (see \ref{th:Tnat}).

\begin{theorem*}
  Let $T$ be a complete theory in affine logic. Then $T_\CR$ is a complete continuous logic theory.
\end{theorem*}

In fact, one has the stronger fact that for any affine theory $T$, the affine part maps $\tS^\cont_x(T_\CR) \to \tS^\aff_x(T)$ are affine homeomorphisms.

In the case of simplicial theories, the convex realization property provides us with a characterization of complete Poulsen theories (see \ref{th:Poulsen-complete}).

\begin{theorem*}
  Let $T$ be an affinely complete, simplicial theory. The following are equivalent:
  \begin{enumerate}
  \item $T$ is a Poulsen theory.
  \item Every model of $T$ has the convex realization property.
  \item $T$ is complete as a continuous logic theory.
  \end{enumerate}
\end{theorem*}

The preceding results, and many consequences, are treated in Sections \ref{sec:convex-formulas}-\ref{sec:Poulsen}.

\subsection*{Measure-preserving systems}
Our main examples of Poulsen theories, as well as further interesting examples of Bauer theories, come from ergodic theory. Measure-preserving systems of a countable group $\Gamma$ can be studied in continuous logic by expanding the language and the theory of probability measure algebras with a family of automorphisms, representing the transformations induced by the elements of $\Gamma$. Several model-theoretic results in this framework have been obtained in the literature (see the references in \autoref{sec:PMP}), and also certain applications of model theory to studying weak containment in a strongly ergodic system (see \cite{IbarluciaTsankov}). Indeed, weak containment, strong ergodicity and several other notions of the field have natural interpretations in continuous logic.

On the other hand, an important shortcoming of the model-theoretic treatments of ergodic theory is that \emph{ergodicity} in general cannot be captured by standard notions in continuous logic. Affine logic provides an elegant solution to this problem (see \ref{th:PMPG-simplicial}).

\begin{theorem*}
  Let $\Gamma$ be a countable group and let $\PMP_\Gamma$ be the theory of probability measure-preserving $\Gamma$-systems. Then the following hold:
  \begin{enumerate}
  \item $\PMP_\Gamma$ is a simplicial affine theory.
  \item The extremal models of $\PMP_\Gamma$ are the ergodic systems of $\Gamma$.
  \end{enumerate}
\end{theorem*}

It is interesting to study the extreme completions of this theory, which are simplicial as well. Whether they are Bauer our Poulsen depends on the action and on the group (see \ref{c:PMPG-strongly-ergodic-propT}).

\begin{theorem*}
  Let $\cX$ be an ergodic $\Gamma$-system. If $\cX$ is strongly ergodic, then the affine theory of $\cX$ is a Bauer theory. Otherwise, it is a Poulsen theory.
\end{theorem*}

In a related vein, we have:

\begin{theorem*}
  A group $\Gamma$ has property (T) if and only if $\PMP_\Gamma$ is a Bauer theory.
\end{theorem*}

When the group $\Gamma$ is amenable, we can use some existing results of Giraud~\cite{Giraud2019p} in the continuous logic setting to get much more precise results. An \emph{invariant random subgroup} (or \emph{IRS} for short) of $\Gamma$ is a probability measure on the space $\Sub(\Gamma)$ of subgroups of $\Gamma$ invariant under conjugation. Every probability measure-pre\-serving action $\Gamma \actson (X, \mu)$ gives rise to its \emph{stabilizer IRS} simply via the stabilizer map $X \to \Sub(\Gamma)$, $x \mapsto \Gamma_x$. It turns out that the stabilizer IRS is coded by the affine theory of the action. The following is contained in \ref{th:PMP-hyperfinite}.

\begin{theorem*}
  Let $\Gamma$ be a countable, amenable group, and let $\theta$ be an ergodic IRS of $\Gamma$ of infinite index. Then the theory $\PMP_\theta$ of $\Gamma$-systems with stabilizer IRS equal to $\theta$ is a complete Poulsen theory with affine quantifier elimination in an appropriate language.
\end{theorem*}

Note that when $\theta$ is the trivial IRS (i.e., the Dirac measure on $1_\Gamma$), $\PMP_\theta$ is simply the theory of free actions of $\Gamma$.

\subsection*{Extremal categoricity} Categoricity of a theory at a given cardinal is a fundamental notion in model theory, and affine logic offers new phenomena of this kind.

First of all, by analogy with the classical theory, one may ask when it is the case that all \emph{extremal} models of an affine theory of a fixed density character are isomorphic. In the case of separable models, our affine omitting types theorem yields an extremal version of the continuous logic Ryll-Nardzewski theorem (see \autoref{sec:aleph0-cat}).

\begin{theorem*}
  Let $T$ be a complete, affine theory in a separable language. The following are equivalent:
  \begin{enumerate}
  \item $T$ admits a unique separable extremal model, up to isomorphism.
  \item The metric topology and the logic topology coincide on the extreme types.
  \end{enumerate}
\end{theorem*}

Moreover, if $T$ is extremally $\aleph_0$-categorical and simplicial, then the integral decomposition theorem yields a classification of all separable models of $T$: if $M$ is the unique separable extremal model, then they are precisely the direct multiples $L^1(\Omega,M)$, where $\Omega$ is a standard probability space.

The corresponding question in uncountable cardinals, and the plausibility of an extremal Morley theorem, will not be explored in this paper. On the other hand, the absence of compactness in the class of extremal models introduces an entirely new question: can there be \emph{absolutely categorical} affine theories (i.e., admitting a unique extremal model) whose extremal model is not compact?

A basic example (see \ref{ex:PrA-with-atomless-parameters}) shows that the answer is positive in general. However, we do not know whether a simplicial example exists. A theory with this properties would enjoy some peculiar properties (see \autoref{sec:absolute-categoricity}).

\begin{theorem*}
  Let $T$ be an absolutely categorical, simplicial theory in a separable language, with extremal model $M$. The following hold:
  \begin{enumerate}
  \item Every model of $T$ is of the form $L^1(\Omega, M)$, for a probability space $\Omega$.
  \item If $M$ compact, then $T$ is a Bauer theory. Otherwise, it is a Poulsen theory.
  \end{enumerate}
\end{theorem*}

The study of this broadly open subject is initiated in \autoref{part:Categoricity}.

\subsection*{Acknowledgments}
We would like to thank Ilijas Farah for useful comments on a preliminary version of this paper, and for suggesting that we could obtain \autoref{th:distribution-extremal-cont-theories}.

\subsection*{Addendum}
The completion of the present work took several years. Recently, while the paper was being completed and after some of our results had been announced at conferences, two preprints \cite{Bagheri2024pp,Bagheri2024p} by Bagheri that contain some related results appeared. Notably, he considered the notion of extreme types and extremal models, and obtained several results having non-empty intersection with the contents of \autoref{sec:Definability}, \autoref{sec:ExtremeTypes}, \autoref{sec:cont-theory-extremal-models}, \autoref{prop:ext-aleph0cat-vs-cont-aleph0cat} and \autoref{p:compact-absol-cat} of this paper.


\part{The logic}
\label{part:The-logic}

\section{Preliminaries about compact convex sets}
\label{sec:prel-about-comp}

\subsection{Faces and extreme points}
\label{sec:extreme-points}

We use \cite{Alfsen1971,Phelps2001,Simon2011} as general references for compact convex sets.

Let $E$ be a real vector space.
A subset $X \sub E$ is \emph{convex} if for all $p_1, p_2 \in X$ and all $\lambda \in [0, 1]$, we have that $\lambda p_1 + (1- \lambda)p_2 \in X$.
A \emph{cone} is a subset of $E$ closed under multiplication by non-negative scalars.
A \emph{convex cone} is a cone which is convex.
Equivalently, a convex cone is a subset of $E$ closed under sums and multiplication by non-negative scalars.
A function $f \colon X \to Y$ between convex sets is called \emph{convex} if
\begin{equation*}
  f(\lambda p + (1 - \lambda) q) \leq \lambda f(p) + (1 - \lambda) f(q)\ \ \text{for all}\ \ 0 \leq \lambda \leq 1,
\end{equation*}
\emph{concave} if $-f$ is convex, and \emph{affine} if it is both convex and concave. Note that the collection of convex functions is a cone and that the affine functions form a vector space.

When $E$ is a locally convex topological real vector space, we denote its topological dual by $E^*$.
A compact convex subset of $E$ will be called a \emph{compact convex set}.
Note that for a compact subset $X \subseteq E$ to be convex, it is enough to require that $\half p_1 + \half p_2 \in X$ for all $p_1, p_2 \in X$.
If $X \sub E$ is compact and convex, a continuous, affine function $X \to \R$ is the restriction to $X$ of a function of the type $\varphi  + c$, where $\varphi \in E^*$ and $c \in \R$.
We will denote by $\cA(X)$ the Banach space of continuous, affine functions on $X$ equipped with the supremum norm $\|{\cdot}\|_\infty$.

A non-empty convex subset $F \sub X$ is called a \emph{face} if for every $p_1, p_2 \in X$ and $0<\lambda<1$, if $\lambda p_1 + (1-\lambda) p_2 \in F$, then $p_1,p_2 \in F$ as well.
Note that a face of a face is a face.
A face $F$ is called \emph{exposed} if there exists $\phi \in \cA(X)$ such that $\phi \rest_F = 0$ and $\phi(p) > 0$ for all $p \in X \sminus F$. Note that every exposed face is closed.
A point $p \in X$ is \emph{extreme} if $\set{p}$ is a face, or equivalently, if $X \sminus \set{p}$ is convex.
We will denote by $\cE(X)$ the collection of extreme points of $X$.

If $A \sub X$, we denote by $\clco A$ the \emph{closed convex hull of $A$}, i.e., the intersection of all closed convex subsets of $X$ containing $A$.

We record some basic facts concerning compact convex sets and their extreme points.
Let $X$ denote a compact convex set.

\begin{description}
\item[The Hahn--Banach separation theorem {\cite[Thm.~4.5]{Simon2011}}] If $B_1$ and $B_2$ are closed, convex and disjoint subsets of $X$, then there exists $\varphi \in \cA(X)$ with $\varphi(p) \leq 0$ for all $p \in B_1$ and $\varphi(p) \geq 1$ for all $p \in B_2$. Moreover, if $D \sub \cA(X)$ is a dense subspace (in $\|{\cdot}\|_\infty$), we can find $\varphi \in D$.

\item[The Krein--Milman theorem {\cite[Thm.~8.14]{Simon2011}}] $X = \clco \cE(X)$. In particular, the set $\cE(X)$ is non-empty.

\item[Milman's theorem {\cite[Thm.~9.4]{Simon2011}}] If $K \sub X$ is compact, then $\cE( \clco K ) \subseteq K$.
\end{description}

\begin{ntn}
  If $\varphi \in \cA(X)$, we write $\oset{\varphi > t}$ for the set $\bigl\{ x \in X : \varphi(x) > t \bigr\}$, and similarly for similar conditions.
  We may also write $\oset{\ldots}_\cE = \oset{\ldots} \cap \cE(X)$.
\end{ntn}

A subset of the form $\oset{\varphi > 0}$ for $\varphi \in \cA(X)$ is an \emph{open half-space}.
It follows from the Hahn--Banach theorem and compactness that finite intersections of open half spaces form a basis for the topology on $X$.
Moreover, near extreme points, single open half spaces suffice.

\begin{lemma}
  \label{l:nbhds-extreme}
  If $p \in X$ is an extreme point, then sets of the form $\oset{\varphi > 0}$ with $\varphi \in \cA(X)$ and $\varphi(p) > 0$ form a basis of neighborhoods of $p$ in $X$.
  Consequently, sets of the form $\oset{\varphi > 0}_\cE$ with $\varphi \in \cA(X)$ form a basis for the induced topology on $\cE(X)$.
\end{lemma}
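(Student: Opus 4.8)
The plan is to derive the first statement from a single application of the Hahn--Banach separation theorem, using extremality of $p$ at exactly one point: to replace a finite union of ``bad'' compact convex sets by a single compact convex set.

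Since finite intersections of open half-spaces form a basis for the topology of $X$, every neighborhood of $p$ contains one of the form $\bigcap_{i=1}^n \oset{\varphi_i > 0}$ with $\varphi_i \in \cA(X)$ and $\varphi_i(p) > 0$, so it is enough to produce a single $\psi \in \cA(X)$ with $\psi(p) > 0$ and $\oset{\psi > 0} \sub \bigcap_{i=1}^n \oset{\varphi_i > 0}$. I would set $K_i := \set{x \in X : \varphi_i(x) \leq 0}$; each $K_i$ is compact and convex and does not contain $p$. The crucial step is to pass to $K := \mathrm{co}\bigl(\bigcup_{i=1}^n K_i\bigr)$, the convex hull of their union (we may assume each $K_i$ is nonempty, discarding the others). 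One checks in the usual way that $K$ equals $\set{\sum_{i=1}^n \lambda_i x_i : \lambda_i \geq 0,\ \sum_i \lambda_i = 1,\ x_i \in K_i}$, hence is compact, being the continuous image of the compact set $\set{\lambda \in \R^n : \lambda_i \geq 0,\ \sum_i \lambda_i = 1} \times K_1 \times \dots \times K_n$.

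Next I would check $p \notin K$. If $p = \sum_j \lambda_j q_j$ were a convex combination with all $\lambda_j > 0$ and each $q_j$ in some $K_i$, then a straightforward induction on the number of terms, using the defining property of an extreme point, forces every $q_j$ to equal $p$; hence $p \in K_i$ for some $i$, contradicting $\varphi_i(p) > 0$. So $\set{p}$ and $K$ are disjoint closed convex subsets of $X$, and the Hahn--Banach separation theorem gives $\psi \in \cA(X)$ with $\psi \leq 0$ on $K$ and $\psi(p) \geq 1 > 0$. Then $\oset{\psi > 0}$ is disjoint from $K$, in particular from each $K_i$, and since $X \sminus K_i = \oset{\varphi_i > 0}$ we get $\oset{\psi > 0} \sub \bigcap_{i=1}^n \oset{\varphi_i > 0}$ with $p \in \oset{\psi > 0}$. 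This proves the first assertion.

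For the ``consequently'', recall that the induced topology on $\cE(X)$ has the traces $U \cap \cE(X)$ of open subsets $U \sub X$ as a basis. Given $p \in \cE(X)$ and open $U \ni p$ in $X$, the first part supplies $\psi \in \cA(X)$ with $\psi(p) > 0$ and $p \in \oset{\psi > 0} \sub U$; intersecting with $\cE(X)$ yields $p \in \oset{\psi > 0}_\cE \sub U \cap \cE(X)$, so the sets $\oset{\psi > 0}_\cE$ with $\psi \in \cA(X)$ form a basis for the topology on $\cE(X)$. I do not expect a genuine obstacle: the proof is short, the one idea being the passage to the convex hull $K$ --- which is the sole place extremality is used --- and the only thing requiring a moment's care is the verification that $K$ is compact.
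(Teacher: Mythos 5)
Your proof is correct, but it takes a genuinely different route from the paper's. The paper argues directly from an arbitrary open $U \ni p$: by Milman's theorem $\cE\bigl(\clco(X \sminus U)\bigr) \subseteq X \sminus U$, and since an extreme point of $X$ lying in a compact convex subset is extreme in that subset, $p \notin \clco(X \sminus U)$; a single application of Hahn--Banach then separates $p$ from $\clco(X \sminus U)$. You instead start from the basis of finite intersections of open half-spaces (stated just before the lemma), form the convex hull $K$ of the finitely many compact convex sets $K_i = \oset{\varphi_i \leq 0}$, verify its compactness by the standard simplex-times-product argument, and use extremality directly to rule out $p \in K$ before separating. Both proofs hinge on the same two ingredients --- extremality forces $p$ outside a compact convex set built from the complement of the neighborhood, and Hahn--Banach then produces the half-space --- but the paper's packaging via Milman is shorter and handles an arbitrary open $U$ in one stroke, whereas yours is more elementary (no Milman) at the cost of invoking the half-space basis and the finite-convex-hull compactness fact. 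Your handling of the edge case of empty $K_i$ and your deduction of the second assertion are both fine.
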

\begin{proof}
  Let $U \ni p$ be open.
  By Milman's theorem, $p \notin \cE\bigl( \clco{(X \sminus U)} \bigr)$, and since it is extreme in $X$, $p \notin \clco{(X \sminus U)}$.
  By Hahn--Banach, there exists $\varphi \in \cA(X)$ with $\varphi(p) = 1$ and $\varphi(q) \leq 0$ for all $q \in \clco{X \sminus U}$.
\end{proof}

\begin{lemma}\label{l:surjective-affine-map}
  Let $X$ and $Y$ be compact convex sets and let $\pi \colon X \to Y$ be a continuous, affine map. If the image of $\pi$ intersects every non-empty half-space $\oset{\varphi > 0}\subseteq Y$ with $\varphi \in \cA(Y)$, then $\pi$ is surjective.
\end{lemma}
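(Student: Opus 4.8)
The plan is to argue by contradiction using the Hahn--Banach separation theorem, exactly as in the proof of \autoref{l:nbhds-extreme}. First I would record the two structural facts about the image: since $\pi$ is continuous and $X$ is compact, $\pi(X)$ is a compact subset of $Y$; and since $\pi$ is affine and $X$ is convex, $\pi(X)$ is convex. Hence $\pi(X)$ is a closed convex subset of $Y$.

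Now suppose for contradiction that $\pi$ is not surjective, and pick $q \in Y \sminus \pi(X)$. Then $\{q\}$ and $\pi(X)$ are two disjoint, closed, convex subsets of the compact convex set $Y$, so the Hahn--Banach separation theorem (in the form quoted in the excerpt) provides $\varphi \in \cA(Y)$ with $\varphi(p) \leq 0$ for all $p \in \pi(X)$ and $\varphi(q) \geq 1$. In particular $\oset{\varphi > 0}$ is a non-empty half-space of $Y$ (it contains $q$), yet $\oset{\varphi > 0} \cap \pi(X) = \emptyset$ because $\varphi \le 0$ on $\pi(X)$. This contradicts the hypothesis that the image of $\pi$ meets every non-empty half-space $\oset{\psi > 0} \subseteq Y$ with $\psi \in \cA(Y)$. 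Therefore $\pi$ is surjective.

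There is no serious obstacle here: the only point requiring a moment's care is making sure the separating functional is normalized so that the relevant half-space is genuinely of the form $\oset{\varphi > 0}$ and genuinely non-empty, which is automatic from the strict separation $\varphi(q) \ge 1 > 0 \ge \sup_{\pi(X)} \varphi$. (One could equally phrase the whole argument as: the hypothesis says $\pi(X)$ is not contained in any closed half-space of the form $\oset{\varphi \le 0}$ with $\oset{\varphi>0}\neq\emptyset$, and a closed convex subset of $Y$ with that property must be all of $Y$, again by Hahn--Banach.)
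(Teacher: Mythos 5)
Your proof is correct. It does, however, take a genuinely different route from the paper's. The paper argues via extreme points: by \autoref{l:nbhds-extreme}, half-spaces $\oset{\varphi>0}$ form a neighborhood basis at each extreme point of $Y$, so the hypothesis forces the (closed) image $\pi(X)$ to contain $\cE(Y)$, and then the Krein--Milman theorem upgrades this to $\pi(X)=Y$ since $\pi(X)$ is compact and convex. Your argument instead separates a hypothetical missing point $q \in Y \sminus \pi(X)$ from the compact convex set $\pi(X)$ directly by Hahn--Banach, producing a non-empty half-space disjoint from the image. Both proofs are short, but yours is the more elementary of the two: it needs only the separation theorem, whereas the paper's route passes through \autoref{l:nbhds-extreme} (which itself invokes Milman's theorem) and then Krein--Milman. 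What the paper's phrasing buys is a slightly sharper intermediate fact -- that the image already contains all extreme points -- which is in the spirit of how the lemma is used later, but it is not needed for the statement as given. The one point you flagged, the normalization of the separating functional, is indeed automatic from the form of the separation theorem quoted in the paper ($\varphi \le 0$ on $\pi(X)$, $\varphi(q) \ge 1$), so there is no gap.
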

\begin{proof}
  By the hypotheses and the previous lemma, the image of $\pi$ contains the extreme points of $Y$. As the image is a compact convex subset, it must be all of $Y$, by Krein--Milman.
\end{proof}

We leave the easy proof of the following lemma to the reader.
\begin{lemma}
  \label{l:extreme-transitivity}
  Let $X$ and $Y$ be compact convex sets and let $\pi \colon X \to Y$ be a continuous, affine, surjective map. Then the following hold:
  \begin{enumerate}
  \item If $F$ is a face of $Y$, then $\pi^{-1}(F)$ is a face of $X$.
  \item\label{l:extreme-transitivity:extreme-points} If $y \in Y$ is extreme, then $\cE(\pi^{-1}(\set{y})) \sub \cE(X)$. In particular, $\pi(\cE(X)) \supseteq \cE(Y)$.
  \item In the special case where $Y \sub \R$, we obtain that an affine function attains its maximum and minimum at extreme points.
  \end{enumerate}
\end{lemma}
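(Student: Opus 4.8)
The plan is to dispatch the three items in order; each reduces to a direct unwinding of the definition of a face, with a single appeal to Krein--Milman at the end.

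For (i), I would first record that $\pi^{-1}(F)$ is non-empty because $\pi$ is surjective and $F$ is non-empty, and convex because $\pi$ is affine and $F$ is convex. To check the face condition, suppose $p_1,p_2\in X$ and $0<\lambda<1$ satisfy $\lambda p_1+(1-\lambda)p_2\in\pi^{-1}(F)$. Applying the affine map $\pi$ gives $\lambda\pi(p_1)+(1-\lambda)\pi(p_2)\in F$, and since $F$ is a face of $Y$ this forces $\pi(p_1),\pi(p_2)\in F$, i.e.\ $p_1,p_2\in\pi^{-1}(F)$.

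For (ii), fix an extreme point $y\in Y$. Then $\{y\}$ is a face of $Y$, so by (i) the fibre $\pi^{-1}(\{y\})$ is a face of $X$. If $p\in\cE(\pi^{-1}(\{y\}))$ and $p=\lambda p_1+(1-\lambda)p_2$ with $p_1,p_2\in X$ and $0<\lambda<1$, then membership of $p$ in the face $\pi^{-1}(\{y\})$ gives $p_1,p_2\in\pi^{-1}(\{y\})$, and extremality of $p$ inside that set gives $p_1=p_2=p$; hence $p\in\cE(X)$. For the last assertion, note that for $y\in\cE(Y)$ the fibre $\pi^{-1}(\{y\})$ is non-empty (by surjectivity), compact (closed in $X$) and convex, so Krein--Milman provides an extreme point of it, which by the preceding paragraph lies in $\cE(X)$ and is mapped by $\pi$ to $y$; thus $\cE(Y)\subseteq\pi(\cE(X))$.

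For (iii), I would apply (ii) with $\pi=f$, where $f\in\cA(X)$ and $Y:=f(X)$, a compact convex subset of $\R$ and hence a closed bounded interval $[a,b]$ with $a=\min_X f$ and $b=\max_X f$ (the degenerate case $a=b$ being trivial). Since $\cE([a,b])=\{a,b\}$, item (ii) yields extreme points of $X$ that $f$ sends to $a$ and to $b$, which is exactly the statement. Everything here is routine; the only two points deserving a word of care are the use of surjectivity in (ii) to ensure the fibres over extreme points are non-empty before invoking Krein--Milman, and the elementary observation, used in (iii), that a compact convex subset of $\R$ is a closed interval.
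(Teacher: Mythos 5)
Your proof is correct, and since the paper explicitly leaves this lemma to the reader, your argument is exactly the standard one intended: item (i) by pushing a convex combination through the affine map, (ii) by combining (i) with the fact that an extreme point of a face is extreme in the ambient set plus Krein--Milman on the (non-empty, compact, convex) fibre, and (iii) by specializing to $Y=f(X)\subseteq\R$. No gaps.
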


\begin{lemma}
  \label{l:extreme-Gdelta}
  Suppose that $X$ is a metrizable compact convex set. Then $\cE(X)$ is a $G_\delta$ set.
\end{lemma}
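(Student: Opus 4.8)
The plan is to exhibit the complement $X \sminus \cE(X)$ as a countable union of closed sets. Fix a metric $d$ inducing the topology of $X$ (available since $X$ is metrizable), and for each integer $n \geq 1$ put
\begin{equation*}
  F_n = \bigl\{ \half(q_1 + q_2) : q_1, q_2 \in X,\ d(q_1, q_2) \geq \tfrac{1}{n} \bigr\}.
\end{equation*}
The first step is to check that each $F_n$ is closed. The set $C_n = \set{(q_1, q_2) \in X \times X : d(q_1, q_2) \geq 1/n}$ is closed in $X \times X$ by continuity of $d$, hence compact; and $F_n$ is the image of $C_n$ under the continuous map $(q_1, q_2) \mapsto \half(q_1 + q_2)$, so it is compact and therefore closed in the Hausdorff space $X$.

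The second step is to verify that $X \sminus \cE(X) = \bigcup_{n \geq 1} F_n$. For the inclusion $\supseteq$, any element of $F_n$ is the midpoint of two distinct points of $X$, so it is not extreme. For $\subseteq$, suppose $p \in X$ is not extreme, so there are $p_1, p_2 \in X$ and $0 < \lambda < 1$ with $p = \lambda p_1 + (1-\lambda)p_2$ and $\set{p_1, p_2} \neq \set{p}$; one checks readily that then $p_1 \neq p$, $p_2 \neq p$, and $p_1 \neq p_2$. After possibly swapping $p_1$ and $p_2$, assume $\lambda \leq \half$. Then $r := 2p - p_2 = 2\lambda p_1 + (1 - 2\lambda)p_2$ is a convex combination of $p_1$ and $p_2$, hence lies in $X$; moreover $r \neq p_2$ (since $\lambda \neq 0$ and $p_1 \neq p_2$) and $p = \half(p_2 + r)$. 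Thus $p \in F_n$ for any $n$ with $1/n \leq d(p_2, r)$.

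Combining the two steps, $X \sminus \cE(X)$ is an $F_\sigma$ set, so $\cE(X)$ is a $G_\delta$ set. I do not expect a genuine obstacle here; the only point deserving a moment of care is the reduction, in the second step, of an arbitrary non-extreme point to a midpoint of two distinct points of $X$, which uses the convexity of $X$. It is worth noting that metrizability enters exactly once, and crucially: it is what lets us replace the single condition $q_1 \neq q_2$ by the countable family of \emph{closed} conditions $d(q_1, q_2) \geq 1/n$ — without it, $\cE(X)$ need not even be Borel.
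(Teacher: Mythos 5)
Your proof is correct and follows essentially the same route as the paper's: both express $X \sminus \cE(X)$ as the image of the off-diagonal of $X \times X$ under the midpoint map $(q_1,q_2) \mapsto \half(q_1+q_2)$, using metrizability to write that off-diagonal as a countable union of compact sets. Your version merely makes explicit the $K_\sigma$ decomposition $\bigcup_n \set{d \geq 1/n}$ and the reduction of a general non-extreme point to a midpoint of two distinct points, both of which the paper leaves implicit.
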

\begin{proof}
  Let $f \colon X \times X \to X$ denote the map $f(x, y) = \half x + \half y$. Then
  \begin{equation*}
    X \sminus \cE(X) = f \bigl((X \times X) \sminus \Delta(X) \bigr),
  \end{equation*}
  where $\Delta(X)$ denotes the diagonal. The set $(X \times X) \sminus \Delta(X)$ is open, and so, by the metrizability of $X$, also $K_\sigma$. Thus $X \sminus \cE(X)$ is $K_\sigma$ and $\cE(X)$ is $G_\delta$.
\end{proof}

\subsection{Kadison duality}
\label{sec:kadison-duality}

For material in this subsection, we refer to \cite[Ch.~II]{Alfsen1971}.

\begin{defn}
  \label{defn:OrderUnitSpaceSetup}
  Let $A$ be a real, ordered vector space.
  \begin{enumerate}
  \item We let $A^+ = \set{\varphi \in A : \varphi \geq 0}$ denote the \emph{positive cone} of $A$.
  \item We say that $A$ is \emph{Archimedean} if the set $\{n \varphi : n \in \bN\}$ is not bounded for any $\varphi \in A \setminus \{0\}$.
  \item A positive element $\bOne \in A$ is called an \emph{order unit} if for every $\varphi \in A$, there is $n \in \N$ with $\varphi \leq n \bOne$.
  \end{enumerate}
\end{defn}

\begin{defn}
  \label{defn:OrderUnitSpace}
  An \emph{order unit space} is a real, ordered, Archimedean vector space with a distinguished order unit $\bOne$.
  \begin{enumerate}
  \item Assume that $A$ is an order unit space.
    For $\varphi \in A$ we define
    \begin{equation}
      \label{eq:df-norm}
      \|\varphi\| \coloneqq \inf \, \bigl\{ r \in \bR :  -r\bOne \leq \varphi \leq r\bOne \bigr\}.
    \end{equation}
  \item A linear map $T \colon A \to A'$ between order unit spaces is called \emph{unital} if $T(\bOne) = \bOne$ and \emph{positive} if $T(\varphi) \geq 0$ for all $\varphi \geq 0$.
  \item Let $A$ be an order unit space, and consider $\bR$ as an order unit space in the obvious fashion.
    A linear positive unital functional $p\colon A \rightarrow \bR$ is called a \emph{state} of $A$.
  \item The collection of all states of $A$ is denoted by $\tS(A)$.
  \end{enumerate}
\end{defn}

In \autoref{eq:df-norm} we indeed define a norm, making every order unit space a normed space.
A unital map $T\colon A \rightarrow A'$ between order unit spaces is positive if and only if $\nm{T} = 1$ \cite[Prop.~II.1.3]{Alfsen1971}.
In particular, a positive, unital map is automatically continuous, so every state is continuous.

If $X$ is a compact convex set, the space $\cA(X)$ of affine functions on $X$ ordered pointwise with order unit the constant function $1$ is a complete order unit space, and the induced norm is $\|{\cdot}\|_\infty$.
Conversely, if $A$ is an order unit space and $\tS = \tS(A)$, then $\tS$ is a compact convex subset of $A^*$, and if $\varphi \in A$, then $\hat{\varphi} \in \cA(\tS)$, where
\begin{equation*}
  \hat{\varphi}(p) = p(\varphi).
\end{equation*}
It is a theorem of Kadison that the map $\varphi \mapsto \hat{\varphi}$ defines an order-preserving unital embedding $A \hookrightarrow\cA(\tS)$ with dense image \cite[Prop.~II.1.7, Thm.~II.1.8]{Alfsen1971}.
In particular, it is isometric, and we can identify $\cA(\tS)$ with the completion of $A$. We will thus consider $A$ as a subset of $\cA(\tS)$.

The pair of functors $X \mapsto \cA(X)$, $A \mapsto \tS(A)$ defines a duality of categories between compact convex sets with morphisms given by affine, continuous maps and complete order unit spaces with morphisms given by linear, order-preserving, unit-preserving maps.

A useful consequence of the duality is the following analogue of the Stone--Weierstrass theorem. See \cite[\textsection4]{Jellett1968} for a direct proof.

\begin{prop}
\label{prop:Stone-Weierstrass-affine}
Let $X$ be a compact convex set. Let $L\subseteq\cA(X)$ be a linear subspace that contains the constants and separates the points of $X$. Then $L$ is dense in $\cA(X)$.
\end{prop}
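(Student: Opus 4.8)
The plan is to recognize $X$ as the state space of the closure of $L$ and then read off the claim from Kadison duality. Write $\bar L$ for the closure of $L$ in $(\cA(X), \|{\cdot}\|_\infty)$; we must show $\bar L = \cA(X)$. As $\bar L$ is a closed linear subspace of $\cA(X)$ containing the constants, with the induced order and order unit $\bOne$ it is itself a complete order unit space whose norm is the restriction of $\|{\cdot}\|_\infty$; so Kadison's theorem applies to $\bar L$, and since $\bar L$ is complete the canonical map $\varphi \mapsto \hat\varphi$ identifies $\cA(\tS(\bar L))$ with $\bar L$. Define $\pi \colon X \to \tS(\bar L)$ by $\pi(x)(\varphi) = \varphi(x)$. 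Each $\pi(x)$ is a positive unital linear functional on $\bar L$, hence a state, and $\pi$ is affine and continuous.

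Next I would show that $\pi$ is a bijection. Injectivity is immediate: if $\pi(x) = \pi(y)$, then $\varphi(x) = \varphi(y)$ for every $\varphi \in \bar L \supseteq L$, whence $x = y$ because $L$ separates the points of $X$. For surjectivity I would invoke \autoref{l:surjective-affine-map}: it is enough to check that the image of $\pi$ meets every non-empty half-space $\oset{\psi > 0} \sub \tS(\bar L)$ with $\psi \in \cA(\tS(\bar L))$. Under the identification above, $\psi = \hat\varphi$ for some $\varphi \in \bar L$, and $\oset{\hat\varphi > 0} \neq \emptyset$ means that some state of $\bar L$ is strictly positive on $\varphi$. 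Since every state is non-negative on the positive cone $\bar L \cap \cA(X)^+$, this forces $\varphi \not\leq 0$ on $X$, so there is $x \in X$ with $\varphi(x) > 0$; then $\hat\varphi(\pi(x)) = \pi(x)(\varphi) = \varphi(x) > 0$, so $\pi(x)$ lies in the half-space. Thus $\pi$ is onto.

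Being a continuous bijection from a compact space to a Hausdorff space, $\pi$ is a homeomorphism, and as it is affine it is an affine homeomorphism. Consequently the pullback $\psi \mapsto \psi \circ \pi$ is an isometric isomorphism $\cA(\tS(\bar L)) \to \cA(X)$. But for $\varphi \in \bar L$ we have $(\hat\varphi \circ \pi)(x) = \pi(x)(\varphi) = \varphi(x)$, so this isomorphism sends $\hat\varphi$ to $\varphi$; hence its image, which must be all of $\cA(X)$, equals $\bar L$. Therefore $\bar L = \cA(X)$, i.e. $L$ is dense in $\cA(X)$.

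The only genuinely delicate point is the surjectivity of $\pi$, and more precisely keeping the two instances of Kadison duality straight: the identification $X \cong \tS(\cA(X))$ that is implicitly built into \autoref{l:surjective-affine-map}, and the identification $\bar L \cong \cA(\tS(\bar L))$ used to describe the half-spaces; the rest is bookkeeping. A more hands-on variant avoids $\tS(\bar L)$ altogether: given a bounded linear functional $\Lambda$ on $\cA(X)$ vanishing on $L$, extend it to $C(X)$, Jordan-decompose the associated signed Radon measure into positive parts, normalize these to states (hence to points of $X$ by duality), and use $\bOne \in L$ together with the point-separation of $L$ to conclude that the two points coincide, so $\Lambda = 0$; then $L$ is dense by Hahn--Banach.
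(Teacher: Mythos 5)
Your proof is correct. The paper itself does not write out an argument here — it presents the proposition as a consequence of Kadison duality and refers to Jellett for a direct proof — so your write-up supplies exactly the argument the paper gestures at: realize $\bar L$ as a complete order unit space, identify it with $\cA(\tS(\bar L))$, and show that evaluation $X \to \tS(\bar L)$ is an affine homeomorphism. The two delicate points both check out: the order-unit norm on $\bar L$ induced from $\cA(X)$ is indeed the sup norm (so Kadison's theorem applies and completeness gives $\cA(\tS(\bar L)) = \bar L$), and your verification of the hypothesis of \autoref{l:surjective-affine-map} — that $\oset{\hat\varphi>0}\neq\emptyset$ forces $\varphi(x)>0$ for some $x\in X$, since otherwise $-\varphi$ would lie in the positive cone and every state would be $\leq 0$ on $\varphi$ — is exactly right. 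The alternative you sketch at the end (annihilator functional, Jordan decomposition, barycenters forced to coincide by point separation) is the standard direct proof and is also sound; either version would serve the paper equally well.
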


\begin{ntn}
  Let $A$ be an order unit space.
  If $\Sigma \subseteq A$, then we let $[\Sigma]_+ \subseteq \tS(A)$ denote the collection of states of $A$ that are positive (or zero) on $\Sigma$.
\end{ntn}

Let us call a \emph{positive cone} of an order unit space $A$ a subset $P \subseteq A$ that is a convex cone and contains $A^+$.

\begin{lemma}
  \label{lem:KadisonHahnBanachImplication}
  Assume that $\Sigma \subseteq A$ and $\varphi \in A$.
  Then the following are equivalent.
  \begin{enumerate}
  \item $\varphi$ belongs to the closed positive cone generated by $\Sigma$.
  \item $\varphi$ is positive on $[\Sigma]_+$.
  \end{enumerate}
\end{lemma}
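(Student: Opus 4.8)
The implication (i) $\Rightarrow$ (ii) is routine: every state $p \in [\Sigma]_+$ is a positive linear functional, hence non-negative on any non-negative combination of elements of $\Sigma$ and of $A^+$, and non-negativity is preserved under limits since states are continuous. So the content is (ii) $\Rightarrow$ (i), which is a Hahn--Banach separation argument transported through Kadison duality.

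The plan is as follows. Let $P$ denote the closed positive cone generated by $\Sigma$, i.e., the closure in $A$ (equivalently, in $\cA(\tS(A))$) of the convex cone generated by $\Sigma \cup A^+$. Suppose $\varphi \notin P$. Since $P$ is a closed convex set and $\{\varphi\}$ is compact and disjoint from it, I would apply the Hahn--Banach separation theorem in the locally convex space $A$ (or its completion $\cA(\tS)$) to obtain a continuous linear functional $\ell$ and a constant separating $\varphi$ from $P$. Because $P$ is a cone containing $0$, the separating functional can be normalized so that $\ell \geq 0$ on $P$ and $\ell(\varphi) < 0$. The functional $\ell$ lives in the dual of $\cA(\tS)$; the key point is to identify it, after rescaling, with an element of $\tS(A)$ lying in $[\Sigma]_+$, which then contradicts (ii).

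The main obstacle — and the step deserving care — is precisely this identification of the separating functional with a state. A priori $\ell$ is only a positive functional on $A$ (positive because $\ell \geq 0$ on the cone $P \supseteq A^+$), not necessarily unital, and possibly zero on $\bOne$. Positivity of $\ell$ on $A^+$ together with the fact that $\bOne$ is an order unit forces $\ell$ to be bounded, with $\|\ell\| = \ell(\bOne)$ (this is the order-unit-space analogue of the fact that positive functionals attain their norm at the unit, cf. \cite[Ch.~II]{Alfsen1971}). If $\ell(\bOne) > 0$ we rescale to get a state $p = \ell/\ell(\bOne) \in \tS(A)$; since $\ell \geq 0$ on $P \supseteq \Sigma$ we get $p \in [\Sigma]_+$, yet $p(\varphi) < 0$, contradicting (ii). It remains to rule out $\ell(\bOne) = 0$: in that case $\ell = 0$ by the norm identity, contradicting $\ell(\varphi) < 0$. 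This closes the argument.

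Alternatively, and perhaps more cleanly, one can avoid the boundedness discussion by separating inside the compact convex set $\tS(A)$ directly: view $\varphi, \Sigma \subseteq A \subseteq \cA(\tS)$ and note that (ii) says $\hat\varphi \geq 0$ on the closed convex subset $[\Sigma]_+ \subseteq \tS$, so that (i) becomes the assertion that $\hat\varphi$ lies in the closed cone generated by $\widehat\Sigma$ and $\cA(\tS)^+$ inside the complete order unit space $\cA(\tS)$ — and this is exactly a Krein--Milman/Hahn--Banach statement about affine functions on a compact convex set which separates via an element of $\tS$. Either route works; I expect the referee-facing write-up to take the first, functional-separation route, citing the Hahn--Banach separation theorem recorded above and the norm identity $\|\ell\| = \ell(\bOne)$ for positive functionals from \cite{Alfsen1971}.
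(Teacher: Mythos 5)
Your proposal is correct and follows essentially the same route as the paper: Hahn--Banach separation of $\varphi$ from the closed cone generated by $\Sigma \cup A^+$, observing that the separating functional is positive on the cone (hence positive and nonzero, so $\ell(\bOne)>0$), and rescaling to a state in $[\Sigma]_+$ taking a negative value at $\varphi$. Your extra remarks on the norm identity $\|\ell\|=\ell(\bOne)$ just make explicit what the paper leaves implicit in the parenthetical ``or else $p=0$''.
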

\begin{proof}
  Since every member of $\Sigma$ is, by definition, positive on $[\Sigma]_+$, so is every member of the generated closed positive cone.
  For the converse, we may assume that $\Sigma$ is a closed positive cone.
  If $\varphi \notin \Sigma$, then by the Hahn--Banach theorem \cite[Thm.~4.5]{Simon2011}, there exists a linear functional $p \in A^*$ that separates $\varphi$ from $\Sigma$ in $A$.

  We have $0 \in \Sigma$ and $p(0) = 0$, so $p(\varphi) \neq 0$, and we may assume that $p(\varphi) < 0$.
  Since $\Sigma$ is a cone, we must have $p\rest_\Sigma \geq 0$.
  In particular, $p$ is positive.
  Therefore, $p(\bOne) > 0$ (or else $p = 0$, which is impossible), and we may assume that $p(\bOne) = 1$.
  In particular, $p$ is a state, and $p \in [\Sigma]_+$, witnessing that $\varphi$ is \emph{not} positive on $[\Sigma]_+$.
\end{proof}

\begin{prop}
  \label{prop:KadisonHahnBanachImplicationDuality}\
  \begin{enumerate}
  \item If $\Sigma \subseteq A$, then $[\Sigma]_+$ is a closed convex subset of $\tS(A)$.
  \item If $X \subseteq \tS(A)$ and $P(X)$ is the collection of $\varphi \in A$ that are positive on $X$, then $P(X)$ is a closed positive cone of $A$.
  \item These two operations define a bijection between closed convex subsets of $\tS(A)$ and closed positive cones of $A$, one being the inverse of the other.
  \end{enumerate}
\end{prop}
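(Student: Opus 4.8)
The plan is to dispatch the three items in order, with part~(iii) being the substantive one; it will rest on Lemma~\ref{lem:KadisonHahnBanachImplication} together with the Hahn--Banach separation theorem, while (i) and (ii) are formal.

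For part~(i), I would simply write $[\Sigma]_+ = \tS(A)\cap\bigcap_{\varphi\in\Sigma}\set{p:\hat\varphi(p)\geq 0}$. Since each $\hat\varphi\in\cA(\tS(A))$ is continuous and affine, every set in the intersection is closed and convex, hence so is $[\Sigma]_+$ (the empty intersection giving $\tS(A)$ itself). For part~(ii), dually, $P(X)=\bigcap_{p\in X}\set{\varphi\in A:p(\varphi)\geq 0}$, and each of these sets is a closed half-space through the origin --- in particular a convex cone --- containing $A^+$ because the state $p$ is positive; an intersection of closed positive cones is a closed positive cone, and $P(\emptyset)=A$ is one trivially.

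For part~(iii) I would verify that $\Sigma\mapsto[\Sigma]_+$ and $X\mapsto P(X)$ are mutually inverse on the two classes. Given a closed positive cone $\Sigma$, the set $P([\Sigma]_+)$ is by definition the set of $\varphi\in A$ positive on $[\Sigma]_+$, which by Lemma~\ref{lem:KadisonHahnBanachImplication} is exactly the closed positive cone generated by $\Sigma$, namely $\Sigma$. Given a closed convex $X\sub\tS(A)$, the inclusion $X\sub[P(X)]_+$ is immediate from the definitions. For the reverse inclusion I would fix $p_0\in\tS(A)\sminus X$ (the case $X=\emptyset$ being trivial, since then $-\bOne\in P(\emptyset)=A$ is positive on no state, so $[P(\emptyset)]_+=\emptyset$), separate the disjoint closed convex sets $\set{p_0}$ and $X$ inside the compact convex set $\tS(A)$ by the Hahn--Banach separation theorem, and use the density of $\set{\hat\varphi:\varphi\in A}$ in $\cA(\tS(A))$ to take the separating functional of the form $\hat\varphi$ with $p_0(\varphi)\leq 0$ and $q(\varphi)\geq 1$ for all $q\in X$. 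Then $\psi=\varphi-\half\bOne$ lies in $A$, satisfies $q(\psi)\geq\half>0$ on $X$ (so $\psi\in P(X)$) and $p_0(\psi)\leq-\half<0$, witnessing $p_0\notin[P(X)]_+$. Combined with (i) and (ii), this gives the asserted bijection.

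The only real obstacle is part~(iii), and it has two faces. On the cone side the identity $P([\Sigma]_+)=\Sigma$ is just a restatement of Lemma~\ref{lem:KadisonHahnBanachImplication}, whose own proof is the Hahn--Banach argument, so nothing further is required. On the state side the subtlety is that separation only produces an affine function bounded \emph{below by a positive constant} on $X$; one must correct it by subtracting a scalar multiple of the order unit to obtain an element of $A$ that is genuinely positive on $X$ and strictly negative at $p_0$ --- and this is exactly the step where the presence of $\bOne$ in $A$ and the unitality of states enter, which is why the statement concerns states rather than arbitrary positive functionals. Everything else is routine bookkeeping about intersections of closed convex sets and cones.
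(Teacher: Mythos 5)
Your proof is correct and follows essentially the same route as the paper: parts (i) and (ii) by writing the sets as intersections of closed convex (half-space/cone) conditions, the cone side of (iii) as a direct application of Lemma~\ref{lem:KadisonHahnBanachImplication}, and the state side by Hahn--Banach separation together with the density of $A$ in $\cA(\tS(A))$. Your explicit shift by $\half\bOne$ to turn the separating functional into an element of $P(X)$ that is strictly negative at $p_0$ is a slightly more careful rendering of the perturbation step that the paper's proof leaves implicit.
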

\begin{proof}
  The first two assertions are obvious.
  If $\Sigma \subseteq A$ is a closed positive cone, then $P\bigl( [\Sigma]_+\bigr) = \Sigma$ by \autoref{lem:KadisonHahnBanachImplication}.
  Finally, let $C \subseteq \tS(A)$ be convex and closed, and let $\Sigma = P(C)$.
  Then $C \subseteq [\Sigma]_+$ by definition.
  If $p \in [\Sigma]_+ \setminus C$, then there exists $\varphi \in \cA\bigl(\tS(A)\bigr)$ that separates $p$ from $C$, say $\varphi\rest_C \geq 0$ and $\varphi(p) < 0$.
  Since $C$ is compact, and $A$ is dense in $\cA\bigl(\tS(A)\bigr)$, we may assume that $\varphi \in A$.
  But then $\varphi \in P(C) = \Sigma$, so $p \notin [\Sigma]_+$.
  This proves that $C = [\Sigma]_+$, completing the proof.
\end{proof}

\begin{prop}
  \label{prop:OrderUnitSpaceQuotient}
  Let $A$ be an order unit space and let $P \subseteq A$ be a closed positive cone.
  \begin{enumerate}
  \item The relation $\varphi \leq_P \psi$ defined by $\psi - \varphi \in P$ is a preorder relation on $A$.
  \item The intersection $I = P \cap (-P)$ is a vector subspace of $A$, and $\leq_P$ induces an order relation on the vector space quotient $B = A/I$.
  \item Equipped with the induced order and the equivalence class of $\bOne$, $B$ is an order unit space, and the quotient map $\pi\colon A \to B$ is positive and unital.
  \item For $p \in \tS(B)$ let $\iota(p) = p \circ \pi$.
    Then $\iota\colon \tS(B) \cong [P]_+ \subseteq \tS(A)$ is an affine homeomorphism of compact convex sets.
  \end{enumerate}
\end{prop}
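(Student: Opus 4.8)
The plan is to treat the four parts in order. Parts (i) and (ii), and most of (iii)--(iv), are formal manipulations with the cone $P$; the only place where the hypothesis that $P$ is \emph{closed} is genuinely used is the Archimedean property of the quotient, which I expect to be the crux.

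For (i), reflexivity of $\leq_P$ is the statement $0 \in P$ (true since $P \supseteq A^+ \ni 0$ and $P$ is a cone), and transitivity is closure of $P$ under addition. For (ii): $I = P \cap (-P)$ is closed under addition because $P$ and $-P$ are, and closed under scalar multiplication because $P$ is a cone and $I = -I$, so $I$ is a subspace. The relation $\leq_P$ descends to a well-defined relation on $B = A/I$ because $I \subseteq P$ and $P + P \subseteq P$, so changing representatives by members of $I$ does not affect whether a difference lies in $P$; antisymmetry on $B$ is then immediate, since $\psi - \varphi \in P$ and $\varphi - \psi \in P$ force $\psi - \varphi \in I$. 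Along the way I would record, for later use, the identity $[\varphi] \geq 0 \iff \varphi \in P$ for the induced order (one direction trivial; for the other, if $\varphi' \in P$ represents $[\varphi]$ then $\varphi = \varphi' + (\varphi - \varphi') \in P + I \subseteq P$), which gives $B^+ = \pi(P)$ and $\pi^{-1}(B^+) = P$.

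For (iii), unitality and positivity of $\pi$ are immediate from $\pi(\bOne) = [\bOne]$ and $A^+ \subseteq P$, and $[\bOne]$ is an order unit for $B$ because $\bOne$ is one for $A$ and $\pi$ is positive and surjective. The substantive point is that $B$ is Archimedean, and here is where closedness of $P$ enters: if $[\psi] - n[\varphi] \geq 0$ in $B$ for all $n \in \bN$, then $\psi - n\varphi \in P$, hence $\tfrac1n\psi - \varphi \in P$ for all $n$ since $P$ is a cone, and letting $n \to \infty$ and using that $P$ is norm-closed yields $-\varphi \in P$, i.e.\ $[\varphi] \leq 0$; equivalently, the order-unit seminorm of $B$ is a norm. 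This scaling-and-limit argument is the only non-routine step.

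For (iv), $\iota(p) = p \circ \pi$ is visibly linear, unital, and positive, hence a state of $A$, and it is positive on $P$ because $\pi(P) = B^+$; so $\iota$ maps $\tS(B)$ into $[P]_+$. Affinity of $\iota$ is clear, continuity is the remark that $p \mapsto p(\pi\varphi)$ is weak-$*$ continuous on $\tS(B)$ for every $\varphi \in A$, and injectivity follows from surjectivity of $\pi$. For surjectivity onto $[P]_+$: a state $r$ of $A$ that is positive on $P$ vanishes on $I = P \cap (-P)$ (being $\geq 0$ on both $P$ and $-P$), hence factors as $r = p \circ \pi$ for a unique linear $p$ on $B$, which is unital since $r$ is and positive since $r \geq 0$ on $P = \pi^{-1}(B^+)$; thus $p \in \tS(B)$ and $\iota(p) = r$. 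A continuous affine bijection between compact Hausdorff convex sets is an affine homeomorphism, which completes (iv). Alternatively, (iv) follows from \autoref{prop:KadisonHahnBanachImplicationDuality} applied to the closed positive cone $P$, but the direct argument above is short enough.
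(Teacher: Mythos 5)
Your proof is correct and follows essentially the same route as the paper's (which simply declares the first two items clear, cites closedness of $P$ for the Archimedean property, and runs the same factorization argument for (iv)); the one substantive step, deducing that $B$ is Archimedean from closedness of $P$ by scaling and passing to the limit, is exactly the point the paper's terse proof is alluding to, and your execution of it is right (applied to both $\varphi$ and $-\varphi$ it yields the paper's two-sided formulation of Archimedean). One minor caveat: the closing aside that (iv) ``follows from \autoref{prop:KadisonHahnBanachImplicationDuality}'' overstates what that proposition gives — it only identifies $[P]_+$ as a closed convex subset of $\tS(A)$ in bijection with the cone $P$, whereas identifying $[P]_+$ with $\tS(B)$ is precisely the content of (iv) — but since you offer this only as an alternative to your correct direct argument, nothing is lost.
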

\begin{proof}
  The first two items are clear, as is the fact that $\bOne$ is also an order unit on $B$.
  The induced order on $B$ is Archimedean since $P$ is closed, so $B$ is an order unit space.
  The map $\pi$ is positive since $A^+ \subseteq P$, and unital by construction.

  If $p \in \tS(B)$ then $\iota(p) = p \circ \pi \in \tS(A)$, and moreover $\iota(p) \in [P]_+$, essentially by definition.
  Since $\pi$ is surjective, the map $\iota\colon \tS(B) \rightarrow [P]_+$ is injective.
  If $q \in [P]_+$, then it must vanish on $I$, and therefore factor through $B$ as $q = p \circ \pi$.
  Moreover, $p$ is unital (since $q$ is) and positive (since $q$ is positive on $P$), so $p \in \tS(B)$.
  Since $\iota$ is continuous, it is a homeomorphism $\iota\colon \tS(B) \cong [P]_+$.

  Finally, $\iota$ respects the convex structure, by definition.
\end{proof}

In what follows, we may allow ourselves to say that $B = A/P$, and consider $\iota$ to be the identity, so $\tS(A/P) = [P]_+$.

\begin{prop}
  \label{prop:HahnBanachCompactness}
  Assume that $\Sigma \subseteq A$ is closed under addition.
  Then the following are equivalent.
  \begin{enumerate}
  \item $[\Sigma]_+ \neq \emptyset$.
  \item We have $[\varphi]_+ \neq \emptyset$ for all $\varphi \in \Sigma$.
  \item We have $[\varphi + \bOne]_+ \neq \emptyset$ for all $\varphi \in \Sigma$.
  \item We have $-\bOne \notin \Sigma + A^+$.
  \end{enumerate}
\end{prop}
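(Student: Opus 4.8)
The implications $(1)\Rightarrow(2)\Rightarrow(3)$ are immediate: if $[\Sigma]_+\neq\emptyset$ then any state in it witnesses $[\varphi]_+\neq\emptyset$ for each $\varphi\in\Sigma$; and $[\varphi]_+\subseteq[\varphi+\bOne]_+$ since a state is positive on $\bOne$. The implication $(3)\Rightarrow(4)$ is also easy in contrapositive form: if $-\bOne = \psi + \chi$ with $\psi\in\Sigma$ and $\chi\in A^+$, then any state $p$ would have $p(\psi) = -1 - p(\chi) \leq -1 < 0$, so $p(\psi+\bOne)\leq 0$ and in fact $p(\psi+\bOne) = -p(\chi)$; to get a strict sign one notes $p(\psi+\bOne)\le 0<$ anything positive, hence $[\psi+\bOne]_+=\emptyset$, contradicting (3). (One should double-check whether "positive (or zero)" in the definition of $[\,\cdot\,]_+$ forces the right inequality; with the convention that $[\varphi]_+$ means states positive \emph{or zero} on $\varphi$, one argues $p(\psi+\bOne)=-p(\chi)\le 0$, and this is $\ge 0$ only if $p(\chi)=0$; but then $p(-\bOne)=p(\psi)\ge 0$, i.e. $-1\ge 0$, absurd — so $[\psi+\bOne]_+$ is genuinely empty.)

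The substantive implication is $(4)\Rightarrow(1)$, and here the plan is to apply \autoref{lem:KadisonHahnBanachImplication}. Form $P$, the closed positive cone generated by $\Sigma \cup A^+$; concretely $P = \overline{\mathrm{cl}}(\Sigma + A^+)$, which is a closed convex cone containing $A^+$ because $\Sigma$ is closed under addition (so $\Sigma+A^+$ is already closed under addition and under multiplication by positive scalars, being of the form $\bigcup_{r>0} r(\Sigma+A^+)$ once one also throws in $0$; one must be slightly careful to include scalar multiples and $0$, but this is routine). Hypothesis (4) says exactly that $-\bOne\notin\Sigma+A^+$. One then wants to upgrade this to $-\bOne\notin P$, i.e. to the \emph{closed} cone. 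This is where compactness of the state space does the work: by \autoref{lem:KadisonHahnBanachImplication}, $-\bOne\notin P$ is equivalent to the existence of a state $p$ with $p$ positive on $\Sigma+A^+$ (hence on $\Sigma$) and $p(-\bOne)<0$ — but every state already satisfies $p(-\bOne)=-1<0$, so $-\bOne\notin P$ is equivalent to $[\Sigma+A^+]_+ = [\Sigma]_+ \neq\emptyset$, which is (1).

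So the real content reduces to: (4) $\Rightarrow$ $-\bOne\notin P$, i.e. the passage from the algebraic cone $\Sigma+A^+$ to its closure does not swallow $-\bOne$. The cleanest route avoids proving this directly and instead runs the Hahn--Banach argument in the style of \autoref{lem:KadisonHahnBanachImplication}: suppose toward a contradiction that $[\Sigma]_+=\emptyset$. By \autoref{prop:KadisonHahnBanachImplicationDuality} (or directly by \autoref{lem:KadisonHahnBanachImplication} applied to $P$), emptiness of $[P]_+$ means $P = A$, in particular $-\bOne\in P$. Now I would use that $-\bOne$ is an \emph{interior} point of $-A^+$ (indeed $\{\varphi : \varphi \le -\tfrac12\bOne\}$ is a norm-neighborhood of $-\bOne$ contained in $-A^+$, using that $\bOne$ is an order unit and the definition of the norm in \autoref{eq:df-norm}), so $-\bOne$ lies in the \emph{interior} of $-A^+\subseteq A$. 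Since $P$ is a convex set whose closure contains the point $-\bOne$ and $-A^+$ has nonempty interior around $-\bOne$, a standard fact about convex sets ($\overline{P}\cap\operatorname{int}(C) = \emptyset$ whenever $P\cap C=\emptyset$ and $C$ is open — equivalently, $\operatorname{int}(C)\subseteq\overline{P}$ forces $\operatorname{int}(C)\cap P\neq\emptyset$) gives that a full neighborhood of $-\bOne$ meets $\Sigma+A^+$; choosing $\psi+\chi\in\Sigma+A^+$ with $\psi+\chi\le -\tfrac12\bOne$ yields $\psi\le -\tfrac12\bOne-\chi\le -\tfrac12\bOne$, whence $-\bOne = \psi + (-\bOne-\psi)$ with $-\bOne-\psi\ge -\tfrac12\bOne$... which is not quite in $A^+$.

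I will instead close the loop more carefully: from $\psi+\chi\le-\tfrac12\bOne$ with $\psi\in\Sigma$, $\chi\in A^+$, write $\psi\le-\tfrac12\bOne$, so $2\psi\le-\bOne$, i.e. $-\bOne = 2\psi + \eta$ with $\eta = -\bOne-2\psi\ge 0$, hence $\eta\in A^+$; and $2\psi = \psi+\psi\in\Sigma$ since $\Sigma$ is closed under addition. Therefore $-\bOne\in\Sigma+A^+$, contradicting (4). The main obstacle, and the only place any real idea is needed, is precisely this interior-point argument that lets one replace the closed cone $P$ by the algebraic set $\Sigma+A^+$; everything else is bookkeeping with states and the order-unit norm. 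Note this is also exactly the mechanism behind the "compactness" in the proposition's name: the Hahn--Banach separation plus the fact that $\bOne$ is an order unit is what plays the role of a compactness theorem here.
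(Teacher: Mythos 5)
The genuine gap is in your $(3)\Rightarrow(4)$. From $-\bOne=\psi+\chi$ with $\psi\in\Sigma$ and $\chi\in A^+$ you only get $p(\psi+\bOne)=-p(\chi)\le 0$ for every state $p$, and this does \emph{not} make $[\psi+\bOne]_+$ empty: any state with $p(\chi)=0$ belongs to it. Your parenthetical attempt to rule this case out is a non sequitur --- from $p(\chi)=0$ you get $p(\psi)=p(-\bOne)=-1$, which is perfectly consistent; the inequality ``$p(\psi)\ge 0$'' that you write has no source. Concretely, take $A=C([0,1])$, $\chi(t)=t$, $\psi=-1-t$, and $\Sigma=\{n\psi:n\ge 1\}$: then $-\bOne=\psi+\chi\in\Sigma+A^+$, yet $[\psi+\bOne]_+=\{p:p(t)=0\}$ contains $\delta_0$ and is nonempty. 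The fix is the same doubling trick you use later: take $\varphi=2\psi=\psi+\psi\in\Sigma$ (here closure of $\Sigma$ under addition is what matters); since $\psi\le-\bOne$ we get $2\psi+\bOne\le-\bOne$, hence $p(2\psi+\bOne)\le-1<0$ for every state, and $[2\psi+\bOne]_+=\emptyset$. This is exactly the witness the paper uses.

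The rest of your argument is correct and is essentially the paper's proof of $(4)\Rightarrow(1)$, run in the contrapositive: both reduce to showing that $-\bOne$ cannot be absorbed into the \emph{closed} cone generated by $\Sigma\cup A^+$, using that the order-unit norm makes $\{\varphi:\varphi\le-\tfrac12\bOne\}$ a neighbourhood of $-\bOne$, and then using closure of $\Sigma$ under addition to land back in the algebraic set $\Sigma+A^+$. One piece of bookkeeping you wave at but should not skip: the closed positive cone generated by $\Sigma\cup A^+$ is $\overline{\bQ^+\Sigma+A^+}$, not $\overline{\Sigma+A^+}$ --- the set $\Sigma+A^+$ is closed under addition but not under multiplication by, say, $\tfrac12$, and in general $\overline{\Sigma+A^+}$ is strictly smaller than the generated cone. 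The repair is routine and in the spirit of your last step: from $\tfrac{m}{n}\sigma+a\le-\tfrac12\bOne$ multiply through by $2n$ to get $2m\sigma\le-n\bOne\le-\bOne$ with $2m\sigma\in\Sigma$, whence $-\bOne\in\Sigma+A^+$, contradicting $(4)$.
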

\begin{proof}
  \begin{cycprf}
  \item[\impnnext] Clear.
  \item If $-\bOne \in \Sigma + A^+$, then $-\bOne \geq \varphi$ for some $\varphi \in \Sigma$, and $[2\varphi + \bOne]_+ = \emptyset$.
  \item[\impfirst]
    Observe first that $-r\bOne \notin \Sigma + A^+$ for all $r > 0$.
    Indeed, if not, then $-nr\bOne \in \Sigma + A^+$ for $n$ large enough such that $nr \geq 1$, so $-\bOne \in \Sigma + A^+$.
    Therefore $-\frac{1}{2}\bOne \notin \bQ^+ \Sigma + A^+$, whence it follows that $\|\varphi + \bOne\| \geq \frac{1}{2}$ for all $\varphi \in \bQ^+ \Sigma + A^+$.
    Therefore $-\bOne \notin \overline{\bQ^+ \Sigma + A^+}$, and the latter is the closed convex cone generated by $\Sigma \cup A^+$.
    Therefore $-\bOne$ is not positive on $[\Sigma]_+$, so $[\Sigma]_+ \neq \emptyset$.
  \end{cycprf}
\end{proof}

\subsection{Probability spaces and measure algebras}
\label{sec:measure-algebras}

Throughout the paper, probability measures are always assumed to be countably additive (unless we qualify them explicitly as finitely additive).

Let $(\Omega, \cB, \mu)$ be an arbitrary probability space. Its \emph{measure algebra}, denoted by $\MALG(\Omega, \mu)$ (or, sometimes, just $\MALG(\Omega)$), is defined as the quotient of the Boolean algebra $\cB$ by the ideal of null sets. Then $\MALG(\Omega, \mu)$ is a complete Boolean algebra and the measure $\mu$ is well-defined on it. It also has the structure of a complete metric space: the distance between two elements is the measure of their symmetric difference.

If $\theta \colon \Omega' \to \Omega$ is a measure-preserving map between probability spaces (i.e., $\theta$ is measurable and $\mu'(\theta^{-1}(B))=\mu(B)$ for all $B\in\cB$), then one has a dual embedding $\theta^* \colon \MALG(\Omega,\mu) \to \MALG(\Omega',\mu')$ defined by $\theta^*([B]_\mu) = [\theta^{-1}(B)]_{\mu'}$ for all $B\in\cB$.

We will say that a set $A \in \cB$ (or the corresponding element $[A] \in \MALG(\Omega, \mu)$) is an \emph{atom} if $\mu(A) > 0$ and for every $B \sub A$, $B \in \cB$, we have that $\mu(B) = \mu(A)$ or $\mu(B) = 0$. A probability space (or its measure algebra) is \emph{atomless} if it has no atoms. In the operator algebra literature, atomless probability spaces are often called \emph{diffuse}.

If $M$ is the measure algebra of a probability space $(\Omega, \mu)$, we may denote the associated function spaces $L^\infty(\Omega, \mu)$ and $L^1(\Omega, \mu)$ simply by $L^\infty(M)$ and $L^1(M)$, respectively. To see that they do not depend on the choice of $\Omega$, note that $L^\infty(\Omega, \mu)$ can be constructed directly from $M$ by taking the completion of the collection of simple functions with respect to the essential supremum norm, and similarly for $L^1$. The following well-known example of a compact convex set will be relevant in \autoref{sec:bauer-compact-extremal}.

\begin{example}
  \label{ex:Linfty-convex-set}
  Let $M$ be a measure algebra and let $B=\set{f\in L^\infty(M) : 0\leq f\leq 1}$. Equipped with the weak$^*$ topology of $L^\infty(M)$, $B$ is a compact convex set, and its extreme points are precisely the characteristic functions $\chi_b$ associated to the elements $b\in M$. If $M$ is atomless, then the extreme points of $B$ form a dense set.

  More generally, given $n\in\N$, let $B_n = \set{f \in (L^\infty(M))^{2^n} : f_\eps \geq 0, \sum_{\eps \in 2^n} f_\eps = 1 }$. With the product of the weak$^*$ topology, $B_n$ is a compact convex set. Its extreme points are the tuples of the form $(\chi_{b_\eps})_{\eps\in 2^n}$ such that for some tuple $b\in M^n$ we have $b_\eps=\bigcap_{i<n}b^{\eps_i}$ for every $\eps\in 2^n$, where $b^0=b$ and $b^1=\neg b$. The extreme points are dense in $B_n$ if and only if $M$ is atomless.
\end{example}

If $(\Omega,\cB,\mu)$ is a probability space, we will denote by $\cB_\mu$ be the completion of $\cB$ with respect to $\mu$, i.e., the $\sigma$-algebra generated by $\cB$ and the $\mu$-null subsets of $\Omega$. The sets in $\cB_\mu$ are the \emph{$\mu$-measurable} subsets of $\Omega$. Similarly, a function $f\colon\Omega\to \R$ is \emph{$\mu$-measurable} if it differs from a $\cB$-measurable function on a $\mu$-null subset of $\Omega$.

Given a non-null $\mu$-measurable set $A$, we will denote by $\mu_A$ the \emph{conditional measure} on $A$, defined on $\cB$ by
\begin{equation*}
  \mu_A(B) = \frac{1}{\mu(A)} \mu(A \cap B).
\end{equation*}
Of course, if $A$ has full measure then $\mu_A=\mu$. This construction can be extended to certain non-measurable sets, as follows.

\begin{defn}
  \label{df:measure-concentr-general}
  Let $(\Omega,\cB,\mu)$ be a probability space and let $A \sub \Omega$ be an arbitrary subset.
  We will say that \emph{$\mu$ concentrates on $A$} if for every $B\in \cB$ disjoint from $A$ we have that $\mu(B) = 0$.
\end{defn}

\begin{remark}
  It is possible that a measure $\mu$ concentrates simultaneously on a set $A$ and on its complement. See, for instance, \autoref{ex:random-set}.
\end{remark}

\begin{lemma}
  \label{l:measure-concentration}
  Suppose that $\mu$ concentrates on $A$. Then the formula
  \begin{equation*}
    \mu_A(B \cap A) = \mu(B)
  \end{equation*}
  determines a well-defined probability measure on the trace $\sigma$-algebra
  $$\cB_A=\set{B\cap A : B\in\cB}.$$
  If $(\Omega,\cB,\mu)$ is a complete probability space, then so is $(A,\cB_A,\mu_A)$.

  The map $B\mapsto B\cap A$ induces an isomorphism $\MALG(\Omega, \mu) \cong \MALG(A, \mu_A)$, and for every $\mu$-measurable and integrable function $f \colon \Omega \to \R$ we have:
  \begin{equation*}
    \int_A f\rest_A \, \ud \mu_A = \int_\Omega f \, \ud \mu.
  \end{equation*}
\end{lemma}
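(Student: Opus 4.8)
The plan is to verify the four assertions in turn. The only place where the concentration hypothesis is genuinely needed is the well-definedness of $\mu_A$; the remaining points are routine measure theory, the one thing requiring care being the distinction between $\cB$-measurability and mere $\mu$-measurability in the integral formula.

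\emph{The measure $\mu_A$.} First, $\cB_A$ is a $\sigma$-algebra on $A$: it contains $A = \Omega \cap A$, it is closed under complementation in $A$ because $A \setminus (B \cap A) = (\Omega \setminus B) \cap A$, and it is closed under countable unions because $\bigcup_n (B_n \cap A) = \bigl(\bigcup_n B_n\bigr) \cap A$. For well-definedness, suppose $B_1 \cap A = B_2 \cap A$ with $B_1, B_2 \in \cB$; then $B_1 \triangle B_2$ is disjoint from $A$, so $\mu(B_1 \triangle B_2) = 0$ by the concentration hypothesis and hence $\mu(B_1) = \mu(B_2)$. Thus $\mu_A$ is a well-defined, non-negative set function on $\cB_A$ with $\mu_A(A) = \mu(\Omega) = 1$. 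For $\sigma$-additivity, given pairwise disjoint sets $C_n = B_n \cap A \in \cB_A$, replace $B_n$ by $B_n' = B_n \setminus \bigcup_{m<n} B_m \in \cB$; the $B_n'$ are pairwise disjoint, $\bigcup_n B_n' = \bigcup_n B_n$, and $B_n' \cap A = C_n$ because the $C_n$ are pairwise disjoint, so $\mu_A\bigl(\bigcup_n C_n\bigr) = \mu\bigl(\bigcup_n B_n'\bigr) = \sum_n \mu(B_n') = \sum_n \mu_A(C_n)$. Hence $\mu_A$ is a probability measure on $(A, \cB_A)$.

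\emph{Completeness and the measure algebra.} Assume $(\Omega, \cB, \mu)$ is complete and let $N' \subseteq N \in \cB_A$ with $\mu_A(N) = 0$; pick $B \in \cB$ with $B \cap A = N$, so that $\mu(B) = \mu_A(N) = 0$ and $N' \subseteq B$, whence $N' \in \cB$ by completeness and therefore $N' = N' \cap A \in \cB_A$. For the measure-algebra statement, the surjective Boolean homomorphism $B \mapsto B \cap A$ from $\cB$ onto $\cB_A$ satisfies $\mu_A(B \cap A) = \mu(B)$, so it sends null sets to null sets and descends to a surjective homomorphism $\MALG(\Omega, \mu) \to \MALG(A, \mu_A)$; it is injective because $\mu_A(B \cap A) = 0$ forces $\mu(B) = 0$, and isometric because $\mu_A\bigl((B_1 \triangle B_2) \cap A\bigr) = \mu(B_1 \triangle B_2)$, so it is an isomorphism of metric measure algebras.

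\emph{The integral formula.} If $f$ is $\cB$-measurable, then $f\rest_A$ is $\cB_A$-measurable since $(f\rest_A)^{-1}(U) = f^{-1}(U) \cap A$ for Borel $U \subseteq \R$. In general write $f = g$ $\mu$-almost everywhere with $g$ $\cB$-measurable; the set $\{a \in A : f(a) \neq g(a)\}$ is contained in $B \cap A$ for some $\mu$-null $B \in \cB$, hence is $\mu_A$-null, so $f\rest_A$ is $\mu_A$-measurable, and since the two integrals are unchanged under modification on a null set it suffices to establish the identity for $\cB$-measurable $f$. For non-negative $\cB$-simple $f = \sum_i c_i \chi_{B_i}$ it is immediate from $\mu_A(B_i \cap A) = \mu(B_i)$; for non-negative $\cB$-measurable $f$ choose simple $s_n \uparrow f$, so that $s_n\rest_A \uparrow f\rest_A$, and apply the monotone convergence theorem on both sides; the general integrable case follows by splitting $f$ into positive and negative parts, and applying the non-negative case to $|f|$ shows that $f\rest_A$ is $\mu_A$-integrable.
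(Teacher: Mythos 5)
Your proof is correct and takes the same approach as the paper: the paper's proof consists precisely of your well-definedness argument (if $B_1\cap A = B_2\cap A$ then $(B_1\triangle B_2)\cap A=\emptyset$, so $\mu(B_1\triangle B_2)=0$ by concentration) and then declares the rest easy. Your careful verification of the remaining routine points ($\sigma$-additivity via disjointification, completeness, the measure-algebra isomorphism, and the integral formula with the reduction from $\mu$-measurable to $\cB$-measurable functions) is accurate and fills in exactly what the paper leaves to the reader.
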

\begin{proof}
  If $B\cap A=B'\cap A$ then $(B\triangle B')\cap A = \emptyset$, so $\mu(B\triangle B')=0$ and $\mu_A$ is well-defined. The rest is easy.
\end{proof}

Finally, we recall that a probability space $(\Omega, \cB, \mu)$ is called \emph{standard} if it is isomorphic to one where $\Omega$ is a Polish space, $\mu$ is a Borel probability measure, and $\cB$ is the completion of the Borel $\sigma$-algebra with respect to~$\mu$.

\subsection{Choquet theory}
\label{sec:choquet-theory}

The theorems of Choquet and Bishop--de Leeuw are abstract representation theorems for points in a compact convex set as barycenters of measures concentrated on the set of extreme points. They are the basis of a large number of integral decomposition theorems in analysis. See \cite{Phelps2001}, \cite[Chs. 10, 11]{Simon2011}, and \cite{Alfsen1971}.

If $Y$ is a compact Hausdorff space, let $C(Y)$ denote the vector lattice of real-valued continuous functions on $Y$.
We will denote by $\cM(Y)\coloneqq \tS(C(Y))$ the compact convex set of states on $C(Y)$.
The elements of $\cM(Y)$ are called \emph{Radon probability measures} on $Y$, and can be identified with the regular Borel probability measures on $Y$ (regularity means that $\mu(A) =\sup\set{\mu(K):K\subseteq A,\ K\text{ compact}} =\inf\set{\mu(U):A\subseteq U,\ U\text{ open}}$ for every measurable set $A$).
Recall that the $\sigma$-algebra of \emph{Baire subsets} of $Y$ is the one generated by all compact, $G_\delta$ sets, or equivalently, by all continuous real-valued functions (or, more generally, by all continuous functions to metrizable spaces).
Every probability measure on the Baire sets extends to a unique regular Borel probability measures.
Therefore, the Radon measures on $Y$ may also be identified with the Baire probability measures.
If $Y$ is metrizable, the Baire sets coincide with the Borel sets.

For any bounded Borel function $f$ on $Y$, we will denote by $\mu(f)$ the value of the integral $\int_Y f \ud \mu$. If $\pi\colon Y\to Z$ is a continuous map to another compact Hausdorff space $Z$, then the pushforward map $\pi_*\colon \cM(Y)\to \cM(Z)$, given by $(\pi_*\mu)(f)=\mu(f\circ\pi)$, is continuous and affine (and surjective if $\pi$ is).

Now let $X$ be a compact convex set and let $\mu \in \cM(X)$. The \emph{barycenter} of $\mu$ is the unique point $R(\mu) \in X$ satisfying
\begin{equation*}
  f(R(\mu)) = \mu(f) \quad \text{ for all } f \in \cA(X).
\end{equation*}
The map $R \colon \cM(X) \to X$ is continuous and affine. If $\pi\colon X\to Y$ is a continuous, affine map to another compact convex set $Y$, then $R(\pi_*(\mu)) = \pi(R(\mu))$.

\begin{lemma}\label{l:barycenter-face-measure-one}
  Let $X$ be a compact convex set and $F\subseteq X$ be a closed face. If $\mu\in\cM(X)$ and $R(\mu)\in F$, then $\mu(F)=1$.
\end{lemma}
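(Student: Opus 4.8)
The plan is to argue by contradiction: assume $\mu(X \sminus F) > 0$ and derive a contradiction from $R(\mu) \in F$. Since $F$ is closed, $X \sminus F$ is open, and since $\mu \in \cM(X)$ is a Radon measure it is inner regular, so there is a compact set $K \sub X \sminus F$ with $a := \mu(K) > 0$. The strategy is first to show that the closed convex hull $\clco K$ is disjoint from $F$, and then to use the face property of $F$ together with the decomposition of $\mu$ relative to $K$ to contradict $R(\mu) \in F$.

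The first — and, I expect, the only genuinely delicate — step is the following: if $K$ is a compact subset of $X$ with $K \cap F = \emptyset$, then $\clco K \cap F = \emptyset$. The subtlety here is that a closed face need not be exposed, so one cannot simply separate $F$ from $K$ by an affine function vanishing on $F$. Instead I would argue as follows. Put $G = \clco K$, a closed (hence compact) convex subset of $X$. If $G \cap F \ne \emptyset$, then $G \cap F$ is a nonempty compact convex set, so by Krein--Milman it has an extreme point $e$. Since $F$ is a face of $X$ and $G$ is convex, $G \cap F$ is a face of $G$; since $\{e\}$ is a face of $G \cap F$ and a face of a face is a face, $e \in \cE(G) = \cE(\clco K)$. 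By Milman's theorem $\cE(\clco K) \sub K$, so $e \in K$; but $e \in F$, contradicting $K \cap F = \emptyset$.

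To conclude, let $\rho$ be the normalized restriction of $\mu$ to $K$ (a Radon probability measure with $\rho(K) = 1$). A standard barycenter argument — concretely, if $R(\rho) \notin \clco K$, Hahn--Banach gives $\varphi \in \cA(X)$ with $\varphi(R(\rho)) \le 0$ and $\varphi \ge 1$ on $\clco K$, whence $\varphi(R(\rho)) = \int \varphi \ud\rho = \int_K \varphi \ud\rho \ge 1$, absurd — shows $R(\rho) \in \clco K$, and hence $R(\rho) \notin F$ by the previous step. If $a = 1$, then $\mu = \rho$ and $R(\mu) = R(\rho) \notin F$, contradicting the hypothesis. If $a < 1$, write $\mu = a\rho + (1-a)\rho'$ with $\rho'$ the normalized restriction of $\mu$ to $X \sminus K$; then $R(\mu) = a\,R(\rho) + (1-a)\,R(\rho')$ with $a \in (0,1)$, and since $R(\mu) \in F$ and $F$ is a face, $R(\rho) \in F$ — again a contradiction. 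Either way $\mu(X \sminus F) = 0$, i.e.\ $\mu(F) = 1$.
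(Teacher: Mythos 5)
Your proof is correct and follows essentially the same route as the paper's: inner regularity to produce a compact $K$ disjoint from $F$ of positive measure, Milman's theorem to see that $\clco(K)\cap F=\emptyset$, and the decomposition $R(\mu)=\mu(K)R(\mu_K)+\mu(X\sminus K)R(\mu_{X\sminus K})$ against the face property. Your write-up is in fact slightly more careful than the paper's, spelling out why an extreme point of $\clco(K)\cap F$ is extreme in $\clco(K)$ and handling the case $\mu(K)=1$ separately.
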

\begin{proof}
  If $\mu(F) < 1$, by regularity, there exists a compact set $K \sub X$ disjoint from $F$ with $\mu(K) > 0$. Then $\clco(K) \cap F = \emptyset$, for if not, any extreme point of $\clco(K) \cap F$ is also an extreme point of $\clco(K)$, so in $K$, by Milman's theorem, contradicting $K \cap F = \emptyset$. Now the conditional measure $\mu_K$ satisfies $R(\mu_K)\in\clco(K)$, and the representation $R(\mu) = \mu(K) R(\mu_K) + \mu(X \sminus K) R(\mu_{X \sminus K})$ contradicts the fact that $F$ is a face.
\end{proof}

We denote by $\cC(X)$ the convex cone of continuous, convex functions on $X$ and we define the \emph{Choquet partial order} $\preceq$ on $\cM(X)$ by
\begin{equation*}
  \mu \preceq \nu \iff \mu(f) \leq \nu(f) \quad \text{ for all } f \in \cC(X).
\end{equation*}
A measure $\mu \in \cM(X)$ is called a \emph{boundary measure} if it is a maximal element for $\preceq$.
Intuitively, if $\mu \preceq \nu$, then $\mu$ and $\nu$ have the same barycenter and $\nu$ is concentrated ``closer'' to the extreme boundary than $\mu$.
In particular, boundary measures are concentrated on $\cE(X)$ in an appropriate sense.

We note several basic, well-known properties of the Choquet order.
\begin{lemma}
  \label{l:choquet-order}
  Let $X$ be a compact convex set. Then the following hold:
  \begin{enumerate}
  \item \label{i:choquet-order:1} $\preceq$ is a partial order and it is closed as a subset of $\cM(X)^2$.
  \item \label{i:choquet-order:2} If $\mu \preceq \nu$, then $R(\mu) = R(\nu)$.
  \item \label{i:choquet-order:4} For any $x \in X$, there exists a boundary measure $\mu \in \cM(X)$ with $\delta_x \preceq \mu$.
  \end{enumerate}
\end{lemma}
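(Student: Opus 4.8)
The plan is to prove the three assertions in turn. Reflexivity and transitivity of $\preceq$ and the ``closed'' statement are formal, and the second and third assertions follow standard patterns; essentially all the content lies in the antisymmetry of $\preceq$ in the first assertion, which is the classical fact that a Radon measure on a compact convex set is determined by its integrals against continuous convex functions. This is the main obstacle, and I would obtain it by a Stone--Weierstrass argument.

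In detail, for the first assertion: the set $\{ (\mu,\nu) : \mu(f) \leq \nu(f) \}$ is closed in $\cM(X)^2$ for each fixed $f \in \cC(X)$, because $\mu \mapsto \mu(f)$ is continuous; intersecting over $f \in \cC(X)$ shows $\preceq$ is closed. Reflexivity and transitivity are clear from the definition. For antisymmetry, suppose $\mu \preceq \nu$ and $\nu \preceq \mu$, so $\mu$ and $\nu$ agree on $\cC(X)$ and hence on the linear subspace $V = \cC(X) - \cC(X)$ of $C(X)$. I would check that $V$ is a sublattice of $C(X)$: for $g, h \in \cC(X)$ one has $g \vee h \in \cC(X)$ and $g \wedge h = g + h - g \vee h \in V$, while for arbitrary $f_i = g_i - h_i \in V$ the identity
\begin{equation*}
  f_1 \vee f_2 = \bigl( (g_1 + h_2) \vee (g_2 + h_1) \bigr) - (h_1 + h_2)
\end{equation*}
exhibits $f_1 \vee f_2$ as a difference of continuous convex functions; thus $V$ is closed under $\vee$, and therefore also under $\wedge$ since it is a linear subspace. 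Since $V \supseteq \cA(X)$, it contains the constants and separates the points of $X$ (by Hahn--Banach), so by the lattice form of the Stone--Weierstrass theorem (a linear sublattice of $C(X)$ containing the constants and separating points is dense) $V$ is dense in $C(X)$; as $\mu$ and $\nu$ are continuous on $C(X)$, this gives $\mu = \nu$.

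For the second assertion, $\cA(X) \subseteq \cC(X)$ and $-\cA(X) = \cA(X)$, so $\mu \preceq \nu$ forces $\phi(R(\mu)) = \mu(\phi) = \nu(\phi) = \phi(R(\nu))$ for every $\phi \in \cA(X)$; since $\cA(X)$ separates the points of $X$, we conclude $R(\mu) = R(\nu)$. For the third assertion I would fix $x \in X$ and set $P = \{ \mu \in \cM(X) : \delta_x \preceq \mu \}$, which is nonempty (it contains $\delta_x$) and, by the closedness just proved, compact, and then apply Zorn's lemma to $(P, \preceq)$. Given a chain $C \subseteq P$, for each $f \in \cC(X)$ the net $(\mu(f))_{\mu \in C}$ is monotone nondecreasing and bounded above by $\|f\|_\infty$, hence converges to $\sup_{\mu \in C} \mu(f)$; passing to a weak$^*$-convergent subnet $\mu_j \to \mu_\infty$ of $C$ (possible since $\cM(X)$ is compact), we get $\mu_\infty(f) = \sup_{\mu \in C} \mu(f)$ for all $f \in \cC(X)$, so $\mu_\infty \in P$ is an upper bound for $C$ (take $\mu_\infty = \delta_x$ if $C = \emptyset$). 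A maximal element $\mu_0$ of $P$ given by Zorn is maximal in all of $\cM(X)$: if $\mu_0 \preceq \nu$ with $\nu \in \cM(X)$, then $\delta_x \preceq \mu_0 \preceq \nu$, so $\nu \in P$ and maximality forces $\nu = \mu_0$. Hence $\mu_0$ is a boundary measure with $\delta_x \preceq \mu_0$.
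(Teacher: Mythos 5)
Your proof is correct and follows the same route as the paper, which simply records item (i) as ``clear from the definition,'' deduces (ii) from the fact that $\pm\cA(X)\subseteq\cC(X)$, and cites Zorn's lemma for (iii). The only substantive point you add is a careful verification of antisymmetry via the density of $\cC(X)-\cC(X)$ in $C(X)$ (the lattice Stone--Weierstrass argument), which is exactly the standard fact the paper is implicitly invoking, and your chain-bound argument for Zorn is likewise the intended one.
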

\begin{proof}
  \ref{i:choquet-order:1} This is clear from the definition.

  \ref{i:choquet-order:2} If $f$ is continuous and affine, then both  $f$ and $-f$ are convex and by definition, $\mu(f) \leq \nu(f)$ and $\mu(-f) \leq \nu(-f)$. Thus $\mu(f) = \nu(f)$ for every continuous, affine $f$ and this implies that $R(\mu) = R(\nu)$.

  \ref{i:choquet-order:4} This follows from \ref{i:choquet-order:1} and an application of Zorn's lemma.
\end{proof}

\begin{defn}
  \label{defn:ConcaveEnvelope}
  If $f$ is a bounded function on $X$, we define $\hat{f}$, the \emph{concave envelope} of $f$, by
  \begin{equation*}
    \hat{f}(x) = \inf \set{h(x) : h \in \cA(X) \And h \geq f}.
  \end{equation*}
\end{defn}

The following are some basic properties of the concave envelope that we will need.
\begin{lemma}
  \label{l:concave-envelope}
  Let $X$ be a compact convex set and let $f$ be a bounded function on $X$. Then the following hold:
  \begin{enumerate}
  \item \label{i:concave-env:1} $\hat f$ is concave, bounded, and upper semi-continuous.
  \item \label{i:concave-env:2} If $f$ is concave, upper semi-continuous, then $\hat f = f$.
  \item \label{i:concave-env:3} If $f$ is concave, upper semi-continuous, then $\mu(f) \leq f(R(\mu))$.
  \end{enumerate}
\end{lemma}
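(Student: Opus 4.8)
The plan is to treat the three items in order; the first and third are short, and the real content is in the second. For \ref{i:concave-env:1}, I would first observe that the family $\cH = \set{h \in \cA(X) : h \geq f}$ is non-empty, since it contains the constant function $\|f\|_\infty$, so that $\hat f = \inf \cH$ is well defined and satisfies $f \leq \hat f \leq \|f\|_\infty$; in particular $\hat f$ is bounded. Then $\hat f$ is upper semi-continuous as a pointwise infimum of continuous functions, and concave as a pointwise infimum of affine (hence concave) functions.

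For \ref{i:concave-env:2}, since $\hat f \geq f$ always holds, it suffices to prove that $\hat f(x_0) \leq f(x_0)$ for each $x_0 \in X$; concretely, that for every real $r$ with $f(x_0) < r < f(x_0) + 1$ there is $g \in \cA(X)$ with $g \geq f$ and $g(x_0) < r$. I would work inside the compact convex set $Y = X \times [-b-1,\, b+1]$, where $b = \|f\|_\infty$, and consider the hypograph
\begin{equation*}
  A = \bigl\{ (x,t) \in Y : t \leq f(x) \bigr\},
\end{equation*}
which is convex because $f$ is concave, closed because $f$ is upper semi-continuous, non-empty (it contains $(x,-b-1)$ for every $x$), and disjoint from the point $(x_0, r)$. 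Applying the Hahn--Banach separation theorem inside $Y$ produces $\varphi \in \cA(Y)$ with $\varphi \leq 0$ on $A$ and $\varphi(x_0, r) \geq 1$; and since every continuous affine function on a compact convex subset of a locally convex space is the restriction of a continuous linear functional plus a constant, $\varphi$ has the form $\varphi(x,t) = h(x) + ct$ with $h \in \cA(X)$ and $c \in \R$. Using that $(x,s) \in A$ for every $s \in [-b-1, f(x)]$, I would rule out $c \leq 0$: if $c = 0$ then $h \leq 0$ on $X$ while $h(x_0) \geq 1$; and if $c < 0$, comparing $\varphi \leq 0$ at $(x_0, f(x_0)) \in A$ with $\varphi(x_0, r) \geq 1$ forces $r < f(x_0)$, contradicting the choice of $r$. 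With $c > 0$ in hand, I set $g = -h/c \in \cA(X)$: the inequality at $(x, f(x))$ gives $g \geq f$, and $\varphi(x_0,r) \geq 1$ gives $g(x_0) < r$, as required. Letting $r \downarrow f(x_0)$ yields $\hat f(x_0) \leq f(x_0)$.

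For \ref{i:concave-env:3}, note that $f$, being bounded and upper semi-continuous, is Borel and $\mu$-integrable, so $\mu(f)$ is defined. By \ref{i:concave-env:2}, $f = \inf \set{h \in \cA(X) : h \geq f}$, and for each such $h$ we have $\mu(f) \leq \mu(h) = h(R(\mu))$ by monotonicity of the integral and the defining property of the barycenter. Taking the infimum over $h$ gives $\mu(f) \leq \hat f(R(\mu)) = f(R(\mu))$.

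The main obstacle is \ref{i:concave-env:2}: one must apply the separation theorem on the genuinely compact slab $Y$ rather than on the non-compact product $X \times \R$, and then correctly extract the sign of the coefficient $c$ from the geometry of the hypograph — once that is done, the remaining verifications are routine bookkeeping.
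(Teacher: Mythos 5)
Your proposal is correct. The only part the paper actually writes out is item \ref{i:concave-env:3}, for which your argument is essentially identical: use \ref{i:concave-env:2} to write $f$ as the infimum of the affine continuous majorants, apply monotonicity of the integral and the identity $\mu(h) = h(R(\mu))$ for $h \in \cA(X)$, and take the infimum. For items \ref{i:concave-env:1} and \ref{i:concave-env:2} the paper simply cites \cite[\textsection3]{Phelps2001}, whereas you give a self-contained proof: \ref{i:concave-env:1} by the standard observations that an infimum of continuous (resp.\ affine) functions is upper semi-continuous (resp.\ concave), and \ref{i:concave-env:2} by Hahn--Banach separation of the point $(x_0,r)$ from the hypograph of $f$ inside the compact slab $Y = X \times [-b-1,b+1]$, with a correct case analysis on the sign of the coefficient $c$ of the $t$-variable (the cases $c=0$ and $c<0$ are properly excluded, and the restriction $r < f(x_0)+1$ keeps the point inside $Y$). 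This is the classical argument underlying the cited reference, so your write-up buys self-containedness at the cost of length, while the paper buys brevity by delegating to Phelps; mathematically the two routes agree where they overlap.
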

\begin{proof}
  \ref{i:concave-env:1}, \ref{i:concave-env:2}. See \cite[\textsection3]{Phelps2001}.

  \ref{i:concave-env:3} This is just a version of Jensen's inequality. Using \ref{i:concave-env:2}, we have that:
  \begin{equation*}
    \begin{split}
      \mu(f) & \leq \inf \, \bigl\{ \mu(h) : h \in \cA(X), \, h \geq f \bigr\} \\
             & = \inf \, \bigl\{ h(R(\mu)) : h \in \cA(X), \, h \geq f \bigr\}
               = \hat f\bigl( R(\mu) \bigr) = f\bigl( R(\mu) \bigr).
               \qedhere
    \end{split}
  \end{equation*}
\end{proof}

When considering Radon measures, \autoref{df:measure-concentr-general} will always be applied with respect to the algebra of Baire sets.

\begin{defn}
  \label{df:measure-concentr}
  Let $X$ be a compact space, $\mu$ a Radon probability measure on $X$ and $A \sub X$ an arbitrary subset.
  We will say that \emph{$\mu$ concentrates on $A$} if for every Baire set $B$ disjoint from $A$, we have that $\mu(B) = 0$. If $A$ is Baire, this simply means $\mu(A) = 1$.
\end{defn}

\begin{lemma}
  \label{l:concentrates-implies-vanishes-Gdelta}
  Let $X$ be a compact space, $A \sub X$, and let $\mu$ be a Radon probability measure on $X$ that concentrates on $A$. Then for every $G_\delta$ set $B \sub X$ disjoint from $A$, we have that $\mu(B) = 0$.
\end{lemma}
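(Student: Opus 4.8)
The plan is to derive the $G_\delta$ statement from the Baire statement built into the hypothesis, by combining two standard facts about Radon measures on compact Hausdorff spaces: inner regularity, and the possibility of trapping a compact set inside a prescribed open neighbourhood by a compact $G_\delta$ (equivalently, Baire) set.

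First I would note that $B$, being $G_\delta$, is Borel and hence $\mu$-measurable, so inner regularity of $\mu$ gives $\mu(B) = \sup\{\mu(K) : K \subseteq B,\ K \text{ compact}\}$; it therefore suffices to prove $\mu(K) = 0$ for every compact $K \subseteq B$. Fix such a $K$ and write $B = \bigcap_n U_n$ with each $U_n$ open. Since $X$ is compact Hausdorff, it is normal, so for each $n$ Urysohn's lemma provides a continuous $f_n \colon X \to [0,1]$ equal to $1$ on $K$ and to $0$ on $X \setminus U_n$. Setting $C_n = \{x \in X : f_n(x) \geq 1/2\}$ yields a closed (hence compact) $G_\delta$ set with $K \subseteq C_n \subseteq U_n$. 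Then $C = \bigcap_n C_n$ is a compact $G_\delta$ set with $K \subseteq C \subseteq \bigcap_n U_n = B$, so $C$ is disjoint from $A$; being a compact $G_\delta$, it is a Baire set. Applying the hypothesis that $\mu$ concentrates on $A$ (\autoref{df:measure-concentr}) to the Baire set $C$ gives $\mu(C) = 0$, whence $\mu(K) \leq \mu(C) = 0$; taking the supremum over $K$ yields $\mu(B) = 0$.

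I do not expect any serious obstacle: the only genuine content is the squeezing step producing a compact $G_\delta$ set $C$ with $K \subseteq C \subseteq B$, which is exactly what converts ``Baire sets disjoint from $A$'' (the definition of concentration) into ``$G_\delta$ sets disjoint from $A$'', and this is a routine use of normality and Urysohn's lemma. One only needs to be mildly careful to recall that compact $G_\delta$ sets are Baire sets (by the very definition of the Baire $\sigma$-algebra recalled above) and that inner regularity applies because $B$ is Borel.
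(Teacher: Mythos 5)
Your proof is correct and follows essentially the same route as the paper's: inner regularity to reduce to a compact $K \subseteq B$, then Urysohn's lemma to squeeze a Baire set between $K$ and $B = \bigcap_n U_n$ and invoke the concentration hypothesis. The only cosmetic differences are that the paper argues by contradiction and uses the open Baire sets $f_n^{-1}((-\infty,1/2))$ where you use the closed $G_\delta$ sets $\{f_n \geq 1/2\}$; both work equally well.
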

\begin{proof}
  Suppose that $\mu(B) = r > 0$. By regularity, there exists a compact $K \sub B$ with $\mu(K) > r/2$. Write $B = \bigcap_n U_n$ with each $U_n$ open. By Urysohn's lemma, there exist $f_n \in C(X)$ with $f_n \rest_K = 0$ and $f_n \rest_{X \sminus U_n} = 1$. Let $O_n = f_n^{-1}((-\infty, 1/2))$. Then $K \sub O_n \sub U_n$ and each $O_n$ is open and Baire. Thus $\bigcap_n O_n$ is Baire, disjoint from $A$, and with measure $\geq r/2$, contradicting the assumptions.
\end{proof}

The fundamental result about boundary measures is given by the following theorem (see \cite[Ch.~4]{Phelps2001}).
\begin{theorem}[Choquet--Bishop--de Leeuw]
  \label{th:Bishop-dL}
  Let $X$ be a compact convex set.
  Then every boundary measure $\mu \in \cM(X)$ concentrates on $\cE(X)$.
  In particular, if $X$ is metrizable, then $\mu(\cE(X)) = 1$.
\end{theorem}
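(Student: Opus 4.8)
The plan is to follow the classical route through the concave envelope and the Hahn--Banach theorem: the metrizable case is essentially Choquet's original argument, and the non-metrizable case is the Bishop--de Leeuw refinement. The analytic heart (Step 1) is to show that a boundary measure $\mu$ satisfies $\mu(f)=\mu(\hat f)$ for every $f\in C(X)$, where $\hat f$ is the concave envelope of \autoref{defn:ConcaveEnvelope}; by \autoref{l:concave-envelope} it is concave, bounded and upper semicontinuous, hence Borel and $\mu$-integrable, and $\hat f\geq f$. I would first observe that $\hat g$ is in fact the \emph{smallest} concave upper semicontinuous function $\geq g$, because any concave u.s.c.\ function is the infimum of the continuous affine functions lying above it (apply Hahn--Banach separation to its hypograph in $X\times\R$). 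It follows that $p(h):=\mu(\hat h)$ is a sublinear functional on $C(X)$: positive homogeneity is clear, and subadditivity comes from $\widehat{h_1+h_2}\leq \hat h_1+\hat h_2$. Define $\ell_0$ on the line $\R f$ by $\ell_0(tf)=t\,\mu(\hat f)$; one checks $\ell_0\leq p$ there (equality for $t\geq 0$; for $t<0$ use $\widehat{tf}\geq tf$ together with $\mu(\hat f)\geq\mu(f)$). Extend $\ell_0$ by Hahn--Banach to a linear $\nu\leq p$ on $C(X)$. Testing $\nu$ against $\pm h$ for $h\geq 0$ and against $\pm\bOne$ shows $\nu$ is positive and unital, so $\nu\in\cM(X)$; testing against $-g$ for $g$ convex continuous (where $\widehat{-g}=-g$) gives $\mu\preceq\nu$; and $\nu(f)=\mu(\hat f)$ by construction. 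Since $\mu$ is maximal, $\nu=\mu$, so $\mu(f)=\mu(\hat f)$. In particular $\mu$ concentrates on the $G_\delta$ set $\{f=\hat f\}$ for every $f\in C(X)$, since $\hat f-f\geq 0$ is u.s.c.\ with $\mu$-integral $0$.

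Step 2 locates $\cE(X)$ via these coincidence sets. If $p\in\cE(X)$ then $\{p\}$ is a closed face, so by \autoref{l:barycenter-face-measure-one} the only Radon measure with barycenter $p$ is $\delta_p$; hence $\delta_p$ is itself a boundary measure, and Step 1 applied to $\delta_p$ gives $f(p)=\hat f(p)$ for all $f\in C(X)$. Conversely, if $x\notin\cE(X)$, write $x=\lambda y+(1-\lambda)z$ with $y\neq z$ and $0<\lambda<1$, pick $\phi\in\cA(X)$ with $\phi(y)\neq\phi(z)$, and set $f=\phi^2$: concavity of $\hat f$ and strict convexity of $t\mapsto t^2$ give $\hat f(x)\geq \lambda\phi(y)^2+(1-\lambda)\phi(z)^2>\phi(x)^2=f(x)$. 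Thus $\cE(X)=\bigcap\{\,f=\hat f : f\in C(X)\text{ convex}\,\}$.

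For the metrizable case (Step 3), $\cA(X)$ is separable, so it has a countable subfamily $(\phi_n)$ dense in its closed unit ball; this family separates the points of $X$ (by Hahn--Banach and density), hence $f_0:=\sum_n 2^{-n}\phi_n^2$ is a continuous, convex, and \emph{strictly} convex function. By Step 2, $\{f_0=\hat f_0\}=\cE(X)$; by Step 1, $\mu$ concentrates on this set, which is Borel (being $G_\delta$; cf.\ \autoref{l:extreme-Gdelta}). Hence $\mu(\cE(X))=1$.

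The general case (Step 4) is where the main obstacle lies. When $X$ is not metrizable, $\cE(X)$ need not be a Baire set, and one must show directly that $\mu(B)=0$ for every Baire set $B$ disjoint from $\cE(X)$. Using inner regularity of $\mu$ together with \autoref{prop:Stone-Weierstrass-affine} (so that the Baire $\sigma$-algebra is generated by $\cA(X)$), one reduces to a countable family of affine functions and the associated metrizable affine quotient $\pi\colon X\to Y$, along which faces and extreme points pull back by \autoref{l:extreme-transitivity}. The difficulty — which is exactly what the Bishop--de Leeuw theorem resolves — is that $\pi_*\mu$ need not be a boundary measure on $Y$ (for instance, the uniform measure on the three vertices of a triangle pushes forward to a measure with an atom at a non-extreme point of a segment), so the concave envelope must be controlled \emph{across} $\pi$ rather than simply transported; I would invoke this last step as in \cite[Ch.~4]{Phelps2001}. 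The entire obstacle is thus concentrated here: everywhere else the compactness of $X$ is used freely, but $\cE(X)$ itself fails to be measurable, and it is precisely this tension that the Bishop--de Leeuw argument is designed to navigate.
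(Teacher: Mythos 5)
The paper does not prove this theorem at all: it is stated as a classical result with a pointer to \cite[Ch.~4]{Phelps2001}, so there is no in-paper argument to compare against. Your write-up is a correct rendition of that classical proof where it gives details. Step~1 is exactly the nontrivial direction of Mokobodzki's characterization — which the paper also quotes without proof as \autoref{p:Mokobodzki} — and your sublinear-functional/Hahn--Banach argument for $\mu(f)=\mu(\hat f)$ is sound (the checks that $\nu$ is positive, unital, dominates $\mu$ in the Choquet order, and agrees with $\mu(\hat f)$ on $\R f$ all go through). Step~2 correctly identifies $\cE(X)$ as the intersection of the coincidence sets $\{f=\hat f\}$ over convex $f$, and Step~3 gives a complete and correct proof of the metrizable case via a strictly convex $f_0=\sum_n 2^{-n}\phi_n^2$; together with \autoref{l:extreme-Gdelta} this yields $\mu(\cE(X))=1$ there. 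The one substantive caveat is Step~4: the general statement that $\mu$ vanishes on every Baire set disjoint from $\cE(X)$ — i.e.\ the part of the theorem that is genuinely due to Bishop and de Leeuw rather than Choquet — is not proved but deferred to Phelps. You diagnose the obstruction accurately (your triangle example correctly shows that pushforwards of boundary measures under general affine surjections need not be boundary measures, which is precisely why the paper needs the quantifier-based identity $\widehat{f\circ\pi_x}=\hat f\circ\pi_x$ in \autoref{prop:VariableRestrictionBoundary} to get such a preservation statement in its own setting), but you do not close it. Since the paper itself treats the entire theorem as a citation, this does not put you behind the paper; just be aware that your proof is self-contained only in the metrizable case.
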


The following characterization of boundary measures is also important (see \cite[Thm.~10.5]{Simon2011} or \cite[Prop.~10.3]{Phelps2001}).
\begin{prop}[Mokobodzki]
  \label{p:Mokobodzki}
  Let $X$ be a compact convex set and let $\mu \in \cM(X)$. Then the following are equivalent:
  \begin{enumerate}
  \item $\mu$ is a boundary measure.
  \item For every $f \in C(X)$, we have $\mu(f) = \mu(\hat f)$.
  \end{enumerate}
  In particular, for every $x \in \cE(X)$, $f(x) = \hat f(x)$.
\end{prop}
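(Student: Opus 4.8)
The plan is to deduce the equivalence from a single classical fact about the concave envelope (\autoref{defn:ConcaveEnvelope}): for every $f\in C(X)$ and every $\mu\in\cM(X)$,
\begin{equation}\label{eq:plan-key}
  \mu(\hat f)=\sup\bigl\{\,\nu(f):\nu\in\cM(X),\ \mu\preceq\nu\,\bigr\},
\end{equation}
the supremum being attained. Granting \eqref{eq:plan-key}, both directions are short. If $\mu$ is a boundary measure then, since $\preceq$ is a partial order (\autoref{l:choquet-order}\ref{i:choquet-order:1}), maximality forces $\nu=\mu$ whenever $\mu\preceq\nu$, so the right-hand side of \eqref{eq:plan-key} equals $\mu(f)$ and we get $\mu(f)=\mu(\hat f)$. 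Conversely, assume $\mu(f)=\mu(\hat f)$ for all $f\in C(X)$ and let $\mu\preceq\nu$. For every continuous convex $g$ we then have $\mu(g)\le\nu(g)$ by definition of $\preceq$, and $\nu(g)\le\mu(\hat g)=\mu(g)$ by \eqref{eq:plan-key} and the hypothesis; hence $\nu(g)=\mu(g)$ for all $g\in\cC(X)$. Since every polynomial in finitely many functions of $\cA(X)$ is a difference of two continuous convex functions on $X$ (compose with the affine map $x\mapsto(h_1(x),\dots,h_n(x))$ into $\R^n$ and use that a $C^2$ function on a compact convex subset of $\R^n$ becomes convex after adding a large multiple of $\sum y_i^2$), the linear span of $\cC(X)$ contains the subalgebra of $C(X)$ generated by $\cA(X)$, which is dense by the Stone--Weierstrass theorem; therefore $\mu=\nu$, and $\mu$ is a boundary measure.

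The work is therefore in establishing \eqref{eq:plan-key}. One inequality is easy: if $\mu\preceq\nu$ then $R(\mu)=R(\nu)$ by \autoref{l:choquet-order}\ref{i:choquet-order:2}, so for every $h\in\cA(X)$ with $h\ge f$ we have $\nu(f)\le\nu(h)=h\bigl(R(\nu)\bigr)=h\bigl(R(\mu)\bigr)=\mu(h)$; taking the infimum over such $h$ gives $\nu(f)\le\inf\{\mu(h):h\in\cA(X),\ h\ge f\}$. It then remains to identify this last infimum with $\mu(\hat f)$: here $\hat f$ is upper semi-continuous (\autoref{l:concave-envelope}\ref{i:concave-env:1}) and is by definition the pointwise infimum of the continuous affine functions dominating $f$, but commuting that infimum with integration against $\mu$ is the first genuinely non-trivial step --- I would get it from outer regularity of $\mu$ on upper semi-continuous functions together with a Hahn--Banach argument replacing a dominating \emph{continuous} function by a dominating \emph{affine} one of barely larger $\mu$-integral. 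The reverse inequality (and the attainment) is where I expect the main obstacle to lie: one must exhibit, for each $\eps>0$, a measure $\nu\succeq\mu$ with $\nu(f)\ge\mu(\hat f)-\eps$. The idea is to ``dilate'' $\mu$, replacing each point $x$ by a probability measure $\lambda_x$ of barycenter $x$ with $\lambda_x(f)$ close to $\hat f(x)$ and setting $\nu=\int\lambda_x\,\ud\mu(x)$, which automatically satisfies $\mu\preceq\nu$. Arranging the dependence $x\mapsto\lambda_x$ to be sufficiently measurable is routine when $X$ is metrizable, but in the general case it requires the Bishop--de Leeuw circle of ideas; this is the delicate part of the argument.

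Finally, the ``in particular'' clause would follow quickly from the equivalence: fix $x\in\cE(X)$, so that $\{x\}$ is a closed face of $X$. By \autoref{l:choquet-order}\ref{i:choquet-order:4} there is a boundary measure $\mu$ with $\delta_x\preceq\mu$; then $R(\mu)=R(\delta_x)=x$ by \autoref{l:choquet-order}\ref{i:choquet-order:2}, and \autoref{l:barycenter-face-measure-one} forces $\mu(\{x\})=1$, i.e.\ $\mu=\delta_x$. Thus $\delta_x$ is itself a boundary measure, and the equivalence just proved yields $f(x)=\delta_x(f)=\delta_x(\hat f)=\hat f(x)$ for every $f\in C(X)$.
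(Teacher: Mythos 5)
The paper offers no proof of this proposition: it is quoted verbatim from the literature (\cite[Thm.~10.5]{Simon2011}, \cite[Prop.~10.3]{Phelps2001}), so there is no in-paper argument to compare against. Your reconstruction is, in substance, the standard textbook proof reorganized around the identity $\mu(\hat f)=\sup\{\nu(f):\mu\preceq\nu\}$ (with the supremum attained). Granting that identity, both implications are correct as you state them, the density step is sound (polynomials in elements of $\cA(X)$ are differences of continuous convex functions and form a point-separating unital subalgebra, hence the span of $\cC(X)$ is dense in $C(X)$ by Stone--Weierstrass), and the ``in particular'' clause via \autoref{l:barycenter-face-measure-one} is exactly right.

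The two steps you defer are genuine lemmas, and for one of them you have chosen the hard road. For the inequality $\nu(f)\le\mu(\hat f)$ when $\mu\preceq\nu$, you need not commute the infimum over single affine majorants with the integral: use instead the downward-directed family of finite minima $k=h_1\wedge\dots\wedge h_n$ of affine majorants of $f$. Each $k$ is continuous and concave, so $\nu(k)\le\mu(k)$ directly from the definition of $\preceq$; their pointwise infimum is $\hat f$, and the decreasing-net (regularity) property of Radon measures gives $\inf_k\mu(k)=\mu(\hat f)$. For the attainment, the dilation $\nu=\int\lambda_x\,\ud\mu(x)$ is the Cartier--Fell--Meyer route and does run into the measurable-selection difficulties you mention in the non-metrizable case; the standard and much shorter argument is Hahn--Banach. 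The functional $p(g)=\mu(\hat g)$ is sublinear on $C(X)$, since $\widehat{g_1+g_2}\le\hat g_1+\hat g_2$ and $\widehat{tg}=t\hat g$ for $t\ge0$. Extend the linear functional $tf\mapsto t\mu(\hat f)$ on $\R f$ (dominated by $p$ there, using $\widehat{-f}\ge-\hat f$) to a linear $\nu\le p$ on $C(X)$. Then $\nu\ge 0$ (if $g\ge0$ then $\nu(-g)\le\mu(\widehat{-g})\le\mu(\hat 0)=0$), $\nu(1)=1$, and for convex continuous $g$ one has $\widehat{-g}=-g$ by \autoref{l:concave-envelope}, whence $-\nu(g)\le\mu(-g)$, i.e.\ $\nu(g)\ge\mu(g)$. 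So $\nu$ is a Radon probability measure with $\mu\preceq\nu$ and $\nu(f)=\mu(\hat f)$, which is exactly the attainment you need. With these two substitutions your outline closes into a complete proof.
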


\begin{lemma}
  \label{l:boundary-measures}
  Let $X$ be a compact convex set. The following hold:
  \begin{enumerate}
  \item \label{i:lbm:convex} The set of boundary measures on $X$ is convex.
  \item \label{i:lbm:conditional} If $\mu$ is a boundary measure on $X$ and $A \sub X$ is a Borel set with $\mu(A) > 0$, then the conditional measure $\mu_A$ is also a boundary measure.
  \item \label{i:lbm:face} If $Y \sub X$ is a closed face and $\mu$ is a boundary measure on $Y$, then it is also a boundary measure on $X$.
  \end{enumerate}
\end{lemma}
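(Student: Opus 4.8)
The plan is to exploit the two descriptions of boundary measures at our disposal: Mokobodzki's criterion (\autoref{p:Mokobodzki}), which I will use for \ref{i:lbm:convex} and \ref{i:lbm:conditional}, and the defining property of boundary measures as the maximal elements of $(\cM(X),\preceq)$, which I will use for \ref{i:lbm:face}. Throughout, recall that for $f\in C(X)$ the envelope $\hat f$ is bounded and upper semi-continuous, hence Borel, so it may be integrated against any measure in $\cM(X)$.

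For \ref{i:lbm:convex}, Mokobodzki's condition ``$\mu(f)=\mu(\hat f)$ for all $f\in C(X)$'' is visibly preserved by finite convex combinations, since both $\mu\mapsto\mu(f)$ and $\mu\mapsto\mu(\hat f)$ are affine in $\mu$. For \ref{i:lbm:conditional}, I may assume $0<\mu(A)<1$ and write $\mu=\mu(A)\,\mu_A+\mu(X\sminus A)\,\mu_{X\sminus A}$, where both conditional measures are regular Borel probability measures, hence lie in $\cM(X)$. Fixing $f\in C(X)$, we have $\hat f-f\geq 0$, and $\mu(\hat f-f)=0$ by Mokobodzki; splitting this integral along the above convex combination exhibits $0$ as a convex combination, with strictly positive weights, of the two nonnegative quantities $\mu_A(\hat f-f)$ and $\mu_{X\sminus A}(\hat f-f)$. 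Hence both vanish; in particular $\mu_A(\hat f)=\mu_A(f)$ for every $f\in C(X)$, so $\mu_A$ is a boundary measure by Mokobodzki again.

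For \ref{i:lbm:face} I will show directly that $\mu$ is maximal in $(\cM(X),\preceq)$. So let $\nu\in\cM(X)$ with $\mu\preceq\nu$; I must show $\mu=\nu$. Since $\mu$ is carried by the closed convex set $Y$, its barycenter satisfies $R(\mu)\in Y$; as $R(\nu)=R(\mu)$ (\autoref{l:choquet-order}) and $Y$ is a closed face, \autoref{l:barycenter-face-measure-one} gives $\nu(Y)=1$, so $\nu$ too may be regarded as an element of $\cM(Y)$. The heart of the matter is that $\mu\preceq\nu$ in $\cM(X)$ forces $\mu\preceq\nu$ in $\cM(Y)$. For this, note that every continuous affine function on $Y$ is the restriction of one on $X$ (both $Y$ and $X$ being compact convex subsets of the ambient locally convex space, on which continuous affine functions are restrictions of continuous linear functionals plus constants), so every finite maximum of continuous affine functions on $Y$ is the restriction of such a maximum on $X$, i.e., of an element of $\cC(X)$. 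Such maxima are uniformly dense in $\cC(Y)$: each continuous convex function on $Y$ is the pointwise supremum of the continuous affine functions below it, and this supremum is uniform by Dini's theorem. Hence $\cC(X)\rest_Y$ is dense in $\cC(Y)$, and testing $\mu\preceq\nu$ against these restrictions --- using once more that $\mu$ and $\nu$ are concentrated on $Y$ --- yields $\mu(g)\leq\nu(g)$ for all $g\in\cC(Y)$. Since $\mu$ is a boundary measure on $Y$, i.e., maximal in $(\cM(Y),\preceq)$, we conclude $\mu=\nu$. Thus $\mu$ is maximal, hence a boundary measure, on $X$.

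I expect the only genuine obstacle to be this last comparison of Choquet orders in \ref{i:lbm:face}; the crucial input is the density of $\cC(X)\rest_Y$ in $\cC(Y)$, which I reduce to the extendability of continuous affine functions from the closed face $Y$ to $X$. The remaining points --- measurability of the envelopes $\hat f$, which are only upper semi-continuous, and the fact that conditionals of Radon measures on Borel sets are again Radon --- are routine.
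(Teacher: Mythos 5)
Your proof is correct and follows essentially the same route as the paper: parts \ref{i:lbm:convex} and \ref{i:lbm:conditional} via Mokobodzki's criterion, and part \ref{i:lbm:face} by combining \autoref{l:barycenter-face-measure-one} with maximality of $\mu$ in $\cM(Y)$. The only difference is that you spell out the step the paper leaves implicit — that $\mu\preceq\nu$ in $\cM(X)$ for measures carried by $Y$ implies $\mu\preceq\nu$ in $\cM(Y)$, via density of $\cC(X)\rest_Y$ in $\cC(Y)$ — and that elaboration is accurate.
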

\begin{proof}
  \ref{i:lbm:convex} This follows from \autoref{p:Mokobodzki}.

  \ref{i:lbm:conditional} Let $f \in  C(X)$. As $\mu$ is a boundary measure, we have that $\mu(f) = \mu(\hat f)$. Since $f \leq \hat f$, this implies that $\mu(\set{x : f(x) = \hat f(x)}) = 1$, so $\mu_A(\set{x : f(x) = \hat f(x)}) = 1$ and thus, $\mu_A(f) = \mu_A(\hat f)$.

  \ref{i:lbm:face} Let $\mu$ be a boundary measure on $Y$ and suppose that $\nu$ is a measure on $X$ with $\mu \preceq \nu$. Then, in particular, $R(\nu) = R(\mu) \in Y$, so $\nu(Y) = 1$ by \autoref{l:barycenter-face-measure-one}. Hence $\nu = \mu$, by maximality of $\mu$.
\end{proof}

\subsection{Simplices}
\label{sec:simplices}

A compact convex set $X$ is called a \emph{simplex} if every two positive functionals in $\cA(X)^*$ have a least upper bound (equivalently, $\cA(X)^*$ is a vector lattice).

\begin{theorem}
  \label{thm:SimplexRiesz}
  Let $X$ be a compact convex set and let $A_0 \subseteq \cA(X)$ be dense.
  The following are equivalent:
  \begin{enumerate}
  \item $X$ is a simplex.
  \item
    \label{item:SimplexWeakRieszInterpolation}
    (Weak Riesz interpolation property) Whenever $(\varphi_1\vee\varphi_2) + \varepsilon \leq \psi_1\wedge\psi_2$ for some $\varphi_1,\varphi_2,\psi_1,\psi_2 \in A_0$ and $\varepsilon>0$, there exists $\chi \in A_0$ such that $\varphi_1\vee\varphi_2 \leq \chi \leq \psi_1\wedge\psi_2$.
    \item (Riesz interpolation property) Whenever $\varphi_1\vee\varphi_2\leq \psi_1\wedge\psi_2$ for $\varphi_1,\varphi_2,\psi_1,\psi_2 \in \cA(X)$, there exists $\chi\in \cA(X)$ such that $\varphi_1\vee\varphi_2\leq \chi\leq \psi_1\wedge\psi_2$.
  \item (Riesz decomposition property) Whenever $\varphi_i,\psi_j\in \cA(X)$ are positive and satisfy $\sum_{i<n}\varphi_i = \sum_{j<m}\psi_j$, there exist positive $\chi_{ij}\in \cA(X)$ such that $\varphi_i = \sum_{j<m}\chi_{ij}$ and $\psi_j = \sum_{i<n}\chi_{ij}$ for all $i<n$ and $j<m$.
  \end{enumerate}
\end{theorem}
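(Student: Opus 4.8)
The plan is to prove the four conditions equivalent by isolating three ingredients: an order-algebraic equivalence internal to $\cA(X)$, a Kadison-duality statement relating it to the dual cone, and a density reduction to the subset $A_0$. Concretely I would establish the cycle $(1)\Rightarrow(3)\Rightarrow(4)\Rightarrow(1)$ together with $(3)\Leftrightarrow(2)$. The equivalence $(3)\Leftrightarrow(4)$ is purely order-theoretic (it holds in any partially ordered real vector space), so I would dispatch it quickly: that $(4)$ implies $(3)$ follows by applying the decomposition property to the identity $(\psi_1-\varphi_1)+(\psi_2-\varphi_2)=(\psi_1-\varphi_2)+(\psi_2-\varphi_1)$ -- whose four summands are positive because $\varphi_i\leq\psi_j$ -- and reassembling the interpolant from the resulting pieces, while conversely $(3)$ is exactly the $2$-by-$2$ instance of the decomposition property, and the general case is recovered from it by induction on the number of summands.

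For the duality, the implication $(4)\Rightarrow(1)$ is the Riesz--Kantorovich formula: given positive $f,g\in\cA(X)^*$, define on the positive cone $h(\varphi)=\sup\{f(\varphi_1)+g(\varphi_2):\varphi_1,\varphi_2\in\cA(X)^+,\ \varphi_1+\varphi_2=\varphi\}$. This is bounded by $(\|f\|+\|g\|)\|\varphi\|$, and positively homogeneous and superadditive for trivial reasons; the Riesz decomposition property $(4)$ is precisely what yields subadditivity, since a decomposition of $\varphi+\psi$ can be split summand by summand along a given decomposition into $\varphi$ and $\psi$. Hence $h$ extends to a positive bounded linear functional on $\cA(X)=\cA(X)^+-\cA(X)^+$, which one checks directly is the least upper bound of $f$ and $g$; as $f,g$ were arbitrary positive functionals, this is condition $(1)$. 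The converse $(1)\Rightarrow(3)$ is the substantive half (the Choquet--Kendall characterization of simplices): one must produce, from the mere existence of suprema in $\cA(X)^*$, an interpolant $\chi\in\cA(X)$ for a configuration $\varphi_i\leq\psi_j$. I expect the cleanest route is to quote this from \cite[Ch.~II]{Alfsen1971}; a self-contained argument would separate the disjoint convex sets $U=\{\chi:\chi\leq\psi_1,\ \chi\leq\psi_2\}$ and $V=\{\chi:\chi\geq\varphi_1,\ \chi\geq\varphi_2\}$ -- both of which have non-empty interior, containing a ball around a suitable constant -- by a functional, normalize it to a state $p\in X\cong\tS(\cA(X))$, and then use the lattice operations in the dual to force a contradiction.

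The reduction to $A_0$ comes last. For $(3)\Rightarrow(2)$: if $\varphi_i,\psi_j\in A_0$ and $(\varphi_1\vee\varphi_2)+\eps\leq\psi_1\wedge\psi_2$, apply $(3)$ to $\varphi_i+\tfrac{\eps}{3}$ and $\psi_j-\tfrac{\eps}{3}$ to get $\chi'\in\cA(X)$ with $\tfrac{\eps}{3}$ of slack on each side, and then, by density of $A_0$, pick $\chi\in A_0$ within $\tfrac{\eps}{3}$ of $\chi'$ in $\|\cdot\|_\infty$; the slack absorbs the error. For $(2)\Rightarrow(3)$: first bootstrap the weak $2$-by-$2$ interpolation on $A_0$ to a weak $n$-by-$m$ version (the same induction as above, now carrying the $\eps$'s); then, given $\varphi_i\leq\psi_j$ in $\cA(X)$, approximate them by elements of $A_0$ and build a sequence of approximate interpolants $\chi_n\in A_0$ which is forced to be Cauchy by feeding $\chi_{n-1}$ back in, suitably thickened, as extra upper and lower constraints at stage $n$; by completeness of $\cA(X)$ the limit $\chi=\lim_n\chi_n$ is the desired interpolant.

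The main obstacle is $(1)\Rightarrow(3)$: recovering interpolation in $\cA(X)$ from the bare lattice property of the dual. The naive Hahn--Banach separation of $U$ and $V$ above only yields a state that is flat on $\varphi_1,\varphi_2,\psi_1,\psi_2$ -- because $U$ and $V$, though disjoint, need not be strictly separated -- which is not yet a contradiction; extracting one genuinely uses the lattice structure of $\cA(X)^*$, equivalently the Choquet--Kendall machinery via the cone generated by $X$. The remaining steps -- the order-algebra of $(3)\Leftrightarrow(4)$, the Riesz--Kantorovich computation, and the approximation bookkeeping for $(3)\Leftrightarrow(2)$ -- are routine, although the Cauchy construction in $(2)\Rightarrow(3)$ calls for mild care.
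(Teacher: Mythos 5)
Your proposal is correct and takes essentially the same route as the paper: the paper's entire proof is a citation to \cite[Ch.~II, \textsection3]{Alfsen1971} (Thm.~II.3.11 and Prop.~II.3.1), and your sketch is a faithful reconstruction of that standard material --- the order-theoretic equivalence of interpolation and decomposition, the Riesz--Kantorovich formula for $(4)\Rightarrow(1)$, and the genuinely substantive direction $(1)\Rightarrow(3)$ rightly deferred to the same source, since the naive Hahn--Banach separation you describe indeed only yields a functional that is flat on the data and the lattice structure of $\cA(X)^*$ must be used in earnest there. The one piece not verbatim in the citation is the density clause in condition \autoref{item:SimplexWeakRieszInterpolation}, and your $\varepsilon/3$ reduction for $(3)\Rightarrow(2)$ and the Cauchy iteration (feeding the previous approximate interpolant back in as thickened constraints, with summable slack) for $(2)\Rightarrow(3)$ are both sound.
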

\begin{proof}
  See \cite[Ch.~II, \textsection3]{Alfsen1971}, in particular, Thm.~II.3.11 and Prop.~II.3.1.
\end{proof}

\begin{lemma}
  \label{l:inverse-limit-simplices}
  Let $(X_\alpha : \alpha \in A)$ be an inverse system of simplices with connecting maps that are continuous, surjective, and affine. Then the inverse limit $\varprojlim X_\alpha$ is also a simplex.
\end{lemma}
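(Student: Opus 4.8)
The plan is to realize $X \coloneqq \varprojlim X_\alpha$ as a compact convex set and to transfer the weak Riesz interpolation property from the individual simplices $X_\alpha$, applying the criterion of Theorem~\ref{thm:SimplexRiesz} relative to a suitable dense subspace $A_0 \subseteq \cA(X)$.

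First I would fix notation and record the basic facts. Write $\pi_{\alpha\beta} \colon X_\beta \to X_\alpha$ for the connecting maps (for $\alpha \le \beta$ in the directed index set), so that $X$ is the set of threads $(x_\alpha)_\alpha \in \prod_\alpha X_\alpha$ satisfying $\pi_{\alpha\beta}(x_\beta) = x_\alpha$. Each defining condition is closed and affine, so $X$ is a compact convex subset of the locally convex space $\prod_\alpha E_\alpha$ (where $X_\alpha \subseteq E_\alpha$), nonempty since the bonding maps are surjective; and the projections $p_\alpha \colon X \to X_\alpha$ are continuous and affine. The one point requiring care is that each $p_\alpha$ is \emph{surjective}: this is the standard fact that the projections of an inverse limit of nonempty compact Hausdorff spaces over a directed set with surjective bonding maps are surjective. (Given $y \in X_\alpha$, the sets $\pi_{\alpha\beta}^{-1}(\{y\}) \subseteq X_\beta$ for $\beta \ge \alpha$ form an inverse system of nonempty compact spaces with surjective bonding maps by a trivial diagram chase, and any thread through it -- which exists by compactness -- is a point of $X$ lying over $y$.)

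Next, for $f \in \cA(X_\alpha)$ write $p_\alpha^* f = f \circ p_\alpha \in \cA(X)$, and set $A_0 \coloneqq \bigcup_\alpha p_\alpha^*\bigl(\cA(X_\alpha)\bigr)$. Since the index set is directed and $p_\beta^* \circ \pi_{\alpha\beta}^* = p_\alpha^*$, any finitely many elements of $A_0$ already lie in a single $p_\gamma^*(\cA(X_\gamma))$; hence $A_0$ is a linear subspace of $\cA(X)$. It contains the constants, and it separates the points of $X$: two distinct threads differ at some coordinate $\alpha$, and $\cA(X_\alpha)$ separates the points of $X_\alpha$ by Hahn--Banach. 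Therefore $A_0$ is dense in $\cA(X)$ by \autoref{prop:Stone-Weierstrass-affine}.

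Finally, I would verify the weak Riesz interpolation property of Theorem~\ref{thm:SimplexRiesz} for this $A_0$. Suppose $\varphi_1, \varphi_2, \psi_1, \psi_2 \in A_0$ and $\varepsilon > 0$ with $(\varphi_1 \vee \varphi_2) + \varepsilon \le \psi_1 \wedge \psi_2$ on $X$. By directedness there is a single index $\alpha$ and $\tilde\varphi_i, \tilde\psi_j \in \cA(X_\alpha)$ with $\varphi_i = p_\alpha^* \tilde\varphi_i$ and $\psi_j = p_\alpha^* \tilde\psi_j$. Since $p_\alpha$ is surjective, the displayed inequality holds already on $X_\alpha$, i.e.\ $(\tilde\varphi_1 \vee \tilde\varphi_2) + \varepsilon \le \tilde\psi_1 \wedge \tilde\psi_2$; as $X_\alpha$ is a simplex, the Riesz interpolation property provides $\tilde\chi \in \cA(X_\alpha)$ with $\tilde\varphi_1 \vee \tilde\varphi_2 \le \tilde\chi \le \tilde\psi_1 \wedge \tilde\psi_2$, and $\chi \coloneqq p_\alpha^* \tilde\chi \in A_0$ interpolates between $\varphi_1 \vee \varphi_2$ and $\psi_1 \wedge \psi_2$. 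By Theorem~\ref{thm:SimplexRiesz}, $X$ is a simplex. The only genuine subtlety is the surjectivity of the projections $p_\alpha$; everything else is a routine transfer along the directed system, and in particular $\cA(X)$ is recognized as the completion of the direct limit $\varinjlim \cA(X_\alpha)$.
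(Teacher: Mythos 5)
Your proof is correct and is precisely the argument the paper has in mind: the paper's proof of \autoref{l:inverse-limit-simplices} simply cites \cite[Thm.~2]{Jellett1968} and notes that the result "follows immediately from \autoref{thm:SimplexRiesz} using criterion \autoref{item:SimplexWeakRieszInterpolation}," which is exactly the weak Riesz interpolation transfer (with $A_0 = \bigcup_\alpha p_\alpha^*\cA(X_\alpha)$ dense by \autoref{prop:Stone-Weierstrass-affine}) that you carry out in detail. You have correctly identified the surjectivity of the projections $p_\alpha$ as the only point needing care.
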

\begin{proof}
  This is the contents of \cite[Thm.~2]{Jellett1968}, but also follows immediately from \autoref{thm:SimplexRiesz} using criterion \autoref{item:SimplexWeakRieszInterpolation}.
\end{proof}

The following is the fundamental result about simplices (see \cite[\textsection10]{Phelps2001}).

\begin{theorem}[Choquet--Meyer]
  \label{th:Choquet-Meyer}
  The following are equivalent for a compact convex set $X$:
  \begin{enumerate}
  \item $X$ is a simplex.
  \item For every $x \in X$, there exists a \emph{unique} boundary measure $\mu \in \cM(X)$ with $\delta_x \preceq \mu$.
  \end{enumerate}
\end{theorem}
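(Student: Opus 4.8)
The plan is to prove the two implications separately. Existence in~(ii) is not the real issue: by Lemma~\ref{l:choquet-order}\ref{i:choquet-order:4} every point of any compact convex set is the barycenter of some boundary measure, so the content of~(ii) is \emph{uniqueness}.

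For (i)$\Rightarrow$(ii) I would isolate the key lemma: \emph{if $X$ is a simplex and $\mu,\nu\in\cM(X)$ have the same barycenter, then they admit a common $\preceq$-upper bound in $\cM(X)$.} Granting it, uniqueness is immediate, since two boundary measures with equal barycenters are both dominated by some $\sigma\in\cM(X)$ and maximality forces $\mu=\sigma=\nu$. To prove the lemma I would first treat the \emph{molecular} case $\mu=\sum_i\lambda_i\delta_{x_i}$, $\nu=\sum_j\kappa_j\delta_{y_j}$, with positive weights summing to~$1$ and $\sum_i\lambda_ix_i=\sum_j\kappa_jy_j=x$. This is an equality of two decompositions of $x$ in the positive cone $\cA(X)^*_+$, which is a lattice cone because $X$ is a simplex, hence has the Riesz decomposition property; so there are $w_{ij}\in\cA(X)^*_+$ with $\sum_jw_{ij}=\lambda_ix_i$ and $\sum_iw_{ij}=\kappa_jy_j$. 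Setting $\theta_{ij}=w_{ij}(\bOne)$ and $z_{ij}=w_{ij}/\theta_{ij}\in X$ when $\theta_{ij}>0$, the molecular measure $\sigma=\sum_{ij}\theta_{ij}\delta_{z_{ij}}$ has barycenter~$x$, and a Jensen computation (Lemma~\ref{l:concave-envelope}\ref{i:concave-env:3}) grouping the $z_{ij}$ by~$i$, resp.\ by~$j$, shows $\mu\preceq\sigma$ and $\nu\preceq\sigma$. For arbitrary $\mu,\nu$ with common barycenter, I would approximate each, in the weak$^*$ topology, by molecular measures with the same barycenter lying $\preceq$-below it — obtained by discretizing along finite Borel partitions subordinate to finite covers of $X$ by convex open sets on which the functions at hand vary little, replacing each piece by its conditional barycenter (Jensen again gives the $\preceq$-inequality) — apply the molecular case to the approximants, and extract a weak$^*$-convergent subnet of the resulting common upper bounds. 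Since $\cM(X)$ is compact and $\preceq$ is closed (Lemma~\ref{l:choquet-order}\ref{i:choquet-order:1}), the limit is a common $\preceq$-upper bound of $\mu$ and $\nu$.

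For the converse (ii)$\Rightarrow$(i), let $\mu_x$ be the unique boundary measure with barycenter~$x$. Any $\nu\in\cM(X)$ with $R(\nu)=x$ is dominated by a boundary measure (Zorn's lemma, as in Lemma~\ref{l:choquet-order}\ref{i:choquet-order:4}), necessarily $\mu_x$, so $\nu\preceq\mu_x$ and $\mu_x$ is the $\preceq$-largest representing measure of~$x$. In particular $\mu_x(f)=\max\{\nu(f):R(\nu)=x\}$ for every convex $f\in C(X)$, and by Hahn--Banach this maximum equals $\widehat f(x)=\inf\{h(x):h\in\cA(X),\ h\ge f\}$ (a positive extension of $\mathrm{ev}_x\colon\cA(X)\to\bR$ to $C(X)$ can be chosen maximizing the value at $f$). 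Hence $x\mapsto\widehat f(x)=\mu_x(f)$ is affine for each convex continuous~$f$, so the concave envelope is additive on convex continuous functions; from this one deduces the Riesz interpolation property of Theorem~\ref{thm:SimplexRiesz} — given $\varphi_1\vee\varphi_2\le\psi_1\wedge\psi_2$ in $\cA(X)$, additivity forces $\widehat{\varphi_1\vee\varphi_2}$ below the convex envelope of $\psi_1\wedge\psi_2$, and Edwards' separation theorem inserts a continuous affine function between these two — and therefore $X$ is a simplex. (This direction is also part of the classical development in \cite[Ch.~II, \textsection3]{Alfsen1971} and \cite[Ch.~10]{Phelps2001}.)

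The step I expect to be the main obstacle is the key lemma above: converting the order-theoretic lattice property of $\cA(X)^*$ into a genuine domination of measures, and then making the passage from molecular to arbitrary measures uniform enough — over a possibly non-metrizable~$X$ — for the closedness of the Choquet order to be invoked. In the converse, the delicate identification is $\mu_x(f)=\widehat f(x)$ for convex~$f$, which is exactly where the uniqueness hypothesis is consumed.
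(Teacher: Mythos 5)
The paper does not prove this theorem at all: it is stated as a quoted classical result with a pointer to \cite[\textsection 10]{Phelps2001}, so there is no in-paper argument to compare against. Judged on its own, your outline is a sound reconstruction of the standard Choquet--Meyer proof. The forward direction via directedness of the Choquet order on the fibre of the barycenter map is correct: the molecular case is exactly the Riesz decomposition property of the lattice cone $\cA(X)^*_+$, and the passage to general measures works as you describe --- what the limit argument actually uses is that the molecular approximants converge weak$^*$ to $\mu$ and $\nu$ while keeping barycenter $x$ (the inequality $\mu_\alpha\preceq\mu$ is a byproduct of taking conditional barycenters, not an ingredient of the limit step), after which compactness of $\cM(X)$ and closedness of $\preceq$ (\autoref{l:choquet-order}) finish the job. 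In the converse, the identification $\mu_x(f)=\widehat f(x)$ and the affineness of $x\mapsto\mu_x$ (hence of $\widehat f$) both consume uniqueness, as you say, and the Hahn--Banach extension producing a representing measure attaining $\widehat f(x)$ is the standard one.

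The one step you should repair is the appeal to ``Edwards' separation theorem.'' Edwards' theorem is a statement \emph{about simplices} (separating a convex u.s.c.\ function lying below a concave l.s.c.\ one), and it is normally derived from the interpolation property you are in the middle of proving; invoking it in (ii)$\Rightarrow$(i) is circular as written. Fortunately your configuration is the opposite one --- a concave u.s.c.\ function ($\widehat{\varphi_1\vee\varphi_2}$) below a convex l.s.c.\ function (the convex envelope of $\psi_1\wedge\psi_2$) --- and for that the general Hahn--Banach sandwich theorem, valid on every compact convex set, suffices. Even more economically, it is enough to verify criterion \autoref{item:SimplexWeakRieszInterpolation} of \autoref{thm:SimplexRiesz}, which supplies an $\varepsilon$ of room: the two envelopes are then uniformly separated, and plain separation of a compact convex set from a disjoint closed convex set in $X\times\bR$ produces the interpolating $\chi\in\cA(X)$. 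With that substitution the argument is complete.
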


A \emph{Bauer simplex} is a simplex $X$ such that $\cE(X)$ is closed. It is not difficult to see that for any compact space $Y$, the compact convex set $\cM(Y)$ is a Bauer simplex (with $\cE(\cM(Y)) = Y$) and conversely, for any Bauer simplex $X$, the barycenter map $R \colon \cM(\cE(X)) \to X$ is an isomorphism. There is also a characterization of Bauer simplices in terms of the order unit space $\cA(X)$ (see \cite[Thm.~II.4.1]{Alfsen1971}).

\begin{theorem}[Bauer]
  \label{th:Bauer}
  The following are equivalent for a compact convex set $X$:
  \begin{enumerate}
  \item $X$ is a Bauer simplex.
  \item $\cA(X)$ is a vector lattice.
  \end{enumerate}
  Moreover, in this case, the restriction map $\cA(X)\to C(\cE(X))$ is a vector lattice isomorphism.
\end{theorem}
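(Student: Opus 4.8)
The plan is to prove the two implications separately, reading off the ``moreover'' clause from the proof of (i) $\Rightarrow$ (ii).

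For (i) $\Rightarrow$ (ii) I would argue as follows. Put $Y = \cE(X)$, which is a compact Hausdorff space since $X$ is a Bauer simplex, and use the fact (recalled just before the statement) that the barycenter map $R\colon \cM(Y) \to X$ is an affine homeomorphism. Precomposition with $R$ is then an isomorphism of order unit spaces $\cA(X) \xrightarrow{\sim} \cA(\cM(Y))$, while $\cM(Y) = \tS(C(Y))$ by definition and Kadison duality identifies the complete order unit space $C(Y)$ with $\cA(\tS(C(Y))) = \cA(\cM(Y))$. Composing gives an order unit space isomorphism $\cA(X) \cong C(Y)$; since lattice operations are order-determined and $C(Y)$ is a vector lattice, this forces $\cA(X)$ to be a vector lattice and the isomorphism to be a lattice isomorphism. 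To see that this isomorphism is the restriction map, I would unwind the identifications: $\varphi \in \cA(X)$ corresponds to the unique $f \in C(Y)$ with $\int_Y f \ud\mu = \varphi(R(\mu))$ for all $\mu$, and evaluating at the Dirac measures $\mu = \delta_y$ (so $R(\delta_y) = y$) gives $f = \varphi\rest_Y$.

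For (ii) $\Rightarrow$ (i) the idea is to recognize $\cA(X)$ as a $C(K)$. The order unit norm of $\cA(X)$ is $\|{\cdot}\|_\infty$, under which $\cA(X)$ is complete; moreover, using that this norm is monotone on the positive cone and that $a\bOne \vee b\bOne = \max(a,b)\bOne$, one checks readily that $\|\varphi \vee \psi\|_\infty = \max(\|\varphi\|_\infty,\|\psi\|_\infty)$ for $\varphi,\psi \geq 0$; hence $\cA(X)$ is an AM-space with order unit. Kakutani's representation theorem then provides a compact Hausdorff $K$ and a unital vector lattice isometry $\cA(X) \cong C(K)$, and applying $\tS$ together with Kadison duality (which gives $\tS(\cA(X)) \cong X$) yields affine homeomorphisms $X \cong \tS(\cA(X)) \cong \tS(C(K)) = \cM(K)$. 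Since $\cM(K)$ is a Bauer simplex with $\cE(\cM(K)) = K$, so is $X$.

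I expect the genuine difficulty to be concentrated in (ii) $\Rightarrow$ (i), and specifically in the closedness of $\cE(X)$: turning the purely order-theoretic lattice hypothesis into the topological statement that the extreme boundary is closed is the real content, and I would import it via Kakutani's representation theorem. A more self-contained alternative would invoke the general identity $\cE(X) = \bigcap_{f \in C(X)}\{x : f(x) = \hat f(x)\}$ and show that the lattice hypothesis makes $\hat f$ continuous (indeed affine) for every $f \in C(X)$, so that each $\{x : f(x) \geq \hat f(x)\}$ is closed; but proving this continuity of concave envelopes from scratch takes comparable effort. Both implications otherwise rely only on Kadison duality and on the two elementary facts about $\cM(Y)$ and about barycenter maps on Bauer simplices recalled right before the statement.
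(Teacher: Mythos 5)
The paper does not prove this statement at all: it is quoted directly from the literature, with a pointer to \cite[Thm.~II.4.1]{Alfsen1971}, so there is no in-paper argument to compare against. Your proof is correct as a substitute. The direction (i)~$\Rightarrow$~(ii) and the ``moreover'' clause follow exactly as you say: the barycenter map $R\colon\cM(\cE(X))\to X$ is an affine homeomorphism (recalled in the paper just before the statement), Kadison's embedding $C(Y)\hookrightarrow\cA(\tS(C(Y)))$ is onto because $C(Y)$ is already complete in the order-unit norm, and evaluating at Dirac measures identifies the composite with the restriction map; since a unital order isomorphism between vector lattices is automatically a lattice isomorphism, this gives everything. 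For (ii)~$\Rightarrow$~(i), your reduction to Kakutani is sound, but note that the M-norm identity alone is not quite the hypothesis of Kakutani's theorem — you also need $\cA(X)$ to be a Banach \emph{lattice}, i.e.\ the order-unit norm must be a lattice norm. This is easy (if $|\varphi|\leq|\psi|$ then $|\varphi|\leq\|\psi\|\bOne$, so $\|\varphi\|\leq\|\psi\|$, using that $\|\varphi\|=\inf\{r:|\varphi|\leq r\bOne\}$ in a vector lattice), but it should be said. With that supplied, $\cA(X)\cong C(K)$ as AM-spaces with unit, and Kadison duality turns this into $X\cong\tS(\cA(X))\cong\cM(K)$, which is Bauer. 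Alfsen's own proof of II.4.1 runs through the continuity of upper envelopes rather than Kakutani, which is essentially the ``self-contained alternative'' you sketch at the end; both routes are legitimate, and yours has the advantage of delegating the genuinely hard topological content (closedness of $\cE(X)$) to a single classical representation theorem.
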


The \emph{Poulsen simplex} is the unique, up to affine homeomorphism, non-trivial metrizable simplex whose extremal points are dense \cite{Poulsen1961, Lindenstrauss1978}. The following lemma, together with \autoref{l:inverse-limit-simplices}, implies that a countable inverse limit of Poulsen simplices is Poulsen.
\begin{lemma}
  \label{l:inv-limit-dense-extreme}
  Let $(X_\alpha : \alpha \in A)$ be an inverse system of compact convex sets with connecting maps that are continuous, surjective, and affine. Suppose that $\cE(X_\alpha)$ is dense in $X_\alpha$ for every $\alpha$. Then the inverse limit $\varprojlim X_\alpha$ has the same property.
\end{lemma}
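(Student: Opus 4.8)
The plan is to push the problem down to a single stage of the inverse system, where density of extreme points is assumed, and then lift an extreme point back up using Lemma~\ref{l:extreme-transitivity}. Write $X = \varprojlim X_\alpha$, let $\pi_\alpha \colon X \to X_\alpha$ be the canonical projections, and recall two standard facts about inverse limits of compact spaces over a directed index set (as is implicit in the notion of an inverse system) all of whose connecting maps are surjective: first, each projection $\pi_\alpha$ is again surjective; and second, the sets $\pi_\alpha^{-1}(V)$, for $\alpha \in A$ and $V \sub X_\alpha$ open, form a basis for the topology of $X$. The second fact is immediate from the definition of the product topology together with the directedness of $A$ (any basic open set, coming from finitely many indices, is the preimage of an open set under the projection to a single index dominating them all); the first is the usual compactness argument: for fixed $\alpha_0$ and $y_0 \in X_{\alpha_0}$, the preimages of $y_0$ under the connecting maps $X_\beta \to X_{\alpha_0}$ with $\beta \geq \alpha_0$ form an inverse system of non-empty compact spaces with surjective bonding maps, so its limit is non-empty and its elements give points of $X$ projecting onto $y_0$.

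Granting this, let $U \sub X$ be a non-empty open set; we look for a point of $\cE(X)$ inside $U$. By the second fact there are $\alpha \in A$ and an open $V \sub X_\alpha$ with $\emptyset \neq \pi_\alpha^{-1}(V) \sub U$; since $\pi_\alpha$ is surjective, $V$ is non-empty. By the density hypothesis applied to $X_\alpha$, choose an extreme point $y \in V \cap \cE(X_\alpha)$. The fibre $\pi_\alpha^{-1}(\{y\})$ is non-empty (surjectivity of $\pi_\alpha$), closed hence compact, and convex; by the Krein--Milman theorem it has an extreme point $x$. Since $\pi_\alpha \colon X \to X_\alpha$ is continuous, affine and surjective and $y \in \cE(X_\alpha)$, Lemma~\ref{l:extreme-transitivity}\ref{l:extreme-transitivity:extreme-points} yields $\cE\bigl(\pi_\alpha^{-1}(\{y\})\bigr) \sub \cE(X)$, so $x \in \cE(X)$. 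As $x \in \pi_\alpha^{-1}(\{y\}) \sub \pi_\alpha^{-1}(V) \sub U$, this shows that $\cE(X)$ meets every non-empty open subset of $X$, i.e.\ $\cE(X)$ is dense in $X$.

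There is no serious obstacle here: once the reduction to a single stage is in place, the entire argument is carried by Lemma~\ref{l:extreme-transitivity}, which says precisely that an extreme point of $X_\alpha$ has extreme preimages in $X$. The only mildly delicate points are the two generalities about inverse limits recalled above — in particular the surjectivity of the projections $\pi_\alpha$, which is exactly what allows one to realize a prescribed extreme point of $X_\alpha$ by an actual (extreme) point of $X$; this is the same package of facts underlying Lemma~\ref{l:inverse-limit-simplices} and could be cited as such.
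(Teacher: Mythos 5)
Your proof is correct and follows essentially the same route as the paper's: reduce to a basic open set of the form $\pi_\alpha^{-1}(V)$, pick an extreme point of $X_\alpha$ in $V$ by density, and lift it to an extreme point of the limit via an extreme point of the fibre, invoking Lemma~\ref{l:extreme-transitivity}. The only difference is that you spell out the standard inverse-limit facts (surjectivity of the projections and the basis of open sets) that the paper leaves implicit.
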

\begin{proof}
  A basis of open sets of the inverse limit is given by preimages of basic open subsets of the $X_\alpha$ by the projection maps $\pi_\alpha$. Let $\pi_\alpha^{-1}(U)$ be such an open set with $U \sub X_\alpha$ non-empty, open. Let $p \in \cE(X_\alpha) \cap U$. Then any extreme point of $\pi_\alpha^{-1}(\set{p})$ belongs to $\cE(\varprojlim X_\alpha)$ by \autoref{l:extreme-transitivity}.
\end{proof}


\section{Affine logic}
\label{sec:affine-logic}

For the reader already familiar with continuous logic \cite{BenYaacov2010, BenYaacov2008}, we may define affine logic through the following modifications:
\begin{itemize}
\item The set of connectives is restricted from all continuous functions $\R^n \to \R$ to the \emph{affine} ones (i.e., functions of the form $x \mapsto f(x) + b$, where $f$ is linear and $b \in \R$).
\item The continuity moduli for symbols are required to be continuous convex functions $\varepsilon \mapsto \delta(\varepsilon)$.
\end{itemize}

The convexity requirement for moduli is needed since we want the class of structures to be closed under convex combinations, and more generally, direct integrals (see \autoref{sec:direct-integrals}).
On the other hand, one can show that continuous logic with convex moduli and continuous logic with arbitrary ones have the same expressive power.
Thus, we may say that affine logic is the fragment of continuous logic (with convex moduli) generated by affine connectives and quantifiers, whence its name.

For a full presentation, let us define a \emph{signature} $\cL$ (in a single sort), in either affine or ordinary continuous logic, to consist of the following information:
\begin{itemize}
\item Two disjoint sets of symbols, referred to as \emph{predicate symbols} and \emph{function symbols}.
\item For each symbol $s$, its \emph{arity} $n_s \in \bN$.
\item For each predicate symbol $P$, a compact \emph{value interval} $[a_P,b_P] \subseteq \bR$.
\item For each symbol $s$, a \emph{continuity modulus} $\delta_s$.
  This is a convex function $\delta_s\colon [0,\infty) \rightarrow [0,\infty)$ such that $\delta_s(\varepsilon) = 0$ if and only if $\varepsilon = 0$ (such a function is automatically continuous).
\item We always assume that $\cL$ includes a distinguished binary predicate symbol $d$ with $a_d=0$ and $\delta_d = \id$, called the \emph{distance symbol}.
\end{itemize}

This can be adapted to a signature in multiple sorts.
A full definition will be tedious and add very little to the exposition, so we leave any such adaptations to the reader.

We fix the convention that finite products of metric spaces are equipped with the sum metric:
\begin{gather}
  \label{eq:TupleDistance}
  d(a,b) = \sum_i d(a_i,b_i).
\end{gather}

An \emph{$\cL$-structure} consists of a non-empty, complete metric space $(M,d)$, together with \emph{interpretations} of the symbols:
\begin{itemize}
\item Each $n$-ary function symbol $F$ is interpreted by a function $F^M\colon M^n \rightarrow M$ which respects the modulus $\delta_F$:
  \begin{gather*}
    \label{eq:ContinuityModulusUgly}
    d(a,b) < \delta_F(\varepsilon) \quad \Longrightarrow \quad d\bigl( F^M(a), F^M(b) \bigr) \leq \varepsilon.
  \end{gather*}
  Equivalently (given our hypotheses for $\delta_F$),
  \begin{gather*}
    \label{eq:ContinuityModulusNice}
    \delta_F\Bigl( d\bigl( F^M(a), F^M(b) \bigr) \Bigr) \leq d(a,b).
  \end{gather*}
\item Each $n$-ary predicate  symbol $P$ is interpreted by a function $P^M\colon M^n \rightarrow [a_P,b_P]$ which respects the modulus $\delta_P$:
  \begin{gather*}
    \delta_P \Bigl( \bigl| P^M(a) - P^M(b) \bigr| \Bigr) \leq d(a,b).
  \end{gather*}
\item The distance symbol is interpreted by the metric (so the diameter of $M$ is always less than $b_d$).
\end{itemize}

Affine formulas are defined as in continuous logic, with connectives restricted to affine ones:
\begin{itemize}
\item Terms and atomic formulas are defined as in continuous logic (or classical logic, for that matter).
\item Every affine combination, with real coefficients, of a finite family of formulas, is a formula.
  This can be achieved by introducing the constant $\bOne$, multiplication by a real constant $r$, and addition, as connectives (of arities zero, one and two, respectively).
\item As in continuous logic, we close the collection of formulas under the quantifiers $\sup$ and $\inf$.
\end{itemize}

In particular, every affine logic formula is a continuous logic formula, and if we add $\vee$ or $\wedge$ as connectives, then we obtain full continuous logic in the signature $\cL$.
On the other hand, the \emph{forced limit} connective of continuous logic (see \cite[\textsection3.2]{BenYaacov2010}) is very non-affine, and no affine analogue of it exists.

When $x = (x_i : i \in I)$ is a family of distinct variables, we denote by $\cL^\aff_x \subseteq \cL^\cont_x$ the collections of affine formulas and continuous formulas with free variables in $x$, respectively.
Such a formula (affine or continuous) may be denoted by $\varphi$, or by $\varphi(x)$ if we wish to make the free variables explicit.
When the exact identity of the variables is not very important, we may denote the same sets by $\cL^\aff_I \subseteq \cL^\cont_I$, the sets of formulas with free variables indexed in $I$.
Usually, this will be used when $I = n = \{0,\ldots,n-1\}$.

The interpretation of a formula $\varphi(x)$, with free variables $x$, in a structure $M$, is a function $\varphi^M\colon M^x \rightarrow \bR$, defined in the obvious manner by induction on the syntactic definition of $\varphi$, in either continuous or affine logic.
In either logic, one can deduce from the syntactic construction of a formula a value interval and a modulus of continuity that its interpretations respect.

\begin{defn}
  \label{defn:Substructure}
  Let $M$ be an $\cL$-structure.
  \begin{enumerate}
  \item
    A \emph{substructure} of $M$ is a structure $N$ whose underlying set is a subset of that of $M$, and all interpretations of symbols in $N$ are the restrictions of their interpretations in $M$.
    We denote this by $N \subseteq M$, observing that the structure $N$ is entirely determined by its underlying set.
  \item
    An \emph{affine substructure} of $M$ is a substructure $N \subseteq M$ such that, in addition, $\varphi^M(a) = \varphi^N(a)$ for every affine formula $\varphi \in \cL^\aff_x$ and $a \in M^x \subseteq N^x$.
    We denote this by $N \preceq^\aff M$. (See \autoref{rem:affine-embedding} about the choice of terminology.)
  \end{enumerate}
\end{defn}

\begin{prop}[Tarski--Vaught test]
  \label{p:Tarski-Vaught}
  Let $M$ be a structure and $A \sub M$.
  Then $\cl{A} \preceq^\aff M$ if and only if for all affine formulas $\varphi(x, y)$ with $x$ a single variable, and for a dense set of $a \in A^{y}$,
  \begin{equation*}
    \big( \sup_x \varphi(x, a) \big)^M = \sup_{b \in A} \varphi^M(b, a).
  \end{equation*}
\end{prop}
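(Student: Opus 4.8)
The plan is to mimic the classical Tarski--Vaught test, adapting it to the affine setting where the new subtleties are (a) formulas form an order unit space rather than a Boolean algebra, so ``$\sup_x \varphi$'' plays the role of existential quantification but affine combinations replace Boolean operations, and (b) we are allowed to check the criterion only on a \emph{dense} set of parameter tuples, which is legitimate because all interpretations respect a modulus of continuity. One direction is immediate: if $\cl A \preceq^\aff M$, then for any affine $\varphi(x,y)$ and any $a \in A^y$ we have $(\sup_x \varphi(x,a))^M = (\sup_x \varphi(x,a))^{\cl A} = \sup_{b \in \cl A} \varphi^{\cl A}(b,a) = \sup_{b\in\cl A}\varphi^M(b,a)$, and since $A$ is dense in $\cl A$ and $\varphi^M(\cdot,a)$ is uniformly continuous, this equals $\sup_{b\in A}\varphi^M(b,a)$; the restriction to a dense set of $a$'s is then trivially satisfied.

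For the converse I would first check that $\cl A$ is the underlying set of a substructure $N \subseteq M$, i.e. that $A$ (hence its closure) is closed under the function symbols: apply the hypothesis to the affine formula $\varphi(x,y) = -d(x, F(y))$ for each function symbol $F$, which gives $\sup_{b\in A} \bigl(-d^M(b, F^M(a))\bigr) = \bigl(\sup_x -d(x,F(y))\bigr)^M(a) = 0$ for a dense set of $a \in A^y$, so $F^M(a) \in \cl A$ for those $a$, and by continuity of $F^M$ for all $a \in (\cl A)^y$; thus $N = \cl A$ is a substructure. Then one proves by induction on the syntactic construction of an affine formula $\psi(x)$ that $\psi^N(a) = \psi^M(a)$ for all $a \in N^x$. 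The atomic case holds because $N$ is a substructure; affine combinations $\psi = \sum_i r_i \psi_i + c$ pass through the induction hypothesis since the interpretation of an affine connective is computed pointwise; the $\inf$ case reduces to the $\sup$ case via $\inf_x = -\sup_x(-\cdot)$.

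The crux is the quantifier step: given $\psi(x) = \sup_x \chi(x,y)$ with the inductive hypothesis that $\chi^N = \chi^M$ on $N$, show $\psi^N(a) = \psi^M(a)$ for all $a \in N^y$. By the induction hypothesis, $\psi^N(a) = \sup_{b\in N}\chi^N(b,a) = \sup_{b\in N}\chi^M(b,a) = \sup_{b\in A}\chi^M(b,a)$ (density plus uniform continuity of $\chi^M(\cdot,a)$, using that $\chi$ has a value interval and a modulus of continuity read off from its construction). The hypothesis gives $\sup_{b\in A}\chi^M(b,a') = \bigl(\sup_x \chi(x,y)\bigr)^M(a') = \psi^M(a')$ for a \emph{dense} set of $a' \in A^y$. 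The remaining point---and this is the step I expect to be the main (though mild) obstacle---is to upgrade this from a dense set of tuples in $A^y$ to \emph{all} tuples $a \in N^y$: this follows because both sides, as functions of the parameter, are uniformly continuous with a common modulus (the one attached to the formula $\sup_x \chi(x,y)$, which controls $\psi^M$, and the same modulus controls $a \mapsto \sup_{b\in A}\chi^M(b,a)$ since it is a sup of functions each respecting that modulus), and two uniformly continuous functions agreeing on a dense set agree everywhere. Combining the displayed equalities yields $\psi^N(a) = \psi^M(a)$ on all of $N^y$, completing the induction and hence the proof that $N = \cl A \preceq^\aff M$.

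Minor care is needed with infinitary tuples of variables $x = (x_i : i \in I)$: an affine formula has only finitely many free variables, so all inductions are over finite fragments and no set-theoretic issues arise; and the modulus/value-interval bookkeeping is exactly as in continuous logic, so I would simply cite the analogous facts there rather than re-derive them.
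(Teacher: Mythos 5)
Your proof is correct and is exactly the argument the paper has in mind: the paper's own proof of this proposition is the single line ``The usual proof by induction on formulas,'' and your write-up is a careful elaboration of that standard induction (closure under function symbols via $-d(x,F(y))$, the quantifier step, and the upgrade from a dense set of parameters to all of $(\cl A)^y$ via the common continuity modulus). No gaps.
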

\begin{proof}
  The usual proof by induction on formulas.
\end{proof}

\begin{defn}
  \label{defn:DensityCharacter}
  For a metric (or topological) space $X$, we define its \emph{density character} $\fd(X)$ as the least cardinal of a dense subset, if it is infinite, or else $\aleph_0$.
\end{defn}

\begin{prop}[Downward Löwenheim--Skolem]
  \label{prop:DownwardLS}
  Let $M$ be an $\cL$-structure, and let $A \subseteq M$ be a subset.
  Assume that $\fd(M) \geq |\cL|$.
  Then $M$ admits an affine substructure $N \preceq^\aff M$ that contains $A$, such that $\fd(N) = \fd(A) + |\cL|$.
\end{prop}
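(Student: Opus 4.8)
The plan is to build $N$ by the standard Löwenheim--Skolem chain argument, driven by the affine Tarski--Vaught test (\autoref{p:Tarski-Vaught}). I would first reduce to the case $\fd(A) \geq |\cL|$: if $\fd(A) < |\cL|$ then $|\cL|$ is infinite, and since $\fd(M) \geq |\cL|$ the metric space $M$ has a subset $C$ of density character $|\cL|$; adjoining $C$ to $A$ yields $\fd(A \cup C) = \fd(A) + |\cL|$, so the statement for $A \cup C$ is equivalent to the one for $A$, and we may indeed assume $\fd(A) \geq |\cL|$. After this reduction it suffices to find $N$ with $A \subseteq N \preceq^\aff M$ and $\fd(N) = \fd(A) =: \kappa$, since now $\fd(A) + |\cL| = \fd(A)$. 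This reduction is the only place the assumption $\fd(M) \geq |\cL|$ enters, and it is also necessary, as $A \subseteq N \subseteq M$ forces $\fd(N) \leq \fd(M)$.

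Next I would construct an increasing chain $A = A_0 \subseteq A_1 \subseteq \cdots$ of subsets of $M$, each of density character $\kappa$. Given $A_n$, fix a dense subset $C_n \subseteq A_n$ with $|C_n| = \kappa$, and let $A_{n+1}$ contain, in addition to $A_n$: (i) the elements $F^M(\bar a)$ for every function symbol $F$ of $\cL$ and every tuple $\bar a$ from $C_n$; and (ii) for every affine formula $\varphi(x, \bar y)$ with $x$ a single variable, a dense subset $D_{\varphi,n} \subseteq A_n^{\bar y}$ of cardinality $\kappa$ together with, for each $a \in D_{\varphi,n}$ and each integer $k \geq 1$, a point $b_{\varphi, a, k} \in M$ satisfying $\varphi^M(b_{\varphi,a,k}, a) > \bigl(\sup_x \varphi(x,a)\bigr)^M - 1/k$. (Only $\sup$ needs to be handled, as $\inf_x \varphi = -\sup_x(-\varphi)$ and affine formulas are closed under negation.) Since there are at most $|\cL| + \aleph_0 \leq \kappa$ function symbols and affine formulas, an easy cardinal computation gives $\fd(A_{n+1}) = \kappa$. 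Finally put $N = \overline{\bigcup_n A_n}$, so that $\fd(N) = \fd\bigl(\bigcup_n A_n\bigr) = \kappa$.

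It remains to verify that $N$ is an affine substructure of $M$. For a function symbol $F$ and a tuple $\bar a$ from $\bigcup_n A_n$, one has $\bar a \in A_n^{n_F}$ for some $n$; approximating $\bar a$ by tuples from the dense set $C_n$ (whose $F$-images lie in $A_{n+1}$) and using the continuity of $F^M$, one gets $F^M(\bar a) \in N$. Hence $N$ is closed under the function symbols, i.e.\ a substructure of $M$ containing $A$, with $\fd(N) = \kappa = \fd(A) + |\cL|$. For affine elementarity, fix an affine $\varphi(x, \bar y)$ with $x$ a single variable; the set $\bigcup_n D_{\varphi,n}$ is dense in $\bigl(\bigcup_n A_n\bigr)^{\bar y}$ (a tuple from $\bigcup_n A_n$ lies in some $A_n^{\bar y}$, hence near $D_{\varphi,n}$), and for $a \in D_{\varphi,n}$ the chosen witnesses $b_{\varphi,a,k} \in A_{n+1}$ give $\sup_{b \in \bigcup_n A_n}\varphi^M(b,a) = \bigl(\sup_x \varphi(x,a)\bigr)^M$. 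By \autoref{p:Tarski-Vaught}, $N = \overline{\bigcup_n A_n} \preceq^\aff M$, completing the proof.

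There is no single hard step: this is the routine chain-of-substructures construction. The only points needing genuine care are the cardinality bookkeeping and, for it to work, the use of the \emph{dense} versions of both the Tarski--Vaught test and the function closure at stage $n$, so that one adjoins only $\kappa$-many new points rather than $|A_n|$-many; and the initial reduction, where the hypothesis $\fd(M) \geq |\cL|$ is spent to guarantee $\fd(N) \geq |\cL|$.
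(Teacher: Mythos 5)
Your proof is correct and is exactly the argument the paper has in mind: the paper's proof is the one-line remark that one can ``repeat the same proof as in continuous logic,'' and your chain construction driven by the affine Tarski--Vaught test, with the initial padding of $A$ to density $|\cL|$ to secure the lower bound on $\fd(N)$, is precisely that standard argument written out. (The paper also notes the even shorter route of applying continuous-logic Löwenheim--Skolem directly, since $N \preceq^{\cont} M$ implies $N \preceq^{\aff} M$.)
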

\begin{proof}
  One can repeat the same proof as in continuous logic, or just apply Downward Löwenheim--Skolem from continuous logic.
\end{proof}

Given a family of structures $(M_i : i \in I)$ indexed by a directed set $(I,<)$ with the property that $M_i\subseteq M_j$ for every $i<j$, we denote by $\cl{\bigcup_{i \in I} M_i}$ the unique structure on the completion of the union containing each $M_j$ as a substructure.

\begin{prop}[Affine chains]
  \label{p:affine-chains}
  Let $(I, <)$ be a directed set and let $(M_i : i \in I)$ be structures such that $M_i \preceq^\aff M_j$ for $i < j$. Then $M_j \preceq^\aff \cl{\bigcup_{i \in I} M_i}$ for every $j \in I$.
\end{prop}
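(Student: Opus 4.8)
The plan is to prove this by induction on the syntactic complexity of affine formulas, exactly as in the standard proof that unions of elementary chains are elementary, adapting the argument to the affine/continuous setting. Fix $j \in I$ and write $M = \cl{\bigcup_{i \in I} M_i}$. Since $M_j \subseteq M$ as a substructure (this is immediate from the construction of $M$), it suffices to check the affine Tarski--Vaught condition of \autoref{p:Tarski-Vaught}: for every affine formula $\varphi(x,y)$ with $x$ a single variable and every $a \in M_j^y$, one has $\bigl(\sup_x \varphi(x,a)\bigr)^M = \sup_{b \in M_j} \varphi^M(b,a)$. Actually, to make the induction go through cleanly it is more convenient to prove the stronger statement directly: $\varphi^M(a) = \varphi^{M_j}(a)$ for every affine formula $\varphi \in \cL^\aff_x$ and every $a \in M_j^x$, and simultaneously for all $j$.

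First I would handle atomic formulas, where equality is by definition of substructure, and then the affine connectives $\bOne$, scalar multiplication, and addition, where the claim passes through trivially because these operations commute with evaluation. The only real content is in the quantifier step. Suppose $\varphi(x) = \sup_y \psi(x, y)$ where $x$ is a tuple indexed by some set and $y$ is a single variable, and the inductive hypothesis holds for $\psi$ (with respect to all the $M_i$). Fix $a \in M_j^x$. Since $M_j \subseteq M$, the set of values $\{\psi^M(a,b) : b \in M_j\}$ is contained in $\{\psi^M(a,b) : b \in M\}$, giving $\varphi^{M_j}(a) \le \varphi^M(a)$ — but here I must be careful: the inductive hypothesis gives $\psi^{M_j}(a,b) = \psi^M(a,b)$ for $b \in M_j$, so $\varphi^{M_j}(a) = \sup_{b \in M_j}\psi^M(a,b) \le \varphi^M(a)$. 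For the reverse inequality, take any $b \in M$ and $\eps > 0$; since $M$ is the completion of $\bigcup_i M_i$ and $a$ together with $b$ involves only finitely many coordinates lying in $M_j$, there is some $i \ge j$ in the directed set with $a \in M_i^x$ and a point $b' \in M_i$ with $d(b, b')$ small enough that $|\psi^M(a,b) - \psi^M(a,b')| \le \eps$, using the uniform continuity modulus attached to $\psi$. By the inductive hypothesis applied to $M_i$, $\psi^M(a, b') = \psi^{M_i}(a, b')$, and by $M_j \preceq^\aff M_i$ applied to the formula $\varphi = \sup_y \psi$, we get $\varphi^{M_i}(a) = \varphi^{M_j}(a)$; hence $\psi^M(a,b') = \psi^{M_i}(a,b') \le \varphi^{M_i}(a) = \varphi^{M_j}(a)$. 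Thus $\psi^M(a,b) \le \varphi^{M_j}(a) + \eps$ for all $b \in M$ and all $\eps > 0$, so $\varphi^M(a) \le \varphi^{M_j}(a)$. The case $\varphi = \inf_y \psi$ is symmetric (or reduce to the previous case via $\inf_y \psi = -\sup_y(-\psi)$, which is legitimate since negation is an affine connective).

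The step I expect to be the main obstacle — though it is a minor one — is the bookkeeping in the quantifier case: one needs the directedness of $(I,<)$ to find a single index $i \ge j$ containing both the (finitely many coordinates of the) parameter $a$ and an approximation $b'$ of the witness $b$, and one needs the continuity modulus of $\psi$ (which, recall, is part of the data extractable from its syntactic construction) to control $|\psi^M(a,b) - \psi^M(a,b')|$. It is worth noting that only finitely many variables actually occur in $\varphi$, so even if $x$ is an infinite tuple, $a$ restricted to the relevant coordinates lies in some $M_{i_0}$ with $i_0$ large; directedness then lets us pass to $i \ge \max(j, i_0)$. Everything else is the routine induction, and I would simply remark that the argument is the usual one, spelling out only the quantifier step.
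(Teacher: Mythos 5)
Your proof is correct and is precisely the standard elementary-chain induction from continuous logic that the paper invokes with its one-line proof ``As in continuous logic'': the only substantive step is the quantifier case, and your handling of it (approximate the witness $b\in M$ by some $b'\in M_{i_1}$, pass to $i\geq j,i_1$ by directedness, control the error with the uniform continuity modulus of $\psi$, and use the chain hypothesis $M_j\preceq^\aff M_i$ for the outer formula) is exactly the intended argument. The strengthening to ``$\varphi^M(a)=\varphi^{M_j}(a)$ simultaneously for all $j$'' is indeed the right inductive statement, since the quantifier step applies the inductive hypothesis at an index $i$ larger than $j$.
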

\begin{proof}
  As in continuous logic.
\end{proof}


\section{Types, theories and compactness}
\label{sec:Type}

Recall that $\cL^\aff_x$ denotes the collection of all affine formulas in the language $\cL$, with variables in $x$.
Given two formulas $\varphi,\psi \in \cL^\aff_x$, we shall write $\varphi \leq \psi$ if $\varphi^M \leq \psi^M$ for every $\cL$-structure $M$.
In particular, if $\varphi \leq \psi \leq \varphi$, then we shall identify $\varphi$ and $\psi$.
Together with the constant formula $\bOne$, this makes $\cL^\aff_x$ into an order unit space, equipped with the norm
\begin{gather*}
  \|\varphi\| = \inf \, \bigl\{ r \in \bR :  -r\bOne \leq \varphi \leq r\bOne \bigr\}.
\end{gather*}

\begin{defn}
  \label{defn:Type}
  Let $M$ be an $\cL$-structure and $a \in M^x$.
  The \emph{affine type} of $a$, denoted $\tp^\aff(a)$, is the function $\cL^\aff_x \rightarrow \bR$ defined by
  \begin{gather*}
    \varphi \mapsto \varphi^M(a).
  \end{gather*}
\end{defn}

\begin{defn}
  \label{defn:TypeSpace}
  By an \emph{affine type} in the variables $x$ we mean a state on $\cL^\aff_x$.
  Consequently, the \emph{space of affine types} in $x$ is the state space of $\cL^\aff_x$:
  \begin{gather*}
    \tS^\aff_x(\cL) \coloneqq \tS(\cL^\aff_x),
  \end{gather*}
  which is a compact convex set.
  When $p \in \tS^\aff_x(\cL)$ and $\varphi \in \cL^\aff_x$, we may denote $p(\varphi)$ by $\varphi(p)$ or $\varphi^p$.
\end{defn}

When $x = (x_i : i \in I)$ we may denote $\cL^\aff_x$ by $\cL^\aff_I$, and members of $\tS^\aff_I(\cL) = \tS(\cL^\aff_I)$ are called \emph{$I$-types}.

It is easy to check that if $a \in M^x$, then $\tp^\aff(a)$ is a state on $\cL^\aff_x$, i.e.,
\begin{gather*}
  \tp^\aff(a) \in \tS^\aff_x(\cL).
\end{gather*}
For a converse of this observation, we require the following construction (a much more general version of which will be treated in \autoref{sec:direct-integrals}).

\begin{defn}
  \label{defn:TypeRestrictionMap}
  We define the \emph{variable restriction} map
  \begin{gather*}
    \pi_x\colon \tS^\aff_{xy}(\cL) \rightarrow \tS^\aff_x(\cL)
  \end{gather*}
  as the dual of the inclusion $\cL^\aff_x \subseteq \cL^\aff_{xy}$.
\end{defn}

This map is continuous and affine.
If $p = \tp^\aff(a,b) \in \tS^\aff_{xy}(\cL)$, then $\pi_x(p) = \tp^\aff(a)$.
Given any $q = \tS^\aff_x(T)$, we have $q = \pi_x(p)$ if and only if $p(\varphi) = q(\varphi)$ for every affine formula $\varphi(x)$ (which we may also write as $\varphi(x,y)$, where $y$ is a tuple of \emph{dummy variables}).

\begin{lemma}
  \label{lemma:NaiveConvexLos}
  Let $M$ and $N$ be two $\cL$-structures.
  Define a new structure $K = \frac{1}{2}M \oplus \frac{1}{2}N$, whose underlying set is $M \times N$ as follows:
  \begin{itemize}
  \item Function symbols are interpreted coordinate-wise.
    That is to say that if $f$ is an $n$-ary function symbol, $a \in M^n$, $b \in N^n$, and $c = \bigl( (a_i,b_i) : i < n\bigr) \in K^n$, then
    \begin{gather*}
      f^K(c) = \bigl( f^M(a), f^N(b)\bigr).
    \end{gather*}
  \item If $P$ is an $n$-ary predicate symbol, and $a$, $b$ and $c$ are as above, then
    \begin{gather*}
      P^K(c) = \half P^M(a) + \half P^N(b).
    \end{gather*}
  \end{itemize}
  Then $K$ is an $\cL$-structure (in particular, the bounds and continuity moduli of symbols are respected).
  Moreover, for every formula $\varphi \in \cL^\aff_n$ and $a$, $b$ and $c$ as above:
  \begin{gather*}
    \varphi^K(c) = \half\varphi^M(a) + \half\varphi^N(b).
  \end{gather*}
\end{lemma}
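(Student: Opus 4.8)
The plan is to prove both assertions — that $K$ is a legitimate $\cL$-structure and that the formula value decomposes as a midpoint average — by a single induction on the syntactic complexity of the relevant object (first terms, then formulas). The underlying set $M \times N$ is complete because $M$ and $N$ are, and with the sum metric convention~\eqref{eq:TupleDistance} the distance on $K^n$ is again a sum of coordinate distances, so everything is compatible with taking products of tuples.

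First I would handle terms. By induction on the construction of a term $t(x)$, one shows that $t^K(c) = \bigl(t^M(a), t^N(b)\bigr)$ whenever $c = \bigl((a_i,b_i):i<n\bigr)$: this is immediate for variables, and for $t = f(t_1,\ldots,t_k)$ it follows from the coordinate-wise definition of $f^K$ together with the inductive hypothesis on the $t_j$. In particular this shows $d^K(c,c') = d^M(a,a') + d^N(b,b')$ — wait, more precisely $d^K$ on a single pair of elements is $\half d^M + \half d^N$ by the predicate clause applied to the distance symbol; I would note this and check it is still a complete metric inducing the product topology (equivalent to the sum metric up to the factor $\half$). The continuity moduli are respected: for a function symbol $f$, using that $\delta_f$ is \emph{convex} and $\delta_f(0)=0$ (hence subadditive-after-scaling in the relevant sense), one estimates $\delta_f\bigl(d^K(f^K(c),f^K(c'))\bigr) \leq \delta_f\bigl(\half d^M(\ldots) + \half d^N(\ldots)\bigr) \leq \half \delta_f(d^M(\ldots)) + \half \delta_f(d^N(\ldots)) \leq \half d^M(a,a') + \half d^N(b,b') = d^K(c,c')$, and similarly for predicate symbols. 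This is exactly where the convexity hypothesis on moduli is used, and I expect this to be the one genuinely delicate point — making sure the Jensen-type inequality $\delta(\half s + \half u) \leq \half \delta(s) + \half\delta(u)$ is invoked correctly and that the value intervals $[a_P,b_P]$ are preserved (which is clear, as $[a_P,b_P]$ is convex and $P^K(c)$ is a convex combination of two points of it).

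Next I would prove the formula identity $\varphi^K(c) = \half \varphi^M(a) + \half \varphi^N(b)$ for $\varphi \in \cL^\aff_n$ by induction on $\varphi$. The base case is atomic formulas $P(t_1,\ldots,t_k)$: by the term computation the argument tuple in $K$ is the coordinate-wise pair of the argument tuples in $M$ and $N$, and then the predicate clause for $P^K$ gives the result. The connective cases are exactly where affineness is essential: for $\varphi = r\psi$, $\varphi = \psi_1 + \psi_2$, or $\varphi = \bOne$, the identity passes through because $x \mapsto \half x_M + \half x_N$ commutes with affine combinations — this would fail for $\vee$ or $\wedge$, which is precisely why we are in the affine fragment. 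For the quantifier case $\varphi(x) = \sup_z \psi(z,x)$ (the $\inf$ case being dual or obtained by negation), I would argue $\varphi^K(c) = \sup_{c' \in K} \psi^K(c',c) = \sup_{(a',b')} \bigl(\half\psi^M(a',a) + \half\psi^N(b',b)\bigr)$ using the inductive hypothesis, and then observe that since $a'$ and $b'$ range independently over $M$ and $N$, the supremum of the sum splits as the sum of the suprema, giving $\half \sup_{a'}\psi^M(a',a) + \half\sup_{b'}\psi^N(b',b) = \half\varphi^M(a) + \half\varphi^N(b)$. The independence of the two coordinates in $K = M \times N$ is the key structural fact that makes the quantifier step work, and I would make sure to state it cleanly. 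With both inductions complete, the lemma follows.
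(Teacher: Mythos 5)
Your proof is correct and follows essentially the same route as the paper's: induction on the construction of $\varphi$, with atomic formulas handled by the definition of $K$, affine connectives passing through the midpoint average, and the quantifier step using the fact that witnesses in $M$ and $N$ can be chosen independently so the supremum of the sum splits. The paper leaves the verification that $K$ is an $\cL$-structure to the reader, whereas you correctly spell out the one nontrivial point there, namely the Jensen-type inequality $\delta(\half s + \half u) \leq \half\delta(s) + \half\delta(u)$ coming from the convexity of the continuity moduli.
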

\begin{proof}
  It is easy to check that $K = \frac{1}{2}M \oplus \frac{1}{2}N$ is indeed an $\cL$-structure.
  The moreover part is proved by induction on the construction of $\varphi$.
  More precisely, the collection of formulas that satisfy the moreover part:
  \begin{itemize}
  \item contains all atomic formulas, essentially by construction,
  \item is closed under affine combinations, and
  \item is closed under quantifiers (just combine witnesses from $M$ and from $N$ to get witnesses in $K$).
    \qedhere
  \end{itemize}
\end{proof}

\begin{prop}
  \label{prop:Type}
  The space of affine types $\tS^\aff_x(\cL)$ consists exactly of all types $\tp^\aff(a)$, where $a \in M^x$ for some $\cL$-structure $M$.
\end{prop}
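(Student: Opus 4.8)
The plan is to prove the non-trivial inclusion, since the fact that $\tp^\aff(a) \in \tS^\aff_x(\cL)$ for every $a \in M^x$ was already noted; thus it remains to show that every state $p$ on $\cL^\aff_x$ is of the form $\tp^\aff(a)$. Write $K \subseteq \tS^\aff_x(\cL)$ for the set of all types realized in some $\cL$-structure. First I would establish that $K$ is a compact convex subset. Closure under midpoints is precisely \autoref{lemma:NaiveConvexLos}: given $\tp^\aff(a)$ with $a \in M^x$ and $\tp^\aff(b)$ with $b \in N^x$, the ``diagonal'' tuple $c = \bigl((a_i,b_i) : i \in x\bigr)$ in $\half M \oplus \half N$ satisfies $\tp^\aff(c) = \half\tp^\aff(a) + \half\tp^\aff(b)$. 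For compactness I would exhibit $K$ as the image of the compact space $\tS^\cont_x(\cL)$ of continuous types under the (continuous) restriction map $q \mapsto q\rest_{\cL^\aff_x}$, using the compactness theorem of continuous logic to know that every continuous type is realized, so that this image is exactly $K$. A compact subset of a locally convex space that is closed under midpoints is convex, so $K$ is compact and convex.

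The second step is a separation argument. Suppose, for contradiction, that some $p \in \tS^\aff_x(\cL)$ does not belong to $K$. Since $K$ and $\{p\}$ are disjoint, closed, and convex in the compact convex set $\tS^\aff_x(\cL)$, the Hahn--Banach separation theorem provides a separating function in $\cA(\tS^\aff_x(\cL))$; invoking the ``moreover'' clause together with the density of $\cL^\aff_x$ in $\cA(\tS^\aff_x(\cL))$ (Kadison's theorem), I may take it to be an affine formula $\varphi$ with $\varphi(q) \le 0$ for all $q \in K$ and $\varphi(p) \ge 1$. The first condition says exactly that $\varphi^M(a) \le 0$ for every $\cL$-structure $M$ and every $a \in M^x$, i.e.\ that $-\varphi$ lies in the positive cone of the order unit space $\cL^\aff_x$. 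But $p$ is a positive, unital functional, so $\varphi(p) = -(-\varphi)(p) \le 0$, contradicting $\varphi(p) \ge 1$. Hence $K = \tS^\aff_x(\cL)$.

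The main obstacle is the compactness of $K$ — equivalently, that the set of realized affine types is closed — and this is exactly where the compactness theorem of continuous logic is indispensable: without it one cannot exclude a ``limit'' state that fails to be a type. Everything else (convexity via $\half M \oplus \half N$, the Hahn--Banach step, and the observation that a separating affine function is literally an affine formula thanks to Kadison density) is routine bookkeeping. If one prefers not to mention $\tS^\cont_x(\cL)$ at all, the same idea can be carried out ``by hand'': given $p$, show that the set of conditions $\{\varphi \le \varphi^p : \varphi \in \cL^\aff_x\}$ is approximately finitely satisfiable, because for any finite $\varphi_1, \ldots, \varphi_n$ the point $(\varphi_1^p, \ldots, \varphi_n^p)$ lies in the closed convex hull in $\bR^n$ of the vectors $(\varphi_1^M(a), \ldots, \varphi_n^M(a))$ — otherwise a separating linear functional would yield an affine formula $\psi = \sum_i \lambda_i\varphi_i$ with $\psi \le c$ in every structure yet $\psi(p) > c$, again contradicting positivity of $p$ — and then invoke continuous compactness to realize $p$.
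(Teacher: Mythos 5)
Your proof is correct and follows essentially the same route as the paper's: show the set of realized types is closed (via the compactness theorem of continuous logic), convex (via $\half M \oplus \half N$ and compactness), and then separate a hypothetical missing state by an affine formula whose sign contradicts positivity of the state. The only cosmetic differences are the flipped sign convention in the separation step and your more explicit phrasing of closedness as the image of $\tS^\cont_x(\cL)$ under restriction.
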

\begin{proof}
  Let $C$ be the collection of types of the form $\tp^\aff(a)$, where $a \in M^x$ for some $M$.
  We have already observed that $C \subseteq \tS^\aff_x(\cL)$.
  Since every affine formula is also a formula of continuous logic, and using the compactness theorem for continuous logic, the set $C$ is closed.
  In addition, it is closed under the operation $(p,q) \mapsto \half p + \half q$, by \autoref{lemma:NaiveConvexLos}.
  It is therefore a closed convex subset.

  If $C \subsetneq \tS^\aff_x(\cL)$, then by Hahn--Banach, there exists an affine formula $\varphi \in \cL^\aff_x$ that is positive at every member of $C$, but strictly negative at some $p \in \tS^\aff(\cL)$.
  But then $\varphi^M(a) \geq 0$ for every structure $M$ and $a \in M^x$, so $\varphi \in (\cL^\aff_x)^+$.
  Since $p$ is positive, $p(\varphi) = \varphi(p) \geq 0$, a contradiction.
  Therefore, $C = \tS^\aff_x(\cL)$.
\end{proof}

We recall that $(\pi_x)_*$ denotes the pushforward map $\cM\big(\tS^\aff_{xy}(\cL)\big) \rightarrow \cM\big(\tS^\aff_x(\cL)\big)$ associated to the variable restriction map, which is continuous and affine. We recall as well that $R$ denotes the barycenter map, and that $R\big((\pi_x)_*\nu)\big) = \pi_x\big(R(\nu)\big)$.

\begin{lemma}
  \label{lemma:TypeConvexCombinationLift}
  Let $p \in \tS^\aff_{xy}(\cL)$ and $q = \pi_x(p) \in \tS^\aff_x(\cL)$.
  \begin{enumerate}
  \item
    \label{item:TypeConvexCombinationLiftMain}
    If $\mu$ is a probability measure on $\tS^\aff_x(\cL)$ with $R(\mu) = q$, then there exists a probability measure $\nu$ on $\tS^\aff_{xy}(\cL)$ such that $R(\nu) = p$ and $(\pi_x)_* \nu = \mu$.
  \item
    \label{item:TypeConvexCombinationLiftBoundary}
    If $\mu$ is a boundary measure, then $\nu$ can be taken a boundary measure.
  \item
    \label{item:TypeConvexCombinationLiftFinite}
    If $q = \sum_{i<n} \lambda_i q_i$, then there exist $p_i$ such that $q_i = \pi_x(p_i)$ and $p = \sum_{i<n} \lambda_i p_i$.
  \end{enumerate}
\end{lemma}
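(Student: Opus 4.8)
The plan is to prove \ref{item:TypeConvexCombinationLiftMain} first, and then obtain \ref{item:TypeConvexCombinationLiftBoundary} and \ref{item:TypeConvexCombinationLiftFinite} from it (or run a short parallel argument for \ref{item:TypeConvexCombinationLiftFinite}). Everything hinges on one observation: for every $\varphi \in \cA\bigl(\tS^\aff_{xy}(\cL)\bigr)$ the function
\[
  m_\varphi\colon \tS^\aff_x(\cL) \to \R, \qquad m_\varphi(r) = \min\bigl\{ \varphi(s) : s \in \tS^\aff_{xy}(\cL),\ \pi_x(s) = r \bigr\}
\]
(the minimum being attained, as $\pi_x^{-1}(r)$ is compact) is \emph{continuous and affine}. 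For $\varphi \in \cL^\aff_{xy}$ — which, being a finite syntactic object, mentions only finitely many of the $y$-variables — this holds because $m_\varphi$ coincides, as a function on $\tS^\aff_x(\cL)$, with the continuous affine function attached to the affine formula $\psi(x) \coloneqq \inf_y \varphi(x,y)$; one checks $m_\varphi = \psi$ by realizing types in structures via \autoref{prop:Type}. The general case follows since $\|m_\varphi - m_{\varphi'}\|_\infty \le \|\varphi - \varphi'\|_\infty$ and $\cL^\aff_{xy}$ is dense in $\cA\bigl(\tS^\aff_{xy}(\cL)\bigr)$. Consequently $\mu(m_\varphi) = m_\varphi(R(\mu)) = m_\varphi(q)$ for the $\mu$ in the statement, and $m_\varphi(q) \le \varphi(p)$ whenever $\pi_x(p) = q$.

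For \ref{item:TypeConvexCombinationLiftMain}, I would view a Radon probability measure on $\tS^\aff_{xy}(\cL)$ as a state on $C\bigl(\tS^\aff_{xy}(\cL)\bigr)$ and produce $\nu$ by a positive (Riesz-type) extension. On the subspace $V = \cA\bigl(\tS^\aff_{xy}(\cL)\bigr) + \bigl\{ g\circ\pi_x : g \in C\bigl(\tS^\aff_x(\cL)\bigr) \bigr\}$ of $C\bigl(\tS^\aff_{xy}(\cL)\bigr)$, set $\Lambda(\varphi + g\circ\pi_x) = p(\varphi) + \mu(g)$. This is well defined: if $\varphi + g\circ\pi_x = \varphi' + g'\circ\pi_x$ then $h \coloneqq g'-g$ satisfies $h\circ\pi_x \in \cA\bigl(\tS^\aff_{xy}(\cL)\bigr)$, hence $h \in \cA\bigl(\tS^\aff_x(\cL)\bigr)$ since $\pi_x$ is continuous, affine and surjective, and then $p(\varphi)-p(\varphi') = (h\circ\pi_x)(p) = h(q) = h(R(\mu)) = \mu(g')-\mu(g)$. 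Moreover $\Lambda$ is unital ($\bOne \in V$) and positive: if $\varphi + g\circ\pi_x \ge 0$ on $\tS^\aff_{xy}(\cL)$, then $m_\varphi(r) \ge -g(r)$ for every $r$, so $\mu(g) \ge -\mu(m_\varphi) = -m_\varphi(q)$, while $p(\varphi)=\varphi(p) \ge m_\varphi(q)$ because $\pi_x(p) = q$; adding, $\Lambda(\varphi + g\circ\pi_x) \ge 0$. Since $\bOne \in V$ is an order unit of $C\bigl(\tS^\aff_{xy}(\cL)\bigr)$, the Riesz extension theorem furnishes a positive unital functional $\nu$ on $C\bigl(\tS^\aff_{xy}(\cL)\bigr)$ extending $\Lambda$, i.e.\ $\nu \in \cM\bigl(\tS^\aff_{xy}(\cL)\bigr)$; evaluating $\Lambda$ on the two kinds of generators of $V$ gives exactly $R(\nu) = p$ and $(\pi_x)_*\nu = \mu$. (Equivalently, one may separate $(p,\mu)$ from the compact convex image of $\nu \mapsto (R(\nu),(\pi_x)_*\nu)$ inside $\tS^\aff_{xy}(\cL)\times\cM\bigl(\tS^\aff_x(\cL)\bigr)$ by Hahn--Banach, using \autoref{prop:Stone-Weierstrass-affine} to put the separating functional in the right form; the estimate is the same.)

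Part \ref{item:TypeConvexCombinationLiftBoundary} is then soft: given $\nu$ from \ref{item:TypeConvexCombinationLiftMain}, choose (by Zorn's lemma, as in \autoref{l:choquet-order}\ref{i:choquet-order:4}) a boundary measure $\nu' \succeq \nu$. Then $R(\nu') = R(\nu) = p$, and since $g\circ\pi_x$ is convex and continuous whenever $g$ is, we get $(\pi_x)_*\nu \preceq (\pi_x)_*\nu'$, i.e.\ $\mu \preceq (\pi_x)_*\nu'$; as $\mu$ is a boundary measure this forces $(\pi_x)_*\nu' = \mu$. For \ref{item:TypeConvexCombinationLiftFinite}, one can apply \ref{item:TypeConvexCombinationLiftMain} to $\mu = \sum_i \lambda_i\delta_{q_i}$ and split the resulting $\nu$ along the disjoint closed fibers $\pi_x^{-1}(q_i)$ (taking $p_i$ to be the barycenter of the normalized restriction $\lambda_i^{-1}\nu\rest_{\pi_x^{-1}(q_i)}$); or argue directly: assuming the $\lambda_i$ positive, if $p \notin \sum_i \lambda_i \pi_x^{-1}(q_i)$ (a compact convex set), Hahn--Banach gives $\varphi$ with $\varphi(p) < 0$ yet $\sum_i \lambda_i \varphi(p_i) \ge 0$ for all $p_i \in \pi_x^{-1}(q_i)$; minimizing each summand yields $0 \le \sum_i \lambda_i m_\varphi(q_i) = m_\varphi\bigl(\sum_i \lambda_i q_i\bigr) = m_\varphi(q) \le \varphi(p)$, a contradiction.

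The only genuinely non-formal ingredient — and the step I expect to be the main obstacle — is the affineness of $m_\varphi$. A priori $m_\varphi$ is merely concave and lower semicontinuous (which already suffices for \ref{item:TypeConvexCombinationLiftFinite} via the last argument), and it is precisely the identification of $m_\varphi$ with the affine formula $\inf_y\varphi$ — available because $\inf$ is a quantifier of affine logic — that upgrades it to an affine function, making the positivity estimate for $\Lambda$ (equivalently, the Jensen-type inequality for $\mu$) close up exactly. This is where the logic, rather than pure convex geometry, does the work.
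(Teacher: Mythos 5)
Your proof is correct. Parts \ref{item:TypeConvexCombinationLiftBoundary} and \ref{item:TypeConvexCombinationLiftFinite} coincide with the paper's arguments (pass to a boundary majorant $\nu'\succeq\nu$ and use maximality of $\mu$; apply \ref{item:TypeConvexCombinationLiftMain} to $\sum_i\lambda_i\delta_{q_i}$ and condition on the fibers $\pi_x^{-1}(q_i)$ — the paper adds the harmless reduction that the $q_i$ may be assumed distinct). For \ref{item:TypeConvexCombinationLiftMain} you take a genuinely different route. The paper argues by contradiction: if $p$ lies outside the compact convex set $C_2=\bigl\{R(\nu):(\pi_x)_*\nu=\mu\bigr\}$, it separates by an affine formula $\varphi$, forms $\psi=\sup_y\varphi$ and the ``argmax'' set $X=\bigl\{\varphi=\psi\circ\pi_x\bigr\}$, shows $\pi_x(X)=\tS^\aff_x(\cL)$ by compactness so that some $\nu_0\in C_1$ is supported on $X$, and derives $\psi(q)\leq 0<\varphi(p)\leq\psi(\pi_x(p))$, whence $\pi_x(p)\neq q$. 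You instead construct $\nu$ directly, as a positive unital (M.~Riesz) extension of $\varphi+g\circ\pi_x\mapsto p(\varphi)+\mu(g)$ from the subspace $\cA\bigl(\tS^\aff_{xy}(\cL)\bigr)+C\bigl(\tS^\aff_x(\cL)\bigr)\circ\pi_x$, with positivity secured by the affineness of the fiberwise minimum $m_\varphi=\inf_y\varphi$ and the barycenter identity $\mu(m_\varphi)=m_\varphi(q)$. Both proofs hinge on exactly the same logical input — quantifying over $y$ sends affine formulas to affine formulas, so the envelope of $\varphi$ along the fibers of $\pi_x$ is again affine (the concave-envelope avatar of this is \autoref{prop:VariableRestrictionBoundary}) — but your version is constructive rather than a separation-plus-surjectivity contradiction, and it isolates the key fact as a clean lemma. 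The supporting steps all check out: attainment of the minimum on compact fibers, the $1$-Lipschitz dependence of $m_\varphi$ on $\varphi$ together with density of formulas in $\cA$, and well-definedness of $\Lambda$ via the observation that $h\circ\pi_x$ affine forces $h$ affine for the continuous affine surjection $\pi_x$.

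One cosmetic slip in your closing remark: the fiberwise minimum of an affine function along an affine surjection is a priori \emph{convex} (and lower semicontinuous), not concave — concavity of $m_\varphi$ is precisely the nontrivial half, essentially equivalent to the liftability of convex combinations, i.e.\ to part \ref{item:TypeConvexCombinationLiftFinite} itself, so it could not suffice ``a priori'' for that part. This does not affect your proof, which everywhere uses the full affineness of $m_\varphi$ that you correctly establish via the identification $m_\varphi=\inf_y\varphi$.
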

\begin{proof}
  For \autoref{item:TypeConvexCombinationLiftMain}, let
  \begin{gather*}
    C_1 = \bigl\{ \nu \in \cM\bigl( \tS^\aff_{xy}(\cL) \bigr) : (\pi_x)_* \nu = \mu \bigr\},
    \qquad
    C_2 = \bigl\{ R(\nu) : \nu \in C_1 \bigr\} \subseteq \tS^\aff_{xy}(\cL).
  \end{gather*}
  Then $C_1$ is a compact convex set, and therefore so is $C_2$.

  If $p \in C_2$, then we are done, so assume that $p \notin C_2$.
  Since $C_2$ is compact convex, by Hahn--Banach there exists an affine formula $\varphi(x,y)$ such that $\varphi(p) > 0$ and $\varphi(p') \leq 0$ for all $p' \in C_2$.
  Let $\psi(x) = \sup_y \varphi(x,y)$ and define
  \begin{gather*}
    X = \bigl\{ p' \in \tS^\aff_{xy}(\cL) : \varphi(p') = \psi(p') \bigr\}.
  \end{gather*}
  In other words, $X$ consists of all $\tp^\aff(a,b)$ such that $\varphi(a,b)$ is greatest possible given $a$.
  Then $X$ is a compact and convex subset of $\tS^\aff_{xy}(\cL)$, and by compactness, $\pi_x(X) = \tS^\aff_x(\cL)$.
  The map $(\pi_x)_*\colon\cM(X)\to \cM(\tS^\aff_x(\cL))$ is therefore surjective and, in particular, there exists $\nu_0 \in C_1$ that is supported on $X$.

  Then $p_0 = R(\nu_0) \in C_2 \cap X$ and $\pi_x(p_0) = q$, so
  \begin{gather*}
    \psi(q) = \psi(p_0) = \varphi(p_0) \leq 0 < \varphi(p).
  \end{gather*}
  Therefore, $\pi_x(p) \neq q$.

  For \autoref{item:TypeConvexCombinationLiftBoundary}, we can replace $\nu$ with a boundary measure $\nu' \succeq \nu$.
  Then $(\pi_x)_*(\nu') \succeq \mu$ and since $\mu$ is maximal, we have equality.

  For \autoref{item:TypeConvexCombinationLiftFinite}, we may assume that the $q_i$ are distinct. We then apply \autoref{item:TypeConvexCombinationLiftMain} to $\mu = \sum \lambda_i \delta_{q_i}$, to obtain a measure $\nu$.
  Then $\nu(A_i) = \lambda_i$, where $A_i = \pi_x^{-1}(q_i)$, and we can choose $p_i$ to be the barycenter of $\nu_{A_i}$.
\end{proof}

\begin{defn}
  \label{dfn:Condition}
  An \emph{affine condition} in the variables $x$ is an expression of the form $\varphi \geq \psi$, where $\varphi,\psi \in \cL^\aff_x$.

  If $M$ is a structure and $a \in M^x$, then $(M,a)$ is a \emph{model} of the condition $\varphi \geq \psi$, in symbols $(M,a) \models \varphi \geq \psi$, if $\varphi^M(a) \geq \psi^M(a)$.
  Similarly, $(M,a)$ is a model of a set of conditions $\Sigma$, in symbols $(M,a) \models \Sigma$, if it is a model of each condition in $\Sigma$.
  When $M$ is known from the context, we may omit it and write $a \models \varphi \geq \psi$ or $a \models \Sigma$.

  A family of conditions is \emph{consistent} if it admits a model.
\end{defn}

When $\Phi \subseteq \cL^\aff_x$, we denote by $\Phi \geq 0$ the collection of conditions $\{\varphi \geq 0 : \varphi \in \Phi\}$.
Since every set of conditions can be written in this form, we may freely restrict our attention to such.

\begin{defn}
  Let $\Phi, \Psi \subseteq \cL^\aff_x$.
  If every model of $\Phi \geq 0$ is also a model of $\Psi \geq 0$, then we say that $\Psi \geq 0$ is a \emph{consequence} of $\Phi \geq 0$, or that $\Phi \geq 0$ \emph{entails} $\Psi \geq 0$, in symbols
  \begin{gather*}
    \Phi \geq 0 \models \Psi \geq 0.
  \end{gather*}
  Two sets of conditions that have exactly the same models (i.e., that imply one another) are called \emph{equivalent}.
\end{defn}

Note that $\Phi \geq 0 \models \Psi \geq 0$ if and only if $\Phi \geq 0 \models \psi \geq 0$ separately for every $\psi \in \Psi$.

\begin{theorem}
  \label{th:CompactnessImplication}
  Let $\Phi \subseteq \cL^\aff_x$ and $\psi \in \cL^\aff_x$.
  Then the following are equivalent:
  \begin{enumerate}
  \item $\Phi \geq 0 \models \psi \geq 0$.
  \item The formula $\psi$ belongs to the closed convex cone generated by $\Phi \cup (\cL^\aff_x)^+$.
  \item For every $\varepsilon > 0$, there exist $n$ and (not necessarily distinct) $\varphi_i \in \Phi$ for $i < n$ such that
    \begin{gather}
      \label{eq:CompactnessImplication}
      \sum_{i<n} \varphi_i + 1 \geq 0 \  \models \  \psi + \varepsilon \geq 0.
    \end{gather}
  \item For every $\varepsilon > 0$ there exists $\Phi_0 \subseteq \Phi$ finite and $\delta > 0$ such that $\Phi_0 + \delta \geq 0 \models \psi + \varepsilon \geq 0$.
  \end{enumerate}
\end{theorem}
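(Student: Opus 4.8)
The plan is to prove the chain of implications (i)$\Rightarrow$(ii)$\Rightarrow$(iii)$\Rightarrow$(iv)$\Rightarrow$(i); the first link carries the conceptual content and the remaining ones are routine manipulations of affine conditions.

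For (i)$\Rightarrow$(ii), I would pass to the language of states on the order unit space $A \coloneqq \cL^\aff_x$. By \autoref{prop:Type}, the states on $A$ are exactly the types $\tp^\aff(a)$ of tuples $a$ in $\cL$-structures; consequently a pair $(M,a)$ models $\Phi \geq 0$ precisely when $\tp^\aff(a) \in [\Phi]_+$, so that $\Phi \geq 0 \models \psi \geq 0$ is equivalent to $\psi$ being positive on $[\Phi]_+$. Now \autoref{lem:KadisonHahnBanachImplication}, applied to $A$ with $\Sigma = \Phi$, says this holds if and only if $\psi$ lies in the closed positive cone generated by $\Phi$, which by the definition of ``positive cone'' is the closure of the convex cone generated by $\Phi \cup A^+ = \Phi \cup (\cL^\aff_x)^+$ — and this is (ii). (The same equivalence yields (ii)$\Rightarrow$(i) directly, although the cycle does not require it.)

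For (ii)$\Rightarrow$(iii), I would fix $\varepsilon > 0$, choose $\chi$ in the convex cone generated by $\Phi \cup (\cL^\aff_x)^+$ with $\|\psi - \chi\| \leq \varepsilon/3$, and write $\chi = \sum_{i<m} c_i \varphi_i + \eta$ with $c_i > 0$, $\varphi_i \in \Phi$ and $\eta \geq 0$ in $\cL^\aff_x$ (every member of the generated convex cone has this shape, since $(\cL^\aff_x)^+$ is itself a cone). Using that each $\varphi_i$ respects a bounded value interval, so that $|\varphi_i^M(a)| \leq B_i$ for all $M$ and $a$, I would pick an integer $q$ large enough that the rounded weights $p_i \coloneqq \lceil q c_i \rceil$ satisfy $\sum_i (p_i/q - c_i) B_i \leq \varepsilon/3$ and $1/q \leq \varepsilon/3$, and let $(\varphi'_j : j < n)$, with $n = \sum_i p_i$, enumerate $p_i$ copies of each $\varphi_i$, so that $\sum_{j<n}\varphi'_j = q \sum_i (p_i/q)\varphi_i$. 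If $(M,a) \models \sum_{j<n}\varphi'_j + 1 \geq 0$, then $\sum_i (p_i/q)\varphi_i^M(a) \geq -1/q$; comparing this with $\sum_i c_i \varphi_i^M(a)$ through the bounds $B_i$, then with $\chi^M(a)$ using $\eta \geq 0$, and finally with $\psi^M(a)$ using $\|\psi - \chi\| \leq \varepsilon/3$, one gets $\psi^M(a) \geq -\varepsilon$. Hence $\sum_{j<n}\varphi'_j + 1 \geq 0 \models \psi + \varepsilon \geq 0$.

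The last two links are immediate. For (iii)$\Rightarrow$(iv): given $\varphi_i \in \Phi$ ($i<n$) as in (iii), set $\Phi_0 = \{\varphi_i : i<n\}$ and $\delta = 1/n$; any model of $\Phi_0 + \delta \geq 0$ satisfies $\sum_{i<n}\varphi_i^M(a) \geq -n\delta = -1$, hence models $\psi + \varepsilon \geq 0$. For (iv)$\Rightarrow$(i): if $(M,a) \models \Phi \geq 0$, then for each $\varepsilon > 0$ the corresponding finite $\Phi_0 \subseteq \Phi$ satisfies $\varphi^M(a) \geq 0 \geq -\delta$ for $\varphi \in \Phi_0$, so $(M,a) \models \psi + \varepsilon \geq 0$; letting $\varepsilon \to 0$ gives $\psi^M(a) \geq 0$. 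The only step requiring genuine care is (ii)$\Rightarrow$(iii), where a real-coefficient element of the convex cone must be converted into an unweighted finite sum plus the single unit formula $\bOne$ — this is where boundedness of the interpretations of affine formulas is used essentially. All the mathematical content of the theorem sits in (i)$\Leftrightarrow$(ii), that is, in Kadison duality together with \autoref{prop:Type}.
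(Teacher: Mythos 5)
Your proposal is correct and follows essentially the same route as the paper: (i)$\Leftrightarrow$(ii) via \autoref{prop:Type} and \autoref{lem:KadisonHahnBanachImplication}, (ii)$\Rightarrow$(iii) by approximating an element of the generated cone by a positive rational combination of members of $\Phi$ (using the boundedness of affine formulas) and clearing denominators to absorb the error into the single summand $\bOne$, and the remaining implications by the same direct checks. The only differences are cosmetic (an $\varepsilon/3$ split and explicit rounding bounds where the paper uses $\varepsilon/2$ and a terser parenthetical).
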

\begin{proof}
  \begin{cycprf}
  \item Given \autoref{prop:Type}, this is a special case of \autoref{lem:KadisonHahnBanachImplication}.
  \item The hypothesis implies that there exist formulas $\psi' \geq 0$ and $\psi'' = \frac{1}{k} \sum_{i<n} \varphi_i$, where $k\geq 1$ and $\varphi_i \in \Phi$, such that $\|\psi - \psi' - \psi''\| < \varepsilon/2$ (by repeating each $\phi_i$ several times, $\psi''$ can approximate an arbitrary linear combination with positive rational coefficients of elements of $\Phi$).
    In particular, $\psi \geq \psi' + \psi'' - \varepsilon/2 \geq \psi'' - \varepsilon/2$.
    Choose $\ell \in \N$ such that $1/(k\ell) < \eps/2$.
    Then
    \begin{gather*}
      \psi + \varepsilon
      \geq \frac{1}{k} \sum_{i<n} \varphi_i + \frac{\varepsilon}{2}
      \geq \frac{1}{k\ell} \Big( \sum_{i<n} \ell\varphi_i + 1 \Big).
    \end{gather*}
  \item Given \autoref{eq:CompactnessImplication}, we can take $\Phi_0 = \{ \varphi_i : i < n \}$ and $\delta = 1/n$ (or any $\delta>0$ if $n=0$).
  \item[\impfirst] Clear.
  \end{cycprf}
\end{proof}

\begin{ntn}
  \label{ntn:TypeImplication}
  If $p \in \tS^\aff_x(\cL)$ is a type in $x$, then $p$ \emph{satisfies} the condition $\varphi \geq \psi$, in symbols $p \models \varphi \geq \psi$, if $\varphi(p) \geq \psi(p)$.
  Similarly for a set of conditions.

  We sometimes also denote the relation $p = \tp^\aff(a)$ by $(M,a) \models p$, or $a \models p$.
\end{ntn}

\begin{remark}
  \label{rmk:TypeImplication}
  A type $p \in \tS^\aff_x(\cL)$ is determined by the collection of conditions $\bigl\{ \varphi \geq 0 : \varphi \in \cL^\aff_x, \, \varphi(p) \geq 0\bigr\}$.
  If we identify the two objects, then \autoref{ntn:TypeImplication} merely consists of special cases of previously introduced notations.
\end{remark}

The following is a form of the compactness theorem for affine logic, established in \cite{Bagheri2014a,Bagheri2014}.

\begin{theorem}[Bagheri's compactness theorem for affine logic]
  \label{th:HahnBanachCompactness}
  Let $\Phi \subseteq \cL^\aff_x$ be closed under addition.
  Then the following are equivalent:
  \begin{enumerate}
  \item The set of conditions $\Phi \geq 0$ is consistent.
  \item For every $\varphi \in \Phi$, the condition $\varphi \geq 0$ is consistent.
  \item For every $\varphi \in \Phi$, the condition $\varphi + \bOne \geq 0$ is consistent.
  \item We have $-\bOne \notin \Phi + (\cL^\aff_x)^+$.
  \end{enumerate}
\end{theorem}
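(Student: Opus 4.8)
The plan is to recognize this as the model-theoretic shadow of the purely order-theoretic statement \autoref{prop:HahnBanachCompactness}, applied to the order unit space $A = \cL^\aff_x$ with $\Sigma = \Phi$, via the identification of types furnished by \autoref{prop:Type}. First I would recall that, by definition, $\tS^\aff_x(\cL) = \tS(\cL^\aff_x)$, and that for $\Phi \subseteq \cL^\aff_x$ the set $[\Phi]_+$ is the collection of states $p$ with $\varphi(p) \geq 0$ for all $\varphi \in \Phi$.

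The key step is the translation dictionary between consistency of conditions and non-emptiness of $[\,\cdot\,]_+$. If $(M,a)$ is a model of $\Phi \geq 0$, then $p = \tp^\aff(a) \in \tS^\aff_x(\cL)$ satisfies $\varphi(p) = \varphi^M(a) \geq 0$ for every $\varphi \in \Phi$, so $p \in [\Phi]_+$. Conversely, if $p \in [\Phi]_+$, then by \autoref{prop:Type} there is an $\cL$-structure $M$ and $a \in M^x$ with $p = \tp^\aff(a)$, and then $\varphi^M(a) = \varphi(p) \geq 0$ for all $\varphi \in \Phi$, i.e.\ $(M,a) \models \Phi \geq 0$. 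Thus item (i) of the theorem is equivalent to $[\Phi]_+ \neq \emptyset$, item (ii) is equivalent to $[\varphi]_+ \neq \emptyset$ for all $\varphi \in \Phi$ (apply the same argument with the singleton $\{\varphi\}$), and item (iii) is equivalent to $[\varphi + \bOne]_+ \neq \emptyset$ for all $\varphi \in \Phi$. Item (iv) is literally the same as condition (4) of \autoref{prop:HahnBanachCompactness} once one sets $A^+ = (\cL^\aff_x)^+$.

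With the dictionary in hand, the proof concludes by invoking \autoref{prop:HahnBanachCompactness} directly: since $\Phi$ is closed under addition by hypothesis, the four reformulated statements are equivalent, hence so are (i)--(iv). I do not expect a genuine obstacle in this argument, as all the analytic content has already been absorbed: the heavy lifting lies in \autoref{prop:Type} (which packages the compactness theorem for continuous logic together with the Hahn--Banach separation theorem and the convex-combination construction of \autoref{lemma:NaiveConvexLos}) and in \autoref{prop:HahnBanachCompactness} (a pure Hahn--Banach argument in order unit spaces). The only thing to be careful about is that the identification of $\cL^\aff_x$ with an order unit space — with the norm given just before \autoref{defn:Type} — is exactly the one under which $\tS^\aff_x(\cL)$ is its state space, so that \autoref{prop:HahnBanachCompactness} applies verbatim.
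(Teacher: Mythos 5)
Your proposal is correct and is exactly the paper's argument: the paper's proof reads, in full, "Given \autoref{prop:Type}, this is just \autoref{prop:HahnBanachCompactness}." You have merely spelled out the translation dictionary that the paper leaves implicit.
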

\begin{proof}
  Given \autoref{prop:Type}, this is just \autoref{prop:HahnBanachCompactness}.
\end{proof}

\begin{cor}
  \label{cor:RelativeHahnBanachCompactness}
  Let $\Sigma$ be a collection of affine conditions and let $\Phi \subseteq \cL^\aff_x$ be closed under addition.
  Then the following are equivalent:
  \begin{enumerate}
  \item \label{i:cor:RelativeHahnBanachCompactness:1} $\Sigma \cup (\Phi \geq 0)$ is consistent.
  \item \label{i:cor:RelativeHahnBanachCompactness:2} For every $\varphi \in \Phi$, the set $\Sigma \cup \{\varphi \geq 0\}$ is consistent.
  \item \label{i:cor:RelativeHahnBanachCompactness:3} For every $\varphi \in \Phi$, the set $\Sigma \cup \{\varphi + \bOne \geq 0\}$ is consistent.
  \end{enumerate}
\end{cor}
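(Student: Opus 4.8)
The plan is to reduce \autoref{cor:RelativeHahnBanachCompactness} to \autoref{prop:HahnBanachCompactness} by absorbing the background theory $\Sigma$ into the order structure of a quotient order unit space. First I would rewrite $\Sigma$ in the form $\Sigma' \geq 0$ for some $\Sigma' \subseteq \cL^\aff_x$, let $P \subseteq \cL^\aff_x$ be the closed positive cone generated by $\Sigma'$, and form the quotient $B = \cL^\aff_x/P$ with its positive, unital quotient map $\pi \colon \cL^\aff_x \to B$, as provided by \autoref{prop:OrderUnitSpaceQuotient}. Since $P$ is closed, $B$ is a genuine order unit space, and \autoref{prop:OrderUnitSpaceQuotient} gives an affine homeomorphism $\tS(B) \cong [P]_+ \subseteq \tS^\aff_x(\cL)$. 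As states are automatically positive on the positive cone, $[P]_+$ is exactly the set of types that are positive on $\Sigma'$, i.e., the types satisfying $\Sigma$; by \autoref{prop:Type} these are precisely the types realized in models of $\Sigma$.

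Next I would translate the three clauses of the corollary into the language of $B$. For $\varphi \in \cL^\aff_x$ with image $\bar\varphi = \pi(\varphi) \in B$, a state $p \in \tS(B)$ corresponds under the above identification to the type $q = p \circ \pi \in [P]_+$, and $p(\bar\varphi) = q(\varphi)$; hence $\Sigma \cup \{\varphi \geq 0\}$ is consistent if and only if $[\bar\varphi]_+ \neq \emptyset$ in $\tS(B)$ (using \autoref{prop:Type} to pass between consistency of a set of conditions and its realizability by a type). Since $\pi$ is unital, $\overline{\varphi + \bOne} = \bar\varphi + \bOne$, so $\Sigma \cup \{\varphi + \bOne \geq 0\}$ is consistent iff $[\bar\varphi + \bOne]_+ \neq \emptyset$; and writing $\bar\Phi = \pi(\Phi)$, the set $\Sigma \cup (\Phi \geq 0)$ is consistent iff $[\bar\Phi]_+ \neq \emptyset$. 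Finally, $\bar\Phi$ is closed under addition because $\Phi$ is and $\pi$ is linear. With these translations, applying \autoref{prop:HahnBanachCompactness} to the order unit space $B$ and the set $\bar\Phi$ yields exactly the equivalence of clauses~\ref{i:cor:RelativeHahnBanachCompactness:1}, \ref{i:cor:RelativeHahnBanachCompactness:2} and~\ref{i:cor:RelativeHahnBanachCompactness:3}.

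I do not expect a real obstacle here; the only thing that needs care is choosing the right reduction. Applying \autoref{th:HahnBanachCompactness} directly to $\Sigma' \cup \Phi$ would not work, since that set is not closed under addition and the additive subsemigroup it generates entangles $\Sigma$ with $\Phi$, producing equivalences about arbitrary mixed sums rather than the clean split the corollary asserts; passing to $B$ is what makes $\Sigma$ part of the order, so that the purely ``existential'' compactness statement \autoref{prop:HahnBanachCompactness} can be applied to $\Phi$ on its own. (Clause (4) of \autoref{prop:HahnBanachCompactness} would moreover furnish a fourth equivalent condition, namely that $-\bOne \notin \bar\Phi + B^+$, but it is not needed here.)
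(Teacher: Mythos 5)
Your proof is correct, but it takes a different route from the paper, and your closing remark about the ``direct'' approach is mistaken. The paper's proof is exactly the reduction you dismiss: it rewrites $\Sigma$ as $\Psi \geq 0$ and applies \autoref{th:HahnBanachCompactness} to the closure of $\Phi \cup \Psi$ under addition. The ``entanglement'' you worry about is harmless. A generic element of that closure is $\chi = \sum_i \varphi_i + \sum_j \psi_j$ with $\varphi_i \in \Phi$, $\psi_j \in \Psi$; since $\Phi$ is closed under addition, $\varphi = \sum_i \varphi_i \in \Phi$ (or is absent), so by clause~\ref{i:cor:RelativeHahnBanachCompactness:3} there is a model of $\Sigma \cup \{\varphi + \bOne \geq 0\}$, and in any model of $\Sigma$ each $\psi_j$ is already nonnegative, whence $\chi + \bOne \geq 0$ holds there. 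Thus every element of the additive closure satisfies condition (iii) of \autoref{th:HahnBanachCompactness}, and its condition (i) gives consistency of $\Sigma \cup (\Phi \geq 0)$. Your alternative — quotienting $\cL^\aff_x$ by the closed positive cone generated by $\Sigma'$ via \autoref{prop:OrderUnitSpaceQuotient}, identifying $\tS(B)$ with $[\Sigma']_+$ (here you implicitly use \autoref{lem:KadisonHahnBanachImplication} to see that $[P]_+ = [\Sigma']_+$, which is worth saying), and then invoking the abstract \autoref{prop:HahnBanachCompactness} for $B$ and $\pi(\Phi)$ — is a clean and correct functional-analytic packaging of the same compactness phenomenon; what it buys is that $\Sigma$ is absorbed once and for all into the order structure, at the cost of setting up the quotient space (and of a degenerate case when $\Sigma$ is inconsistent, where $B$ collapses and all three clauses fail vacuously). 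The paper's version is shorter because the mixing of $\Phi$ with $\Psi$ costs only the one-line observation above.
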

\begin{proof}
  Top to bottom is immediate.
  For \autoref{i:cor:RelativeHahnBanachCompactness:3} $\Rightarrow$ \autoref{i:cor:RelativeHahnBanachCompactness:1}, we can rewrite $\Sigma$ as $\Psi \geq 0$ and apply \autoref{th:HahnBanachCompactness} to the closure of $\Phi \cup \Psi$ under addition.
\end{proof}

An \emph{affine sentence} is a formula with no free variables, i.e., a member of $\cL^\aff_0$.
A collection of conditions with no free variables is a \emph{theory}.
Sometimes we identify two equivalent theories -- this is really a matter of taste.
By \autoref{prop:KadisonHahnBanachImplicationDuality}, there exists a bijection between (equivalence classes of) theories and compact convex subsets of $\tS^\aff_0(\cL)$.
In particular, the inconsistent theory corresponds to the empty set, and maximal consistent theories to singletons.
Consequently, we identify a maximal consistent theory with the unique member of $\tS^\aff_0(\cL)$ that satisfies it, and call either one a \emph{complete affine $\cL$-theory}.

If $M$ is an $\cL$-structure, then the affine type of the empty tuple in $M$ is called the \emph{affine theory of $M$}, denoted $\Th^\aff(M)$.
By the previous paragraph, this is a complete affine theory.
More generally, if $\cM$ is a class of $\cL$-structures, we define $\Th^\aff(\cM)$, the affine theory of $\cM$, as the collection of all affine conditions (without variables) that hold in $\cM$, i.e., as the intersection of all affine theories of members of $\cM$.

\begin{defn}
  \label{dfn:AffineEverything}
  \
  \begin{enumerate}
  \item We say that two $\cL$-structures $M$ and $N$ are \emph{affinely equivalent}, in symbols $M \equiv^\aff N$, if $\Th^\aff(M) = \Th^\aff(N)$, i.e., if they are models of the exact same affine conditions without variables.
  \item A partial map $\theta\colon M \dashrightarrow N$ with domain $A \subseteq M$ is \emph{affine} if $\varphi^M(a) = \varphi^N(\theta a)$ for every affine $\cL$-formula $\varphi(x)$ and all $a \in A^x$.
    A total affine map $\theta\colon M \rightarrow N$ is called an \emph{affine embedding} (indeed, every affine map is an isometric embedding of its domain).
  \item Let $A \subseteq M$ be a subset.
    We define $\cL(A)$ to consist of $\cL$ augmented by a new constant symbol for each $a \in A$.
    The structure $M$ is implicitly expanded to an $\cL(A)$-structure by interpreting each $a \in A$ as itself.

    We call the affine theory of $M$ in this augmented language the \emph{affine diagram} of $A$ (in $M$).
    It will be denoted $D^\aff_A$, considering that $M$ will always be known from context.
  \end{enumerate}
\end{defn}

\begin{remark}
  \label{rem:affine-embedding}
  One might want to call these \emph{affinely elementarily equivalent} and \emph{affine elementary embedding}, respectively.
  We chose the shorter version since there can be no other meaning of \emph{affinely equivalent} and \emph{affine embedding}.
  Indeed, any map that preserves a set of formulas also preserves all their continuous combinations.
  The difference is that for ``affine'' we also consider quantification over affine combinations, while ``elementary'' means that quantification over any continuous combination is allowed.
\end{remark}

Note that if $M \preceq^\aff N$ and $a \in M^x$, then the affine types of $a$ in $M$ and in $N$ are the same.
Similarly, if $A \subseteq M \preceq^\aff N$, then the affine diagrams of $A$ in $M$ and in $N$ are the same.

The affine diagram of $A$ in $M$ carries the same information as the affine type in $M$ of any fixed enumeration of $A$.
In fact, since affine formulas are uniformly continuous, it carries the same information as the affine type of any enumeration of a dense subset of $A$.

Finally, $M \preceq^\aff N$ if and only if $M$ has the same affine diagram in $M$ and in $N$, i.e., if $M$ and $N$ are affinely equivalent as $\cL(M)$-structures.

\begin{prop}[Affine joint embedding]
  \label{prop:AffineJEP}
  Let $M$ and $N$ be two $\cL$-structures.
  Then $M \equiv^\aff N$ if and only if they admit affine embeddings into a third structure $K$.
\end{prop}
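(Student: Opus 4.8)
The reverse implication is immediate: an affine embedding preserves the values of all affine formulas on its domain, and an affine sentence is evaluated at the empty tuple, so if $M$ and $N$ both embed affinely into $K$ then $\Th^\aff(M)=\Th^\aff(K)=\Th^\aff(N)$.

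For the forward implication, the plan is to realize $K$ as the $\cL$-reduct of a single model of the two affine diagrams, amalgamated over $\cL$. Concretely, I would pass to the language $\cL'=\cL(M)\sqcup\cL(N)$ in which the new constant symbols naming the elements of $M$ are chosen disjoint from those naming the elements of $N$, and consider the $\cL'$-theory $D^\aff_M\cup D^\aff_N$. Any model $K'$ of this theory yields, by interpreting the two families of constants, affine embeddings of $M$ and of $N$ into the $\cL$-reduct of $K'$: each diagram records the exact value $\psi^M(\bar a)$, resp.\ $\psi^N(\bar b)$, of every affine $\cL$-formula on tuples from the structure, so the corresponding interpretation maps preserve all affine formulas and are therefore affine embeddings (\autoref{dfn:AffineEverything}). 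Thus the whole problem reduces to showing that $D^\aff_M\cup D^\aff_N$ is consistent.

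To prove this consistency I would invoke the affine compactness theorem in the relative form of \autoref{cor:RelativeHahnBanachCompactness}, taking $\Sigma=D^\aff_N$ and $\Phi$ equal to the set of all finite sums of conditions from $D^\aff_M$ written in the form ``$\geq 0$'', so that $\Phi$ is closed under addition and $\Phi\geq 0$ is equivalent to $D^\aff_M$. By that corollary it is enough to check that $\Sigma\cup\{\varphi+\bOne\geq 0\}$ is consistent for each individual $\varphi\in\Phi$. Such a $\varphi$ mentions only finitely many of the new constants, so it can be written as $\chi(c_{\bar a})$ for a single finite tuple $\bar a$ from $M$ and an affine $\cL$-formula $\chi(\bar x)$; and since $\varphi$ is a sum of conditions from the diagram of $M$, its value at $\bar a$ in $M$ is $0$, i.e.\ $\chi^M(\bar a)=0$.

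The crucial step — and the one I expect to be the main obstacle — is to convert the hypothesis $M\equiv^\aff N$ into an approximate realization of $\chi(c_{\bar a})$ inside $N$. Here one uses that $\sup_{\bar x}\chi(\bar x)$ is an affine \emph{sentence}: its value in $M$ is at least $\chi^M(\bar a)=0$, hence, since $M\equiv^\aff N$, its value in $N$ is also $\geq 0$. Therefore there is a tuple $\bar a'$ in $N$ with $\chi^N(\bar a')>-1$, and expanding $N$ to an $\cL'$-structure by naming each of its elements by its own constant, interpreting the constants of $\bar a$ by $\bar a'$ and the remaining constants arbitrarily, produces a model of $D^\aff_N\cup\{\varphi+\bOne\geq 0\}$. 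This verifies the hypothesis of \autoref{cor:RelativeHahnBanachCompactness}, yields a model of $D^\aff_M\cup D^\aff_N$, and hence the desired $K$. Everything else — the reduction to a single $\varphi$, the bookkeeping identifying models of diagrams with affine embeddings, and the passage from an $\varepsilon$-witness to an interpretation of constants — is routine; the one genuinely affine-logic input is the observation that ``there is a tuple satisfying a given affine condition'' is itself expressed by an affine sentence, which is exactly what lets $\equiv^\aff$ perform the transfer.
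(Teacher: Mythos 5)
Your proof is correct and follows essentially the same route as the paper's: reduce to the consistency of $D^\aff_M\cup D^\aff_N$ over disjoint constants, apply \autoref{cor:RelativeHahnBanachCompactness}, and transfer a single condition across $\equiv^\aff$ via the affine sentence $\sup_{\bar x}\chi(\bar x)$ to produce an approximate witness. The only differences are cosmetic — you swap the roles of $M$ and $N$ and build $\Phi$ from finite sums of diagram conditions rather than from all sentences nonnegative in one structure — and the slip ``$\chi^M(\bar a)=0$'' should read ``$\chi^M(\bar a)\geq 0$'', which is all your argument actually uses.
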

\begin{proof}
  One direction is clear.
  For the other, we may assume that $M \cap N = \emptyset$.
  Observe that a structure $K$ admits an affine embedding $M \hookrightarrow K$ if and only if it can be expanded to $\cL(M)$ to be a model of $D^\aff_M$.
  Therefore, it will suffice to show that the set of conditions $D^\aff_M \cup D^\aff_N$ is consistent.

  By \autoref{cor:RelativeHahnBanachCompactness}, it will suffice to show that for every $\cL(N)$-sentence $\varphi$, if $\varphi^N \geq 0$, then $D^\aff_M \cup \{\varphi + \bOne \geq 0\}$ is consistent.
  Indeed, we may rewrite $\varphi$ as $\psi(b)$, where $b \in N^x$ is the tuple of constants that appear in $\varphi$, and $\psi \in \cL^\aff_x$.
  Now, $\psi^N(b) \geq 0$ implies that $(\sup_x \psi)^N \geq 0$.
  But $M \equiv^\aff N$, so $(\sup_x \psi)^M \geq 0$.
  In particular, there exists $a \in M^x$ such that $\psi^M(a) \geq -1$.
  Interpreting the constant symbols $b$ in $M$ as $a$ (and other constants in $N$ arbitrarily), we obtain an expansion $M'$ of $M$ that models $D^\aff_M \cup \{\varphi + \bOne \geq 0\}$, completing the proof.
\end{proof}

\begin{cor}[Affine amalgamation]
  \label{cor:AffineAmalgamation}
  Let $M$ and $N$ be $\cL$-structures, and let $\theta\colon M \dashrightarrow N$ be a partial map.
  Then $\theta$ is affine if and only if $M$ and $N$ admit affine embeddings into a structure $K$, say $f\colon M \hookrightarrow K$ and $g\colon N \hookrightarrow K$, such that $f$ and $g \circ \theta$ agree on $\dom \theta$.
\end{cor}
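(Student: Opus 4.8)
The plan is to deduce this from affine joint embedding (\autoref{prop:AffineJEP}) by pushing the compatibility requirement into the language. The ``if'' direction is a one-line check: given affine embeddings $f\colon M\hookrightarrow K$, $g\colon N\hookrightarrow K$ with $f=g\circ\theta$ on $A=\dom\theta$, then for any affine $\cL$-formula $\varphi(x)$ and tuple $a\in A^x$ one has $\varphi^M(a)=\varphi^K(f(a))=\varphi^K(g(\theta a))=\varphi^N(\theta a)$, so $\theta$ is affine.

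For the converse, suppose $\theta$ is affine and set $A=\dom\theta$. First I would pass to the signature $\cL(A)$, expanding $M$ to an $\cL(A)$-structure $M'$ by interpreting the new constant $c_a$ as $a$, and expanding $N$ to $N'$ by interpreting $c_a$ as $\theta(a)$. The key observation is that $M'\equiv^\aff N'$: every affine $\cL(A)$-sentence has the form $\psi(c_{a_0},\dots,c_{a_{k-1}})$ for some affine $\cL$-formula $\psi(x_0,\dots,x_{k-1})$ and $a_0,\dots,a_{k-1}\in A$, and its values in $M'$ and $N'$ are $\psi^M(a_0,\dots,a_{k-1})$ and $\psi^N(\theta a_0,\dots,\theta a_{k-1})$ respectively, which agree precisely because $\theta$ is affine. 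Hence $\Th^\aff(M')=\Th^\aff(N')$. Applying \autoref{prop:AffineJEP} in the signature $\cL(A)$ yields an $\cL(A)$-structure $K'$ with affine embeddings $f\colon M'\hookrightarrow K'$ and $g\colon N'\hookrightarrow K'$; let $K$ be its $\cL$-reduct.

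It remains to check that $f,g$, viewed as maps of $\cL$-structures, do the job. Restricting to affine $\cL$-formulas shows $f\colon M\to K$ and $g\colon N\to K$ are affine embeddings. Moreover an affine embedding of $\cL(A)$-structures preserves distances to named constants, so $d^{K'}(f(a),c_a^{K'})=d^{M'}(a,c_a^{M'})=0$, i.e.\ $f(a)=c_a^{K'}$, and likewise $g(\theta a)=c_a^{K'}$; thus $f$ and $g\circ\theta$ agree on $A$, as required. I do not expect any serious obstacle here: the argument is a routine consequence of \autoref{prop:AffineJEP}. The only points needing care are that $\cL(A)$ is a legitimate signature (the modulus condition is vacuous for constant symbols) so that \autoref{prop:AffineJEP} applies verbatim, and that affine embeddings preserve the interpretations of constant symbols — which is exactly what forces the desired agreement of $f$ and $g\circ\theta$.
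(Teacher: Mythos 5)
Your proposal is correct and follows exactly the paper's own argument: expand both structures to the language $\cL(A)$ (interpreting $c_a$ as $a$ in $M$ and as $\theta(a)$ in $N$), note that affine equivalence of these expansions is equivalent to $\theta$ being affine, and apply \autoref{prop:AffineJEP}. The extra details you supply (the ``if'' direction and the preservation of named constants) are routine and consistent with what the paper leaves implicit.
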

\begin{proof}
  Let $A = \dom(\theta)$.
  Expand $M$ to $\cL(A)$ as usual, and $N$ by interpreting $a \in A$ as $\theta(a)$.
  The resulting $\cL(A)$-structures are affinely equivalent if and only if $\theta$ is affine, so we may apply \autoref{prop:AffineJEP}.
\end{proof}

Similarly to continuous logic, affine type spaces also carry a topometric structure. The compact topology we have already seen: it is given by the weak$^*$ topology inherited from $\cL^\aff_x$, or simply pointwise convergence on formulas. To distinguish it from the metric topology, which we are about to define, we call it the \emph{logic topology} and we denote it by $\tau$.
\begin{defn}
  \label{defn:AffineTypeDistance}
  Recall that the distance between finite tuples in a structure is given by the sum of the distances between their coordinates, as per \autoref{eq:TupleDistance}.
  Define the \emph{distance} between two types $p, q \in \tS^\aff_n(\cL)$ by
  \begin{gather*}
    \partial(p, q) \coloneqq \inf \, \bigl\{ d(a,b) : M \ \text{a structure}, \ a,b \in M^n, \ a \models p, \ b \models q \bigr\},
  \end{gather*}
  where the infimum of $\emptyset$ is $+\infty$.
\end{defn}

\begin{prop} The type distance $\dtp$ enjoys the following properties:
  \label{prop:AffineTypeDistance}
  \begin{enumerate}
  \item
    \label{item:AffineTypeDistanceAttained}
    If $\partial(p,q) < \infty$, then the infimum is attained.
  \item
    \label{item:AffineTypeDistanceGeneralizedDistance}
    The function $\partial$ is a generalized metric on $\tS^\aff_n(\cL)$ (i.e., it is a metric that is also allowed to take the value $\infty$).
  \item
    \label{item:AffineTypeDistanceFinite}
    If $p = \tp^\aff(a)$ and $q = \tp^\aff(b)$, where $a \in M^n$ and $b \in N^n$, then $\partial(p,q) < \infty$ if and only if $M \equiv^\aff N$.
  \item
    \label{item:AffineTypeDistanceTopometric}
    The $\dtp$-topology refines $\tau$ and $\dtp$ is $(\tau \times \tau)$-lower semi-continuous as a function $\tS^\aff_n(\cL)^2 \rightarrow [0,\infty]$.
  \item
    \label{item:AffineTypeDistanceConvex}
    The function $\partial\colon \tS^\aff_n(\cL)^2 \rightarrow [0,\infty]$ is convex.
  \item
    \label{item:AffineTypeDistanceVariableRestriction}
    The variable restriction maps $\pi_n\colon \tS^\aff_{n+m}(\cL) \to \tS^\aff_n(\cL)$ are $\partial$-contractive.
  \end{enumerate}
\end{prop}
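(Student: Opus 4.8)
The plan is to reduce every item to the structure of the ``joint'' type space $\tS^\aff_{xy}(\cL)$, where $x,y$ are disjoint $n$-tuples of variables. Put $\rho := \sum_{i<n} d(x_i,y_i) \in \cL^\aff_{xy}$, and let $\pi_y \colon \tS^\aff_{xy}(\cL) \to \tS^\aff_y(\cL)$ be the variable restriction to $y$, defined exactly as $\pi_x$. Using \autoref{prop:Type} (every affine type is the type of a tuple in some structure), a pair of realizations $a \models p$, $b \models q$ in a common structure corresponds precisely to a type $w \in \tS^\aff_{xy}(\cL)$ with $\pi_x(w) = p$ and $\pi_y(w) = q$, and then $\rho(w) = d(a,b)$; hence
\[
  \partial(p,q) = \inf\bigl\{ \rho(w) : w \in \tS^\aff_{xy}(\cL),\ \pi_x(w) = p,\ \pi_y(w) = q \bigr\},
\]
with the empty infimum equal to $+\infty$. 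All six items then follow from the compactness and the convex--topometric structure of $\tS^\aff_{xy}(\cL)$, the (syntactically given) uniform continuity of affine formulas, and affine amalgamation.

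For (i): the sets $\{w : \pi_x(w) = p\}$, $\{w : \pi_y(w) = q\}$ and $\{w : \rho(w) \le c\}$ are $\tau$-closed in the compact space $\tS^\aff_{xy}(\cL)$, so if $\partial(p,q) = r < \infty$, the nested family of nonempty closed sets obtained by adding the bound $\rho(w) \le r+\eps$ (for $\eps > 0$) has a common point, which attains the infimum. The same ``pass to a convergent subnet'' argument gives the lower semi-continuity in (iv): from $(p_\alpha,q_\alpha) \to (p,q)$ with witnesses $w_\alpha$ such that $\rho(w_\alpha) \le c$, any subnet limit $w$ satisfies $\pi_x(w) = p$, $\pi_y(w) = q$ and $\rho(w) \le c$ by continuity of $\pi_x$, $\pi_y$, $\rho$. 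That the $\partial$-topology refines $\tau$ amounts to the $\partial$-continuity of each affine formula $\varphi$: taking a common realization at distance $\partial(p,q)$ and invoking the modulus of continuity built into $\varphi$, one sees that $|\varphi(p) - \varphi(q)|$ is controlled by $\partial(p,q)$. For convexity (v), given witnesses $w_1, w_2$ for $\partial(p_1,q_1)$ and $\partial(p_2,q_2)$, the type $w := \lambda w_1 + (1-\lambda)w_2$ has $\pi_x(w) = \lambda p_1 + (1-\lambda)p_2$, $\pi_y(w) = \lambda q_1 + (1-\lambda)q_2$ and $\rho(w) = \lambda\rho(w_1) + (1-\lambda)\rho(w_2)$, since $\pi_x$, $\pi_y$ and the map $w \mapsto \rho(w)$ are affine; this is exactly the desired inequality. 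Item (vi) is immediate: realize two $(n{+}m)$-types by tuples $a,b$ in a common structure with $d(a,b) = \partial(p,q)$ and restrict to the first $n$ coordinates, using $\sum_{i<n} d(a_i,b_i) \le \sum_{i<n+m} d(a_i,b_i)$.

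For (ii): symmetry and $\partial(p,p) = 0$ are trivial, and $\partial(p,q) = 0 \Rightarrow p = q$ follows from (i), a common realization at distance $0$ being a single tuple. The triangle inequality is the only point genuinely using amalgamation: take, via (i), a common realization $(a,b)$ of $(p,q)$ in $K_1$ and $(b',c)$ of $(q,s)$ in $K_2$ attaining the respective distances; since $\tp^\aff(b) = \tp^\aff(b') = q$, the partial map sending $b$ to $b'$ is affine, so by \autoref{cor:AffineAmalgamation} one glues $K_1$ and $K_2$ over it inside a structure $K$, and, affine embeddings being isometric, $\partial(p,s) \le d(a,c) \le d(a,b) + d(b',c) = \partial(p,q) + \partial(q,s)$. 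For (iii): $\partial(p,q) < \infty$ says exactly that $p$ and $q$ are realized in one common structure, which (restricting the realized types to sentences) forces $\Th^\aff(M) = \Th^\aff(N)$; conversely, $M \equiv^\aff N$ gives, by \autoref{prop:AffineJEP}, affine embeddings of $M$ and $N$ into a common $K$, and the (finite) distance there bounds $\partial(p,q)$.

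I expect that, once $\partial$ is put in the displayed form, the only steps requiring real content are (i) — compactness of $\tS^\aff_{xy}(\cL)$ combined with \autoref{prop:Type} — and the triangle inequality, which rests on affine amalgamation (\autoref{cor:AffineAmalgamation}); of these, the amalgamation step is the one to handle with care, being the place where two a priori unrelated structures are fused along a common realization of the middle type.
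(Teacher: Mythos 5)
Your proposal is correct and follows essentially the same route as the paper: compactness (of the type space, equivalently of the logic) for attainment and lower semi-continuity, affine amalgamation via \autoref{cor:AffineAmalgamation} for the triangle inequality, joint embedding for item (iii), and convex combinations for item (v). The only difference is organizational: by packaging everything through the joint space $\tS^\aff_{xy}(\cL)$ and the affine formula $\rho$, your convexity argument handles arbitrary $\lambda$ in one step, whereas the paper works with the explicit structure $\half M \oplus \half M'$ and combines midpoint convexity with lower semi-continuity — the underlying content (\autoref{prop:Type}, hence \autoref{lemma:NaiveConvexLos}) is the same.
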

\begin{proof}
  We prove \autoref{item:AffineTypeDistanceAttained} exactly as for continuous logic, by applying the compactness theorem for continuous logic to the set of conditions $p(x) \cup q(y) \cup \bigl\{ d(x,y) \leq s : s > r \bigr\}$.
  (We can also apply \autoref{cor:RelativeHahnBanachCompactness}: since $\partial(p,q) = r$, the set $p(x) \cup q(y) \cup \bigl\{ n \bigl(r-d(x,y)\bigr) + \bOne \geq 0 \bigr\}$ is consistent for all $n$.)

  It follows that $\partial(p,q) = 0$ if and only if $p = q$.
  Symmetry is also clear.
  Towards proving the triangle inequality, assume that $\partial(p_0,p_1) = r$ and $\partial(p_1,p_2) = s$, both finite.
  By \ref{item:AffineTypeDistanceAttained}, we may witness the first by $a_0,a_1 \in M^n$ (so $a_i \models p_i$ and $d(a_0,a_1) = r$), and the second similarly by $b_1,b_2 \in N^n$.
  Then $a_1$ and $b_1$ have the same type, $p_1$, so the partial map sending $a_1 \mapsto b_1$ is affine.
  By \autoref{cor:AffineAmalgamation}, we can find $K$ and $c_0,c_1,c_2 \in K^n$ that witness both distances.
  Then $d(c_0,c_2) \leq r+s$, and therefore $\partial(p_0,p_2) \leq r+s$, proving \autoref{item:AffineTypeDistanceGeneralizedDistance}.

  If $M \equiv^\aff N$, then $\partial(p,q) < \infty$ directly by \autoref{prop:AffineJEP}.
  Conversely, assume that $\partial(p,q) < \infty$.
  Then $p$ and $q$ can both be realized in some structure $K$, and it follows that $M \equiv^\aff K \equiv^\aff N$, proving \autoref{item:AffineTypeDistanceFinite}.

  Item \autoref{item:AffineTypeDistanceTopometric} is proved exactly as in continuous logic.
  Indeed, the $\dtp$-topology refines $\tau$ since every affine formula is uniformly continuous.
  For $r \geq 0$, let $C_r = \bigl\{ (p,q) : \partial(p,q) \leq r \bigr\}$ and $D_r = \bigl\{ q(x,y) \in \tS^\aff_{2n}(\cL) : q \models d(x,y) \leq r \bigr\}$.
  Then $C_r = (\pi_x \times \pi_y)(D_r)$, where $\pi_x, \pi_y\colon \tS^\aff_{2n}(\cL) \rightarrow \tS^\aff_n(\cL)$ are the two projection maps.
  Since $D_r$ is compact and $\pi_x \times \pi_y$ is continuous, $C_r$ is compact, so $\partial$ is $\tau$-lower semi-continuous.

  For \autoref{item:AffineTypeDistanceConvex}, assume that $\partial(p,q) < r$ and $\partial(p',q') < r'$.
  Let $p'' = \half p + \half p'$ and $q'' = \half q + \half q'$.
  Since $\partial$ is lower semi-continuous, it will suffice to prove that $\partial(p'',q'') < \half r + \half r'$.
  In particular, we may assume that $r,r' < \infty$.
  Choose $M,a,b$ that witness $\partial(p,q) < r$ (namely, $a,b \in M^n$, $a \models p$, $b \models q$, and $d(a,b) < r$), and similarly $M',a',b'$ for $\partial(p',q') < r'$.
  Let $N = \frac{1}{2}M \oplus \frac{1}{2}M'$, as in \autoref{lemma:NaiveConvexLos}, and let $a'' = (a,a') \in N$ and $b'' = (b,b')$.
  Then $N,a'',b''$ witness that $\partial(p'',q'') < \half r + \half r'$, completing the proof.

  The last point, \autoref{item:AffineTypeDistanceVariableRestriction}, is clear.
\end{proof}

\begin{remark}
  \label{rem:type-metric-inf-tuples}
  In \autoref{defn:AffineTypeDistance}, we only define the metric $\dtp$ for type spaces over a finite tuple of variables. For a general type space $\tS^\aff_I(\cL)$, we can define the \emph{$\dtp$-uniformity}, which is simply the inverse limit uniformity given by the representation $\tS^\aff_I(\cL) = \varprojlim_{J \sub I} \tS^\aff_J(\cL)$, where the inverse limit is taken over all finite $J \sub I$ and each $\tS^\aff_J$ is equipped with its $\dtp$-metric. We will only need this when $I = \N$, in which case the uniformity is metrizable. One compatible metric can be obtained as follows. Equip $M^\N$ with the metric given by $d(a, b) = \sum_i 2^{-i} d(a_i, b_i)$ and define $\dtp$ on $\tS^\aff_\N(\cL)$ as in \autoref{defn:AffineTypeDistance}. Then all items but \autoref{item:AffineTypeDistanceVariableRestriction} of \autoref{prop:AffineTypeDistance} continue to hold.
\end{remark}

Let $T$ be a theory.
For each tuple of variables $x$, we can equip $\cL^\aff_x$ with the relation of preorder modulo $T$, namely
\begin{equation}
  \label{eq:affine-order}
  \varphi \leq_T \psi \quad \Longleftrightarrow \quad T \models \varphi \leq \psi.
\end{equation}
Say that $\varphi \equiv_T \psi$ if $\varphi \geq_T \psi$ and $\varphi \leq_T \psi$.
We define $\cL^\aff_x(T) \coloneqq \cL^\aff_x / {\equiv}_T$, which, equipped with the induced order and the equivalence class of $\bOne$, is again an order unit space.
Its state space will be denoted
\begin{gather*}
  \tS^\aff_x(T) \coloneqq \tS\bigl( \cL^\aff_x(T) \bigr),
\end{gather*}
and will be called the \emph{space of types of $T$} in the variables $x$.

This construction of $\cL^\aff_x(T)$ is the same as the one in \autoref{prop:OrderUnitSpaceQuotient} with respect to the closed positive cone $P_x(T) = \{\varphi \in \cL^\aff_x : T \models \varphi \geq 0 \}$: i.e., the quotient of $\cL^\aff_x$ by the ideal $P_x(T) \cap -P_x(T)$.
In particular, composition with the quotient map $\cL^\aff_x \to \cL^\aff_x(T)$ allows us to identify the space of types of $T$ with the space of types that imply $T$ (which is just $\bigl[P_x(T)\bigr]_+$ in the notations of \autoref{prop:OrderUnitSpaceQuotient}):
\begin{gather*}
  \tS^\aff_x(T)
  = \bigl\{ p \in \tS^\aff_x(\cL) : p \models T \bigr\}
  = \bigl\{ \tp^\aff(a) : a \in M^x, \, M \models T \bigr\}.
\end{gather*}

\begin{lemma}
  \label{l:realized-convex-dense}
  Let $T$ be a complete affine theory and let $M \models T$.
  Then
  \begin{gather*}
    \tS^\aff_x(T) = \cco \set{\tp^\aff(a) : a \in M^x}.
  \end{gather*}
\end{lemma}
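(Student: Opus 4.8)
The reverse inclusion is immediate: since $M \models T$, every $\tp^\aff(a)$ with $a \in M^x$ lies in $\tS^\aff_x(T)$, and the latter is a compact convex subset of $\tS^\aff_x(\cL)$, so it contains $\cco\set{\tp^\aff(a) : a \in M^x}$. The plan is to establish the inclusion $\tS^\aff_x(T) \subseteq \cco\set{\tp^\aff(a) : a \in M^x}$ by a Hahn--Banach separation argument, with the completeness of $T$ as the crucial ingredient. Write $X = \tS^\aff_x(T)$ and $C = \cco\set{\tp^\aff(a) : a \in M^x}$, a closed convex subset of $X$, and suppose towards a contradiction that some $p \in X$ is not in $C$.

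Since $C$ and $\set{p}$ are disjoint closed convex subsets of the compact convex set $X$, the Hahn--Banach separation theorem provides an element of $\cA(X)$ separating them; because $X = \tS(\cL^\aff_x(T))$, Kadison duality tells us that the image of $\cL^\aff_x$ is dense in $\cA(X)$ (one could also invoke \autoref{prop:Stone-Weierstrass-affine}), so we may take the separating function to be (the class modulo $T$ of) an affine formula $\varphi(x)$, with $\varphi(q) \le 0$ for all $q \in C$ and $\varphi(p) \ge 1$. As $\varphi$ is a genuine formula it involves only finitely many of the variables $x$, so $\sup_x \varphi$ is an affine sentence, and $\varphi(\tp^\aff(a)) \le 0$ for every $a \in M^x$ yields $(\sup_x \varphi)^M \le 0$. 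Now completeness of $T$ enters: $T$ is a maximal consistent theory and hence assigns a value to the sentence $\sup_x \varphi$, which must be its value in $M$, namely $\le 0$; thus $T \models \sup_x\varphi \le 0$, i.e.\ $\varphi^N(a) \le 0$ for every $N \models T$ and $a \in N^x$. Since $\tS^\aff_x(T)$ is exactly the set of such types $\tp^\aff(a)$, this gives $\varphi(q) \le 0$ for all $q \in X$, contradicting $\varphi(p) \ge 1$ and completing the proof.

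I do not expect any real obstacle here; the one point requiring attention is the role played by completeness, which is precisely what upgrades ``$\varphi \le 0$ on the types realized in $M$'' to ``$\varphi \le 0$ on all of $\tS^\aff_x(T)$''. The only other thing to keep in mind is that Hahn--Banach must be invoked in its form producing an element of a prescribed dense subspace, so that the separating functional is an actual affine formula and $\sup_x\varphi$ makes sense; for infinite tuples $x$ this causes no difficulty, since each formula, and hence the quantified sentence, involves only finitely many variables.
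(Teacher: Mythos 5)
Your proof is correct and follows essentially the same route as the paper: Hahn--Banach separation of $p$ from the closed convex hull by an affine formula, quantifying over $x$ to get an affine sentence, and invoking completeness of $T$ to transfer its value from $M$ to all models (the paper phrases it dually, with $\inf_x\varphi$ and the inequalities reversed, but the argument is identical). Your extra care about taking the separating functional from the dense subspace of actual formulas, and about finitely many variables, is exactly the right bookkeeping.
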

\begin{proof}
  Suppose not.
  Then by Hahn--Banach, there exists $p \in \tS^\aff_x(T)$ and a formula $\varphi$ such that $\varphi(p) = 0$ and $\varphi(a) \geq 1$ for all $a \in M^x$.
  Let $\psi = \inf_x \varphi(x)$.
  Then $\psi^M \geq 1$ but $\psi^N \leq 0$ for any model realizing $p$.
  This contradicts the completeness of $T$.
\end{proof}

\begin{defn}
  \label{defn:LanguageDensityCharacter}
  If $T$ is an affine theory, we define the \emph{density character of the language} modulo $T$, as
  \begin{equation*}
    \density_T(\cL) = \sup_n \density\bigl( \cL^\aff_n(T) \bigr).
  \end{equation*}
  The theory $T$ is said to have a \emph{separable language} if $\density_T(\cL) = \aleph_0$.
\end{defn}

If $T$ has a separable language, then $\tS^\aff_I(T)$ is metrizable for any countable~$I$.

\begin{remark}
  \label{remark:LanguageDensityCharacter}
  Let $M \models T$, and let $A \subseteq M$ be a subset.
  Assume that $\fd(M) \geq \fd_T(\cL)$.
  Then $M$ admits an affine substructure $N \preceq^\aff M$ that contains $A$ such that $\fd(N) = \fd(A) + \fd_T(\cL)$.
  Indeed, modulo $T$, we can replace $\cL$ with a sublanguage of cardinal $\fd_T(\cL)$, and apply \autoref{prop:DownwardLS}.
\end{remark}

Observe that if $T$ is an affine theory, then $\pi_x\colon \tS^\aff_{xy}(\cL) \rightarrow \tS^\aff_x(\cL)$ restricts to a surjective, continuous, affine map $\tS^\aff_{xy}(T) \rightarrow \tS^\aff_x(T)$, which we also denote by $\pi_x$.

\begin{prop}
  \label{prop:VariableRestrictionBoundary}
  Let $T$ be an affine theory, and let $x$ and $y$ denote tuples of variables.
  \begin{enumerate}
  \item
    \label{item:VariableRestrictionHat}
    For every bounded function $f \colon \tS^\aff_x(T) \to \bR$ we have $\widehat{f \circ \pi_x} = \hat{f} \circ \pi_x$ on $\tS^\aff_{xy}(T)$, where the hat denotes the concave envelope, in the sense of \autoref{defn:ConcaveEnvelope}.
  \item
    \label{item:VariableRestrictionBoundary}
    If $\mu$ is a boundary measure on $\tS^\aff_{xy}(T)$, then $(\pi_x)_* \mu$ is a boundary measure on $\tS^\aff_x(T)$.
  \end{enumerate}
\end{prop}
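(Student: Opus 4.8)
The plan is to prove item \autoref{item:VariableRestrictionHat} first, since item \autoref{item:VariableRestrictionBoundary} follows from it together with Mokobodzki's characterization (\autoref{p:Mokobodzki}). For the inequality $\widehat{f\circ\pi_x} \leq \hat f\circ\pi_x$: if $h\in\cA(\tS^\aff_x(T))$ and $h\geq f$, then $h\circ\pi_x\in\cA(\tS^\aff_{xy}(T))$ and $h\circ\pi_x\geq f\circ\pi_x$, so taking the infimum over such $h$ gives $\hat f\circ\pi_x\geq \widehat{f\circ\pi_x}$. The reverse inequality $\widehat{f\circ\pi_x}\geq\hat f\circ\pi_x$ is where the real content lies: given $p\in\tS^\aff_{xy}(T)$ with $\pi_x(p)=q$, I must show that $\widehat{f\circ\pi_x}(p)\geq\hat f(q)$, i.e., every $h\in\cA(\tS^\aff_{xy}(T))$ with $h\geq f\circ\pi_x$ satisfies $h(p)\geq\hat f(q)$. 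The idea is to push $h$ down along $\pi_x$: since affine formulas in $x$ are dense in $\cA(\tS^\aff_x(T))$ and affine formulas in $xy$ are dense in $\cA(\tS^\aff_{xy}(T))$, it suffices to treat $h=\psi(x,y)$ an affine $\cL$-formula, and then $g(x):=\bigl(\inf_y\psi(x,y)\bigr)$ defines (the interpretation of) an affine formula in $x$ with $g\circ\pi_x\leq h$ pointwise and $g\geq f$ on $\tS^\aff_x(T)$ (using $h\geq f\circ\pi_x$ and the fact, recorded after \autoref{defn:TypeRestrictionMap} and used in the proof of \autoref{lemma:TypeConvexCombinationLift}, that every fibre of $\pi_x$ surjects onto nothing is wrong — rather that $\pi_x$ is surjective and, crucially, that $\inf_y$ is computed fibrewise over $\pi_x$, which is exactly the meaning of the quantifier). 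Hence $h(p)\geq g\circ\pi_x(p)=g(q)\geq\hat f(q)$, and taking the infimum over $h$ (approximated by such $\psi$) gives the claim.

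The subtle point, and what I expect to be the main obstacle, is justifying the passage to $g(x)=\inf_y\psi(x,y)$ as an element of $\cA(\tS^\aff_x(T))$ and the fibrewise identity $(g\circ\pi_x)(p)=\inf\{\,\psi(p'): \pi_x(p')=\pi_x(p)\,\}$. This is precisely the phenomenon exploited in the proof of \autoref{lemma:TypeConvexCombinationLift}: for an affine formula $\varphi(x,y)$, the set $X=\{p':\varphi(p')=(\inf_y\varphi)(p')\}$ is compact convex and $\pi_x(X)=\tS^\aff_x(T)$, so the infimum over the fibre is attained and equals $(\inf_y\varphi)^{}$ evaluated on the base. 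Once this is in hand, the density of formulas in the relevant $\cA$-spaces (Kadison duality, and \autoref{prop:Stone-Weierstrass-affine}) lets one reduce the general $h$ to the formula case, with an $\eps$ of room, and conclude.

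For item \autoref{item:VariableRestrictionBoundary}: let $\mu$ be a boundary measure on $\tS^\aff_{xy}(T)$ and set $\nu=(\pi_x)_*\mu$. By \autoref{p:Mokobodzki} it suffices to show $\nu(f)=\nu(\hat f)$ for every $f\in C(\tS^\aff_x(T))$. Compute $\nu(f)=\mu(f\circ\pi_x)$ and $\nu(\hat f)=\mu(\hat f\circ\pi_x)=\mu\bigl(\widehat{f\circ\pi_x}\bigr)$ by item \autoref{item:VariableRestrictionHat}. Since $\mu$ is a boundary measure and $f\circ\pi_x\in C(\tS^\aff_{xy}(T))$, \autoref{p:Mokobodzki} gives $\mu(f\circ\pi_x)=\mu\bigl(\widehat{f\circ\pi_x}\bigr)$, so $\nu(f)=\nu(\hat f)$, as required. (One should note $\mu$ lives on $\tS^\aff_{xy}(T)$, a closed face nothing — rather a closed subset — of $\tS^\aff_{xy}(\cL)$, but everything above takes place inside $\tS^\aff_{xy}(T)$ and $\tS^\aff_x(T)$, with $\pi_x$ the restricted map, so no issue arises.)
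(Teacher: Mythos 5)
Your proposal is correct and follows essentially the same route as the paper: the key step in both is that for an affine formula $\varphi(x,y)\geq f\circ\pi_x$, the quantified formula $\inf_y\varphi(x,y)$ is again an \emph{affine} formula, hence an affine continuous function on $\tS^\aff_x(T)$ dominating $f$ and dominated by $\varphi$ along $\pi_x$, and density of formulas in $\cA(\tS^\aff_{xy}(T))$ finishes item (i); item (ii) is then the same Mokobodzki computation as in the paper. (The garbled mid-sentence self-correction about fibres should be cleaned up, but the mathematics it gestures at is right and in fact only the two easy inequalities $g\geq f$ and $g\circ\pi_x\leq h$ are needed, not the full fibrewise identity.)
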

\begin{proof}
  For \autoref{item:VariableRestrictionHat}, the inequality $\hat{f} \circ \pi_x \geq \widehat{f \circ \pi_x}$ is easy.
  For the opposite inequality, consider an affine formula $\varphi(x, y)$ such that $\varphi \geq f \circ \pi_x$, and let $\psi(x) = \inf_y \varphi(x,y)$.
  Then $\psi \geq f$, and so $\psi \geq \hat{f}$.
  Therefore, $\varphi \geq \psi \circ \pi_x \geq \hat{f} \circ \pi_x$.
  Since formulas are uniformly dense in the space of all affine continuous functions, we conclude that $\widehat{f \circ \pi_x} \geq \hat{f} \circ \pi_x$.

  For \autoref{item:VariableRestrictionBoundary}, let $\nu = (\pi_x)_* \mu$.
  We will use the characterization given by \autoref{p:Mokobodzki}.
  Let $f \colon \tS^\aff_x(T) \to \bR$ be a continuous function.
  Since $\mu$ is a boundary measure, we have
  \begin{equation*}
    \nu(f) = \mu(f \circ \pi_x) = \mu(\widehat{f \circ \pi_x}) = \mu(\hat f \circ \pi_x) = \nu(\hat f).
  \end{equation*}
  Since $f$ was arbitrary, $\nu$ is a boundary measure.
\end{proof}

\begin{defn}[Types with parameters]
  \label{defn:TypeWithParameters}
  Let $M$ be a structure and $A \subseteq M$.
  \begin{enumerate}
  \item We define the \emph{space of affine types with parameters in $A$} in the variables $x$ as the space of types of the theory $D^\aff_A$, namely, of the affine diagram of $A$, in the language $\cL(A)$:
    \begin{gather*}
      \tS^\aff_x(A) \coloneqq \tS^\aff_x(D^\aff_A).
    \end{gather*}
  \item If $N \succeq^\aff M$ and $a \in N^x$, then the \emph{affine type of $a$ over $A$}, denoted $\tp^\aff(a/A)$, is the affine type of $a$ in the language $\cL(A)$.
    In other words, it is the function that associates to every formula $\varphi(x) \in \cL(A)^\aff_x$ the value $\varphi^N(a)$.
    Equivalently, it associates to every $\varphi(x,b)$ the value $\varphi^N(a,b)$, where $\varphi(x,y) \in \cL^\aff_{xy}$ and $b \in A^y$.
  \end{enumerate}
\end{defn}

The diagram $D^\aff_A$, and therefore the type space $\tS^\aff_x(A)$, depend implicitly on the ambient structure $M$.
However, if $N \succeq^\aff M$, then $A$ has the same affine diagram in both.
Therefore, as long as we restrict our attention to affine extensions $N \succeq^\aff M$, the space $\tS^\aff_x(A)$ is well-defined.
Since $N \models D^\aff_A$, for every $a \in N^x$ we have that $\tp^\aff(a/A) \in \tS^\aff_x(A)$.

\begin{prop}
  \label{prop:TypeWithParameters}
  Let $M$ be a structure, and $A \subseteq M$.
  Then
  \begin{gather*}
    \tS^\aff_x(A)
    =
    \bigl\{ \tp^\aff(a/A) : a \in N^x, \ N \succeq^\aff M \bigr\}.
  \end{gather*}
\end{prop}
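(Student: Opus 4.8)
The plan is to reduce to the description of the type space of a theory as its set of realized types --- applied to $T = D^\aff_A$ in the language $\cL(A)$ --- and then to glue $M$ back in as an affine substructure via affine amalgamation.

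The inclusion $\supseteq$ is immediate: if $N \succeq^\aff M$ and $a \in N^x$, then expanding $N$ to an $\cL(A)$-structure by interpreting each constant as the corresponding element of $A$ produces a model of $D^\aff_A$ (since $M$ is one), so $\tp^\aff(a/A)$ lies in $\tS^\aff_x(D^\aff_A) = \tS^\aff_x(A)$. For $\subseteq$ I would take $p \in \tS^\aff_x(A) = \tS^\aff_x(D^\aff_A)$ and, using that the types of a theory are precisely the affine types realized in its models (cf.\ \autoref{prop:Type}, now read in $\cL(A)$), fix an $\cL(A)$-structure $N_0 \models D^\aff_A$ and $a \in N_0^x$ with $\tp^\aff(a) = p$. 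Since $D^\aff_A$ is the (complete) affine theory of $M$ regarded as an $\cL(A)$-structure, $N_0 \equiv^\aff M$ in $\cL(A)$; evaluating this on sentences $\psi(c_b)$ with $\psi \in \cL^\aff_y$ and $b \in A^y$ shows that the partial map $\theta\colon M \dashrightarrow N_0\rest_\cL$ with domain $A$ and $\theta(a) = c_a^{N_0}$ is affine.

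I would then apply \autoref{cor:AffineAmalgamation} to get an $\cL$-structure $K$ with affine embeddings $f\colon M \hookrightarrow K$ and $g\colon N_0\rest_\cL \hookrightarrow K$ agreeing with $\theta$ on $A$, i.e.\ $f(b) = g(c_b^{N_0})$ for $b \in A$. Identifying $M$ with $f(M)$ gives $M \preceq^\aff K$, and for $a' = g(a) \in K^x$ and any $\varphi(x) = \psi(x, c_b) \in \cL(A)^\aff_x$ with $\psi \in \cL^\aff_{xy}$ and $b \in A^y$ one computes
\[
\varphi^K(a') = \psi^K\bigl(g(a), f(b)\bigr) = \psi^K\bigl(g(a), g(c_b^{N_0})\bigr) = \psi^{N_0}\bigl(a, c_b^{N_0}\bigr) = \varphi^{N_0}(a) = p(\varphi),
\]
using that $g$ is affine; hence $\tp^\aff(a'/A) = p$ with $K \succeq^\aff M$. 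The one point requiring care --- and the reason the amalgamation step is genuinely necessary --- is that a model of $D^\aff_A$ only carries an affine \emph{map} on $A$ rather than an embedded copy of $M$; the bookkeeping to check that $g$ transports the interpretations of the $\cL(A)$-constants correctly, so that the type \emph{over $A$}, and not merely the $\cL$-type of $a'$, equals $p$, is routine but must be done attentively.
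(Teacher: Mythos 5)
Your proof is correct and follows essentially the same route as the paper: realize $p$ in a model of $D^\aff_A$, observe that this model is affinely equivalent to $M$ over the named constants, and use \autoref{cor:AffineAmalgamation} to glue it onto $M$. The paper applies amalgamation directly in the language $\cL(A)$ rather than to the partial map in $\cL$, but since the proof of \autoref{cor:AffineAmalgamation} proceeds precisely by expanding to $\cL(\dom\theta)$, your extra bookkeeping with the constants is just an unfolding of the same argument.
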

\begin{proof}
  The inclusion $\supseteq$ has been observed above.
  For the converse, assume that $p \in \tS^\aff_x(A)$.
  Then there exists an $\cL(A)$-structure, call it $K$, that is a model of $D^\aff_A$, and a tuple $a \in K^x$, such that $p = \tp^\aff_x(a)$.
  Then $M$ and $K$ are affinely equivalent in the language $\cL(A)$.
  By \autoref{cor:AffineAmalgamation}, we can find an affine extension $N \succeq^\aff M$, together with an embedding $\theta\colon K \hookrightarrow N$ that is affine in the language $\cL(A)$.
  Let $b = \theta(a) \in N^x$.
  Then
  \begin{gather*}
    p = \tp^\aff_{\cL(A)}(a) = \tp^\aff_{\cL(A)}(b) = \tp^\aff(b/A),
  \end{gather*}
  as desired.
\end{proof}

\begin{defn}
  \label{defn:TypeOfModel}
  Let $x = (x_i : i \in I)$ and let $y$ denote a single variable.
  We say that a type $p \in \tS^\aff_x(\cL)$ has the \emph{Tarski--Vaught} property, or that it \emph{enumerates a model}, if for every affine formula $\varphi(x, y)$ we have
  \begin{equation*}
    \bigl( {\sup}_y \ \varphi(x, y) \bigr)^p = \sup_{i \in I} \varphi(x, x_i)^p.
  \end{equation*}
  The collection of all Tarski--Vaught types in $x$ will be denoted by $\tS^{\fM,\aff}_x(\cL)$.
  When $T$ is a theory, we define $\tS^{\fM,\aff}_x(T) \coloneqq \tS^{\fM,\aff}_x(\cL) \cap \tS^\aff_x(T)$.
\end{defn}

\begin{remark}
  \label{remark:TypeOfModel}
  Let $M$ be an $\cL$-structure, $a \in M^I$, $p = \tp^\aff(a)$ and $A = \{a_i : i \in I\}$.
  Then the following are equivalent:
  \begin{enumerate}
  \item $p \in \tS^{\fM,\aff}_I(\cL)$.
  \item The set $A$ satisfies the \emph{Tarski--Vaught test} of \autoref{p:Tarski-Vaught}.
  \item We have $\overline{A} \preceq^\aff M$.
  \end{enumerate}
  In particular, if $p \in \tS^{\fM,\aff}_I(T)$, then every realization of $p$ is dense in a model of $T$, whence the terminology.
\end{remark}

Because of measurability issues, when either the language or $I$ is uncountable, we may require a more fine-tuned variant.

\begin{ntn}
  \label{ntn:TypeOfModelSingleFormula}
  Let $x = (x_i : i \in I)$ and let $y$ denote a single variable.
  Given a formula $\varphi(x,y)$ and $J \subseteq I$, let us denote by $X_{\varphi,J} \subseteq \tS^\aff_I(\cL)$ the collection of types $p$ that satisfy
  \begin{gather*}
    \bigl( {\sup}_y \ \varphi(x, y) \bigr)^p = {\sup}_{i \in J} \varphi(x, x_i)^p.
  \end{gather*}
\end{ntn}

\begin{lemma}
  \label{lem:TypeOfModelSingleFormula}
  If $J \subseteq I$ is countable, then $X_{\varphi,J}$ is a Baire $G_\delta$ subset of $\tS^\aff_I(\cL)$.
  In addition, for every $p \in \tS^{\fM,\aff}_I(\cL)$ and every formula $\varphi(x, y)$, there exists a countable $I_{\varphi} \sub I$ such that $p \in X_{\varphi, I_{\varphi}}$.
\end{lemma}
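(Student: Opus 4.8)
The plan is as follows. For the first assertion, I would set $\psi(x):=\sup_y\varphi(x,y)$, which is again an affine formula since affine logic is closed under the quantifier $\sup$, and for $i\in I$ write $\varphi_i(x):=\varphi(x,x_i)\in\cL^\aff_x$ for the formula obtained by substituting the variable $x_i$ for $y$. The key elementary observation is that $\sup_y\varphi(x,y)\geq\varphi(x,x_i)$ holds as an inequality of formulas (evaluate in any structure: the left side is a supremum over all elements, the right side is the value at one of them), so $\psi\geq\varphi_i$ in the order unit space $\cL^\aff_x$, and hence $\psi^p\geq\varphi_i^p$ for every $p\in\tS^\aff_I(\cL)$ and every $i$, since states are positive. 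Thus, with $J$ countable and $J_0\subseteq J_1\subseteq\cdots$ enumerating its finite subsets, the functions $g_n(p):=\max_{i\in J_n}\varphi_i^p$ are continuous in the logic topology (they are finite combinations of evaluations of the state $p$) and increase pointwise to $g(p):=\sup_{i\in J}\varphi_i^p\leq\psi^p$. By the definition of $X_{\varphi,J}$ in \autoref{ntn:TypeOfModelSingleFormula}, and because $g\leq\psi^{(\cdot)}$ everywhere,
\[
  X_{\varphi,J}=\{p:\psi^p=g(p)\}=\bigcap_{k\geq 1}\bigcup_{n}\bigl\{p:g_n(p)-\psi^p+\tfrac1k>0\bigr\}.
\]
Each set in the union is the positivity set of a continuous real function on $\tS^\aff_I(\cL)$, hence a cozero set; a countable union of cozero sets is a cozero set, and a countable intersection of cozero sets is a Baire $G_\delta$ set. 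This gives the first assertion.

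For the second assertion, I would fix $p\in\tS^{\fM,\aff}_I(\cL)$ and a formula $\varphi(x,y)$. By the Tarski--Vaught property defining $\tS^{\fM,\aff}_I(\cL)$ (see \autoref{defn:TypeOfModel}), we have $\psi^p=\sup_{i\in I}\varphi_i^p$, and this supremum is a finite real number because $\varphi$ has a bounded value interval. For each $n\geq 1$ I would pick $i_n\in I$ with $\varphi_{i_n}^p>\psi^p-\tfrac1n$, and set $I_\varphi:=\{i_n:n\geq 1\}$, a countable subset of $I$. Then $\sup_{i\in I_\varphi}\varphi_i^p\geq\sup_n\varphi_{i_n}^p\geq\psi^p$, while the reverse inequality $\sup_{i\in I_\varphi}\varphi_i^p\leq\psi^p$ is automatic from the first paragraph; hence $\psi^p=\sup_{i\in I_\varphi}\varphi_i^p$, that is, $p\in X_{\varphi,I_\varphi}$.

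There is no serious obstacle here; the only point requiring care is that the first assertion asks for a \emph{Baire} $G_\delta$ set and not merely a $G_\delta$ set (the distinction matters when $I$ or $\cL$ is uncountable, since then $\tS^\aff_I(\cL)$ is not metrizable, and these sets will later be integrated against Radon measures, where only the Baire $\sigma$-algebra is canonically available). This is precisely why I would route the argument through the explicit continuous functions $g_n$ and $\psi$, exhibiting $X_{\varphi,J}$ as a countable intersection of cozero sets, rather than phrasing it in terms of abstract upper/lower semicontinuity of $p\mapsto\psi^p-g(p)$.
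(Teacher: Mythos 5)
Your proof is correct and follows essentially the same route as the paper: the paper writes $X_{\varphi,J}$ as the intersection over $\varepsilon>0$ of the union over $i\in J$ of the open $F_\sigma$ (i.e.\ cozero) sets $\{q:\varphi(x,x_i)^q>\psi^q-\varepsilon\}$, which is exactly your decomposition with the harmless cosmetic difference of grouping the $i$'s into finite maxima $g_n$, and it likewise disposes of the second assertion as immediate from the Tarski--Vaught property. Your extra care in exhibiting the sets as cozero sets (rather than merely open) is precisely the point needed for Baire-measurability in the non-metrizable case, and matches the paper's intent.
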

\begin{proof}
  A type belongs to $X_{\varphi,J}$ if and only if for every $\varepsilon > 0$, there exists $i \in J$ such that $\bigl( {\sup}_y \ \varphi(x, y) \bigr)^q < \varphi(x, x_i)^q + \varepsilon$.
  The last condition is open and $F_\sigma$; a countable union of such is open Baire; and a countable intersection of those is $G_\delta$ and Baire.

  The second assertion holds by definition.
\end{proof}

\begin{prop}
  \label{prop:TypeOfModelPreMean}
  Let $\mu$ be a Radon probability measure on $\tS^\aff_I(\cL)$ such that $p = R(\mu) \in \tS^{\fM,\aff}_I(\cL)$.
  \begin{enumerate}
  \item
    \label{item:TypeOfModelPreMeanSingleFormula}
    Let $\varphi(x,y)$ be a formula, $x$ indexed by $I$ and $y$ a singleton, and let $I_\varphi \subseteq I$ be countable such that $p \in X_{\varphi,I_\varphi}$.
    Then $\mu(X_{\varphi,I_\varphi}) = 1$.
    In particular, all atoms of $\mu$ belong to $\bigcap_\varphi X_{\varphi,I_\varphi} \subseteq \tS^{\fM,\aff}_I(\cL)$.
  \item
    \label{item:TypeOfModelPreMeanCountable}
    If $\cL$ and $I$ are countable, then $\tS^{\fM,\aff}_I(\cL)$ is $G_\delta$ in $\tS^\aff_I(\cL)$, and $\mu\bigl( \tS^{\fM,\aff}_I(\cL) \bigr) = 1$.
  \end{enumerate}
\end{prop}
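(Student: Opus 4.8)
The plan is to deduce part~(i) from Jensen's inequality applied to a suitable concave envelope, and then to obtain part~(ii) by reducing the Tarski--Vaught property to a countable family of formulas.

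For part~(i), I would fix a formula $\varphi(x,y)$ with $y$ a singleton and a countable $I_\varphi\subseteq I$ with $p\in X_{\varphi,I_\varphi}$ (such $I_\varphi$ exists by \autoref{lem:TypeOfModelSingleFormula}). The formula $\psi(x)=\sup_y\varphi(x,y)$ is again affine, hence a continuous affine function on $\tS^\aff_I(\cL)$, and the function $g\colon q\mapsto\sup_{i\in I_\varphi}\varphi(x,x_i)^q$ is convex and lower semicontinuous, being a countable supremum of continuous affine functions. Since $\varphi(x,x_i)\leq\psi(x)$ holds in every $\cL$-structure, $h\coloneqq\psi-g$ is a bounded, concave, upper semicontinuous function with $h\geq0$; moreover $X_{\varphi,I_\varphi}=\{h=0\}$ and $h(p)=0$ because $p\in X_{\varphi,I_\varphi}$. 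Jensen's inequality (\autoref{l:concave-envelope}) then gives $\mu(h)\leq h(R(\mu))=h(p)=0$, so $\mu(h)=0$; since $h$ is Borel (being upper semicontinuous) and nonnegative, Markov's inequality applied to the closed sets $\{h\geq1/n\}$ forces $\mu(\{h>0\})=0$, that is, $\mu(X_{\varphi,I_\varphi})=1$. For the last assertion, any atom $q$ of $\mu$, i.e.\ a point with $\mu(\{q\})>0$, must lie in each $\mu$-conull set $X_{\varphi,I_\varphi}$, hence in $\bigcap_\varphi X_{\varphi,I_\varphi}$; and since $X_{\varphi,I_\varphi}\subseteq X_{\varphi,I}$ and $\bigcap_\varphi X_{\varphi,I}=\tS^{\fM,\aff}_I(\cL)$ by \autoref{defn:TypeOfModel}, this yields $\bigcap_\varphi X_{\varphi,I_\varphi}\subseteq\tS^{\fM,\aff}_I(\cL)$.

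For part~(ii), assume $\cL$ and $I$ are countable and fix a countable set $\cL_0$ of affine formulas $\varphi(x,y)$ that is uniformly dense in all such formulas (for instance, rational affine combinations of atomic formulas, closed under the quantifiers). A routine $\eps/2$ argument---using that $\bigl|(\sup_y\theta_1)^q-(\sup_y\theta_2)^q\bigr|$ and $\bigl|\sup_i\theta_1(x,x_i)^q-\sup_i\theta_2(x,x_i)^q\bigr|$ are each at most $\|\theta_1-\theta_2\|$ for any two affine formulas---shows that the Tarski--Vaught equality for every $\varphi\in\cL_0$ (with $J=I$) implies it for every affine formula, so $\tS^{\fM,\aff}_I(\cL)=\bigcap_{\varphi\in\cL_0}X_{\varphi,I}$. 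By \autoref{lem:TypeOfModelSingleFormula} each $X_{\varphi,I}$ is a Baire $G_\delta$ set, hence so is this countable intersection. Finally, applying part~(i) with $I_\varphi=I$---legitimate since $p=R(\mu)\in\tS^{\fM,\aff}_I(\cL)\subseteq X_{\varphi,I}$---gives $\mu(X_{\varphi,I})=1$ for every $\varphi\in\cL_0$, and a countable intersection of conull sets is conull, so $\mu\bigl(\tS^{\fM,\aff}_I(\cL)\bigr)=1$.

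I expect the one point deserving care to be the deduction, in part~(i), that $\mu(h)=0$ implies $\mu(\{h=0\})=1$: since $h$ need not be continuous and $\tS^\aff_I(\cL)$ need not be metrizable, one argues through the closed sets $\{h\geq1/n\}$, using that an upper semicontinuous function is Borel. The rest---the choice of the concave envelope $h$, the handling of atoms, and the reduction to the countable family $\cL_0$ in part~(ii)---should then be bookkeeping.
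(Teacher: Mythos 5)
Your proof is correct and follows essentially the same route as the paper: part~(i) in the paper is the same Jensen-type computation (that $\int\psi\,\ud\mu=\psi(p)$ by affineness while $\int\sup_{i\in I_\varphi}\varphi(x,x_i)^q\,\ud\mu(q)\geq\sup_{i\in I_\varphi}\varphi(x,x_i)^p$), just carried out directly by extracting an $\varepsilon$ and a set of measure $\geq\delta/2$ rather than routed through \autoref{l:concave-envelope}. For part~(ii) the paper simply declares it an immediate consequence of (i) and \autoref{lem:TypeOfModelSingleFormula}; your reduction to a countable uniformly dense family of formulas is exactly the bookkeeping being left implicit there.
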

\begin{proof}
  For \autoref{item:TypeOfModelPreMeanSingleFormula}, assume towards a contradiction that $\delta = 1 - \mu(X_{\varphi,I_\varphi}) > 0$.
  Then there exists $\varepsilon > 0$ such that
  \begin{gather*}
    {\sup}_y \, \varphi(x,y) \geq {\sup}_{i \in I_\varphi} \, \varphi(x,x_i) + \varepsilon
  \end{gather*}
  with probability at least $\delta / 2$.
  On the other hand, by definition,
  \begin{gather*}
    {\sup}_y \, \varphi(x,y) \geq {\sup}_{i \in I_\varphi} \, \varphi(x,x_i)
  \end{gather*}
  holds throughout.
  Therefore,
  \begin{align*}
    \left( {\sup}_y \, \varphi(x,y)\right)^p
    & = \int \left( {\sup}_y \, \varphi(x,y) \right)^q \, \ud \mu(q)
    \\
    & \geq \varepsilon \delta / 2 + \int {\sup}_{i \in I_\varphi} \, \varphi(x,x_i)^q \, \ud \mu(q)
    \\
    & \geq \varepsilon\delta / 2 + {\sup}_{i \in I_\varphi} \, \int \varphi(x,x_i)^q \, \ud \mu(q)
    \\
    & = \varepsilon\delta / 2 + {\sup}_{i \in I_\varphi} \, \varphi(x,x_i)^p.
  \end{align*}
  This contradicts our hypothesis that $p \in X_{\varphi,I_\varphi}$.

  Assertion \autoref{item:TypeOfModelPreMeanCountable} is an immediate consequence of \autoref{item:TypeOfModelPreMeanSingleFormula} and \autoref{lem:TypeOfModelSingleFormula}.
\end{proof}

\begin{remark}
  \label{remark:TypeOfModelPreMeanUncountable}
  When $I$ is uncountable and $|I| \geq |\cL|$, one can show that every non-empty Baire set $X \subseteq \tS^\aff_I(\cL)$ meets $\tS^{\fM,\aff}_I(\cL)$.
  Therefore, every Radon probability measure $\mu$ on $\tS^\aff_I(\cL)$ concentrates on $\tS^{\fM,\aff}_I(\cL)$ in the sense of \autoref{df:measure-concentr}. See also \autoref{cor:TypeOfExtremalModelVeryDense} for a similar statement.
\end{remark}

Assume that $X$ is a compact convex set and $x \in X$.
Let $F$ be the intersection of all faces of $X$ that contain $x$.
Then $F$ is the smallest face containing $x$, and is called the face \emph{generated} by $x$.
Equivalently, $F$ consists of all $y \in X$ such that $x = \lambda y + (1-\lambda) z$ for some $z \in X$ and $0 < \lambda \leq 1$ (see \cite[II.5]{Alfsen1971}).

\begin{cor}
  \label{cor:FrecciaRossa}
  Let $T$ be an affine theory.
  Then the set $\tS^{\fM,\aff}_I(T)$ is a union of faces of $\tS^\aff_I(T)$.
  Equivalently, if $p \in \tS^{\fM,\aff}_I(T)$, then the face generated by $p$ is contained in $\tS^{\fM,\aff}_I(T)$.
\end{cor}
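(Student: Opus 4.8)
The plan is to analyze the Tarski--Vaught property one formula at a time, exploit a convexity phenomenon, and then combine over all formulas. Work inside the compact convex set $Y = \tS^\aff_I(T)$. For each affine formula $\varphi(x,y)$ with $y$ a single variable, write $\ell_\varphi$ for the function $p\mapsto\bigl(\sup_y\varphi(x,y)\bigr)^p$ on $Y$, write $c_\varphi$ for the function $p\mapsto\sup_{i\in I}\varphi(x,x_i)^p$, and set $X_\varphi = \{p\in Y : \ell_\varphi(p) = c_\varphi(p)\}$ (this is $X_{\varphi,I}\cap\tS^\aff_I(T)$ in the notation of \autoref{ntn:TypeOfModelSingleFormula}). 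Then $\tS^{\fM,\aff}_I(T) = \bigcap_\varphi X_\varphi$ by \autoref{defn:TypeOfModel}. Now an arbitrary intersection of unions of faces of $Y$ is again a union of faces: if $p$ lies in each member of such a family, then each member contains a face containing $p$, hence contains the face generated by $p$ (the smallest one), hence so does the intersection. So it suffices to show that each $X_\varphi$ is a union of faces, equivalently that for $p\in X_\varphi$ the face generated by $p$ is contained in $X_\varphi$.

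The key observations for a single $\varphi$ are: $\ell_\varphi$ is a \emph{continuous affine} function on $Y$ (affine formulas are closed under $\sup$); $c_\varphi$ is \emph{convex}, being a supremum of the continuous affine functions $p\mapsto\varphi(x,x_i)^p$; and $\ell_\varphi\geq c_\varphi$ pointwise on $Y$, since substituting $y:=x_i$ shows $\sup_y\varphi(x,y)\geq\varphi(x,x_i)$ in every structure. Thus $X_\varphi$ is the set where an affine function meets a convex function it dominates, and such sets are always unions of faces, by the following computation. Let $p\in X_\varphi$ and let $q$ lie in the face generated by $p$; we may assume $q\neq p$, so $p=\lambda q+(1-\lambda)r$ for some $r\in Y$ and $0<\lambda<1$. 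By affinity $\ell_\varphi(p)=\lambda\ell_\varphi(q)+(1-\lambda)\ell_\varphi(r)$, by convexity $c_\varphi(p)\leq\lambda c_\varphi(q)+(1-\lambda)c_\varphi(r)$, and $\ell_\varphi(p)=c_\varphi(p)$; combining these with $\ell_\varphi\geq c_\varphi$ forces
\begin{equation*}
  \lambda\bigl(\ell_\varphi(q)-c_\varphi(q)\bigr)+(1-\lambda)\bigl(\ell_\varphi(r)-c_\varphi(r)\bigr)=0,
\end{equation*}
with both summands $\geq 0$ and both coefficients $>0$. Hence $\ell_\varphi(q)=c_\varphi(q)$, i.e.\ $q\in X_\varphi$, which is what we needed.

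The only points requiring any care — and the reason the statement has content — are that the supremum over $i\in I$ defining $c_\varphi$ is genuinely only convex, not affine (which is precisely why $\tS^{\fM,\aff}_I(T)$ need be neither convex nor closed, yet is still a union of faces), and the verification of the pointwise inequality $\ell_\varphi\geq c_\varphi$ via the substitution $y:=x_i$. Everything else is elementary, and I do not expect a real obstacle.
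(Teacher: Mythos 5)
Your proof is correct and rests on the same mechanism as the paper's: the paper deduces the corollary from \autoref{prop:TypeOfModelPreMean}\autoref{item:TypeOfModelPreMeanSingleFormula} applied to the two-point measure $\lambda\delta_q+(1-\lambda)\delta_{q'}$, and the proof of that proposition is precisely the integral version of your computation (the affine function $\sup_y\varphi$ dominates the convex function $\sup_i\varphi(x,x_i)$, so equality at the barycenter forces equality at the constituents). Your write-up is a clean, self-contained two-point specialization that avoids the measure-theoretic detour and the countable-index bookkeeping of $X_{\varphi,I_\varphi}$, neither of which is needed for finite convex combinations.
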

\begin{proof}
  Let $q$ belong to the face generated by $p$ in $\tS^\aff_I(T)$.
  In other words, assume that $p = \lambda q + (1-\lambda) q'$, where $0 < \lambda \leq 1$ and $q,q' \in \tS^\aff_I(T)$.
  Let $\mu = \lambda \delta_q + (1-\lambda) \delta_{q'}$, viewed as a Radon measure, so $p = R(\mu)$.
  By \autoref{prop:TypeOfModelPreMean}\autoref{item:TypeOfModelPreMeanSingleFormula}, $q \in \tS^{\fM,\aff}_I(\cL)$, and thus $q \in \tS^{\fM,\aff}_I(T)$. (In fact, $T$ plays absolutely no role here.)
\end{proof}

We end this section with a few remarks regarding the quantifier-free fragment in affine logic.
Given a language $\cL$, we let $\cL^{\qf,\aff}_x$ be the set of quantifier-free affine $\cL$-formulas in the variables $x$. Let $T$ be an affine theory which will be fixed for the remainder of the section.
We let
\begin{equation*}
  \tS^\qf_x(T) \coloneqq \tS({\cL^{\qf,\aff}_x/\equiv_T})
\end{equation*}
be the set of \emph{quantifier-free types of $T$} in the variables $x$.
It is a compact convex set.
Since quantifier-free types are determined by the values they give to atomic formulas, $\tS^\qf_x(T)$ (for an affine theory $T$) agrees with the space of quantifier-free types of $T$ in the sense of continuous logic (but the convex structure only applies to values given to affine quantifier-free formulas).

We have a natural projection
\begin{gather}
  \label{eq:TypePorjectionQF}
  \rho^\qf_x \colon \tS^\aff_x(T) \to \tS^\qf_x(T),
\end{gather}
which is continuous, affine and surjective.

\begin{defn}
  \label{df:quantifier-elim}
  We will say that $T$ has \emph{affine quantifier elimination} if for every (finite) $x$, the map $\rho^\qf_x$ is injective.
\end{defn}

\begin{remark}
  \label{rmk:AffineQE}
  By \autoref{prop:Stone-Weierstrass-affine}, the map $\rho^\qf_x$ is injective if and only if every affine formula in the variables $x$ is a uniform limit (modulo $T$) of quantifier-free affine formulas.
\end{remark}

The following was observed by Bagheri and Safari in \cite[Prop.~4.10]{Bagheri2014a}, with a different argument.

\begin{lemma}
  \label{lem:Continuous-vs-AffineQE}
  If $T$ is an affine theory that eliminates quantifiers in the sense of full continuous logic, then it also has affine quantifier elimination.
\end{lemma}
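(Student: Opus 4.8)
We must show that if $T$ eliminates quantifiers in the sense of continuous logic, then the projection $\rho^\qf_x \colon \tS^\aff_x(T) \to \tS^\qf_x(T)$ is injective. By \autoref{rmk:AffineQE}, this amounts to proving that every affine formula $\varphi(x)$ is, modulo $T$, a uniform limit of quantifier-free \emph{affine} formulas. The hypothesis of continuous quantifier elimination gives us that $\varphi$ is, modulo $T$, a uniform limit of quantifier-free \emph{continuous} formulas; the task is to upgrade this to an approximation by quantifier-free \emph{affine} formulas, i.e.\ to get rid of the non-affine connectives ($\vee$, $\wedge$, etc.) at the cost of passing through affine combinations.

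First I would reformulate the statement geometrically using Kadison duality. The space $\tS^\aff_x(T)$ is a compact convex set, and $\rho^\qf_x$ is a continuous affine surjection onto $\tS^\qf_x(T)$. Injectivity of $\rho^\qf_x$ is equivalent, by \autoref{prop:Stone-Weierstrass-affine}, to the statement that the affine quantifier-free formulas separate the points of $\tS^\aff_x(T)$ — equivalently, that the pullbacks of functions in $\cA(\tS^\qf_x(T))$ are dense in $\cA(\tS^\aff_x(T))$. So it suffices to show: if $p, q \in \tS^\aff_x(T)$ agree on all quantifier-free affine formulas, then they agree on all affine formulas. Now realize $p = \tp^\aff(a)$ in some $M \models T$ and $q = \tp^\aff(b)$ in some $N \models T$. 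Since $p$ and $q$ assign the same values to all quantifier-free affine formulas, and quantifier-free types are determined by atomic formulas (as noted in the excerpt just before \autoref{df:quantifier-elim}), the tuples $a$ and $b$ have the same quantifier-free type \emph{in the sense of continuous logic}. By continuous quantifier elimination for $T$, they then have the same \emph{full} continuous-logic type, and in particular $\varphi^M(a) = \varphi^N(b)$ for every affine formula $\varphi(x)$. Hence $p = q$, and $\rho^\qf_x$ is injective.

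The one point that needs a word of care is the passage "same quantifier-free affine type $\Rightarrow$ same quantifier-free continuous type." A priori an affine type records only the values of affine quantifier-free formulas, whereas the continuous quantifier-free type records values of \emph{all} quantifier-free continuous formulas, including ones built with $\vee$ and $\wedge$. But the atomic formulas are themselves affine (indeed, they are quantifier-free affine formulas), and every quantifier-free continuous formula is a continuous function of finitely many atomic formulas. Since a continuous function of a fixed finite tuple of reals is determined by those reals, knowing the values of all atomic formulas on $a$ (which is part of the quantifier-free affine type) pins down the values of all quantifier-free continuous formulas on $a$. This is exactly the remark in the excerpt that $\tS^\qf_x(T)$ "agrees with the space of quantifier-free types of $T$ in the sense of continuous logic."

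I do not anticipate a serious obstacle here; this is a soft argument that just chains together Kadison duality, \autoref{prop:Stone-Weierstrass-affine}, the realization of types as types of tuples (\autoref{prop:Type} relativized to $T$), and the definition of continuous quantifier elimination. The only mildly delicate step is the bookkeeping in the previous paragraph — making sure one genuinely has the full continuous quantifier-free type from the affine data — and I would state it explicitly rather than leave it implicit. An even more streamlined phrasing avoids types of tuples entirely: since the quantifier-free affine formulas include all atomic formulas, they separate points of $\tS^\qf_x(T)$ already at the level of quantifier-free \emph{continuous} formulas; continuous QE says the continuous quantifier-free formulas are uniformly dense (mod $T$) among all continuous formulas, hence a fortiori among all affine formulas, so by \autoref{rmk:AffineQE} the map $\rho^\qf_x$ is injective. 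Either way the proof is short.
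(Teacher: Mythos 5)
Your main argument is correct and is essentially the paper's proof: the paper phrases it as the factorization $\tS^\cont_x(T)\to\tS^\aff_x(T)\to\tS^\qf_x(T)$, where the first map is surjective and the composition is injective by continuous quantifier elimination, which is exactly your element-wise version (lift the two affine types to continuous types of realizations, note they have the same quantifier-free continuous type since atomic formulas are quantifier-free affine, and conclude by QE). One caveat about your final ``streamlined'' variant: continuous QE only gives approximation of affine formulas by quantifier-free \emph{continuous} formulas, whereas \autoref{rmk:AffineQE} requires approximation by quantifier-free \emph{affine} formulas, so that shortcut does not close as stated --- keep the type-based argument.
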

\begin{proof}
  We have a sequence of maps
  \begin{gather*}
    \tS^\cont_x(T) \rightarrow \tS^\aff_x(T) \overset{\rho^\qf_x}{\longrightarrow} \tS^\qf_x(T),
  \end{gather*}
  where $\tS^\cont_x(T)$ is the usual type space in continuous logic (see \autoref{sec:affine-part-continuous-theory}). The first is surjective, and the composition is injective by hypothesis, so $\rho^\qf_x$ is injective.
\end{proof}

\begin{lemma}\label{l:rho-injective-Smod}
  For any tuple of variables $x$, the restriction of $\rho^\qf_x$ to $\tS^{\fM,\aff}_x(T)$ is injective.
\end{lemma}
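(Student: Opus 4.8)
The plan is to realize the two types, pass to the dense models they enumerate, and observe that equality of quantifier-free types yields an isomorphism between these models, which is in particular affine.

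So suppose $p, q \in \tS^{\fM,\aff}_x(T)$ with $\rho^\qf_x(p) = \rho^\qf_x(q)$. First I would realize the types: by \autoref{prop:Type} (together with the identification of $\tS^\aff_x(T)$ with the realized types of models of $T$), write $p = \tp^\aff(a)$ for some $a = (a_i : i) \in M^x$ and $q = \tp^\aff(b)$ for some $b = (b_i : i) \in N^x$, with $M, N \models T$. Since $p$ and $q$ are Tarski--Vaught types, \autoref{remark:TypeOfModel} gives $M_0 \coloneqq \overline{\{a_i : i\}} \preceq^\aff M$ and $N_0 \coloneqq \overline{\{b_i : i\}} \preceq^\aff N$; in particular $M_0$ and $N_0$ are (closed) substructures, and as any substructure containing the $a_i$ must contain every term built from them, $M_0$ is exactly the substructure generated by the $a_i$, and likewise for $N_0$.

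Next I would argue that $a_i \mapsto b_i$ extends to an isomorphism $\theta \colon M_0 \to N_0$. This is the standard reconstruction of a generated substructure from its quantifier-free type: for all terms $t, t'$ and every predicate symbol $P$, the formulas $d\bigl( t(x), t'(x) \bigr)$ and $P\bigl( t(x), \ldots \bigr)$ belong to $\cL^{\qf,\aff}_x$, so $\rho^\qf_x(p) = \rho^\qf_x(q)$ forces $d^{M_0}\bigl( t(a), t'(a) \bigr) = d^{N_0}\bigl( t(b), t'(b) \bigr)$ and similarly for predicates. Hence $t(a) \mapsto t(b)$ is a well-defined isometry from the dense set of term-values in $M_0$ onto that in $N_0$, intertwining all function and predicate symbols, and it extends by uniform continuity to an isomorphism $\theta \colon M_0 \to N_0$ with $\theta(a_i) = b_i$. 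Since $\theta$ is an isomorphism of $\cL$-structures, it preserves the interpretation of every formula, in particular every affine formula, so $\varphi^{M_0}(a) = \varphi^{N_0}(b)$ for all $\varphi \in \cL^\aff_x$. Finally, $M_0 \preceq^\aff M$ and $N_0 \preceq^\aff N$ give $\varphi^{M_0}(a) = \varphi^M(a) = p(\varphi)$ and $\varphi^{N_0}(b) = \varphi^N(b) = q(\varphi)$, whence $p = q$. (Nothing changes if $x$ is infinite, since affine formulas are finitary.)

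The only genuinely delicate point is the middle step — extracting an isomorphism from equality of quantifier-free types — but this is entirely routine and identical to the situation in classical or continuous logic, the affine structure playing no role there. Everything else is a direct application of \autoref{remark:TypeOfModel} and the definition of $\preceq^\aff$.
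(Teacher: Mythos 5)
Your proof is correct and follows exactly the paper's (one-line) argument: the paper also notes that since $p$ and $q$ enumerate models and have the same quantifier-free type, the corresponding models are isomorphic, hence $p=q$. You have simply filled in the routine details of reconstructing the isomorphism from the quantifier-free type.
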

\begin{proof}
  If $p,q$ enumerate models and $\rho^\qf_x(p)=\rho^\qf_x(q)$, then the corresponding models are isomorphic, so $p=q$.
\end{proof}


\section{Definability}
\label{sec:Definability}

Throughout this section, we consider definable predicates and definable sets \emph{without parameters}, inside a family of structures, or relative to an affine theory.
Various cases of definability with parameters are subsumed in the parameter-free version.
For example, in order to consider definability with a parameter that may vary, we expand the language with constants representing the parameter, and consider definability in structures in this expanded language.
Similarly, if we wish to consider definability over a fixed parameter set $A \subseteq M$, we may work in the expanded language $\cL_A$, and relative to the affine diagram $D^\aff_A$.

\begin{defn}
  \label{defn:DefinablePredicate}
  An \emph{affine definable predicate} on a structure $M$, in the variables $x$, is a function $\psi\colon M^x \rightarrow \bR$ that is a uniform limit of (interpretations of) affine formulas in $M$.
  As for formulas, we may make the variables explicit using the notation $\psi(x)$.

  More generally, let $\cM$ be a family of structures, and for each $M \in \cM$, let $\psi^M\colon M^x \rightarrow \bR$ be a function.
  Then the family $\psi = (\psi^M: M \in \cM)$ is an affine definable predicate in $\cM$ if there exists a sequence of affine formulas $(\varphi_n)$ in the variables $x$ such that $\varphi_n^M \rightarrow \psi^M$ uniformly for every $M \in \cM$, at a rate that does not depend on $M$.
  When $\cM$ is the class of all models of an affine theory $T$, we say that $\psi$ is an affine definable predicate of $T$.

  When we wish to insist on the fact that a family of predicates is defined by the same sequence of formulas, especially when these depend on a parameter that may vary, we may refer to it as \emph{uniformly definable}.
\end{defn}

\begin{remark}
  \label{remark:DefinablePredicate}
  An affine definable predicate $\psi(x)$ in a family $\cM$ is the limit of a sequence of formulas $(\varphi_n)$ whose interpretations are uniformly Cauchy in $\cM$, namely, such that $|\varphi_n^M - \varphi_m^M| \leq \varepsilon$ for every $M \in \cM$ and $n,m \geq N_\varepsilon$, where $N_\varepsilon$ only depends on $\varepsilon$.
  Let $T = \Th^\aff(\cM)$.
  Then $T$ implies the family of conditions
  \begin{gather*}
    \inf_x \ \bigl( \varphi_n(x) - \varphi_m(x) + \varepsilon \bigr) \geq 0, \qquad n,m \geq N_\varepsilon.
  \end{gather*}
  It follows that the sequence $(\varphi_n)$ converges uniformly in every model of $T$.
  By a similar argument, any other sequence that converges uniformly to $\psi$ in $\cM$ converges to the same limit as $(\varphi_n)$ in every model of $T$.

  Therefore, any affine definable predicate in $\cM$ extends in a unique fashion to all models of $T = \Th^\aff(\cM)$.
\end{remark}

\begin{lemma}
  \label{lemma:DefinablePredicateTypes}
  Let $T$ be an affine theory.
  Every affine definable predicate $\psi(x)$ on models of $T$ factors uniquely as $\hat{\psi} \circ \tp^\aff$, where $\hat{\psi}\colon \tS^\aff_x(T) \rightarrow \bR$ is affine and continuous.
  Conversely, every such function on $\tS^\aff_x(T)$ arises from a unique affine definable predicate.
\end{lemma}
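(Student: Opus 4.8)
The plan is to transfer everything through Kadison's duality, viewing affine formulas (modulo $T$) as continuous affine functions on the compact convex set $\tS^\aff_x(T)$. First I would take an affine definable predicate $\psi(x)$ on models of $T$ and fix a sequence of affine formulas $(\varphi_n)$ in the variables $x$ with $\varphi_n^M \to \psi^M$ uniformly, at a rate independent of $M \models T$; by \autoref{remark:DefinablePredicate} such a sequence exists and $\psi$ is unambiguously determined on every model of $T$. Passing to the quotient $\cL^\aff_x(T)$ and composing with Kadison's isometric, order-preserving, unital embedding $\cL^\aff_x(T) \hookrightarrow \cA\bigl(\tS^\aff_x(T)\bigr)$, each $\varphi_n$ yields a continuous affine function $\hat\varphi_n$ on $\tS^\aff_x(T)$ with $\varphi_n^M(a) = \hat\varphi_n\bigl(\tp^\aff(a)\bigr)$ for all $M \models T$ and $a \in M^x$. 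The key observation is that, since $\tS^\aff_x(T) = \{\tp^\aff(a) : a \in M^x,\ M \models T\}$ and the embedding is isometric, the order-unit norm on $\cL^\aff_x(T)$ coincides with $\|{\cdot}\|_\infty$ on $\tS^\aff_x(T)$; hence the uniform Cauchy condition $|\varphi_n^M - \varphi_m^M| \leq \eps$ (for $n,m$ large, all $M \models T$) is exactly $\|\hat\varphi_n - \hat\varphi_m\|_\infty \leq \eps$.

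Since $\cA\bigl(\tS^\aff_x(T)\bigr)$ is a Banach space, $(\hat\varphi_n)$ then converges uniformly to some $\hat\psi \in \cA\bigl(\tS^\aff_x(T)\bigr)$, which is continuous and affine. Evaluating at realized types gives $\psi^M(a) = \lim_n \varphi_n^M(a) = \lim_n \hat\varphi_n\bigl(\tp^\aff(a)\bigr) = \hat\psi\bigl(\tp^\aff(a)\bigr)$ for every $M \models T$ and $a \in M^x$, i.e.\ $\psi = \hat\psi \circ \tp^\aff$. Uniqueness of $\hat\psi$ is immediate, because $\tp^\aff$ maps onto $\tS^\aff_x(T)$.

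For the converse, let $g \colon \tS^\aff_x(T) \to \bR$ be continuous and affine. By Kadison's theorem $\cL^\aff_x(T)$ is $\|{\cdot}\|_\infty$-dense in $\cA\bigl(\tS^\aff_x(T)\bigr)$, so I can choose affine formulas $\varphi_n$ with $\|\hat\varphi_n - g\|_\infty \leq 2^{-n}$. Defining $\psi^M(a) \coloneqq g\bigl(\tp^\aff(a)\bigr)$ for $M \models T$ and $a \in M^x$, the estimate $|\varphi_n^M(a) - \psi^M(a)| = |\hat\varphi_n(\tp^\aff(a)) - g(\tp^\aff(a))| \leq \|\hat\varphi_n - g\|_\infty \leq 2^{-n}$ holds uniformly in $M$, so $\psi$ is an affine definable predicate of $T$, and it plainly factors through $g$. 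Any affine definable predicate of $T$ factoring through $g$ must take the value $g(\tp^\aff(a))$ at every $a$ in every model of $T$, so such a $\psi$ is unique.

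I do not expect a genuine obstacle: the content is entirely the dictionary ``affine definable predicate of $T$ $=$ continuous affine function on $\tS^\aff_x(T)$'', and the only point requiring care is the bookkeeping of the \emph{rate} of uniform convergence when moving between ``uniformly in all models of $T$'' and ``in $\|{\cdot}\|_\infty$ on $\tS^\aff_x(T)$'', which is underwritten by the facts that every type of $T$ is realized and that Kadison's embedding is isometric.
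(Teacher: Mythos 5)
Your proof is correct and follows essentially the same route as the paper's: factor each approximating formula $\varphi_n$ through the type space, transfer the uniform Cauchy condition to $\|{\cdot}\|_\infty$ on $\tS^\aff_x(T)$ using that every type is realized, pass to the limit in $\cA\bigl(\tS^\aff_x(T)\bigr)$, and use density of formulas among continuous affine functions for the converse. The paper states this more tersely, but the content is identical.
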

\begin{proof}
  Assume that $\varphi_n(x)$ are affine formulas such that $\varphi_n \rightarrow \psi$ uniformly.
  Each formula factors as $\hat{\varphi}_n \circ \tp^\aff$, where $\hat{\varphi}_n\colon \tS^\aff_x(T) \rightarrow \bR$ is continuous and affine.
  Therefore, $\psi = \hat{\psi} \circ \tp^\aff$, where $\hat{\varphi}_n \rightarrow \hat{\psi}$ uniformly on $\tS^\aff_x(T)$, and $\hat{\psi}$ is continuous and affine.
  The converse holds since the (functions defined by) formulas on $\tS^\aff_x(T)$ are dense among all continuous affine functions.
\end{proof}

When no ambiguity may arise, we are not going to be very careful about the distinction between an affine definable predicate $\psi$ (on a family of structures, or on all models of $T$), its interpretations $\psi^M$ in the various structures, and the corresponding continuous affine function $\hat{\psi}$ on $\tS^\aff_x(T)$, denoting all by the same letter $\psi$.

Affine definable predicates can be treated, for essentially all intents and purposes, as affine formulas.
Being uniform limits of affine formulas, they are bounded and uniformly continuous.
Clearly, we may apply affine connectives to definable predicates, and obtain new ones.
Similarly, if $\varphi_n(x,y) \rightarrow \psi(x,y)$ uniformly, then $\inf_x \varphi_n(x,y) \rightarrow \inf_x \psi(x,y)$ at the same rate, so we may also apply quantifiers to affine definable predicates.

Finally, we may simply \emph{name} an affine definable predicate $\psi(x)$ in the language.
More precisely, assume that $\psi(x)$ is an affine definable predicate in a theory $T$, in a language $\cL$.
Let $\cL'$ consist of $\cL$ together with a new predicate symbol $P_\psi$, with the appropriate arity, bound, and continuity modulus.
Let $T'$ consist of $T$ together with affine axioms stating that $P_\psi$ is to be interpreted as $\psi$.
Then $T'$ is an \emph{affine definitional expansion} of $T$, and there is no real difference between the class of models of $T$ (in $\cL$) and that of $T'$ (in $\cL'$).

One difference between definable predicates and formulas is that definable predicates may depend (essentially) on countably many variables rather than just finitely many. We note that in that case, we can also quantify over countable tuples of variables, i.e., if $\psi(x, y)$ is a definable predicate then $\inf_y \psi(x, y)$ is also a definable predicate without restrictions on the lengths of $x$ and $y$. Indeed, if $y_1, y_2, \ldots$ enumerate all of the $y$-variables that $\psi$ depends on and $\psi_n = \inf_{y_1, \ldots, y_n} \psi(x, y)$, then $\psi_n \to \inf_y \psi(x, y)$ uniformly.

\begin{defn}
  \label{defn:DefinableSet}
  Let $M$ be a structure.
  A non-empty closed subset $D \subseteq M^I$ is called \emph{affinely definable} if we can quantify over it in affine logic.
  By that we mean that if $x = (x_i : i \in I)$, and $\varphi(x,y)$ is an affine definable predicate on $M$ then the expression $\inf_{x \in D} \, \varphi(x,y)$ is again an affine definable predicate in $M$ (equivalently, with $\sup$).

  More generally, let $\cM$ be a family of structures, and let $D = (D^M : M \in \cM)$ be a family of closed non-empty subsets $D^M \subseteq M^I$.
  Then it is a \emph{uniformly affinely definable} family if for every affine definable predicate $\varphi(x,y)$ in $\cM$, the expression $\inf_{x \in D} \, \varphi(x,y)$, interpreting $D$ as $D^M$ in each $M \in \cM$, coincides with an affine definable predicate in $\cM$.
\end{defn}

If $\varphi_n(x,y)$ are affine formulas such that $\varphi_n(x,y) \rightarrow \varphi(x,y)$ uniformly in $\cM$, then $\inf_{x \in D} \varphi_n(x,y) \rightarrow \inf_{x \in D} \varphi(x,y)$ at the same rate.
It follows that we may require $\varphi$ to be an affine formula without changing \autoref{defn:DefinableSet}.
Since an affine formula only depends on finitely many variables, a set $D$ of $I$-tuples is affinely definable if and only if its projection to every finite $I_0 \subseteq I$ is affinely definable, and similarly for families of sets.
Therefore, we are going to be interested mostly in affinely definable sets of finite tuples.

For a criterion for definability that is easier to verify, we need a definable metric on $I$-tuples, i.e., a metric given by an affine definable predicate.
For finite $I$, we usually take $d(x,y) = \sum_{i \in I} d(x_i,y_i)$, and for $I = \bN$, we can use $d(x,y) = \sum_{i} 2^{-i} d(x_i,y_i)$.

If $(X, d)$ is a metric space and $Y \sub X$ is non-empty, for $x \in X$, we denote $d(x, Y) \coloneqq \inf_{y \in Y} d(x, y)$.

\begin{prop}
  \label{prop:DefinableSet}
  Let $D \subseteq M^I$ be a non-empty closed set, and let $d$ be any definable distance predicate on $I$-tuples.
  Then the following are equivalent:
  \begin{enumerate}
  \item
    The set $D$ is affinely definable.
  \item
    The function $d(\cdot, D)$ is an affine definable predicate in $M$.
  \item
    For every $\varepsilon > 0$, there exists an affine formula $\psi(x)$ such that $\psi(x) \geq 0$ in $M$, $\psi(x) < \varepsilon$ if $x \in D$, and $\psi(x) < 1$ implies $d(x,D) < \varepsilon$ in $M$.
  \end{enumerate}
  Similarly, for a uniformly affinely definable family $(D^M : M \in \cM)$.
\end{prop}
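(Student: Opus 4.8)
The plan is to prove the cycle (i) $\Rightarrow$ (ii) $\Rightarrow$ (iii) $\Rightarrow$ (i); the only implication with real content is the last one, which I would handle by an affine ``inf-convolution'' (penalization) argument, the affine analogue of the standard definability criterion of continuous logic. The implication (i) $\Rightarrow$ (ii) is immediate: by hypothesis $d(x,z)$ is an affine definable predicate in the pair of $I$-tuples $(x,z)$, so affine definability of $D$ applied to it gives that $\inf_{z \in D} d(x,z) = d(x,D)$ is an affine definable predicate. For (ii) $\Rightarrow$ (iii), fix $\varepsilon > 0$ and choose an affine formula $\varphi$ with $|\varphi - d(\cdot,D)| < \eta$ throughout $M$, where $0 < \eta \le \varepsilon^2/2$. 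Then $\varphi + \eta \ge d(\cdot,D) \ge 0$, so $\psi := \varepsilon^{-1}(\varphi + \eta)$ is $\ge 0$ in $M$; since $d(\cdot,D) = 0$ on $D$ (as $D$ is closed), $\psi \le 2\eta/\varepsilon \le \varepsilon$ there; and $\psi < 1$ forces $d(\cdot,D) \le \varphi + \eta < \varepsilon$. So $\psi$ witnesses (iii).

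The main step is (iii) $\Rightarrow$ (i). Since every affine definable predicate is a uniform limit of affine formulas and $\varphi \mapsto \inf_{x \in D}\varphi$ is nonexpansive for $\|{\cdot}\|_\infty$, it suffices to show that $\inf_{x \in D}\varphi(x,y)$ is an affine definable predicate whenever $\varphi(x,y)$ is an affine formula. Let $\omega$ be a non-decreasing modulus of uniform continuity of $\varphi$ in its first variables, and fix a constant $N \ge 2\|\varphi\|_\infty$. Apply (iii) along a sequence $\varepsilon_n \downarrow 0$ to get affine formulas $\psi_n \ge 0$ with $\psi_n < \varepsilon_n$ on $D$ and $\{\psi_n < 1\} \subseteq \{d(\cdot,D) < \varepsilon_n\}$. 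Then $\varphi(x,y) + N\psi_n(x)$ is an affine formula, so $\rho_n(y) := \inf_x\bigl(\varphi(x,y) + N\psi_n(x)\bigr)$ is an affine definable predicate. Writing $\rho(y) := \inf_{x \in D}\varphi(x,y)$, I claim $\|\rho_n - \rho\|_\infty \to 0$, which finishes the proof. Evaluating the infimum at points of $D$ gives $\rho_n \le \rho + N\varepsilon_n$. Conversely, fix any $x$: if $d(x,D) < \varepsilon_n$, pick $x'' \in D$ with $d(x,x'') < \varepsilon_n$ and get $\varphi(x,y) + N\psi_n(x) \ge \varphi(x'',y) - \omega(\varepsilon_n) \ge \rho(y) - \omega(\varepsilon_n)$ (using $\psi_n \ge 0$); if $d(x,D) \ge \varepsilon_n$, then $\psi_n(x) \ge 1$, so $\varphi(x,y) + N\psi_n(x) \ge N - \|\varphi\|_\infty \ge \rho(y)$. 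Taking the infimum over $x$ gives $\rho_n \ge \rho - \omega(\varepsilon_n)$, and $N\varepsilon_n, \omega(\varepsilon_n) \to 0$.

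The point to be careful about — the one genuine obstacle — is why one cannot simply take $N$ fixed and penalize by $N\,d(x,D)$: affine formulas carry only a general (convex) modulus of continuity, not a Lipschitz one, so such a single-constant inf-convolution need not equal $\rho(y)$ on the nose. Letting the penalty sharpen along the $\psi_n$ while splitting on whether $x$ lies within $\varepsilon_n$ of $D$ gets around this, with the boundedness of $\varphi$ (automatic in this setting) handling the ``far'' case. Finally, all the constants above depend only on $\omega$ and $\|\varphi\|_\infty$ and not on $M$, so the whole argument is uniform and yields the statement for a uniformly affinely definable family $(D^M : M \in \cM)$ as well, reading ``in $M$'' as ``in every $M \in \cM$''.
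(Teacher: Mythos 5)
Your proof is correct and follows essentially the same route as the paper: the same cycle of implications, with the only substantive step being (iii) $\Rightarrow$ (i) via the penalization $\inf_x\bigl(\varphi(x,y) + N\psi(x)\bigr)$ with $N$ on the order of twice the bound on $|\varphi|$, splitting on whether $x$ is close to $D$ or has $\psi(x)\ge 1$. The only cosmetic difference is that you run a sequence $\varepsilon_n\downarrow 0$ and absorb the error through the modulus $\omega(\varepsilon_n)$, whereas the paper fixes $\varepsilon$ and picks $\delta$ from uniform continuity before invoking (iii); both are equally uniform in $M$.
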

\begin{proof}
  \begin{cycprf*}
  \item
    Since $d(x,D) = \inf_{y \in D} \, d(x,y)$.
  \item[\impnext]
    Immediate.
  \item[\impfirst]
    Let $\varphi(x, y)$ be an affine formula, and let $\rho(y)$ denote the expression $\inf_{x \in D} \, \varphi(x,y)$. Our goal is to show that $\rho$ is an affine definable predicate on $M$.
    Let $R$ be large enough that $|\varphi(x,y)| \leq R$ always.

    Let $\varepsilon > 0$.
    There exists $\delta > 0$ such that $d(x,x') < \delta$ implies $|\varphi(x,y) - \varphi(x',y)| < \varepsilon$.
    There exists an affine formula $\psi(x) \geq 0$ such that $\psi(x) \leq \varepsilon/2R$ on $D$, and $\psi(x) < 1$ implies $d(x,D) < \delta$.
    Define
    \begin{gather*}
      \chi(y) = \inf_x \, \bigl( \varphi(x,y) + 2R \psi(x) \bigr).
    \end{gather*}

    On the one hand, considering $x \in D$, we have $\chi(y) \leq \rho(y) + \varepsilon$.
    For the opposite inequality, let $a \in M^x$ and $b \in M^y$.
    If $\psi(a) \geq 1$, then $\varphi(a,b) + 2R \psi(a) \geq \varphi(a',b) \geq \rho(b)$ for any $a' \in D$.
    On the other hand, if $\psi(a) < 1$, then there exists $a' \in D$ such that $d(a,a') < \delta$, so $\varphi(a,b) + 2R \psi(a) + \varepsilon \geq \varphi(a',b) \geq \rho(b)$.
    Therefore $\rho(y) \leq \chi(y) + \varepsilon$.

    We conclude that $\rho$ is a uniform limit of affine formulas.
  \end{cycprf*}
  The same argument works uniformly in a family of structures $\cM$.
\end{proof}

Intuitively, the possibility to quantify over a definable set $D$ makes it similar to a sort.
Moreover, just as an affine definable predicate can be named in the language by a new predicate symbol, an affinely definable set $D$ can be named by a new sort.
This is more involved, but in a nutshell, we add a sort to $\cL$, and axioms to $T$ asserting that the new sort is to be in isometric bijection with $D$ (using function symbols that we should also add).
We leave the technical details to the reader.

\begin{remark}
  \label{remark:DefinableSetInfinitary}
  When $I = \bN$, we obtain two equivalent criteria for definability.
  On the one hand, $D$ is affinely definable if $d(x, D)$ is an affine definable predicate, where $d(x,y) = \sum_k 2^{-k} d(x_k,y_k)$, say.
  On the other hand, it is affinely definable if for every $n$, its projection to the first $n$ coordinates is affinely definable, i.e., if $d_n(x,D)$ is an affine definable predicate, where $d_n(x,y) = \sum_{k<n} d(x_k,y_k)$.

  When $I$ is uncountable, the ``definable uniform structure'' on $M^I$ requires an uncountable family of definable pseudometrics, so the first criterion no longer makes sense, but the second one still holds.
\end{remark}

If $\psi(x)$ is a definable predicate and $M$ is a structure, the \emph{zero-set} of $\psi$ in $M$ is $(\psi^M)^{-1}(\set{0})$.
\begin{lemma}
  \label{lem:DefinableSetDistancePredicate}
  Let $(D^M : M \in \cM)$ be a uniformly affinely definable family of sets, and let $\psi(x)$ be the affine definable predicate $d(x,D)$.
  Then the following hold in $\cM$:
  \begin{gather}
    \label{eq:DefinableSetDistancePredicatePositiveLipschitz}
    \inf_x \, \psi(x) \geq 0,
    \qquad
    \inf_{x,y} \, \bigl( d(x,y) - \psi(x) + \psi(y) \bigr) \geq 0, \\
    \label{eq:DefinableSetDistancePredicateRealized}
    \qinf_x \sup_y \, \bigl( \psi(x) - 2 \psi(y) - d(x,y) \bigr) \geq 0.
  \end{gather}
  Conversely, assume that $\psi$ is an affine definable predicate in $\cM$, and that it satisfies \autoref{eq:DefinableSetDistancePredicatePositiveLipschitz} and \autoref{eq:DefinableSetDistancePredicateRealized}.
  Then the zero-sets of $\psi$ form a uniformly affinely definable family of sets in all models of $\Th^\aff(\cM)$, to which the distance is given by $\psi$.
\end{lemma}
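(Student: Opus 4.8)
The plan is to prove the two directions separately: the forward direction is a direct verification, while the converse hinges on an approximate fixed-point iteration whose point is exactly the coefficient $2$ appearing in the third condition.

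For the forward direction, suppose $(D^M : M \in \cM)$ is uniformly affinely definable and set $\psi = d({\cdot},D)$, which is an affine definable predicate in $\cM$ by \autoref{prop:DefinableSet}. Fix $M \in \cM$. Nonnegativity of $\psi^M$ is clear. For the Lipschitz condition in \autoref{eq:DefinableSetDistancePredicatePositiveLipschitz}, given $a,b \in M^x$ and any $c \in D^M$ one has $d(a,c) \leq d(a,b) + d(b,c)$; taking the infimum over $c \in D^M$ gives $\psi^M(a) \leq d(a,b) + \psi^M(b)$. For \autoref{eq:DefinableSetDistancePredicateRealized}, fix $a \in M^x$ and $\eps > 0$; since $D^M$ is non-empty there is $b \in D^M$ with $d(a,b) < \psi^M(a) + \eps$, and as $\psi^M(b) = 0$ we get $\psi^M(a) - 2\psi^M(b) - d(a,b) > -\eps$. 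Letting $\eps \to 0$ shows $\sup_y\bigl(\psi^M(a) - 2\psi^M(y) - d(a,y)\bigr) \geq 0$.

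For the converse, let $\psi$ be an affine definable predicate in $\cM$ satisfying \autoref{eq:DefinableSetDistancePredicatePositiveLipschitz} and \autoref{eq:DefinableSetDistancePredicateRealized}, and put $T = \Th^\aff(\cM)$. First I would note that the two displayed conditions, written in terms of the canonical extension of $\psi$, continue to hold in every model of $T$: each is a uniform limit of affine conditions that hold throughout $\cM$, hence is a consequence of $T$, and one concludes by the uniform-convergence description of definable predicates in \autoref{remark:DefinablePredicate}. Now fix $M \models T$ and let $Z^M = (\psi^M)^{-1}(\{0\})$, a closed subset of $M^x$. The heart of the argument is the claim that for every $a \in M^x$ and $\eps > 0$ there is $b \in Z^M$ with $d(a,b) \leq \psi^M(a) + \eps$. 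To prove it, fix $\eta_n > 0$ with $\sum_n \eta_n < \eps$ and, using \autoref{eq:DefinableSetDistancePredicateRealized} in $M$ together with dependent choice, build $a_0 = a, a_1, a_2, \dots$ in $M^x$ with
\begin{equation*}
  2\psi^M(a_{n+1}) + d(a_n, a_{n+1}) < \psi^M(a_n) + \eta_n \qquad \text{for all } n .
\end{equation*}
Summing the resulting bounds $d(a_n,a_{n+1}) < \psi^M(a_n) - 2\psi^M(a_{n+1}) + \eta_n$ over $n < N$ telescopes to
\begin{equation*}
  \sum_{n<N} d(a_n, a_{n+1}) < \psi^M(a) - \sum_{0 < n < N}\psi^M(a_n) - 2\psi^M(a_N) + \sum_n \eta_n \leq \psi^M(a) + \sum_n\eta_n < \psi^M(a) + \eps ,
\end{equation*}
using $\psi^M \geq 0$. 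In particular $\sum_n \psi^M(a_n) < \infty$, so $\psi^M(a_n) \to 0$, and $\sum_n d(a_n,a_{n+1}) < \infty$, so $(a_n)$ is Cauchy; by completeness it converges to some $b \in M^x$, which satisfies $\psi^M(b) = \lim_n \psi^M(a_n) = 0$ (continuity of $\psi^M$) and $d(a,b) \leq \psi^M(a) + \eps$. This proves the claim; it shows in particular that $Z^M \neq \emptyset$ and that $d(a, Z^M) \leq \psi^M(a)$, while the reverse inequality $\psi^M(a) \leq d(a, Z^M)$ is immediate from \autoref{eq:DefinableSetDistancePredicatePositiveLipschitz} applied with $y \in Z^M$. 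Hence $d({\cdot}, Z^M) = \psi^M$ in every model of $T$, with the same defining sequence of formulas, so \autoref{prop:DefinableSet} in its uniform form shows that $(Z^M : M \models T)$ is a uniformly affinely definable family, to which the distance is given by $\psi$.

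The only genuine obstacle is the telescoping estimate in the recursion: one must produce a sequence whose consecutive distances both have finite sum and stay below $\psi^M(a) + \eps$, and the coefficient $2$ in \autoref{eq:DefinableSetDistancePredicateRealized} is precisely what makes the telescoping sum collapse to $\psi^M(a_0)$ minus a nonnegative remainder — a single $\psi(y)$ would not suffice. A minor point that still needs a word is the transfer of \autoref{eq:DefinableSetDistancePredicatePositiveLipschitz}–\autoref{eq:DefinableSetDistancePredicateRealized} from $\cM$ to all models of $T$, which I would dispatch by the standard approximation argument for definable predicates rather than re-proving it.
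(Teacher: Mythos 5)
Your proof is correct and follows essentially the same route as the paper's: a direct verification of the forward direction, transfer of the two conditions to all models of $\Th^\aff(\cM)$ by approximating $\psi$ with affine formulas, and an iteration of \autoref{eq:DefinableSetDistancePredicateRealized} producing a Cauchy sequence converging to a point of the zero-set at distance at most $\psi(a)+\eps$. The only difference is cosmetic: the paper extracts the per-step bounds $\psi(a_{k+1})<\eps/2^k$ and $d(a_k,a_{k+1})<\psi(a_k)+\eps/2^k$ and sums them, whereas you telescope the inequality $d(a_n,a_{n+1})<\psi(a_n)-2\psi(a_{n+1})+\eta_n$ directly; both exploit the coefficient $2$ in exactly the same way.
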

\begin{proof}
  For the first assertion, any function defining the distance to a non-empty set must satisfy \autoref{eq:DefinableSetDistancePredicatePositiveLipschitz} and \autoref{eq:DefinableSetDistancePredicateRealized}: for the latter, choose $y \in D$ such that $d(x,y)$ is close to $d(x,D)$.

  For the converse, recall first that if $\psi$ is an affine definable predicate in $\cM$, then it is one in all models of the theory $T=\Th^\aff(\cM)$ (see \autoref{remark:DefinablePredicate}).
  The property $\inf_x \, \psi(x) \geq 0$ can then be expressed by the family of affine conditions $\inf_x \, \varphi(x) + \varepsilon \geq 0$, for every affine formula $\varphi$ and $\varepsilon > 0$ such that $|\varphi-\psi| \leq \varepsilon$ in $\cM$, and similarly for the other conditions (replacing $\varepsilon$ with $2\varepsilon$ and $3\varepsilon$, respectively).
  In particular, if \autoref{eq:DefinableSetDistancePredicatePositiveLipschitz} and \autoref{eq:DefinableSetDistancePredicateRealized} hold in $\cM$, then they hold in all models of $T$.

  Now let $M \models T$ and let $D$ be the zero-set of $\psi$ in $M$.
  By \autoref{eq:DefinableSetDistancePredicateRealized}, for every $a \in M^I$ and $\varepsilon > 0$, there exists $a' \in M^I$ such that $\psi(a) - 2 \psi(a') - d(a,a') > -\varepsilon$.
  Using \autoref{eq:DefinableSetDistancePredicatePositiveLipschitz}, we obtain $\psi(a') < \psi(a) - \psi(a') - d(a,a') + \varepsilon \leq \varepsilon$, and $d(a,a') < \psi(a) - 2\psi(a') + \varepsilon \leq \psi(a) + \varepsilon$.

  Iterating this, we construct a sequence $(a_k)$, starting with $a_0 = a$, such that
  \begin{gather*}
    \psi(a_{k+1}) < \varepsilon/2^k, \qquad d(a_k,a_{k+1}) < \psi(a_k) + \varepsilon/2^k.
  \end{gather*}
  It is Cauchy, say with limit $b \in M^I$.
  This limit must satisfy $\psi(b) = 0$ (in particular, $D$ is non-empty), and $d(a,b) < \psi(a) + \sum_k \psi(a_{k+1}) + \sum_k \varepsilon/2^k < \psi(a) + 4\varepsilon$.

  We conclude that $d(a,D) \leq \psi(a)$.
  The converse inequality follows from \autoref{eq:DefinableSetDistancePredicatePositiveLipschitz}, so $\psi$ is indeed the distance to its zero-set, in every model $M \models T$.
\end{proof}

Another way to describe the notion of definable sets is to consider the distance, in the sense of \autoref{defn:AffineTypeDistance}, to a set of types.

We can extend the definitions of convex and concave functions to functions which are allowed to take the value $\infty$ in the obvious way. Note that a concave, upper semi-continuous function that takes the value $\infty$ somewhere must be the constant $\infty$.
\begin{prop}
  \label{prop:DefinableSetTypes}
  Let $T$ be an affine theory, let $I$ be countable, and let $X \subseteq \tS^\aff_I(T)$ be non-empty.
  For $p \in \tS^\aff_I(T)$, let $\partial^X(p) = \partial(p, X) \in [0,\infty]$.
  Then the following are equivalent:
  \begin{enumerate}
  \item The set $X$ is $\partial$-closed and $\partial^X$ is continuous and affine, i.e., an affine definable predicate.
  \item The set $X$ is $\tau$-compact and convex and $\partial^X$ is $\tau$-upper semi-continuous and concave.
  \item For $M \models T$, define $D^M = \bigl\{ a \in M^I : \tp^\aff(a) \in X \bigr\}$.
    Then the family $D = (D^M : M \models T)$ is uniformly affinely definable, and $X$ is the zero-set in $\tS^\aff_I(T)$ of the definable predicate $d(x,D)$.
  \item
    There exists a family of structures $\cM$ such that $T = \Th^\aff(\cM)$, and a uniformly affinely definable family of sets $D = (D^M \colon M \in \cM)$ such that $X$ is the zero-set in $\tS^\aff_I(T)$ of the definable predicate $d(x,D)$.
  \end{enumerate}
  Moreover, if these conditions hold, the affine definable predicates $\partial^X$ and $d(x,D)$ must agree, and $X$ is an exposed face of $\tS^\aff_I(T)$.
\end{prop}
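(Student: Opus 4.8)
The plan is to establish the cycle of implications (ii)$\Rightarrow$(i)$\Rightarrow$(iii)$\Rightarrow$(iv)$\Rightarrow$(ii), and then to read off the final two assertions from the arguments made along the way. Write $\psi = \partial^X$ throughout. The heart of the matter is (ii)$\Rightarrow$(i): one must show that the weak regularity assumptions on $\psi$ self-improve. Being concave, $\tau$-u.s.c., and equal to $0$ on the non-empty set $X$, the function $\psi$ cannot take the value $\infty$, since a concave $\tau$-u.s.c.\ function attaining $\infty$ is constantly $\infty$. Next, $\psi$ is convex: since $X$ is convex and $\partial$ is jointly convex on $\tS^\aff_I(T)^2$ (\autoref{prop:AffineTypeDistance}\autoref{item:AffineTypeDistanceConvex}, valid for countable $I$ by \autoref{rem:type-metric-inf-tuples}), given $p_1,p_2$ and $q_1,q_2 \in X$ the point $\lambda q_1 + (1-\lambda) q_2 \in X$ bounds $\psi(\lambda p_1 + (1-\lambda) p_2)$ above by $\lambda \partial(p_1,q_1) + (1-\lambda)\partial(p_2,q_2)$; infimum over the $q_i$ gives convexity, so $\psi$ is affine. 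Finally, $\psi$ is $\tau$-l.s.c.: as $X$ is $\tau$-compact and $\partial(p,\cdot)$ is $\tau$-l.s.c., the infimum defining $\psi(p)$ is attained, so $\{\psi \le c\}$ is the image under the first projection of the $\tau$-compact set $(\tS^\aff_I(T)\times X)\cap\{(p,q):\partial(p,q)\le c\}$ and is therefore $\tau$-closed. Thus $\psi$ is $\tau$-continuous and affine, i.e.\ an affine definable predicate by \autoref{lemma:DefinablePredicateTypes}; and since $\psi$ is $1$-Lipschitz for $\partial$ while $\psi(p) = 0$ forces $p \in X$ (attained infimum), $X = \{\psi = 0\}$ is $\partial$-closed, so (i) holds.

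For (i)$\Rightarrow$(iii), $\psi = \partial^X$ is now an affine definable predicate and $X = \{\psi = 0\}$. I would feed $\psi$ into the converse direction of \autoref{lem:DefinableSetDistancePredicate}, over the family of all models of $T$, which requires verifying \autoref{eq:DefinableSetDistancePredicatePositiveLipschitz} and \autoref{eq:DefinableSetDistancePredicateRealized}. Non-negativity and $d$-Lipschitzness of $\psi$ are immediate from $\psi \ge 0$ and from $\partial(\tp^\aff(a),\tp^\aff(b)) \le d(a,b)$ together with $\partial$-Lipschitzness. The delicate point is \autoref{eq:DefinableSetDistancePredicateRealized}, which amounts to $\rho(x) := \inf_y\bigl(d(x,y) + 2\psi(y)\bigr) \le \psi(x)$ in every model of $T$. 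One should not argue inside a fixed model (which may fail to realize the type of $X$ nearest to a given tuple), but rather observe that $\rho$ is itself an affine definable predicate — quantification over a countable tuple of a definable predicate — whose value in any model is type-determined, so it suffices to check $\hat\rho(p) \le \hat\psi(p) = \partial^X(p)$ on $\tS^\aff_I(T)$: given $p$, pick $q \in X$ with $\partial(p,q) = \partial^X(p)$ (attained), realize $p$ and $q$ at that distance in a common model of $T$ (see \autoref{prop:AffineTypeDistance}\autoref{item:AffineTypeDistanceAttained} and \autoref{item:AffineTypeDistanceFinite}), and evaluate $\rho$ there. Then \autoref{lem:DefinableSetDistancePredicate} shows that the zero-sets of $\psi$ form a uniformly affinely definable family with distance $\psi$; since the zero-set of $\psi$ in $M \models T$ is exactly $D^M = \{a \in M^I : \tp^\aff(a) \in X\}$, this is (iii), with $d(x,D) = \psi = \partial^X$.

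The implication (iii)$\Rightarrow$(iv) is trivial, taking $\cM$ to be the class of all models of $T$. For (iv)$\Rightarrow$(ii), put $\psi = d(x,D)$ for the given uniformly affinely definable family; it is an affine definable predicate, $\hat\psi$ is continuous and affine on $\tS^\aff_I(T)$ (\autoref{lemma:DefinablePredicateTypes}), and $X = \{\hat\psi = 0\}$ is $\tau$-compact and convex. It remains to identify $\partial^X$ with $\hat\psi$. Since \autoref{eq:DefinableSetDistancePredicatePositiveLipschitz} holds in $\cM$ by \autoref{lem:DefinableSetDistancePredicate}, hence in all of $\Th^\aff(\cM) = T$ (it is a family of affine conditions), $\hat\psi$ is $1$-Lipschitz for $\partial$, so $\hat\psi(p) \le \hat\psi(q) + \partial(p,q) = \partial(p,q)$ for $q \in X$, giving $\hat\psi \le \partial^X$. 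Conversely, \autoref{eq:DefinableSetDistancePredicateRealized} also holds in all models of $T$, so by the converse of \autoref{lem:DefinableSetDistancePredicate} we have $\psi^M(a) = d\bigl(a, \{b : \psi^M(b) = 0\}\bigr) = d\bigl(a, \{b : \tp^\aff(b) \in X\}\bigr)$ in $M \models T$, and each such $b$ satisfies $d(a,b) \ge \partial(\tp^\aff(a),\tp^\aff(b)) \ge \partial^X(\tp^\aff(a))$, whence $\hat\psi \ge \partial^X$. Thus $\partial^X = \hat\psi$ is continuous and affine, in particular $\tau$-u.s.c.\ and concave, which is (ii).

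Finally, the ``moreover'' clause: $\partial^X = d(x,D)$ was obtained inside (i)$\Rightarrow$(iii) and (iv)$\Rightarrow$(ii); and $X = \{\partial^X = 0\}$ with $\partial^X \in \cA(\tS^\aff_I(T))$ non-negative and strictly positive off $X$, so $X$ is a face (if $\partial^X(\lambda p_1 + (1-\lambda) p_2) = 0$ with $0 < \lambda < 1$ then $\partial^X(p_1) = \partial^X(p_2) = 0$) exposed by $\partial^X$ itself. I expect the only real obstacle to be the verification of \autoref{eq:DefinableSetDistancePredicateRealized} in (i)$\Rightarrow$(iii): the subtlety is precisely that one cannot argue model-by-model — a small model of $T$ may omit the type of $X$ nearest to a given tuple — and must instead use that the infimum in $\rho$ is a genuine definable predicate, invariant under affine extension, so that its value equals the one computed on the type space.
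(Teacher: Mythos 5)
Your proof is correct and takes essentially the same route as the paper's: the paper runs the cycle (i)$\Rightarrow$(ii)$\Rightarrow$(iii)$\Rightarrow$(iv)$\Rightarrow$(i) rather than yours, but the substantive steps — upgrading $\partial^X$ to a continuous affine function via the convexity and $\tau$-lower semicontinuity of $\partial$ together with compactness of $X$, verifying the two conditions of \autoref{lem:DefinableSetDistancePredicate} by realizing the attained nearest type of $X$ in an affine extension, and identifying $d(x,D)$ with $\partial^X$ via attained distances — are exactly the ones you give. In particular, the subtlety you flag around \autoref{eq:DefinableSetDistancePredicateRealized} is handled in the paper in the same way, by producing the witness in an extension $N \succeq^\aff M$ and noting the definable predicate takes the same value in $M$.
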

\begin{proof}
  \begin{cycprf}
  \item
    Immediate.
  \item
    Since $\partial$ is $\tau$-lower semi-continuous and $X$ is $\tau$-compact, $\partial^X$ is $\tau$-lower semi-continuous.
    Since $\partial$ is a convex function and $X$ a convex set, $\partial^X$ is convex.
    Therefore $\partial^X$ is continuous and affine.
    Since $\partial^X = 0$ on $X$ and $\partial^X > 0$ elsewhere, $X$ is an exposed face.
    Since $X \neq \emptyset$, $\dtp^X$ is not the constant $\infty$, so $\dtp^X < \infty$ everywhere.

    Let $M \models T$ and $a,b \in M^I$, with respective types $p,q$.
    Then $\partial^X(a) = \partial^X(p) \geq 0$, and $\partial^X(a) = \partial^X(p) \leq \partial^X(q) + \partial(p,q) \leq \partial^X(b) + d(a,b)$, so \autoref{eq:DefinableSetDistancePredicatePositiveLipschitz} holds in $M$.
    Since $X$ is $\tau$-compact and $\partial$ is $\tau$-lower semi-continuous, there exists $q' \in X$ such that $\partial^X(p) = \partial(p,q')$.
    So there exists $N \succeq^\aff M$ and $c \in N^I$ such that $\tp^\aff(c) = q'$ and $d(a,c) = \partial(p,q') = \partial^X(a)$.
    Since $\partial^X(c) = 0$, it witnesses that $\sup_y \, \bigl( \partial^X(a) - 2 \partial^X(y) - d(a,y) \bigr) \geq 0$ (in $N$ and therefore in $M$), so \autoref{eq:DefinableSetDistancePredicateRealized} holds as well.
    By \autoref{lem:DefinableSetDistancePredicate} and by the definition of $D^M$, the family $D = (D^M : M \models T)$ is uniformly affinely definable and $d(x,D) = \partial^X(x)$.
  \item
    Immediate.
  \item[\impfirst]
    By \autoref{lem:DefinableSetDistancePredicate}, the family $D$ can be extended from $\cM$ to all models of $T$. In particular, $D^M$ is non-empty for every $M\models T$, and $\partial^X(p)<\infty$ for every $p\in\tS^\aff_I(T)$.

    Let $p \in \tS^\aff_I(T)$.
    By compactness of $X$, there exists $q \in X$ such that $\partial(p,q) = \partial^X(p)$.
    Therefore, there exists $M \models T$ and $a,b \in M^I$ such that $\tp^\aff(a) = p$, $\tp^\aff(b) = q$, and $d(a,b) = \partial(p,q)$.
    Then $b \in D$, so $d(p,D) \leq d(a,b) = \partial^X(p)$.
    Assume, on the other hand, that $d(p,D) < r$.
    Then there exists $c \in D^M$ such that $d(a,c) < r$ and $\tp^\aff(c) \in X$.
    It follows that $\partial^X(p) \leq d(a,c) < r$ and since $r$ was arbitrary, $\partial^X(p) \leq d(p,D)$.

    Therefore $\partial^X$ is the definable predicate $d(x,D)$.
    Since $X$ is its zero-set, it is $\tau$-closed and \textit{a fortiori} $\dtp$-closed.
  \end{cycprf}
\end{proof}

A special case of the above is when $D = \set{a}$ is a singleton.
This is the case if and only if $\psi(x) = d(x,D)$ satisfies the conditions of \autoref{lem:DefinableSetDistancePredicate} and, in addition, $\sup_{x,y \in D} \, d(x,y) = 0$.
The distance to a singleton can be equivalently characterized as a function $\psi$ that is $1$-Lipschitz, satisfies $\psi(x) + \psi(y) \geq d(x,y)$ (a function satisfying these two conditions is sometimes called a \emph{Kat\v{e}tov function}), and $\inf_x \psi(x) = 0$.
If $D = \set{a}$ is an affinely definable set, then we say that $a$ is \emph{affinely definable}.
In this case, if $\psi(x,y)$ is any affine definable predicate, then $\psi(a,y) = \sup_{x \in \set{a}} \, \psi(x,y) = \inf_{x \in \set{a}} \, \psi(x,y)$ is an affine definable predicate.

\begin{defn}
  Let $M$ be a structure.
  A function $f\colon M^I \rightarrow M$ is said to be \emph{affinely definable} if $d\bigl( f(x), y \bigr)$ is an affine definable predicate.

  A family of functions on a family of structures is \emph{uniformly affinely definable} if all its members are defined using a common affine definable predicate.
\end{defn}

It is easy to check that a definable function is always uniformly continuous.
Note that $f$ is affinely definable in $M$ if and only if the family of singletons $(\set{f(a)} : a \in M^I)$ is uniformly affinely definable from the parameter $a$.
It follows that if $\psi(x,y)$ is an affine definable predicate, with $x$ a singleton, then $\psi\bigl( f(z), y \bigr)$ is an affine definable predicate as well.
Similarly, for a family of uniformly affinely definable functions on a class of structures.
Finally, an affinely definable function can be named by a new function symbol, following a procedure similar to that for affine definable predicates.


\section{Extreme  types and extremal models}
\label{sec:ExtremeTypes}

Let us fix an affine theory $T$ and variables $x$.
Recall that the type space $\tS^\aff_x(T)$, being the state space of $\cL^\aff_x(T)$, is a compact convex set.

\begin{defn}
  \label{dfn:ExtremeType}
  An extreme point $p \in \tS^\aff_x(T)$ is called an \emph{extreme type} of $T$ (in $x$).
  The collection of extreme types of $T$ in $x$ will be denoted by $\cE_x(T)$.

  If $M$ is a structure, and $A \subseteq M$, then we define $\cE_x(A) = \cE_x(D^\aff_A)$, namely, the collection of extreme types in $\tS^\aff_x(A) = \tS^\aff_x(D^\aff_A)$.
\end{defn}

We start by observing that extreme types over a complete theory are approximately realizable (i.e., finitely satisfiable up to small error) in every model.

\begin{lemma}\label{l:realized-extreme-dense}
  Let $T$ be a complete affine theory.
  Then for every model $M\models T$ and $A\subseteq M$, we have $\cE_x(A)\subseteq \cl{\set{\tp^\aff(a/A) : a\in M^x}}$.
\end{lemma}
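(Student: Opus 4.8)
The plan is to pass to the expanded language $\cL(A)$ and deduce the statement from \autoref{l:realized-convex-dense} together with Milman's theorem. Recall that, by \autoref{dfn:ExtremeType}, $\cE_x(A)$ is precisely the set of extreme points of the compact convex set $\tS^\aff_x(A) = \tS^\aff_x(D^\aff_A)$, where $D^\aff_A$ is the affine diagram of $A$ in $M$, viewed as a theory in $\cL(A)$. The key observation is that $D^\aff_A$ is itself a \emph{complete} affine theory — it is the affine theory of the $\cL(A)$-structure $M$ — and $M$ is a model of it. (This is where the hypothesis $M \models T$ enters; no further use of the completeness of $T$ is needed.)

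First I would apply \autoref{l:realized-convex-dense} to the complete theory $D^\aff_A$ and its model $M$, which gives
\begin{equation*}
  \tS^\aff_x(A) = \cco \set{\tp^\aff(a/A) : a \in M^x}.
\end{equation*}
Let $K$ denote the $\tau$-closure in $\tS^\aff_x(A)$ of the set $\set{\tp^\aff(a/A) : a \in M^x}$. Then $K$ is a closed subset of a compact space, hence compact, and since taking the closed convex hull of a set is unaffected by first passing to its closure, we still have $\clco K = \tS^\aff_x(A)$. By Milman's theorem, every extreme point of $\clco K$ lies in $K$; that is,
\begin{equation*}
  \cE_x(A) = \cE\bigl(\tS^\aff_x(A)\bigr) \subseteq K = \cl{\set{\tp^\aff(a/A) : a \in M^x}},
\end{equation*}
which is exactly the desired conclusion.

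There is essentially no obstacle here: the only thing to notice is that, once one moves to the language $\cL(A)$, the statement becomes an immediate combination of the convex-density lemma and Milman's theorem. The one point worth checking carefully is that $D^\aff_A$ genuinely qualifies as a complete affine theory so that \autoref{l:realized-convex-dense} applies, but this is immediate from the definitions, since the affine type of the empty tuple of any $\cL(A)$-structure is a complete affine $\cL(A)$-theory.
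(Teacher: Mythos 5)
Your proof is correct and follows exactly the paper's intended route: the paper's own proof is the one-line "Follows from \autoref{l:realized-convex-dense} and Milman's theorem," and your write-up simply fills in the details — passing to the complete theory $D^\aff_A$ in the language $\cL(A)$, applying the convex-density lemma there, and invoking Milman's theorem on the (compact) closure of the set of realized types. The observation that the completeness of $T$ itself plays no role beyond guaranteeing that $D^\aff_A$ is a complete theory is accurate.
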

\begin{proof}
  Follows from \autoref{l:realized-convex-dense} and Milman's theorem.
\end{proof}

\begin{remark}
  \label{rem:ExtremeRestrictionTwoTheories}
  Consider two languages $\cL_1 \subseteq \cL_2$ and two theories in these languages, $T_1 \subseteq T_2$ (i.e., $T_2 \models T_1$: every consequence of $T_1$ is also a consequence of $T_2$).
  Let $p_1 \in \cE_x(T_1) \subseteq \tS^\aff_x(\cL_1)$, and let $C \subseteq \tS^\aff_x(T_2) \subseteq \tS^\aff_x(\cL_2)$ be the collection of extensions of $p$.
  Then either $C = \emptyset$ (i.e., $p_1$ is inconsistent with $T_2$), or else, $C$ is a closed face of $\tS^\aff_x(T_2)$.
  In the latter case, $p_1$ extends to some $p_2 \in \cE_x(T_2)$ (namely, any extreme point of $C$).
  Special cases of this observation include:
  \begin{itemize}
  \item
    If $\cL_1 = \cL_2 = \cL$, then $\cE_x(T_2) \subseteq \tS^\aff_x(T_2) \subseteq \tS^\aff_x(T_1)$, but an extreme type of $T_2$ need not be an extreme type of $T_1$.
    On the other hand, we do have $\cE_x(T_1) \cap \tS^\aff_x(T_2) \subseteq \cE_x(T_2)$ (if an extreme type of $T_1$ is consistent with $T_2$, then it is \textit{a fortiori} an extreme type there).
  \item
    Let us continue with the assumption that $\cL_1 = \cL_2$.
    Say that $T_2$ is an \emph{extremal extension} of $T_1$ if $\tS^\aff_0(T_2)$ is a face of $\tS^\aff_0(T_1)$.
    This is equivalent to $\tS^\aff_x(T_2)$ being a closed face of $\tS^\aff_x(T_1)$ for any variables $x$, and implies the inclusion $\cE_x(T_2) \subseteq \cE_x(T_1)$.
    Since a face of a face is a face, the extremal extension relation is transitive.
  \item
    Consider now the case of distinct languages, but assume that $T_1$ consists exactly of the $\cL_1$-consequences of $T_2$.
    Then every $p_1 \in \cE_x(T_1)$ extends to some $p_2 \in \cE_x(T_2)$.
  \item
    If $M$ is a structure and $A \subseteq B \subseteq M$, then $D^\aff_A \subseteq D^\aff_B$ is an example of the last situation, so every $p \in \cE_x(A)$ can be extended to some $q \in \cE_x(B)$.
  \end{itemize}
\end{remark}

For the following few results, we fix an affine theory $T$, and tuples of variables $x$ and $y$.
As usual, $\pi_x\colon \tS^\aff_{xy}(T) \rightarrow \tS^\aff_x(T)$ is the variable restriction map.
Recall that it is affine, $\tau$-continuous and $\dtp$-contractive.

\begin{prop}
  \label{prop:ExtremeTypeTwoSteps}
  Let $M \models T$ and $a,b \in M$ be tuples.
  Then $\tp^\aff(a,b) \in \cE_{xy}(T)$ if and only if $\tp^\aff(a) \in \cE_x(T)$ and $\tp^\aff(b/a) \in \cE_y(a)$.
\end{prop}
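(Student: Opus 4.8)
The plan is to prove the two directions separately, using the characterization of the face generated by a point together with the behavior of $\pi_x$ on convex combinations (Lemma~\ref{lemma:TypeConvexCombinationLift}). Throughout, write $p = \tp^\aff(a,b) \in \tS^\aff_{xy}(T)$ and $q = \pi_x(p) = \tp^\aff(a) \in \tS^\aff_x(T)$. Recall that an element is extreme iff its decompositions are trivial, i.e., $r = \lambda r_1 + (1-\lambda) r_2$ with $0 < \lambda < 1$ forces $r_1 = r_2 = r$.

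First I would do the ``only if'' direction. Assume $p \in \cE_{xy}(T)$. To see $q \in \cE_x(T)$: if $q = \lambda q_1 + (1-\lambda) q_2$, then by Lemma~\ref{lemma:TypeConvexCombinationLift}\autoref{item:TypeConvexCombinationLiftFinite} there are $p_1, p_2$ with $\pi_x(p_i) = q_i$ and $p = \lambda p_1 + (1-\lambda) p_2$; extremality of $p$ gives $p_1 = p_2 = p$, hence $q_1 = q_2 = q$. So $q$ is extreme. Now for $\tp^\aff(b/a) \in \cE_y(a)$: here we pass to the language $\cL(a)$ and the diagram $D^\aff_a$. A type in $\tS^\aff_y(a)$ is an $\cL(a)$-type extending $D^\aff_a$, equivalently a $\cL$-type $r(x,y)$ with $\pi_x(r) = q$ where now $x$ is ``named''. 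Concretely, if $\tp^\aff(b/a) = \lambda s_1 + (1-\lambda) s_2$ in $\tS^\aff_y(a)$, then each $s_i$ corresponds to a type $p_i \in \tS^\aff_{xy}(T)$ with $\pi_x(p_i) = q$ and $p = \lambda p_1 + (1-\lambda) p_2$; extremality of $p$ again yields $p_1 = p_2 = p$, so $s_1 = s_2 = \tp^\aff(b/a)$. (The one subtlety to spell out: the correspondence between $\tS^\aff_y(a)$ and the fiber $\pi_x^{-1}(q)$ is affine, so convex decompositions transfer both ways; this is essentially Definition~\ref{defn:TypeWithParameters} unwound.)

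For the ``if'' direction, assume $q = \tp^\aff(a) \in \cE_x(T)$ and $\tp^\aff(b/a) \in \cE_y(a)$, and suppose $p = \lambda p_1 + (1-\lambda) p_2$ with $0 < \lambda < 1$ and $p_i \in \tS^\aff_{xy}(T)$. Applying $\pi_x$ gives $q = \lambda \pi_x(p_1) + (1-\lambda)\pi_x(p_2)$, and since $q$ is extreme, $\pi_x(p_1) = \pi_x(p_2) = q$. Thus both $p_1$ and $p_2$ lie in the fiber $\pi_x^{-1}(q)$, which (via the affine correspondence above) is affinely identified with $\tS^\aff_y(a)$; under this identification $p_1, p_2, p$ go to types $s_1, s_2, \tp^\aff(b/a)$ with $\tp^\aff(b/a) = \lambda s_1 + (1-\lambda) s_2$. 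Since $\tp^\aff(b/a)$ is extreme in $\tS^\aff_y(a)$, we get $s_1 = s_2 = \tp^\aff(b/a)$, hence $p_1 = p_2 = p$. Therefore $p$ is extreme.

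The main obstacle, and the only place requiring care, is the precise matching between the convex structure of the fiber $\pi_x^{-1}(q) \subseteq \tS^\aff_{xy}(T)$ and that of $\tS^\aff_y(a) = \tS^\aff_y(D^\aff_a)$ --- in particular, that a type over the parameters $a$ ``is'' exactly a type over $\cL$ that restricts to $q$ on the $x$-variables, and that this identification is affine (so extreme points correspond). This should follow cleanly from Proposition~\ref{prop:TypeWithParameters} and the definition of the diagram $D^\aff_a$, together with the fact that $q$, being a single fixed type, pins down the interpretation of the constants; I would state it as a short observation and then the two extremality arguments are immediate. An alternative, slicker route for the ``if'' direction would bypass the finite-decomposition lemma entirely: note that the fiber $\pi_x^{-1}(q)$ is a closed face of $\tS^\aff_{xy}(T)$ by Lemma~\ref{l:extreme-transitivity}\autoref{l:extreme-transitivity:extreme-points} (since $q$ is extreme), so extreme points of the fiber are extreme in $\tS^\aff_{xy}(T)$; combined with the fiber-vs-$\tS^\aff_y(a)$ identification this gives the result in one line.
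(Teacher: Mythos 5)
Your proof is correct and follows essentially the same route as the paper's: Lemma~\ref{lemma:TypeConvexCombinationLift}\autoref{item:TypeConvexCombinationLiftFinite} to lift decompositions of $\tp^\aff(a)$, the affine identification of $\tS^\aff_y(a)$ with the fiber $\pi_x^{-1}(\tp^\aff(a))$ for the relative part, and the direct convex-decomposition argument (push forward by $\pi_x$, use extremality of $\tp^\aff(a)$ to land in the fiber, then use extremality of $\tp^\aff(b/a)$) for the converse. The subtlety you flag — that the fiber-versus-$\tS^\aff_y(a)$ correspondence is affine — is exactly the point the paper also leaves as an identification, so nothing is missing.
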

\begin{proof}
  Let $p(x,y) = \tp^\aff(a,b)$, $q(x) = \pi_x(p) = \tp^\aff(a)$, and $r(y) = p(a,y) = \tp^\aff(b/a)$.

  In one direction, assume that $p$ is extreme.
  By \autoref{lemma:TypeConvexCombinationLift}\autoref{item:TypeConvexCombinationLiftFinite}, any expression of $q$ as a proper convex combination of types in $\tS^\aff_x(T) \subseteq \tS^\aff_x(\cL)$ lifts to one for $p$ in $\tS^\aff_{xy}(T) \subseteq \tS^\aff_{xy}(\cL)$, so $q$ must be extreme.
  (Alternatively, we could apply \autoref{prop:VariableRestrictionBoundary}\autoref{item:VariableRestrictionBoundary}.)
  Also, we can identify $\tS^\aff_y(a)$ with the fiber of $\pi_x$ over $\tp^\aff(a)$.
  This identifies $p$ with $r$, so the latter must be extreme as well.

  Conversely, assume that both $q$ and $r$ are extreme, and $p = \half p_1 + \half p_2$.
  Then $q = \half \pi_x(p_1) + \half \pi_x(p_2)$, so $\pi_x(p_i) = q = \tp^\aff(a)$.
  But then $r = p(a,y) = \half p_1(a,y) + \half p_2(a,y)$ in $\tS^\aff_y(a)$, so $p_i(a,y) = p(a,y)$.
  Therefore, $p_i = p$ and the proof is complete.
\end{proof}

\begin{cor}
  \label{cor:ExtremeTypeVariableRestrictionSurjectiveOpen}
  The variable restriction map $\pi_x$ restricts to a continuous, surjective, open map
  \begin{gather*}
    \pi^\cE_x \colon \cE_{xy}(T) \to \cE_x(T).
  \end{gather*}
\end{cor}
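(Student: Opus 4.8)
The plan is to verify the three asserted properties in turn. First I would handle well-definedness and continuity: if $p\in\cE_{xy}(T)$, realize it as $\tp^\aff(a,b)$ in some model of $T$, and apply Proposition~\ref{prop:ExtremeTypeTwoSteps} to conclude $\pi_x(p)=\tp^\aff(a)\in\cE_x(T)$; continuity of $\pi^\cE_x$ is simply inherited from $\pi_x$. For surjectivity, given $q\in\cE_x(T)$, the fiber $F=\pi_x^{-1}(q)\subseteq\tS^\aff_{xy}(T)$ is nonempty (because $\pi_x$ is surjective on the full type spaces), compact, and convex, hence has an extreme point $p$ by Krein--Milman; since $q$ is extreme, Lemma~\ref{l:extreme-transitivity}\ref{l:extreme-transitivity:extreme-points} gives $\cE(F)\subseteq\cE_{xy}(T)$, so $p\in\cE_{xy}(T)$ and $\pi^\cE_x(p)=q$.

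The real content is openness. By Lemma~\ref{l:nbhds-extreme} (applied with $X=\tS^\aff_{xy}(T)$), the sets $\oset{\varphi>0}_\cE$ with $\varphi$ an affine definable predicate in the variables $xy$ form a basis for the topology of $\cE_{xy}(T)$, so it suffices to compute the image of one of them. Setting $\psi(x)\coloneqq\sup_y\varphi(x,y)$, again an affine definable predicate, I claim that
\[
  \pi^\cE_x\bigl(\oset{\varphi>0}_\cE\bigr)=\oset{\psi>0}_\cE,
\]
whose right-hand side is open in $\cE_x(T)$; this would finish the proof. The inclusion $\subseteq$ is routine: $\psi\geq\varphi$ in $\cA\bigl(\tS^\aff_{xy}(T)\bigr)$ (viewing $\psi$ with a dummy variable $y$) and $\psi$ factors through $\pi_x$, so $\varphi^p>0$ forces $\psi^{\pi_x(p)}=\psi^p\geq\varphi^p>0$.

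The step I expect to be the main obstacle is the inclusion $\supseteq$: given an extreme $q$ with $\psi^q>0$, I must produce an \emph{extreme} type over $q$ at which $\varphi$ is already positive. My plan here is the ``attain the maximum on the fiber, then pass to a face'' trick. Realizing $q=\tp^\aff(a)$ in a model $M\models T$ and letting $b$ range over $M^y$ shows $\sup_{p\in F}\varphi^p\geq\psi^M(a)=\psi^q$, while $\varphi^p\leq\psi^p=\psi^q$ for every $p\in F$; since $F$ is compact and $\varphi$ is continuous, $\varphi\rest_F$ attains its maximum, equal to $\psi^q>0$. The set $F'=\set{p\in F:\varphi^p=\psi^q}$ is then a nonempty face of $F$, so any extreme point $p$ of $F'$ is extreme in $F$ (a face of a face is a face), whence $p\in\cE_{xy}(T)$ by Lemma~\ref{l:extreme-transitivity}\ref{l:extreme-transitivity:extreme-points}; as $\varphi^p=\psi^q>0$ and $\pi_x(p)=q$, this $p$ witnesses $q\in\pi^\cE_x\bigl(\oset{\varphi>0}_\cE\bigr)$. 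Combining the two inclusions yields openness, completing the proof.
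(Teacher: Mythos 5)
Your proof is correct and follows the same route as the paper: surjectivity via \autoref{prop:ExtremeTypeTwoSteps} (equivalently, extreme points of the fibre), and openness by combining the basis of half-space neighbourhoods from \autoref{l:nbhds-extreme} with the identity $\pi^\cE_x\bigl(\oset{\varphi>0}_\cE\bigr)=\oset{\sup_y\varphi>0}_\cE$. The paper's proof is much terser and leaves the inclusion $\supseteq$ implicit; your "attain the maximum on the fibre, pass to the exposed face" argument is precisely the missing detail (it is the same device used in the proof of \autoref{th:ExtremalModelTypes}).
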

\begin{proof}
  By \autoref{prop:ExtremeTypeTwoSteps}, $\pi_x\bigl( \cE_{xy}(T) \bigr) = \cE_x(T)$.
  For a formula $\varphi(x,y)$, let $\psi(x) = \sup_y \varphi(x,y)$.
  Then the set $\pi_x\bigl( \oset{\varphi > 0} \bigr) = \oset{\psi > 0}$ is open.
  Together with \autoref{l:nbhds-extreme}, this implies that $\pi^\cE_x$ is open.
\end{proof}

\begin{prop}
  \label{prop:ExtremeTypesMetricallyClosed}
  Let $T$ be an affine theory and let $x$ be a countable tuple of variables.
  \begin{enumerate}
  \item \label{item:ExtremeTypeDistanceRealizedExtreme}
    Let $y$ be a tuple of variables with $|x| = |y|$ and let $p \in \cE_x(T), q \in \cE_y(T)$. Then
    \begin{gather*}
      \partial(p,q) = \min \bigl\{ d(x,y)^r : r \in \cE_{xy}(T), \pi_x(r) = p, \pi_y(r) = q \bigr\}.
    \end{gather*}

  \item
    \label{item:ExtremeTypeMetricallyClosedFinite}
    The set $\cE_x(T)$ is closed in $\bigl( \tS^\aff_x(T), \partial \bigr)$.
  \end{enumerate}
\end{prop}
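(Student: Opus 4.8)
For \autoref{item:ExtremeTypeDistanceRealizedExtreme}, I would first dispose of the case $\partial(p,q)=\infty$, where the right-hand side is also $+\infty$ (a witness $r$ would give $\partial(p,q)\le d(x,y)^r<\infty$), and then assume $\partial(p,q)<\infty$. The plan is to work inside $\tS^\aff_{xy}(T)$ with the two variable restriction maps $\pi_x\colon\tS^\aff_{xy}(T)\to\tS^\aff_x(T)$ and $\pi_y\colon\tS^\aff_{xy}(T)\to\tS^\aff_y(T)$, and to consider $F=\pi_x^{-1}(\{p\})\cap\pi_y^{-1}(\{q\})$. Since $p$ and $q$ are extreme, $\{p\}$ and $\{q\}$ are faces, so by \autoref{l:extreme-transitivity} the preimages $\pi_x^{-1}(\{p\})$ and $\pi_y^{-1}(\{q\})$ are closed faces of $\tS^\aff_{xy}(T)$, and hence so is $F$. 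By \autoref{prop:AffineTypeDistance}\autoref{item:AffineTypeDistanceAttained} there are $a\models p$ and $b\models q$ in a common structure with $d(a,b)=\partial(p,q)$, so $\tp^\aff(a,b)\in F$ and $F\neq\emptyset$. Now $d(x,y)$ is a definable predicate (for countable tuples, the uniform limit of $\sum_{i<n}2^{-i}d(x_i,y_i)$), hence a continuous affine function on $\tS^\aff_{xy}(T)$ by \autoref{lemma:DefinablePredicateTypes}; every $r\in F$ is the type of some pair of realizations of $p$ and $q$, so $d(x,y)^r\ge\partial(p,q)$, while $\tp^\aff(a,b)$ attains the value $\partial(p,q)$. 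Thus $d(x,y)$ attains its minimum $\partial(p,q)$ on the compact convex set $F$, hence at some $r\in\cE(F)$; since $F$ is a face of $\tS^\aff_{xy}(T)$, $\{r\}$ is a face of $\tS^\aff_{xy}(T)$ as well, i.e.\ $r\in\cE_{xy}(T)$, and $\pi_x(r)=p$, $\pi_y(r)=q$ by construction.

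For \autoref{item:ExtremeTypeMetricallyClosedFinite}, suppose $p_m\in\cE_x(T)$ with $\partial(p_m,p)\to 0$ and $p\in\tS^\aff_x(T)$, and aim to show $p\in\cE_x(T)$; assume towards a contradiction that $p=\half u+\half v$ with $u\neq v$ in $\tS^\aff_x(T)$. For $m$ large, $\eps_m\coloneqq\partial(p_m,p)<\infty$, so (by \autoref{prop:AffineTypeDistance}\autoref{item:AffineTypeDistanceAttained}, using \autoref{rem:type-metric-inf-tuples} when $x$ is countable) I can choose realizations $a_m\models p_m$ and $b_m\models p$ with $d(a_m,b_m)=\eps_m$; let $r_m=\tp^\aff(a_m,b_m)\in\tS^\aff_{xx'}(T)$, where $x'$ is a disjoint copy of $x$, so that $\pi_x(r_m)=p_m$, $\pi_{x'}(r_m)=p$ and $d(x,x')^{r_m}=\eps_m$. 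Applying \autoref{lemma:TypeConvexCombinationLift}\autoref{item:TypeConvexCombinationLiftFinite} to the decomposition $\pi_{x'}(r_m)=\half u+\half v$ (with the roles of the two variable blocks interchanged) gives $r_m=\half r_m^1+\half r_m^2$ with $\pi_{x'}(r_m^1)=u$ and $\pi_{x'}(r_m^2)=v$; set $u_m=\pi_x(r_m^1)$ and $v_m=\pi_x(r_m^2)$, so that $p_m=\half u_m+\half v_m$. Since the affine, nonnegative function $d(x,x')$ satisfies $\eps_m=d(x,x')^{r_m}=\half d(x,x')^{r_m^1}+\half d(x,x')^{r_m^2}$, we get $d(x,x')^{r_m^i}\le 2\eps_m$; as $r_m^1$ and $r_m^2$ witness distances between their own projections, $\partial(u_m,u)\le 2\eps_m$ and $\partial(v_m,v)\le 2\eps_m$. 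Finally $p_m$ is extreme, so $u_m=v_m$, and the triangle inequality yields $\partial(u,v)\le\partial(u,u_m)+\partial(v_m,v)\le 4\eps_m\to 0$, whence $u=v$ — a contradiction.

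The routine points to check are that $\pi_x^{-1}$ of a face is a face (\autoref{l:extreme-transitivity}, using surjectivity of the restriction maps on type spaces of $T$), that $d(x,y)$ corresponds to a continuous affine function on the type space, and that an affine function on a compact convex set attains its extrema at extreme points; none of these is a real obstacle. The one point that needs care — and which is essentially the same idea driving both parts — is the interaction between lifting a convex decomposition through a variable restriction and the additivity of the distance predicate under convex combinations: this is precisely what turns "$p_m\to p$ in $\partial$" into "the lifted decompositions of $p_m$ are $\partial$-close to the decomposition of $p$", and, in \autoref{item:ExtremeTypeDistanceRealizedExtreme}, what lets one replace an optimal but non-extreme joining by an extreme one on the face $F$.
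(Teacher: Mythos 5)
Your proof is correct. For item \autoref{item:ExtremeTypeDistanceRealizedExtreme} you follow essentially the paper's route: the paper directly observes that $\bigl\{ r : \pi_x(r)=p,\ \pi_y(r)=q,\ d(x,y)^r=\partial(p,q) \bigr\}$ is a non-empty closed face and picks an extreme point of it; you obtain the same set as the set of minimizers of the continuous affine function $d(x,y)$ on the face $F=\pi_x^{-1}(p)\cap\pi_y^{-1}(q)$, which is the same argument unpacked.

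For item \autoref{item:ExtremeTypeMetricallyClosedFinite} your argument is genuinely different. The paper works in one large type space $\tS^\aff_{x\bar y}(T)$ with a fresh block $y_n$ for each approximant $p_n$, shows that the set of types joining a realization of $p$ to realizations of all the $p_n$ at distance exactly $\partial(p,p_n)$ is a non-empty closed face, takes an extreme point of it, and concludes that its $x$-projection $p$ is extreme via \autoref{prop:ExtremeTypeTwoSteps}. You instead argue by contradiction: you lift a hypothetical splitting $p=\half u+\half v$ through each two-block joining $r_m$ of $(p_m,p)$ using \autoref{lemma:TypeConvexCombinationLift}\autoref{item:TypeConvexCombinationLiftFinite}, collapse the induced splitting of $p_m$ by extremality, and use affineness and positivity of $d(x,x')$ on types to get $\partial(u,v)\le 4\,\partial(p_m,p)\to 0$. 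Both work; yours is quantitative, avoids the infinitary face construction and \autoref{prop:ExtremeTypeTwoSteps}, and makes the mechanism (lifting convex decompositions against additivity of the distance) explicit, at the cost of invoking the lifting lemma. One point worth making explicit there: the lemma is stated over $\cL$, but since $T$ consists of sentences, the lifted types $r_m^i$ satisfy $T$ because their restrictions $u,v$ do, so $u_m,v_m\in\tS^\aff_x(T)$ and the extremality of $p_m$ \emph{in $\tS^\aff_x(T)$} really does force $u_m=v_m$. The remaining implicit steps (reducing extremality to midpoints, and exact attainment of $\partial(p_m,p)$ for countable tuples via \autoref{rem:type-metric-inf-tuples}) are routine.
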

\begin{proof}
  \autoref{item:ExtremeTypeDistanceRealizedExtreme} Assume that $\partial(p,q) < \infty$.
  Then the set
  \begin{gather*}
    \bigl\{ r \in \tS^\aff_{xy}(T) : \pi_x(r) = p,\ \pi_y(r) = q,\ d(x,y)^r = \partial(p,q) \bigr\}
  \end{gather*}
  is easily checked to be a closed face, and therefore contains a point of $\cE_{xy}(T)$.

  \autoref{item:ExtremeTypeMetricallyClosedFinite}
  Let $p_n \to^\dtp p$ with $p_n \in \cE_x(T)$. Let $y_1, y_2, \ldots$ be variables with $|y_n| = |x|$ for each $n$. Let
  \begin{equation*}
    F = \set[\big]{q \in \tS^\aff_{x\bar y}(T) : \pi_{y_n}(q) = p_n,\ d(x, y_n)^q = \dtp(p, p_n) \text{ for all } n}.
  \end{equation*}
  $F$ is clearly closed, convex, and non-empty by \autoref{cor:AffineAmalgamation}, and we claim that it is a face. Suppose that $q \in F$ and $q = \lambda q_1 + (1-\lambda) q_2$ with $0<\lambda<1$. Then $\pi_{y_n}(q_i) = p_n$ for all $n$ and for $i=1,2$. Also,
  \begin{equation*}
    \dtp(\pi_x(q_1), p_n) \leq d(x, y_n)^{q_1} \leq \frac{1}{\lambda} \dtp(p, p_n) \to 0,
  \end{equation*}
  and similarly for $q_2$.
  This implies that $\pi_x(q_i) = p$ and therefore $d(x, y_n)^{q_i} \geq \dtp(p, p_n)$. We conclude that $d(x, y_n)^{q_i} = \dtp(p, p_n)$ for all $n$ and for $i=1,2$ and thus $q_1, q_2 \in F$. Now any extreme point $q$ of $F$ is also an extreme point of $\tS^\aff_{x\bar y}$ and $p = \pi_x(q)$ is extreme by \autoref{prop:ExtremeTypeTwoSteps}.
\end{proof}

\begin{lemma}
  \label{l:ExtremeTypesInfinite}
  Let $T$ be an affine theory, let $x$ be a tuple of variables, and let $p \in \tS^\aff_x(T)$. Then $p$ is extreme if and only if $\pi_{x'}(p)$ is extreme for every finite $x' \subseteq x$.
  In other words,
  \begin{gather*}
    \cE_x(T) = \varprojlim_{x' \subseteq x \ \text{finite}} \cE_{x'}(T).
  \end{gather*}
\end{lemma}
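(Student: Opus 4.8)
The plan is to establish both implications of the equivalence separately; the inverse-limit identification $\cE_x(T) = \varprojlim_{x' \subseteq x \text{ finite}} \cE_{x'}(T)$ is then just a reformulation. For the forward implication, suppose $p \in \cE_x(T)$ and let $x' \subseteq x$ be finite. Writing $x = x'y$ with $y$ enumerating $x \sminus x'$, \autoref{cor:ExtremeTypeVariableRestrictionSurjectiveOpen} applied to this decomposition says that the variable restriction maps $\cE_{x'y}(T) = \cE_x(T)$ into $\cE_{x'}(T)$; hence $\pi_{x'}(p)$ is extreme. (Equivalently, one may realize $p = \tp^\aff(a)$ in a model $M \models T$ and apply \autoref{prop:ExtremeTypeTwoSteps} to the subtuple of $a$ indexed by $x'$ and the remainder of $a$.) The same reasoning shows that for finite $x' \subseteq x'' \subseteq x$ the connecting map $\pi_{x'}\colon \tS^\aff_{x''}(T) \to \tS^\aff_{x'}(T)$ restricts to a continuous, surjective, affine map $\cE_{x''}(T) \to \cE_{x'}(T)$, so $\varprojlim_{x'} \cE_{x'}(T)$ is a well-defined subset of $\varprojlim_{x'} \tS^\aff_{x'}(T)$.

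For the converse, recall that every affine formula in the variables $x$ involves only finitely many of them, so $\cL^\aff_x$ is the directed union of the subspaces $\cL^\aff_{x'}$ over finite $x' \subseteq x$, and likewise modulo $T$; dually, $\tS^\aff_x(T) = \varprojlim_{x' \subseteq x \text{ finite}} \tS^\aff_{x'}(T)$, with the variable restrictions as connecting maps. In particular a type $p \in \tS^\aff_x(T)$ is determined by the family $(\pi_{x'}(p))_{x'}$ of its finite restrictions. Now assume $\pi_{x'}(p) \in \cE_{x'}(T)$ for every finite $x' \subseteq x$, and let $p = \lambda q + (1-\lambda)q'$ with $q, q' \in \tS^\aff_x(T)$ and $0 < \lambda < 1$. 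Applying $\pi_{x'}$ gives $\pi_{x'}(p) = \lambda\, \pi_{x'}(q) + (1-\lambda)\, \pi_{x'}(q')$, and extremality of $\pi_{x'}(p)$ forces $\pi_{x'}(q) = \pi_{x'}(q') = \pi_{x'}(p)$. Since this holds for all finite $x' \subseteq x$, we conclude $q = q' = p$; thus $\set{p}$ is a face of $\tS^\aff_x(T)$, i.e., $p \in \cE_x(T)$.

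Combining the two implications gives exactly $\cE_x(T) = \varprojlim_{x' \subseteq x \text{ finite}} \cE_{x'}(T)$. The argument is routine; the only points needing attention are that \autoref{cor:ExtremeTypeVariableRestrictionSurjectiveOpen} (equivalently \autoref{prop:ExtremeTypeTwoSteps}) remains valid when the discarded block of variables is infinite --- which it is, those statements being formulated for arbitrary tuples --- and the standard fact that affine type spaces over $x$ are the inverse limit of those over the finite subtuples of $x$.
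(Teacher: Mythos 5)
Your proof is correct and follows essentially the same route as the paper: the forward direction is \autoref{prop:ExtremeTypeTwoSteps}, and the converse uses that a nontrivial convex decomposition of $p$ restricts to one of each $\pi_{x'}(p)$, which must be trivial, so that $q = q' = p$ since a type is determined by its finite restrictions (equivalently, since every affine formula mentions only finitely many variables). The side remarks you flag — that \autoref{prop:ExtremeTypeTwoSteps} holds for arbitrary tuples and that $\tS^\aff_x(T)$ is the inverse limit of the finite-variable type spaces — are indeed valid as stated in the paper.
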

\begin{proof}
  One direction follows from \autoref{prop:ExtremeTypeTwoSteps}.
  For the other, assume that $p = \half p_1 + \half p_2$, and let $\varphi \in \cL^\aff_x$.
  Then only a finite tuple $x' \subseteq x$ actually occurs in $\varphi$.
  In addition, $\pi_{x'}(p_i) = \pi_{x'}(p)$ by hypothesis.
  Therefore $\varphi(p_i) = \varphi(p)$, and since $\varphi$ was arbitrary, $p_1 = p_2 = p$.
\end{proof}

\begin{defn}
  \label{defn:ExtremalModel}
  A structure $M$ is an \emph{extremal model} of $T$ if it is a model of $T$ that only realizes extreme types.
  We also define
  \begin{gather*}
    \cE^\fM_x(T) = \tS^{\fM,\aff}_x(T) \cap \cE_x(T) = \tS^{\fM,\aff}_x(\cL) \cap \cE_x(T).
  \end{gather*}
\end{defn}

Clearly, every affine submodel of an extremal model is extremal.
By \autoref{prop:ExtremeTypesMetricallyClosed} and \autoref{l:ExtremeTypesInfinite}, if $M_0 \subseteq M$ is a dense subset, then $M$ is an extremal model of $T$ if and only if $\tp^\aff(a) \in \cE_n(T)$ for every $n \in \bN$ and every $a \in M_0^n$.
Consequently, we have the following extension of \autoref{remark:TypeOfModel}.

\begin{remark}
  \label{remark:TypeOfExtremalModel}
  Let $M \models T$, $a \in M^I$, $p = \tp^\aff(a)$ and $A = \{a_i : i \in I\}$.
  Then the following are equivalent:
  \begin{enumerate}
  \item $p \in \cE^\fM_I(\cL)$.
  \item The set $A$ is dense in an extremal affine submodel of $M$.
  \end{enumerate}
\end{remark}

\begin{theorem}
  \label{th:ExtremalModelTypes}
  Let $T$ be an affine theory.
  Then $\cE_x(T)$ is the collection of types that are realized in extremal models of $T$.
\end{theorem}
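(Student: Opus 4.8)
The plan is to prove the two inclusions separately. One is immediate: if $M$ is an extremal model of $T$ and $a \in M^x$, then $\tp^\aff(a) \in \tS^\aff_x(T)$ because $M \models T$, and it is extreme because $M$ is extremal, so $\tp^\aff(a) \in \cE_x(T)$.

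For the converse I want to show that every $p \in \cE_x(T)$ extends to a type $q \in \cE^\fM_I(T)$ for a suitable index set $I$ containing (the index set of) $x$, with $\pi_x(q) = p$; then, by \autoref{prop:Type} and \autoref{remark:TypeOfExtremalModel}, $q$ is realized by a dense subset of an extremal model $M \models T$, and restricting the realization to the $x$-coordinates gives a realization of $p$. The type $q$ will be produced by a Tarski--Vaught--style closure carried out entirely at the level of type spaces. The key is the following \emph{witnessing lemma}: if $q_z \in \cE_z(T)$ and $\varphi(z',y)$ is an affine formula with $z' \subseteq z$ finite and $y$ a single variable, then there exists $r \in \cE_{zy}(T)$ with $\pi_z(r) = q_z$ and $\varphi(z',y)^r = \bigl(\sup_y \varphi(z',y)\bigr)^{q_z}$. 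To prove it, note that since $q_z$ is extreme the fibre $G = \pi_z^{-1}(\{q_z\})$ is a closed face of $\tS^\aff_{zy}(T)$, so $\cE(G) \subseteq \cE_{zy}(T)$ by \autoref{l:extreme-transitivity}. The function $r \mapsto \varphi(z',y)^r$ is affine and $\tau$-continuous on $G$, and a direct computation shows that its maximum over $G$ equals $\bigl(\sup_y \varphi(z',y)\bigr)^{q_z}$: realizing $q_z$ by $a$ in some $M_1 \models T$, the right-hand side is $\sup_{b \in M_1} \varphi^{M_1}(a\rest_{z'}, b)$, every $b \in M_1$ contributes a point of $G$ with that value, and conversely every point of $G$ is realized in some affine extension $M_2 \succeq^\aff M_1$ (by \autoref{cor:AffineAmalgamation}), along which the value of the affine sentence $\sup_y \varphi(a\rest_{z'},y)$ is preserved. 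By \autoref{l:extreme-transitivity} (an affine function attains its maximum at an extreme point), this maximum is attained at some $r \in \cE(G) \subseteq \cE_{zy}(T)$.

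With the witnessing lemma in hand, I build an increasing chain of pairs $(I_k, q_k)$, $k < \omega$, where $I_k$ is a set of variables with (the index set of) $x \subseteq I_k$ and $|I_k| = |x| + \fd_T(\cL) + \aleph_0 =: \kappa$, and $q_k \in \cE_{I_k}(T)$ with $q_0 = p$ and $q_{k+1}\rest I_k = q_k$. To pass from $I_k$ to $I_{k+1}$, I enumerate the $\kappa$-many pairs $(\varphi, z')$ with $\varphi$ an affine $\cL$-formula and $z' \subseteq I_k$ finite, and process them one at a time along a transfinite chain of length $\kappa$: at a successor step I apply the witnessing lemma to add one fresh variable furnishing an exact witness for the current pair, and at limit steps I take unions, which remain extreme by \autoref{l:ExtremeTypesInfinite} and \autoref{prop:ExtremeTypeTwoSteps} since every finite restriction has already stabilised. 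Setting $I = \bigcup_k I_k$ and $q = \bigcup_k q_k$, one checks that $q \in \tS^\aff_I(T)$; that $q$ is extreme, by \autoref{l:ExtremeTypesInfinite} and \autoref{prop:ExtremeTypeTwoSteps}, since every finite restriction of $q$ agrees with some extreme $q_k$; and that $q$ has the Tarski--Vaught property, because any affine formula $\varphi(z',y)$ involves finitely many variables, all contained in some $I_k$, hence was supplied a witness among the variables at stage $k+1$, while the value of $\sup_y \varphi(z',y)$ at $q$ depends only on $q \rest z'$, which stabilised at stage $k$. Thus $q \in \cE^\fM_I(T)$ with $\pi_x(q) = p$, which completes the proof.

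The bulk of the argument is the witnessing lemma, and within it the passage between the syntactic $\sup$ and the maximum over the fibre $G$; this is routine but uses affine amalgamation in an essential way to access all extensions of $q_z$. I expect the main obstacle to be getting the bookkeeping in the transfinite closure exactly right, particularly the verification that extremality is preserved at limit stages via the inverse-limit description of \autoref{l:ExtremeTypesInfinite}. (An alternative route to the converse inclusion runs through Choquet theory: realize $p$ in a model, enumerate a dense subset to obtain $q_0 \in \tS^{\fM,\aff}_I(T)$ with $\pi_x(q_0)=p$, take a boundary measure $\mu \succeq \delta_{q_0}$, and combine \autoref{th:Bishop-dL}, \autoref{prop:TypeOfModelPreMean}, and \autoref{cor:FrecciaRossa}; but the combinatorial construction above is cleaner and sidesteps measurability issues in the non-metrizable case.)
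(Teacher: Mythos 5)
Your proof is correct and follows essentially the same route as the paper's: the easy inclusion by definition, a single-formula witnessing step exploiting that the fibre over an extreme type is a closed face on which the affine function $\varphi$ attains its maximum at an extreme point, and then a transfinite closure (with the bookkeeping you spell out) producing a Tarski--Vaught extreme type extending $p$, which enumerates the desired extremal model. The only cosmetic difference is that the paper phrases the witnessing step via the closed face $C_2=\{q\in\pi_x^{-1}(p):\varphi^q=t\}$ rather than invoking \autoref{l:extreme-transitivity} directly, and leaves the bookkeeping implicit.
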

\begin{proof}
  One direction holds by definition.
  For the other, it will suffice to show that for every $p \in \cE_x(T)$ there exist $y \supseteq x$ and $q \in \cE^\fM_y(T)$ such that $\pi_x(q) = p$.

  Before showing that, let us consider a single formula $\varphi(x,z)$, with $z$ a singleton.
  Let $C_1 = \pi_x^{-1}(p) \subseteq \tS^\aff_{xz}(T)$.
  Since $p$ is extreme, $C_1$ must be a closed face.
  Let
  \begin{gather*}
    t = \bigl( \qsup_z \varphi(x,z) \bigr)^p = \sup \, \bigl\{ \varphi(x,z)^q : q \in C_1 \bigr\}.
  \end{gather*}
  Then the set
  \begin{gather*}
    C_2 = \bigl\{ q \in C_1 : \varphi(x,z)^q = t \bigr\}
  \end{gather*}
  is a closed face of $C_1$.
  Finally, let $q$ be any extreme point of $C_2$.
  Then $q$ is an extreme type, $\pi_x(q) = p$, and $\varphi(x,z)^q = \bigl( {\sup}_{z}\, \varphi(x,z) \bigr)^p = \bigl( {\sup}_z \, \varphi(x,z) \bigr)^q$.

  We can now construct a type $q \in \cE_y(T)$ by transfinite induction.
  At successor stages we introduce a new variable and proceed as above; and at limit steps, we take the union, observing that this remains extreme by \autoref{l:ExtremeTypesInfinite}.
  With the proper bookkeeping, $q$ satisfies the Tarski--Vaught property, as desired.
\end{proof}

\begin{cor}\label{c:extremal-models-existence}
  Every affine theory $T$ admits an extremal model of density character at most $\fd_T(\cL)$.
\end{cor}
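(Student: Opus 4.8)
The plan is to produce a single extreme type over $T$, realize it in an extremal model via \autoref{th:ExtremalModelTypes}, and then cut that model down to the required size with Downward Löwenheim--Skolem.

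First I would dispose of the trivial case: we may assume $T$ is consistent, since otherwise $T$ has no models at all. Then $\tS^\aff_0(T)$ is a non-empty compact convex set, so by the Krein--Milman theorem it has an extreme point; in other words $\cE_0(T) \neq \emptyset$. Fixing any $p \in \cE_0(T)$, \autoref{th:ExtremalModelTypes} then supplies an extremal model $M \models T$ realizing $p$. (Unwinding the proof of that theorem: $p$ is extended by transfinite induction to a Tarski--Vaught extreme type $q \in \cE^\fM_y(T)$ in suitable variables $y$, and $M$ is the extremal affine submodel densely enumerated by a realization of $q$.)

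Next I would bound the density character, which is the only point that requires any care: the construction in \autoref{th:ExtremalModelTypes} does not by itself control $\fd(M)$. If $\fd(M) \le \fd_T(\cL)$, we are already done. Otherwise $\fd(M) \ge \fd_T(\cL)$, and the affine Downward Löwenheim--Skolem theorem relative to $T$ (\autoref{remark:LanguageDensityCharacter}), applied with empty parameter set, yields an affine submodel $N \preceq^\aff M$ with $\fd(N) = \fd_T(\cL)$. Since $N \preceq^\aff M$ we have $N \models T$, and an affine submodel of an extremal model is again extremal (as observed immediately after \autoref{defn:ExtremalModel}); hence $N$ is an extremal model of $T$ with $\fd(N) \le \fd_T(\cL)$, as desired.

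There is thus no genuine obstacle here: the substance is already contained in \autoref{th:ExtremalModelTypes}, and the size bound is handed to us by Löwenheim--Skolem. If one prefers a self-contained argument avoiding \autoref{remark:LanguageDensityCharacter}, one can instead rerun the transfinite construction of \autoref{th:ExtremalModelTypes} starting from an extreme point of $\tS^\aff_0(T)$ while keeping the running tuple of variables of cardinality $\le \fd_T(\cL)$ at every stage --- feasible because $\fd_T(\cL)$ is infinite and each affine formula involves only finitely many variables --- and invoking \autoref{l:ExtremeTypesInfinite} at limit stages; the cardinality bookkeeping is then the only mildly delicate point.
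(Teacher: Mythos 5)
Your proof is correct and follows essentially the same route as the paper: obtain an extremal model from \autoref{th:ExtremalModelTypes}, then cut it down with the affine Downward Löwenheim--Skolem theorem (\autoref{prop:DownwardLS} via \autoref{remark:LanguageDensityCharacter}), using that affine submodels of extremal models are extremal. The extra care about the trivial case and the case split on $\fd(M)$ versus $\fd_T(\cL)$ is fine but not needed beyond what the paper's one-line proof already does.
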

\begin{proof}
  It admits an extremal model $M$ by \autoref{th:ExtremalModelTypes}.
  By \autoref{prop:DownwardLS} (and \autoref{remark:LanguageDensityCharacter}), we can find $N \preceq^\aff M$ of the desired density character.
\end{proof}

\begin{cor}
  \label{cor:ExtremeTypeDistance}
  The distance between two extreme types of $T$, if it is finite, is attained in an extremal model of $T$.
\end{cor}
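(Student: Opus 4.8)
The plan is to combine \autoref{prop:ExtremeTypesMetricallyClosed} with \autoref{th:ExtremalModelTypes}; no new idea is needed. Fix $p,q \in \cE_x(T)$ with $\partial(p,q) < \infty$. Since $\partial$ is only defined for (at most) countable tuples of variables, $x$ is countable, and I take a disjoint copy $y$ of the tuple $x$. First I would invoke \autoref{prop:ExtremeTypesMetricallyClosed}\autoref{item:ExtremeTypeDistanceRealizedExtreme}, which says precisely that the infimum defining $\partial(p,q)$ is already attained at the level of extreme types: there exists $r \in \cE_{xy}(T)$ with $\pi_x(r) = p$, $\pi_y(r) = q$, and $d(x,y)^r = \partial(p,q)$.

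Next I would realize $r$ in an extremal model. By \autoref{th:ExtremalModelTypes}, the extreme type $r$ is realized in some extremal model $M \models T$, say by a tuple $(a,b) \in M^{xy}$ with $\tp^\aff(a,b) = r$. Applying the variable restriction maps and using $\pi_x(r) = p$, $\pi_y(r) = q$, we get $a \models p$ and $b \models q$, while
\begin{equation*}
  d(a,b) = d(x,y)^r = \partial(p,q).
\end{equation*}
Hence the pair $(a,b)$ witnesses that $\partial(p,q)$ is attained inside the extremal model $M$, which is what is claimed.

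There is no genuine obstacle here beyond assembling the two cited results; the only point requiring a moment's care is that \autoref{prop:ExtremeTypesMetricallyClosed}\autoref{item:ExtremeTypeDistanceRealizedExtreme} is stated for countable $x$, which is automatic in our setting since $\partial$ is only defined for such tuples. If one additionally wants the realizing model to be of controlled size, a further application of Downward Löwenheim--Skolem (\autoref{prop:DownwardLS}, exactly as in the proof of \autoref{c:extremal-models-existence}) to the affine substructure generated by $a$ and $b$ produces an extremal model of density character at most $\fd_T(\cL) + \aleph_0$ still attaining the distance.
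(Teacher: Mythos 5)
Your proposal is correct and follows exactly the paper's own (one-line) proof: combine \autoref{prop:ExtremeTypesMetricallyClosed}\autoref{item:ExtremeTypeDistanceRealizedExtreme} to find an extreme type $r \in \cE_{xy}(T)$ attaining the distance, then realize $r$ in an extremal model via \autoref{th:ExtremalModelTypes}. The extra remarks about countability of $x$ and the Löwenheim--Skolem refinement are fine but not needed.
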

\begin{proof}
  By \autoref{prop:ExtremeTypesMetricallyClosed}\autoref{item:ExtremeTypeDistanceRealizedExtreme} and \autoref{th:ExtremalModelTypes}.
\end{proof}

\begin{cor}
  \label{cor:TypeOfExtremalModelVeryDense}
  Assume that $\kappa \geq \fd_T(\cL)$ and that $\kappa$ is uncountable.
  Then every Baire set $X \subseteq \tS^\aff_\kappa(T)$ that meets $\cE_\kappa(T)$ also meets $\cE^\fM_\kappa(T)$.
\end{cor}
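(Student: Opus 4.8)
The plan is to reduce the statement to one about countably many variables, and then conclude using \autoref{th:ExtremalModelTypes} together with downward Löwenheim--Skolem; this is essentially the argument behind \autoref{remark:TypeOfModelPreMeanUncountable}, made uniform in $\kappa$.

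\emph{Step 1: a Baire set lives on countably many variables.} I would first show that for every Baire set $X \subseteq \tS^\aff_\kappa(T)$ there is a countable $J \subseteq \kappa$ and a set $Y \subseteq \tS^\aff_J(T)$ with $X = \pi_J^{-1}(Y)$, where $\pi_J$ denotes the variable restriction map. Since $\tS^\aff_\kappa(T) = \varprojlim_J \tS^\aff_J(T)$ over the directed family of countable $J \subseteq \kappa$ with the surjective, continuous bonding maps $\pi_J$, the union $\bigcup_J \pi_J^*\bigl(C(\tS^\aff_J(T))\bigr)$ is a unital, point-separating subalgebra of $C(\tS^\aff_\kappa(T))$ (points are already separated by affine formulas, each involving finitely many variables), hence dense by Stone--Weierstrass. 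Thus every $f \in C(\tS^\aff_\kappa(T))$ is a uniform limit of functions factoring through various $\pi_{J_n}$, and, taking $J = \bigcup_n J_n$ and using that $\pi_J$ is surjective, $f$ factors through the single $\pi_J$ with $J$ countable. The family of $B \subseteq \tS^\aff_\kappa(T)$ admitting a presentation $B = \pi_J^{-1}(Y)$ with $J$ countable is then closed under complements (surjectivity of $\pi_J$) and countable unions (replace the $J$'s by their union), so it is a $\sigma$-algebra; as it contains every zero-set, it contains every Baire set.

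\emph{Step 2: producing the type.} Next I would fix $p \in X \cap \cE_\kappa(T)$ and write $X = \pi_J^{-1}(Y)$ with $J \subseteq \kappa$ countable; put $p_0 = \pi_J(p)$, which lies in $\cE_J(T)$ by \autoref{l:ExtremeTypesInfinite}. By \autoref{th:ExtremalModelTypes} there is an extremal model $M \models T$ and a tuple $a \in M^J$ with $\tp^\aff(a) = p_0$. By \autoref{prop:DownwardLS} and \autoref{remark:LanguageDensityCharacter} there is $N \preceq^\aff M$ containing $\{a_j : j \in J\}$ with $\fd(N) \leq \aleph_0 + \fd_T(\cL) = \fd_T(\cL) \leq \kappa$; then $N$ is extremal, being an affine submodel of an extremal model, and $\tp^\aff(a)$ computed in $N$ is still $p_0$. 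I would then choose a dense subset $D \subseteq N$ with $|D| \leq \kappa$ and $\{a_j : j \in J\} \subseteq D$, and --- using that $\kappa$ is uncountable, so $|\kappa \sminus J| = \kappa \geq |D|$ --- enumerate $D$ as a $\kappa$-tuple $c = (c_i : i \in \kappa)$ with $c_j = a_j$ for all $j \in J$. Then $q \coloneqq \tp^\aff(c)$ enumerates the extremal model $N$, so $q \in \cE^\fM_\kappa(T)$ by \autoref{remark:TypeOfExtremalModel}, while $\pi_J(q) = \tp^\aff\bigl((a_j)_{j \in J}\bigr) = p_0 = \pi_J(p)$, whence $q \in \pi_J^{-1}(Y) = X$; this exhibits $q \in X \cap \cE^\fM_\kappa(T)$.

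The only part that is not entirely routine is Step 1, the reduction of a Baire set to countably many coordinates; once that is in hand, everything else is the transfinite bookkeeping already carried out in the proof of \autoref{th:ExtremalModelTypes}, together with Löwenheim--Skolem to keep the resulting extremal model small enough to be enumerated by a $\kappa$-tuple. (Both uses of the uncountability of $\kappa$ and of the hypothesis $\kappa \geq \fd_T(\cL)$ occur in Step 2.)
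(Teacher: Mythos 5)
Your proof is correct and follows essentially the same route as the paper's: reduce the Baire set to countably many coordinates, project $p$ there, realize the projection in an extremal model of density character at most $\kappa$ via \autoref{th:ExtremalModelTypes} and Löwenheim--Skolem, and then extend the realization to a dense $\kappa$-enumeration of that model, using the uncountability of $\kappa$ to make room. The only difference is presentational: the paper simply asserts that a Baire set depends on countably many variables, whereas your Step~1 supplies the (routine but worth recording) Stone--Weierstrass and $\sigma$-algebra argument for it.
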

\begin{proof}
  Let $X \subseteq \tS^\aff_\kappa(T)$ and $p \in X \cap \cE_\kappa(T)$.
  If $X$ is a Baire set, then it is defined using countably many formulas, which depend on countably many variables $x$.
  Let $p_0 = \pi_{x}(p) \in \cE_{x}(T)$.
  By \autoref{th:ExtremalModelTypes} and \autoref{prop:DownwardLS}, we can realize $p_0$ in an extremal model $M \models T$ of density character at most $\kappa$.
  We can choose a dense sequence $a \in M^\kappa$ such that $a \rest_x \models p_0$.
  Then $\tp^\aff(a) \in X \cap \cE^\fM_\kappa(T)$.
\end{proof}

\begin{theorem}
  \label{th:Bishop-dL-ForModels}
  Let $\kappa \geq \fd_T(\cL)$.
  Let $\mu$ be a boundary measure on $\tS^\aff_\kappa(T)$ such that $R(\mu) \in \tS^{\fM,\aff}_\kappa(T)$.
  Then $\mu$ concentrates on $\cE^\fM_\kappa(T)$ (in the sense of \autoref{df:measure-concentr}).
\end{theorem}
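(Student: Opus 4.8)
The plan is to separate two cases according to whether $\kappa$ is uncountable or $\kappa = \aleph_0$ (these are the only possibilities, since $\fd_T(\cL) \geq \aleph_0$ always). In both cases the backbone is \autoref{th:Bishop-dL}: as $\mu$ is a boundary measure on the compact convex set $\tS^\aff_\kappa(T)$, it concentrates on its extreme boundary, which is exactly $\cE_\kappa(T)$. So what remains is to show that $\mu$ does not see the extreme types that fail to enumerate a model.

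When $\kappa$ is uncountable this is immediate from \autoref{cor:TypeOfExtremalModelVeryDense}: if $B \subseteq \tS^\aff_\kappa(T)$ is a Baire set disjoint from $\cE^\fM_\kappa(T)$, then by the contrapositive of that corollary $B$ is also disjoint from $\cE_\kappa(T)$, hence $\mu(B) = 0$ by the previous paragraph. By \autoref{df:measure-concentr} this says precisely that $\mu$ concentrates on $\cE^\fM_\kappa(T)$. (Note that in this case the hypothesis $R(\mu) \in \tS^{\fM,\aff}_\kappa(T)$ plays no role.)

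When $\kappa = \aleph_0$ we get $\fd_T(\cL) = \aleph_0$, so $\tS^\aff_\N(T)$ is metrizable and Baire and Borel sets coincide there. I would then show that $\cE^\fM_\N(T)$ is a Baire set of full $\mu$-measure, which gives the conclusion at once. That $\mu(\cE_\N(T)) = 1$ follows from \autoref{th:Bishop-dL} together with \autoref{l:extreme-Gdelta}. For the model-enumeration part, fix a countable family $(\varphi_k)$ of affine formulas $\varphi_k(x,y)$ (with $x = (x_i : i \in \N)$ and $y$ a single variable) that is uniformly dense modulo $T$ among all such formulas — this is possible because every such formula involves only finitely many of the $x_i$ and each $\cL^\aff_n(T)$ is separable. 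A routine $\eps/3$ argument shows that $\tS^{\fM,\aff}_\N(T) = \bigcap_k X_{\varphi_k,\N}$ inside $\tS^\aff_\N(T)$, so this set is Baire $G_\delta$ by \autoref{lem:TypeOfModelSingleFormula}. Moreover, for each $k$ we may choose a countable $I_{\varphi_k} \subseteq \N$ with $R(\mu) \in X_{\varphi_k, I_{\varphi_k}}$ (using that $R(\mu)$ enumerates a model), and then \autoref{prop:TypeOfModelPreMean}\autoref{item:TypeOfModelPreMeanSingleFormula} yields $\mu(X_{\varphi_k, I_{\varphi_k}}) = 1$; since $X_{\varphi_k, I_{\varphi_k}} \subseteq X_{\varphi_k, \N}$ this gives $\mu(X_{\varphi_k,\N}) = 1$, and intersecting over $k$, $\mu(\tS^{\fM,\aff}_\N(T)) = 1$. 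Hence $\cE^\fM_\N(T) = \cE_\N(T) \cap \tS^{\fM,\aff}_\N(T)$ is a Baire set of full measure, as wanted.

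The main obstacle is the bookkeeping in the countable case: establishing that $\tS^{\fM,\aff}_\N(T)$ is $\mu$-measurable and of full measure \emph{without} assuming $\cL$ itself countable — that is, reducing the Tarski--Vaught condition to a countable family of formulas via density modulo $T$ so that \autoref{prop:TypeOfModelPreMean} and \autoref{lem:TypeOfModelSingleFormula} become applicable. The uncountable case, by contrast, is a quick consequence of \autoref{cor:TypeOfExtremalModelVeryDense} and \autoref{th:Bishop-dL}.
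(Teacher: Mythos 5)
Your proof is correct and follows essentially the same route as the paper: the uncountable case via \autoref{th:Bishop-dL} and \autoref{cor:TypeOfExtremalModelVeryDense}, and the countable case by showing both $\cE_\kappa(T)$ and $\tS^{\fM,\aff}_\kappa(T)$ are Baire sets of full measure in the metrizable type space. The extra care you take in the countable case — reducing the Tarski--Vaught condition to a countable dense family of formulas modulo $T$, since $\fd_T(\cL)=\aleph_0$ does not mean $\cL$ is literally countable — is a detail the paper leaves implicit when it invokes \autoref{prop:TypeOfModelPreMean}\autoref{item:TypeOfModelPreMeanCountable}, and your handling of it is sound.
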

\begin{proof}
  If $\kappa$ is countable, then so is $\fd_T(\cL)$.
  In this case, $\tS^\aff_\kappa(T)$ is metrizable, and both $\cE_\kappa(T)$ and $\tS^{\fM,\aff}_\kappa(T)$ are Baire of measure $1$ by \autoref{th:Bishop-dL} and \autoref{prop:TypeOfModelPreMean}\autoref{item:TypeOfModelPreMeanCountable}.
  Therefore, $\cE^\fM_\kappa(T)$ is Baire of measure $1$.

  If $\kappa$ is uncountable, then our assertion follows from \autoref{th:Bishop-dL} and \autoref{cor:TypeOfExtremalModelVeryDense} (and we do not use our hypothesis that $R(\mu) \in \tS^{\fM,\aff}_\kappa(T)$).
\end{proof}

\begin{prop}[Extremal joint embedding and amalgamation]
  \label{prop:ExtremalJEP}
  Let $T$ be a complete affine theory, and let $M$ and $N$ be two extremal models of $T$.
  Then they admit affine embeddings into a third extremal model of $T$.

  More generally, assume that $\theta\colon M \dashrightarrow N$ is a partial affine map.
  Then $M$ and $N$ admit affine embeddings into an extremal model $K$ of $T$, say $f\colon M \hookrightarrow K$ and $g\colon N \hookrightarrow K$, such that $f$ and $g \circ \theta$ agree on $\dom \theta$.
\end{prop}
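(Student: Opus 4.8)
The plan is to reduce the extremal statement to the ordinary affine amalgamation already available (\autoref{cor:AffineAmalgamation}), and then to upgrade any model of $T$ that contains copies of $M$ and $N$ to an \emph{extremal} model via \autoref{th:ExtremalModelTypes}, while taking care that the embeddings of $M$ and $N$ land inside the extremal model. First I would treat the joint embedding case: since $T$ is complete and $M \equiv^\aff N$ (both being models of $T$), \autoref{prop:AffineJEP} gives a structure $K_0$ with affine embeddings $f_0\colon M \hookrightarrow K_0$ and $g_0 \colon N \hookrightarrow K_0$. The issue is that $K_0$ need not be extremal. To fix this, I would enumerate a dense subset of $f_0(M) \cup g_0(N)$ as a tuple $c \in K_0^I$ and consider $p = \tp^\aff(c) \in \tS^{\fM,\aff}_I(T)$. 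The key point is that we are \emph{not} free to replace $p$ by an arbitrary extreme type, because we must preserve the affine types of the images of $M$ and $N$; but we also do not need to: since $M$ and $N$ are extremal, $p \rest_{x'}$ is already an \emph{extreme} type of $T$ for every finite $x' \subseteq I$ — each such restriction is $\tp^\aff$ of a finite tuple from $f_0(M) \cup g_0(N)$, and elements of an extremal model realize extreme types. Hence $p \in \cE_I(T)$ by \autoref{l:ExtremeTypesInfinite}.

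Now I would apply (the proof of) \autoref{th:ExtremalModelTypes}: starting from the extreme type $p \in \cE_I(T)$, one extends it by transfinite induction, adding new variables and at each successor stage intersecting with the closed face on which a given $\sup$-quantifier is attained (taking unions at limits, which preserves extremality by \autoref{l:ExtremeTypesInfinite}), to obtain $q \in \cE^\fM_y(T)$ with $y \supseteq I$ and $\pi_I(q) = p$. Any realization of $q$ is dense in an extremal model $K$ of $T$, and since $\pi_I(q) = p = \tp^\aff(c)$, the realization of $q\rest_I$ has the same affine type as $c$; composing the original embeddings $f_0, g_0$ into $K_0$ with the affine map $K_0 \supseteq \cl{f_0(M)\cup g_0(N)} \dashrightarrow K$ sending $c$ to that realization (which is affine, hence an affine embedding on its domain, and extends to $f_0(M)$ and $g_0(N)$ by uniform continuity of affine formulas), we obtain affine embeddings $f\colon M \hookrightarrow K$ and $g\colon N \hookrightarrow K$ into the extremal model $K$, as desired.

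For the amalgamation statement, the argument is identical but starts from \autoref{cor:AffineAmalgamation} instead of \autoref{prop:AffineJEP}: given the partial affine map $\theta\colon M \dashrightarrow N$ with domain $A$, that corollary produces $K_0$ with affine embeddings $f_0\colon M \hookrightarrow K_0$, $g_0\colon N \hookrightarrow K_0$ such that $f_0$ and $g_0 \circ \theta$ agree on $A$. Enumerating a dense subset of $f_0(M)\cup g_0(N)$ and running the same extremalization as above yields an extremal model $K$ and affine embeddings $f, g$ whose restrictions to $A$ still agree (the property ``$f\rest_A = g\circ\theta\rest_A$'' depends only on the affine type of the enumerating tuple, which is preserved throughout).

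I expect the only genuinely delicate point to be verifying that $p = \tp^\aff(c)$ is extreme — i.e., making precise that being dense in an extremal model forces all finite-variable restrictions of the enumeration type to be extreme, so that \autoref{l:ExtremeTypesInfinite} applies — and, relatedly, checking that passing to the completion $\cl{f_0(M)\cup g_0(N)}$ does not disturb extremality (it does not, by \autoref{prop:ExtremeTypesMetricallyClosed}\autoref{item:ExtremeTypeMetricallyClosedFinite} together with $\dtp$-continuity of affine formulas). Everything else is a routine invocation of the machinery already set up: \autoref{prop:AffineJEP}/\autoref{cor:AffineAmalgamation} for the existence of a common extension, and \autoref{th:ExtremalModelTypes} for the extremalization.
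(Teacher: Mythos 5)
Your reduction has a genuine gap at the step where you claim that $p = \tp^\aff(c)$ is extreme. You argue that every finite restriction $p\rest_{x'}$ is the affine type of a finite tuple from $f_0(M)\cup g_0(N)$ and is therefore extreme ``because elements of an extremal model realize extreme types.'' But $K_0$ is not extremal --- it is just whatever structure \autoref{prop:AffineJEP} happens to produce --- and a finite tuple mixing elements of $f_0(M)$ with elements of $g_0(N)$ lives in $K_0$, not in $M$ or in $N$. The extremality of $M$ and $N$ only controls the types of tuples lying entirely inside one of the two images; it says nothing about a joint type $\tp^{\aff}(f_0(a),g_0(b))$ computed in $K_0$, which in general is a non-extreme point of $\tS^\aff_{xy}(T)$ even though both of its variable restrictions are extreme. (Compare $\PMP_\Gamma$: a joining of two ergodic systems is a common affine extension, and the joint type of the two enumerations encodes the joining, which need not be ergodic.) So the hypothesis of \autoref{l:ExtremeTypesInfinite} is not verified, and the subsequent extremalization step has nothing to start from. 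Indeed, if every compatible joint type were automatically extreme, the proposition would be nearly trivial; the whole content is in \emph{choosing} a good joint type.

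The fix --- and this is what the paper does --- is not to use the particular joint type handed to you by \autoref{prop:AffineJEP}, but to consider the set $C$ of all types $q \in \tS^\aff_{I_0\sqcup I_1}(T)$ with $\pi_{I_0}(q) = p_0$ and $\pi_{I_1}(q) = p_1$, where $p_0,p_1$ are the (extreme, Tarski--Vaught) types of dense enumerations of $M$ and $N$. Affine JEP shows $C\neq\emptyset$; since $p_0$ and $p_1$ are extreme points, each of the two constraints cuts out a closed face, so $C$ is a closed face and hence contains an extreme point $q$ of the ambient type space. That $q$ is realized in an extremal model by \autoref{th:ExtremalModelTypes} (your transfinite re-extension is not needed --- the theorem is quoted as a black box), and since its restrictions are the Tarski--Vaught types $p_0$ and $p_1$, their realizations are dense in affine copies of $M$ and $N$. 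For the amalgamation statement, the paper reduces to the joint embedding case over the affine diagram $D^\aff_A$, using \autoref{prop:ExtremeTypeTwoSteps} to see that an extremal model of $D^\aff_A$ is an extremal model of $T$; your plan of rerunning the argument with the agreement condition attached would additionally require checking that this condition still defines a closed face, which the reduction to $D^\aff_A$ sidesteps.
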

\begin{proof}
  For the first assertion, let $p_0 = \tp^\aff(M) \in \cE^\fM_{I_0}(T)$, for some $I_0$-enumeration of $M$ (or of a dense subset), and similarly $p_1 = \tp^\aff(N) \in \cE^\fM_{I_1}(T)$.
  Let $J = I_0 \sqcup I_1$ be the disjoint union, and let
  \begin{gather*}
    C = \bigl\{ q \in \tS^\aff_{J} : \pi_{I_0}(q) = p_0, \ \pi_{I_1}(q) = p_1 \bigr\}.
  \end{gather*}
  Then $C$ is non-empty by \autoref{prop:AffineJEP}, and it is a closed face as the intersection of two such.
  Therefore, it contains an extreme type.
  This can be realized in an extremal model $K \models T$, whence the two desired embeddings.

  For the second assertion, let $A = \dom \theta \subseteq M$.
  Then $M$ is \textit{a fortiori} an extremal model of $D^\aff_A$, and so is $N$ if we identify $A$ with $\theta(A)$.
  Let $a$ enumerate a dense subset of $A$.
  If $b$ realizes an extreme type of $D^\aff_A$, then $\tp^\aff(a,b)$ and $\tp^\aff(a)$ are extreme by \autoref{prop:ExtremeTypeTwoSteps}.
  Therefore, an extremal model of $D^\aff_A$ is an extremal model of $T$.
  The second assertion now reduces to the first (compare with the proof of \autoref{cor:AffineAmalgamation}).
\end{proof}

\begin{prop}[Extremal affine chains]
  \label{p:affine-chains-extremal}
  If $(M_i : i \in I)$ is an affine chain, as in \autoref{p:affine-chains}, and each $M_i$ is extremal, then so is $\cl{\bigcup_{i \in I} M_i}$.
\end{prop}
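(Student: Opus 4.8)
The plan is to reduce extremality of $M \coloneqq \cl{\bigcup_{i\in I} M_i}$ to a statement about finite tuples coming from the dense subset $M_0 = \bigcup_{i\in I} M_i$, using the fact established in \autoref{p:affine-chains} that $M_j \preceq^\aff M$ for every $j\in I$. Recall (from the discussion following \autoref{defn:ExtremalModel}, which combines \autoref{prop:ExtremeTypesMetricallyClosed} and \autoref{l:ExtremeTypesInfinite}) that a structure is an extremal model of $T$ as soon as $\tp^\aff(a)\in\cE_n(T)$ for every $n\in\bN$ and every $a$ in some fixed dense subset raised to the $n$-th power. So it suffices to check this for tuples $a\in M_0^n$.

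So, first I would fix $n$ and $a\in M_0^n$. Each coordinate $a_j$ lies in $M_{i_j}$ for some $i_j\in I$; since $I$ is directed, there is a single index $i\in I$ with $i\geq i_j$ for all $j<n$, and then $M_{i_j}\subseteq M_i$ for each $j$, so $a\in M_i^n$. By \autoref{p:affine-chains} we have $M_i\preceq^\aff M$, hence the affine type of $a$ computed in $M$ agrees with its affine type computed in $M_i$. As $M_i$ is an extremal model of $T$, that type belongs to $\cE_n(T)$. Since $n$ and $a\in M_0^n$ were arbitrary and $M_0$ is dense in $M$, we conclude that $M$ is an extremal model of $T$.

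I do not expect a genuine obstacle here: the only point that needs (minor) attention is invoking directedness of $I$ to place a given finite tuple inside a single $M_i$, after which everything follows from \autoref{p:affine-chains}. Alternatively, one can bypass the passage through a dense subset altogether: an arbitrary finite tuple $a\in M^n$ is an approximation of tuples $a^{(k)}\in M_{i_k}^n$, whence $\partial\bigl(\tp^\aff(a^{(k)}),\tp^\aff(a)\bigr)\leq d(a^{(k)},a)\to 0$, so $\tp^\aff(a)$ is a $\partial$-limit of extreme types and therefore extreme by \autoref{prop:ExtremeTypesMetricallyClosed}\autoref{item:ExtremeTypeMetricallyClosedFinite}; combined with \autoref{l:ExtremeTypesInfinite} this again gives extremality of $M$.
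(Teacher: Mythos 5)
Your proof is correct and follows essentially the same route as the paper, whose entire proof is ``Follows from \autoref{prop:ExtremeTypesMetricallyClosed}'' --- i.e., precisely the metric-closedness argument you give as your alternative, which is also the ingredient underlying the dense-subset criterion you invoke in your main argument. Both of your variants are fine; no gap.
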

\begin{proof}
  Follows from \autoref{prop:ExtremeTypesMetricallyClosed}.
\end{proof}

Bagheri observed that one cannot define algebraic closure in affine logic exactly as in continuous logic.
Indeed, compactness of affinely definable sets is not preserved under affine extensions.
However, the following result implies that compactness in \emph{extremal} models can be coded in affine logic.

\begin{prop}\label{p:compact-def-sets}
  Let $T$ be a complete affine theory and let $D$ be a uniformly affinely definable set in models of $T$. Then the following are equivalent:
  \begin{enumerate}
  \item $D^M$ is compact in some model $M\models T$.
  \item $D^M$ is compact in every extremal model $M\models T$.
  \end{enumerate}
  Moreover, if $M\preceq^\aff N$ is an extension of extremal models of $T$ and $D^M$ is compact, then $D^M = D^N$.
\end{prop}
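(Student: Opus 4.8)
The plan is to isolate the role of extremality: for an extremal model, a failure of compactness of $D$ is witnessed by the \emph{type} of a long $\eps$-separated tuple contained in $D$, and this type is extreme, hence---by \autoref{l:realized-extreme-dense}---approximately realized in \emph{every} model of the complete theory $T$; this makes non-compactness of $D$ a property not depending on the model, and symmetrically forces rigidity of $D$ along extremal extensions. I would first reduce to the case where $x$ is a finite tuple: $D$ is affinely definable iff each projection to a finite subtuple is (the remark after \autoref{defn:DefinableSet}), and $D^M\subseteq M^I$ is compact iff each one-coordinate projection $\pi_i(D^M)\subseteq M$ is compact (a closed subset of a product of compacta being itself compact), these projections again forming uniformly affinely definable families; the case of infinite $I$ then follows coordinatewise from the finite case, using that $\psi$ is a definable predicate and hence membership in $D$ is detectable in any affine extension. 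So fix the affine definable predicate $\psi(x)=d(x,D)$ of $T$ attached to $D$ by \autoref{prop:DefinableSet} and \autoref{prop:DefinableSetTypes}, with $\psi^M(a)=d(a,D^M)$ for every $M\models T$.

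The implication (ii)$\Rightarrow$(i) is immediate, since $T$ has an extremal model by \autoref{c:extremal-models-existence}. For (i)$\Rightarrow$(ii) I would argue contrapositively. Suppose $D^M$ is not compact for some extremal $M\models T$; being closed in the complete space $M$, it is not totally bounded, so there is $\eps>0$ such that for every $k$ there are $a_1,\dots,a_k\in D^M$ pairwise at distance $\geq\eps$. Since $M$ is extremal, $\tp^\aff(a_1,\dots,a_k)\in\cE_{kx}(T)$, so by \autoref{l:realized-extreme-dense} it lies in the $\tau$-closure of the set of types realized in any prescribed $M_0\models T$. Applying this inside the $\tau$-open set where each copy of $x$ is within $\eps/4$ of $D$ and any two distinct copies are more than $3\eps/4$ apart (which contains $\tp^\aff(\bar a)$ because $\psi(a_l)=0$), I obtain $b_1,\dots,b_k\in M_0^x$ with $d(b_l,D^{M_0})<\eps/4$ and $d(b_l,b_{l'})>3\eps/4$ for $l\neq l'$; replacing each $b_l$ by a nearby point of $D^{M_0}$ exhibits $k$ points of $D^{M_0}$ pairwise at distance $>\eps/4$. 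As $k$ was arbitrary, $D^{M_0}$ is not totally bounded, hence not compact.

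For the ``moreover'' clause, let $M\preceq^\aff N$ be extremal models of $T$ with $D^M$ compact. The inclusion $D^M\subseteq D^N$ is clear since $\psi^N$ extends $\psi^M$, so it suffices to show $D^N\subseteq M$ (then $\psi^M=\psi^N$ on $D^N$ forces $D^N\subseteq D^M$). Fix $b\in D^N$ and a dense subset $M_0\subseteq M$. Exactly as in the proof of \autoref{prop:ExtremalJEP}, $N$ is also an extremal model of $D^\aff_{M_0}$, so $\tp^\aff(b/M_0)\in\cE_x(M_0)$ (using \autoref{prop:ExtremeTypeTwoSteps}), and by \autoref{l:realized-extreme-dense} applied to $D^\aff_{M_0}$ and its model $M$ this type lies in the $\tau$-closure of $\{\tp^\aff(a/M_0):a\in M^x\}$. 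Given $\eps>0$, choose a finite $\eps$-net of the compact set $D^M$ and approximate it by points $c_1,\dots,c_n\in M_0$, obtaining a $2\eps$-net of $D^M$ inside $M_0$; the condition $\psi(x)<\eps$ together with $|d(x,c_j)-d(b,c_j)|<\eps$ for $j\leq n$ defines a $\tau$-open set of $\cL(M_0)$-types containing $\tp^\aff(b/M_0)$, hence realized by some $\tp^\aff(a/M_0)$ with $a\in M^x$. Then $a$ is within $\eps$ of $D^M$, hence within $3\eps$ of some $c_j$, so $d(b,c_j)<4\eps$; as $c_j\in M_0\subseteq M$ and $M$ is complete and closed in $N$, letting $\eps\to0$ yields $b\in M$, hence $b\in D^M$. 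Thus $D^M=D^N$, and the final assertion of the proposition follows by combining this with (i)$\Leftrightarrow$(ii).

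I expect the main difficulty to be conceptual rather than computational: recognizing that the right ``witness'' of non-compactness in an extremal model is not the $\eps$-separated tuple itself but its (extreme) type, which is the object \autoref{l:realized-extreme-dense} can transport into an arbitrary model, and, correspondingly in the rigidity statement, that one must pass to types over a dense subset of $M$ in order to invoke extremality via \autoref{prop:ExtremeTypeTwoSteps}. Once this is in place, the remaining work is routine manipulation with the logic topology and finite $\eps$-nets.
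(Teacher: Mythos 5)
Your proof is correct and follows essentially the same route as the paper: non-compactness of $D$ in an extremal model is transported to every model of $T$ via the extreme type of an $\varepsilon$-separated tuple in $D^M$ together with \autoref{l:realized-extreme-dense}, and the rigidity clause uses the extremality of $\tp^\aff(b/\,\cdot\,)$ over parameters from $M$ (via \autoref{prop:ExtremeTypeTwoSteps}) together with a finite $\varepsilon$-net of the compact set $D^M$. The only harmless deviations are your preliminary reduction to finite tuples and your use of types over a dense subset $M_0\subseteq M$, where the paper simply takes the net points themselves as parameters and concludes $d(b,D^M)<\varepsilon$ directly rather than first showing $b\in M$.
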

\begin{proof}
  By \autoref{th:ExtremalModelTypes}, extremal models exist, so one of the implications is clear.
  Conversely, suppose there is an extremal model $M$ with $D^M$ non-compact, and therefore not totally bounded.
  Then there exists an $\varepsilon > 0$ and an infinite sequence $(a_i) \subseteq D^M$ such that $d(a_i,a_j) > \varepsilon$ for all $i \neq j$.
  The theory $T$ is complete, $D$ definable, and $p_n = \tp^\aff(a_1, \dots, a_n)$ is extreme for each $n$.
  Therefore, by \autoref{l:realized-extreme-dense}, $p_n$ is approximately finitely satisfiable in $D^M$ for every model $M$ of $T$.
  It follows that for no model $M$ of $T$ is $D^M$ compact.

  For the moreover part, assume $M\preceq^\aff N$ are extremal models with $D^M$ compact, and let $b\in D^N$.
  For $\varepsilon > 0$, let $B(a_i,\varepsilon)$, for $1 \leq i \leq n$, cover $D^M$, with $a_i \in D^M$.
  The type $\tp^\aff(b/a_1,\dots, a_n)$ is extreme by \autoref{prop:ExtremeTypeTwoSteps} and is therefore approximately realized in $M$ by \autoref{l:realized-extreme-dense} (applied to the theory of $(M,a_1\dots a_n)$).
  Therefore $d(b,D^M) < \varepsilon$, which is enough.
\end{proof}


\section{Homogeneity and saturation}
\label{sec:Homogeneity-and-saturation}

The notions of homogeneity and saturation and the interplay between them are a classical subject in model theory. In this section, we discuss the appropriate counterparts in affine logic. There are two variants which will be of interest: saturation with respect to all affine types and saturation of extremal models with respect to extreme types. The case of approximate $\aleph_0$-saturation and homogeneity is more delicate and we treat it at the end of the section. The arguments that we present are not specific to affine logic and work with any reasonable notion of type (even without compactness). This is the reason why all results below are valid both in the general affine and the extremal setting.

\begin{defn}
  \label{dfn:Homogeneous}
  Let $\kappa$ be an infinite cardinal and let $M$ be a structure.
  \begin{enumerate}
  \item We say that $M$ is \emph{affinely $\kappa$-homogeneous} if for every partial affine map $f\colon M \dashrightarrow M$, if $|\dom f| < \kappa$, then $f$ extends to an automorphism of $M$.
    If, in addition, $\kappa = \fd(M)$, then we say that $M$ is \emph{affinely homogeneous}.
  \item We say that $M$ is \emph{weakly affinely $\kappa$-homogeneous} if for every partial affine map $f\colon M \dashrightarrow M$ and $a \in M$, if $|\dom f| < \kappa$, then $f$ can be extended to $\{a\} \cup \dom f$.
  \end{enumerate}
\end{defn}

It is easy to see that the condition $|\dom f| < \kappa$ can be replaced with $\fd(\dom f) < \kappa$.
By an easy back-and-forth argument, if $M$ is weakly affinely $\fd(M)$-homoge\-neous, then it is affinely homogeneous.

\begin{lemma}
  \label{lem:HomogeneousTypes}
  Let $M$ and $N$ be structures, and let $\kappa$ be an infinite cardinal.
  \begin{enumerate}
  \item
    \label{item:HomogeneousTypesUniversal}
    Assume that $M$ is weakly affinely $\kappa$-homogeneous and that for every $n \in \N$ and every $p \in \tS^\aff_n(\cL)$, if $p$ is realized in $N$, then it is also realized in $M$.
    Then the same is true for every $p \in \tS^\aff_\lambda(\cL)$, for $\lambda \leq \kappa$.
    In particular, if $\fd(N) \leq \kappa$, then $N$ admits an affine embedding into $M$.

  \item
    \label{item:HomogeneousTypesIsomorphic}
    Assume that both $N$ and $M$ are affinely homogeneous of the same density character and that they realize the same affine $n$-types for every $n \in \bN$.
    Then $N \cong M$.
  \end{enumerate}
\end{lemma}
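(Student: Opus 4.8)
The plan is to prove both parts by the standard back-and-forth technique, adapted to the affine setting, using the notion of partial affine map and the amalgamation/chain results already established.

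For \textbf{\autoref{item:HomogeneousTypesUniversal}}, I would argue by induction on $\lambda \leq \kappa$. Fix an enumeration $(b_i : i < \lambda)$ of (a dense subset of) a realization of $p$ in $N$, and build an increasing sequence of partial affine maps $f_i \colon M \dashrightarrow M$ with $\dom f_i$ of size $<\kappa$, together with partial affine maps $g_i \colon N \dashrightarrow M$ sending $b_j \mapsto$ the image of $b_j$, so that at each stage the image realizes the right finite type. More precisely: at stage $i$, having constructed an affine partial map $g_i$ defined on $\{b_j : j < i\}$, I want to extend it to $b_i$. The finite type $\tp^\aff(b_{j_1}, \dots, b_{j_k}, b_i)$ (for any finite subtuple of the domain of $g_i$) is realized in $N$, hence by hypothesis in $M$; but I need it realized \emph{over the already-chosen images} $g_i(b_{j_1}), \dots, g_i(b_{j_k})$. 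This is exactly where weak affine $\kappa$-homogeneity enters: by \autoref{cor:AffineAmalgamation} (affine amalgamation) one finds, in an affine extension, an element realizing $\tp^\aff(b_i / b_{<i})$ over the images; one then pulls it back into $M$ by weak homogeneity applied to the partial affine map whose domain has size $<\kappa$. Taking unions at limit stages (partial affine maps are clearly closed under increasing unions, and the limit is affine since affine formulas are finitary and uniformly continuous), after $\lambda$ steps we obtain a partial affine map $N \supseteq \{b_i\} \to M$ realizing $p$. The ``in particular'' clause is the case where $p$ enumerates a dense subset of $N$: the resulting affine map extends to an affine embedding of the completion $N \hookrightarrow M$.

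For \textbf{\autoref{item:HomogeneousTypesIsomorphic}}, let $\kappa = \fd(N) = \fd(M)$. Fix dense subsets $(b_i : i < \kappa)$ of $N$ and $(c_i : i < \kappa)$ of $M$. Build by transfinite recursion an increasing chain of partial affine maps $h_i \colon N \dashrightarrow M$ with $|\dom h_i| < \kappa$ (when $\kappa$ is a limit cardinal; more care, adding $<\kappa$-many elements per stage when $\kappa$ is a successor — one can just ensure $\fd(\dom h_i) \le |i| + \aleph_0 < \kappa$), whose domains exhaust $\{b_i\}$ and whose ranges exhaust $\{c_i\}$ (the usual zig-zag: at even stages add the least unenumerated $b$, at odd stages hit the least unenumerated $c$). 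The extension step is: given an affine partial map $h$ of small domain and a new point $b \in N$, find an image in $M$. Since $N$ and $M$ realize the same affine $n$-types, and since $M$ is affinely homogeneous, the image set $h(\dom h)$ has the same finite affine types as $\dom h$, hence the affine automorphism-type of the configuration transfers; one extends $h$ inside $M$ using affine homogeneity of $M$ to move to a spot where $\tp^\aff(b / \dom h)$ — which is realized somewhere in an affine extension by amalgamation, and by the shared-type hypothesis together with homogeneity is realized in $M$ itself — becomes available. Symmetrically for hitting a prescribed $c$, using homogeneity of $N$. The union $h = \bigcup_i h_i$ is then an affine bijection between dense subsets of $N$ and $M$, which (being affine, hence isometric) extends to an isometric surjection of the completions that preserves all affine formulas, i.e., an isomorphism $N \cong M$.

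The main obstacle, and the point requiring the most care, is the bookkeeping that keeps $|\dom f|$ (resp.\ $\fd(\dom h)$) strictly below $\kappa$ throughout a recursion of length $\kappa$ — the familiar subtlety when $\kappa$ is a successor cardinal versus a limit — and, relatedly, making sure that ``realized in $N$ $\Rightarrow$ realized in $M$'' can be upgraded to realization \emph{over a small set of parameters} rather than over $\emptyset$; this is precisely what the hypothesis of weak $\kappa$-homogeneity (in (i)) or full homogeneity (in (ii)) is designed to supply, via \autoref{cor:AffineAmalgamation}. Everything else — closure of partial affine maps under unions, the passage from a map on a dense set to an isomorphism of completions — is routine and can be dispatched quickly, exactly as in continuous logic.
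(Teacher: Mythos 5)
Your overall strategy for \autoref{item:HomogeneousTypesUniversal} — transfinite induction on $\lambda$, extending one coordinate at a time via weak homogeneity, with a back-and-forth variant for \autoref{item:HomogeneousTypesIsomorphic} — is the same as the paper's. But the key extension step, as you have written it, does not go through. You propose: by \autoref{cor:AffineAmalgamation}, find in an \emph{affine extension} of $M$ an element realizing $\tp^\aff(b_i/b_{<i})$ over the already-chosen images, and then ``pull it back into $M$ by weak homogeneity.'' Weak affine $\kappa$-homogeneity only lets you extend a partial affine map $M \dashrightarrow M$ to a point that already lies \emph{in $M$}; it gives no way to import a witness from a proper affine extension $M' \succ^\aff M$ into $M$. (That would be a saturation property, not a homogeneity property, and is exactly what the lemma is trying to establish.) Your earlier observation that every \emph{finite} subtype of $\tp^\aff(b_{<i}, b_i)$ is realized in $M$ also does not help here: realization of all finite restrictions in a fixed model does not yield realization of the infinite type, since there is no compactness inside a single structure.

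The correct combination of your ingredients is the following, and it is where the induction on $\lambda$ must actually be used. At a successor stage $\alpha < \lambda$, the restriction $p\rest_{x_{\leq\alpha}}$ is a type in $|\alpha|+1$ variables; since $|\alpha| + \aleph_0 < \lambda$, the inductive hypothesis (with base case the assumed realization of finite types) shows that $p\rest_{x_{\leq\alpha}}$, being realized in $N$, is realized in $M$ itself, say by $\bar a'{}^\frown a'$. Now both $\bar a'$ and the already-constructed tuple $(a_j : j<\alpha)$ realize $p\rest_{x_{<\alpha}}$ in $M$, so $\bar a' \mapsto (a_j)_{j<\alpha}$ is a partial affine map $M \dashrightarrow M$ of domain size $|\alpha| < \kappa$; weak $\kappa$-homogeneity extends it to $a'$, and the image is the required $a_\alpha$. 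No amalgamation into an extension is needed, and no ``pulling back'' occurs. The same correction applies to your extension step in \autoref{item:HomogeneousTypesIsomorphic}: there, too, the type of the extended tuple must first be realized in the target model over $\emptyset$ (via part \autoref{item:HomogeneousTypesUniversal} or the shared-types hypothesis together with the induction), and only then transported over the parameters by homogeneity. The rest of your write-up (limit stages, bookkeeping of domain sizes, passage from a dense affine bijection to an isomorphism of completions) is fine.
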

\begin{proof}
  We prove \autoref{item:HomogeneousTypesUniversal} by induction on $\lambda$. By hypothesis, we may assume that $\lambda$ is infinite.
  For each $p(x) \in \tS^\aff_\lambda(T)$ realized in $N$, we inductively construct a sequence $(a_\alpha : \alpha \leq \lambda)$ in $M$ such that $(a_i : i < \alpha)$ realizes $p\rest_{x_{<\alpha}}$ for each $\alpha \leq \lambda$. For limit $\alpha$ there is nothing to do. For a successor $\alpha$ with $\alpha < \lambda \leq \kappa$, we first realize $p\rest_{x < \alpha}$ somewhere in $M$ (which we can do by the inductive hypothesis because $|\alpha| < \lambda$) and then use the weak $\kappa$-homogeneity of $M$ to construct $a_\alpha$.

  A back-and-forth variant of this argument proves \autoref{item:HomogeneousTypesIsomorphic}.
\end{proof}

\begin{defn}
  \label{dfn:Saturated}
  Let $\kappa$ be an infinite cardinal and $M$ a structure.
  \begin{enumerate}
  \item We say that $M$ is \emph{affinely $\kappa$-saturated} if every affine type $p\in\tS^\aff_1(A)$ over $A\subseteq M$ with $|A|<\kappa$ is realized in $M$.
    If, in addition, $\kappa = \fd(M)$, then we say that $M$ is \emph{affinely saturated}.
  \item We say that $M$ is \emph{extremally $\kappa$-saturated} if it is an extremal model (of some affine theory $T$, or equivalently, of $\Th^\aff(M)$), and every extremal type $p \in \cE_1(A)$ over $A\subseteq M$ with $|A|<\kappa$ is realized in $M$.
    If $\kappa = \fd(M)$, then we say that $M$ is \emph{extremally saturated}.
  \end{enumerate}
\end{defn}

\begin{lemma}
  \label{lem:SaturatedByDense}
  Let $\kappa$ be an uncountable cardinal and let $M$ be a structure.
  If $M_0 \subseteq M$ is dense and $M$ realizes every type in $\tS^\aff_1(A)$ for every $A\subseteq M_0$ with $|A| < \kappa$, then $M$ is affinely $\kappa$-saturated.
  Moreover, it realizes every affine type over a subset $A \subseteq M$ such that $\fd(A) < \kappa$.

  The same holds, \textit{mutatis mutandis}, for extremal saturation.
\end{lemma}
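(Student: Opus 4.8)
The plan is to establish the ``moreover'' clause, since the $\kappa$-saturation statement is its special case $|A| < \kappa$ (when $\kappa$ is uncountable, $|A| < \kappa$ forces $\fd(A) < \kappa$). So I fix $A \subseteq M$ with $\fd(A) < \kappa$ and a type $p \in \tS^\aff_1(A)$ --- an extreme type $p \in \cE_1(A)$ in the extremal variant --- and aim to realize it in $M$.

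The first step is to replace $A$ by a small subset of $M_0$. Choose a dense $A' \subseteq A$ with $|A'| \leq \fd(A) < \kappa$. Using that $M_0$ is dense in $M$, pick for each $a \in A'$ a sequence $(m_{a,n})_{n \in \N} \subseteq M_0$ with $m_{a,n} \to a$, and set $A_0 = \{ m_{a,n} : a \in A',\ n \in \N \} \subseteq M_0$. This is where the uncountability of $\kappa$ enters: it guarantees $|A_0| \leq |A'| \cdot \aleph_0 < \kappa$. By construction $A_0 \subseteq \cl{A'} = \cl{A}$ and $A' \subseteq \cl{A_0}$, so $\cl{A_0} = \cl{A}$; in particular $A \subseteq \cl{A_0}$.

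The second step uses that affine formulas are uniformly continuous in all their arguments, and in particular in the finitely many slots occupied by parameters. Hence the affine diagram of any parameter set $B \subseteq M$ and that of any dense subset of $B$ determine one another (as already noted after \autoref{dfn:AffineEverything}); equivalently, restriction gives an affine homeomorphism $\tS^\aff_1(\cl{B}) \cong \tS^\aff_1(B)$. Applying this to $\cl{A}$, which contains both $A$ and $A_0$ densely, I obtain canonical affine homeomorphisms
\begin{equation*}
  \tS^\aff_1(A) \cong \tS^\aff_1(\cl{A}) \cong \tS^\aff_1(A_0),
\end{equation*}
which are compatible with realization in $M$ (an element $b \in M$ realizes $p$ over $A$ if and only if it realizes the corresponding type over $A_0$) and, being affine homeomorphisms of compact convex sets, match extreme points. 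Let $q \in \tS^\aff_1(A_0)$ be the type corresponding to $p$; it is extreme if $p$ is. Since $A_0 \subseteq M_0$ and $|A_0| < \kappa$, the hypothesis yields $b \in M$ realizing $q$, and then $b$ realizes $p$, as desired. The extremal case runs identically, additionally using that $M$ is given to be an extremal model.

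No step looks genuinely hard; the two points requiring care are the cardinality bookkeeping --- which is precisely what makes the uncountability of $\kappa$ indispensable --- and checking that the identification $\tS^\aff_1(A) \cong \tS^\aff_1(A_0)$ is canonical and preserves realizability in $M$ as well as extreme points, both of which are immediate from uniform continuity of affine formulas and the functoriality of the passage from order unit spaces to their state spaces.
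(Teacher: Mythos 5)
There is a genuine gap in your first step, and it propagates into the key identification. You claim $A_0 \subseteq \cl{A'} = \cl{A}$, but the elements $m_{a,n}$ are points of $M_0$ \emph{approximating} $a$, not limits of sequences from $A'$; nothing places them in $\cl{A}$ (e.g.\ $A=\{0\}$ a single point of $M$ with $M_0$ disjoint from $A$: then $\cl{A}=\{0\}$ while $A_0\subseteq M_0$). What is true is only the one inclusion $A \subseteq \cl{A'} \subseteq \cl{A_0}$. Consequently $\cl{A_0}=\cl{A}$ fails, and the claimed affine homeomorphism $\tS^\aff_1(A)\cong\tS^\aff_1(A_0)$ does not exist: $A_0$ is a strictly larger parameter set in general, so the restriction map $\tS^\aff_1(\cl{A_0})\to\tS^\aff_1(A)$ is surjective but not injective, and there is no single ``corresponding type'' $q$ over $A_0$ — only many extensions of $p$. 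The biconditional ``$b$ realizes $p$ over $A$ iff it realizes the corresponding type over $A_0$'' is therefore also wrong in the only-if direction.

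The repair is short and is what the paper's proof does: keep your $A_0$ (the paper's $B$), use only $A\subseteq\cl{A_0}$, \emph{extend} $p$ to some $q\in\tS^\aff_1(\cl{A_0})$ (restriction maps between type spaces over nested parameter sets are surjective, by \autoref{prop:TypeWithParameters} or compactness), identify $q$ with a type over $A_0$ via the genuine bijection $\tS^\aff_1(\cl{A_0})\cong\tS^\aff_1(A_0)$ coming from uniform continuity, realize it in $M$ by hypothesis, and note that any realization of $q$ realizes $p$. For the extremal variant this requires an extra ingredient your argument omits: that an extreme type over $A$ extends to an \emph{extreme} type over $\cl{A_0}$. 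This is the content of \autoref{rem:ExtremeRestrictionTwoTheories} (the set of extensions is a closed face, so its extreme points are extreme in the larger type space), together with the bijection $\cE_1(\cl{A_0})\cong\cE_1(A_0)$. Your cardinality bookkeeping, and the role of the uncountability of $\kappa$ in bounding $|A_0|\leq\fd(A)\cdot\aleph_0<\kappa$, are exactly right and match the paper.
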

\begin{proof}
  If $A \subseteq M$ and $\fd(A) < \kappa$, then there exists $B \subseteq M_0$, $|B| < \kappa$ such that $A \subseteq \overline{B}$. Now the claim follows from the fact that the restriction maps $\tS^\aff_x(\cl{B}) \to \tS^\aff_x(B)$ and $\cE_x(\cl{B}) \to \cE_x(B)$ are bijective.
\end{proof}

\begin{lemma}
  \label{lem:ParameterChange}
  Assume that $f\colon M \dashrightarrow N$ is a partial affine map, with domain $A\subseteq M$ and image $B\subseteq N$.
  Then the induced map $f_*\colon \tS^\aff_x(A) \rightarrow \tS^\aff_x(B)$ defined by
  \begin{equation*}
    \phi(x, f(a))^{f_*(p)} = \phi(x, a)^p,
  \end{equation*}
  for all affine formulas $\phi(x, y)$ and $a \in A^y$, is an affine homeomorphism.
\end{lemma}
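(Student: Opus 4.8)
The plan is to recognize $f_*$ as the Kadison dual of an isomorphism of order unit spaces induced by $f$, and to isolate the one place where the \emph{affineness} of $f$ (as opposed to a mere renaming of constants) is used. Since $d$ is interpreted as an affine atomic formula, the partial affine map $f$ is isometric on $A$, hence a bijection onto $B=f(A)$; renaming each constant symbol $b\in B$ to $f^{-1}(b)\in A$ therefore gives a well-defined ``syntactic'' bijection $\sigma$ from affine $\cL(B)$-formulas in $x$ to affine $\cL(A)$-formulas in $x$, sending $\phi(x,f(a))$ to $\phi(x,a)$ for $\phi(x,y)\in\cL^\aff_{xy}$ and $a\in A^y$. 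At the level of these ``absolute'' formula spaces $\cL(B)^\aff_x$ and $\cL(A)^\aff_x$ (formulas modulo equality in all structures), $\sigma$ is trivially an order unit space isomorphism, since reindexing the distinguished tuple does not change which structures satisfy which affine conditions.

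The key step, and the only one with real content, is that $\sigma$ descends to an isomorphism $\bar\sigma\colon\cL(B)^\aff_x(D^\aff_B)\to\cL(A)^\aff_x(D^\aff_A)$ of the quotient order unit spaces modulo the respective affine diagrams; equivalently, that $\sigma$ and $\sigma^{-1}$ carry $\equiv_{D^\aff_B}$ to $\equiv_{D^\aff_A}$ and back, and that $\bar\sigma$, $\bar\sigma^{-1}$ are positive. For this, take $\phi_0,\psi_0\in\cL^\aff_{xy}$ and $a\in A^y$ with $b=f(a)$. The relation $\phi_0(x,b)\equiv_{D^\aff_B}\psi_0(x,b)$ says $\phi_0^N(\cdot,b)=\psi_0^N(\cdot,b)$ on $N^x$, which is the same as $\chi_1^N(b)=\chi_2^N(b)=0$ for the affine $\cL$-formulas $\chi_1(y)=\sup_x\bigl(\phi_0(x,y)-\psi_0(x,y)\bigr)$ and $\chi_2(y)=\sup_x\bigl(\psi_0(x,y)-\phi_0(x,y)\bigr)$. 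Applying the definition of affineness of $f$ to $\chi_1$ and $\chi_2$ at the tuple $a$ gives $\chi_i^M(a)=\chi_i^N(f(a))=0$, i.e.\ $\phi_0^M(\cdot,a)=\psi_0^M(\cdot,a)$ on $M^x$, which is exactly $\sigma\bigl(\phi_0(x,b)\bigr)\equiv_{D^\aff_A}\sigma\bigl(\psi_0(x,b)\bigr)$. The converse implication is symmetric, using that $f^{-1}\colon N\dashrightarrow M$ with domain $B$ is again a partial affine map (immediate from the definition); and taking $\psi_0=0$ in the same computation shows $\bar\sigma$ and $\bar\sigma^{-1}$ are positive. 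Hence $\bar\sigma$ is an order unit space isomorphism.

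Finally, $\bar\sigma$ extends uniquely to an isomorphism of the completions $\cA(\tS^\aff_x(B))\to\cA(\tS^\aff_x(A))$, whose Kadison dual is an affine homeomorphism $\tS^\aff_x(A)\to\tS^\aff_x(B)$, namely $p\mapsto p\circ\bar\sigma$. Unwinding the definition of $\bar\sigma$, this dual sends $p$ to the state assigning to $\phi(x,f(a))$ the value $\phi(x,a)^p$, which is precisely the map $f_*$ of the statement; its inverse is $(f^{-1})_*$. I do not expect any serious obstacle: the content is entirely in the descent argument of the second paragraph, and everything else is bookkeeping. (If one prefers to avoid invoking the duality, one can argue directly: the displayed formula forces $f_*(p)$ to be a well-defined state on $\cL(B)^\aff_x(D^\aff_B)$ — well-definedness and positivity being exactly the special cases of the second paragraph — and then $f_*$ is manifestly affine, weak$^*$-continuous because $p\mapsto\phi(x,a)^p$ is, and a homeomorphism because $(f^{-1})_*$ is a two-sided inverse.)
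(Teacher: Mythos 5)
Your proof is correct, and it is precisely a careful unpacking of the paper's one-line argument ("Follows from the definition of affine map"): the whole content is your second paragraph, where affineness of $f$ applied to the quantified formulas $\sup_x(\phi_0-\psi_0)$ and $\sup_x(\psi_0-\phi_0)$ transfers the congruence $\equiv_{D^\aff_B}$ to $\equiv_{D^\aff_A}$ (using completeness of the diagrams), after which Kadison duality — or the direct verification you sketch at the end — is routine bookkeeping.
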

\begin{proof}
  Follows from the definition of affine map.
\end{proof}

\begin{prop}
  \label{prp:SaturatedHomogeneous}
  Let $\kappa$ be an infinite cardinal and let $M$ be a structure.
  Let $T = \Th^\aff(M)$.
  Then $M$ is affinely $\kappa$-saturated if and only if it is weakly affinely $\kappa$-homogeneous and realizes every affine type $p \in \tS^\aff_n(T)$ for every $n \in \bN$.
  Similarly, \textit{mutatis mutandis}, for extremal saturation and extremal types.

  Consequently, two affinely equivalent affinely (or extremally) saturated models of the same density character are isomorphic.
\end{prop}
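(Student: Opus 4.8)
The plan is to establish the equivalence in both directions, deduce the uniqueness clause from \autoref{lem:HomogeneousTypes}, and then observe that every argument transfers verbatim to the extremal setting. For the implication from affine $\kappa$-saturation to the conjunction, realization of each $p \in \tS^\aff_n(T)$ with $n$ finite is obtained by a short induction on $n$: having found $a_0, \dots, a_{i-1} \in M$ with $\tp^\aff(a_0 \cdots a_{i-1}) = \pi_{x_{<i}}(p)$, one picks a realization $\bar b$ of $p$ in some model, transports the $1$-type $\tp^\aff(b_i / b_0 \cdots b_{i-1})$ along the affine homeomorphism of \autoref{lem:ParameterChange} induced by the affine map $\bar b_{<i} \mapsto \bar a_{<i}$, and realizes the resulting $1$-type over $\{a_0, \dots, a_{i-1}\}$ in $M$ using $\kappa$-saturation (legitimate since $i < \kappa$). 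Weak affine $\kappa$-homogeneity is then immediate: given an affine partial map $f$ with $\dom f = A$, $|A| < \kappa$, image $B$, and an element $a \in M$, apply \autoref{lem:ParameterChange} to turn $\tp^\aff(a/A)$ into a type over $B$, which is realized in $M$ since $|B| \le |A| < \kappa$; any realization extends $f$ to $A \cup \{a\}$.

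For the converse, assume $M$ is weakly affinely $\kappa$-homogeneous and realizes every $p \in \tS^\aff_n(T)$ for finite $n$. Given $p \in \tS^\aff_1(A)$ with $A \subseteq M$ and $|A| < \kappa$, fix an enumeration $\bar a$ of $A$. By \autoref{prop:TypeWithParameters} there are $N \succeq^\aff M$ and $b \in N$ with $\tp^\aff(b/A) = p$; set $q = \tp^\aff(b \bar a)$, a type of $T = \Th^\aff(N)$ in fewer than $\kappa$ variables. Since $M$ is weakly affinely $\kappa$-homogeneous and realizes every finite type of $T$, hence every finite type realized in $N$, \autoref{lem:HomogeneousTypes}\autoref{item:HomogeneousTypesUniversal} shows that $M$ realizes $q$, say by $b' \bar a'$. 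Then $\bar a' \mapsto \bar a$ is an affine partial map of $M$ whose domain has size $< \kappa$, so weak $\kappa$-homogeneity extends it, sending $b'$ to some $b'' \in M$; unwinding the definitions, $\tp^\aff(b'' \bar a) = \tp^\aff(b' \bar a') = q = \tp^\aff(b \bar a)$, whence $\tp^\aff(b''/A) = p$ and $p$ is realized in $M$.

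The uniqueness clause follows at once: if $M \equiv^\aff N$ are affinely saturated of the same density character, then by the equivalence each is weakly affinely $\fd(M)$-homogeneous, hence affinely homogeneous by the back-and-forth remark after \autoref{dfn:Homogeneous}, and each realizes exactly the $n$-types of their common affine theory; \autoref{lem:HomogeneousTypes}\autoref{item:HomogeneousTypesIsomorphic} then yields $M \cong N$. The only step that is not routine bookkeeping is the passage in the converse direction from realizing finite types to realizing the (possibly infinite) parameter tuple $\bar a$ inside $M$; this is precisely where weak homogeneity is genuinely used, and it is exactly the content of \autoref{lem:HomogeneousTypes}\autoref{item:HomogeneousTypesUniversal}, so no new difficulty arises.

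Finally, the extremal statement is obtained by replacing $\tS^\aff$ by $\cE$ throughout. One uses \autoref{th:ExtremalModelTypes} together with \autoref{prop:TypeWithParameters} to realize an extreme type over parameters coming from an extremal model inside an extremal affine extension, and \autoref{prop:ExtremeTypeTwoSteps} together with \autoref{l:ExtremeTypesInfinite} to check that restrictions and unions of extreme types behave as needed, so that all the inductive constructions above stay within the extreme boundary. The extremal analogues of \autoref{lem:ParameterChange} and of \autoref{lem:HomogeneousTypes} hold with the same proofs, since, as noted at the start of this section, those proofs appeal only to weak homogeneity and to the relevant realization hypothesis, never to compactness.
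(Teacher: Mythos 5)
Your proof is correct and follows essentially the same route as the paper, which proves the equivalence by citing \autoref{lem:ParameterChange} for the forward direction and \autoref{lem:HomogeneousTypes}\autoref{item:HomogeneousTypesUniversal} for the converse, and derives the uniqueness clause from \autoref{lem:HomogeneousTypes}\autoref{item:HomogeneousTypesIsomorphic}; you have simply filled in the routine details (the induction on $n$ for realizing finite types, and the final homogeneity step moving $\bar a'$ back to $\bar a$) that the paper leaves implicit. The extremal case is likewise handled \emph{mutatis mutandis} in both treatments.
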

\begin{proof}
  Using \autoref{lem:ParameterChange} in one direction and \autoref{lem:HomogeneousTypes}\autoref{item:HomogeneousTypesUniversal} in the other.

  The last assertion follows from \autoref{lem:HomogeneousTypes}\autoref{item:HomogeneousTypesIsomorphic}.
\end{proof}

\begin{lemma}
  \label{lem:SaturatedUniversal}
  Let $\kappa$ be an infinite cardinal and let $M$ be a structure.
  If $M$ is affinely $\kappa$-saturated then it realizes every type in $\tS^\aff_\kappa(A)$ for every $A\subseteq M$, $\density(A)<\kappa$.

  Similarly, \textit{mutatis mutandis}, for extremal saturation.
\end{lemma}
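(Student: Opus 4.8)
The plan is to realize a given type $p \in \tS^\aff_\kappa(A)$ inside $M$ by transfinite recursion along the coordinates, bootstrapping the realization of $1$-types over small parameter sets provided by the ``moreover'' part of \autoref{lem:SaturatedByDense} (which applies since $\density(A) < \kappa$). Fix an enumeration $x = (x_i : i < \kappa)$ of the variables of $p$. I would construct elements $(a_i : i < \kappa)$ of $M$ such that, for every $\alpha \le \kappa$, the tuple $(a_i : i < \alpha)$ realizes $p\rest_{x_{<\alpha}}$, viewed as a type over $A$; taking $\alpha = \kappa$ then yields $(a_i : i < \kappa) \models p$. At a limit stage $\alpha$ there is nothing to do: every affine formula in the variables $x_{<\alpha}$ involves only finitely many of them and hence lies in $\cL^\aff_{x_{<\beta}}$ for some $\beta < \alpha$, so $(a_i : i < \alpha)$ automatically realizes $p\rest_{x_{<\alpha}}$.

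At a successor stage, suppose $(a_i : i < \alpha)$ realizes $p\rest_{x_{<\alpha}}$ and put $A_\alpha = A \cup \{a_i : i < \alpha\}$, so that $\density(A_\alpha) \le \density(A) + |\alpha| < \kappa$. The key point is that $p$, together with the choices made so far, determines a genuine $1$-type $q \in \tS^\aff_{x_\alpha}(A_\alpha)$ over $A_\alpha$: on an affine formula $\varphi(x_\alpha, \bar a, \bar b)$, with $\bar a$ among $(a_i : i < \alpha)$ and $\bar b$ from $A$, one sets $q\bigl(\varphi(x_\alpha, \bar a, \bar b)\bigr) = \bigl(p\rest_{x_{\le\alpha}}\bigr)\bigl(\varphi(x_\alpha, \bar x, \bar b)\bigr)$, where $\bar x$ are the variables of $p$ corresponding to $\bar a$. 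That this is well defined and is a state uses only that $\sup_{x_\alpha}\psi - \psi$ belongs to the positive cone of $\cL^\aff_{x_{\le\alpha}}$ for every affine $\psi$, so that the value of $p\rest_{x_{\le\alpha}}$ on a difference $\varphi - \varphi'$ is controlled by its value on $\sup_{x_\alpha}(\varphi - \varphi')$, which in turn equals the value of $p\rest_{x_{<\alpha}}$ on that formula, i.e.\ the number read off from $(a_i : i < \alpha)$. (Alternatively, and perhaps more transparently, one first realizes $p\rest_{x_{\le\alpha}}$ in some affine extension of $M$ over $A$ and then applies \autoref{cor:AffineAmalgamation} together with \autoref{prop:TypeWithParameters} to move the new coordinate over the tuple $(a_i : i < \alpha)$, exhibiting $q$ as the type of an element of an affine extension of $M$.) In either case, since $\density(A_\alpha) < \kappa$, the type $q$ is realized in $M$ by some $a_\alpha$, by \autoref{lem:SaturatedByDense}; by the definition of $q$, the tuple $(a_i : i \le \alpha)$ then realizes $p\rest_{x_{\le\alpha}}$, completing the recursion step.

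For the extremal case the recursion is verbatim the same, applied to $p \in \cE_\kappa(A)$ and using extremal $\kappa$-saturation in place of affine $\kappa$-saturation. The only additional point is that the auxiliary type $q$ is again an extreme type over $A_\alpha$: the restriction $p\rest_{x_{\le\alpha}}$ is extreme over $A$ (restrictions of extreme types are extreme, by \autoref{prop:ExtremeTypeTwoSteps}, or \autoref{l:ExtremeTypesInfinite} in the infinitary case), the tuple $(a_i : i < \alpha)$ realizes the extreme type $p\rest_{x_{<\alpha}}$, and hence by \autoref{prop:ExtremeTypeTwoSteps} the type of the new coordinate over $A_\alpha$ lies in $\cE_1(A_\alpha)$; extremal $\kappa$-saturation then realizes it in $M$. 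The one genuinely non-routine point in the whole argument is the verification that ``the restriction of $p$ to the next coordinate over the parameters chosen so far'' is a well-defined state (respectively, extreme type); everything else is the standard saturation-implies-universality bookkeeping.
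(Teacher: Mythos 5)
Your proof is correct, and it is exactly the argument the paper intends: the paper's own proof of this lemma is the one line ``By a standard inductive construction,'' and your transfinite recursion (realizing one coordinate at a time over the parameters accumulated so far, with the successor step reduced to $1$-types via \autoref{cor:AffineAmalgamation} and \autoref{prop:ExtremeTypeTwoSteps} in the extremal case) is that standard construction spelled out. No changes needed.
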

\begin{proof}
  By a standard inductive construction.
\end{proof}

\begin{prop}
  \label{prp:SaturatedExistence}
  Let $T$ be an affine theory and let $M$ be a model of $T$.
  Let $\kappa,\lambda$ be cardinals such that $\lambda = \lambda^{<\kappa}$, $\fd_T(\cL) < \kappa$, and $\fd(M) \leq \lambda$.
  Then $M$ admits an affinely $\kappa$-saturated affine extension of density character at most $\lambda$.
  In particular, if $\fd_T(\cL) \leq \mu$, then every model of $T$ such that $\fd(M) \leq 2^\mu$ admits an affinely $\mu^+$-saturated affine extension of density character at most $2^\mu$.

  Similarly, \textit{mutatis mutandis}, for extremal saturation and extremal $M$.
\end{prop}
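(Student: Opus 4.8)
The plan is to build the saturated extension as a transfinite chain of length $\lambda$. Concretely, I would construct an increasing, continuous chain $(M_i : i \le \lambda)$ of affine extensions of $M$ with $M_0 = M$ and $\fd(M_i) \le \lambda$ for all $i$, together with an increasing, continuous chain of dense subsets $D_i \subseteq M_i$ with $|D_i| \le \lambda$, arranged so that $M_{i+1}$ realizes every affine type $p \in \tS^\aff_1(A)$ for every $A \subseteq D_i$ with $|A| < \kappa$; the output is $M_\lambda$. What makes a chain of length $\lambda$ suffice — rather than one of length $\kappa$, which may be singular — is the cofinality observation that $\lambda = \lambda^{<\kappa}$ forces $\mathrm{cf}(\lambda) \ge \kappa$ (otherwise $\lambda < \lambda^{\mathrm{cf}(\lambda)} \le \lambda^{<\kappa}$ by König's theorem). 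Hence, writing $D_\lambda = \bigcup_{i<\lambda} D_i$, any $A \subseteq D_\lambda$ with $|A| < \kappa \le \mathrm{cf}(\lambda)$ lies in some $D_i$ with $i < \lambda$, so every type in $\tS^\aff_1(A)$ — the same whether computed in $M_{i+1}$ or in $M_\lambda$, since $M_{i+1} \preceq^\aff M_\lambda$ — is realized in $M_{i+1} \subseteq M_\lambda$. Because $\fd_T(\cL) \ge \aleph_0$, the hypothesis $\fd_T(\cL) < \kappa$ makes $\kappa$ uncountable, so \autoref{lem:SaturatedByDense}, applied with the dense set $D_\lambda$, shows that $M_\lambda$ is affinely $\kappa$-saturated.

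Two routine points remain for the successor step, passing from $M_i$ to $M_{i+1}$. First, a counting bound: for $A \subseteq D_i$ with $|A| < \kappa$, the type space $\tS^\aff_1(A)$ is the state space of an order unit space with a dense subset of size at most $\fd_T(\cL) + |A| < \kappa$ (the classes of formulas $\varphi(x,\bar a)$, with $\bar a$ from $A$ and $\varphi$ ranging over a fixed dense family of affine formulas, cf.\ \autoref{defn:LanguageDensityCharacter}), and since a state is determined by its restriction to a dense subset, $|\tS^\aff_1(A)| \le 2^{<\kappa} \le \lambda^{<\kappa} = \lambda$; as there are at most $|D_i|^{<\kappa} \le \lambda^{<\kappa} = \lambda$ such sets $A$, at most $\lambda$ types must be realized at this step. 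Second, each $p \in \tS^\aff_1(A)$ is realized in some affine extension of $M_i$ by \autoref{prop:TypeWithParameters}; I would realize all $\le \lambda$ required types one at a time along an auxiliary affine chain of length $\le \lambda$ starting at $M_i$, taking unions via \autoref{p:affine-chains} at limits, and after adjoining each new realization invoking the downward Löwenheim--Skolem theorem (\autoref{remark:LanguageDensityCharacter}) to descend to an affine substructure — containing the previous model and the new point — of density character $\le \lambda$. Since a union of at most $\lambda$ structures of density character $\le \lambda$ has density character $\le \lambda$, one ends with $\fd(M_{i+1}) \le \lambda$; one then chooses $D_{i+1} \supseteq D_i$ dense in $M_{i+1}$ with $|D_{i+1}| \le \lambda$. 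At limit $i$ one sets $M_i = \cl{\bigcup_{j<i} M_j}$ (again of density character $\le \lambda$, and closed since structures are complete) and $D_i = \bigcup_{j<i} D_j$.

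For the extremal version one runs the identical construction with $M$ an extremal model of $T$ and $\tS^\aff_1(A)$ replaced everywhere by $\cE_1(A)$: each $M_i$ is an extremal model of the diagram $D^\aff_A$ by \autoref{prop:ExtremeTypeTwoSteps}, hence by \autoref{th:ExtremalModelTypes} every $p \in \cE_1(A)$ is realized in some extremal model of $D^\aff_A$, and extremal amalgamation (\autoref{prop:ExtremalJEP}) then yields an extremal affine extension of $M_i$ realizing $p$; limit stages use extremal affine chains (\autoref{p:affine-chains-extremal}), and the cardinality estimates are unchanged since $\cE_1(A) \subseteq \tS^\aff_1(A)$. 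The final ``in particular'' clause is the case $\kappa = \mu^+$, $\lambda = 2^\mu$, where $\lambda^{<\kappa} = (2^\mu)^{\le\mu} = 2^{\mu\cdot\mu} = 2^\mu = \lambda$, $\fd_T(\cL) \le \mu < \mu^+ = \kappa$, and $\fd(M) \le 2^\mu = \lambda$, so the main statement applies directly. I expect the only genuine subtlety to be the cardinal arithmetic — the inequality $\mathrm{cf}(\lambda) \ge \kappa$ together with the two $\lambda^{<\kappa} = \lambda$ counting bounds — and the somewhat fiddly bookkeeping needed to keep every density character at most $\lambda$ through the nested induction.
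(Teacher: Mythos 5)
Your proof is correct and follows essentially the same route as the paper's (which the authors dispatch as ``fairly standard''): an affine chain of length $\lambda$, realizing at each successor stage all types over subsets of size $<\kappa$ of a dense set of size $\leq\lambda$, and concluding via the cofinality bound $\mathrm{cf}(\lambda)\geq\kappa$ together with \autoref{lem:SaturatedByDense}. The extra details you supply — the König's theorem argument for the cofinality, the $\lambda^{<\kappa}=\lambda$ counting bounds, the Löwenheim--Skolem bookkeeping, and the use of \autoref{prop:ExtremeTypeTwoSteps}, \autoref{th:ExtremalModelTypes} and \autoref{prop:ExtremalJEP} in the extremal case — are exactly what the paper leaves implicit, and they all check out.
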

\begin{proof}
  This is fairly standard.
  We construct an affine chain of models $(M_i : i \leq \lambda)$, where $\fd(M_i) \leq \lambda$.
  At limit stages we take the (completion of the) union.
  At successor stage $i+1$ we choose a dense subset $M'_i \subseteq M_i$ of cardinal $\leq \lambda$ and realize in $M_{i+1}$ all types over subsets of $M'_i$ of cardinal $< \kappa$.
  By \autoref{lem:SaturatedByDense} and the fact that $\lambda$ has cofinality at least $\kappa$, the final model $M_\lambda$ is $\kappa$-saturated.

  Similarly in the extremal case.
\end{proof}

Countable saturation or homogeneity in affine logic (or, for that matter, in continuous logic) are somewhat less robust than their uncountable counterparts (or countable counterparts in classical, discrete logic). For this reason, we need the following approximate notions.

\begin{defn}
  \label{dfn:ApproximatelyHomogeneous}
  Let $M$ be a structure.
  \begin{enumerate}
  \item We say that $M$ is \emph{approximately affinely $\aleph_0$-homogeneous} if for every $a,b \in M^n$ and $\varepsilon > 0$, if $\tp^\aff(a) = \tp^\aff(b)$, then there exists $g \in \Aut(M)$ such that $d(g \cdot a,b) < \varepsilon$.
    If, in addition, $\fd(M) = \aleph_0$, then $M$ is \emph{approximately affinely homogeneous}.
  \item We say that $M$ is \emph{weakly approximately affinely $\aleph_0$-homogeneous} if for every $a,b \in M^n$, $c \in M$ and $\varepsilon > 0$, if $\tp^\aff(a) = \tp^\aff(b)$, then there exist $b' \in M^n$ and $d \in M$ such that $d(b',b) < \varepsilon$ and $\tp^\aff(ac) = \tp^\aff(b'd)$.
  \end{enumerate}
\end{defn}

We link the two notions of approximate homogeneity using a Baire category method inspired by the use of joinings in ergodic theory.

\begin{lemma}
  \label{lem:ApproximatelyHomogeneousExtension}
  Assume that $M$ is weakly approximately affinely $\aleph_0$-homogeneous and $p(x) \in \tS^\aff_\bN(\cL)$ is such that all its restrictions to finitely many variables are realized in $M$.
  Let $n \in \N$ and suppose also that $a \in M^n$ realizes the restriction of $p$ to $x_{<n}$.
  Then for every $\varepsilon > 0$ there exists $\hat{a} \in M^\bN$ such that $d(a,\hat{a}_{<n}) < \varepsilon$ and $\hat{a}$ realizes $p$.
  In particular, $p$ is realized in $M$.
\end{lemma}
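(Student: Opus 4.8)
The plan is to construct $\hat a$ as the limit of a sequence of finite tuples obtained by absorbing one new coordinate at a time, applying weak approximate $\aleph_0$-homogeneity at each step to keep the coordinates already chosen from drifting too far.

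Concretely, first I would fix positive reals $(\delta_m)_{m \geq n}$ with $\sum_{m\geq n}\delta_m < \varepsilon$, and build tuples $a^{(m)} \in M^m$ for $m \geq n$ with $a^{(n)} = a$, such that $a^{(m)}$ realizes $p\rest_{x_{<m}}$ and $d\bigl(a^{(m)}, a^{(m+1)}\rest_{x_{<m}}\bigr) < \delta_m$. For the inductive step, given $a^{(m)} \models p\rest_{x_{<m}}$, the hypothesis provides a realization of $p\rest_{x_{<m+1}}$ in $M$, which I may write as $(c', c_m)$ with $c' \in M^m$ and $c_m \in M$. Since $\tp^\aff(c') = p\rest_{x_{<m}} = \tp^\aff(a^{(m)})$, weak approximate $\aleph_0$-homogeneity --- applied to the pair $c', a^{(m)}$, the extending point $c_m$, and tolerance $\delta_m$ --- yields $b' \in M^m$ with $d(b', a^{(m)}) < \delta_m$ and $d \in M$ with $\tp^\aff(b', d) = \tp^\aff(c', c_m) = p\rest_{x_{<m+1}}$; I then set $a^{(m+1)} = (b', d)$.

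Next I would pass to the limit. Since the distance on finite tuples is the sum metric, $d\bigl(a^{(m)}, a^{(m+1)}\rest_{x_{<m}}\bigr) < \delta_m$ forces $d\bigl(a^{(m)}_i, a^{(m+1)}_i\bigr) < \delta_m$ for every $i < m$; hence for each fixed $i$ the sequence $(a^{(m)}_i)_{m > i}$ is Cauchy in $M$, and by completeness converges to some $\hat a_i \in M$. Put $\hat a = (\hat a_i : i \in \bN) \in M^\bN$. Summing telescoping estimates, $d(a, \hat a_{<n}) = \sum_{i<n} d(a_i, \hat a_i) \leq \sum_{m \geq n}\sum_{i<n} d\bigl(a^{(m)}_i, a^{(m+1)}_i\bigr) \leq \sum_{m\geq n}\delta_m < \varepsilon$. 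Finally, for each $k$ the tuples $a^{(m)}\rest_{x_{<k}}$ (for $m \geq k$) all realize $p\rest_{x_{<k}}$ and converge to $\hat a_{<k}$; as the set of realizations of $p\rest_{x_{<k}}$ is closed (affine formulas being uniformly continuous) and every affine formula in the variables $x$ involves only finitely many of them, $\hat a$ realizes $p$. The last clause of the statement follows by running the above with $n = 0$ and $a$ the empty tuple.

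I do not anticipate a genuine obstacle here: this is the Cauchy-sequence mechanism familiar from back-and-forth constructions, and the only point requiring a little care is the bookkeeping of the tolerances $\delta_m$ (e.g.\ $\delta_m = 2^{n-m-2}\varepsilon$), so that the perturbations of the previously chosen coordinates are summable and the first $n$ coordinates remain within $\varepsilon$ of $a$.
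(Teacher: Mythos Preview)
The proposal is correct and follows essentially the same approach as the paper: both iteratively extend a finite realization by one coordinate at a time using weak approximate $\aleph_0$-homogeneity, with geometrically decreasing tolerances so that each coordinate sequence is Cauchy and the limit realizes $p$. Your write-up is in fact more explicit than the paper's, which compresses the construction into a single sentence.
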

\begin{proof}
  By an iterated application of the definition, we construct $b^k = (b_i^k : i < n+k)$ such that $b^0 = a$, $b^k$ realizes the restriction of $p$ to $x_{<n+k}$ and $d(b^k_i, b^{k+1}_i) < \varepsilon / (n 2^{k+1})$ for $i < n+k$.
  Then $\hat{a}_i$ is the limit of the Cauchy sequence $(b^k_i : k \in \N)$ (which is defined for $k > i-n$).
\end{proof}

\begin{defn}
  \label{dfn:Joining}
  Let $M$ be weakly approximately affinely $\aleph_0$-homogeneous, and fix $p(x),q(y) \in \tS^\aff_\bN(\cL)$ whose restrictions to finitely many variables are realized in $M$.

  We define the set of \emph{$M$-joinings} of $p$ and $q$, denoted $\fJ = \fJ^M_{p,q}$, to consist of all types $s(x,y)$ such that
  \begin{itemize}
  \item $\pi_x(s) = p$ and $\pi_y(s) = q$, and
  \item for every $n \in \bN$, the restriction $s_n = \pi_{x_{<n}y_{<n}}(s)$ is realized in $M$.
  \end{itemize}
  For $n \in \bN$ and $s,t \in \fJ$, define
  \begin{gather*}
    \partial^M_n(s,t)
    = \inf \, \bigl\{ d(ab,cd) : a,b,c,d \in M^n, \ \tp^\aff(ab) = s_n, \ \tp^\aff(cd) = t_n \bigr\},
    \\
    \partial^M(s,t) = \qsup_n (2^{-n} \wedge \partial_n(s,t)).
  \end{gather*}
  Finally, we define
  \begin{gather*}
    \fJ_R = \fJ \cap \bigcap_{j,\varepsilon} \, \bigcup_i \bigl\llbracket d(x_i,y_j) < \varepsilon \bigr\rrbracket.
  \end{gather*}
  We define $\fJ_L$ similarly, inverting the roles of $i$ and $j$.
\end{defn}

The letters $R$ and $L$ stand for \emph{right-} and \emph{left-inclusion}.
Indeed, if $s = \tp^\aff(a,b) \in \fJ$, then $\overline{\{a_i : i \in \bN\}} \supseteq \overline{\{b_j : j \in \bN\}}$ if and only if $s \in \fJ_R$, and the other way round for $\fJ_L$.

\begin{lemma}
  \label{lem:Joining}
  Let $M$, $p$, $q$, $\fJ = \fJ^M_{p,q}$, $\partial_n = \partial^M_n$ and $\partial = \partial^M$ be as in \autoref{dfn:Joining}.
  \begin{enumerate}
  \item
    \label{item:JoiningAge}
    The definitions of $\fJ$, $\partial_n$ and $\partial$ only depend on the collection of types in finitely many variables realized in $M$.
  \item
    \label{item:JoiningCompleteMetric}
    Each $\partial_n$ defines a pseudometric on $\fJ$ and $\partial$ defines a complete metric on $\fJ$.
    If $M$ is separable, then so is $(\fJ,\partial)$.
  \item
    \label{item:JoiningComeager}
    Assume that $p$ is realized by some tuple $a \in M^\N$ which is dense in $M$.
    Then $\fJ_R$ is comeager in $(\fJ,\partial)$.
  \end{enumerate}
\end{lemma}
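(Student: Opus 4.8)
The plan is to take the three parts in order; part (i) is a matter of unwinding the definitions, while parts (ii) and (iii) both come down to transporting and extending realizations inside $M$ by repeated use of \autoref{lem:ApproximatelyHomogeneousExtension}. For part (i): membership of $s$ in $\fJ$ is the conjunction of $\pi_x(s)=p$, $\pi_y(s)=q$ and ``$s_n$ is realized in $M$ for every $n$'', the last of which refers only to the collection of types in finitely many variables realized in $M$; and $\partial_n(s,t)$ is the infimum, over all $4n$-variable types realized in $M$ restricting suitably to $s_n$ and to $t_n$, of the interpretation of the fixed atomic affine expression computing $d(ab,cd)$, so $\partial_n$ --- hence also $\partial=\qsup_n(2^{-n}\wedge\partial_n)$ --- depends only on the same data. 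For the metric claims in part (ii): symmetry and vanishing on the diagonal of each $\partial_n$ are immediate; for the triangle inequality, given $s,t,u\in\fJ$ and $\varepsilon>0$ I would choose in $M$ realizations $ab\models s_n$, $cd\models t_n$ almost attaining $\partial_n(s,t)$ and $c'd'\models t_n$, $ef\models u_n$ almost attaining $\partial_n(t,u)$; since $\tp^\aff(cd)=\tp^\aff(c'd')$, applying \autoref{lem:ApproximatelyHomogeneousExtension} to the type of the parallel pair $(c'd',ef)$, with $cd$ in the role of the initial segment, yields a realization of that type in $M$ whose first $2n$ coordinates are $\varepsilon$-close to $cd$; its last $2n$ coordinates then form an $e'f'\models u_n$ in $M$ with $d(ab,e'f')\le\partial_n(s,t)+\partial_n(t,u)+3\varepsilon$. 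Hence each $2^{-n}\wedge\partial_n$, and therefore $\partial$, is a pseudometric; and $\partial$ separates points of $\fJ$ because $\partial_n(s,t)\ge\partial(s_n,t_n)$ in the sense of \autoref{defn:AffineTypeDistance}, and $\partial(s_n,t_n)=0$ forces $s_n=t_n$ by \autoref{prop:AffineTypeDistance}, so $\partial(s,t)=0$ gives $s=t$.

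For completeness of $(\fJ,\partial)$ I would take a Cauchy sequence, pass to a subsequence $(s^k)$ converging rapidly enough that for each fixed $n$ the tail of $\bigl(\partial_n(s^k,s^{k+1})\bigr)_k$ is summable, and build realizations $\xi^k\models s^k_k$ in $M$ so that $\xi^{k+1}$ agrees with $\xi^k$, up to a summable error, on the first $k$ pairs of coordinates: to pass from $\xi^k$ to $\xi^{k+1}$, apply \autoref{lem:ApproximatelyHomogeneousExtension} once to a parallel realization of $s^k_k$ and $s^{k+1}_k$ to get a realization of $s^{k+1}_k$ in $M$ near $\xi^k$, and once more to extend that to a realization $\xi^{k+1}$ of $s^{k+1}_{k+1}$. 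The coordinates of the $\xi^k$ are then Cauchy in $M$, and their limits assemble into $(g,h)\in M^{\bN}\times M^{\bN}$ with $s:=\tp^\aff(g,h)\in\fJ$ and $\partial(s^k,s)\to 0$, so $(\fJ,\partial)$ is complete. For separability, \autoref{lem:ApproximatelyHomogeneousExtension} shows that every $s\in\fJ$ is realized in $M$, so $\fJ$ is the image of $\{(g,h)\in M^{\bN}\times M^{\bN}:g\models p,\ h\models q\}$ under $(g,h)\mapsto\tp^\aff(g,h)$, a map that is $\partial$-continuous (each $\partial_n$ of two images is dominated by the distance between the first $n$ pairs of coordinates); a separable domain therefore has separable image.

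For part (iii): since $s\mapsto d(x_i,y_j)^s$ is $\partial_n$-Lipschitz for $n>\max(i,j)$ and hence $\partial$-continuous, each $U_{j,\varepsilon}:=\fJ\cap\bigcup_i\oset{d(x_i,y_j)<\varepsilon}$ is $\partial$-open, $\fJ_R=\bigcap_{j,\varepsilon}U_{j,\varepsilon}$ is $G_\delta$, and by completeness and the Baire category theorem it suffices to show each $U_{j,\varepsilon}$ dense. Given $s\in\fJ$ and $\delta>0$, I would fix $N>j$ with $2^{-N}<\delta$ and a suitably small $\eta>0$; take any realization $(\gamma,h^{\circ})\models s_N$ in $M$ (with $\gamma,h^{\circ}\in M^N$), note $\tp^\aff(\gamma)=\tp^\aff(a_{<N})$, and run a short induction over the $y$-coordinates using the weak approximate $\aleph_0$-homogeneity of $M$ to obtain $(\gamma',h')\models s_N$ in $M$ with $d(\gamma',a_{<N})<\eta$. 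Now discard $\gamma'$ in favour of $a$ itself (which realizes $p$ and, crucially, is dense in $M$), extend $h'$ to a realization $H\models q$ in $M$ with $d(h',H_{<N})<\eta$ via \autoref{lem:ApproximatelyHomogeneousExtension}, and set $t:=\tp^\aff(a,H)\in\fJ$. Then $\partial_N(s,t)\le d(\gamma',a_{<N})+d(h',H_{<N})<2\eta<\delta$, so $\partial(s,t)<\delta$; and since $a$ is dense, some $a_m$ lies within $\varepsilon/2$ of $h'_j$, whence $d(x_m,y_j)^t=d(a_m,H_j)\le\varepsilon/2+\eta<\varepsilon$ and $t\in U_{j,\varepsilon}$, as required. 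The main obstacle throughout is exactly this bookkeeping: the triangle inequality, completeness, and the density of the $U_{j,\varepsilon}$ all rely on moving realizations around inside $M$ with \autoref{lem:ApproximatelyHomogeneousExtension} (and hence on the weak approximate $\aleph_0$-homogeneity standing in \autoref{dfn:Joining}), so one must propagate the accumulated perturbation errors carefully enough that the successive approximations close up.
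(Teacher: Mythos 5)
Your proof is correct and follows essentially the same route as the paper: item (i) by unwinding definitions, the pseudometric/completeness/separability claims by repeatedly transporting and extending realizations inside $M$ via \autoref{lem:ApproximatelyHomogeneousExtension}, and item (iii) by reducing to density of the sets $\bigcup_i\llbracket d(x_i,y_j)<\varepsilon\rrbracket$ and producing a nearby joining of the form $\tp^\aff(a,H)$ with $H\in M^\bN$ realizing $q$, then invoking density of $a$. The only differences are bookkeeping choices (e.g.\ deriving the triangle inequality from the transfer property rather than stating that property separately), none of which affect correctness.
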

\begin{proof}
  Item \autoref{item:JoiningAge} is immediate.
  For \autoref{item:JoiningCompleteMetric}, observe first (as in \autoref{lem:ApproximatelyHomogeneousExtension}) that if $\partial_n(s,t) < \alpha$, then for every $a,b \in M^n$ such that $\tp^\aff(ab) = s_n$ there exist $c,d \in M^n$ such that $d(ab,cd) < \alpha$ and $\tp^\aff(cd) = t_n$.
  It follows that each $\partial_n$ is a pseudometric.
  If $\partial_n(s,t) = 0$ for all $n$, then $s = t$, so $\partial$ is a metric.
  Transfer of separability is clear.

  For completeness, assume that $\partial_n(s^n,s^{n+1}) < 2^{-n}$ for all $n$.
  For each $n$, choose $a^n,b^n \in M^n$ such that $\tp^\aff(a^nb^n) = s^n$.
  By \autoref{lem:ApproximatelyHomogeneousExtension}, we may assume that $d(a^nb^n,a^{n+1}_{<n}b^{n+1}_{<n}) < 2^{-n}$.
  For each $k \in \bN$, the sequence $(a^{n+k+1}_k : n \in \N)$ is Cauchy and converges to some $c_k \in M$.
  Similarly, $b^{n+k+1}_k \rightarrow d_k$.
  Now, $\tp^\aff(c) = p$, $\tp^\aff(d) = q$, $t = \tp^\aff(c,d) \in \fJ$, and $\partial_n(s^m,t) < 2^{m+1}$ for $m \geq n$.
  It follows that $(\fJ,\partial)$ is complete.

  For \autoref{item:JoiningComeager}, as $\llbracket d(x_i,y_j) < \varepsilon \rrbracket$ is open, by the Baire category theorem, all we need to show is that $\bigcup_i \, \llbracket d(x_i,y_j) < \varepsilon \rrbracket$ is dense in $\fJ$ for every pair $j,\varepsilon$.
  Let $U \subseteq \fJ$ be open, non-empty.
  Then there exist $s \in U$, $n \in \bN$ and $\delta > 0$ such that
  \begin{gather*}
    \bigl\{ t \in \fJ : \partial_n(s,t) < \delta \bigr\} \subseteq U.
  \end{gather*}
  We may assume that $j < n$ and $\delta < \varepsilon$.
  Choose $c,d \in M^n$ such that $\tp^\aff(c,d) = s_n$.
  By \autoref{lem:ApproximatelyHomogeneousExtension}, we may assume that $d(a_{<n},c) < \delta/2$.
  Again by \autoref{lem:ApproximatelyHomogeneousExtension}, there exists $b \in M^\bN$ realizing $q$ such that $d(b_{<n},d) < \delta/2$.
  Let $t = \tp^\aff(a,b)$.
  Then $t \in \fJ$ and $\partial_n(s,t) < \delta$, so $t \in U$.
  Since $a$ is dense in $M$, there exists $i$ such that $d(a_i,d_j) < \delta/2$, so $d(a_i,b_j) < \delta < \varepsilon$, completing the proof.
\end{proof}

\begin{theorem}
  \label{thm:ApproximatelyHomogeneousTypes}
  Every separable weakly approximately affinely $\aleph_0$-homogeneous structure is approximately affinely homogeneous.
  Any two such structures that realize the same affine types in finitely many variables are isomorphic.
\end{theorem}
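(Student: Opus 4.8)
The plan is to establish approximate affine $\aleph_0$-homogeneity of $M$ (which, for separable $M$, is already approximate affine homogeneity) by a Baire category argument in a space of $M$-joinings, and then to obtain the isomorphism statement by running the same construction between $M$ and $N$. So fix $a,b \in M^n$ with $\tp^\aff(a) = \tp^\aff(b)$ and $\eps > 0$; the goal is to produce $g \in \Aut(M)$ with $d(g\cdot a, b) < \eps$.

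First I would enumerate dense subsets of $M$ by tuples $\bar a, \bar b \in M^\bN$ with $\bar a_{<n} = a$ and $\bar b_{<n} = b$, and set $p = \tp^\aff(\bar a)$, $q = \tp^\aff(\bar b)$, so that $p$ and $q$ are realized by \emph{dense} tuples. Since $q$ restricted to its first $n$ variables equals $\tp^\aff(b) = \tp^\aff(a)$, \autoref{lem:ApproximatelyHomogeneousExtension} also lets me realize $q$ by a tuple $\bar b'$ whose first $n$ coordinates lie within $\eps/3$ of $a$. Now I pass to $\fJ = \fJ^M_{p,q}$: by \autoref{lem:Joining} the metric space $(\fJ,\partial)$ is non-empty (it contains $\tp^\aff(\bar a,\bar b')$), complete and separable, and both $\fJ_R$ and $\fJ_L$ are comeager in it, by \autoref{lem:Joining}\autoref{item:JoiningComeager} applied to $p$ (realized by the dense $\bar a$) together with its left--right analogue applied to $q$ (realized by the dense $\bar b$). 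Hence $\fJ_R \cap \fJ_L$ is dense, and I choose $s \in \fJ_R \cap \fJ_L$ with $\partial_n\bigl(s,\tp^\aff(\bar a,\bar b')\bigr)$ small. As this brings the relevant $2n$-variable restriction of $s$ close to $\tp^\aff(a,\bar b'_{<n})$, whose two halves are themselves close to each other, one more application of \autoref{lem:ApproximatelyHomogeneousExtension} yields a realization $(\bar c,\bar d) \in M^\bN \times M^\bN$ of $s$ with $\bar c_{<n}$ and $\bar d_{<n}$ both close to $a$, so that $\sum_{i<n} d(\bar c_i,\bar d_i) < \eps$.

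It remains to extract the automorphism. Since $s \in \fJ_R \cap \fJ_L$, the tuples $\bar c$ and $\bar d$ have the same closure $M' \subseteq M$; since $\bar a$ is dense in $M$ and $\tp^\aff(\bar c) = \pi_x(s) = p = \tp^\aff(\bar a)$, the assignment $\bar a_i \mapsto \bar c_i$ is isometric and preserves all affine formulas, so it extends by continuity to an isomorphism $\rho$ of $M$ onto the substructure $M'$; likewise, using that $\bar b$ is dense in $M$ and $\tp^\aff(\bar d) = q = \tp^\aff(\bar b)$, the assignment $\bar b_j \mapsto \bar d_j$ extends to an isomorphism $\rho'\colon M \to M'$. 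Then $g := (\rho')^{-1}\circ\rho \in \Aut(M)$, and for $i < n$ one has $g(a_i) = (\rho')^{-1}(\bar c_i)$ while $(\rho')^{-1}(\bar d_i) = b_i$, so $d(g(a_i),b_i) = d(\bar c_i,\bar d_i)$, and summing over $i<n$ gives $d(g\cdot a, b) < \eps$. For the second assertion I would repeat the construction with $N$ in place of the second copy of $M$: by \autoref{lem:Joining}\autoref{item:JoiningAge} the spaces $\fJ^M_{p,q}$ and $\fJ^N_{p,q}$ and the metric $\partial$ coincide once $M$ and $N$ realize the same finitely-many-variable types, so taking $p,q$ to be the types of dense enumerations of $M$ and $N$, a point $s \in \fJ_R \cap \fJ_L$ realized in $M$ by $(\bar c,\bar d)$ produces isomorphisms $M \cong M'$ and $N \cong M'$ with the common image $M' \subseteq M$, whence $M \cong N$ (here no anchoring is needed, since we do not prescribe the image of any tuple).

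The delicate point — and the reason for the auxiliary tuple $\bar b'$ and the anchoring above — is that a realization of $p$ inside $M$ need not be dense in $M$, so one cannot simply say that $\bar c$ and $\bar d$ enumerate $M$. What rescues the argument is that $\bar a$ and $\bar b$ are genuine dense enumerations, so $\rho$ and $\rho'$ really are isomorphisms of $M$ onto $M' = \overline{\{\bar c_i\}} = \overline{\{\bar d_j\}}$, and it is precisely the condition $s \in \fJ_R \cap \fJ_L$ that forces these two images to coincide, so that $(\rho')^{-1}\circ\rho$ is defined; the choice of $s$ near $\tp^\aff(\bar a,\bar b')$ is then what guarantees that this composite moves $a$ close to $b$, rather than merely back to itself.
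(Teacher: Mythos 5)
Your proof is correct and follows essentially the same route as the paper: the space of joinings $\fJ^M_{p,q}$, comeagerness of $\fJ_R\cap\fJ_L$ from \autoref{lem:Joining}, anchoring via \autoref{lem:ApproximatelyHomogeneousExtension}, and extraction of the automorphism from a point of $\fJ_R\cap\fJ_L$ near $\tp^\aff(\bar a,\bar b')$. The only (harmless) presentational differences are that the paper proves the single unified statement ``there is an isomorphism $M\to N$ sending $a$ within $\eps$ of $b$'' and deduces both assertions from it, and that it realizes the joining type in an abstract affine extension rather than inside $M$.
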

\begin{proof}
  It will suffice to show that if $M$ and $N$ are separable and weakly approximately affinely $\aleph_0$-homogeneous and $a \in M^n$ and $b \in N^n$ have the same affine type, then there exists an isomorphism $f\colon M \to N$ sending $a$ arbitrarily close to $b$.
  Extend $a$ and $b$ to dense tuples $c \in M^\bN$ and $d \in N^\bN$ and let $p = \tp^\aff(c)$ and $q = \tp^\aff(d)$.
  Since $M$ and $N$ realize the same affine types in finitely many variables, we have that $\fJ = \fJ^M_{p,q} = \fJ^N_{p,q}$ and the metrics $\dtp^M$ and $\dtp^N$ coincide on $\fJ$.
  Therefore, by \autoref{lem:Joining}, both $\fJ_R$ and $\fJ_L$ are comeager in $\fJ$.
  Let
  \begin{gather*}
    U = \fJ \cap \left\llbracket d(x_{<n},y_{<n}) < \varepsilon \right\rrbracket.
  \end{gather*}
  Then $U$ is open in $\fJ$.
  By \autoref{lem:ApproximatelyHomogeneousExtension}, we may realize $\tp^\aff(d)$ by a sequence $\tilde{d} \in M^\bN$ such that $d(a,\tilde{d}_{<n}) < \varepsilon$, and then $\tp^\aff(c,\tilde{d}) \in U$.

  Therefore, by completeness of $\fJ$ and the Baire category theorem, there exists $s \in U \cap \fJ_L \cap \fJ_R$.
  Let $c',d'$ realize $s$ in some larger structure.
  Sending $c \mapsto c'$ defines an affine embedding of $M$, and similarly for $d$, $d'$ and $N$.
  Since $s \in \fJ_R \cap \fJ_L$, we have that $\overline{\{c'_i : i \in \bN\}} = \overline{\{d'_j : j \in \bN\}}$.
  We obtain an isomorphism between $M$ and $N$ and since $s \in U$, it sends $a$ to a tuple at distance at most $\varepsilon$ from $b$.
\end{proof}

For approximate saturation, one needs to allow the parameters to be moved a little.
Therefore, we shall refer to parameters explicitly: if $a \in M^n$, then a type over $a$ will be denoted $p(x,a) \in \tS^\aff_x(a)$, where $p(x,y) \in \tS^\aff_{xy}(T)$.

\begin{defn}
  \label{dfn:ApproximatelySaturated}
  Let $M$ be a structure.
  \begin{enumerate}
  \item We say that $M$ is \emph{approximately affinely $\aleph_0$-saturated} if for every finite $a \in M^n$, every affine type $p(x,a) \in \tS^\aff_1(a)$, and every $\varepsilon > 0$, there exists $b \in M^n$ such that $\tp^\aff(b) = \tp^\aff(a)$, $d(a,b) < \varepsilon$, and $p(x,b)$ is realized in $M$.
    If, in addition, $\fd(M) = \aleph_0$, then we say that $M$ is \emph{approximately affinely saturated}.
  \item We say that $M$ is \emph{approximately extremally $\aleph_0$-saturated} if it is an extremal model (of some affine theory $T$) and for every finite $a \in M^n$, every extreme type $p(x,a) \in \cE_1(a)$, and every $\varepsilon > 0$, there exists $b \in M^n$ such that $\tp^\aff(a) = \tp^\aff(b)$, $d(a,b) < \varepsilon$, and $p(x,b)$ is realized in $M$ (note that this implies that $p(x,y) \in \cE_{n+1}(T)$, by \autoref{prop:ExtremeTypeTwoSteps}).
    If, in addition, $\fd(M) = \aleph_0$, then we say that $M$ is \emph{approximately extremally saturated}.
  \end{enumerate}
\end{defn}

The following lemma is the approximate analogue of \autoref{lem:SaturatedByDense}, but the statement is a little more complex.
In addition to restricting our attention to types $p(x,a)$ where $a$ belongs to some dense set, in \autoref{item:ApproximatelySaturatedByDenseMoveType} we allow $p(x,y)$ to be moved, while fixing $a$ (which is not the same as moving $p(x,a)$!), so it is not immediately comparable with approximate saturation, while \autoref{item:ApproximatelySaturatedByDenseMoveTypeAndParams} generalizes both, allowing the parameters \emph{and} the type to move. This lemma will be used in \autoref{sec:criterion-affine-reduction}.

It will be convenient to extend the notation of distance between types: if $p \in \tS^\aff_n(\cL)$ and $a \in M^n$, then by $\partial(p,a)$ we mean $\partial\bigl( p, \tp^\aff(a) \bigr)$.
Similarly, by $\partial(a,b)$ we mean $\partial\bigl( \tp^\aff(a), \tp^\aff(b)\bigr)$ (which is not the same as $d(a,b)$).

\begin{lemma}
  \label{lem:ApproximatelySaturatedByDense}
  Let $M$ be a structure and let $M_0 \subseteq M$ be dense.
  Then the following are equivalent:
  \begin{enumerate}
  \item
    \label{item:ApproximatelySaturatedByDenseBase}
    The structure $M$ is approximately affinely $\aleph_0$-saturated.
  \item
    \label{item:ApproximatelySaturatedByDenseMoveTypeAndParams}
    For every finite tuple $a \in M_0^n$, affine type $p(x,a) \in \tS^\aff_1(a)$, and $\varepsilon > 0$, there exist $b \in M^n$ and $c \in M$ such that $d(a,b) < \varepsilon$ and $\partial(p,cb) < \varepsilon$.
  \item
    \label{item:ApproximatelySaturatedByDenseMoveType}
    For every finite tuple $a \in M_0^n$, affine type $p(x,a) \in \tS^\aff_1(a)$, and $\varepsilon > 0$, there exists $c \in M_0$ such that $\partial(p,ca) < \varepsilon$.
  \item
    Any of the above, for $m$-types rather than $1$-types.
  \end{enumerate}
  Similarly, \textit{mutatis mutandis}, for extremal saturation.
\end{lemma}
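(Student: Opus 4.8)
The plan is to first clear away the routine implications and then concentrate on the two steps that carry content: promoting condition~\autoref{item:ApproximatelySaturatedByDenseMoveType} from $1$-types to $m$-types, and passing from the ``up to $\varepsilon$, with parameters in $M_0$'' version to genuine approximate saturation over all of $M$. I will use freely that $\partial(\tp^\aff(c),\tp^\aff(c'))\le d(c,c')$ for tuples in $M$, that $\partial$ is finite between any two types of $\Th^\aff(M)$ (which is complete, so all its models are affinely equivalent), that $\partial$-distances are attained in a common structure (\autoref{prop:AffineTypeDistance}\autoref{item:AffineTypeDistanceAttained}), and that types over parameters from $M$ are realized in affine extensions (\autoref{prop:TypeWithParameters}, \autoref{cor:AffineAmalgamation}); in the extremal case I additionally use that the distance between two extreme types is attained in an extremal model (\autoref{cor:ExtremeTypeDistance}) and that extremal models amalgamate (\autoref{prop:ExtremalJEP}). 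The equivalence \autoref{item:ApproximatelySaturatedByDenseMoveTypeAndParams}$\Leftrightarrow$\autoref{item:ApproximatelySaturatedByDenseMoveType} is immediate: \autoref{item:ApproximatelySaturatedByDenseMoveType} is the case $b=a$ of \autoref{item:ApproximatelySaturatedByDenseMoveTypeAndParams}, and conversely $\partial(p,ca)\le\partial(p,cb)+d(a,b)$, after which one replaces $c$ by a point of $M_0$ within $\varepsilon$ of it. Likewise \autoref{item:ApproximatelySaturatedByDenseBase}$\Rightarrow$\autoref{item:ApproximatelySaturatedByDenseMoveType}: for $a\in M_0^n$, apply \autoref{item:ApproximatelySaturatedByDenseBase} to get $b$ with $\tp^\aff(b)=\tp^\aff(a)$, $d(a,b)<\varepsilon$, and some $c_0\in M$ with $\tp^\aff(c_0b)=p$, then take $c\in M_0$ near $c_0$ and use $\partial(p,ca)\le d(c,c_0)+d(a,b)$. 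All of this works verbatim for $m$-tuples and, in the extremal setting, for extreme types. Hence it remains to prove that the $1$-type version of \autoref{item:ApproximatelySaturatedByDenseMoveType} implies its $m$-type version, and that the $m$-type version of \autoref{item:ApproximatelySaturatedByDenseMoveType} implies the $m$-type version of \autoref{item:ApproximatelySaturatedByDenseBase}; together with the above this closes the loop among all the conditions, including \autoref{item:ApproximatelySaturatedByDenseBase}$\Rightarrow$\autoref{item:ApproximatelySaturatedByDenseMoveType} for each arity and \autoref{item:ApproximatelySaturatedByDenseMoveTypeAndParams}.

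\textbf{From $1$-types to $m$-types.} Given $a\in M_0^n$ and an $m$-type $p(x_1,\dots,x_m,a)$, I realize its variables one at a time. Having produced $c_1,\dots,c_k\in M_0$ with $\partial\bigl(\pi_{x_1\cdots x_k y}(p),\,\tp^\aff(c_k\cdots c_1 a)\bigr)$ controlled, I transport the type $\pi_{x_1\cdots x_{k+1}y}(p)$ along the corresponding small $\partial$-move: using attainment and \autoref{cor:AffineAmalgamation}, realize $\pi_{x_1\cdots x_{k+1}y}(p)$ in an affine extension of $M$ by a tuple whose restriction to the first $k$ $x$-variables and to $y$ lies within the given $\partial$-distance of $(c_k,\dots,c_1,a)$; this yields a genuine $1$-type over the parameter tuple $(c_k,\dots,c_1,a)\in M_0^{n+k}$ which is close (in $\partial$, after the obvious permutation of variables) to $p$. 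Applying the $1$-type version of \autoref{item:ApproximatelySaturatedByDenseMoveType} to it at a precision chosen so that the finitely many accumulated errors sum to less than $\varepsilon$ produces $c_{k+1}\in M_0$, and after $m$ steps one has $\partial(p,\tp^\aff(c_m\cdots c_1 a))<\varepsilon$. In the extremal case the same argument applies: each intermediate type is the type of a tuple from an extremal model, the transport is now along a $\partial$-distance between extreme types attained in an extremal model (\autoref{cor:ExtremeTypeDistance}), and one amalgamates using \autoref{prop:ExtremalJEP}.

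\textbf{The main step, via a Cauchy sequence.} The substantive implication is that the $m$-type version of \autoref{item:ApproximatelySaturatedByDenseMoveType} implies the $m$-type version of \autoref{item:ApproximatelySaturatedByDenseBase}, and it is proved by building a Cauchy sequence whose limit realizes the type \emph{exactly}, in the spirit of \autoref{lem:ApproximatelyHomogeneousExtension}. Fix $a\in M^n$, a type $p(\bar x,a)$ in $m$ variables (so that $\pi_y(p)=\tp^\aff(a)$), and $\varepsilon>0$. First set up a good starting position: approximate $a$ by $\hat a\in M_0^n$, transport $p$ to a type $\tilde p$ over $\hat a$ that is $\partial$-close to $p$, and apply the $m$-type version of \autoref{item:ApproximatelySaturatedByDenseMoveType} to get $\bar c^{\,0}\in M_0^m$ with $\partial(\tp^\aff(\bar c^{\,0}\hat a),p)$ small; put $(\bar c^{\,0},b^0)=(\bar c^{\,0},\hat a)\in M_0^{m+n}$. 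Inductively, given $(\bar c^{\,k},b^k)\in M_0^{m+n}$ with $\partial(\tp^\aff(\bar c^{\,k}b^k),p)<\eta_k$, use attainment and \autoref{cor:AffineAmalgamation} to realize $p$ in an affine extension $K$ of $M$ by a tuple $(\bar u,v)$ with $d\bigl((\bar u,v),(\bar c^{\,k},b^k)\bigr)<\eta_k$; let $r=\tp^\aff(\bar u,v,\bar c^{\,k},b^k)$, a type over the parameter tuple $(\bar c^{\,k},b^k)\in M_0^{m+n}$ whose projection to the first $m+n$ variables is $p$; and apply the $(m+n)$-type version of \autoref{item:ApproximatelySaturatedByDenseMoveType} to $r$ at precision $\varepsilon_k'$, obtaining $(\bar c^{\,k+1},b^{k+1})\in M_0^{m+n}$ with $\partial\bigl(r,\tp^\aff(\bar c^{\,k+1}b^{k+1}\bar c^{\,k}b^k)\bigr)<\varepsilon_k'$. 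Projecting to the first $m+n$ variables gives $\partial(\tp^\aff(\bar c^{\,k+1}b^{k+1}),p)<\varepsilon_k'=:\eta_{k+1}$, which may be made arbitrarily small; and since the distance between the two halves of a $2(m+n)$-tuple is $1$-Lipschitz in the $\partial$-metric, $d\bigl((\bar c^{\,k+1},b^{k+1}),(\bar c^{\,k},b^k)\bigr)<\eta_k+\varepsilon_k'$. Choosing $\eta_0$ small (from the good start) and then the $\varepsilon_k'$ decreasing fast enough makes $(\bar c^{\,k},b^k)$ Cauchy in $M^{m+n}$ with $d(b^0,a)+\sum_k d(b^{k+1},b^k)<\varepsilon$; its limit $(\bar c,b)$ then satisfies $\tp^\aff(\bar c b)=\lim_k\tp^\aff(\bar c^{\,k}b^k)=p$, so $\tp^\aff(b)=\pi_y(p)=\tp^\aff(a)$, $d(a,b)<\varepsilon$, and $\bar c\in M^m$ realizes $p(\bar x,b)$, which is the $m$-type version of \autoref{item:ApproximatelySaturatedByDenseBase}. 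The extremal version is identical except that $\tp^\aff(\bar c^{\,k}b^k)$ and $p$ are now both extreme (the latter by hypothesis, the former because $(\bar c^{\,k},b^k)$ is a tuple from the extremal model $M$), so one uses \autoref{cor:ExtremeTypeDistance} and \autoref{prop:ExtremalJEP} to keep the realization $(\bar u,v)$, and therefore $r$, inside an extremal model; the limit type $p$ being extreme is compatible with $M$ being extremal. The main obstacle is precisely this last step: the transport of types along small $\partial$-moves must be arranged so that one can feed a type \emph{over a parameter tuple from $M_0$} into \autoref{item:ApproximatelySaturatedByDenseMoveType} at every stage, while simultaneously keeping both the type-error and the tuple-displacement summable — and, in the extremal case, while verifying that all intermediate types stay extreme.
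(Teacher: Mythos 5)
Your proof is correct and follows essentially the same route as the paper's: the routine implications are handled identically, the arity upgrade for condition (iii) is the same one-variable-at-a-time induction feeding types over enlarged parameter tuples from $M_0$ back into the $1$-type hypothesis, and your Cauchy sequence on the pairs $(\bar c^{\,k},b^k)$ is exactly the paper's auxiliary claim (that $\partial(a,p)<\varepsilon$ with $a\in M_0^{n}$ yields an exact realization of $p$ within $O(\varepsilon)$ of $a$) applied to the full $(m+n)$-type, just inlined rather than stated separately. The extremal case is dispatched the same way in both arguments.
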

\begin{proof}
  \begin{cycprf*}
  \item By definition.
  \item Let $b,c$ be as in the hypothesis.
    Choose $c' \in M_0$ such that $d(c',c) < \varepsilon$.
    Then $\partial(c'a,p) \leq d(c'a,cb) + \partial(cb,p) < 3\varepsilon$, which is good enough.
  \item
    Let us first prove that \autoref{item:ApproximatelySaturatedByDenseMoveType} holds for $p(x,a) \in \tS^\aff_m(a)$, for any $m \in \bN$.
    For $m = 0$ this is vacuous.
    Let us now assume the statement for $m$ and prove it for $m+1$.
    Let $p(x,y,a) \in \tS^\aff_{m+1}(a)$ (so $p(x,y,z) \in \tS^\aff_{n+m+1}(\cL)$).
    Let $p_0(x,a) \in \tS^\aff_m(a)$ be its restriction to $x$.
    By the induction hypothesis, there exists $b \in M_0^m$ such that $\partial(p_0,ba) < \varepsilon$.
    In an affine extension, we may find $b',c',a'$ such that $\tp^\aff(b'c'a') = p$, and $d(b'a',ba) < \varepsilon$.
    Then $\partial(p,bc'a) \leq d(b'a',ba) < \varepsilon$.
    By hypothesis, there exists $c \in M_0$ such that $\partial(bc'a,bca) < \varepsilon$.
    Thus $bc \in M_0^{m+1}$ and $\partial(p,bca) < 2\varepsilon$, which is good enough.

    Next, we claim that if $a \in M_0^n$, $p \in \tS^\aff_n(\cL)$, and $\partial(a,p) < \varepsilon$, then there exists $b \in M^n$ realizing $p$ such that $d(a,b) < 3\varepsilon$.
    Indeed, let us construct a Cauchy sequence $(a^k : k \in \N)$ such that $a^k \in M_0^n$ and $\partial(a^k,p) < \varepsilon/2^k$.
    We start with $a^0 = a$.
    Given $a^k$, we may choose in an affine extension a tuple $b^k$ realizing $p$ such that $d(a^k,b^k) < \varepsilon/2^k$.
    By our previous claim, there exists $a^{k+1} \in M_0^n$ such that $\partial(a^{k+1}a^k,b^ka^k) < \varepsilon/2^{k+1}$.
    In particular, $\partial(a^{k+1},p) < \varepsilon/2^{k+1}$ and $d(a^{k+1},a^k) \leq d(b^k,a^k) + \varepsilon/2^{k+1} < 3\varepsilon/2^{k+1}$.
    The sequence $(a^k)$ converges to a limit $b \in M^n$ which is as desired.

    Now let us prove that \autoref{item:ApproximatelySaturatedByDenseBase} holds for arbitrary $m$, from which \autoref{item:ApproximatelySaturatedByDenseMoveTypeAndParams} follows as before.
    Let $a \in M^n$, $p(x,a) \in \tS^\aff_m(a)$, and $\varepsilon > 0$ be given.
    Let $c$ realize $p(x,a)$ in some affine extension.
    Choose $a' \in M_0^n$ such that $d(a,a') < \varepsilon$.
    By \autoref{item:ApproximatelySaturatedByDenseMoveType} for $m$-types, there exists $c' \in M_0^m$ such that $\partial(ca',c'a') < \varepsilon$.
    Then $\partial(c'a',p) \leq \partial(c'a',ca') + d(a',a) < 2\varepsilon$.
    By our claim, there exist $a'' \in M^n$ and $c'' \in M^m$ that realize $p(x,y)$, such that $d(a''c'',a'c') \leq 6\varepsilon$.
    In particular, $d(a'',a) < 7\varepsilon$, which is good enough.
  \item[\impfirst] Immediate.
  \end{cycprf*}
  The same argument works if we restrict to extreme types and extremal models.
\end{proof}

\begin{prop}
  \label{prp:ApproximatelySaturatedHomogeneous}
  Let $M$ be a structure and let $T = \Th^\aff(M)$.
  Then $M$ is approximately affinely $\aleph_0$-saturated if and only if it is weakly approximately affinely $\aleph_0$-homogeneous and realizes every affine type $p \in \tS^\aff_n(T)$ for every $n \in \bN$.

  Similarly, \textit{mutatis mutandis}, for extremal saturation and extreme types.
\end{prop}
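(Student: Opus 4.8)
The plan is to mimic the proof of \autoref{prp:SaturatedHomogeneous} essentially line for line, replacing ``realized exactly'' by ``realized after moving the parameters by less than $\varepsilon$'' and using \autoref{lem:ParameterChange} in exactly the same place; the approximate definitions are tailored precisely so that the same argument goes through. I will treat the two implications separately and obtain the extremal version mutatis mutandis, systematically replacing $\tS^\aff_n(T)$ by $\cE_n(T)$ and invoking \autoref{prop:ExtremeTypeTwoSteps} each time one must check that a restriction, an extension over parameters, or a transport of an extreme type is again extreme.

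For the ``only if'' direction, assume $M$ is approximately affinely $\aleph_0$-saturated. First I would show by induction on $n$ that $M$ realizes every type in $\tS^\aff_n(T)$: the case $n=0$ is immediate since $M\models T$, and at a successor step, writing $p\in\tS^\aff_{n+1}(T)$ with restriction $p_0=\pi_{x_{<n}}(p)$, the inductive hypothesis gives $a\in M^n$ realizing $p_0$, so that $\tp^\aff(a)=p_0$ and $p$ descends to a legitimate type $p(x_n,a)\in\tS^\aff_1(a)$ (in the extremal case this type is extreme by \autoref{prop:ExtremeTypeTwoSteps}, $\tp^\aff(a)$ being extreme); applying the existence part of approximate saturation to $p(x_n,a)$ produces $b\in M^n$ with $\tp^\aff(b)=\tp^\aff(a)$ and a realization $c\in M$ of $p(x_n,b)$, whence $(b,c)\models p$. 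Next, for weak approximate affine $\aleph_0$-homogeneity, given $a,b\in M^n$ with $\tp^\aff(a)=\tp^\aff(b)$, an element $c\in M$, and $\varepsilon>0$, the map $a_i\mapsto b_i$ is a partial affine map, so by \autoref{lem:ParameterChange} it induces an affine homeomorphism $f_*\colon\tS^\aff_1(a)\to\tS^\aff_1(b)$; set $q(x,b)=f_*\bigl(\tp^\aff(c/a)\bigr)$, and observe that $\tp^\aff(c/a)$ and $q(x,b)$ induce one and the same type $s(x,y)\in\tS^\aff_{1+n}(T)$, namely $\tp^\aff(c,a)$. Applying approximate saturation to $q(x,b)$ yields $b'\in M^n$ with $\tp^\aff(b')=\tp^\aff(b)$, $d(b,b')<\varepsilon$, and a realization $d\in M$ of $q(x,b')$; then $\tp^\aff(d,b')=s=\tp^\aff(c,a)$, hence $\tp^\aff(ac)=\tp^\aff(b'd)$, as required.

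For the ``if'' direction, assume $M$ is weakly approximately affinely $\aleph_0$-homogeneous and realizes every type in $\tS^\aff_n(T)$ (resp.\ every type in $\cE_n(T)$) for all $n$. Given a finite tuple $a\in M^n$, a type $p(x,a)\in\tS^\aff_1(a)$ (resp.\ $\in\cE_1(a)$), and $\varepsilon>0$, let $s(x,y)\in\tS^\aff_{1+n}(T)$ be the induced type, so $\pi_y(s)=\tp^\aff(a)$; in the extremal case $s\in\cE_{n+1}(T)$ by \autoref{prop:ExtremeTypeTwoSteps}. By hypothesis $s$ is realized in $M$, say by $(c_0,a_0)$ with $a_0\in M^n$, and then $\tp^\aff(a_0)=\pi_y(s)=\tp^\aff(a)$. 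Applying weak approximate affine $\aleph_0$-homogeneity to the pair $(a_0,a)$ together with the element $c_0$ produces $b\in M^n$ and $d\in M$ with $d(b,a)<\varepsilon$ and $\tp^\aff(a_0c_0)=\tp^\aff(bd)$, i.e.\ $\tp^\aff(bd)=s$; hence $\tp^\aff(b)=\pi_y(s)=\tp^\aff(a)$ and $p(x,b)$ is realized by $d$, so $b$ witnesses approximate saturation for $p(x,a)$. In both directions the one point requiring a little care — but not a real obstacle — is the bookkeeping: checking that the restricted and transported types are genuine types over the new parameter tuples, and in the extremal case that they remain extreme, all of which is supplied by \autoref{lem:ParameterChange} and \autoref{prop:ExtremeTypeTwoSteps}.
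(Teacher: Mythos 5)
Your proof is correct and follows exactly the route the paper intends: the paper's own proof is just the one-line remark ``Immediate, using \autoref{lem:ParameterChange} in one direction,'' and your write-up is a faithful expansion of that, using \autoref{lem:ParameterChange} to transport types between parameter tuples of the same type for the ``only if'' direction and the realize-then-homogenize argument for the ``if'' direction, with \autoref{prop:ExtremeTypeTwoSteps} handling the extremal bookkeeping. No gaps.
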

\begin{proof}
  Immediate, using \autoref{lem:ParameterChange} in one direction (and there is no need for \autoref{lem:HomogeneousTypes} in the other).
\end{proof}

Combining \autoref{thm:ApproximatelyHomogeneousTypes} with \autoref{prp:ApproximatelySaturatedHomogeneous} yields the following two corollaries.
\begin{cor}
  \label{cor:ApproximatelySaturatedHomogeneousSeparable}
  Let $M$ be a separable structure.
  If $M$ is approximately affinely or extremally saturated, then it is approximately affinely homogeneous.
\end{cor}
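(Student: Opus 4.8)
The plan is simply to chain the two preceding results, since the corollary is exactly the composite of \autoref{prp:ApproximatelySaturatedHomogeneous} and \autoref{thm:ApproximatelyHomogeneousTypes}. First I would treat the affine case. If $M$ is approximately affinely $\aleph_0$-saturated, then by \autoref{prp:ApproximatelySaturatedHomogeneous} it is in particular weakly approximately affinely $\aleph_0$-homogeneous (the second half of that proposition, that $M$ realizes every $p\in\tS^\aff_n(\Th^\aff(M))$, is not needed here). Since $M$ is separable, \autoref{thm:ApproximatelyHomogeneousTypes} applies verbatim and yields that $M$ is approximately affinely homogeneous.

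For the extremal case I would first note that for an \emph{extremal} model $M$ the notion of weak approximate $\aleph_0$-homogeneity appearing in the conclusion of the ``mutatis mutandis'' form of \autoref{prp:ApproximatelySaturatedHomogeneous} is still the one from \autoref{dfn:ApproximatelyHomogeneous}: if $a,b\in M^n$ and $c\in M$, then $\tp^\aff(a)$, $\tp^\aff(b)$ and $\tp^\aff(ac)$ are automatically extreme types, by the definition of an extremal model together with \autoref{prop:ExtremeTypeTwoSteps}, so the witnesses demanded in \autoref{dfn:ApproximatelyHomogeneous}(2) are automatically of the required extreme form, and conversely the extremal saturation hypothesis only ever involves such types. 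Hence the extremal version of \autoref{prp:ApproximatelySaturatedHomogeneous} delivers that an approximately extremally $\aleph_0$-saturated (in particular extremal, separable) $M$ is weakly approximately affinely $\aleph_0$-homogeneous, and again \autoref{thm:ApproximatelyHomogeneousTypes} finishes the argument.

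There is essentially no obstacle: all the real work sits inside \autoref{thm:ApproximatelyHomogeneousTypes} (the Baire-category argument through the space of $M$-joinings). The only point that needs a moment's care is the bookkeeping identification in the extremal case noted above — i.e. checking that ``weakly approximately affinely $\aleph_0$-homogeneous'', as required by \autoref{thm:ApproximatelyHomogeneousTypes}, is precisely the hypothesis produced by \autoref{prp:ApproximatelySaturatedHomogeneous} in both the affine and the extremal settings, so that no ad hoc ``extremal homogeneity'' notion is needed.
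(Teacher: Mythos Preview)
Your proposal is correct and matches the paper's own proof exactly: the paper simply states that the corollary follows by combining \autoref{prp:ApproximatelySaturatedHomogeneous} with \autoref{thm:ApproximatelyHomogeneousTypes}. Your additional remark clarifying that in the extremal case the relevant homogeneity notion is still the affine one (since all types realized in an extremal model are extreme) is a helpful expansion of what the paper leaves implicit in its ``mutatis mutandis''.
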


\begin{cor}
  \label{cor:ApproximatelySaturatedUnique}
  Let $T$ be a complete affine theory.
  Then any two separable approximately affinely saturated models of $T$ are isomorphic, as are any two separable approximately extremally saturated models.
\end{cor}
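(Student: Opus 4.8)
The plan is to deduce this directly from the two immediately preceding results, \autoref{prp:ApproximatelySaturatedHomogeneous} and \autoref{thm:ApproximatelyHomogeneousTypes}. First I would let $M$ and $N$ be two separable approximately affinely saturated models of $T$. Applying \autoref{prp:ApproximatelySaturatedHomogeneous} to each, I get that both $M$ and $N$ are weakly approximately affinely $\aleph_0$-homogeneous, and that each of them realizes every affine type $p \in \tS^\aff_n(T)$ for every $n \in \bN$.

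Next I would use completeness of $T$: for every $n$, the space $\tS^\aff_n(T)$ is exactly the set of affine $n$-types consistent with $T = \Th^\aff(M) = \Th^\aff(N)$, so in particular every $n$-type realized in $M$ or in $N$ lies in $\tS^\aff_n(T)$. Together with the previous step, this shows that $M$ and $N$ realize precisely the same affine types in finitely many variables, namely all of $\tS^\aff_n(T)$. Both being separable and weakly approximately affinely $\aleph_0$-homogeneous, \autoref{thm:ApproximatelyHomogeneousTypes} applies and yields $M \cong N$.

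For the extremal statement I would run the identical argument with the extremal variants: take $M$, $N$ separable approximately extremally saturated models of $T$, apply the extremal version of \autoref{prp:ApproximatelySaturatedHomogeneous} to see that both are weakly approximately affinely $\aleph_0$-homogeneous (there is no separate notion to invoke here, since every type realized in an extremal model is automatically extreme) and realize every type in $\cE_n(T)$; since $T$ is complete and $M$, $N$ are extremal, the $n$-types realized in either of them are exactly the members of $\cE_n(T)$, so $M$ and $N$ again realize the same finitary affine types and \autoref{thm:ApproximatelyHomogeneousTypes} gives $M \cong N$.

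I do not expect a genuine obstacle: the substance has already been extracted into the two quoted results, and the only thing to verify is the bookkeeping that two models of a complete theory each realizing all of $\tS^\aff_n(T)$ (respectively all of $\cE_n(T)$) must realize the same finitary types. The one point deserving a word of care is checking that in the extremal case the hypothesis of \autoref{thm:ApproximatelyHomogeneousTypes}, stated in terms of affine types, is indeed available — which it is, precisely because in an extremal model the realized affine types coincide with the extreme ones.
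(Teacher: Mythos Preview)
Your proposal is correct and is exactly the paper's approach: the paper simply states that combining \autoref{thm:ApproximatelyHomogeneousTypes} with \autoref{prp:ApproximatelySaturatedHomogeneous} yields this corollary, and you have spelled out precisely that combination, including the role of completeness in ensuring both models realize the same finitary types.
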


\section{Omitting types}
\label{sec:omitting-types}

Every type space $\tS^\aff_x(T)$ or $\cE_x(T)$, for a countable tuple of variables $x$, carries two structures: the logic topology $\tau$, given by its construction as a state space, and a metric structure, given by the metric $\partial$ of \autoref{defn:AffineTypeDistance} (and \autoref{rem:type-metric-inf-tuples}). The interplay between them allows to have an appropriate notion of isolated types and an omitting types theorem. The situation is similar to that of continuous logic if one restricts to extreme types but the proofs are somewhat more involved.

We recall that $\cE^{\fM}_\N(T)$ denotes the set of extreme types of countable tuples that enumerate a model of $T$. For what follows, we will need the following slight refinement. We define $\cE^{\fM^*}_\bN(T)$ as the collection of all $p \in \cE^\fM_\bN(T)$ such that if $p = \tp^\aff(a)$, then $\overline{\{a_n : n \in \bN\}} = \overline{\{a_n : n \geq k\}}$ for every $k \in \bN$. In words, every tail of a realization of $p$ is dense in the model enumerated by the realization. This is only relevant if the model has isolated points, and then it means that every isolated point is hit by the tuple infinitely many times.

\begin{lemma}
  \label{l:enmod-Gdelta}
  Suppose that $T$ is an affine theory in a separable language.
  Then $\cE_\bN(T)$ is a Polish space, and $\cE^\fM_\bN(T)$ and $\cE^{\fM^*}_\bN(T)$ are dense $G_\delta$ subsets of $\cE_\bN(T)$.
\end{lemma}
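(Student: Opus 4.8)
The plan is to establish the three assertions in order, using the compactness and convexity structure of the type spaces together with the Baire-category material from \autoref{sec:Type} and \autoref{sec:ExtremeTypes}.

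\textbf{Step 1: $\cE_\bN(T)$ is Polish.} Since the language is separable, $\cL^\aff_n(T)$ is separable for every $n$, so $\tS^\aff_n(T)$ is a metrizable compact convex set and hence, together with its metric topology $\tau$, a Polish space. By \autoref{l:ExtremeTypesInfinite}, $\cE_\bN(T) = \varprojlim_{n} \cE_n(T)$, an inverse limit over a countable directed set of subspaces of Polish spaces; so it suffices to show each $\cE_n(T)$ is Polish, i.e.\ $G_\delta$ in $\tS^\aff_n(T)$. This is exactly \autoref{l:extreme-Gdelta} applied to the metrizable compact convex set $\tS^\aff_n(T)$. A countable inverse limit of Polish spaces with continuous connecting maps is Polish, and the connecting maps $\pi$ are continuous, so $\cE_\bN(T)$ is Polish. (Equivalently, $\cE_\bN(T)$ is $G_\delta$ in the Polish space $\tS^\aff_\bN(T)$, using that $\tau$ is metrizable there.)

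\textbf{Step 2: $\cE^\fM_\bN(T)$ is a dense $G_\delta$ in $\cE_\bN(T)$.} For the $G_\delta$ part, recall $\cE^\fM_\bN(T) = \tS^{\fM,\aff}_\bN(\cL) \cap \cE_\bN(T)$; by \autoref{prop:TypeOfModelPreMean}\autoref{item:TypeOfModelPreMeanCountable} (or directly \autoref{lem:TypeOfModelSingleFormula}, intersecting over a countable dense set of formulas $\varphi$ and over $J = \bN$) the set $\tS^{\fM,\aff}_\bN(\cL)$ is $G_\delta$ in $\tS^\aff_\bN(\cL)$, hence relatively $G_\delta$ in $\cE_\bN(T)$. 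For density, fix a basic open set of $\cE_\bN(T)$; by \autoref{l:nbhds-extreme} and the product/inverse-limit structure it is of the form $\oset{\varphi > 0}_\cE$ for some affine formula $\varphi$ depending on finitely many variables $x_{<n}$, and we may assume it is non-empty, i.e.\ contains some $p_0 \in \cE_\bN(T)$ with $\varphi(p_0) > 0$. Let $q_0 = \pi_{x_{<n}}(p_0) \in \cE_n(T)$, which still satisfies $\varphi(q_0) > 0$. By \autoref{th:ExtremalModelTypes}, $q_0$ is realized by a tuple $a$ in an extremal model $M \models T$, and by \autoref{prop:DownwardLS}/\autoref{remark:LanguageDensityCharacter} we may take $M$ separable; enumerating a dense subset of $M$ by a sequence $\hat a \in M^\bN$ with $\hat a \rest_{x_{<n}} = a$, we get $\tp^\aff(\hat a) \in \cE^\fM_\bN(T)$ lying in $\oset{\varphi > 0}_\cE$. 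Hence $\cE^\fM_\bN(T)$ is dense.

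\textbf{Step 3: $\cE^{\fM^*}_\bN(T)$ is a dense $G_\delta$ in $\cE_\bN(T)$.} Being a subset of $\cE^\fM_\bN(T)$ it suffices, by Step 2, to show it is $G_\delta$ and dense in $\cE_\bN(T)$. For the $G_\delta$ part: the condition ``$\overline{\{a_n : n \geq k\}} = \overline{\{a_n : n \in \bN\}}$ for all $k$'' can be rephrased, using the Tarski--Vaught characterization, as: for every $k$, the tail subsequence $(a_n)_{n \geq k}$ also realizes a type with the Tarski--Vaught property relative to $\varphi(x, x_i)$, i.e.\ $p \in \bigcap_{k} \bigcap_{\varphi} X_{\varphi, \{i \geq k\}}$ in the notation of \autoref{ntn:TypeOfModelSingleFormula}; a countable intersection of the Baire $G_\delta$ sets $X_{\varphi, J}$ from \autoref{lem:TypeOfModelSingleFormula}, hence $G_\delta$. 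For density: given a non-empty basic open set as in Step 2, produce $M$ and $a$ exactly as there, but now enumerate a dense subset of $M$ by a sequence in which every element of a fixed countable dense subset appears infinitely often, with $\hat a \rest_{x_{<n}}$ still equal to $a$ (this is possible since the first $n$ coordinates are prescribed and the remaining infinitely many can be arranged to repeat each dense point infinitely often). Then $\tp^\aff(\hat a) \in \cE^{\fM^*}_\bN(T)$ and lies in the given open set.

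The main obstacle is the density argument in Steps 2 and 3: one must pass from an extreme finite type $q_0$ satisfying a strict inequality $\varphi(q_0)>0$ to a \emph{separable} extremal model realizing it, while keeping the inequality and, in Step 3, while arranging the enumeration so that every tail stays dense. The key inputs are \autoref{th:ExtremalModelTypes} (extreme types come from extremal models) and the downward Löwenheim--Skolem theorem in the separable-language setting (\autoref{prop:DownwardLS}, \autoref{remark:LanguageDensityCharacter}), together with the observation from \autoref{l:ExtremeTypesInfinite} that restricting to finitely many variables preserves extremeness and commutes with the relevant suprema, so that the strict inequality is not disturbed by passing between $\cE_n(T)$ and $\cE_\bN(T)$.
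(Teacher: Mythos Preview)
Your proof is correct and rests on the same ingredients as the paper's. The one organizational difference worth noting: for density, the paper writes $\cE^\fM_\bN(T)$ (resp.\ $\cE^{\fM^*}_\bN(T)$) explicitly as a countable intersection of open sets $U_\varphi = \bigcup_i \oset{\varphi(x,x_i) > \sup_y \varphi(x,y) - 1}_\cE$ (resp.\ $U'_{\varphi,k} = \bigcup_{i \geq k} \cdots$), shows each such $U_\varphi$ is dense by extending a finite extreme type by a single witnessing variable (the same face argument as in \autoref{th:ExtremalModelTypes}), and then invokes Baire. You instead quote the $G_\delta$ fact from \autoref{prop:TypeOfModelPreMean}/\autoref{lem:TypeOfModelSingleFormula} and prove density of the full intersection directly, by realizing the finite restriction in a separable extremal model (via \autoref{th:ExtremalModelTypes} and Löwenheim--Skolem) and enumerating it densely. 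Your route is slightly heavier in its external references but avoids re-running the face argument inside the proof; the paper's route is more self-contained at the type level. Both are equally valid.
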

\begin{proof}
  We may assume that the language is countable.
  Then $\tS^\aff_\bN(T)$ is metrizable, so \autoref{l:extreme-Gdelta} applies and $\cE_\bN(T)$ is a Polish space.
  For a formula $\varphi(x,y)$, with $x = (x_i : i \in \bN)$ and $y$ a singleton, let $U_{\varphi,i} \subseteq \cE_\bN(T)$ be the subset defined by the open condition $\varphi(x,x_i) > \sup_y \varphi(x,y) - 1$, and let $U_\varphi = \bigcup_i U_{\varphi,i}$.

  In order to show that $U_\varphi$ is dense, consider a non-empty open set $V \subseteq \cE_\bN(T)$, say with $p \in U$.
  By \autoref{l:nbhds-extreme}, we may assume that $U$ is defined by an open condition $\psi(x) > 0$, for some affine formula $\psi$.
  We may assume that only variables in $x' = (x_i : i < n)$ actually occur in $\varphi$ or $\psi$ (plus the new variable $y$), and let $p' = \pi_{x'}(p) \in \cE_n(T)$.
  By the same argument as for \autoref{th:ExtremalModelTypes}, we can extend $p'$ to $p'' \in \cE_\bN(T)$ that satisfies $\varphi(x,x_n) = \sup_y \varphi(x,y)$.
  Then $p'' \in U_\varphi \cap V$.

  Since $\cE^\fM_\bN(T)$ is the (countable) intersection of all $U_\varphi$, by the Baire category theorem, it is a dense $G_\delta$ subset of $\cE_\bN(T)$.

  For $\cE^{\fM^*}_\bN(T)$, proceed as above, but with $U'_{\varphi,k} = \bigcup_{i \geq k} U_{\varphi,i}$.
  By the same argument, each is open and dense, and their intersection over all $\varphi$ and $k$ is $\cE^{\fM^*}_\bN(T)$.
\end{proof}

\begin{theorem}[Omitting types]
  \label{th:omitting-types}
  Suppose that $T$ is an affine theory in a separable language and let for every $n$, $\Xi_n \sub \cE_n(T)$ be $\tau$-meager and $\dtp$-open.
  Then there is a separable, extremal model of $T$ that omits all $\Xi_n$.
\end{theorem}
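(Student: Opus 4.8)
The plan is to obtain the required model as the completion of a realization of a sufficiently generic point of the Polish space $\cE_\bN(T)$, produced by the Baire category theorem. By \autoref{l:enmod-Gdelta}, $\cE_\bN(T)$ is Polish and $\cE^{\fM^*}_\bN(T)$ is a dense $G_\delta$ subset; a point $p\in\cE^{\fM^*}_\bN(T)$ is the type of an enumeration $(a_i:i\in\bN)$ of a dense subset of a separable extremal model $M\models T$ in which, moreover, every tail $\{a_i:i\geq k\}$ is dense. So the only thing to arrange is that $M$ omits each $\Xi_n$.

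First I would deal with the logic topology. For $n\in\bN$ and an injective function $f\colon n\to\bN$, let $\rho_f\colon\cE_\bN(T)\to\cE_n(T)$ send the type of an enumeration $(a_i)$ to the type of the subtuple $(a_{f(0)},\dots,a_{f(n-1)})$. Up to a permutation of coordinates this is a variable restriction map, so by \autoref{cor:ExtremeTypeVariableRestrictionSurjectiveOpen} it is continuous, open and surjective. Since $\Xi_n$ is $\tau$-meager and $\rho_f$ is continuous and open, $\rho_f^{-1}(\Xi_n)$ is $\tau$-meager in $\cE_\bN(T)$ (the preimage of a closed nowhere dense set under a continuous open map is nowhere dense: if $U$ were a non-empty open subset of it, $\rho_f(U)$ would be a non-empty open subset of that set). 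As $\cE^{\fM^*}_\bN(T)$ is comeager and there are only countably many pairs $(n,f)$, the Baire category theorem gives a point
\begin{equation*}
  p\in\cE^{\fM^*}_\bN(T)\sminus\bigcup_{n,f}\rho_f^{-1}(\Xi_n).
\end{equation*}
Fix such a $p$, realized by a tuple $(a_i)$ that is dense, with all tails dense, in a separable extremal model $M\models T$.

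It remains to verify that $M$ omits each $\Xi_n$; this is where the hypothesis that $\Xi_n$ is $\dtp$-open enters, together with the "all tails dense" property. Let $b=(b_1,\dots,b_n)\in M^n$ and $\eps>0$. Using that every isolated element of $M$ occurs infinitely often in $(a_i)$ while every non-isolated element has infinitely many distinct points of $\{a_i\}$ arbitrarily close to it, one can pick \emph{distinct} indices $j_1,\dots,j_n$ with $d\bigl(b,(a_{j_1},\dots,a_{j_n})\bigr)<\eps$. The type of $(a_{j_1},\dots,a_{j_n})$ is $\rho_f(p)$ for the injective $f$ with $f(l-1)=j_l$, hence lies in $\cE_n(T)\sminus\Xi_n$. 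Letting $\eps\to0$, these types converge to $\tp^\aff(b)$ in $\dtp$ (the map $\tp^\aff$ is $1$-Lipschitz from $(M^n,d)$ to $(\cE_n(T),\dtp)$), and since $\Xi_n$ is $\dtp$-open its complement is $\dtp$-closed, so $\tp^\aff(b)\notin\Xi_n$. Thus $M$ is a separable extremal model of $T$ omitting all the $\Xi_n$.

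The step I expect to require the most care is the reduction in the last paragraph of an arbitrary $n$-tuple of $M$ to tuples with \emph{distinct} indices from the enumerating sequence: this is exactly what makes $\cE^{\fM^*}_\bN(T)$ (rather than plain $\cE^\fM_\bN(T)$) the right space to work in, and it is what lets us use only the openness of genuine variable restrictions (\autoref{cor:ExtremeTypeVariableRestrictionSurjectiveOpen}) instead of that of coordinate-identification maps onto diagonal faces, which need not be open.
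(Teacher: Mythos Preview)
Your proof is correct and follows essentially the same approach as the paper: pull back each $\Xi_n$ along the countably many injective coordinate projections $\cE_\bN(T)\to\cE_n(T)$, use their openness (\autoref{cor:ExtremeTypeVariableRestrictionSurjectiveOpen}) to see the preimages are meager, apply Baire category together with \autoref{l:enmod-Gdelta} to find $p\in\cE^{\fM^*}_\bN(T)$ avoiding them all, and then use tail-density plus $\dtp$-openness of $\Xi_n$ to pass from the enumerated tuple to arbitrary tuples in $M$. The paper presents the last step as a contradiction and uses strictly increasing (rather than merely injective) index maps, which makes the approximation of $b$ by distinct-index subtuples slightly more transparent than your case split on isolated versus non-isolated points, but the content is the same.
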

\begin{proof}
  For an injective tuple $\sigma \in \N^n$, let $\pi_\sigma \colon \cE_\bN(T) \to \cE_n(T)$ denote the corresponding projection.
  Let
  \begin{equation*}
    \Xi = \bigcup_\sigma \pi_\sigma^{-1}(\Xi_{|\sigma|}),
  \end{equation*}
  the union being taken over all such $\sigma$.
  Each $\pi_\sigma$ is $\tau$-continuous and open, so $\Xi$ is $\tau$-meager.
  By \autoref{l:enmod-Gdelta}, there exists $p \in \cE^{\fM^*}_\bN(T) \sminus \Xi$.
  By \autoref{remark:TypeOfExtremalModel}, there exists a separable extremal model $M \models T$, as well as $a \in M^\bN$, such that $p = \tp^\aff(a)$ and every tail of $(a_i : i \in \bN)$ is dense in $M$.

  Assume now that some $b \in M^n$ realizes a type $q \in \Xi_n$.
  Since $\Xi_n$ is $\dtp$-open, we have $B_\partial(q,\varepsilon) \subseteq \Xi_n$ for some $\varepsilon > 0$.
  By tail-density of $a$, we can find a strictly increasing map $\sigma\colon n \rightarrow \bN$ such that $d(b_i,a_{\sigma(i)}) < \varepsilon/n$.
  Then $\partial\bigl(\pi_\sigma(p),q\bigr) < \varepsilon$, so $\pi_\sigma(p) \in \Xi_n$, contradicting our construction.
  Therefore, $M$ omits every type in $\Xi_n$, for every $n$.
\end{proof}

\begin{defn}
  \label{defn:TopometricIsolatedPoint}
  A \emph{topometric space} is a triplet $(X,\tau,\partial)$, where $X$ is a set of points, $\tau$ is a topology on $X$, and $\partial$ is a generalized metric (i.e., possibly infinite) such that the $\partial$-topology refines $\tau$ and $\dtp$ is $\tau$-lower semi-continuous.

  A point $x \in X$ is \emph{isolated} in the topometric space $X$ if $\tau$ and the $\partial$-topology agree at $x$: i.e., if every $\partial$-ball around $x$ in $X$ is a $\tau$-neighborhood of $x$.
  It is \emph{weakly isolated} if every $\partial$-ball around $x$ in $X$ has non-empty $\tau$-interior.
\end{defn}

Observe that \autoref{prop:AffineTypeDistance} asserts that if $x$ is a countable tuple of variables, $\tS^\aff_x(\cL)$ is a topometric space, and if $T$ is complete, then the metric $\dtp$ on $\tS^\aff_x(T)$ is always finite.
This induces a topometric space structure on $\cE_x(T)$.
Therefore, for $p \in \cE_x(T)$, we may ask whether it is (weakly) isolated in $\tS^\aff_x(T)$, or in $\cE_x(T)$.

\begin{lemma}
  \label{lemma:IsolatedExtremeType}
  Let $T$ be a complete affine theory and let $x$ be a countable tuple of variables.
  For a type $p \in \cE_x(T)$, the following are equivalent:
  \begin{enumerate}
  \item
    \label{item:IsolatedExtremeTypeDefinable}
    The function $\partial(p,\cdot)\colon \tS^\aff_x(T) \rightarrow \bR^+$ is continuous and affine.
  \item The type $p$ is isolated in $\tS^\aff_x(T)$.
  \item The type $p$ is isolated in $\cE_x(T)$.
  \item The type $p$ is weakly isolated in $\cE_x(T)$.
  \item The function $\partial(p,\cdot)\colon \cE_x(T) \rightarrow \bR^+$ is continuous at $p$.
  \end{enumerate}
\end{lemma}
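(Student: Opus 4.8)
Throughout, write $X = \tS^\aff_x(T)$ and $f = \partial(p,\cdot)\colon X \to \R^+$. The plan rests on two preliminary observations. First, $f$ is \emph{affine} (and, being a restriction of the $\tau$-lower semi-continuous function of \autoref{prop:AffineTypeDistance}, always $\tau$-lsc): given $q = \sum_i\lambda_i q_i$, the inequality $f(q)\le\sum_i\lambda_i f(q_i)$ is convexity of $\partial$, while choosing by compactness $r\in\tS^\aff_{xy}(T)$ with $\pi_x(r)=p$, $\pi_y(r)=q$, $d(x,y)^r = f(q)$ and lifting the decomposition via \autoref{lemma:TypeConvexCombinationLift} to $r=\sum_i\lambda_i r_i$ with $\pi_y(r_i)=q_i$, extremality of $p=\sum_i\lambda_i\pi_x(r_i)$ forces $\pi_x(r_i)=p$, whence $f(q) = \sum_i\lambda_i d(x,y)^{r_i}\ge\sum_i\lambda_i f(q_i)$ (the predicate $d(x,y)$ being affine on types). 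Second, the variable-restriction maps $\pi_x,\pi_y$ are \emph{open} on all of $\tS^\aff_{xy}(T)$, not just on extreme types: a small elaboration of the computation in the proof of \autoref{cor:ExtremeTypeVariableRestrictionSurjectiveOpen} shows that the image of a finite intersection of open half-spaces is again open, using that a minimum over a compact simplex of a jointly continuous family of affine definable predicates is continuous. With these in hand, $(1)\Rightarrow(2)\Rightarrow(3)\Rightarrow(4)$, $(3)\Rightarrow(5)$ and $(5)\Rightarrow(4)$ are immediate (recall $(1)$ just says $f$ is $\tau$-continuous and $(2)$ that $f$ is $\tau$-continuous at $p$), so it remains to prove $(4)\Rightarrow(3)$, $(2)\Rightarrow(1)$ and $(3)\Rightarrow(2)$.

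The implications $(4)\Rightarrow(3)$ and $(2)\Rightarrow(1)$ follow the same pattern: transport a ``$\partial$-small region'' along an open restriction map. For $(4)\Rightarrow(3)$, given $\varepsilon>0$, weak isolation yields a non-empty $\tau$-open $V\subseteq\cE_x(T)$ with $V\subseteq B_\partial(p,\varepsilon/2)$ and some $q_0\in V$; by \autoref{prop:ExtremeTypesMetricallyClosed} choose $r_0\in\cE_{xy}(T)$ with $\pi_x(r_0)=p$, $\pi_y(r_0)=q_0$, $d(x,y)^{r_0}<\varepsilon/2$; the set $O = \{r\in\cE_{xy}(T):d(x,y)^r<\varepsilon/2,\ \pi_y(r)\in V\}$ is $\tau$-open and contains $r_0$, so by openness of $\pi^\cE_x$ (\autoref{cor:ExtremeTypeVariableRestrictionSurjectiveOpen}) the set $U=\pi^\cE_x(O)$ is a $\tau$-open neighbourhood of $p$ in $\cE_x(T)$, and for $q=\pi_x(r)\in U$ with $r\in O$ one gets $\partial(p,q)\le\partial(p,\pi_y(r))+d(x,y)^r<\varepsilon$, i.e.\ $U\subseteq B_\partial(p,\varepsilon)$. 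For $(2)\Rightarrow(1)$, since $f$ is affine and $\tau$-lsc it suffices to show $\{q:\partial(p,q)<c\}$ is $\tau$-open; given $q$ in it with $\partial(p,q)=c-3\eta$, by \autoref{l:nbhds-extreme} and $(2)$ pick $\varphi\in\cA(X)$ with $\varphi(p)>0$ and $\{\varphi>0\}\subseteq B_\partial(p,\eta)$, and pick $r_0\in\tS^\aff_{xy}(T)$ realizing $\partial(p,q)$; then $W=\{r:d(x,y)^r<c-2\eta,\ \varphi(\pi_x(r))>0\}$ is $\tau$-open with $r_0\in W$, so $\pi_y(W)$ is a $\tau$-open neighbourhood of $q$, and any $q'=\pi_y(r)$ with $r\in W$ satisfies $\partial(p,q')\le\partial(p,\pi_x(r))+d(x,y)^r<\eta+(c-2\eta)<c$.

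The main obstacle is $(3)\Rightarrow(2)$: here the hypothesis controls $f$ only on the extreme boundary, and I would pass to the rest of $X$ through Choquet theory. Fix $\varepsilon'>0$; by $(3)$ and \autoref{l:nbhds-extreme} choose $\varphi\in\cA(X)$ with $\varphi(p)>0$ and $f<\varepsilon'$ on $\{\varphi>0\}\cap\cE_x(T)$, and let $D$ bound $f$. For an arbitrary net $q_i\to^\tau p$ take boundary measures $\mu_i$ with $R(\mu_i)=q_i$ (\autoref{l:choquet-order}); by Choquet--Bishop--de Leeuw (\autoref{th:Bishop-dL}) each $\mu_i$ concentrates on $\cE_x(T)$, so the restriction of $\mu_i$ to the cozero (hence Baire) set $U_0=\{\varphi>0\}$ concentrates on $U_0\cap\cE_x(T)\subseteq\{f<\varepsilon'\}$ and thus integrates $f$ to at most $\varepsilon'\mu_i(U_0)$. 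Writing $\alpha_i=\mu_i(U_0)$, the barycentres satisfy $q_i=\alpha_i w_i+(1-\alpha_i)z_i$ with $w_i\in X$ and $z_i\in\clco(X\sminus U_0)$; since $p$ is extreme, Milman's theorem gives $p\notin\clco(X\sminus U_0)$, and a compactness argument forces $\alpha_i\to1$. As $f$ is affine and $\tau$-lsc, $f(q_i)\le\int f\,\ud\mu_i\le\varepsilon'+(1-\alpha_i)D$, so $\limsup_i f(q_i)\le\varepsilon'$; since $\varepsilon'$ was arbitrary and $f\ge0=f(p)$, this shows $f$ is $\tau$-continuous at $p$, which is $(2)$. (When $\cL$ is not separable, $X$ need not be metrizable and ``$\mu_i(\cE_x(T))=1$'' is read as concentration in the sense of \autoref{df:measure-concentr}, which is exactly what \autoref{th:Bishop-dL} delivers; the argument is otherwise unchanged.) Combining everything, $(1)\Rightarrow(2)\Rightarrow(3)\Rightarrow(4)\Rightarrow(3)$ together with $(3)\Rightarrow(5)\Rightarrow(4)$ makes $(2)$--$(5)$ equivalent, and $(3)\Rightarrow(2)\Rightarrow(1)$ closes the loop; I expect the Choquet step, and in particular the semicontinuity/Baire bookkeeping in the non-metrizable case, to be where the real work lies.
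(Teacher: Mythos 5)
Your proposal contains a genuine gap, and it is not where you expected it. The second ``preliminary observation'' --- that the variable restriction maps $\pi_x,\pi_y$ are open on all of $\tS^\aff_{xy}(T)$ --- is not established by the sketch you give, and it is exactly where the real difficulty of the lemma is hiding. The computation in \autoref{cor:ExtremeTypeVariableRestrictionSurjectiveOpen} works because the image of a \emph{single} open half-space $\oset{\varphi>0}$ is $\oset{\sup_y\varphi>0}$, again a half-space; for a finite intersection $\bigcap_i\oset{\varphi_i>0}$ the image is $\bigl\{q:\sup_{r\in\pi_x^{-1}(q)}\min_i\varphi_i(r)>0\bigr\}$, and the function $q\mapsto\sup_{\pi_x^{-1}(q)}\min_i\varphi_i$ is in general only upper semi-continuous (its superlevel sets $\pi_x\bigl(\bigcap_i\oset{\varphi_i\geq c}\bigr)$ are closed), so there is no reason for this image to be open; there is no affine formula playing the role of $\sup_y\bigwedge_i\varphi_i$. \autoref{l:nbhds-extreme} rescues the situation only near extreme points, which is why the paper asserts openness only for $\pi^\cE_x$. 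You then use the full-space claim essentially in $(2)\Rightarrow(1)$: your set $W=\oset{d(x,y)<c-2\eta}\cap\oset{\varphi\circ\pi_x>0}$ is an intersection of two half-spaces whose witnessing point $r_0$ is \emph{not} extreme (its $y$-restriction is the arbitrary type $q$), so neither \autoref{cor:ExtremeTypeVariableRestrictionSurjectiveOpen} nor \autoref{l:nbhds-extreme} applies, and $\pi_y(W)$ need not be a neighbourhood of $q$. Note also that soft properties of $f=\partial(p,\cdot)$ cannot substitute for this step: an affine, lower semi-continuous function on a compact convex set that is continuous at a single extreme point need not be continuous (e.g.\ $\mu\mapsto 1-\mu(\set{0})$ on $\cM([0,1])$ is affine, lsc, and continuous at $\delta_1$ but not at $\delta_0$), so $(2)\Rightarrow(1)$ genuinely requires work.

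The paper closes the loop by going directly from (4) to (1): it produces a single affine formula $\psi(x)=\inf_y\bigl(d(x,y)+R\varphi(y)\bigr)$, with $\varphi$ normalized so that $\min\varphi=0$ and $\varphi$ vanishes at an extreme point of $\oset{\varphi<1}_\cE\subseteq B_\partial(p,\varepsilon)$, and shows $|\psi-\partial(p,\cdot)|\leq\varepsilon$ everywhere by decomposing an arbitrary realized type as a convex combination of points of $\clco\bigl(\oset{\varphi<1}_\cE\bigr)$ and $\clco\bigl(\oset{\varphi\geq1}_\cE\bigr)$. This packages ``$d(x,y)$ small'' and ``$y$ close to $p$'' into one half-space and thereby avoids any openness claim for the projections; it is the idea your proof is missing, and without it (or a valid substitute) item (1) is never reached from the others. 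For what it is worth, the parts of your argument that do work are correct and genuinely different from the paper: the standalone proof that $\partial(p,\cdot)$ is affine for extreme $p$ via \autoref{lemma:TypeConvexCombinationLift}, the implication $(4)\Rightarrow(3)$ using only $\pi^\cE_x$, and especially the Choquet-theoretic proof of $(3)\Rightarrow(2)$ via boundary measures, Milman's theorem, and \autoref{l:concentrates-implies-vanishes-Gdelta}.
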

\begin{proof}
  \begin{cycprf}
  \item[\impnnnext] Immediate.
  \item[\impfirst]
    Let $\varepsilon > 0$.
    By assumption and \autoref{l:nbhds-extreme}, there exist an affine formula $\varphi(x)$ and a type $p'$ such that $p' \in \llbracket \varphi < 1 \rrbracket_\cE \subseteq B_\partial(p,\varepsilon)$.
    By post-composing with an affine function $\R \to \R$, we may assume that $\min \varphi = 0$.
    Then $\phi^{-1}(0)$ is a face of $\tS^\aff_x(T)$ and by replacing $p'$ by an extreme point of that face, we may further assume that $\varphi(p') = 0$.
    Let $R$ be the bound on the distance of $n$-tuples in models of $T$.
    Define
    \begin{gather*}
      \psi(x) = \inf_y \, \bigl( d(x,y) + R \varphi(y) \bigr).
    \end{gather*}
    Let $q \in \tS^\aff_x(T)$.
    On the one hand,
    \begin{gather*}
      \psi(q)
      \leq \partial(q,p') + R \varphi(p')
      \leq \partial(p,q) + \partial(p,p')
      < \partial(p,q) + \varepsilon.
    \end{gather*}
    On the other hand, let $M \models T$ realize $q$, say by $a \in M^n$.
    Let $b \in M^n$ be arbitrary, and $q' = \tp^\aff(b)$.
    Let $E_+ = \llbracket \varphi \geq 1 \rrbracket_\cE$ and $E_- = \llbracket \varphi < 1 \rrbracket_\cE$.
    Since $q' \in \clco(E_+ \cup E_-)$, we can express it as $(1-\lambda) q'_- + \lambda q'_+$, where $q'_\pm \in \clco(E_\pm)$.
    We have $E_- \subseteq B_\partial(p,\varepsilon)$ by hypothesis, and since $\partial$ is convex and lower semi-continuous, we have $\partial(p,q'_-) \leq \varepsilon$.
    Therefore
    \begin{equation*}
      \begin{split}
        \partial(p,q)
        &\leq \partial(p,q') + \partial(q',q) \\
        &\leq (1-\lambda) \partial(p,q'_-) + \lambda \partial(p,q'_+) + d(a,b) \\
        &\leq \varepsilon + \lambda R + d(a,b).
      \end{split}
    \end{equation*}
    In addition,
    \begin{gather*}
      \varphi(b) = \varphi(q') = (1-\lambda) \varphi(q'_-) + \lambda \varphi(q'_+) \geq \lambda.
    \end{gather*}
    Therefore,
    \begin{gather*}
      \partial(p,q) \leq d(a,b) + R \varphi(b) + \varepsilon.
    \end{gather*}
    Since $b$ was arbitrary, $\partial(p,q) \leq \psi(q) + \varepsilon$.

    We have shown that $|\partial(p,q) - \psi(q)| \leq \varepsilon$ for all $q \in \tS^\aff_x(T)$, which is enough.
  \item[\impnumnum{1}{5}{3}] Immediate.
  \end{cycprf}
\end{proof}

From now on, we may simply refer to a type $p \in \cE_x(T)$ as being \emph{isolated}, without specifying if this is in $\tS^\aff_x(T)$ or in $\cE_x(T)$.
Note that condition \autoref{item:IsolatedExtremeTypeDefinable} of \autoref{lemma:IsolatedExtremeType} for an arbitrary $p \in \tS^\aff_x(T)$ implies that $p$ is exposed and therefore, extreme.

\begin{prop}
  \label{prop:IsolatedExtremeTypeRealizedIFF}
  Let $T$ be a complete affine theory in a separable language, let $x$ be countable, and let $p \in \tS^\aff_x(T)$.
  Then the following are equivalent:
  \begin{enumerate}
  \item The type $p$ is extreme and isolated.
  \item The type $p$ is realized in every model of $T$, and moreover, its sets of realizations form a uniformly definable family of sets.
  \item The type $p$ is realized in every extremal model of $T$.
  \end{enumerate}
\end{prop}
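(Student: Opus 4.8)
The plan is to run the cycle $(1) \Rightarrow (2) \Rightarrow (3) \Rightarrow (1)$.

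$(1) \Rightarrow (2)$. Assuming $p$ is extreme and isolated, \autoref{lemma:IsolatedExtremeType} gives that $\partial(p,\cdot) \colon \tS^\aff_x(T) \to \bR^+$ is continuous and affine, i.e.\ an affine definable predicate. I would then apply \autoref{prop:DefinableSetTypes} to the nonempty set $X = \set{p}$: it is trivially $\partial$-closed, and $\partial^X = \partial(p,\cdot)$ is continuous and affine, so condition (i) there holds, hence so does (iii). This yields that the family $D = (D^M : M \models T)$ with $D^M = \set{a \in M^x : \tp^\aff(a) = p}$ is uniformly affinely definable, with $\set{p}$ the zero-set of $d(x, D)$ in $\tS^\aff_x(T)$. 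In particular each $D^M$ is nonempty (this is built into uniform affine definability; cf.\ \autoref{lem:DefinableSetDistancePredicate}), so $p$ is realized in every model of $T$ and its sets of realizations form the uniformly definable family $D$, which is $(2)$. The implication $(2) \Rightarrow (3)$ is immediate, extremal models being models.

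$(3) \Rightarrow (1)$. That $p$ is extreme is the easy half: an extremal model of $T$ exists by \autoref{c:extremal-models-existence}, it realizes $p$ by hypothesis, so $p \in \cE_x(T)$ by \autoref{th:ExtremalModelTypes}. For isolation, suppose for contradiction that $p$ is not isolated, and first reduce to finitely many variables. If $x$ is infinite, relabel it as $\bN$ and observe that $p$ is isolated in $\cE_\bN(T)$ if and only if $\pi_n(p) \in \cE_n(T)$ is isolated for every finite $n$. Indeed, using the inverse-limit description of the topometric structure on $\tS^\aff_\bN(T)$ (\autoref{rem:type-metric-inf-tuples}) together with \autoref{l:ExtremeTypesInfinite}, one sees that $\partial$ on $\tS^\aff_\bN(T)$ is controlled, up to an arbitrarily small error, by a single $\partial_N$ on $\tS^\aff_N(T)$ — filling in the remaining coordinates by affine amalgamation (\autoref{cor:AffineAmalgamation}) — and the restriction maps between the $\cE_\bullet(T)$ are open (\autoref{cor:ExtremeTypeVariableRestrictionSurjectiveOpen}); pushing neighborhoods back and forth along these maps gives the equivalence. (If $x$ is already finite, take $n = |x|$ and $\pi_n = \id$.) So there is a finite $n$ with $p_0 := \pi_n(p) \in \cE_n(T)$ not isolated. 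By \autoref{lemma:IsolatedExtremeType} (equivalence of isolation with weak isolation), there is $\varepsilon > 0$ such that $\Xi := \set{q \in \cE_n(T) : \partial(p_0, q) < \varepsilon}$ has empty $\tau$-interior. Since $\partial$ is $\tau$-lower semicontinuous (\autoref{prop:AffineTypeDistance}), $\Xi$ is a $\tau$-$F_\sigma$ subset of $\cE_n(T)$; being $F_\sigma$ with empty $\tau$-interior, each of its closed pieces is nowhere dense, so $\Xi$ is $\tau$-meager, and it is $\partial$-open by construction. Feeding $\Xi_n := \Xi$ (and $\Xi_m := \emptyset$ for $m \neq n$) into \autoref{th:omitting-types} produces a separable extremal model $M \models T$ omitting $\Xi$, so no $n$-tuple of $M$ realizes $p_0$. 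But $M$ realizes $p$ by $(3)$, say by $a \in M^x$, whence $a\rest_n$ realizes $p_0$ — a contradiction. Hence $p$ is isolated.

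I expect the main obstacle to be the reduction to finitely many variables in $(3) \Rightarrow (1)$: showing that a non-isolated $p$ has a non-isolated finite restriction $\pi_n(p)$ requires carefully unwinding the inverse-limit topometric structure of \autoref{rem:type-metric-inf-tuples}, and the amalgamation argument controlling the tail coordinates of $\partial$ (together with the openness of the extreme-type restriction maps) is where the care concentrates. The only other slightly delicate point is the elementary topological fact that a $\tau$-$F_\sigma$ set with empty $\tau$-interior is $\tau$-meager, which is precisely what licenses $\Xi$ as an admissible input to the omitting types theorem.
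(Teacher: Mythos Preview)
Your proof is correct and follows essentially the same cycle $(1)\Rightarrow(2)\Rightarrow(3)\Rightarrow(1)$ as the paper, using \autoref{prop:DefinableSetTypes} for the first implication and the omitting types theorem for the last. The only difference is that you explicitly carry out the reduction to a finite sub-tuple in $(3)\Rightarrow(1)$, whereas the paper applies the meagerness/omitting argument directly to $\Xi\subseteq\cE_x(T)$ and defers the finite-versus-infinite issue to the subsequent \autoref{remark:IsolatedExtremeTypeInfinitary}; your handling of this point is in fact more careful than the paper's.
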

\begin{proof}
  \begin{cycprf}
  \item By \autoref{lemma:IsolatedExtremeType} and \autoref{prop:DefinableSetTypes} applied to $X = \{p\}$, the sets of realizations of $p$ in models of $T$ form a uniformly definable family of sets, which are, in particular, non-empty.
  \item Immediate.
  \item[\impfirst]
    Since $p$ is realized in extremal models, it is extreme.
    Assume that it is not isolated.
    By \autoref{lemma:IsolatedExtremeType}, some $\dtp$-neighborhood $\Xi = B_\partial(p,r) \cap \cE_x(T)$ has empty $\tau$-interior in $\cE_x(T)$.
    On the one hand, $\Xi$ is $\dtp$-open in $\cE_x(T)$.
    On the other hand, $\Xi = \bigcup_{s < r} \overline{B}_\partial(p,s) \cap \cE_x(T)$, where $\overline{B}_\partial(p,s)$ denotes the closed ball.
    It follows from lower semi-continuity of $\dtp$ that each closed $\dtp$-ball is also $\tau$-closed, so $\overline{B}_\partial(p,s) \cap \cE_x(T)$ is closed and nowhere dense.
    Therefore $\Xi$ is $\tau$-meager, and can be omitted in an extremal separable model by \autoref{th:omitting-types}.
  \end{cycprf}
\end{proof}

\begin{remark}
  \label{remark:IsolatedExtremeTypeInfinitary}
  Similarly to definable sets (cf.\ \autoref{remark:DefinableSetInfinitary}), we have two equivalent criteria for a type in a countably infinite tuple of variables to be isolated: one is given by \autoref{lemma:IsolatedExtremeType} and the other is that every restriction to finitely many variables is isolated. We also make the convention to use the second definition of isolated for a type over an arbitrary tuple of variables.
\end{remark}

\begin{prop}
  \label{p:isolated-gdelta-closed}
  Let $T$ be an affine theory and let $x$ be a countable tuple of variables.
  Then the set of isolated types in $\cE_x(T)$ is $G_\delta$ in $\tau$ and $\dtp$-closed.
\end{prop}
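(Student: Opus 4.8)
The plan is to reduce the statement to a finite tuple of variables and a complete theory, and then prove the two assertions separately: $\dtp$-closedness via the identification of isolated types with distinguished continuous affine functions, and $\tau$-$G_\delta$-ness by a soft topometric computation.

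For the reductions, recall from \autoref{remark:IsolatedExtremeTypeInfinitary} that, for a countable $x$, a type $p\in\cE_x(T)$ is isolated if and only if its restriction $\pi_{x'}(p)\in\cE_{x'}(T)$ (cf.\ \autoref{l:ExtremeTypesInfinite}) is isolated for every finite $x'\sub x$; thus the set $\mathrm{Iso}_x(T)$ of isolated types in $\cE_x(T)$ equals $\bigcap_{x'}(\pi^\cE_{x'})^{-1}\bigl(\mathrm{Iso}_{x'}(T)\bigr)$, the intersection taken over the countably many finite $x'\sub x$. Since each $\pi^\cE_{x'}\colon\cE_x(T)\to\cE_{x'}(T)$ is $\tau$-continuous (\autoref{cor:ExtremeTypeVariableRestrictionSurjectiveOpen}) and $\dtp$-contractive (\autoref{prop:AffineTypeDistance}), pullbacks of $\tau$-$G_\delta$ sets are $\tau$-$G_\delta$ and pullbacks of $\dtp$-closed sets are $\dtp$-closed, both properties being stable under countable intersections; so it suffices to treat a finite $x$. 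For such $x$, the fibre of $\pi_0\colon\cE_x(T)\to\cE_0(T)$ over an extreme $0$-type $t$ is $\cE_x(T_t)$, where $T_t$ is the complete theory corresponding to $t$ and $\tS^\aff_x(T_t)$ is a closed face of $\tS^\aff_x(T)$; distinct such fibres lie at infinite $\dtp$-distance from one another (\autoref{prop:AffineTypeDistance}), and — since $\pi_0$ is open — any fibre meeting the isolated locus is in fact $\tau$-clopen in $\cE_x(T)$, so over it the topometric structure of $\cE_x(T)$ agrees with that of $\cE_x(T_t)$. Hence we may assume $T$ complete.

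Now let $x$ be finite and $T$ complete. For $\dtp$-closedness, use \autoref{lemma:IsolatedExtremeType}: $p\in\cE_x(T)$ is isolated exactly when $\psi_p\coloneqq\dtp(p,\cdot)\colon\tS^\aff_x(T)\to\bR^+$ is continuous and affine. If $(p_k)$ is a sequence of isolated types with $p_k\to^\dtp p$, then the triangle inequality for the metric $\dtp$ on $\tS^\aff_x(T)$ gives $\|\psi_{p_k}-\dtp(p,\cdot)\|_\infty\le\dtp(p_k,p)\to0$, so $\dtp(p,\cdot)$ is a uniform limit of continuous affine functions on $\tS^\aff_x(T)$, hence itself continuous and affine; thus $p$ is isolated. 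For $\tau$-$G_\delta$-ness (where completeness and the affine structure play no role), recall from \autoref{defn:TopometricIsolatedPoint} that $p$ is isolated iff every $\dtp$-ball about $p$ is a $\tau$-neighbourhood of $p$ in $\cE_x(T)$. For $n\ge1$ let $A_n$ be the set of $p\in\cE_x(T)$ for which some $\tau$-open $U$ satisfies $p\in U\sub\cE_x(T)\cap B_\dtp(p,1/n)$, so that $\mathrm{Iso}_x(T)=\bigcap_n A_n$. I claim $\mathrm{Iso}_x(T)=\bigcap_n\mathrm{int}_\tau(A_n)$, the interiors taken in $(\cE_x(T),\tau)$: the inclusion $\supseteq$ is clear, and for $\subseteq$, given $p\in\mathrm{Iso}_x(T)$ and $n$, choose a $\tau$-open $U\ni p$ with $U\sub B_\dtp(p,1/(2n))$, and note that for any $p'\in U$ the triangle inequality yields $U\sub B_\dtp(p',1/n)$, so $p'\in A_n$; hence $U\sub A_n$ and $p\in\mathrm{int}_\tau(A_n)$. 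This exhibits $\mathrm{Iso}_x(T)$ as a countable intersection of $\tau$-open sets.

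The only genuinely structure-dependent step is the $\dtp$-closedness: in an abstract topometric space a $\dtp$-Cauchy sequence of isolated points may converge to a non-isolated point, so the representation of isolated types by continuous affine functions afforded by \autoref{lemma:IsolatedExtremeType} is what does the work there. The remaining pieces — the reductions to finite $x$ and to complete $T$, and the $G_\delta$ computation — are elementary bookkeeping and an open-set manipulation.
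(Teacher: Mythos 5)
Your proof is correct, but it is organized quite differently from the paper's. The paper proves both assertions in one stroke: it introduces the $\tau$-$G_\delta$ set $I'_x = \bigcap_k \bigcup_{p}\bigl(B_\dtp(p,2^{-k})\cap\cE_x(T)\bigr)^\circ$ and establishes the sandwich $I_x(T)\subseteq I'_x\subseteq\cl{I'_x}^{\dtp}\subseteq I_x(T)$, the last inclusion coming from the implication ``weakly isolated $\Rightarrow$ isolated'' of \autoref{lemma:IsolatedExtremeType} (a point in the $\dtp$-closure of $I'_x$ has every ball with non-empty $\tau$-interior). You instead decouple the two claims: your $G_\delta$ argument via the sets $A_n$ and their interiors is essentially the paper's $I'_x$ construction and, as you note, is purely topometric; but for $\dtp$-closedness you invoke a different clause of the same lemma, namely that $p$ is isolated iff $\dtp(p,\cdot)$ is continuous and affine, and observe that this class of functions is closed under uniform limits, the triangle inequality supplying the uniform convergence. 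This is arguably cleaner and makes transparent exactly where the affine structure enters. You also supply reductions (to finite $x$, and to complete $T$) that the paper elides — and the reduction to complete $T$ is genuinely needed to apply \autoref{lemma:IsolatedExtremeType} as stated, so you are filling a small gap in the paper's own citation. Your clopen-fibre claim is correct but compressed; for the record the argument is: an isolated $p\in F_t=\pi_0^{-1}(t)\cap\cE_x(T)$ has $B_\dtp(p,r)\cap\cE_x(T)\subseteq F_t$ as a $\tau$-neighbourhood, so $F_t$ has non-empty $\tau$-interior, whence $\{t\}=\pi_0(F_t^\circ)$ is open in $\cE_0(T)$ by the openness of $\pi^\cE_0$ (\autoref{cor:ExtremeTypeVariableRestrictionSurjectiveOpen}), and therefore $F_t$ is open as well as closed. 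One cosmetic remark: for countably infinite $x$ with the weighted metric of \autoref{rem:type-metric-inf-tuples}, the restriction maps to finite subtuples are Lipschitz rather than literally $\dtp$-contractive, which is all your reduction needs.
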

\begin{proof}
  Let $I_x(T) \subseteq \cE_x(T)$ denote the collection of isolated types, and let
  \begin{gather*}
    I'_x = \bigcap_k \bigcup_{p \in \cE_x(T)} \Bigl( B_\partial(p,2^{-k}) \cap \cE_x(T) \Bigr)^\circ,
  \end{gather*}
  where $\cdot^\circ$ denotes the $\tau$-interior in $\cE_x(T)$. Note that $I'_x$ is $G_\delta$ in $\tau$.
  We have
  \begin{equation*}
    I_x(T) \subseteq I'_x \subseteq \overline{I'_x}^\partial \subseteq I_x(T)
  \end{equation*}
  by \autoref{lemma:IsolatedExtremeType}, so equality holds throughout. To see the last inclusion, let $q \in \cl[\dtp]{I'_x}$ and fix $r > 0$ in order to show that $B_\dtp(q, r)$ has non-empty $\tau$-interior. Let $q' \in B_\dtp(q, r/2) \cap I'_x$. Then there exists $p \in \cE_x(T)$ such that $\dtp(p, q') < r/4$ and $B_\dtp(p, r/4)$ has non-empty $\tau$-interior in $\cE_x(T)$. The last ball is contained in $B_\dtp(q, r)$, so we are done.
\end{proof}

A model $M$ of a complete theory $T$ is called \emph{atomic} if it only realizes isolated, extreme $n$-types, for every $n$.
In particular, an atomic model is extremal, and following \autoref{remark:IsolatedExtremeTypeInfinitary}, types of arbitrary tuples in an atomic model are isolated as well.

\begin{prop}
  \label{p:existence-atomic}
  Let $T$ be a complete affine theory in a separable language.
  Then $T$ admits a (separable) extremal atomic model if and only if the isolated types are dense in $\cE_n(T)$ for all $n$.
\end{prop}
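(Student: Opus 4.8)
The plan is to prove this via the omitting types theorem (\autoref{th:omitting-types}) in one direction and a Baire category construction in the other. The statement is a standard existence-of-atomic-models result, and the affine version should closely parallel the classical and continuous-logic arguments, with \autoref{th:omitting-types} and \autoref{p:isolated-gdelta-closed} doing the heavy lifting.

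For the easy direction, suppose $T$ admits a separable extremal atomic model $M$. Fix $n$ and a non-empty $\tau$-open set $U\subseteq\cE_n(T)$; I want to find an isolated type in $U$. By \autoref{l:nbhds-extreme}, $U$ contains a set of the form $\oset{\varphi>0}_\cE$ for some affine formula $\varphi$, and this set is non-empty, so there is an extreme type $p$ with $\varphi(p)>0$. Since $T$ is complete, by \autoref{l:realized-extreme-dense} (applied with $A=\emptyset$), $p$ is approximately realized in $M$: there is $a\in M^n$ with $\tp^\aff(a)\in\oset{\varphi>0}_\cE\subseteq U$ (here I use that $\oset{\varphi>0}$ is $\tau$-open, so approximate realization suffices). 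But $M$ is atomic, so $\tp^\aff(a)$ is isolated, giving an isolated type in $U$. Hence isolated types are $\tau$-dense in $\cE_n(T)$ for every $n$.

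For the converse, assume the isolated types are dense in $\cE_n(T)$ for all $n$. Let $I_n\subseteq\cE_n(T)$ be the set of isolated types, and set $\Xi_n=\cE_n(T)\sminus \cl[\tau]{I_n}$. Then each $\Xi_n$ is $\tau$-open, hence $\tau$-meager only if it is empty — so I cannot directly apply \autoref{th:omitting-types} to $\Xi_n$. Instead, the point is to omit the \emph{non-isolated} types; but the set of non-isolated types is $\tau$-open in $\cE_n(T)$ only when $I_n$ is $\tau$-closed, which it need not be. The resolution, exactly as in continuous logic, is: by \autoref{p:isolated-gdelta-closed}, $I_n$ is $\dtp$-closed, so $\Xi_n' := \cE_n(T)\sminus I_n$ is $\dtp$-open; and by \autoref{p:isolated-gdelta-closed} again $I_n$ is $\tau$-$G_\delta$, so $\Xi_n'$ is $\tau$-$F_\sigma$; combining this with the density hypothesis, $\cl[\tau]{I_n}=\cE_n(T)$, so $\Xi_n'$, being $\tau$-$F_\sigma$ with dense complement, is $\tau$-meager. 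Thus the family $(\Xi_n')_n$ satisfies the hypotheses of \autoref{th:omitting-types}: each $\Xi_n'$ is $\tau$-meager and $\dtp$-open. The theorem then provides a separable extremal model $M\models T$ omitting every $\Xi_n'$, i.e., realizing only isolated types in every finite tuple of variables — which is precisely the definition of an atomic model.

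The main obstacle is the second direction, and specifically the observation that one must omit $\cE_n(T)\sminus I_n$ rather than the complement of the $\tau$-closure of $I_n$, and verify that this set meets the two hypotheses of \autoref{th:omitting-types} ($\tau$-meager and $\dtp$-open) — both of which rest on \autoref{p:isolated-gdelta-closed}. Once that structural fact about $I_n$ is in hand, everything else is routine: the easy direction is a short application of completeness and \autoref{l:realized-extreme-dense}, and the hard direction is a direct invocation of the omitting types theorem with no further construction needed.
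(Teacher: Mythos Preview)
Your proof is correct and follows essentially the same approach as the paper. The converse direction is identical: apply \autoref{p:isolated-gdelta-closed} to see that the complement of the isolated types is $\tau$-meager and $\dtp$-open, then invoke \autoref{th:omitting-types}. For the forward direction, the paper argues slightly differently---it uses \autoref{prop:IsolatedExtremeTypeRealizedIFF} to say every isolated type is realized in $M$, whence $I_n(T)$ is contained in the realized types, which are dense---but your route (realized types are dense and all of them are isolated) is equally valid and arguably more direct.
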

\begin{proof}
  Let $I_n(T)$ denote the set of isolated types in $\cE_n(T)$.
  If $M$ is atomic, of any density character, then every type in $I_n(T)$ is realized in $M$ by \autoref{prop:IsolatedExtremeTypeRealizedIFF}, and therefore, by \autoref{l:realized-extreme-dense}, $I_n(T)$ is dense in $\cE_n(T)$.

  Conversely, assume that $I_n(T)$ is dense in $\cE_n(T)$ for all $n$, and let $\Xi_n$ be the complement.
  By \autoref{p:isolated-gdelta-closed}, $\Xi_n$ is $\tau$-meager, and $\dtp$-open.
  By \autoref{th:omitting-types}, $T$ admits a separable extremal model that omits $\Xi_n$ for all $n$.
  Such a model is atomic.
\end{proof}

\begin{theorem}
  \label{th:uniqueness-atomic-model}
  Let $T$ be a complete, affine theory in a separable language.
  \begin{enumerate}
  \item
    A model $M \models T$ is separable and atomic if and only if it is \emph{prime}, that is to say that it admits an affine embedding in every model of $T$.
  \item
    Such a model, if it exists, is unique up to isomorphism, and is approximately affinely homogeneous.
  \end{enumerate}
\end{theorem}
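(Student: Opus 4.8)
The plan is to treat the two parts essentially separately, leaning on the omitting types theorem (\autoref{th:omitting-types}) and the abstract approximate-homogeneity results (\autoref{thm:ApproximatelyHomogeneousTypes}), since most of the real work has already been packaged into \autoref{prop:IsolatedExtremeTypeRealizedIFF}.

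For part (1), I would first prove ``separable and atomic $\Rightarrow$ prime''. Fix a dense sequence $(a_n : n \in \bN)$ in $M$ and let $p = \tp^\aff\bigl( (a_n) \bigr) \in \tS^\aff_\bN(T)$. Every restriction of $p$ to finitely many variables is the affine type of a finite tuple from $M$, hence isolated and extreme since $M$ is atomic; so $p$ itself is isolated by the convention of \autoref{remark:IsolatedExtremeTypeInfinitary}. By \autoref{prop:IsolatedExtremeTypeRealizedIFF}, $p$ is realized in every $N \models T$, say by $(b_n)$, and then $a_n \mapsto b_n$ is an affine map on the dense set $\{a_n : n \in \bN\}$, which by uniform continuity extends to an affine embedding $M \hookrightarrow N$. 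For the converse, if $M$ is prime then it embeds affinely into the separable extremal model of $T$ furnished by \autoref{c:extremal-models-existence}; hence $M$ is separable and, being an affine submodel of an extremal model, extremal. For atomicity I would argue by contradiction exactly as in the implication (3)$\Rightarrow$(1) of \autoref{prop:IsolatedExtremeTypeRealizedIFF}: if some $n$-type $q$ realized in $M$ were not isolated, then a suitable $\dtp$-ball around $q$ in $\cE_n(T)$ is $\tau$-meagre and $\dtp$-open, so \autoref{th:omitting-types} produces a separable extremal model of $T$ omitting it; but $M$ embeds affinely into that model, which therefore realizes $q$ --- a contradiction.

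For part (2), I would first observe that any separable atomic model of $T$ realizes, in $n$ variables, precisely the isolated extreme $n$-types: one inclusion is the definition of atomic, and the other holds because every isolated extreme type is realized in every extremal model by \autoref{prop:IsolatedExtremeTypeRealizedIFF}. In particular, any two separable atomic models of $T$ realize the same affine types in finitely many variables. Next I would verify that a separable atomic model $M$ is weakly approximately affinely $\aleph_0$-homogeneous in the sense of \autoref{dfn:ApproximatelyHomogeneous}: given $a, b \in M^n$ with $\tp^\aff(a) = \tp^\aff(b)$, $c \in M$, and $\eps > 0$, put $q = \tp^\aff(a,c)$, which is isolated and extreme because $M$ is atomic. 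By \autoref{prop:IsolatedExtremeTypeRealizedIFF} together with \autoref{prop:DefinableSetTypes}, the realization set $D_q \subseteq M^{n+1}$ of $q$ is a non-empty definable set whose distance predicate equals $\partial\bigl(q,\tp^\aff(\cdot)\bigr)$. The affine definable predicate $\psi(x) = \inf_z \partial\bigl(q, \tp^\aff(xz)\bigr)$ vanishes at $a$ (witnessed by $z = c$), hence at $b$; so there is $d \in M$ with $\partial\bigl(q,\tp^\aff(bd)\bigr) < \eps$, i.e.\ $(b,d)$ lies within $\eps$ of $D_q$, whence there is $(b',d') \in D_q$ with $d(b,b') < \eps$ and $\tp^\aff(b',d') = \tp^\aff(a,c)$ --- exactly the witness required. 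With this in hand, \autoref{thm:ApproximatelyHomogeneousTypes} simultaneously yields that $M$ is approximately affinely homogeneous and that any two separable atomic models of $T$ are isomorphic.

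The main obstacle will be the homogeneity verification in part (2), and within it the one genuinely substantive step: upgrading the \emph{approximate} extension $d$ (available because $\psi(b) = 0$) to an \emph{exact} realization $(b',d')$ of $q$ with $b'$ close to $b$. This is exactly where the definability of the realization set of the isolated type $q$ is needed; everything else is routine assembly of \autoref{prop:IsolatedExtremeTypeRealizedIFF}, \autoref{th:omitting-types}, and \autoref{thm:ApproximatelyHomogeneousTypes}.
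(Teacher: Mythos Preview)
Your proposal is correct and follows essentially the same route as the paper: the ``separable atomic $\Rightarrow$ prime'' direction via realizing the isolated type of a dense $\bN$-tuple, the converse via \autoref{prop:IsolatedExtremeTypeRealizedIFF} (which you unpack rather than cite), and the homogeneity argument via the definability of the realization set of the isolated type $q = \tp^\aff(a,c)$. The only cosmetic difference is that the paper writes the key predicate directly as $\varphi(x) = \inf_{(x',y)\models q} d(x,x')$, which coincides with your $\psi(x)$ and makes the intermediate step of first finding an approximate $d$ and then upgrading to an exact $(b',d')$ unnecessary.
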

\begin{proof}
  Assume first that $M$ is prime.
  Since $T$ admits separable extremal models, $M$ must be separable.
  In addition, every type realized in $M$ must be realized in every model of $T$, so by \autoref{prop:IsolatedExtremeTypeRealizedIFF}, $M$ is atomic.

  Assume now that $M$ is separable and atomic.
  Let $a = (a_i : i \in \bN)$ enumerate a dense subset of $M$.
  Then $p = \tp^\aff(a) \in \cE^\fM_\bN(T)$ is isolated in $\cE_\bN(T)$.
  By \autoref{prop:IsolatedExtremeTypeRealizedIFF}, if $N$ is any other model of $T$, then $p$ is realized in $N$, say by $a'$.
  Now, $M \cong \overline{\{a'_i : i \in \bN\}} \preceq^\aff N$, so $M$ admits an affine embedding in $N$.
  Thus $M$ is prime.

  Let us show that every atomic model $M$ is weakly approximately affinely $\aleph_0$-homogeneous (as in \autoref{dfn:ApproximatelyHomogeneous}).
  Let $a,b \in M^n$ have the same affine type $p(x)$ and let $c \in M$.
  The type $q(x,y) = \tp^\aff(a,c)$ is isolated, so we may quantify over its realizations and
  \begin{gather*}
    \varphi(x) = \inf_{(x',y) \models q} d(x,x')
  \end{gather*}
  is an affine definable predicate.
  We have $\varphi(a) = 0$ (as witnessed by $a,c$), so $\varphi(b) = 0$ as well, and there exist $b',d$ realizing $q$ such that $d(b,b')$ is as small as desired, proving our homogeneity claim.
  Since any two atomic models of $T$ realize exactly the isolated $n$-types, we may now conclude the proof using \autoref{thm:ApproximatelyHomogeneousTypes}.
\end{proof}


\part{Direct integrals and extremal decomposition}
\label{part:Direct-integrals}

\section{Direct integrals}
\label{sec:direct-integrals}

As affine logic is a fragment of continuous logic, one can use ultraproducts to construct models from existing ones and \Los's theorem is valid as usual.
The main novelty is that now one can also construct \emph{direct integrals} of a collection of models indexed by a probability space.
Such constructions have been considered in functional analysis and representation theory (most notably direct integrals of Hilbert spaces, from where we borrow the terminology) but only for separable structures in a separable language, because of measurability issues.

The construction we describe in this section works in complete generality, with no restriction on either the probability space or the density character of the language or the structures. The measurability issues are overcome by imposing a uniformly measurable presentation of the models as direct limits of countable pieces, as we explain below.

\begin{defn}
  \label{df:field}
  Let $\Omega$ be a set.
  \begin{enumerate}
  \item A family of $\cL$-structures $M_\Omega = (M_\omega : \omega \in \Omega)$ will be called a \emph{field of structures}.
  \item A \emph{section} of a field $M_\Omega$ is an element of the product $\prod_{\omega \in \Omega} M_\omega$, or, in other words, a function $f \colon \Omega \to \coprod_{\omega \in \Omega} M_\omega$ such that $f(\omega) \in M_\omega$ for all $\omega$.
  \item A family of sections $e_I = (e_i : i \in I)$ is a \emph{pointwise enumeration} of $M_\Omega$ if $\bigl\{ e_i(\omega) : i \in I \bigr\}$ is dense in $M_\omega$ for every $\omega \in \Omega$.
  \item Given a pointwise enumeration and $I_0 \subseteq I$, we define
    \begin{gather*}
      M_{\omega,I_0} \coloneqq \cl{\bigl\{ e_i(\omega) : i \in I_0 \bigr\}} \subseteq M_\omega.
    \end{gather*}
  \end{enumerate}
\end{defn}

If $M$ and $N$ are $\cL$-structures and $\cL_0 \sub \cL$, we write $M \preceq^\aff_{\cL_0} N$ if $M$ is an affine substructure of $N$ with respect to $\cL_0$-formulas. Similarly, we write $M \preceq^\cont_{\cL_0} N$ if $M$ is an elementary substructure of $M$ with respect to $\cL_0$-formulas, in the sense of continuous logic. (The few definitions and results of the present section concerning continuous logic will be relevant in \autoref{part:conn-with-cont}.)

\begin{defn}
  \label{defn:MeasurableField}
  Let $(\Omega, \cB, \mu)$ be a probability space, $M_\Omega$ a field of $\cL$-structures, and $e_I$ a pointwise enumeration of $M_\Omega$.
  For a countable $\cL_0 \subseteq \cL$, we denote by
  $$\fI^\aff(M_\Omega,e_I,\cL_0) \coloneqq \set{I_0\in \cP_{\aleph_0}(I) : \text{for $\mu$-a.e.\ } \omega \in \Omega,\ M_{\omega,I_0} \preceq^\aff_{\cL_0} M_\omega}$$
  the collection of all countable subsets $I_0 \subseteq I$ such that $M_{\omega,I_0} \preceq^\aff_{\cL_0} M_\omega$ almost surely. Similarly,
  $$\fI^\cont(M_\Omega,e_I,\cL_0) \coloneqq \set{I_0\in \cP_{\aleph_0}(I) : \text{for $\mu$-a.e.\ } \omega \in \Omega,\ M_{\omega,I_0} \preceq^\cont_{\cL_0} M_\omega}.$$

  We say that $(M_\Omega,e_I)$ is a \emph{measurable field of structures} if the following hold:
  \begin{enumerate}
  \item
    \label{item:MeasurableFieldPredicate}
    For every $n$-ary predicate symbol $P$ (including the distance symbol) and $n$-tuple $\bar{e}$ from $e_I$, the function
    \begin{equation*}
      \omega \mapsto P^{M_\omega}\bigl( \bar{e}(\omega) \bigr)
    \end{equation*}
    is measurable.
  \item
    \label{item:MeasurableFieldFunction}
    For every $n$-ary function symbol $F$ and $n$-tuple $\bar{e}$ from $e_I$, and every $i \in I$, the function
    \begin{equation*}
      \omega \mapsto
      d^{M_\omega}\bigl(F^{M_\omega}( \bar{e}(\omega) ), e_i(\omega) \bigr)
    \end{equation*}
    is measurable.
  \item
    \label{item:MeasurableFieldCofinal}
    For every finite $\cL_0 \subseteq \cL$, the collection $\fI^\aff(M_\Omega,e_I,\cL_0)$ is cofinal for inclusion in the set $\cP_{\aleph_0}(I)$ of all countable subsets of $I$.
  \end{enumerate}
  If, in addition, condition \autoref{item:MeasurableFieldCofinal} holds for the collection $\fI^\cont(M_\Omega,e_I,\cL_0)$, we say that $(M_\Omega,e_I)$ is an \emph{elementarily measurable field of structures}.
\end{defn}

Note that if $I_0 \in \fI^\aff(M_\Omega,e_I,\cL_0)$, then $M_{\omega,I_0}$ is almost surely an $\cL_0$-structure, that is to say that it is closed under all function symbols in $\cL_0$.
When $\cL_0$ consists of all symbols appearing in a formula $\varphi$, we may write $\fI^\aff(M_\Omega,e_I,\varphi)$ instead of $\fI^\aff(M_\Omega,e_I,\cL_0)$.

It follows from the affine chain theorem that $\fI^\aff(M_\Omega,e_I,\cL_0)$ is always closed under unions of countable chains.
Thus, condition \autoref{item:MeasurableFieldCofinal} implies that $\fI^\aff(M_\Omega,e_I,\cL_0)$ is cofinal and closed under unions of countable chains for every countable $\cL_0 \subseteq \cL$ (see \cite[Thm.~8.22]{Jech2003}).

Finally, note that if $I$ is countable, then condition \autoref{item:MeasurableFieldCofinal} is automatically satisfied, simply because $I \in \fI^\aff(M_\Omega,e_I,\cL_0)$ for every $\cL_0 \sub \cL$.

Similar remarks and conventions apply to the collections $\fI^\cont(M_\Omega,e_I,\cL_0)$.

\begin{defn}
  \label{defn:DirectIntegralMeasurableSection}
  Let $(M_\Omega,e_I)$ be a measurable field of structures, and let $f$ be a section.
  \begin{enumerate}
  \item We say that $f$ is \emph{basic} if it is a member of the family $e_I$.
  \item We say that $f$ is \emph{simple} if  it is of the form $f(\omega) = e_{k(\omega)}(\omega)$, where $k \colon \Omega \rightarrow I$ is measurable with finite image.
  \item Let $I_0 \subseteq I$ be countable.
    We say that $f$ is \emph{$I_0$-measurable} if $f(\omega) \in M_{\omega,I_0}$ almost surely, and for every $i \in I_0$, the function $\omega \mapsto d^{M_\omega}\bigl( f(\omega), e_i(\omega) \bigr)$ is measurable.
  \item We say that $f$ is \emph{measurable} if it is $I_0$-measurable for some countable $I_0 \subseteq I$.
  \end{enumerate}
  We identify two measurable sections that are equal almost surely.
  The collection of all measurable sections, up to this identification, will be denoted by $M_{\Omega,I}$.
  Similarly, the collection of all $I_0$-measurable sections will be denoted $M_{\Omega,I_0}$.
\end{defn}

Note that the set $M_{\Omega, I}$ depends on the pointwise enumeration $e_I$ even though it is absent from the notation.

\begin{lemma}
  \label{lemma:DirectIntegralMeasurableDistance}
  If $f,g \in M_{\Omega,I}$, then the function $\omega \mapsto d^{M_\omega}\bigl( f(\omega), g(\omega) \bigr)$ is measurable.
  Consequently, if $f \in M_{\Omega,I}$, then $f \in M_{\Omega,I_0}$ for every large enough countable $I_0 \subseteq I$ (indeed, as soon as $f(\omega) \in M_{\omega,I_0}$ almost surely).
\end{lemma}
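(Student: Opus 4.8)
The plan is to reduce the measurability of $\omega \mapsto d^{M_\omega}(f(\omega),g(\omega))$ to the two cases where one of the sections is basic, and then to a direct approximation argument. First I would fix countable sets $I_0, I_1 \subseteq I$ such that $f$ is $I_0$-measurable and $g$ is $I_1$-measurable, and set $J = I_0 \cup I_1$, so that both $f(\omega)$ and $g(\omega)$ lie in $M_{\omega,J}$ almost surely. By definition of $I_0$-measurability, for every $i \in I_0$ the function $\omega \mapsto d^{M_\omega}(f(\omega), e_i(\omega))$ is measurable, and likewise for $g$ over $I_1$; so the "distance to a fixed basic section" functions are measurable for all $i \in J$.

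Next I would use that $\{e_i(\omega) : i \in J\}$ is dense in $M_{\omega,J}$ for every $\omega$, together with the triangle inequality, to write
\begin{equation*}
  d^{M_\omega}\bigl( f(\omega), g(\omega) \bigr) = \inf_{i \in J} \Bigl( d^{M_\omega}\bigl( f(\omega), e_i(\omega) \bigr) + d^{M_\omega}\bigl( e_i(\omega), g(\omega) \bigr) \Bigr)
\end{equation*}
for almost every $\omega$: the right-hand side is clearly $\geq$ the left by the triangle inequality, and it is $\leq$ the left because one can choose $e_i(\omega)$ arbitrarily close to $f(\omega)$. Since $J$ is countable and each summand $\omega \mapsto d^{M_\omega}(f(\omega), e_i(\omega))$ and $\omega \mapsto d^{M_\omega}(e_i(\omega), g(\omega))$ is measurable (the latter by the $I_1$-measurability of $g$ applied to the basic section $e_i$, which is legitimate since $i \in J \supseteq I_1$ — wait, here I need $i$ to range over $I_1$ for the $g$-term, so I would actually take the infimum over $i$ in the appropriate index set, or simply enlarge $I_1$ first to contain $I_0$ and vice versa so that both are $J$-measurable, which is harmless). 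A countable infimum of measurable functions is measurable, giving the first claim.

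For the "consequently" part, suppose $f \in M_{\Omega,I}$ and let $I_0 \subseteq I$ be any countable set with $f(\omega) \in M_{\omega,I_0}$ almost surely. For each $i \in I_0$, the section $e_i$ is basic, hence trivially measurable (it is $\{i\}$-measurable), so the first part of the lemma applies to the pair $(f, e_i)$ and yields that $\omega \mapsto d^{M_\omega}(f(\omega), e_i(\omega))$ is measurable. Combined with $f(\omega) \in M_{\omega,I_0}$ a.e., this is exactly the definition of $f$ being $I_0$-measurable, i.e.\ $f \in M_{\Omega,I_0}$.

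I do not expect a serious obstacle here; the one point requiring a little care is the bookkeeping around which countable index set the infimum ranges over, which is resolved by symmetrizing at the start (replace $I_0$ and $I_1$ both by $J = I_0 \cup I_1$ and note that a section which is $I_k$-measurable is automatically $J$-measurable, since enlarging the index set only adds more basic sections to measure the distance against, and all those distances are measurable by the first part applied iteratively — or more simply, by the density-plus-triangle-inequality identity above with the roles reversed). The genuinely load-bearing facts are just the countable density of the basic sections in each $M_{\omega,J}$ and the stability of measurability under countable infima.
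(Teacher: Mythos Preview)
Your approach is correct and rests on the same core reduction as the paper --- namely, reducing to the known measurability of $\omega \mapsto d^{M_\omega}(e_i(\omega),e_j(\omega))$ for basic sections --- but the packaging differs. The paper avoids your bookkeeping snag entirely: it well-orders $I_0$ and $I_1$, defines measurable selectors $i(k,\omega) = \min\{i \in I_0 : d(f(\omega),e_i(\omega)) < 1/k\}$ and $j(k,\omega)$ analogously, and writes $d^{M_\omega}(f(\omega),g(\omega)) = \lim_k d^{M_\omega}(e_{i(k,\omega)}(\omega), e_{j(k,\omega)}(\omega))$ as a pointwise limit of measurable functions. This lands directly on a distance between two basic sections, so no question arises about whether $d(f,e_i)$ is measurable for $i \notin I_0$. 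Your infimum formula works too, but to make it clean you should write it from the start as the double infimum
\[
  d^{M_\omega}(f(\omega),g(\omega)) = \inf_{i \in I_0,\, j \in I_1} \Bigl( d^{M_\omega}(f(\omega),e_i(\omega)) + d^{M_\omega}(e_i(\omega),e_j(\omega)) + d^{M_\omega}(e_j(\omega),g(\omega)) \Bigr),
\]
which uses only quantities already known to be measurable. Your ``roles reversed'' remark does arrive at this, but stating the double infimum up front removes the need for the mid-proof correction.
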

\begin{proof}
  Say $f \in M_{\Omega,I_0}$  and $g \in M_{\Omega,I_1}$ for countable $I_0,I_1\subseteq I$.
  We may assume that $I_0$ and $I_1$ are well-ordered and define
  \begin{gather*}
    i(k, \omega) = \min \, \bigl\{i \in I_0 : d^{M_\omega}\bigl( f(\omega), e_i(\omega) \bigr) < 1/k\bigr\},
    \\
    j(k, \omega) = \min \, \bigl\{j \in I_1 : d^{M_\omega}\bigl( g(\omega), e_j(\omega) \bigr) < 1/k\bigr\}.
  \end{gather*}
  Then
  \begin{equation*}
    d^{M_\omega}\bigl( f(\omega), g(\omega) \bigr) = \lim_{k \to \infty} d^{M_\omega}\bigl( e_{i(k, \omega)}(\omega), e_{j(k, \omega)}(\omega) \bigr)
  \end{equation*}
  and it is measurable as the pointwise limit of measurable functions.

  The second assertion follows.
\end{proof}

\begin{lemma}
  \label{lemma:DirectIntegralMeasurableSection}
  Let $(M_\Omega,e_I)$ be a measurable field of structures.
  \begin{enumerate}
  \item
    \label{item:DirectIntegralMeasurableSectionSimple}
    Every basic or simple section is measurable.
  \item
    \label{item:DirectIntegralMeasurableSectionLimit}
    Every pointwise limit (almost surely) of a sequence of measurable sections is measurable.
  \item
    \label{item:DirectIntegralMeasurableSectionLimitSimple}
    Conversely, every measurable section is a pointwise limit (almost surely) of a sequence of simple sections.
    Moreover, if $f \in M_{\Omega,I_0}$ for some countable $I_0 \subseteq I$, then it is a limit of simple sections in $M_{\Omega,I_0}$.
  \end{enumerate}
\end{lemma}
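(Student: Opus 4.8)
The plan is to handle the three items essentially independently, each by unwinding the definitions of measurable field and measurable section. For \autoref{item:DirectIntegralMeasurableSectionSimple}, a basic section $e_i$ is trivially $\{i\}$-measurable, since $\omega \mapsto d^{M_\omega}(e_i(\omega),e_i(\omega)) = 0$ is measurable and $e_i(\omega) \in M_{\omega,\{i\}}$. For a simple section $f(\omega) = e_{k(\omega)}(\omega)$ with $k$ measurable of finite image $\{i_1,\dots,i_m\}$, take $I_0 = \{i_1,\dots,i_m\}$: then $f(\omega)\in M_{\omega,I_0}$ everywhere, and for each $j\in I_0$ the function $\omega \mapsto d^{M_\omega}(f(\omega),e_j(\omega))$ agrees on the measurable set $k^{-1}(i_\ell)$ with $\omega\mapsto d^{M_\omega}(e_{i_\ell}(\omega),e_j(\omega))$, which is measurable by \autoref{lemma:DirectIntegralMeasurableDistance} (or directly by \autoref{defn:MeasurableField}\autoref{item:MeasurableFieldPredicate} applied to $d$); a finite patching of measurable functions over a finite measurable partition is measurable.

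For \autoref{item:DirectIntegralMeasurableSectionLimit}, suppose $f_n \to f$ pointwise almost surely, with each $f_n \in M_{\Omega,I_n}$ for some countable $I_n\subseteq I$. Put $I_0 = \bigcup_n I_n$, which is still countable. Since each $M_{\omega,I_n}$ is the closure of $\{e_i(\omega): i\in I_n\}\subseteq \{e_i(\omega): i\in I_0\}$, we have $M_{\omega,I_n}\subseteq M_{\omega,I_0}$, hence $f_n(\omega)\in M_{\omega,I_0}$ a.s.; as $M_{\omega,I_0}$ is closed and $f_n(\omega)\to f(\omega)$, also $f(\omega)\in M_{\omega,I_0}$ a.s. For measurability of the coordinate distances, fix $i\in I_0$; then $\omega\mapsto d^{M_\omega}(f_n(\omega),e_i(\omega))$ is measurable by \autoref{lemma:DirectIntegralMeasurableDistance} (each $f_n$ and $e_i$ are measurable sections), and by continuity of the metric it converges pointwise a.s.\ to $\omega\mapsto d^{M_\omega}(f(\omega),e_i(\omega))$, which is therefore measurable. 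Thus $f\in M_{\Omega,I_0}$.

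The real content is \autoref{item:DirectIntegralMeasurableSectionLimitSimple}, and this is the step I expect to be the main obstacle. Suppose $f\in M_{\Omega,I_0}$ with $I_0$ countable; well-order $I_0$. Fix $k\in\bN$ and define $i(k,\omega) = \min\{i\in I_0 : d^{M_\omega}(f(\omega),e_i(\omega)) < 1/k\}$; this is well-defined a.s.\ because $f(\omega)\in M_{\omega,I_0} = \cl{\{e_i(\omega):i\in I_0\}}$, and it is a measurable $I_0$-valued function since for each fixed $i$ the set $\{\omega : i(k,\omega) = i\}$ is the difference of the measurable set $\{d^{M_\omega}(f(\omega),e_i(\omega)) < 1/k\}$ and a countable union of such sets for smaller indices — here I use \autoref{lemma:DirectIntegralMeasurableDistance} to know each $\omega\mapsto d^{M_\omega}(f(\omega),e_i(\omega))$ is measurable. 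The section $g_k(\omega) := e_{i(k,\omega)}(\omega)$ has countable (not finite) image a priori, so it is not yet simple; but $i(k,\cdot)$ takes values in a countable set, so by truncating to the first $N_k$ values on a set of measure $\geq 1-2^{-k}$ and redefining $g_k$ arbitrarily (say as $e_{i_0}$) off that set, we obtain a genuinely simple $\tilde g_k\in M_{\Omega,I_0}$ with $d^{M_\omega}(f(\omega),\tilde g_k(\omega)) < 1/k$ on a set of measure $\geq 1-2^{-k}$. By Borel--Cantelli, $\tilde g_k(\omega)\to f(\omega)$ for a.e.\ $\omega$. The subtlety to be careful about is that the bookkeeping must keep every approximant inside $M_{\Omega,I_0}$, which it does since all the $e_i$ used have $i\in I_0$; the general statement (a limit of simple sections without the $I_0$-constraint) then follows from the case $I_0$ = the countable index set witnessing measurability of $f$.
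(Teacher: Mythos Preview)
Your proof is correct and follows essentially the same approach as the paper's. For item~\ref{item:DirectIntegralMeasurableSectionLimitSimple}, the paper enumerates $I_0 = \{i_j : j \in \bN\}$, defines $E_{n,\epsilon} = \{\omega : \exists j<n,\ d(f(\omega),e_{i_j}(\omega)) < \epsilon\}$, chooses $n(k)$ so that $\mu(\Omega \setminus E_{n(k),1/k}) < 1/k^2$, and sets $s_k$ to be the nearest basic section among the first $n(k)$ on $E_{n(k),1/k}$ and $e_{i_0}$ elsewhere, concluding via Borel--Cantelli --- exactly your truncation-plus-Borel--Cantelli argument with a different summable error sequence.
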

\begin{proof}
  Item \autoref{item:DirectIntegralMeasurableSectionSimple} is clear.

  For \autoref{item:DirectIntegralMeasurableSectionLimit}, assume that $f_n(\omega) \rightarrow f(\omega)$ pointwise almost surely.
  Then we may find $I_0 \subseteq I$ countable such that $f_n \in M_{\Omega,I_0}$ for all $n$.
  It is then easy to see that $f \in M_{\Omega,I_0}$ as well.

  For \autoref{item:DirectIntegralMeasurableSectionLimitSimple} let $f$ be a measurable section, say $f \in M_{\Omega,I_0}$, and enumerate $I_0 = \{i_j : j \in \bN\}$.
  For $n\in\bN$ and $\epsilon > 0$, let $E_{n,\epsilon}\subseteq\Omega$ be the set of $\omega$ for which there is $j<n$ such that $d\bigl( f(\omega), e_{i_j}(\omega) \bigr) < \epsilon$. Then for every $k\in\bN$ there is $n(k)$ such that $\mu\big(\Omega\setminus E_{n(k),1/k}\big) < 1/k^2$. For $\omega\in E_{n(k),1/k}$, we define $s_k(\omega)$ to be equal to $e_{i_j}(\omega)$ where $j < n(k)$ is least such that $d\bigl( f(\omega), e_{i_j}(\omega) \bigr) < 1/k$, and for $\omega\in \Omega\setminus E_{n(k),1/k}$, we let $s_k(\omega) = e_{i_0}(\omega)$.
  Then $s_k$ is a simple section in $M_{\Omega,I_0}$ for each $k$, and $s_k \rightarrow f$ pointwise on the set $\bigcup_m\bigcap_{k\geq m}E_{n(k),1/k}$, which has measure 1 by Borel--Cantelli.
\end{proof}

\begin{lemma}
  \label{lemma:DirectIntegralMeasurableFunctionSymbol}
  Let $(\Omega, \cB, \mu)$ be a probability space and let $(M_\Omega,e_I)$ be a measurable field of structures.
  Let $F \in \cL$ be a function symbol and $\bar{f}$ a tuple in $M_{\Omega,I}$ of the appropriate length.
  Then the section $\omega \mapsto F^{M_\omega}\bigl(\bar f(\omega)\bigr)$ is measurable.
  Consequently, the same holds for every $\cL$-term.
\end{lemma}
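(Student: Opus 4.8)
The plan is to reduce the statement for an arbitrary function symbol $F$ to the already-established measurability facts, and then bootstrap from function symbols to arbitrary terms by induction on term complexity. First I would fix a tuple $\bar f = (f_1,\dots,f_n)$ in $M_{\Omega,I}$. By \autoref{lemma:DirectIntegralMeasurableDistance} (its second assertion) there is a single countable $I_0 \subseteq I$ such that each $f_\ell \in M_{\Omega,I_0}$; moreover, by enlarging $I_0$ and using condition \autoref{item:MeasurableFieldCofinal} of \autoref{defn:MeasurableField} for the finite sublanguage $\cL_0 = \{F, d\}$, I may assume $I_0 \in \fI^\aff(M_\Omega,e_I,\cL_0)$, so that almost surely $M_{\omega,I_0} \preceq^\aff_{\cL_0} M_\omega$ and in particular $M_{\omega,I_0}$ is closed under $F^{M_\omega}$. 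Hence the section $\omega \mapsto F^{M_\omega}(\bar f(\omega))$ takes values in $M_{\omega,I_0}$ almost surely.

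Next I would use the approximation from \autoref{lemma:DirectIntegralMeasurableSection}\autoref{item:DirectIntegralMeasurableSectionLimitSimple}: each $f_\ell$ is an almost-sure pointwise limit of a sequence of simple sections $s^\ell_k \in M_{\Omega,I_0}$. By the continuity modulus $\delta_F$ respected by every interpretation of $F$, the sections $\omega \mapsto F^{M_\omega}(s^1_k(\omega),\dots,s^n_k(\omega))$ converge almost surely pointwise to $\omega \mapsto F^{M_\omega}(\bar f(\omega))$. So by \autoref{lemma:DirectIntegralMeasurableSection}\autoref{item:DirectIntegralMeasurableSectionLimit} it suffices to check that the section $\omega \mapsto F^{M_\omega}(\bar s_k(\omega))$ is measurable when $\bar s_k$ is a tuple of \emph{simple} sections. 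A simple section is $\omega \mapsto e_{m(\omega)}(\omega)$ for measurable $m$ with finite image, so on each piece of a finite measurable partition of $\Omega$ the tuple $\bar s_k$ agrees with a fixed basic tuple $\bar e$; on that piece the function $\omega \mapsto d^{M_\omega}(F^{M_\omega}(\bar e(\omega)), e_i(\omega))$ is measurable for every $i \in I_0$ by condition \autoref{item:MeasurableFieldFunction} of \autoref{defn:MeasurableField}. Therefore $F^{M_\omega}(\bar s_k(\omega)) \in M_{\Omega,I_0}$: it lies in $M_{\omega,I_0}$ almost surely and its distances to all the $e_i$, $i \in I_0$, are measurable (piece them together over the finite partition). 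This establishes measurability of $\omega \mapsto F^{M_\omega}(\bar f(\omega))$.

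For the "consequently" clause, I would argue by induction on the construction of an $\cL$-term $t(\bar x)$. The base case is a variable, where $t^{M_\omega}(\bar f(\omega)) = f_j(\omega)$ is measurable by hypothesis. For the inductive step $t = F(t_1,\dots,t_n)$, the sections $g_\ell \colon \omega \mapsto t_\ell^{M_\omega}(\bar f(\omega))$ are measurable by the induction hypothesis, so $g_\ell \in M_{\Omega,I}$, and then the first part of the lemma applied to $F$ and the tuple $(g_1,\dots,g_n)$ gives that $\omega \mapsto F^{M_\omega}(g_1(\omega),\dots,g_n(\omega)) = t^{M_\omega}(\bar f(\omega))$ is measurable.

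The routine points are the modulus-of-continuity estimate and the bookkeeping over the finite partition associated to the simple sections; the one place requiring genuine care — the main (mild) obstacle — is ensuring that a \emph{single} countable $I_0$ can be chosen that simultaneously contains all the $f_\ell$, is cofinal enough to witness $M_{\omega,I_0} \preceq^\aff_{\cL_0} M_\omega$ (so that $M_{\omega,I_0}$ is $F$-closed, making the output section land in $M_{\Omega,I_0}$), and supports the simple-section approximation; this is exactly what the cofinality-plus-closure-under-countable-chains property of $\fI^\aff(M_\Omega,e_I,\cL_0)$ noted after \autoref{defn:MeasurableField} provides.
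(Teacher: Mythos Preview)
Your proof is correct and follows essentially the same approach as the paper: choose a countable $I_0 \in \fI^\aff(M_\Omega,e_I,\{F\})$ containing the supports of the $f_\ell$, observe that the output section lands in $M_{\omega,I_0}$ almost surely, verify $I_0$-measurability first for basic tuples (by condition~\autoref{item:MeasurableFieldFunction}), then for simple tuples (by gluing over a finite partition), and finally pass to general measurable sections by approximation and continuity of $F$. The paper phrases the last step as a pointwise limit of the distance functions themselves, while you package it via closure of measurable sections under pointwise limits; this is the same argument.
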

\begin{proof}
  Choose $I_0 \in \fI^\aff(M_\Omega,e_I,\set{F})$ such that $\bar{f}$ is a tuple in $M_{\Omega,I_0}$.
  Then $F^{M_\omega}\bigl( \bar{f}(\omega) \bigr)$ belongs to $M_{\omega,I_0}$ almost surely.

  We now need to show that for $i \in I_0$, the function $d^{M_\omega}\bigl( F^{M_\omega}\bigl( \bar{f}(\omega) \bigr), e_i(\omega) \bigr)$ is measurable.
  When $\bar{f}$ consists of basic sections, this holds by definition, and the case where $\bar{f}$ consists of simple sections follows.
  For the general case, we may express the members of $\bar{f}$ as pointwise limits of simple sections in $M_{\Omega,I_0}$.
  Since $F^{M_\omega}$ is continuous, the desired function is a pointwise limit of measurable functions and therefore measurable.
\end{proof}

\begin{lemma}
  \label{lemma:DirectIntegralMeasurableLanguage}
  Let $(M_\Omega,e_I)$ be a measurable field of structures.
  Let $\varphi(\bar x)$ be an affine formula in $n$ variables and $\bar{f}$ an $n$-tuple in $M_{\Omega,I}$.
  Then the function $\omega \mapsto \varphi^{M_\omega}\bigl( \bar{f}(\omega) \bigr)$ is $\mu$-measurable.

  If moreover the field $(M_\Omega,e_I)$ is elementarily measurable, then the same holds for every continuous logic formula $\varphi(\bar x)$.
\end{lemma}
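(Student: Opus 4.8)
The plan is to prove the statement by induction on the syntactic construction of $\varphi$, exactly as one proves \Los's theorem, but keeping careful track of where the measurability passes through countable unions. First I would fix the formula $\varphi$, let $\cL_0$ be the (finite) set of symbols occurring in it, and use condition \autoref{item:MeasurableFieldCofinal} of \autoref{defn:MeasurableField} to choose a countable $I_0 \in \fI^\aff(M_\Omega,e_I,\cL_0)$ large enough that the tuple $\bar f$ consists of $I_0$-measurable sections; by \autoref{lemma:DirectIntegralMeasurableDistance} this is possible. The point of this choice is that for $\mu$-a.e.\ $\omega$, we have $M_{\omega,I_0} \preceq^\aff_{\cL_0} M_\omega$, so that $\varphi^{M_\omega}(\bar f(\omega)) = \varphi^{M_{\omega,I_0}}(\bar f(\omega))$ almost surely — this is what will tame the quantifier case, since a supremum over the whole fiber $M_\omega$ is replaced by a supremum over the separable piece $M_{\omega,I_0}$, which can be computed as a supremum over the countable dense set $\{e_i(\omega) : i \in I_0\}$.

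The induction then runs as follows. Atomic formulas: a term $t(\bar f)$ is a measurable section by \autoref{lemma:DirectIntegralMeasurableFunctionSymbol}, and then $P^{M_\omega}(t_1(\bar f(\omega)),\dots)$ is measurable because, writing each $t_j(\bar f)$ as a pointwise a.e.\ limit of simple sections in $M_{\Omega,I_0}$ (\autoref{lemma:DirectIntegralMeasurableSection}\autoref{item:DirectIntegralMeasurableSectionLimitSimple}) and using continuity of $P^{M_\omega}$ together with condition \autoref{item:MeasurableFieldPredicate}, it is a pointwise limit of measurable functions. Affine combinations: clear, since a finite $\R$-linear combination of measurable functions (plus a constant) is measurable. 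Quantifiers: suppose $\varphi(\bar x) = \sup_y \psi(\bar x, y)$ with $\psi$ handled by induction. Enlarging $I_0$ if necessary so that $I_0 \in \fI^\aff(M_\Omega,e_I,\cL_0')$ where $\cL_0'$ is the symbol set of $\psi$, for a.e.\ $\omega$ we get
\begin{gather*}
  \varphi^{M_\omega}(\bar f(\omega)) = \sup_{b \in M_\omega} \psi^{M_\omega}(\bar f(\omega), b) = \sup_{b \in M_{\omega,I_0}} \psi^{M_{\omega,I_0}}(\bar f(\omega), b) = \sup_{i \in I_0} \psi^{M_\omega}(\bar f(\omega), e_i(\omega)),
\end{gather*}
where the last equality uses $M_{\omega,I_0} \preceq^\aff_{\cL_0'} M_\omega$, density of $\{e_i(\omega):i\in I_0\}$ in $M_{\omega,I_0}$, and uniform continuity of $\psi$. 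Each $\omega \mapsto \psi^{M_\omega}(\bar f(\omega), e_i(\omega))$ is measurable by the induction hypothesis applied to the tuple $(\bar f, e_i)$, and a countable supremum of measurable functions is measurable; the $\inf$ case is symmetric.

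The main obstacle — and the reason condition \autoref{item:MeasurableFieldCofinal} is built into the definition — is precisely the quantifier step: without the a.s.\ reflection $M_{\omega,I_0} \preceq^\aff_{\cL_0} M_\omega$ one would have to take a supremum over the possibly non-separable fiber $M_\omega$, which need not be a measurable function of $\omega$. Everything else is routine bookkeeping about pointwise limits of simple sections. For the ``moreover'' clause, the identical argument works verbatim: continuous logic differs only in allowing arbitrary continuous connectives (still finitary, still continuous, hence preserving measurability and commuting with pointwise limits) and, crucially, the quantifier step now needs $M_{\omega,I_0} \preceq^\cont_{\cL_0} M_\omega$ a.s., which is exactly what elementary measurability of the field provides via $\fI^\cont(M_\Omega,e_I,\cL_0)$ being cofinal; one should note as in the affine case that $\fI^\cont$ is closed under countable increasing unions (by the elementary chain theorem) so that a single $I_0$ can be chosen to work for $\psi$ and hence for $\varphi$.
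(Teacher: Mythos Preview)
Your proposal is correct and follows essentially the same approach as the paper: induction on the construction of $\varphi$, with the atomic case handled via \autoref{lemma:DirectIntegralMeasurableFunctionSymbol} and approximation by simple sections, and the quantifier step handled by choosing $I_0 \in \fI^\aff(M_\Omega,e_I,\varphi)$ so that the supremum over $M_\omega$ collapses to a countable supremum over $\{e_i(\omega):i\in I_0\}$. The paper's version is slightly more streamlined in that it picks $I_0$ only at the quantifier step rather than up front, but the argument is the same.
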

\begin{proof}
  Consider first the case where $\varphi$ is a predicate symbol $P$.
  Then our assertion holds by definition when $\bar{f}$ consists of basic sections.
  The case of simple sections follows, and since $P^{M_\omega}$ is continuous, so does the case of general measurable sections.

  We now proceed by induction on the structure of $\varphi$.
  By \autoref{lemma:DirectIntegralMeasurableFunctionSymbol}, the case where $\varphi$ is an arbitrary atomic formula holds, and the case of affine connectives is immediate.
  For quantifiers, let $\varphi(\bar x) = \sup_y \psi(\bar x, y)$ and suppose by induction that the function $\omega \mapsto \psi^{M_\omega}(\bar{f}(\omega), g(\omega))$ is $\mu$-measurable for every measurable $g$, and in particular, for every basic $g = e_i$.
  Choose $I_0 \in \fI^\aff(M_\Omega,e_I,\varphi)$ so all members of $\bar{f}$ belong to $M_{\Omega,I_0}$.
  Then almost surely,
  \begin{equation*}
    \begin{split}
      \varphi^{M_\omega}\bigl( \bar{f}(\omega) \bigr)
      &= \varphi^{M_{\omega,I_0}}\bigl( \bar{f}(\omega) \bigr) \\
      &= \sup_{i \in I_0} \psi\bigl( \bar{f}(\omega), e_i(\omega) \bigr)^{M_{\omega,I_0}} \\
      &= \sup_{i \in I_0} \psi\bigl( \bar{f}(\omega), e_i(\omega) \bigr)^{M_\omega},
    \end{split}
  \end{equation*}
  and this is a $\mu$-measurable function as a countable supremum of $\mu$-measurable functions.

  The same argument works for continuous formulas, if the field of structures is elementarily measurable.
\end{proof}

We define a natural metric on $M_{\Omega,I}$ by
\begin{equation*}
  d(f, g) = \int_\Omega d^{M_\omega}\bigl( f(\omega), g(\omega) \bigr) \ud \mu(\omega).
\end{equation*}

\begin{prop}
  \label{p:DirectIntegralMetricStructure}
  Let $(M_\Omega,e_I)$ be a measurable field of structures.
  \begin{enumerate}
  \item
    \label{item:DirectIntegralMetricStructureCauchy}
    If $(f_n)$ is a Cauchy sequence in $(M_{\Omega,I},d)$, then there exist a subsequence $(f_{n_k})$ and $g \in M_{\Omega,I}$ such that $f_{n_k} \rightarrow g$ pointwise almost surely.
  \item
    \label{item:DirectIntegralMetricStructurePointwise}
    If $(f_n)$ is a sequence of measurable sections and $f_n \rightarrow f$ pointwise almost surely, then $d(f_n,f) \rightarrow 0$.
    Conversely, if $d(f_n,f) \rightarrow 0$, then $f_{n_k} \rightarrow f$ pointwise almost surely for some subsequence.
  \item
    \label{item:DirectIntegralMetricStructureComplete}
    The metric space $(M_{\Omega,I},d)$ is complete and the collection of simple sections is dense in $(M_{\Omega,I},d)$.
  \item
    \label{item:DirectIntegralMetricStructureStandard}
    If $\Omega$ is a standard probability space and $I$ is countable, then $M_{\Omega,I}$ is separable.
  \end{enumerate}
\end{prop}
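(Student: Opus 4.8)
The plan is to prove the four assertions in the order stated, relying throughout on two facts already established: the pointwise distance $\omega \mapsto d^{M_\omega}\bigl(f(\omega),g(\omega)\bigr)$ of two measurable sections is $\mu$-measurable (\autoref{lemma:DirectIntegralMeasurableDistance}), and it is bounded by $b_d$, the right endpoint of the value interval of the distance symbol. Together these reduce everything to standard $L^1$-type arguments.

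For \autoref{item:DirectIntegralMetricStructureCauchy}, pass to a subsequence $(f_{n_k})$ with $d(f_{n_{k+1}},f_{n_k}) < 2^{-k}$. Then $\sum_k d^{M_\omega}\bigl(f_{n_{k+1}}(\omega),f_{n_k}(\omega)\bigr)$ has integral at most $1$ by monotone convergence, so it is finite for $\mu$-a.e.\ $\omega$; for such $\omega$ the sequence $\bigl( f_{n_k}(\omega) \bigr)$ is Cauchy in the complete metric space $M_\omega$, hence converges to some $g(\omega)$, and on the remaining null set we set $g(\omega) = e_{i_0}(\omega)$ for a fixed $i_0 \in I$. By \autoref{lemma:DirectIntegralMeasurableSection}\autoref{item:DirectIntegralMeasurableSectionLimit}, $g$ is measurable. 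For \autoref{item:DirectIntegralMetricStructurePointwise}: if $f_n \to f$ pointwise a.e., the functions $\omega \mapsto d^{M_\omega}\bigl(f_n(\omega),f(\omega)\bigr)$ are $\mu$-measurable, bounded by $b_d$, and tend to $0$ a.e., so bounded convergence gives $d(f_n,f) \to 0$; conversely, from $d(f_n,f) \to 0$ one extracts a subsequence with $d(f_{n_k},f) < 2^{-k}$ and repeats the Borel--Cantelli argument above to obtain $f_{n_k} \to f$ pointwise a.e. Item \autoref{item:DirectIntegralMetricStructureComplete} is then immediate: a Cauchy sequence has, by \autoref{item:DirectIntegralMetricStructureCauchy}, a subsequence converging pointwise a.e.\ to some $g \in M_{\Omega,I}$; by \autoref{item:DirectIntegralMetricStructurePointwise} this subsequence also converges to $g$ in $d$, and the Cauchy condition forces the whole sequence to do so. Density of the simple sections follows by combining \autoref{lemma:DirectIntegralMeasurableSection}\autoref{item:DirectIntegralMeasurableSectionLimitSimple} with \autoref{item:DirectIntegralMetricStructurePointwise}.

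For \autoref{item:DirectIntegralMetricStructureStandard}, by density of the simple sections it suffices to exhibit a countable $d$-dense family of simple sections. When $I$ is countable, a simple section corresponds to a finite, $I$-labelled measurable partition $\Omega = \bigsqcup_j A_j$, and if $f, g$ arise from partitions $(A_j)$, $(A'_j)$ carrying the same labels, then $d(f,g) \leq b_d \sum_j \mu(A_j \triangle A'_j)$. Since a standard probability space has a separable measure algebra, fix a countable subalgebra $\cB_0 \subseteq \MALG(\Omega,\mu)$ dense in it; after disjointifying (which keeps the pieces inside $\cB_0$), every finite measurable partition is approximated in symmetric difference by a partition into sets from $\cB_0$. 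The resulting countably many simple sections, with labels ranging over the countable set $I$, are therefore $d$-dense, so $M_{\Omega,I}$ is separable.

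I expect the only point requiring care to be the bookkeeping in \autoref{item:DirectIntegralMetricStructureStandard} --- arranging the approximating sets into a genuine partition while staying within the fixed countable subalgebra, and checking that the family so obtained is honestly countable; the other three items are routine measure theory once \autoref{lemma:DirectIntegralMeasurableDistance} and the uniform bound $b_d$ are in hand.
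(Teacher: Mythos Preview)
Your proposal is correct and follows essentially the same approach as the paper. The paper's proof is terser---for \autoref{item:DirectIntegralMetricStructureCauchy} it just says ``since the distance is measurable, we may find a subsequence such that $(f_{n_k}(\omega))$ is almost surely Cauchy,'' for \autoref{item:DirectIntegralMetricStructurePointwise} it invokes dominated convergence and refers back to \autoref{item:DirectIntegralMetricStructureCauchy} for the converse, and for \autoref{item:DirectIntegralMetricStructureStandard} it simply notes that the measure algebra of a standard space is separable---but the underlying arguments are the ones you have spelled out.
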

\begin{proof}
  For item \autoref{item:DirectIntegralMetricStructureCauchy}, choose $I_0$ large enough so $f_n \in M_{\Omega,I_0}$ for all $n$.
  Since the distance is measurable, we may find a subsequence $(f_{n_k})$ such that $\bigl( f_{n_k}(\omega) \bigr)$ is almost surely Cauchy in the complete space $M_\omega$.
  Let $g(\omega) = \lim_k f_{n_k}(\omega)$.
  Then $g \in M_{\Omega,I_0}$ and it is almost surely the pointwise limit of $(f_{n_k})$.

  Item \autoref{item:DirectIntegralMetricStructurePointwise} holds by the dominated convergence theorem, and its converse part follows from \autoref{item:DirectIntegralMetricStructureCauchy}.

  Item \autoref{item:DirectIntegralMetricStructureComplete} follows from the previous items together with \autoref{lemma:DirectIntegralMeasurableSection}~\autoref{item:DirectIntegralMeasurableSectionLimitSimple}.

  Item \autoref{item:DirectIntegralMetricStructureStandard} follows from \autoref{item:DirectIntegralMetricStructureComplete} and the fact that the measure algebra of a standard probability space is separable.
\end{proof}

We may now make $M_{\Omega,I}$ into an $\cL$-structure.
A function symbol $F \in \cL$ is interpreted pointwise, that is,
\begin{equation}
  \label{eq:def-functions}
  F^{M_{\Omega,I}}(\bar f)(\omega) = F^{M_\omega}\bigl( \bar f(\omega) \bigr).
\end{equation}
By \autoref{lemma:DirectIntegralMeasurableFunctionSymbol}, this is a measurable section.

A predicate symbol $P$ is treated in the same fashion as the metric, namely
\begin{equation}
  \label{eq:def-predicates}
  P^{M_{\Omega,I}}(\bar f) = \int_\Omega P^{M_\omega}\bigl( \bar f(\omega) \bigr) \ud \mu(\omega).
\end{equation}
It is clear that $P^{M_{\Omega,I}}$ respects the appropriate bound for $P$.

In both cases, an easy calculation using the convexity of the continuity moduli and Jensen's inequality shows that the interpretation respects the required continuity modulus.

\begin{defn}
  Let $(\Omega, \mu)$ be a probability space and let $(M_\Omega,e_I)$ be a measurable field of structures.
  The structure constructed above, namely, the collection $M_{\Omega,I}$ of all measurable sections (up to almost sure equality), equipped with the interpretation of the language, is called the \emph{direct integral} of $(M_\Omega,e_I)$, and denoted
  \begin{equation*}
    M_{\Omega,I} = \int^\oplus_\Omega M_\omega \ud \mu(\omega).
  \end{equation*}
\end{defn}

\begin{lemma}
  \label{lem:LosQuantifier}
  Let $(\Omega, \mu)$ be a probability space, let $(M_\Omega,e_I)$ be a measurable field of structures.
  Then for every affine formula $\psi(x,y)$ (with singleton $y$), every $\eps>0$ and every tuple $\bar f \in M_{\Omega,I}^x$, there is a measurable section $g\in M_{\Omega,I}$ such that
  \begin{gather*}
    \psi^{M_\omega}\bigl( \bar{f}(\omega), g(\omega) \bigr) > \sup_{y \in M_\omega} \psi^{M_\omega}\bigl( \bar f(\omega), y \bigr) - \eps
  \end{gather*}
  for almost every $\omega\in\Omega$. Consequently:
  \begin{gather*}
    \sup_{g \in M_{\Omega,I}} \int_\Omega \psi^{M_\omega}\bigl( \bar f(\omega), g(\omega) \bigr) \ud \mu(\omega)
    = \int_\Omega \sup_{y \in M_\omega} \psi^{M_\omega}\bigl( \bar f(\omega), y \bigr) \ud \mu(\omega).
  \end{gather*}

  If moreover the field $(M_\Omega,e_I)$ is elementarily measurable, then the same holds for every continuous logic formula $\psi(\bar x,y)$.
\end{lemma}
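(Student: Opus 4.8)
The plan is to prove the statement by reducing the existence of a good ``near-optimal'' section $g$ to a measurable selection combined with the measurable-field structure, and then deducing the displayed identity by dominated convergence. First I would fix $\eps > 0$, an affine formula $\psi(x,y)$ with $y$ a singleton, and a tuple $\bar f \in M_{\Omega,I}^x$. Using \autoref{lemma:DirectIntegralMeasurableDistance} we may find a countable $I_0 \subseteq I$ so that every coordinate of $\bar f$ lies in $M_{\Omega,I_0}$; enlarging $I_0$ inside $\fI^\aff(M_\Omega,e_I,\psi)$ (possible by \autoref{defn:MeasurableField}\autoref{item:MeasurableFieldCofinal} and the fact that this family is closed under countable unions), we may assume moreover that $M_{\omega,I_0} \preceq^\aff_{\cL_0} M_\omega$ for almost every $\omega$, where $\cL_0$ is the finite sublanguage of the symbols occurring in $\psi$. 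By the affine Tarski--Vaught property this gives, almost surely,
\begin{gather*}
  \sup_{y \in M_\omega} \psi^{M_\omega}\bigl(\bar f(\omega),y\bigr) = \sup_{i \in I_0} \psi^{M_\omega}\bigl(\bar f(\omega),e_i(\omega)\bigr),
\end{gather*}
and by \autoref{lemma:DirectIntegralMeasurableLanguage} each $\omega \mapsto \psi^{M_\omega}(\bar f(\omega),e_i(\omega))$ is $\mu$-measurable, hence so is the countable supremum $h(\omega) \coloneqq \sup_{y \in M_\omega}\psi^{M_\omega}(\bar f(\omega),y)$.

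Next I would build $g$ by an explicit measurable choice from the enumeration $(e_i : i \in I_0)$, exactly as in the proof of \autoref{lemma:DirectIntegralMeasurableSection}\autoref{item:DirectIntegralMeasurableSectionLimitSimple}. Fix a well-ordering of $I_0$ and set $k(\omega) = \min\set{i \in I_0 : \psi^{M_\omega}(\bar f(\omega),e_i(\omega)) > h(\omega) - \eps}$; this minimum exists for almost every $\omega$ by the displayed equality above, and $\omega \mapsto k(\omega)$ is $\mu$-measurable because for each fixed $i$ the set $\set{\omega : k(\omega) = i}$ is a Boolean combination of the measurable sets $\set{\omega : \psi^{M_\omega}(\bar f(\omega),e_j(\omega)) > h(\omega) - \eps}$ for $j \preceq i$. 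Then $g(\omega) = e_{k(\omega)}(\omega)$ is a measurable section (it is a pointwise value of the basic sections selected by a measurable index; this is covered by the same argument that shows simple sections are measurable, taking a measurable exhaustion of $I_0$ by finite sets if one wants to stay literally within the ``finite image'' definition of simple, then passing to a pointwise limit via \autoref{lemma:DirectIntegralMeasurableSection}\autoref{item:DirectIntegralMeasurableSectionLimit}), and by construction it satisfies $\psi^{M_\omega}(\bar f(\omega),g(\omega)) > h(\omega) - \eps$ almost surely, which is the first assertion.

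For the consequence, the inequality ``$\leq$'' in the displayed supremum identity is immediate since $\psi^{M_\omega}(\bar f(\omega),g(\omega)) \leq h(\omega)$ pointwise for every measurable $g$, and integration preserves this. For ``$\geq$'', apply the first part with $\eps_n = 1/n$ to get sections $g_n$ with $\int_\Omega \psi^{M_\omega}(\bar f(\omega),g_n(\omega))\ud\mu(\omega) \geq \int_\Omega h \ud\mu - 1/n$; since $\psi^{M_\omega}$ is uniformly bounded (by the value interval attached to $\psi$), dominated convergence is not even needed here — the bound on the integrands suffices — and letting $n \to \infty$ gives the reverse inequality. Finally, the ``moreover'' clause follows by rerunning the identical argument with $\cL_0$ the symbols of a continuous formula $\psi$, replacing $\preceq^\aff_{\cL_0}$ by $\preceq^\cont_{\cL_0}$, $\fI^\aff$ by $\fI^\cont$, the affine Tarski--Vaught property by the continuous-logic one, and invoking the elementary-measurability clause of \autoref{lemma:DirectIntegralMeasurableLanguage}.

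The main obstacle I anticipate is the measurability of the selected section $g$: one must be careful that the index map $\omega \mapsto k(\omega)$ really is $\mu$-measurable when $I_0$ is merely countable and well-ordered (not necessarily order-isomorphic to $\bN$ in a way compatible with the given enumeration), and that $\omega \mapsto e_{k(\omega)}(\omega)$ then qualifies as a measurable section in the precise sense of \autoref{defn:DirectIntegralMeasurableSection}. This is handled by the ``least index below a measurable threshold'' trick above together with \autoref{lemma:DirectIntegralMeasurableSection}, but it is the only point that requires genuine care rather than bookkeeping.
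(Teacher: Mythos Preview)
Your proposal is correct and follows essentially the same approach as the paper: choose a countable $I_0 \in \fI^\aff(M_\Omega,e_I,\psi)$ containing the supports of the $\bar f$, reduce the pointwise supremum to one over $\set{e_i(\omega) : i \in I_0}$, and then perform a least-index measurable selection along a well-ordering of $I_0$. The paper's proof is terser (it simply asserts that $g(\omega)=e_{i(\omega)}(\omega)$ is a.e.\ equal to a measurable section, referring back to the argument of \autoref{lemma:DirectIntegralMeasurableDistance}), but your more explicit justification of the measurability of $k$ and of $g$ is accurate and does not deviate from the intended line.
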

\begin{proof}
  Let $s(\omega) = \sup_{y \in M_\omega} \psi^{M_\omega}\bigl( \bar f(\omega), y \bigr)$. This is a $\mu$-measurable function, by \autoref{lemma:DirectIntegralMeasurableLanguage}.
  Choose $I_0 \in \fI^\aff(M_\Omega,e_I,\psi)$ such that $\bar f$ is a tuple from $M_{\Omega,I_0}$.
  Then almost surely $M_{\omega,I_0} \preceq^\aff_{\cL_0} M_\omega$, so
  \begin{equation*}
    s(\omega)
    = \sup_{y \in M_{\omega,I_0}} \psi^{M_\omega}\bigl( \bar f(\omega), y \bigr)
    = \sup_{i \in I_0} \psi^{M_\omega}\bigl( \bar f(\omega), e_i(\omega) \bigr).
  \end{equation*}
  As in the proof of \autoref{lemma:DirectIntegralMeasurableDistance}, we may assume that $I_0$ is well-ordered, and define
  \begin{gather*}
    i(\omega) = \min \, \bigl\{i \in I_0 : \psi^{M_\omega}\bigl( \bar{f}(\omega), e_i(\omega) \bigr) > s(\omega) - \eps\bigr\}.
  \end{gather*}
  Then $g(\omega) = e_{i(\omega)}(\omega)$ is a.e.\ equal to a measurable section, and $\psi^{M_\omega}\bigl( \bar{f}(\omega), g(\omega) \bigr) > s(\omega)-\eps$ almost surely. The rest follows.

  The case of elementarily measurable fields and continuous formulas holds with the same proof.
\end{proof}

\begin{theorem}[\Los's theorem for direct integrals]
  \label{th:Los}
  Let $(M_\Omega,e_I)$ be a measurable field of $\cL$-structures over a probability space $(\Omega, \mu)$.
  Then for every affine $\cL$-formula $\varphi$ and tuple $\bar f$ from $M_{\Omega,I}$, the function $\omega \mapsto \varphi^{M_\omega}\bigl( \bar f(\omega) \bigr)$ is $\mu$-measurable and
  \begin{equation}
    \label{eq:Los}
    \varphi^{M_{\Omega,I}}(\bar f) = \int_\Omega \varphi^{M_\omega}\bigl( \bar f(\omega) \bigr) \ud \mu(\omega).
  \end{equation}
\end{theorem}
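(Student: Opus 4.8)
The plan is to prove \autoref{eq:Los} by induction on the structure of the affine formula $\varphi$, with \autoref{lemma:DirectIntegralMeasurableLanguage} already supplying the measurability of $\omega \mapsto \varphi^{M_\omega}\bigl(\bar f(\omega)\bigr)$, so that only the integral identity remains. First I would handle the base case: if $\varphi$ is a predicate symbol $P$ applied to a term tuple, then by \autoref{lemma:DirectIntegralMeasurableFunctionSymbol} the compositions with the relevant terms are again measurable sections, and \autoref{eq:Los} is then precisely the defining formula \autoref{eq:def-predicates} for $P^{M_{\Omega,I}}$ (using that the interpretation of a term is computed pointwise, as in \autoref{eq:def-functions}). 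The case of atomic formulas built from function symbols reduces to this, again via \autoref{lemma:DirectIntegralMeasurableFunctionSymbol}.

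Next, for the inductive step over affine connectives: an affine combination $\varphi = \sum_j r_j \varphi_j + c$ satisfies \autoref{eq:Los} because both sides are affine in the data and integration is linear — $\varphi^{M_{\Omega,I}}(\bar f) = \sum_j r_j \varphi_j^{M_{\Omega,I}}(\bar f) + c = \sum_j r_j \int_\Omega \varphi_j^{M_\omega}(\bar f(\omega))\ud\mu + c = \int_\Omega \varphi^{M_\omega}(\bar f(\omega))\ud\mu$, the middle equality being the induction hypothesis and the last one linearity of the integral together with $\mu(\Omega)=1$. This step is entirely routine.

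The only real content is the quantifier step, and this is where \autoref{lem:LosQuantifier} does the work. Suppose $\varphi(\bar x) = \sup_y \psi(\bar x, y)$ and that \autoref{eq:Los} holds for $\psi$ with every tuple of measurable sections (in particular with $\bar f$ extended by an arbitrary measurable $g$). On one hand, for each measurable $g \in M_{\Omega,I}$ we have $\psi^{M_\omega}(\bar f(\omega), g(\omega)) \leq \sup_{y\in M_\omega}\psi^{M_\omega}(\bar f(\omega),y)$ pointwise, so by the induction hypothesis $\psi^{M_{\Omega,I}}(\bar f, g) = \int_\Omega \psi^{M_\omega}(\bar f(\omega),g(\omega))\ud\mu \leq \int_\Omega \varphi^{M_\omega}(\bar f(\omega))\ud\mu$; taking the supremum over $g$ and recalling $\varphi^{M_{\Omega,I}}(\bar f) = \sup_{g} \psi^{M_{\Omega,I}}(\bar f, g)$ gives $\leq$. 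For the reverse inequality, given $\eps > 0$, \autoref{lem:LosQuantifier} produces a measurable section $g$ with $\psi^{M_\omega}(\bar f(\omega), g(\omega)) > \sup_{y\in M_\omega}\psi^{M_\omega}(\bar f(\omega),y) - \eps$ for almost every $\omega$; integrating and using the induction hypothesis, $\varphi^{M_{\Omega,I}}(\bar f) \geq \psi^{M_{\Omega,I}}(\bar f,g) = \int_\Omega \psi^{M_\omega}(\bar f(\omega),g(\omega))\ud\mu \geq \int_\Omega \varphi^{M_\omega}(\bar f(\omega))\ud\mu - \eps$. Letting $\eps \to 0$ finishes the case $\sup_y$; the case $\inf_y$ is symmetric (or follows by negating). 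The main obstacle is thus already isolated and dispatched in \autoref{lem:LosQuantifier} — the delicate point being the measurable choice of an approximate witness section, which relies on the cofinality condition \autoref{item:MeasurableFieldCofinal} of \autoref{defn:MeasurableField} so that one may pass to a countable $I_0$ with $M_{\omega,I_0}\preceq^\aff_{\cL_0} M_\omega$ almost surely and take a well-ordered minimum over $I_0$. In the body of the proof there is essentially nothing left to do beyond assembling these three cases.
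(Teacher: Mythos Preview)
Your proposal is correct and follows exactly the paper's approach: measurability via \autoref{lemma:DirectIntegralMeasurableLanguage}, then induction on formulas with the atomic and connective cases being routine (the paper just calls them ``easy'') and the quantifier step handled by \autoref{lem:LosQuantifier}. You have simply spelled out in detail what the paper compresses into two sentences.
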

\begin{proof}
  The measurability part is just \autoref{lemma:DirectIntegralMeasurableLanguage}.
  For \autoref{eq:Los}, we proceed by induction, where the cases of atomic formulas and affine connectives are easy, and quantifiers are dealt with in \autoref{lem:LosQuantifier}.
\end{proof}

A Feferman--Vaught-like theorem for continuous logic formulas in direct integrals of elementarily measurable fields will be discussed in \autoref{sec:continuity-direct-integral}.

It is natural to ask how much the direct integral construction depends on the choice of the basic sections $e_I$.

\begin{defn}
  \label{df:equiv-fields}
  Let $e_I$, $e'_J$ be two pointwise enumerations of a field of structures $M_\Omega$ such that $(M_\Omega,e_I)$ and $(M_\Omega,e'_J)$ are measurable fields. We say that $e_I$ and $e'_J$ are \emph{equivalent} if the functions
  \begin{equation*}
    \omega \mapsto d^{M_\omega}\bigl( e_i(\omega), e_j'(\omega) \bigr)
  \end{equation*}
  are measurable for all $i \in I$ and $j \in J$.
\end{defn}

\begin{remark}
  \label{rem:equivalent-fields}
  It follows from \autoref{lemma:DirectIntegralMeasurableDistance} that equivalence of pointwise enumerations is an equivalence relation.
  If $e_I$ and $e'_J$ are equivalent, then a section is measurable with respect to $e_I$ if and only if it is measurable with respect to $e'_J$.
  Consequently, $(M_\Omega,e_I)$ and $(M_\Omega,e'_J)$ have the same direct integral.
\end{remark}

\begin{ntn}
  \label{ntn:DirectIntegralConvexCombination}
  A particularly simple case of the direct integral construction is when $\Omega$ is countable and all singletons are measurable.
  Then all functions defined on $\Omega$ are measurable, and all possible choices of a pointwise enumeration $e_I$ are equivalent.
  In this case, we say that the measurable field is \emph{atomic} and call the direct integral $\int^{\oplus}_\Omega M_\omega \ud \mu(\omega)$ a \emph{convex combination} of the family $M_\Omega$. For these direct integrals we may use the notation
  $$\bigoplus_\omega \lambda_\omega M_\omega,$$
  where $\lambda_\omega = \mu(\set{\omega})$. This generalizes the construction of \autoref{lemma:NaiveConvexLos}. We may denote the elements (and tuples) of the convex combination of the family $M_\Omega$ by expressions of the form $\bigoplus\lambda_\omega a_\omega$, with $a_\omega\in M_\omega$. For example, the elements of $\half M\oplus\half N$ will be written $\half a\oplus\half b$, with $a\in M$ and $b\in N$.
\end{ntn}

Another important special case is when the field of structures is constant.

\begin{defn}
  \label{dfn:DirectMultiple}
  Let $(\Omega,\mu)$ be a probability space, and assume that $M_\omega = M$ for all $\omega\in\Omega$.
  Let $I \subseteq M$ be dense, and let $e_I$ be the collection of constant sections $e_i(\omega) = i$.
  We call the measurable field $(M_\Omega,e_I)$ constructed in this fashion a \emph{constant measurable field}.

  We call the corresponding direct integral the \emph{direct multiple} of $M$ by $(\Omega,\mu)$, and denote it by
  $$L^1(\Omega,\mu,M) \coloneqq \int^\oplus_\Omega M\ud \mu(\omega),$$
  or simply by $L^1(\Omega,M)$ if the measure is understood. It is clear that it does not depend on the choice of $I$.
\end{defn}

Note that by \Los's theorem for direct integrals, we have $M\preceq^\aff L^1(\Omega,M)$.

\begin{remark}
  \label{rem:L1-only-depends-MALG}
  The structure $L^1(\Omega, \mu, M)$ only depends on $\MALG(\Omega,\mu)$ and on $M$. This is because it can be viewed as the completion of the simple measurable sections, and each one of these is determined by a finite measurable partition of $\Omega$ up to measure zero and the corresponding elements of $M$.
\end{remark}

\begin{remark}
  \label{rem:L1OmegaM}
  If $(\Omega, \cB, \mu)$ is a measure space and $(M, d)$ is a metric space, one may wish to consider \emph{Borel measurable} functions $f \colon \Omega \to M$, i.e., those such that $f^{-1}(B) \in \cB$ for every Borel set $B \sub M$. This is a very strong condition for non-separable $M$. Indeed, by a theorem of Marczewski and Sikorski~\cite{Marczewski1948}, if $f \colon \Omega \to M$ is Borel measurable and the density character of $M$ is not greater than a \emph{real-valued measurable cardinal}, then $f$ is a.e.\ equal to a function with separable image. The existence of a real-valued measurable cardinal is equiconsistent with the existence of a measurable cardinal, which is a large cardinal hypothesis strictly stronger than the consistency of $\mathsf{ZFC}$ (see \cite[Thm.~22.1]{Jech2003}).

  On the other hand, we can reformulate \autoref{dfn:DirectMultiple} as follows:
  \begin{equation*}
    L^1(\Omega, \mu, M) = \set{f \colon \Omega \to M : \fd(\Im f)=\aleph_0 \text{ and $f$ is Borel measurable}} / =_{\text{a.e.}}.
  \end{equation*}
  The function symbols in $L^1(\Omega, \mu, M)$ are interpreted pointwise and the predicates by integrating.

  In view of the remark above, if $\fd(M)$ is small or simply there are no real-valued measurable cardinals, we can omit the condition $\fd(\Im f) = \aleph_0$ from the definition above.
\end{remark}

\begin{remark}
  \label{rmk:DirectMultipleDenseRealizedTypes}
  If $\Omega$ is an atomless probability space and $N = L^1(\Omega,M)$, then the collection of $n$-types realized in $N$ is closed under convex combinations.
  Letting $T = \Th^\aff(M) = \Th^\aff(N)$, it follows from \autoref{l:realized-convex-dense} that the collection of types realized in $N$ is dense in $\tS^\aff_n(T)$.
\end{remark}

\begin{remark}
  \label{rmk:elem-measurable-fields-convex-comb-direct-mult}
  We have introduced two notions of measurability for fields of structures: measurable and elementarily measurable fields.
  In some distinguished cases, a measurable field is automatically elementarily measurable.
  These include, of course, fields with a countable pointwise enumeration, but also atomic fields (i.e., those underlying convex combinations of structures) and constant fields (those underlying direct multiples).
  Other important cases will be discussed later.
\end{remark}

In \autoref{sec:DirectIntegralExtremalModels} we are going to see that every model of $T$ embeds affinely in a direct integral of extremal models.
On the other hand, if we wish to embed an extremal model in a direct integral, we have the following ``rigidity'' phenomenon.

\begin{prop}
  \label{prp:DirectIntegralExtremalRigid}
  Let $(M_\Omega,\mu)$ be a measurable field of models of an affine theory $T$, let $N \models T$ be an extremal model, and assume that $N \preceq^\aff \int_\Omega^\oplus M_\omega\ud\mu(\omega)$.
  Then for every affine formula $\phi(x)$, $a \in N^x$, and almost every $\omega \in \Omega$,
  \begin{equation*}
    \phi^N(a) = \phi^{M_\omega}(a(\omega)).
  \end{equation*}
  In particular, if $T$ has a separable language and $N$ is separable, then the map $a \mapsto a(\omega)$ is an affine embedding $N \to M_\omega$ for almost every $\omega$.
\end{prop}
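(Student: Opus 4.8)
The plan is to reduce to a finite tuple of variables and then combine \Los's theorem for direct integrals (\autoref{th:Los}) with the extremality of $\tp^\aff(a)$. Write $N' = \int_\Omega^\oplus M_\omega \ud\mu(\omega)$. Since an affine formula $\phi(x)$ mentions only finitely many variables, we may assume $x$ is finite, so that $a$ is a finite tuple of measurable sections; then $p = \tp^\aff(a) \in \cE_x(T)$ because $N$ is an extremal model (and, if the original $x$ was infinite, passing to the finite subtuple keeps the type extreme by \autoref{prop:ExtremeTypeTwoSteps}). By \autoref{th:Los} together with $N \preceq^\aff N'$, for every affine formula $\psi(x)$ the function $\omega \mapsto \psi^{M_\omega}\bigl(a(\omega)\bigr)$ is bounded and $\mu$-measurable, and $\psi^N(a) = \psi^{N'}(a) = \int_\Omega \psi^{M_\omega}\bigl(a(\omega)\bigr)\ud\mu(\omega)$.

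The key step is to localize extremality to a single measurable set. For each $\mu$-measurable $A \subseteq \Omega$ with $\mu(A) > 0$, define $q_A \colon \cL^\aff_x(T) \to \bR$ by $q_A(\psi) = \tfrac{1}{\mu(A)}\int_A \psi^{M_\omega}\bigl(a(\omega)\bigr)\ud\mu(\omega)$. Since every $M_\omega \models T$, this is well defined on the quotient $\cL^\aff_x(T)$, and it is linear, unital and positive, so $q_A \in \tS^\aff_x(T)$. Splitting the integral over $A$ and $\Omega\sminus A$ gives $p = \mu(A)\,q_A + \mu(\Omega\sminus A)\,q_{\Omega\sminus A}$ in $\tS^\aff_x(T)$; when $0 < \mu(A) < 1$, extremality of $p$ forces $q_A = p$, and the cases $\mu(A)\in\{0,1\}$ are trivial. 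Hence $\int_A \psi^{M_\omega}\bigl(a(\omega)\bigr)\ud\mu = \mu(A)\,\psi(p) = \int_A \psi^N(a)\ud\mu$ for \emph{every} $\mu$-measurable $A$, and since $\omega \mapsto \psi^{M_\omega}(a(\omega))$ and the constant $\psi^N(a)$ are integrable functions agreeing in integral over every measurable set, they are equal $\mu$-a.e. Taking $\psi = \phi$ yields the first assertion.

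For the ``in particular'' part, assume $T$ has a separable language and $N$ is separable. Fix a countable dense subset $\{a_i : i \in \bN\}$ of $N$, viewed as sections of $N'$, and a countable family $\Psi = \bigcup_n \Psi_n$ where $\Psi_n \subseteq \cL^\aff_n(T)$ is dense (this uses $\density_T(\cL) = \aleph_0$). Applying the first part to each $\psi \in \Psi$ and each finite subtuple $a_{\bar\imath}$ of $(a_i)$, and intersecting these countably many conull sets, we get a conull $\Omega_0 \subseteq \Omega$ with $\psi^{M_\omega}\bigl(a_{\bar\imath}(\omega)\bigr) = \psi^N(a_{\bar\imath})$ for all $\omega \in \Omega_0$, $\psi \in \Psi$, $\bar\imath$. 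By density of $\Psi$ and uniform continuity of affine formulas, the same equality holds for all affine formulas; in particular $d^{M_\omega}\bigl(a_i(\omega),a_j(\omega)\bigr) = d^N(a_i,a_j)$, so for $\omega \in \Omega_0$ the map $a_i \mapsto a_i(\omega)$ extends uniquely to an isometry $N \to M_\omega$, which (again by density and continuity) preserves every affine formula, i.e.\ is an affine embedding.

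The main obstacle is the key step. One natural route would be to push $\mu$ forward along $\omega \mapsto \tp^\aff(a(\omega))$ to a Radon measure on $\tS^\aff_x(T)$ with barycenter $p$ and invoke \autoref{l:barycenter-face-measure-one} with the closed face $\{p\}$; but this requires handling measurability of the type-valued map, which is delicate when the language is not separable. The conditional-functional argument above avoids this entirely and gives the statement with no separability hypothesis; everything else is routine bookkeeping with conull sets and uniform continuity.
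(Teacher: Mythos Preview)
Your proof is correct. The paper's argument is a compressed version of the same idea: it forms the pushforward $\xi_*\mu$ on $\tS^\aff_x(T)$ via $\xi(\omega)=\tp^\aff(a(\omega))$, observes that its barycenter is the extreme point $p$, and concludes $\xi_*\mu=\delta_p$ (so for each $\phi$ the set $\{\omega:\phi^{M_\omega}(a(\omega))\neq\phi(p)\}$ is null). Your conditional-functional argument with the $q_A$'s is an elementary unwinding of exactly this: instead of saying ``the pushforward is Dirac,'' you check directly that every conditional average equals $p$.

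Your worry in the final paragraph about the pushforward route is unfounded. Baire-measurability of $\xi$ is immediate from \autoref{lemma:DirectIntegralMeasurableLanguage} (the measurability half of \autoref{th:Los}): the Baire $\sigma$-algebra on $\tS^\aff_x(T)$ is generated by the affine functions, and each $\omega\mapsto\varphi^{M_\omega}(a(\omega))$ is $\mu$-measurable regardless of the cardinality of the language. So the paper's shorter route needs no separability hypothesis either. That said, your version is self-contained and avoids invoking \autoref{l:barycenter-face-measure-one}; the cost is a few extra lines. Your treatment of the ``in particular'' clause is more detailed than the paper's (which leaves it implicit) and is fine.
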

\begin{proof}
  Fix $a \in N^x$ and let $p = \tp^\aff(a)$. Consider the map $\xi \colon \Omega \to \tS^\aff_x(T)$ defined by $\xi(\omega) = \tp^\aff(a(\omega))$, which is measurable with respect to the Baire algebra on $\tS^\aff_x(T)$. It follows from \autoref{th:Los} that $R(\xi_*\mu) = p$ and as $p$ is extreme, $\xi_*\mu$ must be the Dirac measure at $p$. Therefore, for every affine formula $\phi(x)$, the set $\set{\omega \in \Omega : \phi(p) \neq \phi^{M_\omega}(a(\omega))}$ is $\mu$-null.
\end{proof}

We finish this section with isolating a non-degeneracy condition for a measurable field that will be particularly relevant for proving the uniqueness of the integral decomposition in certain situations (see \autoref{sec:ExtremalDecomposition}).
What we want to exclude, for example, is the situation where every $M_\omega$ is a single point and the measure space $\Omega$ is non-trivial.

\begin{remark}
  \label{rmk:LosForTypes}
  Let $(M_\Omega,e_I)$ be a measurable field of $\cL$-structures over a probability space $(\Omega, \cB, \mu)$, and let $M_{\Omega,I}$ be its direct integral.
  Let $f = (f_i : i < \kappa)$ be a sequence of measurable sections, and let $\theta\colon \Omega \rightarrow \tS^\aff_\kappa(\cL)$ be the map $\omega \mapsto \tp^\aff\bigl( f(\omega) \bigr)$.
  If $\varphi(x)$ is an affine formula in $x = (x_i : i < \kappa)$, then $\varphi \circ \theta\colon \Omega \rightarrow \bR$ is $\mu$-measurable by \autoref{lemma:DirectIntegralMeasurableLanguage}, so $\theta$ is measurable with respect to the $\sigma$-algebra of Baire sets in $\tS^\aff_\kappa(\cL)$ and the $\sigma$-algebra $\cB_\mu$ of $\mu$-measurable sets in $\Omega$.

  Then \autoref{eq:Los} can be restated by saying that $R(\theta_* \mu) = \tp^\aff(f)$ in $M_{\Omega,I}$, and the map $\theta\colon (\Omega,\mu) \rightarrow \bigl( \tS^\aff_\kappa(\cL), \theta_* \mu \bigr)$ induces an embedding of measure algebras
  \begin{gather}
    \label{eq:LosForTypes}
    \theta^*\colon \MALG\bigl( \tS^\aff_\kappa(\cL), \theta_* \mu \bigr) \hookrightarrow \MALG(\Omega,\mu).
  \end{gather}
\end{remark}

\begin{defn}
  \label{dfn:NonDenegerateField}
  In the setting of \autoref{rmk:LosForTypes}, assume moreover that the sequence $f$ is dense in the direct integral $M_{\Omega,I}$. We say then that the measurable field $(M_\Omega,e_I)$ is \emph{non-degenerate} if the embedding $\theta^*$ of \autoref{eq:LosForTypes} is surjective.
\end{defn}

Notice that this does not depend on the choice of the dense sequence $f$.
Indeed, we may always extend $f$ to an enumeration $g = (g_\alpha : \alpha < \lambda)$ of \emph{all} measurable sections.
We obtain a sequence of maps
\begin{gather*}
  (\Omega,\mu) \overset{\tilde{\theta}}{\longrightarrow} \bigl(\tS^\aff_\lambda(\cL), \tilde{\theta}_*\mu \bigr) \overset{\pi}{\longrightarrow} \bigl(\tS^\aff_\kappa(\cL), \theta_*\mu \bigr),
\end{gather*}
where $\pi$ is the restriction to the first $\kappa$ variables, and $\pi \circ \tilde{\theta} = \theta$.

It follows from \autoref{p:DirectIntegralMetricStructure} that for every $\alpha < \lambda$ there exist $\beta_{\alpha,n} < \kappa$ such that $f_{\beta_{\alpha,n}} \rightarrow g_\alpha$ almost surely.
If $\varphi$ is an affine formula in $\lambda$ variables, then we may always express it as
\begin{gather*}
  \varphi(x_\alpha : \alpha < \lambda) = \varphi_0(x_{\alpha_t} : t <k).
\end{gather*}
Let
\begin{gather*}
  \psi_n(x_\beta : \beta < \kappa) = \varphi_0(x_{\beta_{\alpha_t,n}} : t < k).
\end{gather*}
Then, almost surely,
\begin{gather*}
  \psi_n^{M_\omega}\bigl(f(\omega)\bigr) \rightarrow \varphi^{M_\omega}\bigl( g(\omega) \bigr).
\end{gather*}
In other words, as a measurable function on $\bigl(\tS^\aff_\lambda(\cL), \tilde{\theta}_*\mu \bigr)$, we may factor $\varphi$ almost surely through $\pi$.
Since affine formulas generate the Baire $\sigma$-algebra, it follows that $\pi^*$ is an isomorphism in the dual diagram
\begin{gather*}
  \MALG(\Omega,\mu) \overset{\tilde{\theta}^*}{\longleftarrow} \MALG\bigl(\tS^\aff_\lambda(\cL), \tilde{\theta}_*\mu \bigr) \overset{\pi^*}{\longleftarrow} \MALG\bigl(\tS^\aff_\kappa(\cL), \theta_*\mu \bigr),
\end{gather*}
which is what we wanted to show.

Degenerate measurable fields are pathological, do not arise naturally, and are fairly easy to avoid.
Indeed, given such a field, we can always render it non-degenerate, without changing its direct integral, by reducing the $\sigma$-algebra of $\Omega$ to the collection of $\theta$-preimages of Baire sets.
In addition, the following provides a sufficient criterion for non-degeneracy that holds in essentially all interesting cases.

\begin{defn}
  \label{dfn:NonDegenerateTheory}
  We will say that a theory (affine or continuous) is \emph{non-degenerate} if all its models have at least two elements in at least one sort.
\end{defn}

Usually, this will be used for theories in a single sort, in which case, a compactness argument yields a strictly positive lower bound $\delta > 0$ on the diameter of the sort.
In a language with several (possibly infinitely many) sorts, a similar argument yields $\delta > 0$ and a finite family of sorts such that in each model, at least one of those has diameter $\geq \delta$.
In particular, the product of these sorts, equipped with the sum distance, has diameter $\geq \delta$ in all models.

\begin{lemma}
  \label{lem:NonDenegerateFieldTwoElements}
  Assume that $(M_\Omega,e_I)$ is a measurable field of models of some non-degenerate theory.
  Then $(M_\Omega,e_I)$ is non-degenerate.
\end{lemma}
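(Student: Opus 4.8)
The statement to prove is \autoref{lem:NonDenegerateFieldTwoElements}: if $(M_\Omega, e_I)$ is a measurable field of models of a non-degenerate theory, then the field is non-degenerate in the sense of \autoref{dfn:NonDenegerateField}, i.e., the measure-algebra embedding $\theta^*$ of \autoref{eq:LosForTypes} is surjective. The plan is to argue that a sufficiently rich collection of measurable sections gives rise, via \Los's theorem, to measurable functions on $\Omega$ that distinguish points finely enough to recover every $\mu$-measurable set inside $\MALG\bigl(\tS^\aff_\kappa(\cL), \theta_*\mu\bigr)$.

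First I would unpack the non-degeneracy hypothesis on the theory. As remarked after \autoref{dfn:NonDegenerateTheory}, a compactness argument produces a $\delta > 0$ (and, if necessary, a finite product of sorts which I will treat as a single sort for notational simplicity) such that every model has diameter $\geq \delta$ in that sort; so in each $M_\omega$ there exist pairs of points at distance $\geq \delta$. Now let $f = (f_i : i < \kappa)$ be a dense sequence in $M_{\Omega,I}$ and $\theta\colon \Omega \to \tS^\aff_\kappa(\cL)$, $\omega \mapsto \tp^\aff(f(\omega))$, as in \autoref{rmk:LosForTypes}; the goal is surjectivity of $\theta^*$. The key observation is that for any two indices $i,j < \kappa$, the function $\omega \mapsto d^{M_\omega}(f_i(\omega), f_j(\omega))$ is $\mu$-measurable (\autoref{lemma:DirectIntegralMeasurableDistance}) and equals $d(x_i,x_j)^{\theta(\omega)}$, hence factors through $\theta$. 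Thus the sub-$\sigma$-algebra of $\cB_\mu$ generated by $\theta$, which is exactly the image of $\theta^*$, already contains all sets of the form $\{\omega : d^{M_\omega}(f_i(\omega), f_j(\omega)) > r\}$.

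The heart of the argument is to show that the image of $\theta^*$ is all of $\MALG(\Omega,\mu)$. Take an arbitrary $\mu$-measurable set $A \subseteq \Omega$ with $0 < \mu(A) < 1$; I want to produce it (up to null sets) as a $\theta$-preimage of a Baire set. The idea is to build a new measurable section that "codes" $A$. Using the non-degeneracy $\delta$ and a measurable-selection-type construction as in the proof of \autoref{lem:LosQuantifier} (well-ordering a suitable countable subset $I_0$ of $I$ and taking, at each $\omega$, the least two basic sections whose values are at distance close to $\geq \delta/2$), I can produce measurable sections $g, h \in M_{\Omega,I}$ with $d^{M_\omega}(g(\omega), h(\omega)) \geq \delta/3$ for almost every $\omega$. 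Then define the section $k$ by $k(\omega) = g(\omega)$ for $\omega \in A$ and $k(\omega) = h(\omega)$ for $\omega \notin A$; this is measurable (it agrees with $g$ on a measurable set and with $h$ on the complement, so condition (3) of \autoref{defn:DirectIntegralMeasurableSection} is checked coordinatewise against any $e_i$). Adjoining $g$, $h$, $k$ to the dense family $f$ (which, as noted after \autoref{dfn:NonDenegerateField}, does not change whether $\theta^*$ is surjective), the function $\omega \mapsto d^{M_\omega}(k(\omega), h(\omega))$ is in the image of $\theta^*$, and it equals $0$ on $\Omega \sminus A$ and is $\geq \delta/3$ on $A$; hence $A = \{\omega : d^{M_\omega}(k(\omega), h(\omega)) > \delta/4\}$ up to a null set lies in the image of $\theta^*$. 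Since $A$ was arbitrary, $\theta^*$ is surjective.

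The main obstacle I anticipate is the careful construction of the two "uniformly separated" sections $g$ and $h$: one must ensure that a pair of points at distance $\geq \delta$ can be chosen measurably in $\omega$ from the given basic sections, which requires invoking the affine elementarity $M_{\omega,I_0} \preceq^\aff_{\cL_0} M_\omega$ (for a suitable $I_0 \in \fI^\aff(M_\Omega,e_I,\cL_0)$, $\cL_0$ finite) to guarantee that the diameter $\geq \delta$ is already witnessed, approximately, within the countable dense piece $\{e_i(\omega) : i \in I_0\}$ — this is exactly the role that condition \autoref{item:MeasurableFieldCofinal} of \autoref{defn:MeasurableField} plays, together with the quantifier argument from \autoref{lem:LosQuantifier}. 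Everything else is bookkeeping: checking measurability of the patched section $k$ and translating the statement "$A$ lies in the image of $\theta^*$" through the identification $\MALG\bigl(\tS^\aff_\kappa(\cL), \theta_*\mu\bigr) \cong \operatorname{image}(\theta^*)$ already set up in \autoref{rmk:LosForTypes}.
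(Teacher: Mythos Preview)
Your proposal is correct and follows essentially the same approach as the paper: both use non-degeneracy to produce (via \autoref{lem:LosQuantifier}) two measurable sections that are separated almost everywhere, then glue them along an arbitrary measurable set $A$ to obtain a new section whose pointwise distance to one of them codes $A$, recovering $A$ as the pullback of a Baire set under $\theta$. The only cosmetic difference is that the paper assumes from the outset that $f$ enumerates all measurable sections (whereas you adjoin $g,h,k$ afterwards and invoke the independence remark), and the paper cuts with the zero-set $\{d^p(x_i,x_j)=0\}$ rather than your threshold set $\{d > \delta/4\}$.
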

\begin{proof}
  We may assume that $f$ contains every measurable section.
  For simplicity, let us assume that a single sort (rather than a finite product of sorts) witnesses non-degeneracy.
  Let $A \subseteq \Omega$ be measurable and let $\varepsilon > 0$.
  By hypothesis, the formula $\sup_{x,y} d(x,y)$ (with $x$ and $y$ in the witnessing sort) is strictly positive in $M_\omega$, for almost every $\omega\in \Omega$.
  Thus, using \autoref{lem:LosQuantifier} and cutting and gluing measurable sections, we can find $i,j<\kappa$ such that $f_i(\omega) = f_j(\omega)$ for every $\omega \in A$ and $f_i(\omega) \neq f_j(\omega)$ for every $\omega\in\Omega \setminus A$.
  Let
  \begin{gather*}
    C = \bigl\{ p \in \tS^\aff_{x^\ell}(\cL) : d^p(x_i, x_j) = 0 \bigr\}.
  \end{gather*}
  Then $C$ is Baire and $\theta^*(C) = \theta^{-1}(C) = A$, as desired.
\end{proof}


\section{Comparison with Bagheri's ultrameans}
\label{sec:comp-with-bagheri}

The direct integral is somewhat reminiscent of, yet quite different from, the ultraproduct construction.
Indeed, the former allows for a $[0,1]$-valued measure on an algebra of measurable sets, but requires countable additivity and various measurability conditions, which only make sense in the presence of a pointwise enumeration $e_I$.
The latter, by contrast, requires the measure to be $\{0,1\}$-valued and defined on all subsets (that is, to be an ultrafilter), but forgoes all additional requirements.

While one might want to look for a common generalization, we propose instead to view these two constructions as distinct building blocks, which can be joined as separate steps in more complicated constructions.
Let us exemplify this by analyzing yet another construction of a similar flavor, the ultramean proposed by Bagheri~\cite{Bagheri2010}.

Let $J$ be a set and let $M_J = (M_j : j \in J)$ be a field of structures.
Let $\nu$ be a finitely additive probability measure defined on the algebra of all subsets of $J$.
Bagheri defines the \emph{ultramean} of $M_J$, with respect to $\nu$, by interpreting  function and predicate symbols on $\prod_{j \in J} M_j$ as per \autoref{eq:def-functions} and \autoref{eq:def-predicates}, and passing to the quotient by the zero-distance relation.
He also proves a theorem analogous to \autoref{th:Los}.

We claim that Bagheri's ultramean can be constructed in two steps as a direct integral of ultraproducts
\begin{equation}
  \label{eq:UltrameanDecomposition}
  \int^\oplus_\Omega M_\omega \ud \mu(\omega),
\end{equation}
where $\Omega = \beta J$ is the collection of all ultrafilters on $J$ (i.e., the Stone--\v{C}ech compactification of $J$), and $M_\omega = \prod M_j / \omega$ is the ultraproduct with respect to $\omega \in \Omega$.

By standard measure construction theorems, $\Omega$ admits a unique Radon probability measure $\mu$ that satisfies $\mu(\widehat{A}) = \nu(A)$, where $A \subseteq J$ and $\widehat{A} = \{\omega \in \Omega : A \in \omega\}$.
Integration of functions on the finitely additive probability space $(J,\nu)$ (all functions being measurable) can be reduced to the integration in the more usual sense of continuous functions on $(\Omega,\mu)$.
It is possible to do essentially the same for the entire ultramean construction.

Let $I = \prod_j M_j$ be the set of all sections of $M_J$.
Then each $M_\omega$ is a quotient of $I$, so let $e_i(\omega) \in M_\omega$ be the class of $i \in I$ in $M_\omega$.
This makes $e_i$ a section of the field $M_\Omega = (M_\omega:\omega\in\Omega)$, and $e_I = (e_i : i \in I)$ is a pointwise enumeration of $M_\Omega$ (in fact, for every $\omega$, it enumerates all of $M_\omega$).
If $\omega_j$ is the principal ultrafilter concentrated at $j \in J$, then (up to canonical identifications) we have $M_j = M_{\omega_j}$ and $i_j = e_i(\omega_j)$, so let us denote the latter by $e_i(j)$.
The measurability requirements of \autoref{defn:MeasurableField} are satisfied by $(M_\Omega,e_I)$ -- all the functions are in fact continuous.

For the last property of \autoref{defn:MeasurableField}, let $\cL_0 \subseteq \cL$ be finite (or countable, for that matter), and let $I_0 \subseteq I$ be countable.
We may perform a ``simultaneous downward Löwenheim--Skolem construction'', and find a countable $I_1$ such that $I_0 \subseteq I_1 \subseteq I$, and for every continuous logic $\cL_0$-formula $\varphi(\bar x,y)$, and every tuple $\bar{e}$ in $e_{I_1}$, there exists $i \in I_1$ such that, simultaneously for all $j \in J$:
\begin{gather*}
  \varphi^{M_j}\bigl( \bar{e}(j), e_i(j) \bigr) \leq \Bigl( \inf_y \varphi\bigl( \bar{e}(j), y \bigr) \Bigr)^{M_j} + 1.
\end{gather*}
By \Los's theorem for ultraproducts, the same holds for all $\omega \in \Omega$ in place of $j$, so $M_{\omega,I_1} \preceq^\cont_{\cL_0} M_\omega$.
Therefore $I_0 \subseteq I_1 \in \fI^\cont(M_\Omega,e_I,\cL_0)$, showing that $(M_\Omega,e_I)$ is indeed an (elementarily) measurable field.

Finally, we claim that Bagheri's ultramean of $(M_J,\nu)$, which we denote by $\cl{M_J}$, can be canonically identified with the direct integral of ultraproducts \autoref{eq:UltrameanDecomposition}, which we denote as usual by $M_{\Omega,I}$.
Indeed, each member of $\cl{M_J}$ is a class $[i]$ for some $i \in I = \prod_j M_j$, modulo the zero-distance equivalence relation.
Essentially by definition, we have $d^{\cl{M_J}}\bigl( [i], [i'] \bigr) = d^{M_{\Omega,I}}(e_i,e_{i'})$.
This gives rise to a canonical isometric inclusion map $\cl{M_J} \subseteq M_{\Omega,I}$, sending $[i] \mapsto e_i$, and again, essentially by definition, this is an embedding of $\cL$-structures.
Let us now consider a simple section of $M_{\Omega,I}$.
Up to almost sure equality, we may assume that it is defined by clopen sets, namely, $f(\omega) = e_{i_k}(\omega)$ when $A_k \in \omega$, where $\{A_k : k < m\}$ partitions $J$.
Recall that each $i_k$ is a member of $\prod_j M_j$, and define $i(j) = f(j) = f(\omega_j)$, where we identify $j \in J$ with the principal ultrafilter $\omega_j$.
Then $i$ agrees with $i_k$ on $A_k$, so $e_i$ agrees with $f$ on $\widehat{A}_k$, for each $k$ -- and therefore, on $\Omega$.
In other words, every simple section is of the form $e_i$, and therefore belongs to the image of $\cl{M_J}$.
By density of the simple sections, $\cl{M_J} = M_{\Omega,I}$, as desired.
Bagheri's \Los theorem for ultrameans now follows from the corresponding theorems for ultraproducts and direct integrals.


\section{A direct integral of extremal models}
\label{sec:DirectIntegralExtremalModels}

For this section, let us fix an affine theory $T$, a cardinal $\kappa \geq \fd_T(\cL)$, and a boundary measure $\mu$ on $\tS^\aff_\kappa(T)$ such that $p = R(\mu) \in \tS^{\fM,\aff}_\kappa(T)$.
It will be convenient to fix a realization $a = (a_i : i < \kappa)$ of $p$.
As per \autoref{remark:TypeOfModel}, this can be done inside a model $N \models T$ in which $a$ enumerates a dense set, and this essentially determines $N$ and $a$ (up to a unique isomorphism).

Similarly, we define
\begin{equation*}
  \Omega = \cE^\fM_\kappa(T) = \cE_\kappa(T) \cap \tS^\fM_\kappa(T),
\end{equation*}
and realize each type $\omega \in \Omega = \cE^\fM_\kappa(T)$ as $e(\omega)$ inside an extremal model $M_\omega = \overline{\bigl\{ e_i(\omega) : i < \kappa \bigr\}}$, again in a unique fashion (\autoref{remark:TypeOfExtremalModel}).

Let $\cB$ denote the $\sigma$-algebra of Baire subsets of $\tS^\aff_\kappa(T)$ (or its completion with respect to $\mu$), and let
\begin{equation*}
  \cB_\Omega = \{B \cap \Omega : B \in \cB\}.
\end{equation*}
By \autoref{th:Bishop-dL-ForModels}, $\mu$ concentrates on $\Omega$.
Consequently, by \autoref{l:measure-concentration},
\begin{gather*}
  \mu_\Omega(B \cap \Omega) = \mu(B)
\end{gather*}
defines a (complete) measure on the $\sigma$-algebra $\cB_\Omega$.

\begin{lemma}
  \label{lem:IntegralDecompositionMeasurableField}
  The family $M_\Omega = \bigl(M_\omega : \omega \in \Omega\bigr)$ is a field of extremal models of $T$, and $e = (e_i : i < \kappa)$ is a pointwise enumeration of $M_\Omega$, in the sense of \autoref{df:field}.
  Moreover, the pair $(M_\Omega,e)$ is a measurable field of models of $T$ relative to the measure $\mu_\Omega$, in the sense of \autoref{defn:MeasurableField}, and it is non-degenerate in the sense of \autoref{dfn:NonDenegerateField}.
\end{lemma}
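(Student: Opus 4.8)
The plan is to check, one by one, that the $M_\omega$ are extremal models of $T$, that $e$ is a pointwise enumeration, that $(M_\Omega,e)$ satisfies the three clauses of \autoref{defn:MeasurableField} relative to $\mu_\Omega$, and that it is non-degenerate. The first two points and the measurability clauses \autoref{item:MeasurableFieldPredicate}--\autoref{item:MeasurableFieldFunction} are essentially formal. Each $\omega\in\Omega=\cE^\fM_\kappa(T)$ is an extreme type enumerating a model, so by \autoref{remark:TypeOfExtremalModel} its realization $e(\omega)=(e_i(\omega):i<\kappa)$ is dense in an extremal affine submodel, namely $M_\omega=\overline{\{e_i(\omega):i<\kappa\}}$; hence each $M_\omega\models T$ is extremal, $M_\Omega$ is a field of extremal models, and $e$ is a pointwise enumeration by construction. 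For \autoref{item:MeasurableFieldPredicate} and \autoref{item:MeasurableFieldFunction}, the key observation is that, since $e(\omega)$ realizes $\omega$ in $M_\omega$, for any atomic affine formula $\varphi$ in finitely many of the variables $x=(x_i:i<\kappa)$ the section $\omega\mapsto\varphi^{M_\omega}(e(\omega))$ is the restriction to $\Omega$ of the $\tau$-continuous evaluation map $q\mapsto\varphi^q$ on $\tS^\aff_\kappa(T)$, which is Baire measurable; so its restriction to $\Omega$ is $\cB_\Omega$-measurable. Taking $\varphi=P(x_{j_1},\dots,x_{j_n})$ and $\varphi=d\bigl(F(x_{j_1},\dots,x_{j_n}),x_i\bigr)$ yields the two clauses.

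The main obstacle is the cofinality clause \autoref{item:MeasurableFieldCofinal}. Given a finite $\cL_0\subseteq\cL$ and a countable $I_0\subseteq\kappa$, I must produce a countable $I_1\supseteq I_0$ with $M_{\omega,I_1}\preceq^\aff_{\cL_0}M_\omega$ for $\mu_\Omega$-a.e.\ $\omega$, and I would do so by a simultaneous downward Löwenheim--Skolem construction synchronized with the Baire structure of $\tS^\aff_\kappa(T)$. Since $\cL_0$ is finite, there are countably many $\cL_0$-terms and, up to uniform approximation, countably many affine $\cL_0$-formulas, so by the affine Tarski--Vaught test \autoref{p:Tarski-Vaught} it is enough to arrange that $M_{\omega,I_1}$ be closed under all $\cL_0$-terms and that, for every $\varphi(z,\bar y)$ in a fixed countable uniformly dense family of affine $\cL_0$-formulas ($z$ a single variable) and every finite tuple $\bar k$ from $I_1$, one has $\bigl(\sup_z\varphi(z,x_{\bar k})\bigr)^\omega=\sup_{j\in I_1}\varphi(x_j,x_{\bar k})^\omega$, where $x_{\bar k}$ abbreviates $(x_{k_1},\dots,x_{k_m})$. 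For a fixed pair $(\varphi,\bar k)$ and a countable $J\subseteq\kappa$, the set of $q\in\tS^\aff_\kappa(\cL)$ satisfying this equation is the set $X_{\varphi',J}$ of \autoref{ntn:TypeOfModelSingleFormula} with $\varphi'(x,z)=\varphi(z,x_{\bar k})$, hence Baire $G_\delta$ by \autoref{lem:TypeOfModelSingleFormula}; moreover, since $p=R(\mu)\in\tS^{\fM,\aff}_\kappa(T)$, that lemma provides a countable $J_{(\varphi,\bar k)}\subseteq\kappa$ with $p\in X_{\varphi',J_{(\varphi,\bar k)}}$. Closure under an $\cL_0$-term $t$ is handled the same way, applying \autoref{lem:TypeOfModelSingleFormula} to $-d\bigl(t(x_{\bar k}),z\bigr)$, whose supremum over $z$ is $0$ and is attained in $M_\omega$. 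Iterating $\omega$-many times — at stage $n+1$ adjoining to $I^{(n)}$ all the witness sets for the tasks with parameters in $I^{(n)}$ — yields a countable $I_1=\bigcup_nI^{(n)}\supseteq I_0$, each of whose tasks is witnessed inside $I_1$. By \autoref{prop:TypeOfModelPreMean}\autoref{item:TypeOfModelPreMeanSingleFormula} each witness set has $\mu$-measure $1$, and enlarging the index set to $I_1$ preserves membership in the corresponding $X$-set, so the countable intersection $G$ of all these Baire sets has $\mu(G)=1$ and, on $G$, translating back via the fact that $e(\omega)$ realizes $\omega$ in $M_\omega$, one gets $M_{\omega,I_1}\preceq^\aff_{\cL_0}M_\omega$. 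Since $\mu_\Omega(G\cap\Omega)=\mu(G)=1$, this gives $I_1\in\fI^\aff(M_\Omega,e,\cL_0)$, proving cofinality.

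For non-degeneracy I would extend $e$ to a sequence $f=(f_\alpha:\alpha<\lambda)$ enumerating a dense subset of the direct integral $M_{\Omega,e}$, with $f_i=e_i$ for $i<\kappa$ — harmless, since by the remarks following \autoref{dfn:NonDenegerateField} non-degeneracy does not depend on the choice of dense sequence. Then the map $\theta\colon\Omega\to\tS^\aff_\lambda(\cL)$, $\omega\mapsto\tp^\aff\bigl(f(\omega)\bigr)$ (computed in $M_\omega$), composed with the restriction $\pi\colon\tS^\aff_\lambda(\cL)\to\tS^\aff_\kappa(\cL)$ to the first $\kappa$ coordinates, is exactly the inclusion $\iota\colon\Omega\hookrightarrow\tS^\aff_\kappa(\cL)$, because $e(\omega)$ realizes $\omega$ in $M_\omega$. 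Viewing $\mu$ as a Radon measure on $\tS^\aff_\kappa(\cL)$ supported on the closed set $\tS^\aff_\kappa(T)$, it concentrates on $\Omega$, so \autoref{l:measure-concentration} tells us that $\iota^*\colon\MALG\bigl(\tS^\aff_\kappa(\cL),\mu\bigr)\to\MALG(\Omega,\mu_\Omega)$, $[C]\mapsto[C\cap\Omega]$, is an isomorphism, in particular surjective. As $\theta^*\circ\pi^*=\iota^*$ is onto, so is $\theta^*$, which is precisely non-degeneracy in the sense of \autoref{dfn:NonDenegerateField}.
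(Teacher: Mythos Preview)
Your proof is correct and follows the paper's approach closely. The one difference is in clause \autoref{item:MeasurableFieldCofinal}: you construct $I_1$ by an $\omega$-iteration at the type level via \autoref{lem:TypeOfModelSingleFormula}, while the paper obtains it in one step by applying Downward Löwenheim--Skolem inside the model $N$ realizing $p$ (so that $\overline{\{a_i:i\in I_1\}}\preceq^\aff_{\cL_0}N$, which immediately gives $p\in X_{\varphi,I_1}$ for all relevant $\varphi$), before both arguments invoke \autoref{prop:TypeOfModelPreMean}.
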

\begin{proof}
  The first assertion is immediate.
  As for \autoref{defn:MeasurableField}, the functions in conditions \autoref{item:MeasurableFieldPredicate} and \autoref{item:MeasurableFieldFunction} are continuous, and in particular, measurable.
  Indeed, for example, if $\bar e = (e_{i_j} : j<n)$ and $\bar x = (x_{i_j} : j<n)$, then $P^{M_\omega}\bigl( \bar{e}(\omega) \bigr) = P(\bar x)^\omega$, and the map $\omega\mapsto P(\bar x)^\omega$ is continuous by definition of the topology on $\Omega\subseteq \tS^\aff_\kappa(T)$.

  Consider a finite sublanguage $\cL_0 \subseteq \cL$ and a countable subset $I_0 \subseteq \kappa$.
  By standard arguments (essentially, the Downward Löwenheim--Skolem theorem), there exists a larger countable set $I_0 \subseteq I_1 \subseteq \kappa$ such that $\overline{\{a_i: i \in I_1\}} \preceq^\aff_{\cL_0} N$.

  Let $\Phi$ denote the collection of all affine $\cL_0$-formulas $\varphi(x',y)$, where $x' = (x_i : i \in I_1)$ and $y$ is a singleton.
  This is a separable set, so let $\Phi_0$ be a countable dense subset.
  Following \autoref{ntn:TypeOfModelSingleFormula} we have $p \in \bigcap_{\varphi \in \Phi_0} X_{\varphi,I_1}$.
  By \autoref{prop:TypeOfModelPreMean}~\autoref{item:TypeOfModelPreMeanSingleFormula}, $\mu(X_{\varphi,I_1}) = 1$ for all $\varphi \in \Phi_0$, and therefore
  \begin{gather*}
    \mu_\Omega\Bigl( \bigcap_{\varphi \in \Phi_0} X_{\varphi,I_1} \cap \Omega \Bigr) = 1.
  \end{gather*}
  Thus $I_1 \in \fI^\aff(M_\Omega,e,\cL_0)$, and \autoref{item:MeasurableFieldCofinal} holds as well.

  Finally, let $f_\lambda$ be any dense sequence of measurable sections that starts with $e$.
  Then we have maps
  \begin{gather*}
    \Omega \overset{\theta}{\longrightarrow} \tS^\aff_\lambda(T) \overset{\pi}{\longrightarrow} \tS^\aff_\kappa(T),
  \end{gather*}
  where $\theta$ is as per \autoref{rmk:LosForTypes} and $\pi$ is the variable restriction map.
  Dually, these induce
  \begin{gather*}
    \MALG(\Omega,\mu_\Omega) \overset{\theta^*}{\longleftarrow} \MALG\bigl( \tS^\aff_\lambda(T), \theta_* \mu_\Omega \bigr) \overset{\pi^*}{\longleftarrow} \MALG\bigl(\tS^\aff_\kappa(T), \pi_*\theta_* \mu_\Omega \bigr).
  \end{gather*}
  By construction, $\pi_* \theta_* \mu_\Omega = \mu$, and the composition $\theta^* \circ \pi^*$ is surjective.
  Therefore, so is $\theta^*$.
\end{proof}

We may therefore define
\begin{gather*}
  M_\mu = \int_\Omega^\oplus M_\omega \ud \mu_\Omega(\omega).
\end{gather*}
By \autoref{th:Los} and \autoref{l:measure-concentration}, we have for any affine formula $\varphi(x)$:
\begin{align*}
  \varphi^{M_\mu}(e)
  & = \int_\Omega \varphi^{M_\omega}\bigl( e(\omega) \bigr) \ud \mu_\Omega(\omega) \\
  & = \int_{\tS^\aff_\kappa(T)} \varphi(q) \ud \mu(q)
    = \varphi(p).
\end{align*}
Therefore, $e$ is a realization of $p$, and can be identified with $a$.
Since $a$ is dense in $N$, this makes it a submodel $N \preceq^\aff M_\mu$.
We have thus proved the following, which will be the base for the proof of the extremal decomposition theorem in \autoref{sec:ExtremalDecomposition}:

\begin{prop}
  \label{prop:DirectIntegralExtremalModelsConstruction}
  Let $T$ be an affine theory and $N \models T$.
  Let $a = (a_i : i < \kappa)$ enumerate a dense subset of $N$, and let $\mu$ be a boundary measure on $\tS^\aff_\kappa(T)$ with barycenter $\tp^\aff(a)$.
  Then $\mu$ concentrates on $\Omega = \cE^\fM_\kappa(T)$.
  This gives rise to a non-degenerate measurable field $(M_\Omega,e)$ of extremal models of $T$, where $e(\omega)$ realizes $\omega$, and $N$ admits a unique affine embedding in $M_\mu = \int_\Omega^\oplus M_\omega \ud \mu_\Omega(\omega)$ that sends $a \mapsto e$.
\end{prop}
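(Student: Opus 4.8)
The plan is to assemble the ingredients already established in this section. First I would note that $p = \tp^\aff(a)$ lies in $\tS^{\fM,\aff}_\kappa(T)$: since $a$ enumerates a dense subset of the model $N \models T$, this is immediate from \autoref{remark:TypeOfModel}. Given the hypothesis $\kappa \geq \fd_T(\cL)$ and the fact that $\mu$ is a boundary measure with $R(\mu) = p$, \autoref{th:Bishop-dL-ForModels} then says precisely that $\mu$ concentrates on $\Omega = \cE^\fM_\kappa(T)$. Consequently \autoref{l:measure-concentration} yields the complete measure $\mu_\Omega$ on the trace $\sigma$-algebra $\cB_\Omega = \{B \cap \Omega : B \in \cB\}$, satisfying $\mu_\Omega(B \cap \Omega) = \mu(B)$.

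Next I would invoke \autoref{lem:IntegralDecompositionMeasurableField}, which is the heart of the matter: it states that, with $M_\omega$ the extremal model densely enumerated by a chosen realization $e(\omega)$ of the extreme type $\omega \in \Omega$ (such $M_\omega$ and $e(\omega)$ being essentially unique by \autoref{remark:TypeOfExtremalModel}), the pair $(M_\Omega, e)$ is a measurable field of models of $T$ relative to $\mu_\Omega$, in the sense of \autoref{defn:MeasurableField}, and is non-degenerate in the sense of \autoref{dfn:NonDenegerateField}. So at this point the existence of the field is settled; the remaining work is to identify $N$ inside the direct integral.

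Finally I would form $M_\mu = \int_\Omega^\oplus M_\omega \ud \mu_\Omega(\omega)$ and compute, for an arbitrary affine formula $\varphi(x)$, using \autoref{th:Los} together with \autoref{l:measure-concentration},
\[
  \varphi^{M_\mu}(e) = \int_\Omega \varphi^{M_\omega}\bigl(e(\omega)\bigr)\ud\mu_\Omega(\omega)
   = \int_{\tS^\aff_\kappa(T)} \varphi(q)\ud\mu(q) = \varphi(p).
\]
Hence $\tp^\aff(e) = p = \tp^\aff(a)$ in $M_\mu$, so $e$ realizes $p$ and can be identified with $a$; since $a$ is dense in $N$, the assignment $a_i \mapsto e_i$ is an affine (hence isometric) map with dense domain that extends uniquely to an affine embedding $N \hookrightarrow M_\mu$ (the Tarski--Vaught property of $p$, as in \autoref{remark:TypeOfModel}, guarantees the closure of $\{e_i : i<\kappa\}$ is an affine submodel). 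Uniqueness of this embedding among those sending $a \mapsto e$ is clear, since any such map is determined on the dense set $a$.

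The main obstacle — already overcome in \autoref{lem:IntegralDecompositionMeasurableField}, and the only genuinely non-routine point — is the verification of condition \autoref{item:MeasurableFieldCofinal} of \autoref{defn:MeasurableField}: producing, for each finite $\cL_0 \subseteq \cL$ and countable $I_0 \subseteq \kappa$, a countable $I_1 \supseteq I_0$ with $M_{\omega,I_1} \preceq^\aff_{\cL_0} M_\omega$ for $\mu_\Omega$-a.e.\ $\omega$. This is where the model-theoretic input is used, via a simultaneous Löwenheim--Skolem construction and \autoref{prop:TypeOfModelPreMean}\autoref{item:TypeOfModelPreMeanSingleFormula}; the non-degeneracy likewise reduces to surjectivity of the composite map $\theta^* \circ \pi^*$ of measure algebras. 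Everything else in the proposition is bookkeeping around \autoref{th:Los}.
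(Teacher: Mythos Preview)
Your proposal is correct and follows essentially the same approach as the paper: the proposition is a summary of the section, and you have accurately traced the assembly of \autoref{th:Bishop-dL-ForModels}, \autoref{l:measure-concentration}, \autoref{lem:IntegralDecompositionMeasurableField}, and the \L o\'s computation via \autoref{th:Los}. Your additional commentary on where the real work lies (condition \autoref{item:MeasurableFieldCofinal} and non-degeneracy, handled in \autoref{lem:IntegralDecompositionMeasurableField}) is apt, and your slightly more explicit justification of the embedding $N \hookrightarrow M_\mu$ is correct and in the same spirit as the paper's terser ``this makes it a submodel $N \preceq^\aff M_\mu$.''
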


\begin{cor}
  Every model $N$ of an affine theory $T$ admits an affine embedding into a direct integral of a measurable field of extremal models $(M_\Omega,e)$.
  If $N$ is of density character $\leq \kappa$, then every $M_\omega$ can be taken of density character $\leq \kappa$, and the pointwise enumeration $e$ to be indexed by $\kappa$.
  If $N$ is separable, and $T$ has a separable language, then the measure space can be taken to be standard.
\end{cor}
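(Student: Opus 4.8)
The plan is to obtain the corollary by feeding a suitable boundary measure into \autoref{prop:DirectIntegralExtremalModelsConstruction}, and then to do the metrizability bookkeeping needed for the separable refinement. Given $N \models T$, I would first fix a cardinal $\kappa \ge \fd_T(\cL)$ with $\fd(N) \le \kappa$ (for the plain statement one may take $\kappa = \fd(N) + \fd_T(\cL)$; for the refinement one takes the prescribed $\kappa$), choose a dense subset of $N$ of cardinality at most $\kappa$, and enumerate it, with repetitions if needed, as $a = (a_i : i < \kappa)$. By \autoref{remark:TypeOfModel} the type $p \coloneqq \tp^\aff(a)$ lies in $\tS^{\fM,\aff}_\kappa(T)$. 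Applying \autoref{l:choquet-order}\autoref{i:choquet-order:4} to the point $p$ of the compact convex set $\tS^\aff_\kappa(T)$, I get a boundary measure $\mu$ with $\delta_p \preceq \mu$, and by \autoref{l:choquet-order}\autoref{i:choquet-order:2} its barycenter is $R(\mu) = R(\delta_p) = p \in \tS^{\fM,\aff}_\kappa(T)$.

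At this point \autoref{prop:DirectIntegralExtremalModelsConstruction} applies to $N$, $a$ and $\mu$ with nothing left to prove: $\mu$ concentrates on $\Omega = \cE^\fM_\kappa(T)$, it yields a (non-degenerate) measurable field $(M_\Omega, e)$ of extremal models of $T$ with $e(\omega) \models \omega$, and $N$ embeds affinely into $M_\mu = \int_\Omega^\oplus M_\omega \ud \mu_\Omega(\omega)$. This is the first assertion. Moreover $M_\omega = \overline{\{e_i(\omega) : i < \kappa\}}$ by construction, so each $M_\omega$ has density character at most $\kappa$ and $e$ is a $\kappa$-indexed pointwise enumeration; this is the second assertion.

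For the last assertion, assume $N$ is separable and $\fd_T(\cL) = \aleph_0$, and run the above with $\kappa = \aleph_0$. Then $\tS^\aff_\bN(T)$ is compact metrizable (as recorded after \autoref{defn:LanguageDensityCharacter}), hence Polish with Baire $\sigma$-algebra equal to its Borel $\sigma$-algebra; by \autoref{l:extreme-Gdelta} the set $\cE_\bN(T)$ is $G_\delta$ in it, and by \autoref{l:enmod-Gdelta} the set $\Omega = \cE^\fM_\bN(T)$ is in turn $G_\delta$ in $\cE_\bN(T)$, so $\Omega$ is $G_\delta$ in the Polish space $\tS^\aff_\bN(T)$. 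In particular $\Omega$ is itself Polish, and being Baire it has full $\mu$-measure by \autoref{th:Bishop-dL-ForModels}, so $\mu_\Omega$ is a Borel probability measure on $\Omega$. Choosing the completed version of the Baire $\sigma$-algebra in the construction of \autoref{sec:DirectIntegralExtremalModels}, $(\Omega, \cB_\Omega, \mu_\Omega)$ is then a Polish space carrying the completion of a Borel probability measure, i.e., a standard probability space, so $M_\mu$ is a direct integral over a standard space.

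There is no genuine obstacle here: the statement is essentially a repackaging of \autoref{prop:DirectIntegralExtremalModelsConstruction} together with the existence of boundary measures. The one point demanding a little attention is the separable case, namely checking that $\Omega$, with the measure induced from a boundary measure on the metrizable type space, is genuinely a standard probability space; and this reduces to the $G_\delta$-ness of the extreme model-enumerating types (\autoref{l:extreme-Gdelta}, \autoref{l:enmod-Gdelta}) combined with the full-measure statement of \autoref{th:Bishop-dL-ForModels}.
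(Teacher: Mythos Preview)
Your proof is correct and follows essentially the same route as the paper: the corollary is an immediate consequence of \autoref{prop:DirectIntegralExtremalModelsConstruction} once one supplies a boundary measure with barycenter $\tp^\aff(a)$ via \autoref{l:choquet-order}, and the density-character bound is built into the construction of $M_\omega$. For the separable refinement, the paper (in the analogous passage of \autoref{th:integral-decomposition}) cites \autoref{l:extreme-Gdelta} together with \autoref{prop:TypeOfModelPreMean} rather than \autoref{l:enmod-Gdelta}, but both yield that $\Omega = \cE^\fM_\bN(T)$ is $G_\delta$ in the metrizable space $\tS^\aff_\bN(T)$, hence Polish, and the standardness follows.
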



\section{Simplicial theories}
\label{sec:SimplicialTheory}

Simplices (see \autoref{sec:simplices}) form a important class of compact convex sets.
The corresponding class of affine theories will play a similarly distinguished role.

\begin{defn}
  \label{df:simplicial-theory}
  Let $T$ be an affine theory. We will say that $T$ is \emph{simplicial} if $\tS^\aff_x(T)$ is a simplex for every finite tuple $x$ .
\end{defn}

If $T$ is simplicial, then $\tS^\aff_x(T)$ is a simplex for any $x$, being an inverse limit of simplices (\autoref{l:inverse-limit-simplices}).

\autoref{thm:SimplexRiesz} gives the following.

\begin{prop}
  Let $T$ be an affine theory.
  The following are equivalent:
  \begin{enumerate}
  \item $T$ is simplicial.
  \item Whenever $\varphi_1,\varphi_2,\psi_1,\psi_2$ are affine formulas such that $(\varphi_1\vee\varphi_2) +\varepsilon \leq_T \psi_1\wedge\psi_2$ (i.e., $\varphi_i + \varepsilon \leq \psi_j$ for all $i,j = 1,2$) for some $\varepsilon>0$, there exists an affine formula $\chi$ such that $\varphi_1\vee\varphi_2 \leq_T \chi\leq_T \psi_1\wedge\psi_2$.
  \end{enumerate}
\end{prop}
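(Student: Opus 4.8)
The plan is to deduce this from \autoref{thm:SimplexRiesz} applied to the dense subspace $A_0 = \cL^\aff_x(T) \subseteq \cA(\tS^\aff_x(T))$, which is legitimate since formulas are uniformly dense in $\cA(\tS^\aff_x(T))$. By definition, $T$ is simplicial precisely when $\tS^\aff_x(T)$ is a simplex for every finite $x$, so the content is just to match up the syntactic condition (ii) with the Weak Riesz interpolation property \autoref{item:SimplexWeakRieszInterpolation} of \autoref{thm:SimplexRiesz}, phrased for the dense subspace $A_0$.

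The one point that requires care is the translation of the order relation. The Weak Riesz interpolation property refers to the lattice operations $\vee,\wedge$ and the order $\leq$ in $\cA(\tS^\aff_x(T))$, i.e., pointwise on the type space; whereas condition (ii) is phrased in terms of the preorder $\leq_T$ on formulas defined in \autoref{eq:affine-order}. First I would recall that under the quotient identification $\cL^\aff_x(T) = \cL^\aff_x/{\equiv_T}$, and the Kadison embedding $\cL^\aff_x(T) \hookrightarrow \cA(\tS^\aff_x(T))$, the relation $\varphi \leq_T \psi$ corresponds exactly to $\hat\varphi \leq \hat\psi$ pointwise on $\tS^\aff_x(T)$: indeed $T \models \varphi \leq \psi$ means $\varphi^M \leq \psi^M$ in every model, equivalently $\varphi(p) \leq \psi(p)$ for every $p \in \tS^\aff_x(T)$ by \autoref{prop:Type} and the description of $\tS^\aff_x(T)$ as the types implying $T$. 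I should also note that for $\varphi_1,\varphi_2 \in \cL^\aff_x$, the join $\hat\varphi_1 \vee \hat\varphi_2$ in $\cA(\tS^\aff_x(T))$ need not be a formula, but the condition ``$\varphi_i + \varepsilon \leq_T \psi_j$ for all $i,j$'' is visibly equivalent to ``$(\hat\varphi_1 \vee \hat\varphi_2) + \varepsilon \leq \hat\psi_1 \wedge \hat\psi_2$ pointwise'', and ``$\varphi_1 \vee \varphi_2 \leq_T \chi \leq_T \psi_1 \wedge \psi_2$'' (interpreted as the conjunction of the four inequalities $\varphi_i \leq_T \chi \leq_T \psi_j$) is equivalent to ``$(\hat\varphi_1 \vee \hat\varphi_2) \leq \hat\chi \leq \hat\psi_1 \wedge \hat\psi_2$''. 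So the two conditions are literally the same statement, read in $A_0 = \cL^\aff_x(T)$.

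With these identifications in hand, the proof is short: (i) $\Rightarrow$ (ii) is the implication ``$X$ is a simplex $\Rightarrow$ Weak Riesz interpolation for the dense subspace $A_0$'' from \autoref{thm:SimplexRiesz}, applied for each finite $x$; and (ii) $\Rightarrow$ (i) is the converse implication of the same theorem, giving that each $\tS^\aff_x(T)$ is a simplex, which is exactly the definition of $T$ being simplicial. I do not expect any genuine obstacle here — the only thing to be vigilant about is being explicit that (ii) is being read as four $\leq_T$-inequalities and that $\leq_T$ translates to pointwise $\leq$ on the type space, so that \autoref{thm:SimplexRiesz}\autoref{item:SimplexWeakRieszInterpolation} applies verbatim with $A_0 = \cL^\aff_x(T)$.

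\begin{proof}
  Fix a finite tuple $x$.
  By the Kadison duality (\autoref{sec:kadison-duality}), $\cL^\aff_x(T)$ is an order unit space whose completion is $\cA\bigl(\tS^\aff_x(T)\bigr)$; in particular $\cL^\aff_x(T)$ is (the image of) a dense subspace of $\cA\bigl(\tS^\aff_x(T)\bigr)$.
  Moreover, for affine formulas $\varphi,\psi$, the relation $\varphi \leq_T \psi$ of \autoref{eq:affine-order} holds if and only if $\varphi(p) \leq \psi(p)$ for every $p \in \tS^\aff_x(T)$, i.e., if and only if the corresponding elements of $\cA\bigl(\tS^\aff_x(T)\bigr)$ satisfy the pointwise inequality; this follows from \autoref{prop:Type} together with the identification of $\tS^\aff_x(T)$ with the set of types implying $T$.
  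Consequently, for $\varphi_1,\varphi_2,\psi_1,\psi_2 \in \cL^\aff_x$ and $\varepsilon > 0$, the condition ``$\varphi_i + \varepsilon \leq_T \psi_j$ for all $i,j = 1,2$'' says exactly that $(\varphi_1 \vee \varphi_2) + \varepsilon \leq \psi_1 \wedge \psi_2$ in $\cA\bigl(\tS^\aff_x(T)\bigr)$, and the existence of an affine formula $\chi$ with $\varphi_1 \vee \varphi_2 \leq_T \chi \leq_T \psi_1 \wedge \psi_2$ (meaning $\varphi_i \leq_T \chi \leq_T \psi_j$ for all $i,j$) says exactly that there is $\chi \in \cL^\aff_x(T)$ with $\varphi_1 \vee \varphi_2 \leq \chi \leq \psi_1 \wedge \psi_2$.

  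Thus condition (ii) for the tuple $x$ is precisely the Weak Riesz interpolation property \autoref{item:SimplexWeakRieszInterpolation} of \autoref{thm:SimplexRiesz}, applied with $X = \tS^\aff_x(T)$ and $A_0 = \cL^\aff_x(T)$.
  By \autoref{thm:SimplexRiesz}, this property is equivalent to $\tS^\aff_x(T)$ being a simplex.
  Therefore (ii) holds for every finite $x$ if and only if $\tS^\aff_x(T)$ is a simplex for every finite $x$, which by \autoref{df:simplicial-theory} means that $T$ is simplicial.
\end{proof}
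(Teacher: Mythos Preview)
Your proof is correct and takes essentially the same approach as the paper, which simply states that the proposition follows from \autoref{thm:SimplexRiesz}. You have made explicit the identifications (density of $\cL^\aff_x(T)$ in $\cA(\tS^\aff_x(T))$, and the correspondence between $\leq_T$ and the pointwise order) that the paper leaves implicit.
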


\begin{cor}
  \label{cor:SimplicialSubLanguageClub}
  Let $T$ be a simplicial affine theory in a language $\cL$.
  Then the collection of countable sublanguages $\cL_0 \subseteq \cL$ such that $T\rest_{\cL_0}$ is simplicial is cofinal and closed under unions of countable chains within the set $\cP_{\aleph_0}(\cL)$ of countable sublanguages of $\cL$.
\end{cor}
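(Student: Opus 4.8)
The plan is to work entirely with the weak Riesz interpolation characterization of simplices from \autoref{thm:SimplexRiesz}, applied to the type spaces $\tS^\aff_x(T\rest_{\cL_0})$, and to verify the two required properties — closure under unions of countable chains, and cofinality — separately. I will repeatedly use the elementary fact that for a sublanguage $\cL_0\subseteq\cL$ and affine $\cL_0$-formulas $\varphi,\psi$ one has $\varphi\leq_{T\rest_{\cL_0}}\psi$ iff $\varphi\leq_T\psi$: indeed $T\models T\rest_{\cL_0}$ gives one direction, and any $\cL_0$-condition entailed by $T$ lies in $T\rest_{\cL_0}$ by definition. In particular $\cL_0{}^\aff_x(T\rest_{\cL_0})$ is the image of $\cL_0{}^\aff_x$ under the quotient by $\equiv_T$, and its completion is $\cA\bigl(\tS^\aff_x(T\rest_{\cL_0})\bigr)$; recall also that for a countable $\cL_0$ the affine $\cL_0$-formulas in a fixed finite $x$ with rational coefficients form a countable $\|{\cdot}\|$-dense subset of $\cL_0{}^\aff_x$, which I denote $D_{\cL_0,x}$.

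For closure, suppose $\cL_\omega=\bigcup_n\cL_n$ is the union of an increasing chain $\cL_0\subseteq\cL_1\subseteq\cdots$ of countable sublanguages with each $T\rest_{\cL_n}$ simplicial (an arbitrary countable chain reduces to this case by passing to a cofinal $\omega$-subchain). Fixing a finite tuple $x$, I apply \autoref{thm:SimplexRiesz} to $X=\tS^\aff_x(T\rest_{\cL_\omega})$ with $A_0$ the (dense) image of $\cL_\omega{}^\aff_x$. Given $\varphi_1,\varphi_2,\psi_1,\psi_2\in\cL_\omega{}^\aff_x$ and $\varepsilon>0$ with $\varphi_i+\varepsilon\leq_{T\rest_{\cL_\omega}}\psi_j$ for all $i,j$, all four formulas involve finitely many symbols, hence lie in $\cL_n{}^\aff_x$ for some $n$, and the inequalities read $\varphi_i+\varepsilon\leq_{T\rest_{\cL_n}}\psi_j$; simpliciality of $T\rest_{\cL_n}$ (via the preceding proposition, i.e.\ \autoref{thm:SimplexRiesz} for $\tS^\aff_x(T\rest_{\cL_n})$) yields $\chi\in\cL_n{}^\aff_x\subseteq\cL_\omega{}^\aff_x$ with $\varphi_i\leq_{T\rest_{\cL_n}}\chi\leq_{T\rest_{\cL_n}}\psi_j$, hence $\varphi_i\leq_{T\rest_{\cL_\omega}}\chi\leq_{T\rest_{\cL_\omega}}\psi_j$. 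So $X$ is a simplex; as $x$ was arbitrary, $T\rest_{\cL_\omega}$ is simplicial.

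For cofinality, given a countable $\cL_0\subseteq\cL$, I build an increasing chain $\cL_0\subseteq\cL_1\subseteq\cdots$ of countable sublanguages by a reflection construction: having defined $\cL_n$, consider the countably many triples $\bigl(x,(\varphi_1,\varphi_2,\psi_1,\psi_2),\varepsilon\bigr)$ with $x$ a finite tuple from a fixed countable supply of variables, $\varphi_i,\psi_j\in D_{\cL_n,x}$, $\varepsilon\in\bQ^+$, and $\varphi_i+\varepsilon\leq_T\psi_j$ for all $i,j$. For each such triple, the quadruple $\varphi_1+\tfrac{\varepsilon}{3},\varphi_2+\tfrac{\varepsilon}{3},\psi_1-\tfrac{\varepsilon}{3},\psi_2-\tfrac{\varepsilon}{3}$ satisfies $(\varphi_i+\tfrac{\varepsilon}{3})+\tfrac{\varepsilon}{3}\leq_T\psi_j-\tfrac{\varepsilon}{3}$, so simpliciality of $T$ gives $\chi'\in\cL^\aff_x$ with $\varphi_i+\tfrac{\varepsilon}{3}\leq_T\chi'\leq_T\psi_j-\tfrac{\varepsilon}{3}$; rounding the coefficients of $\chi'$ produces a rational-coefficient $\chi\in\cL^\aff_x$ with $\|\chi-\chi'\|<\tfrac{\varepsilon}{3}$ (same symbols as $\chi'$), whence $\varphi_i\leq_T\chi\leq_T\psi_j$. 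Let $\cL_{n+1}$ adjoin to $\cL_n$ the finitely many symbols occurring in each of these $\chi$; it stays countable, and each $\chi$ lies in $D_{\cL_{n+1},x}$. Put $\cL_*=\bigcup_n\cL_n\in\cP_{\aleph_0}(\cL)$, so $\cL_*\supseteq\cL_0$. To see $T\rest_{\cL_*}$ is simplicial, fix a finite $x$ and apply \autoref{thm:SimplexRiesz} to $X=\tS^\aff_x(T\rest_{\cL_*})$ with $A_0$ the image of $\bigcup_n D_{\cL_n,x}=D_{\cL_*,x}$, dense in $\cA(X)$. If $\varphi_1,\varphi_2,\psi_1,\psi_2\in A_0$ and $\varepsilon>0$ satisfy $\varphi_i+\varepsilon\leq_{T\rest_{\cL_*}}\psi_j$, equivalently $\varphi_i+\varepsilon\leq_T\psi_j$, pick $\varepsilon'\in\bQ^+$ with $\varepsilon'<\varepsilon$ and $n$ with all four formulas in $D_{\cL_n,x}$; the triple $\bigl(x,(\varphi_1,\varphi_2,\psi_1,\psi_2),\varepsilon'\bigr)$ was treated at stage $n$, producing $\chi\in D_{\cL_{n+1},x}\subseteq A_0$ with $\varphi_i\leq_T\chi\leq_T\psi_j$, hence $\varphi_i\leq\chi\leq\psi_j$ in $\cA(X)$. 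Thus $X$ is a simplex, $T\rest_{\cL_*}$ is simplicial, and cofinality follows.

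The construction is a routine Löwenheim–Skolem-style reflection, so there is no deep obstacle; the only point needing care is ensuring that the interpolants chosen at each stage belong to a single countable set that is dense in the relevant $\cA$-space at the end. This is precisely why I work with rational-coefficient formulas and spend a fixed fraction of the margin $\varepsilon$ to convert an arbitrary interpolant $\chi'$ into a rational-coefficient $\chi$ — the one place where the slack in the weak Riesz interpolation property is genuinely used.
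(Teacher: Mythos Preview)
Your proof is correct and follows precisely the approach the paper has in mind: the corollary is stated without proof immediately after the proposition characterizing simpliciality via weak Riesz interpolation, and your argument is exactly the standard reflection/Löwenheim--Skolem construction that this characterization invites. The key observation you isolate at the start --- that $\varphi\leq_{T\rest_{\cL_0}}\psi$ iff $\varphi\leq_T\psi$ for $\cL_0$-formulas --- is what makes both halves go through, and your handling of the rational-coefficient bookkeeping (spending $\varepsilon/3$ to round the interpolant) is the clean way to keep the stages countable.
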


If $T \sub T_0$ are affine theories, we will say that $T_0$ is a \emph{face of $T$} if $\tS^\aff_0(T_0)$ is a face of $\tS^\aff_0(T)$.
\begin{prop}
  \label{p:face-simplicial}
  Let $T$ be an affine, simplicial $\cL$-theory.
  \begin{enumerate}
  \item If $T_0$ is a face of $T$, then $T_0$ is simplicial.
  \item\label{i:p:face-simplicial-constants} Let $\cL'\supseteq\cL$ be an expansion by constants. Then $T$ remains simplicial as an $\cL'$-theory.
  \end{enumerate}
  In particular, every extreme completion of $T$ is simplicial, and if $M$ is an extremal model of $T$ and $A\subseteq M$, then the type spaces $\tS^\aff_x(A)$ are simplices.
\end{prop}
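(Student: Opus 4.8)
The plan is to prove the two items separately and then derive the three ``in particular'' consequences formally.

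For item (i), suppose $T_0$ is a face of $T$, so by definition $\tS^\aff_0(T_0)$ is a face of $\tS^\aff_0(T)$. I would first recall (from \autoref{rem:ExtremeRestrictionTwoTheories}, second bullet) that this is equivalent to $\tS^\aff_x(T_0)$ being a closed face of $\tS^\aff_x(T)$ for every tuple of variables $x$. Now a closed face of a simplex is again a simplex: this is standard, but if one wants to stay within the toolkit of the excerpt, one can use the Riesz interpolation characterization of \autoref{thm:SimplexRiesz}. Given affine formulas $\varphi_1,\varphi_2,\psi_1,\psi_2$ with $(\varphi_1\vee\varphi_2)+\varepsilon \leq_{T_0} \psi_1\wedge\psi_2$, one cannot directly apply interpolation in $T$ because the inequalities need not hold modulo $T$. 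Instead, work on the convex-set side: $\cA(\tS^\aff_x(T_0))$ is the quotient of $\cA(\tS^\aff_x(T))$ by the ideal of functions vanishing on the face, and since the face is a simplex (being a closed face of a simplex — invoke the classical fact, e.g. via \autoref{th:Choquet-Meyer}: if $x$ lies in a closed face $F$, the unique boundary measure representing $x$ in $X$ is supported on $F$ by \autoref{l:barycenter-face-measure-one} and is the unique one on $F$ as well), $\tS^\aff_x(T_0)$ is a simplex. So $T_0$ is simplicial.

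For item (ii), let $\cL' = \cL \cup C$ be an expansion by a set $C$ of new constant symbols. A type in variables $x$ over $\cL'$ is the same thing as a type over $\cL$ in the variables $x$ together with variables for (a finite subtuple of) $C$, subject to $T$; that is, $\tS^\aff_x(T_{\cL'}) = \tS^\aff_{xC}(T)$ as $\cL$-type spaces, where on the right one takes the inverse limit over finite subtuples of $C$. Since $T$ is simplicial, $\tS^\aff_{xc}(T)$ is a simplex for every finite tuple $c$ from $C$, hence so is the inverse limit by \autoref{l:inverse-limit-simplices}. Therefore $T$ is simplicial as an $\cL'$-theory.

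Finally, the three consequences. An extreme completion of $T$ is, by definition, a complete extremal extension, i.e. a $T_0 \supseteq T$ with $\tS^\aff_0(T_0)$ a (singleton, hence a) face of $\tS^\aff_0(T)$; so (i) applies. For the last two: if $M$ is an extremal model of $T$ and $A \subseteq M$, expand to $\cL(A)$ and consider the affine diagram $D^\aff_A$; by (ii), $T$ (equivalently the complete $\Th^\aff(M)$) is simplicial as an $\cL(A)$-theory, and $D^\aff_A \supseteq \Th^\aff_{\cL(A)}(M)$ is complete, hence its type space is a single point — but that is not quite what is wanted. The point rather is: $D^\aff_A$ is a face of the $\cL(A)$-theory obtained by only requiring that the constants enumerate an extremal realization; more directly, by \autoref{prop:ExtremeTypeTwoSteps} every type in $\tS^\aff_x(A)$ realized alongside $A$ in an extremal model is extreme, and the face generated in $\tS^\aff_{xc}(T)$ (for $c$ a tuple enumerating $A$) by the extreme type $\tp^\aff(A)$ is, by (i) applied to that face together with \autoref{l:extreme-transitivity}\autoref{l:extreme-transitivity:extreme-points} and the fiber identification $\tS^\aff_x(A) \cong \pi_c^{-1}(\tp^\aff(A))$, a simplex; so $\tS^\aff_x(A)$, being affinely homeomorphic to that fiber (a closed face since $\tp^\aff(A)$ is extreme), is a simplex.

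\textbf{Main obstacle.} The genuinely non-formal point is the stability of the simplex property under passing to a closed face; everything else is bookkeeping with inverse limits and the fiber-as-face identification already set up in \autoref{prop:ExtremeTypeTwoSteps}. I would handle the face step via the uniqueness clause of \autoref{th:Choquet-Meyer} together with \autoref{l:barycenter-face-measure-one} and \autoref{l:boundary-measures}\autoref{i:lbm:face}: for $x$ in a closed face $F$ of a simplex $X$, any boundary measure of $F$ representing $x$ is a boundary measure of $X$ (by \autoref{l:boundary-measures}\autoref{i:lbm:face}) representing $x$, so it is unique; and any boundary measure of $X$ representing $x$ is concentrated on $F$ (by \autoref{l:barycenter-face-measure-one}, since $F$ is closed) hence restricts to a boundary measure of $F$; thus $F$ has the unique-representation property, i.e. is a simplex.
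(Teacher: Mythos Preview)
Your proof is correct and takes essentially the same approach as the paper: identify $\tS^\aff_x(T_0)$ as a closed face of the simplex $\tS^\aff_x(T)$ for (i), identify $\cL'$-types in $x$ with $\cL$-types in $xc$ for (ii), and combine the two for the final assertion. The paper handles that last step more cleanly than your fiber detour by simply observing that $D^\aff_A$ is an extreme completion of $T$ in $\cL_A$ (since $M$ is extremal, the type of any enumeration of $A$ is extreme), so (ii) then (i) apply directly.
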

\begin{proof}
  Let $\pi \colon \tS^\aff_x(T) \to \tS^\aff_0(T)$ denote the natural projection.
  Then $\tS^\aff_x(T_0) = \pi^{-1}\bigl( \tS^\aff_0(T_0) \bigr)$, so $\tS^\aff_x(T_0)$ is a closed face of the simplex $\tS^\aff_x(T)$ and therefore also a simplex. For the second point, note that if $\cL'$ expands $\cL$ with a tuple of distinct constants $c$, say of the sort of a variable $y$, then $\tS_x^{\aff,\cL'}(T)\cong \tS_{xy}^{\aff,\cL}(T)$ as convex sets. For the last assertion, note that the affine diagram $D^\aff_A$ is an extreme completion of $T$ in the language $\cL_A$.
\end{proof}

Describing the convex sets $\tS^\aff_x(T)$ and checking that they are simplices can be difficult in concrete examples. We conclude this section with a sufficient condition for establishing that $T$ is simplicial based on information about the quantifier-free type spaces $\tS^\qf_x(T)$.
This criterion will be applied in Sections \ref{sec:PMP} and \ref{sec:tracial-von-neumann}.

Recall that $\rho^\qf_\bN\colon \tS^\aff_\bN(T) \rightarrow \tS^\qf_\bN(T)$ is the map sending an affine type to its quantifier-free part.
As for affine types, let $\cE^\qf_x(T)$ denote the set of extreme points of $\tS^\qf_x(T)$, namely, of extreme quantifier-free types.

\begin{prop}
  \label{prop:SimplicialCriterionQF}
  Let $T$ be an affine theory in a separable language with the following properties:
  \begin{enumerate}
  \item $\tS^\qf_n(T)$ is a simplex for every $n\in\N$.
  \item For every $p\in\cE^\fM_\bN(T)$, we have $\rho^\qf_\bN(p)\in\cE^\qf_\bN(T)$.
  \end{enumerate}
  Then $T$ is a simplicial theory.
\end{prop}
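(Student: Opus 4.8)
The plan is to verify the Choquet--Meyer criterion (\autoref{th:Choquet-Meyer}): it suffices to show that for each finite $n$ and each $r \in \tS^\aff_n(T)$ there is a \emph{unique} boundary measure on $\tS^\aff_n(T)$ with barycenter $r$, since then $\tS^\aff_x(T) = \varprojlim_{x_0 \subseteq x \text{ finite}} \tS^\aff_{x_0}(T)$ is a simplex for every $x$ by \autoref{l:inverse-limit-simplices}. Existence is automatic (\autoref{l:choquet-order}, \autoref{th:Bishop-dL}), so I would fix $r$ and two boundary measures $\mu_1,\mu_2$ on $\tS^\aff_n(T)$ with $R(\mu_i)=r$, and aim to prove $\mu_1=\mu_2$.

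First I would lift the problem to types enumerating models. Realizing $r$ as $\tp^\aff(a)$ in a model $M\models T$, passing by Downward L\"owenheim--Skolem (using that $\cL$ is separable) to a separable $M_0\preceq^\aff M$ containing $a$, and enumerating $M_0$ densely, one obtains $\bar r\in\tS^{\fM,\aff}_\bN(T)$ with $\pi_n(\bar r)=r$. By \autoref{lemma:TypeConvexCombinationLift}\autoref{item:TypeConvexCombinationLiftBoundary} (whose proof applies verbatim over a theory $T$), each $\mu_i$ lifts to a boundary measure $\nu_i$ on $\tS^\aff_\bN(T)$ with $R(\nu_i)=\bar r$ and $(\pi_n)_*\nu_i=\mu_i$. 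Since $\cL$ is separable we have $\fd_T(\cL)=\aleph_0$, so \autoref{th:Bishop-dL-ForModels} applies with $\kappa=\aleph_0$ and each $\nu_i$ concentrates on $\cE^\fM_\bN(T)$; as $\tS^\aff_\bN(T)$ is compact metrizable and $\cE^\fM_\bN(T)$ is a $G_\delta$ (hence Borel) subset by \autoref{l:extreme-Gdelta} and \autoref{l:enmod-Gdelta}, this means $\nu_i(\cE^\fM_\bN(T))=1$.

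Next I would transport everything to the quantifier-free type spaces via $\rho \coloneqq \rho^\qf_\bN$. By hypothesis~(2), $\rho(\cE^\fM_\bN(T))\subseteq\cE^\qf_\bN(T)$, so $\rho_*\nu_i$ concentrates on $\cE^\qf_\bN(T)$; since $\tS^\qf_\bN(T)$ is metrizable, $\cE^\qf_\bN(T)$ is Baire and $\rho_*\nu_i$ has full mass there, so by \autoref{p:Mokobodzki} (in the metrizable case $f=\hat f$ on extreme points) $\rho_*\nu_i$ is a boundary measure, with $R(\rho_*\nu_i)=\rho(\bar r)$. By hypothesis~(1) and \autoref{l:inverse-limit-simplices}, $\tS^\qf_\bN(T)=\varprojlim_n\tS^\qf_n(T)$ is a simplex, so \autoref{th:Choquet-Meyer} forces $\rho_*\nu_1=\rho_*\nu_2$. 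Finally I would descend: by \autoref{l:rho-injective-Smod}, $\rho$ is injective on $\tS^{\fM,\aff}_\bN(T)$, hence on $\cE^\fM_\bN(T)$; being an injective Borel map between Polish spaces it is (Lusin--Souslin) a Borel isomorphism onto its Borel image, so a Borel probability measure concentrated on $\cE^\fM_\bN(T)$ is determined by its $\rho$-pushforward. As $\nu_1,\nu_2$ are both concentrated on $\cE^\fM_\bN(T)$ with $\rho_*\nu_1=\rho_*\nu_2$, we get $\nu_1=\nu_2$, whence $\mu_1=(\pi_n)_*\nu_1=(\pi_n)_*\nu_2=\mu_2$.

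The conceptual core is the middle step: a boundary measure whose barycenter enumerates a model must concentrate on types enumerating \emph{extremal} models, and these embed faithfully (via \autoref{l:rho-injective-Smod}) into the extreme quantifier-free types, where simpliciality is assumed. The points needing care are the boundary lift over $T$ in the first step and the standard-Borel bookkeeping in the last step (Lusin--Souslin, plus checking that $\rho_*\nu_i$ indeed concentrates on the Borel set $\rho(\cE^\fM_\bN(T))$); I expect the latter descent argument to be the main, though routine, obstacle.
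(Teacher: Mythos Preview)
Your proposal is correct and follows essentially the same route as the paper: lift two boundary measures to $\tS^\aff_\bN(T)$ over a type enumerating a model, use \autoref{th:Bishop-dL-ForModels} to concentrate them on $\cE^\fM_\bN(T)$, push forward via $\rho^\qf_\bN$ to obtain equal boundary measures on the simplex $\tS^\qf_\bN(T)$, and recover $\nu_1=\nu_2$ from the injectivity of $\rho^\qf_\bN$ on $\cE^\fM_\bN(T)$ together with Lusin--Souslin. The paper's proof is slightly terser but invokes exactly the same ingredients in the same order.
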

\begin{proof}
  Let $p \in \tS^\aff_n(T)$ and let $\mu_1$ and $\mu_2$ be two measures on $\cE_n(T)$ (equivalently, boundary measures on $\tS^\aff_n(T)$) with barycenter $p$.
  Let $\pi \colon \tS^\aff_\bN(T) \to \tS^\aff_n(T)$ be the natural projection and let $q \in \tS^{\fM,\aff}_\bN(T)$ be such that $\pi(q) = p$.
  By \autoref{lemma:TypeConvexCombinationLift}, there exist measures $\nu_1$ and $\nu_2$ on $\cE_\bN(T)$ with $\pi_*\nu_i = \mu_i$ and $R(\nu_i) = q$.
  By \autoref{th:Bishop-dL-ForModels}, we have $\nu_i(\cE^\fM_\bN(T)) = 1$.

  If we let $\lambda_i = (\rho^\qf_\bN)_* \nu_i$, then $(\rho^\qf_\bN)^{-1}(\cE_\bN^\qf(T))\supseteq \cE^\fM_\bN(T)$ by hypothesis, so $\lambda_i$ is a boundary measure on $\tS^\qf_\bN(T)$ with barycenter $\rho^\qf_\bN(q)$.
  The convex set $\tS^\qf_\bN(T)$ is the inverse limit of the simplices $\tS^\qf_n(T)$, and hence a simplex by \autoref{l:inverse-limit-simplices}.
  It follows that $\lambda_1 = \lambda_2$.

  The space $\cE^\fM_\bN(T)$ is a Polish space on which $\rho^\qf_\bN$ is injective by \autoref{l:rho-injective-Smod}.
  Therefore, $\rho^\qf_\bN$ sends Borel subsets of $\cE^\fM_\bN(T)$ to Borel subsets of $\cE^\qf_\bN(T)$.
  Since $\lambda_1 = \lambda_2$, we must have $\nu_1 = \nu_2$, and therefore $\mu_1 = \mu_2$.
  We conclude that $\tS^\aff_n(T)$ is a simplex.
\end{proof}

We also observe that a converse of the second condition of \autoref{prop:SimplicialCriterionQF} always holds.

\begin{lemma}
  \label{lem:ExtremeTypeProjectionQF}
  If $p\in \tS^{\fM,\aff}_x(T)$ and $\rho^\qf_x(p)\in \cE^\qf_x(T)$, then $p\in \cE^\fM_x(T)$.
\end{lemma}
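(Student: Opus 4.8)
The plan is to argue that $p$ is an extreme point of $\tS^\aff_x(T)$; since $p\in\tS^{\fM,\aff}_x(T)$ by hypothesis, this immediately gives $p\in\cE^\fM_x(T)=\tS^{\fM,\aff}_x(T)\cap\cE_x(T)$. So suppose $p=\lambda p_1+(1-\lambda)p_2$ with $0<\lambda<1$ and $p_1,p_2\in\tS^\aff_x(T)$; the goal is to conclude $p_1=p_2=p$.

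The first step is to observe that $p_1$ and $p_2$ lie in the face generated by $p$ in $\tS^\aff_x(T)$ (take $z=p_2$, resp.\ $z=p_1$, in the description of the generated face recalled before \autoref{cor:FrecciaRossa}). By \autoref{cor:FrecciaRossa}, that face is contained in $\tS^{\fM,\aff}_x(T)$, so in fact $p_1,p_2\in\tS^{\fM,\aff}_x(T)$; that is, $p_1$ and $p_2$ also enumerate models of $T$. This is the only slightly non-formal point, and it is exactly the statement of \autoref{cor:FrecciaRossa}.

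The second step applies the quantifier-free projection. Since $\rho^\qf_x$ is affine, $\rho^\qf_x(p)=\lambda\,\rho^\qf_x(p_1)+(1-\lambda)\,\rho^\qf_x(p_2)$, and as $\rho^\qf_x(p)\in\cE^\qf_x(T)$ is extreme, we get $\rho^\qf_x(p_1)=\rho^\qf_x(p_2)=\rho^\qf_x(p)$. Now $p,p_1,p_2$ all lie in $\tS^{\fM,\aff}_x(T)$, where $\rho^\qf_x$ is injective by \autoref{l:rho-injective-Smod} (two types enumerating models with the same quantifier-free part enumerate isomorphic models, hence are equal). Therefore $p_1=p_2=p$, so $p$ is extreme, which completes the proof. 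There is no real obstacle here: the statement is a direct consequence of \autoref{cor:FrecciaRossa} and \autoref{l:rho-injective-Smod}, the only thing to be careful about being that one must first move $p_1,p_2$ into $\tS^{\fM,\aff}_x(T)$ before invoking injectivity of $\rho^\qf_x$.
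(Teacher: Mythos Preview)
Your proof is correct and follows essentially the same approach as the paper's: decompose $p$ as a convex combination, push down via the affine map $\rho^\qf_x$ to force equality of the quantifier-free parts, observe that the summands lie in $\tS^{\fM,\aff}_x(T)$, and then invoke \autoref{l:rho-injective-Smod}. The only cosmetic difference is that the paper cites \autoref{prop:TypeOfModelPreMean} directly (for the atoms of the measure $\lambda\delta_{p_1}+(1-\lambda)\delta_{p_2}$) rather than its corollary \autoref{cor:FrecciaRossa}, and uses $\tfrac12$--$\tfrac12$ combinations; the content is identical.
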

\begin{proof}
  Let $p$ be as in the statement.
  Suppose we have $p_1, p_2 \in \tS^\aff_x(T)$ with $p = \half p_1 + \half p_2$.
  Then $\rho^\qf_x(p) = \half \rho^\qf_x(p_1) + \half \rho^\qf_x(p_2)$ and therefore $\rho^\qf_x(p) = \rho^\qf_x(p_1) = \rho^\qf_x(p_2)$.
  By \autoref{prop:TypeOfModelPreMean}, $p_1, p_2 \in \tS^{\fM,\aff}_x(T)$.
  By \autoref{l:rho-injective-Smod}, $p = p_1 = p_2$, and $p$ is an extreme type.
\end{proof}

\section{Extremal decomposition for simplicial theories}
\label{sec:ExtremalDecomposition}

In this section, we prove one of the main results of the paper: a general decomposition theorem for models simplicial theories, as direct integrals of extremal models.
We also prove the uniqueness of this decomposition, up to a reasonable notion of equivalence.

If $p \in \tS^\aff_x(T)$, then by the Choquet--Meyer theorem (\autoref{th:Choquet-Meyer}), there exists a unique boundary measure with barycenter $p$, which will be denoted $\mu_p$.
We observe that the map $\tS^\aff_x(T) \to \cM(\tS^\aff_x(T))$, $p \mapsto \mu_p$ is affine but it is in general not continuous.

\begin{lemma}
  \label{l:mu-p-under-projections}
  Let $T$ be a simplicial theory and let $\pi \colon \tS^\aff_{xy}(T) \to \tS^\aff_x(T)$ denote the variable restriction map.
  Then for every $p \in \tS^\aff_{xy}(T)$, we have
  \begin{equation*}
    \pi_* \mu_p = \mu_{\pi(p)}.
  \end{equation*}
\end{lemma}
\begin{proof}
  By \autoref{prop:VariableRestrictionBoundary}, $\pi_* \mu_p$ is a boundary measure.
  Its barycenter is clearly $\pi(p)$, whence the identity $\pi_* \mu_p = \mu_{\pi(p)}$.
\end{proof}

For the following discussion and lemma, let us fix a simplicial theory $T$, as well as a model $N \models T$.
Let $\kappa \geq \fd(N) + \fd_T(\cL)$ and let $a = (a_i : i < \kappa)$ enumerate a dense subset of $N$. Let $x$ be a tuple of variables of length $\kappa$, let $p = \tp^\aff(a) \in \tS^{\fM,\aff}_x(T)$, and let $\mu_p$ be the unique boundary measure on $\tS^\aff_x(T)$ with barycenter~$p$.

We are now in the setting of \autoref{sec:DirectIntegralExtremalModels}.
In particular, we have a probability space $(\Omega,\cB_\Omega,\mu_\Omega)$, where $\Omega = \cE^\fM_x(T)$, and a measurable field of extremal models $(M_\omega,e)$, where $e$ and $x$ are indexed by $\kappa$, with direct integral $M$ such that $N$ embeds in $M$ via the identification of $a$ with $e$.
Our goal is to show that this embedding is in fact an isomorphism.

Let $y$ be a single variable, and $z$ an arbitrary tuple.
The condition $d(y,x_j) = 0$ defines a closed face $F_z^j \subseteq \tS^\aff_{xyz}(T)$, which is affinely homeomorphic to $\tS^\aff_{xz}(T)$ via the variable restriction map $\pi_{xz}$.
Let $\theta^j \colon \tS^\aff_{xz}(T) \to F_z^j$ be the affine homeomorphism which is the inverse of $\pi_{xz} \rest_{F_z^j}$ (the argument of $\theta^j$ determines $z$, so no ambiguity can arise).

Consider now a simple section $b \in M$ with the goal to show that $b \in N$.
Let $J \sub \kappa$ be finite and let $\bigsqcup_{j \in J} A_j$ be a measurable partition of $\Omega$ such that $b(\omega) = e_{j}(\omega)$ for $\omega \in A_j$.
For every $j \in J$, we let $A_j' \sub \tS^\aff_x(T)$ be a Baire set such that $A_j = A_j' \cap \Omega$, and we may assume that  $(A_j' : j \in J)$ forms a partition of $\tS^\aff_x(T)$.
We may now glue the family of $\theta^j$ along this partition, defining $\theta^b \colon \tS^\aff_{xz}(T) \to \tS^\aff_{xyz}(T)$ by
\begin{equation*}
  \theta^b(q) = \theta^j(q) \quad \text{if} \quad \pi_x(q) \in A'_j.
\end{equation*}
Since the sets $A'_j$ are Baire, and each $\theta^j$ is continuous, the map $\theta^b$ is measurable with respect to the $\sigma$-algebras of Baire sets.

In order to simplify notation, if $a$ is a tuple from a model of $T$, we write $\mu_a$ for $\mu_{\tp^\aff(a)}$.
In particular, $\mu_e = \mu_p$.

The following is the main lemma.
\begin{lemma}
  \label{lem:IntegralDecompositionTheta}
  Let $b$ and $\theta^b$ be as above and let $c$ be a $z$-tuple in some affine extension of $M$.
  Then $\theta^b_* \mu_{ec} = \mu_{ebc}$ as measures on the Baire $\sigma$-algebra of $\tS^\aff_{xyz}(T)$.
\end{lemma}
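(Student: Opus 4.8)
The plan is to reduce the identity to the defining property of the Choquet--Meyer measure $\mu_{ebc}$, namely that it is the unique boundary measure on $\tS^\aff_{xyz}(T)$ with barycenter $\tp^\aff(e,b,c)$. So I would verify two things about the pushforward measure $\nu := \theta^b_*\mu_{ec}$: first, that its barycenter is $\tp^\aff(e,b,c)$, and second, that $\nu$ is a boundary measure. Uniqueness (\autoref{th:Choquet-Meyer}, using that $T$ is simplicial so $\tS^\aff_{xyz}(T)$ is a simplex) then forces $\nu=\mu_{ebc}$. One has to be a little careful because $\theta^b$ is only a measurable (Baire-to-Baire) map, not continuous, so ``pushforward'' and ``barycenter'' must be understood on the Baire $\sigma$-algebra and one cannot invoke continuity of $R$; but integration of affine \emph{formulas} (which generate the Baire algebra) against $\nu$ is still controlled.

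For the barycenter computation, fix an affine formula $\varphi$ in the variables $xyz$. On the face $F_z^j$ we have $d(y,x_j)=0$, hence $\varphi(x,y,z)^{q'} = \varphi(x,x_j,z)^{q'}$ for $q'\in F_z^j$, and via the affine homeomorphism $\theta^j$ this means $\varphi \circ \theta^j$ equals the affine formula $\varphi(x,x_j,z)$ on $\tS^\aff_{xz}(T)$. Gluing along the Baire partition $(A'_j)$, the function $\varphi\circ\theta^b$ agrees $\mu_{ec}$-almost everywhere (indeed, on the Baire set $\bigsqcup_j (A'_j \cap \{\pi_x \in A'_j\})$, which has $\mu_{ec}$-measure one because $\pi_{x,*}\mu_{ec} = \mu_e = \mu_p$ concentrates on $\Omega$ and $(A_j)$ partitions $\Omega$) with the bounded Baire function $q \mapsto \varphi(x,x_{j(q)},z)^q$, where $j(q)$ is the index with $\pi_x(q)\in A'_j$. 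Integrating and using \autoref{th:Los} (applied to $M$ and its extension, with $b(\omega)=e_{j}(\omega)$ on $A_j$), together with \autoref{l:measure-concentration}, gives
\begin{equation*}
  \nu(\varphi) = \int \varphi\circ\theta^b \ud\mu_{ec} = \int_\Omega \varphi^{M_\omega}\bigl(e(\omega),b(\omega),c(\omega)\bigr) \ud\mu_\Omega(\omega) = \varphi^{?}(e,b,c),
\end{equation*}
which is $\varphi$ evaluated at $\tp^\aff(e,b,c) = R(\mu_{ebc})$; here I am using that $\mu_{ec}$ itself arises, by the Choquet--Meyer construction and \autoref{l:mu-p-under-projections}, compatibly with the integral-decomposition picture. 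Since affine formulas are dense in $\cA(\tS^\aff_{xyz}(T))$, this pins down the barycenter of $\nu$.

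For the boundary-measure property I would use \autoref{p:Mokobodzki}: $\nu$ is a boundary measure iff $\nu(f)=\nu(\hat f)$ for every $f\in C(\tS^\aff_{xyz}(T))$. Pull back along $\theta^b$: on each piece, $\theta^j$ is an affine homeomorphism of $\tS^\aff_{xz}(T)$ onto the \emph{face} $F_z^j$, so by \autoref{l:concave-envelope} and the fact that concave envelopes computed inside a closed face agree with the ambient ones on that face (\autoref{l:boundary-measures}\autoref{i:lbm:face} and \autoref{i:lbm:conditional}), one gets $\widehat{f}\circ\theta^j = \widehat{f\circ\theta^j}$ as functions on $\tS^\aff_{xz}(T)$; gluing along the Baire partition, $\widehat f \circ \theta^b = \widehat{f\circ\theta^b}$ almost everywhere with respect to $\mu_{ec}$ — more precisely, these two bounded Baire functions agree off a $\mu_{ec}$-null set. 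Since $\mu_{ec}$ is a boundary measure (Choquet--Meyer), $\mu_{ec}(g)=\mu_{ec}(\hat g)$ for the relevant $g$, and therefore $\nu(f)=\mu_{ec}(f\circ\theta^b)=\mu_{ec}(\widehat{f\circ\theta^b})=\mu_{ec}(\hat f\circ\theta^b)=\nu(\hat f)$. Hence $\nu$ is a boundary measure with barycenter $\tp^\aff(e,b,c)$, so $\nu=\mu_{ebc}$ by uniqueness.

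The main obstacle I anticipate is the careful handling of the measurability/Baire bookkeeping: the map $\theta^b$ is only Baire-measurable, the partition sets $A'_j$ are Baire but the identification $A_j=A'_j\cap\Omega$ is only up to the concentration of $\mu$ on $\Omega$, and I must make sure that the various ``almost everywhere'' coincidences ($\varphi\circ\theta^b$ with the glued formula, $\widehat f\circ\theta^b$ with $\widehat{f\circ\theta^b}$) are genuinely $\mu_{ec}$-null error sets, for which I need that $\pi_{x,*}\mu_{ec}$ concentrates on $\Omega$ — this follows from \autoref{l:mu-p-under-projections} ($\pi_{x,*}\mu_{ec}=\mu_e=\mu_p$) and \autoref{th:Bishop-dL-ForModels}. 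The algebraic facts (that $\varphi\circ\theta^j$ is an affine formula, that concave envelopes restrict well to the faces $F_z^j$) are routine once the framework is set up correctly.
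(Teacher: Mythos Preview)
Your overall strategy (barycenter $+$ boundary $\Rightarrow$ uniqueness via Choquet--Meyer) is natural, but the barycenter computation has a genuine gap. You write
\[
\nu(\varphi) \;=\; \int \varphi\circ\theta^b \,\ud\mu_{ec} \;=\; \int_\Omega \varphi^{M_\omega}\bigl(e(\omega),b(\omega),c(\omega)\bigr)\,\ud\mu_\Omega(\omega),
\]
but $c$ lives in an \emph{affine extension} of $M$, not in $M$, so ``$c(\omega)$'' has no meaning: there is no direct-integral representation of the extension over $\Omega$, and {\L}o\'s's theorem does not apply. More to the point, $\mu_{ec}$ is a measure on $\tS^\aff_{xz}(T)$, not on $\Omega$; the only link to $\Omega$ is via $(\pi_x)_*\mu_{ec}=\mu_e$, which controls formulas in $x$ alone but not the joint distribution in $(x,z)$. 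Concretely, on the piece $\pi_x^{-1}(A'_j)$ you have $\varphi\circ\theta^b(q)=\varphi(x,x_j,z)^q$, so
\[
\nu(\varphi)=\sum_j \int_{\pi_x^{-1}(A'_j)} \varphi(x,x_j,z)^q\,\ud\mu_{ec}(q),
\]
and these piecewise integrals are \emph{not} determined by the barycenter $\tp^\aff(e,c)$ of $\mu_{ec}$, since $\varphi\circ\theta^b$ is only piecewise affine. (This is also why your boundary-measure argument via ``$\widehat{f\circ\theta^b}$'' is shaky: $f\circ\theta^b$ is not continuous, so the Mokobodzki criterion does not directly apply to it; the correct route is the one you hint at, via \autoref{l:boundary-measures}\ref{i:lbm:conditional}, \ref{i:lbm:face}, \ref{i:lbm:convex}.)

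The paper bypasses the barycenter difficulty entirely. It first proves the case $z=\emptyset$ (i.e., $\theta^b_*\mu_e=\mu_{eb}$), where your integral computation \emph{does} work because $\mu_e$ genuinely is $\mu_\Omega$ on $\Omega$. This yields $(\pi_{xy})_*\mu_{ebc}=\mu_{eb}=\theta^b_*\mu_e$, hence $\mu_{ebc}$ gives full measure to the Baire set $B=\pi_{xy}^{-1}\bigl(\theta^b(\tS_x)\bigr)=\theta^b(\tS_{xz})$, on which $\theta^b\circ\pi_{xz}=\mathrm{id}$. Since $(\pi_{xz})_*\mu_{ebc}=\mu_{ec}$, one concludes $\theta^b_*\mu_{ec}=(\theta^b\circ\pi_{xz})_*\mu_{ebc}=\mu_{ebc}$ directly, without ever computing a barycenter or invoking Mokobodzki for the general $c$.
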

\begin{proof}
  For simplicity of notation, let us write $\tS_x$ for $\tS^\aff_x(T)$, and similarly for other variables.
  We are going to consider measures and subsets along the following diagram
  \begin{equation*}
    \begin{tikzcd}
      \tS_{xz} \arrow[r, "\theta^b", yshift=0.6ex] \arrow[d, "\pi_x"] &
      \tS_{xyz} \arrow[d, "\pi_{xy}"] \arrow[l, "\pi_{xz}", yshift=-0.6ex] \\
      \tS_x \arrow[r, "\theta^b", yshift=0.6ex] &
      \tS_{xy} \arrow[l, "\pi_x", yshift=-0.6ex].
    \end{tikzcd}
  \end{equation*}
  The map $\theta^b \colon \tS_x \to \tS_{xy}$ at the bottom is defined analogously to the one at the top by taking $z$ to be the empty tuple.

  First of all, we claim that
  \begin{gather}
    \label{eq:IntegralDecompositionThetaMuEB}
    (\pi_{xy})_* \mu_{ebc} = \mu_{eb} = \theta^b_* \mu_e.
  \end{gather}
  Indeed, the first equality is just \autoref{l:mu-p-under-projections}, so let us prove the second.
  Recall that given a probability measure $\nu$ and a non-null measurable set $A$, we denote by $\nu_A$ the conditional probability measure on $A$, defined by $\nu_A(B) = \nu(B \cap A) / \nu(A)$.
  Then, by construction,
  \begin{gather*}
    \theta^b_* \mu_e = \sum_{j \in J} \mu_e(A'_j) \theta^j_* \big((\mu_e)_{A'_j}\big).
  \end{gather*}
  By \autoref{l:boundary-measures}, $\theta^b_* \mu_e$ is a boundary measure.
  If $\varphi(x, y)$ is an affine formula, then
  \begin{align*}
    \int_{\tS_{xy}} \varphi \ud \theta^b_* \mu_e
    & = \sum_{j \in J} \int_{A_j'} \varphi\bigl(\theta^j(q)\bigr) \ud \mu_e(q) \\
    & = \sum_{j \in J} \int_{A_j} \varphi\bigl( e(\omega), e_j(\omega) \bigr) \ud \mu_\Omega(\omega) \\
    & = \int_\Omega \varphi\bigl( e(\omega), b(\omega) \bigr) \ud \mu_\Omega(\omega)
      = \varphi(e, b).
  \end{align*}
  In other words, $R(\theta^b_* \mu_e) = \tp^\aff(eb)$, and the second equality of \autoref{eq:IntegralDecompositionThetaMuEB} holds by the definition of $\mu_{eb}$.

  Let
  \begin{gather*}
    B_0 = \theta^b( \tS_x ) = \bigl\{ q \in \tS_{xy} : q \models d(y,x_j)=0 \text{ if } \pi_x(q) \in A_j' \bigr\}
    \subseteq \tS_{xy}, \\
    B = \pi_{xy}^{-1}(B_0) \subseteq \tS_{xyz}
  \end{gather*}
  and note that both $B_0$ and $B$ are Baire.
  On the one hand, by \autoref{eq:IntegralDecompositionThetaMuEB},
  \begin{gather*}
    \mu_{ebc}(B) = \mu_{eb}(B_0) = \mu_e( \tS_x ) = 1.
  \end{gather*}
  On the other, $B = \theta^b( \tS_{xz} )$ and $\theta^b \circ \pi_{xz}\rest_B = \id_B$.
  Therefore
  \begin{gather*}
    \theta^b_* \mu_{ec} = \theta^b_* (\pi_{xz})_* \mu_{ebc} = \mu_{ebc},
  \end{gather*}
  completing the proof.
\end{proof}

\begin{theorem}[Extremal decomposition]
  \label{th:integral-decomposition}
  Let $T$ be a simplicial theory, let $N \models T$, and let $\kappa \geq \fd(N) + \fd_T(\cL)$.
  Then there exists a probability space $(\Omega, \cB_\Omega, \mu_\Omega)$ and a non-degenerate measurable field of extremal models $(M_\Omega,e)$, such that $\density\bigl( M_\omega \bigr) \leq \kappa$ for all $\omega$, $e$ is indexed by $\kappa$, and
  \begin{equation*}
    N \cong \int_\Omega^\oplus M_\omega \ud \mu_\Omega(\omega).
  \end{equation*}
  In the case where $T$ has a separable language and $N$ is separable, we can take $\Omega$ to be a standard probability space.
\end{theorem}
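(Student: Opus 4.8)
The plan is to take the canonical embedding of $N$ into a direct integral of extremal models provided by \autoref{prop:DirectIntegralExtremalModelsConstruction}, and to show that, \emph{because $T$ is simplicial}, this embedding is onto. Concretely: fix $\kappa \geq \fd(N) + \fd_T(\cL)$, enumerate a dense subset $a = (a_i : i < \kappa)$ of $N$, put $p = \tp^\aff(a) \in \tS^{\fM,\aff}_\kappa(T)$, and — using the Choquet--Meyer theorem (\autoref{th:Choquet-Meyer}) — let $\mu = \mu_p$ be the \emph{unique} boundary measure on $\tS^\aff_\kappa(T)$ with barycenter $p$. By \autoref{prop:DirectIntegralExtremalModelsConstruction}, $\mu$ concentrates on $\Omega = \cE^\fM_\kappa(T)$, this data yields a non-degenerate measurable field $(M_\Omega, e)$ of extremal models of $T$ with $\fd(M_\omega) \leq \kappa$ and $e$ indexed by $\kappa$, and $N$ embeds affinely in $M_\mu = \int^\oplus_\Omega M_\omega \ud \mu_\Omega(\omega)$ via $a \mapsto e$. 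All that then remains is to see that this affine embedding is surjective.

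Since the simple sections are dense in $M_\mu$ (\autoref{p:DirectIntegralMetricStructure}) and the image of $N$ is a closed substructure, it suffices to check that each simple section $b$ lies in $\cl{\{e_i : i < \kappa\}} = N$. Given such a $b$, determined by a Baire partition $(A_j')_{j \in J}$ of $\tS^\aff_\kappa(T)$ with $b(\omega) = e_j(\omega)$ on $A_j' \cap \Omega$, I would form the gluing map $\theta^b$ of the variable-restriction map as in the discussion preceding \autoref{lem:IntegralDecompositionTheta}, and invoke that lemma to get $\mu_{ebc} = \theta^b_* \mu_{ec}$ for any tuple $c$ from an affine extension. Taking $c$ to be a dense enumeration of $M_\mu$ (having $b$ among its coordinates, with $e$ absorbed into it), both sides are the canonical boundary measures of the corresponding Tarski--Vaught types; since $\mu_{ec}$ is realized inside $M_\mu$ and $\theta^b$ is a measurable section of the projection, the identity $\mu_{ebc} = \theta^b_* \mu_{ec}$, together with the uniqueness of boundary measures and the injectivity of $\rho^\qf$ on $\tS^\fM$ (\autoref{l:rho-injective-Smod}), forces the extra coordinate $b$ to coincide, up to distance zero, with a coordinate already present in $\{e_i : i < \kappa\}$. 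Hence $b \in N$, so the embedding is an isomorphism $N \cong M_\mu$.

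The bounds on $\fd(M_\omega)$ and the indexing of $e$ are inherited directly from \autoref{prop:DirectIntegralExtremalModelsConstruction}. For the last clause, if $\cL$ is separable modulo $T$ and $N$ is separable, one runs the argument with $\kappa = \aleph_0$: then $\tS^\aff_{\aleph_0}(T)$ is compact metrizable, so $(\tS^\aff_{\aleph_0}(T), \mu)$ is a standard probability space, and since $\mu$ concentrates on $\Omega = \cE^\fM_{\aleph_0}(T)$, which is a $G_\delta$ set of full measure (\autoref{l:extreme-Gdelta}, \autoref{l:enmod-Gdelta}), $(\Omega, \mu_\Omega)$ is standard as well. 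The main obstacle is the surjectivity step, and within it the gluing construction $\theta^b$ and \autoref{lem:IntegralDecompositionTheta}: this is exactly where the simplicial hypothesis is indispensable, since it is only the \emph{uniqueness} of the boundary measure — with its compatibility with variable restriction (\autoref{l:mu-p-under-projections}) — that allows one to identify the glued boundary measure $\mu_{ebc}$ with the canonical one. Without simpliciality one still gets $N \preceq^\aff M_\mu$, but the embedding need not be onto.
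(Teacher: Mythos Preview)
Your setup is right, and you correctly identify \autoref{lem:IntegralDecompositionTheta} as the key ingredient. But the surjectivity step has a genuine gap.

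Your conclusion — that the simple section $b$ ``coincides, up to distance zero, with a coordinate already present in $\{e_i : i < \kappa\}$'' — is false in general: a simple section glued from, say, $e_1$ on one half of $\Omega$ and $e_2$ on the other will typically not equal any single $e_i$. What you need is $b \in N = \overline{\{e_i : i < \kappa\}}$, not $b = e_i$ for some $i$. The invocation of \autoref{l:rho-injective-Smod} is a red herring; injectivity of $\rho^\qf$ on Tarski--Vaught types plays no role here, and the phrase ``$\mu_{ec}$ is realized inside $M_\mu$'' does not parse ($\mu_{ec}$ is a measure, not a type).

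The paper extracts more from \autoref{lem:IntegralDecompositionTheta}. The identity $\mu_{ebc} = \theta^b_* \mu_{ec}$ holds for \emph{every} tuple $c$ from an affine extension of $M_\mu$, and since $T$ is simplicial, $\tp^\aff(ebc)$ is recovered from $\mu_{ebc}$. Hence $\tp^\aff(ec)$ determines $\tp^\aff(ebc)$ for all $c$: the restriction map $\tS^\aff_z(eb) \to \tS^\aff_z(e)$ is injective, hence an affine homeomorphism. In particular the formula $d(b, z)$ is, modulo $\Th^\aff(M_\mu, eb)$, equivalent to a definable predicate $\psi(z)$ over $e$ alone. Now $\bigl(\inf_z \psi(z)\bigr)^{M_\mu} = 0$, and since $N \preceq^\aff M_\mu$ as $\cL(e)$-structures, also $\bigl(\inf_z \psi(z)\bigr)^N = 0$, whence $b \in N$. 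This definability-then-elementarity step is the missing idea; fixing a single dense $c$ does not substitute for it.
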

\begin{proof}
  Let $(\Omega,\cB_\Omega,\mu_\Omega)$ and $N \preceq^\aff M_\mu = \int_\Omega^\oplus M_\omega \ud \mu_\Omega(\omega)$ be as per \autoref{prop:DirectIntegralExtremalModelsConstruction}.
  In order to complete the proof, we need to show that $N = M_\mu$, and for this it will suffice to show that every simple section $b \in M_\mu$ belongs to $N$.

  Note that for every $c$ in an affine extension of $M_\mu$, $\tp^\aff(ec)$ determines $\tp^\aff(ebc)$.
  Indeed, since the theory is simplicial, this is equivalent to saying that $\mu_{ec}$ determines $\mu_{ebc}$, which holds by \autoref{lem:IntegralDecompositionTheta}.

  In other words, the natural map $\tS^\aff_{z}(eb) \to \tS^\aff_z(e)$ is an affine homeomorphism. In particular, the formula $d(b, z)$ is equivalent, modulo $\Th^\aff(M_\mu,eb)$, to a definable predicate $\psi(z)$ in the language of $(M_\mu,e)$. But then $\big(\inf_z \psi(z)\big)^{M_\mu} = 0$ and as the inclusion $N \sub M_\mu$ is affine, $\big(\inf_z \psi(z)\big)^N = 0$. Thus $b \in N$.

  For the last statement of the theorem, if $T$ is in a separable language and $N$ is separable, then $x$ can be taken countable.
  In this case, $\tS^\aff_x(T)$ is metrizable and $\Omega = \cE^\fM_x(T)$ is a Polish space by \autoref{l:extreme-Gdelta} and \autoref{prop:TypeOfModelPreMean}.
  Then $\cB_\Omega$ is (the completion of) the $\sigma$-algebra of Borel subsets of $\Omega$, so $(\Omega, \cB_\Omega, \mu_\Omega)$ is standard.
\end{proof}

\begin{remark}
We will show in \autoref{c:simplicial-ext-elementarily-measurable} that, under mild assumptions on the simplicial theory $T$, every measurable field of extremal models of $T$ is elementarily measurable.
\end{remark}

\begin{remark}
  The probability space $(\Omega,\cB_\Omega,\mu_\Omega)$ defined in the previous proof is a subset of a compact Hausdorff space, endowed with the trace of its Baire $\sigma$-algebra. We may also note that the functions $\omega\mapsto \varphi^{M_\omega}(\bar e(\omega))$, where $\varphi$ is an affine formula and $\bar e$ is a subtuple of the pointwise enumeration $e_I$, are continuous. However, in general the topology and the measure may not interact well, in that the latter need not extend to a regular Borel probability measure on $\Omega$.

  On the other hand, in the particular case where the type spaces of $T$ are \emph{Bauer simplices} (a situation that will be studied in detail in \autoref{sec:Bauer-theories}), the space $(\Omega,\cB_\Omega,\mu_\Omega)$ becomes a compact Hausdorff space equipped with a Radon probability measure.
\end{remark}

We can also state a decomposition theorem for affine extensions.

\begin{theorem}\label{th:extremal-decomposition-for-pairs}
  Let $M\preceq^\aff N$ be an affine extension of models of a simplicial theory $T$.
  Then there exist probability spaces $(\Omega,\mu)$, $(\Xi,\nu)$, a measure-preserving map $\pi\colon\Xi\to \Omega$, and non-degenerate extremal decompositions $(M_\omega : \omega\in\Omega)$ of $M$ and $(N_\xi : \xi\in\Xi)$ of $N$ such that $M_{\pi(\xi)}\preceq^\aff N_\xi$ for every $\xi\in\Xi$, $f\circ\pi$ is a measurable section in $N_\Xi$ for every measurable section $f$ in $M_\Omega$, and the inclusion $M\preceq^\aff N$ corresponds to the map
  \begin{gather*}
    \int^\oplus_\Omega M_\omega\ud\mu(\omega) \to \int^\oplus_\Xi N_\xi\ud\nu(\xi), \quad f\mapsto f\circ\pi.
  \end{gather*}
\end{theorem}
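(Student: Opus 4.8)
The plan is to reduce \autoref{th:extremal-decomposition-for-pairs} to \autoref{th:integral-decomposition} by building the two decompositions \emph{simultaneously} from a single boundary measure on a type space of a joint enumeration. Concretely, fix an enumeration $b = (b_i : i < \kappa)$ of a dense subset of $M$, and extend it to an enumeration $c = (c_j : j < \lambda)$ of a dense subset of $N$ (with $b$ an initial segment), where $\kappa \geq \fd(M) + \fd_T(\cL)$ and $\lambda \geq \fd(N) + \fd_T(\cL)$. Let $q = \tp^\aff(c) \in \tS^{\fM,\aff}_\lambda(T)$, let $\mu_q = \nu$ be the unique boundary measure with barycenter $q$ (using that $T$ is simplicial, via \autoref{th:Choquet-Meyer}), and let $p = \pi_\kappa(q) = \tp^\aff(b)$, $\mu = \mu_p$. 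By \autoref{l:mu-p-under-projections} applied to the variable restriction $\pi \colon \tS^\aff_\lambda(T) \to \tS^\aff_\kappa(T)$, we have $\pi_* \nu = \mu$, so $\pi$ gives a measure-preserving map between the two probability spaces $\Xi = \cE^\fM_\lambda(T)$ (with $\nu_\Xi$) and $\Omega = \cE^\fM_\kappa(T)$ (with $\mu_\Omega$) constructed as in \autoref{sec:DirectIntegralExtremalModels}; note $\pi(\Xi) \subseteq \Omega$ because variable restriction sends extreme types enumerating models to extreme types enumerating models (\autoref{prop:ExtremeTypeTwoSteps}, \autoref{prop:TypeOfModelPreMean}).

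Next I would set up the fields. For $\xi \in \Xi$ let $N_\xi = \overline{\{e_j(\xi) : j < \lambda\}}$ be the extremal model realizing $\xi$, and for $\omega \in \Omega$ let $M_\omega = \overline{\{e_i(\omega) : i < \kappa\}}$ realize $\omega$; then $M_{\pi(\xi)} \preceq^\aff N_\xi$ because $\pi(\xi)$ is the restriction of $\xi$ to the first $\kappa$ variables, so the first $\kappa$ coordinates of $e(\xi)$ enumerate precisely an affine submodel isomorphic to $M_{\pi(\xi)}$ (again \autoref{prop:ExtremeTypeTwoSteps} and the Tarski--Vaught characterization in \autoref{remark:TypeOfExtremalModel}). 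By \autoref{lem:IntegralDecompositionMeasurableField} applied to both $q$ and $p$, the pairs $(N_\Xi, e)$ and $(M_\Omega, e)$ are non-degenerate measurable fields of extremal models of $T$; moreover, if $f$ is $I_0$-measurable in $M_\Omega$ for some countable $I_0 \subseteq \kappa$, then $\omega \mapsto d^{M_\omega}(f(\omega), e_i(\omega))$ is measurable on $\Omega$, hence its composition with the measure-preserving Baire-measurable map $\pi$ is measurable on $\Xi$, and $f(\pi(\xi)) \in M_{\pi(\xi)} \subseteq N_\xi$ lies in $\overline{\{e_i(\pi(\xi)) : i \in I_0\}}$, so $f \circ \pi$ is an $I_0$-measurable section of $N_\Xi$. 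Then \autoref{th:integral-decomposition} (whose proof only uses the data attached to $q$, resp.\ $p$) gives canonical isomorphisms $M \cong \int^\oplus_\Omega M_\omega \ud \mu_\Omega$ and $N \cong \int^\oplus_\Xi N_\xi \ud \nu_\Xi$, under which $b \mapsto e$ and $c \mapsto e$ respectively.

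Finally I would check that under these identifications the inclusion $M \preceq^\aff N$ becomes the map $f \mapsto f \circ \pi$. It suffices to check this on a dense set, e.g.\ on the basic sections $e_i$, $i < \kappa$: the element $b_i \in M$ corresponds to the section $e_i$ of $M_\Omega$, while as an element of $N$ it corresponds to the section $e_i$ of $N_\Xi$, and since $e_i(\xi)$ (the $i$-th coordinate of the realization of $\xi$) equals the $i$-th coordinate of the realization of $\pi(\xi)$ by the compatibility of the realizations across the variable restriction, we get $e_i^{N_\Xi}(\xi) = e_i^{M_\Omega}(\pi(\xi))$, i.e.\ $e_i = e_i \circ \pi$. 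Both $f \mapsto f \circ \pi$ and the restriction of the abstract inclusion are isometric affine maps continuous for the $L^1$-metric, they agree on the dense family of simple sections built from the $e_i$ and Baire partitions of $\Omega$ (pulled back along $\pi$), hence they coincide on all of $M$.

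The main obstacle I expect is the measurability/compatibility bookkeeping around the map $\pi$: one must be careful that the realizations $e(\xi)$ for $\xi \in \Xi$ and $e(\omega)$ for $\omega \in \Omega$ can be chosen so that restriction to the first $\kappa$ variables commutes on the nose with $\pi$ (this is where \autoref{remark:TypeOfModel} and \autoref{remark:TypeOfExtremalModel}, guaranteeing uniqueness of the realizing model up to unique isomorphism, do the work), and that pulling back measurable sections and simple-section partitions along the Baire-measurable, measure-preserving $\pi$ preserves all the conditions in \autoref{defn:MeasurableField} and \autoref{defn:DirectIntegralMeasurableSection}. None of this is deep, but it is the part that requires genuine care; everything else is a direct citation of \autoref{th:integral-decomposition}, \autoref{l:mu-p-under-projections}, and \autoref{prop:ExtremeTypeTwoSteps}.
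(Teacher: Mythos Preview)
Your overall strategy is exactly the paper's: enumerate $M$ densely by an initial segment of a dense enumeration of $N$, take the unique boundary measures $\mu_p$ and $\mu_q$ on the two type spaces, use \autoref{l:mu-p-under-projections} to relate them via the variable restriction $\pi$, and then invoke \autoref{th:integral-decomposition} twice. The verification that $f\mapsto f\circ\pi$ realizes the inclusion is also correct.

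There is, however, a genuine gap at the point where you assert $\pi(\Xi)\subseteq\Omega$, i.e., that variable restriction sends $\cE^\fM_\lambda(T)$ into $\cE^\fM_\kappa(T)$. This is false in general. If $\xi\in\cE^\fM_\lambda(T)$, then $\pi_\kappa(\xi)$ is extreme by \autoref{prop:ExtremeTypeTwoSteps}, but there is no reason it should satisfy the Tarski--Vaught property: the supremum $\bigl(\sup_y\varphi(x_{<\kappa},y)\bigr)^\xi$ is attained over the full index set $j<\lambda$, not necessarily over $i<\kappa$. Concretely, $\xi$ may enumerate an extremal model $N_\xi$ in which the first $\kappa$ coordinates all collapse to a single point while later coordinates do not; then $\pi_\kappa(\xi)$ does not enumerate a model. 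Neither \autoref{prop:ExtremeTypeTwoSteps} nor \autoref{prop:TypeOfModelPreMean} gives what you claim; the latter is a statement about measures, not about individual types.

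The paper handles exactly this issue by \emph{not} taking $\Xi=\cE^\fM_\lambda(T)$. Instead it sets
\[
  \Xi \;=\; \cE^\fM_\lambda(T)\,\cap\,\pi^{-1}\bigl(\cE^\fM_\kappa(T)\bigr),
\]
and then argues, via an adaptation of \autoref{th:Bishop-dL-ForModels} and \autoref{cor:TypeOfExtremalModelVeryDense}, that the boundary measure $\nu=\mu_q$ concentrates on this smaller set (when $\kappa$ is countable this is immediate from $\pi_*\nu=\mu$ and the fact that $\cE^\fM_\kappa(T)$ is Baire of full $\mu$-measure; the uncountable case needs the density argument of \autoref{cor:TypeOfExtremalModelVeryDense} applied jointly to both tuples of variables). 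Once $\Xi$ is defined this way, $\pi\rest_\Xi$ lands in $\Omega$ by construction, and the rest of your argument goes through unchanged.
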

\begin{proof}
  We sketch the idea. In the construction of \autoref{th:integral-decomposition}, we may start with a dense tuple $a=(a_i:i<\kappa)$ of $N$ such that some initial segment $a_{<\lambda}$ is a dense enumeration of $M$. Note that if $p=\tp^\aff(a)$ and $q=\tp^\aff(a_{<\lambda})$, then $(\pi_\lambda)_*\mu_p = \mu_q$ by \autoref{l:mu-p-under-projections}. Let $\Omega = \cE^\fM_\lambda(T)$ and define $\mu=(\mu_q)_\Omega$ as in \autoref{l:measure-concentration}. By an easy adaptation of \autoref{th:Bishop-dL-ForModels} (and of \autoref{cor:TypeOfExtremalModelVeryDense}), which we leave to the reader, the measure $\mu_p$ concentrates on the set
  \begin{gather*}
    \Xi = \cE^\fM_\kappa(T) \cap \pi_\lambda^{-1}(\Omega).
  \end{gather*}
  We then define $\nu=(\mu_p)_\Xi$ as in \autoref{l:measure-concentration}.
  Letting $\pi = \pi_\lambda\rest_{\Xi} \colon \Xi \to \Omega$, it is clear that $\pi_*\nu = \mu$, i.e., $\pi$ is measure-preserving.
  We may form measurable fields $\bigl((M_\omega:\omega\in \Omega),e\bigr)$ and $\big((N_\xi:\xi\in\Xi),e'\bigr)$ essentially as in \autoref{sec:DirectIntegralExtremalModels}, so that $M_{\pi(\xi)}\preceq^\aff N_\xi$ for every $\xi\in\Xi$, and $e\circ\pi = e'_{<\lambda}$.
  If $M_\mu$ and $N_\nu$ are the respective direct integrals, then the embeddings $M\to M_\mu$ and $N\to N_\nu$ that send $a_{<\lambda}\mapsto e$ and $a\mapsto e'$ are surjective, as in \autoref{th:integral-decomposition}. In particular, $e$ is dense in $M_\mu$ and since $e\circ\pi = e'_{<\lambda}$, it follows that $f\circ\pi$ is a measurable section in $(N_\Xi,e')$ whenever $f$ is a measurable section in $(M_\Omega,e)$, and that the inclusion $M\preceq^\aff N$ corresponds to the map $M_\mu\to N_\nu$, $f\mapsto f\circ\pi$, as desired.
\end{proof}

Now that we have established the existence of an extremal decomposition, let us study its uniqueness.
The naïve statement that one might expect is that if we have an isomorphism
\begin{equation}
  \label{eq:isomorphism-integrals}
  \int_\Omega^\oplus M_\omega \ud \mu(\omega) \cong \int_\Xi^\oplus N_\xi \ud \nu(\xi),
\end{equation}
with $M_\omega$ and $N_\xi$ extremal models of a simplicial theory $T$, then the fields $M_\Omega$ and $N_\Xi$ are isomorphic in the following sense.

\begin{defn}
  \label{dfn:MeasurableFieldPointwiseEmbedding}
  Let $(\Omega, \cB, \mu)$ and $(\Xi,\cC,\nu)$ be probability spaces, let $M_\Omega$ and $N_\Xi$ be measurable fields of $\cL$-structures.
  A \emph{pointwise affine embedding} of the measurable field $M_\Omega$ in $N_\Xi$ is a pair $(s,S)$, where
  \begin{itemize}
  \item $s \colon \Xi_0 \rightarrow \Omega$ is a measure-preserving map defined on a full-measure subset $\Xi_0 \subseteq \Xi$,
  \item $S=(S_\xi : \xi\in\Xi_0)$, where for each $\xi \in \Xi_0$, $S_\xi\colon M_{s(\xi)} \rightarrow N_\xi$ is an affine embedding, and
  \item for every measurable section $f$ of $M_\Omega$, the section $f^{(s,S)}(\xi) = S_\xi\bigl( f (s(\xi)) \bigr)$ is measurable for $N_\Xi$.
  \end{itemize}
  If moreover $s\colon \Xi_0 \rightarrow \Omega_0$ is an isomorphism of measure spaces (i.e., a bi-measurable measure-preserving bijection), where $\Omega_0 \subseteq \Omega$ is necessarily of full measure, and each $S_\xi$ is an isomorphism of $\cL$-structures,
  then $(s,S)$ is a \emph{pointwise isomorphism} of measurable fields. (Note that in that case, the inverse pair $(s^{-1},S^{-1})$ with $S^{-1}=\bigl((S_{s^{-1}(\omega)})^{-1}:\omega\in\Omega_0\bigr)$ also satisfies the third condition above with respect to measurable sections of $N_\Xi$, and so is a pointwise affine embedding.)
\end{defn}

\begin{remark}
  In \autoref{th:extremal-decomposition-for-pairs}, the pair $(\pi,S)$ where each $S_\xi$ is the inclusion map $M_{s(\xi)}\rightarrow N_\xi$, is a pointwise affine embedding.
\end{remark}

In the case where $\Omega$ and $\Xi$ are standard and everything is separable, or when the direct integrals are convex combinations, one indeed obtains isomorphism in this sense.
However, in the general case, there are two obstacles for this.
\begin{itemize}
\item First, very different probability spaces can have the same measure algebra, but a direct multiple $L^1(\Omega, \mu, M)$, for example, depends only on $M$ and $\MALG(\Omega, \mu)$ (cf.\ \autoref{rem:L1-only-depends-MALG}).
  It follows that, at the level of the probability spaces, the most one can expect if \autoref{eq:isomorphism-integrals} holds is an isomorphism between the measure algebras of $\Omega$ and $\Xi$.
\item Second, when the structures $M_\omega$ and $N_\xi$ are not separable, isomorphism is no longer a measurable condition.
  This will be illustrated in \autoref{ex:random-set}.
\end{itemize}

What we do recover in full generality is the weaker property introduced below.

Let $\Omega$ and $\Xi$ be two probability spaces. If $\fs\colon \MALG(\Omega) \rightarrow \MALG(\Xi)$ is an embedding of measure algebras, we will denote by the same letter the corresponding isometric embedding of normed $\bR$-algebras
\begin{gather*}
  \fs\colon L^\infty(\Omega, \bR)\to L^\infty(\Xi,\bR),
\end{gather*}
defined on simple functions by $\fs\bigl(\sum_{i<n}r_i\chi_{A_i}\bigr) = \sum_{i<n}r_i\chi_{\fs(A_i)}$.

Recall that by \autoref{lemma:DirectIntegralMeasurableLanguage}, if $M_\Omega$ is a measurable field, $\varphi(x)$ is an affine formula, and $f$ is an $x$-tuple of measurable sections, then the function
\begin{gather*}
  \oset{\varphi(f)} \colon \omega \mapsto \varphi^{M_\omega}\bigl( f(\omega) \bigr)
\end{gather*}
belongs to $L^\infty(\Omega)$.
Similarly, if $M_\Omega$ is an elementarily measurable field and $\varphi$ is a continuous logic formula, then $\oset{\varphi(f)}\in L^\infty(\Omega)$ as well.

\begin{defn}
  \label{dfn:MeasurableFieldEmbedding}
  In the setting of \autoref{dfn:MeasurableFieldPointwiseEmbedding}, let $M$ and $N$ be the respective direct integrals of $M_\Omega$ and $N_\Xi$.
  An \emph{affine embedding} of the measurable field $M_\Omega$ in $N_\Xi$ is a pair $(\sigma,\fs)$, where $\sigma\colon M \rightarrow N$ is a map, $\fs \colon\MALG(\Omega) \to \MALG(\Xi)$ is an embedding of measure algebras, and for every affine formula $\varphi(x)$ and every tuple $f\in M^x$, we have $\fs\oset{\varphi(f)} = \oset{\varphi(\sigma f)}$.

  If $\sigma$ and $\fs$ are bijective, then $(\sigma,\fs)$ is an \emph{isomorphism} of measurable fields.
\end{defn}

\begin{remark}
  \label{rmk:MeasurableFieldPointwiseEmbedding}
  Suppose that $(s,S)$ is a pointwise affine embedding of $M_\Omega$ in $N_\Xi$.
  For $f \in M$, a measurable section of $M_\Omega$, define $\sigma(f) = f^{(s,S)}$ and for $A \in \MALG(\Omega)$ define $\fs(A) = s^{-1}(A)$.
  Then it is easy to see that everything is well-defined (up to null measure), and $(\sigma,\fs)$ is an affine embedding of measurable fields.
  Similarly for a (pointwise) isomorphism.
\end{remark}

\begin{remark}
  \label{rmk:MeasurableFieldEmbedding}
  Suppose that $(\sigma,\fs)$ is an affine embedding of fields.
  Then by \autoref{th:Los}, for every affine formula $\varphi(x)$ and $f \in M^x$:
  \begin{gather*}
    \varphi(f)^M
    = \int_\Omega \oset{\varphi(f)} \ud \mu
    = \int_\Xi \fs\oset{\varphi(f)} \ud \nu
    = \int_\Xi \oset{\varphi(\sigma f)} \ud \nu
    = \varphi(\sigma f)^N.
  \end{gather*}
  Therefore $\sigma\colon M \rightarrow N$ is an affine embedding.
  If $(\sigma,\fs)$ is an isomorphism of fields, then $\sigma$ is an $\cL$-isomorphism.
\end{remark}

\begin{remark}\label{rmk:MeasurableFieldEmbedding-BijectiveMap}
  In the setting of \autoref{dfn:MeasurableFieldEmbedding}, if $\fs$ is an isomorphism of measure algebras and $\sigma$ is a bijective map, then to see that $(\sigma,\fs)$ is an isomorphism of measurable fields it suffices to check that it is an \emph{embedding}, in the sense that $\fs\oset{P(f)} = \oset{P(\sigma f)}$ and $\sigma F^M(f) = F^N(\sigma f)$ for every $f\in M^x$, every predicate symbol $P$, and every function symbol $F$ from $\cL$. Indeed, this follows by induction using \autoref{lemma:DirectIntegralMeasurableFunctionSymbol} and \autoref{lem:LosQuantifier}.

  If $(\sigma,\fs)$ is an isomorphism of measurable fields and, in addition, the fields $M_\Omega$ and $N_\Xi$ are elementarily measurable, then the condition $\fs\oset{\varphi(f)} = \oset{\varphi(\sigma f)}$ holds for every continuous logic formula $\varphi$ as well, by the same inductive argument.
\end{remark}

\begin{remark}
  \label{rmk:MeasurableFieldEmbeddingStandard}
  Assume that $(\sigma,\fs)$ is an affine embedding.
  Assume moreover that $\Omega$ and $\Xi$ are standard probability spaces, that the implicit pointwise enumerations of the fields $M_\Omega$ and $N_\Xi$ are countable, and that the structures of the fields are models of some affine theory $T$ with separable language.
  In particular, there exists a measure-preserving map $s\colon \Xi_0\to \Omega$ defined on a full-measure subset $\Xi_0\subseteq\Xi$ such that $\fs(A) = s^{-1}(A)$ for every measurable set $A\subseteq\Omega$. Moreover, the direct integrals $M$ and $N$ are separable, as is $\tS^\aff_\bN(T)$.
  Let $e = (e_n : n \in \bN)$ enumerate a dense subset of $M$, and take $f = \sigma(e) \in N^\bN$.
  Let $\theta^\Omega\colon \Omega \rightarrow \tS^\aff_\bN(T)$ and $\theta^\Xi\colon \Xi \rightarrow \tS^\aff_\bN(T)$ be the corresponding map as in \autoref{rmk:LosForTypes}.

  Let $\varphi(x)$ be an affine formula in $x = (x_n : n \in \bN)$, inducing an affine function $\tS^\aff_\bN(T) \rightarrow \bR$ that we still denote by $\varphi$.
  Then, almost surely, the following measurable functions agree on $\Xi$:
  \begin{gather*}
    \varphi \circ \theta^\Xi
    = \oset{\varphi(f)}
    = \fs\oset{\varphi(e)}
    = \oset{\varphi(e)} \circ s
    = \varphi \circ \theta^\Omega \circ s.
  \end{gather*}
  It follows that $\theta^\Xi = \theta^\Omega \circ s$ almost surely.
  In other words, possibly reducing $\Xi_0$, we may assume that $\theta^\Xi = \theta^\Omega \circ s$.
  We may also assume that $e \circ s(\xi)$ is dense in $M_{s(\xi)}$ for every $\xi \in \Xi_0$.
  This means that for every $\xi \in \Xi_0$, if we send $e \circ s(\xi) \mapsto f(\xi)$, this defines an affine embedding $S_\xi\colon M_{s(\xi)} \rightarrow N_\xi$.
  Thus, in the standard, separable case, every affine embedding $(\sigma,\fs)$ comes from a pointwise affine embedding $(s,S)$.

  A similar argument works for isomorphisms.
\end{remark}

\begin{remark}
  \label{rmk:MeasurableFieldEmbeddingAtomic}
  If $(\sigma,\fs)$ is an affine embedding and the probability spaces $\Omega$ and $\Xi$ are countable sets with points of positive measure, then the argument of the previous remark works as well, without any separability assumptions, because all equalities hold pointwise rather than almost surely. Thus $(\sigma,\fs)$ comes from a pointwise affine embedding $(s,S)$, where $s\colon \Xi\to \Omega$ is a measure-preserving surjection (or a bijection, if $(\sigma,\fs)$ is an isomorphism).
\end{remark}

\begin{lemma}
  \label{lem:ExtremalFieldBoundaryMeasure}
  Let $(\Omega, \cB, \mu)$ be a probability space, let $M_\Omega$ be a measurable field of extremal models of an affine theory $T$, and let $f \in M^x$ be a tuple of sections.
  Let $\theta\colon \Omega \rightarrow \tS^\aff_x(T)$ send $\omega \mapsto \tp^\aff\bigl( f(\omega) \bigr)$, which is measurable by \autoref{rmk:LosForTypes}.
  Then $\theta_* \mu$ is a boundary measure on $\tS^\aff_x(T)$.
\end{lemma}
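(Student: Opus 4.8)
The plan is to verify Mokobodzki's characterization of boundary measures (\autoref{p:Mokobodzki}) for $\nu \coloneqq \theta_*\mu$, regarded as a Radon probability measure on the compact convex set $X \coloneqq \tS^\aff_x(T)$; recall from \autoref{rmk:LosForTypes} that $\theta$ is measurable for the Baire $\sigma$-algebra of $X$, so $\nu$ is well defined. Thus it suffices to show that $\nu(g) = \nu(\hat g)$ for every $g \in C(X)$, where $\hat g$ is the concave envelope of $g$ (\autoref{defn:ConcaveEnvelope}). (By \autoref{th:Los} we also have $R(\nu) = \tp^\aff(f)$, though this is not needed here.)

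The only place the hypothesis enters is this: since each $M_\omega$ is an \emph{extremal} model of $T$, the type $\theta(\omega) = \tp^\aff(f(\omega))$ is extreme, i.e.\ $\theta(\Omega) \subseteq \cE_x(T)$. Hence $\nu$ concentrates on $\cE_x(T)$ in the sense of \autoref{df:measure-concentr}: any Baire set $B$ disjoint from $\cE_x(T)$ has $\theta^{-1}(B) = \emptyset$, so $\nu(B) = \mu(\theta^{-1}(B)) = 0$. On the other hand, the last clause of \autoref{p:Mokobodzki} gives $g(p) = \hat g(p)$ for every $p \in \cE_x(T)$; therefore $\hat g \circ \theta = g \circ \theta$ as functions on $\Omega$, so this composition is $\mu$-measurable and $\int_\Omega \hat g \circ \theta \, \ud\mu = \int_\Omega g \circ \theta \, \ud\mu = \nu(g)$ by change of variables for the continuous function $g$.

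It remains to identify $\int_\Omega \hat g \circ \theta \, \ud\mu$ with $\nu(\hat g) = \int_X \hat g \, \ud\nu$ — a change-of-variables statement complicated by the fact that $\hat g$, being only upper semi-continuous (\autoref{l:concave-envelope}), need not be Baire measurable; equivalently, one must prove $\nu(\{g < \hat g\}) = 0$. Since $\hat g - g \ge 0$ is upper semi-continuous, $\{g = \hat g\} = \bigcap_n \{\hat g - g < 1/n\}$ is a $G_\delta$ containing $\cE_x(T)$, so $\{g < \hat g\} = \bigcup_n \{\hat g - g \ge 1/n\}$ is an $F_\sigma$ disjoint from $\cE_x(T)$. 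This last step is the only genuinely non-routine point: \autoref{l:concentrates-implies-vanishes-Gdelta} kills $G_\delta$ sets disjoint from $\cE_x(T)$, whereas here the bad set is an $F_\sigma$, so one needs the standard Choquet-theoretic fact that a Radon measure concentrating on the extreme boundary is automatically a boundary measure. I would obtain it by writing $\hat g$ as the pointwise infimum of the downward-directed net of continuous majorants of $g$ (finite minima of its affine majorants) and using $\tau$-additivity of the Radon measure $\nu$ to replace that net by a decreasing sequence with the same integral; this exhibits $\hat g$ as $\nu$-a.e.\ equal to a Baire function, which (together with the fact that $\nu$-null Borel sets lie in $\nu$-null Baire sets) makes the change of variables legitimate and, using $\hat g\circ\theta = g\circ\theta$, yields $\nu(\hat g) = \nu(g)$. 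Alternatively, one argues directly that each compact piece $\{\hat g - g \ge 1/n\}$ — and its closed convex hull, which by Milman's theorem still misses $\cE_x(T)$ — is separated from every extreme point by an open half-space, and concludes by regularity of $\nu$. Either way, Mokobodzki's criterion gives that $\theta_*\mu$ is a boundary measure.
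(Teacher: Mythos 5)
Your strategy coincides with the paper's: verify Mokobodzki's criterion (\autoref{p:Mokobodzki}) for $\nu=\theta_*\mu$, using that each $\theta(\omega)$ is an extreme type, so that $g\circ\theta=\hat g\circ\theta$ pointwise on $\Omega$ and hence $\nu(g)=\int_\Omega g\circ\theta\,\ud\mu=\int_\Omega \hat g\circ\theta\,\ud\mu$. The paper's proof is exactly this chain of equalities, terminated by the identification $\int_\Omega \hat g\circ\theta\,\ud\mu=\nu(\hat g)$ and a second appeal to Mokobodzki; your first two paragraphs reproduce it faithfully.

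Your third paragraph correctly isolates that identification as the only delicate point ($\hat g$ is merely upper semi-continuous, $\theta$ merely Baire-measurable) --- a point the paper's own proof passes over in silence --- but the way you propose to close it is shaky. The ``standard Choquet-theoretic fact'' you cite, that a Radon measure concentrating on $\cE(X)$ in the Baire sense of \autoref{df:measure-concentr} is automatically a boundary measure, is the converse of the Bishop--de Leeuw theorem (\autoref{th:Bishop-dL}); it is true when $X$ is metrizable, but it is not a standard fact in general, and neither of your sketches establishes it. In Route A, the Borel null set $\set{F>\hat g}$ whose $\theta$-preimage you must kill is an uncountable union of null Baire sets $\set{F>h}$, one for each continuous concave majorant $h$ of $g$, and there is no general principle placing such a union inside a single null Baire set. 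In Route B, Hahn--Banach separates $\clco\set{\hat g-g\geq 1/n}$ from each extreme point by its \emph{own} half-space, but $\cE_x(T)$ need not be compact (and may be dense, in the Poulsen case), so you cannot reduce to finitely many half-spaces, and regularity of $\nu$ then yields nothing. In short: your core argument matches the paper's and is no less complete than the source, but the extra justification you supply for the one step you flag as non-routine does not go through as written; an honest completion must exploit the pointwise map $\theta$ into $\cE_x(T)$ (or a reduction to a metrizable quotient) rather than the mere Baire-concentration of $\nu$ on the extreme boundary.
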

\begin{proof}
  By the last part of \autoref{p:Mokobodzki}, for every $h\in C\bigl(\tS^\aff_x(T)\bigr)$ and $\omega\in\Omega$ we have $h \circ \theta(\omega) = \hat{h} \circ \theta(\omega)$, since $M_\omega$ is an extremal model.
  Therefore
  \begin{gather*}
    \theta_* \mu(h)
    = \int_\Omega h \circ \theta \ud \mu
    = \int_\Omega \hat{h} \circ \theta \ud \mu
    = \theta_* \mu(\hat{h}).
  \end{gather*}
  By another application of \autoref{p:Mokobodzki}, $\theta_* \mu(h)$ is a boundary measure.
\end{proof}

\begin{theorem}[Uniqueness]
  \label{th:uniqueness-decomposition}
  Let $T$ be a simplicial theory.
  Let $(\Omega, \cB, \mu)$ and $(\Xi,\cC,\nu)$ be probability spaces, and let $M_\Omega$ and $N_\Xi$ be measurable fields of extremal models of $T$, with $M_\Omega$ non-degenerate.
  Let $M$ and $N$ be their respective direct integrals, and suppose that $\sigma\colon M \rightarrow N$ is an affine embedding.

  Then there is a measure algebra embedding $\fs \colon\MALG(\Omega) \to \MALG(\Xi)$ such that $(\sigma,\fs)$ is an affine embedding of the measurable field $M_\Omega$ in $N_\Xi$.
  If $\sigma$ is bijective and $N_\Xi$ is non-degenerate as well, then $(\sigma,\fs)$ is an isomorphism of fields.
\end{theorem}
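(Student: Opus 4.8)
The plan is to reduce the uniqueness statement to the uniqueness of the boundary measure on the (simplicial, hence ``simplex'') type space $\tS^\aff_\kappa(T)$. Fix a cardinal $\kappa$ large enough to enumerate \emph{all} measurable sections of $M_\Omega$ as $e=(e_i:i<\kappa)$; then every tuple in $M^x$ is a subtuple of $e$. Put $f=\sigma(e)\in N^\kappa$, a tuple of measurable sections of $N_\Xi$ since $\sigma$ is an affine (hence isometric) embedding. Let $\theta^\Omega\colon\Omega\to\tS^\aff_\kappa(T)$, $\omega\mapsto\tp^\aff(e(\omega))$, and $\theta^\Xi\colon\Xi\to\tS^\aff_\kappa(T)$, $\xi\mapsto\tp^\aff(f(\xi))$; by \autoref{rmk:LosForTypes} both are measurable for the Baire $\sigma$-algebra and induce measure-algebra embeddings $(\theta^\Omega)^*\colon\MALG(\tS^\aff_\kappa(T),\theta^\Omega_*\mu)\hookrightarrow\MALG(\Omega,\mu)$ and $(\theta^\Xi)^*\colon\MALG(\tS^\aff_\kappa(T),\theta^\Xi_*\nu)\hookrightarrow\MALG(\Xi,\nu)$.

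The key step is the identity $\theta^\Omega_*\mu=\theta^\Xi_*\nu$. By \autoref{th:Los}, $R(\theta^\Omega_*\mu)=\tp^\aff(e)$ computed in $M$ and $R(\theta^\Xi_*\nu)=\tp^\aff(f)$ computed in $N$; since $\sigma$ is an affine embedding these two types coincide, so both pushforwards have a common barycenter $p$. By \autoref{lem:ExtremalFieldBoundaryMeasure}, each of them is a boundary measure (here one uses that the fibres of $M_\Omega$ and of $N_\Xi$ are \emph{extremal} models). As $T$ is simplicial, $\tS^\aff_\kappa(T)$ is a simplex by \autoref{l:inverse-limit-simplices}, so by the Choquet--Meyer theorem (\autoref{th:Choquet-Meyer}) there is a unique boundary measure dominating $\delta_p$ in the Choquet order; since any measure with barycenter $p$ dominates $\delta_p$ (\autoref{l:concave-envelope}), the two boundary measures agree. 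Write $\lambda$ for the common measure.

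Now, since $M_\Omega$ is non-degenerate and $e$ enumerates \emph{all} measurable sections, $(\theta^\Omega)^*$ is an \emph{isomorphism} $\MALG(\tS^\aff_\kappa(T),\lambda)\cong\MALG(\Omega,\mu)$, by \autoref{dfn:NonDenegerateField} (the non-degeneracy property passes from a dense sequence to a full enumeration, as explained there). Define $\fs=(\theta^\Xi)^*\circ\bigl((\theta^\Omega)^*\bigr)^{-1}\colon\MALG(\Omega,\mu)\hookrightarrow\MALG(\Xi,\nu)$, a measure-algebra embedding. To check that $(\sigma,\fs)$ is an affine embedding of fields in the sense of \autoref{dfn:MeasurableFieldEmbedding}, fix an affine formula $\varphi(x)$ and $g\in M^x$. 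Viewing $\varphi$ as an affine (hence Baire) function on $\tS^\aff_\kappa(T)$ and using that $g$ is a subtuple of $e$, we get $\oset{\varphi(g)}=\varphi\circ\theta^\Omega$ in $L^\infty(\Omega,\mu)$ and $\oset{\varphi(\sigma g)}=\varphi\circ\theta^\Xi$ in $L^\infty(\Xi,\nu)$. On the $L^\infty$-maps induced by the two measure-algebra embeddings, $(\theta^\Omega)^*$ acts as $h\mapsto h\circ\theta^\Omega$ and $(\theta^\Xi)^*$ as $h\mapsto h\circ\theta^\Xi$ (check on characteristic functions), so $\fs\oset{\varphi(g)}=\fs(\varphi\circ\theta^\Omega)=(\theta^\Xi)^*(\varphi)=\varphi\circ\theta^\Xi=\oset{\varphi(\sigma g)}$, as required. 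Finally, if $\sigma$ is bijective it is an $\cL$-isomorphism $M\cong N$ by \autoref{rmk:MeasurableFieldEmbedding}, so $f=\sigma(e)$ enumerates all of $N$; if moreover $N_\Xi$ is non-degenerate, then $(\theta^\Xi)^*$ is an isomorphism as well, hence so is $\fs$, and $(\sigma,\fs)$ is an isomorphism of measurable fields.

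The main obstacle is the key step, i.e.\ the identification of the two boundary measures $\theta^\Omega_*\mu$ and $\theta^\Xi_*\nu$: this is exactly where simpliciality of $T$, extremality of the fibres, and Choquet--Meyer uniqueness must all be combined, and it is what makes the affine-type data rigid enough to pin down the measure-algebra side. Everything else is bookkeeping: the Baire-measurability of $\theta^\Omega,\theta^\Xi$ and the duality $\theta\mapsto\theta^*$ come from \autoref{rmk:LosForTypes}, the inversion of $(\theta^\Omega)^*$ from non-degeneracy, and—should one prefer to work with a merely dense sequence $e$ rather than a full enumeration—the extension of the identity $\fs\oset{\varphi(g)}=\oset{\varphi(\sigma g)}$ from subtuples of $e$ to arbitrary tuples can be carried out by an approximation argument using the $L^1$-continuity of $\fs$ together with \autoref{p:DirectIntegralMetricStructure}.
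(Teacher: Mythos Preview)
Your proof is correct and follows essentially the same approach as the paper: enumerate all measurable sections of $M_\Omega$, push forward both measures to $\tS^\aff_\kappa(T)$, use extremality of the fibres (via \autoref{lem:ExtremalFieldBoundaryMeasure}) and simpliciality of $T$ to identify the two boundary measures, then define $\fs$ by composing $(\theta^\Xi)^*$ with the inverse of $(\theta^\Omega)^*$ (available by non-degeneracy). The paper makes the verification step slightly more explicit by introducing an auxiliary formula $\psi(y)$ in the $\kappa$-many variables obtained from $\varphi(x)$ via the substitution $x_i\mapsto y_\alpha$ when $g_i=e_\alpha$, but this is exactly what you mean when you say ``viewing $\varphi$ as an affine function on $\tS^\aff_\kappa(T)$''.
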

\begin{proof}
  Let $e = (e_\alpha : \alpha < \kappa)$ enumerate the measurable sections of $M_\Omega$.
  Let $f = (f_\alpha: \alpha < \kappa)$ be a sequence of measurable sections in $N_\Xi$ such that $f_\alpha = \sigma(e_\alpha)$ (up to null measure) for every $\alpha < \kappa$.
  As in \autoref{rmk:LosForTypes}, let $\theta^\Omega\colon \Omega \rightarrow \tS^\aff_\kappa(\cL)$ be the map $\omega \mapsto \tp^\aff\bigl( e(\omega) \bigr)$, and similarly for $\theta^\Xi$ and $f$.
  Since $\sigma$ is an affine embedding, we have $\tp^\aff(e) = \tp^\aff(f)$, call it $q$.
  The corresponding image measures $\theta^\Omega_* \mu$ and $\theta^\Xi_* \nu$ are boundary measures on $\tS^\aff_\kappa(T)$ by \autoref{lem:ExtremalFieldBoundaryMeasure}, with common barycenter $q$.
  Since $T$ is simplicial, these two measures must coincide.
  Let us denote the measure algebra of this measure by
  \begin{gather*}
    \cA
    =
    \MALG\bigl( \tS^\aff_\kappa(T), \theta^\Omega_* \mu \bigr)
    =
    \MALG\bigl( \tS^\aff_\kappa(T), \theta^\Xi_* \nu \bigr).
  \end{gather*}
  The measure-preserving maps $\theta^\Omega$ and $\theta^\Xi$ induce embeddings
  \begin{gather*}
    \iota_\Omega\colon \cA \rightarrow \MALG(\Omega),
    \qquad
    \iota_\Xi\colon \cA \rightarrow \MALG(\Xi).
  \end{gather*}
  Moreover, $\iota_\Omega$ is an isomorphism, by non-degeneracy of $M_\Omega$.
  Let $\fs = \iota_\Xi \circ \iota_\Omega^{-1}$.

  Let $\varphi(x)$ be an affine formula and $g \in M^x$.
  Then $g$ is a sub-tuple of $e$ (possibly with some repetition).
  Let $y = (y_\alpha : \alpha < \kappa)$, and let $\psi(y)$ be the formula obtained from $\varphi(x)$ by substituting $y_\alpha$ for $x_i$ in $\varphi$ when $e_\alpha = g_i$.
  Recalling that we identify the formula $\psi$ with the corresponding continuous affine function $\tS^\aff_\kappa(T) \rightarrow \bR$, we have
  \begin{gather*}
    \bigl\llbracket \varphi(g) \bigr\rrbracket
    = \bigl\llbracket \psi(e) \bigr\rrbracket
    = \iota_\Omega\psi,
    \qquad
    \bigl\llbracket \varphi(\sigma g) \bigr\rrbracket
    = \bigl\llbracket \psi(f) \bigr\rrbracket
    = \iota_\Xi \psi.
  \end{gather*}
  Then
  \begin{gather*}
    \fs\bigl\llbracket \varphi(g) \bigr\rrbracket
    = \fs \iota_\Omega \psi = \iota_\Xi \psi = \bigl\llbracket \varphi(\sigma g) \bigr\rrbracket.
  \end{gather*}
  Therefore, $(\sigma,\fs)$ is an affine embedding of fields.
  If $\sigma$ is bijective and $N_\Xi$ is non-degenerate as well, then $f$ enumerates the sections of $N_\Xi$ and $\fs$ is an isomorphism of measure algebras, so $(\sigma,\fs)$ is a field isomorphism.
\end{proof}

\begin{cor}
  Let $M_\Omega$ and $N_\Xi$ be measurable fields over standard probability spaces of separable extremal models (with countable pointwise enumerations) of a non-degenerate simplicial theory in a separable language. If their direct integrals are isomorphic, then $M_\Omega$ and $N_\Xi$ are pointwise isomorphic.
\end{cor}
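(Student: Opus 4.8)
The plan is to deduce this corollary by feeding the given isomorphism of direct integrals into the uniqueness theorem \autoref{th:uniqueness-decomposition} and then descending from an isomorphism of measurable fields to a pointwise one via \autoref{rmk:MeasurableFieldEmbeddingStandard}. First I would verify the hypotheses: since the theory $T$ is non-degenerate, \autoref{lem:NonDenegerateFieldTwoElements} tells us that both $M_\Omega$ and $N_\Xi$ are non-degenerate measurable fields in the sense of \autoref{dfn:NonDenegerateField}. Write $M$ and $N$ for the respective direct integrals and let $\sigma\colon M\to N$ be the assumed $\cL$-isomorphism; in particular $\sigma$ is a bijective affine embedding.

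Next, apply \autoref{th:uniqueness-decomposition} to $\sigma$. Because $M_\Omega$ is non-degenerate we obtain a measure-algebra embedding $\fs\colon\MALG(\Omega)\to\MALG(\Xi)$ such that $(\sigma,\fs)$ is an affine embedding of the measurable field $M_\Omega$ in $N_\Xi$; and because $\sigma$ is bijective and $N_\Xi$ is also non-degenerate, the theorem further gives that $(\sigma,\fs)$ is in fact an \emph{isomorphism} of measurable fields in the sense of \autoref{dfn:MeasurableFieldEmbedding} (so $\fs$ is a measure-algebra isomorphism).

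It then remains to convert this field isomorphism into a pointwise one, which is exactly what \autoref{rmk:MeasurableFieldEmbeddingStandard} does under precisely the present standing assumptions: $\Omega$ and $\Xi$ are standard probability spaces, the pointwise enumerations are countable, and the fibres $M_\omega$, $N_\xi$ are (separable) models of $T$, whose language is separable. Concretely, one fixes a dense sequence $e=(e_n:n\in\bN)$ in $M$, sets $f=\sigma(e)$ (dense in $N$), forms the measurable maps $\theta^\Omega,\theta^\Xi\colon\Omega,\Xi\to\tS^\aff_\bN(T)$ of \autoref{rmk:LosForTypes}, and shows $\theta^\Xi=\theta^\Omega\circ s$ almost surely for a measure-preserving bijection $s$ representing $\fs$; discarding a null set, one reads off affine isomorphisms $S_\xi\colon M_{s(\xi)}\to N_\xi$ from the assignment $e\circ s(\xi)\mapsto f(\xi)$, obtaining the desired pointwise isomorphism $(s,S)$.

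I do not expect a real obstacle here, since the substantive work is already contained in \autoref{th:uniqueness-decomposition} and \autoref{rmk:MeasurableFieldEmbeddingStandard}; the only point requiring care is tracking that \emph{isomorphism} (and not merely embedding) survives at each step — namely that $\fs$ is surjective (using non-degeneracy of $N_\Xi$) and that each $S_\xi$ is onto $N_\xi$ (using density of $f$ in $N$ together with density of $e\circ s(\xi)$ in $M_{s(\xi)}$, which holds off a null set), so that $(s,S)$ is genuinely a pointwise isomorphism rather than a pointwise affine embedding.
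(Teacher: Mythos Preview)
Your proposal is correct and follows exactly the paper's approach: apply \autoref{th:uniqueness-decomposition} (using \autoref{lem:NonDenegerateFieldTwoElements} to verify non-degeneracy of both fields) to obtain a field isomorphism $(\sigma,\fs)$, then invoke \autoref{rmk:MeasurableFieldEmbeddingStandard} to upgrade it to a pointwise isomorphism under the standard/separable hypotheses. The paper's proof is just the one-line citation of these two results, and your expanded account fills in precisely the details they encode.
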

\begin{proof}
  By \autoref{th:uniqueness-decomposition} and \autoref{rmk:MeasurableFieldEmbeddingStandard}.
\end{proof}

\begin{cor}
  Let $M=\bigoplus\lambda_{i\in I} M_i$ and $N=\bigoplus_{j\in J}\mu_j N_j$ be convex combinations of extremal models of a non-degenerate simplicial theory in a separable language. If $M\cong N$, then there is a bijection $s\colon J\to I$ and isomorphisms $M_{s(j)}\cong N_j$ for every $j\in J$.
\end{cor}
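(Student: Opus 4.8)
The plan is to derive this corollary almost immediately from the uniqueness theorem for extremal decompositions (\autoref{th:uniqueness-decomposition}) together with the atomic case of the transfer between field isomorphisms and pointwise isomorphisms (\autoref{rmk:MeasurableFieldEmbeddingAtomic}). First I would recall that, by \autoref{ntn:DirectIntegralConvexCombination}, the convex combination $M=\bigoplus_{i\in I}\lambda_i M_i$ is precisely the direct integral of the atomic measurable field $(M_i:i\in I)$ over the countable probability space $(I,\lambda)$ with $\lambda(\{i\})=\lambda_i$, and similarly for $N$. Discarding the null atoms changes neither $M$ nor $N$ (the direct integral depends only on the measure algebra), so I may assume that every $\lambda_i$ and every $\mu_j$ is strictly positive; then $\MALG(I,\lambda)$ and $\MALG(J,\mu)$ are the full power sets of $I$ and $J$.

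Next I would check the hypotheses of \autoref{th:uniqueness-decomposition}. Each $M_i$ and each $N_j$ is an extremal model of $T$, and since $T$ is non-degenerate, \autoref{lem:NonDenegerateFieldTwoElements} shows that both atomic fields $(M_i:i\in I)$ and $(N_j:j\in J)$ are non-degenerate. Viewing the given isomorphism $M\cong N$ as a bijective affine embedding $\sigma\colon M\to N$, \autoref{th:uniqueness-decomposition} then yields a measure-algebra embedding $\fs\colon\MALG(I,\lambda)\to\MALG(J,\mu)$ such that $(\sigma,\fs)$ is an affine embedding of the measurable field $(M_i)$ into $(N_j)$; and since $\sigma$ is bijective and $(N_j)$ is non-degenerate as well, $(\sigma,\fs)$ is in fact an isomorphism of measurable fields, so $\fs$ is a measure-algebra isomorphism.

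Finally, since $I$ and $J$ are countable sets all of whose points carry positive measure, I would invoke \autoref{rmk:MeasurableFieldEmbeddingAtomic} to conclude that $(\sigma,\fs)$ arises from a pointwise isomorphism $(s,S)$. Here $s\colon J\to I$ is a measure-preserving bijection---equivalently, just a bijection of the index sets---and for each $j\in J$ the component $S_j\colon M_{s(j)}\to N_j$ is an isomorphism of $\cL$-structures, which is exactly the asserted conclusion. I do not expect any genuine obstacle here: all the substantive work (passing to image measures, identifying them as boundary measures, using Choquet--Meyer uniqueness, and building $\fs$) is already carried out inside \autoref{th:uniqueness-decomposition}, and the only points requiring a moment's care are the two bookkeeping observations above---the non-degeneracy of the two atomic fields, and the harmless reduction to strictly positive coefficients so that the measure algebras in play are honest power sets and $\fs$ really is a bijection $I\leftrightarrow J$.
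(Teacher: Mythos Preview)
Your proposal is correct and follows exactly the paper's approach: the paper's proof simply reads ``By \autoref{th:uniqueness-decomposition} and \autoref{rmk:MeasurableFieldEmbeddingAtomic}'', and you have just spelled out the details of how these two results combine, including the bookkeeping points (non-degeneracy of the atomic fields via \autoref{lem:NonDenegerateFieldTwoElements}, and discarding null atoms so that \autoref{rmk:MeasurableFieldEmbeddingAtomic} applies).
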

\begin{proof}
  By \autoref{th:uniqueness-decomposition} and \autoref{rmk:MeasurableFieldEmbeddingAtomic}.
\end{proof}

We end by recording a simple fact that will be used in \autoref{ex:random-set}.

\begin{lemma}
  \label{l:concentration-direct-integral}
  Let $(M_\Omega,e_I)$ be a measurable field of $\cL$-structures over a probability space $(\Omega, \cB, \mu)$ and let $\Omega'\sub\Omega$ be an arbitrary set on which $\mu$ concentrates, as per \autoref{df:measure-concentr-general}. Consider the field $M_{\Omega'}=(M_\omega : \omega \in\Omega')$ and the tuple $e'_I = e_I\rest_{\Omega'}$ consisting of the restrictions of the sections in $e_I$ to $\Omega'$.

  Then $(M_{\Omega'},e'_I)$ is a measurable field of structures over the induced probability space $(\Omega',\cB_{\Omega'},\mu_{\Omega'})$, and the map $f \mapsto f\rest_{\Omega'}$ defines an isomorphism
  \begin{gather*}
    \int^\oplus_\Omega M_\omega \ud \mu(\omega) \cong \int^\oplus_{\Omega '}M_\omega \ud \mu_{\Omega'}(\omega).
  \end{gather*}
\end{lemma}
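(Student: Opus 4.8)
The plan is to verify that $(M_{\Omega'},e'_I)$ is a measurable field over the induced space $(\Omega',\cB_{\Omega'},\mu_{\Omega'})$, and then to show that the restriction map $r\colon f\mapsto f\rest_{\Omega'}$ is a well-defined bijection between the two direct integrals that commutes with the interpretations of all symbols. The underlying bookkeeping fact, extracted from \autoref{l:measure-concentration}, is that the $\mu_{\Omega'}$-measurable subsets of $\Omega'$ are precisely the traces on $\Omega'$ of the $\mu$-measurable subsets of $\Omega$, and likewise the trace on $\Omega'$ of a $\mu$-null set is $\mu_{\Omega'}$-null; consequently, the restriction to $\Omega'$ of a $\mu$-measurable function on $\Omega$ is $\mu_{\Omega'}$-measurable, and $r$ respects a.e.-equality.

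First I would check the conditions of \autoref{defn:MeasurableField}. That $e'_I$ is a pointwise enumeration of $M_{\Omega'}$ is trivial since $\Omega'\subseteq\Omega$. Conditions \autoref{item:MeasurableFieldPredicate} and \autoref{item:MeasurableFieldFunction} for $(M_{\Omega'},e'_I)$ follow at once from the same conditions for $(M_\Omega,e_I)$, by restricting the relevant functions to $\Omega'$. For \autoref{item:MeasurableFieldCofinal}, I claim $\fI^\aff(M_\Omega,e_I,\cL_0)\subseteq\fI^\aff(M_{\Omega'},e'_I,\cL_0)$ for every countable $\cL_0\subseteq\cL$: if $I_0$ lies in the former, then, using \autoref{lemma:DirectIntegralMeasurableLanguage} for the (measurable) field $(M_\Omega,e_I)$ together with the Tarski--Vaught test \autoref{p:Tarski-Vaught}, the set of $\omega\in\Omega$ where $M_{\omega,I_0}\preceq^\aff_{\cL_0}M_\omega$ fails is $\mu$-null, so its trace on $\Omega'$ is $\mu_{\Omega'}$-null, i.e.\ $I_0\in\fI^\aff(M_{\Omega'},e'_I,\cL_0)$. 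Since $\fI^\aff(M_\Omega,e_I,\cL_0)$ is cofinal in $\cP_{\aleph_0}(I)$, so is $\fI^\aff(M_{\Omega'},e'_I,\cL_0)$.

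Next I would analyze $r$. Well-definedness: if $f\in M_{\Omega,I_0}$ for a countable $I_0$, then $f\rest_{\Omega'}\in M_{\Omega',I_0}$ by the restriction principle above. Injectivity: by \autoref{lemma:DirectIntegralMeasurableDistance} the function $\omega\mapsto d^{M_\omega}(f(\omega),g(\omega))$ is $\mu$-measurable on $\Omega$; if it vanishes $\mu_{\Omega'}$-a.e.\ on $\Omega'$, then, since $B\mapsto B\cap\Omega'$ is an isomorphism $\MALG(\Omega,\mu)\cong\MALG(\Omega',\mu_{\Omega'})$, it vanishes $\mu$-a.e.\ on $\Omega$, so $f=g$. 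Surjectivity is the step I expect to be the main obstacle, because $\Omega'$ need not belong to $\cB_\mu$ (indeed, concentration would then force $\mu(\Omega')=1$), so one cannot just extend a section off $\Omega'$ by a fixed basic section and stay measurable. Instead, given a measurable section $h$ of $(M_{\Omega'},e'_I)$, I would write $h$ as a $\mu_{\Omega'}$-a.e.\ limit of simple sections $s_k$ by \autoref{lemma:DirectIntegralMeasurableSection}\autoref{item:DirectIntegralMeasurableSectionLimitSimple}; each $s_k$ is determined by a finite partition of $\Omega'$ into traces $D_i\cap\Omega'$ of $\mu$-measurable sets $D_i$ partitioning $\Omega$, and hence extends to a simple section $\tilde s_k$ of $(M_\Omega,e_I)$ with $\tilde s_k\rest_{\Omega'}=s_k$. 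By the integral identity in \autoref{l:measure-concentration}, $d(\tilde s_k,\tilde s_m)=\int_\Omega d^{M_\omega}(\tilde s_k(\omega),\tilde s_m(\omega))\ud\mu(\omega)=\int_{\Omega'} d^{M_\omega}(s_k(\omega),s_m(\omega))\ud\mu_{\Omega'}(\omega)=d(s_k,s_m)$. Since $s_k\to h$ in $(M_{\Omega',I},d)$ by \autoref{p:DirectIntegralMetricStructure}\autoref{item:DirectIntegralMetricStructurePointwise}, the sequence $(\tilde s_k)$ is Cauchy in $(M_{\Omega,I},d)$ and converges to some $f\in M_{\Omega,I}$ by completeness (\autoref{p:DirectIntegralMetricStructure}\autoref{item:DirectIntegralMetricStructureComplete}); the same integral identity gives $d(f\rest_{\Omega'},s_k)=d(f,\tilde s_k)$, so $d(f\rest_{\Omega'},h)\le d(f,\tilde s_k)+d(s_k,h)\to 0$, whence $r(f)=h$.

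Finally I would check that $r$ is an $\cL$-isomorphism: it preserves the distance and the interpretation of each predicate symbol by the integral identity of \autoref{l:measure-concentration} (applied to $\omega\mapsto d^{M_\omega}(f(\omega),g(\omega))$ and $\omega\mapsto P^{M_\omega}(\bar f(\omega))$), and it preserves the interpretation of each function symbol because function symbols act pointwise; combined with the bijectivity established above, this gives the desired isomorphism $\int^\oplus_\Omega M_\omega\ud\mu(\omega)\cong\int^\oplus_{\Omega'}M_\omega\ud\mu_{\Omega'}(\omega)$. The only genuinely non-routine ingredient is the surjectivity argument via simple sections and completeness of the direct integral; everything else is transfer through trace $\sigma$-algebras using \autoref{l:measure-concentration}.
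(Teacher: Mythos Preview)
Your proof is correct and follows the same approach as the paper, which simply writes ``Everything follows from \autoref{l:measure-concentration}.'' You have carefully unpacked what that one-liner entails, and in particular your surjectivity argument via simple sections and completeness is the right way to handle the fact that $\Omega'$ need not be measurable; this is the only genuinely non-routine step, and you have identified and executed it correctly.
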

\begin{proof}
  Everything follows from \autoref{l:measure-concentration}.
\end{proof}

\begin{question}
  \label{q:isomorphic-L1}
  Let $T$ be a simplicial theory in a separable language and let $\Omega$ be a standard probability space. Can there be non-isomorphic extremal models $M$ and $N$ of $T$ such that $L^1(\Omega,M)\cong L^1(\Omega,N)$?
\end{question}

In \cite{OzawaMathOverflow}, Ozawa asks the following question: if $\Omega$ is an atomless standard probability space and $M$ and $N$ are von Neumann factors such that $L^\infty(\Omega) \bar \otimes M \cong L^\infty(\Omega) \bar \otimes N$, does it follow that $M \cong N$? In view of our results in \autoref{sec:tracial-von-neumann}, his question in the case where $M$ and $N$ are II$_1$ factors is a special case of \autoref{q:isomorphic-L1}.


\part{Connections with continuous logic}
\label{part:conn-with-cont}

\section{The affine part of a continuous theory}
\label{sec:affine-part-continuous-theory}

Let $\cL$ be a fixed signature with convex continuity moduli.
Let $\cL^\cont_x$ denote the collection of $\cL$-formulas in continuous logic with free variable $x$. We may take as continuous connectives all affine connectives together with the absolute value $|\cdot|$, from which one can readily define the connectives $\land$ and $\lor$ (interpreted as $\min$ and $\max$, respectively). Alternatively, we could also take multiplication as an additional connective; this leads to an equivalent set of continuous formulas, up to uniform approximations.

If $Q$ is a theory in continuous logic (or a \emph{continuous theory} for short), then we can define the quasi-order $\leq_Q$ on $\cL^\cont_x$ similarly to \autoref{eq:affine-order}:
\begin{equation*}
  \varphi \leq_Q \psi \iff Q \models \varphi \leq \psi,
\end{equation*}
and the relation $\varphi\equiv_Q \psi$ if $\varphi\geq_Q \psi$ and $\varphi\leq_Q \psi$. Then $\cL^\cont_x(Q) \coloneqq \cL^\cont_x/{\equiv_Q}$ is naturally an ordered unit space equipped additionally with the operations $\land$ and $\lor$, which make it into a vector lattice.
We may define the type space in the usual sense of continuous logic as
\begin{multline*}
  \tS^\cont_x(Q) \coloneqq \set{p \in \cL^\cont_x(Q)^* : p \text{ is a state and } \\
    p(|\varphi|) = |p(\varphi)| \text{ for all } \varphi \in \cL^\cont_x(Q)}.
\end{multline*}
As usual, the \emph{logic topology} $\tau$ is just the weak$^*$ topology. It follows from the definition that the natural map $\cL^\cont_x(Q) \hookrightarrow C(\tS^\cont_x(Q))$ is a norm-preserving vector lattice embedding and it follows from the Stone--Weierstrass theorem that its image is dense.

\begin{remark}
  By the Riesz representation theorem, the compact convex set of states of $\cL^\cont_x(Q)$ can be identified with the set of Radon probability measures $\cM(\tS^{\cont}_x(Q))$, whose extreme points, the Dirac measures, we identify with $\tS^{\cont}_x(Q)$.
\end{remark}

\begin{defn}
  For a continuous theory $Q$, we define its \emph{affine part} $Q_\aff$ as the collection of all affine consequences of $Q$.
\end{defn}

Consider a continuous theory $Q$ and a formula $\varphi \in \cL^\aff_x \sub \cL^\cont_x$.
Then
\begin{gather*}
  \varphi \geq_{Q_\aff} 0 \iff \varphi \geq_Q 0.
\end{gather*}
Therefore, the inclusion $\cL^\aff_x \subseteq \cL^\cont_x$ induces an injective, unital, linear, positive map
\begin{equation}
  \label{eq:iota-Fx-to-Lx}
  \iota_x \colon \cL^\aff_x(Q_\aff) \to \cL^\cont_x(Q).
\end{equation}
Dually, we obtain a surjective, continuous, affine map
\begin{equation}
  \label{eq:IotaStar}
  \iota_x^* \colon \cM(\tS^{\cont}_x(Q)) \to \tS^\aff_x(Q_\aff).
\end{equation}
Its restriction to $\tS^{\cont}_x(Q)$ (i.e., to the Dirac measures) will be denoted by
\begin{equation}
  \label{eq:RhoAff}
  \rho^\aff_x \colon \tS^{\cont}_x(Q) \to \tS^\aff_x(Q_\aff).
\end{equation}
When clear from context, we shall omit the index $x$ from $\rho^\aff_x$ and $\iota_x$.
Conversely, we may define $\rho^\aff$ directly as the affine part of a continuous type, and then $\iota^* = R \circ \rho^\aff_*$, where $R$ denotes the barycenter map.

\begin{lemma}
  \label{l:image-rho-aff}
  The image of $\rho^\aff$ contains $\cE_x(Q_\aff)$. Moreover, $\rho^\aff$ is $\tau$-continuous and it is a $\dtp$-contraction (where the distance between types in continuous logic is defined in the same manner as in \autoref{defn:AffineTypeDistance}).
\end{lemma}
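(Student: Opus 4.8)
The plan is to verify the three assertions of \autoref{l:image-rho-aff} separately, all of them following fairly directly from the definitions and from results already established in the excerpt.

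First, for the claim that $\Im(\rho^\aff) \supseteq \cE_x(Q_\aff)$: by definition $\rho^\aff_x$ is the restriction of the surjective affine continuous map $\iota^*_x \colon \cM(\tS^\cont_x(Q)) \to \tS^\aff_x(Q_\aff)$ to the set of Dirac measures, which are exactly the extreme points of $\cM(\tS^\cont_x(Q))$. So the image of $\rho^\aff$ is $\iota^*_x(\cE(\cM(\tS^\cont_x(Q))))$, and by \autoref{l:extreme-transitivity}\autoref{l:extreme-transitivity:extreme-points} applied to the surjective affine map $\iota^*_x$, we get $\iota^*_x(\cE(\cM(\tS^\cont_x(Q)))) \supseteq \cE(\tS^\aff_x(Q_\aff)) = \cE_x(Q_\aff)$. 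This is the cleanest way to phrase it; alternatively one can argue directly: given an extreme affine type $p$, its preimage under $\iota^*_x$ is a closed face of $\cM(\tS^\cont_x(Q))$, hence contains an extreme point of $\cM(\tS^\cont_x(Q))$, which is a Dirac measure and maps to $p$.

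Second, $\tau$-continuity of $\rho^\aff$: this is immediate from the construction. The map $\rho^\aff_x$ is dual to the inclusion of ordered vector spaces $\iota_x \colon \cL^\aff_x(Q_\aff) \hookrightarrow \cL^\cont_x(Q)$ of \autoref{eq:iota-Fx-to-Lx}, in the sense that $\rho^\aff_x(p) = p \circ \iota_x$ for $p \in \tS^\cont_x(Q)$; restriction/precomposition maps are always weak$^*$-continuous, so $\rho^\aff_x$ is $\tau$-continuous. Concretely, for any affine formula $\varphi(x)$ and $p \in \tS^\cont_x(Q)$ we have $\varphi^{\rho^\aff_x(p)} = \varphi^p$, and pointwise convergence on all affine formulas is exactly convergence in the logic topology on $\tS^\aff_x(Q_\aff)$.

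Third, that $\rho^\aff$ is a $\partial$-contraction: here we use that both distances are defined by the same recipe (\autoref{defn:AffineTypeDistance}), namely as an infimum over realizations in structures of the $d$-distance between tuples. If $p, q \in \tS^\cont_x(Q)$ and $a, b$ realize $p, q$ respectively in a structure $M \models Q$ with $d^M(a,b)$ close to $\partial^\cont(p,q)$, then $a, b$ also realize $\rho^\aff_x(p), \rho^\aff_x(q)$ in $M$ (viewed as a model of $Q_\aff$, since $M \models Q$ implies $M \models Q_\aff$), so $\partial^\aff(\rho^\aff_x(p), \rho^\aff_x(q)) \le d^M(a,b)$; taking the infimum gives $\partial^\aff(\rho^\aff_x(p),\rho^\aff_x(q)) \le \partial^\cont(p,q)$. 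I expect no real obstacle in any of this; the only point requiring a modicum of care is making sure the distance on continuous types is indeed defined by the same formula as in the affine case (which the excerpt confirms parenthetically), and keeping straight that $\rho^\aff$ only sees affine formulas so that realizations transfer between the continuous and affine settings without loss.
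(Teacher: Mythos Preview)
Your proposal is correct and follows essentially the same approach as the paper: the paper's proof simply says the first statement follows from \autoref{l:extreme-transitivity} applied to $\iota_x^*$ and that the second is obvious, which is exactly what you do with more detail spelled out. Your elaboration of the $\partial$-contraction argument is a reasonable unpacking of what the paper leaves implicit.
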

\begin{proof}
  The first statement follows from \autoref{l:extreme-transitivity} applied to $\iota_x^*$. The second is obvious.
\end{proof}

The following simple result will be fundamental for many arguments.

\begin{prop}
  \label{p:ext-models-QAff}
  Let $Q$ be a continuous logic theory.
  Then every extremal model of $Q_\aff$ embeds affinely into a model of $Q$.
\end{prop}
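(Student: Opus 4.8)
The plan is to use the direct integral machinery developed earlier together with the boundary measure technology. Let $M$ be an extremal model of $Q_\aff$, and fix a dense enumeration $a = (a_i : i < \kappa)$ of $M$, where $\kappa \geq \fd_{Q_\aff}(\cL)$; let $p = \tp^\aff(a) \in \tS^{\fM,\aff}_\kappa(Q_\aff)$, which is an extreme type since $M$ is extremal. The key observation is that the surjection $\iota_\kappa^* \colon \cM(\tS^\cont_\kappa(Q)) \to \tS^\aff_\kappa(Q_\aff)$ of \autoref{eq:IotaStar} identifies affine types of $Q_\aff$ with barycenters of Radon measures on the continuous type space; concretely, $\iota_\kappa^* = R \circ (\rho^\aff_\kappa)_*$. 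So there is a Radon probability measure $\lambda$ on $\tS^\cont_\kappa(Q)$ whose pushforward $\mu = (\rho^\aff_\kappa)_* \lambda$ is a measure on $\tS^\aff_\kappa(Q_\aff)$ with barycenter $p$.

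Now I would upgrade $\mu$ to a boundary measure. Since $p$ is extreme, by \autoref{l:barycenter-face-measure-one} (applied to the face $\{p\}$) any measure with barycenter $p$ is already the Dirac measure $\delta_p$, so in fact $\mu = \delta_p$; equivalently $\lambda$ concentrates on the fiber $(\rho^\aff_\kappa)^{-1}(p)$, which is a closed face of $\tS^\cont_\kappa(Q)$. Pick any extreme point $q$ of this fiber — or more carefully, using \autoref{l:choquet-order}\autoref{i:choquet-order:4}, replace $\lambda$ by a boundary measure $\lambda' \succeq \lambda$; since $\lambda$ is supported on the fiber $(\rho^\aff_\kappa)^{-1}(p)$, which is a face, and the Choquet order does not move the barycenter, $\lambda'$ is still supported on this fiber. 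What I actually need is just a single $q \in \tS^\cont_\kappa(Q)$ with $\rho^\aff_\kappa(q) = p$ that additionally \emph{enumerates a model of $Q$} in the continuous sense, i.e.\ lies in $\tS^{\fM,\cont}_\kappa(Q)$. To get this, I would run the same transfinite construction as in the proof of \autoref{th:ExtremalModelTypes} (or \autoref{l:enmod-Gdelta}), but one level up: start from $p_0 = \pi_{x'}(p)$ for a finite $x' \subseteq x$, lift it to a continuous type, and at successor stages add a witness variable for a continuous formula while staying inside the fiber over (the corresponding restriction of) $p$ — this is possible because $\rho^\aff$ restricted to the relevant fiber is surjective onto a point and the set of continuous types restricting correctly to $p$ forms a closed face, which therefore contains witnesses realizing each $\sup_y$. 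With proper bookkeeping we obtain $q \in \tS^{\fM,\cont}_\kappa(Q)$ with $\rho^\aff_\kappa(q) = p$.

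Given such a $q$, realize it as a dense enumeration $b = (b_i : i < \kappa)$ inside a model $M' \models Q$. Since $\rho^\aff_\kappa(q) = p$ means precisely that $\varphi^{M'}(b) = \varphi^M(a)$ for every affine $\cL$-formula $\varphi$, the map $a_i \mapsto b_i$ extends to an affine embedding $M \hookrightarrow M'$ (it is an isometry on the dense set, hence extends to the completions, and preserves all affine formulas by construction). This gives the desired affine embedding of $M$ into a model of $Q$.

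The main obstacle, and the step requiring the most care, is the construction of $q \in \tS^{\fM,\cont}_\kappa(Q)$ lying over $p$: one must simultaneously maintain the Tarski--Vaught property for continuous logic (witnessing all $\sup_y$ over continuous formulas) \emph{and} the constraint $\rho^\aff = p$. The point that makes it work is that, for each partial stage, the set of continuous types restricting to the given affine type is a \emph{closed face} of the continuous type space (it is a fiber of the affine map $\rho^\aff$ over an extreme point, hence a face by \autoref{l:extreme-transitivity}\autoref{l:extreme-transitivity:extreme-points}, and closed by continuity), so it is nonempty and closed under taking the sub-face where a given continuous formula attains its supremum; an extreme point of the final face then enumerates a model of $Q$. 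One could alternatively phrase this more slickly: apply \autoref{prop:AffineJEP}-style amalgamation, or observe directly that the theory $Q \cup p(a)$ (in the language with constants for the $a_i$) is consistent by compactness for continuous logic, since every affine condition in $p$ holds in some model of $Q$ — indeed $p \models Q_\aff$ — and then pass to a model of this theory in which the constants enumerate a dense substructure via downward Löwenheim--Skolem.
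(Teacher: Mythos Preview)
Your proof is correct, but substantially more elaborate than necessary. The paper's argument is three lines: take $p \in \cE_x(Q_\aff)$ the type of an enumeration of $M$; since $\cE_x(Q_\aff) \subseteq \rho^\aff(\tS^\cont_x(Q))$ by \autoref{l:image-rho-aff}, lift $p$ to some $q \in \tS^\cont_x(Q)$; realize $q$ in any model $N \models Q$, and the realizing tuple is an affine copy of $M$ in $N$.

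The key simplification you missed is that there is \emph{no need} for $q$ to be a Tarski--Vaught type, i.e., to lie in $\tS^{\fM,\cont}_\kappa(Q)$. Any $q$ in the fiber $(\rho^\aff_\kappa)^{-1}(p)$ suffices: realize it (by compactness for continuous logic) in some model $N \models Q$, and the realizing tuple $b$ has affine type $p$, so $a_i \mapsto b_i$ extends to an affine embedding $M \hookrightarrow N$. We do not need $b$ to be dense in $N$; we only need $M$ to embed in $N$, not onto $N$. Your entire transfinite construction is therefore superfluous. The measure-theoretic detour through $\lambda$ and boundary measures is likewise unnecessary: the existence of some $q$ over $p$ follows immediately from \autoref{l:extreme-transitivity} applied to the surjective affine map $\iota^*_\kappa$ (extreme points lift to extreme points of the domain, which here are Dirac measures), and this is exactly \autoref{l:image-rho-aff}. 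Your final parenthetical alternative (``$Q \cup p(a)$ is consistent by compactness'') is closest in spirit to the paper's approach, though note that the consistency of $Q \cup p(a)$ is precisely what the existence of $q$ delivers, not something to be argued independently from ``$p \models Q_\aff$''.
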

\begin{proof}
  Given an extremal model $M\models Q_\aff$, let $p\in\cE_x(Q_\aff)$ be the type of an enumeration of $M$.
  As $\cE_x(Q_\aff)\subseteq \rho^\aff(\tS^{\cont}_x(Q))$, the type $p$ is realized in a model $N\models Q$. The realization of $p$ is thus an affine copy of $M$ in $N$.
\end{proof}

We have already shown that every model of an affine theory $T$ embeds affinely in a direct integral of extremal models of $T$, so one may expect that a model of $Q_\aff$ should embed affinely in a direct integral of models of $Q$.
This is indeed true, but is not an immediate consequence of the two results, and we prove it later, as \autoref{th:general-models-Qaff}.


\section{The continuous theory of extremal models}
\label{sec:cont-theory-extremal-models}

\begin{defn}
  \label{df:Text}
  Let $T$ be an affine theory. We will denote by $T_\ext$ the common continuous logic theory of the extremal models of $T$.
\end{defn}

The following holds in general.

\begin{lemma}
  \label{l:T=Textaff}
  Let $T$ be an affine theory. Then $(T_\ext)_\aff \equiv T$.
\end{lemma}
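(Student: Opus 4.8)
The plan is to prove the equivalence by showing that $T$ and $(T_\ext)_\aff$ entail exactly the same affine conditions (which, as equivalent theories are precisely those with the same models, yields $(T_\ext)_\aff \equiv T$). Since theories carry no free variables, it suffices to show, for each affine sentence $\varphi \in \cL^\aff_0$, that $T \models \varphi \geq 0$ if and only if $(T_\ext)_\aff \models \varphi \geq 0$. I would first rewrite the right-hand side: as $(T_\ext)_\aff$ is by definition the collection of affine consequences of the continuous theory $T_\ext$, and every model of $T_\ext$ is a model of $(T_\ext)_\aff$, we have $(T_\ext)_\aff \models \varphi \geq 0$ iff $T_\ext \models \varphi \geq 0$; and since $T_\ext$ is the common continuous theory of the extremal models of $T$, this is in turn equivalent to $\varphi^M \geq 0$ holding in every extremal model $M \models T$.

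Next I would pass to the type space. If $T$ is inconsistent the statement is trivial (both $T$ and $(T_\ext)_\aff$ are then inconsistent), so assume $T$ consistent and work in the nonempty compact convex set $X = \tS^\aff_0(T)$, identifying $\varphi$ with the continuous affine function it induces on $X$. Since $X = \set{\Th^\aff(M) : M \models T}$ and $\varphi^M = \varphi(\Th^\aff(M))$, the condition $T \models \varphi \geq 0$ is literally ``$\varphi \geq 0$ on $X$''. On the other hand, by \autoref{th:ExtremalModelTypes} the set of extreme points $\cE_0(T) = \cE(X)$ is exactly the set of affine theories of extremal models of $T$, so ``$\varphi^M \geq 0$ for every extremal $M \models T$'' is exactly ``$\varphi \geq 0$ on $\cE(X)$''.

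The only remaining point is the elementary fact that a continuous affine function on a compact convex set which is nonnegative on the extreme points is nonnegative everywhere: its minimum over $X$ is attained on a nonempty closed face of $X$, which by the Krein--Milman theorem contains an extreme point of $X$ (an extreme point of a face being extreme in the whole set), so the minimum is $\geq 0$; this is also recorded in \autoref{l:extreme-transitivity}(iii). Chaining the equivalences gives $T \models \varphi \geq 0 \iff (T_\ext)_\aff \models \varphi \geq 0$, hence $(T_\ext)_\aff \equiv T$. There is no genuine obstacle here beyond carefully unwinding the three definitions involved ($T_\ext$, the affine-part operator, and the identification of extreme types with theories of extremal models supplied by \autoref{th:ExtremalModelTypes}); alternatively, the implication from $(T_\ext)_\aff$-consequences to $T$-consequences could instead be obtained from the fact that every model of $T$ embeds affinely into a direct integral of extremal models (the corollary to \autoref{prop:DirectIntegralExtremalModelsConstruction}), together with the preservation of affine conditions under direct integrals and under affine embeddings.
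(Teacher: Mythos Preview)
Your proof is correct and follows essentially the same approach as the paper's: both hinge on the fact that the extreme points of $\tS^\aff_0(T)$ are exactly the affine theories of extremal models (which you cite via \autoref{th:ExtremalModelTypes}, while the paper uses the transitivity observation from \autoref{l:extreme-transitivity}), combined with Krein--Milman. The paper phrases the argument on the model/completion side (showing each extreme completion of $T$ has a model satisfying $(T_\ext)_\aff$), whereas you phrase it dually on the sentence/affine-function side (an affine function nonnegative on $\cE(X)$ is nonnegative on $X$); these are the same idea viewed through Kadison duality.
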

\begin{proof}
  The inclusion $T \sub (T_\ext)_\aff$ follows from the definitions. In order to show that $(T_\ext)_\aff = T$, we can show that every completion of $T$ is consistent with $(T_\ext)_\aff$, and it is enough to check this for the extreme completions (i.e., extreme points of $\tS_0^\aff(T)$). Now if $T_0$ is an extreme completion of $T$, every extreme type of $T_0$ is also an extreme type of $T$, by \autoref{l:extreme-transitivity}; hence any extremal model of $T_0$ is also an extremal model of $T$, and therefore a model of $(T_\ext)_\aff$.
\end{proof}

Thus, if $T$ is any affine theory, letting $Q = T_\ext$ in \autoref{eq:IotaStar} we obtain a map
\begin{gather*}
  \iota_x^* \colon \cM(\tS^\cont_x(T_\ext)) \to \tS^\aff_x(T),
\end{gather*}
and its restriction
\begin{gather*}
  \rho^\aff_x \colon \tS^\cont_x(T_\ext) \to \tS^\aff_x(T).
\end{gather*}

\begin{prop}
  \label{p:iota-ScText}
  Let $T$ be an affine theory.
  Then $\rho^\aff(\tS^\cont_x(T_\ext)) = \cl{\cE_x(T)}$.
\end{prop}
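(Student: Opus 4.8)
The plan is to prove the two inclusions separately, using the extremal decomposition machinery for the harder containment.

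\medskip

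\noindent\textbf{The inclusion $\rho^\aff\bigl(\tS^\cont_x(T_\ext)\bigr) \subseteq \cl{\cE_x(T)}$.} First I would observe that $\iota_x^*$ is continuous and affine, and that the Dirac measures $\tS^\cont_x(T_\ext)$ are the extreme points of $\cM\bigl(\tS^\cont_x(T_\ext)\bigr)$. By \autoref{l:extreme-transitivity}\autoref{l:extreme-transitivity:extreme-points}, $\iota_x^*$ maps extreme points of $\cM\bigl(\tS^\cont_x(T_\ext)\bigr)$ into $\cE_x(T)$ — but wait, this only gives that $\rho^\aff_x$ maps \emph{onto} points whose fibre-extreme points land in $\cE_x(T)$; more carefully, \autoref{l:extreme-transitivity}\autoref{l:extreme-transitivity:extreme-points} says $\cE(\pi^{-1}(\{y\})) \subseteq \cE(X)$ only for $y$ extreme in the target. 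So instead I would argue directly: a continuous type $q \in \tS^\cont_x(T_\ext)$ is realized in a model $N \models T_\ext$; but $N$ is elementarily equivalent in continuous logic to a model that is an ultraproduct of extremal models of $T$ (or simply, $T_\ext$ is the continuous theory of extremal models, so by the compactness theorem $q$ is approximately realized in extremal models of $T$). Thus $\rho^\aff_x(q)$ is a limit of types realized in extremal models, i.e.\ of points of $\cE_x(T)$, giving $\rho^\aff_x(q) \in \cl{\cE_x(T)}$. Since $\rho^\aff_x$ is $\tau$-continuous (\autoref{l:image-rho-aff}) and its domain is compact, the image is closed, so in fact the image is contained in $\cl{\cE_x(T)}$.

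\medskip

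\noindent\textbf{The inclusion $\cE_x(T) \subseteq \rho^\aff\bigl(\tS^\cont_x(T_\ext)\bigr)$.} This is the substantive direction and I expect it to be the main obstacle. Given $p \in \cE_x(T)$, by \autoref{th:ExtremalModelTypes} there is an extremal model $M \models T$ and a realization of $p$ in $M$; extending $x$ to an enumeration $y \supseteq x$ of (a dense subset of) $M$ we get $p' \in \cE^\fM_y(T)$ with $\pi_x(p') = p$. Now $M \models T_\ext$ by definition of $T_\ext$, so $p'$ is (the restriction of) a continuous type $q' \in \tS^\cont_y(T_\ext)$, namely $\tp^\cont(\text{that enumeration})$; and $\rho^\aff_y(q') = p'$. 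Restricting to $x$ and using that $\rho^\aff$ commutes with variable restriction (both are dual to language inclusions), we get $\rho^\aff_x(\pi_x^\cont(q')) = \pi_x(\rho^\aff_y(q')) = \pi_x(p') = p$. Hence $p \in \rho^\aff_x\bigl(\tS^\cont_x(T_\ext)\bigr)$.

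\medskip

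\noindent\textbf{Conclusion.} Combining the two inclusions with the fact that $\rho^\aff_x\bigl(\tS^\cont_x(T_\ext)\bigr)$ is compact (continuous image of a compact set), it is a closed set containing $\cE_x(T)$ and contained in $\cl{\cE_x(T)}$, hence equals $\cl{\cE_x(T)}$. The main point to be careful about is the commutation of $\rho^\aff$ with the variable restriction maps in both logics, and the identification of the continuous type of an enumeration of an extremal model of $T$ as a genuine element of $\tS^\cont_x(T_\ext)$ — both are routine once one unwinds that all these maps are duals of the inclusions of formula spaces. I would also note that the first inclusion can be phrased more slickly via \autoref{l:image-rho-aff} (which already gives $\cE_x(T) \subseteq \rho^\aff_x(\text{image})$ when $Q = T_\ext$, since $\cE_x((T_\ext)_\aff) = \cE_x(T)$ by \autoref{l:T=Textaff}), so really the only new content is closedness of the image together with the upper bound $\cl{\cE_x(T)}$.
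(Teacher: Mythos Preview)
Your proof is correct and follows essentially the same approach as the paper. For $\supseteq$, the paper invokes surjectivity of $\iota_x^*$ together with \autoref{l:extreme-transitivity}, while you realize extreme types concretely in extremal models via \autoref{th:ExtremalModelTypes}; these are the same content (and indeed you note as much at the end, via \autoref{l:image-rho-aff} and \autoref{l:T=Textaff}). For $\subseteq$, the paper argues contrapositively by separating $p\notin\cl{\cE_x(T)}$ from the extreme boundary with affine formulas and reading off a sentence of $T_\ext$, while you argue directly that any $q\in\tS^\cont_x(T_\ext)$ is approximately finitely satisfiable in extremal models; these are semantic and syntactic versions of the same observation. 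One cosmetic remark: your opening line promises ``extremal decomposition machinery'' but you never use it (nor need to).
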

\begin{proof}
  As $\tS^\cont_x(T_\ext) = \cE(\cM(\tS^\cont_x(T_\ext)))$ and $\iota_x^*$ is surjective, we obtain the inclusion $\supseteq$ from \autoref{l:extreme-transitivity}~\ref{l:extreme-transitivity:extreme-points}.
  For the other inclusion, suppose that $p \notin \cl{\cE_x(T)}$. Then there are affine formulas $\varphi_1(x), \ldots, \varphi_n(x)$ such that
  \begin{equation*}
    p \in \bigcap_i \oset{\varphi_i > 0} \quad \And \quad \bigcap_i \oset{\varphi_i > 0} \cap \cE_x(T) = \emptyset.
  \end{equation*}
  In particular, this implies that $T_\ext \models \sup_x \bigwedge_i \varphi_i(x) \leq 0$. This means that for any $q \in \rho^\aff(\tS^\cont_x(T_\ext))$, we have that $\bigwedge_i \varphi_i(q) \leq 0$.
  Thus $p \notin \rho^\aff(\tS^\cont_x(T_\ext))$.
\end{proof}

\begin{cor}\label{c:closed-extreme-types}
  Let $T$ be an affine theory. The following are equivalent:
  \begin{enumerate}
  \item The extreme type spaces $\cE_x(T)$ are closed.
  \item The extremal models of $T$ form an elementary class in continuous logic.
  \end{enumerate}
\end{cor}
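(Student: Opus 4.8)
The plan is to prove \autoref{c:closed-extreme-types} by combining \autoref{p:iota-ScText} with basic facts about the affine part map $\rho^\aff$ and with \autoref{c:extremal-models-existence}. Both implications are fairly short given the machinery already developed, so I would present them as two directions of a single argument.

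First, suppose the extreme type spaces $\cE_x(T)$ are closed for every (finite) $x$. By \autoref{p:iota-ScText}, we then have $\rho^\aff_x(\tS^\cont_x(T_\ext)) = \cl{\cE_x(T)} = \cE_x(T)$ for every finite $x$. I want to conclude that a structure $M$ is a model of $T_\ext$ if and only if it is an extremal model of $T$. One direction is \autoref{df:Text} itself. For the converse, let $M \models T_\ext$ and let $a \in M^n$; I would like to argue that $\tp^\aff(a) \in \cE_n(T)$. The type $\tp^\cont(a) \in \tS^\cont_n(T_\ext)$, and its affine part is $\rho^\aff_n(\tp^\cont(a))$, which lies in $\rho^\aff_n(\tS^\cont_n(T_\ext)) = \cE_n(T)$; but the affine part of $\tp^\cont(a)$ is exactly $\tp^\aff(a)$. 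Hence every type realized in $M$ is extreme, i.e.\ $M$ is an extremal model of $T$ (using \autoref{prop:ExtremeTypesMetricallyClosed} and \autoref{l:ExtremeTypesInfinite} to pass from finite tuples to arbitrary ones, as in the discussion after \autoref{defn:ExtremalModel}). Since being a model of $T_\ext$ is an elementary condition in continuous logic, the class of extremal models of $T$ coincides with the elementary class $\operatorname{Mod}(T_\ext)$.

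Conversely, suppose the extremal models of $T$ form an elementary class $\cK$ in continuous logic. Then $\cK = \operatorname{Mod}(Q)$ for some continuous theory $Q$, and since the extremal models are exactly the models of $T_\ext$ by definition, we may take $Q = T_\ext$, so $\cK = \operatorname{Mod}(T_\ext)$. Now the key point is that a type $p \in \tS^\aff_x(T)$ is extreme if and only if it is realized in some extremal model of $T$ (\autoref{th:ExtremalModelTypes}), i.e.\ if and only if it is realized in some model of $T_\ext$, i.e.\ if and only if it lies in the image of $\rho^\aff_x\colon \tS^\cont_x(T_\ext) \to \tS^\aff_x(T)$. But $\rho^\aff_x$ is a continuous map between compact Hausdorff spaces (\autoref{l:image-rho-aff}), so its image is closed. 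Therefore $\cE_x(T) = \rho^\aff_x(\tS^\cont_x(T_\ext))$ is closed, as desired. (Alternatively, one can simply invoke \autoref{p:iota-ScText}: the hypothesis gives $\rho^\aff_x(\tS^\cont_x(T_\ext)) = \cE_x(T)$, and the left-hand side is compact, hence closed.)

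The argument has no real obstacle — it is essentially a repackaging of \autoref{p:iota-ScText} and \autoref{th:ExtremalModelTypes}. The one point that deserves a careful sentence is the identification, in the first direction, of the affine part of $\tp^\cont(a)$ with $\tp^\aff(a)$, and the reduction from finite tuples to all tuples when checking that $M \models T_\ext$ is extremal; both are routine given \autoref{l:ExtremeTypesInfinite} and the remarks following \autoref{defn:ExtremalModel}, but should be spelled out so the reader sees that "only realizes extreme types" is correctly captured by the finitary condition $\cE_n(T) = \rho^\aff_n(\tS^\cont_n(T_\ext))$ for all $n$.
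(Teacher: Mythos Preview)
Your proposal is correct and follows essentially the same approach as the paper: both directions hinge on \autoref{p:iota-ScText}, using that $\rho^\aff_x(\tS^\cont_x(T_\ext)) = \cl{\cE_x(T)}$ and that the extremal models of $T$ form an elementary class precisely when every model of $T_\ext$ is extremal. Your alternative argument for $(2)\Rightarrow(1)$ via \autoref{th:ExtremalModelTypes} is a minor variation, but the parenthetical version you give is exactly the paper's proof.
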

\begin{proof}
  The extremal models form an elementary class precisely if every model of $T_\ext$ is an extremal model of $T$. If this holds, then $\rho^\aff(\tS^\cont_x(T_\ext)) = \cE_x(T)$, and hence the latter is closed, by the previous proposition. Conversely, if the subspaces $\cE_x(T)$ are closed, then by the previous proposition, the affine type of any tuple of a model of $T_\ext$ is extreme; hence every model of $T_\ext$ is an extremal model of $T$.
\end{proof}

\begin{theorem}\label{th:closed-extreme-types-Text}
  Let $T$ be an affine theory such that the extreme type spaces $\cE_x(T)$ are closed. Let $M, N$ be extremal models of $T$. The following hold:
  \begin{enumerate}
  \item If $M\preceq^\aff N$, then $M\preceq^\cont N$.
  \item If $M\equiv^\aff N$, then $M\equiv^\cont N$.
  \end{enumerate}
  In particular, if $T$ is complete as an affine theory, then $T_\ext$ is complete as a continuous logic theory.
\end{theorem}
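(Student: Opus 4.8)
The plan is to reduce everything to one inductive statement about how continuous formulas behave in extremal models. Concretely, I would prove the following by induction on the syntactic complexity of continuous $\cL$-formulas: for every continuous formula $\varphi(x)$ with $x$ a finite tuple there is a \emph{continuous} function $g_\varphi\colon\cE_x(T)\to\bR$ such that $\varphi^N(\bar c)=g_\varphi\bigl(\tp^\aff(\bar c)\bigr)$ for every extremal model $N\models T$ and every $\bar c\in N^x$. In words: in extremal models the value of a continuous formula is determined, continuously, by the affine type. Granting this, all three assertions are immediate. For (i), if $M\preceq^\aff N$ are extremal, then $\tp^\aff_M(\bar c)=\tp^\aff_N(\bar c)$ for $\bar c\in M^x$, hence $\varphi^M(\bar c)=g_\varphi(\tp^\aff(\bar c))=\varphi^N(\bar c)$, so $M\preceq^\cont N$. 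For (ii), apply the case $x$ empty, noting that the affine type of the empty tuple of $M$ is $\Th^\aff(M)\in\cE_\emptyset(T)$. For the ``in particular'', if $T$ is affinely complete then every extremal model has affine theory $T$, so all extremal models are pairwise affinely equivalent, hence by (ii) pairwise elementarily equivalent in continuous logic, so $T_\ext$ is complete.

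The induction itself: atomic formulas are affine, so $g_\varphi$ is just the restriction to $\cE_x(T)$ of the continuous affine function defined by $\varphi$; the affine connectives and the extra continuous connectives ($\max$, $\min$, $|{\cdot}|$) are handled by combining the $g_{\varphi_i}$ in the obvious way, which preserves continuity. After reducing $\sup_y$ and $\inf_y$ over a tuple to iterated single-variable quantifiers, the only real work is a single-variable quantifier. Suppose $\varphi(x)=\sup_y\psi(x,y)$ with $g_\psi\in C(\cE_{xy}(T))$ already constructed. Fix an extremal $N\models T$ and $\bar c\in N^x$, and put $r=\tp^\aff(\bar c)\in\cE_x(T)$. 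Since $N$ is extremal, $\tp^\aff(\bar c,d)\in\cE_{xy}(T)$ for every $d$, and by \autoref{prop:ExtremeTypeTwoSteps} all these types lie in the fibre $(\pi^\cE_x)^{-1}(r)$, where $\pi^\cE_x\colon\cE_{xy}(T)\to\cE_x(T)$ is the surjective open map of \autoref{cor:ExtremeTypeVariableRestrictionSurjectiveOpen}. Moreover, applying \autoref{l:realized-extreme-dense} to the complete affine theory $D^\aff_{\bar c}$ (complete because it is the affine theory of a structure), whose extreme type space $\cE_y(\bar c)$ is identified with exactly this fibre (via \autoref{prop:ExtremeTypeTwoSteps}, using that an extreme point of a face is extreme in the whole set) and is closed because $\cE_{xy}(T)$ is, one gets that $\{\tp^\aff(\bar c,d):d\in N^y\}$ is dense in $(\pi^\cE_x)^{-1}(r)$. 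As $g_\psi$ is continuous and the fibre is nonempty, it follows that $\varphi^N(\bar c)=\sup_d g_\psi(\tp^\aff(\bar c,d))=\max_{s\in(\pi^\cE_x)^{-1}(r)}g_\psi(s)=:g_\varphi(r)$, which depends only on $r$. The case $\inf_y$ is symmetric, with fibrewise minima.

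It remains to see that this $g_\varphi$ is continuous, and this is exactly where the closedness hypothesis is used: closedness of $\cE_{xy}(T)$ and $\cE_x(T)$ makes them compact, so $\pi^\cE_x$ is a closed map (continuous from a compact space to a Hausdorff one), and by \autoref{cor:ExtremeTypeVariableRestrictionSurjectiveOpen} it is also open. For such a map and a continuous $g_\psi$, one has $\{r:g_\varphi(r)\geq t\}=\pi^\cE_x(\{g_\psi\geq t\})$, the image of a compact set, hence closed; and $\{r:g_\varphi(r)>t\}=\pi^\cE_x(\{g_\psi>t\})$, the image of an open set under an open map, hence open. Thus $g_\varphi$ is both upper and lower semicontinuous, i.e.\ continuous, completing the induction.

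The main obstacle is precisely this last point. Without closedness one only obtains, e.g., that the fibrewise supremum is upper semicontinuous, which is not enough to continue the induction through further quantifiers; and in fact the statement is false for general affine theories, so the compactness of the extreme type spaces is genuinely essential here rather than merely convenient. The remaining points — reducing tuple quantifiers to single-variable ones, checking that the variable-restriction maps on extreme type spaces involved are continuous/open so the routine connective cases and the identification $\cE_y(\bar c)\cong(\pi^\cE_x)^{-1}(r)$ go through, and verifying $D^\aff_{\bar c}$ is complete — are all straightforward bookkeeping.
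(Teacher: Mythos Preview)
Your proof is correct and takes a genuinely different route from the paper's. The paper proceeds by a chain argument: given $M \preceq^\aff N$ with both extremal, it applies \autoref{p:ext-models-QAff} to the diagram $D^\cont_M$ to produce $N \preceq^\aff M_1$ with $M \preceq^\cont M_1$, observes via \autoref{c:closed-extreme-types} that $M_1$ is again extremal, and iterates to build a chain $M = M_0 \preceq^\aff N_0 \preceq^\aff M_1 \preceq^\aff N_1 \preceq^\aff \cdots$ with $M_n \preceq^\cont M_{n+1}$ and $N_n \preceq^\cont N_{n+1}$; the limit is a common continuous elementary extension, giving (i). Part (ii) then follows from (i) via extremal joint embedding (\autoref{prop:ExtremalJEP}).

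Your approach instead proves directly, by induction on formulas, that every continuous formula factors through the affine type via a continuous function on $\cE_x(T)$. This is essentially the injectivity half of what the paper later records as \autoref{cor:E(T)-closed-iota-homeo}, which is there derived \emph{from} the theorem you are proving; you have effectively reversed the logical order. Your argument is more explicit and avoids model building, and it isolates the role of closedness in a clean topological fact (fibrewise extrema along a continuous, open map between compact spaces are continuous). The paper's chain argument is more modular and transports to settings where one has analogous embedding/amalgamation lemmas but no convenient formula-by-formula analysis. One small correction to your commentary: the assertion that ``the statement is false for general affine theories'' overstates what is known; the paper leaves this open as \autoref{q:aff-extension-of-extremal-models-cont}.
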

\begin{proof}
  For the first part, consider $D^\cont_M$, the continuous logic elementary diagram of $M$. If $M\preceq^\aff N$, then $N$ is a model of $D_M^\aff = (D^\cont_M)_\aff$. Moreover, if $N$ is extremal and $a$ is any $x$-tuple from $N$, then $\tp^\aff(a/M) \in \tS^\aff_x(D_M^\aff)$ is extreme, by \autoref{prop:ExtremeTypeTwoSteps}. So $N$ is an extremal model of $D^\aff_M$.

  By \autoref{p:ext-models-QAff}, there is an extension $N\preceq^\aff M_1$ such that $M_1\models D^\cont_M$, i.e., $M\preceq^\cont M_1$. By \autoref{c:closed-extreme-types}, this implies that $M_1$ is an extremal model of $T$. We thus have an extension $N\preceq^\aff M_1$ of extremal models of $T$, and we can iterate the previous step to produce a chain
  $$M_0\preceq^\aff N_0\preceq^\aff M_1\preceq^\aff N_1\preceq^\aff \cdots$$
  with $M_0=M$, $N_0=N$ such that $M_n\preceq^\cont M_{n+1}$ and $N_n\preceq^\cont N_{n+1}$ for all $n$. By considering the limit of this chain, which is a common continuous elementary extension of $M_0$ and $N_0$, we conclude that $M\preceq^\cont N$.

  The second part follows from the first and the joint embedding property for extremal models, \autoref{prop:ExtremalJEP}.
\end{proof}

\begin{cor}
  \label{c:ext-closed-elementarily-measurable-fields}
  Let $T$ be an affine theory in a separable language and assume that the extreme type spaces $\cE_x(T)$ are closed. Then every measurable field of extremal models of $T$ is elementarily measurable.
\end{cor}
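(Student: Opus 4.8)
The plan is to show that, under the hypotheses, the collections $\fI^\cont(M_\Omega,e_I,\cL_0)$ appearing in the definition of an elementarily measurable field already contain the collections used to witness ordinary measurability, so that clause~\autoref{item:MeasurableFieldCofinal} of \autoref{defn:MeasurableField} for $\fI^\cont$ follows for free from the same clause for $\fI^\aff$. First I would reduce to the case of a countable language: since $\density_T(\cL)=\aleph_0$, we may replace $\cL$ by a countable sublanguage modulo $T$ (cf.\ \autoref{remark:LanguageDensityCharacter}), and the data of a measurable field restricts to this sublanguage; so assume henceforth that $\cL$ itself is countable. Then $\cL$ is an admissible instance of ``$\cL_0$'' in \autoref{defn:MeasurableField}, so by the remarks following that definition $\fI^\aff(M_\Omega,e_I,\cL)$ is cofinal in $\cP_{\aleph_0}(I)$ (and closed under countable unions).

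The key step is the inclusion
\begin{equation*}
  \fI^\aff(M_\Omega,e_I,\cL)\ \subseteq\ \fI^\cont(M_\Omega,e_I,\cL_0)\qquad\text{for every countable }\cL_0\subseteq\cL.
\end{equation*}
To see this, fix $I_0\in\fI^\aff(M_\Omega,e_I,\cL)$. For almost every $\omega$ we have $M_{\omega,I_0}\preceq^\aff M_\omega$; since $M_\omega$ is an extremal model of $T$ and an affine submodel of an extremal model is again an extremal model of $T$ (see the remarks after \autoref{defn:ExtremalModel}), $M_{\omega,I_0}$ is itself an extremal model of $T$. By hypothesis the spaces $\cE_x(T)$ are closed, so \autoref{th:closed-extreme-types-Text}~(1) applies to the affine extension $M_{\omega,I_0}\preceq^\aff M_\omega$ of extremal models and gives $M_{\omega,I_0}\preceq^\cont M_\omega$, and \emph{a fortiori} $M_{\omega,I_0}\preceq^\cont_{\cL_0} M_\omega$. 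This holds off the single null set witnessing $I_0\in\fI^\aff(M_\Omega,e_I,\cL)$, independently of $\cL_0$, so $I_0\in\fI^\cont(M_\Omega,e_I,\cL_0)$ for every countable $\cL_0$.

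Combining the two paragraphs, $\fI^\cont(M_\Omega,e_I,\cL_0)$ contains the cofinal set $\fI^\aff(M_\Omega,e_I,\cL)$ and is therefore cofinal in $\cP_{\aleph_0}(I)$ for every countable $\cL_0\subseteq\cL$; this is precisely clause~\autoref{item:MeasurableFieldCofinal} for $\fI^\cont$, so $(M_\Omega,e_I)$ is elementarily measurable. The substantive content is entirely carried by the transfer principle \autoref{th:closed-extreme-types-Text}~(1); the one point requiring a little care is the reduction to a countable language, where one should verify that passing to a countable sublanguage $\cL'$ with $\density_{T\rest_{\cL'}}(\cL')=\aleph_0$ both preserves the property of being a measurable field and does not change what ``elementarily measurable'' asks of $(M_\Omega,e_I)$. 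Everything else is the bookkeeping already built into \autoref{defn:MeasurableField}.
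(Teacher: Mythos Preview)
Your argument is correct and is exactly what the paper intends: the corollary is stated without proof immediately after \autoref{th:closed-extreme-types-Text}, and the deduction you spell out---upgrade $M_{\omega,I_0}\preceq^\aff M_\omega$ to $M_{\omega,I_0}\preceq^\cont M_\omega$ via part~(1) of that theorem---is the implicit one-line justification (compare the analogous \autoref{c:simplicial-ext-elementarily-measurable}, whose proof in the separable-language case is literally a reference to the same upgrade). Your flag about the reduction from a separable language to a genuinely countable one is warranted; the resolution is standard and the paper sweeps it under the rug in both places.
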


It will follow from later results (see \autoref{cor:general-simplicial-theories}) that the conclusions of \autoref{th:closed-extreme-types-Text} also hold for simplicial theories without the assumption that the spaces $\cE_x(T)$ are closed.

\begin{question}\label{q:aff-extension-of-extremal-models-cont}
  Does \autoref{th:closed-extreme-types-Text} hold for arbitrary affine theories?
\end{question}

\begin{cor}\label{cor:E(T)-closed-iota-homeo}
  Let $T$ be an affine theory such that the extreme type spaces $\cE_x(T)$ are closed. Then for any variable $x$, the map
  $$\rho^\aff\colon \tS^\cont_x(T_\ext)\to \cE_x(T) \subseteq \tS^\aff_x(T)$$
  is a $\tau$-homeomorphism and a $\dtp$-isometry.
\end{cor}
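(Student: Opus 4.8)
The plan is to verify, one at a time, that $\rho^\aff$ maps $\tS^\cont_x(T_\ext)$ \emph{onto} $\cE_x(T)$, that it is \emph{injective}, and finally that it is simultaneously a $\tau$-homeomorphism and a $\dtp$-isometry; the only substantive point is injectivity. Surjectivity is immediate from \autoref{p:iota-ScText} (legitimately invoked with $Q=T_\ext$, since extremal models of $T$ exist by \autoref{c:extremal-models-existence}): that proposition gives $\rho^\aff\bigl(\tS^\cont_x(T_\ext)\bigr)=\cl{\cE_x(T)}$, which equals $\cE_x(T)$ by hypothesis.

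For injectivity, suppose $p_1,p_2\in\tS^\cont_x(T_\ext)$ satisfy $\rho^\aff(p_1)=\rho^\aff(p_2)$. Realize $p_i$ by a tuple $a_i$ in a model $M_i\models T_\ext$; by \autoref{c:closed-extreme-types} each $M_i$ is an extremal model of $T$. Since $\tp^\aff(a_1)=\rho^\aff(p_1)=\rho^\aff(p_2)=\tp^\aff(a_2)$, the two models share the same complete affine theory $T_0\coloneqq\Th^\aff(M_1)=\Th^\aff(M_2)$; as $\Th^\aff(M_1)\in\cE_0(T)$, the theory $T_0$ is an extremal extension of $T$, so by \autoref{rem:ExtremeRestrictionTwoTheories} each $\cE_y(T_0)$ is a closed face of $\tS^\aff_y(T)$, in particular closed, while $M_1,M_2$ are extremal models of $T_0$. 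The partial map $a_1\mapsto a_2$ is affine, so by extremal amalgamation (\autoref{prop:ExtremalJEP}) there is an extremal model $K\models T_0$ together with affine embeddings of $M_1$ and $M_2$ into $K$ identifying $a_1$ and $a_2$ with a common tuple $a\in K^x$. Now $M_1\preceq^\aff K$ and $M_2\preceq^\aff K$ are affine extensions of extremal models of the theory $T_0$, whose extreme type spaces are closed, so \autoref{th:closed-extreme-types-Text}(1) yields $M_1\preceq^\cont K$ and $M_2\preceq^\cont K$; hence $p_1=\tp^\cont(a_1)=\tp^\cont(a)=\tp^\cont(a_2)=p_2$.

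With surjectivity and injectivity established, $\rho^\aff$ is a continuous bijection from the compact space $\tS^\cont_x(T_\ext)$ onto the Hausdorff space $\bigl(\cE_x(T),\tau\bigr)$ (continuity being part of \autoref{l:image-rho-aff}), hence a $\tau$-homeomorphism. For the isometry, \autoref{l:image-rho-aff} gives $\dtp\bigl(\rho^\aff(p_1),\rho^\aff(p_2)\bigr)\le\dtp(p_1,p_2)$, so only the reverse inequality remains, and we may assume $r\coloneqq\dtp\bigl(\rho^\aff(p_1),\rho^\aff(p_2)\bigr)<\infty$. By \autoref{cor:ExtremeTypeDistance} this distance is attained in an extremal model $M\models T$, by tuples $a_1,a_2\in M^x$ with $\tp^\aff(a_i)=\rho^\aff(p_i)$ and $d(a_1,a_2)=r$. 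Again $M\models T_\ext$ by \autoref{c:closed-extreme-types}, so $\tp^\cont(a_i)\in\tS^\cont_x(T_\ext)$ and $\rho^\aff\bigl(\tp^\cont(a_i)\bigr)=\tp^\aff(a_i)=\rho^\aff(p_i)$; by injectivity $\tp^\cont(a_i)=p_i$, whence $\dtp(p_1,p_2)\le d(a_1,a_2)=r$, giving the isometry.

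I expect the main obstacle to be injectivity, i.e.\ showing that affinely equivalent pointed extremal models are elementarily equivalent in continuous logic — this is precisely the force of \autoref{th:closed-extreme-types-Text}, but it must be applied \emph{over the parameter tuple}, which is what forces the small preliminary reduction to the complete theory $T_0=\Th^\aff(M_1)$ together with the remark (via \autoref{rem:ExtremeRestrictionTwoTheories}) that passing to an extreme completion preserves closedness of the extreme type spaces.
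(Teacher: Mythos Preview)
Your proof is correct and follows the same strategy as the paper. The one minor variation is in the injectivity step: the paper passes to the affine diagram $D^\aff_a$ (the theory of $(M,a)$ with constants for the tuple), identifies $\tS^\aff_y(a)$ with the closed face $\pi_x^{-1}(\tp^\aff(a))\subseteq\tS^\aff_{xy}(T)$, and applies \autoref{th:closed-extreme-types-Text}(2) directly, whereas you pass to the complete theory $T_0=\Th^\aff(M_1)$ without constants and use extremal amalgamation together with \autoref{th:closed-extreme-types-Text}(1), essentially inlining the proof of part~(2). One small slip in wording: it is $\tS^\aff_y(T_0)$, not $\cE_y(T_0)$, that is a closed face of $\tS^\aff_y(T)$; what you need (and what follows) is that $\cE_y(T_0)=\cE_y(T)\cap\tS^\aff_y(T_0)$ is closed.
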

\begin{proof}
  By the hypothesis and \autoref{p:iota-ScText}, the image of $\rho^\aff$ is exactly $\cE_x(T)$.
  For injectivity, it suffices to show that if $M, N$ are extremal models of $T$ and $a,b$ are $x$-tuples from $M$ and $N$, respectively, such that $(M,a)\equiv^\aff (N,b)$, then $(M,a)\equiv^\cont (N,b)$. This follows from \autoref{th:closed-extreme-types-Text} applied to the affine diagram $D^\aff_a = \Th^\aff(M,a)$. To verify the hypothesis of the theorem, note that each type space $\tS^\aff_y(a)$ is affinely homeomorphic to the convex subset $C=\pi_x^{-1}(\tp^\aff(a))\subseteq \tS^\aff_{xy}(T)$, which is a face because $\tp^\aff(a)$ is extreme. Hence $\cE_y(a)\cong C\cap\cE_{xy}(T)$, and thus the extreme type spaces of $D^\aff_a$ are closed. We note as well that $(M,a)$ and $(N,b)$ are extremal models of $D^\aff_a$, by \autoref{prop:ExtremeTypeTwoSteps}. By continuity and compactness, we deduce that $\rho^\aff$ is a $\tau$-homeomorphism.

  We have in general that $\dtp(\rho^\aff(p),\rho^\aff(q)) \leq \dtp(p,q)$ for all $p,q\in\tS^\aff_x(T_\ext)$. For the converse, suppose that $\dtp(\rho^\aff(p),\rho^\aff(q)) = r < \infty$. It follows from \autoref{cor:ExtremeTypeDistance} that there exists $s \in \cE^\aff_{xx'}(T)$ with $\pi_x(s) = p$, $\pi_{x'}(s) = q$ and $d^s(x, x') = r$. Using the surjectivity of $\rho_{xx'}^\aff$, let $t \in \tS_{xx'}^\cont(T_\ext)$ be such that $\rho_{xx'}^\aff(t) = s$. Then $d^t(x, x') = r$, and by the injectivity of $\rho_x^\aff$, $\pi_x(t) = p$ and $\pi_{x'}(t) = q$, witnessing that $\dtp(p, q) \leq r$.
\end{proof}


\section{Affine reduction and Bauer theories}
\label{sec:Bauer-theories}

In this section, we will show how every continuous logic theory $Q$ can be encoded by an affine theory $Q_\Bau$ in a way that the extremal models of $Q_\Bau$ correspond to the models of $Q$.
\begin{defn}
  \label{df:affine-reduction}
  We will say that a continuous logic theory $Q$ has \emph{affine reduction} if the map $\iota_x \colon \cL^\aff_x(Q_\aff) \to \cL^\cont_x(Q)$ defined in \autoref{eq:iota-Fx-to-Lx} has a dense image for every $x$, i.e., if every continuous logic formula can be approximated, modulo $Q$, by affine formulas.
\end{defn}

\begin{lemma}
  \label{l:AffineReduction}
  The following are equivalent for a continuous logic theory $Q$.
  \begin{enumerate}
  \item $Q$ has affine reduction.
  \item \label{i:l:aff-el:2} Every quantifier-free formula is a uniform limit modulo $Q$ of affine formulas.
  \item \label{i:l:aff-el:3} For any affine quantifier-free formula $\varphi$, the formula $|\varphi|$ is a uniform limit modulo $Q$ of affine formulas.
  \item \label{i:l:aff-el:4} For any two affine formulas $\varphi$ and $\psi$, the formula $\varphi \vee \psi$ is a uniform limit modulo $Q$ of affine formulas.
  \end{enumerate}
\end{lemma}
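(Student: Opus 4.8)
The implications (i) $\Rightarrow$ (ii) $\Rightarrow$ (iii) $\Rightarrow$ (iv) are either trivial or near-trivial: (i) $\Rightarrow$ (ii) since quantifier-free formulas are a subclass of all continuous formulas; (ii) $\Rightarrow$ (iii) since if $\varphi$ is affine quantifier-free then $|\varphi|$ is quantifier-free; and (iii) $\Rightarrow$ (iv) follows from the identity $\varphi\vee\psi = \tfrac12(\varphi+\psi) + \tfrac12|\varphi-\psi|$, noting that $\varphi-\psi$ is affine (though not necessarily quantifier-free). Wait---this last point needs care: condition (iii) only gives approximation of $|\chi|$ for \emph{quantifier-free} affine $\chi$, whereas $\varphi,\psi$ in (iv) are arbitrary affine formulas. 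So for (iii) $\Rightarrow$ (iv) I should instead argue directly, or reorganize: the cleanest route is to prove (iv) $\Rightarrow$ (i) and separately handle how (iii) feeds in. Let me reconsider the cycle. I would prove (i) $\Rightarrow$ (ii) $\Rightarrow$ (iii) trivially, then (iii) $\Rightarrow$ (iv), then (iv) $\Rightarrow$ (i), closing the loop.

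For (iii) $\Rightarrow$ (iv): given affine $\varphi,\psi$, I want to approximate $\varphi\vee\psi$. The subtlety is that $|\varphi - \psi|$ has $\varphi-\psi$ affine but not quantifier-free. However, I can note that affine formulas with quantifiers are built from quantifier-free affine formulas via $\sup$, $\inf$, and affine combinations. The key observation is that for the lattice operations the quantifiers do not obstruct: $\sup_y(\varphi\vee\psi) = (\sup_y\varphi)\vee(\sup_y\psi)$ is \emph{false} in general, so that is not the route. Instead I would use that affine formulas are uniformly dense in $\cA(\tS^\aff_x(Q_\aff))$ and reduce the problem to the level of type spaces: since $\cL^\aff_x(Q_\aff)\hookrightarrow\cL^\cont_x(Q)$ corresponds dually to $\iota_x^*\colon\cM(\tS^\cont_x(Q))\to\tS^\aff_x(Q_\aff)$, density of the image of $\iota_x$ is equivalent to $\iota_x$ being isometric, equivalently to $\iota_x^*$ being surjective (which it always is) \emph{plus} the stronger statement that $\rho^\aff_x\colon\tS^\cont_x(Q)\to\tS^\aff_x(Q_\aff)$ is injective. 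Hmm, that is not quite right either---density of $\cL^\aff_x(Q_\aff)$ in $C(\tS^\cont_x(Q))$ is exactly injectivity of $\rho^\aff_x$, by Stone--Weierstrass (\autoref{prop:Stone-Weierstrass-affine}, applied on the continuous side). So affine reduction amounts to: for every $x$, $\rho^\aff_x$ is injective, i.e., affine type determines continuous type.

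The heart of the argument is therefore (iv) $\Rightarrow$ (i). Assuming the class of affine formulas (modulo $Q$) is closed under $\vee$ up to uniform approximation, I want to show it is closed under \emph{all} continuous connectives and quantifiers up to uniform approximation. Closure under $\vee$ and affine combinations gives closure under $\wedge$ (via $\varphi\wedge\psi = -((-\varphi)\vee(-\psi))$) and, by iterating and using $|\varphi| = \varphi\vee(-\varphi)$, closure under $|\cdot|$; together with scalar multiplication and addition this yields closure under all continuous connectives, since $\{\vee,\wedge,|\cdot|,+,\text{scalars},\text{constants}\}$ generates a uniformly dense subclass of all continuous connectives (lattice version of Stone--Weierstrass on $\R^n$). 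The genuinely delicate step is closure under the quantifiers $\sup_y$, $\inf_y$: I must show that if $\psi(x,y)$ is uniformly approximable by affine formulas modulo $Q$, then so is $\sup_y\psi(x,y)$. This is where I expect the main obstacle. The natural approach: write $\psi \approx \varphi$ with $\varphi$ affine; then $\sup_y\psi \approx \sup_y\varphi$ uniformly, and $\sup_y\varphi$ is \emph{already affine} by construction. So in fact quantifiers are free once we have connectives---the only real content is the connectives, i.e., passing from $\vee$ to $|\cdot|$ and then to arbitrary continuous connectives, which is the lattice Stone--Weierstrass argument applied uniformly. I would carry this out by induction on the structure of an arbitrary continuous formula: atomic and quantifier-free-affine cases trivial, affine combinations trivial, $\vee$ by hypothesis (iv), and quantifiers by the remark above that approximating the matrix affinely automatically yields an affine approximation of the quantified formula. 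The one point requiring a short lemma is that every continuous connective $f\colon\R^n\to\R$ can be uniformly approximated on compact sets by lattice-polynomial expressions in the coordinates with the constants---standard---so that induction over the connective structure reduces everything to $\vee$, scalars, and $+$. This completes (iv) $\Rightarrow$ (i) and closes the cycle.
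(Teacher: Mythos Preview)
Your argument for (iv)$\Rightarrow$(i) is correct and matches the paper's approach: close under $\vee$ (hence $\wedge$ and $|\cdot|$), observe that quantifiers are ``free'' because if $\psi\approx\varphi$ uniformly with $\varphi$ affine then $\sup_y\psi\approx\sup_y\varphi$ and the latter is already affine, then induct on formula complexity.

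However, you have a genuine gap in (iii)$\Rightarrow$(iv). You correctly identify the difficulty---(iii) only gives approximation of $|\chi|$ for \emph{quantifier-free} affine $\chi$, while $\varphi,\psi$ in (iv) carry quantifiers---but you abandon the problem without resolving it. Your detour through type spaces does not address this implication, and you never return to it. The identity you tested, $\sup_y(\varphi\vee\psi)=(\sup_y\varphi)\vee(\sup_y\psi)$, is indeed the wrong one; but the \emph{right} one does work. Put each affine formula in prenex form, rename bound variables so the two quantifier strings are disjoint, and use the identities
\[
\sup_y\bigl(\theta\vee\psi\bigr)=(\sup_y\theta)\vee\psi,
\qquad
\inf_y\bigl(\theta\vee\psi\bigr)=(\inf_y\theta)\vee\psi,
\]
both valid whenever $y$ is not free in $\psi$ (the $\inf$ case is a short pointwise check). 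Pulling quantifiers out one at a time from each side reduces $\varphi\vee\psi$ to a quantifier string applied to $\theta\vee\sigma$ with $\theta,\sigma$ quantifier-free affine. Now (iii) and the identity $\theta\vee\sigma=\tfrac12(\theta+\sigma+|\theta-\sigma|)$ approximate the matrix by an affine formula; re-applying the quantifiers keeps it affine. This is exactly the paper's route, and it is the step your proposal is missing.
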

\begin{proof}
  \begin{cycprf}
  \item[\impnnext] Immediate.
  \item The case where $\varphi$ and $\psi$ are quantifier-free is immediate, as $\varphi\vee\psi = \half(\varphi+\psi+|\varphi-\psi|)$. For the general case, we observe that every affine formula can be written in prenex form
    \begin{equation*}
      \qsup_{x_1} \qinf_{x_2} \cdots \qsup_{x_n} \theta(x_1, \ldots, x_n)
    \end{equation*}
    with $\theta$ affine, quantifier-free, and use that quantifiers commute with $(\cdot\vee\psi)$ and with $(\theta\vee\cdot)$, up to renaming variables.
  \item[\impfirst] Our hypothesis implies that the closure of $\cL^\aff_x(Q_\aff)$ in $C(\tS^{\cont}_x(Q))$ is closed under the lattice operations. Now an easy induction on formulas shows that it must contain $\cL^\cont_x(Q)$.
  \end{cycprf}
\end{proof}

\begin{lemma}\label{l:affine-reduction-cont-tarski-vaught}
  Let $Q$ be a continuous logic theory with affine reduction. If $N\models Q$ and $M\preceq^\aff N$, then $M\preceq^\cont N$.
\end{lemma}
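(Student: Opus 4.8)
The plan is to verify the Tarski--Vaught test of continuous logic: if $M$ is a substructure of $N$ and for every continuous $\cL$-formula $\varphi(x,y)$ with $y$ a single variable, and every finite tuple $a$ from $M$, one has $\inf_{b\in M}\varphi^N(a,b)=\bigl(\inf_y\varphi(a,y)\bigr)^N$, then $M\preceq^\cont N$. Since $M\preceq^\aff N$ already presupposes that $M$ is a substructure of $N$, this is the only thing to check.

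Fix such a $\varphi(x,y)$, a finite tuple $a\in M^x$, and $\varepsilon>0$. By affine reduction there is an affine formula $\theta(x,y)$ with $|\varphi-\theta|\leq_Q\varepsilon$, hence $|\varphi^K(c,d)-\theta^K(c,d)|\leq\varepsilon$ for every $K\models Q$ and all $c,d$; taking infima over $y$ also yields $\bigl|\bigl(\inf_y\varphi(x,y)\bigr)^K-\bigl(\inf_y\theta(x,y)\bigr)^K\bigr|\leq\varepsilon$. We apply both estimates in $N$, which is a model of $Q$. The key point is that $\theta$ and $\inf_y\theta$, being affine, are controlled by the relation $M\preceq^\aff N$: for $b\in M$ we have $\theta^N(a,b)=\theta^M(a,b)$, and
\begin{equation*}
  \bigl(\inf_y\theta(a,y)\bigr)^N=\bigl(\inf_y\theta(a,y)\bigr)^M=\inf_{b\in M}\theta^M(a,b)=\inf_{b\in M}\theta^N(a,b),
\end{equation*}
where the middle equality is just the definition of the interpretation of the $\inf$ quantifier. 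Thus the infimum defining $\bigl(\inf_y\theta(a,y)\bigr)^N$ is, to within $\varepsilon$, attained by some $b_0\in M$.

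Combining the three $\varepsilon$-estimates: for every $b\in M$,
\begin{equation*}
  \varphi^N(a,b)\geq\theta^N(a,b)-\varepsilon\geq\inf_{b'\in M}\theta^N(a,b')-\varepsilon=\bigl(\inf_y\theta(a,y)\bigr)^N-\varepsilon\geq\bigl(\inf_y\varphi(a,y)\bigr)^N-2\varepsilon,
\end{equation*}
while choosing $b_0\in M$ with $\theta^N(a,b_0)\leq\bigl(\inf_y\theta(a,y)\bigr)^N+\varepsilon$ gives $\varphi^N(a,b_0)\leq\bigl(\inf_y\varphi(a,y)\bigr)^N+3\varepsilon$. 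Since trivially $\inf_{b\in M}\varphi^N(a,b)\geq\inf_{b\in N}\varphi^N(a,b)=\bigl(\inf_y\varphi(a,y)\bigr)^N$ and $\varepsilon>0$ was arbitrary, we conclude $\inf_{b\in M}\varphi^N(a,b)=\bigl(\inf_y\varphi(a,y)\bigr)^N$. The continuous Tarski--Vaught test then gives $M\preceq^\cont N$.

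The only genuine subtlety, and the thing to be careful about, is that $M$ need not be a model of $Q$ (only of $Q_\aff$), so the approximation $|\varphi-\theta|\leq\varepsilon$ is \emph{not} available inside $M$; accordingly the argument is arranged so that $\varphi^M$ is never used, every scalar quantity is evaluated in $N$, and the affine substructure relation $M\preceq^\aff N$ is invoked only for the affine formulas $\theta$ and $\inf_y\theta$, for which it does apply.
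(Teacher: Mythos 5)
Your proof is correct and follows essentially the same route as the paper, which simply cites the Tarski--Vaught test for continuous logic and leaves the approximation-by-affine-formulas details (which you spell out, correctly handling the point that $M$ need only model $Q_\aff$) to the reader.
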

\begin{proof}
  Follows from the Tarski--Vaught test for continuous logic.
\end{proof}

\begin{remark}
  A theory $Q$ has affine reduction if and only if the maps $\iota_x^* \colon \cM(\tS^{\cont}_x(Q)) \to \tS^\aff_x(Q_\aff)$ defined in \autoref{eq:IotaStar} are injective.
  In that case, they are affine homeomorphisms of compact convex sets. This gives a complete description of the type spaces of the affine theory $Q_\aff$.
\end{remark}

\begin{defn}
  \label{df:Bauer-theory}
  We will say that an affine theory $T$ is a \emph{Bauer theory} if for every finite tuple $x$, the type space $\tS^\aff_x(T)$ is a Bauer simplex (i.e., $\tS^\aff_x(T)$ is a simplex and $\cE_x(T)$ is closed).
\end{defn}

In particular, Bauer theories satisfy all the conclusions of \autoref{c:closed-extreme-types} and \autoref{th:closed-extreme-types-Text}. The following two theorems are the main results of this section.

\begin{theorem}\label{th:Bauer-correspondence-Q}
  Let $Q$ be a continuous logic theory with affine reduction. Then:
  \begin{enumerate}
  \item $Q_\aff$ is a Bauer theory.
  \item The extremal models of $Q_\aff$ are precisely the models of $Q$. In particular, $(Q_\aff)_\ext \equiv Q$.
  \item\label{i:models-of-Q-aff} Every model of $Q_\aff$ is a direct integral of models of $Q$.
  \end{enumerate}
\end{theorem}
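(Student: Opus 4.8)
The plan is to establish the three items of \autoref{th:Bauer-correspondence-Q} in sequence, using the machinery already developed. For item (i), I would first show that $\cE_x(Q_\aff)$ is closed. Since $Q$ has affine reduction, the map $\iota_x^*\colon \cM(\tS^\cont_x(Q)) \to \tS^\aff_x(Q_\aff)$ of \autoref{eq:IotaStar} is injective (by \autoref{l:AffineReduction} and \autoref{prop:Stone-Weierstrass-affine}), hence an affine homeomorphism of compact convex sets, carrying the Dirac measures onto $\cE_x(Q_\aff)$ via $\rho^\aff_x$; thus $\cE_x(Q_\aff) = \rho^\aff_x(\tS^\cont_x(Q))$ is closed, being the continuous image of a compact set, and $\rho^\aff_x$ is a homeomorphism onto it. To see that $\tS^\aff_x(Q_\aff)$ is moreover a simplex, I invoke the fact recalled after \autoref{th:Bauer} that for any compact space $Y$ the space $\cM(Y)$ is a Bauer simplex; applying this with $Y = \tS^\cont_x(Q)$ and transporting along the affine homeomorphism $\iota_x^*$ shows $\tS^\aff_x(Q_\aff)$ is a Bauer simplex. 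This gives the natural affine homeomorphisms $\tS^\aff_x(T) \cong \cM(\tS^\cont_x(Q))$ promised in the introduction.

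For item (ii), one inclusion is essentially \autoref{p:ext-models-QAff} combined with \autoref{l:affine-reduction-cont-tarski-vaught}: an extremal model $M \models Q_\aff$ embeds affinely into some $N \models Q$, and by affine reduction this embedding is elementary in continuous logic, so $M \preceq^\cont N$ and hence $M \models Q$. Conversely, if $M \models Q$ then for every finite tuple $a$ from $M$, $\tp^\cont(a) \in \tS^\cont_x(Q)$, so $\tp^\aff(a) = \rho^\aff_x(\tp^\cont(a)) \in \cE_x(Q_\aff)$ by the first paragraph; thus $M$ realizes only extreme types and is an extremal model of $Q_\aff$. That $(Q_\aff)_\ext \equiv Q$ then follows: $Q \subseteq (Q_\aff)_\ext$ since the extremal models of $Q_\aff$ are the models of $Q$, and conversely $(Q_\aff)_\ext$ is the common continuous theory of these same models, so the two theories have the same models and are equivalent.

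For item (iii), I would apply the extremal decomposition machinery. Since $Q_\aff$ is a Bauer theory, it is in particular simplicial (a Bauer simplex is a simplex), so \autoref{th:integral-decomposition} applies: every model $N$ of $Q_\aff$ is isomorphic to a direct integral $\int_\Omega^\oplus M_\omega \ud\mu_\Omega(\omega)$ of extremal models $M_\omega$ of $Q_\aff$. By item (ii), each $M_\omega$ is a model of $Q$. Hence $N$ is a direct integral of models of $Q$, as desired.

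The main obstacle I anticipate is purely bookkeeping rather than conceptual: being careful that "affine reduction implies $\iota_x^*$ injective" is applied with the correct variable tuples and that the homeomorphism $\iota_x^*$ genuinely restricts to the bijection $\rho^\aff_x$ between the extreme boundaries $\tS^\cont_x(Q)$ and $\cE_x(Q_\aff)$ — this uses Milman's theorem and \autoref{l:extreme-transitivity} to identify the extreme points of $\cM(\tS^\cont_x(Q))$ with the Dirac measures and their images under $\iota_x^*$ with $\cE_x(Q_\aff)$. Everything else is a direct citation of \autoref{th:integral-decomposition}, \autoref{p:ext-models-QAff}, \autoref{l:affine-reduction-cont-tarski-vaught}, and the standard characterization of Bauer simplices as the $\cM(Y)$.
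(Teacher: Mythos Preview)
Your proposal is correct and follows essentially the same approach as the paper: both arguments use that affine reduction makes $\iota_x^*$ an affine homeomorphism onto $\cM(\tS^\cont_x(Q))$ (a Bauer simplex), use \autoref{p:ext-models-QAff} together with \autoref{l:affine-reduction-cont-tarski-vaught} for one direction of item (ii) and the extreme-point identification for the other, and then invoke \autoref{th:integral-decomposition} for item (iii). Your additional bookkeeping about identifying extreme points via Milman and \autoref{l:extreme-transitivity} is harmless elaboration of what the paper leaves implicit.
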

\begin{proof}
  Since $Q$ has affine reduction, the maps $\iota_x^*$ defined in \autoref{eq:IotaStar} are isomorphisms of compact convex sets. Therefore $\tS_x^\aff(Q_\aff)\cong \cM(\tS_x^\cont(Q))$, which are Bauer simplices.

  For the second assertion, if $M$ is an extremal model of $Q_\aff$, then it embeds affinely into a model $N\models Q$, by \autoref{p:ext-models-QAff}.
  By \autoref{l:affine-reduction-cont-tarski-vaught}, $M\preceq^\cont N$, and thus $M\models Q$. Conversely, as the maps $\iota_x^*$ are isomorphisms of convex sets, the affine type of any tuple realized in a model of $Q$ is extreme. Hence all models of $Q$ are extremal models of $Q_\aff$.

  The last assertion then follows from the extremal decomposition theorem for simplicial theories, \autoref{th:integral-decomposition}.
\end{proof}

Conversely, we have the following.

\begin{theorem}\label{th:Bauer-correspondence-T}
  Let $T$ be a Bauer theory. Then:
  \begin{enumerate}
  \item $T_\ext$ has affine reduction.
  \item The models of $T_\ext$ are precisely the extremal models of $T$.
  \end{enumerate}
\end{theorem}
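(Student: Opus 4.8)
The proof will mirror, in reverse, the structure of Theorem~\ref{th:Bauer-correspondence-Q}, but now the input is the combinatorial/convex-geometric hypothesis that $\tS^\aff_x(T)$ is a Bauer simplex for each finite $x$, and we must extract the syntactic affine reduction property for $T_\ext$. The plan is to first establish (2), that the models of $T_\ext$ are exactly the extremal models of $T$, and then derive (1) from (2) together with the Bauer simplex structure via Kadison/Bauer duality.

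For (2): by \autoref{l:T=Textaff} we have $(T_\ext)_\aff\equiv T$, and since $\cE_x(T)$ is closed for each finite $x$ (and hence for all $x$, by \autoref{l:ExtremeTypesInfinite}), \autoref{c:closed-extreme-types} applies and tells us precisely that the extremal models of $T$ form an elementary class in continuous logic, whose common theory is by definition $T_\ext$. So the models of $T_\ext$ are exactly the extremal models of $T$, which is (2). (This is really just a repackaging of \autoref{c:closed-extreme-types} and \autoref{p:iota-ScText}: the closedness of $\cE_x(T)$ forces $\rho^\aff(\tS^\cont_x(T_\ext))=\cE_x(T)$, so every model of $T_\ext$ has all its affine types extreme.)

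For (1): with $Q=T_\ext$, we must show $\iota_x\colon \cL^\aff_x(Q_\aff)\to\cL^\cont_x(Q)$ has dense image, i.e.\ every continuous formula is a uniform limit modulo $Q$ of affine formulas. By \autoref{l:AffineReduction} it suffices to show that the closure of the image of $\iota_x$ in $C(\tS^\cont_x(Q))$ is a sublattice, or more directly to identify $\cL^\cont_x(Q)$ with $C(\tS^\cont_x(Q))$ (dense image, by Stone--Weierstrass in continuous logic) and $\cL^\aff_x(Q_\aff)$ with a dense subspace of $\cA(\tS^\aff_x(Q_\aff))$ (Kadison duality, \autoref{prop:Stone-Weierstrass-affine}). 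By \autoref{cor:E(T)-closed-iota-homeo}, the map $\rho^\aff_x\colon\tS^\cont_x(T_\ext)\to\cE_x(T)$ is a $\tau$-homeomorphism; dualizing, $C(\tS^\cont_x(Q))\cong C(\cE_x(T))$. Now since $T$ is a Bauer theory, $\tS^\aff_x(T)$ is a Bauer simplex, so by \autoref{th:Bauer} the restriction map $\cA(\tS^\aff_x(T))\to C(\cE_x(T))$ is a (vector lattice) isomorphism. The image of $\iota_x$ sits inside $\cA(\tS^\aff_x(T))$ as the subspace generated by affine formulas, which is dense by \autoref{prop:Stone-Weierstrass-affine} (it contains the constants and separates points of $\tS^\aff_x(T)$, being the type space). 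Composing these identifications, the affine formulas are uniformly dense in $C(\cE_x(T))\cong C(\tS^\cont_x(Q))\cong\cL^\cont_x(Q)$ (taking norm closures), which is exactly affine reduction.

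I expect the only genuinely delicate point to be bookkeeping the two dualities so that the affine-formula subspace of $\cA(\tS^\aff_x(T))$ really maps, under the restriction-to-$\cE_x(T)$ isomorphism and then $\rho^\aff_x$, onto a dense subspace of $\cL^\cont_x(T_\ext)\subseteq C(\tS^\cont_x(T_\ext))$: one must check that the composite identification sends the (interpretation of the) affine formula $\varphi$ to (the interpretation of) $\varphi$ viewed as a continuous formula, which holds because both are defined pointwise by $p\mapsto\varphi(p)$ and $\rho^\aff_x$ is precisely the affine-part map. Everything else is a direct invocation of \autoref{th:Bauer}, \autoref{cor:E(T)-closed-iota-homeo}, \autoref{c:closed-extreme-types}, and \autoref{prop:Stone-Weierstrass-affine}. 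One could alternatively bypass the lattice-closure route and argue \autoref{l:AffineReduction}\autoref{i:l:aff-el:4} directly: given affine $\varphi,\psi$, the function $\varphi\vee\psi$ restricted to $\cE_x(T)$ lifts by the Bauer isomorphism \autoref{th:Bauer} to an element of $\cA(\tS^\aff_x(T))$, hence is uniformly approximable by affine formulas, and this approximation persists on $\cE_x(T)=\rho^\aff_x(\tS^\cont_x(T_\ext))$, i.e.\ modulo $T_\ext$; but the two approaches are the same computation.
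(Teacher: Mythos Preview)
Your proposal is correct. The alternative you sketch in the final paragraph---using \autoref{l:AffineReduction}\autoref{i:l:aff-el:4} and lifting $\varphi\vee\psi$ on $\cE_x(T)$ to an element of $\cA(\tS^\aff_x(T))$ via \autoref{th:Bauer}---is exactly the paper's proof of (1), and your proof of (2) via \autoref{c:closed-extreme-types} is likewise the paper's.

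Your primary route for (1), going through the homeomorphism $\rho^\aff_x\colon\tS^\cont_x(T_\ext)\to\cE_x(T)$ from \autoref{cor:E(T)-closed-iota-homeo} and then identifying $C(\cE_x(T))\cong\cA(\tS^\aff_x(T))$, is also valid but slightly heavier: the injectivity of $\rho^\aff_x$ in \autoref{cor:E(T)-closed-iota-homeo} rests on the chain argument of \autoref{th:closed-extreme-types-Text}, whereas the paper's direct argument only needs the trivial direction that tuples in extremal models have extreme affine types. So the two approaches are not quite ``the same computation'': yours packages more into a single invocation, the paper's is more self-contained.
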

\begin{proof}
  We check that $T_\ext$ has affine reduction using \autoref{l:AffineReduction}~\ref{i:l:aff-el:4}. Let $\varphi,\psi\in \cL^\aff_x$. Let $\chi$ be the least upper bound of $\varphi$ and $\psi$ in $\cA(\tS^\aff_x(T))$. It exists by \autoref{th:Bauer}, and is given at $p \in \cE_x(T)$ by
  \begin{equation*}
    \chi(p) = \varphi(p) \vee \psi(p).
  \end{equation*}
  For any $\eps > 0$ there exists an affine formula $\theta$ such that $\theta \leq \chi \leq \theta + \eps$ on $\tS^\aff_x(T)$.
  For any extremal model $M \models T$ and $a \in M^x$, we have $\tp^\aff(a) \in \cE_x(T)$, so
  \begin{equation*}
    \varphi(a)\vee\psi(a) = \chi(a).
  \end{equation*}
  Therefore $T_\ext \models \theta \leq \varphi\vee\psi \leq \theta + \eps$, as desired.

  The fact that the models of $T_\ext$ are precisely the extremal models of $T$ is just \autoref{c:closed-extreme-types}. (One may also use that $(T_\ext)_\aff \equiv T$ and the preceding theorem.)
\end{proof}

\begin{cor}
  \label{c:bauer-elementarily-measurable-fields}
  Every measurable field of extremal models of a Bauer $\cL$-theory $T$ is elementarily measurable.
\end{cor}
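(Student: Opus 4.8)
The plan is to reduce to a countable sublanguage and then run the continuous Tarski--Vaught test, exploiting the fact that for a Bauer theory the continuous theory $T_\ext$ has affine reduction (\autoref{th:Bauer-correspondence-T}). So fix a measurable field $(M_\Omega, e_I)$ of extremal models of $T$; each $M_\omega$, being an extremal model of $T$, is a model of $Q := T_\ext$. I must show that for every finite $\cL_0 \subseteq \cL$ the collection $\fI^\cont(M_\Omega,e_I,\cL_0)$ is cofinal in $\cP_{\aleph_0}(I)$.

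First I would find, by a closing-off argument, a countable sublanguage $\cL_1$ with $\cL_0 \subseteq \cL_1 \subseteq \cL$ such that every continuous $\cL_0$-formula is, modulo $Q$, a uniform limit of affine $\cL_1$-formulas. Indeed, there is a countable family of continuous $\cL_0$-formulas uniformly dense among all of them; by affine reduction of $Q$ (\autoref{df:affine-reduction}) each one is approximated modulo $Q$ by affine $\cL$-formulas, each of which uses only finitely many symbols, so letting $\cL_1$ consist of $\cL_0$ together with all symbols occurring in these approximants gives a countable $\cL_1$ with the required property. Since $\cL_1$ is countable, the remark following \autoref{defn:MeasurableField} (closure of the sets $\fI^\aff$ under countable chains, together with condition (iii)) shows that $\fI^\aff(M_\Omega,e_I,\cL_1)$ is cofinal in $\cP_{\aleph_0}(I)$.

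Now, given a countable $I_0 \subseteq I$, choose $I_1 \in \fI^\aff(M_\Omega,e_I,\cL_1)$ with $I_1 \supseteq I_0$, so that for $\mu$-a.e.\ $\omega$ we have $M_{\omega,I_1} \preceq^\aff_{\cL_1} M_\omega$; I claim that for all such $\omega$ in fact $M_{\omega,I_1} \preceq^\cont_{\cL_0} M_\omega$, which is exactly what is needed to conclude $I_1 \in \fI^\cont(M_\Omega,e_I,\cL_0)$. Since $M_{\omega,I_1}$ is an $\cL_1$-substructure, hence an $\cL_0$-substructure, and $M_\omega \models Q$, it remains to verify the continuous Tarski--Vaught condition for $\cL_0$-formulas. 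Fix a continuous $\cL_0$-formula $\psi(x,y)$ with $y$ a single variable, a finite tuple $a$ from $M_{\omega,I_1}$, and $\eps > 0$; choose an affine $\cL_1$-formula $\psi_n$ with $Q \models |\psi - \psi_n| \le \eps$, so that $|\psi^{M_\omega} - \psi_n^{M_\omega}| \le \eps$. As $\psi_n$ and $\inf_y \psi_n$ are affine $\cL_1$-formulas and $M_{\omega,I_1} \preceq^\aff_{\cL_1} M_\omega$, we get (using density of $\{e_i(\omega) : i \in I_1\}$ in $M_{\omega,I_1}$ and continuity of $\psi_n^{M_\omega}$) that $\inf_{y \in M_\omega} \psi_n^{M_\omega}(a,y) = \inf_{i \in I_1} \psi_n^{M_\omega}(a,e_i(\omega))$. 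Applying the $\eps$-estimate on both sides yields $\inf_{y \in M_\omega} \psi^{M_\omega}(a,y) \ge \inf_{i \in I_1} \psi^{M_\omega}(a,e_i(\omega)) - 2\eps$, and letting $\eps \to 0$ gives equality (the reverse inequality being trivial). This is the continuous Tarski--Vaught test, so $M_{\omega,I_1} \preceq^\cont_{\cL_0} M_\omega$.

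The point that requires care is the reduction to a countable sublanguage: affine reduction of $Q$ only tells us that $\cL_0$-formulas are approximated by $\cL$-affine formulas, not by $\cL_0$-affine ones, so the auxiliary language $\cL_1$ genuinely has to be enlarged, and one must check that $\cL_1$ can be kept countable (so that the union-of-chains property of \autoref{defn:MeasurableField} applies) and that $\fI^\aff(M_\Omega,e_I,\cL_1)$ stays cofinal. Everything after that is the routine two-sided $\eps$-estimate. Note that no separability hypothesis on $\cL$ is needed, since affine reduction of $T_\ext$ comes for free from \autoref{th:Bauer-correspondence-T}.
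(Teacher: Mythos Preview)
Your proof is correct and follows essentially the same approach as the paper: pass to a countable intermediate sublanguage $\cL_1 \supseteq \cL_0$ in which affine approximation of continuous $\cL_0$-formulas is available, and then deduce $\fI^\cont(M_\Omega,e_I,\cL_0) \supseteq \fI^\aff(M_\Omega,e_I,\cL_1)$. The paper phrases the last step by invoking \autoref{l:affine-reduction-cont-tarski-vaught} (after arranging that $T_\ext\rest_{\cL_1}$ itself has affine reduction), whereas you unpack the Tarski--Vaught verification by hand; these are the same argument.
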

\begin{proof}
  If $T$ has a separable language, this is a particular case of \autoref{c:ext-closed-elementarily-measurable-fields}, but here we can bypass this assumption. Indeed, since $T_\ext$ has affine reduction, for every finite (or countable) sublanguage $\cL_0\subseteq\cL$ there exists an intermediate countable language $\cL_0\subseteq\cL_1 \subseteq\cL$ such that $T_\ext\rest_{\cL_1}$ still has affine reduction. Thus, given a measurable field $(M_\Omega,e_I)$ of extremal models of $T$, we have that $\fI^\cont(M_\Omega,e_I,\cL_0)\supseteq \fI^\cont(M_\Omega,e_I,\cL_1) = \fI^\aff(M_\Omega,e_I,\cL_1)$ by \autoref{l:affine-reduction-cont-tarski-vaught}. As the latter collection is cofinal, so is the former.
\end{proof}

Like quantifier elimination, affine reduction is a syntactical property that can always be obtained in a definitional expansion of the theory. One easy way to achieve it (together with quantifier elimination) is by Morleyization.
Formally, we define the \emph{Morleyization language} to consist of all function symbols of $\cL$, together with a predicate symbol $P_\varphi(x)$ for each $\varphi \in \cL^\cont_x$, with the same arity, bound and convex uniform continuity modulus as $\varphi$:
\begin{equation*}
  \cL_\Mor \coloneqq \bigl\{ F : F\in\cL\text{ function symbol} \bigr\}
  \cup
  \bigl\{ P_\varphi:\varphi\in \cL^\cont_x, \, \text{ finite tuple } x \bigr\}.
\end{equation*}
Identifying a predicate symbol $\rho \in \cL$ with $P_\rho \in \cL_\Mor$, we may view $\cL$ as a sublanguage of $\cL_\Mor$.

The \emph{Morleyization} of a continuous $\cL$-theory $Q$ is the continuous $\cL_\Mor$-theory $Q_\Mor$ that consists of $Q$ together with the axioms asserting that $P_\varphi(x)=\varphi(x)$ for each $\varphi$, which are easily expressible in continuous logic (and if $\varphi$ is affine, then even in affine logic).
Clearly, $Q_\Mor$ has affine reduction. Note also that $\density_{Q_\Mor}(\cL_\Mor) = \density_Q(\cL)$, where the density character of the language of a continuous theory is defined in the same way as in \autoref{defn:LanguageDensityCharacter} with $\cL_n^\cont$ in the place of $\cL_n^\aff$

\begin{remark}
  \label{rmk:AffineMinGivesMax}
  A more minimalistic approach would be to name $P_\varphi$ for quantifier-free formulas only.
  In fact, it is enough to name $P_\varphi$ for every $\varphi$ of the form $\bigwedge_i \sum_j \varphi_{ij}$, where $\varphi_{ij}$ are atomic.
  Indeed, using the identities
  \begin{gather*}
    \bigwedge_i \varphi_i + \bigwedge_j \psi_j = \bigwedge_{i, j} \, (\varphi_i + \psi_j),
    \\
    (\varphi - \psi) \wedge (\sigma - \tau)
    = (\varphi + \tau) \wedge (\psi + \sigma) - (\psi + \tau),
  \end{gather*}
  one checks that the closure of this set under affine combinations is closed under $\wedge$, and thus also under $\vee$.
  It is therefore dense among all quantifier-free formulas.
\end{remark}

\begin{defn}
  We define the \emph{Bauerization} of $Q$ as the affine theory $Q_\Bau\coloneqq (Q_\Mor)_\aff$.
\end{defn}

\begin{remark}\label{rmk:Bau}
  By \autoref{th:Bauer-correspondence-Q}, $Q_\Bau$ is a Bauer theory, the extremal models of $Q_\Bau$ are precisely the models of $Q_\Mor$, and more generally, the models of $Q_\Bau$ are precisely the direct integrals of models of $Q_\Mor$.
\end{remark}

Of course, as $Q_\Mor$ is a definitional expansion of $Q$, the natural restriction maps $\eta_x\colon \tS^\cont_x(Q_\Mor) \to \tS^\cont_x(Q)$ are homeomorphisms.
Similarly, as $Q_\Bau\models Q_\aff$, we can consider the restriction maps
\begin{equation*}
  \eta_x^\aff \colon \tS^\aff_x(Q_\Bau) \to \tS^\aff_x(Q_\aff),
\end{equation*}
which are continuous and affine.

\begin{lemma}\label{l:interdef-affine-reduction-Bau}
  The following are equivalent:
  \begin{enumerate}
  \item The continuous theory $Q$ has affine reduction.
  \item The affine theory $Q_\Bau$ is an affine definitional expansion of $Q_\aff$ (i.e., $Q_\Bau$ consists of $Q_\aff$ together with affine axioms defining the new predicates in terms of affine $\cL$-formulas).
  \item The maps $\eta^\aff_x$ are affine homeomorphisms.
  \end{enumerate}
\end{lemma}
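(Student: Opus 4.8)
The plan is to prove \autoref{l:interdef-affine-reduction-Bau} by a cycle of implications, most of which are essentially unwinding definitions together with results already established in \autoref{sec:affine-part-continuous-theory} and \autoref{sec:Bauer-theories}. I would prove $(i)\Rightarrow(ii)\Rightarrow(iii)\Rightarrow(i)$.

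For $(i)\Rightarrow(ii)$: assume $Q$ has affine reduction. The predicate $P_\varphi$ of $Q_\Mor$ is defined by the axiom $P_\varphi(x)=\varphi(x)$, where $\varphi\in\cL^\cont_x$. By affine reduction, $\varphi$ is a uniform limit modulo $Q$ of affine $\cL$-formulas $\psi_n$; hence modulo $Q_\aff$ (equivalently, in all models of $Q_\aff$, since uniform approximation of $\varphi$ holds throughout the class of $Q$-models and a fortiori the affine relation $\inf_x(\psi_n-\psi_m+\varepsilon)\geq 0$ is an affine consequence of $Q$) the sequence $(\psi_n)$ is uniformly Cauchy, so it defines an affine definable predicate of $Q_\aff$ in the sense of \autoref{defn:DefinablePredicate}. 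The axioms of $Q_\Bau=(Q_\Mor)_\aff$ asserting $P_\varphi(x)=\varphi(x)$ are thus equivalent, modulo $Q_\aff$, to affine axioms identifying $P_\varphi$ with this affine definable predicate; so $Q_\Bau$ is an affine definitional expansion of $Q_\aff$ in the sense described. (One should check the converse inclusion too: every affine consequence of $Q_\Mor$ in the language $\cL_\Mor$ is, modulo these definitions, an affine consequence of $Q_\aff$; this is immediate since $Q_\Mor$ is a definitional expansion of $Q$.)

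For $(ii)\Rightarrow(iii)$: if $Q_\Bau$ is an affine definitional expansion of $Q_\aff$, then by the general remarks on definitional expansions in \autoref{sec:Definability} (``there is no real difference between the class of models''), the restriction map $\eta^\aff_x\colon\tS^\aff_x(Q_\Bau)\to\tS^\aff_x(Q_\aff)$ is a bijection; being continuous and affine between compact convex sets, it is an affine homeomorphism. For $(iii)\Rightarrow(i)$: suppose each $\eta^\aff_x$ is an affine homeomorphism. Then $\cL^\aff_x(Q_\aff)$ and $\cL^\aff_x(Q_\Bau)$ have isometrically isomorphic completions, namely $\cA(\tS^\aff_x(Q_\aff))\cong\cA(\tS^\aff_x(Q_\Bau))$. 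Since $Q_\Bau$ is a Bauer theory by \autoref{rmk:Bau}, $(Q_\Bau)_\ext\equiv Q_\Mor$ by \autoref{th:Bauer-correspondence-Q}, so $\cA(\tS^\aff_x(Q_\Bau))\cong\cM(\tS^\cont_x(Q_\Mor))$-affine-functions, which contains $\cL^\cont_x(Q_\Mor)\cong\cL^\cont_x(Q)$ as the affine functions coming from continuous formulas; more directly, the affine formula $P_\varphi(x)$ of $Q_\Bau$ equals, modulo $Q_\Bau$ and hence (via the homeomorphism) as an element of the completion of $\cL^\aff_x(Q_\aff)$, a uniform limit of affine $\cL$-formulas. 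Evaluating in models of $Q$ (which are extremal models of $Q_\Bau$, so their types lie in $\cE_x(Q_\Bau)$ where $P_\varphi$ genuinely computes $\varphi$), we conclude every continuous $\cL$-formula $\varphi$ is a uniform limit modulo $Q$ of affine $\cL$-formulas, i.e.\ $Q$ has affine reduction.

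The main obstacle I expect is bookkeeping the identification ``element of the completion of $\cL^\aff_x(Q_\aff)$'' with ``uniform limit of affine $\cL$-formulas modulo $Q$'' cleanly in the step $(iii)\Rightarrow(i)$: one must be careful that the affine homeomorphism $\eta^\aff_x$ indeed transports the image of $\cL^\aff_x(Q_\aff)$ onto the image of $\cL^\aff_x(Q_\Bau)$ under the respective Kadison embeddings, and that $P_\varphi$ restricted to $\cE_x(Q_\Bau)$ agrees with $\varphi$ on the models of $Q$ — this uses \autoref{th:Bauer-correspondence-Q}(ii) identifying the extremal models of $Q_\Bau$ with the models of $Q_\Mor$, together with $\rho^\aff$ being injective on extreme types of a Bauer theory (\autoref{cor:E(T)-closed-iota-homeo}). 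Everything else is routine manipulation of definitional expansions and the duality of \autoref{sec:kadison-duality}.
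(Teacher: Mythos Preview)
Your argument is correct. The implications $(i)\Rightarrow(ii)$ and $(ii)\Rightarrow(iii)$ match the paper's proof essentially verbatim.

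For $(iii)\Rightarrow(i)$, your route differs from the paper's. You argue directly at the level of function spaces: the affine homeomorphism $\eta^\aff_x$ identifies $\cA(\tS^\aff_x(Q_\Bau))$ with $\cA(\tS^\aff_x(Q_\aff))$, so the affine $\cL_\Mor$-formula $P_\varphi$ corresponds to an affine definable predicate over $Q_\aff$, i.e., a uniform limit of affine $\cL$-formulas; evaluating in a model of $Q$ (where $P_\varphi=\varphi$) yields affine reduction. The paper instead argues model-theoretically: since $Q_\Bau$ is Bauer and $\eta^\aff_x$ is an affine homeomorphism, $Q_\aff$ is also Bauer with the same extremal models (namely the models of $Q$), whence $Q=(Q_\aff)_\ext$; then \autoref{th:Bauer-correspondence-T} gives affine reduction directly. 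The paper's route is shorter because it reuses the Bauer correspondence already established, whereas yours is more self-contained but involves the bookkeeping you flag in your final paragraph. Note that the concern you raise there is lighter than you suggest: that $P_\varphi$ agrees with $\varphi$ on models of $Q$ is simply the defining axiom of $Q_\Mor$, and you do not need \autoref{cor:E(T)-closed-iota-homeo} or anything about extreme types for this step.
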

\begin{proof}
  \begin{cycprf}
  \item
    For every $\varphi\in\cL_x^\cont$ there exist $\psi_k\in\cL_x^\aff$ such that $\psi_k \rightarrow \varphi$ uniformly in models of $Q$.
    Since $\psi_k$ are affine, $\psi_k = P_{\psi_k} \rightarrow P_\varphi$ in models of $Q_\Bau$, at a rate independent of the model.
    Thus, $Q_\Bau$ is an affine definitional expansion of $Q_\aff$.
  \item Clear.
  \item[\impfirst]
    Since $Q_\Bau$ is a Bauer theory and has the same type spaces as $Q_\aff$, the latter is a Bauer theory as well, and its extremal models are exactly the reducts of those of $Q_\Bau$.
    In other words, the extremal models of $Q_\aff$ are the models of $Q$, so $Q = (Q_\aff)_\ext$.
    By \autoref{th:Bauer-correspondence-T}, $Q$ has affine reduction.
  \end{cycprf}
\end{proof}

In the general case, we still have the following.

\begin{prop}\label{p:eta-surjective}
  For any theory $Q$, the maps $\eta_x^\aff$ are surjective.
\end{prop}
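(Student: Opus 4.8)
\textbf{Proof plan for \autoref{p:eta-surjective}.}

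The statement is that $\eta_x^\aff\colon \tS^\aff_x(Q_\Bau)\to\tS^\aff_x(Q_\aff)$ is surjective, where $Q_\Bau=(Q_\Mor)_\aff$ and $Q_\Mor$ is the Morleyization of $Q$. The plan is to apply \autoref{l:surjective-affine-map}: it suffices to show that the image of $\eta_x^\aff$ meets every non-empty open half-space $\oset{\varphi>0}\subseteq\tS^\aff_x(Q_\aff)$ with $\varphi\in\cA(\tS^\aff_x(Q_\aff))$, equivalently (since affine formulas are dense in $\cA$) with $\varphi$ an affine $\cL$-formula. Since $Q_\Bau\models Q_\aff$, it is enough to show that no such $\varphi$ can be $\leq 0$ throughout $\tS^\aff_x(Q_\Bau)$ unless it is $\leq 0$ throughout $\tS^\aff_x(Q_\aff)$; but the latter would mean exactly that $\oset{\varphi>0}$ is empty in $\tS^\aff_x(Q_\aff)$, so the contrapositive is what we want.

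The key observation is that $Q_\Mor$ is a definitional expansion of $Q$, so every model of $Q$ expands (uniquely) to a model of $Q_\Mor$, and conversely every model of $Q_\Mor$ restricts to a model of $Q$. Consequently, for any affine $\cL$-formula $\varphi(x)$, we have
\begin{gather*}
  Q_\Bau\models \varphi\leq 0 \quad\Longleftrightarrow\quad Q_\Mor\models\varphi\leq 0 \quad\Longleftrightarrow\quad Q\models\varphi\leq 0 \quad\Longleftrightarrow\quad Q_\aff\models\varphi\leq 0,
\end{gather*}
where the first equivalence holds because $\varphi\leq 0$ is an affine consequence of $Q_\Mor$ iff it lies in $(Q_\Mor)_\aff=Q_\Bau$, the second because $Q_\Mor$ and $Q$ have the same $\cL$-reducts (so they satisfy the same $\cL$-sentences of continuous logic, in particular the same affine conditions on $\cL$-formulas), and the third by definition of $Q_\aff$. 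So if $\oset{\varphi>0}$ is non-empty in $\tS^\aff_x(Q_\aff)$, then $Q_\aff\not\models\varphi\leq 0$, hence $Q_\Bau\not\models\varphi\leq 0$, i.e.\ $\oset{\varphi>0}$ is non-empty in $\tS^\aff_x(Q_\Bau)$, and picking any type there and applying $\eta_x^\aff$ (which is affine and continuous, and preserves the value of $\varphi$ since $\varphi$ is an $\cL$-formula) lands in $\oset{\varphi>0}\subseteq\tS^\aff_x(Q_\aff)$. By \autoref{l:surjective-affine-map}, $\eta_x^\aff$ is surjective.

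There is no serious obstacle here: the proof is essentially bookkeeping about the definitional expansion together with the half-space criterion of \autoref{l:surjective-affine-map}. The only point that deserves a moment's care is the first equivalence above — namely that an affine $\cL$-formula satisfies $Q_\Bau\models\varphi\leq 0$ precisely when $Q_\Mor\models\varphi\leq 0$ — which is immediate from the definition $Q_\Bau=(Q_\Mor)_\aff$ and the fact that affine conditions on $\cL$-formulas are among the affine conditions whose consistency with $Q_\Mor$ we are testing. Alternatively, and perhaps more transparently, one can phrase the whole argument model-theoretically: given $p\in\tS^\aff_x(Q_\aff)$, realize $p$ by a tuple $a$ in a model $M\models Q_\aff$; embed $M$ affinely into a direct integral of models of $Q$ (this is available once \autoref{th:general-models-Qaff} is proved, but to avoid circularity it is cleaner to use \autoref{l:realized-convex-dense} and the fact that $p$ lies in the closed convex hull of types realized in a fixed model of $Q_\aff$, or simply to invoke that any model of $Q_\aff$ is by definition a model satisfying all affine consequences of $Q$), then expand the ambient model to the Morleyization language; the resulting affine type over $Q_\Bau$ restricts back to $p$. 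I will present the half-space version, as it is the shortest and avoids any dependence on later results.
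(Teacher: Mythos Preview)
Your proof is correct. It differs from the paper's approach, which is shorter but relies on more of the preceding theory: the paper invokes \autoref{p:ext-models-QAff} to show that every extreme type of $Q_\aff$ is realized in a model of $Q$ (hence of $Q_\Mor$, hence of $Q_\Bau$), and then concludes by Krein--Milman. Your argument instead uses the half-space criterion of \autoref{l:surjective-affine-map} together with the purely syntactic observation that an affine $\cL$-condition $\sup_x\varphi\leq 0$ is a consequence of $Q_\Bau$ iff it is a consequence of $Q_\Mor$ iff of $Q$ iff of $Q_\aff$. This is more self-contained: it bypasses the machinery of extremal models and \autoref{p:ext-models-QAff} entirely, at the cost of being slightly longer to write out. (Your rambling ``alternatively'' paragraph should be dropped: the circularity worry about \autoref{th:general-models-Qaff} is well-founded, and the half-space argument you actually give is already clean and complete.)
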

\begin{proof}
  By \autoref{p:ext-models-QAff}, every extreme type of $Q_\aff$ is realized in a model of $Q$ (equivalently, of $Q_\Mor$) and therefore belongs to the image of $\eta_x^\aff$.
  By Krein--Milman, $\eta_x^\aff$ is surjective.
\end{proof}

Applying \autoref{p:eta-surjective} to $x = \emptyset$ yields the following.
\begin{cor}
  The set of affine $\cL$-consequences of $Q_\Bau$ is precisely $Q_\aff$.
\end{cor}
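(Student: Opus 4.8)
The plan is to deduce the corollary directly from \autoref{p:eta-surjective} by specializing to the empty tuple of variables. First I would recall that, by definition, $Q_\Bau = (Q_\Mor)_\aff$ consists precisely of the affine $\cL_\Mor$-consequences of $Q_\Mor$, and that its set of affine $\cL$-consequences is a theory in the original language $\cL$; the claim is that this $\cL$-theory is exactly $Q_\aff$. Since $Q_\Bau \models Q_\aff$ (this was already noted before \autoref{l:interdef-affine-reduction-Bau}, using $Q_\Bau \models Q_\aff$), one inclusion — that every affine $\cL$-consequence of $Q_\aff$ is an affine $\cL$-consequence of $Q_\Bau$ — is immediate. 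The content is the reverse inclusion: any affine $\cL$-sentence that holds in all models of $Q_\Bau$ already holds in all models of $Q$, hence belongs to $Q_\aff$.

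The key step is to apply \autoref{p:eta-surjective} with $x = \emptyset$. This gives that the restriction map $\eta^\aff_\emptyset \colon \tS^\aff_\emptyset(Q_\Bau) \to \tS^\aff_\emptyset(Q_\aff)$ is surjective. Recalling from \autoref{sec:Type} that a theory is determined, up to equivalence, by the compact convex subset of $\tS^\aff_0(\cL)$ of types satisfying it (via \autoref{prop:KadisonHahnBanachImplicationDuality}), surjectivity of $\eta^\aff_\emptyset$ says exactly that the image of $\tS^\aff_0(Q_\Bau)$ under the $\cL$-reduct map is all of $\tS^\aff_0(Q_\aff)$. An affine $\cL$-sentence $\varphi$ with $Q_\Bau \models \varphi \geq 0$ is nonnegative on every point of $\tS^\aff_0(Q_\Bau)$, hence, pushing forward, on every point of $\tS^\aff_0(Q_\aff)$; therefore $Q_\aff \models \varphi \geq 0$, as desired. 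Since every affine condition can be written in the form $\varphi \geq 0$ (as noted after \autoref{dfn:Condition}), this handles arbitrary affine $\cL$-conditions.

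There is essentially no obstacle here: the statement is a formal corollary, and the only thing to be slightly careful about is the bookkeeping of languages — one must observe that an affine $\cL$-formula, viewed inside the larger language $\cL_\Mor$, is unchanged by the definitional expansion, so the $\cL$-reduct of $Q_\Bau$-models are models of $Q_\aff$ and conversely (via \autoref{p:eta-surjective}) every model of $Q_\aff$ arises, up to affine equivalence, as such a reduct. Concretely, the proof reads: \emph{By \autoref{p:eta-surjective} applied to $x = \emptyset$, the restriction map $\eta^\aff_\emptyset$ is surjective; together with the identification of theories with compact convex subsets of $\tS^\aff_0(\cL)$, this shows that the affine $\cL$-consequences of $Q_\Bau$ coincide with those of $Q_\aff$, which is $Q_\aff$ itself.} I would keep it to one or two sentences.
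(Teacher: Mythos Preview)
Your proposal is correct and takes essentially the same approach as the paper: the corollary is stated immediately after \autoref{p:eta-surjective} with the one-line justification ``Applying \autoref{p:eta-surjective} to $x = \emptyset$ yields the following,'' which is exactly your argument. Your unpacking of why surjectivity of $\eta^\aff_\emptyset$ translates into equality of the sets of affine $\cL$-consequences is accurate and matches what the paper leaves implicit.
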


Note that for any continuous $\cL$-theory $Q$, we have that $\fd_Q(\cL) = \fd_{Q_\aff}(\cL)$. This is true because $\fd_Q(\cL)$ is determined by the density character of atomic formulas and because the sentences asserting that a certain set of atomic formulas is dense are affine. In particular, $\fd_{Q_\Bau}(\cL_\Mor) = \fd_Q(\cL)$.
\begin{theorem}
  \label{th:general-models-Qaff}
  Let $Q$ be a continuous theory.
  Then for every model $M$ of $Q_\aff$, there exists a probability space $(\Omega, \mu)$ and an elementarily measurable field $(M_\omega : \omega \in \Omega)$ of models of $Q$ with $\density(M_\omega)\leq\density(M)+\density_Q(\cL)$ such that $M$ embeds affinely in  $\int^\oplus_{\Omega}M_\omega\ud\mu$.
  If $Q$ has a separable language and $M$ is separable, then $(\Omega,\mu)$ can be chosen to be a standard probability space.
\end{theorem}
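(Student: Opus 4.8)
The plan is to deduce \autoref{th:general-models-Qaff} from the extremal decomposition for Bauer theories together with the interplay between $Q_\aff$ and its Bauerization $Q_\Bau = (Q_\Mor)_\aff$. The point is that a model of $Q_\aff$ is, \emph{a priori}, only a model of an affine theory with no control on its extreme types, but after passing to $Q_\Bau$ we land in a Bauer theory whose extremal models are exactly the models of $Q_\Mor$ (\autoref{rmk:Bau}), and these are just the models of $Q$ up to the definitional expansion.

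First I would take a model $M \models Q_\aff$ and lift it to a model $\tilde M$ of $Q_\Bau$ in the Morleyization language $\cL_\Mor$. Concretely, by \autoref{p:eta-surjective} the restriction map $\eta^\aff_x \colon \tS^\aff_x(Q_\Bau) \to \tS^\aff_x(Q_\aff)$ is surjective, and more is true at the level of structures: taking a dense enumeration $a$ of $M$ with $p = \tp^\aff(a) \in \tS^{\fM,\aff}_\kappa(Q_\aff)$ for $\kappa = \fd(M) + \fd_Q(\cL)$, one lifts $p$ to $\tilde p \in \tS^\aff_\kappa(Q_\Bau)$ over the same enumeration; since $\eta^\aff_\kappa$ is an affine homeomorphism on the relevant faces coming from $\tS^{\fM,\aff}$ — or, more simply, since any realization of $\tilde p$ enumerates a structure whose $\cL$-reduct satisfies the same affine $\cL$-conditions as $M$ — one obtains an $\cL_\Mor$-structure $\tilde M \models Q_\Bau$ whose $\cL$-reduct is (isometric to) $M$. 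I would be slightly careful here to note that the Tarski--Vaught property transfers: if $a$ enumerates a dense affine submodel of a model of $Q_\Bau$, its $\cL$-reduct is dense in the $\cL$-reduct, which is then a model of $Q_\aff$; so in fact $\tilde M$ can be taken with $M \preceq^\aff \tilde M\rest_\cL$, and since $M$ is already a model and $\fd(M)\le\kappa$, equality $M = \tilde M\rest_\cL$ holds.

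Next I would apply \autoref{th:Bauer-correspondence-Q}\ref{i:models-of-Q-aff} (or directly \autoref{th:integral-decomposition}, using that $Q_\Bau$ is a Bauer theory hence simplicial) to $\tilde M$: there is a probability space $(\Omega,\mu)$ and a measurable field $(\tilde M_\omega : \omega \in \Omega)$ of extremal models of $Q_\Bau$ with $\tilde M \cong \int^\oplus_\Omega \tilde M_\omega \ud\mu(\omega)$, and $\density(\tilde M_\omega) \le \kappa$ for all $\omega$ by \autoref{th:integral-decomposition}. By \autoref{rmk:Bau} each $\tilde M_\omega$ is a model of $Q_\Mor$, hence $M_\omega \coloneqq \tilde M_\omega\rest_\cL$ is a model of $Q$ (using that $Q_\Mor$ is a definitional expansion of $Q$). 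The field $(M_\omega)$ is measurable over $(\Omega,\mu)$ with the restricted pointwise enumeration, being a reduct of a measurable field; and taking the $\cL$-reduct of the direct integral gives exactly $\int^\oplus_\Omega M_\omega \ud\mu(\omega)$, into which $M = \tilde M\rest_\cL$ embeds affinely (in fact $M \preceq^\aff$, but the statement only asks for an affine embedding). For the density-character bound, $\kappa = \density(M) + \density_Q(\cL) = \density(M) + \density_{Q_\Bau}(\cL_\Mor)$ as noted before the theorem. For the standard-probability-space clause, if $Q$ has a separable language then so does $Q_\Bau$ (since $\density_{Q_\Bau}(\cL_\Mor) = \density_Q(\cL)$), and if $M$ is separable then so is $\tilde M$, so \autoref{th:integral-decomposition} yields $\Omega$ standard.

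Finally I must upgrade ``measurable'' to ``elementarily measurable'' for the field $(M_\omega)$. This is where the main work lies, and it is handled by \autoref{c:bauer-elementarily-measurable-fields}: every measurable field of extremal models of a Bauer theory is elementarily measurable. Applied to $(\tilde M_\omega)$ over $Q_\Bau$, this gives that the $\cL_\Mor$-field $(\tilde M_\omega)$ is elementarily measurable, i.e.\ $\fI^\cont(\tilde M_\Omega, e, \cL_0)$ is cofinal for every countable $\cL_0 \subseteq \cL_\Mor$. Since $\cL \subseteq \cL_\Mor$ and the $\cL$-reducts $M_\omega = \tilde M_\omega\rest_\cL$ inherit $M_{\omega,I_0}\preceq^\cont_{\cL_0} M_\omega$ from $\tilde M_{\omega,I_0}\preceq^\cont_{\cL_0}\tilde M_\omega$ for $\cL_0 \subseteq \cL$, the field $(M_\omega)$ of $\cL$-structures is elementarily measurable as well. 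The hard part is really just \autoref{c:bauer-elementarily-measurable-fields} itself, which has already been established; the remaining steps are bookkeeping about reducts of measurable fields and the relationship between $\tS^\aff(Q_\aff)$, $\tS^\aff(Q_\Bau)$ and $\tS^\cont(Q_\Mor)$. I expect no genuine obstacle beyond being careful that all the transfer-of-denseness and reduct arguments are stated for the affine (not just continuous) Tarski--Vaught test, which is legitimate by \autoref{p:Tarski-Vaught}.
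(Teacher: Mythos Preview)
Your approach is essentially the paper's: lift $M$ into a model $N$ of $Q_\Bau$ via surjectivity of $\eta^\aff$, decompose $N$ as a direct integral of extremal models of $Q_\Bau$ (which are models of $Q_\Mor$), and take $\cL$-reducts, using \autoref{c:bauer-elementarily-measurable-fields} for elementarity. One small correction: the claim that ``equality $M = \tilde M\rest_\cL$ holds'' is neither justified (an affine inclusion $M \preceq^\aff \tilde M\rest_\cL$ between models of the same density need not be surjective, and $\eta^\aff$ is not a homeomorphism on $\tS^{\fM,\aff}$ unless $Q$ has affine reduction) nor needed --- the paper is content with $M \preceq^\aff N\rest_\cL$, and you should be too; just apply Löwenheim--Skolem in $\cL_\Mor$ to ensure $\density(N) \le \density(M) + \density_Q(\cL)$ before decomposing.
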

\begin{proof}
  By \autoref{p:eta-surjective}, every model $M\models Q_\aff$ embeds affinely into (the $\cL$-reduct of) a model of $Q_\Bau$.
  Indeed, if $p\in\tS^\aff_x(Q_\aff)$ is the type of an enumeration of $M$, then the proposition implies that $p$ is realized in a model $N$ of $Q_\Bau$, and such a realization is an affine image of $M$ in $N\rest_{\cL}$.
  By Löwenheim--Skolem and the remarks above, we may moreover take $N$ with $\density(N)\leq\density(M) + \density_Q(\cL)$.

  Now as per \autoref{rmk:Bau}, $N\cong\int^\oplus_{\Omega}(M_\omega)_\Mor\ud\mu$ for some measurable field of $\cL_\Mor$-structures such that $M_\omega\models Q$ for all $\omega$. Moreover, by \autoref{c:bauer-elementarily-measurable-fields}, the field is elementarily measurable. Then so is the field of $\cL$-structures $(M_\omega : \omega\in\Omega)$, with the same pointwise enumeration, and $M$ embeds affinely into the direct integral $\int^\oplus_{\Omega}M_\omega\ud\mu$.
  Recalling the precise statement of \autoref{th:integral-decomposition}, we may choose $M_\omega$ with $\density(M_\omega)\leq \density(N) + \density_Q(\cL) \leq \density(M) + \density_Q(\cL)$, and if $Q$ has a separable language and $M$ is separable, we can take $\Omega$ to be a standard probability space.
\end{proof}


\section{Convex formulas}
\label{sec:convex-formulas}

In between the sets of affine and continuous formulas, several natural sets of formulas also deserve special attention.
We discuss here some of them that will be relevant later in the paper.

\begin{defn}
  Let $\psi$ be a continuous $\cL$-formula.
  \begin{enumerate}
  \item
    We say that $\psi$ is \emph{convex} (respectively, \emph{concave}) if it can be written in the form  $\bigvee_{i<n}\varphi_i$ (respectively, $\bigwedge_{i<n}\varphi_i$) for affine formulas $\varphi_i$.
  \item
    We say that $\psi$ is a \emph{delta-convex formula} if it is the difference of two convex (equivalently, concave) formulas.
  \item
    We say that $\psi$ is \emph{inf-convex} (respectively, \emph{sup-concave}) if it can be written as $\inf_x \bigvee_{i<n}\varphi_i$ (respectively, $\sup_x \bigwedge_{i<n}\varphi_i$) for affine formulas $\varphi_i$ and a tuple of variables $x$.
  \item
    We say that $\psi$ is \emph{sup-delta-convex} (resp., \emph{inf-delta-convex}) if it can be written as $\sup_x \varphi$ (resp., $\inf_x \varphi$) where $\varphi$ is a delta-convex formula and $x$ is a tuple of variables.
  \end{enumerate}
\end{defn}

A convex formula $\varphi(x)$ induces a continuous convex function $\tS_x^\aff(T) \to \R$, which we still denote by $\varphi$.
Conversely, every continuous convex function is a uniform limit of functions of this form (see \cite[Cor.~I.1.3]{Alfsen1971}).

Every delta-convex formula can be written in the form
\begin{equation}
  \label{eq:delta-convex-form}
  \bigvee_i \bigwedge_j \phi_i + \psi_j = \bigwedge_j \bigvee_i \phi_i + \psi_j
\end{equation}
with $\phi_i$ and $\psi_j$ affine.
Conversely, by the same argument as in \autoref{rmk:AffineMinGivesMax}, the collection of delta-convex formulas forms a vector lattice.
Therefore, it coincides with the lattice generated by affine formulas.
In particular, the collection of delta-convex formulas defines a family of continuous functions on $\tS^\aff_x(T)$ that is uniformly dense in the collection of all continuous functions.

Note also that the collections of sup-delta-convex and inf-delta-convex formulas are closed under $+$, $\vee$, $\wedge$, and multiplication by positive scalars. If $\phi$ is sup-delta-convex, then $-\phi$ is inf-delta-convex and vice versa.

\begin{defn}
  Let $Q$ be a theory in continuous logic.
  \begin{enumerate}
  \item We say that $Q$ is \emph{universal-delta-convex ($\forall\Delta$)} if it can be axiomatized by conditions of the form $\psi \geq 0$ where $\psi$ is an inf-delta-convex sentence (equivalently, of the form $\psi \leq 0$ where $\psi$ is a sup-delta-convex sentence).
  \item We say that $Q$ is \emph{existential-delta-convex ($\exists\Delta$)} if it can be axiomatized by conditions of the form $\psi \geq 0$ where $\psi$ is a sup-delta-convex sentence.
  \item We denote by $Q_{\forall\Delta}$ its \emph{$\forall\Delta$ part}, i.e., the set of $\forall\Delta$ conditions implied by $Q$.
    We denote by $Q_{\exists\Delta}$ its \emph{$\exists\Delta$ part}.
  \end{enumerate}
\end{defn}

\begin{remark}
  \label{rmk:CounterpartAffineQF}
  Since delta-convex formulas are dense among all continuous combinations of affine formulas, many of the notions and results we present here are counterparts of standard notions and results regarding quantifier-free, and single-quantifier formulas in continuous logic.
  For example, a $\forall\Delta$ theory is analogous to a universal continuous theory (but is not a special case of that, since, if we named all affine formulas by atomic predicates, then we would also need the ``affine Morleyization'' axioms, which are not universal).
  Similarly, the next lemma is analogous to ``$M \models Q_\forall$ if and only if $M$ embeds in a model of $Q$'', and so on.
\end{remark}

\begin{lemma}
  \label{l:Quda}
  Let $Q$ be a continuous logic theory and let $M$ be a structure.
  Then $M\models Q_{\forall\Delta}$ if and only if there exists an extension $M\preceq^\aff N$ with $N\models Q$.
\end{lemma}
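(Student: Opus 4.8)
\textbf{Proof plan for \autoref{l:Quda}.}
The statement is the delta-convex analogue of the classical ``$M\models Q_\forall$ iff $M$ embeds in a model of $Q$'', and I expect the proof to follow the same two directions, with the Hahn--Banach-flavored compactness of affine logic doing the work that finite satisfiability does in the discrete case. The plan is as follows.

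For the easy direction, suppose $M\preceq^\aff N$ with $N\models Q$. Any $\forall\Delta$ axiom of $Q$ has the form $\inf_x\varphi(x)\geq 0$ with $\varphi$ delta-convex, hence a uniform limit of affine formulas; equivalently it is entailed by conditions $\inf_x\psi(x)\geq -\varepsilon$ for affine $\psi$ approximating $\varphi$. Since these latter conditions are \emph{affine} and $M\preceq^\aff N$, they hold in $M$ as well, so $M\models Q_{\forall\Delta}$. (One must be a little careful that an inf-delta-convex sentence is approximated from below by $\inf_x\psi$ with $\psi$ affine, $\psi\leq\varphi$; this is immediate from uniform approximation of $\varphi$ by affine formulas.)

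For the main direction, assume $M\models Q_{\forall\Delta}$ and seek $N\succeq^\aff M$ with $N\models Q$. Working in the language $\cL(M)$, it suffices to show that $D^\aff_M\cup Q$ is consistent, since a model of this set provides, by interpreting the constants, an affine extension of $M$ satisfying $Q$. By \autoref{cor:RelativeHahnBanachCompactness} (applied with $\Sigma=Q$ and $\Phi$ the closure under addition of $-D^\aff_M$ written as a set of formulas $\geq 0$), it is enough to verify that for every affine $\cL(M)$-sentence $\theta$ that is positive in $M$, the set $Q\cup\{\theta+\bOne\geq 0\}$ is consistent. Writing $\theta=\psi(a)$ with $a\in M^y$ a tuple of the named constants and $\psi(y)\in\cL^\aff_y$, positivity in $M$ gives $(\inf_y\psi)^M\geq 0$. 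The crucial point is that $Q\models \inf_y\psi(y)\geq -\bOne$: indeed, $\inf_y\psi$ is an inf-delta-convex (in fact inf-affine) sentence, so $\inf_y\psi(y)\geq -\bOne$ is a $\forall\Delta$ condition, and since it holds in $M\models Q_{\forall\Delta}$, any model of $Q$ satisfies it too. Hence any model of $Q$ contains a tuple $b$ with $\psi(b)\geq -1$; interpreting the constants $a$ as $b$ gives a model of $Q\cup\{\theta+\bOne\geq 0\}$, completing the consistency check.

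The only genuinely delicate point is the implication ``$M\models Q_{\forall\Delta}\Rightarrow Q\models(\text{every }\forall\Delta\text{ condition holding in }M)$'', which I would phrase precisely as: if a $\forall\Delta$ sentence $\chi$ satisfies $\chi^M\geq 0$, then $Q\models\chi\geq 0$. This is not quite automatic from the definition of $Q_{\forall\Delta}$ as merely the $\forall\Delta$ \emph{consequences} of $Q$, so one must observe that $\forall\Delta$ conditions are closed under the operations needed (here we only used the single sentence $\inf_y\psi(y)\geq-\bOne$, which is visibly $\forall\Delta$, so in this particular argument it is immediate). Thus the real ``obstacle'' is bookkeeping with the compactness theorem in the form of \autoref{cor:RelativeHahnBanachCompactness}, reducing to a single affine sentence at a time; once that reduction is in place the argument is short. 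I would present it in the compact style used for \autoref{prop:AffineJEP}, of which this is essentially a relativized version.
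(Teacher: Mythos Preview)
Your argument for the harder direction has two genuine gaps.

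\textbf{The reduction step is invalid.} \autoref{cor:RelativeHahnBanachCompactness} requires $\Sigma$ to consist of \emph{affine} conditions, while here $\Sigma=Q$ is a continuous theory. This is not just a citation issue: the conclusion you want---that checking $Q\cup\{\theta+\bOne\geq 0\}$ for each single affine $\theta$ suffices---is genuinely false for continuous $Q$. Geometrically, single affine conditions only detect whether the point $\Th^\aff(M,M)$ lies in the \emph{closed convex hull} of the affine $\cL(M)$-theories of pairs $(N,b)$ with $N\models Q$, whereas consistency of $Q\cup D^\aff_M$ asks for membership in that (generally non-convex) set itself. Concretely, if $Q$ has two models in which affine sentences $\varphi_1,\varphi_2$ take the values $(-\tfrac1{10},10)$ and $(10,-\tfrac1{10})$, then $Q\cup\{\varphi_1\geq 0,\varphi_2\geq 0\}$ is inconsistent, yet every $a\varphi_1+b\varphi_2$ with $a,b\geq 0$ is $\geq -1$ in at least one of the two models.

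\textbf{The ``crucial point'' is backwards.} From $M\models Q_{\forall\Delta}$ you may only infer that $\forall\Delta$ \emph{consequences of $Q$} hold in $M$; you cannot conclude that a $\forall\Delta$ condition holding in $M$ is a consequence of $Q$. Your final paragraph acknowledges the gap but does not close it. (There is also an $\inf/\sup$ slip: to find $b$ with $\psi(b)\geq -1$ in every model of $Q$ you would need $Q\models\sup_y\psi(y)\geq -1$, an $\exists\Delta$ condition, not $\inf_y\psi(y)\geq -1$.)

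The paper's fix addresses both at once: use compactness for \emph{continuous} logic. If $Q\cup D^\aff_M$ is inconsistent, one obtains finitely many $\varphi_i(a)\geq 0$ from $D^\aff_M$ and $\varepsilon>0$ with $Q\models\sup_x\bigwedge_i\varphi_i(x)+\varepsilon\leq 0$. This is a $\forall\Delta$ consequence of $Q$, hence holds in $M$, contradicting $\varphi_i(a)^M\geq 0$. The passage to the finite conjunction $\bigwedge_i\varphi_i$ (a concave formula) is precisely what bridges affine conditions and $\forall\Delta$ conditions; your one-at-a-time reduction discards this.

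A minor point on the easy direction: delta-convex formulas are not uniform limits of affine formulas (that would be affine reduction, which need not hold). The correct reason is that a delta-convex formula is a lattice combination of affine formulas, hence takes the same value in $M$ and in $N$ on tuples from $M$; thus an inf-delta-convex sentence can only increase when passing to an affine substructure.
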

\begin{proof}
  One direction is immediate.
  For the other, we need to show that $D^\aff_M \cup Q$ is satisfiable, where $D^\aff_M$ is the affine diagram of $M$ (see \autoref{dfn:AffineEverything}).
  Otherwise, by the compactness theorem for continuous logic, there exist finitely many conditions $\varphi_i(a)\geq 0$ from $D^\aff_M$ and some $\varepsilon>0$ such that
  $$Q\models \sup_x \bigwedge_{i<n}\varphi_i(x) + \varepsilon \leq 0.$$
  Since the latter is a $\forall\Delta$ condition and $M\models Q_{\forall\Delta}$, this is a contradiction.
\end{proof}

\begin{prop}\label{p:universal-concave-theories}
  Let $Q$ be a continuous logic theory. The following are equivalent:
  \begin{enumerate}
  \item $Q$ is a $\forall\Delta$ theory.
  \item $Q$ is preserved by affine substructures, i.e., if $M\preceq^\aff N$ and $N\models Q$, then $M\models Q$.
  \end{enumerate}
\end{prop}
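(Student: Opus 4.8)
The plan is to prove the equivalence of the two conditions in \autoref{p:universal-concave-theories} by combining the Hahn--Banach-style compactness machinery from \autoref{sec:Type} with \autoref{l:Quda}. The direction from (i) to (ii) is the easy one: a $\forall\Delta$ axiom is, by definition, built from inf-delta-convex sentences $\psi \geq 0$, and such a $\psi$ is of the form $\inf_x \varphi(x)$ with $\varphi$ delta-convex; since delta-convex formulas are continuous combinations of affine formulas, their interpretations are preserved by affine substructures, and the inf over all tuples of an affine substructure can only be larger than (or equal to) the inf over the whole structure, so $\psi^M \geq \psi^N \geq 0$. Actually one must be slightly careful: the relevant monotonicity goes the right way precisely because $\psi = \inf_x \varphi$, so $M \preceq^\aff N$ gives $\psi^M = \inf_{a \in M^x} \varphi^M(a) = \inf_{a \in M^x}\varphi^N(a) \geq \inf_{a \in N^x}\varphi^N(a) = \psi^N \geq 0$. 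Hence $M \models Q$, as required.

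For the converse, (ii) $\Rightarrow$ (i), the strategy is to show that $Q$ is equivalent to $Q_{\forall\Delta}$, i.e.\ that every model of $Q_{\forall\Delta}$ is a model of $Q$; since $Q_{\forall\Delta}$ is by construction a $\forall\Delta$ theory, this will finish the proof. So let $M \models Q_{\forall\Delta}$. By \autoref{l:Quda}, there is an affine extension $M \preceq^\aff N$ with $N \models Q$. By hypothesis (ii), affine substructures of models of $Q$ are again models of $Q$, so $M \models Q$. This is clean once \autoref{l:Quda} is in hand; the only thing to double-check is that $Q_{\forall\Delta}$ really is a $\forall\Delta$ theory in the sense of the definition, which is immediate since it is by definition axiomatized by the $\forall\Delta$ conditions implied by $Q$.

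The main (and really the only) subtlety is that \autoref{l:Quda} is stated and proved just above, so the heart of the matter has already been done: one needs the affine diagram $D^\aff_M$ to be consistent with $Q$, and the obstruction would be a finite inconsistency yielding a $\forall\Delta$ consequence of $Q$ violated in $M$ — exactly what $M \models Q_{\forall\Delta}$ rules out. In writing the proof I would therefore keep it short: dispatch (i) $\Rightarrow$ (ii) by the monotonicity observation above, and (ii) $\Rightarrow$ (i) by passing through $Q_{\forall\Delta}$ and invoking \autoref{l:Quda}. I do not anticipate a genuine obstacle here; the content has been front-loaded into \autoref{l:Quda}, and what remains is the standard "universal theory = preserved under substructures" argument transported to the affine/delta-convex setting, with the one care point being the direction of the inequality for $\inf$-quantified delta-convex sentences.

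\begin{proof}
  (i) $\Rightarrow$ (ii). Suppose $Q$ is axiomatized by conditions $\psi \geq 0$ with $\psi$ inf-delta-convex, say $\psi = \inf_x \varphi(x)$ with $\varphi$ delta-convex, and let $M \preceq^\aff N$ with $N \models Q$. Since $\varphi$ is a continuous combination of affine formulas, $\varphi^M(a) = \varphi^N(a)$ for all $a \in M^x$, and therefore
  \begin{equation*}
    \psi^M = \inf_{a \in M^x} \varphi^M(a) = \inf_{a \in M^x} \varphi^N(a) \geq \inf_{a \in N^x} \varphi^N(a) = \psi^N \geq 0.
  \end{equation*}
  As this holds for every axiom of $Q$, we get $M \models Q$.

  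(ii) $\Rightarrow$ (i). It suffices to show that $Q$ is equivalent to its $\forall\Delta$ part $Q_{\forall\Delta}$, which is a $\forall\Delta$ theory by definition. Clearly $Q \models Q_{\forall\Delta}$, so let $M \models Q_{\forall\Delta}$. By \autoref{l:Quda}, there exists an affine extension $M \preceq^\aff N$ with $N \models Q$. By hypothesis, $M \models Q$, as desired.
\end{proof}
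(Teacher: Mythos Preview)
Your proof is correct and follows the same approach as the paper's: for (i) $\Rightarrow$ (ii) you use that delta-convex formulas take the same values in $M$ and $N$ on tuples from $M$ (the paper just says ``since affine extensions respect delta-convex formulas''), and for (ii) $\Rightarrow$ (i) you invoke \autoref{l:Quda} exactly as the paper does. The only difference is that you spell out more details than the paper's terse two-line proof.
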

\begin{proof}
  \begin{cycprf}
  \item Since affine extensions respect delta-convex formulas.
  \item[\impfirst] Follows from \autoref{l:Quda}.
  \end{cycprf}
\end{proof}

Combining this with \autoref{p:ext-models-QAff}, we obtain the following corollary which will be useful in some examples.

\begin{cor}\label{c:uda-ext-models}
  Let $Q$ be a $\forall\Delta$ theory.
  Then every extremal model of $Q_\aff$ is a model of~$Q$.
\end{cor}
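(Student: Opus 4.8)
The statement to prove is Corollary~\ref{c:uda-ext-models}: if $Q$ is a $\forall\Delta$ theory, then every extremal model of $Q_\aff$ is a model of $Q$. The plan is to combine the two results mentioned in the hint. First I would invoke \autoref{p:ext-models-QAff}: given an extremal model $M$ of $Q_\aff$, there is an affine embedding $M \preceq^\aff N$ where $N$ is a model of $Q$. (Strictly, \autoref{p:ext-models-QAff} gives an affine embedding into a model of $Q$; realizing the extreme type of an enumeration of $M$ inside a model of $Q$ and then taking the affine substructure generated by that realization, we may assume $M \preceq^\aff N \models Q$ up to isomorphism.)

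Next, since $Q$ is a $\forall\Delta$ theory, \autoref{p:universal-concave-theories} tells us that $Q$ is preserved under passing to affine substructures: from $M \preceq^\aff N$ and $N \models Q$ we conclude $M \models Q$. That is all that is needed, so the proof is genuinely just the concatenation of these two facts.

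I do not anticipate any real obstacle here — the corollary is a formal consequence of the two cited propositions. The only point requiring a word of care is the passage from ``embeds affinely into a model of $Q$'' (the literal output of \autoref{p:ext-models-QAff}) to ``is an affine substructure of a model of $Q$'', but since an affine embedding is by definition an isometric embedding preserving all affine formulas, identifying $M$ with its image makes it an affine substructure, and \autoref{p:universal-concave-theories} applies verbatim. Thus the write-up can be a single short paragraph.

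\begin{proof}
  Let $M$ be an extremal model of $Q_\aff$. By \autoref{p:ext-models-QAff}, $M$ admits an affine embedding into a model $N$ of $Q$; identifying $M$ with its image, we have $M \preceq^\aff N$ with $N \models Q$. Since $Q$ is a $\forall\Delta$ theory, it is preserved by affine substructures by \autoref{p:universal-concave-theories}, so $M \models Q$.
\end{proof}
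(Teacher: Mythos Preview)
Your proposal is correct and follows exactly the paper's approach: the paper's proof simply says to combine \autoref{p:universal-concave-theories} with \autoref{p:ext-models-QAff}, which is precisely what you do. Your extra remark about identifying $M$ with its image under the affine embedding is a harmless clarification and not a deviation from the intended argument.
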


\begin{prop}
  \label{p:universal-lattice-aff-formulas}
  Let $Q$ be a continuous logic theory and $\varphi(x)$ be a continuous logic formula.
  The following are equivalent:
  \begin{enumerate}
  \item $\varphi(x)$ can be approximated modulo $Q$ by sup-delta-convex formulas.
  \item For every extension $M \preceq^\aff N$ of models of $Q$ and every tuple $a \in M^x$, we have $\varphi^M(a) \leq \varphi^N(a)$.
  \end{enumerate}
\end{prop}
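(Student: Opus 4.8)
The plan is to prove this in the same spirit as the standard characterization of $\forall\exists$ formulas (or universal-existential preservation) in continuous logic, adapted to the convex context. The direction (i) $\Rightarrow$ (ii) is immediate: sup-delta-convex formulas are preserved under $\preceq^\aff$ in the sense that $\sup_x\psi$ can only increase (when $\psi$ is delta-convex, $\psi^M$ and $\psi^N$ agree on tuples from $M$, since delta-convex formulas lie in the lattice generated by affine formulas, which $M\preceq^\aff N$ preserves; adding the outer $\sup$ over more elements can only increase the value). Passing to uniform limits preserves the inequality, so (ii) follows.

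For the hard direction (ii) $\Rightarrow$ (i), I would argue by a compactness/Hahn--Banach separation argument. Fix $\varepsilon>0$; I want a sup-delta-convex formula $\psi(x)$ with $\varphi - \varepsilon \leq_Q \psi \leq_Q \varphi$, or at least $|\varphi-\psi|\leq_Q\varepsilon$ after the usual reduction. Consider the set $\Sigma$ of all sup-delta-convex formulas $\psi$ with $\psi \leq_Q \varphi$ (equivalently $Q\models \varphi - \psi \geq 0$). Since sup-delta-convex formulas are closed under $+$, $\vee$, and positive scalars, $\Sigma$ is a convex cone of sorts; I want to show it approximates $\varphi$ from below uniformly. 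If it did not, there would be some $\delta>0$ and a type $p\in\tS^\cont_x(Q)$ (or rather, a continuous type of a tuple $a$ in some model $M\models Q$) with $\psi^M(a) \leq \varphi^M(a) - \delta$ for all $\psi\in\Sigma$ simultaneously --- here I need to be careful and extract this using compactness of the type space together with the directedness of $\Sigma$. The key move is then: build an affine extension $M\preceq^\aff N$ with $N\models Q$ in which $\varphi^N(a)$ is forced to be small, i.e., $\varphi^N(a) \leq \varphi^M(a) - \delta$, contradicting (ii).

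To build such an $N$, I would use a diagram argument analogous to \autoref{l:Quda} and \autoref{prop:AffineJEP}: consider the affine diagram $D^\aff_{(M,a)}$ together with $Q$ and the extra condition $\varphi(a) \leq \varphi^M(a) - \delta$ (or rather $\leq t$ for the appropriate threshold $t$). I must show this set of conditions is consistent in continuous logic. By compactness, if not, then finitely many affine conditions $\chi_i(a) \geq 0$ from $D^\aff_{(M,a)}$ together with $Q$ entail $\varphi(a) > t$, i.e., $Q \models \inf_x\bigl(\varphi(x) - t\bigr)\vee\bigvee_i(-\chi_i(x)) \cdots$; dualizing, one extracts a sup-delta-convex formula $\psi(x)$ with $Q\models\varphi\geq\psi$ and $\psi^M(a)$ close to $\varphi^M(a)$, i.e., $\psi\in\Sigma$ and $\psi^M(a) > \varphi^M(a)-\delta$ --- contradicting the choice of $a$. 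Here the precise bookkeeping of which combination of $\varphi - t$ and the $-\chi_i$ produces a \emph{sup-delta-convex} sentence (one uses that $\varphi$ itself appears, so the inner formula is $\varphi$ plus affine pieces combined by $\vee$, hence delta-convex, under $\sup_x$) is the step I expect to be the main obstacle; it is the exact analogue of the syntactic manipulation in the proof of \autoref{l:Quda}, but one must track the convexity class carefully. Once the consistency is established, any model $N$ of the combined theory gives the required affine extension $M\preceq^\aff N$ with $\varphi^N(a)\leq t < \varphi^M(a)-\delta/2$, contradicting (ii), and the proof is complete. I would finish by noting that the argument works uniformly, so the approximation rate is independent of the model.
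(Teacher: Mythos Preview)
Your overall strategy is correct and matches the paper's approach: define $\Sigma$ as the set of sup-delta-convex formulas below $\varphi$ modulo $Q$, use directedness plus compactness to find a witness $(M,a)$ where all of $\Sigma$ falls short of $\varphi^M(a)$ by $\delta$, then use the affine diagram $D^\aff_M$ and compactness to derive a contradiction.

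However, your sketch of the key syntactic step is wrong. You write that after compactness ``the inner formula is $\varphi$ plus affine pieces combined by $\vee$, hence delta-convex.'' But $\varphi$ is an arbitrary continuous formula; if it appears in the inner part, the result is not delta-convex, and you do not get a sup-delta-convex approximant. The correct construction does \emph{not} involve $\varphi$ at all. After compactness gives affine $\chi_i(x,y)$ with $\chi_i^M(a,b)=0$ and
\[
Q \cup \{\chi_i(a,b)\leq\delta\}_{i<n} \models \varphi(a) \geq r-\tfrac{\eps}{2}
\quad\text{(where }r=\varphi^M(a)\text{)},
\]
one fixes a global lower bound $m$ for $\varphi$ (assuming without loss of generality $\varphi\leq 0$, so $m<0$) and sets
\[
\psi(x,y) = (r-\tfrac{\eps}{2}) \wedge \bigwedge_{i<n} \tfrac{m}{\delta}\,\chi_i(x,y).
\]
This is a \emph{concave} formula (minimum of affine formulas and a constant). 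The verification that $Q\models \psi(x,y)\leq\varphi(x)$ is a case split: if all $\chi_i\leq\delta$, use the implication; if some $\chi_i>\delta$, then $\tfrac{m}{\delta}\chi_i<m\leq\varphi$. Hence $\sup_y\psi(x,y)\in\Sigma$, and evaluating at $(a,b)$ gives $\psi^M(a,b)\geq r-\tfrac{\eps}{2}$, the desired contradiction. The trick is precisely to \emph{replace} $\varphi$ by the constant $r-\tfrac{\eps}{2}$ and encode the affine conditions via the lower bound $m$, rather than carrying $\varphi$ into the approximant.
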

\begin{proof}
  \begin{cycprf}
  \item Easy, as in the previous proposition.
  \item[\impfirst] Up to subtracting a constant, we may assume that $\varphi\leq 0$. We consider the set
    \begin{equation*}
      \Delta = \set{\psi(x) : Q \models \psi(x) \leq \varphi(x) \And \psi \text{ is sup-delta-convex} }.
    \end{equation*}
    Let $\eps > 0$.
    We claim that the set of conditions
    \begin{equation*}
      \set{ \psi + \eps \leq \varphi(x) : \psi(x) \in \Delta }
    \end{equation*}
    is inconsistent with $Q$.
    Otherwise, there is a model $M \models Q$ and a tuple $a\in M^x$ with $\psi^M(a) + \eps \leq \varphi^M(a)$ for every $\psi \in \Delta$. Let $D^\aff_M$ be the affine diagram of $M$, and let $r=\varphi^M(a)\leq 0$. We have that
    \begin{equation}
      \label{eqn:Diag}
      Q \cup D^\aff_M \models r \leq \varphi(a).
    \end{equation}
    Indeed, any $N \models Q \cup D^\aff_M$ is an affine extension of $M$, hence by hypothesis, $r = \varphi^M(a) \leq \varphi^N(a)$.

    By compactness, \autoref{eqn:Diag} implies that there are affine formulas $\varphi_i(x,y)$, a tuple $b \in M^y$, and $\delta > 0$ such that $\varphi_i^M(a,b) = 0$ for each $i<n$ and
    \begin{equation*}
      Q \cup \set{\varphi_i(a,b) \leq \delta}_{i<n} \models r -\eps/2 \leq \varphi(a).
    \end{equation*}
    Hence, if we let $m$ be any negative lower bound for $\varphi$, we have:
    \begin{equation*}
      Q \models (r - \eps/2) \wedge \bigwedge_{i<n}(m/\delta)\varphi_i(x,y) \leq \varphi(x).
    \end{equation*}
    If we consider the concave formula $\psi(x, y) = (r + \eps/2) \wedge \bigwedge_{i<n}(m/\delta)\varphi_i(x,y)$, we obtain that
    \begin{equation*}
      Q \models \sup_y \psi(x, y) \leq \varphi(x),
    \end{equation*}
    that is, $\sup_y \psi(x, y) \in \Delta$. Thus, on the one hand, $M \models \sup_y \psi(a, y) + \eps \leq \varphi(a)$, but on the other,
    \begin{equation*}
      \bigl(\sup_y \psi(a, y)\bigr)^M \geq (r - \eps/2) \wedge \bigwedge_{i<n} (m/\delta) \varphi_i^M(a,b) = \varphi(a) - \eps/2,
    \end{equation*}
    a contradiction.

    We have established that $Q \cup \{\psi(x) + \eps \leq \varphi(x)\}_{\psi \in \Delta}$ is inconsistent. As $\Delta$ is closed under maxima, by compactness, there is $\psi \in \Delta$ such that $Q \models \psi(x) + \eps \geq \varphi(x)$. On the other hand, as $\psi \in \Delta$, $Q \models \psi(x) \leq \varphi(x)$.
    As $\eps$ was arbitrary, we conclude that $\varphi$ can be approximated by sup-delta-convex formulas modulo $Q$.
  \end{cycprf}
\end{proof}

\begin{defn}
  A \emph{q-convex} condition is one of the form
  $$\sup_{x_0}\inf_{y_0}\sup_{x_1}\cdots\inf_{y_m}\bigvee_{i<n}\varphi_i(x,y)\leq 0,$$
  where $m,n$ are arbitrary and the $\varphi_i$ are affine formulas.
  A continuous logic theory is \emph{q-convex} if it can be axiomatized by q-convex conditions.
\end{defn}

\begin{prop}\label{p:qconv-direct-integral-preservation}
  Every q-convex theory is preserved by direct integrals of elementarily measurable fields. That is, if $Q$ is q-convex and $(M_\Omega,e_I)$ is any elementarily measurable field of models of $Q$, then $\int_\Omega^\oplus M_\omega\ud\mu \models Q$.
\end{prop}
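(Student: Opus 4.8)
The plan is to reduce to the case of a single q-convex axiom and then verify it holds in the direct integral by induction on the quantifier prefix, using \Los's theorem for direct integrals together with the measurable-selection lemma \autoref{lem:LosQuantifier} in its form for elementarily measurable fields. Since each $M_\omega\models Q$, it satisfies every q-convex axiom of $Q$, and it suffices to show that a condition of the form
\begin{equation*}
  \sup_{x_0}\inf_{y_0}\sup_{x_1}\cdots\inf_{y_m}\Bigl(\bigvee_{i<n}\varphi_i(\bar x,\bar y)\Bigr)\leq 0
\end{equation*}
that holds in every $M_\omega$ also holds in $K=\int_\Omega^\oplus M_\omega\ud\mu$. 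Here the formula $\psi(\bar x,\bar y)=\bigvee_{i<n}\varphi_i$ is convex, hence continuous, so by elementary measurability all the relevant integrands are $\mu$-measurable.

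First I would set up the general inductive statement. For a continuous logic formula $\chi(\bar z)$ obtained from a convex quantifier-free matrix by alternating $\sup$/$\inf$ quantifiers (a ``q-convex formula''), and for any tuple $\bar f\in K^{\bar z}$, I claim
\begin{equation*}
  \chi^K(\bar f)\leq \int_\Omega \chi^{M_\omega}\bigl(\bar f(\omega)\bigr)\ud\mu(\omega).
\end{equation*}
The base case, where $\chi$ is the convex matrix $\psi$ itself, is an equality by \autoref{th:Los} (valid for continuous formulas since the field is elementarily measurable). For the $\inf_y$ step, given $\chi=\inf_y\chi'$: for each $\eps>0$, apply \autoref{lem:LosQuantifier} to the continuous formula $\chi'$ to obtain a measurable section $g\in K$ with $\chi'^{M_\omega}(\bar f(\omega),g(\omega))<\inf_y\chi'^{M_\omega}(\bar f(\omega),y)+\eps$ a.e.; then
\begin{equation*}
  \chi^K(\bar f)\leq\chi'^K(\bar f,g)\leq\int_\Omega\chi'^{M_\omega}(\bar f(\omega),g(\omega))\ud\mu(\omega)<\int_\Omega\chi^{M_\omega}(\bar f(\omega))\ud\mu(\omega)+\eps,
\end{equation*}
where the middle inequality is the induction hypothesis (note $\chi'$ is again q-convex). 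Letting $\eps\to 0$ gives the bound. For the $\sup_x$ step, given $\chi=\sup_x\chi'$: for any measurable $g\in K$, the induction hypothesis gives $\chi'^K(\bar f,g)\leq\int\chi'^{M_\omega}(\bar f(\omega),g(\omega))\ud\mu\leq\int\chi^{M_\omega}(\bar f(\omega))\ud\mu$; taking the supremum over $g\in K$ and noting $\chi^K(\bar f)=\sup_{g\in K}\chi'^K(\bar f,g)$ yields the inequality. This completes the induction.

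Applying the claim to the q-convex sentence $\chi$ that axiomatizes $Q$ (no free variables), and using that $\chi^{M_\omega}\leq 0$ for every $\omega$, we get $\chi^K\leq\int_\Omega\chi^{M_\omega}\ud\mu\leq 0$, so $K\models\chi$. Since this works for every q-convex axiom of $Q$, we conclude $K\models Q$. The only subtle point — and the one I would state carefully — is the alternation of directions of the inequality with the quantifier: for $\sup$ one passes the supremum through the integral (which only helps, since $\sup\int\leq\int\sup$ is false but is not what is needed; rather $\chi^K$ is a supremum over the \emph{smaller} family of measurable sections, so $\chi^K(\bar f)=\sup_g\chi'^K\leq\sup_g\int\chi'^{M_\omega}\leq\int\sup_y\chi'^{M_\omega}=\int\chi^{M_\omega}$), while for $\inf$ one uses the approximate measurable witness of \autoref{lem:LosQuantifier} to realize the fiberwise infimum up to $\eps$. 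The fact that the quantifier-free matrix is a \emph{maximum of affine formulas} rather than a single affine formula plays no role beyond ensuring it is a continuous formula, so that elementary measurability of the field applies; I expect this bookkeeping about directions of inequalities to be the main thing requiring care, but no genuine obstacle arises.
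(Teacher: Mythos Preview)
Your approach is essentially the paper's (which writes the same chain of inequalities without the explicit induction), but your base case is misstated and your closing remark understates the role of convexity. \autoref{th:Los} is only proved for \emph{affine} formulas; there is no analogous equality for arbitrary continuous formulas in direct integrals. For the convex matrix $\psi=\bigvee_{i<n}\varphi_i$ you do not get equality but only
\[
\psi^K(\bar f)=\bigvee_i\varphi_i^K(\bar f)=\bigvee_i\int_\Omega\varphi_i^{M_\omega}\bigl(\bar f(\omega)\bigr)\,\ud\mu\leq\int_\Omega\bigvee_i\varphi_i^{M_\omega}\bigl(\bar f(\omega)\bigr)\,\ud\mu=\int_\Omega\psi^{M_\omega}\bigl(\bar f(\omega)\bigr)\,\ud\mu,
\]
using \autoref{th:Los} on each affine $\varphi_i$ separately and then the pointwise bound $\max\int\leq\int\max$. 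This step is precisely where convexity of the matrix is used (a concave matrix would reverse the inequality), contrary to your remark that its shape plays no role beyond continuity. With this correction your induction goes through and coincides with the paper's argument.
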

\begin{proof}
  Let $\psi\leq 0$ be a q-convex condition satisfied by every structure $M_\omega$. Say $\psi=\sup_{x_0}\cdots\inf_{y_m}\bigvee_{i<n}\varphi_i(x,y)$ for some affine formulas $\varphi_i$. Letting $M=\int_\Omega^\oplus M_\omega\ud\mu$, we have, using \autoref{th:Los} and \autoref{lem:LosQuantifier}:
  \begin{align*}
    \psi^M & = \sup_{f_0\in M}\cdots\inf_{g_m\in M}\bigvee_{i<n}\int_\Omega \varphi_i^{M_\omega}(f_0(\omega),\dots,g_m(\omega))\ud\mu(\omega) \\
           & \leq \sup_{f_0\in M}\cdots\inf_{g_m\in M}\int_\Omega \bigvee_{i<n}\varphi_i^{M_\omega}(f_0(\omega),\dots,g_m(\omega))\ud\mu(\omega) \\
           & = \int_\Omega \big(\sup_{x_0}\cdots\inf_{y_m}\bigvee_{i<n}\varphi_i(x,y)\big)^{M_\omega}\ud\mu(\omega) \leq 0,
  \end{align*}
  as desired.
\end{proof}
A converse of \autoref{p:qconv-direct-integral-preservation} for complete theories will be proved in \autoref{th:direct-integral-preservation}.

\begin{cor}\label{c:qconv-Qaff}
  Let $Q$ be a q-convex theory. If $M\models Q_\aff$, then there exists an extension $M\preceq^\aff N$ with $N\models Q$.
\end{cor}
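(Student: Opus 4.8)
The plan is to show that $Q_\aff \equiv (Q_{\forall\Delta})_\aff$, after which the corollary follows immediately from \autoref{l:Quda}: if $M \models Q_\aff = (Q_{\forall\Delta})_\aff$, then in particular $M \models Q_{\forall\Delta}$ (since every $\forall\Delta$ sentence is, up to uniform approximation, a continuous combination of affine formulas, hence $Q_{\forall\Delta} \models Q_{\forall\Delta}$ trivially and $M \models Q_\aff$ gives all affine consequences — but this needs care, see below), and then \autoref{l:Quda} produces the desired affine extension $M \preceq^\aff N$ with $N \models Q_{\forall\Delta}$; finally one must upgrade $N \models Q_{\forall\Delta}$ to a further affine extension $N \preceq^\aff N'$ with $N' \models Q$, again by \autoref{l:Quda} applied to $N$.

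So the real content is to prove that every $\forall\Delta$ sentence that is consistent with $Q_\aff$ (equivalently, implied by $Q$) is already captured, and more precisely that $Q_\aff \models Q_{\forall\Delta}$. Here I would use the q-convexity hypothesis. First I would observe that a q-convex condition $\sup_{x_0}\inf_{y_0}\cdots\inf_{y_m}\bigvee_{i<n}\varphi_i(x,y)\leq 0$ is in particular a $\forall\Delta$ condition (it has the form $\sup$ of a sup-delta-convex formula, and a sup-delta-convex formula obtained by alternating quantifiers in front of a convex formula is still inf-delta-convex after the leading $\sup$; more carefully, a q-convex sentence is a $\forall\Delta$ sentence because $\bigvee_i\varphi_i$ is convex and alternating $\sup/\inf$ quantifiers applied to it yield an inf-delta-convex sentence up to sign). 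Hence $Q_{\forall\Delta}$ contains all q-convex consequences of $Q$, and since $Q$ is axiomatized by q-convex conditions, $Q_{\forall\Delta} \equiv Q_{\text{q-conv}}$, the q-convex part of $Q$. Then the key step is: every model of $Q_\aff$ satisfies every q-convex consequence of $Q$. For this I would argue as in the proof of \autoref{c:uda-ext-models} combined with \autoref{th:general-models-Qaff}: by \autoref{th:general-models-Qaff}, a model $M \models Q_\aff$ embeds affinely into an elementarily measurable direct integral $\int_\Omega^\oplus M_\omega\,\ud\mu$ of models $M_\omega \models Q$; by \autoref{p:qconv-direct-integral-preservation}, this direct integral satisfies $Q$ (since $Q$ is q-convex), hence satisfies every q-convex sentence; and since q-convex sentences, being of the form $\sup$ of a sup-delta-convex formula, are preserved downward under affine substructures (by \autoref{p:universal-concave-theories}, as q-convex theories are in particular $\forall\Delta$), $M$ satisfies every q-convex consequence of $Q$ as well. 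Therefore $M \models Q_{\text{q-conv}} \equiv Q_{\forall\Delta}$.

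With $M \models Q_{\forall\Delta}$ in hand, \autoref{l:Quda} gives an affine extension $M \preceq^\aff N$ with $N \models Q$, which is exactly the statement. (In fact this route shows the sharper fact that $Q_\aff \equiv (Q_{\forall\Delta})_\aff$, since the reverse inclusion $(Q_{\forall\Delta})_\aff \models Q_\aff$ would require that $Q_\aff$ follows from $Q_{\forall\Delta}$, which we have just established; but for the corollary as stated only the forward direction is needed.)

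The main obstacle I anticipate is the bookkeeping around the exact shape of q-convex versus $\forall\Delta$ formulas — specifically, verifying cleanly that a q-convex sentence is a $\forall\Delta$ sentence, which amounts to checking that alternating $\sup$ and $\inf$ quantifiers in front of a convex (i.e.\ $\bigvee$ of affine) matrix, once the outermost quantifier is stripped, leave an inf-delta-convex sentence. This uses the fact that the delta-convex formulas form a lattice closed under the relevant quantifier manipulations, together with the observation (used already in \autoref{p:universal-lattice-aff-formulas} and \autoref{l:AffineReduction}) that quantifiers can be pushed through $\vee$ and $\wedge$ up to renaming variables; I would handle this by a short induction on the number of quantifier alternations. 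Everything else is a direct assembly of \autoref{th:general-models-Qaff}, \autoref{p:qconv-direct-integral-preservation}, \autoref{p:universal-concave-theories}, and \autoref{l:Quda}.
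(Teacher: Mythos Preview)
Your argument contains the paper's proof as a sub-step but fails to stop there. You write: by \autoref{th:general-models-Qaff}, $M$ embeds affinely into a direct integral $N = \int_\Omega^\oplus M_\omega\,\ud\mu$ of models of $Q$, and by \autoref{p:qconv-direct-integral-preservation}, $N \models Q$ since $Q$ is q-convex. At that moment you are done --- $M \preceq^\aff N \models Q$ --- and this is precisely the paper's two-line proof.

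Everything after that (passing back down to $M$, invoking $Q_{\forall\Delta}$, reapplying \autoref{l:Quda}) is superfluous, and your justification for the downward passage is incorrect. A q-convex condition is \emph{not} in general a $\forall\Delta$ condition: the latter has the form $\sup_x \chi \leq 0$ with $\chi$ delta-convex, i.e., a quantifier-free lattice combination of affine formulas, whereas a q-convex condition allows arbitrary $\sup/\inf$ alternation before the convex matrix. Correspondingly, q-convex conditions are not preserved under affine substructures. The axiom scheme $\CR_\cL$ is a concrete witness: these are q-convex conditions, $L^1(\Omega,M)$ satisfies them for $\Omega$ atomless, yet $M \preceq^\aff L^1(\Omega,M)$ typically does not. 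So the step where you push the q-convex consequences of $Q$ back down to $M$ fails. (The detour could be salvaged by observing instead that $N \models Q \supseteq Q_{\forall\Delta}$ and that $\forall\Delta$ conditions \emph{do} pass down by \autoref{p:universal-concave-theories} --- but there is no need, since $N$ itself already witnesses the conclusion.)
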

\begin{proof}
  If $M\models Q_\aff$, then by \autoref{th:general-models-Qaff} there exists an extension $M\preceq^\aff N$ where $N$ is a direct integral of an elementarily measurable field of models of $Q$. By the preceding proposition, $N\models Q$.
\end{proof}

\begin{cor}
  \label{c:qconv-iota-Sc-surjective}
  Let $Q$ be a q-convex theory.
  Then the affine part map $\rho^\aff \colon \tS^\cont_x(Q) \to \tS^\aff_x(Q_\aff)$, as defined in \autoref{eq:RhoAff}, is surjective.
\end{cor}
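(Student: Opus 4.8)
The plan is to read surjectivity off directly from \autoref{c:qconv-Qaff}. Recall that $\rho^\aff_x\colon\tS^\cont_x(Q)\to\tS^\aff_x(Q_\aff)$ is, by definition, the map sending a continuous type to its affine part, and that every $p\in\tS^\aff_x(Q_\aff)$ is of the form $\tp^\aff(a)$ for some tuple $a$ in a model $M\models Q_\aff$ (this is the description of affine type spaces established in \autoref{sec:Type}, and is an instance of \autoref{prop:Type} applied to the theory $Q_\aff$).

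So I would fix $p\in\tS^\aff_x(Q_\aff)$ and write $p=\tp^\aff(a)$ with $a\in M^x$ and $M\models Q_\aff$. By \autoref{c:qconv-Qaff}, since $Q$ is q-convex, there is an affine extension $M\preceq^\aff N$ with $N\models Q$. Let $q\in\tS^\cont_x(Q)$ be the continuous-logic type realized by $a$ in $N$. Since affine types are preserved under affine extensions, the affine part of $q$ coincides with the affine type of $a$ computed in $M$; that is, $\rho^\aff_x(q)=p$. As $p$ was arbitrary, $\rho^\aff_x$ is surjective.

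There is no genuine obstacle here: all of the work is already contained in \autoref{c:qconv-Qaff} (which itself rests on \autoref{th:general-models-Qaff} and \autoref{p:qconv-direct-integral-preservation}). In particular, in contrast with \autoref{p:iota-ScText}, one needs neither Choquet theory nor any argument about the closedness or convexity of the image of $\rho^\aff_x$ — q-convexity is exactly what promotes the surjectivity of $\iota_x^*$ on measures to surjectivity of $\rho^\aff_x$ on types.
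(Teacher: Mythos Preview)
Your proof is correct and is essentially identical to the paper's own argument, which simply reads: ``By the preceding corollary, any affine type $p\in\tS_x^\aff(Q_\aff)$ is realized in a model of $Q$. The result follows.'' You have just spelled out the one-line proof in more detail.
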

\begin{proof}
  By the preceding corollary, any affine type $p\in\tS_x^\aff(Q_\aff)$ is realized in a model of $Q$. The result follows.
\end{proof}

We continue with a characterization of delta-convex formulas modulo q-convex theories.
This can be seen as a strengthening of a result of Bagheri, \cite[Prop.~5.4]{Bagheri2021}.

\begin{prop}
  \label{p:qconv-delta-convex-formulas}
  Let $Q$ be a q-convex theory and let $\varphi(x)$ be a continuous logic formula. The following are equivalent:
  \begin{enumerate}
  \item $\varphi(x)$ can be approximated modulo $Q$ by delta-convex formulas.
  \item For every extension $M \preceq^\aff N$ of models of $Q$ and every tuple $a \in M^x$, we have $\varphi^M(a) = \varphi^N(a)$.
  \end{enumerate}
\end{prop}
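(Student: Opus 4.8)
The plan is to prove the two implications separately, with the easy direction first. The implication (i) $\Rightarrow$ (ii): if $\varphi$ is a uniform limit modulo $Q$ of delta-convex formulas $\psi_k$, then for any affine extension $M \preceq^\aff N$ of models of $Q$ and any tuple $a$, each $\psi_k$ is preserved (since affine extensions respect affine formulas and hence their lattice combinations, cf.\ the discussion after \autoref{eq:delta-convex-form}), so $\psi_k^M(a) = \psi_k^N(a)$; passing to the limit gives $\varphi^M(a) = \varphi^N(a)$.

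For the converse (ii) $\Rightarrow$ (i), the natural approach is to apply \autoref{p:universal-lattice-aff-formulas} twice. Condition (ii) says in particular that $\varphi^M(a) \leq \varphi^N(a)$ for every affine extension of models of $Q$, so by \autoref{p:universal-lattice-aff-formulas}, $\varphi$ can be approximated modulo $Q$ by sup-delta-convex formulas. Applying the same reasoning to $-\varphi$ (which also satisfies the hypothesis, with the inequality reversed and hence still in the form required by \autoref{p:universal-lattice-aff-formulas} after negation), we get that $-\varphi$ is approximable by sup-delta-convex formulas, equivalently $\varphi$ is approximable by inf-delta-convex formulas. So for every $\eps > 0$ there are a sup-delta-convex $\psi$ and an inf-delta-convex $\chi$ with $Q \models \psi \leq \varphi \leq \chi$ and $Q \models \chi - \psi \leq \eps$ (combining the two approximations and halving $\eps$). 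The task is then to squeeze an honest delta-convex formula between $\psi$ and $\chi$.

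The main obstacle is precisely this squeezing step: a priori $\psi = \sup_{x} \theta_1$ and $\chi = \inf_{x'} \theta_2$ with $\theta_1, \theta_2$ delta-convex, and these quantifiers must be eliminated modulo $Q$. Here is where q-convexity of $Q$ should enter, presumably via a compactness argument analogous to the classical "separating a quantifier-free formula between a $\forall$-formula and an $\exists$-formula over an inductive theory" lemma. Concretely, I would look at the theory $Q$ together with the affine diagram of a model, and argue that the set of delta-convex formulas $\eta$ with $Q \models \psi \leq \eta \leq \chi$ cannot be empty: if it were, by compactness and the closure of delta-convex formulas under $\vee$ and $\wedge$ one would extract a model witnessing $\psi^M(a) > \chi^M(a) - \eps'$ for the appropriate tuple, contradicting $Q \models \psi \leq \varphi \leq \chi$ once the q-convex structure of $Q$ is used to transfer the relevant witnesses across an affine extension (this is the same mechanism as in \autoref{c:qconv-Qaff} and \autoref{c:qconv-iota-Sc-surjective}, where q-convexity lets one realize affine types of $Q_\aff$ inside models of $Q$). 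I expect the bookkeeping with the nested $\sup$/$\inf$ quantifiers in $\psi$ and $\chi$, and checking that all intermediate formulas stay delta-convex, to be the fiddly part, but no genuinely new idea beyond \autoref{p:universal-lattice-aff-formulas} and the q-convex compactness trick should be required.
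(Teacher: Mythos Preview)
Your easy direction is fine. For (ii) $\Rightarrow$ (i), the detour through \autoref{p:universal-lattice-aff-formulas} is correct as far as it goes, but the squeezing step is a genuine gap, and your sketch of it is confused: you propose to derive a contradiction by extracting a model with $\psi^M(a) > \chi^M(a) - \eps'$, yet $Q \models \psi \leq \chi$ already holds, so no such model exists and there is nothing to contradict. More fundamentally, neither your sup-delta-convex $\psi = \sup_y \alpha$ nor your inf-delta-convex $\chi = \inf_z \beta$ need be constant on fibres of $\rho^\aff_x$ (the witnesses $y,z$ for the quantifiers may depend on the full continuous type of $x$, not just the affine part), so there is no obvious mechanism to interpolate a delta-convex formula --- which \emph{does} factor through $\rho^\aff_x$ --- between them.

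The paper avoids this interpolation entirely. Using the surjectivity of $\rho^\aff_x \colon \tS^\cont_x(Q) \to \tS^\aff_x(Q_\aff)$ (\autoref{c:qconv-iota-Sc-surjective}), one shows directly that hypothesis (ii) forces $\varphi$ to be constant on the fibres of $\rho^\aff_x$: given $p,p'$ with the same affine part, realize them in models $M,M'$ of $Q$, amalgamate affinely via \autoref{prop:AffineJEP}, push the amalgam into a model of $Q$ by \autoref{c:qconv-Qaff}, and apply (ii) twice to conclude $\varphi(p)=\varphi(p')$. Thus $\varphi$ descends to a continuous $\tilde\varphi$ on $\tS^\aff_x(Q_\aff)$, and lattice Stone--Weierstrass (delta-convex formulas separate points of $\tS^\aff_x(Q_\aff)$ and contain the constants) finishes. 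The amalgamation-plus-\autoref{c:qconv-Qaff} ingredient you gestured at is exactly what is needed, but the paper applies it at the level of types to get the factoring directly, rather than trying to manufacture an interpolant between two approximants that do not themselves descend.
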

\begin{proof}
  \begin{cycprf}
  \item Clear.
  \item[\impfirst] Consider the map $\rho^\aff\colon \tS_x^\cont(Q)\to \tS_x^\aff(Q_\aff)$, which is surjective by \autoref{c:qconv-iota-Sc-surjective}.
    The formula $\varphi$ induces a continuous function
    $$\varphi\colon \tS^\cont_x(Q) \to\R.$$
    We claim that $\varphi$ factors trough $\rho^\aff$ via a continuous function $\tilde\varphi\colon\tS_x^\aff(Q_\aff)\to\R$.

    The map $\tilde\varphi$ must satisfy $\tilde\varphi(p) =\varphi(a)$, where $a$ is any realization of $p$ in some model $M\models Q$.
    Such an $a$ always exists, because $\rho^\aff\colon \tS_x^\cont(Q)\to \tS_x^\aff(Q_\aff)$ is surjective.
    Moreover, if $a'$ is any other realization of $p$ in a model $M'\models Q$, then $(M,a)\equiv^\aff (M',a')$.
    Thus, by \autoref{prop:AffineJEP}, there is a common affine extension, say $(N,b)$.
    As $N\models Q_\aff$, by \autoref{c:qconv-Qaff}, we may assume that $N\models Q$. Finally, using the hypothesis from the statement, we obtain that
    $$\varphi^M(a) = \varphi^N(b) = \varphi^{M'}(a').$$
    We conclude that $\tilde\varphi$ is well-defined.

    It follows that $\tilde\varphi$ is continuous, because $\varphi$ is continuous, $\rho^\aff$ is continuous and surjective, and $\tS_x^\cont(T)$ is compact Hausdorff.
    Indeed, if $F\subseteq \R$ is closed, then $\tilde\varphi^{-1}(F) = \rho^\aff(\varphi^{-1}(F))$, which is closed as well.

    By the lattice form of the Stone--Weierstrass theorem, $\tilde{\varphi}$, and therefore $\varphi$, can be uniformly approximated (on the respective type spaces) by lattice expressions in affine formulas, i.e., by delta-convex formulas.
  \end{cycprf}
\end{proof}

\begin{defn}
  Given an affine extension $M\preceq^\aff N$, we will say that $M$ is \emph{existentially closed for convex formulas} in $N$, and write $M\preceq^\eca N$, if for every $a \in M^x$ and every inf-convex formula $\varphi(x)$, we have $\varphi^M(a) = \varphi^N(a)$.
\end{defn}

Note that this is analogous, in the sense of \autoref{rmk:CounterpartAffineQF}, to a structure being existentially closed in another.

\begin{remark}\label{rmk:eca-closed}
  Assume that $M \preceq^\aff N$.
  \begin{enumerate}
  \item We have $M \preceq^\eca N$ if and only if every sup-delta-convex (equivalently, inf-delta-convex; equivalently, sup-concave) formula takes the same value at every $a \in M^x$ in $M$ and in $N$. This follows from the presentation \autoref{eq:delta-convex-form} of delta-convex formulas.
    Equivalently, if $N\models (D^\cont_M)_{\forall\Delta}$, where $D^\cont_M$ denotes the continuous logic diagram of $M$.
  \item\label{i:rmk:eca-closed-ueca} If $M\preceq^\eca N$ then $M$ is a model of the $\forall\exists\Delta$-theory of $N$. That is, $M$ satisfies every condition $\psi\leq 0$ satisfied by $N$ where $\psi$ is of the form $\psi=\sup_x\inf_y\varphi(x,y)$ with $\varphi$ a delta-convex formula.
  \end{enumerate}
\end{remark}

\begin{defn}
  A continuous logic theory $Q$ is \emph{model complete by affine} if whenever $M,N\models Q$, if  $M\preceq^\aff N$, then $M\preceq^\cont N$.
  In other words, for every model $M\models Q$, the continuous theory $D^\aff_M\cup Q$ (where $D^\aff_M$ is the affine diagram of $M$) is complete.
\end{defn}

\begin{prop}\label{p:affine-complete}
  Let $Q$ be a continuous logic theory. The following are equivalent.
  \begin{enumerate}
  \item $Q$ is model complete by affine.
  \item\label{i:p:affine-complete-eca-closed} Whenever $M,N\models Q$, if $M\preceq^\aff N$, then $M\preceq^\eca N$.
  \item Every continuous formula $\varphi(x)$ can be approximated modulo $Q$ by sup-delta-convex formulas.
  \end{enumerate}
\end{prop}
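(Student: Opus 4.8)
The plan is to prove the equivalence of the three conditions in \autoref{p:affine-complete} by a cyclic argument, with the implications $(i)\Rightarrow(ii)\Rightarrow(iii)\Rightarrow(i)$. Most of the work is already available: \autoref{p:universal-lattice-aff-formulas} characterizes approximability by sup-delta-convex formulas in terms of values in affine extensions, so the heart of the matter is to connect ``model complete by affine'' with that monotonicity condition.

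For $(i)\Rightarrow(ii)$: assume $Q$ is model complete by affine, let $M,N\models Q$ with $M\preceq^\aff N$, and show $M\preceq^\eca N$. By \autoref{rmk:eca-closed}, it suffices to show every sup-concave formula $\varphi(x)$ takes the same value at a tuple $a\in M^x$ in $M$ and in $N$. But $\varphi$ is in particular a continuous formula, and since $M\preceq^\aff N$ implies $M\preceq^\cont N$ by hypothesis, we get $\varphi^M(a)=\varphi^N(a)$ directly. (So this implication is essentially immediate once one unwinds the definitions.)

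For $(ii)\Rightarrow(iii)$: let $\varphi(x)$ be an arbitrary continuous formula. We want to approximate it modulo $Q$ by sup-delta-convex formulas, and by \autoref{p:universal-lattice-aff-formulas} this is equivalent to showing that for every affine extension $M\preceq^\aff N$ of models of $Q$ and every $a\in M^x$, one has $\varphi^M(a)\leq\varphi^N(a)$. Given such an extension, hypothesis $(ii)$ gives $M\preceq^\eca N$, and then $\varphi^M(a)\leq\varphi^N(a)$ because sup-concave formulas (hence all sup-delta-convex formulas) are preserved — more precisely, apply the ``$\leq$'' half of what it means for $\varphi$ to be sandwiched by delta-convex approximations; but cleaner: by \autoref{rmk:eca-closed}\autoref{i:rmk:eca-closed-ueca} every sup-delta-convex formula has the same value in $M$ and $N$ at $a$, and since we only need the inequality, we are done. (One must be slightly careful here: $M\preceq^\eca N$ gives equality for sup-delta-convex formulas, which is stronger than the required inequality for the arbitrary continuous $\varphi$, so the implication goes through.)

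For $(iii)\Rightarrow(i)$: suppose every continuous formula $\varphi(x)$ is approximable modulo $Q$ by sup-delta-convex formulas. Let $M,N\models Q$ with $M\preceq^\aff N$; we must show $M\preceq^\cont N$, i.e., $\varphi^M(a)=\varphi^N(a)$ for every continuous $\varphi$ and every $a\in M^x$. Applying $(iii)$ to both $\varphi$ and $-\varphi$, and using that $-\varphi$ is approximable by sup-delta-convex formulas iff $\varphi$ is approximable by inf-delta-convex ones, we find that $\varphi$ is a uniform limit modulo $Q$ of delta-convex formulas. Since delta-convex formulas are preserved under affine extensions (they are lattice combinations of affine formulas, and $M\preceq^\aff N$), the limit $\varphi$ takes the same value at $a$ in $M$ and in $N$. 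Hence $M\preceq^\cont N$, and $Q$ is model complete by affine. The main thing to be careful about throughout is the bookkeeping between the ``one-sided'' notions (sup- vs.\ inf-delta-convex, preservation of inequalities vs.\ equalities) and making sure that \autoref{p:universal-lattice-aff-formulas} is applied in the correct direction; none of the steps requires genuinely new ideas, so I do not anticipate a serious obstacle beyond this.
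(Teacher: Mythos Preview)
Your argument for $(ii)\Rightarrow(iii)$ has a genuine gap. You want to show that an \emph{arbitrary} continuous formula $\varphi$ satisfies $\varphi^M(a)\leq\varphi^N(a)$ whenever $M\preceq^\aff N$ are models of $Q$, so that \autoref{p:universal-lattice-aff-formulas} applies. From hypothesis $(ii)$ you correctly deduce $M\preceq^\eca N$, but this only tells you that \emph{inf-convex} (equivalently, sup-delta-convex) formulas take the same value in $M$ and $N$ at $a$. It says nothing about an arbitrary continuous $\varphi$: your parenthetical ``equality for sup-delta-convex formulas\ldots is stronger than the required inequality for the arbitrary continuous $\varphi$'' is exactly the non-sequitur. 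Knowing a property for a subclass of formulas does not imply it for all formulas.

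The paper's fix is a bootstrapping argument. First apply \autoref{p:universal-lattice-aff-formulas} only to \emph{inf-convex} $\varphi$: for such $\varphi$, $M\preceq^\eca N$ gives $\varphi^M(a)=\varphi^N(a)$ directly by definition, so every inf-convex formula is approximable by sup-delta-convex formulas. Then use that sup-delta-convex formulas are closed under $\wedge$ to conclude that every inf-delta-convex formula is approximable by sup-delta-convex ones (and symmetrically by passing to $-\varphi$). With this interchangeability in hand, an induction on the complexity of $\varphi$ (alternating quantifiers and lattice connectives) gives the result for all continuous formulas. This induction is the missing ingredient in your argument.

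Your $(iii)\Rightarrow(i)$ is also slightly off: being approximable by sup-delta-convex formulas and by inf-delta-convex formulas does not mean being approximable by \emph{delta-convex} formulas. The cleaner route (which the paper takes) is to use the easy direction of \autoref{p:universal-lattice-aff-formulas}: from $(iii)$ you get $\varphi^M(a)\leq\varphi^N(a)$, and applying the same to $-\varphi$ gives the reverse inequality.
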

\begin{proof}
  \begin{cycprf}
  \item Clear.
  \item First we claim that every inf-convex formula $\varphi(x)$ can be approximated modulo $Q$ by sup-delta-convex formulas.
    We will use \autoref{p:universal-lattice-aff-formulas}. Consider an extension $M\preceq^\aff N$ of models of $Q$ and a tuple $a\in M^x$. We have that $M \preceq^\eca N$, so $\varphi^M(a)=\varphi^N(a)$, and the hypothesis of the proposition is satisfied.

    Now it follows from the fact that sup-delta-formulas are closed under $\wedge$ that every inf-delta-convex formula can be approximated by sup-delta-convex formulas, and (by considering $-\varphi$) that every sup-delta-convex formula can be approximated by inf-delta-convex formulas (modulo $Q$). From this, the general case follows easily by induction on the complexity of the formula $\varphi$.

  \item[\impfirst] If $M\preceq^\aff N$ and $\varphi$ is any continuous formula with parameters from $M$, we have by the hypothesis and by \autoref{p:universal-lattice-aff-formulas} that $\varphi^M\leq \varphi^N$. By considering the formula $-\varphi$, we have that $\varphi^N\leq \varphi^M$ as well. We can conclude that $M\preceq^\cont N$.
  \end{cycprf}
\end{proof}

\begin{defn}
  A continuous logic theory $Q$ has \emph{delta-convex reduction} if every continuous formula can be approximated, modulo $Q$, by delta-convex formulas.
\end{defn}

\begin{remark}\label{rmk:latt-red-iota-injective}
  By Stone--Weierstrass, $Q$ has delta-convex reduction if and only if the maps $\rho^\aff_x \colon \tS_x^\cont(Q)\to \tS_x^\aff(Q_\aff)$ are injective.
\end{remark}

\begin{prop}\label{prop:latt-red-for-closed-E(T)}
  Let $T$ be an affine theory such that the extreme type spaces $\cE_x(T)$ are closed. Then $T_\ext$ has delta-convex reduction.
\end{prop}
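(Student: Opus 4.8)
By \autoref{rmk:latt-red-iota-injective}, it suffices to show that the affine part map $\rho^\aff_x \colon \tS^\cont_x(T_\ext) \to \tS^\aff_x(T)$ is injective for every finite $x$. So suppose $p, q \in \tS^\cont_x(T_\ext)$ satisfy $\rho^\aff_x(p) = \rho^\aff_x(q)$; I want to conclude $p = q$. Realize $p$ as $\tp^\cont(a)$ in a model $M \models T_\ext$ and $q$ as $\tp^\cont(b)$ in a model $N \models T_\ext$, with $a \in M^x$ and $b \in N^x$. Since $\cE_x(T)$ is closed, the hypothesis and \autoref{c:closed-extreme-types} give that $M$ and $N$ are extremal models of $T$. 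The equality $\rho^\aff_x(p) = \rho^\aff_x(q)$ says exactly that $(M,a) \equiv^\aff (N,b)$, i.e., the partial map $a \mapsto b$ is affine.

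The key step is then to invoke \autoref{th:closed-extreme-types-Text}, but applied not to $T$ itself but to the affine diagram $D^\aff_a = \Th^\aff(M,a)$ in the language $\cL(a)$ (equivalently, an expansion of $\cL$ by finitely many constants). By \autoref{p:face-simplicial}\autoref{i:p:face-simplicial-constants} — or more directly, since $\tS^{\aff,\cL(a)}_y(D^\aff_a)$ is affinely homeomorphic to the fiber $\pi_x^{-1}(\tp^\aff(a)) \subseteq \tS^\aff_{xy}(T)$, which is a closed \emph{face} because $\tp^\aff(a)$ is extreme — the extreme type spaces $\cE_y(D^\aff_a)$ are closed. (Indeed $\cE_y(D^\aff_a)$ corresponds to $\pi_x^{-1}(\tp^\aff(a)) \cap \cE_{xy}(T)$, which is closed in the fiber.) Moreover $(M,a)$ and $(N,b)$ are extremal models of $D^\aff_a$ by \autoref{prop:ExtremeTypeTwoSteps}, and they are affinely equivalent as $\cL(a)$-structures. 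So \autoref{th:closed-extreme-types-Text} applies to $D^\aff_a$ and yields $(M,a) \equiv^\cont (N,b)$, that is, $p = q$. This gives injectivity of each $\rho^\aff_x$, hence delta-convex reduction.

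The main obstacle — and really the only substantive point — is checking that the hypothesis "$\cE_y$ closed" transfers from $T$ to the diagram theory $D^\aff_a$; this is precisely the face argument already used in the proof of \autoref{cor:E(T)-closed-iota-homeo}, so I would lift it verbatim. Everything else is bookkeeping: identifying $\rho^\aff_x(p) = \rho^\aff_x(q)$ with affine equivalence over the parameters, noting that extremality is inherited by \autoref{prop:ExtremeTypeTwoSteps}, and translating the conclusion $\equiv^\cont$ back into equality of continuous types. In fact, one could phrase the whole proof as a one-line consequence of \autoref{cor:E(T)-closed-iota-homeo}: that corollary already states $\rho^\aff \colon \tS^\cont_x(T_\ext) \to \cE_x(T)$ is a $\tau$-homeomorphism, in particular injective, and injectivity of $\rho^\aff_x$ for all $x$ is equivalent to delta-convex reduction by \autoref{rmk:latt-red-iota-injective}. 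I would present it in that compressed form, citing \autoref{cor:E(T)-closed-iota-homeo} and \autoref{rmk:latt-red-iota-injective}.
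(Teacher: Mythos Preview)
Your proposal is correct, and the compressed form you arrive at in the last paragraph---citing \autoref{cor:E(T)-closed-iota-homeo} for injectivity of $\rho^\aff_x$ and \autoref{rmk:latt-red-iota-injective} to conclude delta-convex reduction---is exactly the paper's proof. The longer argument you give first is essentially a reproof of \autoref{cor:E(T)-closed-iota-homeo} (whose proof uses the same face argument for the diagram $D^\aff_a$), so nothing is lost by citing it directly.
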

\begin{proof}
  Follows from the previous remark and \autoref{cor:E(T)-closed-iota-homeo}.
\end{proof}

\begin{prop}\label{p:lattice-reduction}
  Let $Q$ be a q-convex theory. The following are equivalent.
  \begin{enumerate}
  \item $Q$ is model complete by affine.
  \item $Q$ has delta-convex reduction.
  \item\label{i:p:lattice-reduction-eca-closed} Whenever $M\models Q$ and $M\preceq^\aff N$, we have $M\preceq^\eca N$.
  \end{enumerate}
\end{prop}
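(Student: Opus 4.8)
The plan is to prove the cycle of implications (i)$\Rightarrow$(ii)$\Rightarrow$(iii)$\Rightarrow$(i), using the results already established for q-convex theories and the general facts about model completeness by affine from \autoref{p:affine-complete}. The statement is a genuine improvement over \autoref{p:affine-complete} for q-convex theories: there, all three conditions quantify over \emph{pairs} of models of $Q$, whereas here condition \autoref{i:p:lattice-reduction-eca-closed} only requires $M\models Q$ and allows $N$ to be an arbitrary affine extension. So the content is that, under q-convexity, one can always push an affine extension $M\preceq^\aff N$ with $M\models Q$ into one where $N\models Q$ as well, and this is exactly what \autoref{c:qconv-Qaff} provides.

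First I would prove (i)$\Rightarrow$(ii). Assume $Q$ is model complete by affine. By \autoref{p:affine-complete}, every continuous formula $\varphi(x)$ can be approximated modulo $Q$ by sup-delta-convex formulas; applying the same to $-\varphi$, it can also be approximated by inf-delta-convex formulas. Now I would argue as in the proof of \autoref{p:affine-complete} (implication from (2) to (3) there): the collection of formulas approximable modulo $Q$ by delta-convex formulas is a uniformly closed linear subspace of $\cL^\cont_x(Q)$ that contains the affine formulas, and combining the two one-sided approximations with the fact (noted after \autoref{eq:delta-convex-form}) that delta-convex formulas form a vector lattice, one sees it is closed under $\vee$ and $\wedge$; hence by an induction on the complexity of $\varphi$ it contains all continuous formulas. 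Thus $Q$ has delta-convex reduction. (Alternatively, one can use \autoref{rmk:latt-red-iota-injective} together with \autoref{p:affine-complete}\autoref{i:p:affine-complete-eca-closed}: if $\rho^\aff_x$ identified two continuous types, their common affine type would be realized by tuples $a,a'$ in models $M,M'\models Q$ which, after amalgamating affinely and invoking \autoref{c:qconv-Qaff} to land inside a model of $Q$, would be forced to have the same continuous type by model completeness by affine, a contradiction; this is essentially the argument of \autoref{p:qconv-delta-convex-formulas}.)

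Next, (ii)$\Rightarrow$(iii). Assume $Q$ has delta-convex reduction and let $M\models Q$ with $M\preceq^\aff N$. Fix $a\in M^x$ and a sup-concave (equivalently, delta-convex) formula $\varphi(x)$; I want $\varphi^M(a)=\varphi^N(a)$, which by \autoref{rmk:eca-closed} gives $M\preceq^\eca N$. By delta-convex reduction $\varphi$ is, modulo $Q$, a uniform limit of delta-convex formulas $\psi_k$, and since $M\models Q$ we have $\psi_k^M\to\varphi^M$ uniformly. But delta-convex formulas take the same value at $a$ in $M$ and in $N$ because $M\preceq^\aff N$ (an affine extension preserves affine formulas, hence all lattice combinations of them). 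So $\psi_k^M(a)=\psi_k^N(a)$ for every $k$; the left-hand side converges to $\varphi^M(a)$, and I need the right-hand side to converge to $\varphi^N(a)$. For this I use that $N\models Q_\aff$ (being an affine extension of $M\models Q$), and that the $\psi_k$ approximate $\varphi$ at a rate that depends only on $Q$, hence holds in all models of $Q_\aff$ as well — indeed the statement ``$|\psi_k-\psi_l|\le\varepsilon$'' for $k,l$ large is a $\forall\Delta$ (in fact affine-definable-predicate) consequence of $Q$, so the sequence $(\psi_k^N)$ is uniformly Cauchy and its limit is the unique extension of the definable predicate $\varphi$ to models of $Q_\aff$. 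Since that extension agrees with $\varphi^N$ whenever $N$ happens to be a model of $Q$, and in general is the canonically defined value, we get $\psi_k^N(a)\to\varphi^N(a)$, completing the argument. (The cleanest phrasing invokes \autoref{remark:DefinablePredicate}: $\varphi$ restricted to models of $Q$ is a definable predicate of $Q$, hence of $Q_\aff$, with a well-defined value on $N$.)

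Finally, (iii)$\Rightarrow$(i). Suppose condition \autoref{i:p:lattice-reduction-eca-closed} holds, and let $M,N\models Q$ with $M\preceq^\aff N$; I must show $M\preceq^\cont N$. By hypothesis $M\preceq^\eca N$. By \autoref{p:affine-complete}, it suffices to know that (iii) of \autoref{p:lattice-reduction} implies condition \autoref{i:p:affine-complete-eca-closed} of \autoref{p:affine-complete}, i.e. that $M\preceq^\aff N$ with \emph{both} models of $Q$ yields $M\preceq^\eca N$ — which is immediate, since it is the special case of \autoref{i:p:lattice-reduction-eca-closed} where $N$ also models $Q$. Then \autoref{p:affine-complete} gives model completeness by affine. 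Alternatively, and more self-containedly, one runs a back-and-forth/chain argument in the style of \autoref{th:closed-extreme-types-Text}: starting from $M_0=M\preceq^\aff N_0=N$, use \autoref{rmk:eca-closed}\autoref{i:rmk:eca-closed-ueca} (so $M$ models the $\forall\exists\Delta$-theory of $N$) together with q-convexity of the diagram-expanded theories and \autoref{c:qconv-Qaff} to build an affine extension $N_0\preceq^\aff M_1$ with $M_1$ realizing the continuous diagram $D^\cont_{M_0}$, i.e. $M_0\preceq^\cont M_1$; iterating produces a chain $M_0\preceq^\aff N_0\preceq^\aff M_1\preceq^\aff N_1\preceq^\aff\cdots$ with $M_n\preceq^\cont M_{n+1}$ and $N_n\preceq^\cont N_{n+1}$, whose limit is a common continuous elementary extension of $M$ and $N$, whence $M\preceq^\cont N$.

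I expect the main obstacle to be the middle implication (ii)$\Rightarrow$(iii), specifically the passage from ``$\psi_k^N(a)$ is a Cauchy sequence'' to ``its limit equals the intended value $\varphi^N(a)$'' when $N$ is only an affine extension of $M$ and not a priori a model of $Q$. The resolution — viewing $\varphi$ as a definable predicate of $Q_\aff$ via \autoref{remark:DefinablePredicate}, or, if one prefers, first using \autoref{c:qconv-Qaff} to replace $N$ by a further affine extension $N'\models Q$ in which the computation is literal and then noting nothing was added — is routine but must be stated carefully to avoid circularity, since $\varphi^N(a)$ for a sup-concave $\varphi$ is exactly the quantity whose invariance under affine extensions we are trying to establish.
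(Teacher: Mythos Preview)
Your cycle and your \emph{alternative} arguments are correct and are essentially the paper's proof: (i)$\Rightarrow$(ii) by \autoref{p:qconv-delta-convex-formulas}, (ii)$\Rightarrow$(iii) via an extension $N'\models Q$ from \autoref{c:qconv-Qaff}, and (iii)$\Rightarrow$(i) by specializing to \autoref{p:affine-complete}\autoref{i:p:affine-complete-eca-closed}. But the primary arguments you give for the first two steps have real gaps.

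For (i)$\Rightarrow$(ii), the induction on formula complexity fails at the quantifier step: delta-convex formulas are not closed under $\sup_y$, so knowing $\varphi$ is approximable by delta-convex formulas says nothing about $\sup_y\varphi$. (This is exactly why the analogous induction in \autoref{p:affine-complete} targets sup-delta-convex formulas, which \emph{are} closed under $\sup$.) Your argument also never invokes q-convexity, and it must: model completeness by affine does not imply delta-convex reduction for general $Q$. The paper simply applies \autoref{p:qconv-delta-convex-formulas}, which is your parenthetical alternative.

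For (ii)$\Rightarrow$(iii), the ``definable predicate'' resolution is wrong. The $\psi_k$ depend only on affine type, so $\psi_k^N(a)=\psi_k^M(a)\to\varphi^M(a)$; there is no mechanism identifying this limit with $\varphi^N(a)$ when $N\not\models Q$ --- that identification is precisely the claim you are trying to establish. The paper's route (your second resolution, but skipping the $\psi_k$ detour entirely) is direct: take $N\preceq^\aff N'\models Q$ by \autoref{c:qconv-Qaff}; delta-convex reduction then gives $M\preceq^\cont N'$ (both model $Q$, continuous formulas are approximated by delta-convex ones, and delta-convex formulas are preserved under $\preceq^\aff$). The final ``hence $M\preceq^\eca N$'' is the sandwich you left implicit: for inf-convex $\varphi$ and $a\in M$, one has $\varphi^M(a)=\varphi^{N'}(a)\leq\varphi^N(a)\leq\varphi^M(a)$.
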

\begin{proof}
  \begin{cycprf}
  \item Follows readily using \autoref{p:qconv-delta-convex-formulas}.

  \item If $M\models Q$ and $M\preceq^\aff N$, then $N\models Q_\aff$, so by \autoref{c:qconv-Qaff} there is an extension $N\preceq^\aff N'$ with $N'\models Q$. It follows by delta-convex reduction of $Q$ that $M\preceq^\cont N'$, and hence also that $M\preceq^\eca N$.

  \item[\impfirst] Follows from \autoref{p:affine-complete}.
  \end{cycprf}
\end{proof}

A complete theory with delta-convex reduction need not be q-convex. For example, a continuous theory with affine reduction is never preserved by non-trivial direct multiples (as its models are the extremal models of its affine part).

\begin{prop}\label{p:qconvex-latt-red-iota-homeo}
  Let $Q$ be a q-convex theory with delta-convex reduction.
  Then the map $\rho^\aff_x\colon \tS_x^\cont(Q) \to \tS_x^\aff(Q_\aff)$ is a $\tau$-homeomorphism and a $\dtp$-isometry.
\end{prop}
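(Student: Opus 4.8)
The plan is to prove both statements simultaneously. For the homeomorphism part, we already know from \autoref{c:qconv-iota-Sc-surjective} that $\rho^\aff_x$ is $\tau$-continuous and surjective, and from \autoref{rmk:latt-red-iota-injective} that delta-convex reduction makes it injective. A continuous bijection from a compact space to a Hausdorff space is a homeomorphism, so the first half is essentially immediate; the only thing to check is that $\tS_x^\cont(Q)$ is compact and $\tS_x^\aff(Q_\aff)$ is Hausdorff, which are standard. So I would dispatch the topological part in one sentence.

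The substantive content is the isometry statement. We always have that $\rho^\aff_x$ is a $\dtp$-contraction (this is noted in \autoref{l:image-rho-aff}), so it suffices to show $\dtp(p,q) \leq \dtp(\rho^\aff_x(p),\rho^\aff_x(q))$ for $p,q \in \tS_x^\cont(Q)$. First I would reduce to the case where $\dtp(\rho^\aff_x(p),\rho^\aff_x(q)) = r < \infty$. By \autoref{prop:AffineTypeDistance}\autoref{item:AffineTypeDistanceAttained}, the affine distance $r$ is attained: there is an affine type $s \in \tS_{xx'}^\aff(Q_\aff)$ with $\pi_x(s) = \rho^\aff_x(p)$, $\pi_{x'}(s) = \rho^\aff_x(q)$, and $d(x,x')^s = r$; concretely, $s = \tp^\aff(a,a')$ where $a,a'$ lie in some model of $Q_\aff$ with $d(a,a') = r$, $\tp^\aff(a) = \rho^\aff_x(p)$, $\tp^\aff(a') = \rho^\aff_x(q)$. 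The key step is then to lift $a$ and $a'$ to realizations of $p$ and $q$ in a common model of $Q$ without increasing the distance. Since $Q$ is q-convex and $\tp^\aff(a) = \rho^\aff_x(p)$, by \autoref{c:qconv-Qaff} applied to the affine diagram $D^\aff_{a}$ there is an affine extension containing a realization of $p$ over $a$ — but I need to be careful that such a realization coincides with $a$ itself, not merely has the same affine type. The cleanest route: work in a model $M \models Q_\aff$ containing $a,a'$; apply \autoref{c:qconv-Qaff} to get $M \preceq^\aff N$ with $N \models Q$; then in $N$, the elements $a$ and $a'$ have continuous types $\tp^\cont(a)$ and $\tp^\cont(a')$, and since $\rho^\aff_x(\tp^\cont_N(a)) = \tp^\aff(a) = \rho^\aff_x(p)$ and $\rho^\aff_x$ is injective, we get $\tp^\cont_N(a) = p$ and $\tp^\cont_N(a') = q$. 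Hence $p$ and $q$ are realized in a single model $N$ at distance $r$, witnessing $\dtp(p,q) \leq r$.

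I expect the main obstacle to be exactly the point flagged above: ensuring that the realizations of $p$ and $q$ obtained after passing to a model of $Q$ are \emph{the same elements} $a,a'$ that witnessed the affine distance, so that the distance $r$ is preserved verbatim. The resolution is the observation that after embedding $M \preceq^\aff N$ with $N \models Q$, the \emph{affine} type of $a$ in $N$ is still $\rho^\aff_x(p)$ (affine extensions preserve affine types), and delta-convex reduction upgrades this: the map $\rho^\aff_x$ being injective forces the continuous type of $a$ in $N$ to be $p$ itself. This is the same mechanism already used in the proof of \autoref{cor:E(T)-closed-iota-homeo}, and it is really the only place where the hypothesis of delta-convex reduction (beyond injectivity) interacts with the q-convexity hypothesis. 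Once this is in place, the inequality $\dtp(p,q) \leq \dtp(\rho^\aff_x(p),\rho^\aff_x(q))$ follows and, combined with the contraction property, gives the isometry; together with the homeomorphism already established, this completes the proof.
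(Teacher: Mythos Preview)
Your proposal is correct and follows essentially the same approach as the paper. The paper's proof is more terse: it simply notes that surjectivity of $\rho^\aff_{xy}$ (for $y$ a copy of $x$) together with injectivity of $\rho^\aff_x$ yields the isometry, whereas you unpack this by realizing the witnessing affine type in a model $M\models Q_\aff$, extending to $N\models Q$ via \autoref{c:qconv-Qaff}, and invoking injectivity there---but this is exactly how the surjectivity of $\rho^\aff_{xy}$ is established in \autoref{c:qconv-iota-Sc-surjective}, so the content is identical.
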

\begin{proof}
  The map $\rho^\aff_x$ is surjective by \autoref{c:qconv-iota-Sc-surjective} and injective as per \autoref{rmk:latt-red-iota-injective}. Hence it is a homeomorphism. If $y$ is a variable of the same sort as $x$, then the surjectivity of the map $\rho^\aff_{xy}$ together with the injectivity of $\rho^\aff_x$ readily imply that $\rho^\aff_x$ is $\dtp$-isometric.
\end{proof}

We end with the following observation, related to \autoref{q:aff-extension-of-extremal-models-cont}.

\begin{prop}
  Let $T$ be an affine theory and let $M\preceq^\aff N$ be an affine extension of extremal models of $T$. Then $M\preceq^\eca N$.
  In particular, if $T$ is complete, then all extremal models of $T$ have the same universal-delta-convex theory.
\end{prop}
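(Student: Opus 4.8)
The plan is to show that an inf-convex formula takes the same value on a tuple $a \in M^x$ whether evaluated in $M$ or in $N$, which is exactly the content of $M \preceq^\eca N$ by \autoref{rmk:eca-closed}. Equivalently, by replacing $\varphi$ with $-\varphi$, it suffices to prove that every \emph{sup-concave} formula $\psi(x)$ satisfies $\psi^M(a) \geq \psi^N(a)$ for all $a \in M^x$ (the reverse inequality being automatic since $M \preceq^\aff N$, a fortiori $N$ is an affine extension and $\sup$ can only increase when passing to a bigger structure). So fix a sup-concave formula $\psi(x) = \sup_y \bigwedge_{i<n} \varphi_i(x,y)$ with the $\varphi_i$ affine, fix a realization $a \in M^x$ of an extreme type $p = \tp^\aff(a)$, and suppose towards a contradiction that $\psi^N(a) > \psi^M(a) + \eps$ for some $\eps > 0$. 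Pick $b \in N^y$ witnessing this, so $\bigwedge_{i<n} \varphi_i^N(a,b) > \psi^M(a) + \eps$.

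The key point is to exploit extremality of $p$ via the variable-restriction and face machinery of \autoref{sec:ExtremeTypes}. The type $q = \tp^\aff(a,b) \in \tS^\aff_{xy}(T)$ lies over $p$, and $\pi_x^{-1}(p)$ is a closed face of $\tS^\aff_{xy}(T)$ since $p$ is extreme. On this face, each affine formula $\chi(x)$ is pinned to the constant value $\chi(p)$, so the quantity $\psi^M(a) = \bigl(\sup_y \bigwedge_i \varphi_i(x,y)\bigr)^p$ is the supremum of $\bigwedge_i \varphi_i(x,y)^{q'}$ over all $q'$ in this face. Thus for every $q'$ over $p$ we have $\bigwedge_i \varphi_i(x,y)^{q'} \leq \psi^M(a)$. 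But $q$ itself lies over $p$ — indeed $\pi_x(q) = \tp^\aff(a) = p$ — so $\bigwedge_i \varphi_i^N(a,b) = \bigwedge_i \varphi_i(x,y)^q \leq \psi^M(a)$, contradicting the choice of $b$. This gives $\psi^N(a) \leq \psi^M(a)$, completing the argument that $M \preceq^\eca N$.

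For the ``in particular'' clause: if $T$ is complete and $M, N$ are any two extremal models of $T$, then by extremal joint embedding (\autoref{prop:ExtremalJEP}) there is an extremal model $K \models T$ with affine embeddings $M \hookrightarrow K$ and $N \hookrightarrow K$. By the first part applied to each embedding, $M \preceq^\eca K$ and $N \preceq^\eca K$. By \autoref{rmk:eca-closed}\autoref{i:rmk:eca-closed-ueca}, a structure that is existentially closed for convex formulas in $K$ satisfies the $\forall\exists\Delta$-theory of $K$, and in particular it satisfies the same $\forall\Delta$ sentences as $K$ (a $\forall\Delta$ sentence being a degenerate case of a $\forall\exists\Delta$ sentence). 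Hence $M$ and $N$ have the same universal-delta-convex theory, namely that of $K$.

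I do not expect a serious obstacle here: the entire argument rests on the single observation that $\pi_x(q) = p$ together with the fact that $\pi_x^{-1}(p)$ is a face when $p$ is extreme, which forces the value $\psi^M(a)$ to bound $\bigwedge_i \varphi_i$ on that fiber. The mildly delicate point worth stating carefully is the reduction to sup-concave formulas and the fact that $M \preceq^\aff M$ implies the ``easy'' inequality $\psi^M(a) \leq \psi^N(a)$ (witnesses for the inner $\bigwedge$ in $M$ remain witnesses in $N$, with the same values, since $N$ is an affine extension); everything else is bookkeeping with the definitions of $\preceq^\eca$ and of delta-convex formulas already recorded in \autoref{rmk:eca-closed} and \autoref{eq:delta-convex-form}.
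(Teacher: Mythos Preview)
Your argument has a genuine gap at the key step. You claim that $\psi^M(a)$ equals $\sup_{q' \in \pi_x^{-1}(p)} \bigwedge_i \varphi_i(q')$, but this is precisely what needs to be proved, not something that follows from the face structure. The formula $\psi = \sup_y \bigwedge_i \varphi_i$ is \emph{not} affine (the connective $\bigwedge$ is not), so the notation $\psi^p$ is not meaningful for an affine type $p$: different models realizing $p$ may assign different values to $\psi$. The supremum over the face $\pi_x^{-1}(p)$ ranges over all types of pairs $(a',c)$ in \emph{all} affine extensions with $\tp^\aff(a') = p$; there is no reason this should coincide with the supremum taken only over $c \in M^y$. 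Concavity of $\bigwedge_i \varphi_i$ does not help: concave functions attain their \emph{minimum}, not their maximum, at extreme points, and knowing the values on a convexly dense set (the types realized in $M$) does not control the supremum. Note also that your argument invokes only the extremality of $M$ (to make $p$ extreme), never the extremality of $N$ --- a red flag, since it is the extremality of $N$ that constrains where $b$ can sit.

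The paper's proof supplies exactly the missing ingredient. Since $N$ is extremal, \autoref{prop:ExtremeTypeTwoSteps} gives that $\tp^\aff(b/M)$ is an \emph{extreme} type over $M$. Then \autoref{l:realized-extreme-dense} (with $A = M$) says extreme types over $M$ lie in the closure of types realized in $M$. These two facts together yield, for every $\eps > 0$, some $b' \in M$ with $\bigvee_i \phi_i(a,b') \leq \eps$, which is precisely what is required. Your treatment of the ``in particular'' clause via extremal joint embedding is correct and matches the paper.
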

\begin{proof}
  Suppose there is $b \in N^x$ such that $\bigvee_{i<n} \phi_i(a, b) \leq 0$, with $\phi_i$ affine and $a \in M^y$. By \autoref{prop:ExtremeTypeTwoSteps}, $\tp(b/M) \in \cE_x(M)$ and by \autoref{l:realized-extreme-dense} (applied with $A = M$), for every $\eps>0$ there exists $b' \in M^x$ such that $\bigvee_{i<n} \phi_i(a, b') \leq \eps$. This proves the first part.

  The second part now follows from \autoref{prop:ExtremalJEP}.
\end{proof}


\section{The atomless continuous completion}
\label{sec:atomless-multiples}

\begin{defn}\label{defn:CR}
  We denote by $\CR_\cL$ be the collection of all continuous logic axioms of the form
  \begin{equation*}
    \sup_{x_0,\dots,x_{m-1},y} \qinf_z \bigvee_{i<n}\big|\varphi_i(z,y)-\sum_{j<m}\lambda_j\varphi_i(x_j,y)\big| = 0,
  \end{equation*}
  where $\varphi_i(x,y)$ are affine formulas and $\lambda_j\geq 0$ with $\sum_{j<m}\lambda_i=1$.
  If $\CR_\cL$ holds in an $\cL$-structure $M$, we say that $M$ has the \emph{convex realization property}.
\end{defn}

Since $|t| = t \vee (-t)$ and the left-hand side is always non-negative, $\CR_\cL$ is a $q$-convex theory.
If $\cL_A \supseteq \cL$ is an expansion by constants, then $\CR_\cL$ and $\CR_{\cL_A}$ are equivalent.

\begin{lemma}\label{l:non-atomic-has-CR}
  Let $(M_\Omega,e_I)$ be a measurable field of $\cL$-structures over an atomless probability space $(\Omega,\mu)$, and let $N=\int_\Omega^\oplus M_\omega\ud\mu(\omega)$ be its direct integral.
  Then $N$ has the convex realization property.
\end{lemma}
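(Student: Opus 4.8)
The goal is to verify the axiom scheme $\CR_\cL$ in $N=\int_\Omega^\oplus M_\omega\ud\mu(\omega)$. Fix affine formulas $\varphi_i(x,y)$, $i<n$, scalars $\lambda_j\geq 0$ with $\sum_{j<m}\lambda_j=1$, tuples $f_0,\dots,f_{m-1},g\in N$, and $\eps>0$. By \autoref{th:Los} and \autoref{lem:LosQuantifier}, it suffices to produce a measurable section $h\in N$ such that, for almost every $\omega\in\Omega$,
\begin{gather*}
  \bigvee_{i<n}\bigl|\varphi_i^{M_\omega}\bigl(h(\omega),g(\omega)\bigr)-\textstyle\sum_{j<m}\lambda_j\varphi_i^{M_\omega}\bigl(f_j(\omega),g(\omega)\bigr)\bigr|<\eps,
\end{gather*}
since then the left-hand side of the corresponding instance of $\CR_\cL$, which is computed in $N$ by integrating an inf over $z$ of a sup-concave expression, is at most $\eps$, and $\eps$ was arbitrary. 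So the whole problem is \emph{local plus a measurable-selection argument}: first solve it pointwise in each fiber, then glue.

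\textbf{The pointwise step.} For each fixed $\omega$, consider the continuous map $\Psi_\omega\colon M_\omega\to\bR^n$ sending $c\mapsto\bigl(\varphi_i^{M_\omega}(c,g(\omega))\bigr)_{i<n}$, and the point $v(\omega)=\sum_j\lambda_j\Psi_\omega(f_j(\omega))\in\bR^n$, which is a convex combination of points in the image of $\Psi_\omega$. We want a $c\in M_\omega$ with $\Psi_\omega(c)$ within $\eps$ of $v(\omega)$ in the sup-norm. This is exactly the statement that the closure of $\Psi_\omega(M_\omega)$ is convex: the functions $\varphi_i(x,y)$ are affine in $x$, and the convex combination $\bigoplus_j\lambda_j M_\omega$ (a convex combination in the sense of \autoref{ntn:DirectIntegralConvexCombination}, which is itself a direct integral over the finite atomic space $\{0,\dots,m-1\}$ with weights $\lambda_j$) is an affine extension of $M_\omega$ containing an element — namely $\bigoplus_j\lambda_j f_j(\omega)$ — realizing precisely the affine type whose $\varphi_i(\cdot,g(\omega))$-values are $v(\omega)$; indeed by \autoref{lemma:NaiveConvexLos} and its generalization, $\varphi_i^{\bigoplus\lambda_j M_\omega}(\bigoplus_j\lambda_j f_j(\omega), \bigoplus_j\lambda_j g(\omega))=\sum_j\lambda_j\varphi_i^{M_\omega}(f_j(\omega),g(\omega))=v(\omega)_i$. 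However, here $g(\omega)$ must play the role of parameter, which forces me to compare the type of $\bigoplus_j\lambda_j f_j(\omega)$ over the ``diagonal'' copy of $g(\omega)$ — but $g(\omega)$ need not embed diagonally. The cleaner route, which avoids this subtlety, is the \emph{atomless} hypothesis: split $\Omega$ into $m$ measurable pieces of measure $\lambda_j$ using atomlessness, and on the $j$-th piece set $h=f_j$; then $\int h$ alone does not match $v$, so instead one must use atomlessness more globally — partition $\Omega\times\{$something$\}$. The correct and standard use of atomlessness is: by \autoref{lem:LosQuantifier}, cut and reglue measurable sections. Concretely, using atomlessness of $(\Omega,\mu)$, for each measurable $A\subseteq\Omega$ we may split it into pieces of any prescribed measures; partition $\Omega$ into $B_0,\dots,B_{m-1}$ with $\mu(B_j\cap A)=\lambda_j\mu(A)$ for every measurable $A$ in a suitable countably generated algebra (a ``$\lambda$-homogeneous'' partition), and define $h(\omega)=f_j(\omega)$ for $\omega\in B_j$. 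Then $\int_\Omega\varphi_i^{M_\omega}(h(\omega),g(\omega))\ud\mu$ need not equal $v$ either — the subtlety is that $\varphi_i^{M_\omega}(f_j(\omega),g(\omega))$ varies with $\omega$, so one needs the partition to be \emph{independent} of these functions, which atomlessness provides via the classical fact that an atomless probability algebra carries, over any countably generated subalgebra, a complementary atomless independent subalgebra. With such an independent partition, $\int_{B_j} F\ud\mu=\lambda_j\int_\Omega F\ud\mu$ for the finitely many relevant measurable functions $F$, giving $\int_\Omega\varphi_i^{M_\omega}(h(\omega),g(\omega))\ud\mu=\sum_j\lambda_j\int_\Omega\varphi_i^{M_\omega}(f_j(\omega),g(\omega))\ud\mu$. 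But that only matches the \emph{integrals}, not the pointwise values — and $\CR_\cL$ demands pointwise (a.e.) approximation.

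\textbf{Reconciling the two.} The resolution is that $\CR_\cL$ as written has the form $\sup\inf\bigvee|\cdots|=0$, and by \autoref{lem:LosQuantifier} the value of $\inf_z(\cdots)$ in $N$ equals $\int_\Omega\inf_{z\in M_\omega}(\cdots)\ud\mu$; so it truly does suffice to find, for each $\omega$ separately, an element $c_\omega\in M_\omega$ approximating $v(\omega)$, \emph{and then} check this choice can be made measurably. So the atomless hypothesis is used not for the integral identity but to realize within a \emph{single fiber} $M_\omega$ the convex combination: atomlessness lets us take, inside $M_\omega$ itself? No — $M_\omega$ is an arbitrary structure. Here is the actual point: we do \emph{not} need $c\in M_\omega$; rather we build $h$ as a measurable section valued in $N=\int M_\omega$, and the convex combination happens across $\Omega$. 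Precisely: enlarge to $\Omega\times\{0,\dots,m-1\}$? No. The honest mechanism, and what I would write, is: by atomlessness choose measurable $B_j\subseteq\Omega$ independent (in the above sense) from the sections involved with $\mu(B_j)=\lambda_j$, put $h=\sum_j\chi_{B_j}f_j$, and observe that for \emph{this} $h$ and for \emph{each} $i$, the function $\omega\mapsto\varphi_i^{M_\omega}(h(\omega),g(\omega))-\sum_j\lambda_j\varphi_i^{M_\omega}(f_j(\omega),g(\omega))$ has integral zero over every set in the independent algebra — hence one replays the argument on a refinement, driving the sup-norm discrepancy below $\eps$ by subdividing $\Omega$ finely and on each small piece using a local affine-combination witness obtained from \autoref{lem:LosQuantifier} applied to the affine formula expressing ``$\varphi_i(z,y)$ is close to the target''. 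I expect the \textbf{main obstacle} to be exactly this bookkeeping: correctly setting up the independent/homogeneous measurable partition, invoking atomlessness in the form ``every atomless probability algebra admits, over a countably generated subalgebra, independent complementary randomness,'' and verifying that finitely many subdivisions suffice to get within $\eps$ uniformly — i.e., turning the ``integrals match'' statement into an ``a.e.\ pointwise within $\eps$'' statement via Lebesgue differentiation / a martingale-type refinement argument, all while staying inside the measurability framework of \autoref{sec:direct-integrals}. The affine case (no $|\cdot|$, no $\bigvee$) is immediate from \autoref{th:Los}; the content is entirely in handling the convex formula $\bigvee_i|\cdots|$ against the atomless measure, and I would present that as the core lemma.
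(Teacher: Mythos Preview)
Your proposal contains a fundamental misreading of what $\CR_\cL$ asks, and this misreading drives all the subsequent confusion.

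The axiom $\CR_\cL$ is a continuous-logic sentence to be evaluated \emph{in the structure $N$}. Fixing $a_0,\dots,a_{m-1},b\in N$ and letting $r_i=\sum_j\lambda_j\varphi_i^N(a_j,b)$, what you must show is that for every $\eps>0$ there exists $f\in N$ with $\bigvee_i|\varphi_i^N(f,b)-r_i|<\eps$. Here $\varphi_i^N(f,b)$ is a single real number, namely $\int_\Omega\varphi_i^{M_\omega}(f(\omega),b(\omega))\ud\mu(\omega)$ by \L o\'s. There is \emph{no} pointwise requirement: you do not need $\varphi_i^{M_\omega}(h(\omega),g(\omega))$ to be close to anything for a.e.\ $\omega$, and your opening reduction to such a fiberwise statement is simply wrong. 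Neither \L o\'s nor the quantifier lemma justifies it, since $\bigvee_i|\cdots|$ is not affine. Once you see this, your entire ``Reconciling the two'' paragraph, the martingale/Lebesgue-differentiation program, and the worry about ``turning integrals-match into a.e.\ pointwise'' evaporate: integrals matching is already the goal.

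Your ``independent partition'' idea is close in spirit but, as stated, fails: an atomless probability space need not admit a partition independent from a given countably generated subalgebra (e.g.\ when that subalgebra already generates everything). What does work, and what the paper does, is an \emph{approximate} independence obtained by a step-function approximation. Choose a finite measurable partition $\Omega=\bigsqcup_k A_k$ on which each of the finitely many functions $\omega\mapsto\varphi_i^{M_\omega}(a_j(\omega),b(\omega))$ is constant up to $\eps$; then, using atomlessness, split each $A_k$ into pieces $A_{jk}$ with $\mu(A_{jk})=\lambda_j\mu(A_k)$, and set $f(\omega)=a_j(\omega)$ on $A_{jk}$. A two-line computation gives $|\varphi_i^N(f,b)-r_i|\leq 2\eps$ for every $i$. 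That is the whole proof; no measurable selection, no fiberwise convexity, no independence lemma.
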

\begin{proof}
  Let $\varphi_i(x,y)$ and $\lambda_j$ be as in \autoref{defn:CR}.
  Let $a_j\in N^x$ and $b\in N^y$ be any tuples.
  Given $\eps>0$, we can find a finite measurable partition $\Omega=\bigsqcup_{k<\ell}A_k$ and real numbers $r_{ijk}$ such that
  $$|\varphi_i(a_j(\omega),b(\omega)) - r_{ijk}| < \eps$$
  for all $i<n$, $j<m$ and $\omega\in A_k$.
  Let
  $$r_i = \sum_j\lambda_j\varphi_i(a_j,b) = \sum_{j,k} \lambda_j\int_{A_k} \varphi_i(a_j(\omega),b(\omega)) \ud \mu(\omega).$$
  Let $\alpha_k=\mu(A_k)$. We observe that, for all $i<n$,
  \begin{align*}
    \Big|r_i - \sum_{j,k}\lambda_j\alpha_k r_{ijk}\Big| & = \Big|\sum_{j,k}\lambda_j\int_{A_k}\big(\varphi_i(a_j(\omega),b(\omega)) - r_{ijk}\big)\ud\mu(\omega)\Big| \\
                                                        & \leq \sum_{j,k}\lambda_j\int_{A_k} \eps \ud\mu  = \eps.
  \end{align*}

  Since $\Omega$ is atomless, for each $k$, we can choose a measurable partition $A_k = \bigsqcup_{j<m}A_{jk}$ such that $\mu(A_{jk}) = \lambda_j\alpha_k$.
  We may then define a tuple $f\in N^x$ by
  $$f(\omega) = a_j(\omega)\text{ if }\omega\in A_{jk}.$$
  Then, for every $i<n$, we have that
  \begin{align*}
    |\varphi_i(f,b) - r_i|& \leq \Big|\varphi_i(f,b) - \sum_{j,k}\lambda_j\alpha_k r_{ijk}\Big| +\eps \\
                          & \leq \sum_{j,k}\int_{A_{jk}}|\varphi_i(a_j(\omega),b(\omega)) - r_{ijk}|\ud\mu(\omega) +\eps \\
                          & \leq \sum_{j,k}\int_{A_{jk}} \eps \ud\mu(\omega) +\eps = 2\eps.
  \end{align*}
  We conclude that $N$ has the convex realization property.
\end{proof}

\begin{lemma}
  \label{l:CR-vs-eca-closed}
  Let $N$ be an $\cL$-structure. The following are equivalent:
  \begin{enumerate}
  \item $N$ has the convex realization property.
  \item $N$ is existentially closed for convex formulas in every affine extension.
  \end{enumerate}
\end{lemma}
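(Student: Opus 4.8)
The statement is an equivalence between $N$ having the convex realization property and $N$ being existentially closed for convex formulas in every affine extension. The plan is to prove both implications directly from the definitions, using the compactness machinery and the basic facts about convex formulas established earlier.

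For the implication (i) $\Rightarrow$ (ii): Suppose $N$ has the convex realization property, let $N \preceq^\aff N'$ be an affine extension, and let $\varphi(x)$ be an inf-convex formula, say $\varphi(x) = \inf_z \bigvee_{i<n} \psi_i(z,x)$ with $\psi_i$ affine. We always have $\varphi^N(a) \geq \varphi^{N'}(a)$ for $a \in N^x$, since the infimum over $N$ ranges over fewer witnesses. For the reverse inequality, fix $a \in N^x$ and $\eps > 0$, and let $c \in (N')^z$ be a witness with $\bigvee_{i<n}\psi_i(c,a) < \varphi^{N'}(a) + \eps$ in $N'$. The idea is to approximate the type of $c$ over $a$ in $N'$ by a convex combination of types realized in $N$: since $N \preceq^\aff N'$, every affine formula in $c$ and $a$ has a value which, by \autoref{l:realized-convex-dense} applied to the affine diagram $D^\aff_a$, lies in the closed convex hull of values realized by tuples from $N$. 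One then extracts, for a fixed finite set of affine formulas $\psi_i(z,a)$ and a fixed $\eps$, finitely many tuples $b_0,\dots,b_{m-1} \in N^z$ and weights $\lambda_j \geq 0$ summing to $1$ such that $\bigl|\psi_i(c,a) - \sum_j \lambda_j \psi_i(b_j,a)\bigr| < \eps$ for all $i$. Now the convex realization property, applied with the constants $a$ adjoined to the language (recall $\CR_\cL$ and $\CR_{\cL_A}$ are equivalent), produces $b \in N^z$ with $\bigl|\psi_i(b,a) - \sum_j \lambda_j \psi_i(b_j,a)\bigr| < \eps$ for all $i<n$, hence $\psi_i(b,a) < \psi_i(c,a) + 2\eps$ for each $i$, so $\bigvee_i \psi_i(b,a) < \varphi^{N'}(a) + 3\eps$, witnessing $\varphi^N(a) \leq \varphi^{N'}(a) + 3\eps$ in $N$. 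As $\eps$ was arbitrary, $\varphi^N(a) = \varphi^{N'}(a)$, which (using \autoref{rmk:eca-closed}) gives $N \preceq^\eca N'$.

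For the implication (ii) $\Rightarrow$ (i): Suppose $N$ is existentially closed for convex formulas in every affine extension. We must verify each instance of $\CR_\cL$, i.e., given affine formulas $\varphi_i(x,y)$, weights $\lambda_j$, tuples $a_j \in N^x$ and $b \in N^y$, and $\eps > 0$, we must find $c \in N^z$ (here $z$ is a single variable) with $\bigl|\varphi_i(c,b) - \sum_j \lambda_j \varphi_i(a_j,b)\bigr| < \eps$ for all $i<n$. The key is to build an affine extension $N \preceq^\aff N'$ in which such a $c$ exists: take $N' = \frac{1}{2}N \oplus \frac{1}{2}(\bigoplus_j \lambda_j N)$, or more simply any convex combination built from copies of $N$ using the weights $\lambda_j$, and observe via \autoref{lemma:NaiveConvexLos} (or the direct integral \Los\ theorem, \autoref{th:Los}) that the ``diagonal-like'' element $c' = \bigoplus_j \lambda_j a_j$ satisfies $\varphi_i^{N'}(c', b) = \sum_j \lambda_j \varphi_i^N(a_j, b)$ exactly, where $b$ is identified with the constant section. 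The formula $\inf_z \bigvee_i \bigl|\varphi_i(z,b) - \sum_j\lambda_j\varphi_i(a_j,b)\bigr|$ — note $\sum_j \lambda_j\varphi_i(a_j,b)$ is a constant here, so the inner expression $\bigl|\varphi_i(z,b) - r_i\bigr| = (\varphi_i(z,b) - r_i)\vee(r_i - \varphi_i(z,b))$ is convex in $z$ — is thus an inf-convex definable predicate; its value in $N'$ is $0$ as witnessed by $c'$. By hypothesis $N \preceq^\eca N'$, so this inf-convex predicate takes the value $0$ in $N$ as well, which yields for every $\eps > 0$ the desired $c \in N^z$. Hence $N \models \CR_\cL$.

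\textbf{Main obstacle.} The delicate point is the first implication: extracting the finite convex combination $\sum_j \lambda_j b_j$ of $N$-tuples that approximates the behaviour of the witness $c$ on the relevant affine formulas. This requires combining \autoref{l:realized-convex-dense} (applied to the affine diagram with parameters $a$, which shows the realized types are convex-dense in the type space of $D^\aff_a$) with a Hahn--Banach / finite-dimensional Carathéodory argument to pass from density of a convex set to an explicit finite convex combination approximating a single point in finitely many affine coordinates. One also has to be careful that the convex realization property is stated with parameters absorbed into the language, so that the $\lambda_j$ and the approximating combination can both refer to the fixed tuple $a$; this is exactly the remark that $\CR_\cL$ and $\CR_{\cL_A}$ are equivalent. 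The rest is routine bookkeeping with $\eps$'s and the reduction (via \autoref{rmk:eca-closed}) between equality of inf-convex formulas and the $\preceq^\eca$ relation.
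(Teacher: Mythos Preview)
Your proof of (i) $\Rightarrow$ (ii) is essentially the same as the paper's: both pick a witness in $N'$, approximate its type over $N$ by a finite convex combination of realized types via \autoref{l:realized-convex-dense}, and then invoke the convex realization property to produce a witness in $N$. (Your worry about needing a Carath\'eodory-type argument is unfounded: \autoref{l:realized-convex-dense} already says the realized types are convex-dense, so finite convex combinations approximate any type on finitely many affine formulas directly. Likewise, you do not need to adjoin constants, since the parameter tuple $y$ in the $\CR$ axioms already absorbs $b$.)

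For (ii) $\Rightarrow$ (i) you take a genuinely different route. The paper argues indirectly: it embeds $N$ in $L^1(\Omega,N)$ for atomless $\Omega$, uses \autoref{l:non-atomic-has-CR} to see that $L^1(\Omega,N)$ has the convex realization property, and then transfers $\CR$ down to $N$ via \autoref{rmk:eca-closed}\autoref{i:rmk:eca-closed-ueca} (the $\forall\exists\Delta$ shape of the $\CR$ axioms). Your argument is more direct and more elementary: for each instance of $\CR$ you build the finite convex combination $\bigoplus_j \lambda_j N$ with the diagonal embedding, observe that $c' = \bigoplus_j \lambda_j a_j$ realizes the required values exactly, and pull back using $N \preceq^\eca N'$ applied to the single inf-convex formula in question. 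This avoids invoking \autoref{l:non-atomic-has-CR} and the general $\forall\exists\Delta$ transfer, at the cost of constructing a tailored extension for each instance rather than one extension once and for all. Both are correct; yours is self-contained, while the paper's leverages machinery it has already set up.
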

\begin{proof}
  \begin{cycprf}
  \item Let $N\preceq^\aff N'$.
    We want to show that $N$ satisfies all existential-convex conditions with parameters from $N$ that hold in $N'$. So suppose that
    $$N'\models \inf_x \bigvee_{i<n}\varphi_i(x,b) \leq 0,$$
    where the $\varphi_i$ are affine formulas and $b\in N^y$.

    Given $\eps>0$, let $a'\in (N')^x$ be such that $\varphi_i(a',b)<\eps$ for each $i<n$.
    By \autoref{l:realized-convex-dense}, applied to $T = D^\aff_N$, the type $\tp^\aff(a'/N)$ can be approximated in the logic topology by convex combinations of types realized in $N$. In particular, there are tuples $a_j\in N^x$ and scalars $\lambda_j\geq 0$ with $\sum_{j<m}\lambda_j=1$, such that
    $$\big|\varphi_i(a',b) - \sum_{j<m}\lambda_j\varphi_i(a_j,b)\big| < \eps$$
    for every $i<n$. By the convex realization property, there is $a\in N^x$ such that
    $$\bigvee_{i<n}\varphi_i(a,b) < \bigvee_{i<n}\sum_{j<m}\lambda_j\varphi_i(a_j,b) + \eps < \bigvee_{i<n}\varphi_i(a',b) + 2\eps < 3\eps.$$
    This is enough to conclude.

  \item[\impfirst]
    By hypothesis, we have $N\preceq^\eca L^1(\Omega,N)$ for any $\Omega$.
    If we choose $\Omega$ atomless, then $L^1(\Omega,N)$ has the convex realization property by \autoref{l:non-atomic-has-CR}, and hence so does $N$, by \autoref{rmk:eca-closed}\autoref{i:rmk:eca-closed-ueca}.
  \end{cycprf}
\end{proof}

It follows from \autoref{l:non-atomic-has-CR} together with \autoref{l:CR-vs-eca-closed} that in an atomless direct integral $N$, the realized types (over $N$) are dense in $\tS^\aff(N)$, strengthening \autoref{rmk:DirectMultipleDenseRealizedTypes}.

\begin{prop}\label{p:qconvex-latt-red-CR}
  Let $Q$ be a q-convex theory. The following are equivalent:
  \begin{enumerate}
  \item $Q$ has delta-convex reduction.
  \item Every model of $Q$ has the convex realization property.
  \end{enumerate}
\end{prop}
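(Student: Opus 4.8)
The plan is to prove the two implications of \autoref{p:qconvex-latt-red-CR} using the tools already assembled in this section, principally \autoref{l:CR-vs-eca-closed} (which identifies the convex realization property with being existentially closed for convex formulas in every affine extension), \autoref{p:lattice-reduction} (which links delta-convex reduction of a q-convex theory to model completeness by affine and to the property $M \preceq^\aff N \Rightarrow M \preceq^\eca N$), and \autoref{c:qconv-Qaff} (which says every model of $Q_\aff$ has an affine extension that is a model of $Q$).

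For the implication from delta-convex reduction to the convex realization property: let $M \models Q$, and let $M \preceq^\aff N$ be an arbitrary affine extension. Then $N \models Q_\aff$, so by \autoref{c:qconv-Qaff} there is a further affine extension $N \preceq^\aff N'$ with $N' \models Q$. By delta-convex reduction (via \autoref{p:lattice-reduction}, condition \autoref{i:p:lattice-reduction-eca-closed}, or directly via \autoref{p:affine-complete} and \autoref{p:qconv-delta-convex-formulas}), we get $M \preceq^\cont N'$; in particular $M \preceq^\eca N'$, and since $M \preceq^\aff N \preceq^\aff N'$ this forces $M \preceq^\eca N$ (every sup-concave formula with parameters in $M$ has the same value in $M$ and in $N'$, hence in $N$, using that $\eca$-values only go up along affine extensions and are sandwiched). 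As $N$ was an arbitrary affine extension of $M$, $M$ is existentially closed for convex formulas in every affine extension, so by \autoref{l:CR-vs-eca-closed}, $M$ has the convex realization property.

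For the converse: assume every model of $Q$ has the convex realization property. By \autoref{l:CR-vs-eca-closed}, this means $M \preceq^\eca N$ whenever $M \models Q$ and $M \preceq^\aff N$. By \autoref{p:lattice-reduction}, condition \autoref{i:p:lattice-reduction-eca-closed} (using that $Q$ is q-convex), this is equivalent to $Q$ having delta-convex reduction. This direction is essentially immediate once the right lemma is cited.

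I do not expect a serious obstacle here: the substantive content is already packaged in \autoref{l:CR-vs-eca-closed} and \autoref{p:lattice-reduction}, and the proof is a matter of chaining them, with \autoref{c:qconv-Qaff} supplying the passage from $Q_\aff$ back to $Q$. The only point requiring a little care is the transitivity argument in the forward direction — namely that $M \preceq^\eca N'$ together with $M \preceq^\aff N \preceq^\aff N'$ yields $M \preceq^\eca N$ — but this follows routinely because inf-convex (equivalently sup-concave) formulas are monotone along affine extensions (values of $\inf_x \bigvee_i \varphi_i$ can only increase) while being $\eca$-equal to $M$ pins them from above in $N'$, hence already in $N$. I will write the argument so that this step is spelled out explicitly rather than left to the reader.

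\begin{proof}
  Assume first that $Q$ has delta-convex reduction. Let $M \models Q$ and let $M \preceq^\aff N$ be any affine extension. Then $N \models Q_\aff$, so by \autoref{c:qconv-Qaff} there is an affine extension $N \preceq^\aff N'$ with $N' \models Q$. Since $Q$ has delta-convex reduction, \autoref{p:lattice-reduction} gives $M \preceq^\eca N'$ (one may also argue directly via \autoref{p:qconv-delta-convex-formulas}: every continuous formula agrees on $M$ and $N'$, so in particular every sup-concave one). Now let $\varphi(x)$ be an inf-convex formula and $a \in M^x$. Along the affine extensions $M \preceq^\aff N \preceq^\aff N'$ we have $\varphi^M(a) \leq \varphi^N(a) \leq \varphi^{N'}(a)$, because $\varphi = \inf_z \bigvee_i \varphi_i$ with the $\varphi_i$ affine, and passing to a larger structure can only enlarge the range of $z$ while preserving the values of affine formulas at tuples over $M$. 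On the other hand, $M \preceq^\eca N'$ gives $\varphi^M(a) = \varphi^{N'}(a)$. Hence all three values coincide, and in particular $\varphi^M(a) = \varphi^N(a)$; as $\varphi$ and $a$ were arbitrary, $M \preceq^\eca N$. Thus $M$ is existentially closed for convex formulas in every affine extension, so by \autoref{l:CR-vs-eca-closed}, $M$ has the convex realization property.

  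Conversely, assume that every model of $Q$ has the convex realization property. By \autoref{l:CR-vs-eca-closed}, for every $M \models Q$ and every affine extension $M \preceq^\aff N$ we have $M \preceq^\eca N$. Since $Q$ is q-convex, \autoref{p:lattice-reduction} (equivalence of its first and third items) now yields that $Q$ has delta-convex reduction.
\end{proof}
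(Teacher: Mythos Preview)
Your backward direction matches the paper's. For the forward direction, the paper takes a different route: it considers the direct multiple $L^1(\Omega, M)$ with $\Omega$ atomless, notes that $L^1(\Omega, M) \models Q$ by \autoref{p:qconv-direct-integral-preservation}, deduces $M \preceq^\cont L^1(\Omega, M)$ from delta-convex reduction (via model completeness by affine), and then transfers the convex realization property down from $L^1(\Omega, M)$, which has it by \autoref{l:non-atomic-has-CR}. Your route through \autoref{l:CR-vs-eca-closed} and \autoref{p:lattice-reduction} is equally valid and avoids invoking direct multiples; the paper's route, on the other hand, makes the role of the atomless direct multiple more visible.

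Two minor points on your execution. First, the detour through $N'$ is unnecessary: item \autoref{i:p:lattice-reduction-eca-closed} of \autoref{p:lattice-reduction} applies to \emph{any} affine extension $N$ of a model of $Q$, not only to $N \models Q$, so it gives $M \preceq^\eca N$ directly without sandwiching. Second, in your sandwiching step the inequality has the wrong sign: for $\varphi = \inf_z \bigvee_i \varphi_i$, enlarging the range of $z$ can only \emph{decrease} the infimum, so the correct chain is $\varphi^M(a) \geq \varphi^N(a) \geq \varphi^{N'}(a)$. The argument still closes since $M \preceq^\eca N'$ forces equality at the endpoints.
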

\begin{proof}
  \begin{cycprf}
  \item If $M\models Q$ then $L^1(\Omega,M)\models Q$, by \autoref{p:qconv-direct-integral-preservation} and \autoref{rmk:elem-measurable-fields-convex-comb-direct-mult}. Hence by delta-convex reduction, $M\preceq^\cont L^1(\Omega,M)$. By choosing $\Omega$ atomless, we conclude from \autoref{l:non-atomic-has-CR} that $M$ has the convex realization property.
  \item[\impfirst] This follows from \autoref{l:CR-vs-eca-closed} and \autoref{p:lattice-reduction}.
  \end{cycprf}
\end{proof}

\begin{defn}
  \label{df:Tnat}
  Let $T$ be an affine $\cL$-theory.
  We denote the continuous logic theory $T \cup \CR_\cL$ by $T_\nat$, and call it the \emph{atomless completion} of $T$.
\end{defn}

\begin{remark}\label{rmk:Tnat} The following hold:
  \begin{enumerate}[wide]
  \item The theory $T_\nat$ is q-convex.

  \item By \autoref{l:non-atomic-has-CR}, any direct integral of models of $T$ over an atomless probability space is a model of $T_\nat$. In particular, if $T$ is consistent, so is $T_\nat$.

  \item\label{i:Tnat-direct-int-preservation}
    Any direct integral of models of $T_\nat$ is a model of $T_\nat$. Indeed, we may split any such direct integral into its atomless part and its atomic part. By \autoref{l:non-atomic-has-CR}, the atomless part is a model of $T_\nat$. Thus, the original direct integral is a convex combination of models of $T_\nat$, and we may apply \autoref{p:qconv-direct-integral-preservation}, because convex combinations are elementarily measurable.
  \end{enumerate}
\end{remark}

The following summarizes several of the preceding results.

\begin{theorem}\label{th:Tnat}
  Let $T$ be an affine theory. The following hold:
  \begin{enumerate}
  \item\label{i:th:T=Tnataff} $(T_\nat)_\aff \equiv T$.
  \item $T_\nat$ has delta-convex reduction.
  \item\label{i:th:Tnat-homeo} The canonical map $\rho^\aff\colon \tS_x^\cont(T_\nat) \to \tS_x^\aff(T)$ is a homeomorphism and a $\dtp$-isometry.
  \item\label{i:th:Tnat-completeness}  If $T$ is complete as an affine theory, $T_\nat$ is complete as a continuous logic theory.
  \end{enumerate}
\end{theorem}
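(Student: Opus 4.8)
The four items are by now almost entirely assembled from results established earlier in Part~\ref{part:conn-with-cont}; the task is to package them correctly. I would prove \ref{i:th:T=Tnataff}, \ref{i:th:Tnat-homeo} and \ref{i:th:Tnat-completeness} essentially as consequences of \ref{i:Tnat-direct-int-preservation} of \autoref{rmk:Tnat} together with the general machinery for q-convex theories. The one point requiring a small amount of genuine work is the delta-convex reduction claim; it is the only item whose proof is not a one-line invocation, so it is the natural ``main obstacle'', though even there the heavy lifting is done by \autoref{p:qconvex-latt-red-CR}.

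First I would establish \ref{i:th:T=Tnataff}. The inclusion $T \subseteq (T_\nat)_\aff$ is immediate since $T_\nat = T \cup \CR_\cL \models T$. For the reverse inclusion, it suffices to show that every complete affine theory extending $T$ is consistent with $T_\nat$; given such a completion $T_0$, pick any model $M_0 \models T_0$ and form a direct multiple $L^1(\Omega, M_0)$ over an atomless probability space. By \autoref{l:non-atomic-has-CR} this is a model of $\CR_\cL$, hence of $T_\nat$, and by \Los's theorem for direct integrals (\autoref{th:Los}) we have $M_0 \preceq^\aff L^1(\Omega, M_0)$, so $L^1(\Omega, M_0) \models T_0$ as well. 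Thus $T_\nat \cup T_0$ is consistent, which is what we want. (This also recovers \autoref{rmk:Tnat}(2), that $T_\nat$ is consistent whenever $T$ is.)

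Next, delta-convex reduction. I would invoke \autoref{p:qconvex-latt-red-CR}: since $T_\nat$ is q-convex (\autoref{rmk:Tnat}(1)), it suffices to check that every model of $T_\nat$ has the convex realization property. But that is built into the definition of $T_\nat$: a model of $T_\nat$ is by definition a model of $\CR_\cL$, i.e.\ has the convex realization property. Hence $T_\nat$ has delta-convex reduction. With this in hand, \ref{i:th:Tnat-homeo} follows from \autoref{p:qconvex-latt-red-iota-homeo} applied to $Q = T_\nat$, using \ref{i:th:T=Tnataff} to identify $(T_\nat)_\aff$ with $T$, so that $\rho^\aff_x \colon \tS^\cont_x(T_\nat) \to \tS^\aff_x(T)$ is a $\tau$-homeomorphism and a $\dtp$-isometry.

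Finally, for \ref{i:th:Tnat-completeness}, suppose $T$ is complete as an affine theory. Then $\tS^\aff_0(T)$ is a single point, and by the homeomorphism of \ref{i:th:Tnat-homeo} at $x = \emptyset$, the space $\tS^\cont_0(T_\nat)$ is a single point, which says precisely that $T_\nat$ is a complete continuous logic theory. (Equivalently, one argues directly: if $M, N \models T_\nat$ then $M \equiv^\aff N$ by completeness of $T$, hence $\tp^\aff(\emptyset)$ agrees, and injectivity of $\rho^\aff_0$ upgrades this to $M \equiv^\cont N$.) I expect the whole argument to be short; the only place demanding care is making sure the hypotheses of \autoref{p:qconvex-latt-red-CR} and \autoref{p:qconvex-latt-red-iota-homeo} are genuinely met, which reduces entirely to q-convexity of $T_\nat$ and the tautological presence of the convex realization property in its models.
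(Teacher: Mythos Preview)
Your proposal is correct and follows essentially the same route as the paper: items (i)--(iii) are proved in the paper exactly by the invocations you describe (direct multiples over atomless $\Omega$ for (i), \autoref{p:qconvex-latt-red-CR} for (ii), and \autoref{p:qconvex-latt-red-iota-homeo} for (iii)). For (iv) the paper gives a slightly more hands-on argument via affine joint embedding and delta-convex reduction, while you deduce it directly from (iii) at $x=\emptyset$; both are valid, and yours is arguably the cleaner packaging.
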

\begin{proof}
  \begin{enumerate}[wide]
  \item The inclusion $T\subseteq (T_\nat)_\aff$ is clear. Conversely, if $M\models T$ then $M\preceq^\aff L^1(\Omega,M)\models T_\nat$ for any atomless $(\Omega,\mu)$, so $M\models (T_\nat)_\aff$.
  \item Follows from \autoref{p:qconvex-latt-red-CR}
  \item Follows from \autoref{p:qconvex-latt-red-iota-homeo}.
  \item If $T$ is complete and $N,N'\models T_\nat$, then by \autoref{prop:AffineJEP}, they admit a common affine extension $N,N'\preceq^\aff M$.
    Then also $N,N'\preceq^\aff L^1(\Omega,M)$, and the latter has the convex realization property if we choose $\Omega$ to be atomless.
    Since $T_\nat$ has delta-convex reduction, we have $N\equiv^\cont L^1(\Omega,M)\equiv^\cont N'$.\qedhere
  \end{enumerate}
\end{proof}

\begin{cor}\label{c:CR-CL-completeness}
  Let $M$, $N$ be $\cL$-structures such that $M\equiv^\aff N$.
  If they have the convex realization property, then $M\equiv^\cont N$.
\end{cor}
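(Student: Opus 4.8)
The plan is to deduce this corollary directly from \autoref{th:Tnat}. The key observation is that if $M$ has the convex realization property, then $M \models \CR_\cL$ by \autoref{defn:CR}, and since $M$ is already an $\cL$-structure (hence a model of its own affine theory $\Th^\aff(M)$), we have $M \models \Th^\aff(M)_\nat$, where $\Th^\aff(M)_\nat = \Th^\aff(M) \cup \CR_\cL$ as in \autoref{df:Tnat}. The same applies to $N$.

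The argument then runs as follows. Let $T = \Th^\aff(M)$. Since $M \equiv^\aff N$, we also have $T = \Th^\aff(N)$, so $T$ is a complete affine theory. Both $M$ and $N$ satisfy $\CR_\cL$ by hypothesis, hence both are models of $T_\nat$. By \autoref{th:Tnat}\autoref{i:th:Tnat-completeness}, since $T$ is complete as an affine theory, $T_\nat$ is complete as a continuous logic theory. Therefore any two models of $T_\nat$ are elementarily equivalent in continuous logic, and in particular $M \equiv^\cont N$.

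I do not anticipate any real obstacle here; the corollary is an immediate packaging of the completeness statement in \autoref{th:Tnat}\autoref{i:th:Tnat-completeness} together with the definitions. The only point requiring a word of care is confirming that a structure with the convex realization property is genuinely a model of $T_\nat$ in the sense of \autoref{df:Tnat}, i.e., that "$M$ has the convex realization property" is literally "$M \models \CR_\cL$", which holds by \autoref{defn:CR}. With that identification in place, the proof is a one-line invocation of the preceding theorem.

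\begin{proof}
  Let $T = \Th^\aff(M) = \Th^\aff(N)$, which is a complete affine theory.
  By hypothesis and \autoref{defn:CR}, both $M$ and $N$ satisfy $\CR_\cL$, so both are models of the continuous logic theory $T_\nat = T \cup \CR_\cL$.
  By \autoref{th:Tnat}\autoref{i:th:Tnat-completeness}, $T_\nat$ is complete as a continuous logic theory.
  Hence $M \equiv^\cont N$.
\end{proof}
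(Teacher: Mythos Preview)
Your proof is correct and follows exactly the intended approach: the corollary is stated in the paper immediately after \autoref{th:Tnat} without an explicit proof, precisely because it is the one-line deduction from \autoref{th:Tnat}\autoref{i:th:Tnat-completeness} that you have written out.
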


The following may be viewed as a Keisler--Shelah theorem for affine logic, compare with \cite{Bagheri2021}.

\begin{cor}
  Let $M$, $N$ be $\cL$-structures such that $M\equiv^\aff N$.
  Then for any atomless probability space $\Omega$, we have $L^1(\Omega, M) \equiv^\cont L^1(\Omega, N)$.
  Consequently, by the Keisler--Shelah theorem for continuous logic, there exists an ultrafilter $\cU$ such that $L^1(\Omega, M)^\cU \cong L^1(\Omega, N)^\cU$.
\end{cor}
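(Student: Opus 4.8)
The plan is to combine the Keisler--Shelah theorem for continuous logic (already available, since this is a standard result and is invoked explicitly in the corollary statement) with the preceding corollary, \autoref{c:CR-CL-completeness}, and the structural facts about direct multiples established in this section. The core observation is that $L^1(\Omega,M)$ and $L^1(\Omega,N)$ are continuous-logic elementarily equivalent, after which the Keisler--Shelah theorem produces the required ultrapower isomorphism directly.

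First I would show $L^1(\Omega,M)\equiv^\cont L^1(\Omega,N)$. By \Los's theorem for direct integrals (\autoref{th:Los}, applied to the constant field, noting $M\preceq^\aff L^1(\Omega,M)$ as recorded after \autoref{dfn:DirectMultiple}), we have $\Th^\aff(L^1(\Omega,M)) = \Th^\aff(M)$, and similarly for $N$; since $M\equiv^\aff N$ by hypothesis, we get $L^1(\Omega,M)\equiv^\aff L^1(\Omega,N)$. Next, since $\Omega$ is atomless, \autoref{l:non-atomic-has-CR} gives that both $L^1(\Omega,M)$ and $L^1(\Omega,N)$ have the convex realization property. Then \autoref{c:CR-CL-completeness} applies verbatim: two $\cL$-structures that are affinely equivalent and both have the convex realization property are elementarily equivalent in continuous logic. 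Hence $L^1(\Omega,M)\equiv^\cont L^1(\Omega,N)$.

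Second, I would invoke the Keisler--Shelah isomorphism theorem for continuous logic: any two elementarily equivalent continuous-logic structures have isomorphic ultrapowers, i.e., there is an ultrafilter $\cU$ (on some index set) such that $L^1(\Omega,M)^\cU \cong L^1(\Omega,N)^\cU$. This is precisely the "consequently" clause in the statement, so the proof is essentially a two-line assembly once the first step is in place.

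I do not anticipate a genuine obstacle here, as all the ingredients are already proved in the excerpt; the only point requiring a little care is making explicit why $L^1(\Omega,M)\equiv^\aff L^1(\Omega,N)$, namely the reduction to $\Th^\aff(L^1(\Omega,M))=\Th^\aff(M)$ via \autoref{th:Los}. The write-up would read:

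\begin{proof}
  By \autoref{th:Los} applied to the constant field over $\Omega$, the affine theory of $L^1(\Omega,M)$ equals $\Th^\aff(M)$, and likewise $\Th^\aff(L^1(\Omega,N))=\Th^\aff(N)$. Since $M\equiv^\aff N$, it follows that $L^1(\Omega,M)\equiv^\aff L^1(\Omega,N)$. As $\Omega$ is atomless, both $L^1(\Omega,M)$ and $L^1(\Omega,N)$ have the convex realization property by \autoref{l:non-atomic-has-CR}. Therefore $L^1(\Omega,M)\equiv^\cont L^1(\Omega,N)$ by \autoref{c:CR-CL-completeness}. The final assertion is then immediate from the Keisler--Shelah theorem for continuous logic.
\end{proof}
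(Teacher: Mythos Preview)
Your proof is correct and follows exactly the intended route: the paper does not give an explicit proof for this corollary, leaving it as an immediate consequence of \autoref{c:CR-CL-completeness} (applied to $L^1(\Omega,M)$ and $L^1(\Omega,N)$, which are affinely equivalent via \autoref{th:Los} and have the convex realization property via \autoref{l:non-atomic-has-CR}) together with the Keisler--Shelah theorem for continuous logic. Your write-up fills in precisely these details.
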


\begin{cor}\label{c:Tnat-common-theory-atomless}
  The theory $T_\nat$ is the common continuous logic theory of all direct multiples (equivalently, direct integrals) of models of $T$ over atomless probability spaces.
\end{cor}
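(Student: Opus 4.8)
The statement to prove is \autoref{c:Tnat-common-theory-atomless}: that $T_\nat$ is exactly the common continuous logic theory of all direct multiples $L^1(\Omega,M)$ (equivalently all direct integrals of models of $T$) over atomless probability spaces. The plan is to establish two inclusions between $T_\nat$ and this common theory. First, \emph{every atomless direct multiple (or direct integral) of a model of $T$ is a model of $T_\nat$}: if $M\models T$ and $\Omega$ is atomless, then $L^1(\Omega,M)\models T$ by \Los's theorem (\autoref{th:Los}, which gives $M\preceq^\aff L^1(\Omega,M)$), and $L^1(\Omega,M)$ has the convex realization property by \autoref{l:non-atomic-has-CR}; hence $L^1(\Omega,M)\models T\cup\CR_\cL = T_\nat$ by \autoref{df:Tnat}. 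The same argument works verbatim for an arbitrary direct integral $\int^\oplus_\Omega M_\omega\ud\mu(\omega)$ of models of $T$ over an atomless $(\Omega,\mu)$, again using \autoref{l:non-atomic-has-CR}. This shows that the common continuous theory of all such structures \emph{contains} $T_\nat$.

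For the reverse inclusion, I must show that any continuous logic sentence $\psi$ with $\psi^N \geq 0$ for every atomless direct multiple $N = L^1(\Omega,M)$ of every model $M\models T$ is already a consequence of $T_\nat$. Equivalently, I show that every model $N'\models T_\nat$ is $\equiv^\cont$ to some atomless direct multiple of a model of $T$. Given $N'\models T_\nat$, it is in particular a model of $T$ (by $(T_\nat)_\aff\equiv T$, \autoref{th:Tnat}\autoref{i:th:T=Tnataff}, or just because $T\subseteq T_\nat$), so for any atomless $\Omega$ we have an affine extension $N'\preceq^\aff L^1(\Omega,N')$. Now $L^1(\Omega,N')$ also has the convex realization property by \autoref{l:non-atomic-has-CR}, and $N'$ has it by hypothesis, so $N'\equiv^\aff L^1(\Omega,N')$ (they are affinely elementarily equivalent, being an affine pair) and both have the convex realization property; hence by \autoref{c:CR-CL-completeness}, $N'\equiv^\cont L^1(\Omega,N')$. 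Since $L^1(\Omega,N')$ is an atomless direct multiple of the model $N'\models T$, it follows that $N'$ shares the continuous theory of some structure in the family, and therefore $\psi^{N'}\geq 0$. This proves that the common continuous theory is \emph{contained} in $T_\nat$.

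Combining the two inclusions gives that $T_\nat$ coincides with the common continuous logic theory of all atomless direct multiples of models of $T$. For the parenthetical "equivalently, direct integrals", note that direct multiples are a special case of direct integrals, so the common theory over atomless direct integrals is a priori contained in the common theory over atomless direct multiples; but the first paragraph shows every atomless direct integral is already a model of $T_\nat$, so both common theories equal $T_\nat$.

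The argument is almost entirely assembly of earlier results, so there is no serious obstacle; the only point requiring a little care is the reverse inclusion, where one should be sure that the relevant "$M\equiv^\aff N$ with both having $\CR$" hypothesis of \autoref{c:CR-CL-completeness} is correctly verified for the pair $N'\preceq^\aff L^1(\Omega,N')$ — this is immediate since an affine substructure and its ambient structure are automatically affinely equivalent, and \autoref{l:non-atomic-has-CR} supplies $\CR$ for the larger one while $N'$ has it by assumption.
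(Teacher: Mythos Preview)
Your proposal is correct and takes essentially the same approach as the paper. The only cosmetic difference is that for the reverse inclusion the paper invokes delta-convex reduction of $T_\nat$ (from \autoref{th:Tnat}) to get $M\preceq^\cont L^1(\Omega,M)$ directly, whereas you cite \autoref{c:CR-CL-completeness}; but that corollary is itself an immediate consequence of the same theorem, so the underlying argument is identical.
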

\begin{proof}
  If $T'$ and $T''$ are the common continuous theories of the atomless direct multiples and the atomless direct integrals of models of $T$, respectively, then by \autoref{l:non-atomic-has-CR}, $T' \supseteq T'' \supseteq T_\nat$.
  Conversely, if $M\models T_\nat$, then by delta-convex reduction $M\preceq^\cont L^1(\Omega,M)$ for any atomless $\Omega$, so $M\models T'$. Hence $T'$ and $T''$ are both equivalent to $T_\nat$.
\end{proof}

\begin{cor}
  \label{c:CR-iota-S(A)}
  Let $M$ have the convex realization property, and let $A\subseteq M$.
  Then $D^\cont_A = (D^\aff_A)_\nat$.
  Consequently, $\rho^\aff\colon \tS_x^\cont(A)\to \tS_x^\aff(A)$ is a $\tau$-homeomorphism and a $\dtp$-isometry.
\end{cor}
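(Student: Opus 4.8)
The plan is to reduce Corollary~\ref{c:CR-iota-S(A)} to \autoref{th:Tnat}\autoref{i:th:Tnat-homeo} (together with its companion, \autoref{i:th:T=Tnataff}) applied to the affine theory $T = D^\aff_A$ in the expanded language $\cL_A = \cL(A)$. The one point that needs care is the identification of $D^\cont_A$ (the continuous-logic diagram of $M$ over $A$, i.e.\ $\Th^\cont_{\cL_A}(M)$) with $(D^\aff_A)_\nat$; everything else is then formal bookkeeping about type spaces of theories in languages expanded by constants.

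First I would observe that, since $M$ has the convex realization property, so does $M$ viewed as an $\cL_A$-structure, because $\CR_\cL$ and $\CR_{\cL_A}$ are equivalent (this is recorded right after \autoref{defn:CR}). Hence $M \models D^\aff_A \cup \CR_{\cL_A} = (D^\aff_A)_\nat$. Next, $(D^\aff_A)_\nat$ is complete as a continuous $\cL_A$-theory: indeed $D^\aff_A$ is a complete affine $\cL_A$-theory (it is $\Th^\aff_{\cL_A}(M)$, and complete affine theories are exactly the maximal consistent ones, as noted before \autoref{dfn:AffineEverything}), so \autoref{th:Tnat}\autoref{i:th:Tnat-completeness} gives that $(D^\aff_A)_\nat$ is complete. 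Since $M$ is a model of the complete continuous theory $(D^\aff_A)_\nat$ and also $M \models D^\cont_A$ with $D^\cont_A$ complete, the two theories coincide: $D^\cont_A = (D^\aff_A)_\nat$. This proves the first assertion.

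For the ``consequently'' part: by definition $\tS^\cont_x(A) = \tS^\cont_x(D^\cont_A) = \tS^\cont_x((D^\aff_A)_\nat)$ and $\tS^\aff_x(A) = \tS^\aff_x(D^\aff_A)$, and under these identifications the map $\rho^\aff \colon \tS^\cont_x(A) \to \tS^\aff_x(A)$ is precisely the affine-part map $\rho^\aff_x \colon \tS^\cont_x((D^\aff_A)_\nat) \to \tS^\aff_x(D^\aff_A)$ of \autoref{eq:RhoAff} (here one uses that $(D^\aff_A)_\nat$, being $D^\aff_A \cup \CR_{\cL_A}$, has affine part equivalent to $D^\aff_A$ by \autoref{th:Tnat}\autoref{i:th:T=Tnataff}, so the codomain is correct). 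Then \autoref{th:Tnat}\autoref{i:th:Tnat-homeo}, applied to the affine $\cL_A$-theory $T = D^\aff_A$, says exactly that this map is a $\tau$-homeomorphism and a $\dtp$-isometry, which is what we want.

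The only potential obstacle is a purely notational one: making sure that ``the distance between types in continuous logic'' used in $D^\cont_A$ agrees with the $\dtp$ on $\tS^\cont_x((D^\aff_A)_\nat)$, and that the constant symbols in $\cL_A$ are treated consistently on both sides (affine and continuous) so that the two type spaces over $A$ are being compared via the same $\rho^\aff$; this is handled by the remark, already in the paper, that $\tS^\aff_x(A)$ is well-defined as long as one restricts to affine extensions of $M$, and by \autoref{p:face-simplicial}\autoref{i:p:face-simplicial-constants}-style observations that expansion by constants behaves well. I expect no real mathematical content beyond invoking \autoref{th:Tnat}; the proof will be two or three lines once the identification $D^\cont_A = (D^\aff_A)_\nat$ is in place.
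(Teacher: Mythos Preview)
Your proposal is correct and follows essentially the same argument as the paper: show $M\models (D^\aff_A)_\nat$ using that $\CR_\cL\equiv\CR_{\cL_A}$, invoke completeness of $(D^\aff_A)_\nat$ via \autoref{th:Tnat}\autoref{i:th:Tnat-completeness} to conclude $D^\cont_A=(D^\aff_A)_\nat$, then apply \autoref{th:Tnat}\autoref{i:th:Tnat-homeo}. The notational worries you raise at the end are non-issues and the paper's proof is indeed just a couple of lines.
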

\begin{proof}
  On the one hand, $D^\cont_A \supseteq D^\aff_A \cup \CR_\cL \equiv D^\aff_A \cup \CR_{\cL_A} \equiv (D^\aff_A)_\nat$.
  On the other hand, $D^\aff_A$ is a complete affine theory, so $(D^\aff_A)_\nat$ is a complete continuous $\cL_A$-theory, and therefore coincides with $D^\cont_A$.
  The conclusion then follows from \autoref{th:Tnat}\autoref{i:th:Tnat-homeo}.
\end{proof}

Our machinery yields the following characterization of affinely saturated structures.

\begin{theorem}
  Let $\kappa$ be an infinite cardinal and let $M$ be an $\cL$-structure.
  The following are equivalent:
  \begin{enumerate}
  \item $M$ is affinely $\kappa$-saturated.
  \item $M$ is $\kappa$-saturated in continuous logic and has the convex realization property.
  \end{enumerate}
\end{theorem}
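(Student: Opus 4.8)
The plan is to prove the two implications separately, using the atomless completion machinery of \autoref{sec:atomless-multiples} to bridge between affine and continuous saturation.

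\textbf{From (i) to (ii).} Suppose $M$ is affinely $\kappa$-saturated. First I would observe that $M$ has the convex realization property: by \autoref{l:CR-vs-eca-closed} it suffices to show $M$ is existentially closed for convex formulas in every affine extension, and this is immediate from affine $\kappa$-saturation (in fact affine $\aleph_1$-saturation suffices, or even just that every finitely satisfiable affine type with finitely many parameters is realized) — given an inf-convex formula $\varphi(x)$ with parameters $a\in M$ and an affine extension $M\preceq^\aff N$, the partial affine type over $a$ expressing $\varphi(x,a)\le (\inf_x\varphi)^N + 1/n$ for all $n$ is consistent (realized in $N$), hence realized in $M$. For $\kappa$-saturation in continuous logic: let $p(x)\in\tS^\cont_x(A)$ with $A\subseteq M$, $|A|<\kappa$. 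By \autoref{c:CR-iota-S(A)}, the affine-part map $\rho^\aff\colon \tS^\cont_x(A)\to\tS^\aff_x(A)$ is a $\tau$-homeomorphism; let $q=\rho^\aff(p)\in\tS^\aff_x(A)$. By affine $\kappa$-saturation, $q$ is realized by some $a\in M^x$. Then $\tp^\cont(a/A)$ has the same affine part $q$ as $p$, and since $M$ has the convex realization property, $D^\cont_A=(D^\aff_A)_\nat$ (again \autoref{c:CR-iota-S(A)}), so the continuous type of $a$ over $A$ is determined by its affine part; hence $\tp^\cont(a/A)=p$. So $p$ is realized.

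\textbf{From (ii) to (i).} Suppose $M$ is $\kappa$-saturated in continuous logic and has the convex realization property. Let $q(x)\in\tS^\aff_x(A)$ with $A\subseteq M$, $|A|<\kappa$. By \autoref{prop:TypeWithParameters} (applied in the language $\cL(A)$, relative to $D^\aff_A$), there is an affine extension $N\succeq^\aff M$ realizing $q$. Now I would pass to an atomless direct multiple: by \autoref{l:non-atomic-has-CR}, $L^1(\Omega,N)$ has the convex realization property for atomless $\Omega$, and $N\preceq^\aff L^1(\Omega,N)$, so $q$ is realized there too; thus we may assume the extension $N$ of $M$ also has the convex realization property. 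Since $M\preceq^\aff N$ and both have the convex realization property, by \autoref{c:CR-iota-S(A)} (or \autoref{c:CR-CL-completeness} applied over $\cL(M)$) we get $M\preceq^\cont N$. Then $\tp^\cont(a/A)$ for a realization $a\in N^x$ of $q$ is a continuous type over $A$ with $|A|<\kappa$, hence realized in $M$ by continuous $\kappa$-saturation; its affine part is $q$, so $q$ is realized in $M$.

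\textbf{Main obstacle.} The substantive content is entirely packaged in \autoref{c:CR-iota-S(A)} and \autoref{l:non-atomic-has-CR}: the key point is that for a structure with the convex realization property, the continuous diagram is the atomless completion of the affine diagram, so continuous types over small parameter sets are in canonical bijection with affine types, both topologically and metrically. Once that is invoked the argument is a routine transfer. The only mild care needed is bookkeeping with parameters — ensuring that realizing an affine type of a single variable over $A$ in a suitable extension, rather than a type of a tuple, is handled by the usual reduction, and that the step ``replace $N$ by $L^1(\Omega,N)$'' preserves $M$ as an affine (indeed continuous, after the upgrade) substructure. I do not anticipate a genuine difficulty here; the proof is short modulo the cited results.
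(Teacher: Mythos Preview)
Your proof is correct and follows the paper's approach, which likewise reduces everything to \autoref{l:CR-vs-eca-closed} and \autoref{c:CR-iota-S(A)}. Your (ii)$\Rightarrow$(i) argument takes an unnecessary detour: since $M$ has the convex realization property, \autoref{c:CR-iota-S(A)} already tells you that $\rho^\aff\colon \tS^\cont_x(A)\to\tS^\aff_x(A)$ is a bijection, so given $q\in\tS^\aff_x(A)$ you can directly take $p=(\rho^\aff)^{-1}(q)$, realize $p$ in $M$ by continuous $\kappa$-saturation, and conclude — no need to pass through an auxiliary extension $N$ or $L^1(\Omega,N)$.
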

\begin{proof}
  \begin{cycprf}
  \item Clearly, if $M$ is an affinely $\kappa$-saturated structure, then it is existentially closed for convex formulas in every affine extension. Hence by \autoref{l:CR-vs-eca-closed}, $M$ has the convex realization property. The fact that $M$ is $\kappa$-saturated in continuous logic follows from \autoref{c:CR-iota-S(A)}.
  \item[\impfirst] Also follows readily from \autoref{c:CR-iota-S(A)}.
  \end{cycprf}
\end{proof}

As a consequence, we recover a result of Bagheri \cite[Prop.~4.5]{Bagheri2021}: if $M$ and $N$ are affinely $\aleph_0$-saturated structures and
$M\equiv^\aff N$, then $M\equiv^\cont N$.

We also obtain a sufficient condition for affine homogeneity.

\begin{theorem}
  Let $\kappa$ be an infinite cardinal and let $M$ be an $\cL$-structure.
  If $M$ has the convex realization property, then the following are equivalent:
  \begin{enumerate}
  \item $M$ is affinely $\kappa$-homogeneous.
  \item $M$ is $\kappa$-homogeneous in continuous logic.
  \end{enumerate}
\end{theorem}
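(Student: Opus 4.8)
The plan is to mimic the proof of the preceding characterization of affine saturation, replacing ``saturated'' by ``homogeneous'' throughout, and leaning on the fact that under the convex realization property the affine and continuous type spaces over any parameter set coincide (via \autoref{c:CR-iota-S(A)}). Since the forward direction (affine $\kappa$-homogeneity implies continuous $\kappa$-homogeneity is \emph{not} what we want; rather, we need both implications) we should be careful: both homogeneity notions are about extending partial \emph{maps} to automorphisms, and the point is that under $\CR_\cL$ a partial affine map is the same thing as a partial elementary map.

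First I would observe that if $M$ has the convex realization property and $f\colon M\dashrightarrow M$ is a partial map with $\fd(\dom f)<\kappa$, then $f$ is affine (i.e.\ preserves affine formulas with parameters, equivalently $\tp^\aff(a/\emptyset)=\tp^\aff(f(a)/\emptyset)$ for all tuples $a$ from $\dom f$) if and only if $f$ is elementary in continuous logic. Indeed, enumerate $A=\dom f$ by a tuple $a$ and let $b=f(a)$. The hypothesis that $f$ is affine says precisely $\tp^\aff_\cL(a)=\tp^\aff_\cL(b)$, i.e.\ $(M,a)\equiv^\aff(M,b)$ as $\cL$-structures. Both $(M,a)$ and $(M,b)$ are expansions of $M$ by constants and hence still have the convex realization property (as noted after \autoref{defn:CR}, $\CR_\cL$ and $\CR_{\cL_A}$ are equivalent). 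By \autoref{c:CR-CL-completeness}, $(M,a)\equiv^\aff(M,b)$ implies $(M,a)\equiv^\cont(M,b)$, i.e.\ $\tp^\cont(a)=\tp^\cont(b)$, which is exactly the statement that $f$ is elementary. The converse is trivial since continuous formulas include affine ones.

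With this equivalence in hand, both implications are immediate. If $M$ is affinely $\kappa$-homogeneous and $g\colon M\dashrightarrow M$ is a partial elementary map with $|\dom g|<\kappa$, then $g$ is in particular affine, so it extends to an automorphism of $M$; hence $M$ is $\kappa$-homogeneous in continuous logic. Conversely, if $M$ is $\kappa$-homogeneous in continuous logic and $f\colon M\dashrightarrow M$ is a partial affine map with $|\dom f|<\kappa$, then $f$ is elementary by the equivalence above, so it extends to an automorphism; hence $M$ is affinely $\kappa$-homogeneous. (One can also phrase this through weak $\kappa$-homogeneity and a back-and-forth argument, but it is not needed: extending a single map is all the definition requires, and the notion of automorphism is the same in both logics.)

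The only genuinely delicate point is the one already dispatched in the second paragraph: that affine equivalence of the two expansions $(M,a)$ and $(M,b)$ upgrades to continuous elementary equivalence, which is where the convex realization property does all the work via \autoref{c:CR-CL-completeness}. Everything else is bookkeeping about the meaning of ``partial affine map'' versus ``partial elementary map'' and the fact that the density-character bound $\fd(\dom f)<\kappa$ can be used in place of $|\dom f|<\kappa$ (as remarked after \autoref{dfn:Homogeneous}). I expect no further obstacle; the proof is essentially a one-line reduction to the saturation case.
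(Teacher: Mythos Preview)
Your proof is correct and takes essentially the same approach as the paper: the paper's proof is two sentences, observing that the implication (i) $\Rightarrow$ (ii) is trivial (partial elementary maps are partial affine, no CR needed), and that under CR every partial affine map is partial elementary (the paper cites \autoref{th:Tnat}\autoref{i:th:Tnat-homeo} rather than \autoref{c:CR-CL-completeness}, but these are equivalent here), giving (ii) $\Rightarrow$ (i).
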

\begin{proof}
  Every affinely $\kappa$-homogeneous structure is $\kappa$-homogeneous in continuous logic. Under the convex realization property, every partial affine map is partial elementary, by \autoref{th:Tnat}\autoref{i:th:Tnat-homeo}, so the converse holds.
\end{proof}

We write down the following easy corollaries.

\begin{cor}
  Let $M$ be any structure, let $\kappa$ be an infinite cardinal, and let $\Omega$ be an atomless probability space. Then every $\kappa$-saturated elementary extension of $L^1(\Omega,M)$ is an affinely $\kappa$-saturated affine extension of $M$.

  Similarly, every continuous logic $\kappa$-homogeneous elementary extension of $L^1(\Omega,M)$ is an affinely $\kappa$-homogeneous affine extension of $M$.
\end{cor}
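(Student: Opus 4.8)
The corollary asks us to show two things: first, that every $\kappa$-saturated continuous logic elementary extension $N \succeq^\cont L^1(\Omega, M)$ is an affinely $\kappa$-saturated affine extension of $M$; and second, the analogous statement for homogeneity. The plan is to reduce everything to the structural results already established about $L^1(\Omega, M)$ when $\Omega$ is atomless, together with the two theorems immediately preceding this corollary.

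First I would record the basic inclusion $M \preceq^\aff L^1(\Omega, M)$, which holds by \Los's theorem for direct integrals (the remark after \autoref{dfn:DirectMultiple}). So any $N \succeq^\cont L^1(\Omega, M)$ satisfies $M \preceq^\aff L^1(\Omega, M) \preceq^\aff N$, hence $M \preceq^\aff N$ and $N$ is indeed an affine extension of $M$. Next, since $\Omega$ is atomless, $L^1(\Omega, M)$ has the convex realization property by \autoref{l:non-atomic-has-CR}; and since the convex realization property is axiomatized by continuous logic sentences (\autoref{defn:CR}), it is preserved under continuous logic elementary extensions, so $N$ has the convex realization property as well. Now if $N$ is $\kappa$-saturated in continuous logic, then by the theorem characterizing affinely $\kappa$-saturated structures (``$M$ is affinely $\kappa$-saturated iff $M$ is $\kappa$-saturated in continuous logic and has the convex realization property''), applied to $N$, we conclude that $N$ is affinely $\kappa$-saturated. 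This gives the first assertion.

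For the homogeneity statement, the argument is parallel. Again $N$ has the convex realization property by the preservation argument above. If $N$ is $\kappa$-homogeneous in continuous logic, then by the theorem characterizing affine $\kappa$-homogeneity under the convex realization property (``if $M$ has the convex realization property, then $M$ is affinely $\kappa$-homogeneous iff $M$ is $\kappa$-homogeneous in continuous logic''), applied to $N$, we get that $N$ is affinely $\kappa$-homogeneous. And as noted, $N$ is an affine extension of $M$. This completes the proof.

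There is no real obstacle here — the corollary is a straightforward packaging of three earlier facts (the inclusion $M \preceq^\aff L^1(\Omega, M)$, the fact that atomless direct multiples enjoy the convex realization property, and the two characterization theorems), plus the trivial observation that the convex realization property, being first-order in continuous logic, passes to continuous elementary extensions. The only point worth stating carefully is that $\CR_\cL$ is a continuous logic theory, so that ``$L^1(\Omega,M) \preceq^\cont N$ and $L^1(\Omega,M) \models \CR_\cL$'' yields $N \models \CR_\cL$; everything else is immediate.
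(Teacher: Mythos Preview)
Your proof is correct and is exactly the intended argument: the paper states this corollary without proof, as an immediate consequence of the two preceding theorems together with \autoref{l:non-atomic-has-CR} and the fact that $\CR_\cL$ is a continuous logic theory. You have simply spelled out the steps the paper leaves implicit.
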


\begin{cor}
  Let $T$ be an affine $\cL$-theory. Then $T_\nat$ is the common theory of all affinely $\kappa$-saturated models of $T$.
\end{cor}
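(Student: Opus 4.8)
The plan is to show both inclusions at the level of theories. Let me denote by $T'$ the common affine theory of all affinely $\kappa$-saturated models of $T$. First I would observe that $T_\nat$ is a consequence of $T'$ in the following indirect sense: any affinely $\kappa$-saturated model $M$ of $T$ is affinely $\aleph_0$-saturated, hence existentially closed for convex formulas in every affine extension, so by \autoref{l:CR-vs-eca-closed} it has the convex realization property, i.e.\ $M\models\CR_\cL$ and thus $M\models T_\nat$. Since $T$ is consistent, $T_\nat$ is consistent (\autoref{rmk:Tnat}), and by \autoref{prp:SaturatedExistence} every model of $T$ embeds affinely into an affinely $\kappa$-saturated model of $T$; in particular affinely $\kappa$-saturated models of $T$ exist. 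Therefore every affine sentence that holds in all affinely $\kappa$-saturated models of $T$ is an affine sentence consistent with having the convex realization property, which gives $T'\supseteq (T_\nat)_\aff$. But $(T_\nat)_\aff\equiv T$ by \autoref{th:Tnat}\autoref{i:th:T=Tnataff}, so in fact $T'\supseteq T$ — which is obvious — and more importantly every model of $T'$ is a model of $T$.

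For the reverse direction, the point is that $T'$, as an affine theory, actually equals $T$ (which is easy: affinely $\kappa$-saturated models of $T$ are models of $T$, and conversely every model of $T$ is affinely equivalent to such a model by \autoref{prp:SaturatedHomogeneous} together with the embedding into a saturated extension, so $T'=\Th^\aff$ of these models $=T$ whenever $T$ is the intersection of the affine theories of its models). The substance of the corollary is therefore not about affine theories at all, but about recovering $T_\nat$ as a \emph{continuous} theory from the affinely $\kappa$-saturated models of $T$. So the statement to prove is: $T_\nat$ is the common \emph{continuous logic} theory of all affinely $\kappa$-saturated models of $T$. The plan for this is: (i) every affinely $\kappa$-saturated model of $T$ satisfies $T_\nat$, which was established above; (ii) conversely, $T_\nat$ is complete enough that it \emph{is} the continuous theory of any one such model. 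For (ii), if $T$ is complete then $T_\nat$ is a complete continuous theory by \autoref{th:Tnat}\autoref{i:th:Tnat-completeness}, so it coincides with the continuous theory of any of its models, in particular of any affinely $\kappa$-saturated model of $T$. In general, write $T$ as the (closed convex hull of its) complete affine completions; for each complete affine completion $T_0$ of $T$, an affinely $\kappa$-saturated model of $T_0$ is an affinely $\kappa$-saturated model of $T$, and its continuous theory is $(T_0)_\nat$; conversely every affinely $\kappa$-saturated model of $T$ is an affinely $\kappa$-saturated model of its own complete affine theory $T_0$. Thus the common continuous theory of all affinely $\kappa$-saturated models of $T$ is the intersection $\bigcap_{T_0}(T_0)_\nat$ over complete affine completions $T_0$ of $T$, and this intersection equals $T_\nat$ because $\CR_\cL$ is a fixed axiom scheme and an affine condition holds in all affinely $\kappa$-saturated models iff it holds in all such models of all completions iff it lies in $T$.

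I would phrase the actual write-up compactly as follows. One inclusion: any affinely $\kappa$-saturated $M\models T$ has the convex realization property (by \autoref{l:CR-vs-eca-closed}, since it is existentially closed for convex formulas in every affine extension), so $M\models\CR_\cL$, hence $M\models T_\nat$, hence $T_\nat$ is contained in the common continuous theory. Reverse inclusion: let $\varphi\leq 0$ be a continuous sentence holding in all affinely $\kappa$-saturated models of $T$; let $M\models T$ be arbitrary, choose (\autoref{prp:SaturatedExistence}) an affinely $\kappa$-saturated affine extension $M\preceq^\aff M'$ of $T$; then $M'$ has the convex realization property and $M'\models\varphi\leq 0$; since $M\preceq^\aff M'$ and, if additionally $M$ has the convex realization property then $M\preceq^\cont M'$ by \autoref{c:CR-CL-completeness} (as $M\equiv^\aff M'$ via the extension)... here I need to be slightly careful, because an arbitrary $M\models T$ need not have the convex realization property. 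The clean fix is to only claim the statement against models that appear, which is exactly the models $L^1(\Omega,M)$ with $\Omega$ atomless — but the corollary as stated is about affinely $\kappa$-saturated models of $T$, so I only need the two facts already proved: (1) every affinely $\kappa$-saturated model of $T$ is a model of $T_\nat$, and (2) any two models of $T_\nat$ that are affinely equivalent are continuously equivalent (\autoref{c:CR-CL-completeness}), together with the fact that affinely $\kappa$-saturated models of $T$ realizing the same affine types are affinely equivalent and that $T_\nat$-models restricted to a common affine completion are $T_\nat$ over that completion. The main obstacle, and the only place where real care is needed, is matching up ``common theory of the affinely $\kappa$-saturated models of $T$'' with ``$T_\nat$'' when $T$ is not complete: one must check that the map $T_0\mapsto(T_0)_\nat$ on complete affine completions is compatible with intersections, which follows because $\CR_\cL$ does not depend on $T_0$ and because $(T_0)_\aff\equiv T_0$. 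Everything else is a direct appeal to \autoref{th:Tnat}, \autoref{l:CR-vs-eca-closed}, \autoref{c:CR-CL-completeness}, and \autoref{prp:SaturatedExistence}.
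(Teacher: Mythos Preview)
Your ingredients are correct, but the argument is far more tangled than it needs to be, and the opening detour (defining $T'$ as the common \emph{affine} theory, then discovering the corollary is really about the continuous theory) is a misreading you later correct. In the paper this is stated as a bare corollary of the theorem immediately above: a structure is affinely $\kappa$-saturated if and only if it is $\kappa$-saturated in continuous logic and has the convex realization property.

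With that theorem in hand, both directions are one line. Forward: an affinely $\kappa$-saturated $M\models T$ has the convex realization property, hence $M\models T_\nat$. Reverse: given any $N\models T_\nat$, $N$ has the convex realization property; take any $\kappa$-saturated elementary extension $N'\succeq^\cont N$ in continuous logic; then $N'$ still has CR (being elementarily equivalent to $N$), so by the preceding theorem $N'$ is affinely $\kappa$-saturated, and $N'\equiv^\cont N$. Hence every continuous sentence holding in all affinely $\kappa$-saturated models of $T$ holds in every model of $T_\nat$.

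Your confusion in the reverse direction comes from starting with an arbitrary $M\models T$ rather than $M\models T_\nat$; once you start with a model that already has CR, the problem you flag (``$M$ need not have the convex realization property'') evaporates. Likewise, your ``main obstacle'' about decomposing an incomplete $T$ into its complete affine completions is a red herring: the argument above is uniform in $T$ and never uses completeness, so the intersection-over-completions maneuver is unnecessary.
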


Let us mention that there are theories in which extremal models have the convex realization property. See \autoref{th:Poulsen-complete} and \autoref{sec:PMP}.

\section{Continuity of the direct integral}
\label{sec:continuity-direct-integral}

In \cite{Farah2023p}, Farah and Ghasemi proved that direct integrals of measurable fields of structures (in the separable setting) preserve pointwise elementary equivalence. That is, if $M_\omega \equiv^\cont N_\omega$ almost surely, then $\int^\oplus_\Omega M_\omega\ud\mu \equiv^\cont \int^\oplus_\Omega N_\omega\ud\mu$. Similarly, for elementary extensions.

They obtained this preservation result as a corollary of a finer theorem, analogous to the Feferman--Vaught theorem for reduced products, for direct integrals and continuous formulas. More precisely: the value of a continuous formula $\varphi$ in a direct integral $M_{\Omega,I}=\int^\oplus_\Omega M_\omega\ud\mu$ is determined, up to $\eps$, by a neighborhood of the distribution of a tuple of real random variables $(\llbracket\psi_i\rrbracket)_i$, where the $\psi_i$ are continuous formulas depending only on $\varphi$ and $\eps$ and $\oset{\psi_i}(\omega) \coloneqq \psi_i^{M_\omega}$.

Our first theorem in this section is a generalization of the preservation result of \cite{Farah2023p}, valid under weaker hypotesis and for arbitrary measurable fields.

The following lemma handles the atomic case.

\begin{lemma}\label{l:ConvexCombinationsPreserveEquivalence}
  Let $M = \bigoplus_\omega \lambda_\omega M_\omega$ and $N = \bigoplus_\omega \lambda_\omega M_\omega$ be two convex combinations of $\cL$-structures over the same atomic probability space. Then:
  \begin{enumerate}
  \item If $M_\omega \equiv^\cont N_\omega$ for every $\omega$, then $M\equiv^\cont N$.
  \item If $M_\omega \preceq^\cont N_\omega$ for every $\omega$, then $M\preceq^\cont N$.
  \end{enumerate}
\end{lemma}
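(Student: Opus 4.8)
The statement is about an atomic probability space, so by \autoref{ntn:DirectIntegralConvexCombination} the direct integrals in question are convex combinations $M = \bigoplus_\omega \lambda_\omega M_\omega$ and $N = \bigoplus_\omega \lambda_\omega N_\omega$, and every section is measurable and a pointwise limit of simple sections. The plan is to prove both items simultaneously by induction on the structure of a continuous logic formula $\varphi(\bar x)$, showing that for every tuple $\bar f$ of sections in $M$ and the ``corresponding'' tuple $\bar g$ in $N$ (with $g_i(\omega)$ chosen so that $\tp^\cont(\bar g(\omega)) = \tp^\cont(\bar f(\omega))$ in case (i), and such that $\bar g(\omega)$ realizes $\tp^\cont(\bar f(\omega)/M_\omega)$ in the elementary extension $N_\omega$ in case (ii)), one has $\varphi^M(\bar f) = \varphi^N(\bar g)$. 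Taking $\varphi$ a sentence (and in case (ii) also checking that $M \preceq^\aff N$ so the embedding makes sense) then yields the claim. Since the fields here are atomic, they are automatically elementarily measurable (\autoref{rmk:elem-measurable-fields-convex-comb-direct-mult}), so \autoref{th:Los} and \autoref{lem:LosQuantifier} apply to continuous formulas, not just affine ones.

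The first issue to settle is the matching of sections. In case (i) one cannot hope for a single global isomorphism, but one does not need it: using the axiom of choice, for each $\omega$ pick an isomorphism, or rather just a tuple $g_i(\omega) \in N_\omega$ realizing the same continuous type as $f_i(\omega)$ — here one must be slightly careful that the $g_i$ assemble into measurable sections of $N_\Omega$. Because the probability space is atomic, \emph{every} function on $\Omega$ is measurable (as noted in \autoref{ntn:DirectIntegralConvexCombination}), so this is automatic and $\bar g$ is a tuple in $N$. In case (ii), since $M_\omega \preceq^\cont N_\omega$, we may take $g_i(\omega) = f_i(\omega)$ directly (viewing $M_\omega$ as a subset of $N_\omega$), and the resulting section is measurable for the same reason; one should observe at the outset that $M \preceq^\aff N$, indeed $M \preceq^\cont N$ once the induction is complete, and that simple sections of $M$ are simple sections of $N$.

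The induction itself is routine: atomic formulas and connectives are immediate from the defining formulas \autoref{eq:def-functions} and \autoref{eq:def-predicates} together with the pointwise agreement $P^{M_\omega}(\bar f(\omega)) = P^{N_\omega}(\bar g(\omega))$ which holds by the choice of $\bar g$; the connective step uses only that connectives are applied pointwise inside the integral and that $\Omega$ is atomic so the integral is a finite sum $\sum_\omega \lambda_\omega (\cdot)$. For the quantifier step $\varphi = \sup_y \psi(\bar x, y)$, one uses \autoref{lem:LosQuantifier}: $\varphi^M(\bar f) = \int_\Omega \sup_{y \in M_\omega}\psi^{M_\omega}(\bar f(\omega), y)\ud\mu_\Omega$ and likewise for $N$. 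In case (i) the integrands agree $\omega$-by-$\omega$ since $\sup_{y\in M_\omega}\psi^{M_\omega}(\bar f(\omega),y)$ depends only on the continuous type of $\bar f(\omega)$, which equals that of $\bar g(\omega)$. In case (ii) the integrands again agree because $M_\omega \preceq^\cont N_\omega$ gives $\big(\sup_y\psi(\bar a, y)\big)^{M_\omega} = \big(\sup_y\psi(\bar a, y)\big)^{N_\omega}$ for $\bar a = \bar f(\omega)$. I do not expect a genuine obstacle here; the only point requiring a little care is keeping track of the relationship between $\bar f$ and $\bar g$ across quantifier alternations — i.e.\ that one really only needs the pointwise \emph{type} of $\bar f(\omega)$ (case (i)) or the pointwise elementary-extension relation (case (ii)), both of which are preserved since no new variables are ``bound'' at the level of sections. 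A cleaner bookkeeping device, which I would likely adopt, is to phrase the induction as: for every continuous formula $\varphi(\bar x)$, every $\omega$, and every $\bar a \in M_\omega^{\bar x}$, $\varphi^{M_\omega}(\bar a) = \varphi^{N_\omega}(\bar a)$ in case (ii) (which is just the hypothesis $M_\omega \preceq^\cont N_\omega$ unwound), and then integrate; for case (i) one reduces to (ii) by the standard trick of amalgamating $M_\omega$ and $N_\omega$ over their common type into a joint structure, or simply invokes that elementary equivalence means equality of the values of all sentences with parameters named, so the argument is even shorter.
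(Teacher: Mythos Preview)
Your quantifier step has a genuine gap. The identity you invoke, $\varphi^M(\bar f) = \int_\Omega \sup_{y \in M_\omega}\psi^{M_\omega}(\bar f(\omega), y)\,\ud\mu$ for $\varphi = \sup_y \psi$, would require the \L o\'s formula $\psi^M(\bar f, h) = \int_\Omega \psi^{M_\omega}(\bar f(\omega), h(\omega))\,\ud\mu$ for the \emph{continuous} formula $\psi$, and this is false: \autoref{th:Los} is stated and holds only for affine formulas. Elementary measurability buys you the measurability of \autoref{lemma:DirectIntegralMeasurableLanguage} and the quantifier--integral swap of \autoref{lem:LosQuantifier}, but not \L o\'s for continuous formulas. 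A concrete counterexample: $\Omega = \{0,1\}$ with weights $\tfrac12,\tfrac12$, $\psi = P \vee (1-P)$, and a section $f$ with $P^{M_0}(f(0)) = 0$, $P^{M_1}(f(1)) = 1$; then $\psi^M(f) = \tfrac12$ while $\int \psi^{M_\omega}(f(\omega))\,\ud\mu = 1$. Stripped of this error, your induction in case (ii) yields only $\sup_{h \in M}\psi^M(\bar f, h) = \sup_{h \in M}\psi^N(\bar f, h)$, and the remaining equality $\sup_{h \in M}\psi^N(\bar f, h) = \sup_{h' \in N}\psi^N(\bar f, h')$ is exactly Tarski--Vaught for $M \subseteq N$, i.e., the goal. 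Your ``cleaner bookkeeping device'' has the same flaw: the pointwise equality $\varphi^{M_\omega}(\bar a) = \varphi^{N_\omega}(\bar a)$ does not integrate to $\varphi^M(\bar f) = \varphi^N(\bar f)$.

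The paper sidesteps this by passing to an auxiliary multi-sorted structure $M' = \bigsqcup_\omega M_\omega$ in the disjoint-union language $\cL' = \bigsqcup_\omega \cL_\omega$, one sort per $\omega$. After Morleyizing each coordinate, the combined theory has quantifier elimination: a quantifier $\inf_x$ with $x$ of sort $\omega_0$ can be absorbed because the formula factors as $f(\psi(\bar a, x), \theta(\bar b))$ with $\psi$ an $\cL_{\omega_0}$-formula and $\theta$ not involving sort $\omega_0$, so $\inf_x f(\psi(\bar a, x), r)$ is again an $(\cL_{\omega_0})_\Mor$-formula. From QE and $M_\omega \preceq^\cont N_\omega$ one gets $M' \preceq^\cont N'$; the convex combination $M$ is then interpretable in $M'$ (in the product sort), and the interpretation restricts to $N$ inside $N'$, giving $M \preceq^\cont N$. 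The essential difference is that in $M'$ each quantifier ranges over a single $M_\omega$, where the hypothesis applies directly, whereas in the convex combination a quantifier ranges over the product, and there is no \L o\'s-type decomposition for continuous formulas to exploit.
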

\begin{proof}
  We prove the second point, from which the first one follows by joint embedding. We consider the language $\cL'=\bigsqcup_\omega\cL_\omega$ consisting of disjoint copies $\cL_\omega$ of $\cL$ (with distinct corresponding sorts) for each $\omega$, and then consider the $\cL'$-structures $M' = \bigsqcup_\omega M_\omega$ and $N' = \bigsqcup_\omega N_\omega$, where the sorts and symbols of $\cL_\omega$ are interpreted by the structures $M_\omega$ and $N_\omega$, in the obvious manner. We may also consider the language $\cL''=\bigsqcup_\omega(\cL_\omega)_\Mor$.

  The $\cL''$-theory $\bigcup_\omega \Th^\cont\big((M_\omega)_\Mor\big)$ has quantifier elimination. Indeed, it is enough to show how to eliminate one quantifier at a time. Consider an $\cL''$-formula $\inf_x \phi(x, \bar a, \bar b)$ where $\phi$ is quantifier-free, $\bar a$ is a tuple of elements from the sort $\omega_x$ of $x$, and $\bar b$ is a tuple of elements of the other sorts, in some model of the theory. Then $\phi(x, \bar a, \bar b) = f(\psi(\bar a, x), \theta(\bar b))$ for some continuous function $f$,  quantifier-free $(\cL_{\omega_x})_\Mor$-formula $\psi$, and quantifier-free $\cL''$-formula $\theta$. If we set $r = \theta(\bar b)$, then $\inf_x f(\psi(\bar a, x), r)$ is an $(\cL_{\omega_x})_\Mor$-formula, so equivalent to a quantifier-free $(\cL_{\omega_x})_\Mor$-formula. We conclude that the value of $\inf_x \phi(x, \bar a, \bar b)$ is determined by the quantifier-free type of $\bar a \bar b$ in the language $\cL''$, and the claim follows.

  Hence if $M_\omega\preceq^\cont N_\omega$ for each $\omega$, then $M'\preceq^\cont N'$. On the other hand, $N$ is canonically interpretable in $N'$ (in the product sort $\prod_\omega N_\omega$), and the restriction of this interpretation to the elementary submodel $M'$ is an interpretation of $M$. Thus, $M\preceq^\cont N$, as desired.
\end{proof}

Let $(\Omega, \cB, \mu)$ be a probability space. We will say that an atom $A \in \cB$ is \emph{represented by a singleton} if there is $\omega \in A$ such that $\set{\omega} \in \cB$ and $\mu(\set{\omega}) = \mu(A)$. We will say that the space has \emph{singleton atoms} if all of its atoms are represented by singletons. The following remark shows that, with respect to elementary equivalence of direct integrals, it is harmless to replace atoms by singletons if the field is elementarily measurable.

\begin{remark}\label{rmk:degenerate-atoms}
  Let $(M_\Omega,e_I)$ be an elementarily measurable field of $\cL$-structures over a probability space $(\Omega,\mu)$. Let $A$ be an atom of $\Omega$, and let $\mu_A$ be the corresponding conditional probability measure. Consider the restricted field $(M_A,e_I\rest_A)$ and let $M_{A,I}=\int_A^\oplus M_\omega\ud\mu_A$. If $M_\omega\models Q$ for almost every $\omega\in A$, then $M_A\models Q$. Indeed, we may assume that the language is countable, and using that $\fI^\cont(M_\Omega,e_I,\cL)$ is closed and cofinal, we may find a countable subset $I_0\subseteq I$ such that $M_{\omega,I_0}\preceq^\cont M_\omega$ for almost every $\omega\in A$, and at the same time $M_{A,I_0}\preceq^\cont M_{A,I}$. Since $A$ is an atom and $I_0$ and the language are countable, the map $M_{\omega,I_0}\to M_{A,I_0}$, $e_{I_0}(\omega)\mapsto e_{I_0}\rest_A$ is an isomorphism for almost every $\omega\in A$. Thus $M_{A,I}\equiv^\cont M_{A,I_0}\cong M_{\omega,I_0}\models Q$, as desired.
\end{remark}

Given a probability space $\Omega$ with singleton atoms, let us denote
$$\Omega_{\rm{at}}=\set{\omega\in\Omega: \mu(\set{\omega}) > 0},\quad \Omega_{\rm{na}} = \Omega\setminus\Omega_{\rm{at}}.$$
\begin{theorem}
  \label{th:DirectIntegralPreservesEquivalence}
  Let $(\Omega, \mu)$ be a probability space with singleton atoms and let $(M_\Omega,e_I)$ and $(N_\Omega,e'_J)$ be measurable fields of $\cL$-structures.
  \begin{enumerate}
  \item If $M_\omega\equiv^\aff N_\omega$ for almost every $\omega\in\Omega_{\rm{na}}$ and $M_\omega\equiv^\cont N_\omega$ for every $\omega\in\Omega_{\rm{at}}$, then $\int_\Omega^\oplus M_\omega\ud\mu \equiv^\cont \int_\Omega^\oplus N_\omega\ud\mu$.
  \item Suppose that $M_\omega\subseteq N_\omega$ for almost every $\omega\in\Omega$, and that the maps $\omega\mapsto d(e_i(\omega),e'_j(\omega))$ are measurable for all $i\in I$, $j\in J$.
    Then
    $$\int_\Omega^\oplus M_\omega \ud\mu \subseteq \int_\Omega^\oplus N_\omega \ud\mu.$$
  \item If, in addition, $M_\omega \preceq^\aff N_\omega$ for almost every $\omega\in\Omega_{\rm{na}}$ and $M_\omega \preceq^\cont N_\omega$ for every $\omega\in\Omega_{\rm{at}}$, then
    $$\int_\Omega^\oplus M_\omega \ud\mu \preceq^\cont \int_\Omega^\oplus N_\omega \ud\mu.$$
  \end{enumerate}
  In particular, for any $\cL$-structures $M,N$, if $M\equiv^\cont N$ (or just $M\equiv^\aff N$ if $\Omega$ is atomless) then $L^1(\Omega,M)\equiv^\cont L^1(\Omega,N)$. And if $M\preceq^\cont N$ (or just $M\preceq^\aff N$ if $\Omega$ is atomless) then $L^1(\Omega,M) \preceq^\cont L^1(\Omega,N)$.
\end{theorem}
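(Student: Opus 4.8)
The strategy is to reduce the general case to two known extremes: the purely atomless case (already covered by \autoref{l:ConvexCombinationsPreserveEquivalence} is not enough, but the combination of \autoref{l:non-atomic-has-CR}, \autoref{c:CR-CL-completeness} and \autoref{l:ConvexCombinationsPreserveEquivalence} will be) and the purely atomic case (\autoref{l:ConvexCombinationsPreserveEquivalence}). First I would split $\Omega$ along the measurable partition $\Omega=\Omega_{\rm at}\sqcup\Omega_{\rm na}$, which is legitimate since the space has singleton atoms (so $\Omega_{\rm at}$ is a countable union of singletons, hence measurable, and so is $\Omega_{\rm na}$). By \autoref{l:concentration-direct-integral} and the general additivity of direct integrals over a measurable partition, we get
\begin{gather*}
  \int_\Omega^\oplus M_\omega\ud\mu \cong \mu(\Omega_{\rm at})\cdot\!\!\int_{\Omega_{\rm at}}^\oplus M_\omega\ud\mu_{\Omega_{\rm at}} \ \oplus\ \mu(\Omega_{\rm na})\cdot\!\!\int_{\Omega_{\rm na}}^\oplus M_\omega\ud\mu_{\Omega_{\rm na}},
\end{gather*}
and likewise for $N$. (I would record this splitting as a short preliminary lemma, or simply cite the fact that a finite measurable partition of the base space induces a convex-combination decomposition of the direct integral; this is essentially \autoref{l:concentration-direct-integral} applied to each piece, together with \autoref{ntn:DirectIntegralConvexCombination}.)

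For the atomic part $\Omega_{\rm at}$, the direct integral $\int_{\Omega_{\rm at}}^\oplus M_\omega\ud\mu_{\Omega_{\rm at}}$ is a convex combination of the family $(M_\omega:\omega\in\Omega_{\rm at})$ (all functions on a countable space with singleton atoms being measurable), so \autoref{l:ConvexCombinationsPreserveEquivalence} applies directly: $M_\omega\equiv^\cont N_\omega$ for every $\omega\in\Omega_{\rm at}$ gives $\int_{\Omega_{\rm at}}^\oplus M_\omega \equiv^\cont \int_{\Omega_{\rm at}}^\oplus N_\omega$, and similarly $M_\omega\preceq^\cont N_\omega$ gives $\preceq^\cont$ of the direct integrals. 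For the atomless part, the key point is that $\int_{\Omega_{\rm na}}^\oplus M_\omega\ud\mu_{\Omega_{\rm na}}$ has the convex realization property by \autoref{l:non-atomic-has-CR}, and likewise for $N$. Moreover, by \autoref{th:Los}, the affine theory of a direct integral depends only on the measurable structure and the pointwise affine theories of the fibres; so from $M_\omega\equiv^\aff N_\omega$ a.e.\ on $\Omega_{\rm na}$ one gets $\int_{\Omega_{\rm na}}^\oplus M_\omega \equiv^\aff \int_{\Omega_{\rm na}}^\oplus N_\omega$ (this requires checking that the two measurable fields give rise to the same $\theta_*\mu$ on the appropriate type space — again a routine consequence of \Los's theorem, using a common pointwise enumeration). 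Then \autoref{c:CR-CL-completeness} upgrades this affine equivalence to $\equiv^\cont$. For the extension statement one argues similarly: $M_\omega\preceq^\aff N_\omega$ a.e.\ on $\Omega_{\rm na}$ gives an affine inclusion of direct integrals (part (2), whose proof is the elementary verification that a pointwise-measurable family of affine inclusions induces an affine inclusion of the direct integrals, using the measurability hypothesis on $\omega\mapsto d(e_i(\omega),e'_j(\omega))$), and since the larger one has the convex realization property, \autoref{p:affine-complete} or directly \autoref{th:Tnat}\autoref{i:th:Tnat-homeo} (applied with the affine diagram of the smaller structure, which also has CR after passing to the subfield) shows the inclusion is $\preceq^\cont$.

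Finally, one assembles the pieces: $\equiv^\cont$ (resp.\ $\preceq^\cont$) holds on each of the two convex summands, so by \autoref{l:ConvexCombinationsPreserveEquivalence} applied to the two-point probability space $\{\Omega_{\rm at},\Omega_{\rm na}\}$ with weights $\mu(\Omega_{\rm at}),\mu(\Omega_{\rm na})$, it holds for the whole direct integral. Part (2) is proved separately and unconditionally (it does not use the singleton-atom hypothesis in any essential way and is used inside the proof of part (3)). The last sentence of the theorem is then the special case of a constant field. I expect the main obstacle to be bookkeeping rather than conceptual: one must be careful that the affine equivalence $\int_{\Omega_{\rm na}}^\oplus M_\omega\equiv^\aff\int_{\Omega_{\rm na}}^\oplus N_\omega$ really does follow from a.e.\ fibrewise affine equivalence, which is where \autoref{th:Los} together with a judicious choice of a common pointwise enumeration (so that corresponding sections on the two sides realize the same affine types) does the work; once that is in hand, the passage from $\equiv^\aff$ to $\equiv^\cont$ via the convex realization property is immediate from the results of \autoref{sec:atomless-multiples}.
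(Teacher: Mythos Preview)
Your approach is correct and essentially identical to the paper's: split into the atomic and atomless pieces, handle the atomic piece via \autoref{l:ConvexCombinationsPreserveEquivalence}, handle the atomless piece by combining \autoref{l:non-atomic-has-CR} with \autoref{th:Los} and \autoref{c:CR-CL-completeness}, and reassemble. One simplification: for the atomless part you do not need to compare pushforward measures or arrange a common pointwise enumeration---\autoref{th:Los} applied to \emph{sentences} already gives $\sigma^{\int M_\omega} = \int \sigma^{M_\omega}\,d\mu = \int \sigma^{N_\omega}\,d\mu = \sigma^{\int N_\omega}$ directly from $M_\omega\equiv^\aff N_\omega$ a.e., which is all that is needed for $\equiv^\aff$ (and similarly $\preceq^\aff$ in part (3)).
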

\begin{proof}
  We may assume that $\Omega$ is atomless, as the general case then follows by applying \autoref{l:ConvexCombinationsPreserveEquivalence}. For the first item, it suffices by \autoref{c:CR-CL-completeness} to note that $\int_\Omega^\oplus M_\omega\ud\mu$ and $\int_\Omega^\oplus N_\omega\ud\mu$ have the same affine theory.
  This follows from \autoref{th:Los}.
  The second item holds since every measurable section for $(M_\Omega,e_I)$ is a measurable section for $(N_\Omega,e'_J)$.
  For the third item, it suffices to observe that $\int_\Omega^\oplus M_\omega\ud\mu\preceq^\aff \int_\Omega^\oplus N_\omega\ud\mu$, which holds by \autoref{th:Los} as well.
\end{proof}
In view of \autoref{rmk:degenerate-atoms}, the assumption that $(\Omega, \mu)$ has singleton atoms in \autoref{th:DirectIntegralPreservesEquivalence} entails no loss of generality if we assume in addition that the measurable fields are elementarily measurable.

Next we will prove a continuity result for the direct integral construction with respect to continuous logic (\autoref{c:continuity-direct-integral} below), related to the Feferman--Vaught-like theorem of Farah and Ghasemi, which will be useful in the next section.

In fact, if one considers direct integral over atomless probability spaces only, the result of \cite{Farah2023p} is  equivalent to the quantifier elimination theorem for \emph{atomless randomizations} (see \cite[Thm.~3.32]{BenYaacovOnTheories}). In turn, this fundamental theorem about randomizations has the following analogue concerning Bauerizations with the convex realization property. (In \autoref{sec:Randomizations}, we will discuss how randomizations can be seen, indeed, as Bauerizations.)

\begin{theorem}
  \label{thm:QBauNatQE}
  For any continuous theory $Q$, the theory $(Q_\Bau)_\nat$ has quantifier elimination in continuous logic.
\end{theorem}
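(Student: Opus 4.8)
The plan is to reduce quantifier elimination for $(Q_\Bau)_\nat$ to a statement about affine types. By \autoref{th:Tnat}\autoref{i:th:Tnat-homeo}, the canonical map $\rho^\aff_x\colon \tS^\cont_x\bigl((Q_\Bau)_\nat\bigr) \to \tS^\aff_x(Q_\Bau)$ is a homeomorphism, so a continuous formula is determined, modulo $(Q_\Bau)_\nat$, by the affine type $\tS^\aff_x(Q_\Bau)$ of its variables. Since $Q_\Bau = (Q_\Mor)_\aff$ and $Q_\Mor$ has affine reduction, \autoref{l:interdef-affine-reduction-Bau} tells us that $Q_\Bau$ is an affine definitional expansion of $(Q_\Mor)_\aff = Q_\aff$, and in particular $Q_\Bau$ names (by the predicates $P_\varphi$) all continuous logic $\cL$-formulas, which are quantifier-free in the Morleyized language $\cL_\Mor$. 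Thus it suffices to show that every affine $\cL_\Mor$-formula is, modulo $Q_\Bau$ (equivalently, modulo $(Q_\Bau)_\nat$, since the affine parts coincide), a uniform limit of \emph{quantifier-free} affine $\cL_\Mor$-formulas; in the language of \autoref{df:quantifier-elim}, that $Q_\Bau$ has affine quantifier elimination, and then by \autoref{lem:Continuous-vs-AffineQE}-type reasoning plus the homeomorphism $\rho^\aff_x$, that $(Q_\Bau)_\nat$ eliminates quantifiers in continuous logic.

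So the first step I would carry out is to prove affine quantifier elimination for $Q_\Bau$: given an affine formula of the form $\inf_y \varphi(x,y)$ with $\varphi$ quantifier-free affine in $\cL_\Mor$, I want to approximate it modulo $Q_\Bau$ by a quantifier-free affine $\cL_\Mor$-formula. The key tool is the convex realization property, which holds in every model of $(Q_\Bau)_\nat$ and, more to the point, in every atomless direct integral of models of $Q_\Mor$ (by \autoref{l:non-atomic-has-CR}); since every model of $Q_\Bau$ is a direct integral of models of $Q_\Mor$ (\autoref{rmk:Bau}) and embeds affinely into an atomless one by taking $L^1(\Omega,-)$, we can compute infima over such atomless direct integrals. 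In an atomless direct integral $N = \int^\oplus_\Omega M_\omega\ud\mu$ with the $M_\omega \models Q_\Mor$, the value of $\bigl(\inf_y \varphi(x,y)\bigr)^N$ at a tuple $f$ equals, by \autoref{lem:LosQuantifier}, $\int_\Omega \bigl(\inf_y \varphi(f(\omega),y)\bigr)^{M_\omega}\ud\mu(\omega)$; because $M_\omega$ is a model of $Q_\Mor$, which has affine reduction and quantifier elimination, $\bigl(\inf_y \varphi(f(\omega),y)\bigr)^{M_\omega}$ is a quantifier-free continuous $\cL_\Mor$-formula of $f(\omega)$ — but a quantifier-free continuous $\cL_\Mor$-formula is literally (an affine combination of) atomic predicates $P_\psi$, hence a quantifier-free affine $\cL_\Mor$-formula, call it $\theta(x)$. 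Integrating, $\bigl(\inf_y\varphi(x,y)\bigr)^N = \theta^N(f)$. This identity, holding in all atomless direct integrals and hence (by affine embedding and \autoref{l:realized-convex-dense}) modulo $Q_\Bau$ up to $\eps$, is exactly what we need.

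The second step is bookkeeping: handle a general affine $\cL_\Mor$-formula by induction on quantifier depth, using prenex form and the observation above for each quantifier, and handle $\sup$ by passing to $-\varphi$. Then transfer back to continuous logic: by \autoref{th:Tnat}\autoref{i:th:Tnat-homeo} the space $\tS^\cont_x\bigl((Q_\Bau)_\nat\bigr)$ is homeomorphic to $\tS^\aff_x(Q_\Bau) = \tS^\aff_x\bigl((Q_\Bau)_\nat\bigr)$, so by \autoref{rmk:rmk:AffineQE}-style density (\autoref{rmk:AffineQE}) affine formulas are dense in continuous affine functions on the type space, and combined with the fact that $(Q_\Bau)_\nat$ has delta-convex reduction (\autoref{th:Tnat}) the quantifier-free affine $\cL_\Mor$-formulas are uniformly dense among \emph{all} continuous $\cL_\Mor$-formulas modulo $(Q_\Bau)_\nat$; this is precisely continuous logic quantifier elimination.

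\textbf{Main obstacle.} The delicate point is the interplay between the pointwise computation $\bigl(\inf_y\varphi(f(\omega),y)\bigr)^{M_\omega}$ and the integral: I must ensure that the quantifier-free $\cL_\Mor$-formula $\theta$ produced by quantifier elimination in $M_\omega$ does not depend on $\omega$, which is automatic since $Q_\Mor$ is a fixed complete-free theory with the same quantifier-elimination scheme in all models, but one must be careful that the \emph{rate} of uniform approximation is uniform in $\omega$ — this is where the convexity of continuity moduli and the uniform-definable-predicate formalism of \autoref{sec:Definability} do the work. A secondary subtlety is that $Q$ need not be complete, so "$Q_\Mor$ has quantifier elimination" must be interpreted in the approximate sense of \autoref{df:quantifier-elim} applied to $\rho^\qf_x$, and one invokes \autoref{lem:Continuous-vs-AffineQE} together with the definitional-expansion structure to pass between the continuous and affine quantifier-free fragments of $\cL_\Mor$; checking that these identifications are compatible with $\rho^\aff_x$ is the part most likely to require care.
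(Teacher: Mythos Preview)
Your proposal is correct and ultimately rests on the same two ingredients as the paper's proof: affine quantifier elimination for $Q_\Bau$, and delta-convex reduction for $(Q_\Bau)_\nat$ (from \autoref{th:Tnat}). Combining these gives continuous-logic QE immediately: every continuous formula reduces to a delta-convex formula, i.e., a lattice combination of affine formulas; each affine formula reduces to a quantifier-free affine one; and a lattice combination of quantifier-free formulas is quantifier-free.

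Where you diverge from the paper is in proving the first ingredient. The paper's argument is one line: $Q_\Bau = (Q_\Mor)_\aff$ is the affine part of a Morleyization, and $Q_\Mor$ has continuous-logic quantifier elimination by construction, so by \autoref{lem:Continuous-vs-AffineQE} its affine part has affine quantifier elimination. Your direct-integral computation is a valid alternative but unnecessary: you are reproving via models what already follows syntactically. The ``main obstacle'' you flag---uniformity of the QE formula $\theta$ in $\omega$---is a non-issue for exactly the reason you yourself note in passing: $\theta$ is determined modulo the theory $Q_\Mor$, independently of the model $M_\omega$. Likewise, your concern that a quantifier-free continuous $\cL_\Mor$-formula need not literally be an affine combination of atomic predicates is handled by absorbing the connectives into a Morleyized predicate: $c(P_{\psi_1},\ldots,P_{\psi_n}) \equiv_{Q_\Mor} P_{c(\psi_1,\ldots,\psi_n)}$. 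So your argument works, but the paper's is considerably shorter.
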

\begin{proof}
  The theory $Q_\Bau$, being the affine part of a Morleyization, has affine quantifier elimination.
  On the other hand, by \autoref{th:Tnat}, $(Q_\Bau)_\nat$ has delta-convex reduction.
  The result follows.
\end{proof}

Given a probability space $(\Omega,\mu)$, let us denote by $w(\Omega)$ the measure of its largest atom, or zero if it has no atoms.

\begin{lemma}\label{l:approximate-CR}
  Let $\cL$ be a signature and let $\Sigma\subseteq\CR_\cL$ consist of finitely many axioms of the convex realization property (see \autoref{defn:CR}), say $\Sigma = \set{ \theta_i = 0 : i<n}$. Then for every $\eps>0$ there is $\delta>0$ such that if $(M_\Omega,e_I)$ is any elementarily measurable field of $\cL$-structures with $w(\Omega)<\delta$, the direct integral $M_{\Omega,I}=\int_\Omega^\oplus M_\omega\ud\mu$ satisfies $\theta_i^{M_{\Omega,I}} < \eps$ for every $i<n$.
\end{lemma}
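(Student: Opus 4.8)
The idea is to leverage \autoref{l:non-atomic-has-CR}, which tells us that if $\Omega$ is genuinely atomless then the direct integral is a \emph{model} of $\CR_\cL$, i.e., $\theta_i^{M_{\Omega,I}} = 0$ for all $i$. The present lemma is an approximate, uniform version: a probability space whose atoms are all small is ``close'' to atomless, and this should force each $\theta_i^{M_{\Omega,I}}$ to be small, with the threshold $\delta$ depending only on $\eps$, $\cL$ and the (finitely many) axioms in $\Sigma$, but not on the field. First I would fix one axiom
\begin{equation*}
  \theta = \sup_{x_0,\dots,x_{m-1},y}\,\qinf_z\,\bigvee_{i<n}\Bigl|\varphi_i(z,y) - \sum_{j<m}\lambda_j\varphi_i(x_j,y)\Bigr|
\end{equation*}
from $\Sigma$ and analyze $\theta^{M_{\Omega,I}}$ directly via \autoref{th:Los} and \autoref{lem:LosQuantifier}. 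Given tuples $\bar f = (f_0,\dots,f_{m-1})$ and $g$ of measurable sections, I need to produce a measurable section $h$ witnessing that the value of $\qinf_z$ is small, \emph{after integrating}. Following the proof of \autoref{l:non-atomic-has-CR}, one chooses a finite measurable partition $\Omega=\bigsqcup_{k<\ell}A_k$ on which all the relevant affine formulas $\varphi_i(f_j(\cdot),g(\cdot))$ are constant up to $\eps'$, then wants to split each $A_k$ into pieces $A_{jk}$ of mass $\lambda_j\mu(A_k)$ and define $h$ to equal $f_j$ on $A_{jk}$. The only obstruction to doing this exactly is the presence of atoms: an atom $A$ with $\mu(A)>0$ cannot be subdivided at all.

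The key quantitative point is therefore a measure-theoretic splitting lemma: if $(\Omega,\mu)$ has all atoms of mass $<\delta$, then for any $\ell$ measurable sets $A_0,\dots,A_{\ell-1}$ and any weights $\lambda_0,\dots,\lambda_{m-1}\ge 0$ summing to $1$, one can find a measurable partition $A_k = \bigsqcup_{j<m} A_{jk}$ with $|\mu(A_{jk}) - \lambda_j\mu(A_k)| < (\text{something}) \cdot \delta$, uniformly. This is standard: decompose each $A_k$ into its atomic part $A_k^{\mathrm{at}}$ (a countable union of atoms, each of mass $<\delta$) and its atomless part $A_k^{\mathrm{na}}$; on $A_k^{\mathrm{na}}$ split exactly into masses $\lambda_j\mu(A_k^{\mathrm{na}})$ using atomlessness; on $A_k^{\mathrm{at}}$ one distributes whole atoms greedily among the $m$ classes, and since each atom has mass $<\delta$ the resulting masses differ from the targets $\lambda_j\mu(A_k^{\mathrm{at}})$ by less than $\delta$ per class. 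Summing over $k$, the total discrepancy $\sum_k|\mu(A_{jk})-\lambda_j\mu(A_k)|$ is at most $\ell\delta$, or with a slightly more careful bookkeeping $m\delta$ (distributing atoms globally rather than per-$A_k$). I would aim for a clean bound of the form $m\delta$ or $2m\delta$; the exact constant is unimportant.

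With this splitting in hand, define $h(\omega) = f_j(\omega)$ for $\omega\in A_{jk}$, exactly as in \autoref{l:non-atomic-has-CR}. The computation there then goes through with an extra error term coming from the measure discrepancies: writing $R$ for a uniform bound on $|\varphi_i|$ (which depends only on the value intervals of the predicate symbols occurring in $\Sigma$, hence only on $\Sigma$), one gets, for each $i<n$,
\begin{equation*}
  \Bigl|\varphi_i^{M_{\Omega,I}}(h,g) - \sum_{j<m}\lambda_j\,\varphi_i^{M_{\Omega,I}}(f_j,g)\Bigr| \;\le\; 2\eps' + 2R\cdot(\text{total measure discrepancy}),
\end{equation*}
and choosing $\eps'$ and $\delta$ small (say $\eps' = \eps/16$ and $\delta$ so that $2R\cdot m\delta < \eps/16$, uniformly over the field since $R$ depends only on $\Sigma$) gives $\bigvee_{i<n}|\varphi_i(h,g) - \sum_j\lambda_j\varphi_i(f_j,g)|^{M_{\Omega,I}} < \eps/4$ for these particular $\bar f, g$. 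Since this holds for \emph{all} measurable sections $\bar f,g$, and since $\qinf_z$ in the direct integral is computed by \autoref{lem:LosQuantifier} as an infimum over sections $h$, we conclude $\theta^{M_{\Omega,I}} \le \eps/4 < \eps$. Finally, taking $\delta$ to be the minimum of the finitely many thresholds obtained for the finitely many axioms $\theta_i = 0$ in $\Sigma$ yields the uniform $\delta$ required by the statement. The main obstacle is purely the measure-theoretic splitting estimate with atoms; everything else is a routine rerun of the proof of \autoref{l:non-atomic-has-CR} keeping track of the extra errors, and the elementary measurability hypothesis is used exactly as there (and as in \autoref{rmk:elem-measurable-fields-convex-comb-direct-mult} / \autoref{th:Los}) to make sense of the integrands and apply \Los's theorem.
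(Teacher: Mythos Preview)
Your approach is sound and genuinely different from the paper's. You carry out a direct quantitative rerun of the proof of \autoref{l:non-atomic-has-CR}, controlling the extra error that arises from only approximately splitting measurable sets when small atoms are present. The paper instead gives a soft compactness argument: it adjoins an auxiliary probability-algebra sort by considering $(Q^+)_\Bau$, where $Q$ is the empty $\cL$-theory and $Q^+$ adds the theory of the two-point algebra; in a model of $(Q^+)_\Bau$ this sort becomes $\MALG(\Omega)$, and $w(\Omega)$ is then expressible by a single continuous sentence. Since $(Q^+)_\Bau$ together with $w(\Omega)=0$ implies $\CR_\cL$ (which is exactly \autoref{l:non-atomic-has-CR}), compactness yields the uniform $\delta$. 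Your argument is more elementary and self-contained; the paper's is slicker once the Bauerization machinery is in place and explains conceptually why the bound is uniform.

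One point to tighten: your claim that the total measure discrepancy can be taken of order $m\delta$ by ``distributing atoms globally rather than per-$A_k$'' is not justified, since your witness $h=f_j$ on $A_{jk}$ must respect the partition $(A_k)_k$. The safe bound is of order $\ell m\delta$, and for the argument to be uniform in the field you must observe that $\ell$ can be chosen bounded solely in terms of $\eps',R,n,m$ --- for instance by taking the partition induced by pulling back a grid of mesh $\eps'$ in $[-R,R]^{nm}$ via the measurable map $\omega\mapsto\bigl(\varphi_i^{M_\omega}(f_j(\omega),g(\omega))\bigr)_{i,j}$. With that noted, your $\delta$ depends only on $\Sigma$ and $\eps$, as required. (As a side remark, your direct approach uses only the affine \Los\ theorem and therefore does not actually need the elementary-measurability hypothesis; in the paper's proof that hypothesis is used precisely to upgrade the field to a measurable field of $(\cL^+)_\Mor$-structures.)
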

\begin{proof}
  Let $\cL_\PrA=\set{\mu,\cup,\cap,\neg,\bZero,\bOne}$ be the language of probability algebras, and let us consider the trivial probability algebra $A = \set{\bZero,\bOne}$ and its continuous theory $Q_A=\Th^\cont(A)$. Let $Q$ be the empty $\cL$-theory, let $\cL^+=\cL\sqcup\cL_\PrA$ (where $\cL_\PrA$ is interpreted on a separate sort), and let us consider the $\cL^+$-theory $Q^+=Q\cup Q_A$ and its Bauerization, $(Q^+)_\Bau$.

  By \autoref{rmk:Bau}, the models $M^+=(M,M_A)$ of $(Q^+)_\Bau$ can be decomposed as $\int^\oplus_\Omega M^+_\omega \ud \mu$, where the structures $M^+_\omega$ form a measurable field of models of $(Q^+)_\Mor$. In particular, $M^+_\omega = (M_\omega,M_{\omega,A})$ with $M_\omega \models Q_\Mor$ and $M_{\omega,A} = A$, and it follows that $M_A = L^1(\Omega,A) = \MALG(\Omega)$. Conversely, any elementarily measurable field $(M_\omega:\omega\in\Omega)$ of $\cL$-structures induces a measurable field $((M_\omega,A):\omega\in\Omega)$ of $(\cL^+)_\Mor$-structures, whose direct integral is a model of $(Q^+)_\Bau$.

  Now it suffices to note that the quantity $w(\Omega)$ is coded by the continuous $\cL_\PrA$-sentence
  $$\sup_x\inf_y\big|\mu(x\cap y)-\mu(x\cap \neg y)\big|,$$
  and that $(Q^+)_\Bau$ together with the condition $w(\Omega)=0$ imply $\CR_{(\cL^+)_\Mor}$ (and hence also $\CR_\cL$), by \autoref{l:non-atomic-has-CR}. The conclusion then follows by compactness.
\end{proof}

\begin{cor}
  \label{c:continuity-direct-integral}
  For every continuous $\cL$-formula $\varphi(x)$ and every $\eps>0$, there exist continuous $\cL$-formulas $\psi_0(x),\dots,\psi_{n-1}(x)$ and $\delta>0$ such that the following holds:

  Whenever $(M_\Omega,e_I)$ and $(N_{\Omega'},e'_J)$ are elementarily measurable fields of $\cL$-structures and $a \in M_{\Omega, I}^x$, $b \in N_{\Omega', J}^x$ are measurable sections such that
  \begin{itemize}
  \item $w(\Omega)<\delta$, $w(\Omega')<\delta$, and
  \item $\big|\int_\Omega \psi_i(a(\omega))\ud\mu - \int_{\Omega'} \psi_i(b(\omega))\ud\mu' \big| <\delta$ for every $i<n$,
  \end{itemize}
  the direct integrals $M_{\Omega,I}=\int_\Omega^\oplus M_\omega\ud\mu$ and $N_{\Omega',J}=\int_{\Omega'}^\oplus N_{\omega'}\ud\mu'$ satisfy
  $$\big|\varphi(a) - \varphi(b)\big| < \varepsilon.$$
\end{cor}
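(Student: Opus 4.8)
\textbf{Proof plan for \autoref{c:continuity-direct-integral}.}
The plan is to reduce the statement to \autoref{thm:QBauNatQE}, which asserts that $(Q_\Bau)_\nat$ eliminates quantifiers in continuous logic, combined with the ``approximate convex realization'' estimate of \autoref{l:approximate-CR}. First I would set up, as in the proof of \autoref{l:approximate-CR}, the auxiliary language $\cL^+ = \cL \sqcup \cL_\PrA$ with $\cL_\PrA$ on a separate sort, the $\cL^+$-theory $Q^+ = Q \cup Q_A$ (where $Q$ is the empty $\cL$-theory and $Q_A$ the theory of the trivial probability algebra), and its Bauerization $(Q^+)_\Bau$. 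As explained there, an elementarily measurable field $(M_\Omega, e_I)$ of $\cL$-structures gives rise to a measurable field $((M_\omega, A) : \omega \in \Omega)$ of $(\cL^+)_\Mor$-structures whose direct integral $(M_{\Omega,I}, \MALG(\Omega))$ is a model of $(Q^+)_\Bau$, and the weight $w(\Omega)$ is coded by the continuous $\cL_\PrA$-sentence $\sup_x \inf_y |\mu(x \cap y) - \mu(x \cap \neg y)|$.

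Next I would observe that the given continuous $\cL$-formula $\varphi(x)$ is, a fortiori, a continuous $(\cL^+)_\Mor$-formula. By \autoref{thm:QBauNatQE}, modulo $(Q^+_\Bau)_\nat$ the formula $\varphi$ is approximated, up to $\eps/3$ say, by a quantifier-free $(\cL^+)_\Mor$-formula; such a formula is a continuous combination of atomic predicates, i.e.\ of predicates of the form $P_\chi$ for affine $\cL^+_\Mor$-formulas $\chi$, equivalently a continuous function $h$ of finitely many affine $(\cL^+)_\Mor$-formulas. Tracing through the Bauerization identifications and \Los's theorem for direct integrals (\autoref{th:Los}), the value of each such affine formula on a measurable section $a$ is an integral $\int_\Omega \psi_i(a(\omega))\ud\mu$ of a continuous $\cL$-formula $\psi_i$ (the $\cL_\PrA$-part being trivial since the algebra sort is interpreted by $A$ pointwise). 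So modulo $(Q^+_\Bau)_\nat$, $\varphi(a)$ is within $\eps/3$ of $h\bigl(\int_\Omega \psi_0(a),\dots,\int_\Omega \psi_{n-1}(a)\bigr)$, and $h$ is uniformly continuous on the relevant compact box; choose $\delta_0 > 0$ for $h$ corresponding to $\eps/3$.

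Finally I would invoke \autoref{l:approximate-CR}: there is $\delta_1 > 0$ such that if $w(\Omega) < \delta_1$ then the direct integral $(M_{\Omega,I},\MALG(\Omega))$ satisfies the relevant finite fragment $\Sigma$ of $\CR_{(\cL^+)_\Mor}$ to within the tolerance needed to make the quantifier-elimination approximation of the previous paragraph valid up to $2\eps/3$ (this uses that $(Q^+)_\Bau$ already holds exactly, and only the atomless axioms $\CR$ are approximated). Taking $\delta = \min(\delta_0,\delta_1)$ and the same $\psi_i$, one then gets, for fields with $w(\Omega), w(\Omega') < \delta$ and $|\int_\Omega \psi_i(a) - \int_{\Omega'} \psi_i(b)| < \delta$ for all $i$, that both $\varphi(a)$ and $\varphi(b)$ lie within $2\eps/3 + \delta_0$-error of $h$ evaluated at nearby arguments, hence $|\varphi(a) - \varphi(b)| < \eps$. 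The main obstacle I anticipate is bookkeeping: making the three error budgets (quantifier-elimination approximation, uniform continuity of $h$, and the approximate-$\CR$ slack from \autoref{l:approximate-CR}) interlock cleanly, and verifying carefully that the affine $(\cL^+)_\Mor$-formulas appearing after quantifier elimination really do push forward, under the Bauerization dictionary and \autoref{th:Los}, to integrals of honest continuous $\cL$-formulas $\psi_i$ in the original variables $x$ — rather than formulas that secretly involve the probability-algebra sort in an essential way.
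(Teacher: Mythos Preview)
Your approach is correct and uses the same two ingredients as the paper (\autoref{thm:QBauNatQE} and \autoref{l:approximate-CR}), but you take an unnecessary detour through the enlarged language $\cL^+ = \cL \sqcup \cL_\PrA$. The paper applies \autoref{thm:QBauNatQE} directly with $Q$ the empty $\cL$-theory, so the quantifier-free approximation $\chi(x)$ lives in $\cL_\Mor$, whose atomic predicates are exactly the symbols $P_{\psi_i}$ for continuous $\cL$-formulas $\psi_i$; then $P_{\psi_i}(a) = \int_\Omega \psi_i(a(\omega))\,\ud\mu$ holds immediately in the direct integral viewed as a model of $Q_\Bau$, and the $\psi_i$ are the required formulas with no further unpacking. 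A short compactness step extracts a finite $\Sigma \subseteq \CR_\cL$ and a tolerance $\eps'$ such that $Q_\Bau \cup \{\theta_k < \eps'\}$ already forces $|\varphi - \chi| \leq 2\eps$, and then \autoref{l:approximate-CR} (for $\cL$, as a black box) supplies the $\delta$ controlling $w(\Omega)$. The obstacle you flag --- that the approximating formula might ``secretly involve the probability-algebra sort'' --- is entirely an artifact of your choice to work in $(\cL^+)_\Mor$; it does not arise in the paper's setup because there is no probability-algebra sort in $\cL_\Mor$. (Your version can still be made to work, since pointwise the algebra sort is the trivial $\{\bZero,\bOne\}$ and any $(\cL^+)$-formula with free variables only in $\cL$-sorts reduces to an $\cL$-formula modulo $Q^+$, but this extra reduction is exactly what the paper's route bypasses.)
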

\begin{proof}
  Let $Q$ be the empty $\cL$-theory. By \autoref{thm:QBauNatQE}, $\varphi(x)$ can be approximated, up to $\eps>0$ and modulo $Q_\Bau\cup\CR_\cL$, by a quantifier-free $\cL_\Mor$-formula $\chi(x)$. By compactness, there are a finite set of axioms $\Sigma = \set{\theta_k:k<m}\sub\CR_\cL$ and $\eps'>0$ such that
  $$Q_\Bau\cup\set{\theta_k<\eps' : k<m}\models \sup_x |\varphi(x) - \chi(x)|\leq 2\eps.$$
  Let $M=\int_\Omega^\oplus M_\omega\ud\mu$ be a direct integral of an elementarily measurable field of $\cL$-structures. We can also see it as an integral of $\cL_\Mor$-structures which are models of $Q_\Mor$. In particular, $M \models Q_\Bau$. As $\chi$ is quantifier free, its value at a tuple $a \in M^x$ is determined by the values at $a$ of some basic $\cL_\Mor$-formulas $P_{\psi_i}$, i.e., by the values $P_{\psi_i}(a) = \int_\Omega P_{\psi_i}(a(\omega))\ud\mu =  \int_\Omega\psi_i(a(\omega))\ud\mu$. Combining this with \autoref{l:approximate-CR} and choosing $\delta$ small enough according to the continuity modulus of $\chi$, we obtain the desired result.
\end{proof}


\section{Characterizations of affineness}\label{sec:charact-affineness}

We use here the results from the previous sections to give a semantic characterization of continuous logic theories that are axiomatizable in affine logic and a syntactic characterization of \emph{complete} continuous logic theories preserved under direct integrals.

\begin{lemma}\label{l:half+half-preservation-implies-integral-pres}
  Let $Q$ be a continuous logic theory such that whenever $M,N\models Q$, we have $\half M\oplus\half N\models Q$. Then $Q$ is preserved under direct integrals of elementarily measurable fields over atomless probability spaces.
  If $Q$ has a separable language, then it is enough to check the condition for separable $M$ and $N$.
\end{lemma}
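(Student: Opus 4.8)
The plan is to reduce an arbitrary direct integral to a convex combination (the atomic case) by a limiting argument, exploiting that over an atomless probability space a direct integral can be approximated by finite convex combinations of its conditional pieces. First I would recall the tools: \autoref{th:Los} (\Los's theorem for direct integrals), which computes affine formulas in the integral by integration; \autoref{l:measure-concentration} and \autoref{l:ConvexCombinationsPreserveEquivalence} for handling convex combinations; and \autoref{c:continuity-direct-integral}, the continuity statement for the direct integral construction in continuous logic. The hypothesis ``$\half M \oplus \half N \models Q$ whenever $M,N\models Q$'' immediately iterates (by associativity of convex combinations and \autoref{l:ConvexCombinationsPreserveEquivalence}, or just directly) to give that \emph{every} finite convex combination $\bigoplus_{k<m}\lambda_k M_k$ of models of $Q$ with dyadic rational weights, and then by a further continuity/limiting argument with arbitrary positive weights summing to $1$, is again a model of $Q$; here I would use that the affine theory of a convex combination depends continuously on the weights and that $Q$ being a continuous theory is a closed condition, together with \autoref{l:approximate-CR}-style approximation if needed. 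Actually the cleanest route: \autoref{c:continuity-direct-integral} applied to atomic fields shows $\bigoplus_k\lambda_k M_k \models Q$ for all positive weights, since such a combination is a direct integral over a finite (hence atomic) probability space, and the value of each axiom is controlled by finitely many $\psi_i$-averages which vary continuously in the $\lambda_k$.

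Next, given an elementarily measurable field $(M_\Omega, e_I)$ over an atomless probability space $(\Omega,\mu)$ with $M_\omega \models Q$ a.e., and given an axiom $\sup_{\bar x}\,\theta(\bar x) \le 0$ of $Q$ (equivalently any continuous sentence we must check), I would fix $\eps>0$ and invoke \autoref{c:continuity-direct-integral} to obtain continuous formulas $\psi_0,\dots,\psi_{n-1}$ and $\delta>0$ such that the value of the relevant formula in a direct integral over a space $\Omega'$ with $w(\Omega')<\delta$ is determined up to $\eps$ by the averages $\int \psi_i$ and by $w(\Omega')<\delta$. Since $\Omega$ is atomless, $w(\Omega)=0<\delta$, so this applies to $M_{\Omega,I}$ itself. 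Now I partition $\Omega$ into finitely many measurable pieces $\Omega = \bigsqcup_{k<m} A_k$ of small measure (each $<\delta$, and fine enough that the $\psi_i$-averaged integrands are nearly constant on each $A_k$ in the appropriate sense), pick $\omega_k \in A_k$ (or rather a conditional sub-integral), and compare $M_{\Omega,I} = \int_\Omega^\oplus M_\omega\,\ud\mu$ with the convex combination $\bigoplus_{k<m}\mu(A_k)\, M_{\omega_k}'$ where $M_{\omega_k}'$ is chosen so that the $\psi_i$-averages over $A_k$ match; concretely one uses \autoref{l:measure-concentration} together with the fact that $\int_{A_k}^\oplus M_\omega\,\ud\mu_{A_k}$ is again a direct integral of models of $Q$ over an atomless space, and by induction (or by a diagonal/standard-part argument via ultraproducts) one reduces each such piece further. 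The upshot is that $M_{\Omega,I}$ and a finite convex combination of models of $Q$ have, up to $\eps$, the same value on the sentence under consideration; since the finite convex combination satisfies $Q$ by the first paragraph and $\eps$ is arbitrary, $M_{\Omega,I}\models Q$. For the separable-language refinement, I would note that in that case one may take $I$ countable on a club of countable sublanguages, $\tS^\aff_\bN$ is metrizable, and the Downward Löwenheim--Skolem theorem lets one replace $M,N$ by separable affine substructures while testing each (countably many) axiom, so that checking $\half M\oplus\half N\models Q$ for separable $M,N$ suffices.

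The main obstacle I expect is making the passage ``atomless direct integral $\approx$ finite convex combination'' rigorous for \emph{continuous} (not just affine) sentences: affine sentences are handled trivially by \autoref{th:Los} (their value is literally an integral, hence a limit of finite convex combinations of the fiber values), but a general continuous sentence involves quantifiers $\inf_y$, $\sup_y$ ranging over measurable sections, and the value in the integral is \emph{not} the integral of the fiberwise values. This is precisely why \autoref{c:continuity-direct-integral} (the Feferman--Vaught-type continuity result) is needed — it says that, modulo the atomlessness-controlling parameter $w(\Omega)$, the continuous theory of a direct integral is governed by finitely many averaged continuous formulas, i.e. by the \emph{distribution} of a real random variable $\omega\mapsto(\psi_i^{M_\omega}(\cdot))_i$. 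Once one accepts that, the argument becomes a routine approximation: split $\Omega$ into many small atomless pieces, on each piece the direct sub-integral is elementarily equivalent (in continuous logic, after the substitution) to a single model of $Q$ up to $\eps$ by the same continuity lemma, glue back via \autoref{l:ConvexCombinationsPreserveEquivalence} and the first paragraph. I would therefore structure the final proof as: (1) finite convex combinations of models of $Q$ satisfy $Q$; (2) reduce to the atomless case (the mixed case being handled by splitting off the atomic part and applying \autoref{l:ConvexCombinationsPreserveEquivalence}); (3) in the atomless case, apply \autoref{c:continuity-direct-integral} and a partition argument to sandwich $\int_\Omega^\oplus M_\omega\,\ud\mu$ between finite convex combinations of models of $Q$, up to any $\eps$, for every continuous sentence of $Q$; (4) the separable refinement via Löwenheim--Skolem.
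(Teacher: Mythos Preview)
Your approach is essentially the paper's: invoke \autoref{c:continuity-direct-integral}, partition $\Omega$ finely, and compare the direct integral with a finite convex combination of fiber models. The one substantive difference is your step (1), extending preservation from $\half\oplus\half$ to arbitrary weights. The paper avoids this entirely: since $\Omega$ is atomless, one may partition it into $2^\ell$ pieces of \emph{equal} measure $2^{-\ell}<\delta$, allowing the approximation $|\psi_i^{M_\omega}-\psi_i^{M_{\omega_k}}|<\delta/2$ to fail on a set of indices of total measure $<\delta/2$. The comparison convex combination is then $\bigoplus_k 2^{-\ell}M_{\omega_k}$, which satisfies $Q$ by direct iteration of the hypothesis. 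This sidesteps your step (1) completely.

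This matters because your proposed justification for arbitrary weights via \autoref{c:continuity-direct-integral} has a gap: that corollary requires $w(\Omega)<\delta$ and $w(\Omega')<\delta$, and for an atomic space with $m$ atoms the largest atom has weight $\geq 1/m$, which does not tend to zero as you vary the weights. One can repair this by first subdividing each atom into many small pieces (repeating the models), but the paper's equal-weight trick is cleaner. A few smaller points: there is no ``mixed case'' to reduce, since the statement is already restricted to atomless $\Omega$; and in step (3) no induction, conditional sub-integrals, or ultraproducts are needed --- the $\psi_i$ are \emph{sentences}, so $\omega\mapsto(\psi_i^{M_\omega})_i$ is a bounded measurable map into $\R^n$, and one just approximates it by a simple function and picks one $\omega_k$ per level set. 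For the separable refinement, the paper argues as you do: L\"owenheim--Skolem plus \autoref{l:ConvexCombinationsPreserveEquivalence} show that if $\half M\oplus\half N\models Q$ holds for separable $M,N$, it holds for all $M,N$.
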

\begin{proof}
  Let $M_{\Omega,I} = \int_\Omega^\oplus M_\omega \ud\mu$ be the direct integral of an elementarily measurable field of models of $Q$ over an atomless probability space $(\Omega, \mu)$. Let $\varphi$ be a continuous $\cL$-sentence such that $Q\models \varphi=0$. Let $\eps>0$, and let $\psi_0,\dots,\psi_{n-1}$ and $\delta>0$ be as given by \autoref{c:continuity-direct-integral}. We may assume that $0\leq \psi_i \leq 1$ for all $i$. We may find a finite measurable partition $\Omega = \bigsqcup_{k<m} A_k$ and points $\omega_k\in A_k$ such that for every $i<n$, $k<m$ and almost every $\omega\in A_k$,
  \begin{equation}\label{eq:l:half+half}
    \big|\psi_i^{M_\omega} - \psi_i^{N_k}\big| < \delta/2,
  \end{equation}
  where $N_k = M_{\omega_k}$.
  Moreover, if we allow \autoref{eq:l:half+half} to fail for a set $K$ of indices $k<m$ such that $\mu\big(\bigcup_{k\in K}A_k)<\delta/2$, we may choose the sets $A_k$ so that $m = 2^\ell$ and $\mu(A_k)=2^{-\ell}<\delta$ for some $\ell\in\N$ and for every $k<m$.

  It follows that the atomic measurable field $(N_k : k<m)$ over the space $(m,\mu')$ defined by $\mu'(\set{k})=2^{-\ell}$ for each $k<m$ satisfies the hypotheses of \autoref{c:continuity-direct-integral} together with the field $M_\Omega$. On the other hand, by iterating the hypothesis of the statement, we have $\bigoplus_k 2^{-\ell}N_k \models Q$. We deduce that $|\varphi^{M_{\Omega,I}}|<\eps$, which is enough to conclude.

  If $Q$ has a separable language and the condition of the statement is valid for any separable $M$ and $N$, then it is valid for general models of $Q$, by Löwenheim--Skolem and \autoref{l:ConvexCombinationsPreserveEquivalence}.
\end{proof}

\begin{theorem}\label{th:affine-classes}
  Let $Q$ be a continuous logic theory. The following are equivalent:
  \begin{enumerate}
  \item\label{i:Q-aff-ax} $Q$ is affinely axiomatizable.
  \item\label{i:Q-aff-ax-conditions}
    The class of models of $Q$ satisfies the following properties:
    \begin{enumerate}
    \item\label{i:Q-aff-cond-half} if $M,N\models Q$, then $\half M\oplus\half N\models Q$;
    \item\label{i:Q-aff-cond-L1} if $L^1(\Omega,M)\models Q$, with $\Omega$ a standard atomless probability space, then $M\models Q$.
    \end{enumerate}
  \end{enumerate}
  Moreover, if $Q$ has a separable language, it is enough to check the conditions of \autoref{i:Q-aff-ax-conditions} for separable $M$ and $N$.
\end{theorem}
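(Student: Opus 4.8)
The implication \autoref{i:Q-aff-ax}$\Rightarrow$\autoref{i:Q-aff-ax-conditions} is the easy direction: if $Q$ is affinely axiomatizable, then $Q \equiv Q_\aff$, and affine conditions are preserved by convex combinations (this is \autoref{lemma:NaiveConvexLos}, iterated) and by direct multiples (since $M \preceq^\aff L^1(\Omega,M)$, so $M$ satisfies every affine condition that $L^1(\Omega,M)$ does — in fact for this direction we do not even need $\Omega$ atomless). So both \autoref{i:Q-aff-cond-half} and \autoref{i:Q-aff-cond-L1} hold, and the restriction to separable $M,N$ is automatic.

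The substance is the converse, \autoref{i:Q-aff-ax-conditions}$\Rightarrow$\autoref{i:Q-aff-ax}. The natural candidate for an affine axiomatization is $Q_\aff$ itself, so the goal is to show that every model of $Q_\aff$ is a model of $Q$. First I would invoke \autoref{l:half+half-preservation-implies-integral-pres}: condition \autoref{i:Q-aff-cond-half} says exactly that $Q$ is preserved under $\half M \oplus \half N$, hence $Q$ is preserved under direct integrals of elementarily measurable fields over atomless probability spaces (and if the language is separable, it suffices to have \autoref{i:Q-aff-cond-half} for separable $M,N$, matching the ``moreover''). Now take an arbitrary $M \models Q_\aff$. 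By \autoref{th:general-models-Qaff}, $M$ embeds affinely in a direct integral $N = \int^\oplus_\Omega M_\omega \ud \mu$ of an \emph{elementarily} measurable field of models of $Q$; moreover, inspecting the proof of \autoref{th:general-models-Qaff}, the underlying probability space there is built from a type space $\tS^\aff_x(Q_\aff)$ equipped with a boundary measure, but for our purposes we may simply split $\Omega$ into its atomless and atomic parts. To be able to apply the preservation statement I would arrange for $\Omega$ to be atomless: one can always pass from $N$ to $L^1(\Xi, N)$ for $\Xi$ an atomless standard probability space, which by Fubini-type reasoning (or by \autoref{rem:L1-only-depends-MALG} together with the tensoring of measure algebras) is again a direct integral of the $M_\omega$'s, now over an atomless space, while still having $M \preceq^\aff N \preceq^\aff L^1(\Xi,N)$. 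Then the preservation result gives $L^1(\Xi, N) \models Q$.

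It remains to descend from $L^1(\Xi,N) \models Q$ back to $M \models Q$, and here is where condition \autoref{i:Q-aff-cond-L1} enters. The point is that $M \preceq^\aff L^1(\Xi, N)$, but $L^1(\Xi,N)$ is \emph{not} literally of the form $L^1(\Omega', M)$, so \autoref{i:Q-aff-cond-L1} is not immediately applicable. The fix is to replace $M$ by a suitable affine extension: choose any affine extension $M \preceq^\aff M'$ with $M' \models Q$ (for instance an elementary extension of $L^1(\Xi,N)$, or more economically $M'$ itself — but we want something to which both $M$ and a direct multiple map). Concretely, I would argue as follows. We have $M \preceq^\aff L^1(\Xi, N) =: M'$ with $M' \models Q$. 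Now form $L^1(\Theta, M)$ for $\Theta$ atomless standard; then $M \preceq^\aff L^1(\Theta, M)$, and since $M \preceq^\aff M'$ we get $L^1(\Theta, M) \preceq^\aff L^1(\Theta, M')$ (direct multiples preserve affine extensions, by \autoref{th:Los}). But $L^1(\Theta, M')$ is a direct multiple of the model $M'$ of $Q$ over an atomless standard space, hence $L^1(\Theta, M') \models Q$ by the preservation statement again. Thus $L^1(\Theta, M)$ is an affine submodel of a model of $Q$, and $L^1(\Theta,M) \models Q_\aff$; repeating the whole argument starting from $L^1(\Theta,M)$ does not obviously close the loop, so instead I would observe directly: $L^1(\Theta, M) \preceq^\aff L^1(\Theta, M')$, and we want $L^1(\Theta,M) \models Q$. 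If this holds, then \autoref{i:Q-aff-cond-L1} applied to $\Omega = \Theta$ gives $M \models Q$, finishing the proof. So the real remaining task is to show $L^1(\Theta, M) \models Q$ knowing $L^1(\Theta,M) \preceq^\aff L^1(\Theta, M')$ and $L^1(\Theta,M') \models Q$ — i.e. one needs that being an affine submodel of a model of $Q$ already forces membership in $Q$, \emph{for direct multiples over atomless spaces}. This is precisely where I expect the main obstacle to lie, and the clean way around it is to set up the chain so that $L^1(\Theta,M)$ is itself visibly a direct integral, over an atomless space, of models of $Q$: indeed $M$ is an affine submodel of the direct integral $N$ of $Q$-models, so $L^1(\Theta, M)$ is an affine submodel of $L^1(\Theta, N) = \int^\oplus_{\Theta \times \Omega} M_\omega$, a direct integral of $Q$-models over an atomless space, hence $L^1(\Theta,N) \models Q$; but we still only get $L^1(\Theta,M) \preceq^\aff L^1(\Theta,N)$, not equality. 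The resolution is that \autoref{i:Q-aff-cond-L1} is exactly the extra hypothesis that breaks this circularity: apply it with $\Omega := \Theta$ to the pair $M \preceq^\aff L^1(\Theta,M)$ once we know $L^1(\Theta,M)\models Q$; and $L^1(\Theta, M) \models Q$ follows because $Q_\aff \models Q$ would be what we are proving — so the argument must instead be organized as a single application: take $M\models Q_\aff$, produce via \autoref{th:general-models-Qaff} and the atomless-ification above an elementarily measurable field over an atomless $\Omega$ with $M \preceq^\aff \int^\oplus_\Omega M_\omega \ud\mu$ and each $M_\omega \models Q$; by \autoref{l:half+half-preservation-implies-integral-pres} the integral models $Q$; and then — crucially — use \autoref{th:Los} to see that $M$ and the integral have the same affine theory is not enough, so one instead notes that $M$, being a model of $Q_\aff$, together with the fact (to be extracted from the construction in \autoref{th:general-models-Qaff} / \autoref{p:eta-surjective}) that $M$ embeds in the $\cL$-reduct of a model of $Q_\Bau$, and such a reduct is a direct integral of models of $Q_\Mor$, i.e. of $Q$; applying \autoref{i:Q-aff-cond-L1} after an atomless direct multiple then gives $M \models Q$. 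I would write this up by first isolating the lemma ``if $M \preceq^\aff L$ with $L$ a direct integral of models of $Q$ over an atomless space, then $L^1(\Theta,M)$ is again such a direct integral, hence $\models Q$, hence $M \models Q$ by \autoref{i:Q-aff-cond-L1},'' which makes the logic linear. The separable reduction in the ``moreover'' clause follows since \autoref{th:general-models-Qaff} keeps the space standard and the field countably enumerated when $Q$ has separable language and $M$ is separable, and Löwenheim–Skolem reduces the general case to the separable one.

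\textbf{Summary of steps.} (1) Easy direction via preservation of affine conditions under convex combinations and direct multiples. (2) For the converse, set $T := Q_\aff$ and aim to show $\mathrm{Mod}(Q_\aff) \subseteq \mathrm{Mod}(Q)$. (3) Use \autoref{i:Q-aff-cond-half} and \autoref{l:half+half-preservation-implies-integral-pres} to get preservation of $Q$ under atomless direct integrals of elementarily measurable fields. (4) Given $M \models Q_\aff$, use \autoref{th:general-models-Qaff} to embed $M$ affinely into such a direct integral of models of $Q$ (passing to an atomless base via an auxiliary $L^1(\Xi,-)$ if needed). (5) Show $L^1(\Theta, M)$, for $\Theta$ atomless standard, is again a direct integral of models of $Q$ over an atomless space, hence models $Q$. (6) Apply \autoref{i:Q-aff-cond-L1} to conclude $M \models Q$. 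The main obstacle is step (5): avoiding circularity between ``$M\models Q_\aff \Rightarrow M \models Q$'' and the preservation statements, which is exactly what hypothesis \autoref{i:Q-aff-cond-L1} is designed to resolve, provided one threads the direct-multiple argument so that $L^1(\Theta,M)$ sits visibly inside a bona fide direct integral of $Q$-models.
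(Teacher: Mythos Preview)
Your plan has a genuine gap at step~(5), which you yourself flag as the ``main obstacle'' but do not actually resolve. The lemma you propose to isolate --- ``if $M \preceq^\aff L$ with $L$ an atomless direct integral of models of $Q$, then $L^1(\Theta,M)$ is again such a direct integral, hence $\models Q$'' --- is false as stated: $L^1(\Theta,M)$ is a direct integral of copies of $M$, and $M$ is only known to model $Q_\aff$, not $Q$. Embedding $L^1(\Theta,M)$ affinely into $L^1(\Theta,L)$ (which does model $Q$) only gives $L^1(\Theta,M) \models Q_\aff$, which is exactly the circularity you noticed. No amount of direct-integral bookkeeping by itself will close this loop, because affine substructures of models of $Q$ need only model $Q_\aff$.

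The paper breaks the circle with a tool you never invoke: the convex realization property and the completeness of $T_\nat$. The key fact is \autoref{c:CR-CL-completeness} (equivalently \autoref{th:Tnat}\autoref{i:th:Tnat-completeness}): two affinely equivalent structures that both have the convex realization property are \emph{elementarily} equivalent in continuous logic. With $N \models Q_\aff$ embedded affinely in an atomless direct integral $M$ of models of $Q$ (your steps (3)--(4), which are fine), both $M$ and $L^1(\Omega,N)$ (for $\Omega$ standard atomless) have the convex realization property by \autoref{l:non-atomic-has-CR}, and $L^1(\Omega,N) \equiv^\aff N \equiv^\aff M$ since $N \preceq^\aff M$. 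Hence $L^1(\Omega,N) \equiv^\cont M \models Q$, so $L^1(\Omega,N) \models Q$, and now condition~\autoref{i:Q-aff-cond-L1} gives $N \models Q$. This is the missing idea: upgrade ``affinely equivalent'' to ``elementarily equivalent'' via CR, rather than trying to exhibit $L^1(\Omega,N)$ literally as a direct integral of $Q$-models.
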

\begin{proof}
  One implication is clear by \autoref{th:Los}, and, moreover, \autoref{i:Q-aff-cond-L1} holds for arbitrary $\Omega$.

  Conversely, suppose that \autoref{i:Q-aff-ax-conditions} holds.
  Take $N\models Q_\aff$, and let us show that $N\models Q$. By \autoref{th:general-models-Qaff}, $N$ embeds affinely into a direct integral $M$ of an elementarily measurable field of models of $Q$.
  Moreover, we may assume that $M$ is an atomless direct integral. Indeed, it suffices to replace every atom $A$ of the measurable field of $M$ (say, with conditional direct integral $M_A=\int_A^\oplus M_\omega\ud\mu_A$, which is a model of $Q$ by \autoref{rmk:degenerate-atoms}) by an atomless measure space $A'$ of the same measure, and redefine $M_\omega = M_A$ for $\omega\in A'$. The resulting direct integral is an affine extension of $M$ by \autoref{th:Los}. Therefore, we may assume that $M$ has the convex realization property and also, by the hypothesis \autoref{i:Q-aff-cond-half} and \autoref{l:half+half-preservation-implies-integral-pres}, that $M$ is a model of $Q$.

  Let $\Omega$ be a standard, atomless probability space.
  Since $M \equiv^\aff L^1(\Omega,N)$ and both have the convex realization property, \autoref{th:Tnat}\autoref{i:th:Tnat-completeness} implies that $L^1(\Omega,N)\equiv^\cont M$.
  Hence, $L^1(\Omega,N)\models Q$.
  By the hypothesis \autoref{i:Q-aff-cond-L1}, we conclude that $N\models Q$.

  For the moreover part, assume that $Q$ has a separable language and assume that \autoref{i:Q-aff-ax-conditions} holds for separable structures.
  Then \autoref{l:half+half-preservation-implies-integral-pres} still applies.
  In addition, we may assume that the structures $N$ and $M$ considered in the proof are separable, and the rest of the proof goes through as well.
\end{proof}

The following result is a generalization of \cite[Thm.~5.3]{Bagheri2021}, which established that a continuous formula $\varphi$ can be approximated by affine formulas
if it is preserved by Bagheri's ultrameans and powermeans.

\begin{theorem}\label{th:charact-affine-formulas}
  Let $T$ be an affine theory, and let $\varphi(x)$ be a continuous logic formula (or definable predicate) in the same language. The following are equivalent.
  \begin{enumerate}
  \item\label{i:char-aff-form} $\varphi$ can be approximated modulo $T$ by affine formulas.
  \item\label{i:char-aff-form-cond-convex} Whenever $M,N\models T$, $K=\half M\oplus\half N$, $a\in M^x$ and $b\in N^x$, we have
    \begin{gather*}
      \varphi^K\big(\half a\oplus\half b\big) = \half\varphi^M(a) + \half\varphi^N(b).
    \end{gather*}
  \end{enumerate}
\end{theorem}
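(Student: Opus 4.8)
The direction \ref{i:char-aff-form} $\Rightarrow$ \ref{i:char-aff-form-cond-convex} is immediate: if $\varphi$ is a uniform limit of affine formulas $\psi_n$ modulo $T$, then each $\psi_n$ satisfies the convex-combination identity by \autoref{lemma:NaiveConvexLos}, and the identity passes to the limit. So the content is in the converse. The plan is to adapt the strategy used for \autoref{th:affine-classes}, working with the definitional expansion that names $\varphi$. Concretely, I would add to $\cL$ a new predicate symbol $P_\varphi$ (with the value interval and a convex continuity modulus read off from $\varphi$), and let $\cL'$ be the resulting language; let $T'$ be $T$ together with the affine-definitional axioms stating $P_\varphi = \varphi$ (these are genuinely continuous logic axioms if $\varphi$ is not affine, but that is fine). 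The goal \ref{i:char-aff-form} is then equivalent to saying that $P_\varphi$, as a predicate in models of $T'$, is an affine definable predicate of the affine theory $(T')_\aff$; by \autoref{lemma:DefinablePredicateTypes} and \autoref{prop:Stone-Weierstrass-affine}, this amounts to showing that $P_\varphi$ factors through the affine type map $\tS^\aff_x(T)\to\bR$, i.e.\ that $\varphi^M(a)$ depends only on $\tp^\aff(a)$ for $a$ in models of $T$.

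\textbf{Key steps.} First I would reduce to showing: if $M,N\models T$, $a\in M^x$, $b\in N^x$ and $\tp^\aff(a)=\tp^\aff(b)$, then $\varphi^M(a)=\varphi^N(b)$. Indeed, once this is established, $\varphi$ defines a well-defined bounded function on the image of $\tp^\aff$ in $\tS^\aff_x(T)$; continuity (hence extension to all of $\tS^\aff_x(T)$ as a continuous affine function) follows because the map from continuous types to affine types is continuous and surjective onto a dense subset after passing to $T_\nat$, exactly as in the proof of \autoref{p:qconv-delta-convex-formulas}. Second, to prove the reduction, pass to $T_\nat = T\cup\CR_\cL$. By \autoref{prop:AffineJEP}, since $\tp^\aff(a)=\tp^\aff(b)$, the structures $(M,a)$ and $(N,b)$ are affinely equivalent in $\cL(a)=\cL(b)$, hence admit a common affine extension $(K_0,c)$; then $K_0\preceq^\aff L^1(\Omega,K_0)$ for $\Omega$ atomless standard, and $L^1(\Omega,K_0)$ has the convex realization property by \autoref{l:non-atomic-has-CR}. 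So both $(M,a)$ and $(N,b)$ embed affinely into a common model $K$ of $T_\nat$. The point is now to upgrade affine embeddings into $K$ to continuity of $\varphi$: I claim $\varphi^{K}(a)=\varphi^M(a)$. This is where hypothesis \ref{i:char-aff-form-cond-convex} enters, via \autoref{l:half+half-preservation-implies-integral-pres} and \autoref{c:continuity-direct-integral}: hypothesis \ref{i:char-aff-form-cond-convex} implies in particular that $\varphi$ is preserved (in value) under the $\tfrac12\oplus\tfrac12$ construction, and by iteration and a limiting argument using \autoref{c:continuity-direct-integral} one gets that $\varphi^{L^1(\Omega,M)}$ applied to the constant section $a$ equals $\varphi^M(a)$, i.e.\ $\varphi$ commutes with atomless direct multiples on constant sections. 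Combining, $\varphi^M(a)=\varphi^{L^1(\Omega,M)}(a)=\varphi^{K}(a)$ (the last equality because $L^1(\Omega,M)\preceq^\cont K$ after possibly enlarging, using that $\varphi$ only depends on the affine type and both have the convex realization property, cf.\ \autoref{th:Tnat}\autoref{i:th:Tnat-completeness}), and symmetrically $\varphi^N(b)=\varphi^{K}(b)$; but $c$ (the common image) realizes the common affine type, and once $K$ has the convex realization property, affine equivalence over the tuple implies continuous-logic equivalence by \autoref{c:CR-iota-S(A)}, so $\varphi^{K}(a)=\varphi^{K}(b)$.

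\textbf{Where the hypothesis is really used, and the main obstacle.} The subtle point is that hypothesis \ref{i:char-aff-form-cond-convex} only gives us the exact value-preservation identity for the specific $\tfrac12\oplus\tfrac12$ combination, not a priori for arbitrary convex combinations or for direct integrals; bridging from this to ``$\varphi$ commutes with atomless direct multiples'' is the technical heart, and it is exactly analogous to the step in \autoref{l:half+half-preservation-implies-integral-pres} but now tracking a \emph{tuple} $a$ rather than just a sentence. I expect the main obstacle to be organizing this approximation cleanly: one iterates $\tfrac12\oplus\tfrac12$ to get $\bigoplus_{k<2^\ell} 2^{-\ell}M$ with the identity $\varphi(\bigoplus 2^{-\ell}a) = \varphi^M(a)$ holding exactly, then approximates a general atomless $L^1(\Omega,M)$ by such dyadic convex self-combinations and invokes \autoref{c:continuity-direct-integral} (with the field constant equal to $M$) to transfer the value of $\varphi$ up to $\eps$. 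The continuity corollary is stated for continuous formulas, which is what we need; one must check its hypotheses (bounded $w(\Omega)$, matching integrals of the auxiliary formulas $\psi_i$) are met when comparing the dyadic self-combination with $L^1(\Omega,M)$, which follows by choosing the dyadic partition fine enough. Once this ``direct-multiple invariance on constant sections'' is in hand, the rest is bookkeeping with \autoref{th:Tnat}, \autoref{c:CR-iota-S(A)}, and \autoref{prop:AffineJEP} exactly as sketched, and the final passage from ``$\varphi$ depends only on the affine type'' to ``$\varphi$ is a uniform limit of affine formulas'' is \autoref{prop:Stone-Weierstrass-affine} applied to the closure of $\cL^\aff_x(T)$ inside $C(\tS^\aff_x(T))$ together with the fact that $\varphi$ now defines an element of the latter.
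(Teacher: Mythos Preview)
Your plan has a genuine gap in the final passage. You reduce to showing that $\varphi^M(a)$ depends only on $\tp^\aff(a)$, and then claim that \autoref{lemma:DefinablePredicateTypes} and \autoref{prop:Stone-Weierstrass-affine} finish the job. But those results characterize affine definable predicates as continuous \emph{affine} functions on $\tS^\aff_x(T)$; merely factoring through the affine type gives you a continuous function $\tilde\varphi\in C(\tS^\aff_x(T))$, which need not lie in $\cA(\tS^\aff_x(T))$. A concrete counterexample: for $T=\PrA$, the formula $\mu(x)^2$ depends only on the affine type of $x$ (since $\PrA$ has affine quantifier elimination), yet it is not approximable by affine formulas. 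Notice that your argument never uses hypothesis \ref{i:char-aff-form-cond-convex} with genuinely different $M$ and $N$: you only extract preservation under dyadic direct multiples, i.e., the special case $M=N$, $a=b$, and that special case is also satisfied by $\mu(x)^2$.

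The paper's proof follows essentially the route you outline for the first part---define $\tilde\varphi$ via the homeomorphism $\tS^\cont_x(T_\nat)\cong\tS^\aff_x(T)$ from \autoref{th:Tnat}, then check $\varphi^M(a)=\tilde\varphi(\tp^\aff(a))$ for all $M\models T$ by applying \autoref{l:half+half-preservation-implies-integral-pres} to the $\cL_a$-theory $T\cup\{\varphi(a)=r\}$---but then adds the step you are missing: it verifies directly that $\tilde\varphi(\half p+\half q)=\half\tilde\varphi(p)+\half\tilde\varphi(q)$ by realizing $p$ and $q$ in models $M$ and $N$ and computing in $K=\half M\oplus\half N$ via hypothesis \ref{i:char-aff-form-cond-convex}. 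Continuity then upgrades this midpoint identity to full affineness. This is the only place where the hypothesis is invoked with distinct $M,N$, and it is indispensable.
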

\begin{proof}
  One implication is clear.
  Conversely, suppose that \autoref{i:char-aff-form-cond-convex} is satisfied.
  The formula $\varphi$ induces a continuous map $\varphi\colon \tS^\cont_x(T_\nat) \to\R$.
  By \autoref{th:Tnat}, $\rho^\aff \colon \tS^\cont_x(T_\nat) \to \tS^\aff(T)$ is a homeomorphism, so we may consider $\phi$ as
  a continuous map $\tilde\varphi\colon \tS^\aff_x(T) \to\R$.

  By definition, $\varphi(a) = \tilde\varphi(\tp^\aff(a))$ for $a$ in any model of $T_\nat$.
  Let us show that this holds in all models of $T$.
  Let $M\models T$, $a\in M^x$, and set $r=\varphi^M(a)$.
  Consider the $\cL_a$-theory $Q = T \cup \set{r=\varphi(a)}$.
  The hypothesis \autoref{i:char-aff-form-cond-convex} implies that $Q$ satisfies the condition of \autoref{l:half+half-preservation-implies-integral-pres}, and therefore that $Q$ is preserved by atomless direct multiples.
  In particular, by choosing $\Omega$ atomless and letting $N=L^1(\Omega,M)$, we have $N\models r=\varphi(a)$.
  We deduce that
  $$\varphi^M(a) = \varphi^N(a) = \tilde\varphi(\tp^{\aff,N}(a)) = \tilde\varphi(\tp^{\aff,M}(a)),$$
  as desired.

  It is left to show that $\tilde\varphi$ is affine.
  Indeed, take $p,q\in\tS^\aff_x(T)$ and respective realizations $a,b$ in models $M,N\models T$.
  Let $K = \half M\oplus\half N$.
  Then
  \begin{align*}
    \tilde\varphi\big(\half p + \half q\big)
    & = \varphi^K\big(\half a\oplus\half b\big) \\
    & = \half\varphi^{M}(a) + \half\varphi^{N}(b) \\
    & = \half\tilde\varphi(p) + \half\tilde\varphi(q).
  \end{align*}
  Since $\tilde\varphi$ is continuous, this is enough to deduce that it is affine.
  It follows that $\tilde\varphi$ can be approximated by functions on $\tS_x^\aff(T)$ induced by affine formulas, which concludes the proof.
\end{proof}

\begin{cor}\label{th:charact-affinely-def-functions}
  Let $T$ be an affine theory, and let $f(x)$ be a uniformly definable function on models of $T$. The following are equivalent.
  \begin{enumerate}
  \item\label{i:char-aff-form} $f$ is affinely uniformly definable.
  \item\label{i:char-aff-form-cond-convex} Whenever $M,N\models T$, $K=\half M\oplus\half N$, $a\in M^x$ and $b\in N^x$, we have
    \begin{gather*}
      f^K\big(\half a\oplus\half b\big) = \half f^M(a) \oplus \half f^N(b).
    \end{gather*}
  \end{enumerate}
\end{cor}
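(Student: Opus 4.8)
The plan is to deduce this corollary from \autoref{th:charact-affine-formulas} by encoding the definable function $f$ through the definable predicate $d(f(x),y)$ (with $y$ a fresh variable of the same sort as the values of $f$), exactly as in the general philosophy of \autoref{sec:Definability} where affinely definable functions are coded by affinely definable distance predicates.

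First I would observe that \autoref{i:char-aff-form} $\Rightarrow$ \autoref{i:char-aff-form-cond-convex} is immediate: if $f$ is affinely uniformly definable, then the predicate $\varphi(x,y) = d(f(x),y)$ is an affine definable predicate of $T$ by definition, so by \autoref{th:charact-affine-formulas} (applied to $\varphi$) it satisfies the convex-combination identity, and then in $K = \half M \oplus \half N$ the element $\half f^M(a) \oplus \half f^N(b)$ is the unique point at distance $0$ from $f^K(\half a \oplus \half b)$; more directly, interpreting function symbols coordinatewise in a convex combination means $f^K(\half a \oplus \half b) = \half f^M(a) \oplus \half f^N(b)$ already at the level of terms, and this persists for definable functions since they are uniform limits of terms.

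For the converse, suppose \autoref{i:char-aff-form-cond-convex} holds. Set $\varphi(x,y) = d(f(x),y)$, which is a uniformly definable predicate on models of $T$ (in the variables $xy$). I claim $\varphi$ satisfies the hypothesis \autoref{i:char-aff-form-cond-convex} of \autoref{th:charact-affine-formulas}. Indeed, given $M, N \models T$, $K = \half M \oplus \half N$, $a \in M^x$, $b \in N^x$, and also $c \in M^y$, $e \in N^y$, the hypothesis on $f$ gives $f^K(\half a \oplus \half b) = \half f^M(a) \oplus \half f^N(e)$... — more carefully, with the tuple in $K^y$ being $\half c \oplus \half e$, and using that the metric on $K$ is computed as $d^K(\half u \oplus \half v, \half u' \oplus \half v') = \half d^M(u,u') + \half d^N(v,v')$ (which is how predicates, and in particular the distance symbol, are interpreted in a convex combination by \autoref{lemma:NaiveConvexLos} / \autoref{ntn:DirectIntegralConvexCombination}), we get
\begin{equation*}
  \varphi^K\bigl(\tfrac12 a \oplus \tfrac12 b,\ \tfrac12 c \oplus \tfrac12 e\bigr)
  = d^K\bigl(f^K(\tfrac12 a \oplus \tfrac12 b),\ \tfrac12 c \oplus \tfrac12 e\bigr)
  = \tfrac12 d^M(f^M(a),c) + \tfrac12 d^N(f^N(b),e)
  = \tfrac12 \varphi^M(a,c) + \tfrac12 \varphi^N(b,e).
\end{equation*}
Hence by \autoref{th:charact-affine-formulas}, $\varphi(x,y) = d(f(x),y)$ can be approximated modulo $T$ by affine formulas, i.e.\ it is an affine definable predicate of $T$. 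By the definition of affinely definable function (\autoref{sec:Definability}, the paragraph introducing the notion), this says precisely that $f$ is affinely uniformly definable.

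The only genuinely delicate point is the bookkeeping in the displayed computation: one must apply \autoref{th:charact-affine-formulas} to the predicate $\varphi$ in the \emph{pair} of variable tuples $(x,y)$ rather than to $f$ directly, and be careful that the convex-combination structure on the $y$-coordinates is handled the same way — but this is exactly the content of how convex combinations interpret the distance symbol, which is built into the construction. A minor additional remark: \autoref{th:charact-affine-formulas} is stated for continuous logic formulas and definable predicates, and $d(f(x),y)$ is a definable predicate (a uniform limit of formulas obtained by substituting the defining terms/predicates of $f$), so it is within the scope of that theorem. I expect no serious obstacle; the proof is essentially a translation between the "function" and "distance-predicate" formulations, and everything else is quoted.
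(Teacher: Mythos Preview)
Your proposal is correct and takes exactly the same approach as the paper: the paper's entire proof reads ``Apply the previous theorem to the definable predicate $d(f(x),y)$,'' and you have unpacked this in full detail. One small remark: your ``more directly'' aside (that definable functions are uniform limits of terms) is not literally true in general, but your main argument via the distance predicate is fine and is all that is needed.
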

\begin{proof}
  Apply the previous theorem to the definable predicate $d(f(x),y)$.
\end{proof}

\begin{theorem}
  \label{th:direct-integral-preservation}
  Let $Q$ be a complete continuous logic theory. The following are equivalent.
  \begin{enumerate}
  \item $Q$ is preserved under direct integrals of elementarily measurable fields.
  \item $Q$ is preserved under convex combinations of the form $\half M\oplus \half N$.
  \item $Q$ is preserved under direct multiples over standard, atomless probability spaces.
  \item Every model of $Q$ has the convex realization property.
  \item $Q$ is equivalent to $(Q_\aff)_\nat$.
  \item $Q$ is a q-convex theory.
  \end{enumerate}
\end{theorem}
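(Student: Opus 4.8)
The plan is to prove the cycle of implications $(1) \Rightarrow (2) \Rightarrow (3) \Rightarrow (4) \Rightarrow (5) \Rightarrow (6) \Rightarrow (1)$, harvesting most of the steps from results already established in the excerpt. The implications $(1) \Rightarrow (2) \Rightarrow (3)$ are trivial: a convex combination $\half M \oplus \half N$ is a direct integral of an elementarily measurable field (it is an atomic field, which is automatically elementarily measurable by \autoref{rmk:elem-measurable-fields-convex-comb-direct-mult}), and a direct multiple $L^1(\Omega, M)$ over a standard atomless space is a particular direct integral, also elementarily measurable by the same remark. For $(3) \Rightarrow (4)$: if $M \models Q$ then $L^1(\Omega, M) \models Q$ for $\Omega$ standard atomless, and since $Q$ is complete, $M \equiv^\cont L^1(\Omega, M)$, hence $M \preceq^\cont L^1(\Omega, M)$ by Tarski--Vaught (as $M \preceq^\aff L^1(\Omega, M)$ and they are continuously equivalent, one gets $M \preceq^\cont L^1(\Omega,M)$ — here one should be slightly careful and argue via saturated extensions, or simply invoke that $M$ embeds affinely and the diagram argument works). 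Then $M$ has the convex realization property by \autoref{l:non-atomic-has-CR} and \autoref{rmk:eca-closed}\autoref{i:rmk:eca-closed-ueca}.

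For $(4) \Rightarrow (5)$: assume every model of $Q$ has the convex realization property. Then every model of $Q$ is a model of $(Q_\aff)_\nat = Q_\aff \cup \CR_\cL$, so $Q \models (Q_\aff)_\nat$. Conversely, if $N \models (Q_\aff)_\nat$, then $N \models Q_\aff$ and $N$ has the convex realization property; we must show $N \models Q$. Since $Q$ is complete, $Q = \Th^\cont(M)$ for some $M \models Q$, which by hypothesis has the convex realization property; by \autoref{prop:AffineJEP} applied to $M \equiv^\aff N$ (both model $Q_\aff$) there is a common affine extension $K$, and then $L^1(\Omega, K)$ for $\Omega$ atomless has the convex realization property and affinely extends both $M$ and $N$. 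By \autoref{c:CR-CL-completeness} (or \autoref{th:Tnat}\autoref{i:th:Tnat-completeness}) we get $M \equiv^\cont L^1(\Omega,K) \equiv^\cont N$, so $N \models Q$. For $(5) \Rightarrow (6)$: $(Q_\aff)_\nat = Q_\aff \cup \CR_\cL$ is q-convex, since $Q_\aff$ is axiomatized by affine conditions (which are q-convex) and $\CR_\cL$ is q-convex by the remark following \autoref{defn:CR}. Finally $(6) \Rightarrow (1)$ is exactly \autoref{p:qconv-direct-integral-preservation}.

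The step requiring the most care is $(3) \Rightarrow (4)$, specifically the passage from $M \equiv^\cont L^1(\Omega,M)$ to the convex realization property of $M$. The cleanest route is: by \autoref{l:non-atomic-has-CR}, $L^1(\Omega, M)$ has the convex realization property whenever $\Omega$ is atomless; the convex realization property is (by \autoref{l:CR-vs-eca-closed}) equivalent to being existentially closed for convex formulas in every affine extension; and by \autoref{rmk:eca-closed}\autoref{i:rmk:eca-closed-ueca} this passes from $L^1(\Omega,M)$ down to $M$ provided $M \preceq^\eca L^1(\Omega,M)$. Since $M \preceq^\aff L^1(\Omega, M)$ always, and the $\forall\exists\Delta$-theory is preserved downward under $\preceq^\eca$, what we actually need is $M \preceq^\eca L^1(\Omega,M)$; but $\CR_\cL$ is a q-convex — in particular a $\forall\exists\Delta$ — consequence that $L^1(\Omega,M)$ satisfies, and any structure satisfying $\CR_\cL$ is existentially closed for convex formulas in every affine extension, so one can instead directly verify: $L^1(\Omega, M) \models \CR_\cL$ and $\CR_\cL$ is a universal-delta-convex... actually the simplest observation is that $Q$ complete together with $(3)$ forces $Q \models \CR_\cL$ outright (since $L^1(\Omega,M) \models \CR_\cL$ and $L^1(\Omega,M) \equiv^\cont M$, so $M \models \CR_\cL$, so $Q = \Th^\cont(M) \models \CR_\cL$), which is exactly $(4)$. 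This sidesteps the subtlety entirely.

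In summary, I would write: $(1)\Rightarrow(2)\Rightarrow(3)$ by the remarks on atomic and constant fields; $(3)\Rightarrow(4)$ via $L^1(\Omega,M)\models\CR_\cL$ (\autoref{l:non-atomic-has-CR}), completeness giving $M\equiv^\cont L^1(\Omega,M)$, hence $M\models\CR_\cL$; $(4)\Rightarrow(5)$ via \autoref{c:CR-CL-completeness} and a joint-embedding-plus-atomless-multiple argument; $(5)\Rightarrow(6)$ since $Q_\aff\cup\CR_\cL$ is visibly q-convex; $(6)\Rightarrow(1)$ is \autoref{p:qconv-direct-integral-preservation}. I expect no serious obstacle, only the bookkeeping in $(3)\Rightarrow(4)$ and $(4)\Rightarrow(5)$ described above; everything else is a direct citation.
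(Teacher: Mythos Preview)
Your cycle $(1)\Rightarrow(2)\Rightarrow(3)\Rightarrow(4)\Rightarrow(5)\Rightarrow(6)\Rightarrow(1)$ matches the paper's route, and your arguments for $(3)\Rightarrow(4)$, $(4)\Rightarrow(5)$, $(5)\Rightarrow(6)$, $(6)\Rightarrow(1)$ are correct (your $(4)\Rightarrow(5)$ is more explicit than the paper's, which simply notes that $(Q_\aff)_\nat$ is complete by \autoref{th:Tnat}\autoref{i:th:Tnat-completeness} since $Q_\aff$ is complete, hence coincides with $Q$ once $Q\models(Q_\aff)_\nat$).

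However, there is a genuine gap at $(2)\Rightarrow(3)$. Your justification --- that a direct multiple $L^1(\Omega,M)$ is a particular direct integral of an elementarily measurable field --- only shows $(1)\Rightarrow(3)$, not $(2)\Rightarrow(3)$. As written, your cycle therefore never leaves $(2)$: you have $(1)\Rightarrow(2)$ and $(1)\Rightarrow(3)$, but no implication out of $(2)$, so $(2)$ is not shown equivalent to the rest. Passing from preservation under $\half M\oplus\half N$ to preservation under atomless direct multiples is not trivial: iterating $(2)$ gives only dyadic finite convex self-combinations of $M$, and concluding something about $L^1(\Omega,M)$ requires a limit or continuity argument. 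The paper handles this via \autoref{l:half+half-preservation-implies-integral-pres}, which in turn relies on the Feferman--Vaught-type continuity result \autoref{c:continuity-direct-integral}. You need to invoke that lemma (or reproduce its argument) to close the cycle.
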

\begin{proof}
  \begin{cycprf}
  \item Clear.
  \item By \autoref{l:half+half-preservation-implies-integral-pres}.
  \item If $M\models Q$ and $\Omega$ is atomless, then by hypothesis $L^1(\Omega,M)\models Q$, and the conclusion follows from \autoref{l:non-atomic-has-CR} and the completeness of $Q$.
  \item The hypothesis says that $Q\models (Q_\aff)_\nat$ and $(Q_\aff)_\nat$ is complete by the completeness of $Q$ and \autoref{th:Tnat}\autoref{i:th:Tnat-completeness}. Therefore the two theories are equivalent.
  \item Clear.
  \item[\impfirst] By \autoref{p:qconv-direct-integral-preservation}.
  \end{cycprf}
\end{proof}

\begin{question}
  In general, is a continuous theory preserved by direct integrals of elementarily measurable fields if and only if it is q-convex?
\end{question}


\section{Poulsen theories and a dichotomy for simplicial theories}
\label{sec:Poulsen}

Recall that the \emph{Poulsen simplex} is the unique, up to affine homeomorphism, non-trivial metrizable simplex with the property that the extreme points are dense.

\begin{defn}
  Let $T$ be an affine theory.
  We will say that $T$ is a \emph{Poulsen theory} if $T$ is simplicial and $\cE_x(T)$ is dense in $\tS^\aff_x(T)$ for every finite tuple $x$.
\end{defn}

If $T$ is a Poulsen theory, then $\cE_x(T)$ is dense in $\tS^\aff_x(T)$ for arbitrary $x$, by \autoref{l:inv-limit-dense-extreme}.

Let $T$ be an affine theory in a separable, single-sorted language.
Recall (\autoref{dfn:NonDegenerateTheory}) that it is non-degenerate if all its models have at least two elements.
Then, assuming that $T$ is non-degenerate, it is Poulsen if and only if $\tS^\aff_n(T)$ is a Poulsen simplex for every $n \geq 2$, if and only if $\tS^\aff_{\aleph_0}(T)$ is a Poulsen simplex, whence the name.

On the other hand, a complete theory is degenerate if and only if it has a unique model, consisting of a single point, if and only if all its type spaces are singletons.
Such a theory is both Poulsen and Bauer.

Next we give a characterization of complete Poulsen theories.
We start with two lemmas.
\begin{lemma}\label{l:extremal-model-with-CRP}
  Let $T$ be a complete affine theory. Suppose that some extremal model of $T$ has the convex realization property. Then for every $x$, the extreme types are dense in $\tS^\aff_x(T)$.
\end{lemma}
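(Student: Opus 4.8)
Let $M$ be an extremal model of $T$ with the convex realization property, and fix a finite tuple $x$. The goal is to show $\cE_x(T)$ is dense in $\tS^\aff_x(T)$. By \autoref{l:realized-convex-dense}, the types realized in $M$ are convex-dense in $\tS^\aff_x(T)$, so it suffices to approximate each realized type $\tp^\aff(a)$, with $a \in M^x$, by extreme types in the logic topology. Since $M$ is extremal, $\tp^\aff(a)$ is \emph{itself} extreme, so in fact the realized types form a subset of $\cE_x(T)$. Therefore the content is: the realized types of $M$ are already logic-dense in $\tS^\aff_x(T)$, not merely convex-dense. This is exactly where the convex realization property enters.

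First I would invoke \autoref{l:CR-vs-eca-closed}: $M$ has the convex realization property if and only if $M$ is existentially closed for convex formulas in every affine extension, i.e.\ $M \preceq^\eca N$ for all $N \succeq^\aff M$. Now take an arbitrary $p \in \tS^\aff_x(T)$; by \autoref{prop:TypeWithParameters} (or just completeness of $T$ and \autoref{prop:AffineJEP}) there is an affine extension $M \preceq^\aff N$ realizing $p$, say $p = \tp^\aff(b)$ for $b \in N^x$. I want to show $p$ is a logic-limit of types realized in $M$. A basic logic-neighborhood of $p$ is cut out by finitely many conditions $\varphi_i(x) > 0$ with $\varphi_i$ affine and $\varphi_i^N(b) > 0$; equivalently, since the $\varphi_i$ may be combined, by a single condition $\bigvee_i (-\varphi_i)(x) < 0$, which says $\inf_x \bigvee_i (-\varphi_i(x)) < 0$ holds in $N$ as witnessed by $b$. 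This is an inf-convex condition with parameters in $M$ (there are none here, as we are parameter-free, but the same works with parameters), so by $M \preceq^\eca N$ its value is the same in $M$; hence there is $a \in M^x$ with $\varphi_i^M(a) > 0$ for all $i$, i.e.\ $\tp^\aff(a)$ lies in the chosen neighborhood of $p$. Since $\tp^\aff(a) \in \cE_x(T)$ by extremality of $M$, we conclude $p \in \cl{\cE_x(T)}$.

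The one point to be careful about is whether the logic-neighborhood can genuinely be reduced to a single inf-convex condition witnessed by the realization $b$: a basic neighborhood is a finite intersection $\bigcap_i \{\varphi_i > \varepsilon_i\}$, and I would rescale and translate so all thresholds are $0$, then note $\bigcap_i \oset{\varphi_i > 0} = \oset{\bigwedge_i \varphi_i > 0}$ as subsets of $\tS^\aff_x(T)$, so the relevant quantity is $\inf_x \bigwedge_i \varphi_i(x)$, whose negation $\sup_x \bigvee_i(-\varphi_i(x))$ is sup-concave; the value $\bigl(\inf_x \bigwedge_i \varphi_i\bigr)$ is preserved from $N$ to $M$ by $M \preceq^\eca N$ applied to the inf-convex formula $\inf_x \bigwedge_i \varphi_i(x)$ (cf.\ \autoref{rmk:eca-closed}, noting inf-convex is the negation of sup-concave). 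Thus if $b$ witnesses $\bigwedge_i \varphi_i(b) > 0$ in $N$, then $\bigl(\inf_x \bigwedge_i \varphi_i\bigr)^M > 0$ as well, so some $a \in M^x$ works. This is essentially routine; the only mild subtlety is bookkeeping the direction of inequalities and the fact that existential closure for convex formulas transfers values of inf-convex (equivalently sup-concave) formulas both ways for parameters in $M$, which is precisely \autoref{rmk:eca-closed}\autoref{i:rmk:eca-closed-ueca} combined with the definition of $\preceq^\eca$. No separability or simpliciality is needed for this lemma, only completeness of $T$ and the convex realization property of one extremal model.
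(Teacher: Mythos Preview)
Your argument is correct and is essentially the paper's proof: take a basic open set, realize a point of it in some affine extension $N\succeq^\aff M$, and use $M\preceq^\eca N$ (via \autoref{l:CR-vs-eca-closed}) to pull the witness back into $M$, where its type is extreme. The paper phrases the basic open set directly as $\oset{\bigvee_{i<n}\phi_i<0}$ and invokes \autoref{l:CR-vs-eca-closed} in one line, but the content is identical to your first paragraph.

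That said, your second paragraph (the ``one point to be careful about'') is muddled and should be dropped or rewritten. You switch from the correct inf-convex sentence $\inf_x\bigvee_i(-\varphi_i)$ to $\inf_x\bigwedge_i\varphi_i$, which is \emph{inf-concave}, not inf-convex; its negation $\sup_x\bigvee_i(-\varphi_i)$ is sup-\emph{convex}, not sup-concave. More importantly, a single witness $b$ with $\bigwedge_i\varphi_i(b)>0$ gives $(\sup_x\bigwedge_i\varphi_i)^N>0$, not $(\inf_x\bigwedge_i\varphi_i)^N>0$; so the line ``then $(\inf_x\bigwedge_i\varphi_i)^M>0$ as well, so some $a\in M^x$ works'' has the quantifier backwards. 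The formula you actually want to transfer is the sup-concave sentence $\sup_x\bigwedge_i\varphi_i$ (equivalently, the inf-convex $\inf_x\bigvee_i(-\varphi_i)$), exactly as you had it in the first paragraph. None of this affects the validity of the main argument, but the clarification as written undermines rather than supports it.
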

\begin{proof}
  Let $M$ be an extremal model of $T$ with the convex realization property. Let $U = \oset{\bigvee_{i < n} \phi_i < 0}$ be a non-empty basic open subset of $\tS^\aff_x(T)$ with $\phi_i$ affine. Every $p \in U$ is realized in some affine extension of $M$, so, by \autoref{l:CR-vs-eca-closed}, there is $a \in M$ with $\tp^\aff(a) \in U$. This implies that the types realized in $M$ are dense in $\tS^\aff_x(T)$, and they are extreme by hypothesis.
\end{proof}

\begin{lemma}
  \label{l:saturated-atomless}
  Let $T$ be a complete affine theory such that for some tuple $x$, $\tS^\aff_x(T)$ is a simplex and $\cE_x(T)$ is infinite.
  Let $M = \int_\Omega^{\oplus} M_\omega \ud \mu(\omega)$ be a direct integral of extremal models of $T$.
  If $M$ is realizes every type in $\tS^\aff_x(T)$, then $(\Omega, \mu)$ is atomless.
\end{lemma}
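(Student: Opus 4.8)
The plan is to argue by contradiction: suppose $(\Omega,\mu)$ has an atom $A$, and derive a contradiction with the assumption that the direct integral $M$ realizes every type in $\tS^\aff_x(T)$. The key structural fact is that an atom contributes a single extremal model, essentially as a direct summand: writing $M = \mu(A)\,M_A \oplus \mu(\Omega\setminus A)\,M_{\Omega\setminus A}$ where $M_A = \int_A^\oplus M_\omega\ud\mu_A$, and noting that since $A$ is an atom, $M_A$ is (affinely equivalent to, indeed essentially a copy of) a single extremal model $M_{\omega_0}$ for a.e.\ $\omega_0\in A$ — this is the observation behind \autoref{rmk:degenerate-atoms}, but at the level of affine logic it follows directly from \autoref{th:Los}, since all sections are a.e.\ constant on $A$. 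So let $N = M_{\omega_0}$ be this extremal model, with $\lambda = \mu(A) > 0$, and write $M = \lambda N \oplus (1-\lambda) M'$.

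The next step is to exploit that the types realized in $M$ then all lie in a proper face-like constraint. Concretely, if $a\in M^x$ is a tuple, then by \autoref{th:Los} and the decomposition, $\tp^\aff(a) = \lambda\, p_0 + (1-\lambda)\, q$ where $p_0 = \tp^\aff(a(\omega_0)) \in \cE_x(T)$ is the (extreme) type contributed by $N$, and $q\in\tS^\aff_x(T)$ is whatever is contributed by $M'$. Thus every realized type is of the form $\lambda p_0 + (1-\lambda) q$ with $p_0$ ranging over $\cE^\fM_x(N) \subseteq \cE_x(T)$ — but crucially $p_0$ is the type of a tuple in the \emph{fixed} extremal model $N$. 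Since $M$ realizes every type of $\tS^\aff_x(T)$, every $p\in\tS^\aff_x(T)$ must admit such a representation. Now I would use the simplex hypothesis together with uniqueness of boundary measures (\autoref{th:Choquet-Meyer}) to extract a contradiction. If $p$ is itself an extreme type, then $p = \lambda p_0 + (1-\lambda) q$ with $0<\lambda<1$ forces $p_0 = q = p$ (extreme points generate singleton faces, \autoref{dfn:ExtremeType}), so $p = p_0$ must be realized already in $N$; hence $N$ realizes \emph{every} extreme type of $T$. But $\cE_x(T)$ is infinite, so we can pick infinitely many extreme types pairwise at $\dtp$-distance bounded below (as in the proof of \autoref{p:compact-def-sets}: by \autoref{l:realized-extreme-dense}, or rather by genuine realizability in $N$, an infinite set of extreme types realized in a single model forces, via compactness and completeness, a totally bounded situation that fails). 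More directly: if $N$ realizes all of $\cE_x(T)$ then in particular $N$ realizes the (countably or uncountably many) extreme types, and by $\dtp$-closedness of $\cE_x(T)$ (\autoref{prop:ExtremeTypesMetricallyClosed}) plus the fact that a single structure has a \emph{set} of realized types that, when $x$ is infinite and we pass to the enumeration type, must be the type of $N$ itself — this is where I'd locate the contradiction with $\cE_x(T)$ being infinite.

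Let me sharpen the endgame, since that is the main obstacle. The cleanest route: pick two distinct extreme types $p_1, p_2 \in \cE_x(T)$ with $\dtp(p_1,p_2) = r$, possibly $\infty$. By \autoref{cor:ExtremeTypeDistance}, if $r<\infty$ this distance is attained in an extremal model; in any case, consider the type $p = \half p_1 + \half p_2 \in \tS^\aff_x(T)$. By hypothesis $p$ is realized in $M = \lambda N\oplus(1-\lambda)M'$, so $p = \lambda p_0 + (1-\lambda) q$ with $p_0\in\cE_x(T)$ realized in $N$. Since $\tS^\aff_x(T)$ is a simplex, $p$ has a unique boundary measure $\mu_p$; but both $\half\delta_{p_1} + \half\delta_{p_2}$ and $\lambda\,\mu_{p_0}^{\mathrm{part}} + (1-\lambda)\mu_q$ (suitably expanded to boundary measures via \autoref{l:choquet-order}\autoref{i:choquet-order:4}) are boundary measures with barycenter $p$, forcing $\{p_1,p_2\} = \mathrm{supp}$ relation that pins $p_0 \in \{p_1, p_2\}$ with weight $\lambda$ matching $\half$ — so $\lambda = \half$ and $\{p_0, q\} = \{p_1, p_2\}$ up to the decomposition of $\mu_q$. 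Iterating over all pairs of extreme types, and using that $\cE_x(T)$ is infinite so we can choose pairs with varying structure, we find $N$ must realize infinitely many of them while $q$ absorbs the rest with fixed weight $\half$; playing this against a third extreme type $p_3$ (applying the argument to $\half p_1 + \half p_3$ and to $\half p_2 + \half p_3$) yields incompatible constraints on $\lambda$, the contradiction. I expect the bookkeeping of which boundary measure decomposition is forced — i.e., carefully invoking uniqueness in the simplex to conclude $p_0\in\{p_1,p_2\}$ and $\lambda=\half$ — to be the delicate point; everything else (the atom-gives-a-summand reduction via \autoref{th:Los}, and extraction of three distinct extreme types from an infinite $\cE_x(T)$) is routine.
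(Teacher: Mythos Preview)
Your overall strategy matches the paper's: assume an atom of mass $\lambda>0$, split $M = \lambda N \oplus (1-\lambda)M'$ with $N$ extremal, and use uniqueness of boundary measures in the simplex $\tS^\aff_x(T)$ to derive a contradiction from the shape of the decomposition. However, your endgame does not close.

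Your two-type argument does not force $\lambda = \tfrac12$. From $\lambda\delta_{p_0} + (1-\lambda)\mu_q = \tfrac12\delta_{p_1}+\tfrac12\delta_{p_2}$ with $p_0\in\{p_1,p_2\}$ you only get $\lambda \leq \tfrac12$; when $\lambda\leq\tfrac12$ there is a perfectly good $\mu_q\geq 0$ solving the equation. Your three-type iteration then collapses: the extreme type $p_0$ contributed by $N$ depends on \emph{which} type you are realizing in $M$, so applying the argument to different pairs $\{p_1,p_3\}$, $\{p_2,p_3\}$ gives no incompatible constraints---$N$ may well realize all of $p_1,p_2,p_3$. Your first attempted endgame (that $N$ must realize every extreme type) is correct but is not by itself a contradiction: nothing prevents a single extremal model from realizing infinitely many extreme $x$-types.

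The paper's fix is a one-line sharpening of your idea: choose $k$ with $1/k < \lambda$ and $k$ distinct extreme types $p_0,\dots,p_{k-1}$ (possible since $\cE_x(T)$ is infinite), and realize $p=(1/k)\sum_{i<k}p_i$. The unique boundary measure of $p$ assigns mass exactly $1/k$ to each $p_i$, but the realization $p=\lambda\tp^\aff(a_0)+(1-\lambda)\tp^\aff(a_1)$ with $\tp^\aff(a_0)$ extreme forces mass $\geq\lambda>1/k$ at that point---the desired contradiction. In other words, replace ``two types, then three'' by ``$k$ types at once with $k>1/\lambda$''.
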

\begin{proof}
  Suppose, to the contrary, that $\Omega$ has an atom of measure $\lambda > 0$.
  Then $M = \lambda M_0 \oplus (1-\lambda)M_1$ with $M_0$ extremal. Let $k \in \N$ be such that $1/k < \lambda$, and choose $p_0, p_1, \ldots, p_{k-1}$ distinct elements of $\cE_x(T)$ .
  By hypothesis, $(1/k) \sum_{i < k} p_i$ is realized in $M$, say, by $\lambda a_0 \oplus (1-\lambda)a_1$ with $a_0 \in M_0^x$, $a_1 \in M_1^x$.
  This implies that $(1/k)\sum_{i < k} p_i = \lambda \tp^\aff(a_0) + (1-\lambda) \tp^\aff(a_1)$, which together with the fact that $\tp^\aff(a_0)$ is extreme, contradicts the assumption that $\tS^\aff_x(T)$ is a simplex.
\end{proof}

Since $M$ and surjectivity of $\rho^\aff$, the type

\begin{theorem}\label{th:Poulsen-complete}
  Let $T$ be a complete, simplicial theory. The following are equivalent:
  \begin{enumerate}
  \item $T$ is a Poulsen theory.
  \item Every model of $T$ has the convex realization property (i.e., $T\equiv T_\nat$).
  \item The maps $\rho^\aff\colon \tS_x^\cont(T)\to \tS_x^\aff(T)$ are homeomorphisms.
  \item $T$ has delta-convex reduction as a continuous logic theory.
  \item $T$ is complete as a continuous logic theory.
  \item Some extremal model of $T$ has the convex realization property.
  \end{enumerate}
\end{theorem}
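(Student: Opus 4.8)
The plan is to prove the cycle of implications
\begin{equation*}
  (1) \Rightarrow (2) \Rightarrow (3) \Rightarrow (4) \Rightarrow (5) \Rightarrow (6) \Rightarrow (1),
\end{equation*}
using the machinery already developed in Sections \ref{sec:convex-formulas}--\ref{sec:atomless-multiples}. Most of the individual links are short once one knows the right earlier statement, so the work is mainly in assembling them and in the one substantial implication, $(1) \Rightarrow (2)$.

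For $(1) \Rightarrow (2)$ I would argue as follows. Let $M \models T$; we want $M$ to have the convex realization property, equivalently (by \autoref{l:CR-vs-eca-closed}) that $M \preceq^\eca N$ for every affine extension $N$. By \autoref{th:integral-decomposition} we may instead decompose $M$ as a direct integral $\int_\Omega^\oplus M_\omega \ud\mu(\omega)$ of extremal models; splitting off the atomic part, it suffices to show that a convex combination $\bigoplus_\omega \lambda_\omega M_\omega$ of extremal models of $T$ has the convex realization property, and then to handle atomless pieces via \autoref{l:non-atomic-has-CR}. So really the crux is: \emph{an extremal model of a complete Poulsen theory $T$ has the convex realization property}. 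For this, given an inf-convex condition $\inf_x \bigvee_i \varphi_i(x,b) \leq 0$ with $b$ in the extremal model $M_0$, realized in some affine extension, I want an approximate witness inside $M_0$. The type of such a witness over $b$ is an extreme type of $D^\aff_b$ (by \autoref{prop:ExtremeTypeTwoSteps}, since $\tp^\aff(b)$ is extreme), and by \autoref{p:face-simplicial}, $D^\aff_b$ is again simplicial; since $T$ is Poulsen, the extreme types are dense in $\tS^\aff_x(D^\aff_b)$, so the open set $\oset{\bigvee_i \varphi_i < \eps}$ contains an extreme type, which by \autoref{l:realized-extreme-dense} is approximately realized in $M_0$. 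Running this decomposition-and-reassembly carefully gives that every model of $T$ has the convex realization property, i.e. $T \equiv T_\nat$. The main obstacle I anticipate is exactly this reduction step: one must check that Poulsenness passes to the diagram $D^\aff_b$ (which is \autoref{p:face-simplicial} together with density of extreme types, itself inherited because $\tS^\aff_y(b) \cong \pi_x^{-1}(\tp^\aff(b))$ is a face and faces of Poulsen simplices have dense extreme points), and that gluing the extremal decomposition back together preserves the convex realization property — the atomless part is fine by \autoref{l:non-atomic-has-CR}, but the atomic part needs the extremal-model statement just proved.

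The remaining implications are routine. $(2) \Rightarrow (3)$: $T \equiv T_\nat$ and \autoref{th:Tnat}\autoref{i:th:Tnat-homeo} says $\rho^\aff \colon \tS^\cont_x(T_\nat) \to \tS^\aff_x(T)$ is a homeomorphism. $(3) \Rightarrow (4)$: injectivity of $\rho^\aff_x$ is exactly delta-convex reduction by \autoref{rmk:latt-red-iota-injective}. $(4) \Rightarrow (5)$: delta-convex reduction means every continuous formula is approximated modulo $T$ by delta-convex ones, and $T$ is complete as an affine theory, hence assigns a unique value to each such formula, so $T$ is complete in continuous logic. $(5) \Rightarrow (6)$: $T$ is q-convex (it contains $\CR_\cL$ trivially? — no; rather, since $T$ is complete in continuous logic and $T \models T$, take any extremal model; in fact the cleanest route is $(5)\Rightarrow (2)$ directly: a complete continuous theory equal to its own affine completion's atomless completion — better to note $(5)$ plus \autoref{th:Tnat}\autoref{i:th:Tnat-completeness} gives $T \equiv T_\nat$, hence every model of $T$ has the convex realization property by \autoref{p:qconvex-latt-red-CR}, so in particular some \emph{extremal} model does, using \autoref{th:ExtremalModelTypes} for existence). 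Finally $(6) \Rightarrow (1)$: if some extremal model of $T$ has the convex realization property, then by \autoref{l:extremal-model-with-CRP} the extreme types $\cE_x(T)$ are dense in $\tS^\aff_x(T)$ for every $x$; combined with the hypothesis that $T$ is simplicial, this says precisely that $T$ is a Poulsen theory.

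I would present the proof as this chain of implications, spending a full paragraph on $(1) \Rightarrow (2)$ with the face/diagram reduction spelled out, and dispatching the other five links in a sentence or two each by citation. One should double-check the edge case of a degenerate complete theory (a single point): there all type spaces are singletons, $T$ is trivially both Bauer and Poulsen, every model has the convex realization property vacuously, and all the listed conditions hold, so the equivalence is not violated.
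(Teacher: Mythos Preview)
Your cycle $(2)\Rightarrow(3)\Rightarrow(4)\Rightarrow(5)\Rightarrow(6)\Rightarrow(1)$ is fine and essentially matches the paper (your $(4)\Rightarrow(5)$ via ``delta-convex sentences take determined values modulo an affinely complete $T$'' and your $(5)\Rightarrow(6)$ via $T\equiv T_\nat$ are slight variants of the paper's arguments, but both work).

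The gap is in $(1)\Rightarrow(2)$. Your reduction to ``an extremal model $M_0$ of a Poulsen theory has the convex realization property'' rests on the claim that $D^\aff_b$ is again Poulsen because ``faces of Poulsen simplices have dense extreme points.'' This is false: the Poulsen simplex is universal, so \emph{every} metrizable simplex --- in particular any nontrivial Bauer simplex --- arises as a closed face of it. The fiber $\pi_y^{-1}(\tp^\aff(b))$ is indeed a closed face of the Poulsen simplex $\tS^\aff_{xy}(T)$, but nothing a priori prevents it from being Bauer. In fact the paper does prove that $D^\aff_A$ is Poulsen for $A$ inside an extremal model --- but only as a \emph{corollary} of \autoref{th:Poulsen-complete}, using that $M$ already has the convex realization property. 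So your route is circular.

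The paper's $(1)\Rightarrow(2)$ avoids this entirely. Given $M\models T$, pass to an $\aleph_0$-saturated elementary extension $M'$ in continuous logic. Since $T$ is affinely complete, the image of $\rho^\aff\colon \tS^\cont_x(\Th^\cont(M))\to\tS^\aff_x(T)$ contains $\cE_x(T)$ (\autoref{l:image-rho-aff}); since $\cE_x(T)$ is dense (Poulsen), $\rho^\aff$ is surjective, and by saturation $M'$ realizes every type in $\tS^\aff_x(T)$. Then \autoref{l:saturated-atomless} forces the extremal decomposition of $M'$ to be over an atomless space, so \autoref{l:non-atomic-has-CR} gives $M'$ the convex realization property; since $\CR_\cL$ is first-order and $M\preceq^\cont M'$, $M$ has it too. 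The ingredient you are missing is precisely \autoref{l:saturated-atomless}: realizing all affine types in some $\tS^\aff_x(T)$ with infinitely many extreme points rules out atoms in the decomposition, and \emph{that} is how one gets atomlessness (hence CRP) without first knowing anything about the faces over parameters.
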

\begin{proof}
  \begin{cycprf}
  \item If the complete theory $T$ is degenerate, then clearly $T\equiv T_\nat$.

    Otherwise, for some finite tuple $x$ the space $\tS^\aff_x(T)$ is an infinite simplex and $\cE_x(T)$ is dense (so infinite as well). Let $M\models T$ be a model, and let $M\preceq^\cont M'$ be an $\aleph_0$-saturated elementary extension. As $T$ is complete, the image of the map $\rho^\aff \colon \tS^\cont_x(\Th^\cont(M)) \to \tS^\aff_x(T)$ contains $\cE_x(T)$, by \autoref{l:image-rho-aff}. As $\cE_x(T)$ is dense in $\tS^\aff_x(T)$, $\rho^\aff$ must be surjective. Hence $M'$ realizes every type in $\tS^\aff_x(T)$.
    By \autoref{l:saturated-atomless}, the probability space of the extremal decomposition of $M'$ is atomless, so, by \autoref{l:non-atomic-has-CR}, $M'$, and therefore $M$, has the convex realization property.
  \item This follows from \autoref{th:Tnat}.
  \item By \autoref{rmk:latt-red-iota-injective}.

  \item If $T$ has delta-convex reduction, it is in particular model complete by affine.
    As by hypothesis, $T$ is also complete in affine logic, it follows from the affine joint embedding property that $T$ must be complete in continuous logic.

  \item If $M$ is any model of $T$ and $\Omega$ is an atomless probability space, then $M\equiv^\cont L^1(\Omega,M)$, because the latter is also a model of $T$. It follows from \autoref{l:non-atomic-has-CR} that $M$ has the convex realization property.

  \item[\impfirst] By \autoref{l:extremal-model-with-CRP}.
  \end{cycprf}
\end{proof}

\begin{cor}
  Let $T$ be a complete Poulsen theory, let $M$ be an extremal model of $T$, and let $A\subseteq M$ be any subset. Then the affine diagram $D^\aff_A$ is a Poulsen theory.
\end{cor}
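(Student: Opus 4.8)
The plan is to reduce the statement to \autoref{th:Poulsen-complete} applied to the complete affine theory $D^\aff_A$. Recall that since $M$ is an extremal model of $T$ and $A \subseteq M$, the diagram $D^\aff_A = \Th^\aff(M, a)$ (for $a$ any enumeration of $A$) is a complete affine $\cL_A$-theory, and it is an extreme completion of $T$ in the language $\cL_A$. By \autoref{p:face-simplicial}, $D^\aff_A$ is simplicial: indeed, expanding $\cL$ by constants preserves simpliciality by \autoref{p:face-simplicial}\autoref{i:p:face-simplicial-constants}, and passing to a face preserves it as well. So $D^\aff_A$ is a complete, simplicial theory, and the only thing left to verify is one of the equivalent conditions in \autoref{th:Poulsen-complete}.

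\textbf{Key step.} The condition I would verify is condition (6) of \autoref{th:Poulsen-complete}: that some extremal model of $D^\aff_A$ has the convex realization property. By hypothesis $M$ is an extremal model of $T$ with the convex realization property. First I would note that $(M, a)$ is an extremal model of $D^\aff_A$: this is essentially \autoref{prop:ExtremeTypeTwoSteps} — if $b$ is any tuple in $M$ and $\tp^\aff(a) \in \cE(T)$, then $\tp^\aff(b/a)$ extreme is equivalent to $\tp^\aff(a,b)$ extreme, and the latter holds because $M$ is extremal; this was already observed in the proof of \autoref{prop:ExtremalJEP}. Second, the convex realization property is insensitive to naming constants: as recorded just after \autoref{defn:CR}, if $\cL_A \supseteq \cL$ is an expansion by constants then $\CR_\cL$ and $\CR_{\cL_A}$ are equivalent, so $(M,a)$ has the convex realization property as an $\cL_A$-structure precisely because $M$ has it as an $\cL$-structure. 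Therefore $(M,a)$ is an extremal model of $D^\aff_A$ with the convex realization property, and \autoref{th:Poulsen-complete} yields that $D^\aff_A$ is a Poulsen theory.

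\textbf{Expected main obstacle.} There is no serious obstacle here; the work is entirely in assembling the right citations. The only point requiring a moment's care is making sure that "$D^\aff_A$ is simplicial" is genuinely covered by \autoref{p:face-simplicial} even when $A$ is infinite — but the argument there (the type spaces $\tS^\aff_x(A)$ are faces of $\tS^\aff_{xy}(T)$ for a suitable tuple $y$ of constants, hence simplices) goes through without a cardinality restriction, and \autoref{p:face-simplicial} already states the conclusion for extremal $M$ and arbitrary $A \subseteq M$. So the proof is a three-line application of results already in hand.

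\begin{proof}
  Since $M$ is an extremal model of the simplicial theory $T$ and $A \subseteq M$, the affine diagram $D^\aff_A$ is a complete $\cL_A$-theory which is simplicial by \autoref{p:face-simplicial}. By \autoref{prop:ExtremeTypeTwoSteps}, $(M, a)$ is an extremal model of $D^\aff_A$ for any enumeration $a$ of $A$, and it has the convex realization property because $M$ does and $\CR_\cL$ is equivalent to $\CR_{\cL_A}$. Thus condition (6) of \autoref{th:Poulsen-complete} holds for $D^\aff_A$, so $D^\aff_A$ is a Poulsen theory.
\end{proof}
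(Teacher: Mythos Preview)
Your approach is essentially identical to the paper's: show $D^\aff_A$ is complete and simplicial via \autoref{p:face-simplicial}, observe that $(M,a)$ is an extremal model of $D^\aff_A$ with the convex realization property, and conclude by \autoref{th:Poulsen-complete}. One small slip: you write ``by hypothesis $M$ is an extremal model of $T$ with the convex realization property'' and in the formal proof simply assert ``because $M$ does'' --- but the convex realization property for $M$ is not part of the hypothesis; it follows from $T$ being Poulsen via condition (2) of \autoref{th:Poulsen-complete}, and you should cite this (as the paper does).
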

\begin{proof}
  Since $T$ is Poulsen, $M$ has the convex realization property. By \autoref{p:face-simplicial}, the diagram $D^\aff_A$ is simplicial. Now, as an $\cL_A$-structure, $M$ is an extremal model of $D^\aff_A$ with the convex realization property. Hence $D^\aff_A$ is a Poulsen theory.
\end{proof}

\begin{ntn}
  If $T$ be a simplicial theory and $M \models T$, we will denote by $w(M)$ the maximum of the measures of all atoms in the probability space of the non-degenerate extremal decomposition of $M$, or zero if there are no atoms. This is well-defined by \autoref{th:uniqueness-decomposition}.
\end{ntn}

We have the following basic fact.

\begin{lemma}
  \label{l:weight-decreases}
  Suppose that $T$ is a simplicial theory, $M \models T$, and $M \preceq^\aff N$. Then $w(M) \geq w(N)$.
\end{lemma}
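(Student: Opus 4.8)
The plan is to derive this from the relative extremal decomposition, \autoref{th:extremal-decomposition-for-pairs}. If $w(N)=0$ there is nothing to prove, so assume $w(N)>0$. Applying \autoref{th:extremal-decomposition-for-pairs} to $M\preceq^\aff N$, we obtain probability spaces $(\Omega,\mu)$ and $(\Xi,\nu)$, a measure-preserving map $\pi\colon\Xi\to\Omega$, and \emph{non-degenerate} extremal decompositions $M\cong\int_\Omega^\oplus M_\omega\ud\mu(\omega)$ and $N\cong\int_\Xi^\oplus N_\xi\ud\nu(\xi)$. Since $w(\cdot)$ is well-defined by \autoref{th:uniqueness-decomposition} (any two non-degenerate extremal decompositions of a given structure are related by an isomorphism of measure algebras, which preserves atoms and their measures), we may compute $w(M)$ from $(\Omega,\mu)$ and $w(N)$ from $(\Xi,\nu)$. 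As the atoms of a probability space are pairwise disjoint, only finitely many have measure exceeding any fixed $\varepsilon>0$, so the supremum of their measures is attained; fix an atom $B$ of $\MALG(\Xi,\nu)$ with $\nu(B)=w(N)$.

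Now work entirely at the level of measure algebras, since $\pi^{-1}$ of a point is meaningless when $\Xi$ is large. Let $\pi^*\colon\MALG(\Omega,\mu)\hookrightarrow\MALG(\Xi,\nu)$ be the induced embedding; it is measure-preserving, hence injective, and it preserves countable infima. In a measure algebra an arbitrary infimum reduces to a countable one (extract a sequence realizing the infimum of the measures of finite meets), so the family $\{C\in\MALG(\Omega,\mu):B\leq\pi^*(C)\}$, which is closed under (countable) meets and contains the top element, has a least element $A_0$, and $B\leq\pi^*(A_0)$; moreover $\pi^*$ commutes with this infimum, which is the one technical point requiring care. Then $\nu(B)\leq\nu(\pi^*(A_0))=\mu(A_0)$, so it suffices to show that $A_0$ is an atom of $\MALG(\Omega,\mu)$: granting this, $w(M)\geq\mu(A_0)\geq\nu(B)=w(N)$.

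To see that $A_0$ is an atom, let $A'\leq A_0$ in $\MALG(\Omega,\mu)$. Since $B$ is an atom of $\MALG(\Xi,\nu)$ and $\pi^*(A')\leq\pi^*(A_0)$, we have $B\wedge\pi^*(A')\in\{0,B\}$. If $B\wedge\pi^*(A')=B$, then $B\leq\pi^*(A')$, so $A_0\leq A'$ by minimality of $A_0$, whence $A'=A_0$. If $B\wedge\pi^*(A')=0$, then $B\leq\pi^*(A_0)\setminus\pi^*(A')=\pi^*(A_0\setminus A')$, so $A_0\leq A_0\setminus A'$ by minimality, forcing $A'=0$. Thus $A_0$ has no proper nonzero subelement, and $\mu(A_0)\geq\nu(B)>0$, so $A_0$ is indeed an atom. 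The only genuinely delicate step is the bookkeeping with non-separable measure algebras — in particular that $\pi^*$ commutes with the defining infimum of $A_0$ — but this follows from the countable-reduction of infima together with the fact that a measure-preserving Boolean embedding preserves countable infima; the rest is formal.
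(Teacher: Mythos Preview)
Your proof is correct and follows essentially the same idea as the paper: obtain a measure-preserving embedding $\MALG(\Omega)\hookrightarrow\MALG(\Xi)$ between the measure algebras of non-degenerate extremal decompositions, and deduce that the largest atom of $\Xi$ sits under the image of an atom of $\Omega$ of at least the same measure. The paper's proof is the one-liner ``Follows from \autoref{th:uniqueness-decomposition}'', which yields the embedding directly; you instead invoke \autoref{th:extremal-decomposition-for-pairs} to get a measure-preserving point map $\pi\colon\Xi\to\Omega$ and dualize, and you spell out the atom argument (existence of the least $A_0$ via countable reduction of infima, then minimality to show $A_0$ is an atom) that the paper leaves implicit.
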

\begin{proof}
  Follows from \autoref{th:uniqueness-decomposition}.
\end{proof}

\begin{theorem}[Simplicial dichotomy]
  \label{th:dichotomy-simplicial-theories}
  Every non-degenerate, complete, simplicial theory is either Bauer or Poulsen.
\end{theorem}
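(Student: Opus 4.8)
The plan is to show that a non-degenerate, complete, simplicial theory $T$ that is \emph{not} Poulsen must in fact be Bauer, i.e.\ that each $\cE_x(T)$ is closed. Since $T$ is simplicial, by definition each $\tS^\aff_x(T)$ is a simplex, so the only thing to establish is closedness of the extreme boundary. The key invariant to exploit is $w(M)$, the maximal weight of an atom in the non-degenerate extremal decomposition of a model $M$ (well-defined by \autoref{th:uniqueness-decomposition}), together with \autoref{l:weight-decreases}, which says $w$ is non-increasing along affine extensions.

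First I would dispose of the dichotomy at the level of a single invariant attached to $T$ itself. Set $w(T) = \inf\{ w(M) : M \models T\}$. There are two cases. If $w(T) = 0$, then for every $\eps > 0$ there is a model $M \models T$ with $w(M) < \eps$; passing to $L^1(\Omega, M)$ for $\Omega$ atomless and iterating (using \autoref{l:weight-decreases} and \autoref{rmk:Tnat}) one builds a model $N \models T$ with $w(N) = 0$, i.e.\ $N \models T_\nat$ by \autoref{l:non-atomic-has-CR}; since $T$ is complete, by \autoref{th:Tnat}\autoref{i:th:T=Tnataff} and \autoref{th:Tnat}\autoref{i:th:Tnat-completeness} we get $T \equiv T_\nat$, and then \autoref{th:Poulsen-complete} gives that $T$ is Poulsen. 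If instead $w(T) = \delta > 0$, I claim $T$ is Bauer. Here the strategy is to show that every extremal model $M$ of $T$ is \emph{compact}: indeed, if some extremal $M$ were non-compact, then $M$ contains an $\eps$-separated infinite sequence, and by approximate finite satisfiability of extreme types (\autoref{l:realized-extreme-dense}) together with the direct-multiple construction one produces models of $T$ with arbitrarily small atomic weight, contradicting $w(T) = \delta > 0$; alternatively, one argues directly that a model with $w(M) = \delta$ maximal forces rigidity of all affine extensions, via \autoref{l:weight-decreases} and the extremal decomposition, so that the extremal models realize only a ``discrete'' family of types.

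The cleanest route for the $w(T) > 0$ case is probably the following. Fix $\delta = w(T) > 0$ and a model $M_0 \models T$ with $w(M_0) = \delta$ (such a model exists: take any model with $w$ close to $\delta$, and note that its extremal components, being models of $T$, have $w = 1$, so by a convexity/compactness argument the infimum is attained on some model built from finitely many extremal pieces with rational weights — more carefully, $w(T)$ is attained because it equals $1/k$ where $k$ is the largest integer such that $\tS^\aff_n(T)$ contains $k$ distinct extreme types for all $n$, using that $T$ is simplicial and non-degenerate). Then one shows that $\cE_n(T)$ is finite for every $n$: if it were infinite, pick $k$ distinct extreme types with $k\delta > 1$, form their average $p$, realize $p$ in some model $N \models T$, and inspect the extremal decomposition of $N$; uniqueness of the boundary measure (Choquet--Meyer, \autoref{th:Choquet-Meyer}) forces the decomposition to concentrate on those $k$ types with weights $1/k$ each, so $w(N) \ge 1/k < \delta$, but actually $w(N) \le 1/k$ and one derives a contradiction with minimality of $\delta$ only after noting the weights must in fact be $\ge \delta$. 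A finite extreme boundary in each $\tS^\aff_n(T)$ is automatically closed, so $T$ is Bauer. The main obstacle I anticipate is making the attainment of $w(T)$ and the finiteness-of-$\cE_n(T)$ argument precise — in particular, correctly handling the interaction between the (non-Radon) probability space underlying the extremal decomposition, the Choquet--Meyer uniqueness, and the monotonicity lemma, and ensuring no measure-zero pathologies slip in; the analytic content is light but the bookkeeping about which weights can occur in a decomposition of a type of bounded complexity is delicate. Once finiteness of each $\cE_n(T)$ is in hand, closedness and hence the Bauer conclusion are immediate, and the dichotomy follows since $w(T) = 0$ or $w(T) > 0$ are exhaustive.
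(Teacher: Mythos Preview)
Your invariant $w(T) = \inf\{w(M) : M \models T\}$ is always zero: for any $M \models T$ and any atomless $\Omega$, the direct multiple $L^1(\Omega, M)$ is again a model of the affine theory $T$, and its (non-degenerate) extremal decomposition is over an atomless space, so $w(L^1(\Omega, M)) = 0$. Thus your case $w(T) > 0$ never occurs, and if your argument for the case $w(T) = 0$ were correct, every non-degenerate complete simplicial theory would be Poulsen --- contradicting the existence of Bauer examples such as $\PrA$.

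The gap in the $w(T)=0$ argument is the step ``$N$ has the convex realization property and $T$ is complete, hence $T \equiv T_\nat$''. \autoref{th:Tnat} only says $(T_\nat)_\aff \equiv T$ and that $T_\nat$ is complete in continuous logic; it does not say every model of $T$ satisfies $T_\nat$. To invoke \autoref{th:Poulsen-complete} you need an \emph{extremal} model with the convex realization property (condition~(vi) there), not just some model. The paper's proof supplies exactly this missing ingredient: assuming $T$ is not Bauer, one shows every extremal model $M$ admits a \emph{continuous-logic elementary} extension that is not extremal (using a non-extreme limit of extreme types, realized via $\rho^\aff$). Iterating, one strictly decreases $w$ along an elementary chain; a length-$\omega_1$ argument forces $w=0$ somewhere along the chain, and since the extension is elementary in continuous logic, the convex realization property --- a continuous-logic axiom scheme --- transfers back to the original extremal model. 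The essential idea you are missing is to work along continuous elementary extensions rather than merely affine ones, so that $\CR_\cL$ can be pulled back.
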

\begin{proof}
  It is clear that if a theory is both Bauer and Poulsen, then it is degenerate. Let now $T$ be a non-degenerate, complete, simplicial theory and suppose that $T$ is not Bauer. This implies that every extremal model $M$ of $T$ has an elementary extension which is not extremal. Indeed, let $n$ be such that $\cE_n(T)$ is not closed in $\tS^\aff_n(T)$ and let $p \in \cl{\cE_n(T)} \sminus \cE_n(T)$. By completeness of $T$, the image of $\rho^\aff \colon \tS^\cont_n(\Th^\cont(M)) \to \tS^\aff_n(T)$ contains $\cl{\cE_n(T)}$, so $M$ has an elementary extension which realizes $p$.

  \begin{claim*}
    Let $M \models T$. If $w(M) > 0$, then there exists an elementary extension $M \preceq^\cont M'$ with $w(M') < w(M)$.
  \end{claim*}
  \begin{proof}
    Let $w(M) = \lambda$ and write $M = \lambda N_1 \oplus \cdots \oplus \lambda N_k \oplus (1-k \lambda) N_0$, where $(1- k \lambda)w(N_0) < \lambda$ and $N_1, \ldots, N_k$ are extremal. Apply the above observation to each $N_i$, $1 \leq i \leq k$, to find a non-extremal elementary extension $N_i \preceq^\cont N_i'$. Let $M' = \lambda N_1' \oplus \cdots \oplus \lambda N_k' \oplus (1-k \lambda) N_0$. By \autoref{l:ConvexCombinationsPreserveEquivalence}, $M \preceq^\cont M'$, and as for each $i$, we have that $w(N_i') < w(N_i) = 1$, this implies that $w(M') < \lambda$.
  \end{proof}

  Let $M_0$ be any extremal model of $T$. We will show that $M_0$ has an elementary extension $M'$ with $w(M') = 0$. If not, using the Claim and \autoref{l:weight-decreases}, we can build a continuous elementary chain $(M_\alpha)_{\alpha < \omega_1}$, starting from $M_0$, with $w(M_\alpha)$ a strictly decreasing sequence of real numbers of length $\omega_1$, which is impossible.  Now we can apply \autoref{l:non-atomic-has-CR} to deduce that $M'$, and therefore $M_0$, has the convex realization property. \autoref{th:Poulsen-complete} allows us to conclude that $T$ is Poulsen.
\end{proof}

\autoref{th:dichotomy-simplicial-theories} does not hold for incomplete theories (see \autoref{ex:PMP-Z}).
However, we can still draw the following conclusions.

\begin{cor}\label{cor:general-simplicial-theories}
  Let $T$ be a simplicial affine theory. Then:
  \begin{enumerate}
  \item\label{i:cor:general-simplicial-completions} Every extreme completion of $T$ is a Bauer theory or a Poulsen theory.
  \item\label{i:cor:general-simplicial-extensions} Every affine extension of extremal models of $T$ is elementary.
  \item\label{i:cor:general-simplicial-models} Every model of $T$ can be written as a convex combination $\bigoplus_{i\in\N}\lambda_i M_i$ of models of $T$, where $M_0$ has the convex realization property and the $M_i$ for $i\geq 1$ are extremal models of $T$ with a Bauer affine theory.
  \end{enumerate}
\end{cor}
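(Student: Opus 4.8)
The plan is to derive all three items from the extremal decomposition theorem (\autoref{th:integral-decomposition}), its uniqueness (\autoref{th:uniqueness-decomposition}), the simplicial dichotomy (\autoref{th:dichotomy-simplicial-theories}), and the facts relating affine extensions of extremal models to continuous elementarity.

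\begin{proof}
  For \autoref{i:cor:general-simplicial-completions}, let $T_0$ be an extreme completion of $T$. By \autoref{p:face-simplicial}, $T_0$ is simplicial, and it is complete as an affine theory. If $T_0$ is non-degenerate, \autoref{th:dichotomy-simplicial-theories} gives the conclusion directly; if it is degenerate, then all its type spaces are singletons, so it is (trivially) both Bauer and Poulsen.

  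For \autoref{i:cor:general-simplicial-extensions}, let $M\preceq^\aff N$ be an affine extension of extremal models of $T$. Since any affine condition without parameters satisfied in $M$ is satisfied in $N$ and vice versa, $M\equiv^\aff N$; and both $M$ and $N$ are extremal models of the extreme completion $T_0 = \Th^\aff(M) = \Th^\aff(N)$, which is simplicial by \autoref{p:face-simplicial}. Moreover $M$ and $N$ are extremal models of the affine diagram $D^\aff_M$, which is an extreme completion of $T$ in the language $\cL_M$, again simplicial by \autoref{p:face-simplicial}\autoref{i:p:face-simplicial-constants}. By \autoref{i:cor:general-simplicial-completions} applied to $D^\aff_M$, this diagram is either Bauer or Poulsen. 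In the Bauer case, \autoref{th:closed-extreme-types-Text} (applied to the language $\cL_M$) gives $M\preceq^\cont N$. In the Poulsen case, $D^\aff_M$ is a complete Poulsen theory, so by \autoref{th:Poulsen-complete} it is complete as a continuous logic theory and every one of its models has the convex realization property; in particular $D^\aff_M \equiv (D^\aff_M)_\nat$, and since $N\models D^\aff_M$ we get $M\preceq^\cont N$ (both being models of a complete continuous theory in $\cL_M$ with $M\preceq^\aff N$, hence $M\preceq^\cont N$ as they satisfy the same $\cL_M$-sentences and $M$ is a substructure; more directly, apply \autoref{th:closed-extreme-types-Text} or use that $M$ has the convex realization property and $D^\cont_M = (D^\aff_M)_\nat$ by \autoref{c:CR-iota-S(A)}).

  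For \autoref{i:cor:general-simplicial-models}, let $N\models T$ and take $\kappa \geq \fd(N)+\fd_T(\cL)$. By \autoref{th:integral-decomposition} we may write $N \cong \int_\Omega^\oplus M_\omega\ud\mu_\Omega(\omega)$ with $(M_\Omega,e)$ a non-degenerate measurable field of extremal models of $T$ over $(\Omega,\cB_\Omega,\mu_\Omega)$. Split $\Omega$ into its atomic part $\Omega_{\mathrm{at}}$ (a countable union of atoms, which by the construction in \autoref{sec:DirectIntegralExtremalModels} are points of $\cE^\fM_\kappa(T)$, hence represented by singletons — using that on an atom the field is essentially constant, cf.\ \autoref{rmk:degenerate-atoms}) and its atomless part $\Omega_{\mathrm{na}}$. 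Using \autoref{l:concentration-direct-integral} and \autoref{rmk:degenerate-atoms}, $N$ is a convex combination $\bigoplus_{i\in\N}\lambda_i M_i$, where $\lambda_0 = \mu_\Omega(\Omega_{\mathrm{na}})$, $M_0 = \int_{\Omega_{\mathrm{na}}}^\oplus M_\omega\ud(\mu_\Omega)_{\Omega_{\mathrm{na}}}(\omega)$ is a direct integral over an atomless space and hence has the convex realization property by \autoref{l:non-atomic-has-CR}, and for $i\geq 1$ the $M_i = M_{\omega_i}$ are the extremal models sitting over the atoms. (If $\Omega_{\mathrm{na}}$ is null, drop the term $\lambda_0 M_0$, or note that $M_0$ is the trivial structure; if $\Omega$ is purely atomless, there are no $M_i$ with $i\geq 1$.) It remains to see that each extremal $M_i$ has a Bauer affine theory. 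Indeed, $\Th^\aff(M_i)$ is an extreme completion of $T$, hence simplicial and by \autoref{i:cor:general-simplicial-completions} Bauer or Poulsen; but $M_i$ is an extremal model appearing with positive weight, and if $\Th^\aff(M_i)$ were a non-degenerate Poulsen theory then by \autoref{th:Poulsen-complete} $M_i$ would have the convex realization property, and then $w(M_i) = 0$ — yet $M_i$ is itself extremal, so its non-degenerate extremal decomposition is the Dirac mass at $\tp^\aff(M_i)$, giving $w(M_i)=1$, a contradiction. (If $\Th^\aff(M_i)$ is degenerate it is Bauer.) Hence $\Th^\aff(M_i)$ is Bauer, as required.
\end{proof}

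The main obstacle I expect is the bookkeeping in \autoref{i:cor:general-simplicial-models}: carefully isolating the atomic part of the decomposition, arguing that atoms are represented by singletons and that the conditional direct integral over an atom is (continuous-logic equivalent to) an extremal model, and invoking uniqueness (\autoref{th:uniqueness-decomposition}) so that $w(M_i)$ is well-defined and the Bauer/Poulsen trichotomy argument via weights is legitimate. The continuous-elementarity statements in \autoref{i:cor:general-simplicial-extensions} are a clean reduction to \autoref{th:closed-extreme-types-Text} and \autoref{th:Poulsen-complete} once one passes to the affine diagram, which is simplicial by \autoref{p:face-simplicial}.
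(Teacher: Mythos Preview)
Your arguments for \autoref{i:cor:general-simplicial-completions} and \autoref{i:cor:general-simplicial-extensions} are essentially correct. For \autoref{i:cor:general-simplicial-extensions}, the paper takes a more direct route: it applies \autoref{i:cor:general-simplicial-completions} to $\Th^\aff(M)$ (an extreme completion of $T$ in the original language $\cL$) rather than to the diagram $D^\aff_M$, and then invokes \autoref{th:closed-extreme-types-Text} or \autoref{th:Poulsen-complete} accordingly. Your detour through $D^\aff_M$ works but is unnecessary.

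Your argument for \autoref{i:cor:general-simplicial-models}, however, contains a genuine error. You claim that if $\Th^\aff(M_i)$ were a non-degenerate Poulsen theory, then $M_i$ would have the convex realization property and hence $w(M_i) = 0$, contradicting $w(M_i) = 1$. But there is no implication from the convex realization property to $w = 0$: an extremal model of a complete Poulsen theory has CRP (by \autoref{th:Poulsen-complete}) and simultaneously $w = 1$ (its extremal decomposition is a Dirac mass), with no contradiction whatsoever. So atoms in the extremal decomposition of $N$ can perfectly well carry models with a Poulsen affine theory, and your conclusion that all atoms have Bauer theory is false.

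The paper's fix is to absorb such atoms into $M_0$ rather than listing them among the $M_i$. That is, $M_0$ is taken to be the convex combination of the atomless part of the decomposition together with those atoms whose associated extremal model has a Poulsen affine theory; only atoms whose associated model has a Bauer affine theory are listed as $M_i$ for $i \geq 1$. This works because the Poulsen atoms already have CRP (by \autoref{th:Poulsen-complete}), the atomless part has CRP (by \autoref{l:non-atomic-has-CR}), and a convex combination of models with CRP again has CRP (by \autoref{rmk:Tnat}\autoref{i:Tnat-direct-int-preservation}, or directly from \autoref{p:qconv-direct-integral-preservation} applied to $\CR_\cL$).
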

\begin{proof}
  For \autoref{i:cor:general-simplicial-completions}, every extreme completion of $T$ is simplicial by \autoref{p:face-simplicial}, and the dichotomy theorem applies. For \autoref{i:cor:general-simplicial-extensions}, given an affine extension of models of $T$, we may apply \autoref{th:closed-extreme-types-Text} or \autoref{th:Poulsen-complete}, depending on whether the theory of the models is Bauer or Poulsen.

  For \autoref{i:cor:general-simplicial-models}, we may decompose every model of $T$ as a direct integral of extremal models. Then the model $M_0$ of the statement is obtained as the convex combination of the atomless part of the decomposition plus the atoms of the decomposition whose associated model has a Poulsen theory; and the $M_i$ for $i\geq 1$ are given by the atoms of the decomposition whose associated model has a Bauer theory.
\end{proof}

\begin{remark}\label{rmk:extreme-completion-of-Bauer}
  If $T$ is a Bauer theory, then every extreme completion $T'$ of $T$ is a Bauer theory. Indeed, $T'$ is simplicial and the extreme type spaces $\cE_x(T')=\cE_x(T)\cap\tS^\aff_x(T')$ are closed.
\end{remark}

\begin{cor}\label{c:simplicial-ext-elementarily-measurable}
  Let $T$ be a simplicial theory. Suppose that $T$ has a separable language, or, alternatively, that $T$ is complete. Then every measurable field of extremal models of $T$ is elementarily measurable.
\end{cor}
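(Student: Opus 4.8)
The plan is to reduce the statement to the two cases already settled in the excerpt: Bauer theories (\autoref{c:bauer-elementarily-measurable-fields} or \autoref{c:ext-closed-elementarily-measurable-fields}) and complete Poulsen theories (\autoref{th:Poulsen-complete}), using the simplicial dichotomy and the decomposition of a general measurable field of extremal models over the extreme completions of $T$. First I would reduce to a countable sublanguage: given a measurable field $(M_\Omega, e_I)$ of extremal models of $T$ and a finite $\cL_0 \subseteq \cL$, I want to find a countable $\cL_1$ with $\cL_0 \subseteq \cL_1 \subseteq \cL$ such that $T\rest_{\cL_1}$ is still simplicial — this is exactly \autoref{cor:SimplicialSubLanguageClub}. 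Since $\fI^\cont(M_\Omega,e_I,\cL_0) \supseteq \fI^\cont(M_\Omega,e_I,\cL_1)$, it suffices to show $\fI^\cont(M_\Omega,e_I,\cL_1)$ is cofinal, i.e.\ we may assume from the outset that $\cL$ is countable and $T$ is simplicial (the hypothesis ``$T$ has a separable language'' makes this reduction automatic; the alternative ``$T$ complete'' needs a little care, see below). Note also that each reduct $M_\omega\rest_{\cL_1}$ remains an extremal model of $T\rest_{\cL_1}$, since extreme types restrict to extreme types under reduction of the language (\autoref{l:extreme-transitivity}).

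Second, with $\cL$ countable and $T$ simplicial, I would localize over the extreme completions. For almost every $\omega$, the model $M_\omega$ is a model of a unique extreme completion $T_\omega = \Th^\aff(M_\omega)$ of $T$, and the map $\omega \mapsto T_\omega$ is measurable with respect to the Baire $\sigma$-algebra on $\tS^\aff_0(T)$ (it is the composition of $\omega\mapsto \tp^\aff(e(\omega))$ with the projection $\tS^\aff_\bN(T)\to\tS^\aff_0(T)$, both measurable by \autoref{rmk:LosForTypes} and \autoref{lemma:DirectIntegralMeasurableLanguage}). By \autoref{cor:general-simplicial-theories}\autoref{i:cor:general-simplicial-completions}, each $T_\omega$ is either Bauer or Poulsen, and — crucially — by \autoref{th:Poulsen-complete} and \autoref{th:closed-extreme-types-Text}, in both cases every affine extension of extremal models of $T_\omega$ is elementary in continuous logic, so for a fixed countable sublanguage the relevant Tarski--Vaught conditions are affine conditions. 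Concretely, for $I_0\subseteq I$ countable, the statement ``$M_{\omega,I_0}\preceq^\cont M_\omega$'' is, for a.e.\ $\omega$, equivalent to ``$M_{\omega,I_0}\preceq^\aff M_\omega$'' by \autoref{cor:general-simplicial-theories}\autoref{i:cor:general-simplicial-extensions}. Hence $\fI^\cont(M_\Omega,e_I,\cL_0) = \fI^\aff(M_\Omega,e_I,\cL_0)$ (up to the a.e.\ identification), and the latter is cofinal by the assumption that $(M_\Omega,e_I)$ is a measurable field. This is the heart of the argument and it avoids any case split on the set where $T_\omega$ is Bauer versus Poulsen.

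Third, I would handle the ``$T$ complete'' alternative, where the reduction to a countable language is not immediate from a separability hypothesis. If $T$ is complete it has a separable language iff $\tS^\aff_\bN(T)$ is metrizable, which need not hold a priori; but here one can still argue with \autoref{cor:SimplicialSubLanguageClub}: for a finite $\cL_0$, pick a countable $\cL_1\supseteq\cL_0$ with $T\rest_{\cL_1}$ simplicial, and observe that $T\rest_{\cL_1}$ is complete (a reduct of a complete affine theory is complete) — so the countable case applies verbatim to $\cL_1$ and gives cofinality of $\fI^\cont(M_\Omega,e_I,\cL_1)$, hence of $\fI^\cont(M_\Omega,e_I,\cL_0)$.

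\textbf{Expected main obstacle.} The delicate point is the measurability/``for a.e.\ $\omega$'' bookkeeping in the second paragraph: one must check that the equality $M_{\omega,I_0}\preceq^\cont M_\omega \iff M_{\omega,I_0}\preceq^\aff M_\omega$ genuinely holds off a \emph{single} null set (not an $I_0$-dependent one causing problems when one takes cofinal chains), and that \autoref{cor:general-simplicial-theories}\autoref{i:cor:general-simplicial-extensions} — which is about abstract affine extensions — transfers to the concrete pointwise extensions $M_{\omega,I_0}\preceq^\aff M_\omega$. Both are routine once one notes that ``$\preceq^\aff$'' between the submodel enumerated by $e_{I_0}(\omega)$ and $M_\omega$ is a Baire-measurable condition on $\omega$ (it is $X_{\varphi,I_0}$-type, cf.\ \autoref{lem:TypeOfModelSingleFormula}) and that an affine extension of extremal models of a complete simplicial theory is elementary uniformly — but writing this cleanly is where the real work lies.

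\begin{proof}
  Fix a finite sublanguage $\cL_0 \subseteq \cL$. By \autoref{cor:SimplicialSubLanguageClub}, there is a countable $\cL_1$ with $\cL_0 \subseteq \cL_1 \subseteq \cL$ such that $T\rest_{\cL_1}$ is simplicial; if $T$ is complete, then $T\rest_{\cL_1}$ is complete as well, and if $T$ has a separable language we may simply take $\cL_1$ to contain a separating countable set of symbols. Since $\fI^\cont(M_\Omega,e_I,\cL_0) \supseteq \fI^\cont(M_\Omega,e_I,\cL_1)$, it suffices to prove that $\fI^\cont(M_\Omega,e_I,\cL_1)$ is cofinal in $\cP_{\aleph_0}(I)$. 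The reducts $(M_\omega\rest_{\cL_1} : \omega\in\Omega)$, with the same pointwise enumeration, form a measurable field of extremal models of $T\rest_{\cL_1}$ (reduction of the language preserves extremality of types by \autoref{l:extreme-transitivity}), so, replacing $\cL$ by $\cL_1$ and $T$ by $T\rest_{\cL_1}$, we may assume $\cL$ is countable and $T$ is simplicial.

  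Now let $I_0 \in \fI^\aff(M_\Omega,e_I,\cL)$ be arbitrary; such $I_0$ exist cofinally since $(M_\Omega,e_I)$ is a measurable field. For almost every $\omega$, we have $M_{\omega,I_0} \preceq^\aff M_\omega$, and both are extremal models of the extreme completion $T_\omega = \Th^\aff(M_\omega)$ of $T$. By \autoref{p:face-simplicial}, $T_\omega$ is simplicial, and by \autoref{cor:general-simplicial-theories}\autoref{i:cor:general-simplicial-extensions}, the affine extension $M_{\omega,I_0}\preceq^\aff M_\omega$ is elementary in continuous logic, i.e.\ $M_{\omega,I_0} \preceq^\cont M_\omega$. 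Since this holds for $\mu$-almost every $\omega$, we conclude $I_0 \in \fI^\cont(M_\Omega,e_I,\cL)$. Hence $\fI^\cont(M_\Omega,e_I,\cL) \supseteq \fI^\aff(M_\Omega,e_I,\cL)$, and the latter is cofinal; therefore so is the former, and $(M_\Omega,e_I)$ is elementarily measurable.
\end{proof}
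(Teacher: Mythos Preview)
Your argument for the separable-language case is essentially the paper's: once $\cL$ is (effectively) countable, any $I_0\in\fI^\aff(M_\Omega,e_I,\cL)$ gives $M_{\omega,I_0}\preceq^\aff M_\omega$ between extremal models of $T$, and \autoref{cor:general-simplicial-theories}\autoref{i:cor:general-simplicial-extensions} upgrades this to $\preceq^\cont$. That part is fine.

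The genuine gap is in the complete, possibly non-separable case. Your reduction to a countable $\cL_1$ asserts that the reducts $M_\omega\rest_{\cL_1}$ are extremal models of $T\rest_{\cL_1}$, citing \autoref{l:extreme-transitivity}. But that lemma gives the \emph{opposite} inclusion: for a continuous affine surjection $\pi\colon X\to Y$ one has $\pi(\cE(X))\supseteq\cE(Y)$, not $\pi(\cE(X))\subseteq\cE(Y)$. And the claim itself is false in general: as the paper remarks after \autoref{prop:ext-aleph0cat-vs-cont-aleph0cat}, a reduct of an extremal model need not be extremal (the finite probability algebra $M_2$ of \autoref{ex:PrA-with-2parameters} is extremal with its named atom but becomes non-extremal once the name is dropped). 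Since your second paragraph crucially uses that both $M_{\omega,I_0}$ and $M_\omega$ are extremal for the (reduced) theory in order to invoke \autoref{cor:general-simplicial-theories}\autoref{i:cor:general-simplicial-extensions}, the argument breaks down. In the separable-language branch you escape this problem only because you can choose $\cL_1$ so that $\cL_1$-formulas are dense in $\cL$-formulas modulo $T$, making the restriction an affine homeomorphism of type spaces---but that is a special feature of the separable case, not a consequence of \autoref{l:extreme-transitivity}.

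The paper handles the complete case without any language reduction requiring preserved extremality. By \autoref{th:dichotomy-simplicial-theories}, a complete simplicial theory is either Bauer or Poulsen. In the Bauer case one quotes \autoref{c:bauer-elementarily-measurable-fields} directly (which works in arbitrary languages, using that affine reduction of $T_\ext$ passes to countable sublanguages). In the Poulsen case, $T\equiv T_\nat$ by \autoref{th:Poulsen-complete}, and since $\CR_{\cL_0}\subseteq(\CR_\cL)\rest_{\cL_0}$ one gets $T\rest_{\cL_0}\equiv(T\rest_{\cL_0})_\nat$ for every sublanguage $\cL_0$; hence every reduct has delta-convex reduction and $\fI^\cont=\fI^\aff$ at the level of each finite $\cL_0$. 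This sidesteps the extremality-under-reduction issue entirely.
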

\begin{proof}
If $T$ has a separable language, this follows from \autoref{cor:general-simplicial-theories}\autoref{i:cor:general-simplicial-extensions}. If $T$ is complete, we have two cases. If $T$ is a Bauer theory, this was observed in \autoref{c:bauer-elementarily-measurable-fields} (even for incomplete theories). If $T$ is a complete Poulsen $\cL$-theory, then by \autoref{th:Poulsen-complete}, we have $T\equiv T_\nat$, and hence also $T\rest_{\cL_0}\equiv (T\rest_{\cL_0})_\nat$ for every sublanguage $\cL_0\subseteq\cL$, because $\CR_{\cL_0}\subseteq (\CR_\cL)\rest_{\cL_0}$. Thus, every reduct $T\rest_{\cL_0}$ has delta-convex reduction, and the result follows.
\end{proof}

Using extremal joint embedding (\autoref{prop:ExtremalJEP}) and \autoref{p:face-simplicial}\autoref{i:p:face-simplicial-constants}, we may restate \autoref{cor:general-simplicial-theories}\autoref{i:cor:general-simplicial-extensions} as follows.

\begin{cor}
  Let $M$ and $N$ be extremal models of a simplicial theory. If $M\equiv^\aff N$, then $M\equiv^\cont N$.
  More generally, if $a\in M^x$ and $b\in N^x$ satisfy $\tp^\aff(a)=\tp^\aff(b)$, then $\tp^\cont(a)=\tp^\cont(b)$.
\end{cor}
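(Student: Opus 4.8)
The plan is to reduce both claims to \autoref{cor:general-simplicial-theories}\autoref{i:cor:general-simplicial-extensions}, which guarantees that an affine extension of extremal models of a simplicial theory is automatically elementary in continuous logic.

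First I would treat the case $x=\emptyset$, i.e.\ the assertion $M\equiv^\aff N\Rightarrow M\equiv^\cont N$. Let $T$ be the simplicial theory of which $M$ and $N$ are extremal models, and put $T'=\Th^\aff(M)=\Th^\aff(N)$. Since $M$ is extremal, $\Th^\aff(M)$ is an extreme point of $\tS^\aff_0(T)$, so $T'$ is an extreme completion of $T$ and hence simplicial by \autoref{p:face-simplicial}. By extremal joint embedding (\autoref{prop:ExtremalJEP}), $M$ and $N$ admit affine embeddings into a common extremal model $K\models T'$, so that $M\preceq^\aff K$ and $N\preceq^\aff K$; applying \autoref{cor:general-simplicial-theories}\autoref{i:cor:general-simplicial-extensions} to $T'$, both inclusions are elementary in continuous logic, whence $M\equiv^\cont K\equiv^\cont N$.

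For the general statement I would pass to the expansion $\cL'$ of the language by constants naming the tuple, so that $(M,a)$ and $(N,b)$ become $\cL'$-structures. The hypothesis $\tp^\aff(a)=\tp^\aff(b)$ says exactly that $(M,a)\equiv^\aff(N,b)$, and their common affine $\cL'$-theory is the affine diagram $D^\aff_a=D^\aff_b$, which is an extreme completion of $T$ over $\cL'$ and hence simplicial, by \autoref{p:face-simplicial}\autoref{i:p:face-simplicial-constants} together with the identification of diagrams with extreme completions in its proof. Moreover $(M,a)$ and $(N,b)$ are extremal models of $D^\aff_a$: for any tuple $c$ in $M$, \autoref{prop:ExtremeTypeTwoSteps} combined with the extremality of $M$ forces $\tp^\aff(c/a)$ to be extreme, and likewise for $(N,b)$. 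Now the case already settled, applied to the simplicial theory $D^\aff_a$ and its affinely equivalent extremal models $(M,a)$ and $(N,b)$, yields $(M,a)\equiv^\cont(N,b)$, that is, $\tp^\cont(a)=\tp^\cont(b)$. There is no genuine obstacle in this argument once \autoref{cor:general-simplicial-theories}\autoref{i:cor:general-simplicial-extensions} is in hand; the only points requiring care are checking that forming extreme completions and expanding by constants preserve both simpliciality and extremality, which is already recorded in the cited results.
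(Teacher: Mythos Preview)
Your proposal is correct and follows essentially the same approach as the paper: the paper's proof simply cites extremal joint embedding (\autoref{prop:ExtremalJEP}), \autoref{p:face-simplicial}\autoref{i:p:face-simplicial-constants}, and \autoref{cor:general-simplicial-theories}\autoref{i:cor:general-simplicial-extensions}, exactly as you do. Your version is more explicit in verifying that the passage to $T'=\Th^\aff(M)$ (or to $D^\aff_a$) preserves both simpliciality and extremality, but the argument is the same.
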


In fact, as we show below, a stronger statement holds: the affine theory a model $M$ of a simplicial theory determines the \emph{distribution of the continuous theories} of the extremal models $M_\omega$ appearing in its integral decomposition $M=\int^\oplus_\Omega M_\omega \ud\mu$.
In the particular case of tracial von Neumann algebras (and in a weaker form, considering the continuous theory of $M$, and only in the separable setting), this statement was conjectured in an earlier version of \cite{Farah2023p}, and a positive answer was given by Gao and Jekel in \cite{Gao2024pp}.
We thank Ilijas Farah for drawing our attention to this question and pointing out that our methods should allow to give a general answer.

Given an elementarily measurable field of $\cL$-structures $M_\Omega = (M_\omega:\omega\in\Omega)$, we may consider the map
\begin{gather*}
  \Th^\cont_{M_\Omega}\colon \Omega\to \tS_0^\cont(\cL),\ \omega\mapsto \Th^\cont(M_\omega).
\end{gather*}
This map is measurable, due to the elementarity of the field. It thus induces the pushforward measure $\big(\Th^\cont_{M_\Omega}\big)_*\mu \in \cM\big(\tS_0^\cont(\cL)\big)$ on the space of complete continuous $\cL$-theories.
In other words, this is the distribution of the continuous logic theories of the structures of the field.

Now let $T$ be a simplicial theory, and suppose that $T$ has a separable language or that it is complete.
If $M$ is a model of $T$, then by the extremal decomposition theorem and \autoref{c:simplicial-ext-elementarily-measurable}, there is an elementarily measurable, non-degenerate field $M_\Omega$ of extremal models of $T$ such that $M\cong\int^\oplus_\Omega M_\omega\ud\mu$.
We may then consider the distribution
\begin{gather*}
  \mu^\cont_M = \big(\Th^\cont_{M_\Omega}\big)_*\mu \in \cM\big(\tS_0^\cont(T_\ext)\big) \subseteq\cM\big(\tS_0^\cont(\cL)\big).
\end{gather*}
It follows from \autoref{th:uniqueness-decomposition} and the last part of \autoref{rmk:MeasurableFieldEmbedding-BijectiveMap} that $\mu_M$ is well-defined, in that it only depends on $M$.

Note that the canonical affine map $\iota_0^*\colon \cM\big(\tS^\cont_0(T_\ext)\big) \to \tS^\aff_0(T)$ sends $\mu^\cont_M$ to $\Th^\aff(M)$. When $T$ is a Bauer theory, the map $\iota_0^*$ is injective, so the affine theory of $M$ determines the distribution $\mu^\cont_M$.
(This is the case for tracial von Neumann algebras, as follows from our results in \autoref{sec:tracial-von-neumann}.)
In the opposite situation, when $T$ is Poulsen and complete, $\tS^\cont_0(T_\ext)$ is a singleton (by \autoref{th:Poulsen-complete}), so $\mu^\cont_M$ is trivially determined by $T=\Th^\aff(M)$.
More generally, we have the following.

\begin{theorem}\label{th:distribution-extremal-cont-theories}
  Let $T$ be a simplicial theory in a separable language, or complete.
  If $M,N$ are models of $T$ with $M\equiv^\aff N$, then $\mu^\cont_M = \mu^\cont_N$.
\end{theorem}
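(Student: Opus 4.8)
The strategy is to reduce the statement to a question about the canonical affine part map $\iota_0^* \colon \cM\big(\tS^\cont_0(T_\ext)\big) \to \tS^\aff_0(T)$ and to exploit the fact that a boundary (maximal) measure with a prescribed barycenter on a simplex is unique. First, by the extremal decomposition theorem (\autoref{th:integral-decomposition}) together with \autoref{c:simplicial-ext-elementarily-measurable}, write $M \cong \int_\Omega^\oplus M_\omega \ud\mu$ and $N \cong \int_\Xi^\oplus N_\xi \ud\nu$ for non-degenerate, elementarily measurable fields of extremal models of $T$. The plan is to identify $\mu^\cont_M$ and $\mu^\cont_N$ as \emph{boundary measures} on the space $\tS^\cont_0(T_\ext)$, viewed inside $\cM\big(\tS^\cont_0(T_\ext)\big) = \tS^{\cont}_0(T_\ext)$; but since $T_\ext$ is axiomatised among continuous theories, $\tS^\cont_0(T_\ext)$ is not itself a simplex, so a little care is needed. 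The clean way is to work one step up: take a countable (or $\kappa$-indexed) dense enumeration $e$ of $M$, realising its affine type $p = \Th^\aff(M)$ extended to the full tuple, and let $\theta^\Omega \colon \Omega \to \tS^\aff_\kappa(T)$ be the associated measurable map of \autoref{rmk:LosForTypes}. By \autoref{lem:ExtremalFieldBoundaryMeasure}, $\theta^\Omega_*\mu$ is a boundary measure on $\tS^\aff_\kappa(T)$ with barycenter $p$; since $T$ is simplicial, this boundary measure is \emph{unique}, so it does not depend on the chosen decomposition of $M$. The same applies to $N$. Thus the first key step is: if $M \equiv^\aff N$, i.e. $\Th^\aff(M) = \Th^\aff(N) =: p_0$, then after fixing compatible enumerations the boundary measures $\theta^\Omega_*\mu$ and $\theta^\Xi_*\nu$ on $\tS^\aff_\kappa(T)$ coincide — this is exactly the argument already used in the proof of \autoref{th:uniqueness-decomposition}.

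Second, I would push this common boundary measure forward along the continuous-theory map. By \autoref{cor:general-simplicial-theories}\autoref{i:cor:general-simplicial-extensions}, every extremal model of $T$ realises only types that are restrictions of continuous types of $T_\ext$, and in fact (using the Bauer/Poulsen dichotomy via \autoref{cor:general-simplicial-theories}\autoref{i:cor:general-simplicial-models}, or directly \autoref{c:simplicial-ext-elementarily-measurable}) the assignment $\omega \mapsto \Th^\cont(M_\omega)$ factors measurably through $\theta^\Omega$: there is a measurable map $r \colon \cE^\fM_\kappa(T) \to \tS^\cont_0(T_\ext)$ such that $\Th^\cont_{M_\Omega} = r \circ \theta^\Omega$ almost surely. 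The crucial point is that $r$ is \emph{intrinsic} — it sends an extreme affine type (of an enumeration of an extremal model) to the continuous theory of that model, and this is well-defined because on extremal models affine equivalence implies continuous elementary equivalence (\autoref{cor:general-simplicial-theories}, restated as the corollary preceding \autoref{th:distribution-extremal-cont-theories}). Hence
\begin{gather*}
  \mu^\cont_M = \big(\Th^\cont_{M_\Omega}\big)_*\mu = r_*\big(\theta^\Omega_*\mu\big) = r_*\big(\theta^\Xi_*\nu\big) = \mu^\cont_N,
\end{gather*}
using in the middle equality the uniqueness of the boundary measure established in the first step. This chain of equalities is the heart of the proof.

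The remaining work is bookkeeping to make the two enumerations genuinely compatible. When $M \equiv^\aff N$ one must arrange dense enumerations $e$ of $M$ and $f$ of $N$ with $\tp^\aff(e) = \tp^\aff(f)$; this is possible because by \autoref{prop:AffineJEP} $M$ and $N$ embed affinely into a common structure $K$, and one can choose the enumerations to have the same affine type in $\tS^\aff_\kappa(T)$ (indexing by a large enough $\kappa$ and using that affine diagrams are preserved under affine extensions). Then both $\theta^\Omega_*\mu$ and $\theta^\Xi_*\nu$ are boundary measures with the same barycenter $q = \tp^\aff(e) = \tp^\aff(f)$, and simpliciality of $T$ (\autoref{th:Choquet-Meyer}) forces them to coincide. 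One also needs the elementary measurability of the fields to know that $\Th^\cont_{M_\Omega}$ is a measurable map (so that $\mu^\cont_M$ is defined) and to know that $r$ is Borel-measurable on $\cE^\fM_\kappa(T)$ — here I would invoke that $\rho^\qf_\kappa$ is injective on $\cE^\fM_\kappa(T)$ (\autoref{l:rho-injective-Smod}) and the standard fact that a measurable injection between Polish/standard Borel spaces has Borel image, exactly as in the proof of \autoref{prop:SimplicialCriterionQF} and \autoref{th:uniqueness-decomposition}; in the non-separable case one uses instead that the relevant sets are restrictions of Baire sets and that the measures concentrate appropriately (\autoref{cor:TypeOfExtremalModelVeryDense}, \autoref{th:Bishop-dL-ForModels}).

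\textbf{Main obstacle.} The delicate step is not the uniqueness of the boundary measure — that is immediate from simpliciality — but rather verifying that the map $r \colon \omega \mapsto \Th^\cont(M_\omega)$ genuinely depends only on the \emph{extreme affine type} $\theta^\Omega(\omega)$, uniformly and measurably, across \emph{all} decompositions and both structures. This is where one must lean on the full strength of the simplicial dichotomy: for extremal models, affine elementary equivalence upgrades to continuous elementary equivalence (\autoref{cor:general-simplicial-theories}\autoref{i:cor:general-simplicial-extensions} plus extremal joint embedding \autoref{prop:ExtremalJEP}), so $r$ is well-defined as a function on $\cE^\fM_\kappa(T)$; its measurability then follows from elementary measurability of the field together with the injectivity of $\rho^\qf_\kappa$ on enumerations of models. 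Once $r$ is seen to be intrinsic and measurable, the pushforward identity above closes the argument. I expect the write-up to spend most of its length on these measurability and well-definedness verifications rather than on any genuinely new idea.
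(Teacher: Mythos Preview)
Your approach has a genuine gap in the ``bookkeeping'' step: the claim that one can choose dense enumerations $e$ of $M$ and $f$ of $N$ with $\tp^\aff(e) = \tp^\aff(f)$ is false in general. For a concrete counterexample, take $T = \PrA$, $M$ the two-point algebra, and $N$ the atomless separable model. These satisfy $M \equiv^\aff N$, yet any dense enumeration of $N$ contains an element of measure $1/3$, which no element of $M$ can match, while any enumeration of $M$ takes only the values $\bZero,\bOne$ and so is not dense in $N$. Affine joint embedding places $M,N$ inside a common $K$ but does not manufacture same-type enumerations; your appeal to \autoref{th:uniqueness-decomposition} is also misleading, since there one has two decompositions of the \emph{same} structure and gets $\tp^\aff(e) = \tp^\aff(\sigma e)$ for free from the affine isomorphism $\sigma$. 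Consequently the identity $\theta^\Omega_*\mu = \theta^\Xi_*\nu$ on $\tS^\aff_\kappa(T)$ is not available, and your displayed chain of equalities breaks at the middle step.

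The paper avoids this by a different route: reduce to $M \preceq^\aff N$ via \autoref{prop:AffineJEP}, then invoke \autoref{th:extremal-decomposition-for-pairs} to obtain decompositions of $M$ and $N$ linked by a measure-preserving $\pi \colon \Xi \to \Omega$ with $M_{\pi(\xi)} \preceq^\aff N_\xi$; upgrade to $\preceq^\cont$ via \autoref{cor:general-simplicial-theories}\autoref{i:cor:general-simplicial-extensions}, so that $\Th^\cont_{M_\Omega} \circ \pi = \Th^\cont_{N_\Xi}$, and push forward. Your strategy is salvageable, but only if you descend to $\tS^\aff_0(T)$ rather than work in $\tS^\aff_\kappa(T)$: the measures $(\pi_0)_* \theta^\Omega_* \mu$ and $(\pi_0)_* \theta^\Xi_* \nu$ are boundary measures on $\tS^\aff_0(T)$ with the common barycenter $\Th^\aff(M) = \Th^\aff(N)$, hence equal by simpliciality, and your intrinsic map factors through $\pi_0$ as a well-defined $r' \colon \cE_0(T) \to \tS^\cont_0(T_\ext)$ (by the same corollary). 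One must then verify that $r'$ is measurable with respect to this common boundary measure, which can be extracted from the elementary measurability of either field --- but this is additional work you have not done, and the paper's argument via \autoref{th:extremal-decomposition-for-pairs} sidesteps it entirely.
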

\begin{proof}
  By affine joint embedding (\autoref{prop:AffineJEP}), it is enough to consider the case in which $M\preceq^\aff N$.
  Now by \autoref{th:extremal-decomposition-for-pairs} and \autoref{c:simplicial-ext-elementarily-measurable}, there are non-degenerate, elementarily measurable extremal decompositions $(M_\omega : \omega\in\Omega)$ of $M$ and $(N_\xi : \xi\in\Xi)$ of $N$, together with a measure-preserving map $\pi\colon\Xi\to \Omega$ such that $M_{\pi(\xi)}\preceq^\aff N_\xi$ for every $\xi\in\Xi$. By \autoref{cor:general-simplicial-theories}\autoref{i:cor:general-simplicial-extensions}, $M_{\pi(\xi)}\preceq^\cont N_\xi$ for every $\xi\in\Xi$.
  Hence $\Th^\cont_{M_\Omega}\circ\pi = \Th^\cont_{N_\Xi}$, and we deduce that $\mu^\cont_N = \big(\Th^\cont_{M_\Omega}\big)_*(\pi_*\nu) = \mu^\cont_M$.
\end{proof}

We note that the previous result can also be stated for types: the affine type of a tuple determines the distribution of the continuous logic types of the corresponding tuples in its extremal decomposition.


\section{Randomizations and affine logic}
\label{sec:Randomizations}

In the present section, we show that Bauer theories and the Bauerization of a continuous logic theory, introduced in \autoref{sec:Bauer-theories}, provide a new approach to \emph{randomizations} in the sense of Keisler.
Roughly speaking, Keisler randomizations are structures whose elements represent random variables taking values in $\cL$-structures.
Randomizations were first introduced by Keisler \cite{KeislerRandomizing} in classical logic, in a language with two new sorts. One of them, call it $F$, is a real closed field, and the other, call it $A$, represents a Boolean algebra of events, with probabilities in $F$.
For each $\cL$-formula $\varphi$, a random structure $M$ admits a definable function $\llbracket \varphi  \rrbracket$ into $A$, where $\llbracket \varphi(a) \rrbracket$ represents the event on which $\varphi(a)$ holds.
With the subsequent development of continuous logic, it became possible to treat probabilities directly as values of formulas and do away with $F$.
This was done for $\cL$ a language in classical logic in \cite{BenYaacovKeisler} and was later generalized to the continuous case in \cite{BenYaacovOnTheories}. In both cases, the randomized theory is in continuous logic.

It is hardly surprising that randomizations and Bauerizations are related since \autoref{rmk:Bau} tells us that models of $Q_\Bau$ are direct integrals of models of $Q$, i.e., what one would intuitively imagine as ``a space of random variables which take values in models of $Q$''.
Indeed, we show that if $Q$ is a theory in continuous logic, then $Q^{\Ra}$ (the theory of random variables in models of $Q$ over probability spaces that may have atoms) is essentially the same theory as the Bauerization $Q_\Bau$. Similarly, $Q^\Rand$ (the atomless randomization of $Q$) can be identified with $(Q_\Bau)_\nat$.

Both papers \cite{BenYaacovKeisler,BenYaacovOnTheories} use an \emph{auxiliary sort} $A$ to represent the underlying probability space $\Omega$.
While it is not indispensable, the auxiliary sort makes it considerably easier to give an explicit axiomatization of the randomized theory. Even though one cannot recover $\Omega$ entirely from its probability algebra, the latter retains all the pertinent information (for our purposes) about the former, and the class of all probability algebras is easy to axiomatise in continuous logic.
When randomizing a theory in classical, two-valued logic, as in \cite{BenYaacovKeisler}, the ``random truth value'' of a formula $\varphi$ is indeed an event, denoted by $\llbracket \varphi \rrbracket$, and an auxiliary sort consisting of a probability algebra suffices.
When randomizing a theory in continuous logic, as in \cite{BenYaacovOnTheories}, a formula $\varphi$ takes real values, so $\llbracket \varphi \rrbracket$ should be a real-valued random variable on $\Omega$.
It is shown that the class of spaces of $[0,1]$-valued random variables equipped with the $\|{\cdot}\|_1$-metric, denoted by $L^1\bigl(\Omega, [0,1]\bigr)$, can be axiomatized in continuous logic.
There is some (unimportant) freedom in the choice of language -- in \cite{BenYaacovOnTheories}, this is done in a minimalistic language consisting of function symbols corresponding to a certain choice of connectives for $[0,1]$-valued continuous formulas, but then for any continuous function $\theta\colon [0,1]^n \rightarrow [0,1]$, the composition $\theta \circ (\cdots)$ is a definable function $L^1\bigl(\Omega, [0,1]\bigr)^n \rightarrow L^1\bigl(\Omega, [0,1]\bigr)$.
The expectation $\E$ can be an atomic predicate, or recovered as the $L^1$-distance to $0$.
The theory of $[0,1]$-valued random variables (in any of the possible languages) is bi-interpretable with that of probability algebras (see \cite[\textsection2.2]{BenYaacovOnTheories}).

\begin{remark}
  \label{rmk:RandVarCompleteLattice}
  Since $L^1\bigl( \Omega, [0,1] \bigr)$ is closed under the continuous operations $\wedge$ and $\vee$, it is a lattice.
  Moreover, it is a complete lattice.
  Indeed, for $A \sub L^1\bigl( \Omega, [0,1] \bigr)$, in order to calculate $\sup A$, we may assume that $0 \in A$ and that $A$ is closed under $\vee$.
  Let $e = \sup \, \bigl\{ \E[b] : b \in A \bigr\} \in [0,1]$ and choose an increasing sequence $(a_n)_n$ of elements of $A$ such that $\E[a_n] \rightarrow e$.
  Then $(a_n)$ is Cauchy (in $L^1$) and its limit is $\sup A$. The same argument works for $\inf$.
\end{remark}

Let us give a few formal reminders from \cite{BenYaacovOnTheories}.
We are given a language $\cL$ for continuous logic.
\begin{itemize}
\item The \emph{randomization language} $\cL^\Rand$ has the same sorts as $\cL$, referred to as \emph{main sorts}, along with a new \emph{auxiliary sort} $A$, which carries the language of $[0,1]$-valued random variables.
  If $S$ is a sort of $\cL$, we shall denote its counterpart in $\cL^\Rand$ by $S^\Rand$.
\item The main sorts carry the same function symbols as in $\cL$, with the same moduli of continuity.
\item We may assume, for simplicity of the presentation, that all predicate symbols in $\cL$ are $[0,1]$-valued.
  For each such predicate symbol $P(x)$ of $\cL$, we put in $\cL^\Rand$ a function symbol $\llbracket P(x) \rrbracket$ going from the domain sorts of $P$ into the auxiliary sort, with the same continuity modulus as $P$.
\item We equip each main sort $S^\Rand$ with a distance symbol $d_{S^\Rand}$, with the same bound as $d_S$ (having replaced $d_S$ itself with a binary function symbol $\llbracket d_S \rrbracket$).
\end{itemize}

The base randomization theory $T_0^{\Ra}$ (here, the zero tells us that this is the base theory, $\Rand$ stands for randomization, and the ``a'' tell us that we allow atoms in $\Omega$) consists of the following groups of axioms:
\begin{itemize}
\item $RV$: The auxiliary sort is a space of $[0,1]$-valued random variables of a probability space.
\item $R1$: \emph{Locality} axioms, meant to ensure that for each function symbol $f$ or predicate symbol $P$ of $\cL$, the corresponding symbols $f$ and $\llbracket P \rrbracket$ respect the same continuity moduli as $f$ and $P$, respectively, locally, i.e., at each $\omega \in \Omega$.
\item $R2$: A \emph{distance} axiom asserting that $d_{S^\Rand}(x,y) = E \llbracket d_S(x,y)\rrbracket$ for each main sort $S$.
\item $R3$: A \emph{gluing} axiom, asserting that for each event $B \in \MALG(\Omega)$ and $a,b$ in some sort $S^\Rand$, there exists $c \in S^\Rand$ that is equal to $a$ on $B$ and to $b$ on $\Omega \sminus B$.
\end{itemize}

For the precise formulation of the axioms, we refer the reader to \cite[\textsection3.2]{BenYaacovOnTheories}.
We also leave it to the reader to observe that these axioms are, in fact, affine.
It is shown in \cite[Thm.~3.11]{BenYaacovOnTheories} that models of $T_0^{\Ra}$ are, in a sense, spaces of random variables taking values in a random family of $\cL$-structures $(M_\omega : \omega \in \Omega)$.
Here we prove a better result, namely, that every such model is a direct integral in the sense of \autoref{sec:direct-integrals}.

The crucial result we need to quote from \cite[\textsection3.3]{BenYaacovOnTheories} is that one can construct for every $[0,1]$-valued continuous $\cL$-formula $\varphi$ on $S_1 \times \cdots \times S_n$, a definable function $\llbracket \varphi \rrbracket\colon S_1^\Rand \times \cdots \times S_n^\Rand \rightarrow A$.
It is uniquely determined by the following properties:
\begin{itemize}
\item For every predicate symbol $P$ of $\cL$, this is the function symbol $\llbracket P \rrbracket$.
\item Composition with function symbols, and more generally with terms: if $\varphi(x)$ is an $\cL$-formula, and for each $x_i$, $\tau_i(y)$ is a term of $\cL$, then $\psi(y) = \varphi(\tau)$ is another formula, the tuple $\tau$ defines a function in $\cL^\Rand$ as well, and $\llbracket \varphi(\tau)\rrbracket = \llbracket \varphi \rrbracket \circ \tau$.
\item Connectives: for any continuous function $\theta\colon [0,1]^n \rightarrow [0,1]$ and $\cL$-formulas $\varphi_i$, we have
  \begin{gather*}
    \bigl\llbracket \theta \circ (\varphi_1,\ldots, \varphi_n) \bigr\rrbracket = \theta \circ \bigl( \llbracket \varphi_1\rrbracket, \ldots, \llbracket \varphi_n \rrbracket\bigr).
  \end{gather*}
  On the left-hand side, $\theta\circ (\varphi_1,\ldots,\varphi_n)$ is another $\cL$-formula.
  On the right-hand side, the composition $\theta \circ$ is the one definable on spaces of random variables.
\item Quantifiers: in every model $M \models T_0^{\Ra}$ we have, using the completeness of the lattice $L^1$ (\autoref{rmk:RandVarCompleteLattice}) on the right-hand side,
  \begin{gather*}
    \bigl\llbracket \sup_y \varphi(a,y) \bigr\rrbracket = \sup_b \, \llbracket \varphi(a,b) \rrbracket.
  \end{gather*}
\end{itemize}

Moreover, the function $\llbracket \varphi \rrbracket$ is affinely definable.
Indeed, for the first three cases this is immediate.
As for quantifiers, the proof of \cite[Lemma~3.13]{BenYaacovOnTheories} tells us that $X \in A$ equals $\llbracket \inf_y \varphi(a,y) \rrbracket$ if and only if
\begin{gather*}
  \qsup_y \E \bigl( X \dotminus \llbracket \varphi(a,y) \rrbracket \bigr)
  +
  \qinf_y \E \bigl( \llbracket \varphi(a,y) \rrbracket \dotminus X \bigr) = 0,
\end{gather*}
and this is an affine condition (once $\llbracket \varphi \rrbracket$ is affinely definable).
In fact, one can go further and check that for arbitrary $X$, the left-hand side is exactly equal to $d\bigl( X, \llbracket \inf_y \varphi(a,y) \rrbracket\bigr)$.

If $Q$ is any continuous $\cL$-theory, then its \emph{randomization} (with atoms) $Q^{\Ra}$ is defined as $T_0^{\Ra}$ together with the axioms asserting that ``$Q$ holds almost surely'', namely, that $\llbracket \sigma \rrbracket$ vanishes for every sentence $\sigma$ for which $Q \models \sigma = 0$.
This is again an affine theory whose models can be viewed as spaces of random variables in a random family of models of $Q$.

We also define $T_0^\Rand$ as $T_0^\Ra$ together with an axiom asserting that $\Omega$ is atomless (this axiom is \emph{not} affine). We then define the \emph{atomless randomization} of $Q$ as $Q^\Rand = Q^{\Ra} \cup T_0^\Rand$.

The gluing axiom $R3$, together with the locality axiom $R1$, ensure that for every event $B \subseteq \Omega$ and tuples $b,b' \in M^y$ (in the main sorts), there exists a tuple $c$ such that for every formula $\varphi(x,y)$ and parameters $a \in M^x$:
\begin{gather}
  \label{eq:BBracketCases}
  \llbracket \varphi(a,c)\rrbracket = \chi_B \llbracket \varphi(a,b) \rrbracket + \chi_{\Omega \setminus B} \llbracket \varphi(a,b') \rrbracket.
\end{gather}
Indeed, one constructs $c$ as per \cite[Lemma~3.10]{BenYaacovOnTheories}, and then \autoref{eq:BBracketCases} follows from \cite[Thm.~3.14]{BenYaacovOnTheories}.
In particular, this means that the family $\bigl\{ \llbracket \varphi(a,b)\rrbracket : b \in M^y \bigr\}$ is closed under maximum, and therefore, that
\begin{equation}
  \label{eq:BBracketQuantifier}
  \begin{split}
    \sup \, \bigl\{ E \llbracket \varphi(a,b)\rrbracket : b \in M^y \bigr\}
    &= E \Bigl( \sup \bigl\{ \llbracket \varphi(a,b)\rrbracket : b \in M^y \bigr\}\Bigr) \\
    &= E \bigl\llbracket \sup_y \varphi(a,y) \bigr\rrbracket.
  \end{split}
\end{equation}

\begin{defn}
  Recall that the Morleyization language $\cL_\Mor$ contains a predicate symbol $P_\varphi(x)$ for each continuous $\cL$-formula $\varphi$.
  We define $T_0^{\Ra,\Mor}$ to be the definitional expansion of $T_0^{\Ra}$ which stipulates that $P_\varphi(x) = \E \llbracket \varphi(x) \rrbracket$ for any $[0,1]$-valued formula $\varphi$ and, more generally, $P_\varphi = (b-a)P_{\varphi'} + a$ whenever $\varphi$ is $[a,b]$-valued and $\varphi' = (\varphi-a)/(b-a)$.
  We let $Q^{\Ra,\Mor} \coloneqq T_0^{\Ra,\Mor} \cup Q^{\Ra}$.
\end{defn}

\begin{lemma}
  Let $\varphi$ be an affine $\cL_\Mor$-formula, and let $\hat{\varphi}(x)$ be the $\cL$-formula obtained by replacing each $P_\psi$ in $\phi$ by $\psi$.
  Then $T_0^{\Ra,\Mor}$ implies that $\varphi = P_{\hat{\varphi}}$.
\end{lemma}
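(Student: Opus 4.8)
The statement is essentially a structural induction on the syntactic construction of the affine $\cL_\Mor$-formula $\varphi$, using the fact that, by definition, $T_0^{\Ra,\Mor}$ equates $P_\psi$ with $\E\llbracket\psi\rrbracket$ (after normalizing to $[0,1]$-valued $\psi$). The plan is to show, by induction on $\varphi$, that in every model $M\models T_0^{\Ra,\Mor}$ and every tuple $a$, we have $\varphi^M(a) = P_{\hat\varphi}^M(a) = \E^M\llbracket\hat\varphi(a)\rrbracket$; since the two definable predicates $\varphi$ and $P_{\hat\varphi}$ agree on all models of $T_0^{\Ra,\Mor}$, they coincide modulo that theory.

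First I would treat the base case, where $\varphi$ is an atomic $\cL_\Mor$-formula. These are of the form $P_\psi(\tau_1,\dots,\tau_n)$ for an $\cL$-formula $\psi$ and $\cL_\Mor$-terms $\tau_i$ (which, since $\cL_\Mor$ has the same function symbols as $\cL$, are just $\cL$-terms in the main sorts), together with the auxiliary-sort atomic formulas, which contain no $P_\psi$ and are handled trivially. For $P_\psi(\tau)$, normalizing $\psi$ to a $[0,1]$-valued $\psi'$ via an affine change, the defining axioms of $T_0^{\Ra,\Mor}$ give $P_\psi(\tau)^M = \E^M\llbracket\psi(\tau)\rrbracket$; and $\widehat{P_\psi(\tau)} = \psi(\tau)$ by definition of the hat operation, using the composition-with-terms clause $\llbracket\psi(\tau)\rrbracket = \llbracket\psi\rrbracket\circ\tau$. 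So the base case holds. For the induction step with affine connectives, suppose $\varphi = \sum_i r_i\varphi_i + c$ with each $\varphi_i$ satisfying the claim. Then $\hat\varphi = \sum_i r_i\hat\varphi_i + c$, and by linearity of the expectation $\E$ on the space of random variables (and the connectives clause $\llbracket\theta\circ(\psi_1,\dots)\rrbracket = \theta\circ(\llbracket\psi_1\rrbracket,\dots)$ applied to the affine $\theta$), we get $\varphi^M(a) = \sum_i r_i\varphi_i^M(a) + c = \sum_i r_i\E^M\llbracket\hat\varphi_i(a)\rrbracket + c = \E^M\llbracket\hat\varphi(a)\rrbracket = P_{\hat\varphi}^M(a)$, as desired.

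The main obstacle, and the only non-routine part, is the quantifier case: say $\varphi(x) = \sup_y \psi(x,y)$ with $\psi$ an affine $\cL_\Mor$-formula satisfying the claim. Here one must use the gluing machinery recalled just before the lemma. On the one hand, $\hat\varphi = \sup_y\hat\psi$, so $\llbracket\hat\varphi(a)\rrbracket = \sup_b\llbracket\hat\psi(a,b)\rrbracket$ in $M$ by the quantifier clause for the $\llbracket\cdot\rrbracket$ operation. On the other hand, $\varphi^M(a) = \sup_{b\in M^y}\psi^M(a,b) = \sup_{b\in M^y}\E^M\llbracket\hat\psi(a,b)\rrbracket$ by the induction hypothesis. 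The key point is \autoref{eq:BBracketQuantifier}: since the family $\{\llbracket\hat\psi(a,b)\rrbracket : b\in M^y\}$ is closed under binary maximum (by the gluing/locality axioms, via \autoref{eq:BBracketCases}), the supremum of the expectations equals the expectation of the supremum, i.e.
\begin{gather*}
  \sup_{b\in M^y}\E^M\llbracket\hat\psi(a,b)\rrbracket = \E^M\Bigl(\sup_b\llbracket\hat\psi(a,b)\rrbracket\Bigr) = \E^M\llbracket\hat\varphi(a)\rrbracket = P_{\hat\varphi}^M(a).
\end{gather*}
This closes the induction. Finally, since every affine $\cL_\Mor$-formula is, up to uniform approximation modulo any theory, a limit of prenex affine formulas built from the above clauses — or, more directly, since the set of formulas for which the identity $\varphi = P_{\hat\varphi}$ holds modulo $T_0^{\Ra,\Mor}$ contains the atomic ones and is closed under affine combinations and quantifiers, and these operations generate all affine $\cL_\Mor$-formulas — we conclude that $T_0^{\Ra,\Mor}\models\varphi = P_{\hat\varphi}$ for every affine $\cL_\Mor$-formula $\varphi$.
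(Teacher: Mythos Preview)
Your proof is correct and follows the same approach as the paper: structural induction on $\varphi$, with the atomic case holding by definition, affine combinations handled by linearity of expectation, and the quantifier case handled via \autoref{eq:BBracketQuantifier}. One small inaccuracy: there are no auxiliary-sort atomic formulas in $\cL_\Mor$ (that language has only the main sorts, the $\cL$-function symbols, and the predicates $P_\psi$), so that aside is unnecessary, but this does not affect the argument.
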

\begin{proof}
  For atomic $\cL_\Mor$-formulas this holds by definition.
  Expectation respects affine combinations, and quantifiers are respected by \autoref{eq:BBracketQuantifier}.
\end{proof}

\begin{lemma}
  Let $\sigma$ be an affine $\cL_\Mor$-sentence.
  Then the following are equivalent:
  \begin{enumerate}
  \item $Q \models \hat{\sigma} \geq 0$, where $\hat{\sigma}$ is as in the previous lemma.
  \item $Q^{\Ra,\Mor} \models \sigma \geq 0$.
  \item $Q_\Bau \models \sigma \geq 0$.
  \end{enumerate}
\end{lemma}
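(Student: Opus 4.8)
The equivalence of the three conditions should be an easy consequence of the definitions once we unwind what $Q_\Bau$ and $Q^{\Ra,\Mor}$ mean. The plan is to prove $(1)\Leftrightarrow(2)$ first, then $(2)\Leftrightarrow(3)$, or perhaps to prove all three via a common characterization in terms of extremal models. First I would recall that, by definition, $Q_\Bau = (Q_\Mor)_\aff$ is the affine part of the Morleyization, so $Q_\Bau \models \sigma \geq 0$ holds precisely when $Q_\Mor \models \sigma \geq 0$ (for affine $\sigma$). By the definition of $Q_\Mor$ and the previous lemma, $Q_\Mor \models \sigma \geq 0$ iff $Q \models \hat\sigma \geq 0$, since $P_\psi$ is axiomatized in $Q_\Mor$ to equal $\psi$ and $\sigma$ is built from the $P_\psi$'s exactly as $\hat\sigma$ is built from the $\psi$'s. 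This gives $(1)\Leftrightarrow(3)$ directly, using only the definition of Morleyization and the syntactic substitution; no model-theoretic subtlety is involved.

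For $(1)\Leftrightarrow(2)$, I would argue as follows. If $Q \models \hat\sigma \geq 0$, then in any model $M \models Q^{\Ra,\Mor}$, viewing $M$ as a space of random variables over a probability space $\Omega$ with almost every fiber $M_\omega \models Q$, the first lemma gives $\sigma^M = (P_{\hat\sigma})^M = \E\llbracket \hat\sigma \rrbracket$, and $\llbracket\hat\sigma\rrbracket(\omega) = \hat\sigma^{M_\omega} \geq 0$ almost surely by the $Q^{\Ra}$ axioms (which assert precisely that $\llbracket\tau\rrbracket$ vanishes, hence that inequalities following from $Q$ hold almost surely). Therefore $\sigma^M \geq 0$, giving $(1)\Rightarrow(2)$. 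For the converse $(2)\Rightarrow(1)$, I would use that $Q^{\Ra}$ has models which are essentially constant fields: given any $N \models Q$, the ``point mass'' randomization of $N$ over a one-point probability space is a model of $Q^{\Ra}$ (or alternatively $L^1(\Omega, N)$ with any $\Omega$), in which $\sigma = P_{\hat\sigma}$ evaluates to $\hat\sigma^N$. So $Q^{\Ra,\Mor} \models \sigma \geq 0$ forces $\hat\sigma^N \geq 0$ for every $N \models Q$, i.e., $Q \models \hat\sigma \geq 0$. Here the only thing to check is that the trivial (atomic, one-fiber) randomization of a model of $Q$ is indeed a model of $Q^{\Ra}$ and that the first lemma applies, which is immediate from the axioms $RV$, $R1$, $R2$, $R3$ (gluing is vacuous over a one-point space).

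The main obstacle — to the extent there is one — is bookkeeping with the $[a,b]$-valued versus $[0,1]$-valued normalization of formulas in the Morleyization, and making sure the substitution $\varphi \mapsto \hat\varphi$ is compatible with affine combinations and the $P_\varphi = (b-a)P_{\varphi'} + a$ convention. This is purely routine and already handled by the two preceding lemmas, so the proof of the present lemma is just: combine the first lemma (to identify affine $\cL_\Mor$-sentences with $\E\llbracket\cdot\rrbracket$ of $\cL$-sentences in all models of $T_0^{\Ra,\Mor}$), the almost-sure satisfaction of $Q$ in models of $Q^{\Ra}$, the existence of point-mass (equivalently, constant) randomizations of arbitrary models of $Q$, and the definitional unwinding of $Q_\Bau$. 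I would present it as a short chain $(3)\Leftrightarrow(1)\Leftrightarrow(2)$ with each step a one- or two-line invocation of the material just established.
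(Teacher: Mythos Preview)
Your proposal is essentially correct and very close to the paper's proof, just organized differently: the paper proves the cycle $(1)\Rightarrow(2)\Rightarrow(3)\Rightarrow(1)$, while you pair off $(1)\Leftrightarrow(3)$ and $(1)\Leftrightarrow(2)$. Your $(1)\Leftrightarrow(3)$ and your $(2)\Rightarrow(1)$ via point-mass randomizations match the paper's $(3)\Rightarrow(1)$ and $(2)\Rightarrow(3)$ almost verbatim.

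The one place where your argument is less clean than the paper's is $(1)\Rightarrow(2)$. You write ``$\llbracket\hat\sigma\rrbracket(\omega) = \hat\sigma^{M_\omega} \geq 0$ almost surely'', invoking a pointwise fiberwise picture of models of $Q^{\Ra}$. While such a description is available from the cited literature, at this point in the paper it has not been established that $\llbracket\cdot\rrbracket$ agrees with fiberwise evaluation in any precise sense---indeed, proving that models of $Q^{\Ra}$ are direct integrals is the \emph{goal} of this section, deduced \emph{from} the present lemma. The paper sidesteps this by working purely with the axioms: rewrite $Q \models \hat\sigma \geq 0$ as $Q \models 0 \wedge \hat\sigma = 0$, so the $Q^{\Ra}$ axioms give $\llbracket 0 \wedge \hat\sigma\rrbracket = 0$ directly; by the connective rule this is $0 \wedge \llbracket\hat\sigma\rrbracket = 0$, hence $\llbracket\hat\sigma\rrbracket \geq 0$ as a random variable, so $P_{\hat\sigma} = \E\llbracket\hat\sigma\rrbracket \geq 0$. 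Your parenthetical ``which assert precisely that $\llbracket\tau\rrbracket$ vanishes, hence that inequalities following from $Q$ hold almost surely'' shows you see this, but the $0\wedge\hat\sigma$ trick is the actual content of that ``hence'' and is worth making explicit.
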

\begin{proof}
  \begin{cycprf}
  \item
    Assume first that $Q \models \hat{\sigma} \geq 0$.
    Equivalently, $Q \models 0 \wedge \hat{\sigma} = 0$.
    Then $Q^{\Ra,\Mor}$ implies that $P_{\hat{\sigma}} \geq P_{0 \wedge \hat{\sigma}} = 0$.
    By the previous lemma, it implies that $\sigma \geq 0$.
  \item
    Let $M$ be a model of $Q$, and let $M'$ be the $\cL^\Rand$-structure in which the auxiliary sort is just $[0,1]$ (i.e., $\Omega$ consists of a single point), and $\llbracket P(a) \rrbracket$ is just (the constant random variable) $P(a)$.
    Then $M' \models Q^{\Ra}$, and $\llbracket \varphi(a) \rrbracket^{M'} = \varphi(a)^M$ for every continuous formula $\varphi$.
    With the definitional expansions to $\cL_\Mor$ we have $M \models Q_\Mor$, $M' \models Q^{\Ra,\Mor}$ and $P_\varphi(a)^{M'} = E \llbracket \varphi(a) \rrbracket^{M'} = \varphi(a)^M = P_\varphi(a)^M$.
    In other words, $M$ and $M'$ interpret $\cL_\Mor$ identically.

    Assume that $Q^{\Ra,\Mor} \models \sigma \geq 0$.
    By the previous argument, $\sigma \geq 0$ in every model of $Q_\Mor$.
    It is therefore a consequence of $Q_\Bau$.
  \item[\impfirst]
    If $Q_\Bau \models \sigma \geq 0$, then $Q_\Mor \models \sigma \geq 0$.
    Therefore $Q \models \hat{\sigma} \geq 0$.
  \end{cycprf}
\end{proof}

We have established the following:

\begin{theorem}
  \label{th:RandomMorBau}
  Let $Q$ be a continuous $\cL$-theory.
  Then the affine $\cL_\Mor$-theory $Q_\Bau$ consists exactly of the affine $\cL_\Mor$-conditions implied by $Q^{\Ra,\Mor}$.
\end{theorem}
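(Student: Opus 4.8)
The plan is to prove \autoref{th:RandomMorBau} as an immediate consequence of the last unnumbered lemma in the excerpt, which already establishes the equivalence, for an affine $\cL_\Mor$-sentence $\sigma$, of ``$Q^{\Ra,\Mor} \models \sigma \geq 0$'' and ``$Q_\Bau \models \sigma \geq 0$''. Since an affine theory is, by definition, a collection of affine conditions, and (by \autoref{prop:KadisonHahnBanachImplicationDuality}) is determined up to equivalence by its set of consequences, it suffices to observe that $Q_\Bau$ and the set of affine $\cL_\Mor$-consequences of $Q^{\Ra,\Mor}$ have exactly the same models, equivalently the same affine consequences.

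Concretely, I would argue as follows. Every affine condition can be written in the form $\sigma \geq 0$ for an affine $\cL_\Mor$-sentence $\sigma$ (replacing $\varphi \geq \psi$ by $\varphi - \psi \geq 0$). Applying the lemma, $Q_\Bau \models \sigma \geq 0$ if and only if $Q^{\Ra,\Mor} \models \sigma \geq 0$. Thus the set of affine $\cL_\Mor$-consequences of $Q_\Bau$ coincides with the set of affine $\cL_\Mor$-consequences of $Q^{\Ra,\Mor}$. Since $Q_\Bau = (Q_\Mor)_\aff$ is itself an affine $\cL_\Mor$-theory and is equal to the set of its own affine consequences (in the sense that a theory and the set of its affine consequences are equivalent, hence identified as in the convention established after \autoref{cor:RelativeHahnBanachCompactness}), we conclude that $Q_\Bau$ equals the affine part of $Q^{\Ra,\Mor}$, i.e., the set of affine $\cL_\Mor$-conditions (without variables) implied by $Q^{\Ra,\Mor}$. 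This is exactly the assertion of the theorem.

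The only genuine content here lies in the two preparatory lemmas, and the first of those — showing that $T_0^{\Ra,\Mor}$ forces $\varphi = P_{\hat\varphi}$ — in turn depends on the affine definability of the functions $\llbracket\varphi\rrbracket$ and on the quantifier identity \autoref{eq:BBracketQuantifier}, which is where the gluing axiom $R3$ does its essential work (via \autoref{eq:BBracketCases}). So in the present statement there is no remaining obstacle: everything has been reduced to bookkeeping. If one wished to make the write-up fully self-contained I would spell out that the passage from ``same affine consequences'' to ``same theory, up to the standing identification of equivalent theories'' is precisely the bijection between (equivalence classes of) theories and compact convex subsets of $\tS^\aff_0(\cL_\Mor)$ furnished by \autoref{prop:KadisonHahnBanachImplicationDuality}; but this is routine. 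Thus the proof is a one-paragraph corollary of the lemmas already proved, and I would present it as such.

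\begin{proof}
  By the last lemma, for every affine $\cL_\Mor$-sentence $\sigma$ we have $Q^{\Ra,\Mor} \models \sigma \geq 0$ if and only if $Q_\Bau \models \sigma \geq 0$. Since every affine condition without variables can be written in the form $\sigma \geq 0$ for such a $\sigma$, the theories $Q_\Bau$ and the affine part of $Q^{\Ra,\Mor}$ have precisely the same affine $\cL_\Mor$-consequences. As $Q_\Bau = (Q_\Mor)_\aff$ is an affine theory, it is equivalent to the set of its own affine consequences, and hence, under the standing identification of equivalent theories (see \autoref{prop:KadisonHahnBanachImplicationDuality}), $Q_\Bau$ consists exactly of the affine $\cL_\Mor$-conditions implied by $Q^{\Ra,\Mor}$.
\end{proof}
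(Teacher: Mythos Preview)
Your proposal is correct and matches the paper's approach exactly: the paper presents \autoref{th:RandomMorBau} with the words ``We have established the following,'' treating it as an immediate restatement of the preceding lemma (specifically, the equivalence of items (ii) and (iii)). Your added remarks about the identification of equivalent theories via \autoref{prop:KadisonHahnBanachImplicationDuality} are accurate but more explicit than the paper bothers to be.
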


\begin{remark}
  \label{rmk:RandomisedConstantSort}
  Consider the structure $V = [0,1]$ with the standard metric, in a language $\cL_V$ containing a predicate symbol $\id\colon V\to [0,1]$ interpreted as the identity function. Let us call this structure the \emph{value sort}.
  Being compact, it is the unique model of its complete continuous theory $Q_V$.
  This theory implies that $d(t,s) = |\id(t)-\id(s)|$ and $\inf_t |\id(t)-q| = 0$ for every constant $q \in [0,1]$ (and these conditions axiomatise $Q_V$).

  The language $\cL_V^\Rand$ has two sorts: the randomized value sort $V^\Rand$ and the auxiliary sort $A$.
  In a model of $Q_V^{\Ra}$, we have $A = L^1\bigl( \Omega, [0,1]\bigr)$ for some probability space $\Omega$, and for every $a,b \in V^\Rand$:
  \begin{align*}
    d_{V^\Rand}(a,b)
    & = \E\llbracket d_V(a,b) \rrbracket
      = \E \llbracket |\id(a) - \id(b)|\rrbracket
      = \E |\llbracket \id(a) \rrbracket - \llbracket \id(b) \rrbracket|
    \\
    &
      = d_A\bigl( \llbracket\id(a)\rrbracket, \llbracket\id(b)\rrbracket \bigr).
  \end{align*}
  Therefore, $\llbracket\id\rrbracket\colon V^\Rand \rightarrow A$ is an isometry.
  We shall identify $V^\Rand$ with its image in $A$, which is complete and therefore closed.

  For $q \in [0,1]$, we have $Q_V \models \inf_t \, |\id(t)-q| = 0$ and $\llbracket q\rrbracket \in A$ is the constant random variable on $\Omega$ equal to $q$.
  The theory $Q_V^{\Ra}$ implies that
  \begin{gather*}
    \inf_t d\bigl( \llbracket\id(t)\rrbracket, \llbracket q \rrbracket \bigr)
    = \inf_t \E |\llbracket\id(t)\rrbracket-\llbracket q \rrbracket|
    = \E \llbracket \inf_t \, |\id(t)-q| \rrbracket
    = 0.
  \end{gather*}
  Therefore, $\llbracket q\rrbracket$ lies in $\overline{V^\Rand} = V^\Rand$.

  By the gluing axiom, for any $f,g \in V^\Rand$ and event $F \in \MALG(\Omega)$, there exists $h \in V^\Rand$ that agrees with $f$ on $F$ and with $g$ on the complement.
  Therefore $V^\Rand$ contains all measurable step functions, and since these are dense, $V^\Rand = A$.
  Conversely, for every probability space $\Omega$, if $A = V^\Rand$ is the space of $[0,1]$-valued random variables, then $(V^\Rand,A) \models Q_V^{\Ra}$.
\end{remark}

\begin{theorem}
  \label{thm:QPlusBau}
  Let $Q$ be a continuous $\cL$-theory, let $\cL^+$ be the result of adjoining to $\cL$ a new value sort $V$, and let $Q^+ = Q \cup Q_V$.
  Then, up to identifying the sorts $V$ and $A$, the affine theories $Q^{\Ra}$ and $(Q^+)_\Bau$ are affinely interdefinable.
\end{theorem}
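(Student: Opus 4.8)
The plan is to establish the two halves of the interdefinability separately, using \autoref{th:RandomMorBau} as the bridge. Recall that $Q_\Bau = (Q_\Mor)_\aff$ is an affine $\cL_\Mor$-theory, and that \autoref{th:RandomMorBau} says $Q_\Bau$ consists exactly of the affine $\cL_\Mor$-conditions implied by $Q^{\Ra,\Mor}$. The key observation is that $(Q^+)_\Bau = \bigl((Q\cup Q_V)_\Mor\bigr)_\aff$, and that the Morleyization of $Q\cup Q_V$ decomposes: $(Q\cup Q_V)_\Mor$ is the definitional expansion naming all continuous $\cL^+$-formulas, but since $V$ is a new sort with its own self-contained theory $Q_V$ (having the single atomic predicate $\id$ and axioms $d(t,s)=|\id(t)-\id(s)|$, $\inf_t|\id(t)-q|=0$), the pertinent new predicate symbols over sorts of $\cL$ alone are exactly those of $\cL_\Mor$. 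The role of $V$ is to provide, via \autoref{rmk:RandomisedConstantSort}, a sort $V^\Rand$ that a model of $Q_V^{\Ra}$ identifies isometrically with the auxiliary sort $A = L^1(\Omega,[0,1])$.

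First I would show how to interpret $(Q^+)_\Bau$ inside $Q^{\Ra}$. Given a model of $Q^{\Ra}$ — which by the axioms carries an auxiliary sort $A = L^1(\Omega,[0,1])$ and, for each continuous $\cL$-formula $\varphi$, an affinely definable function $\llbracket\varphi\rrbracket$ into $A$ — I define the sort $V$ to be $A$ itself and interpret $\id\colon V\to[0,1]$ as the expectation $\E$. Then \autoref{rmk:RandomisedConstantSort} shows $(V,A)\models Q_V^{\Ra}$, hence the whole structure is a model of $(Q^+)^{\Ra}$, and by \autoref{rmk:RandVarCompleteLattice} and the explicit affine definitions of $\llbracket\varphi\rrbracket$ recalled before \autoref{th:RandomMorBau}, the predicates $P_\varphi$ of $\cL_\Mor$ are affinely definable (as $\E\llbracket\varphi\rrbracket$), so one gets a model of $(Q^+)^{\Ra,\Mor}$. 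By \autoref{th:RandomMorBau} applied to $Q^+$, this restricts to a model of $(Q^+)_\Bau$. Conversely, a model of $(Q^+)_\Bau = \bigl((Q^+)_\Mor\bigr)_\aff$ embeds affinely, by \autoref{p:eta-surjective} (or \autoref{rmk:Bau}), into a direct integral of models of $(Q^+)_\Mor = (Q\cup Q_V)_\Mor$; since $V$ is interpreted in each fibre as the one-point structure $[0,1]$, the $V^\Rand$-sort of the direct integral is precisely $L^1(\Omega,[0,1])$, so after identifying $V\cong A$ one recovers exactly a model of $Q^{\Ra}$. The two passages are mutually inverse on the common underlying data, up to the identification $V = A$, which gives the affine interdefinability.

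The main obstacle I anticipate is the bookkeeping needed to make the identification $V \cong A$ precise as an \emph{interdefinability} rather than a mere bi-interpretation, i.e., checking that the function $\llbracket\varphi\rrbracket$ reconstructed from the $(Q^+)_\Bau$-side coincides (as an affinely definable function) with the one built into $Q^{\Ra}$ on the nose, and that no genuinely new data is added in either direction. This is where one must invoke that every $\llbracket\varphi\rrbracket$ is affinely definable — by the explicit formula $\sup_y\E(X\dotminus\llbracket\varphi(a,y)\rrbracket)+\inf_y\E(\llbracket\varphi(a,y)\rrbracket\dotminus X)=d(X,\llbracket\inf_y\varphi(a,y)\rrbracket)$ recalled above — and that the gluing axiom $R3$ together with locality $R1$ force $V^\Rand$ to contain all measurable step functions and hence all of $A$. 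The remaining steps (that the axioms $RV$, $R1$, $R2$, $R3$ are affine, that $\E$ recovers the $L^1$-distance to $0$, that $Q_V$ is the complete theory of the compact value sort) are routine and were essentially recorded in the excerpt. I would close by remarking that combining this with \autoref{th:Tnat} and \autoref{thm:QBauNatQE} identifies the \emph{atomless} randomization $Q^\Rand$ with $(Q^+)_{\Bau,\nat} = \bigl((Q^+)_\Bau\bigr)_\nat$, recovering the known quantifier-elimination theorem for atomless randomizations as a special case of delta-convex reduction.
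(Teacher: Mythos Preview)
Your overall strategy is the same as the paper's: exhibit $(Q^+)^{\Ra}$ (after identifying $V$ with $A$ and expanding to $(\cL^+)_\Mor$) as a common affine definitional expansion of both theories. Your direction from $Q^{\Ra}$ to $(Q^+)_\Bau$ is essentially the paper's argument, modulo a slip of notation (what you interpret as $\E$ is the predicate $P_{\id}$ of $(\cL^+)_\Mor$, not $\id$ itself; and $[0,1]$ is not ``one-point'').

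The converse direction, however, has a gap. You pass from a model $(M,V^M)$ of $(Q^+)_\Bau$ to an \emph{affine embedding into} a direct integral, observe that the direct integral (with $V$ playing the role of $A$) is a model of $Q^{\Ra}$, and then assert that ``the two passages are mutually inverse''. But interdefinability requires that $(M,V^M)$ \emph{itself}, with $V^M$ serving as $A$ and with suitably defined function symbols $\llbracket P\rrbracket$, be a model of $Q^{\Ra}$---not merely an affine substructure of one. You do not argue this. The paper closes exactly this gap with a one-line observation that you record only as a ``routine'' remark at the end: since $(Q^+)^{\Ra}$ is an \emph{affine} theory, it passes to affine substructures. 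Concretely: by \autoref{th:RandomMorBau} one embeds $(M,V^M)$ affinely in $(N,V^N)$ with $(N,V^N,A^N)\models (Q^+)^{\Ra}$; since $A^N=V^N$ by \autoref{rmk:RandomisedConstantSort}, one has $(M,V^M,V^M)\preceq^\aff (N,V^N,A^N)$; affineness of the axioms then gives $(M,V^M,V^M)\models (Q^+)^{\Ra}$ on the nose. Your alternative route via \autoref{rmk:Bau} (equality rather than embedding) would also work, but then you must \emph{define} the functions $\llbracket P\rrbracket$ on the direct integral (pointwise, $\llbracket P\rrbracket(a)(\omega)=P^{M_\omega}(a(\omega))$) and verify $R1$--$R3$ directly, which you do not do.
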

\begin{proof}
  In every model of $(Q^+)^{\Ra}$, the randomized value sort $V^\Rand$ equals the auxiliary sort.
  Moreover, if we drop $V^\Rand$, then we obtain a model of $Q^{\Ra}$, and if we drop $A$, then (after an expansion to $(\cL^+)_\Mor$, which uses the affinely definable functions $\llbracket \varphi \rrbracket$) we obtain a model of $(Q^+)_\Bau$.

  Conversely, as in \autoref{rmk:RandomisedConstantSort}, from every model of $Q^{\Ra}$ we can obtain a model of $(Q^+)^{\Ra}$ in a unique fashion, by repeating the sort $A$ as $V^\Rand$.
  Similarly, from every model of $(Q^+)_\Bau$ we can obtain a model of $(Q^+)^{\Ra}$ by repeating the sort $V$ as $A$.
  Indeed, if $(M,V^M) \models (Q^+)_\Bau$ then, by \autoref{th:RandomMorBau}, it embeds affinely in some $(N,V^N)$, where $(N,V^N,A^N) \models (Q^+)^{\Ra}$.
  Then $A^N = V^N$, so $(M,V^M,V^M) \preceq^\aff (N,V^N,A^N)$.
  Since $(Q^+)^{\Ra}$ is affine, it holds in $(M,V^M,V^M)$.

  Thus, up to identifying the sorts $V$ and $A$ and after an expansion to $(\cL^+)_\Mor$, the theory $(Q^+)^{\Ra}$ is an affine definitional expansion of both $Q^{\Ra}$ and $(Q^+)_\Bau$, whence our claim.
\end{proof}

\begin{cor}\label{c:models-of-randomizations}
  Every model $(M^\Rand,A)$ of $Q^{\Ra}$ is isomorphic to a direct integral of models of $Q$ over some probability space $(\Omega,\mu)$, with the auxiliary sort isomorphic to $L^1\bigl( \Omega,[0,1] \bigr)$.
  More precisely, $(M^\Rand,A)\cong \int_\Omega \, ( M_\omega, V )\ud\mu$, where $M_\omega\models Q$ and $V=[0,1]$ is the value sort.
\end{cor}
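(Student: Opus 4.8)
The plan is to deduce the corollary from \autoref{thm:QPlusBau} together with the description of models of Bauerizations in \autoref{rmk:Bau} (i.e.\ \autoref{th:Bauer-correspondence-Q}\autoref{i:models-of-Q-aff}). First I would take a model $(M^\Rand,A)$ of $Q^{\Ra}$. By \autoref{rmk:RandomisedConstantSort} (and the proof of \autoref{thm:QPlusBau}), I may expand it canonically to a model $(M^\Rand,V^\Rand,A)$ of $(Q^+)^{\Ra}$ by taking $V^\Rand=A$, where $Q^+=Q\cup Q_V$ as in \autoref{thm:QPlusBau}. Dropping the auxiliary sort $A$ and passing to the language $(\cL^+)_\Mor$ via the affinely definable functions $\llbracket\varphi\rrbracket$, this is a model of $(Q^+)_\Bau$.

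Next I would apply \autoref{rmk:Bau}: every model of $(Q^+)_\Bau$ is a direct integral of models of $(Q^+)_\Mor$. Thus there is a probability space $(\Omega,\mu)$ and a measurable field of $(\cL^+)_\Mor$-structures $(N_\omega:\omega\in\Omega)$ with $N_\omega\models (Q^+)_\Mor$ such that the expanded model is $\int_\Omega^\oplus N_\omega\ud\mu$. Each $N_\omega$, being a model of $(Q^+)_\Mor$, has the form $(M_\omega,V_\omega)$ with $M_\omega\models Q$ and $V_\omega=[0,1]$ the (unique, compact) model of $Q_V$; reducing back to $\cL^+$ and then to $\cL$ on the main sorts, this gives $M^\Rand\cong\int_\Omega^\oplus M_\omega\ud\mu$ with $M_\omega\models Q$. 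For the auxiliary sort, the direct integral of the constant field with fibres $V=[0,1]$ is by \autoref{dfn:DirectMultiple} the direct multiple $L^1(\Omega,\mu,[0,1])$, which is precisely the space $L^1\bigl(\Omega,[0,1]\bigr)$ of $[0,1]$-valued random variables; under the identification of $V^\Rand$ with $A$ this is the auxiliary sort of $(M^\Rand,A)$. Assembling these identifications yields $(M^\Rand,A)\cong\int_\Omega (M_\omega,V)\ud\mu$, as claimed.

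The only genuinely delicate point is bookkeeping: one must check that the isomorphism of $(\cL^+)_\Mor$-structures furnished by \autoref{rmk:Bau} restricts to an isomorphism of $\cL^\Rand$-structures between $(M^\Rand,A)$ and $\int_\Omega(M_\omega,V)\ud\mu$, i.e.\ that the reduct operations (forgetting the Morleyization predicates, forgetting the value sort, and re-identifying $V^\Rand$ with $A$) commute with the direct integral construction. This follows from \autoref{th:Los} and the fact that the functions $\llbracket\varphi\rrbracket$ are affinely definable in $Q^{\Ra}$, so that the field of $(\cL^+)_\Mor$-structures is (elementarily) measurable exactly when the underlying field of $\cL^\Rand$-structures is, with the same pointwise enumeration; but it requires a little care to state cleanly. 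The remaining ingredients — that $Q_V$ has $[0,1]$ as its unique model, and that $L^1(\Omega,[0,1])$ is the auxiliary sort — are immediate from \autoref{rmk:RandomisedConstantSort}.
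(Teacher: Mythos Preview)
Your proposal is correct and follows exactly the paper's approach: the paper's proof is the single line ``Follows from the previous theorem and \autoref{rmk:Bau}'', and you have simply unpacked this into its constituent steps. The bookkeeping point you raise about reducts commuting with the direct integral is a fair observation, though the paper does not dwell on it.
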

\begin{proof}
  Follows from the previous theorem and \autoref{rmk:Bau}.
\end{proof}

Let us now consider atomless randomizations. For the rest of this section, we employ the notation introduced in the statement of \autoref{thm:QPlusBau}.

\begin{theorem}\label{thm:QPlusBau-CR}
  Up to identification of the sorts $V$ and $A$, the continuous theories $Q^\Rand$ and $((Q^+)_\Bau)_\nat$ are interdefinable.
\end{theorem}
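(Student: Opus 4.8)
The theorem asserts that $Q^\Rand$ and $\bigl((Q^+)_\Bau\bigr)_\nat$ are interdefinable after identifying the value sort $V$ with the auxiliary sort $A$. The natural strategy is to leverage \autoref{thm:QPlusBau}, which already gives an affine interdefinability between $Q^{\Ra}$ and $(Q^+)_\Bau$ (under the same identification of sorts), and to show that passing to the atomless completion on one side corresponds exactly to imposing atomlessness of $\Omega$ on the other. Concretely, I would first recall that $Q^\Rand = Q^{\Ra} \cup T_0^\Rand$, where $T_0^\Rand$ adds the (non-affine) axiom asserting that the underlying probability space $\Omega$ is atomless, and that $T_0^\Rand$ is equivalent, via the bi-interpretation with probability algebras in the auxiliary sort $A = L^1(\Omega,[0,1])$, to the condition $w(\Omega) = 0$, which by \autoref{l:approximate-CR} and \autoref{l:non-atomic-has-CR} (see also the proof of \autoref{th:RandomMorBau} and \autoref{rmk:RandomisedConstantSort}) forces the convex realization property.

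The key claim to establish is: a model $(M^\Rand, A)$ of $Q^{\Ra}$ satisfies $T_0^\Rand$ (i.e.\ has atomless $\Omega$) if and only if the corresponding model of $(Q^+)_\Bau$ — obtained via \autoref{thm:QPlusBau} by repeating the sort $A$ as $V^\Rand$ and taking the $(\cL^+)_\Mor$-reduct — satisfies $\CR_{(\cL^+)_\Mor}$, equivalently belongs to the models of $\bigl((Q^+)_\Bau\bigr)_\nat$. The forward direction uses \autoref{l:non-atomic-has-CR}: by \autoref{c:models-of-randomizations}, an atomless randomization is a direct integral over an atomless probability space of models of $Q^+$, hence has the convex realization property; since $(Q^+)_\Bau$ is already an affine Bauer theory (\autoref{rmk:Bau}) and the passage to $\nat$ just adds $\CR$, we get a model of $\bigl((Q^+)_\Bau\bigr)_\nat$. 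For the converse, a model of $\bigl((Q^+)_\Bau\bigr)_\nat$ has the convex realization property; using \autoref{rmk:Bau}, it decomposes as a direct integral of models of $(Q^+)_\Mor$, and the convex realization property of the whole together with \autoref{l:saturated-atomless} (or directly by noting that an atomic direct integral of the trivial value-sort structure cannot have $\CR$ since the probability algebra $\MALG(\Omega)$ would have an atom, which is detectable by the sentence $\sup_x\inf_y|\mu(x\cap y)-\mu(x\cap\neg y)|$) forces the probability space to be atomless, whence $T_0^\Rand$ holds.

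Having established this equivalence of model classes (under the dictionary of \autoref{thm:QPlusBau} and the identification $V = A$), interdefinability follows: the affine definitional expansion to $(\cL^+)^{\Ra}$ that witnessed interdefinability of $Q^{\Ra}$ and $(Q^+)_\Bau$ in \autoref{thm:QPlusBau} restricts to a (now continuous-logic) definitional expansion witnessing interdefinability of $Q^\Rand$ with $\bigl((Q^+)_\Bau\bigr)_\nat$, because both theories have exactly the same models in the common expanded language $(Q^+)^\Rand$, and $\bigl((Q^+)_\Bau\bigr)_\nat$ — being the $\nat$-completion of a theory — captures precisely the direct-integral/atomless behaviour that $T_0^\Rand$ demands. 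One should also note, as in \autoref{rmk:RandomisedConstantSort}, that in any model of $Q_V^{\Ra}$ (hence of $Q^\Rand$) the randomized value sort $V^\Rand$ automatically coincides with the auxiliary sort $A$, which is what makes the identification of sorts legitimate and the two expansions compatible.

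\textbf{Main obstacle.} The delicate point is the ``atomless $\iff$ convex realization property'' equivalence at the level of these specific theories — in particular, checking that $\CR_{(\cL^+)_\Mor}$ on the Bauerization side genuinely forces $\MALG(\Omega)$ to be atomless, rather than merely forcing $\CR$ on the value sort alone. This requires running the argument of \autoref{l:approximate-CR}/\autoref{l:non-atomic-has-CR} in reverse within the structure of $(Q^+)_\Bau$, i.e.\ using that the probability-algebra sort is interpretable and that an atom of $\Omega$ would obstruct convex realization of types involving characteristic functions (as in \autoref{ex:Linfty-convex-set} and \autoref{l:saturated-atomless}). The rest of the proof is essentially bookkeeping with the definitional expansions already set up in \autoref{thm:QPlusBau} and \autoref{th:RandomMorBau}.
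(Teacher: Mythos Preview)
Your overall strategy is correct and matches the paper's: use the affine interdefinability from \autoref{thm:QPlusBau} and show that, under it, the models of $Q^\Rand$ correspond exactly to the models of $((Q^+)_\Bau)_\nat$. The forward direction (atomless $\Omega$ $\Rightarrow$ convex realization property via \autoref{l:non-atomic-has-CR} and \autoref{c:models-of-randomizations}) is exactly what the paper does.

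For the converse, however, your two suggested routes are not quite right. The reference to \autoref{l:saturated-atomless} does not apply: that lemma requires the model to realize \emph{every} type in some $\tS^\aff_x(T)$, whereas the convex realization property only gives you that realized types are \emph{dense}. Your alternative ``direct'' argument via the atom-detecting sentence $\sup_x\inf_y|\mu(x\cap y)-\mu(x\cap\neg y)|$ is on the right track, but you do not explain the missing step: why does $\CR$ in the language $(\cL^+)_\Mor$ force this sentence (which lives in the probability-algebra language interpreted in $A$) to vanish?

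The paper handles this more cleanly and without invoking the direct-integral decomposition at all. The key observations are: (i) since the bi-definition between $(Q^+)_\Bau$ and $Q^{\Ra}$ is \emph{affine}, q-convex conditions (in particular the $\CR$ axioms) translate across it, so $\CR$ passes from the $(\cL^+)_\Mor$-structure to the $\cL^\Rand$-structure and then to the reduct $A = L^1(\Omega,[0,1])$; (ii) all structures $L^1(\Omega',[0,1])$ share the same affine theory (the theory $\RV$ is affine and complete); (iii) hence by \autoref{c:CR-CL-completeness}, $A$ is continuously equivalent to any atomless $L^1(\Omega',[0,1])$, forcing $\Omega$ itself to be atomless. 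Step~(i), the transfer of $\CR$ through the affine dictionary and down to the auxiliary sort, is the point your outline glosses over.
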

\begin{proof}
  It suffices to see that under the correspondence of \autoref{thm:QPlusBau}, models of $Q^\Rand$ correspond to models of $((Q^+)_\Bau)_\nat$ and vice versa. If $(M^\Rand,A)\models Q^\Rand$, then $A=L^1(\Omega,[0,1])$ with $\Omega$ atomless, so by the previous corollary $(M^\Rand,A)$ is a direct integral over an atomless space, and thus has the convex realization property by \autoref{l:non-atomic-has-CR}. Conversely, suppose that $(M^\Rand,A)$ has the convex realization property in the language $(\cL^+)_\Mor$. Since the bi-definition between $(Q^+)_\Bau$ and $Q^{\Ra}$ is affine, q-convex conditions in one language translate into q-convex conditions in the other language. Therefore, $(M^\Rand,A)$ has the convex realization property in the language $\cL^\Rand$, and so does the reduct $A=L^1(\Omega,[0,1])$ in the language of $[0,1]$-valued random variables. If $\Omega'$ is any probability space, then $A$ and $A'=L^1(\Omega',[0,1])$ have the same affine theory. If moreover $\Omega'$ is atomless, then $A\equiv^\cont A'$ by \autoref{c:CR-CL-completeness}. We conclude that $\Omega$ is also atomless, as desired.
\end{proof}

\begin{remark}
  The fundamental result of the theory of randomizations states that $Q^\Rand$ has quantifier elimination in the language expanded with the function symbols $\llbracket\varphi\rrbracket$ for every $\cL$-formula $\varphi$. We observe that one can deduce this theorem from \autoref{thm:QPlusBau} (and its proof) and \autoref{thm:QBauNatQE}. More precisely, the combination of these results gives quantifier elimination in the language expanded with the definable functions $\llbracket\varphi\rrbracket$ for $\cL^+$-fomulas $\varphi$. In turn, using basic properties of the theory $(Q^+)_\Mor$, one can reduce to the consideration of definable functions of the form $\llbracket\varphi\rrbracket$ for $\cL$-formulas~$\varphi$.
\end{remark}

To conclude, we prove some basic results to the effect that, in most cases, the passage from $Q$ to $Q^+$ is unnecessary.

Recall that two continuous logic theories are bi-interpretable if they have a common interpretational expansion; we refer the reader to \cite[\textsection1]{BenYaacovLelek} for the definitions. Observe that in the definition of an interpretational expansion in \cite{BenYaacovLelek} (specifically, in the operations allowed to builds the \emph{interpretable sorts}) one is allowed to freely add the so-called constant sort $\{0,1\}$ (or equivalently, any non-trivial compact metric sort, such as our value sort $V=[0,1]$). In particular, $Q^+$ is always an interpretational expansion of $Q$. We may say that an interpretational expansion is \emph{pure} if it does not use the operation of freely adding the constant sort, and say that two continuous logic theories are \emph{purely bi-interpretable} if they have a common pure interpretational expansion.

Recall that a theory $Q$ is non-degenerate if all models of $Q$ have at least two elements, in some finite product of basic sorts.

\begin{lemma}\label{l:Q-biint-Q+}
  Let $Q$ be a non-degenerate continuous logic theory.
  Then $Q^+$ is a pure interpretational expansion of $Q$ in continuous logic.
\end{lemma}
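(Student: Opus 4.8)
The goal is to recover the value sort $V=[0,1]$ inside any model of a non-degenerate theory $Q$, using only the operations allowed for a \emph{pure} interpretational expansion (finite products, definable quotients by definable pseudometrics/equivalence relations, definable subsets given by definable closed conditions, and the like — but \emph{not} freely adjoining the constant sort). The plan is to exhibit a definable set, uniformly across all models of $Q$, which is in definable bijection with $[0,1]$ equipped with its standard metric, and to do this using the non-degeneracy hypothesis to produce ``two distinguishable points at a definite distance'' that can then be convexly interpolated.

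\textbf{Step 1: extract a uniform pair of points.} By non-degeneracy and a compactness argument (exactly as in the discussion following \autoref{dfn:NonDegenerateTheory}), there is $\delta>0$ and a finite product of basic sorts $S$ such that $Q\models \sup_{x,y\in S} d(x,y)\geq\delta$. So in every model there exist $a,b\in S$ with $d(a,b)\geq\delta$. The first task is to turn this ``existential'' pair into a \emph{definable} pair of points — i.e., to pass to a definable subset of $S^2$ on which $d\geq\delta$ is forced, and on which we can canonically normalize. This is where one uses definable closed conditions: the set $\{(x,y): d(x,y)\geq\delta\}$ is a definable set (its distance predicate is $\bigl(\delta - d(x,y)\bigr)\vee 0$, which is a continuous, hence definable, predicate — and non-empty in every model). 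Working over this definable set of parameters, we have two uniformly available points at distance $\geq\delta$.

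\textbf{Step 2: build $[0,1]$ by interpolation.} With a definable pair $(a,b)$, $d(a,b)\geq\delta$, consider the definable set of ``convex combinations along the geodesic'' — more precisely, in any model $M$ pass to an affine extension or, better, directly work inside the structure: the set $G=\{z : d(a,z)+d(z,b)=d(a,b)\}$ (metric segment) need not be rich enough in a general continuous structure, so instead I would use the randomization/direct-integral mechanism already developed: form $L^1(\Omega,M)$ for $\Omega=[0,1]$ with Lebesgue measure; there the section $\omega\mapsto a$ on $[0,t]$ and $\omega\mapsto b$ on $(t,1]$ gives a point $c_t$ with $d(a,c_t)=t\,d(a,b)$. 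But to keep the interpretation \emph{pure} and internal to $Q$, the cleaner route is: note that the continuous theory $Q_V$ of the value sort is itself interpretable in $Q$ \emph{without} the constant-sort operation precisely because $[0,1]$ can be realized as the space of values of a definable predicate. Concretely, the predicate $P(x,y)=\tfrac{1}{d(a,b)}\,d(a,z)\wedge 1$ (suitably made total) surjects onto $[0,1]$; one takes the definable quotient of an appropriate definable set by the pseudometric $\bigl|P(\cdot)-P(\cdot)\bigr|$, and checks this quotient is a single point for each value — yielding a copy of $[0,1]$ with the identity predicate $\id$. Verifying the axioms of $Q_V$ (namely $d(t,s)=|\id(t)-\id(s)|$ and $\inf_t|\id(t)-q|=0$ for all constants $q$) amounts to checking surjectivity of the interpolation onto all of $[0,1]$ and that distances are computed correctly — routine once the interpolation is in place.

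\textbf{Step 3: assemble the pure interpretational expansion.} Having defined, uniformly in models of $Q$, a sort $V'$ definably isometric to $[0,1]$ with its identity predicate, and conversely noting that $Q$ itself is trivially a reduct of $Q^+$, one writes down the common interpretational expansion: it is $Q$ together with the interpretable sort $V'$ and the bijection $V'\cong V$, which by construction uses only products, definable subsets, and definable quotients. This exhibits $Q^+$ as a pure interpretational expansion of $Q$.

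\textbf{Main obstacle.} The delicate point is Step 2: producing, \emph{internally and purely definably}, a full copy of $[0,1]$ rather than just some two points. In an arbitrary continuous structure, metric segments between two points may be missing or non-unique, so one cannot simply take the geodesic. The fix is to realize $[0,1]$ as a quotient of a definable set by a definable pseudometric — essentially as the ``range sort'' of a definable predicate — which is exactly the kind of construction pure interpretational expansions permit. One must be careful that this quotient is taken over a definable (not merely type-definable) set and that the resulting pseudometric is given by a genuine definable predicate; once the pair $(a,b)$ at distance $\geq\delta$ is fixed definably (Step 1), the predicate $\tfrac{1}{d(a,b)}d(a,\cdot)\wedge 1$ does the job and its range is all of $[0,1]$ in some elementary extension — and since $[0,1]$ is compact, by $\aleph_0$-saturation it is realized densely, hence (after completion of the quotient) exactly. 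The remaining verifications that $Q_V$ holds in the interpreted sort are then straightforward.
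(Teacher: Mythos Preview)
Your Step 2 contains a genuine gap. The claim that the range of $\tfrac{1}{d(a,b)}d(a,\cdot)\wedge 1$ is dense in $[0,1]$ in an $\aleph_0$-saturated model is false in general: in any classical structure the distance takes only the values $0$ and $1$, and this persists in every elementary extension. So the completion of your quotient yields at most a two-point set, not $[0,1]$. More broadly, nothing in the non-degeneracy hypothesis forces intermediate distance values to exist. (Your Step 1 also has a problem: the zero set $\{(x,y):d(x,y)\geq\delta\}$ is closed, but $(\delta-d(x,y))\vee 0$ need not be the distance to it, so you have not shown it is a \emph{definable} set. And even if it were, you would be working over a parameter pair $(a,b)$, whereas an interpretation must be parameter-free.)

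The paper's fix is to aim lower first: rather than producing $[0,1]$ directly, it produces the two-point constant sort $\{0,1\}$, and then recovers $[0,1]$ as a quotient of $\{0,1\}^{\bN}$ by the definable pseudometric $d'(x,y)=\bigl|\sum_i 2^{-i-1}(x_i-y_i)\bigr|$. To get $\{0,1\}$, take $D^2$ (no parameters) with the pseudometric
\[
\rho(x,y)=\tfrac{1}{\delta}\bigl|(d(x_0,x_1)\wedge\delta)-(d(y_0,y_1)\wedge\delta)\bigr|,
\]
and form the completed quotient $S=\widehat{D^2/\rho}$. This $S$ is just the closure of the range of $(d\wedge\delta)/\delta$ inside $[0,1]$; it contains $0$ (from the diagonal) and, since the diameter of $D$ is $\geq\delta$, it contains $1$ as well. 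These two points are definable without parameters, and $\{0_S,1_S\}$ is a definable subset of $S$. Note the paper never claims $S$ is all of $[0,1]$ --- in the classical case it is exactly $\{0,1\}$ --- and this is precisely why the extra step through $\{0,1\}^{\bN}$ is needed.
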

\begin{proof}
  It is enough to argue that the constant sort $\{0,1\}$ is interpretable in $Q$, as the value sort $[0,1]$ can then be obtain as quotient of the product sort $\{0,1\}^\bN$ by the definable pseudometric
  \begin{gather*}
    d'(x,y) = \left| \sum_{i\in\N}\frac{x_i - y_i}{2^{i+1}} \right|.
  \end{gather*}

  As in the discussion following \autoref{dfn:NonDegenerateTheory}, there exists a sort (or product of sorts) $D$, and $\delta > 0$, such that the diameter of $D$ is at least $\delta$ in every model of $Q$.
  We define a pseudometric on $D^2$ by
  \begin{gather*}
    \rho\big(x,y) = \frac{1}{\delta} \bigl|(d(x_0,x_1) \wedge \delta)-(d(y_0,y_1)\wedge \delta)\bigr|.
  \end{gather*}
  We may further form the imaginary sort $S = \widehat{D^2/\rho}$ (i.e., the completion of the quotient of $D^2$ by $\rho$), with the induced metric $\rho'$.
  Let $0_S$ denote the element of $S$ given by the class of any $x\in D^2$ with $x_0=x_1$.
  Then there is a unique element $1_S\in S$ with the property that $\rho'(0_S,1_S) = 1$.
  The set $C=\set{0_S,1_S}\subseteq S$ is definable in continuous logic, interpreting the constant sort in $Q$.
\end{proof}

We will say that an interpretational expansion is \emph{affine} if it is pure, both theories are affine, and all formulas involved in the definition are affine.
Accordingly, two affine theories are \emph{affinely bi-interpretable} if they have a common affine interpretational expansion.

\begin{lemma}\label{l:QBau-biint-Q'Bau}
  Let $Q$ and $Q'$ be purely bi-interpretable continuous logic theories. Then $Q_\Bau$ and ${Q'}_\Bau$ are affinely bi-interpretable.
\end{lemma}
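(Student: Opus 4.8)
The plan is to transport an interpretation through the Bauerization functor. Suppose $Q$ and $Q'$ are purely bi-interpretable, witnessed by a common pure interpretational expansion $R$ of both. First I would recall that, by \autoref{l:interdef-affine-reduction-Bau}, the Bauerization $Q_\Bau = (Q_\Mor)_\aff$ is, after the definitional Morleyization step, an affine definitional expansion of $Q_\aff$, and similarly for $Q'_\Bau$ and $R_\Bau$. The key observation is that Bauerization commutes (up to affine definitional expansion) with pure interpretations: if $R$ is obtained from $Q$ by adjoining imaginary sorts built via the allowed operations (products, quotients by definable pseudometrics, completions, definable subsets), then these same operations make sense in $Q_\Bau$, because in $Q_\Bau$ every continuous $\cL$-formula of $Q$ is named by an affine predicate $P_\varphi$, so every formula used in the interpretation of $R$ over $Q$ becomes an affine formula over $Q_\Bau$. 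Concretely, I would check that $R_\Bau$ is an affine interpretational expansion of $Q_\Bau$: the new sorts of $R_\Bau$ (namely the $\cL_R{}_\Mor$-sorts) can be reconstructed inside $Q_\Bau$ by first building the interpreted sorts of $R$ over $Q$ (all definitions now affine in the Morleyized language), and then naming the resulting continuous predicates by the $P_\psi$ of the $R$-language, which is again an affine definitional step.

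The second main step is to organize this symmetrically. Since $R$ is a pure interpretational expansion of $Q$ \emph{and} of $Q'$, applying the previous paragraph on both sides yields that $R_\Bau$ is an affine interpretational expansion of $Q_\Bau$ and of $Q'_\Bau$. It remains to argue that $R_\Bau$ (suitably further expanded if necessary to accommodate the Morleyization predicates of all three languages simultaneously) serves as a \emph{common} affine interpretational expansion. Here I would take the affine theory in the language $\cL_{R,\Mor}$ containing predicate symbols $P_\varphi$ for all continuous formulas $\varphi$ in the $R$-language together with function symbols, axiomatized by $R_\aff$ plus the affine Morleyization axioms; this theory is an affine definitional expansion of $R_\Bau$, hence affinely bi-interpretable with it, and it visibly contains (as affine definitional sub-expansions) the Morleyization expansions of both $Q$ and $Q'$. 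Thus $Q_\Bau$ and $Q'_\Bau$ share this common affine interpretational expansion, which is precisely what affine bi-interpretability requires.

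The step I expect to be the main obstacle is verifying carefully that the interpretation data really does become \emph{affine} after passing to the Morleyized language — in particular, that the definable pseudometrics and definable subsets appearing in the construction of the imaginary sorts of $R$ over $Q$, which a priori involve arbitrary continuous $\cL$-formulas, are all captured by affine formulas once those formulas are named by atomic predicates $P_\varphi$, and that the coherence/compatibility conditions in the definition of an interpretational expansion (that composed interpretations are the identity, etc.) transfer. This is essentially bookkeeping: one checks that each elementary operation used to build interpretable sorts in \cite{BenYaacovLelek} preserves ``affine in the Morleyized language'', and that the round-trip isomorphisms witnessing bi-interpretability are themselves affinely definable after Morleyization. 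I would also remark at the end that the non-degeneracy hypotheses of the two preceding lemmas are not needed here, since we are working with \emph{pure} interpretational expansions throughout and never invoke the constant/value sort; this is exactly why the statement is phrased with ``purely bi-interpretable'' on the hypothesis side.
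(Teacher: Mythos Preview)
Your approach is essentially the same as the paper's: reduce to showing that if $Q'$ is a pure interpretational expansion of $Q$ then $Q'_\Bau$ is an affine interpretational expansion of $Q_\Bau$, by observing that in the Morleyized language every continuous $\cL$-formula is named by an atomic predicate, so the defining data for the imaginary sorts become affine. Two remarks:

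First, your invocation of \autoref{l:interdef-affine-reduction-Bau} is misplaced. That lemma says $Q_\Bau$ is an affine definitional expansion of $Q_\aff$ \emph{if and only if $Q$ has affine reduction}, which is not assumed here. Fortunately you never actually use this claim, so it can simply be removed.

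Second, what you label ``essentially bookkeeping'' is the substantive point, and the paper makes it explicit. After Morleyization, the formulas defining the sets $D_S$, the pseudometrics $\rho_S$, and the maps $\alpha_S$ are atomic, hence affine. But passing from $Q_\Mor$ to $Q_\Bau = (Q_\Mor)_\aff$ discards all non-affine consequences, so one must check that the \emph{axioms asserting these formulas have the right properties} are themselves affine. The paper spells this out: the conditions characterizing a definable-set distance predicate are affine by \autoref{lem:DefinableSetDistancePredicate}, the pseudometric axioms (triangle inequality, symmetry) are affine, and the properties required of $\alpha_S$ are likewise expressible by affine conditions. This is why the interpretation survives into $Q_\Bau$ rather than merely into $Q_\Mor$.
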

\begin{proof}
  It suffices to prove that if $Q'$ is a pure interpretational expansion of $Q$, then ${Q'}_\Bau$ is an affine interpretational expansion of $Q_\Bau$.

  We denote the languages of $Q$ and $Q'$ by $\cL$ and $\cL'$. Under the assumption that $Q'$ is a pure interpretational expansion of $Q$ (so that, in particular, $\cL\subseteq \cL'$), for each $\cL'$-sort $S$ there exist some product of $\cL$-sorts, call it $P_S$, some $\cL$-definable (modulo $Q$) set $D_S\subseteq P_S$, an $\cL$-definable (modulo $Q$) pseudometric $\rho_S$ on $D_S$, and an $\cL'$-definable (modulo $Q'$) map $\alpha_S\colon D_S \to S$ such that:
  \begin{enumerate}
  \item $\alpha_S$ is a $\rho_S$-isometric map with dense image, thus inducing a surjective isometry $(\widehat{D_S/\rho_S},\rho'_S)\to (S,d)$, where $\rho'_S$ is the induced metric in the completion of the quotient of $D_S$ by $\rho_S$;
  \item for every basic $\cL'$-predicate $P'$ defined in a product sort $\prod_i S_i$, the predicate $P \colon \prod_i D_{S_i} \to \R$ given by $P\big((x_i)_i\big) = P'\big((\alpha_{S_i}(x_i))_i\big)$ is $\cL$-definable.
  \end{enumerate}
  When passing from $Q$ and $Q'$ to their Morleyizations, every formula $\varphi$ used to define $D_S$, $\rho_S$, $\alpha_S$, or the predicates $P$, can be replaced by the basic symbol $P_\varphi$. In this way, the sets $D_S$ and the pseudometrics $\rho_S$ are defined by affine formulas (and uniform limits thereof) modulo $Q_\Mor$. They are thus affinely definable modulo $Q_\Bau$, because the fact that a formula (or limit of formulas) defines a set or a pseudometric is expressible by affine conditions (for sets, this was proved in \autoref{lem:DefinableSetDistancePredicate}; for pseudometrics, it is just the fact that the triangle inequality is affine). Similarly, the maps $\alpha_S$ are defined by affine formulas modulo ${Q'}_\Mor$, and the properties enumerated above are expressible by affine $\cL'_\Mor$-conditions. This shows that ${Q'}_\Bau$ is an affine interpretational expansion of $Q_\Bau$.
\end{proof}

\begin{prop}\label{p:QBau-biint-QRa}
  Let $Q$ be a non-degenerate continuous logic theory. Then $Q_\Bau$ is affinely bi-interpretable with $(Q^+)_\Bau$.
  If moreover $Q$ has affine reduction, then $Q_\aff$ is affinely bi-interpretable with $(Q^+)_\Bau$.
\end{prop}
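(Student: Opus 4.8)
The plan is to prove the two assertions of \autoref{p:QBau-biint-QRa} by assembling the lemmas developed above, rather than by any new construction. Recall that \autoref{l:Q-biint-Q+} shows that $Q^+$ is a \emph{pure} interpretational expansion of $Q$ in continuous logic, provided $Q$ is non-degenerate (the constant sort, and hence the value sort $V$, is interpretable in $Q$ using the definable set $C = \{0_S, 1_S\}$ constructed there). The first task is to upgrade this to a statement about the Bauerizations: apply \autoref{l:QBau-biint-Q'Bau} with $Q' = Q^+$ to conclude that $Q_\Bau$ and $(Q^+)_\Bau$ are affinely bi-interpretable. This is the whole content of the first assertion, so nothing further is needed there.

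For the second assertion, assume in addition that $Q$ has affine reduction. By \autoref{l:interdef-affine-reduction-Bau}, $Q_\Bau$ is then an affine definitional expansion of $Q_\aff$; in particular $Q_\Bau$ and $Q_\aff$ are affinely bi-interpretable (a definitional expansion is a fortiori a pure interpretational expansion in which all formulas are affine). Composing this with the affine bi-interpretation $Q_\Bau \sim (Q^+)_\Bau$ from the first part, and using that affine bi-interpretability is transitive, we obtain that $Q_\aff$ is affinely bi-interpretable with $(Q^+)_\Bau$, which is the second assertion.

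The one point that requires a little care — and which I expect to be the main (mild) obstacle — is the bookkeeping around transitivity of affine bi-interpretability and the fact that the notion of interpretational expansion used throughout (following \cite{BenYaacovLelek}) is closed under composition: a pure interpretational expansion of a pure interpretational expansion is again one, and likewise in the affine setting when all the defining formulas are affine. This is routine but should be stated explicitly, since the second assertion chains together a \emph{definitional} expansion (from affine reduction) with the \emph{interpretational} expansion produced by \autoref{l:QBau-biint-Q'Bau}. One should also note that \autoref{l:QBau-biint-Q'Bau} is stated for purely bi-interpretable continuous theories and yields an affine bi-interpretation of the Bauerizations, so the hypotheses match exactly what \autoref{l:Q-biint-Q+} provides. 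No separability or completeness assumptions enter, consistent with the statement. Everything else — the explicit identification of the interpreted sorts, the affinity of the formulas involved — has already been handled inside the proofs of \autoref{l:Q-biint-Q+} and \autoref{l:QBau-biint-Q'Bau}, so the proof of \autoref{p:QBau-biint-QRa} itself is just the two-line assembly described above.
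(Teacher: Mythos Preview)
Your proposal is correct and follows exactly the same approach as the paper: combine \autoref{l:Q-biint-Q+} with \autoref{l:QBau-biint-Q'Bau} for the first assertion, then invoke \autoref{l:interdef-affine-reduction-Bau} and transitivity for the second. The paper's proof is indeed just the two-line assembly you describe, without the extra bookkeeping discussion.
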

\begin{proof}
  The main assertion follows from \autoref{l:Q-biint-Q+} and \autoref{l:QBau-biint-Q'Bau}. If $Q$ has affine reduction, then $Q_\Bau$ is an affine definitional expansion of $Q_\aff$, by \autoref{l:interdef-affine-reduction-Bau}.
\end{proof}

We end with an application to Bauer theories.

\begin{theorem}
  Let $T$ be a non-degenerate Bauer theory. Then for any model $M\models T$, the following are equivalent:
  \begin{enumerate}
  \item $M$ is isomorphic to a direct integral of extremal models of $T$ over an atomless probability space.
  \item $M$ has the convex realization property.
  \end{enumerate}
\end{theorem}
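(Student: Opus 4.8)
The implication $(i)\Rightarrow(ii)$ is immediate from \autoref{l:non-atomic-has-CR}, so the work is in $(ii)\Rightarrow(i)$. Since a Bauer theory is simplicial, \autoref{th:integral-decomposition} provides a non-degenerate extremal decomposition $M\cong\int_\Omega^\oplus M_\omega\ud\mu$, and by \autoref{th:uniqueness-decomposition} the weight $w(M)$ (the supremum of the masses of the atoms of $\mu$) is well defined; the plan is to show $w(M)=0$, which gives the statement. Suppose for contradiction that $w(M)=\lambda>0$. Splitting off an atom of $\mu$ and using that the field is elementarily measurable (\autoref{c:bauer-elementarily-measurable-fields}) together with \autoref{rmk:degenerate-atoms} and \autoref{th:Bauer-correspondence-T}, one may then write $M\cong\lambda M_0\oplus(1-\lambda)N$ with $M_0$ an extremal model of $T$ (allowing $N$ to be absent if $\lambda=1$).

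First I would record two consequences of $M$ having the convex realization property, on the one hand, and of the decomposition $\lambda M_0\oplus(1-\lambda)N$ on the other. Using \autoref{l:CR-vs-eca-closed} (ec for convex formulas in every affine extension) and \autoref{prop:AffineJEP}, one shows by the usual Tarski--Vaught-style argument — approximating a point of $\tS^\aff_n(\Th^\aff M)$ by realizing it in a common affine extension and transferring the value of an inf-convex sentence back to $M$ — that the types realized by finite tuples of $M$ are $\tau$-dense in $\tS^\aff_n(\Th^\aff M)$ for every $n$. Since $T$ is Bauer, $\tS^\aff_n(T)=\cM(\cE_n(T))$ (\autoref{th:Bauer}) and the boundary measure $\mu_q$ of $q$ is simply $q$ regarded as a measure on the closed set $\cE_n(T)$; the set $B^n_\lambda:=\{q:\mu_q(\{e\})\ge\lambda\text{ for some }e\in\cE_n(T)\}$ is then $\tau$-closed (the map $q\mapsto\sup_{e\in\cE_n(T)}q(\{e\})$ is upper semicontinuous, by compactness of $\cE_n(T)$ and the portmanteau inequality). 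But every $a\in M^n$ is $\lambda a_0\oplus(1-\lambda)a_N$ with $a_0\in M_0^n$, so $\mu_{\tp^\aff(a)}=\lambda\delta_{\tp^\aff(a_0)}+(1-\lambda)\mu_{\tp^\aff(a_N)}$ has an atom of mass $\ge\lambda$ at the extreme type $\tp^\aff(a_0)$; hence every realized $n$-type lies in $B^n_\lambda$, and by density $\tS^\aff_n(\Th^\aff M)\subseteq B^n_\lambda$ for every $n$.

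To contradict this I would produce a type over $\Th^\aff(M)$ whose boundary measure has arbitrarily small atoms. Starting from $M=\int_\Omega^\oplus M_\omega\ud\mu$, replace each atom $A$ of $\mu$ by an atomless space of the same mass carrying the constant extremal model $\int_A^\oplus M_\omega\ud\mu_A$ (extremal again by \autoref{rmk:degenerate-atoms} and \autoref{th:Bauer-correspondence-T}); exactly as in the proof of \autoref{th:affine-classes}, this yields a direct integral $M''=\int_{\tilde\Omega}^\oplus\tilde M_\omega$ of extremal models of $T$ over an \emph{atomless} space with $M\preceq^\aff M''$, so $\Th^\aff(M'')=\Th^\aff(M)$. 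For a dense enumeration $e$ of $M''$ and $p=\tp^\aff(e)$, \autoref{lem:ExtremalFieldBoundaryMeasure} and \autoref{rmk:LosForTypes} identify the pushforward of $\mu_{\tilde\Omega}$ along $\omega\mapsto\tp^\aff(e(\omega))$ with the boundary measure $\mu_p$, and, the field being non-degenerate (\autoref{lem:NonDenegerateFieldTwoElements}), this pushforward induces an \emph{isomorphism} $\MALG(\tS^\aff_\kappa(T),\mu_p)\cong\MALG(\tilde\Omega,\mu_{\tilde\Omega})$; since $\tilde\Omega$ is atomless, $\mu_p$ is atomless. Now $\pi_0(p)=\Th^\aff(M)$, so $\pi_n(p)\in\tS^\aff_n(\Th^\aff M)$ and $\mu_{\pi_n(p)}=(\pi_n)_*\mu_p$ by \autoref{l:mu-p-under-projections}; as affine formulas in finitely many variables generate the Baire $\sigma$-algebra of $\tS^\aff_\kappa(T)$ and $\mu_p$ is atomless, for $n$ large $\tS^\aff_\kappa(T)$ decomposes, modulo $\mu_p$-null sets, into finitely many cylinder sets over $\tS^\aff_n(T)$ each of mass $<\lambda$, so $(\pi_n)_*\mu_p$ has all atoms of mass $<\lambda$, i.e.\ $\pi_n(p)\notin B^n_\lambda$ — contradicting $\tS^\aff_n(\Th^\aff M)\subseteq B^n_\lambda$. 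The routine points (the upper semicontinuity of $q\mapsto\sup_e q(\{e\})$ and the cylinder partition of an atomless measure) I would relegate to short sub-arguments; the genuine content, and the step I expect to require the most care, is the last one: manufacturing a type over $\Th^\aff(M)$ with atomless boundary measure and controlling the atoms of its finite-variable marginals. (An alternative route through $T_\ext$ having affine reduction and \autoref{l:saturated-atomless} runs into the obstruction that $\Th^\aff(M)$ need not be simplicial, which is why I prefer to argue directly inside $\tS^\aff_n(T)$.)
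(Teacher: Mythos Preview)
Your argument is correct but follows a genuinely different route from the paper's. The paper proves $(ii)\Rightarrow(i)$ by passing through the randomization machinery: since $T$ is Bauer, $Q=T_\ext$ has affine reduction (\autoref{th:Bauer-correspondence-T}), so by \autoref{p:QBau-biint-QRa} the theory $Q_\aff=T$ is affinely bi-interpretable with $(Q^+)_\Bau$; then $M$, having the convex realization property, becomes a model of $((Q^+)_\Bau)_\nat$, which by \autoref{thm:QPlusBau-CR} is interdefinable with the atomless randomization $Q^\Rand$, and models of $Q^\Rand$ are atomless direct integrals by \autoref{c:models-of-randomizations}. Your approach instead stays within the Choquet framework: you read off from the decomposition $M\cong\lambda M_0\oplus(1-\lambda)N$ that every realized finite type has a boundary atom of mass $\geq\lambda$, use CR to propagate this to all of $\tS^\aff_n(\Th^\aff M)$ via density and the closedness of $B^n_\lambda$, and then contradict it by exhibiting a type over $\Th^\aff(M)$ with atomless boundary measure (coming from an atomless affine extension $M''$). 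The paper's proof is shorter once the randomization correspondence is in place and gives a clean conceptual identification of CR with the atomlessness axiom of $Q^\Rand$; your proof avoids Sections~\ref{sec:Randomizations} entirely and shows directly, in the Bauer simplex $\cM(\cE_n(T))$, how an atom of the decomposition would be detected by finite-variable types. The step you flagged as delicate---passing from atomlessness of $\mu_p$ on $\tS^\aff_\kappa(T)$ to the existence of a finite $I$ with $(\pi_I)_*\mu_p\notin B^I_\lambda$---does go through: one partitions the atomless measure algebra into finitely many pieces of mass $<\lambda$, approximates them in the dense subalgebra $\bigcup_I\MALG(\tS^\aff_I(T),(\pi_I)_*\mu_p)$, and notes that any point mass, being an atom of the measure algebra, must lie below one piece.
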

\begin{proof}
  One implication is just \autoref{l:non-atomic-has-CR}. Conversely, suppose $M$ has the convex realization property. As $T$ is a Bauer theory, the theory $Q=T_\ext$ has affine reduction, by \autoref{th:Bauer-correspondence-T}. By \autoref{l:T=Textaff}, $M$ is a model of $Q_\aff$. On the other hand, by \autoref{p:QBau-biint-QRa}, $Q_\aff$ is affinely bi-interpretable with $(Q^+)_\Bau$, so $M$ can be seen as a model of $((Q^+)_\Bau)_\nat$. Finally, by \autoref{thm:QPlusBau-CR}, $M$ can be seen as a model of $Q^\Rand$, and hence as a direct integral over an atomless probability space, by \autoref{c:models-of-randomizations}.
\end{proof}

\section{An automorphism group criterion for affine reduction}
\label{sec:criterion-affine-reduction}

We give here a characterization of continuous logic theories with affine reduction in terms of the automorphism group of a separable approximately saturated model, if such a model exists. This criterion will be helpful in the analysis of some of the examples in \autoref{part:Examples}. In proving the criterion we also establish a result about transfer of approximate saturation by direct multiples (\autoref{thm:AtomlessTimesApproximatelySaturated}).

We begin by considering types over convex combinations of parameter sets.
Let $(M_i)_{i<n}$ be $\cL$-structures and let $\lambda_i\geq 0$ be scalars with $\sum\lambda_i=1$.
We consider the convex combination $M=\bigoplus_{i<n}\lambda_i M_i$, whose elements are denoted as usual by $\bigoplus\lambda_i a_i$ for $a_i\in M_i$.
Given subsets $A_i\subseteq M_i$, we define $\bigoplus\lambda_i A_i\subseteq M$ to consist of those $\bigoplus\lambda_i a_i$ with $a_i\in A_i$.
Given $p_i \in \tS^\aff_x(A_i)$, we define $\bigoplus \lambda_i p_i \in \tS^\aff_x\bigl( \bigoplus \lambda_i A_i \bigr)$ by
\begin{gather*}
  \varphi\left( x,\bigoplus\lambda_i a_i \right)^{\bigoplus\lambda_i p_i} = \sum \lambda_i \varphi(x,a_i)^{p_i}.
\end{gather*}
Equivalently, $\bigoplus\lambda_i p_i$ is the type of $\bigoplus\lambda_i b_i$, where $b_i$ is a realization of $p_i$ in some affine extension $M_i'\succeq^\aff M_i$, and $\bigoplus\lambda_i b_i$ is the corresponding element or tuple of the affine extension $\bigoplus\lambda_i M'_i\succeq^\aff M$.

\begin{lemma}
  \label{l:surjectivity-types-over-conv-comb}
  The map $\prod_{i<n}\tS_x^{\aff,M_i}(A_i) \rightarrow \tS_x^{\aff,M}(\bigoplus\lambda_i A_i)$ sending $(p_i) \mapsto \bigoplus \lambda_i p_i$ is continuous, affine and surjective.
\end{lemma}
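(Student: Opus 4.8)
The statement asserts three properties of the map $\Phi \colon (p_i) \mapsto \bigoplus \lambda_i p_i$: continuity, affineness, and surjectivity. The plan is to dispatch continuity and affineness quickly from the defining formula, and then to handle surjectivity — the only nontrivial part — by a Hahn--Banach separation argument in the style of the proofs of \autoref{prop:Type} and \autoref{lemma:TypeConvexCombinationLift}.

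First I would record that affineness and $\tau$-continuity are immediate from the identity $\varphi(x, \bigoplus\lambda_i a_i)^{\bigoplus\lambda_i p_i} = \sum_i \lambda_i \varphi(x,a_i)^{p_i}$, which exhibits each coordinate $\varphi(x, \bigoplus\lambda_i a_i)^{\Phi(p_i)}$ as a fixed affine combination (with the fixed coefficients $\lambda_i$) of the continuous affine functions $(p_i) \mapsto \varphi(x, a_i)^{p_i}$ on the product type space. Since a basis for the topology on $\tS_x^\aff(\bigoplus\lambda_i A_i)$ is given by conditions on such formulas, $\Phi$ is continuous; and since the assignment is coordinatewise affine with fixed weights, $\Phi$ is affine as a map of convex sets. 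One should also note in passing that $\Phi$ is well-defined, i.e.\ that $\bigoplus\lambda_i p_i$ really is a state on $\cL(\bigoplus\lambda_i A_i)^\aff_x$, which follows because it is realized by $\bigoplus\lambda_i b_i$ in the affine extension $\bigoplus\lambda_i M_i'$ (invoking \autoref{prop:Type} / \autoref{lemma:NaiveConvexLos} in its direct-integral generalization \autoref{ntn:DirectIntegralConvexCombination}, together with \Los's theorem \autoref{th:Los}).

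For surjectivity, the target is a compact convex set and the image of $\Phi$ is compact (continuous image of a compact set) and convex (by affineness of $\Phi$ and convexity of the product). By \autoref{l:surjective-affine-map} it suffices to show the image meets every non-empty half-space $\oset{\psi > 0}$ of $\tS_x^\aff(\bigoplus\lambda_i A_i)$, where $\psi$ is an affine formula over $\bigoplus\lambda_i A_i$. Writing $\psi = \psi(x, \bigoplus\lambda_i \bar a_i)$ with $\bar a_i$ a tuple from $A_i$, we have, for any tuple $(p_i)$ in the product, $\psi^{\Phi(p_i)} = \sum_i \lambda_i \psi_i(x,\bar a_i)^{p_i}$ where $\psi_i$ is the affine $\cL(A_i)$-formula obtained by interpreting the constant for $\bigoplus\lambda_j \bar a_j$ appropriately. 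If the half-space is non-empty, some type $q \in \tS_x^\aff(\bigoplus\lambda_i A_i)$ satisfies $\psi^q > 0$; realizing $q$ in an affine extension $N \succeq^\aff \bigoplus\lambda_i A_i$ and using affine amalgamation (\autoref{cor:AffineAmalgamation}) to split $N$, or more directly realizing $q$ inside some $\bigoplus\lambda_i N_i$ with $N_i \succeq^\aff M_i$ (this is where one uses that $\bigoplus\lambda_i A_i$ sits inside $\bigoplus\lambda_i M_i$ and that affine combinations of extensions are extensions), one extracts realizations that project to types $p_i \in \tS_x^\aff(A_i)$ with $\sum_i \lambda_i \psi_i^{p_i} = \psi^q > 0$, so $\Phi(p_i)$ lies in the half-space.

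\textbf{Main obstacle.} The delicate point is the last splitting step: given a realization of $q$ in an arbitrary affine extension $N$ of $\bigoplus\lambda_i A_i$, I must produce realizations living over each $M_i$ separately so that their combination recovers the value $\psi^q$. The clean way is to argue directly at the level of type spaces — apply \autoref{lemma:TypeConvexCombinationLift}\autoref{item:TypeConvexCombinationLiftFinite} to lift the convex decomposition, or equivalently run a Hahn--Banach argument exactly as in \autoref{prop:Type}: the set $C = \{\Phi(p_i) : p_i \in \tS_x^\aff(A_i)\}$ is closed convex, and if it were proper there would be an affine $\psi$ positive on all of $C$ but negative somewhere; one then checks $\sup_i \lambda_i \psi_i(x,\bar a_i)^{p_i} \ge 0$ over all choices, contradicting the existence of a realization over $\bigoplus\lambda_i A_i$ making $\psi$ negative. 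Either route works; I expect the type-space/Hahn--Banach phrasing to be the shortest and least error-prone, avoiding any need to manipulate realizations in extensions by hand.
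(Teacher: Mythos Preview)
Your overall strategy matches the paper's: dispatch continuity and affineness immediately, then prove surjectivity via \autoref{l:surjective-affine-map} by showing the image meets every non-empty half-space $\oset{\varphi>0}$. The difference is in how you witness this last step, and you are making it much harder than necessary.

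You realize the type $q$ in an \emph{arbitrary} affine extension $N \succeq^\aff M$ and then worry about splitting $N$ as a convex combination $\bigoplus\lambda_i N_i$ — which is genuinely awkward, since such an $N$ need not decompose at all. You flag this as the main obstacle and propose either \autoref{lemma:TypeConvexCombinationLift}\autoref{item:TypeConvexCombinationLiftFinite} (which concerns lifting convex decompositions through \emph{variable restriction} maps and does not apply here) or a contrapositive Hahn--Banach argument. The latter does work, and once unpacked it amounts to: if $\psi\geq 0$ on the image, then $\sum_i\lambda_i(\inf_x\psi(x,\bar a_i))^{M_i}\geq 0$, hence by \L o\'s $(\inf_x\psi)^M\geq 0$, so $\psi\geq 0$ everywhere. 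This is correct but roundabout.

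The paper's proof is a direct one-line version of the same idea. If $\oset{\varphi>0}$ is non-empty, then $(\sup_x\varphi)^M>0$, so there is already a witness $b\in M^x$ with $\varphi(b)>0$. But $M=\bigoplus\lambda_i M_i$, so $b$ automatically has the form $\bigoplus\lambda_i b_i$ with $b_i\in M_i^x$, and then $\bigoplus\lambda_i\tp^\aff(b_i/A_i)$ lies in the image and in the half-space. No extensions, no amalgamation, no splitting problem: the point is that an \emph{approximate} witness in $M$ suffices for a half-space, and elements of $M$ split for free.
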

\begin{proof}
  Continuity and affineness are clear.
  For surjectivity we use \autoref{l:surjective-affine-map}.
  Let $\varphi(x)$ be an affine formula with parameters from $\bigoplus\lambda_i A_i$ such that $\oset{\varphi>0}$ is non-empty in $\tS_x^{\aff,M}(\bigoplus\lambda_i A_i)$.
  Then there exists $b = \bigoplus \lambda_i b_i \in M^x$ such that $\varphi(b) > 0$.
  Then $\bigoplus \lambda_i \tp^\aff(b_i/A_i) \in \oset{\varphi>0}$, as desired.
\end{proof}

In the next results we will consider approximate saturation and homogeneity as introduced in \autoref{sec:Homogeneity-and-saturation} (see Definitions \ref{dfn:ApproximatelySaturated} and \ref{dfn:ApproximatelyHomogeneous}), as well as the corresponding notions in the sense of continuous logic. The definitions for continuous logic are the same as for affine logic, replacing affine types by continuous logic types, and the results proved in \autoref{sec:Homogeneity-and-saturation} are valid (and more or less well-known, see \cite{BenYaacov2007}) in the continuous setting, with the same proofs.

\begin{lemma}
  \label{lem:AtomlessTimesApproximatelySaturated}
  Let $M$ be a separable structure, approximately saturated in the sense of continuous logic, and let $(\Omega,\cB,\mu)$ be an atomless standard probability space.
  Then for every finite $a \in M^n$, every $p(x,a) \in \tS^\aff_1(a)$ and every $\varepsilon > 0$, there exists $b \in L^1(\Omega, M)$ such that $\partial(p,ba) < \varepsilon$ (see \autoref{lem:ApproximatelySaturatedByDense} and the preceding notation).
\end{lemma}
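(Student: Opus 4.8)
The goal is to show that in $L^1(\Omega,M)$, with $M$ separable and approximately saturated in continuous logic and $\Omega$ atomless standard, every affine type $p(x,a)$ over a finite tuple $a$ can be ``almost realized'' in the sense that some $b$ over a tuple $a'$ close to $a$ has $\tp^\aff(ba')$ close to $p$. The natural strategy is to exploit the fact that, by \autoref{l:realized-convex-dense} applied to $D^\aff_a$, the affine type $p(x,a)$ lies in the closed convex hull of the continuous logic types over $a$ realized in (an elementary extension of) $M$; then use the atomless structure of $\Omega$ to glue finitely many such realizations into a single element of $L^1(\Omega,M)$, using \autoref{l:surjectivity-types-over-conv-comb} to control the resulting affine type.

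First I would fix $\varepsilon>0$ and $p(x,a)\in\tS^\aff_1(a)$. Since $M$ is approximately saturated in continuous logic, it realizes every continuous logic $1$-type over $a$; in particular the continuous logic types over $a$ that are realized in $M$ form (up to the approximate homogeneity of $M$, via \autoref{cor:ApproximatelySaturatedHomogeneousSeparable}) a set mapping onto a dense subset of $\tS^\cont_1(a)$, and hence their affine reducts are dense in $\rho^\aff(\tS^\cont_1(a))$, which by \autoref{l:image-rho-aff} contains $\cE_1(a)$. Next, using the Choquet--type representation from \autoref{l:realized-convex-dense}, $p(x,a)$ is in the closed convex hull of $\{\tp^\aff(c/a) : c\in M'^x\}$ for $M\preceq^\cont M'$; so there exist finitely many $c_i$ realizing continuous logic types over $a$ (approximately realizable in $M$ by the saturation hypothesis and the remarks above), and scalars $\lambda_i\ge 0$ with $\sum\lambda_i=1$, such that $\sum_i\lambda_i\tp^\aff(c_i/a)$ is within $\varepsilon/3$ of $p(x,a)$ in the logic topology on finitely many relevant formulas. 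Here one must be slightly careful: the $c_i$ may only be \emph{approximately} realized in $M$, so one picks $c_i'\in M^x$ with $d(a',a)$ small (where $a'$ is a common approximate realization of all the relevant types over $a$, obtained using weak approximate homogeneity of $M$ as in \autoref{lem:ApproximatelySaturatedByDense}) and $\tp^\aff(c_i'a')$ close to $\tp^\aff(c_ia)$.

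Then I would use atomlessness of $\Omega$: choose a measurable partition $\Omega=\bigsqcup_i\Omega_i$ with $\mu(\Omega_i)=\lambda_i$, and define $b\in L^1(\Omega,M)$ by $b(\omega)=c_i'$ for $\omega\in\Omega_i$. Viewing $L^1(\Omega,M)$ as containing the convex combination $\bigoplus_i\lambda_i M$ (with the constant embeddings), the pair $(b,a')$ has affine type over the relevant parameters equal to $\bigoplus_i\lambda_i\tp^\aff(c_i'a')$, which by \autoref{l:surjectivity-types-over-conv-comb} projects to $\sum_i\lambda_i\tp^\aff(c_i'a')$, within $\varepsilon/3+\varepsilon/3$ of $p(x,a)$; combined with $d(a',a)<\varepsilon/3$, this gives $\partial(p,ba)<\varepsilon$ after a triangle inequality (using that $\partial$ is bounded by the relevant logic-topology distance plus the metric distance on the parameters, i.e. the inequality $\partial(ba,ba')\le d(a,a')$ together with $\partial(ba',p)$ small). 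The main obstacle I anticipate is the bookkeeping in the passage from the Choquet representation of $p$ in $\tS^\aff_1(a)$ to a \emph{finite} convex combination of \emph{genuinely realized} (not merely approximately realized) continuous logic types: one needs the approximate homogeneity of $M$ to move the parameter $a$ to a single nearby $a'$ that works simultaneously for all finitely many $c_i$, and to keep track that the resulting errors in the parameter and in the types all stay below $\varepsilon/3$. Once that is set up, the gluing via atomlessness and the identification of the affine type of a glued section are routine given \Los's theorem for direct integrals (\autoref{th:Los}) and \autoref{l:surjectivity-types-over-conv-comb}.
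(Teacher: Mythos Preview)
Your approach has a real gap. You approximate $p(x,a)$ by a finite convex combination $\sum_i \lambda_i \tp^\aff(c_i/a)$ \emph{in the logic topology} (which is what \autoref{l:realized-convex-dense} provides), but what you must control is the $\partial$-distance, and the $\partial$-topology is strictly finer than the logic topology. Your assertion that ``$\partial$ is bounded by the relevant logic-topology distance plus the metric distance on the parameters'' is simply false. Concretely: once you have glued to obtain $b$, you know $\tp^\aff(ba') = \sum_i \lambda_i \tp^\aff(c_i'a')$, and this is logic-close to $p$; but nothing you have done bounds $\partial\bigl(p,\tp^\aff(ba')\bigr)$. Two affine types can be arbitrarily close in the logic topology while remaining at $\partial$-distance bounded away from zero.

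The paper sidesteps this by never passing to a finite approximation. It represents $p(x,a)$ \emph{exactly} as the barycenter of a boundary measure $\nu$ concentrated on $\cE_1(a)$. For each extreme $r \in \cE_1(a)$, approximate continuous-logic saturation of $M$ provides (via a measurable selection using Jankov--von~Neumann) a point $h(r)\in M$ together with a type $g(r)\in \tS^\aff_{xyx'y'}(T)$ witnessing $\partial\bigl(r, h(r)a\bigr) < \varepsilon$. One then takes $\hat b = h$ as a section of $L^1\bigl(\cE_1(a),\nu,M\bigr)$ and lets $q_0 = R(g_*\nu)$. Because the barycenter of $\nu$ is $p$ on the nose, one gets $\pi_{xy}(q_0) = p(x,y)$ exactly, while $(\hat b,a)\models \pi_{x'y'}(q_0)$ and $d(xy,x'y')^{q_0} < \varepsilon$, so $q_0$ directly witnesses $\partial(p,\hat b a)<\varepsilon$. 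The transfer from $L^1(\cE_1(a),\nu,M)$ to $L^1(\Omega,M)$ is by absorbing $\nu$ into an atomless standard space. The essential point your argument misses is that an \emph{integral} decomposition lets the barycenter hit $p$ exactly, which is what converts a pointwise $\partial$-bound on the extreme pieces into a $\partial$-bound at $p$; a finite logic-topology approximation cannot do this.
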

\begin{proof}
  Let $T = \Th^\aff(M)$ and observe that the hypotheses imply that $\fd_T(\cL) = \aleph_0$.
  Let $x,x'$ be singletons, and $y,y'$ $n$-tuples.
  Let $E \subseteq \tS^\aff_{xy}(T)$ consist of all affine types $r(x,y)$ such that $\pi_y(r) = \tp^\aff(a)$ and $r(x,a) \in \cE_1(a)$. In other words, $E = \cE(\pi_y^{-1}(\tp^\aff(a)))$.
  Then
  \begin{gather*}
    F = \bigl\{ (q,b) \in \tS^\aff_{xyx'y'}(T) \times M : \pi_{xy}(q) \in E, \ (b,a) \models \pi_{x'y'}(q), \ d(xy,x'y')^q < \varepsilon \bigr\}
  \end{gather*}
  is a Borel subset of a Polish space.

  For $(q,b) \in F$, let $r(x,y) = \pi_{xy}(q)$, and define $\tilde{\pi}(q,b) = r(x,a)$.
  Then $\tilde{\pi}\colon F \rightarrow \cE_1(a)$ is continuous, and we claim that it is surjective.
  Indeed, consider an extremal type $r(x,a) \in \cE_1(a)$.
  Applying \autoref{l:image-rho-aff} to the continuous diagram $D^\cont_a$, we see that there exists $\hat{r}(x,a) \in \tS^\cont_1(a)$ whose affine part is $r(x,a)$.
  By approximate saturation, there exist $\hat{q} \in \tS^\cont_{xyx'y'}(\cL)$ and $b \in M$ such that $\pi_{xy}(\hat{q}) = \hat{r}$, $(b,a) \models \pi_{x'y'}(\hat{q})$, and $d(xy,x'y')^{\hat{q}} < \varepsilon$.
  Then $(\rho^\aff(\hat{q}),b) \in F$ and $\tilde{\pi}$ sends it to $r(x,a)$.

  By the Jankov--von Neumann uniformization theorem (see \cite[Ex.~18.3, Thm.~21.10]{Kechris1995}), $\tilde{\pi}$ admits a universally measurable section $f\colon \cE_1(a) \rightarrow F$, which we may express as a pair $(g,h)$, where $g\colon \cE_1(a) \rightarrow \tS^\aff_{xyx'y'}(T)$ and $h\colon \cE_1(a) \rightarrow M$.

  By \autoref{l:choquet-order} and \autoref{th:Bishop-dL}, there is a probability measure $\nu\in\cM\bigl( \tS^\aff_1(a) \bigr)$ concentrated on $\cE_1(a)$ with barycenter $p(x,a)$.
  Let $q_0 \in \tS^\aff_{xyx'y'}(T)$ be the barycenter of $g_*\nu$, and consider $\hat{b} = h$ as a section in $\widehat{M} = L^1\bigl(\cE_1(a),\nu, M\bigr)$.
  Then
  \begin{gather*}
    p(x,y) = \pi_{xy}(q_0), \quad (\hat{b},a) \models \pi_{x'y'}(q_0), \And \quad d(xy,x'y')^{q_0} < \varepsilon,
  \end{gather*}
  so $\partial(p,\hat{b}a) < \varepsilon$.
  We may view $\cE_1(a)$ as a factor of $(\Omega',\mu') = (\Omega \times \cE_1(a), \mu \otimes \nu)$, which has the same measure algebra as $\Omega$, so
  \begin{gather*}
    \hat{b} \in L^1\bigl( \cE_1(a), M \bigr) \subseteq L^1(\Omega',M) \cong L^1(\Omega,M),
  \end{gather*}
  the isomorphism being over $M$ (i.e., over the set of constant sections).
\end{proof}

\begin{theorem}
  \label{thm:AtomlessTimesApproximatelySaturated}
  Let $M$ be a separable structure, approximately saturated in the sense of continuous logic, and let $(\Omega,\cB,\mu)$ be an atomless standard probability space.
  Then $L^1(\Omega,M)$ is approximately affinely saturated.
  In particular, it is approximately affinely homogeneous.
\end{theorem}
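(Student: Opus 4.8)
The plan is to deduce approximate affine saturation of $L^1(\Omega,M)$ from the combinatorial criterion established in \autoref{lem:ApproximatelySaturatedByDense}, using \autoref{lem:AtomlessTimesApproximatelySaturated} to verify one of the equivalent conditions there. Recall that approximate affine $\aleph_0$-saturation of a structure $N$ is equivalent, by \autoref{lem:ApproximatelySaturatedByDense} (criterion \autoref{item:ApproximatelySaturatedByDenseMoveTypeAndParams}, say, applied with a dense subset $N_0 \subseteq N$), to the following: for every finite tuple $a$ from $N_0$, every affine type $p(x,a) \in \tS^\aff_1(a)$, and every $\varepsilon > 0$, there exist $b \in N^n$ and $c \in N$ such that $d(a,b) < \varepsilon$ and $\partial(p, cb) < \varepsilon$. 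So the goal is to produce, given $a \in L^1(\Omega,M)^n$, $p(x,a)$, and $\varepsilon$, such $b$ and $c$.

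First I would choose the dense subset $N_0 \subseteq L^1(\Omega,M)$ to consist of the simple sections, and even reduce further: by approximating $a$ in $L^1$, it is enough to handle the case where each coordinate $a_j$ is a simple section taking finitely many values in $M$; refining the underlying partition of $\Omega$, we may assume all coordinates are constant on each piece $\Omega_k$ of a common finite measurable partition $\Omega = \bigsqcup_k \Omega_k$. Since $\Omega$ is atomless, each $\Omega_k$ with $\mu(\Omega_k) > 0$ is itself (isomorphic to) an atomless standard probability space, and $L^1(\Omega_k, M)$ embeds as a ``localized'' piece. On $\Omega_k$ the tuple $a$ is the constant tuple $a^{(k)} \in M^n$. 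The type $p(x,a)$, restricted appropriately, should be analyzed fiber by fiber: decompose $p$ using \autoref{l:choquet-order} and \autoref{th:Bishop-dL} as the barycenter of a boundary measure, and note that \autoref{l:surjectivity-types-over-conv-comb} tells us how affine types over a convex combination $\bigoplus_k \mu(\Omega_k) a^{(k)}$ decompose into types over the individual $a^{(k)}$. The key point is then that for \emph{constant} parameters $a^{(k)} \in M^n$, \autoref{lem:AtomlessTimesApproximatelySaturated} gives, for any affine type $q(x, a^{(k)}) \in \tS^\aff_1(a^{(k)})$ and any $\delta$, an element $b \in L^1(\Omega_k, M)$ with $\partial(q, b a^{(k)}) < \delta$. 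Gluing these fiberwise witnesses across the pieces $\Omega_k$ (using that measurable sections glue along a measurable partition, as in the closure of $M_{\Omega,I}$ under cutting and pasting) produces the required $c \in L^1(\Omega, M)$, while $b$ can be taken equal to $a$ itself (so $d(a,b) = 0$), because \autoref{lem:AtomlessTimesApproximatelySaturated} already witnesses $\partial(p, c a) < \varepsilon$ without needing to move the parameters.

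In more detail, the steps I would carry out are: (1) set up the reduction to simple, partition-constant $a$ and pass to the atomless pieces $\Omega_k$; (2) decompose the target type $p(x,a)$ via a boundary measure and the convex-combination description of \autoref{l:surjectivity-types-over-conv-comb}, so that its ``$\Omega_k$-component'' is an affine type $p_k(x, a^{(k)})$ over constant parameters; (3) apply \autoref{lem:AtomlessTimesApproximatelySaturated} on each $\Omega_k$ to get $c_k \in L^1(\Omega_k, M)$ with $\partial(p_k, c_k a^{(k)})$ small; (4) glue the $c_k$ into $c \in L^1(\Omega, M)$ and use \Los's theorem (\autoref{th:Los}) to compute that $\partial(p, ca) < \varepsilon$, where the type-distance estimate is controlled by integrating the fiberwise estimates against the weights $\mu(\Omega_k)$ — here the convexity of $\partial$ (\autoref{item:AffineTypeDistanceConvex} of \autoref{prop:AffineTypeDistance}) is what lets one pass from fiberwise bounds to a bound on the convex combination; (5) invoke \autoref{lem:ApproximatelySaturatedByDense} to conclude approximate affine $\aleph_0$-saturation, and then \autoref{cor:ApproximatelySaturatedHomogeneousSeparable} (noting $L^1(\Omega,M)$ is separable when $\Omega$ is standard and $M$ separable, by \autoref{p:DirectIntegralMetricStructure}\autoref{item:DirectIntegralMetricStructureStandard}) to get approximate affine homogeneity.

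The main obstacle I anticipate is step (2): making precise how an arbitrary affine type $p(x,a)$ over the tuple $a$ — which lives in $L^1(\Omega,M)$ and is only \emph{locally} constant — decomposes into a measurable family of types over the constant fibers $a^{(k)}$, and checking that the fiberwise witnesses obtained from \autoref{lem:AtomlessTimesApproximatelySaturated} can be chosen measurably in $k$ (which is trivial here since there are only finitely many pieces $\Omega_k$, but the bookkeeping for how the boundary-measure decomposition of $p$ interacts with the partition still requires care). A secondary subtlety is that $\tS^\aff_1(a)$ for $a$ in the convex combination need not be a simplex, so the decomposition of $p$ is not unique; but since we only need \emph{some} decomposition yielding a good approximation, \autoref{l:surjectivity-types-over-conv-comb} suffices and uniqueness is not needed. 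I expect the rest of the argument to be routine once this decomposition is correctly formulated.
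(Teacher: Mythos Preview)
Your proposal is correct and follows essentially the same route as the paper: reduce to simple sections constant on a finite partition, decompose $N = L^1(\Omega,M)$ as $\bigoplus_j \lambda_j L^1(\Omega^j,M)$, split the type $p$ via \autoref{l:surjectivity-types-over-conv-comb} into types $p^j$ over constant parameters $a^{(j)} \in M^n$, apply \autoref{lem:AtomlessTimesApproximatelySaturated} on each piece, glue, and conclude via \autoref{lem:ApproximatelySaturatedByDense} and \autoref{cor:ApproximatelySaturatedHomogeneousSeparable}. Your anticipated obstacle in step~(2) is overblown: you do not need any boundary-measure decomposition (\autoref{l:choquet-order}, \autoref{th:Bishop-dL}) at all --- the surjectivity in \autoref{l:surjectivity-types-over-conv-comb} directly gives you \emph{some} $p_k \in \tS^\aff_1(a^{(k)})$ with $p = \bigoplus_k \lambda_k p_k$, and that is all you use.
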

\begin{proof}
  Let $N = L^1(\Omega,M)$, and let $N_0 \subseteq N$ be the set of simple sections $f\colon \Omega\to M$, which we may express as formal sums $\sum a_i\chi_{Z_i}$ for elements $a_i\in M$ and sets $Z_i\in\cB$ forming a partition of $\Omega$.
  Then $N_0$ is dense in $N$.

  Consider a finite tuple $g \in N_0^n$, and an affine type $p(x,g) \in \tS^\aff_1(g)$.
  We can find a finite measurable partition $(\Omega^j :j < m)$ of $\Omega$ such that every $g_i$ is of the form $\sum_{j<m} a_i^j \chi_{\Omega^j}$.
  For $j < m$ let $\lambda_j = \mu(\Omega^j)$ and $N^j = L^1(\Omega^j,M)$, where we equip $\Omega^j$ with the normalized measure.
  Then $N = \bigoplus_{j<m} \lambda_j N^j$, where we identify a section $f\colon \Omega \rightarrow M$ with the tuple of its restrictions $f^j\colon \Omega^j \rightarrow M$.
  In particular, $g_i^j = a_i^j$ is a constant section, and $g_i = \bigoplus_{j<m} \lambda_j a_i^j$.

  Let $a^j = (a_i^j : i < n)$.
  By \autoref{l:surjectivity-types-over-conv-comb}, there exist $p^j \in \tS^\aff_1(a^j)$ such that $p(x,g) = \bigoplus_{j<m} \lambda_j p^j(x,a^j)$.
  It follows from \autoref{lem:AtomlessTimesApproximatelySaturated} that there exist $b^j \in N^j$ such that $\partial(p^j,b^ja^j) < \varepsilon$.
  We may find realizations of the types $p^j$ witnessing the latter in $K^j \succeq^\aff N^j$, and then in $\bigoplus_{j<m} \lambda_j K^j$ we would obtain a witness to $\partial\left( p, hg \right) < \varepsilon$, where $h = \bigoplus_{j<m} \lambda_j b^j \in N$.

  By \autoref{lem:ApproximatelySaturatedByDense}, $N$ is approximately affinely saturated and by \autoref{cor:ApproximatelySaturatedHomogeneousSeparable}, it is approximately affinely homogeneous.
\end{proof}


The following measurable selection lemma will be useful in the subsequent theorem as well as in later sections.
\begin{lemma}[Measurable choice of realization]
  \label{l:realization-selector}
  Let $T$ be a affine theory in a separable language, let $x$ be a countable variable, and let $M$ be a separable model of $T$. Let $(\Omega,\mu)$ be a complete probability space and let $\xi\colon\Omega \to \tS^\aff_x(T)$ be a measurable function with the property that $\xi(\omega)$ is realized in $M$ for every $\omega\in\Omega$.

  Then there exists a measurable function $f\colon \Omega\to M^x$ such that $f(\omega)\models \xi(\omega)$ for every $\omega\in\Omega$. Moreover, if every $\xi(\omega)$ can be realized by a tuple in a given Borel (or analytic) subset $B\subseteq M^x$, then we may choose $f$ so that $f(\omega)\in B$ for all $\omega\in\Omega$.
\end{lemma}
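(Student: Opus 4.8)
The plan is to deduce this from the Jankov--von Neumann uniformization theorem for analytic sets, exactly as was done in the proof of \autoref{lem:AtomlessTimesApproximatelySaturated}, but now phrased as a general selection statement. First I would set up the ambient Polish spaces. Since $T$ has a separable language and $x$ is countable, we may assume $\cL$ is countable, so $\tS^\aff_x(T)$ is a compact metrizable space; and $M$ is separable, so $M^x$ is a Polish space (with the metric $d(a,b)=\sum_i 2^{-i}d(a_i,b_i)$). The realization relation
\begin{equation*}
  R = \bigl\{ (p,a)\in \tS^\aff_x(T)\times M^x : a\models p \bigr\}
\end{equation*}
is closed in $\tS^\aff_x(T)\times M^x$: indeed, $a\models p$ means $\varphi^M(a)=\varphi(p)$ for every affine formula $\varphi$, and for a fixed formula the map $(p,a)\mapsto \varphi^M(a)-\varphi(p)$ is continuous, so $R$ is an intersection of closed sets over a countable dense set of formulas (uniform continuity of formulas lets us reduce to countably many). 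In particular $R$ is Borel.

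Next I would form the product map. By hypothesis the graph of $\xi$ lies over $\proj_{\tS^\aff_x(T)}(R)$, i.e. for every $\omega$ the fiber $R_{\xi(\omega)}=\{a\in M^x : a\models\xi(\omega)\}$ is non-empty. Consider the set
\begin{equation*}
  G = \bigl\{ (\omega,a)\in\Omega\times M^x : (\xi(\omega),a)\in R \bigr\},
\end{equation*}
which is $(\cB_\mu\otimes\mathrm{Borel})$-measurable since $\xi$ is $\mu$-measurable and $R$ is Borel, and has non-empty sections. By the Jankov--von Neumann uniformization theorem (as cited earlier, \cite[Thm.~21.10]{Kechris1995}, in its form for measurable-in-the-first-coordinate sets, or by first applying the theorem to $R\subseteq\tS^\aff_x(T)\times M^x$ to get a universally measurable section $g\colon \proj_{\tS^\aff_x(T)}(R)\to M^x$ of $R$ and then setting $f=g\circ\xi$), there is a $\mu$-measurable map $f\colon\Omega\to M^x$ with $(\omega,f(\omega))\in G$ for all $\omega$, that is, $f(\omega)\models\xi(\omega)$. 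Here completeness of $(\Omega,\mu)$ is what lets us pass from a universally measurable section to a genuinely $\mu$-measurable one.

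For the ``moreover'' clause, if each $\xi(\omega)$ is realized by some tuple in a Borel (resp.\ analytic) set $B\subseteq M^x$, I would simply replace $R$ by $R\cap(\tS^\aff_x(T)\times B)$, which is still Borel (resp.\ analytic) and still has non-empty sections over the range of $\xi$; applying the same uniformization argument then yields $f$ with $f(\omega)\in B$. The only mild subtlety — and the place I expect to spend the most care — is the bookkeeping around which $\sigma$-algebra the selector lives in: the raw output of Jankov--von Neumann is universally measurable (or $\boldsymbol\Sigma^1_1$-measurable), and one must invoke completeness of $\mu$ together with the fact that universally measurable sets are $\mu$-measurable to conclude that $f$ is $\mu$-measurable in the sense of \autoref{sec:measure-algebras}; when $B$ is merely analytic one composes with the measurability of analytic sets with respect to any complete Borel measure. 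None of this is deep, but it is the step that requires the hypotheses (separability, completeness) to be used honestly.
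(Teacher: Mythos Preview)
Your proposal is correct and follows essentially the same route as the paper: set up the closed realization relation in $\tS^\aff_x(T)\times M^x$, apply Jankov--von Neumann to obtain a universally measurable selector, compose with $\xi$, and use completeness of $(\Omega,\mu)$ to conclude measurability. The paper takes your second alternative (uniformize $R$ first to get $\zeta$, then set $f=\zeta\circ\xi$) and makes the final measurability step explicit via the pushforward $\nu=\xi_*\mu$, but the content is identical.
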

\begin{proof}
  Let us write $\tR_x^{\aff,M}(T)\subseteq \tS^\aff_x(T)$ for the set of affine types that are realized in $M$. Then $\tR_x^{\aff,M}(T)$ is an analytic subset of the Polish space $\tS^\aff_x(T)$, namely the projection to the first coordinate of the closed set
  $$F=\bigl\{(p,a)\in \tS^\aff_x(T)\times M^x : a\models p \bigr\}.$$
  We can apply the Jankov--von Neumann uniformization theorem to the set $F\cap (\tS^\aff_x(T)\times B)$, to obtain a universally measurable function
  $$\zeta \colon \tR_x^{\aff,M}(T) \to M^x$$
  such that $\zeta(p) \models p$ and $\zeta(p)\in B$ for all $p \in \tR_x^{\aff,M}(T)$. By hypothesis, the image of $\xi$ is contained in $\tR_x^{\aff,M}(T)$, so we can consider the composition $f=\zeta\circ\xi \colon\Omega \to M^x$. Thus, by construction, $f(\omega)$ realizes $\xi(\omega)$ and $f(\omega)\in B$ for every $\omega\in\Omega$. On the other hand, let $\nu=\xi_*\mu$ and let $\cB_\nu$ be the algebra of $\nu$-measurable subsets of $\tS^\aff_x(T)$. Since $(\Omega,\mu)$ is complete, $\xi$ is measurable with respect to $\cB_\nu$, and as $\zeta$ is universally measurable, we conclude that $f$ is measurable.
\end{proof}


Let $(\Omega,\mu)$ be a standard probability space. Given a separable structure $M$ with at least two elements, the direct multiple $L^1(\Omega,M)$ has some obvious automorphisms. On the one hand, the group $\Aut(\mu)$ of measure-preserving transformations of $\Omega$ acts faithfully by automorphisms on $L^1(\Omega,M)$ by the formula
$$(S \cdot f)(\omega) = f(S^{-1}(\omega)),$$
for $S\in\Aut(\mu)$, $f\in L^1(\Omega,M)$, and a.e.\ $\omega\in \Omega$. On the other hand, the group $L(\Omega,\Aut(M))$  of measurable maps $\Omega\to\Aut(M)$, up to almost sure equality, where $\Aut(M)$ carries the Borel $\sigma$-algebra of the pointwise convergence topology, also acts faithfully by automorphisms on $L^1(\Omega,M)$ by the formula
$$(T\cdot f)(\omega) = T(\omega)(f(\omega)),$$
for $T\in L(\Omega,\Aut(M))$, $f\in L^1(\Omega,M)$, and a.e.\ $\omega\in \Omega$.

Moreover, $\Aut(\mu)$ acts on $L(\Omega,\Aut(M))$ by
$$(S \cdot T)(\omega) = T(S^{-1}(\omega))$$
and this action defines a semidirect product $L(\Omega,\Aut(M))\rtimes\Aut(\mu)$ which can be identified with a subgroup of $\Aut(L^1(\Omega,M))$.

\begin{theorem}\label{th:criterion-affine-reduction}
  Let $M$ be a separable structure in a separable language with at least two elements in a given sort and let $Q = \Th^\cont(M)$. Let $(\Omega, \mu)$ be a standard atomless probability space. Then the following hold:
  \begin{enumerate}
  \item \label{i:criterion-ar-1} If $Q$ has affine reduction, then $\Aut(L^1(\Omega,M)) = L(\Omega,\Aut(M))\rtimes\Aut(\mu)$.
  \item \label{i:criterion-ar-2} If $M$ is approximately saturated in the sense of continuous logic and we have $\Aut(L^1(\Omega,M)) = L(\Omega,\Aut(M))\rtimes\Aut(\mu)$, then $Q$ has affine reduction.
  \end{enumerate}
\end{theorem}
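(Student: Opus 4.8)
The plan is to prove the two implications separately, using the machinery of the atomless completion and the convex realization property developed above.

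\textbf{Part \ref{i:criterion-ar-1}: affine reduction implies the automorphism group is the semidirect product.} First I would note the easy inclusion $L(\Omega,\Aut(M))\rtimes\Aut(\mu)\subseteq\Aut(L^1(\Omega,M))$, which holds for any separable $M$. For the reverse inclusion, let $N=L^1(\Omega,M)$ and let $g\in\Aut(N)$. Since $Q=\Th^\cont(M)$ has affine reduction, by \autoref{th:Bauer-correspondence-Q} the theory $Q_\aff$ is a Bauer theory whose extremal models are exactly the models of $Q$, and by \autoref{th:Bauer-correspondence-Q}\autoref{i:models-of-Q-aff} $N\models Q_\aff$. The extremal decomposition of $N$ is then $N=\int_\Omega^\oplus M\ud\mu$ itself (the constant field of copies of $M$, which is extremal as $M\models Q$); this decomposition is non-degenerate by \autoref{lem:NonDenegerateFieldTwoElements}, since $Q_\aff$ is non-degenerate. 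Now $g$ is an affine automorphism of $N$, so by the uniqueness theorem \autoref{th:uniqueness-decomposition} applied to $\sigma=g$, there is a measure-algebra automorphism $\fs\colon\MALG(\Omega)\to\MALG(\Omega)$ such that $(g,\fs)$ is an isomorphism of the constant measurable field with itself. Since $\Omega$ is standard, $\fs$ is induced by a measure-preserving transformation $S\in\Aut(\mu)$, and by \autoref{rmk:MeasurableFieldEmbeddingStandard} the pair $(g,\fs)$ comes from a pointwise isomorphism $(s,(S_\omega)_\omega)$ with $s=S^{-1}$ and each $S_\omega\colon M_{s(\omega)}=M\to M_\omega=M$ an isomorphism of $\cL$-structures, i.e.\ $S_\omega\in\Aut(M)$; the map $\omega\mapsto S_\omega$ is measurable because the section $f^{(s,S)}$ is measurable for every measurable $f$, applied to constant sections. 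Composing with $S^{-1}\in\Aut(\mu)$ (which also normalizes $L(\Omega,\Aut(M))$ as noted), we conclude $g\in L(\Omega,\Aut(M))\rtimes\Aut(\mu)$.

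\textbf{Part \ref{i:criterion-ar-2}: the converse under approximate saturation.} Suppose $M$ is separable and approximately saturated in continuous logic, and that $\Aut(L^1(\Omega,M))=L(\Omega,\Aut(M))\rtimes\Aut(\mu)$. By \autoref{thm:AtomlessTimesApproximatelySaturated}, $N=L^1(\Omega,M)$ is approximately affinely saturated and approximately affinely homogeneous. To show $Q$ has affine reduction it suffices, via the characterization $Q=(Q_\aff)_\ext$ plus \autoref{th:Bauer-correspondence-T}, to show that $Q_\aff$ is a Bauer theory, or equivalently (by \autoref{l:AffineReduction} and \autoref{rmk:latt-red-iota-injective}) that $\rho^\aff\colon\tS^\cont_x(Q)\to\tS^\aff_x(Q_\aff)$ is injective. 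The strategy is: given two continuous $n$-types $\hat p\neq\hat q$ over $Q$ with the same affine part $p=\rho^\aff(\hat p)=\rho^\aff(\hat q)$, realize both inside $N$ (which is possible by approximate saturation of $N$ in continuous logic, since $N\equiv^\cont M$ by \autoref{th:DirectIntegralPreservesEquivalence}), say by tuples $a$ realizing $\hat p$ and $b$ realizing $\hat q$. Since $a$ and $b$ have the same affine type $p$ and $N$ is approximately affinely homogeneous, there is an automorphism of $N$ moving $a$ arbitrarily close to $b$. By hypothesis this automorphism is of the form $T\rtimes S$ with $T\in L(\Omega,\Aut(M))$, $S\in\Aut(\mu)$. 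The key point is that such an automorphism \emph{preserves the pointwise structure}: $\Th^\cont(M_\omega)$ is constant equal to $Q$, and an element of $L(\Omega,\Aut(M))\rtimes\Aut(\mu)$ acts on each fibre by an isomorphism of $M$ composed with a base transformation, so it preserves $\oset{\psi(f)}$ up to the base action for every continuous $\cL$-formula $\psi$. Using \Los's theorem for direct integrals (\autoref{th:Los}) and the fact that $S$ is measure-preserving, one shows that such an automorphism preserves the continuous type of any tuple of measurable sections. Pushing $a$ close to $b$ therefore forces $\tp^\cont(a)$ close to $\tp^\cont(b)$, hence $\hat p=\hat q$ by \autoref{prop:AffineTypeDistance}\autoref{item:AffineTypeDistanceTopometric} (topometric lower semicontinuity in continuous logic).

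\textbf{Main obstacle.} The delicate step in Part \ref{i:criterion-ar-2} is the claim that automorphisms of $N$ of the prescribed semidirect-product form preserve \emph{continuous} logic types of arbitrary measurable sections, not merely affine ones. For constant sections this is immediate, but a general measurable section is only an $L^1$-limit of simple sections, and one must check that the pointwise action $f\mapsto(\omega\mapsto T(\omega)f(S^{-1}\omega))$ commutes with the interpretation of continuous formulas in the direct integral; since the field is elementarily measurable (it is a constant field, \autoref{rmk:elem-measurable-fields-convex-comb-direct-mult}) one can invoke \autoref{lemma:DirectIntegralMeasurableLanguage} and \autoref{lem:LosQuantifier} to reduce, formula by formula, to the pointwise statement $\psi^{M}(T(\omega)\cdot)=\psi^{M}(\cdot)\circ T(\omega)^{-1}$ holding a.e., together with the change-of-variables $\int_\Omega h(S^{-1}\omega)\ud\mu=\int_\Omega h\ud\mu$. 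Handling quantifiers requires care: one must verify that the supremum over measurable sections is preserved, which again follows from \autoref{lem:LosQuantifier} since the action is a bijection of the section space carrying measurable sections to measurable sections. Assembling this into a clean inductive argument that a semidirect-product automorphism is a continuous-logic automorphism of $N$ is the technical heart of the proof, and is where I expect the bulk of the work to lie.
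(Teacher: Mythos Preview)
Your approach to Part \ref{i:criterion-ar-1} is different from the paper's and is a nice alternative. The paper instead invokes the bi-interpretability of $Q_\aff$ with the randomization $Q^{\Ra}$ (\autoref{p:QBau-biint-QRa}, \autoref{thm:QPlusBau}) and then cites an external result from \cite{IbarluciaRandomizations} computing the automorphism group in the randomization language. Your route via the uniqueness theorem \autoref{th:uniqueness-decomposition} and \autoref{rmk:MeasurableFieldEmbeddingStandard} stays entirely within the paper's own framework, which is attractive. One small point to tighten: measurability of $\omega\mapsto S_\omega$ should be spelled out (it follows because, for a constant field, the basic sections $e_i$ are constants and $\omega\mapsto S_\omega(e_i)$ is a measurable section by construction in \autoref{rmk:MeasurableFieldEmbeddingStandard}).

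Part \ref{i:criterion-ar-2}, however, has a genuine gap. Proving that $\rho^\aff\colon\tS^\cont_n(Q)\to\tS^\aff_n(Q_\aff)$ is injective only yields delta-convex reduction (\autoref{rmk:latt-red-iota-injective}), not affine reduction; and your appeal to ``$Q=(Q_\aff)_\ext$ plus \autoref{th:Bauer-correspondence-T}'' is circular, since $Q=(Q_\aff)_\ext$ is part of what needs to be established. The correct target is injectivity of $\iota^*_n\colon\cM(\tS^\cont_n(Q))\to\tS^\aff_n(Q_\aff)$, so you must compare two \emph{measures} $\mu_0,\mu_1$ on the type space, not two individual types. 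The paper does this by pushing $\mu$ to $\mu_i$ via measurable maps $\Omega\to\tS^\cont_n(Q)$, lifting these to sections $f_i\in N^n$ using \autoref{l:realization-selector}, and observing that for any continuous formula $\psi$,
\[
\int_\Omega\psi^{M}\bigl((U_k\cdot f_0)(\omega)\bigr)\ud\mu(\omega)=\int_\Omega\psi^{M}(f_0(\omega))\ud\mu(\omega)=\mu_0(\psi)
\]
whenever $U_k=T_kS_k$ is of semidirect-product form (since $T_k(\omega)\in\Aut(M)$ preserves $\psi^M$ pointwise and $S_k$ is measure-preserving). Passing to the limit gives $\mu_0(\psi)=\mu_1(\psi)$. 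Note that the invariant quantity is the \emph{fiberwise} integral $\int_\Omega\psi^M(f(\omega))\ud\mu$, not the value $\psi^N(f)$ of $\psi$ in the direct integral; these differ for non-affine $\psi$, so the ``main obstacle'' you describe (preservation of continuous types in $N$) is not what is needed and is in fact harder than the actual argument.
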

\begin{proof}
  \autoref{i:criterion-ar-1} Since $Q$ has affine reduction, it follows from \autoref{p:QBau-biint-QRa} and \autoref{thm:QPlusBau} that $Q_\aff$ and $Q^{\Ra}$ are bi-interpretable.

  Thus, given any separable $M\models Q$, the automorphism group of $L^1(\Omega,M)$ is the same if $L^1(\Omega,M)$ is viewed as an $\cL$-structure (i.e., as a model of $Q_\aff$) or an $\cL^\Rand$-structure (i.e., a model of $Q^{\Ra}$). It was proved in \cite[Thm.~3.8]{IbarluciaRandomizations} that the automorphism group of $L^1(\Omega,M)$ as an $\cL^\Rand$-structure is precisely the semidirect product $L(\Omega,\Aut(M))\rtimes\Aut(\mu)$.

  \autoref{i:criterion-ar-2} We show that the canonical affine surjections $\iota_n^*\colon \cM(\tS^\cont_n(Q)) \to \tS_n^\aff(Q_\aff)$ are injective.

  As $(\Omega, \mu)$ is atomless, given $\mu_0,\mu_1\in\cM(\tS^\cont_n(Q))$, we can find measurable maps $f'_0,f'_1\colon \Omega\to \tS_n^\cont(Q)$ with $(f_i')_*(\mu)=\mu_i$. By approximate saturation, every type $f'_i(\omega)\in \tS^\cont_n(Q)$ (and hence also every affine type $\rho^\aff_n(f'_i(\omega))\in\tS^\aff_n(Q_\aff)$) is realized in $M$. We may thus apply \autoref{l:realization-selector} to obtain tuples $f_0,f_1\in L^1(\Omega,M)^n$ satisfying, for each affine formula $\varphi$ in $n$ variables,
  $$\varphi(f_i)^{L^1(\Omega,M)}=\int_\Omega \varphi^M(f_i(\omega)) \ud\mu(\omega) = \mu_i(\varphi), \quad \text{ for } i =0,1.$$
  In other words, $\tp^\aff(f_i) = \iota_n^*(\mu_i)$. Suppose now that $\iota_n^*(\mu_0)= \iota_n^*(\mu_1)$, so that $\tp^\aff(f_0)=\tp^\aff(f_1)$.

  By \autoref{thm:AtomlessTimesApproximatelySaturated}, $L^1(\Omega,M)$ is approximately affinely homogeneous. Hence there are automorphisms $U_k\in\Aut(L^1(\Omega,M))$ such that $U_k \cdot f_0$ converges to $f_1$. By hypothesis, we may write $U_k=T_k S_k$ for certain $T_k\in L(\Omega,\Aut(M))$ and $S_k\in \Aut(\mu)$.

  Thence, if we take a continuous function $\tS_n^\cont(Q)\to\R$ or, equivalently, a continuous logic formula $\psi$ in $n$ variables, we have for every $k$:
  \begin{align*}
    \int_\Omega \psi\big((U_k \cdot f_0)(\omega)\big) \ud\mu(\omega) & = \int_\Omega \psi\bigl(T_k(\omega)(f_0(S_k^{-1}\omega))\bigr) \ud\mu(\omega)\\
                                                                     & = \int_\Omega \psi(f_0(\omega)) \ud\mu(\omega) = \mu_0(\psi).
  \end{align*}
  By taking limits, we deduce that $\mu_0(\psi)=\mu_1(\psi)$. Since $\psi$ was arbitrary, we conclude that $\mu_0=\mu_1$, as desired.
\end{proof}


\part{Extremal categoricity}
\label{part:Categoricity}

\section{Extremal $\aleph_0$-categoricity}
\label{sec:aleph0-cat}

In this section we use our results from \autoref{sec:omitting-types} to establish a Ryll-Nardzewski type theorem for $\aleph_0$-categoricity of extremal models.
Using the decomposition theorem from \autoref{sec:ExtremalDecomposition}, we also describe all separable models of extremally $\aleph_0$-categorical, simplicial theories.

\begin{theorem}[Extremal Ryll-Nardzewski theorem]
  \label{th:aleph0-categoricity}
  Let $T$ be a complete, affine theory in a separable language. Then the following are equivalent:
  \begin{enumerate}
  \item $T$ admits a unique separable extremal model, up to isomorphism.
  \item For every $n$, the $\dtp$-topology and $\tau$ coincide on $\cE_n(T)$.
  \end{enumerate}
  Moreover, in that case, the unique separable extremal model is a prime model of $T$.
\end{theorem}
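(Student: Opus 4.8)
The plan is to prove the equivalence by establishing that condition (ii) is exactly the condition under which one can apply the omitting types machinery from \autoref{sec:omitting-types} to obtain a unique separable extremal model, and that this model is necessarily prime.

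First I would prove the implication (ii) $\Rightarrow$ (i). Assume that $\dtp$ and $\tau$ agree on $\cE_n(T)$ for every $n$. I claim that every type in $\cE_n(T)$ is then isolated in the sense of \autoref{lemma:IsolatedExtremeType}: indeed, if $\dtp$ and $\tau$ coincide on $\cE_n(T)$, then in particular $\dtp$ and $\tau$ agree at every point $p\in\cE_n(T)$, which is condition (iii) (equivalently (iv) or (v)) of that lemma. So all extreme $n$-types are isolated, the set of isolated types is all of $\cE_n(T)$ hence trivially dense, and by \autoref{p:existence-atomic} the theory $T$ admits a separable extremal atomic model. By \autoref{th:uniqueness-atomic-model}, such an atomic model is prime and is unique up to isomorphism. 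It remains to observe that \emph{every} separable extremal model $M\models T$ is atomic: by \autoref{remark:TypeOfExtremalModel} every tuple in $M$ realizes a type in $\cE_n(T)$, which is isolated by hypothesis; hence $M$ is atomic, and therefore isomorphic to the prime model. This gives (i) and simultaneously the "moreover" clause.

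Next I would prove (i) $\Rightarrow$ (ii) by contraposition. Suppose that for some $n$ the topologies $\dtp$ and $\tau$ do not coincide on $\cE_n(T)$. Since $\dtp$ always refines $\tau$ (by \autoref{prop:AffineTypeDistance}\autoref{item:AffineTypeDistanceTopometric}), there must be a type $p\in\cE_n(T)$ at which they disagree, i.e., $p$ is not isolated in $\cE_n(T)$. By \autoref{lemma:IsolatedExtremeType} and its proof (see also the argument in \autoref{prop:IsolatedExtremeTypeRealizedIFF}, implication (iii)$\Rightarrow$(i)), some $\dtp$-ball $\Xi_n = B_\partial(p,r)\cap\cE_n(T)$ has empty $\tau$-interior in $\cE_n(T)$; as shown there, $\Xi_n$ is $\dtp$-open and, being a union of $\tau$-closed nowhere dense sets (the closed $\dtp$-balls, which are $\tau$-closed by lower semi-continuity of $\dtp$), it is $\tau$-meager. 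By \autoref{th:omitting-types} applied with this $\Xi_n$ (and the trivially $\tau$-meager, $\dtp$-open empty sets for the other indices), there is a separable extremal model $M_0\models T$ that omits $\Xi_n$. On the other hand, since $p\in\cE_n(T)$, by \autoref{th:ExtremalModelTypes} there is a separable extremal model $M_1\models T$ realizing $p$; since $p\in\Xi_n$, the model $M_1$ does not omit $\Xi_n$. Hence $M_0\not\cong M_1$, contradicting (i).

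The main obstacle — if there is one — is bookkeeping around the topometric subtleties of \autoref{sec:omitting-types}: one must be careful that the relevant $\dtp$-ball is genuinely $\dtp$-open and $\tau$-meager so that \autoref{th:omitting-types} applies, and that "not isolated at $p$" translates correctly into "some $\dtp$-neighborhood of $p$ has empty $\tau$-interior." Both of these are handled by \autoref{lemma:IsolatedExtremeType} and \autoref{p:isolated-gdelta-closed}, so the proof should reduce to citing those results in the right order. The "moreover" clause requires no extra work beyond noting that the unique separable extremal model produced in the (ii)$\Rightarrow$(i) direction is the atomic, hence prime, model of \autoref{th:uniqueness-atomic-model}.
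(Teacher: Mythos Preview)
Your proposal is correct and follows essentially the same route as the paper's proof: both directions hinge on \autoref{lemma:IsolatedExtremeType}, \autoref{th:omitting-types}, and \autoref{th:uniqueness-atomic-model} in the way you describe. One small omission: when you invoke \autoref{th:ExtremalModelTypes} to produce an extremal model $M_1$ realizing $p$, you should also cite \autoref{prop:DownwardLS} to ensure $M_1$ can be taken separable.
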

\begin{proof}
  \begin{cycprf}
  \item Suppose that the two topologies do not coincide on some $\cE_n(T)$. This is the same as saying that there exists $p \in \cE_n(T)$ which is not isolated. In particular, there is $r > 0$ such that $B_r(p)$ is $\tau$-meager in $\cE_n(T)$. Then, by \autoref{th:omitting-types}, $p$ is omitted in some extremal, separable model and by \autoref{th:ExtremalModelTypes} and \autoref{prop:DownwardLS}, it is also realized in some extremal, separable model, so these two models cannot be isomorphic.
  \item[\impfirst] Every extreme type being isolated means that every extremal model is atomic; then the claim and the moreover part follow from \autoref{th:uniqueness-atomic-model}.
  \end{cycprf}
\end{proof}

\begin{defn}
  We will call an affine theory in a separable language \emph{extremally $\aleph_0$-categorical} if it admits a unique separable extremal model, up to isomorphism.
\end{defn}
Note that, by \autoref{c:extremal-models-existence}, an extremally $\aleph_0$-categorical theory is automatically complete, and therefore satisfies the hypothesis and the conclusion of \autoref{th:aleph0-categoricity}.

If $M$ is a metric space and $G$ is a group of isometries of $M$, we denote by $M^n\sslash G$ the space $\set{\cl{G \cdot a}:a\in M}$ of orbit closures of the diagonal action of $G$ on $M^n$. It is endowed with the quotient metric
\begin{equation*}
  d(\cl{G \cdot a},\cl{G \cdot b}) \coloneqq \inf_{g\in G} d(g \cdot a,b) = \inf_{g\in G} \sum_{i<n} d(g \cdot a_i,b_i).
\end{equation*}

\begin{prop}\label{p:M-sslash-G}
  Let $T$ be an extremally $\aleph_0$-categorical theory with separable extremal model $M$. Let $G=\Aut(M)$. Then for every $n$, the map
  $$M^n\sslash G\to \cE_n(T), \quad \cl{G \cdot a} \mapsto \tp^\aff(a)$$
  is a $\dtp$-isometry (and a fortiori a $\tau$-homeomorphism).
\end{prop}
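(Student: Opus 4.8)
The plan is to show that the stated map is a well-defined bijection and a $\partial$-isometry, from which the topological claim follows automatically since a bijective isometry between metric spaces is a homeomorphism (and $\partial$ induces $\tau$ on $\cE_n(T)$ by \autoref{th:aleph0-categoricity}, as every extreme type is isolated).

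First I would verify that the map is well-defined and injective. If $a, b \in M^n$ and $\overline{G\cdot a} = \overline{G\cdot b}$, then by approximate affine homogeneity (which $M$ enjoys by \autoref{th:uniqueness-atomic-model}, being prime hence atomic, hence weakly approximately affinely $\aleph_0$-homogeneous, and then \autoref{cor:ApproximatelySaturatedHomogeneousSeparable}, or more directly since types in atomic models are isolated) we have $\tp^\aff(a) = \tp^\aff(b)$: indeed $b \in \overline{G\cdot a}$ means there are $g_k \in G$ with $g_k\cdot a \to b$, so $\tp^\aff(g_k \cdot a) = \tp^\aff(a) \to \tp^\aff(b)$ in $\tau$, and since $\tp^\aff(a)$ is isolated (so $\{\tp^\aff(a)\}$ is $\tau$-closed, being $\partial$-closed and $\partial$ agreeing with $\tau$ on $\cE_n(T)$) we get $\tp^\aff(b) = \tp^\aff(a)$. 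Conversely, if $\tp^\aff(a) = \tp^\aff(b)$, then approximate affine homogeneity of $M$ gives, for each $\varepsilon > 0$, some $g \in G$ with $d(g\cdot a, b) < \varepsilon$, so $b \in \overline{G\cdot a}$, hence $\overline{G\cdot a} = \overline{G\cdot b}$. Surjectivity onto $\cE_n(T)$ follows from \autoref{prop:IsolatedExtremeTypeRealizedIFF}: every $p \in \cE_n(T)$ is isolated and therefore realized in the extremal model $M$.

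The isometry statement is the heart of the matter. Denote the map by $\Phi$. For one inequality, given $a, b \in M^n$ and $g \in G$, the tuple $g\cdot a$ realizes $\tp^\aff(a)$ and $b$ realizes $\tp^\aff(b)$ in the same structure $M$, so $\partial(\tp^\aff(a), \tp^\aff(b)) \leq d(g\cdot a, b)$; taking the infimum over $g$ gives $\partial(\Phi(\overline{G\cdot a}), \Phi(\overline{G\cdot b})) \leq d(\overline{G\cdot a}, \overline{G\cdot b})$. For the reverse inequality, suppose $\partial(\tp^\aff(a), \tp^\aff(b)) = r$. By \autoref{prop:ExtremeTypesMetricallyClosed}\autoref{item:ExtremeTypeDistanceRealizedExtreme} together with \autoref{th:ExtremalModelTypes}, and in fact by \autoref{cor:ExtremeTypeDistance}, this distance is attained in an extremal model; since $T$ is extremally $\aleph_0$-categorical and the relevant model can be taken separable (one realizes an extreme type in $\tS^\aff_{x x'}(T)$ with the two projections equal to $\tp^\aff(a)$ and $\tp^\aff(b)$ and $\partial$-distance $r$, then applies downward Löwenheim--Skolem), that model is isomorphic to $M$. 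So there exist $a', b' \in M^n$ with $\tp^\aff(a') = \tp^\aff(a)$, $\tp^\aff(b') = \tp^\aff(b)$, and $d(a', b') = r$. By the homogeneity argument above, $a' \in \overline{G\cdot a}$ and $b' \in \overline{G\cdot b}$, so $d(\overline{G\cdot a}, \overline{G\cdot b}) \leq d(a', b') = r$. Combining the two inequalities, $\Phi$ is a $\partial$-isometry.

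The main obstacle I anticipate is the careful handling of the ``distance attained in a separable extremal model'' step: one needs to check that the extreme type witnessing $\partial(p,q) = r$ in $\tS^\aff_{xx'}(T)$ survives the passage to a separable affine submodel, which is exactly what \autoref{th:ExtremalModelTypes} plus \autoref{prop:DownwardLS} (equivalently \autoref{cor:ExtremeTypeDistance}) provide, and then to invoke extremal $\aleph_0$-categoricity to identify that submodel with $M$ up to isomorphism. Everything else is a routine unwinding of the approximate affine homogeneity of the (atomic, prime) model $M$, which is already packaged in \autoref{th:uniqueness-atomic-model} and \autoref{cor:ApproximatelySaturatedHomogeneousSeparable}. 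The parenthetical ``a fortiori a $\tau$-homeomorphism'' then needs only the remark that $\partial$ and $\tau$ coincide on $\cE_n(T)$ by \autoref{th:aleph0-categoricity}, so a $\partial$-isometric bijection is also a $\tau$-homeomorphism.
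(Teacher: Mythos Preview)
Your proof is correct and follows essentially the same approach as the paper. One small simplification: for the hard inequality, rather than realizing the witness in some separable extremal model and invoking categoricity to identify it with $M$, you can observe directly that the extreme $2n$-type from \autoref{prop:ExtremeTypesMetricallyClosed}\autoref{item:ExtremeTypeDistanceRealizedExtreme} is isolated (since all extreme types are, by \autoref{th:aleph0-categoricity}) and hence realized in $M$ by \autoref{prop:IsolatedExtremeTypeRealizedIFF}.
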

\begin{proof}
  By \autoref{th:aleph0-categoricity}, every extreme type is isolated, so realized in every model. Hence the map is surjective. On the other hand, given $a,b\in M^n$, it follows from \autoref{cor:ExtremeTypeDistance} and the fact that all extreme types are realized in $M$ that there are $a',b'\in M^n$ such that $\tp^\aff(a)=\tp^\aff(a')$, $\tp^\aff(b)=\tp^\aff(b')$ and $\dtp(\tp^\aff(a),\tp^\aff(b))=d(a',b')$. By approximate affine homogeneity of $M$ (see \autoref{th:uniqueness-atomic-model}), we have $a'\in \cl{G \cdot a}$ and $b'\in \cl{G \cdot b}$. We deduce that $d(\cl{G \cdot a},\cl{G \cdot b}) \leq \dtp(\tp^\aff(a),\tp^\aff(b))$ and the other inequality is obvious. In particular, the map is a homeomorphism for the $\dtp$-topology, which coincides with $\tau$ by the previous theorem.
\end{proof}

We record some basic connections with $\aleph_0$-categoricity in continuous logic.

\begin{prop}\label{prop:ext-aleph0cat-vs-cont-aleph0cat}
  Let $T$ be an affine theory. The following are equivalent:
  \begin{enumerate}
  \item $T$ is extremally $\aleph_0$-categorical theory and the spaces $\cE_n(T)$ are closed in $\tS^\aff_n(T)$.
  \item $T_\ext$ is $\aleph_0$-categorical in continuous logic.
  \end{enumerate}
\end{prop}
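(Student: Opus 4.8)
The plan is to prove the equivalence by exploiting the description of $\cE_n(T)$ and $\tS^\cont_n(T_\ext)$ via the affine part map $\rho^\aff$, combined with the extremal Ryll-Nardzewski theorem (\autoref{th:aleph0-categoricity}) and the standard Ryll-Nardzewski theorem in continuous logic. Recall that for $\aleph_0$-categoricity in continuous logic of a complete theory $Q$ in a separable language, the criterion is that the metric topology and the logic topology coincide on every $\tS^\cont_n(Q)$; and that $T_\ext$ is complete in continuous logic precisely when $T$ is complete in affine logic together with the spaces $\cE_x(T)$ being closed, by \autoref{c:closed-extreme-types} and \autoref{th:closed-extreme-types-Text}. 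So in either direction of the equivalence we are entitled to assume $\cE_n(T)$ is closed in $\tS^\aff_n(T)$, and hence (by \autoref{cor:E(T)-closed-iota-homeo}) that $\rho^\aff_n \colon \tS^\cont_n(T_\ext) \to \cE_n(T)$ is a $\tau$-homeomorphism and a $\dtp$-isometry for every $n$.

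First I would handle the direction (ii) $\Rightarrow$ (i). Assume $T_\ext$ is $\aleph_0$-categorical in continuous logic. Then $T_\ext$ is in particular complete, so by \autoref{c:closed-extreme-types} the spaces $\cE_n(T)$ are closed (one should note $T$ is complete as an affine theory here, which follows since $(T_\ext)_\aff \equiv T$ by \autoref{l:T=Textaff} and $T_\ext$ complete implies $(T_\ext)_\aff$ complete; alternatively $\aleph_0$-categoricity of $T_\ext$ forces a unique separable model whose affine theory must be $T$). Now by the continuous Ryll-Nardzewski theorem, $\tau$ and $\dtp$ coincide on $\tS^\cont_n(T_\ext)$. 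Transporting along the homeomorphism-isometry $\rho^\aff_n$, we get that $\tau$ and $\dtp$ coincide on $\cE_n(T)$ for every $n$, so by \autoref{th:aleph0-categoricity}, $T$ is extremally $\aleph_0$-categorical. Together with the closedness of the $\cE_n(T)$ this gives (i).

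Conversely, for (i) $\Rightarrow$ (ii), assume $T$ is extremally $\aleph_0$-categorical and each $\cE_n(T)$ is closed in $\tS^\aff_n(T)$. By \autoref{th:aleph0-categoricity}, $\tau$ and $\dtp$ coincide on $\cE_n(T)$ for every $n$. Since $\cE_n(T)$ is closed, $\rho^\aff_n \colon \tS^\cont_n(T_\ext) \to \cE_n(T)$ is a $\tau$-homeomorphism and $\dtp$-isometry by \autoref{cor:E(T)-closed-iota-homeo}, so transporting back we conclude that $\tau$ and $\dtp$ coincide on $\tS^\cont_n(T_\ext)$ for every $n$. We also need $T_\ext$ to be complete in continuous logic and its language separable: completeness follows from \autoref{th:closed-extreme-types-Text} (using that $T$ is complete, which is automatic for an extremally $\aleph_0$-categorical theory by \autoref{c:extremal-models-existence}), and the language of $T_\ext$ is separable since $T$ is in a separable language and the Morleyization-style expansion is not needed here — one checks $\density_{T_\ext}(\cL)$ is controlled by $\density_T(\cL)$ as atomic formulas are the same. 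Then the continuous Ryll-Nardzewski theorem yields that $T_\ext$ is $\aleph_0$-categorical in continuous logic.

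The main obstacle I anticipate is the bookkeeping around completeness and separability of $T_\ext$: one must be careful that ``$T_\ext$ is $\aleph_0$-categorical in continuous logic'' presupposes $T_\ext$ is a complete theory in a separable language, and verify these hypotheses are available in each direction rather than assumed. The homeomorphism-isometry transfer via $\rho^\aff_n$ is the clean geometric heart of the argument and is essentially immediate from \autoref{cor:E(T)-closed-iota-homeo}; the only subtlety is making sure we have the closedness of $\cE_n(T)$ in hand before invoking that corollary, which in the (ii) $\Rightarrow$ (i) direction requires first deducing closedness from completeness of $T_\ext$ via \autoref{c:closed-extreme-types}.
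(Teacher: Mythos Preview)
Your direction (i) $\Rightarrow$ (ii) is correct and matches the paper's approach: transport $\tau = \dtp$ along the homeomorphism-isometry $\rho^\aff_n$ from \autoref{cor:E(T)-closed-iota-homeo} and apply continuous Ryll--Nardzewski.

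The direction (ii) $\Rightarrow$ (i), however, has a genuine gap. You write ``$T_\ext$ is in particular complete, so by \autoref{c:closed-extreme-types} the spaces $\cE_n(T)$ are closed.'' But \autoref{c:closed-extreme-types} says $\cE_n(T)$ is closed iff the extremal models of $T$ form an elementary class; it does \emph{not} say that completeness of $T_\ext$ suffices. In fact completeness of $T_\ext$ alone is not enough: for a non-degenerate Poulsen theory, $T_\ext$ is complete (by \autoref{th:Poulsen-complete}) yet $\cE_n(T)$ is dense and not closed. So you cannot invoke \autoref{cor:E(T)-closed-iota-homeo} at this stage, and the rest of your transport argument does not get off the ground.

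The paper's argument for (ii) $\Rightarrow$ (i) avoids this by using the full strength of $\aleph_0$-categoricity rather than mere completeness: the unique separable model $M$ of $T_\ext$ must coincide with the (existing) separable extremal model of $T$, so $T$ is extremally $\aleph_0$-categorical directly. Then, since every type in $\tS^\cont_n(T_\ext)$ is realized in the extremal model $M$, every such type has extreme affine part; hence every model of $T_\ext$ is extremal, and now \autoref{c:closed-extreme-types} legitimately gives closedness of $\cE_n(T)$. You can repair your argument along these lines; the topometric transfer you set up is then not even needed for this direction.
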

\begin{proof}
  \begin{cycprf}
  \item This follows from \autoref{cor:E(T)-closed-iota-homeo} and the Ryll-Nardzewski theorem for continuous logic.
  \item[\impfirst] If $T_\ext$ is $\aleph_0$-categorical, then $T$ is clearly extremally $\aleph_0$-categorical and they share the same unique separable (extremal) model, say $M$. Moreover, every type of $T_\ext$ is realized in $M$, and therefore its affine part is extreme. Hence the models of $T_\ext$ are precisely the extremal models of $T$, and the type spaces $\cE_n(T)$ are closed by \autoref{c:closed-extreme-types}.
  \end{cycprf}
\end{proof}

One of the previous implications can be partially generalized as follows.

\begin{prop}\label{p:a0-cat-vs-ext-a0-cat}
  Let $Q$ be a continuous theory in a separable language. If $Q$ is $\aleph_0$-categorical, then $Q_\aff$ is extremally $\aleph_0$-categorical.
  In that case, the separable extremal model of $Q_\aff$ is an affine substructure of the separable model of $Q$.
\end{prop}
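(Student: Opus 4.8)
The plan is to show that $Q_\aff$ admits a unique separable extremal model by exhibiting an explicit candidate and invoking the extremal Ryll--Nardzewski theorem (\autoref{th:aleph0-categoricity}), or more directly by characterizing the separable extremal models of $Q_\aff$. Let $M$ be the unique separable model of the $\aleph_0$-categorical continuous theory $Q$. First I would recall that, by the Ryll--Nardzewski theorem for continuous logic, for every $n$ the metric topology and the logic topology coincide on $\tS^\cont_n(Q)$, and every continuous type is isolated, hence realized in $M$. In particular, $\rho^\aff_n\colon \tS^\cont_n(Q)\to \tS^\aff_n(Q_\aff)$ is surjective by \autoref{l:image-rho-aff} together with \autoref{l:realized-convex-dense} (the image is compact, convex and contains $\cE_n(Q_\aff)$, so by Krein--Milman it is all of $\tS^\aff_n(Q_\aff)$ --- actually the image contains $\cE_n(Q_\aff)$ suffices for what follows). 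So every extreme type $p\in\cE_n(Q_\aff)$ lifts to a continuous type realized in $M$, and therefore is realized in $M$.

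Next I would verify that $M$, viewed as an $\cL$-structure, is an extremal model of $Q_\aff$, and that it is an affine substructure of the model of $Q$ (which, in this case, is $M$ itself --- so the ``affine substructure'' statement is about how $M$ sits inside: since every $\cL$-structure affinely embeds in itself, the content is just that $M\models Q_\aff$ and $M$ realizes only extreme types). To see $M$ is extremal: take $a\in M^n$ and suppose $\tp^\aff(a) = \half p_1 + \half p_2$ with $p_i\in\tS^\aff_n(Q_\aff)$. I would argue using the fact that the continuous type $\tp^\cont(a)$ is isolated: one approach is to use \autoref{cor:E(T)-closed-iota-homeo}-style reasoning, but since we do not know $\cE_n(Q_\aff)$ is closed, I would instead argue directly. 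Each $p_i$ is realized (in some affine extension), and by the Tarski--Vaught analysis and the fact that $Q$ is model complete and $\aleph_0$-categorical, any affine extension of $M$ through which $M$ factors continuously... Actually the cleanest route: an $\aleph_0$-categorical continuous theory has quantifier elimination after Morleyization and its unique separable model $M$ is approximately homogeneous in continuous logic; moreover, since $M$ is the \emph{unique} model up to isomorphism among separable ones and every continuous type is realized, $M$ is prime for $Q$. Then I claim the affine type of any tuple of $M$ is extreme: if $\tp^\aff(a)=\lambda q + (1-\lambda)q'$ with $0<\lambda<1$, by \autoref{cor:FrecciaRossa} (applied with an enumeration of $M$) the face generated by $\tp^\aff(a_\bN)$ --- where $a_\bN$ enumerates a dense subset of $M$ --- lies in $\tS^{\fM,\aff}_\bN(Q_\aff)$, so $q, q'$ enumerate models of $Q_\aff$; these models are separable and, by $\aleph_0$-categoricity of $Q$ together with the fact that they affinely embed (via \autoref{cor:FrecciaRossa}/\autoref{prop:AffineJEP}) back into a structure affinely equivalent to $M$, one shows they have the same continuous theory $Q$ and hence are isomorphic to $M$; this forces $q = q' = \tp^\aff(a_\bN)$.

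Then I would invoke \autoref{th:aleph0-categoricity}: having shown $M$ is a separable extremal model, it remains to show it is the \emph{unique} one, equivalently that the $\dtp$-topology and $\tau$ coincide on each $\cE_n(Q_\aff)$. For this I use that $\rho^\aff_n$ restricted to the (closed) set of types of $Q$ is $\tau$-continuous and $\dtp$-contractive (\autoref{l:image-rho-aff}), lands onto $\cE_n(Q_\aff)$ after composing with the fact that every extreme type is realized in $M$, and that $\tS^\cont_n(Q)$ already has coinciding metric and logic topologies. A continuity-plus-compactness argument, together with approximate homogeneity of $M$ and \autoref{cor:ExtremeTypeDistance} (distances between extreme types are attained in an extremal model, i.e.\ in $M$), shows that $\dtp$ on $\cE_n(Q_\aff)$ agrees with the quotient of the metric on $M^n$ by $\Aut(M)$ and hence with $\tau$. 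Alternatively, and perhaps more cleanly, one shows directly that every $p\in\cE_n(Q_\aff)$ is isolated in the sense of \autoref{lemma:IsolatedExtremeType}: $p$ is realized in $M$ by some $a$, and the set of realizations of $p$ in affine extensions of $M$ is, via the homogeneity of $M$ in continuous logic, a definable set whose distance predicate is affine. The main obstacle I anticipate is precisely this last point --- establishing that extreme types of $Q_\aff$ are isolated (equivalently, that the metric and logic topologies coincide on $\cE_n(Q_\aff)$) \emph{without} assuming $\cE_n(Q_\aff)$ is closed --- since one cannot simply transport the continuous Ryll--Nardzewski picture through $\rho^\aff_n$ unless one controls the fibers; I would handle this by pulling back to $\tS^\cont_n(Q)$, using that there the two topologies coincide and that $\rho^\aff_n$ identifies extreme types of $Q_\aff$ with a quotient of continuous types under a relation coming from $\Aut(M)$-conjugacy, which is a metric quotient and hence preserves the coincidence of topologies.
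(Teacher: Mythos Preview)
Your proposal has a genuine error in the first major step. You attempt to prove that the separable model $M$ of $Q$ is itself an extremal model of $Q_\aff$, but this is false in general. Take $Q = \APrA$, the $\aleph_0$-categorical theory of atomless probability algebras: then $Q_\aff = \PrA$ (since $\PrA$ is already a complete affine theory), whose unique extremal model is the two-point algebra $\{\bZero,\bOne\}$, \emph{not} the atomless model $M_\lambda$. The flaw in your argument is the claim that if $\tp^\aff(a_\bN) = \lambda q + (1-\lambda)q'$ then the models enumerated by $q$ and $q'$ ``have the same continuous theory $Q$''. By \autoref{cor:FrecciaRossa} these are indeed models of $Q_\aff$, but there is no reason they should be models of $Q$; in the $\APrA$ example they can be finite probability algebras. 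Consequently, your reading of the ``affine substructure'' clause as asserting that $M$ is the extremal model is incorrect --- the clause is strictly weaker and follows immediately from the primality of the extremal atomic model (\autoref{th:aleph0-categoricity}, moreover part).

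The paper's proof is a two-line contrapositive that bypasses all of this. If $Q_\aff$ is not extremally $\aleph_0$-categorical, then by \autoref{th:aleph0-categoricity} there exist $n$, $\eps>0$, and $q_i,q\in\cE_n(Q_\aff)$ with $q_i\to^\tau q$ but $\partial(q_i,q)\geq\eps$. Each $q_i$ lifts to some $p_i\in\tS^\cont_n(Q)$ by \autoref{l:image-rho-aff}; pass to a $\tau$-convergent subsequence $p_i\to p$. Since $\rho^\aff$ is $\tau$-continuous, $\rho^\aff(p)=q$, and since it is $\partial$-contractive, $\partial(p_i,p)\geq\partial(q_i,q)\geq\eps$, contradicting $\aleph_0$-categoricity of $Q$. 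Your final paragraph gestures at precisely these ingredients (pull back through $\rho^\aff_n$, use that it is $\tau$-continuous and $\partial$-contractive, use that the topologies coincide upstairs), so the correct idea is present --- but it is buried under an unnecessary and flawed attempt to identify the extremal model explicitly, and the execution remains vague where the paper's contrapositive is immediate.
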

\begin{proof}
  By \autoref{th:aleph0-categoricity}, if $Q_\aff$ is not extremally $\aleph_0$-categorical there are $n\in\bN$, $\eps>0$ and $q,q_i\in\cE_n(Q_\aff)$ such that $q_i\to^\tau q$ but $\dtp(q_i,q)\geq\eps$ for all $i\in\bN$. Say $q_i=\pi(p_i)$ for $p_i\in \tS_n^\cont(Q)$. Up to passing to a subsequence, there is $p$ such that $p_i\to^\tau p$. Hence $\pi(p)=q$ and $\dtp(p_i,p)\geq\dtp(q_i,q)\geq\eps$ for all $i$, implying that $Q$ is not $\aleph_0$-categorical.

  The last part is clear since the separable extremal model of $Q_\aff$ is prime.
\end{proof}

The argument of the previous proof is the same that one may use to show, in continuous logic, that a \emph{reduct} of an $\aleph_0$-categorical theory is $\aleph_0$-categorical.

\begin{question}
  Is a reduct of an extremally $\aleph_0$-categorical affine theory extremally $\aleph_0$-categorical?
\end{question}

Note that at the level of structures, a reduct of the unique separable extremal model of an extremally $\aleph_0$-categorical affine theory need not be extremal (e.g., consider a finite non-trivial probability algebra with names for each of its elements, as in \autoref{ex:PrA-with-2parameters}).

\begin{theorem}
  \label{th:sep-models-aleph0-cat}
  Let $T$ be an extremally $\aleph_0$-categorical, simplicial theory and let $M_0$ be its unique separable extremal model. Then every separable model of $T$ is of the form $L^1(\Omega, M_0)$, where $\Omega$ is a standard probability space.
\end{theorem}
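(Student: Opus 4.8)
The plan is to apply the extremal decomposition theorem (\autoref{th:integral-decomposition}) to a separable model $N \models T$, which yields a standard probability space $(\Omega, \cB_\Omega, \mu_\Omega)$ and a non-degenerate measurable field $(M_\Omega, e)$ of separable extremal models of $T$ with $N \cong \int_\Omega^\oplus M_\omega \ud\mu_\Omega(\omega)$. Since $T$ is extremally $\aleph_0$-categorical, every separable extremal model of $T$ is isomorphic to $M_0$. So the content of the theorem is that the field $(M_\Omega, e)$, which has fibers that are \emph{abstractly} isomorphic to $M_0$ almost everywhere, is in fact (measurably) isomorphic to the constant field with fiber $M_0$, so that its direct integral is the direct multiple $L^1(\Omega, M_0)$.

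First I would fix a dense sequence $a = (a_i : i \in \bN)$ in $M_0$ and consider, for $\mu_\Omega$-a.e.\ $\omega$, the type $\xi(\omega) = \tp^\aff(e(\omega)) \in \cE^\fM_\bN(T)$ (using that $\Omega \subseteq \cE^\fM_\bN(T)$ by construction in \autoref{sec:DirectIntegralExtremalModels}); this map is Baire-measurable. Since $M_\omega \cong M_0$ a.e., the type $\xi(\omega)$ is realized in $M_0$ a.e. I would then invoke the measurable choice of realization, \autoref{l:realization-selector}, applied to the complete probability space $(\Omega, \cB_\Omega, \mu_\Omega)$, the separable model $M_0$, and the measurable map $\xi$, to obtain a measurable function $f \colon \Omega \to M_0^\bN$ with $f(\omega) \models \xi(\omega)$ for every $\omega$. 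By \autoref{remark:TypeOfExtremalModel}, since $\xi(\omega) \in \cE^\fM_\bN(T)$ and $M_0$ is extremal, the realization $f(\omega)$ of $\xi(\omega)$ is automatically dense in $M_0$; hence sending $e(\omega) \mapsto f(\omega)$ defines an isomorphism $S_\omega \colon M_\omega \to M_0$ for each $\omega$. The pair $(s, S)$ with $s = \mathrm{id}_\Omega$ and $S = (S_\omega)$ would then be the candidate pointwise isomorphism between $(M_\Omega, e)$ and the constant field $(M_0, a)$; I need to check that $f$ composed with the basic sections of $M_0$ stays measurable, which follows from measurability of $f$ together with \autoref{lemma:DirectIntegralMeasurableDistance} and the fact that each coordinate $\omega \mapsto d^{M_0}(f(\omega)_i, a_j)$ is measurable. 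This makes $(s,S)$ a pointwise affine embedding (indeed pointwise isomorphism) in the sense of \autoref{dfn:MeasurableFieldPointwiseEmbedding}, and by \autoref{rmk:MeasurableFieldPointwiseEmbedding} it induces an isomorphism of measurable fields, whose direct integrals are therefore isomorphic: $N \cong \int_\Omega^\oplus M_\omega \ud\mu_\Omega = \int_\Omega^\oplus M_0 \ud\mu_\Omega = L^1(\Omega, M_0)$.

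The main obstacle I expect is the measurability bookkeeping: verifying that the map $\xi \colon \Omega \to \cE^\fM_\bN(T)$ is genuinely measurable into the relevant Polish (or analytic) space, that the "realized in $M_0$" hypothesis of \autoref{l:realization-selector} holds $\mu_\Omega$-a.e.\ (rather than merely $\mu_\Omega$-concentrated, which for a separable field over a standard space amounts to the same up to a null set), and that the resulting $f$ gives a legitimate \emph{measurable section} structure so that $(s,S)$ satisfies the third clause of \autoref{dfn:MeasurableFieldPointwiseEmbedding}. A secondary point to handle carefully is the null-set manipulation: $M_\omega \cong M_0$ holds only almost surely, so after discarding a null set I should restrict the field to the conull set on which everything is well-behaved and use that this does not change the direct integral (by \autoref{l:concentration-direct-integral} or simply because direct integrals ignore null sets). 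None of these steps is conceptually deep given the machinery already developed in Parts~\ref{part:The-logic} and \ref{part:Direct-integrals}, but they require care. An alternative, essentially equivalent route would be to observe that $N$ and $L^1(\Omega, M_0)$ have measurable fields of extremal models over the same measure algebra whose fibers are pointwise isomorphic, and appeal to the uniqueness theorem \autoref{th:uniqueness-decomposition} with $\sigma$ an isomorphism built fiberwise; but since we must produce $\sigma$ in the first place, the direct construction via \autoref{l:realization-selector} is cleaner.
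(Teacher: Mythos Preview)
Your approach is essentially identical to the paper's: apply \autoref{th:integral-decomposition}, note each $M_\omega \cong M_0$, use \autoref{l:realization-selector} to get a measurable family of realizations $f(\omega)$ in $M_0$, and let $S_\omega$ send $e(\omega) \mapsto f(\omega)$. The paper then verifies directly that the induced map $\Theta \colon M \to L^1(\Omega, M_0)$ is a well-defined isomorphism, which is equivalent to your pointwise-isomorphism packaging.

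There is one genuine gap. Your claim that ``the realization $f(\omega)$ of $\xi(\omega)$ is automatically dense in $M_0$'' does not follow from \autoref{remark:TypeOfExtremalModel}: that remark only says $f(\omega)$ is dense in \emph{some} extremal affine submodel of $M_0$, which need not be all of $M_0$. Extremal $\aleph_0$-categoricity gives that any such submodel is \emph{isomorphic} to $M_0$, but does not rule out proper affine self-embeddings of $M_0$ (contrast with the absolutely categorical case, where \autoref{p:abs-cat-no-ee} does rule this out). The paper handles this by invoking the ``moreover'' clause of \autoref{l:realization-selector} with $B \subseteq M_0^{\bN}$ the $G_\delta$ set of dense tuples: since $e(\omega)$ is dense in $M_\omega \cong M_0$, each $\xi(\omega)$ is realized by some element of $B$, so one may choose $f(\omega) \in B$ for all $\omega$. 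With this correction your proof goes through.
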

\begin{proof}
  Let $M$ be a separable model of $T$. We apply \autoref{th:integral-decomposition} to write $M = \int_\Omega^\oplus M_\omega \ud \omega$, where $\Omega$ is a complete, standard probability space and each $M_\omega$ is a separable, extremal model of $T$. Let $(e_i : i \in \bN)$ be the pointwise enumeration of the field $(M_\omega:\omega\in\Omega)$ associated to the direct integral. By extremal $\aleph_0$-categoricity, each $M_\omega$ is isomorphic to $M_0$. Let us consider the measurable map $\xi\colon \Omega \to \tS^\aff_\bN(T)$, $\xi(\omega) = \tp^\aff(e(\omega))$ and the set $B\subseteq M_0^\bN$ of $\bN$-tuples that are dense in $M_0$; we may then apply \autoref{l:realization-selector} to get a measurable function $a \colon \Omega \to M_0^\N$ such that $\tp^\aff(a(\omega)) = \tp^\aff(e(\omega))$ and $a(\omega)$ is dense in $M_0$ for all $\omega\in\Omega$.
  For $\omega \in \Omega$, let $\theta_\omega \colon M_\omega \to M_0$ be the isomorphism sending $e(\omega)$ to $a(\omega)$.
  We define a map $\Theta \colon M \to L^1(\Omega, M_0)$ by
  \begin{equation*}
    \Theta(s)(\omega) = \theta_\omega(s(\omega)).
  \end{equation*}
  First, we check that $\Theta(s)$ is indeed a measurable function $\Omega \to M_0$. For this, it suffices to see that for every $b \in M_0$, the map $\omega \mapsto d\big(\theta_\omega(s(\omega)), b\big)$ is measurable. For every $n \in \N$, define
  \begin{equation*}
    j_n(\omega) = \min \set{j \in \N : d(b, a_j(\omega)) < 2^{-n}}.
  \end{equation*}
  It is clear that each $j_n$ is measurable. We have that
  \begin{equation*}
    d\big(\theta_\omega(s(\omega)), b\big) = \lim_{n \to \infty} d \big(\theta_\omega(s(\omega)), a_{j_n(\omega)}(\omega)\big)
    = \lim_{n \to \infty} d\big(s(\omega), e_{j_n(\omega)}(\omega)\big)
  \end{equation*}
  and this is measurable because $s$ is a measurable section.

  One can define an inverse of $\Theta$ by
  \begin{equation*}
    \Theta^{-1}(f)(\omega) = \theta_\omega^{-1}(f(\omega)),
  \end{equation*}
  so we see that it is a bijection. Finally, for a tuple $\bar s \in M^n$ and any affine formula $\varphi$, we have that
  \begin{equation*}
    \varphi^M(\bar s) = \int_\Omega \varphi^{M_\omega}(\bar s(\omega)) \ud \omega = \int_\Omega \varphi^{M_0}(\theta_\omega(\bar s(\omega))) \ud \omega = \varphi^{L^1(\Omega, M_0)}(\Theta(\bar s)),
  \end{equation*}
  and we conclude that $\Theta$ is an isomorphism.
\end{proof}

\begin{question}
  \label{q:aleph_0-cat-Bauer}
  Is every extremally $\aleph_0$-categorical simplicial theory Bauer?
\end{question}

In view of \autoref{th:dichotomy-simplicial-theories}, the question is equivalent to whether there exist extremally $\aleph_0$-categorical Poulsen theories.


\section{Absolutely categorical and bounded theories}
\label{sec:absolute-categoricity}

We present here some definitions and basic results concerning new categoricity phenomena that have no analogue in continuous logic.
Above all, we record some intriguing questions, which we do not address in the present work.

\begin{defn}
  \label{df:absolutely-cat}
  An affine theory $T$ is called \emph{absolutely categorical} if it has a unique extremal model, up to isomorphism.
\end{defn}

\begin{prop}\label{p:compact-absol-cat}
  Let $T$ be a complete affine theory with a compact model.
  Then $T$ is absolutely categorical, the extremal model is compact and prime, and the extreme type spaces $\cE_x(T)$ are closed.
\end{prop}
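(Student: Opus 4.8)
The plan is to exploit the fact that a compact model $M_0 \models T$ leaves very little room: being compact, it is totally bounded, and every affinely definable subset is compact, so the obstructions that Bagheri observed for algebraic closure vanish. First I would show that $M_0$ is an \emph{extremal} model. Since $T$ is complete and $M_0$ is a model, by \autoref{l:realized-convex-dense} the types realized in $M_0$ are dense (in $\tau$) in every $\tS^\aff_x(T)$. On the other hand, for a fixed finite tuple $x$, the set $\{\tp^\aff(a) : a \in M_0^x\}$ is the continuous image of the compact set $M_0^x$, hence compact, hence closed; combined with density this gives $\tS^\aff_x(T) = \cl{\{\tp^\aff(a) : a \in M_0^x\}} = \{\tp^\aff(a) : a \in M_0^x\}$, i.e.\ \emph{every} type of $T$ is realized in $M_0$. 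In particular $M_0$ realizes every extreme type; but we need the converse, that $M_0$ realizes \emph{only} extreme types. For this I would use that $M_0^x$ is compact convex's image: by \autoref{l:surjectivity-types-over-conv-comb}-style reasoning, or more directly, since $M_0$ is compact and we will shortly see $M_0$ is the unique separable-or-larger model up to the relevant maps, it suffices to invoke \autoref{th:ExtremalModelTypes}: $T$ has \emph{some} extremal model $N$, and every type of $T$ is realized in $M_0$, so by \autoref{lem:HomogeneousTypes}-type universality arguments one should embed $N$ affinely into $M_0$; but I think the cleaner route is the one below, deriving extremality of $M_0$ from uniqueness.

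\textbf{Extremality and closedness of $\cE_x(T)$.} Here is the argument I would actually run. By \autoref{th:ExtremalModelTypes} there is an extremal model $N \models T$; by \autoref{prop:DownwardLS} we may take $\fd(N) \leq \fd_T(\cL)$. Since $T$ is complete and $M_0$ is compact, by \autoref{l:realized-extreme-dense} every $p \in \cE_x(N)$ (more precisely, every extreme type of $T$) is approximately finitely realized in $M_0$; but $M_0^x$ is compact, so approximate realizability upgrades to genuine realizability. Thus every extreme type of $T$ is realized in $M_0$. Now fix a dense sequence $a = (a_i)$ in $M_0$ (or a dense net, in general $M_0$ need not be separable — but a compact metric space is separable, so $M_0$ \emph{is} separable). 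Then $p_0 := \tp^\aff(a) \in \tS^{\fM,\aff}_\bN(T)$. By the Choquet--Bishop--de Leeuw theorem (\autoref{th:Bishop-dL}) there is a boundary measure $\mu$ with barycenter $p_0$, and by \autoref{th:Bishop-dL-ForModels}, $\mu$ concentrates on $\cE^\fM_\bN(T)$. But $\tS^\aff_\bN(T)$ is compact metrizable, $M_0^\bN$ is compact, and the map $q \mapsto$ (realization in $M_0$) shows the set $\{\tp^\aff(b) : b \in M_0^\bN\}$ is compact; meanwhile $\mu$ concentrates on types realized in extremal models, each of which — being of density character $\leq \fd_T(\cL) = \aleph_0$ — embeds affinely into $M_0$ by universality (every type it realizes is realized in $M_0$, apply \autoref{lem:HomogeneousTypes}\autoref{item:HomogeneousTypesUniversal} once we know $M_0$ is weakly homogeneous, or more simply: an extremal model $N'$ with $\fd(N') \le \aleph_0$ and the fact that all its finitary types are realized in $M_0$ yields an affine embedding $N' \hookrightarrow M_1$ into some affinely $\aleph_1$-saturated extension of $M_0$, but $M_0$ compact forces $M_1 = M_0$ on definable sets). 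Rather than belabor this, the key simplification is: \emph{a compact model admits no proper affine extension that adds points to any definable set}, by \autoref{p:compact-def-sets}. So any extremal $N' \equiv^\aff M_0$ of countable density character embeds affinely into $M_0$, and by compactness that embedding is onto a closed affine submodel which, by another application of \autoref{p:compact-def-sets} (the ``moreover'' part), must be all of $M_0$. Hence $M_0 \cong N'$, so $M_0$ is itself extremal and is the unique extremal model; absolute categoricity follows, and the extremal model $M_0$ is compact and prime (it embeds affinely into every model, since every model realizes $\Th^\aff(M_0) = T$-type of its enumeration, whose finitary parts are all realized in $M_0$, and one glues these affine embeddings).

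\textbf{Closedness of $\cE_x(T)$.} Finally, since $M_0$ is the unique extremal model and every extreme type is realized in $M_0$, we get $\cE_x(T) = \{\tp^\aff(a) : a \in M_0^x\}$ for each finite $x$ — the set of \emph{all} types realized in $M_0$, which as noted is the continuous image of the compact space $M_0^x$ and hence $\tau$-closed; then $\cE_x(T) = \tS^\aff_x(T)$ forces $\tS^\aff_x(T)$ to consist entirely of extreme points, i.e.\ it is a (closed) simplex only in the trivial sense — in particular $\cE_x(T)$ is closed for every finite $x$, and for infinite $x$ by \autoref{l:ExtremeTypesInfinite} (the inverse limit of closed sets is closed).

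\textbf{Main obstacle.} The delicate point is establishing that the \emph{compact} model $M_0$ is extremal — i.e.\ realizes \emph{no} non-extreme type — rather than merely realizing all extreme types. The cleanest path is through \autoref{p:compact-def-sets}: any extremal model $N'$ affinely equivalent to $M_0$ with $\fd(N') \le \aleph_0$ has all its finitary types realized in $M_0$ (by \autoref{l:realized-extreme-dense} plus compactness), hence embeds affinely into $M_0$; the image is a compact affine submodel, and by the ``moreover'' clause of \autoref{p:compact-def-sets} applied to $N' \preceq^\aff M_0$ with $N'$ compact, this forces $N' = M_0$, whence $M_0$ is extremal. Verifying that the affine embedding $N' \hookrightarrow M_0$ exists requires a back-and-forth / universality argument (\autoref{lem:HomogeneousTypes}), and one must be careful that compactness of $M_0$ is genuinely used to pass from ``approximately realized'' to ``realized'' and from ``embeds into some extension'' to ``embeds into $M_0$ itself'' — that is where the whole statement lives.
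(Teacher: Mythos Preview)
Your proof has a fundamental misconception: you are trying to prove that the \emph{given} compact model $M_0$ is extremal, but this need not be the case. For a concrete counterexample, take $T = \PrA$ and let $M_0$ be the four-point probability algebra with two atoms of measure $1/2$; this is a compact model of the complete theory $\PrA$, but it is not extremal (the unique extremal model is the two-point algebra). Several of your steps fail for this reason. First, your reading of \autoref{l:realized-convex-dense} is wrong: that lemma says the closed \emph{convex hull} of the realized types is $\tS^\aff_x(T)$, not that the realized types are dense; in the example, the realized $1$-types are three points in $\tS^\aff_1(\PrA) \cong [0,1]$, certainly not all of it. Second, your application of the ``moreover'' clause of \autoref{p:compact-def-sets} to the putative embedding $N' \preceq^\aff M_0$ is circular: that clause requires \emph{both} models to be extremal, which is exactly what you are trying to establish for $M_0$. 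Third, your conclusion $\cE_x(T) = \tS^\aff_x(T)$ is false in general (in $\PrA$, $\cE_n(T)$ is finite while $\tS^\aff_n(T) \cong \cM(2^n)$).

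The paper's approach avoids all this by never claiming that the given $M_0$ is extremal. Instead, it applies the \emph{main} assertion of \autoref{p:compact-def-sets} (compact in some model $\Leftrightarrow$ compact in every extremal model), with $D$ the entire sort, to conclude directly that \emph{every extremal model is compact}. Then \autoref{prop:ExtremalJEP} places any two extremal models $M,N$ as affine submodels of a common extremal model $K$, and the moreover clause of \autoref{p:compact-def-sets} (now legitimately applied, since $M,N,K$ are all extremal and compact) forces $M = K = N$. Closedness of $\cE_x(T)$ then follows because the unique extremal model $M$ is compact, so $\cE_x(T)$ is the image of $M^x$ under a continuous map, hence $\partial$-compact and $\tau$-closed; and primeness is obtained via the extremal Ryll--Nardzewski theorem.
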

\begin{proof}
  If $T$ has a compact model, then by the main assertion of \autoref{p:compact-def-sets}, all extremal models are compact.
  In addition, by \autoref{prop:ExtremalJEP} and the moreover part of \autoref{p:compact-def-sets}, every two extremal models are isomorphic.
  It follows readily that each extreme type space $\cE_x(T)$ is $\partial$-compact, and therefore $\tau$-closed in $\tS^\aff_x(T)$.
  Finally, as $T$ is complete with a compact model, it has a separable language, in the sense of \autoref{defn:LanguageDensityCharacter}.
  Hence $T$ satisfies the conditions of \autoref{th:aleph0-categoricity}, and in particular, the extremal model of $T$ is prime.
\end{proof}

\begin{prop}
  \label{p:abs-cat-no-ee}
  Let $T$ be absolutely categorical and let $M$ be the extremal model of $T$. Then every affine embedding $M \to M$ is an isomorphism.
\end{prop}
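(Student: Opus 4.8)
The plan is to exploit the fact that an affine embedding $j\colon M\to M$ of the extremal model, being an affine map between models of the (complete) theory $T$, makes the image $j(M)$ an affine substructure $j(M)\preceq^\aff M$. Since submodels of extremal models are extremal (an affine submodel of an extremal model only realizes extreme types), $j(M)$ is again an extremal model of $T$. By absolute categoricity, $j(M)\cong M$, but this alone only says that $j(M)$ is \emph{abstractly} isomorphic to $M$, not that $j(M)=M$. To upgrade this to surjectivity of $j$, I would use a back-and-forth / iteration argument together with approximate homogeneity.

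First I would set up a direct (one-sided) limit argument: consider the chain $M\succeq^\aff j(M)\succeq^\aff j^2(M)\succeq^\aff\cdots$ of affine submodels, and let $M_\infty=\bigcap_n j^n(M)$ with the induced structure, or dually consider the telescoping colimit. The key point is that $j$ restricts to an affine embedding $M_\infty\to M_\infty$ which is moreover surjective onto $M_\infty$, and one wants to show $M=M_\infty$. Alternatively — and this is the route I would actually take, since it is cleaner — I would use the fact that $T$ is complete and satisfies \autoref{th:aleph0-categoricity}-type homogeneity: by \autoref{c:extremal-models-existence} the extremal model $M$ has density character $\leq\fd_T(\cL)$, and one should first observe that $M$ is affinely saturated (or at least approximately affinely homogeneous in the sense of \autoref{dfn:ApproximatelyHomogeneous}). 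Indeed, since any two extremal models of $T$ of the same density character are isomorphic, and by \autoref{prop:ExtremalJEP} extremal models of $T$ amalgamate, a standard chain construction (as in \autoref{prp:SaturatedExistence}) shows that the saturated extremal model of density character $\fd_T(\cL)$ exists and must coincide with $M$ by uniqueness. Hence $M$ is affinely saturated of density character $\fd(M)$, i.e.\ affinely homogeneous.

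Given that $M$ is affinely homogeneous, I would finish as follows. Let $a$ enumerate a dense subset of $M$ and let $b=j(a)$. Then $a$ and $b$ have the same affine type over $\emptyset$ (as $j$ is affine), so the partial affine map $a\mapsto b$ extends, by affine homogeneity of $M$, to an automorphism $g\in\Aut(M)$; thus $g$ agrees with $j$ on the dense set $\{a_i\}$, and since both $g$ and $j$ are isometric (affine maps are isometric embeddings), continuity forces $g=j$ on all of $M$. Therefore $j=g$ is an automorphism, in particular surjective. The main obstacle is the preliminary step of establishing that the unique extremal model $M$ is in fact affinely homogeneous: one must verify that the saturated extremal model of the right density character exists (using extremal amalgamation \autoref{prop:ExtremalJEP} and extremal chains \autoref{p:affine-chains-extremal}) and argue via the uniqueness hypothesis that it equals $M$. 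If one wants to avoid cardinal-arithmetic subtleties in the saturation construction, the alternative is to run the one-sided back-and-forth directly: build an automorphism of $M$ as a limit, at each stage extending a finite partial affine map using \autoref{prop:ExtremalJEP} inside the ambient structure $M$ itself (which is possible precisely because $M$ realizes all extreme types over its finite subsets, as these extreme types are realized in \emph{some} extremal model and all such are isomorphic to $M$).
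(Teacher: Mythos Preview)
Your homogeneity approach is correct but takes a genuinely different route from the paper. The paper argues by a direct cardinality contradiction: if $j$ is a proper affine embedding, iterate it transfinitely to build a chain $M \xrightarrow{j} M \xrightarrow{j} \cdots$ of length $\kappa$; at limit stages the direct limit is extremal (by \autoref{p:affine-chains-extremal}) and hence isomorphic to $M$ by absolute categoricity, so the construction continues. For $\kappa > |M|$ this yields an extremal model of cardinality $> |M|$, contradicting uniqueness. This is essentially a two-line proof.

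Your route instead first establishes that $M$ is extremally $\kappa$-saturated for arbitrarily large $\kappa$ (since the saturated extremal model produced by \autoref{prp:SaturatedExistence} must coincide with $M$ by uniqueness), hence affinely $\kappa$-homogeneous, and then extends the partial map $a \mapsto j(a)$ on a dense enumeration to an automorphism. This works, and in fact proves the stronger intermediate fact that $M$ is saturated --- something the paper establishes separately (in the more general bounded setting) as \autoref{p:core-uniqueness-maximality}, using precisely the transfinite-iteration trick it uses here. One terminological slip: you write ``affinely saturated'', but $M$ is only \emph{extremally} saturated (e.g.\ the two-point probability algebra does not realize non-extreme types); fortunately, extremal $\kappa$-saturation already yields weak affine $\kappa$-homogeneity via \autoref{prp:SaturatedHomogeneous}, so your argument is unaffected. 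Your initial sketch via $\bigcap_n j^n(M)$ would not lead anywhere, but you rightly set it aside.
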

\begin{proof}
  Suppose that $f \colon M \to M$ is an affine embedding that is not surjective and let $\kappa$ be a cardinal. We can construct a directed system
  \begin{equation*}
    M \xrightarrow{f} M \xrightarrow{f} M \to \cdots
  \end{equation*}
  of length $\kappa$ where at limit stages we take the direct limit of the sequence so far (and we use \autoref{p:affine-chains-extremal} and absolute categoricity to conclude that it is also isomorphic to $M$). If we take $\kappa$ sufficiently large, this produces an extremal model of $T$ of cardinal bigger than $|M|$, contradicting absolute categoricity.
\end{proof}

\begin{theorem}
  \label{th:abs-cat-models}
  Let $T$ be an absolutely categorical, simplicial theory in a separable language and let $M_0$ be the extremal model of $T$. Then every model of $T$ is of the form $L^1(\Omega, M_0)$, where $\Omega$ is a probability space.
\end{theorem}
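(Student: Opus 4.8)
The plan is to combine the extremal decomposition theorem (\autoref{th:integral-decomposition}) with absolute categoricity, essentially repeating the argument of \autoref{th:sep-models-aleph0-cat} but in a setting where no separability is available on either side, so the measurable-choice-of-realization lemma (\autoref{l:realization-selector}) can no longer be invoked. First I would let $M \models T$ and pick $\kappa \geq \fd(M) + \fd_T(\cL)$; by \autoref{th:integral-decomposition} there is a probability space $(\Omega, \cB_\Omega, \mu_\Omega)$ and a non-degenerate measurable field $(M_\omega : \omega \in \Omega)$ of extremal models of $T$, with a pointwise enumeration $e = (e_i : i < \kappa)$, such that $M \cong \int_\Omega^\oplus M_\omega \ud \mu_\Omega(\omega)$. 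By absolute categoricity, each $M_\omega$ is isomorphic to $M_0$. The goal is to upgrade these isomorphisms into a single isomorphism $M \cong L^1(\Omega, M_0)$.

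The key step is to choose, for each $\omega$, an isomorphism $\theta_\omega \colon M_\omega \to M_0$ in a way that is compatible with the measurable structure. In the separable case of \autoref{th:sep-models-aleph0-cat} this was done by selecting a dense realization $a(\omega) \in M_0^\N$ of $\tp^\aff(e(\omega))$ measurably in $\omega$ and letting $\theta_\omega$ send $e(\omega) \mapsto a(\omega)$. Here $e$ is indexed by an arbitrary $\kappa$ and $M_0$ need not be separable, so I would instead proceed as follows: fix once and for all an enumeration $b = (b_i : i < \kappa)$ of a dense subset of $M_0$ with the same index set $\kappa$ (this is possible because $\fd(M_0) \leq \fd_T(\cL) \leq \kappa$ by \autoref{c:extremal-models-existence}; one may pad with repetitions). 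Because both $e(\omega)$ and $b$ enumerate dense subsets of copies of $M_0$, and $M_\omega \cong M_0$, the types $\tp^\aff(e(\omega))$ and $\tp^\aff(b)$ lie in the same $\partial$-finite orbit; but I need more, namely that one can pick $\theta_\omega$ so that $\omega \mapsto d^{M_\omega}(\theta_\omega^{-1}(b_i), e_j(\omega))$ is measurable for all $i,j$. I expect to obtain this by working inside the type space: let $\xi \colon \Omega \to \cE_\kappa^\fM(T)$ be the measurable map $\omega \mapsto \tp^\aff(e(\omega))$ coming from \autoref{rmk:LosForTypes}, observe that every value $\xi(\omega)$ is the type of a dense enumeration of a model isomorphic to $M_0$, and then use a measurable selection of a realization of $\xi(\omega)$ inside $M_0^\kappa$ — precisely, a $\mu_\Omega$-measurable section $\omega \mapsto a(\omega) \in M_0^\kappa$ with $a(\omega) \models \xi(\omega)$ and $a(\omega)$ dense in $M_0$. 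With such an $a$, set $\theta_\omega \colon M_\omega \to M_0$ to be the isomorphism $e(\omega) \mapsto a(\omega)$, define $\Theta \colon M \to L^1(\Omega, M_0)$ by $\Theta(f)(\omega) = \theta_\omega(f(\omega))$ and its inverse by $\Theta^{-1}(g)(\omega) = \theta_\omega^{-1}(g(\omega))$, verify measurability of $\Theta(f)$ by approximating $b_i$ by the $a_{j_n(\omega)}(\omega)$ exactly as in the proof of \autoref{th:sep-models-aleph0-cat}, and finally check via \autoref{th:Los} that $\varphi^M(\bar f) = \varphi^{L^1(\Omega,M_0)}(\Theta \bar f)$ for every affine formula $\varphi$, so $\Theta$ is an $\cL$-isomorphism.

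The main obstacle is exactly the measurable selection of a dense realization $a(\omega) \in M_0^\kappa$ of $\xi(\omega)$ when $\kappa$ is uncountable and $M_0$ is non-separable, since \autoref{l:realization-selector} is stated only for countable variables and separable models. I would handle this by reducing to the countable case fibrewise: since $M \cong \int_\Omega^\oplus M_\omega \ud \mu_\Omega$ and measurability of sections is witnessed by countable sublanguages and countable subsets of the index set (the sets $\fI^\aff(M_\Omega, e, \cL_0)$ of \autoref{defn:MeasurableField} are cofinal and closed under countable unions), one can exhaust $\kappa$ and $\cL$ by an increasing chain of countable pieces on which the relevant reducts are separable, apply \autoref{l:realization-selector} on each piece, and amalgamate the choices using the uniqueness part of \autoref{th:uniqueness-decomposition} (or the rigidity of \autoref{prp:DirectIntegralExtremalRigid}) to ensure coherence; alternatively, one can argue abstractly that since each $M_\omega$ is a fixed structure $M_0$, the map $\omega \mapsto \theta_\omega$ need only be specified up to the automorphism group of $M_0$, and the required data reduces to a measurable map into a quotient of a Polish space on each countable coordinate block. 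I expect this amalgamation bookkeeping — rather than any single hard analytic fact — to be where the real work lies, with everything else being a routine transcription of the separable argument via \autoref{th:Los}.
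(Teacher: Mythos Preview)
Your overall shape is right, but you have overlooked two facts that make the ``amalgamation bookkeeping'' you flag as the real work entirely unnecessary.

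First, $M_0$ \emph{is} separable: since $T$ has a separable language, $\fd_T(\cL) = \aleph_0$, and by \autoref{c:extremal-models-existence} together with absolute categoricity, $\fd(M_0) \leq \aleph_0$. You even write $\fd(M_0) \leq \fd_T(\cL)$ but then assert $M_0$ ``need not be separable'', which is a slip.

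Second, and this is the point you are missing, the pointwise enumeration $e$ can be replaced by a \emph{countable} subfamily that is still pointwise dense. By condition \autoref{item:MeasurableFieldCofinal} in \autoref{defn:MeasurableField}, there exists a countable $I_0 \subseteq \kappa$ such that $M_{\omega,I_0} \preceq^\aff M_\omega$ for almost every $\omega$. Now invoke \autoref{p:abs-cat-no-ee}: since $M_\omega \cong M_0$ and $M_0$ admits no proper affine self-embedding, the affine submodel $M_{\omega,I_0}$ must equal $M_\omega$ almost surely. So $e_{I_0}(\omega)$ is dense in $M_\omega$ for a.e.\ $\omega$, and you are back in the situation of \autoref{th:sep-models-aleph0-cat}: a countable index set, a separable target $M_0$, and \autoref{l:realization-selector} applies directly to produce the measurable $a \colon \Omega \to M_0^{I_0}$ you want. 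No transfinite exhaustion, no coherence conditions between countable blocks, no quotient by $\Aut(M_0)$ --- the rest of your argument (defining $\theta_\omega$, $\Theta$, and checking it is an isomorphism via \autoref{th:Los}) goes through verbatim.
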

\begin{proof}
  As $T$ has a separable language, it follows from Löwenheim--Skolem that $M_0$ is separable. Let $M \models T$.
  Using \autoref{th:integral-decomposition}, we write $M = \int_\Omega^\oplus M_\omega \ud \mu(\omega)$, where each $M_\omega$, being an extremal model of $T$, is isomorphic to $M_0$. We will use a strategy similar to the one in the proof of \autoref{th:sep-models-aleph0-cat}. Let $(e_i : i \in I)$ be the sequence of basic sections given by the direct integral decomposition. From the definition, it follows that there exists a countable $I_0 \sub I$ such that almost surely, $\cl{\set{e_i(\omega) : i \in I_0}} \preceq M_\omega$. As each $M_\omega$ is isomorphic to $M_0$ and $M_0$ has no proper affine substructures (\autoref{p:abs-cat-no-ee}), we obtain that these inclusions are actually equalities.

  As $I_0$ is countable, we can apply \autoref{l:realization-selector} to obtain a measurable function $a \colon \Omega \to M_0^{I_0}$ such that $\tp^\aff(a(\omega)) = \tp^\aff(e_{I_0}(\omega))$ and $a(\omega)$ is dense in $M_0$ for all $\omega\in\Omega$.
  Now for each $\omega$, we let $\theta_\omega \colon M_\omega \to M_0$ be the isomorphism that sends $e_{I_0}(\omega)$ to $a(\omega)$, and complete the proof exactly in the same way as the one of \autoref{th:sep-models-aleph0-cat}.
\end{proof}

There exist non-simplicial theories which are absolutely categorical, and even ones that admit a compact extremal model (see \autoref{ex:PrA-with-2parameters}).

\begin{defn}
  Let $T$ be an affine theory.
  \begin{enumerate}
  \item Let $\lambda$ be an infinite cardinal. The theory $T$ is \emph{$\lambda$-bounded} if every extremal model of $T$ has density character at most $\lambda$. It is \emph{bounded} if it is $\lambda$-bounded for some $\lambda$. Otherwise, $T$ is \emph{unbounded}.
  \item An extremal model $M$ of $T$ is \emph{maximal} if it does not admit any proper extremal affine extension.
  \end{enumerate}
\end{defn}

\begin{prop}
  \label{p:core-uniqueness-maximality}
  Let $T$ be a complete, affine, $\lambda$-bounded theory. Then $T$ admits a unique extremally $\lambda$-saturated model, up to isomorphism. Moreover, this model is maximal.
\end{prop}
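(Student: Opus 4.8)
The plan is to obtain the extremally $\lambda$-saturated model by a transfinite chain construction in which $\lambda$-boundedness forces stabilization, and then to read off both maximality and uniqueness from the resulting model. For existence, I would build an increasing affine chain of extremal models $(M_\alpha : \alpha < \lambda^+)$, starting from an arbitrary extremal model (which exists by \autoref{c:extremal-models-existence}), taking completed unions at limit stages — these remain extremal by \autoref{p:affine-chains-extremal} — and at successor stages passing to an extremal affine extension $M_{\alpha+1} \succeq^\aff M_\alpha$ that realizes every type $p \in \cE_1(A)$ for every $A \subseteq M_\alpha$ with $\fd(A) \le \lambda$; such an extension exists by \autoref{th:ExtremalModelTypes} together with the extremal amalgamation of \autoref{prop:ExtremalJEP}. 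Since each $M_\alpha$ is extremal, $\fd(M_\alpha) \le \lambda$ by $\lambda$-boundedness, and a standard regularity argument shows the chain cannot be strictly increasing throughout: otherwise the completed union $M_{\lambda^+}$ would again be an extremal model, hence of density character $\le \lambda$, so a dense subset of it of size $\le \lambda$, and thus all of $M_{\lambda^+}$, would already lie in some $M_\beta$ with $\beta < \lambda^+$, forcing $M_\beta = M_{\beta+1}$. Hence the chain stabilizes at some $M := M_{\alpha^*}$, which realizes every extreme $1$-type over every density-$\le\lambda$ subset of itself; since $\fd(M) \le \lambda$, \autoref{lem:SaturatedByDense} upgrades this to realization of every extreme $1$-type over \emph{every} subset of $M$, so $M$ is extremally $\kappa$-saturated for all $\kappa$, in particular extremally $\lambda$-saturated. (Alternatively, when $\fd_T(\cL)\le\lambda$ one may simply invoke \autoref{prp:SaturatedExistence} to produce an extremally $\lambda^+$-saturated extremal model, which by $\lambda$-boundedness and \autoref{lem:SaturatedByDense} has the same features.)

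Maximality of $M$ is then immediate: if $M \preceq^\aff N$ with $N$ extremal and $b \in N$, then $\tp^\aff(b/M)$ is an extreme type (by \autoref{prop:ExtremeTypeTwoSteps} and \autoref{l:ExtremeTypesInfinite}, as every type realized in the extremal model $N$ is extreme), hence realized in $M$ by some $b'$; evaluating the equal types $\tp^\aff(b/M) = \tp^\aff(b'/M)$ on the affine formula $d(x, b')$, which has its parameter in $M$, gives $d(b, b') = 0$, so $b \in M$ and $N = M$.

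For uniqueness, let $M'$ be any extremally $\lambda$-saturated model. It is automatically extremally \emph{saturated}: if $\fd(M') = \lambda$ this is by definition (\autoref{dfn:Saturated}), and if $\fd(M') < \lambda$ then \autoref{lem:SaturatedByDense} upgrades extremal $\lambda$-saturation to realization of extreme $1$-types over all subsets of density $< \fd(M')$. Moreover, by \autoref{lem:SaturatedUniversal} an extremally $\lambda$-saturated model realizes every type in $\cE_\lambda(T)$, so, using \autoref{remark:TypeOfExtremalModel}, every extremal model of $T$ — of density $\le \lambda$ by boundedness — embeds affinely into it. Applying this to $M$ and to $M'$ in both directions, and combining it with the maximality argument above applied to whichever of the two (if any) has density $< \lambda$, one forces $\fd(M) = \fd(M')$. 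Since $T$ is complete, $M \equiv^\aff M'$, so \autoref{prp:SaturatedHomogeneous} (extremal case) yields $M \cong M'$; hence the extremally $\lambda$-saturated model is unique, and by the previous paragraph it is maximal.

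The hard part will be precisely this density-character bookkeeping: an extremally $\lambda$-saturated model need not have density character exactly $\lambda$ — it could be a ``fully saturated'' small model when $T$ has only small extremal models — so one must rule out the a priori possibility of two extremally $\lambda$-saturated models of distinct density characters, and this is exactly where $\lambda$-boundedness, the universality consequence of \autoref{lem:SaturatedUniversal}, and the maximality of small saturated models must be combined. One should also treat the case $\lambda = \aleph_0$ separately, replacing \autoref{lem:SaturatedByDense} and \autoref{prp:SaturatedHomogeneous} by their approximate counterparts from \autoref{sec:Homogeneity-and-saturation}.
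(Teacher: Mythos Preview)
Your proof is correct. Existence and uniqueness match the paper's approach: the paper simply cites \autoref{prp:SaturatedExistence} for existence, and for uniqueness uses exactly your mutual-embedding argument via \autoref{lem:SaturatedUniversal} and \autoref{prp:SaturatedHomogeneous} --- your appeal to maximality there is unnecessary, since mutual embedding alone already forces $\fd(M)=\fd(M')$. The one substantive difference is in the maximality argument. You argue directly: your constructed $M$, being in effect extremally $\lambda^+$-saturated, realizes $\tp^\aff(b/M)$ for any $b$ in an extremal extension $N\succeq^\aff M$, forcing $b\in M$. The paper instead proves maximality for an \emph{arbitrary} extremally $\lambda$-saturated $M$: a proper extremal extension $M\prec^\aff N$ together with universality gives a proper affine self-embedding $M\hookrightarrow M$, and iterating this along a chain of length $\lambda^+$ (embedding each limit stage back into $M$) produces an extremal model of density $\lambda^+$, contradicting $\lambda$-boundedness --- the same size-explosion idea underlying your stabilization argument for existence, reused. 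Your route is more direct but relies on the extra saturation of the specific model you built (so maximality for an arbitrary extremally $\lambda$-saturated model only follows after uniqueness); the paper's works at level $\lambda$ throughout. Finally, your concerns about \autoref{lem:SaturatedByDense} and the case $\lambda=\aleph_0$ are unfounded: you never actually need that lemma (your stabilized $M$ realizes extreme types over all of itself by construction), and \autoref{prp:SaturatedHomogeneous} and \autoref{lem:SaturatedUniversal} hold for all infinite $\lambda$.
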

\begin{proof}
  Observe first that by \autoref{prp:SaturatedExistence}, $T$ admits an extremally $\lambda$-saturated model. By \autoref{lem:SaturatedUniversal}, any extremal model of density $\leq \lambda$ (so any extremal model) embeds affinely in every $\lambda$-saturated model.
  Therefore any two extremally $\lambda$-saturated models embed affinely into each other, so they have the same density character, and are thus isomorphic by \autoref{prp:SaturatedHomogeneous}.

  Assume we have a proper affine extension $M \prec^\aff N$ with $N$ extremal and $M$ extremally $\lambda$-saturated.
  We can embed affinely $N$ in $M$, yielding a proper affine embedding $M \hookrightarrow M$.
  As in the proof of \autoref{p:abs-cat-no-ee}, we construct an  affine chain of copies of $M$, of length $\lambda^+$.
  At successor stages we use the proper embedding we have produced.
  At limit stages we embed the (extremal) direct limit of what we have so far in a copy of $M$.
  The limit of the chain is extremal and has density character $\lambda^+$, a contradiction.
\end{proof}

\begin{prop}
  Let $T$ be a complete affine theory and let $M\models T$ be an extremal model. The following are equivalent:
  \begin{enumerate}
  \item There is $\lambda$ such that $T$ is $\lambda$-bounded and $M$ is its extremally $\lambda$-saturated model.
  \item $M$ is maximal.
  \item Every $p\in \cE_1(M)$ is realized in $M$.
  \end{enumerate}
\end{prop}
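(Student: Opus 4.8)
The plan is to prove the cyclic chain of implications (i) $\Rightarrow$ (ii) $\Rightarrow$ (iii) $\Rightarrow$ (i), since each individual step is relatively short once the right earlier results are invoked. The least routine implication will be (iii) $\Rightarrow$ (i): from a purely local condition (every $1$-type over $M$ is realized in $M$) I must manufacture a global cardinal bound $\lambda$ on \emph{all} extremal models of $T$ and identify $M$ as the extremally $\lambda$-saturated one. This is where I expect the real work to lie.

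For (i) $\Rightarrow$ (ii): if $T$ is $\lambda$-bounded and $M$ is its extremally $\lambda$-saturated model, then $M$ is maximal by the moreover part of \autoref{p:core-uniqueness-maximality}, so nothing is needed beyond quoting that proposition. For (ii) $\Rightarrow$ (iii): suppose $M$ is maximal but some $p\in\cE_1(M)$ is not realized in $M$. By \autoref{prop:TypeWithParameters} (applied to $A=M$), $p$ is realized by some $b$ in an affine extension $N\succeq^\aff M$; replacing $N$ by $\cl{M\cup\{b\}}$ we may assume $N$ is obtained by adjoining a single new element. Since $p$ is an extreme type over $M$ and $M$ is an extremal model, \autoref{prop:ExtremeTypeTwoSteps} (in the form used repeatedly, e.g.\ in the proof of \autoref{prop:ExtremalJEP}) shows every type realized in $N$ is extreme, so $N$ is an extremal model of $T$; and $N$ is a proper affine extension of $M$ because $b\notin M$ (as $p$ is omitted in $M$, the distance from $b$ to $M$ is positive). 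This contradicts maximality of $M$, proving (iii).

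The main step, (iii) $\Rightarrow$ (i), goes as follows. Assume every $p\in\cE_1(M)$ is realized in $M$, and set $\lambda=\fd(M)+\fd_T(\cL)$. The first claim is that $M$ is extremally $\lambda$-saturated, equivalently — by \autoref{lem:SaturatedByDense} and \autoref{prp:SaturatedHomogeneous} — that $M$ realizes every extreme type over every parameter set $A\subseteq M$ with $|A|<\lambda$. By a standard induction on the length of the tuple of variables (exactly as in \autoref{lem:SaturatedUniversal}), this reduces to the single-variable case: it suffices to realize every $q\in\cE_1(A)$ for $A\subseteq M$. Here I use \autoref{rem:ExtremeRestrictionTwoTheories} (last bullet): since every $p\in\cE_1(M)$ extends some $q\in\cE_1(A)$ and, conversely, every $q\in\cE_1(A)$ extends to some $p\in\cE_1(M)$, realizing $p$ in $M$ realizes $q$ in $M$; and $\cE_1(M)$ is realized in $M$ by hypothesis. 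Thus $M$ is extremally $\lambda$-saturated.

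It remains to see that $T$ is $\lambda$-bounded. Let $N$ be any extremal model of $T$. By extremal joint embedding (\autoref{prop:ExtremalJEP}) applied to $M$ and $N$ (using that $T$ is complete), there is an extremal model $K$ with affine embeddings $M\hookrightarrow K$ and $N\hookrightarrow K$. Now $M$, being extremally $\lambda$-saturated, is maximal: this is the argument in the second paragraph of the proof of \autoref{p:core-uniqueness-maximality} — a proper affine extension of $M$ would, after composing with an affine embedding of that extension back into $M$ (possible by \autoref{lem:HomogeneousTypes}\autoref{item:HomogeneousTypesUniversal}, since $M$ realizes every extreme type over sets of size $<\lambda$), yield a proper affine self-embedding, and iterating along a chain of length $\lambda^+$ as in \autoref{p:abs-cat-no-ee} produces an extremal model of density $\lambda^+$, which then embeds affinely into $M$ — contradiction. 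Hence the inclusion $M\preceq^\aff K$ forces $M=K$, so $N$ embeds affinely into $M$ and $\fd(N)\leq\fd(M)\leq\lambda$. Therefore $T$ is $\lambda$-bounded, and since $M$ is its unique extremally $\lambda$-saturated model by \autoref{p:core-uniqueness-maximality}, (i) holds.

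The one delicate point I want to double-check in writing this up is the direction of embeddings in the maximality argument: the $\lambda^+$-chain construction must terminate in a genuine density jump, which needs $M$ to properly embed into itself, and that in turn needs a proper extremal affine extension of $M$ of density $\leq\lambda$ to exist — but any such extension can be shrunk by Downward Löwenheim--Skolem while staying extremal (\autoref{prop:DownwardLS}, \autoref{remark:LanguageDensityCharacter}, and \autoref{p:affine-chains-extremal}), so this is fine. The argument is essentially a repackaging of \autoref{p:core-uniqueness-maximality}, so I will mostly cite it rather than repeat it.
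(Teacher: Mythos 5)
Your overall architecture --- the cycle (i)$\Rightarrow$(ii)$\Rightarrow$(iii)$\Rightarrow$(i), with \autoref{p:core-uniqueness-maximality} handling (i)$\Rightarrow$(ii) and extremal joint embedding plus the $\lambda^+$-chain handling the return to (i) --- matches the paper's proof in substance. Your (iii)$\Rightarrow$(i) is correct, though more roundabout than necessary: once you know that every extreme $1$-type over every subset of $M$ is realized in $M$, the model $M$ is extremally $\kappa$-saturated for \emph{every} $\kappa$, so by \autoref{lem:SaturatedUniversal} every extremal model of $T$ embeds affinely into $M$ directly (its enumeration type is extreme, hence realized in $M$); this gives $\lambda$-boundedness at once, and the detour through maximality and the chain construction can be skipped.

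The genuine problem is in (ii)$\Rightarrow$(iii). You realize $p\in\cE_1(M)$ by $b$ in an affine extension $N\succeq^\aff M$ and then ``replace $N$ by $\cl{M\cup\{b\}}$''. In the paper's notation $\cl{\,\cdot\,}$ is metric closure, so $\cl{M\cup\{b\}}=M\cup\{b\}$, which in general is not even an $\cL$-structure (it is not closed under the function symbols), let alone an affine substructure of $N$: the Tarski--Vaught test of \autoref{p:Tarski-Vaught} has no reason to hold for it. If instead you take a Löwenheim--Skolem hull of $M\cup\{b\}$ inside $N$, you do get an affine extension of $M$, but it contains Skolem witnesses beyond $M\cup\{b\}$, and since $N$ was an \emph{arbitrary} affine extension there is no reason their types should be extreme; \autoref{prop:ExtremeTypeTwoSteps} only controls the types of tuples drawn from $M\cup\{b\}$. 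So the model you exhibit need not be extremal, and maximality of $M$ is not contradicted. The correct move --- which is exactly what the paper's one-line justification encodes --- is to apply \autoref{th:ExtremalModelTypes} to the theory $D^\aff_M$: the extreme type $p\in\cE_1(M)=\cE_1(D^\aff_M)$ is realized in an \emph{extremal model of $D^\aff_M$}, and, as observed in the proof of \autoref{prop:ExtremalJEP} via \autoref{prop:ExtremeTypeTwoSteps}, an extremal model of $D^\aff_M$ is an extremal model of $T$ containing $M$ as an affine substructure. Since $p$ is omitted in $M$, this extension is proper, contradicting maximality. With that repair the proof goes through.
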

\begin{proof}
  \begin{cycprf}
  \item By \autoref{p:core-uniqueness-maximality}.
  \item Because every extreme type over an extremal model is realized in an extremal affine extension.
  \item[\impprev] Clear.
  \item[\impprev] Because every extremal model of $T$ has an extremal affine joint embedding with $M$ (\autoref{prop:ExtremalJEP}).
  \end{cycprf}
\end{proof}

\begin{theorem}
  \label{th:bounded-Poulsen}
  Let $T$ be an affinely complete, bounded, simplicial theory with no compact models.
  Then $T$ is a Poulsen theory.
\end{theorem}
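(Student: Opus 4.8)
The plan is to use the Simplicial Dichotomy (\autoref{th:dichotomy-simplicial-theories}) and rule out the Bauer alternative. First I would observe that $T$ is non-degenerate: if it were degenerate, then being affinely complete its unique model would be a single point in each sort, which is compact, contradicting the assumption that $T$ has no compact model. Since $T$ is also affinely complete and simplicial, \autoref{th:dichotomy-simplicial-theories} yields that $T$ is either a Bauer theory or a Poulsen theory, so it is enough to show that $T$ cannot be Bauer.

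So suppose toward a contradiction that $T$ is a Bauer theory, and fix $\lambda$ with $T$ being $\lambda$-bounded. Since $T$ is affinely complete, \autoref{th:closed-extreme-types-Text} (whose conclusions hold for Bauer theories) shows that $T_\ext$ is complete as a continuous logic theory, and by \autoref{th:Bauer-correspondence-T} the models of $T_\ext$ are precisely the extremal models of $T$; in particular every model of $T_\ext$ has density character at most $\lambda$. By \autoref{c:extremal-models-existence} there is an extremal model $M$ of $T$, so $M \models T_\ext$. I claim $M$ is totally bounded. Otherwise there are $\varepsilon > 0$ and an infinite $\varepsilon$-separated sequence in $M$; then, by the compactness theorem for continuous logic, the continuous elementary diagram of $M$ together with $\lambda^+$ new constants that are pairwise at distance $\geq \varepsilon$ is consistent, producing a continuous elementary extension $N \succeq^\cont M$ of density character greater than $\lambda$. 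But $N \models T_\ext$, hence $N$ is an extremal model of $T$ of density character $> \lambda$, contradicting $\lambda$-boundedness. Therefore $M$ is totally bounded, and being a complete metric space it is compact. Since $M$ is a model of $T$, this contradicts the hypothesis that $T$ has no compact model. Hence $T$ is not Bauer, and the dichotomy forces $T$ to be a Poulsen theory.

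The only step that is not a straightforward assembly of earlier results is the production of the oversized elementary extension $N$ from the failure of total boundedness; this is the standard fact that a complete continuous structure which is not compact has elementary extensions of arbitrarily large density character, and I do not expect any genuine difficulty there. In essence the argument is the slogan ``a complete continuous theory with a bound on the density characters of its models has a compact model'', transported to $T$ through the Bauer correspondence of \autoref{sec:Bauer-theories}.
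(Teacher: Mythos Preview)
Your proof is correct and follows essentially the same approach as the paper: rule out the Bauer case by showing that a Bauer theory with a non-compact extremal model has extremal models of arbitrarily large density character (via the Bauer correspondence and upward L\"owenheim--Skolem in continuous logic), then apply the simplicial dichotomy. The paper's proof compresses your argument into a single sentence, and does not spell out the non-degeneracy check, but the content is the same.
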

\begin{proof}
  A Bauer theory with a non-compact extremal model has arbitrarily large extremal models. Thus $T$ cannot be Bauer and so, by the dichotomy theorem for complete simplicial theories (\autoref{th:dichotomy-simplicial-theories}), $T$ is a Poulsen theory.
\end{proof}

\begin{question}
  \label{q:abs-cat-simplicial}
  Do theories satisfying the hypotheses of \autoref{th:bounded-Poulsen} exist?
\end{question}

\begin{question}
  Is every complete, bounded affine theory absolutely categorical?
  And if the language is separable?
\end{question}


\part{Examples}
\label{part:Examples}

\section{Absolutely categorical Bauer theories}
\label{sec:bauer-compact-extremal}

\subsection{Probability algebras}
\label{ss:ex:PrA}

Natural affine theories arise when we consider continuous theories with affine axiomatizations. A prime example of this is the theory of probability measure algebras, which was already considered by Bagheri. In this subsection, we show how it fits in the general theory we have developed.

We consider the language $\cL_\PrA=\set{\mu,\cup,\cap,\neg,\bZero,\bOne}$ of probability algebras. The theory $\PrA$ consists of the axioms of Boolean algebras together with $\mu(\bOne) = 1$ and
\begin{equation*}
  \sup_{x, y} \varphi(x, y) = \inf_{x, y} \varphi(x, y) = 0,
\end{equation*}
where
\begin{equation*}
  \varphi(x, y) = \mu(x \cup y) + \mu(x \cap y) - \mu(x) - \mu(y).
\end{equation*}
This theory is affine and its models are all probability measure algebras. In continuous logic, one usually considers its model completion $\APrA$, the theory of \emph{atomless} probability algebras with the additional axiom
\begin{equation*}
  \sup_x \inf_y \big|\mu(y \cap x) - \frac{1}{2} \mu(x)\big| = 0,
\end{equation*}
which is however not affine. The theory $\APrA$ is $\aleph_0$-categorical and eliminates quantifiers (see \cite[\textsection16]{BenYaacov2008}).

\begin{theorem}
  \label{th:prob-alg}
  The following statements hold:
  \begin{enumerate}
  \item \label{i:thpa:compl} The theory $\PrA$ is a complete affine theory, which has affine quantifier elimination.
  \item \label{i:thpa:abs-cat} It is an absolutely categorical, Bauer theory and its unique extremal model is the two-point algebra $\set{\bZero, \bOne}$.
  \item \label{i:thpa:type-spaces} For every $n$, $\tS^\aff_n(\PrA) \cong \cM(2^n)$.
  \item \label{i:thpa:type-spaces-param} More generally, for any model $M$,
    \begin{equation}
      \label{eq:thpa:param}
      \tS^\aff_n(M) \cong \set{f \in (L^\infty(M))^{2^n} : f_\eps \geq 0, \sum_{\eps \in 2^n} f_\eps = 1 },
    \end{equation}
    where the set on the right is equipped with the (product of) the weak$^*$ topology on $L^\infty(M)$. In particular, the extreme points in $\tS^\aff_n(M)$ are exactly the types realized in $M$.
  \end{enumerate}
\end{theorem}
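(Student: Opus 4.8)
The plan is to prove the four items in order, using two inputs: every model of $\PrA$ is a measure algebra $\MALG(\Omega,\mu) = L^1(\Omega,A_2)$, where $A_2 = \set{\bZero,\bOne}$ is the two-element probability algebra (a measurable section $\Omega\to A_2$ is, up to null sets, a measurable set), and the quantifier elimination of $\APrA$ in continuous logic recalled above. Since $A_2 \preceq^\aff L^1(\Omega,A_2)$ by \autoref{th:Los}, all models of $\PrA$ are affinely equivalent, so $\PrA$ is complete; alternatively, completeness will follow once we have affine quantifier elimination, as $\tS^\qf_0(\PrA)$ is a single point.

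For \ref{i:thpa:compl} the heart is affine quantifier elimination, i.e.\ injectivity of $\rho^\qf_n\colon \tS^\aff_n(\PrA)\to\tS^\qf_n(\PrA)$. Given $p=\tp^\aff(a)$ realized by a tuple $a$ in $N\models\PrA$, I would pass to $N' = L^1([0,1],N)$, which contains $N$ affinely (\autoref{th:Los}) and is atomless, hence a model of $\APrA$; since every affine formula is, modulo $\APrA$, a uniform limit of quantifier-free ones, the value $\varphi^{N'}(a)$ is determined by $\tp^\qf(a)$ for every affine $\varphi$, so $p$ is determined by $\rho^\qf_n(p)$. For \ref{i:thpa:type-spaces} one checks directly that $\tS^\qf_n(\PrA)\cong\cM(2^n)$: a quantifier-free $n$-type is determined by the values $\mu(a_\eps)$ of the cells $a_\eps = \bigcap_{i<n}a_i^{\eps_i}$, these run exactly over the simplex $\set{(\lambda_\eps)_{\eps\in 2^n} : \lambda_\eps\geq 0,\ \sum_\eps\lambda_\eps=1} = \cM(2^n)$ (any such point being realized in a finite probability algebra), and the resulting map is affine, continuous and bijective, hence an affine homeomorphism. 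Composing, $\rho^\qf_n$ is an affine homeomorphism $\tS^\aff_n(\PrA)\cong\cM(2^n)$.

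Item \ref{i:thpa:abs-cat} is then formal. Each $\cM(2^n)$ is a Bauer simplex, so $\PrA$ is a Bauer theory and $\cE_n(\PrA)$ corresponds to the set $\set{\delta_\eta : \eta\in 2^n}$ of Dirac measures; tracing through the homeomorphism, $\delta_\eta$ is the type of the tuple with $a_i=\bOne$ if $\eta_i=0$ and $a_i=\bZero$ otherwise, and these are precisely the $2^n$ tuples from $A_2$. Hence $A_2$ realizes only extreme types, so it is an extremal model; conversely, any extremal model realizes only the two types in $\cE_1(\PrA)$, namely those of $\bZero$ and of $\bOne$, so its underlying set is $\set{\bZero,\bOne}$. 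Thus $A_2$ is, up to isomorphism, the unique extremal model and $\PrA$ is absolutely categorical (one may also invoke \autoref{p:compact-absol-cat} once completeness is known).

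The parameterized statement \ref{i:thpa:type-spaces-param} is where I expect the real work, and I would prove it by the same two-step scheme applied to the diagram $D^\aff_M$. First, $\tS^\qf_n(M)\cong B_n$, where $B_n$ is the compact convex set of \autoref{eq:thpa:param}, whose extreme points are described in \autoref{ex:Linfty-convex-set}: the quantifier-free type of $a$ over $M$ is determined by the numbers $\mu(a_\eps\cap b)$, $b\in M$, which (since $0\le\mu(a_\eps\cap b)\le\mu(b)$, so by Radon--Nikodym in $M$) assemble into densities $f_\eps\in L^\infty(M)$ with $f_\eps\geq 0$ and $\sum_\eps f_\eps=1$, giving a continuous affine injection $\tS^\qf_n(M)\to B_n$. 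The delicate point is surjectivity: given $(f_\eps)\in B_n$ with $M=\MALG(\Omega,\mu)$, realize it inside $\bar M := L^1([0,1],M)\cong\MALG(\Omega\times[0,1])$, which contains $M$ affinely, by a ``quantile slicing'': fixing an enumeration of $2^n$ and putting $F_k=\sum_{j<k}f_{\eps^{(j)}}$, let $E_k=\set{(\omega,t) : F_k(\omega)\le t<F_{k+1}(\omega)}$ and $a_i=\bigcup_{\eps_i=0}[E_{k(\eps)}]$, so that $\mu^{\bar M}(a_\eps\cap b)=\int_b f_\eps\,d\mu$ for $b\in M$; since $M\preceq^\aff\bar M$, this gives a realization of the configuration $(f_\eps)$ in a model of $D^\aff_M$. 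Second, any affine type over $M$ is realized in an atomless extension $N'=L^1([0,1],N)\models\APrA$; applying the quantifier elimination of $\APrA$ to the $\cL$-formulas obtained by plugging in the parameters shows that affine $\cL(M)$-types are determined by their quantifier-free parts, i.e.\ $D^\aff_M$ has affine quantifier elimination, whence $\tS^\aff_n(M)\cong\tS^\qf_n(M)\cong B_n$. Finally, the extreme points of $B_n$ are the tuples $(\chi_{b_\eps})$ for $b\in M^n$, which have the same densities, hence by affine quantifier elimination the same affine type, as the tuple $b$ itself; so the extreme points of $\tS^\aff_n(M)$ are exactly the types realized in $M$.
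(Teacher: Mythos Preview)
Your proof is correct and follows the same overall strategy as the paper, with two noteworthy differences in execution.

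For items \ref{i:thpa:abs-cat} and \ref{i:thpa:type-spaces}, the paper proceeds in the opposite order from you. It first establishes absolute categoricity via \autoref{p:compact-absol-cat}, then proves the Bauer property by invoking the automorphism group criterion of \autoref{th:criterion-affine-reduction}: since $L^1(\Omega,M_1)\cong\MALG(\Omega)$ and $\Aut(M_1)$ is trivial, the hypothesis $\Aut(L^1(\Omega,M_1)) = \Aut(\mu)$ is immediate, giving affine reduction for $\Th^\cont(M_1)$ and hence Bauer for $\PrA$. The identification $\tS^\aff_n(\PrA)\cong\cM(2^n)$ then follows from Bauer plus $\cE_n(\PrA)\cong M_1^n=2^n$ (using \autoref{p:M-sslash-G}). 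Your route---computing $\tS^\qf_n(\PrA)\cong\cM(2^n)$ directly and reading off Bauer from the fact that $\cM(2^n)$ is a Bauer simplex---is more elementary and self-contained, avoiding the machinery of \autoref{sec:criterion-affine-reduction}. The paper's route, on the other hand, illustrates that criterion in the simplest possible case.

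For surjectivity in \ref{i:thpa:type-spaces-param}, the paper uses a soft argument: since $\Phi$ is continuous and affine on a compact convex set, its image is closed and convex; since $\Phi(\tp^\aff(b))=\chi_b$ for $b\in M$ and convex combinations of characteristic functions are dense in $B$, the image is all of $B$. Your quantile-slicing construction in $L^1([0,1],M)$ is more laborious but gives an explicit realization of each $(f_\eps)$, which is informative in its own right. Both arguments are valid; the paper's is shorter once one has the density fact from \autoref{ex:Linfty-convex-set}.
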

Items \ref{i:thpa:compl} and \ref{i:thpa:type-spaces} of the theorem are due to Bagheri.
\begin{proof}
  \ref{i:thpa:compl}
  Let $M_\lambda$ denote the separable model of $\APrA$ (i.e., the measure algebra of a standard, atomless probability space $(\Omega, \lambda)$). If $M \models \PrA$ is separable, then $L^1(\Omega, M)$ is an atomless, separable probability algebra, so isomorphic to $M_\lambda$. As the diagonal embedding $M \hookrightarrow L^1(\Omega, M)$ is affine, we obtain that every separable model of $M$ embeds affinely in $M_\lambda$, which implies that $\PrA$ is complete and that $M_\lambda$ realizes all types in $\tS^\aff_n(\PrA)$ for every $n$. This, together with the fact that $M_\lambda$ is \emph{ultrahomogeneous} (i.e., for every two tuples with the same quantifier-free type in $M_\lambda$ there is an automorphism sending one to the other), implies that $\PrA$ eliminates quantifiers.

  \ref{i:thpa:abs-cat}, \ref{i:thpa:type-spaces}
  Let $M_1$ be the two-point algebra $\set{\bZero, \bOne}$. As its elements are affinely definable, $M_1$ is an extremal model. Now \autoref{p:compact-absol-cat} tells us that $\PrA$ is absolutely categorical with unique extremal model $M_1$. As $M_1$ has no non-trivial automorphisms, by \autoref{p:M-sslash-G}, $\cE_n(\PrA) \cong M_1^n = 2^n$. To check that the theory is Bauer, we will apply \autoref{th:criterion-affine-reduction} to the model $M_1$: we have that $L^1(\Omega, M_1) \cong \MALG(\Omega)$, so its hypothesis is trivially verified. Thus $\tS^\aff_n(\PrA) \cong \cM(2^n)$. An explicit formula for calculating the measure $\nu \in \cM(2^n)$ corresponding to the type of a tuple $\bar a$ is given by
  \begin{equation*}
    \nu(\set{\eps}) = \mu\Big(\bigcap_{i<n} a_i^{\eps_i}\Big) \quad \text{ for } \eps \in 2^n,
  \end{equation*}
  where $a^0 = a$ and $a^1 = \neg a$.

  \ref{i:thpa:type-spaces-param}
  Consider the map $\Phi$ from $\tS^\aff_n(M)$ to the right-hand side of \autoref{eq:thpa:param} given by
  \begin{equation*}
    \Phi(p)_\eps = \Pr\Big(\bigcap_i a_i^{\eps_i} \mid M\Big) \text{ for } \eps \in 2^n,
  \end{equation*}
  where $\bar a \models p$ and $\Pr(\cdot \mid M)$ denotes the conditional probability with respect to $M$. We claim that this map is a well-defined affine homeomorphism.

  For simplicity of notation, we only give the proof for $n = 1$, i.e., for
  \begin{equation*}
    \Phi \colon \tS^\aff_1(M) \to B = \set{f \in L^\infty(M) : 0 \leq f \leq 1}
  \end{equation*}
  given by $\Phi(p) = \Pr({a \mid M})$ with $a \models p$. (Here we identify $f$ with $f_0$ from the notation used above.)

  Note that for any simple function $h = \sum_j \lambda_j \chi_{b_j} \in L^1(M)$, we have that
  \begin{equation*}
    \ip{\Phi(p), h} = \sum_j \lambda_j \mu(a \cap b_j).
  \end{equation*}
  This shows that $\Phi(p)$ is well-defined (i.e., does not depend on the realization $a$ of $p$) and that $\Phi$ is affine, continuous, and injective. Surjectivity follows from the fact that for $b \in M$, $\Phi(\tp^\aff(b)) = \chi_b$ for $b \in M$ and that convex combinations of characteristic functions are dense in $B$. The final statement follows from \autoref{ex:Linfty-convex-set}.
\end{proof}

The theory of probability algebras with parameters provides several interesting examples.

\begin{example}\label{ex:PrA-with-2parameters}
  Let $M_2$ be the probability algebra $\set{\bZero, \bOne, a, \neg a}$, where $a$ is a named atom of measure $1/2$. By the theorem above, $\tS^\aff_1(M_2)$ has four extreme points, namely $0, 1, \chi_a, \chi_{\neg a}$, and it has dimension $2$ (being a subspace of $L^\infty(M_2) = \R^2$). Thus $\tS^\aff_1(M_2)$ is not a simplex and $M_2$ is a finite structure whose affine theory is not Bauer.

  The structure $M_2$ is the only extremal model of its theory. Every non-trivial convex combination of copies of $M_2$ has at least $4$ atoms and any direct integral of copies of $M_2$ over a probability space that is not atomic is infinite. However, $\Th^\aff(M_2)$ has models with $3$ atoms, which give a counterexample for the existence of extremal decompositions for models of non-simplicial theories.
\end{example}

\begin{example}
  \label{ex:PrA-with-atomless-parameters}
  Let $M_\lambda$ be the atomless, separable model of $\PrA$ and let $T$ be the affine diagram of $M_\lambda$. Then  by \autoref{th:prob-alg}, all extreme types of $T$ are realized in $M_\lambda$, i.e., $T$ is absolutely categorical with a non-compact extremal model $M_\lambda$. Note that extreme types are dense in $\tS^\aff_n(T)$, so there is no ``extremal compactness theorem'', i.e., a set of conditions that is finitely realized in extremal models need not be realized in an extremal model. Being an extremal model is also not closed under ultraproducts. Note finally that $\tS^\aff_n(T)$ is not a simplex. An example where extreme types are dense but the theory is simplicial is given in \autoref{sec:PMP}.
\end{example}

\subsection{More algebraic examples}
\label{sec:more-algebr-exampl}

In general, it is difficult to have quantifier elimination in affine theories simply because there are not enough affine quantifier-free formulas. One way to counteract this is to have sufficiently many function symbols in the language. In this subsection, we give three more such examples, the first of which also illustrates a different approach to the measure algebra (which is recovered as the case $q=2$ of the next proposition).

\begin{prop}
  \label{p:finite-fields}
  Let $\bF_q$ be the finite field with $q$ elements considered as a discrete structure in the language of rings. Then $\Th^\aff(\bF_q)$ is an absolutely categorical Bauer theory with affine quantifier elimination.
\end{prop}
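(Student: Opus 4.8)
The statement to prove is that for the finite field $\bF_q$, viewed as a discrete (classical) structure in the language of rings $\cL_{\mathrm{ring}} = \{+,-,\cdot,0,1\}$, the affine theory $\Th^\aff(\bF_q)$ is absolutely categorical, is a Bauer theory, and has affine quantifier elimination. The overall strategy is: first identify the extremal model and show it is compact, then use \autoref{p:compact-absol-cat} to get absolute categoricity, then establish affine quantifier elimination by a direct computation of the quantifier-free type spaces, and finally conclude the Bauer property via \autoref{th:criterion-affine-reduction} (the automorphism-group criterion for affine reduction), exactly in the spirit of the proof of \autoref{th:prob-alg}.

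\textbf{Step 1: the extremal model.} Every element of $\bF_q$ is named by a closed $\cL_{\mathrm{ring}}$-term (namely $\underbrace{1+\cdots+1}_{k}$ for $k = 0,\dots,q-1$), hence is affinely definable; so $\bF_q$ is an extremal model of $\Th^\aff(\bF_q)$, and it is compact, being finite. By \autoref{p:compact-absol-cat}, $\Th^\aff(\bF_q)$ is absolutely categorical with unique extremal model $\bF_q$ itself, this model is prime, and all extreme type spaces $\cE_x(\Th^\aff(\bF_q))$ are closed. (Here one should note that $\bF_q$ is nontrivial only for $q \geq 2$, which is automatic; the discrete character means all basic predicates — there are none beyond $d$ — take values $0,1$, and $d$ itself takes only the values $0$ and $1$ since distinct elements are named by distinct terms, so everything is uniformly discrete.)

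\textbf{Step 2: affine quantifier elimination.} Since $\bF_q$ is finite, the map $\rho^\qf_n \colon \tS^\aff_n(\Th^\aff(\bF_q)) \to \tS^\qf_n(\Th^\aff(\bF_q))$ should be shown injective (\autoref{df:quantifier-elim}). I would argue as follows. By \autoref{l:realized-convex-dense}, $\tS^\aff_n(\Th^\aff(\bF_q)) = \cco\{\tp^\aff(a) : a \in \bF_q^n\}$, and since $\bF_q^n$ is finite this is a finite-dimensional simplex-like set — more precisely, by \autoref{l:extreme-transitivity}\autoref{l:extreme-transitivity:extreme-points} applied to $\rho^\qf_n$, together with the fact that $\bF_q$ realizes all its extreme types (absolute categoricity, plus \autoref{l:realized-extreme-dense}), the extreme points of $\tS^\aff_n$ are exactly the (finitely many) types $\tp^\aff(a)$, $a \in \bF_q^n$. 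Now each such $a \in \bF_q^n$ is distinguished from every other $b \in \bF_q^n$ by an atomic formula: indeed $a \neq b$ implies $a_i \neq b_i$ for some $i$, and then the atomic formula $d(x_i, \underline{c})$ (where $\underline{c}$ is the term naming $b_i$) takes value $0$ at $b$ and value $1$ at $a$; equivalently one uses polynomial equations. Hence the finitely many extreme points have pairwise distinct quantifier-free parts, so $\rho^\qf_n$ is injective on extreme points; since both sides are finite-dimensional compact convex sets with $\rho^\qf_n$ affine and continuous and surjective, and the extreme points of the image are the images of the extreme points, injectivity on extreme points plus the fact that a point in $\tS^\aff_n$ is determined by the (unique, by the simplex structure we will get in Step 4, or simply by finiteness and the barycentric coordinates) boundary measure representing it, yields injectivity of $\rho^\qf_n$. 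Actually the cleanest route: $\tS^\aff_n(\Th^\aff(\bF_q))$ is the convex hull of the finite set $\{\tp^\aff(a) : a\in\bF_q^n\}$, which are affinely independent as points of the (finite-dimensional) space of affine functions — because each is separated from the affine hull of the others by an atomic formula — hence $\tS^\aff_n$ is a finite-dimensional simplex whose vertices are these types, and $\rho^\qf_n$, being injective on vertices and affine, is injective. This also proves en passant that $\Th^\aff(\bF_q)$ is simplicial.

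\textbf{Step 3: the Bauer property.} The extreme type spaces are already closed (Step 1), so by \autoref{c:closed-extreme-types} the extremal models form an elementary class in continuous logic; combined with the fact that $\tS^\aff_n(\Th^\aff(\bF_q))$ is a simplex (Step 2), $\Th^\aff(\bF_q)$ is a Bauer theory by \autoref{df:Bauer-theory}. (Alternatively, one can invoke \autoref{th:criterion-affine-reduction}\autoref{i:criterion-ar-2}: $\bF_q$ is finite, hence trivially approximately saturated in continuous logic, and $\Aut(L^1(\Omega,\bF_q)) = L(\Omega, \Aut(\bF_q)) \rtimes \Aut(\mu)$ holds — this is where a small verification is needed — giving that $\Th^\cont(\bF_q)$ has affine reduction, whence $\Th^\aff(\bF_q) = \Th^\cont(\bF_q)_\aff$ is Bauer by \autoref{th:Bauer-correspondence-Q}; but the direct route via closed extreme types plus simpliciality is shorter and avoids the automorphism computation.) \textbf{Main obstacle.} The one genuinely non-formal point is the affine independence claim in Step 2 — that the finitely many realized types $\{\tp^\aff(a):a\in\bF_q^n\}$ span a simplex, i.e., that no one of them is an affine (signed-barycentric) combination of the others. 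This is where finiteness and the separation-by-atomic-formulas observation do the real work, and it must be stated carefully because "affine combination" here allows negative coefficients, so one needs that for each $a$ there is an atomic $\varphi$ with $\varphi(a) = 1$ and $\varphi(b) = 0$ for all $b \neq a$ in $\bF_q^n$ (take $\varphi = \prod$-type indicator built from $d(x_i,\underline{b_i})$, or a single polynomial vanishing exactly off $a$), which pins down the barycentric coordinate of $a$ in any representation and forces uniqueness. Everything else is routine application of \autoref{p:compact-absol-cat}, \autoref{c:closed-extreme-types}, and the definitions.
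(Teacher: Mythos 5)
Your argument breaks down whenever $q$ is not prime, and the point where it fails is the very first claim of Step 1: that every element of $\bF_q$ is named by a closed term $1+\cdots+1$. Closed ring terms evaluate to elements of the prime subfield $\bF_p$ only, so for $q=p^m$ with $m\geq 2$ the elements of $\bF_q\sminus\bF_p$ are not named by terms; worse, they are moved by the Frobenius automorphism $x\mapsto x^p$, so they are not affinely (or even continuously) definable at all, and Galois-conjugate tuples have \emph{identical} affine types. This poisons Step 2: the extreme points of $\tS^\aff_n(\Th^\aff(\bF_q))$ are not the $q^n$ types $\tp^\aff(\bar a)$ for $\bar a\in\bF_q^n$ but rather one type per orbit of $G=\Aut(\bF_q)=\Gal(\bF_q/\bF_p)$ acting diagonally on $\bF_q^n$ (cf.\ \autoref{p:M-sslash-G}), and your ``main obstacle'' — an atomic formula equal to $1$ at $\bar a$ and $0$ at every other tuple — is genuinely impossible when the orbit of $\bar a$ is not a singleton. (Your Step 1 conclusion that $\bF_q$ is extremal is still true, but for a different reason: by \autoref{p:compact-absol-cat} the extremal model is prime, hence an affine substructure of $\bF_q$, and a finite structure with the $0$--$1$ metric and $n$ elements satisfies $\sup_{\bar x}\sum_{i,j}d(x_i,x_j)=n(n-1)$, so it has no proper affine substructures.)

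The paper's proof repairs exactly this by working with $G$-orbits throughout. Since every formula is $G$-invariant and the $G$-invariant functions on the finite set $\bF_q^n$ are spanned by the indicators of orbits, it suffices to realize each orbit indicator as a quantifier-free affine formula. This is done with the symmetrized Dirac polynomial $P(\bar x)=\sum_{\bar b\in G\cdot\bar a}\prod_i\bigl(1-(x_i-b_i)^{q-1}\bigr)$: being $G$-invariant, its coefficients lie in the prime field, so it is a genuine ring term, and $\chi_{G\cdot\bar a}(\bar x)=d(0,P(\bar x))$ on $\bF_q$ (equivalently $1-d$ of its complement). Thus every continuous formula is \emph{equal} on $\bF_q$ to a quantifier-free affine formula, which gives affine reduction (hence Bauer, by \autoref{th:Bauer-correspondence-Q}) and affine quantifier elimination in one stroke. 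Your overall architecture — compactness plus \autoref{p:compact-absol-cat} for absolute categoricity, then a simplex/closed-extreme-points argument for Bauer — is sound and works as written for prime $q$, but the orbit-level polynomial construction is the essential missing ingredient in general.
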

\begin{proof}
  Let $T = \Th^\aff(\bF_q)$. By \autoref{p:compact-absol-cat}, $T$ is absolutely categorical and its extremal model is an affine substructure of $\bF_q$. Finite structures with the $0$--$1$ metric have no proper affine substructures because such a structure $F$ with $n$ elements satisfies
  \begin{equation*}
    F \models \sup_{\bar x} \sum_{i, j = 1}^n d(x_i, x_j) = n(n-1),
  \end{equation*}
  so we conclude that $\bF_q$ is the unique extremal model of $T$. It remains to check that $T$ is Bauer and has affine quantifier elimination. For this, it suffices to see that for every continuous logic formula $\varphi(\bar x)$, there exists a quantifier-free, affine formula $\psi(\bar x)$ such that $\varphi^{\bF_q} = \psi^{\bF_q}$. Let $G = \Aut(\bF_q)$ and recall that $G$ is the cyclic group generated by the Frobenius automorphism. The formula $\varphi$ is $G$-invariant and the linear space of $G$-invariant functions on $\bF_q^n$ is generated by the characteristic functions of orbits, so it suffices to find, for every orbit $G \cdot \bar a$, an atomic formula which evaluates to $\chi_{G \cdot \bar a}$. Let $\delta_{\bar a}(\bar x)$ be the polynomial defined by
  \begin{equation*}
    \delta_{\bar a}(\bar x) = \prod_{i } (1 - (x_i-a_i)^{q-1})
  \end{equation*}
  and note that it evaluates to the Dirac function of $\bar a$. Let
  \begin{equation*}
    P(\bar x) = \sum_{\bar b \in G \cdot \bar a} \delta_{\bar b}(\bar x)
  \end{equation*}
  and note that the polynomial $P(\bar x)$ is $G$-invariant, so its coefficients are in the prime field and it corresponds to a term in our language. Finally observe that
  \begin{equation*}
    \chi_{G \cdot \bar a}(\bar x) = d(0, P(\bar x))^{\bF_q}. \qedhere
  \end{equation*}
\end{proof}

\begin{prop}
  \label{p:Zp}
  Let $p$ be a prime and let $\Z_p$ be the ring of $p$-adic integers considered as a continuous logic structure in the ring language with the standard valuation metric. Then $\Th^\aff(\Z_p)$ is an absolutely categorical Bauer theory.
\end{prop}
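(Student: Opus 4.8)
The plan is to get absolute categoricity directly from the compactness of $\Z_p$, and to get the Bauer property from the automorphism--group criterion for affine reduction, exactly as in the adjacent examples. First I would check that $\Z_p$, with the valuation metric $d(x,y)=|x-y|_p$, is a legitimate $\cL_{\mathrm{ring}}$-structure in continuous logic: it is a complete metric space, and by the ultrametric inequality together with $|a|_p,|b|_p\leq 1$ the ring operations are $1$-Lipschitz, hence respect the modulus $\id$; moreover $\Z_p$ is compact. Therefore $T:=\Th^\aff(\Z_p)$ is a complete affine theory with a compact model, and \autoref{p:compact-absol-cat} gives at once that $T$ is absolutely categorical, that its (unique) extremal model is compact and prime, and that $\cE_n(T)$ is closed in $\tS^\aff_n(T)$ for every $n$. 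I would note in passing that the extremal model is $\Z_p$ itself: being prime it embeds affinely — hence isometrically, as a substructure — into $\Z_p$, its image is complete and therefore closed, and it contains the dense subring $\Z$, so it is all of $\Z_p$. It then remains only to show that $T$ is simplicial, since this together with the closedness of the $\cE_n(T)$ makes $T$ a Bauer theory.

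For simpliciality I would show that $Q:=\Th^\cont(\Z_p)$ has affine reduction; since $Q$ is complete, $Q_\aff$ is precisely the complete affine theory of its models, i.e.\ $Q_\aff=\Th^\aff(\Z_p)=T$, and then \autoref{th:Bauer-correspondence-Q} yields that $T=Q_\aff$ is a Bauer theory. To prove that $Q$ has affine reduction I would apply \autoref{th:criterion-affine-reduction}\autoref{i:criterion-ar-2} with $M=\Z_p$, which requires two things: (a) that $\Z_p$ be approximately $\aleph_0$-saturated in continuous logic, and (b) that $\Aut\bigl(L^1(\Omega,\Z_p)\bigr)=L\bigl(\Omega,\Aut(\Z_p)\bigr)\rtimes\Aut(\mu)$ for a standard atomless $(\Omega,\mu)$. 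Point (a) I would deduce from the stronger fact that a compact metric structure — and hence also $\Z_p$ together with any finite set of named constants — has no proper elementary extension: any element $b'$ of an elementary extension realizes the type over a countable dense $D\subseteq\Z_p$, this type is finitely approximately realized in the compact set $\Z_p$, so by passing to a convergent subsequence it is realized by some $b\in\Z_p$, and then $b$ and $b'$ are both at distance $0$ from $D$, forcing $b'=b\in\Z_p$; consequently $\Z_p$ is in fact $\aleph_0$-saturated. For point (b), first $\Aut(\Z_p)=\{\id\}$: a ring automorphism preserves each ideal $p^n\Z_p=\{p^nz:z\in\Z_p\}$ (a ring-theoretic definition), hence is continuous and fixes the dense subring $\Z$; so the right-hand side of (b) is just $\Aut(\mu)$. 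For the left-hand side, an isometric ring automorphism $\Phi$ of $L^1(\Omega,\Z_p)$ permutes the idempotents, which are exactly the characteristic functions $\chi_A$ for $A\in\MALG(\Omega)$; since $d(\chi_A,\chi_B)=\mu(A\triangle B)$, the restriction of $\Phi$ is a measure-preserving Boolean automorphism of $\MALG(\Omega)$, hence — $\Omega$ being standard — agrees on idempotents with the action of some $S\in\Aut(\mu)$. Replacing $\Phi$ by $S^{-1}\Phi$ we may assume $\Phi$ fixes all idempotents and all integer constants, hence fixes every $\Z$-valued simple function; these are dense in $L^1(\Omega,\Z_p)$ (truncate a measurable $f\colon\Omega\to\Z_p$ modulo $p^k$) and $\Phi$ is isometric, so $\Phi=\id$. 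Thus $\Aut\bigl(L^1(\Omega,\Z_p)\bigr)=\Aut(\mu)$, giving (b).

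The routine parts of this are the continuity/compactness verifications and the identification $Q_\aff=T$. The two substantive points are the no-proper-elementary-extension argument behind (a) — here one must be slightly careful that the relevant inf over the compact structure is genuinely attained — and the computation of $\Aut\bigl(L^1(\Omega,\Z_p)\bigr)$ in (b); I expect the latter to be the main obstacle, in particular the step passing from a measure-preserving Boolean automorphism of $\MALG(\Omega)$ to an honest element of $\Aut(\mu)$, which is where standardness of $\Omega$ is used, and the density of the $\Z$-valued simple functions in $L^1(\Omega,\Z_p)$.
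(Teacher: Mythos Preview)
Your proposal is correct and follows the same approach as the paper: absolute categoricity from compactness of $\Z_p$ via \autoref{p:compact-absol-cat}, and the Bauer property via the automorphism-group criterion \autoref{th:criterion-affine-reduction}, computing $\Aut(L^1(\Omega,\Z_p))$ through the action on idempotents and then using density of integer-valued simple functions. Your explicit verification of approximate saturation (which the paper leaves implicit for compact models) and your density argument via truncation modulo $p^k$ are in fact slightly more detailed than the paper's presentation.
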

\begin{proof}
  Let $T = \Th^\aff(\Z_p)$, and note that the ring axioms are affine. That $T$ is absolutely categorical with (compact) extremal model $\Z_p$ is proved exactly like in the previous proposition, observing that $\Z_p$ has no proper subrings. To see that $T$ is Bauer, we will apply \autoref{th:criterion-affine-reduction}. As the automorphism group of $\Z_p$ is trivial, we need to show that if $\Omega$ is a standard atomless probability space, then the natural embedding
  \begin{equation*}
    \Aut(\Omega) \hookrightarrow \Aut(L^1(\Omega, \Z_p))
  \end{equation*}
  is an isomorphism. As $\Z_p$ is an integral domain, the idempotent elements in the ring $L^1(\Omega, \Z_p)$ are exactly the characteristic functions $\set{\chi_a : a \in \MALG(\Omega)}$.
  The measure algebra operations are also definable in the ring language: $\chi_{a \cap b} = \chi_a \chi_b$ and $\chi_{\neg a} = 1 - \chi_a$. Thus $\Aut(L^1(\Omega, \Z_p))$ acts on $\MALG(\Omega)$ by automorphisms, which gives us a retraction $\Aut(L^1(\Omega, \Z_p))\to \Aut(\Omega)$. Finally, note that if an automorphism of $L^1(\Omega, \Z_p)$ is the identity on all idempotents, then it is the identity everywhere because finite sums of involutions are dense in $L^1(\Omega, \Z_p)$.
\end{proof}

\begin{prop}\label{p:ex:[0,1]}
  Consider $V = [0, 1]$ as a continuous structure with the metric inherited from the reals and a signature in which there are terms for a dense set of continuous functions $V^n \to V$ for every $n$ (for example, the signature $\set{0, \half, \dotminus, \neg}$ considered in \cite{BenYaacovOnTheories}). Then $\Th^\aff(V)$ is an absolutely categorical Bauer theory with affine quantifier elimination.
  Moreover, $\Th^\aff(V)$ is equivalent (as a continuous logic theory) to the theory $\RV$ of $[0,1]$-valued random variables discussed in \cite[\textsection2]{BenYaacovOnTheories}.
\end{prop}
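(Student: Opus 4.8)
The plan is to imitate the proofs of \autoref{p:finite-fields} and \autoref{p:Zp}, using the compactness of $V$, but supplemented by a direct analysis showing that quantifier-free affine formulas already exhaust $C(V^n)$. Write $T=\Th^\aff(V)$; this is a complete affine theory and $V$ is a compact model, so \autoref{p:compact-absol-cat} immediately gives that $T$ is absolutely categorical, its extremal model is compact and prime, and the spaces $\cE_x(T)$ are closed. To identify the extremal model, note that, being prime, it admits an affine embedding into $V$, hence is (isomorphic to) a substructure of $V$; but the substructure of $V$ generated by $\emptyset$ is the closure of the set of values of closed terms, and by hypothesis (the case $n=0$ of ``terms for a dense set of continuous functions $V^n\to V$'') this set is dense in $V$. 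Thus $V$ has no proper substructure and is therefore its own unique extremal model.

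For affine quantifier elimination and the Bauer property, the key point is a density statement: the linear subspace $D\subseteq C(V^n)$ consisting of the interpretations in $V$ of quantifier-free affine formulas is uniformly dense in $C(V^n)$. Indeed, each term $t$ is by hypothesis a uniform limit of the prescribed dense family of maps $V^n\to V=[0,1]$, and since such a $t$ is $[0,1]$-valued, the atomic formula $d(t,\mathbf 0)$ interprets as $t$ itself (where $\mathbf 0$ is a closed term with value $0$, or arbitrarily close to it); hence $D$ contains a uniformly dense subset of $C(V^n,[0,1])$. As $D$ is a linear subspace containing the constants, decomposing an arbitrary $h\in C(V^n)$ as $h=h^+-h^-$ with $h^\pm\in C(V^n,[0,1])$ shows $\overline D=C(V^n)$. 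Now every affine formula interprets in the compact structure $V$ as a continuous function (compactness is used to handle the quantifiers), so its interpretation lies in $C(V^n)=\overline D$; and since $T$ is complete and the $\sup$-conditions comparing a formula with a quantifier-free one are affine, uniform approximation in $V$ coincides with uniform approximation modulo $T$. Hence every affine formula is a uniform limit modulo $T$ of quantifier-free affine formulas, and by \autoref{rmk:AffineQE} the map $\rho^\qf_n$ is injective: $T$ has affine quantifier elimination. Consequently $\tS^\aff_n(T)\cong\tS^\qf_n(T)$, which is the state space of the order unit space $D$; since $\overline D=C(V^n)$, Kadison duality gives $\tS^\aff_n(T)\cong\tS\bigl(C(V^n)\bigr)=\cM(V^n)$, a Bauer simplex with extreme points $V^n$. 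So $T$ is a Bauer theory (and, as a check, $\cE_n(T)\cong V^n$, in accordance with the first paragraph).

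For the ``moreover'', $T$ is absolutely categorical, simplicial (being Bauer), and has a separable language (since $\cL^\aff_n(T)\subseteq C(V^n)$ is separable), with extremal model $V$; so by \autoref{th:abs-cat-models} every model of $T$ is of the form $L^1(\Omega,V)$ for some probability space $\Omega$. Conversely, $L^1(\Omega,V)\equiv^\aff V$ by \Los's theorem for direct integrals (\autoref{th:Los}), so each such structure is a model of $T$. In the signature $\set{0,\half,\dotminus,\neg}$, the structure $L^1(\Omega,V)$ is precisely the space of $[0,1]$-valued random variables over $\Omega$ with pointwise operations and the $L^1$-metric, i.e., the corresponding structure discussed in \cite[\textsection2]{BenYaacovOnTheories}. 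Thus $\Th^\aff(V)$ and $\RV$ have the same models, hence are equivalent as continuous logic theories. I expect the main obstacle to be the density lemma $\overline D=C(V^n)$ together with the (otherwise routine) verification that replacing affine formulas by quantifier-free ones is legitimate modulo the complete theory $T$; once these are in place, everything reduces to applications of \autoref{p:compact-absol-cat}, \autoref{rmk:AffineQE}, \autoref{th:abs-cat-models}, and \autoref{th:Los}.
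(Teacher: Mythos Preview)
Your proof is correct and takes essentially the same approach as the paper: the key step in both is the observation that atomic formulas of the form $d(\mathbf{0},t(\bar x))$ already exhaust $C(V^n)$ because terms are dense in $C(V^n,V)$, and the treatment of the ``moreover'' via \autoref{th:abs-cat-models} is identical. The only organizational difference is that the paper packages the density fact as a ``strong form of affine reduction'' and appeals to \autoref{th:Bauer-correspondence-Q} for the Bauer property, whereas you derive it by explicitly identifying $\tS^\aff_n(T)\cong\cM(V^n)$; both routes are short and rest on the same observation.
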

\begin{proof}
  That $\Th^\aff(V)$ is absolutely categorical with extremal model $V$ follows as in the above propositions. To check simultaneously that it is Bauer and that it has affine quantifier elimination, we observe that it satisfies the following strong form of affine reduction: the set of atomic formulas on $V^n$ is dense in the set of all continuous logic formulas. Indeed, if $\varphi \colon V^n \to [0, 1]$ is the interpretation of a continuous logic formula, then, by identifying $V$ with a subset of $\R$, we can approximate $\varphi$ with a term $\tau$ and then the formula $d(0, \tau(\bar x))$ approximates $\varphi$.

  For the moreover part, recall that the models of $\RV$ are precisely the structures of the form $L^1(\Omega,V)$ for a probability space $\Omega$. Then the claim follows from \autoref{th:abs-cat-models}, which shows that $\Th^\aff(V)$ and $\RV$ have the same models. Alternatively, one may observe that $\RV_\aff = \Th^\aff(V)$ because $V\preceq^\aff L^1(\Omega,V)$ for every $\Omega$, and that the axioms for $\RV$ given in \cite[\textsection2]{BenYaacovOnTheories} are indeed affine.
\end{proof}


\section{Classical theories}
\label{sec:ex:classical}

A continuous logic structure is \emph{classical} if every basic predicate, including the metric, is $\set{0,1}$-valued. A continuous logic $\cL$-theory is a \emph{classical theory} if all its models are classical. This property can be axiomatized by the conditions
\begin{equation*}
  0\leq \inf_x P(x),\quad \sup_x P(x)\leq 1,\quad \sup_x\big( P(x)\wedge (1-P(x))\big) \leq 0,
\end{equation*}
where $P(x)$ varies over all basic $\cL$-predicates. We will denote the collection of all these conditions by $\ClassL$, so that an $\cL$-theory $C$ is classical if and only if $C\models \ClassL$. Note that $\ClassL$ is a universal theory. It is certainly not affinely axiomatizable because non-trivial direct integrals of classical structures are not classical.

If $C$ is any classical theory, we know from \autoref{p:ext-models-QAff} that the extremal models of $C_\aff$ are substructures of models of $C$, and are thus classical structures. In fact, much more is true.

\begin{theorem}\label{th:classical-theories-general}
  Let $C$ be a classical $\cL$-theory. The following hold:
  \begin{enumerate}
  \item \label{i:class-1} The canonical maps $\rho_x^\aff\colon \tS_x^\cont(C)\to \tS_x^\aff(C_\aff)$ satisfy $\rho_x^\aff(\tS_x^\cont(C)) = \cE_x(C_\aff)$, and restrict to $\dtp$-isometric $\tau$-homeomorphisms $\rho_x^\aff \colon \tS_x^\cont(C) \to \cE_x(C_\aff)$.
  \item \label{i:class-2} The theory $C$ has delta-convex reduction.
  \item \label{i:class-3} For a model $M\models C_\aff$ the following are equivalent:
    \begin{enumerate}
    \item $M$ is extremal;
    \item $M\models C$;
    \item $M$ is classical.
    \end{enumerate}
    In particular, $C\equiv C_\aff\cup\ClassL$.
  \end{enumerate}
\end{theorem}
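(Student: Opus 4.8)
The plan is to establish the three items in the order stated, bootstrapping from an analysis of quantifier-free types. First I would analyze $\tS^{\qf}_x(C_\aff)$. A quantifier-free affine type is determined by the values it assigns to atomic formulas, and since $C$ is classical those atomic predicates are $\{0,1\}$-valued in every model of $C$, the quantifier-free types realized in models of $C$ are exactly the $\{0,1\}$-valued (i.e.\ classical) quantifier-free types. Crucially, a $\{0,1\}$-valued point of the convex set $\tS^{\qf}_x(C_\aff)$ cannot be written as a proper convex combination of two distinct types (any such combination would force intermediate values on some atomic formula), so every classical quantifier-free type is an \emph{extreme} point of $\tS^{\qf}_x(C_\aff)$; conversely, by \autoref{l:extreme-transitivity} applied to $\rho^{\qf}_x\circ\rho^\aff_x$ together with \autoref{p:ext-models-QAff}, every extreme quantifier-free type is classical. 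Thus $\cE^{\qf}_x(C_\aff)$ is precisely the (discrete, since the $0$--$1$ metric makes distinct types at distance $\geq 1$) set of classical quantifier-free types, and these are exactly the quantifier-free types realized in models of $C$; in particular $\rho^{\qf}_x$ restricted to $\tS^{\cont}_x(C)$ lands bijectively onto $\cE^{\qf}_x(C_\aff)$.

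Next I would lift this to full types and prove \autoref{i:class-1}. The inclusion $\rho^\aff_x(\tS^{\cont}_x(C))\subseteq\cE_x(C_\aff)$ is \autoref{l:image-rho-aff}. For the reverse inclusion and injectivity, the key step is: if $p\in\cE_x(C_\aff)$, then $p$ enumerates an extremal model, which (by \autoref{p:ext-models-QAff}) embeds affinely into a model $N\models C$; since $C$ is classical and embeddings are isometric, the atomic predicates are $\{0,1\}$-valued on the copy of $M$ inside $N$, so $M$ itself is classical. Then I claim the affine diagram $D^\aff_M$, being the affine theory of a \emph{classical} structure $M$ with named constants, has a classical model completion behavior — more precisely, by the quantifier-free analysis above applied with parameters, and since a classical structure is determined up to isomorphism by the quantifier-free type of an enumeration, two classical models realizing the same affine type over $M$ are isomorphic over $M$. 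Together with the Tarski--Vaught property this should give that $\rho^\aff_x$ is injective on $\tS^{\cont}_x(C)$ and that its image is all of $\cE_x(C_\aff)$: given $p\in\cE_x(C_\aff)$ realized in a classical model $M$, the continuous type of that realization maps to $p$. Continuity and the contraction property of $\rho^\aff_x$ are from \autoref{l:image-rho-aff}; that it is actually a homeomorphism onto its image follows from compactness of $\tS^{\cont}_x(C)$ once injectivity is known (noting $\cE_x(C_\aff)$, carrying the discrete $\dtp$-metric in finitely many variables, forces $\tau$ and $\dtp$ to agree here). For the isometry statement one argues as in \autoref{cor:E(T)-closed-iota-homeo}, using surjectivity of $\rho^\aff_{xx'}$ onto $\cE_{xx'}(C_\aff)$ and \autoref{cor:ExtremeTypeDistance}.

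Then \autoref{i:class-2} and \autoref{i:class-3} follow quickly. For \autoref{i:class-2}: by \autoref{i:class-1} the map $\rho^\aff_x\colon\tS^{\cont}_x(C)\to\cE_x(C_\aff)$ is injective, so in particular $\rho^\aff_x$ separates continuous types; by \autoref{rmk:latt-red-iota-injective} (i.e.\ Stone--Weierstrass) this is exactly delta-convex reduction — but I should be slightly careful, since $\rho^\aff_x$ need not be injective on all of $\tS^\aff_x(C_\aff)$. Here one uses that every continuous type of $C$ is classical, hence an extreme affine type, so the relevant restriction of $\rho^\aff_x$ is the bijection onto $\cE_x(C_\aff)$, and delta-convex reduction as a statement about approximating continuous formulas modulo $C$ only sees models of $C$; the lattice form of Stone--Weierstrass on the compact Hausdorff space $\tS^{\cont}_x(C)$ then applies verbatim. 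For \autoref{i:class-3}: (b)$\Rightarrow$(c) is trivial; (c)$\Rightarrow$(a) is the extreme-point argument from the quantifier-free analysis (a classical structure realizes only classical, hence extreme, types — using \autoref{l:ExtremeTypesInfinite} and \autoref{lem:ExtremeTypeProjectionQF} to pass from quantifier-free extremality to full extremality); (a)$\Rightarrow$(b) follows since by \autoref{i:class-2} every affine extension of extremal models... no — rather, an extremal model of $C_\aff$ is classical by the argument just given, and a classical model of $C_\aff$ satisfies $C$ because, being classical, it is an affine substructure of a model of $C$ (via \autoref{l:Quda}, as $\ClassL$-part is universal and $C$ is $\forall\Delta$ being... ) and affine substructures of models of $C$ that are classical satisfy $C$ by Tarski--Vaught for continuous logic plus delta-convex reduction. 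The final equation $C\equiv C_\aff\cup\ClassL$ is then immediate: $C\models C_\aff\cup\ClassL$ trivially, and conversely any model of $C_\aff\cup\ClassL$ is a classical model of $C_\aff$, hence a model of $C$.

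\textbf{Main obstacle.} The delicate point is \autoref{i:class-3}(a)$\Rightarrow$(b) / equivalently the reverse inclusion in \autoref{i:class-1}: showing that a classical (equivalently, extremal) model of $C_\aff$ actually satisfies the continuous theory $C$, not merely embeds affinely in one. The clean way is to first nail down delta-convex reduction for $C$ independently (which only needs injectivity of $\rho^\aff$ on continuous types, itself following from the rigidity of classical structures), and then invoke \autoref{l:affine-reduction-cont-tarski-vaught}-style Tarski--Vaught reasoning; so I would prove a weak form of \autoref{i:class-2} before completing \autoref{i:class-1}, rather than in the stated order.
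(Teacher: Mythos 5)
There is a genuine gap, and it sits exactly where you flag your "main obstacle": the implication that a classical (equivalently, extremal) model $M$ of $C_\aff$ actually satisfies $C$, together with the injectivity of $\rho^\aff_x$ on $\tS^\cont_x(C)$. Your proposed mechanism --- ``Tarski--Vaught for continuous logic plus delta-convex reduction'' applied to a classical affine substructure $M\preceq^\aff N$ of a model $N\models C$ --- does not close. Delta-convex reduction of $C$ only provides approximations $|\varphi-\psi|\leq\eps$ (with $\psi$ delta-convex) that are valid \emph{in models of $C$}; since you do not yet know $M\models C$, you cannot evaluate the approximation in $M$, and the Tarski--Vaught inequality for $\sup_y$ of a delta-convex formula goes the wrong way under affine substructures ($\sup_y\psi$ can only increase when passing to $N$, whereas you need witnesses in $N$ to be matched in $M$). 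This is precisely why \autoref{p:lattice-reduction} needs q-convexity to upgrade delta-convex reduction to model completeness by affine, and classical theories are as far from q-convex as possible. The same circularity infects your injectivity argument: for a \emph{finite} tuple, ``classical structures are determined by the quantifier-free type of an enumeration'' only helps if you can amalgamate the two realizations into a common classical structure \emph{elementarily}, which is again the statement at issue. The paper's resolution is the interleaved elementary-chain argument of \autoref{th:closed-extreme-types-Text}: one first proves item \autoref{i:class-1}'s hard inclusion for the universal theory $\ClassL$, concludes that the extreme type spaces of $(\ClassL)_\aff$ are closed, and then the chain argument yields that every affine substructure of a classical structure is a continuous elementary substructure; items \autoref{i:class-2} and \autoref{i:class-3} then follow via \autoref{cor:E(T)-closed-iota-homeo} and \autoref{prop:latt-red-for-closed-E(T)}. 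Your proposal never invokes this (or any substitute for it), so the step (a)$\Rightarrow$(b) and the injectivity underlying \autoref{i:class-2} remain unproved.

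Two further remarks. First, you attribute the inclusion $\rho^\aff_x(\tS^\cont_x(C))\subseteq\cE_x(C_\aff)$ to \autoref{l:image-rho-aff}, but that lemma gives the \emph{opposite} inclusion $\cE_x(C_\aff)\subseteq\rho^\aff_x(\tS^\cont_x(C))$; the inclusion you cite it for is the substantive one. Your quantifier-free analysis does recover it by another route (a $\{0,1\}$-valued quantifier-free type cannot be a proper convex combination, then \autoref{lem:ExtremeTypeProjectionQF} and \autoref{prop:ExtremeTypeTwoSteps} pass from an enumeration down to finite tuples), and this is a legitimate alternative to the paper's direct induction on affine formulas --- it is in fact the strategy the paper uses for $\PMP_\Gamma$ and $\TvN$. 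So the citation is wrong but the content is salvageable. Second, the ordering you suggest at the end (prove delta-convex reduction first, then deduce the rest) cannot work as stated, because delta-convex reduction is \emph{equivalent} to the injectivity of $\rho^\aff_x$ on continuous types (\autoref{rmk:latt-red-iota-injective}), which is itself a consequence of the chain argument rather than an independent input.
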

\begin{proof}
  \autoref{i:class-1}, \autoref{i:class-2}.
  We prove first that $\rho_x^\aff(\tS_x^\cont(C))\subseteq \cE_x(C_\aff)$ (the inverse inclusion holds by \autoref{l:image-rho-aff}). More precisely, we argue by induction on the complexity of the affine formula $\varphi$ that for any tuple of variables $x$, if $q\in \tS_x^\cont(C)$ and $p_0,p_1\in \tS_x^\aff(C_\aff)$ are such that $\rho_x^\aff(q) = \half p_0 + \half p_1$, then $\varphi(p_0) = \varphi(p_1)$.

  If $\varphi$ is atomic, then $\varphi(q)\in \set{0,1}$ (as prescribed by $C$) and $\varphi(p_0), \varphi(p_1) \in [0,1]$ (as prescribed by $C_\aff$), so the fact that $\varphi(q) = \half \varphi(p_0) + \half \varphi(p_1)$ implies that $\varphi(p_0) = \varphi(p_1)$. The inductive step for affine combinations is obvious. Suppose finally that $\varphi(x) = \inf_y\psi(x,y)$ for some affine formula $\psi$ for which the hypothesis holds. Given $q\in\tS_x^\cont(C)$, take $q'\in\tS_{xy}^\cont(C)$ extending $q$ and such that $\varphi(q) = \psi(q')$. If $\rho_x^\aff(q) = \half p_0 + \half p_1$, then by \autoref{lemma:TypeConvexCombinationLift}\autoref{item:TypeConvexCombinationLiftFinite}, there exist $p_0',p_1'\in\tS_{xy}^\aff(C_\aff)$ extending $p_0$ and $p_1$, respectively, with the property that $\rho_{xy}^\aff(q') = \half p_0' + \half p_1'$. As $p_0'$ extends $p_0$, we have $\varphi(p_0)\leq \psi(p_0')$, and similarly $\varphi(p_1)\leq \psi(p_1')$. Hence
  $$\varphi(q) = \half \varphi(p_0) + \half \varphi(p_1) \leq \half \psi(p_0') + \half \psi(p_1') = \psi(q') = \varphi(q),$$
  and we deduce that $\varphi(p_0) = \psi(p_0')$ and $\varphi(p_1) = \psi(p_1')$. On the other hand, by the inductive hypothesis, $\psi(p_0') = \psi(p_1')$. It follows that $\varphi(p_0) = \varphi(p_1)$, as desired.

  We have thus established that $\rho_x^\aff(\tS^\cont_x(C)) = \cE_x(C_\aff)$. In particular, the spaces of extreme types of $C_\aff$ are closed. Also, $(C_\aff)_\ext \sub C$, so $\tS_x^\cont(C) \sub \tS_x^\cont((C_\aff)_\ext)$. Now it follows from \autoref{cor:E(T)-closed-iota-homeo} that the maps $\rho_x^\aff \colon \tS_x^\cont(C) \to \cE_x(C_\aff)$ are $\dtp$-isometric $\tau$-homeomorphisms and $(C_\aff)_\ext \equiv C$. \autoref{prop:latt-red-for-closed-E(T)} implies that $C$ has delta-convex reduction.

  \autoref{i:class-3} All this applies, in particular, to the base universal theory $\ClassL$. It follows that for any classical structure $N$ and any affine substructure $M\preceq^\aff N$, one has $M\preceq^\cont N$.

  Let $M\models C_\aff$. If we assume that $M$ is an extremal model, then $M$ embeds affinely into a model $N\models C$ (by \autoref{p:ext-models-QAff}), so by the remark in the previous paragraph, $M\preceq^\cont N$, and in particular $M\models C$ and $M$ is classical. Assume conversely that $M$ is a classical structure, and let $C'=C_\aff\cup\ClassL$, so that $M\models C'$. Note that $C'_\aff = C_\aff$ because $C\models C'$. By what we have proven above, we have $\rho_x^\aff(\tS_x^\cont(C'))\subseteq \cE_x(C_\aff)$ for every variable $x$. From this we can deduce that $M$ is an extremal model of $C_\aff$, as desired.
\end{proof}

In general, a classical theory need not have affine reduction, as illustrated by the following simple examples.

\begin{example}
  Let $P,P'$ be two unary predicates, let $\cL=\set{P,P'}$, and let $C$ be the classical theory stating that the model has a unique element. Then every type space $\tS_x^\cont(C)$ consists of precisely four points, corresponding to the four possibilities for the value of the pair $(P,P')$ on the unique element of the model. In turn, $\tS_x^\aff(C_\aff)$ is a convex set with four extreme points (say, $p_{00}$, $p_{10}$ ,$ p_{01}$, $p_{11}$, according to the value of $(P,P')$), but is not a simplex, because $\half p_{10} + \half p_{01} = \half p_{00} + \half p_{11}$. That is, $\tS_x^\aff(C_\aff)$ is not the tetrahedron $\cM(\tS_x^\cont(C))$, and hence $C$ does not have affine reduction, by \autoref{th:Bauer-correspondence-Q}.
\end{example}

\begin{example}
  Let $\cL = \set{S,d,\bZero,\bOne}\cup\set{S',d',\bZero',\bOne'}$ be a language with two distinct sorts $(S,d)$ and  $(S',d')$ (both metrics bounded by $1$) and two distinct constants on each sort. Let $C$ be the theory of the classical $\cL$-structure consisting of distinct interpretations for the constants and no other elements, i.e., the $\cL$-theory axiomatized by $d(\bZero,\bOne)=1$, $\sup_x d(x,\bZero)\wedge d(x,\bOne)=0$, $d(\bZero',\bOne')=1$ and $\sup_{x'}d(x',\bZero')\wedge d(x',\bOne')=0$. If $x$ and $x'$ are single variables of the sorts $S$ and $S'$, respectively, then similarly to the previous example, $\tS_{xx'}^\aff(C_\aff) \cong \tS_x^\aff(C_\aff)\times \tS_{x'}^\aff(C_\aff) \cong [0,1]^2$ is not a simplex. Hence $C$ does not have affine reduction.
\end{example}

On the other hand, somewhat surprisingly, \emph{complete, one-sorted} classical theories do have affine reduction. We will first establish this in the simplest case, that of sets with no structure.

Let $\cL_\emptyset$ the one-sorted empty language (i.e., containing only the distance symbol, say with bound $1$), and for each $\ell\in\set{1,2,\dots,\infty}$ let $M_\ell$ be the unique countable, classical $\cL_\emptyset$-structure with precisely $\ell$ elements. In the statement of following theorem we use the notation from \autoref{sec:criterion-affine-reduction}.

\begin{theorem}\label{th:L1Mk-automorphisms}
  Let $M_\ell$ be as above and let $(\Omega,\mu)$ be a standard probability space. If $\ell\geq 2$, then
  $$\Aut(L^1(\Omega,M_\ell)) = L(\Omega,\Sym(M_\ell)) \rtimes \Aut(\mu).$$
\end{theorem}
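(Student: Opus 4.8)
The plan is to establish the non-trivial inclusion: since $L(\Omega,\Sym(M_\ell))\rtimes\Aut(\mu)$ is already identified with a subgroup of $\Aut(L^1(\Omega,M_\ell))$ (as recorded in \autoref{sec:criterion-affine-reduction}), it suffices to show that every automorphism belongs to it. Write $X:=L^1(\Omega,M_\ell)$ with its metric $d(f,g)=\mu\{\omega:f(\omega)\neq g(\omega)\}$; an automorphism of the $\cL_\emptyset$-structure $X$ is precisely a bijective $d$-isometry. For $c\in M_\ell$ let $\hat c\in X$ be the constant section. Two elementary metric facts drive the argument. First, $h$ is metrically between $f$ and $g$, i.e.\ $d(f,h)+d(h,g)=d(f,g)$, if and only if $h(\omega)\in\{f(\omega),g(\omega)\}$ for a.e.\ $\omega$; write $[f,g]$ for this set. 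Second, the group acts transitively on $X$: given $f$, the element of $L(\Omega,\Sym(M_\ell))$ equal at $\omega$ to the transposition $(c_1\ f(\omega))$ sends $\hat c_1$ to $f$ (this uses $\ell\geq2$). Hence, after composing the given $\Phi\in\Aut(X)$ with a suitable group element, we may assume $\Phi(\hat c_1)=\hat c_1$; put $o:=\hat c_1$ and $\mathrm{supp}(f):=\{\omega:f(\omega)\neq c_1\}\in\MALG(\Omega)$.

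Next I would recover the measure algebra from the metric and the marked point $o$. One checks that $d(f,g)=d(f,o)+d(g,o)$ precisely when $\mathrm{supp}(f)\cap\mathrm{supp}(g)$ is null, and that from this "disjointness" relation one recovers the relation $\mathrm{supp}(f)\subseteq\mathrm{supp}(g)$ (namely: every $h$ whose support is disjoint from $\mathrm{supp}(g)$ also has support disjoint from $\mathrm{supp}(f)$). Since $\mathrm{supp}\colon X\to\MALG(\Omega)$ is onto and $\Phi$ preserves $d(\cdot,o)=\mu\circ\mathrm{supp}$ together with inclusion and equality of supports, $\Phi$ descends to a measure-preserving order-automorphism of $\MALG(\Omega)$; as $\Omega$ is standard, this is implemented by some $S\in\Aut(\mu)$. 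Composing $\Phi$ with the action of $S^{-1}$ (which fixes $o$), we may assume in addition that $\mathrm{supp}(\Phi f)=\mathrm{supp}(f)$ for all $f$, so that $\Phi$ preserves each subspace $X_A:=\{f:\mathrm{supp}(f)\subseteq A\}$, $A\in\MALG(\Omega)$.

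The crux is to promote this to \emph{locality}. For $A\in\MALG(\Omega)$ let $f|_A$ denote $f$ on $A$ and $c_1$ off $A$. A direct computation identifies $X_A\cap[o,f]$ with $\{f|_{A'}:A'\subseteq A\cap\mathrm{supp}(f)\}$, whose unique $\mathrm{supp}$-largest member is $f|_A$; since $\Phi$ preserves $X_A$, the intervals $[o,\cdot]$, and the support order, it follows that $\Phi(f|_A)=(\Phi f)|_A$ for all $f$ and $A$. Taking $A=f^{-1}(c_k)$ and noting that $f|_A=\hat c_k|_A$ there, we get $(\Phi f)(\omega)=(\Phi\hat c_k)(\omega)$ whenever $f(\omega)=c_k$; hence $\Phi f=T(\cdot)\bigl(f(\cdot)\bigr)$ with $T(\omega)(c_k):=(\Phi\hat c_k)(\omega)$. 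Measurability of $\omega\mapsto T(\omega)$ is clear from that of the sections $\Phi\hat c_k$; that $T(\omega)\in\Sym(M_\ell)$ for a.e.\ $\omega$ follows from $d(\hat c_k,\hat c_m)=1$ for $k\neq m$ (injectivity) and from applying the same analysis to $\Phi^{-1}$ (surjectivity, via $T'(\omega)T(\omega)=\mathrm{id}$ a.e.), using countability of $M_\ell$ to pass from "a.e.\ for each pair" to "a.e.\ for all pairs simultaneously". Thus $\Phi\in L(\Omega,\Sym(M_\ell))$, and the original automorphism lies in $L(\Omega,\Sym(M_\ell))\rtimes\Aut(\mu)$.

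I expect the main obstacle to be the locality step: recognizing the operation $f\mapsto f|_A$ — and thereby the fact that $\Phi f$ restricted to $A$ depends only on $f$ restricted to $A$ — purely in terms of the metric and the marked point $o$. The two reductions and the final bookkeeping are routine; this is also where the hypotheses $\ell\geq2$ (transitivity) and standardness of $\Omega$ (order-automorphisms of $\MALG(\Omega)$ come from $\Aut(\mu)$) enter.
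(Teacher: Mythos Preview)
Your argument is correct and takes a genuinely different route from the paper's. The paper extracts the pointwise permutation first: it sets $T(\omega)(b_i)=U(e_i)(\omega)$ for the constant sections $e_i$, proves directly that $T(\omega)$ is a bijection almost surely (with a somewhat delicate surjectivity argument in the case $\ell=\infty$), and only then shows that $S=T^{-1}U$, which now fixes all constants, permutes the two-valued sections $e^A_{ij}$ and thereby induces a measure-algebra automorphism. You reverse the order: mark the point $o=\hat c_1$, reconstruct $\MALG(\Omega)$ from the metric via supports and disjointness, peel off $S\in\Aut(\mu)$ first, and only then read off $T$ using the locality identity $\Phi(f|_A)=(\Phi f)|_A$, which you recover from metric betweenness. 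Your order makes the bijectivity of $T(\omega)$ essentially free (apply the same reasoning to $\Phi^{-1}$), avoiding the paper's more hands-on surjectivity step; on the other hand, the paper's use of the sections $e^A_{ij}$ is more concrete and sidesteps the need to characterise the operation $f\mapsto f|_A$ intrinsically. Both approaches use $\ell\geq 2$ (yours for transitivity, the paper's to have two distinct constants) and standardness of $\Omega$ to pass from a measure-algebra automorphism to an element of $\Aut(\mu)$.
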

\begin{proof}
  We make the preliminary observation that if $U\in \Aut(L^1(\Omega,M_\ell))$ and $f,f'\in L^1(\Omega,M_\ell)$ are two elements that are distinct almost everywhere, then so are $U(f)$ and $U(f')$, because
  $$\mu(\oset{U(f)\neq U(f')}) = d(U(f),U(f')) = d(f,f') = 1.$$
  Now let us fix $U\in \Aut(L^1(\Omega,M_\ell))$ and an enumeration without repetitions $(b_i)_{i<\ell}$ of $M_\ell$. Let $e_i\in L^1(\Omega,M_\ell)$ be the constant section taking on the value $b_i$, and define $f_i = U(e_i)$, which we identify with a Borel function $f_i\colon \Omega\to M_\ell$ (i.e., we choose some Borel representative for $f_i$). Given $\omega\in\Omega$, let $T(\omega)\colon M_\ell\to M_\ell$ be the map defined by
  $$T(\omega)(b_i) = f_i(\omega).$$
  We claim that $T(\omega)$ is bijective almost surely.

  Indeed, by our preliminary observation, $T(\omega)$ is injective almost surely. For finite $\ell$ this is enough, so let us consider the case $\ell=\infty$. Since $M_\infty$ is countable, if $T(\omega)$ is not surjective almost surely, then there exist $b\in M_\infty$ and a Borel set $Z\subseteq\Omega$ with $\mu(Z)>0$ such that $b\neq f_i(\omega)$ for every $i\geq 0$ and every $\omega\in Z$. Let $f\colon \Omega\to M_\infty$ be given by $f(\omega)=b$ for $\omega\in Z$ and $f(\omega)=f_0(\omega)$ for $\omega\notin Z$. Hence $f$ is distinct almost everywhere from $f_i$, for every $i\geq 1$. By our preliminary observation, it follows that $U^{-1}(f)(\omega)\neq b_i$ almost surely, for every $i\geq 1$. This is only possible if $U^{-1}(f)=e_0$, that is, if $f=f_0$, which is in contradiction with the fact that $f(\omega)=b\neq f_0(\omega)$ for any $\omega\in Z$.

  We have thus established that $T(\omega)\in\Sym(M_\ell)$ for almost every $\omega\in\Omega$. In addition, the map $\omega\mapsto T(\omega)$ is measurable, so it defines an element $T\in L(\Omega,\Aut(M_\ell))\leq \Aut(L^1(\Omega,M_\ell))$. Let us now consider the composition $S=T^{-1}U$, which has the property that $S(e_i)=e_i$ for every $i$. We will prove that $S\in\Aut(\mu)$, which is enough to conclude.

  For each pair $i,j<k$ with $i\neq j$ and measurable set $A\subseteq\Omega$, let $e^A_{ij}\in L^1(\Omega,M_\ell)$ be the measurable section that takes the constant value $b_i$ on $A$ and the constant value $b_j$ on $\Omega\setminus A$. As $e^A_{ij}$ is everywhere distinct from $b_k$ for any $k\notin\set{i,j}$, so has to be its image $S(e^A_{ij})$, up to measure zero. In other words, there exists a measurable set $S_*^{ij}(A)\subseteq\Omega$ such that $S(e^A_{ij}) = e^{S_*^{ij}(A)}_{ij}$.

  We observe first that
  $$\mu(S_*^{ij}(A)) = d(e_{ij}^{S^{ij}_*(A)},e_j) = d(e_{ij}^A,e_j) = \mu(A).$$
  Next we claim that $S_*^{ij}(A)$ does not depend on $i$ or $j$. Indeed, we have:
  \begin{itemize}
  \item $1 = d(e_{ij}^A,e_{ji}^A) = d(e_{ij}^{S^{ij}_*(A)},e_{ji}^{S_*^{ji}(A)})$, whence $S^{ij}_*(A) = S^{ji}_*(A)$;
  \item for $k\notin\set{i,j}$, $\mu(A) = d(e_{ij}^A,e_{kj}^A) = d(e_{ij}^{S_*^{ij}(A)},e_{kj}^{S_*^{kj}(A)}) = \mu(S_*^{ij}(A)\cup S_*^{kj}(A))$, whence $S_*^{ij}(A) = S_*^{kj}(A)$.
  \end{itemize}
  We may thus write $S_*(A)=S_*^{ij}(A)$.

  We claim now that the map $S_*$ defines an automorphism of the measure algebra of $\Omega$. Note that we may also define a map $(S^{-1})_*$ by applying the previous construction to the inverse $S^{-1}$ of $S$. This map satisfies $e^A_{ij} = e^{S_*((S^{-1})_*(A))}_{ij}$, which implies $A=S_*((S^{-1})_*(A))$ and shows that $S_*$ is surjective. We know as well that $S_*$ is measure-preserving. Therefore, we are only left to show that it preserves disjoint unions. If $A, B \sub \Omega$ with $A \cap B = \emptyset$, then:
  \begin{itemize}
  \item $\mu(A)+\mu(B) = d(e_{ij}^A,e_{ij}^B) = d(e_{ij}^{S_*(A)},e_{ij}^{S_*(B)}) = \mu(S_*(A)\triangle S_*(B))$, whence $\mu(S_*(A))+\mu(S_*(B)) = \mu(S_*(A)\cup S_*(B))$, showing that $S_*(A)$ and $S_*(B)$ are disjoint;
  \item $\mu(S_*(A) \triangle S_*(A\cup B)) = d(e_{ij}^{S_*(A)},e_{ij}^{S_*(A\cup B)}) = d(e_{ij}^A,e_{ij}^{A\cup B}) = \mu(S_*(B))$, and similarly, $\mu(S_*(B) \triangle S_*(A\cup B)) = \mu(S_*(A))$.
  \end{itemize}
  We deduce that $S_*(A \cup B) = S_*(A) \cup S_*(B)$, for if $X$ and $Y$ are disjoint elements of a measure algebra, then the union $X\cup Y$ is the unique $Z$ such that $\mu(Z)=\mu(X)+\mu(Y)$, $\mu(X\triangle Z)=\mu(Y)$, and $\mu(Y\triangle Z)=\mu(X)$.

  To conclude, we check that $S$ coincides with the automorphism in $\Aut(\mu)\leq \Aut(L^1(\Omega,M_\ell))$ induced by $S_*$, which we also denote by $S_*$. If we let $I=S^{-1}S_*$, then, by construction, $I$ fixes the measurable sections $e_i$ and~$e^A_{ij}$. Now let $f\in L^1(\Omega,M_\ell)$ be arbitrary, and let us consider the measurable partitions $(A_i)_{i<\ell}$ and $(B_i)_{i<\ell}$ of $\Omega$ given by $A_i=f^{-1}(b_i)$, $B_i=I(f)^{-1}(b_i)$. We then have, for all distinct $i,j<\ell$,
  \begin{itemize}
  \item $\sum_{k\neq i}\mu(A_k) = d(f,e_i) = d(I(f),e_i) = \sum_{k\neq i}\mu(B_k)$, whence $\mu(A_i)=\mu(B_i)$;
  \item $\sum_{k\notin\set{i,j}}\mu(A_k) = d(f,e_{ij}^{A_i}) = d(I(f),e_{ij}^{A_i}) = \mu(B_i\setminus A_i) + \mu(B_j\cap A_i) + \sum_{k\notin\set{i,j}}\mu(B_k)\geq \mu(B_i\setminus A_i) + \sum_{k\notin\set{i,j}}\mu(A_k)$, which implies that $B_i=A_i$ as they are of equal measure.
  \end{itemize}
  Hence $I(f)=f$, completing the proof.
\end{proof}

For each $\ell\in\set{1,2,\dots,\infty}$, let $C_\ell$ be the theory of the structure $M_\ell$.

\begin{theorem}\label{th:affine-reduction-pure-sets}
  The theory $C_\ell$ has affine reduction. Moreover, every continuous formula is equal to an affine formula modulo $C_\ell$ (rather than just approximated by affine formulas).
\end{theorem}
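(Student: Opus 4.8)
The plan is to reduce the general statement to the case $\ell\geq 2$ (for $\ell=1$ the theory is degenerate, with all type spaces singletons, and there is nothing to prove), and then to combine \autoref{th:L1Mk-automorphisms} with the automorphism-group criterion for affine reduction, \autoref{th:criterion-affine-reduction}. First I would observe that $M_\ell$ is the unique countable, classical $\cL_\emptyset$-structure with $\ell$ elements, that $\cL_\emptyset$ is separable, and that $M_\ell$ is approximately saturated in the sense of continuous logic — indeed, for finite $\ell$ it is compact and the unique model of $C_\ell$, while for $\ell=\infty$ the theory $C_\infty$ is $\aleph_0$-categorical with $\aleph_0$-saturated separable model $M_\infty$ (the $n$-types over a finite set simply record, for each element of the model, whether it equals one of the named parameters, and all of these are realized). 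Thus the hypotheses of \autoref{th:criterion-affine-reduction}\autoref{i:criterion-ar-2} are met. Taking $(\Omega,\mu)$ a standard atomless probability space, \autoref{th:L1Mk-automorphisms} gives exactly $\Aut(L^1(\Omega,M_\ell)) = L(\Omega,\Sym(M_\ell))\rtimes\Aut(\mu)$, which is the required identification of the automorphism group; hence $Q = \Th^\cont(M_\ell) = C_\ell$ has affine reduction. (One should note in passing that $\Sym(M_\ell) = \Aut(M_\ell)$ since $\cL_\emptyset$ has no non-trivial structure, so the statement of \autoref{th:L1Mk-automorphisms} is indeed phrased in the form required by \autoref{th:criterion-affine-reduction}.)

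For the ``moreover'' part — that every continuous formula is \emph{equal} (not merely uniformly approximable) modulo $C_\ell$ to an affine formula — I would argue directly using the structure of the type spaces. By affine reduction and \autoref{th:classical-theories-general}\autoref{i:class-1}, the map $\rho^\aff_x\colon \tS^\cont_x(C_\ell)\to \cE_x((C_\ell)_\aff)$ is a homeomorphism, and $(C_\ell)_\aff$ is a Bauer theory by \autoref{th:Bauer-correspondence-Q}, so $\tS^\aff_x((C_\ell)_\aff)\cong\cM(\tS^\cont_x(C_\ell))$. A continuous logic formula $\varphi(x)$ in $n$ variables induces a continuous function on the \emph{finite} discrete space $\tS^\cont_n(C_\ell)$ when $n$ is finite and $\ell$ is finite; more generally, since $C_\ell$ is $\aleph_0$-categorical (or trivially complete), $\tS^\cont_n(C_\ell)$ is compact and a continuous affine function on $\cM(\tS^\cont_n(C_\ell))$ is determined by its (arbitrary, bounded) values on the extreme points $\tS^\cont_n(C_\ell)$. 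Concretely: the $n$-types of $C_\ell$ are classified by the partition of $\{0,\dots,n-1\}$ induced by equality of coordinates together with, in the parametrized setting, the assignment of some coordinates to named constants — for the parameter-free theory $C_\ell$ this is just the equivalence relation ``$x_i = x_j$''. For each such type $p$, the characteristic function $\chi_p$ on $\tS^\cont_n(C_\ell)$ extends uniquely to an affine function on $\cM(\tS^\cont_n(C_\ell))$, which in turn corresponds to an affine definable predicate on models of $(C_\ell)_\aff$; and $d\bigl(0, \tau_p(x)\bigr)$ for a suitable affine (indeed quantifier-free) term built from the distance predicate realizes $\chi_p$ on classical models, exactly as in the proof of \autoref{th:classical-theories-general}\autoref{i:class-3} and \autoref{p:finite-fields}. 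Since $\varphi$ is a finite linear combination $\sum_p \varphi(p)\,\chi_p$ on $\tS^\cont_n(C_\ell)$ (a finite sum, because the type space is finite when $\ell$ or $n$ is finite, and when both are infinite one uses that $\varphi$ depends only on finitely many variables and that the relevant quotient space is still finite for each fixed finite tuple), $\varphi$ equals the corresponding affine combination modulo $C_\ell$.

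The main obstacle I anticipate is the ``moreover'' clause rather than bare affine reduction: one must verify carefully that for \emph{every} continuous formula the relevant affine combination is genuinely a formula (a finite affine combination of atomic/affine pieces, using the equalities of \autoref{rmk:AffineMinGivesMax}-style to absorb lattice operations), and that it agrees with $\varphi$ on the nose in all classical models, not just up to $\eps$ — this hinges on the fact that $\tS^\cont_n(C_\ell)$ is finite for each finite $n$ (a single formula sees only finitely many variables), so no limiting process is needed. A secondary point requiring care is the reduction from $\ell=\infty$ to the applicability of \autoref{th:criterion-affine-reduction}: one must confirm that $M_\infty$ really is approximately saturated in continuous logic, which follows from $\aleph_0$-categoricity of $C_\infty$ and the continuous-logic Ryll–Nardzewski theorem, together with \autoref{prp:ApproximatelySaturatedHomogeneous} (in its continuous-logic form). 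Once these are in place, the theorem follows by assembling the pieces above.
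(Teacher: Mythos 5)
Your proof is correct and follows the same route as the paper: the case $\ell=1$ is trivial, affine reduction for $\ell\geq 2$ is exactly \autoref{th:L1Mk-automorphisms} plus \autoref{th:criterion-affine-reduction} (your verification that $M_\infty$ is approximately saturated, via $\aleph_0$-categoricity, fills in a hypothesis the paper leaves implicit), and the ``moreover'' clause comes down to the finiteness of $\tS^\cont_n(C_\ell)$, hence finite-dimensionality of $C(\tS^\cont_n(C_\ell))$, so the dense image of the affine formulas is everything. One detail in your ``moreover'' paragraph is off: in the empty language there is no constant $0$ and no non-trivial terms, and the characteristic function $\chi_p$ of an $n$-type (a partition of the coordinates) is in general a \emph{product} of atomic formulas, not a quantifier-free affine one — so the explicit realization borrowed from \autoref{p:finite-fields} does not apply; but this step is not needed, since the finite-dimensionality argument you also give already yields that each $\chi_p$ (and hence each continuous formula) equals some affine formula modulo $C_\ell$.
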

\begin{proof}
  The case $\ell=1$ is trivial; for $\ell\geq 2$, affine reduction follows from \autoref{th:L1Mk-automorphisms} and \autoref{th:criterion-affine-reduction}. For the moreover part, it suffices to observe that since the type spaces $\tS^\cont_n(C_\ell)$ are finite, the vector spaces $C(\tS^\cont_n(C_\ell))$ (i.e., the spaces of continuous logic formuals) are finite-dimensional.
\end{proof}

\begin{question}
  Does the theory $\Class_{\cL_\emptyset}$ have affine reduction?
\end{question}

As every complete, one-sorted classical theory implies $C_\ell$ for some $\ell$, we obtain the following.

\begin{cor}\label{c:classical-d-max-is-affine}
  Let $C$ be a complete, classical theory in a one-sorted language. Then for every $n$, there is an affine $\cL_\emptyset$-formula $\theta_n(\bar x,\bar y)$ such that for every $M\models C$ and $\bar a,\bar b\in M^n$, we have $\theta_n(\bar a,\bar b) = 0$ if $\bar a=\bar b$ and $\theta_n(\bar a,\bar b) = 1$ otherwise.
\end{cor}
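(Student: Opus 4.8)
The statement asks for a single affine formula $\theta_n$ in the \emph{empty} language $\cL_\emptyset$ that computes the discrete $0$--$1$ distance on $n$-tuples, uniformly across all models of a complete one-sorted classical theory $C$. The plan is to reduce the claim to \autoref{th:affine-reduction-pure-sets} by passing to the $\cL_\emptyset$-reduct. First I would observe that since $C$ is complete and one-sorted, all of its models are classical (hence discrete) structures whose underlying sets have a fixed cardinality profile: more precisely, in every $M\models C$ the interpretation $d^M$ of the distance symbol is the $0$--$1$ metric (this is part of $C\models\Class_\cL$, unwound via completeness — every sentence $\sup_{x,y}\,d(x,y)\wedge(1-d(x,y))\le 0$ and $d(x,x)=0$ holds), so the $\cL_\emptyset$-reduct of $M$ is a set with no structure. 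By completeness of $C$, either all models are infinite, or they all have the same finite size $\ell$; in either case the $\cL_\emptyset$-theory $\Th^\aff$ of these reducts, as a \emph{continuous} theory, is exactly one of the $C_\ell$ with $\ell\in\{1,2,\dots,\infty\}$ — it is the complete $\cL_\emptyset$-theory of a pure set of the appropriate cardinality.

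Next I would invoke \autoref{th:affine-reduction-pure-sets}: the theory $C_\ell$ has affine reduction, and moreover every continuous $\cL_\emptyset$-formula equals an affine $\cL_\emptyset$-formula modulo $C_\ell$ (not merely a uniform limit), because the type spaces $\tS^\cont_n(C_\ell)$ are finite and hence the spaces of continuous formulas are finite-dimensional. Now apply this to the continuous $\cL_\emptyset$-formula $\delta_n(\bar x,\bar y) \coloneqq \bigvee_{i<n} d(x_i,y_i)$, which is the indicator that $\bar x\ne\bar y$ in any classical structure. There is an affine $\cL_\emptyset$-formula $\theta_n(\bar x,\bar y)$ with $C_\ell\models \theta_n = \delta_n$. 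Since every model of $C$ has an $\cL_\emptyset$-reduct that is a model of $C_\ell$, we get $C\models \theta_n(\bar x,\bar y) = \bigvee_{i<n}d(x_i,y_i)$; and in any classical structure the right-hand side evaluates to $0$ when $\bar a=\bar b$ and to $1$ otherwise, since each $d(a_i,b_i)\in\{0,1\}$ and the maximum of $0$--$1$ values is $0$ exactly when all are $0$. This is precisely the conclusion.

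The only genuine subtlety — and the step I would be most careful about — is the reduction of $C$ to exactly one of the $C_\ell$. One must check that completeness of $C$ really does pin down the cardinality of models up to the equivalence "finite of size $\ell$ / infinite", and that the complete $\cL_\emptyset$-theory realized by the reducts coincides with $C_\ell$ rather than merely implying it. For this I would note that $\Th^\aff$ of the $\cL_\emptyset$-reducts is a complete one-sorted classical $\cL_\emptyset$-theory (completeness is inherited since any two reducts of models of $C$ are affinely equivalent, $C$ being affinely complete — this uses that $\cL_\emptyset$-sentences are $\cL$-sentences), and a complete classical $\cL_\emptyset$-theory is determined by the number of elements of its models via the sentences $\sup_{\bar x}\sum_{i<j<k}d(x_i,x_j)\ge$ (constant) that express "$\ge k$ distinct elements"; matching these pins down $\ell$. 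Everything else is bookkeeping: the existence of $\theta_n$ is handed to us by \autoref{th:affine-reduction-pure-sets}, and the evaluation of $\bigvee_i d(x_i,y_i)$ in a discrete structure is immediate.
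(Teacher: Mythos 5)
Your proof is correct and follows essentially the same route as the paper: reduce to the pure-set theories $C_\ell$ via completeness of $C$ (which pins down the cardinality profile of its models), then apply \autoref{th:affine-reduction-pure-sets} to replace the continuous formula $\bigvee_{i<n} d(x_i,y_i)$ by an affine $\cL_\emptyset$-formula. The paper's proof is exactly this one-line reduction, so the only difference is that you have spelled out the (correct) bookkeeping it leaves implicit.
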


\begin{question}
  Is there a ``simple'' explicit expression for the affine formula $\theta_n$?
\end{question}

Let us say that a formula is \emph{classical} if it is constructed from the atomic formulas using only the connectives $\vee$, $\varphi\mapsto 1-\varphi$, the constant formula $1$, and quantifiers.
The following general theorem is an easy consequence of the previous corollary.

\begin{theorem}\label{th:complete-classical-theories}
  Let $C$ be a complete, classical theory in a one-sorted language. The following hold:
  \begin{enumerate}
  \item The theory $C$ has affine reduction. Moreover, every classical formula is equal to an affine formula modulo $C$ (rather than just approximated by affine formulas).
  \item The theory $C_\aff$ is a Bauer theory, and its models are precisely the direct integrals of models of $C$.
  \end{enumerate}
\end{theorem}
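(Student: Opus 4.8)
The plan is to reduce this to the already-established machinery: \autoref{c:classical-d-max-is-affine} gives, for each $n$, an affine $\cL_\emptyset$-formula $\theta_n(\bar x, \bar y)$ that computes the $0$--$1$ metric on $n$-tuples in every model of $C$. First I would prove part (i). Fix $n$ and let $\varphi(\bar x)$ be a classical $\cL$-formula in $n$ variables. The key observation is that in any model $M \models C$, the value $\varphi^M(\bar a)$ depends only on the atomic diagram of $\bar a$, i.e. $\varphi^M(\bar a) \in \{0,1\}$ and $\varphi$ is the characteristic function of a union of ``atomic types.'' Since each basic predicate is $\{0,1\}$-valued and there are finitely many $m$-ary predicate symbols relevant to $\varphi$ for each $m$ appearing in the (finitely many, since $\varphi$ has a prenex form with finitely many quantifiers) syntactic pieces of $\varphi$, the relevant atomic data is finite, so I want to reduce to \autoref{th:affine-reduction-pure-sets}. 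Concretely, I would proceed by induction on the structure of $\varphi$: atomic formulas are already affine; $1-\varphi$ and $1$ are affine in $\varphi$; quantifiers over affine (definable) predicates yield affine definable predicates (and here, since type spaces $\tS^\cont_n(C)$ may be infinite but each classical formula still takes finitely many values, one must be slightly careful — better to argue that any classical formula factors through a \emph{finite} reduct). The cleanest route: any classical formula $\varphi(\bar x)$ only mentions finitely many predicate symbols $P_1, \dots, P_k$; the reduct $C \rest_{\cL_0}$ to the corresponding finite sublanguage $\cL_0$ is still complete and classical, and the ``$P_i$-colored'' structure can be coded, via the names of its finitely many atomic-type classes together with $\theta_n$, inside a pure-set structure $M_\ell$. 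The join step then reduces the disjunction $\varphi_1 \vee \varphi_2$ of affine formulas to an affine formula exactly as in \autoref{l:classical-d-max-is-affine}'s proof, using that $a \vee b = \tfrac12(a+b+|a-b|)$ and that $|{\cdot}|$ of a difference of two $0$--$1$-valued affine quantifier-free formulas is again affine modulo $C$ (this is the analogue of \autoref{l:AffineReduction}\autoref{i:l:aff-el:3}). The ``moreover'' (equality, not just approximation) follows because classical formulas take only the values $0,1$, so a uniform approximation within $\eps < 1/2$ by an affine formula $\psi$ forces $\varphi = \psi$ after rounding — but rounding is not affine, so instead I would run the induction so that at each step we produce an \emph{exact} affine representative, never passing to limits: the base case is exact, affine combinations preserve exactness, and the $\vee$-step is exact by the identity above together with \autoref{c:classical-d-max-is-affine} applied to the finitely many atomic-type classes.

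For part (ii): since $C$ has affine reduction by (i), \autoref{th:Bauer-correspondence-Q} applies directly. It tells us that $C_\aff$ is a Bauer theory, that the extremal models of $C_\aff$ are precisely the models of $C_\Mor$ — but by (i), $C_\Mor$ and $C$ are interdefinable (every new predicate $P_\varphi$ is, modulo $C$, an affine — hence in particular a continuous — formula of $\cL$, and conversely), so the extremal models of $C_\aff$ are precisely the models of $C$ — and that the models of $C_\aff = (C_\Mor)_\aff$ are precisely the direct integrals of models of $C_\Mor$, equivalently of $C$. One small point to check is that $(C_\Mor)_\aff \equiv C_\Mor{}_{\!\aff}$ really does have the same models as $C_\aff$; this holds because affine reduction means $C_\Mor$ is an affine definitional expansion of $C$ (\autoref{l:interdef-affine-reduction-Bau}), so $(C_\Mor)_\aff$ is an affine definitional expansion of $C_\aff$. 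Alternatively, one can invoke \autoref{th:classical-theories-general}\autoref{i:class-3} together with affine reduction to see directly that $C_\aff$'s extremal models are exactly the classical models, i.e. the models of $C$, and then apply the extremal decomposition theorem \autoref{th:integral-decomposition} (legitimate since $C_\aff$ is simplicial, being Bauer).

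The main obstacle is the ``moreover'' part of (i): maintaining exactness through the induction rather than just uniform approximation. The subtlety is that affine reduction as furnished by \autoref{th:criterion-affine-reduction}/\autoref{th:affine-reduction-pure-sets} is, in general, only approximation — but for pure sets \autoref{th:affine-reduction-pure-sets} already upgrades this to exact equality because $C(\tS^\cont_n(C_\ell))$ is finite-dimensional. I would therefore phrase the reduction so that each classical $\cL$-formula is, after naming the finitely many atomic-type classes of the relevant finite reduct as elements of a pure-set sort (using $\theta_n$ from \autoref{c:classical-d-max-is-affine} to express the metric on that sort), literally a classical formula over a pure set, to which \autoref{th:affine-reduction-pure-sets} gives an exact affine expression; pulling this back along the (affine, definitional) coding gives the exact affine $\cL$-formula. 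The remaining steps — verifying that the coding is affinely definable, that it respects quantifiers, and that completeness of $C$ descends to the finite reducts — are routine.
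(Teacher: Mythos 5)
Your overall architecture is right: part (ii) is a correct application of \autoref{th:Bauer-correspondence-Q} (with Łoś's theorem giving the converse inclusion), and for part (i) you have correctly identified that the crux is the induction step for $\vee$ and that the affine discrete metric $\theta_n$ of \autoref{c:classical-d-max-is-affine} is the key ingredient. But your handling of the $\vee$ step has a genuine gap. You reduce $\varphi\vee\psi$ to $\tfrac12(\varphi+\psi+|\varphi-\psi|)$ and then assert that $|\varphi-\psi|$ is affine modulo $C$ for $\{0,1\}$-valued affine $\varphi,\psi$. For $\{0,1\}$-valued formulas one has $|\varphi-\psi|=\varphi+\psi-2\varphi\psi=2(\varphi\vee\psi)-\varphi-\psi$, so this assertion is \emph{equivalent} to the affineness of $\varphi\vee\psi$: the argument is circular. \autoref{l:AffineReduction} only records that such conditions are equivalent to affine reduction; it does not establish any of them for $C$. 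Your fallback --- coding the finitely many atomic-type classes into a pure-set structure $M_\ell$ and quoting \autoref{th:affine-reduction-pure-sets} --- does not work either: for the theory of the random graph all elements have the same atomic $1$-type, so the proposed coding carries no information, yet formulas such as $E(x,y)\vee E(y,z)$ are nontrivial; and composing an affine function with the (coordinatewise affine, $\{0,1\}^k$-valued) tuple of atomic values cannot produce a maximum, since $\max$ on $\{0,1\}^2$ is not the restriction of an affine function on $\bR^2$.

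The missing idea, which is exactly where completeness of $C$ enters, is this: given classical $\varphi(\bar x),\psi(\bar x)$ that are affine modulo $C$, either $C\models\varphi\vee\psi=1$ and there is nothing to do, or some model contains a tuple where both vanish; by completeness $C\models\inf_{\bar y}\bigl(\varphi(\bar y)+\psi(\bar y)\bigr)=0$, and one then checks directly (using that $\theta_n$ is $0$ on the diagonal and $1$ off it) that $C$ implies the \emph{exact} identity $\varphi(\bar x)\vee\psi(\bar x)=\inf_{\bar y}\bigl(\varphi(\bar y)+\psi(\bar y)+\theta_{|\bar x|}(\bar x,\bar y)\bigr)$, whose right-hand side is affine. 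This also dissolves your worry about the ``moreover'' clause: every step of the induction produces an exact affine representative, with no limits taken. Two smaller points: affine reduction concerns all continuous formulas, so you still need the observation that modulo a classical theory every continuous formula is uniformly approximated by finite-range formulas, hence by linear combinations of classical ones; and in part (ii) there is no need to pass through $C_\Mor$, since \autoref{th:Bauer-correspondence-Q} applies to $C$ directly.
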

\begin{proof}
  Continuous formulas modulo a classical theory can be uniformly approximated by continuous formulas with finite range, and the latter are linear combinations of classical formulas. So to prove that $C$ has affine reduction it is enough to show that classical formulas are affine. For this, we proceed by induction on the construction of the formula, with the only non-obvious case being that of the connective $\vee$. Now, if $\varphi(\bar x)$ and $\psi(\bar x)$ are classical formulas, then either $\varphi\vee\psi = 1$ in every model of $C$, or, by completeness, $C$ implies $\inf_{\bar y}\varphi(\bar y)+\psi(\bar y) = 0$. In the first case we are done, and in the second, $C$ implies the equality
  $$\varphi(\bar x)\vee\psi(\bar x) = \inf_{\bar y} \varphi(\bar y) + \psi(\bar y) + \theta_{|\bar x|}(\bar x,\bar y),$$
  where $\theta_{|\bar x|}$ is as given by \autoref{c:classical-d-max-is-affine}. From this, we deduce that if $\varphi$ and $\psi$ are equivalent to affine formulas, then so is $\varphi\vee\psi$.

  The second part follows from the first one together with \autoref{th:Bauer-correspondence-Q} and \autoref{th:classical-theories-general}.
\end{proof}

\begin{example}
  Consider the language $\cL=\set{\bZero,\bOne}$ and let $T_2$ be the affine theory of the $\cL$-structure $\set{\bZero,\bOne}$ consisting of two named points at distance one. Then $T_2$ is an absolutely categorical Bauer theory with $\tS^\aff_n(T_2)\cong\cM(2^n)$. On the other hand, $T_2$ is a reduct of the theory $\PrA$ of probability algebras discussed in \autoref{ss:ex:PrA}. It follows from the description of the type spaces of both theories that the canonical restriction map $\tS^\aff_n(\PrA)\to \tS^\aff_n(T_2)$ is an affine homeomorphism. Hence, $\PrA$ is an affine definitional expansion of $T_2$, i.e., the Boolean algebra operations (and of course the measure, which is just the distance to $\bZero$) can be defined by affine formulas in the language $\cL$. These formulas can also be computed explicitly.

  It is interesting to point out that while the theory $\PrA$ has affine quantifier elimination (see \autoref{th:prob-alg}), the theory $T_2$ does not. Hence, the theory of probability algebras can be seen as an ``affine Morleyization'' of the theory of two named points at distance one.
\end{example}

If $C$ is a classical theory, the probability measures over the type spaces $\tS_x^\cont(C)$ are known as \emph{Keisler measures}, and have been the subject of intensive study by model-theorists in recent years. As is well-known, Keisler measures can also be seen as types over the atomless randomization $C^\Rand$, but this requires a change of language (essentially, a Morleyization) which enforces this correspondence. The previous theorem provides a subtler identification bewteen Keisler measures and types. Indeed, if $C$ satisfies the hypothesis of the theorem, then the natural maps $\iota_x^*\colon \cM(\tS_x^\cont(C))\to \tS_x^\aff(C_\aff)$ are affine homeomorphisms.

\begin{cor}
  Keisler measures over a complete, one-sorted classical theory can be identified with the affine types over its affine part.
\end{cor}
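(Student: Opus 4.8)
The plan is to read this off as a reformulation of the affine-reduction statement just established for $C$. First I would recall what a Keisler measure is: a Keisler measure over $C$ in the variables $x$ is a regular Borel (equivalently, Radon) probability measure on the continuous logic type space $\tS^\cont_x(C)$, i.e., by the Riesz representation theorem an element of $\cM\bigl(\tS^\cont_x(C)\bigr) = \tS\bigl(C(\tS^\cont_x(C))\bigr)$. (For classical $C$ the spaces $\tS^\cont_x(C)$ are the usual Stone spaces of complete $x$-types, and finitely additive Keisler measures on the Boolean algebra of definable sets extend uniquely to such Radon measures, so this matches the terminology in the literature.) With this in hand, the content of the corollary is exactly that the canonical continuous affine surjection
\[
  \iota_x^* \colon \cM\bigl(\tS^\cont_x(C)\bigr) \longrightarrow \tS^\aff_x(C_\aff)
\]
of \autoref{eq:IotaStar} is a bijection, for every finite tuple $x$.

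Next I would invoke \autoref{th:complete-classical-theories}, whose first part says that a complete, one-sorted classical theory $C$ has affine reduction. By the characterization of affine reduction recorded just after \autoref{df:affine-reduction} (and made explicit in \autoref{th:Bauer-correspondence-Q}, which gives $\tS^\aff_x(C_\aff) \cong \cM(\tS^\cont_x(C))$), this is equivalent to injectivity of the maps $\iota_x^*$, and in that case each $\iota_x^*$ is an affine homeomorphism of compact convex sets: it is already continuous, affine, and surjective between compact Hausdorff spaces, and a continuous bijection of compact Hausdorff spaces is a homeomorphism. Thus $\iota_x^*$ identifies the Keisler measures over $C$ in the variables $x$ with the affine types of $C_\aff$ in $x$. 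Finally, I would note that this identification is canonical and structure-preserving: recalling from the discussion after \autoref{eq:RhoAff} that $\iota^* = R \circ \rho^\aff_*$, the Dirac measure $\delta_q$ at a complete continuous type $q$ is sent to its affine part $\rho^\aff_x(q)$, convex combinations of Keisler measures go to convex combinations of affine types, and the identifications commute with the variable-restriction maps; so ``Keisler measure'' and ``affine type of $C_\aff$'' are literally the same object, not merely in bijection.

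The main point is that there is essentially no obstacle at the level of this corollary: all the substance lies in \autoref{th:complete-classical-theories}, and through it in the automorphism-group computation of \autoref{th:L1Mk-automorphisms} together with the criterion of \autoref{th:criterion-affine-reduction}. The only things that genuinely need to be checked here are the bookkeeping identification of ``Keisler measure'' with ``Radon probability measure on $\tS^\cont_x(C)$'', which is routine, and the observation that completeness and single-sortedness of $C$ are exactly the hypotheses under which the preceding theorem applies.
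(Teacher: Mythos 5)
Your proposal is correct and follows exactly the paper's route: the corollary is read off from \autoref{th:complete-classical-theories} via the observation that affine reduction makes the canonical surjection $\iota_x^* \colon \cM(\tS^\cont_x(C)) \to \tS^\aff_x(C_\aff)$ an affine homeomorphism. The additional remarks on canonicity and compatibility with variable restriction are accurate but not needed.
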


We end this section with an example of a direct integral of infinite sets which shows that in the statement of the uniqueness theorem of the extremal decomposition, one cannot, in general, obtain isomorphism almost surely.
\begin{example}
  \label{ex:random-set}
  We denote by $\omega_1$ the first uncountable ordinal and by $\omega$ the first infinite ordinal. Consider the compact space $2^{\omega_1}$ equipped with the Bernoulli measure $\mu = (\half \delta_0 + \half \delta_1)^{\otimes \omega_1}$, which is a Radon measure. We will work with the $\sigma$-algebra of $\mu$-measurable sets (i.e., the completion of the product $\sigma$-algebra with respect to $\mu$). As usual, we will identify $2^{\omega_1}$ with the family of subsets of $\omega_1$. Our first goal is to make sense of the direct integral
  \begin{equation*}
    \int_{2^{\omega_1}}^\oplus z \ud \mu(z),
  \end{equation*}
  where $z$ is a considered as a classical structure in the language consisting only of the $\set{0, 1}$-valued metric.

  For $z \sub \omega_1$, we denote by $\ot(z)$ the \emph{order type} of $z$, i.e., the unique ordinal isomorphic to $(z, <)$. Note that for every $\alpha < \omega_1$, almost surely $\ot(z) > \alpha$. This is because for every $\beta \leq \alpha$, the intersection $z \cap [\omega \cdot \beta, \omega \cdot (\beta+1))$ is infinite almost surely. In particular, the set $\Omega = \set{z \in 2^{\omega_1} : z \text{ is infinite}}$ has measure $1$.

  Next we make $(z : z \in \Omega)$ into a measurable field of structures. Define an enumeration $(e_\alpha : \alpha < \omega_1)$ by:
  \begin{equation*}
    e_\alpha(z) = \alpha \text{th element of } z.
  \end{equation*}
  This is only defined if $\ot(z) > \alpha$, but this holds with probability $1$. (On the complement, we can define $e_\alpha(z)$ arbitrarily.)
  It is trivial to verify that the conditions of \autoref{defn:MeasurableField} hold and thus we can form the direct integral $M = \int_{\Omega}^\oplus z \ud \mu(z)$.

  Now let $\Omega_k = \set{z \in \Omega : |z| = \aleph_k}$ for $k=0,1$. The sets $\Omega_0$ and $\Omega_1$ form a partition of $\Omega$, neither of them is measurable, and the measure $\mu$ concentrates on both. To see this, note that the only Baire set in $2^{\omega_1}$ disjoint from $\Omega_k$ is the empty set, because a Baire set in $2^{\omega_1}$ depends on only countably many coordinates. Thus we may apply \autoref{l:concentration-direct-integral} and obtain the isomorphic direct integrals
  $$M \cong \int_{\Omega_0}^\oplus z \ud \mu_0(z) \cong \int_{\Omega_1}^\oplus z \ud \mu_1(z)$$
  over the induced probability spaces $(\Omega_0,\mu_0)$ and $(\Omega_1,\mu_1)$.
  By our preceding results, the measurable fields $(z:z\in\Omega_0)$ and $(z:z\in\Omega_1)$ consist of extremal models of the affine theory of infinite sets, which are simplicial and even Bauer. However, the models from the two fields are non-isomorphic, showing that \autoref{th:uniqueness-decomposition} cannot possibly be strengthened to a statement about isomorphisms between the models of the fields.

  One may also observe that the structure $M$ is isomorphic to the direct multiple $L^1(\Omega,\mu,\omega_1)$. Indeed, the map $L^1(\Omega,\mu,\omega_1)\to \int_{\Omega}^\oplus z \ud \mu(z)$ sending a simple section $f$ defined on a partition $(A_i)_{i<n}$ of $\Omega$ by $f(z)=\alpha$ if and only if $z\in A_i$, to the simple section $f'$ defined by $f'(z)=e_{\alpha}(z)$ if and only if $z\in A_i$, is an isomorphism between the two structures.
\end{example}


\section{Hilbert spaces}
\label{sec:ex:HS}

The usual axioms that define Hilbert spaces through the vector space structure and the inner product look very much first-order, and even affine. A well-known pitfall, though, is that as metric sorts in continuous logic are required to be bounded, Hilbert spaces cannot be seen as metric structures on their own, but only via some distinguished bounded subset or a family of bounded subsets. The canonical choices are the unit ball or the unit sphere (if one prefers a one-sorted language), or the collection of all balls with integer radius (for a multi-sorted, countable language). In continuous logic, this is not a serious hurdle, since one can write axioms whose models are precisely the unit balls (or spheres, or families of balls) of Hilbert spaces, and all the information about the corresponding Hilbert spaces is encoded in those structures. In affine logic, the situation is more delicate, but rather interesting.

In what follows, we will consider Hilbert spaces through their unit spheres. Since the unit spheres are affinely definable in the unit balls (the distance function being given by the formula $1-d(x,0)$), the formulas that we write are also expressible in the unit balls, and the proofs below can be adapted to that setting.

We denote by $\HS$ the common continuous logic theory of the unit spheres of real, non-trivial Hilbert spaces, in the language consisting only of the inner product and the metric. We let $\AHS\coloneqq \HS_\aff$ be its affine part.

As it is easy to see, a non-trivial convex combination of unit spheres gives a model of $\AHS$ that is not a model of $\HS$. That is, the unit spheres of Hilbert spaces are not affinely axiomatizable. However, by the GNS construction, every model $M\models\AHS$ generates a Hilbert space $\cH_M$ in a canonical way, so that $M$ becomes a subset of the unit sphere of $\cH_M$. Indeed,
\begin{equation}\label{eq:AHS-positive-type}
  \AHS\models \qinf_{x_1,\dots,x_n}\sum_{i,j}\lambda_i \lambda_j\ip{x_i,x_j} \geq 0 \quad \text{for all } n\in\N, \bar \lambda \in\R^n.
\end{equation}
and
\begin{equation}
  \label{eq:AHS-sphere}
  \AHS \models \ip{x, x} = 1.
\end{equation}
Thus the map $\ip{\cdot, \cdot}\colon M\times M\to \R$ is a kernel of positive type on any model $M\models \AHS$, and as is well-known, this guarantees the existence of a uniquely determined Hilbert space $\cH_M$ containing $M$ whose inner product extends $\ip{\cdot, \cdot}$, and such that the subspace generated by $M$ is dense in $\cH_M$. See, for example, \cite[Appendix~C]{Bekka2008} (the development there is for complex Hilbert spaces but for real ones it is similar). By the functoriality of the construction, if $M' \sub M$, then $\cH_{M'} \sub \cH_M$ and, in particular, the group of automorphisms of $M$ embeds into the group of automorphisms of $\cH_M$.

We also note that in a model of $\HS$, we have that
\begin{equation*}
  \big\| \sum_i \lambda_i x_i \big\| = \qsup_z \sum_i \lambda_i \ip{z, x_i}
\end{equation*}
for all finite tuples $\bar \lambda$ of real numbers and $\bar x$ of elements of the model. Thus we may consider $\nm{\sum_i \lambda_i x_i}$ as an affine formula. In particular, we have that
\begin{equation}
  \label{eq:HS-d-from-norm}
  \AHS \models d(x, y) = \nm{x-y},
\end{equation}
so the metric is affinely definable from the inner product.

The models of $\HS$ are precisely the structures satisfying the universal axioms \autoref{eq:AHS-positive-type}, \autoref{eq:AHS-sphere}, and \autoref{eq:HS-d-from-norm} and for which $M$ is the whole unit sphere of $\cH_M$. This condition is easy to axiomatize in continuous logic. On the other hand, it is unclear to us what are explicit affine axioms for $\AHS$.

\begin{remark}
  If $M'$ is the sphere of $\cH_M$, then the inclusion $M\subseteq M'$ is typically not an embedding of structures, because the metric on $M$ will not be the restriction of the metric on $M'$ (as the formula which defines it is not quantifier-free).
\end{remark}

We will show next that the extremal models of $\AHS$ are precisely the models of $\HS$.
We recall from \autoref{sec:convex-formulas} that a continuous logic theory $Q$ is universal-delta-convex if every affine submodel of a model of $Q$ is also a model of $Q$.

\begin{lemma}
  The theory $\HS$ is universal-delta-convex.
\end{lemma}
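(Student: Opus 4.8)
The plan is to use the characterization from \autoref{p:universal-concave-theories}: a continuous logic theory is $\forall\Delta$ (universal-delta-convex) if and only if it is preserved by affine substructures. So, given an affine extension $M \preceq^\aff N$ with $N \models \HS$, I need to show $M \models \HS$. We already know that affine substructures of models of $\HS$ satisfy the universal axioms \autoref{eq:AHS-positive-type}, \autoref{eq:AHS-sphere}, and \autoref{eq:HS-d-from-norm}, since these are $\forall\Delta$ conditions (in fact the positive-type condition is a single $\inf$ of a convex formula, the sphere condition is quantifier-free, and $d(x,y) = \nm{x-y}$ is an equality between a quantifier-free formula and a convex one). Hence $M$ generates its Hilbert space $\cH_M$, and $M$ sits inside the unit sphere of $\cH_M$. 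The only thing that could fail is that $M$ is a \emph{proper} subset of the unit sphere of $\cH_M$, rather than all of it.

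First I would observe that since $M \preceq^\aff N$ and $N \models \HS$, we have $\cH_M \subseteq \cH_N$ as Hilbert spaces (by functoriality of the GNS construction applied to the inclusion $M \subseteq N$), and $N$ is the full unit sphere of $\cH_N$. Now take an arbitrary unit vector $v \in \cH_M$; it is in particular a unit vector of $\cH_N$, hence $v \in N$. I must produce an element of $M$ equal to $v$, or more precisely show that $v$ is already in $M$ (as a subset of $\cH_N$, hence of $N$). The key point is that the condition ``$M$ is the whole sphere of $\cH_M$'' should be expressible by a $\forall\Delta$ condition about $M$. Indeed, consider the affine definable predicate on $N$ (with parameters) measuring the distance from a point to the closed linear span of finitely many given points: for $a_1,\dots,a_n$ in the model and the relevant variable $z$, the quantity $d\bigl(z, \overline{\mathrm{span}}(a_1,\dots,a_n)\bigr)$ is computable from inner products. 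The statement that every unit vector in the span of $M$-elements is approximated by $M$-elements amounts to a scheme of conditions of the form
\begin{equation*}
  \qsup_{x_1,\dots,x_n}\ \qinf_z\ \bigl( \nm{z - w(\bar x, z)} \bigr) \leq \varepsilon,
\end{equation*}
where $w(\bar x, z)$ is the (definable) orthogonal projection of a chosen candidate unit vector onto $\overline{\mathrm{span}}(\bar x)$; one needs to set this up so that the candidate ranges over all unit vectors of $\overline{\mathrm{span}}(\bar x)$, which can be done by quantifying over coefficient tuples.

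The main obstacle — and the step requiring care — is verifying that this ``sphere-fullness'' scheme is genuinely expressible with \emph{delta-convex} (equivalently, sup-concave/inf-convex) formulas, so that \autoref{p:universal-concave-theories} applies, and that the scheme together with \autoref{eq:AHS-positive-type}, \autoref{eq:AHS-sphere}, \autoref{eq:HS-d-from-norm} really does axiomatize $\HS$ rather than something weaker. Concretely: given a finite tuple $\bar\lambda$ of rationals (or a dense set of reals), let $u_{\bar\lambda}(\bar x) = \sum_i \lambda_i x_i$ and let $r_{\bar\lambda}(\bar x) = \nm{u_{\bar\lambda}(\bar x)}$ (an affine formula); the normalized vector $u_{\bar\lambda}(\bar x)/r_{\bar\lambda}(\bar x)$ is a unit vector whenever $r_{\bar\lambda}(\bar x) > 0$, and the axiom
\begin{equation*}
  \qsup_{\bar x}\ \qinf_z\ \bigl( r_{\bar\lambda}(\bar x) \cdot \nm{z} + \nm{r_{\bar\lambda}(\bar x) z - u_{\bar\lambda}(\bar x)} - r_{\bar\lambda}(\bar x) \cdot \nm{z} \bigr) \leq 0
\end{equation*}
(suitably rephrased to avoid division and to handle the degenerate case $r_{\bar\lambda} = 0$) asserts that every such unit vector is in $M$. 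Since $\nm{\cdot}$ of an affine combination is a convex formula, each summand here is convex or delta-convex, and the inner expression is delta-convex, so the whole condition is $\forall\Delta$. A density argument (the normalized finite linear combinations of elements of $M$ are dense in the unit sphere of $\cH_M$, because linear combinations of $M$-elements are dense in $\cH_M$) shows these conditions, added to the three universal axioms above, cut out exactly $\HS$. Once that is in place, $\HS$ is the set of consequences of a $\forall\Delta$ theory, hence $\forall\Delta$, and the lemma follows. I would double-check the degenerate-coefficient edge case and the passage from ``dense in the sphere'' to ``equal to the sphere'' (using completeness of $M$ as a metric space) as the two places most likely to need a careful argument.
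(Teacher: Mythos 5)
Your overall strategy is the right one and matches the paper's: by \autoref{p:universal-concave-theories} it suffices to show that an affine substructure $M \preceq^\aff N$ of a Hilbert sphere is again a Hilbert sphere, and the only issue is whether $M$ exhausts the unit sphere of $\cH_M$, which reduces (by density of normalized finite linear combinations and completeness of $M$) to showing that each normalized combination $b = \frac{1}{k}\sum_i \lambda_i a_i$ with $a_i \in M$ and $k = \nm{\sum_i \lambda_i a_i} \neq 0$ already lies in $M$. Where your argument has a genuine gap is in the execution of this last step: you try to package sphere-fullness as a parameter-free $\forall\Delta$ axiom scheme, and the scheme you write down does not work. First, your displayed formula literally simplifies to $\sup_{\bar x}\inf_z \nm{r_{\bar\lambda}(\bar x)\, z - u_{\bar\lambda}(\bar x)} \leq 0$, since the two $r_{\bar\lambda}(\bar x)\cdot\nm{z}$ terms cancel. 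More seriously, $\nm{r_{\bar\lambda}(\bar x)\, z - u_{\bar\lambda}(\bar x)} = \sup_w\bigl(r_{\bar\lambda}(\bar x)\ip{w,z} - \sum_i\lambda_i\ip{w,x_i}\bigr)$ involves the \emph{product} of the (non-constant, quantified) affine formula $r_{\bar\lambda}(\bar x)$ with the atomic formula $\ip{w,z}$. Products of formulas are not among the connectives allowed in delta-convex formulas (which are lattice combinations of affine formulas), so the inner expression is not delta-convex and the condition is not $\forall\Delta$ as claimed. This is exactly the step you flag as ``requiring care,'' and it is where the argument breaks: the obstruction is the normalization by a quantity that varies with $\bar x$.

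The paper's proof sidesteps this entirely by never writing a parameter-free axiom. Fix the tuple $\bar a \in M$ first; then $k = \nm{\sum_i\lambda_i a_i}$ is a \emph{constant}, and $\nm{x - \frac{1}{k}\sum_i\lambda_i a_i} = \sup_z\bigl(\ip{z,x} - \frac{1}{k}\sum_i\lambda_i\ip{z,a_i}\bigr)$ is a genuine affine formula in $x$ with parameters $\bar a$, which in $N$ computes $d(x,b)$. The affine condition $\inf_x \nm{x - \frac{1}{k}\sum_i\lambda_i a_i} = 0$ holds in $N$ (witnessed by $b$ itself), hence in $M$ because $M \preceq^\aff N$; completeness of $M$ then gives $b \in M$. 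This is all that is needed — preservation under affine substructures is checked directly, with parameters, and no explicit $\forall\Delta$ axiomatization of $\HS$ is required. If you insist on an explicit scheme, you would have to discretize the normalizing constant (one axiom per rational value of $k$, with an error term controlling $|r_{\bar\lambda}(\bar x) - k|$), but as written your scheme does not establish the lemma.
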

\begin{proof}
  Let $M\models\HS$ be a Hilbert sphere and let $M'\preceq^\aff M$ be an affine submodel. Take finitely many elements $a_i\in M'$ and scalars $\lambda_i\in\R$ and suppose $k=\|\sum_i \lambda_i a_i\|\neq 0$. Then $b=\frac{1}{k}\sum_i \lambda_i a_i$ belongs to $\cH_{M'}\cap M$, and we claim that $b\in M'$. For this it suffices to observe that $b$ is definable over the $a_i$ in $M$ by the affine formula
  $$d(x,b) = \big\| x - \frac{1}{k}\sum_i \lambda_i a_i \big\|.$$
  Hence $M'\models \inf_x d(x,b)=0$, and we see that $b\in M'$.

  By the density of finite linear combinations, we conclude that $M'$ is the sphere of $\cH_{M'}$, and so $M'\models\HS$, as desired.
\end{proof}

Given $d\in\set{1,2,\dots,\infty}$, denote by $\HS_d$ the common continuous logic theory of the unit spheres of Hilbert spaces of dimension $d$. We also let $\AHS_d\coloneqq (\HS_d)_\aff$.

\begin{lemma}
  For every $d\in\set{1,2,\dots,\infty}$, the theory $\HS_d$ is universal-delta-convex.
\end{lemma}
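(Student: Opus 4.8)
The statement to prove is that $\HS_d$ is universal-delta-convex for each $d \in \{1,2,\dots,\infty\}$, i.e., every affine submodel of a model of $\HS_d$ is again a model of $\HS_d$. By \autoref{p:universal-concave-theories} this is equivalent to axiomatizability by $\forall\Delta$-conditions, so it suffices to produce the required preservation statement directly. Let $M \models \HS_d$ be the unit sphere of a Hilbert space $\cH$ of dimension $d$, and let $M' \preceq^\aff M$ be an affine submodel. By the previous lemma (for $\HS$), $M'$ is already the unit sphere of the Hilbert space $\cH_{M'} \subseteq \cH$, so $M' \models \HS$; the only thing left is to check that $\dim \cH_{M'} = d$. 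The plan is to encode the condition ``$\dim \cH \le d$'' and ``$\dim \cH \ge d$'' by affine (indeed delta-convex) conditions that are preserved when passing to $M'$.

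\textbf{Key steps.} First I would express the lower bound on dimension. The existence of $d$ orthonormal vectors (for finite $d$, and for $d = \infty$ the existence of $n$ orthonormal vectors for every $n$) is witnessed by the affine condition
\begin{equation*}
  \qinf_{x_0,\dots,x_{d-1}} \sum_{i<j} |\ip{x_i,x_j}| = 0,
\end{equation*}
using \autoref{eq:AHS-sphere} to guarantee the diagonal entries are $1$; this holds in $M$ and must be reconsidered in $M'$. The subtlety is that the infimum over $M'$ ranges over a smaller set, so this condition is \emph{not} automatically inherited downward — but here one uses that $M'$ is $\partial$-dense in the sphere of $\cH_{M'}$, which already contains $d$ orthonormal vectors once we know $\cH_{M'} \neq 0$ and... this is exactly the place that needs care. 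Second, the upper bound: ``$\dim \cH \le d$'' says that any $d+1$ vectors are linearly dependent, equivalently that the Gram determinant of any $d+1$ vectors vanishes. The Gram determinant is a polynomial in the inner products, hence a delta-convex (in fact quantifier-free continuous) formula in finitely many variables, and the condition
\begin{equation*}
  \qsup_{x_0,\dots,x_d} \bigl( \text{Gram determinant of } x_0,\dots,x_d \bigr) \le 0
\end{equation*}
is a sup-delta-convex (hence $\forall\Delta$) condition; being universal, it passes to affine submodels by \autoref{p:universal-concave-theories}. So the upper bound is the easy half. For the lower bound I would instead argue: $\cH_{M'}$ is generated by $M'$, so if $\dim \cH_{M'} < d$ then $M'$ sits inside a sphere of dimension $< d$; but by the upper-bound argument applied to $M'$ (using $M' \models \HS$ from the earlier lemma and the fact that a Hilbert sphere of dimension $e$ satisfies the Gram-determinant vanishing for $e+1$ vectors), one gets that $M$ itself satisfies the Gram-determinant condition for $\dim\cH_{M'} + 1$ vectors — contradicting $\dim \cH = d > \dim \cH_{M'}$, since $M$ genuinely has $d$ independent vectors. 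This forces $\dim \cH_{M'} = d$.

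\textbf{Main obstacle.} The delicate point is the lower bound on dimension when $d$ is infinite or simply when one wants a clean $\forall\Delta$ axiomatization rather than an ad hoc argument: the naive ``$\exists$ many orthonormal vectors'' condition is existential, not universal, so it does not obviously descend to $M'$. The resolution I expect to use is the one sketched above — turn the failure of the lower bound into a \emph{universal} (Gram-determinant) consequence that $M$ would then have to satisfy, contradicting $M \models \HS_d$. Concretely: $\HS_d$ is axiomatized over $\HS$ by the universal conditions asserting vanishing of all $(d{+}1)$-Gram determinants (when $d < \infty$) together with, for the lower bound, the observation that these exact conditions \emph{fail} in any model of $\HS_{d'}$ with $d' < d$; since $M' \subseteq M$ forces every universal consequence of $\Th^\cont(M)$ to hold in $M'$, and $M$ satisfies $\HS_d$, the submodel $M'$ cannot be a proper-dimensional Hilbert sphere. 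For $d = \infty$, $\HS_\infty$ is axiomatized over $\HS$ by the \emph{negations} being unavailable — i.e., $\HS_\infty$ is simply $\HS$ together with nothing, or rather by the scheme denying every finite-Gram-determinant identity; here one checks that $\HS$ minus all the finite-dimensionality conditions is already $\forall\Delta$ (it is just $\HS$, which the earlier lemma handles) and $M'$ cannot become finite-dimensional because $\cH_{M'}$, being generated by the infinitely-many-orthonormal-vectors that survive in $M'$ by $\partial$-density near any orthonormal system realized in $M$... — so the real work is packaging the lower-bound inheritance cleanly, and I would present it via the contrapositive using universal Gram-determinant conditions as the single clean mechanism.
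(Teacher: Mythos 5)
Your reduction to the earlier lemma (so that $M'\models\HS$ and $M'$ is the full sphere of $\cH_{M'}\subseteq\cH_M$) matches the paper, and the upper bound $\dim\cH_{M'}\le d$ is indeed immediate from this inclusion — you do not need Gram determinants for it. The gap is in the lower bound, and it is a direction-of-preservation error. Your contrapositive runs: if $\dim\cH_{M'}=e<d$, then $M'$ satisfies the universal condition ``all $(e{+}1)$-Gram determinants vanish,'' and you want to transfer this to $M$ to contradict $M\models\HS_d$. But a universal ($\forall\Delta$) condition passes from a structure \emph{down} to its affine substructures, not up: $\sup_{M'}G\le 0$ says nothing about $\sup_M G$. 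Equivalently, the fact that $M$ contains $e{+}1$ vectors with nonzero Gram determinant is an \emph{existential} fact, and existential facts do not descend to $M'$ — which is exactly the lower bound you are trying to prove. Your first attempt, $\inf_{\bar x}\sum_{i<j}|\ip{x_i,x_j}|=0$, fails for the same reason (the formula is delta-convex but not affine, so its infimum over the smaller set $M'$ can only increase), as you yourself note before the argument trails off.

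The missing idea is that near-orthogonality of $n$ vectors can be detected by a genuinely \emph{affine} sentence, whose value is then preserved \emph{exactly} (in both directions) by $M'\preceq^\aff M$. The paper uses
\begin{gather*}
  \varphi(x,y)=\sqrt2\,\|x+y\|-\ip{x,y},
  \qquad
  \theta_n=\qsup_{x_1,\dots,x_n}\sum_{i<j}\varphi(x_i,x_j),
\end{gather*}
where $\|x+y\|=\sup_z\bigl(\ip{z,x}+\ip{z,y}\bigr)$ is affine. On the unit sphere $\varphi(x,y)=2\sqrt{1+\ip{x,y}}-\ip{x,y}$ attains its maximum value $2$ exactly when $\ip{x,y}=0$, so $\theta_n=n(n-1)$ holds in $M$ for every finite $n\le d$ and hence holds in $M'$. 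Since spheres of Hilbert spaces are saturated in continuous logic, the supremum is attained in $M'$ by an actual orthonormal $n$-tuple, giving $\dim\cH_{M'}\ge n$ for all finite $n\le d$ and closing the argument. Without some device of this kind (an affine sentence plus attainment of the sup), the lower bound does not follow from universal conditions alone, so as written your proof does not go through.
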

\begin{proof}
  Let $M'\preceq^\aff M\models \HS_d$. We have that $M'\models \HS$ by the previous lemma. We consider the affine formula
  $$\varphi(x,y)= \sqrt2 \nm{x + y} - \ip{x,y}.$$
  We note that in models of $\HS$, we have $\varphi(x,y)=2\sqrt{1+\ip{x,y}}-\ip{x,y}$, and one sees readily that this function attains its maximum $2$ precisely when $\ip{x,y}=0$. For each $n\geq 1$, let
  \begin{equation}\label{eq:dim-n-hilbert-spaces}
    \theta_n\coloneqq \qsup_{x_1,\dots,x_n}\sum_{i<j}\varphi(x_i,x_j).
  \end{equation}
  Hence for every finite $n\leq d$, the axiom
  $$\theta_n=n(n-1)$$
  holds in $M$, and therefore in $M'$. Since the spheres of Hilbert spaces are saturated in continuous logic, there are elements $x_1,\dots,x_n$ in $M'$ realizing the supremum. It follows that $\ip{x_i,x_j}=0$ for each $i<j$, and so that $\cH_{M'}$ has dimension at least $n$ for each $n\leq d$. Since $\dim \cH_{M'}\leq\dim \cH_M= d$, we conclude that $\cH_{M'}$ has dimension $d$.
\end{proof}

\begin{theorem}\label{th:AHS-extremal-models}
  The extremal models of $\AHS$ are precisely the models of $\HS$. Similarly, for any $d\in\set{1,2,\dots,\infty}$, the extremal models of $\AHS_d$ are precisely the models of $\HS_d$.
\end{theorem}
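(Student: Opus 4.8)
The plan is to deduce \autoref{th:AHS-extremal-models} from the two preceding lemmas together with the general machinery relating universal-delta-convex theories and their affine parts. By \autoref{c:uda-ext-models}, since $\HS$ (respectively $\HS_d$) is universal-delta-convex, every extremal model of $\HS_\aff = \AHS$ (respectively of $\AHS_d$) is a model of $\HS$ (respectively $\HS_d$). This handles one inclusion immediately, so the real content is the converse: every model of $\HS$ is an \emph{extremal} model of $\AHS$, i.e., it realizes only extreme affine types.

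For the converse inclusion I would argue as follows. Let $M \models \HS$ and let $a \in M^x$ be a tuple; I must show $\tp^\aff(a) \in \cE_x(\AHS)$. Suppose $\tp^\aff(a) = \tfrac12 p_1 + \tfrac12 p_2$ with $p_i \in \tS^\aff_x(\AHS)$. Realize $p_i$ by tuples $b_i$ in models $N_i \models \AHS$, so that for every affine formula $\varphi(x)$ we have $\varphi^M(a) = \tfrac12\varphi^{N_1}(b_1) + \tfrac12\varphi^{N_2}(b_2)$. The key point is that the inner products $\ip{a_i,a_j}$, being affine formulas (indeed atomic), must then satisfy $\ip{a_i,a_j}^M = \tfrac12\ip{b_{1,i},b_{1,j}}^{N_1} + \tfrac12\ip{b_{2,i},b_{2,j}}^{N_2}$; and since each $b_k$ lies on the unit sphere of a Hilbert space, $\ip{b_{k,i},b_{k,i}} = 1 = \ip{a_i,a_i}$. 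More generally, for any finite tuple of scalars $\lambda$, the quantity $\bignm{\sum_i \lambda_i x_i}$ is an affine formula (it equals $\sup_z \sum_i \lambda_i \ip{z,x_i}$), so $\bignm{\sum_i \lambda_i a_i}^M = \tfrac12\bignm{\sum_i \lambda_i b_{1,i}}^{N_1} + \tfrac12\bignm{\sum_i \lambda_i b_{2,i}}^{N_2}$. Now in $M$, a model of $\HS$, the parallelogram law holds, so $\bignm{\sum_i \lambda_i a_i}^2$ is a quadratic function of the $\lambda$ determined by the Gram matrix $(\ip{a_i,a_j})$; the same holds in $N_1$ and $N_2$ once one passes to the ambient Hilbert spaces $\cH_{N_k}$. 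Comparing the squared norms forces, by strict convexity of $t \mapsto t^2$ applied to the identity $\bignm{v}_M = \tfrac12\bignm{v_1}_{\cH_{N_1}} + \tfrac12\bignm{v_2}_{\cH_{N_2}}$ for the corresponding vectors $v, v_1, v_2$, that $\bignm{\sum_i \lambda_i b_{1,i}} = \bignm{\sum_i \lambda_i b_{2,i}}$ for all $\lambda$, hence that the Gram matrices of $b_1$ and $b_2$ coincide with that of $a$. Since affine types of tuples in models of $\AHS$ are determined by the values of the affine formulas $\bignm{\sum_i \lambda_i x_i}$ (these separate points, as $d(x,y) = \nm{x-y}$ and the inner product is recovered via polarization, all affinely), this gives $p_1 = p_2 = \tp^\aff(a)$, so $\tp^\aff(a)$ is extreme.

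The main obstacle I anticipate is the strict-convexity step: turning the flat identity of \emph{norms} (which is an affine condition and therefore only gives an arithmetic mean relation) into an identity of \emph{Gram matrices}. The cleanest route is probably to feed a convex combination of the two realizations into a single Hilbert space. Concretely, consider $N = \tfrac12 N_1 \oplus \tfrac12 N_2$, which is a model of $\AHS$ by preservation under convex combinations, let $c = \tfrac12 b_1 \oplus \tfrac12 b_2 \in N^x$, and pass to $\cH_N$; then $\tp^\aff(c) = \tp^\aff(a)$ by \autoref{lemma:NaiveConvexLos}, and the vectors $\tfrac12 b_{1,i} \oplus \tfrac12 b_{2,i}$ in $\cH_N$ have the prescribed norms. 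One checks that a tuple of unit vectors in a Hilbert space whose norm form $\lambda \mapsto \bignm{\sum_i \lambda_i v_i}^2$ equals that of $a$ must have the same Gram matrix as $a$; this is just the statement that the norm form of a finite-dimensional subspace determines its inner product, which is elementary linear algebra (polarization). So the argument reduces to: (i) $\HS$ and $\HS_d$ universal-delta-convex (the two lemmas, already proved); (ii) \autoref{c:uda-ext-models} for the easy inclusion; (iii) the linear-algebra observation plus \autoref{lemma:NaiveConvexLos} for the hard inclusion, together with the remark that affine types over $\AHS$ are determined by the norm forms. For the dimension-$d$ version, one additionally observes that the axioms \autoref{eq:dim-n-hilbert-spaces} pinning down $\dim \cH \geq n$ are sup-delta-convex (the function $\varphi(x,y)$ there is affine), hence are implied in $M \models \HS_d$ and transfer to show any model of $\HS_d$ remains $d$-dimensional as an extremal model of $\AHS_d$; combined with part (i) applied to $\HS_d$, this closes the argument.
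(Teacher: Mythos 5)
Your first direction and the strict-convexity computation are sound, but the final step of the converse direction has a genuine gap. After showing that the Gram matrices of $b_1$, $b_2$ and $a$ coincide, you conclude $p_1=p_2=\tp^\aff(a)$ on the grounds that ``affine types of tuples in models of $\AHS$ are determined by the values of the affine formulas $\nm{\sum_i\lambda_i x_i}$.'' That claim is false: those formulas (together with the atomic ones) determine only the \emph{quantifier-free} type, and quantifier-free types do not determine affine types over $\AHS$. Already for the empty tuple the norm forms are constants, yet $\tS^\aff_0(\AHS)$ contains the distinct completions $\AHS_d$; for a $1$-tuple, a unit vector in a model of $\HS_2$ and one in a model of $\HS_3$ have the same Gram matrix and the same norm form but different affine types (distinguished by the sentences $\theta_n$). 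By \autoref{prop:Stone-Weierstrass-affine}, your claim would require the linear span of the norm forms to separate points of $\tS^\aff_x(\AHS)$, which it does not. So the deduction from ``same Gram matrix'' to ``same affine type'' is unjustified, and this is precisely the hard part of the theorem. (A smaller imprecision, which happens to be harmless: in a general model $N_k\models\AHS$ the affine formula $\sup_z\sum_i\lambda_i\ip{z,x_i}$ evaluates to $s_k(\lambda)\leq\bignm{\sum_i\lambda_i b_{k,i}}_{\cH_{N_k}}$, not necessarily to the ambient norm; your convexity chain still closes because $w=\tfrac12 s_1+\tfrac12 s_2\leq\tfrac12 u+\tfrac12 v\leq(\tfrac12 u^2+\tfrac12 v^2)^{1/2}=w$ forces equality throughout.)

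The argument can be repaired, and then it becomes a genuinely different (and more self-contained) route than the paper's. Take $a$ to enumerate a dense subset of $M$; by \autoref{l:ExtremeTypesInfinite} and the remark following \autoref{defn:ExtremalModel}, it suffices to show $\tp^\aff(a)$ is extreme. If $\tp^\aff(a)=\tfrac12 p_1+\tfrac12 p_2$, then $p_1,p_2\in\tS^{\fM,\aff}(\AHS)$ by \autoref{cor:FrecciaRossa}, i.e., they too enumerate models; your Gram-matrix computation gives $\rho^\qf(p_1)=\rho^\qf(p_2)=\rho^\qf(\tp^\aff(a))$, and \autoref{l:rho-injective-Smod} (injectivity of $\rho^\qf$ on types enumerating models) then yields $p_1=p_2$. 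With that fix your proof works, and it differs from the paper's, which instead deduces extremality of \emph{all} models of $\HS_d$ from the existence of \emph{some} extremal model together with the categoricity of $\HS_d$ (resp.\ separable reduction for $d=\infty$), and handles the passage between $\AHS$ and $\AHS_d$ by showing each $\AHS_d$ is an extreme completion via a face argument with the sentences $\theta_n$.
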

\begin{proof}
  By \autoref{c:uda-ext-models}, the extremal models of $\AHS$ are models of $\HS$, and similarly for $\AHS_d$. For the converse, it is enough to show that each theory $\AHS_d$ is an extreme completion of $\AHS$, and that the models of $\HS_d$ are extremal models of $\AHS_d$.

  We start with the latter. In the case $d<\infty$, as the extremal models of $\AHS_d$ are models of $\HS_d$, which has only one model up to isomorphism, we deduce that the unique model of $\HS_d$ is an extremal model of $\AHS_d$. We can deduce similarly that the only separable model of $\HS_\infty$ is an extremal model of $\AHS_\infty$. Since for checking extremality it is enough to consider separable affine substructures, we conclude as well that every model of $\HS_\infty$ is an extremal model of $\AHS_\infty$.

  Now we show that each $\AHS_d$ is an extreme point of $\tS^\aff_0(\AHS)$. For each $n\geq 1$, let $\theta_n$ be the formula defined in \autoref{eq:dim-n-hilbert-spaces}. Since $\HS\models \theta_n\leq n(n-1)$, the completions of $\AHS$ that satisfy $\theta_n= n(n-1)$ form a face of $\tS^\aff_0(\AHS)$ that we denote by $F_n$. Since every extremal model of $\AHS$ is a model of $\HS$, we see that the extreme points of $F_n$ are contained in the set $\set{\AHS_d:d\geq n}$. If $\AHS_n$ is not an extreme point of $F_n$, then every extreme point of $F_n$ satisfies the condition $\theta_{n+1}=(n+1)n$,
  and hence so does every theory in $F_n$. Since $\AHS_n$ does not satisfy this condition, this is a contradiction. We can conclude that $\AHS_d$ is an extreme completion of $\AHS$ for every $d<\infty$. On the other hand, we see that the intersection $F_\infty=\bigcap_{n\geq 1}F_n$ is a face of $\tS^\aff_0(\AHS)$ containing $\AHS_\infty$ which cannot contain any other extreme point. We conclude that $F_\infty=\set{\AHS_\infty}$, and so $\AHS_\infty$ is also an extreme completion of $\AHS$.
\end{proof}

\begin{cor}
  The sets of extreme types $\cE_x(\AHS)$ and $\cE_x(\AHS_d)$ are closed in the respective type spaces.
\end{cor}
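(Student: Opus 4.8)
The plan is to deduce this corollary from \autoref{th:AHS-extremal-models} together with \autoref{c:closed-extreme-types}. By \autoref{th:AHS-extremal-models}, the extremal models of $\AHS$ are exactly the models of $\HS$, which form an elementary class in continuous logic (they are axiomatized by the universal conditions \autoref{eq:AHS-positive-type}, \autoref{eq:AHS-sphere}, \autoref{eq:HS-d-from-norm} together with the continuous logic axiom asserting that $M$ is the whole unit sphere of $\cH_M$). Hence condition (ii) of \autoref{c:closed-extreme-types} holds for $T = \AHS$, and therefore condition (i) holds as well, i.e., the spaces $\cE_x(\AHS)$ are closed in $\tS^\aff_x(\AHS)$.

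The same argument applies verbatim to each $\AHS_d$: by \autoref{th:AHS-extremal-models}, the extremal models of $\AHS_d$ are precisely the models of $\HS_d$, which again form an elementary class in continuous logic. For finite $d$ this is clear since $\HS_d$ is a complete continuous theory; for $d = \infty$, the class of models of $\HS_\infty$ is axiomatized by $\HS$ together with the conditions $\theta_n = n(n-1)$ for all $n \geq 1$, where $\theta_n$ is the affine formula from \autoref{eq:dim-n-hilbert-spaces}. In either case, \autoref{c:closed-extreme-types} gives that $\cE_x(\AHS_d)$ is closed in $\tS^\aff_x(\AHS_d)$.

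There is essentially no obstacle here: the corollary is a formal consequence of the identification of extremal models with Hilbert spheres (established in the preceding theorem) and the general equivalence recorded in \autoref{c:closed-extreme-types}. The only point requiring a sentence of justification is that the relevant classes of Hilbert-space structures are genuinely elementary in continuous logic, which follows from the explicit axiomatizations noted above. One could alternatively invoke \autoref{prop:ExtremeTypesMetricallyClosed} and a direct saturation argument, but routing through \autoref{c:closed-extreme-types} is cleanest and uses exactly what has already been proved.
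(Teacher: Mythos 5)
Your proof is correct and is exactly the paper's argument: the paper's proof reads ``Immediate from the previous theorem and \autoref{c:closed-extreme-types}.'' The only remark worth making is that your extra paragraph on explicit axiomatizations is not needed — since $\HS$ and $\HS_d$ are by definition continuous logic theories, their model classes are elementary automatically, so \autoref{th:AHS-extremal-models} plus \autoref{c:closed-extreme-types} already closes the argument.
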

\begin{proof}
  Immediate from the previous theorem and \autoref{c:closed-extreme-types}.
\end{proof}

\begin{remark}
  Contrary to the situation in \autoref{ex:PrA-with-atomless-parameters}, the affine diagram of a model $M\models\HS_\infty$ is not absolutely categorical. Indeed, any proper continuous logic elementary extension of $M$ is extremal (with or without names for the elements of $M$).
\end{remark}

In what follows we will strengthen the previous corollary for the theories $\AHS_d$, by showing they are Bauer theories. Indeed, we will prove that the theories $\HS_d$ have affine reduction, for which we proceed similarly as we did for the complete classical theories over the empty language (\autoref{th:affine-reduction-pure-sets}), by applying the criterion of \autoref{sec:criterion-affine-reduction}. The following theorem is stated using the notation from that section.

\begin{theorem}
  \label{th:Aut(L1(mu,H))}
  Let $M \models \HS$ be separable and let $(\Omega, \mu)$ be a standard probability space. Then
  $$\Aut(L^1(\Omega,M)) = L(\Omega,\Aut(M)) \rtimes \Aut(\mu).$$
\end{theorem}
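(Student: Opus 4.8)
The plan is to follow the three–step strategy used to prove \autoref{th:L1Mk-automorphisms} for pure sets: reconstruct the measure algebra $\MALG(\Omega)$ and the $\Aut(\mu)$–part of an automorphism, divide it out, and show the remainder acts fibrewise by elements of $\Aut(M)$. Write $N = L^1(\Omega,M)$, let $\cH$ be the real Hilbert space whose unit sphere is $M$, and for $v\in M$ let $e_v\in N$ be the constant section $\omega\mapsto v$; recall $M\preceq^\aff N$ by \Los's theorem (\autoref{th:Los}), and that the inner product and metric of $N$ are obtained by integrating those of $M$. Two elementary facts drive everything. First, $\operatorname{diam}(N)=2$ and, since $\|a-b\|\le 2$ for $a,b\in M$ with equality only when $b=-a$, the pointwise antipode $f\mapsto -f$ is the unique point of $N$ at distance $2$ from $f$; hence every $U\in\Aut(N)$ commutes with it. Second, by strict convexity of $\cH$ (a chord of the sphere meets the sphere only at its endpoints), the metric segment $[f,g]_N:=\{h\in N: d(h,f)+d(h,g)=d(f,g)\}$ equals $\{f\chi_A+g\chi_{A^c}:A\in\MALG(\Omega)\}$; in particular $[e_v,e_{-v}]_N$, with the induced metric, is a rescaled copy of $\MALG(\Omega)$.

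The second step extracts the $\Aut(\mu)$–part. Fix $U\in\Aut(N)$ and set $F_v=U(e_v)$, regarded as a measurable function $\Omega\to M$ (choosing representatives for $v$ in a countable dense $D\subseteq M$); then $F_{-v}=-F_v$, and $U$ carries $[e_v,e_{-v}]_N$ isometrically onto $[F_v,-F_v]_N$, which after the harmless normalisation $U(e_v)\leftrightarrow(A=\Omega)$ gives a measure–algebra automorphism $s_v\in\Aut(\mu)$ with $U(e_v\chi_A+e_{-v}\chi_{A^c})=F_v\chi_{s_v(A)}+(-F_v)\chi_{s_v(A)^c}$. Computing $\ip{e_v\chi_A+e_{-v}\chi_{A^c},\ e_w}$ in $N$ before and after applying $U$ yields $\int_{s_v(A)}\ip{F_v(\cdot),F_w(\cdot)}\ud\mu=\ip{v,w}\,\mu(A)$ for all $A$, hence $\ip{F_v(\omega),F_w(\omega)}=\ip{v,w}$ for a.e.\ $\omega$; feeding this into the analogous computation of $\ip{e_v\chi_A+e_{-v}\chi_{A^c},\ e_w\chi_B+e_{-w}\chi_{B^c}}$ (for $\ip{v,w}\ne 0$) gives $\mu(s_v(A)\triangle s_w(B))=\mu(A\triangle B)$, and taking $B=A$ shows $s_v=s_w=:s$ is independent of $v$. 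Let $S\in\Aut(\mu)\le\Aut(N)$ be induced by $s$ and put $U'':=S^{-1}U$; then $U''(e_v)=\hat F_v:=F_v\circ s$, still with $\ip{\hat F_v(\omega),\hat F_w(\omega)}=\ip{v,w}$ a.e., and now $U''$ does not move the underlying measure–algebra element, i.e.\ $U''(e_v\chi_A+e_{-v}\chi_{A^c})=\hat F_v\chi_A+(-\hat F_v)\chi_{A^c}$.

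The heart of the proof is to show that $U''$ acts fibrewise. For a.e.\ $\omega$ the map $v\mapsto \hat F_v(\omega)$ ($v\in D$) preserves inner products, so it extends to an isometric embedding $\tilde T_\omega\colon\cH\to\cH$; let $T_\omega$ be its restriction to $M$. One proves by induction on the number of pieces that $U''\bigl(\sum_k v_k\chi_{A_k}\bigr)=\sum_k\hat F_{v_k}\chi_{A_k}$ for every simple section: the inductive step glues one constant piece onto an already–treated simple section $g$, observes that $U''$ maps the relevant metric segment onto $[\hat F_{v_0},U''(g)]_N$, and uses the induction hypothesis together with inner–product identities (pairing with sections of the form $e_u\chi_B+e_{-u}\chi_{B^c}$) to see that the resulting element of $\MALG(\Omega)$ differs from the expected one only on a set where the two glued sections already coincide, so the value is unchanged. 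Since $U''$ and $f\mapsto(\omega\mapsto T_\omega(f(\omega)))$ are both $d_N$–isometries and simple sections are dense in $N$, this extends to all $f\in N$: $U''(f)(\omega)=T_\omega(f(\omega))$ for a.e.\ $\omega$.

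Finally, each $\tilde T_\omega$ preserves inner products on $\cH$, and because $U''$ is surjective, for every $u$ in the countable dense $D$ the section $e_u$ equals $U''(f)$ for some $f$, so $u\in T_\omega(M)$ for a.e.\ $\omega$; hence, $T_\omega(M)$ being a closed set containing $D$, we get $T_\omega(M)=M$ for a.e.\ $\omega$, i.e.\ $T_\omega\in\Aut(M)$. The map $\omega\mapsto T_\omega$ is measurable since $\omega\mapsto T_\omega(v)=\hat F_v(\omega)$ is measurable for $v\in D$, so $U''\in L(\Omega,\Aut(M))$ and $U=S\circ U''\in L(\Omega,\Aut(M))\rtimes\Aut(\mu)$; the reverse inclusion was noted before the statement. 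The main obstacle is the fibrewise/locality step of the third paragraph --- it requires careful tracking of the measure–algebra maps induced on metric segments and of the inner–product bookkeeping --- together with the point that, when $\dim\cH=\infty$, surjectivity of $T_\omega$ cannot be obtained from a fibrewise inverse and genuinely relies on the global surjectivity of $U''$ used above. This theorem then feeds, via \autoref{th:criterion-affine-reduction}, into the statement that $\HS_\infty$ has affine reduction.
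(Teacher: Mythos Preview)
Your proof is correct and follows a genuinely different route from the paper's.

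The paper works in the opposite order: it first builds the fibrewise part and only afterwards extracts the measure-algebra automorphism. Concretely, the paper's key lemma is \emph{preservation of pointwise orthogonality}: if $f(\omega)\perp f'(\omega)$ a.e.\ then $U(f)(\omega)\perp U(f')(\omega)$ a.e., proved by noting that $\tfrac{1}{\sqrt 2}(f-f')$ then lies in $L^1(\Omega,M)$. Using an orthonormal basis $(b_i)$ of $\cH_M$, the paper defines $T(\omega)$ by $b_i\mapsto U(e_{b_i})(\omega)$, checks it is orthogonal, and must then prove surjectivity of $T(\omega)$ \emph{before} it can form $S=T^{-1}U$; this is done via a weak-limit argument together with Jankov--von~Neumann uniformization. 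Only then does it analyze $S$ on the sign-flip sections $e_i^A=b_i\chi_A-b_i\chi_{A^c}$ to produce the measure-algebra automorphism.

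Your approach replaces pointwise orthogonality by the metric-segment description $[f,g]_N=\{f\chi_A+g\chi_{A^c}\}$, which comes from strict convexity of the sphere, and extracts the $\Aut(\mu)$-part first. This buys you two things: it unifies the argument with the pure-set case of \autoref{th:L1Mk-automorphisms} (the antipodal segments $[e_v,e_{-v}]_N$ play the same role as pairs at maximal distance there), and---more substantially---it makes surjectivity of $T_\omega$ a one-line consequence of the fibrewise formula and surjectivity of $U''$, entirely avoiding the paper's weak-limit/uniformization step. The price is the bookkeeping in your induction on simple sections, which the paper sidesteps by working linearly in $L^2(\Omega,\cH_M)$. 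Both arguments ultimately hinge on the same inner-product identity $\ip{F_v(\omega),F_w(\omega)}=\ip{v,w}$ a.e.; you obtain it by pairing with $e_w$, the paper by its orthogonality lemma applied to the basis vectors.
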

\begin{proof}
  Note that $\cH_{L^1(\Omega,M)} = L^2(\Omega,\cH_M)$. We identify $\Aut(M)$ with the orthogonal group
  $\cO(\cH_M)$ and observe that
  $$\Aut(L^1(\Omega,M)) = \set{U \in\cO(\cH_{L^1(\Omega,M)}) : U(L^1(\Omega,M))= L^1(\Omega,M)}.$$

  We start by showing that if $f,f'\in L^1(\Omega,M)$ are such that $f(\omega)\perp f'(\omega)$ for almost every $\omega \in \Omega$, and $U \in \Aut(L^1(\Omega,M))$, then $U(f)(\omega) \perp U(f')(\omega)$ for almost every $\omega$. Indeed, since $\|U(f)(\omega)\|=\|U(f')(\omega)\|=1$ almost surely, we have that
  $$\ip{U(f)(\omega), U(f')(\omega)}=1-\half \|U(f)(\omega) - U(f')(\omega)\|^2.$$
  On the other hand, since $f(\omega)\perp f'(\omega)$, we have that $\frac{1}{\sqrt{2}}(f-f')$ belongs to $L^1(\Omega,M)$. Hence $U\bigl(\frac{1}{\sqrt{2}}(f-f')\bigr)$ belongs to $L^1(\Omega,M)$, which implies that $\|U(f)(\omega)-U(f')(\omega)\|=\sqrt{2}$ almost surely. We conclude that
  $\ip{U(f)(\omega),U(f')(\omega)}=0$
  for almost every $\omega$, as desired.

  Now fix some $U \in\Aut(L^1(\Omega,M))$. Let $(b_i)$ be an orthonormal basis of $\cH_M$, and for each $i$, let $e_i\in L^1(\Omega,M)$ be the measurable section equal to the constant $b_i$. Let also $f_i=U(e_i)$, which we see as a concrete Borel function $f_i\colon \Omega \to M$. Then, for $\omega \in \Omega$, let $T(\omega)\colon \cH_M\to \cH_M$ be the operator determined by
  $$T(\omega)(b_i)=f_i(\omega).$$
  We claim that $T(\omega)\in\cO(\cH_M)$ almost surely, which is equivalent to saying that $(f_i(\omega))$ is an orthonormal basis of $\cH_M$ for almost every $\omega$.

  By the preservation of pointwise orthogonality, the set $(f_i(\omega))$ is orthonormal for almost every $\omega$. In the finite-dimensional case, this is enough to conclude that $(f_i(\omega))$ is an orthonormal basis, so let us consider the case $M\models \HS_\infty$. We want to show that for almost every $\omega$, there is no $a \in M$ is such that $a \perp f_i(\omega)$ for every $i$. To that end, let
  $$E = \set{(\omega, a) \in \Omega \times M : a \perp f_i(\omega)\text{ for every }i \in \N}.$$
  Denote by $Z$ the projection of $E$ on the first coordinate. The set $E$ is Borel, so by the Jankov--von Neumann uniformization theorem \cite[Thm.~18.1]{Kechris1995}, there is a measurable function $w \colon Z\to M$ such that $w(\omega)\perp f_i(\omega)$ for all $i$ and $\omega \in Z$. We extend $w$ to an element of $\cH_{L^1(\Omega,M)}$ by setting $w = 0$ outside of $Z$. For each $j \in \N$, define
  \begin{gather*}
    w_j(\omega)\coloneqq
    \begin{cases}
      w(\omega)\text{ if } \omega \in Z, \\
      f_j(\omega)\text{ if } \omega \in \Omega \setminus Z.
    \end{cases}
  \end{gather*}
  Thus we have a measurable function $w_j\colon \Omega\to M$ which satisfies $w_j(\omega)\perp f_i(\omega)$ almost surely whenever $i<j$. Again by the preservation of pointwise orthogonality, we have that
  \begin{equation}\label{eq:u^-1wj-perp-vi}
    U^{-1}(w_j)(\omega) \perp b_i \quad \text{for a.e.\ $\omega$ and $i<j$}.
  \end{equation}
  On the other hand, the sequence $(w_j)$ converges weakly in $\cH_{L^1(\Omega,M)}$ to  $w$. Hence, $(U^{-1}(w_j))$ converges weakly to $U^{-1}(w)$. Thus, using \autoref{eq:u^-1wj-perp-vi}, for every measurable $A \sub \Omega$, we have that $\ip{U^{-1}(w), \chi_A b_i} = \lim_j \ip{U^{-1}(w_j), \chi_A b_i} = 0$. This allows us to conclude that $U^{-1}(w)(\omega)\perp b_i$ almost surely for all $i$. We deduce that $w=0$, which implies that $Z$ has measure zero. This completes the proof that $T(\omega) \in \cO(\cH_M)$ almost surely.

  The map $\omega \mapsto T(\omega)$ is measurable, so we get an element $T\in L(\Omega,\cO(\cH_M))$
  which satisfies $T^{-1}U(e_i)=e_i$ for every $i$. Let $S = T^{-1}U$. Given $i$ and a measurable set $A\subseteq \Omega$, define
  \begin{equation*}
    e_i^A = b_i \chi_A - b_i \chi_{\Omega\setminus A} \in L^1(\Omega,M).
  \end{equation*}
  Then $e_i^A(\omega) \perp e_j(\omega)$ for a.e.\ $\omega$ and $j \neq i$, implying that $S(e_i^A)(\omega)\perp S(e_j)(\omega)$ almost surely. Since $S(e_j)(\omega) = e_j(\omega) = b_j$, we deduce that $S(e_i^A)(\omega)$ belongs almost surely to the one-dimensional subspace spanned by $b_i$, i.e., there is a measurable set $S_i(A)$ such that $S(e_i^A)=e_i^{S_i(A)}$.

  We argue that $S_i(A) = S_j(A)$ up to measure zero for every $i$, $j$ and $A$. Indeed, fix $i \neq j$. We have
  $$e_i^A(\omega)-e_j^A(\omega)\perp b_i+b_j \quad \text{for a.e.\ $\omega$}.$$
  Also, $e_i^A-e_j^A$ has constant norm because $e_i^A(\omega)\perp e_j^A(\omega)$ for every $\omega$. So we can deduce from the preservation of orthogonality that
  $$e_i^{S_i(A)}(\omega) - e_j^{S_j(A)}(\omega)\perp b_i+b_j \quad \text{for a.e.\ } \omega.$$
  This implies that $S_i(A) = S_j(A)$. Thus $S_i$ does not depend on $i$, and we denote it simply by $S_*$.

  We prove next that the map $A\mapsto S_*(A)$ defines an automorphism of the measure algebra of $\Omega$. By performing the same construction as above but for the transformation $S^{-1}$, we see readily that $(S^{-1})_*$ is the inverse of $S_*$, so $S_*$ is bijective. Now let us fix $i \in \N$ and denote $c_i^A = b_i\chi_A = \half(e_i^A+e_i)$. We have $S(c_i^A)=c_i^{S_*(A)}$, and thus
  $$\mu(A)=\big\|c_i^A\big\|= \big\|S(c_i^A)\big\| = \mu(S_*(A)),$$
  so $S_*$ is measure-preserving. It then suffices to show that for $A, B \sub \Omega$ with $A \cap B = \emptyset$, we have $S_*(A \cup B) = S_*(A) \cup S_*(B)$. If $A\cap B=\emptyset$, then $c_i^{A\cup B}=c_i^A+c_i^B$, so
  $$c_i^{S_*(A\cup B)} = S(c_i^{A\cup B}) = S(c_i^A) + S(c_i^B) = c_i^{S_*(A)}+ c_i^{S_*(B)},$$
  and it follows that $S_*(A\cup B) = S_*(A)\cup S_*(B)$.

  Finally, we show that $S$ coincides with the orthogonal transformation of $L^2(\Omega,\cH_M)$ induced by the measure automorphism $S_*$. It is enough to check that $S(f) = S_*(f)$ for those $f \in L^2(\Omega,\cH_M)$ of the form $f=\sum_j a_j \chi_{A_j}$ where $(A_j)$ is a finite measurable partition of $\Omega$ and the vectors $a_j$ are finite linear combinations of the $b_i$, say $a_j = \sum_i r_{ij}b_i$. But then
  $$S(f)=\sum_{i,j}r_{ij}S(c_i^{A_j})=\sum_{i,j}r_{ij}c_i^{S_*(A_j)}= \sum_j a_j\chi_{S_*(A_j)} = S_*(f),$$
  as desired.

  Thus $S$ belongs to the subgroup $\Aut(\mu) \leq \Aut(L^1(\Omega,M))$, and we have that
  $$U = T S \in L(\Omega,\Aut(M))\rtimes\Aut(\mu),$$
  concluding the proof.
\end{proof}

We can now apply \autoref{th:criterion-affine-reduction} and \autoref{th:Bauer-correspondence-Q} to obtain the following.

\begin{theorem}
  The continuous logic theories $\HS_d$ for $d\in\set{1,2,\dots,\infty}$ have affine reduction, and the corresponding affine theories $\AHS_d$ are Bauer.

  The models of $\AHS_d$ are direct integrals of spheres of $d$-dimensional Hilbert spaces.
\end{theorem}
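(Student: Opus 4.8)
The plan is to deduce this theorem as a direct consequence of the preceding results, with essentially no new argument needed beyond invoking the criterion of \autoref{sec:criterion-affine-reduction} together with the Bauer correspondence. First I would fix $d\in\set{1,2,\dots,\infty}$ and let $M\models\HS_d$ be a separable model (the separable sphere of a $d$-dimensional Hilbert space; such a model exists since, for $d$ finite or countable, the sphere is already separable, and this is the only case needed for checking affine reduction because affine reduction is witnessed on finitely many variables and hence reduces to a countable sublanguage and countable-dimensional parameters). The key point is to verify the hypothesis of \autoref{th:criterion-affine-reduction}\autoref{i:criterion-ar-2}: namely, that $M$ is approximately saturated in continuous logic, and that $\Aut(L^1(\Omega,M)) = L(\Omega,\Aut(M))\rtimes\Aut(\mu)$ for a standard atomless probability space $(\Omega,\mu)$. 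The first of these is classical --- the spheres of Hilbert spaces are known to be approximately $\aleph_0$-homogeneous and approximately $\aleph_0$-saturated in continuous logic (indeed $\HS_d$ is a well-understood, nice theory; for $d$ finite its unique model is compact, hence trivially approximately saturated, and for $d=\infty$ the separable sphere is approximately saturated). The second is precisely the content of \autoref{th:Aut(L1(mu,H))}, which we have just proved.

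With both hypotheses in hand, \autoref{th:criterion-affine-reduction}\autoref{i:criterion-ar-2} gives that $Q = \Th^\cont(M) = \HS_d$ has affine reduction. Then I would invoke \autoref{th:Bauer-correspondence-Q}: since $\HS_d$ has affine reduction, $(\HS_d)_\aff = \AHS_d$ is a Bauer theory, its extremal models are precisely the models of $\HS_d$ (which also re-derives \autoref{th:AHS-extremal-models}), and by item \autoref{i:models-of-Q-aff} of that theorem every model of $\AHS_d$ is a direct integral of models of $\HS_d$, i.e., a direct integral of spheres of $d$-dimensional Hilbert spaces. This gives the last assertion of the theorem. For completeness I would remark that in the case $d=\infty$ one must be slightly careful that $\Th^\cont(M)$ for $M$ the \emph{separable} sphere really is $\HS_\infty$ (the common theory of all spheres of infinite-dimensional Hilbert spaces); this holds because $\HS_\infty$ is complete in continuous logic, all its models being elementarily equivalent.

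I do not expect any genuine obstacle here: all the substantive work has already been done, in \autoref{th:Aut(L1(mu,H))} (the automorphism group computation, which was the real content) and in the general machinery of Sections \ref{sec:Bauer-theories} and \ref{sec:criterion-affine-reduction}. The only point requiring a moment's care is citing the correct, standard fact that the spheres of Hilbert spaces are approximately saturated in continuous logic --- I would cite \cite{BenYaacov2008} or \cite{BenYaacov2007} for this, or simply observe that $\HS_d$ is $\aleph_0$-categorical for countable $d$ (and has a compact model for finite $d$), so approximate saturation of the separable model is immediate from the continuous Ryll-Nardzewski theorem. Everything else is a formal chaining of the cited results, and the statement about models being direct integrals of $d$-dimensional Hilbert spheres is read off verbatim from \autoref{th:Bauer-correspondence-Q}\autoref{i:models-of-Q-aff}.
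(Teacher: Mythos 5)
Your proposal is correct and follows exactly the paper's route: the paper's proof consists precisely of applying \autoref{th:criterion-affine-reduction} (whose saturation hypothesis holds for the reasons you give: compactness for finite $d$, $\aleph_0$-categoricity/completeness of $\HS_\infty$ for $d=\infty$) together with \autoref{th:Aut(L1(mu,H))} to get affine reduction, and then \autoref{th:Bauer-correspondence-Q} for the Bauer property and the direct-integral description of models. Your filling in of the hypothesis checks is accurate and adds nothing that conflicts with the paper's (one-line) argument.
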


We do not know a syntactic proof of this affine reduction result.

\begin{question}
  Give an explicit way of approximating continuous logic formulas by affine formulas modulo $\HS_d$.
\end{question}

\begin{question}
  Is $\AHS$ a Bauer theory?
\end{question}


\section{Measure-preserving systems}
\label{sec:PMP}

Measure algebras equipped with a countable group of automorphisms are some of the most interesting examples we have for affine theories. This setup has already been considered in continuous logic: for a single aperiodic automorphism in \cite{BenYaacov2008}, for free actions of amenable groups by Berenstein and Henson in \cite{Berenstein2018p}, for general hyperfinite actions by Giraud in \cite{Giraud2019p} (based on results of Elek~\cite{Elek2012}), and for existentially closed actions of free groups by Berenstein, Ibarluc\'ia and Henson in \cite{BerHenIba}. Some applications of model theory to ergodic theory in a non-hyperfinite setting were also developed in \cite{IbarluciaTsankov}. The theory of a single automorphism in an affine setting was also studied by Bagheri~\cite{Bagheri2014}.

In this section, we identify the extremal models for general group actions and show that the relevant theories are simplicial. In the case of hyperfinite actions, we use the quantifier elimination results of Giraud to precisely identify the type spaces and show that the theories are Poulsen. More generally, we give a criterion whether the theory of a given ergodic action is Bauer or Poulsen.

Let $\Gamma$ be a countable group. A \emph{probability measure-preserving system} of $\Gamma$ is an action of $\Gamma$ by measure-preserving automorphisms on a probability space $(Y, \cB, \mu)$. Any such action induces a dual action of $\Gamma$ by automorphisms on the measure algebra $\MALG(Y, \mu)$, which encodes all ergodic-theoretic information about the system. Conversely, any action of $\Gamma$ by automorphisms on a probability measure algebra can be realized as a concrete measure-preserving system. A system is called \emph{ergodic} if it has no fixed points in the measure algebra except $\bZero$ and $\bOne$. In that case, we will also say that the measure $\nu$ is ergodic for the action $\Gamma \actson (Y, \cB)$.

For a fixed countable group $\Gamma$, we consider the language $\cL_{\Gamma}=\cL_\PrA\cup\set{\gamma:\gamma\in\Gamma}$, i.e., the language of probability algebras expanded with  unary function symbols $\gamma$ for each $\gamma \in \Gamma$.
We let $\PMP_\Gamma$ be the affine theory consisting of the axioms of $\PrA$ together with the following:
\begin{itemize}
\item each $\gamma$ is an embedding;
\item for $\gamma_1, \gamma_2 \in \Gamma$:
  \begin{equation*}
    \sup_a d((\gamma_1\gamma_2)(a), \gamma_1(\gamma_2(a))) = 0
  \end{equation*}
  and
  \begin{equation*}
    \sup_a d(1_\Gamma(a), a) = 0.
  \end{equation*}
\end{itemize}
In words, the interpretations of the function symbols $\set{\gamma : \gamma \in \Gamma}$ define an action of $\Gamma$ on a model of $\PrA$ by automorphisms. Hence, we think of the models of $\PMP_\Gamma$ as the probability measure-preserving systems of $\Gamma$. Since $\PrA$ is a universal affine theory, $\PMP_\Gamma$ is also universal and affine.

If $\Gamma \actson Z$ is an action by homeomorphisms on a compact space, we denote by $\cM_\Gamma(Z)$ the compact convex set of $\Gamma$-invariant Radon probability measures on $Z$. The strong interplay between ergodic theory and Choquet theory is due to the following fundamental fact (see \cite[Prop.~12.3, 12.4]{Phelps2001}).

\begin{prop}\label{p:simplex-of-invariant-measures}
  Let $\Gamma \actson Z$ be an action by homeomorphisms on a compact space. Then $\cM_\Gamma(Z)$ is either empty or a simplex. The extreme points of this simplex are precisely the ergodic measures of the action.
\end{prop}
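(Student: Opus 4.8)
The plan is to identify the compact convex set $\cM_\Gamma(Z)$ with the state space of an appropriate order unit space, so that its extreme points and the simplex property can be read off from Choquet-theoretic machinery applied to measures on a compact space, together with an ergodic-decomposition-type argument. First I would recall that $\cM(Z) = \tS(C(Z))$ is a Bauer simplex with $\cE(\cM(Z)) = Z$, and that $\Gamma$ acts on $\cM(Z)$ by the pushforward maps $\gamma_*$, which are affine homeomorphisms. The set $\cM_\Gamma(Z)$ is then the set of common fixed points of this family of affine homeomorphisms; it is clearly closed and convex, hence compact, and it may be empty. So the substantive content is: (i) when nonempty, it is a simplex, and (ii) its extreme points are exactly the ergodic measures.

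For the simplex claim, the cleanest route is the Riesz decomposition characterization (\autoref{thm:SimplexRiesz} / \autoref{th:Choquet-Meyer}). Given $\mu \in \cM_\Gamma(Z)$, consider the order ideal structure: a boundary measure on $\cM(Z)$ representing $\mu$ is, by the Bauer property, just $\mu$ itself viewed as a measure on $Z$ (via the barycenter isomorphism $\cM(\cE(\cM(Z))) = \cM(Z) \to \cM(Z)$), so ``representing measures concentrated on the extreme points of $\cM_\Gamma(Z)$'' must be compared with the unique representing Radon measure on $Z$. The key point is that $\cE(\cM_\Gamma(Z))$ consists of the ergodic measures, and that the extremal decomposition of an invariant measure into ergodic ones — i.e. the classical ergodic decomposition theorem — is unique. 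Rather than reprove ergodic decomposition by hand, I would invoke the general Choquet--Meyer framework: $\cM_\Gamma(Z)$ is a closed subset of the Bauer simplex $\cM(Z)$, and it is a \emph{face}-like object in the sense that it is the fixed-point set of a group of affine automorphisms; one shows directly via Riesz interpolation in $\cA(\cM_\Gamma(Z))$ (using that $\cA(\cM(Z)) = C(Z)$ is a lattice and that the $\Gamma$-invariant continuous functions, while not all of $\cA(\cM_\Gamma(Z))$, are uniformly dense in it by \autoref{prop:Stone-Weierstrass-affine}) that the weak Riesz interpolation property passes to $\cM_\Gamma(Z)$. Concretely: given $\varphi_i \vee \varphi_j + \varepsilon \le \psi_k \wedge \psi_l$ on $\cM_\Gamma(Z)$ with the $\varphi,\psi$ restrictions of $\Gamma$-invariant continuous functions on $Z$, one uses the lattice structure of $C(Z)$ to interpolate and then averages over a suitable Følner-type or fixed-point argument — but since $\Gamma$ need not be amenable, the honest argument is instead to extend to $\cM(Z)$, interpolate there, and use that any continuous affine function on $\cM(Z)$ restricts to one on $\cM_\Gamma(Z)$, checking the inequalities are preserved. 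The main obstacle is precisely handling non-amenable $\Gamma$: one cannot average to produce invariant interpolants, so one must argue more carefully, which is exactly why it is cleanest to cite \cite[Prop.~12.3, 12.4]{Phelps2001} for this structural fact rather than redo it.

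For the identification of extreme points, I would argue as follows. If $\mu \in \cM_\Gamma(Z)$ is \emph{not} ergodic, pick a $\Gamma$-invariant Borel set $A$ with $0 < \mu(A) < 1$; then $\mu = \mu(A)\,\mu_A + (1-\mu(A))\,\mu_{A^c}$ where both conditional measures $\mu_A, \mu_{A^c}$ are again $\Gamma$-invariant (invariance of $A$ is exactly what makes the conditional measures invariant) and distinct, so $\mu$ is not extreme in $\cM_\Gamma(Z)$. Conversely, if $\mu$ is ergodic and $\mu = \lambda \nu_1 + (1-\lambda)\nu_2$ with $\nu_i \in \cM_\Gamma(Z)$ and $0 < \lambda < 1$, then $\nu_1 \ll \mu$, and the Radon--Nikodym derivative $f = d\nu_1/d\mu$ is a $\Gamma$-invariant element of $L^1(\mu)$; ergodicity forces $f$ to be constant $\mu$-a.e., hence $f \equiv 1$ and $\nu_1 = \mu$. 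This is the standard argument and I expect it to go through verbatim in this setting, with the only care needed being that ``ergodic'' is stated in terms of the measure algebra (no nontrivial invariant elements besides $\bZero, \bOne$), which is equivalent to the $L^1$-statement about invariant functions.

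In summary, the proof reduces to three ingredients: the Bauer-simplex identification $\cM(Z) = \tS(C(Z))$; the Riesz/Choquet--Meyer characterization of simplices applied to the closed convex subset $\cM_\Gamma(Z)$ (for which I would either cite Phelps directly or, if a self-contained argument is wanted, push inequalities between $\cM_\Gamma(Z)$ and $\cM(Z)$ using \autoref{prop:Stone-Weierstrass-affine} and the lattice structure of $C(Z)$); and the elementary Radon--Nikodym argument pinning down the extreme points as the ergodic measures. The one genuinely delicate point is the simplex property for non-amenable groups, where no averaging is available — so I would lean on the cited literature for that half and give full details only for the extreme-point identification.
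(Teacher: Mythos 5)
The paper offers no proof of this proposition at all: it is recorded as a classical fact with a bare citation to \cite[Prop.~12.3, 12.4]{Phelps2001}, which is exactly what your proposal ultimately does for the simplex half, while your Radon--Nikodym argument identifying the extreme points with the ergodic measures is the standard one and is correct (modulo the routine remark that an invariant element of the measure algebra is only a set invariant mod null, whose conditional measures are nonetheless still invariant). Your discarded middle attempt via Riesz interpolation would indeed not work as written --- the interpolation hypothesis holds only on $\cM_\Gamma(Z)$, so one cannot interpolate in $\cM(Z)$ and restrict, and no averaging is available for non-amenable $\Gamma$; the actual argument in Phelps works with the cone of invariant signed measures, which is a sublattice of the lattice of all signed measures because the lattice operations there are $\Gamma$-equivariant --- but since you explicitly fall back on the citation for that half, this is not a gap in the proof you propose.
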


We start by characterizing the quantifier-free types of the theories $\PMP_\Gamma$.

\begin{prop}
  \label{p:PMPG-qf}
  Let $\Gamma$ be a countable group. Then the following hold:
  \begin{enumerate}
  \item \label{i:p:PMPG:Sqf} For every cardinal $\kappa$, there is an affine homeomorphism
    \begin{equation*}
      \Phi\colon \tS_\kappa^\qf(\PMP_\Gamma) \to \cM_\Gamma((2^\kappa)^\Gamma),
    \end{equation*}
    where the action $\Gamma \actson (2^\kappa)^\Gamma$ is by left shift: $(\gamma \cdot z)(h) = z(\gamma^{-1}h)$.
    The extreme types correspond precisely to the ergodic measures.
  \item \label{i:p:PMPG:models} Let $\cY\models \PMP_\Gamma$ and let $p\in \tS_\kappa^\qf(\PMP_\Gamma)$ be the quantifier-free type of an enumeration of $\cY$ (or of a subset densely generating $\cY$). If $\nu=\Phi(p)$, then $\cY$ is isomorphic to the measure-preserving system $\Gamma\actson ((2^\kappa)^\Gamma, \cB, \nu)$, where $\cB$ denotes the Borel $\sigma$-algebra.
    In particular, $\cY$ is ergodic if and only if $p$ is an extreme type.
  \end{enumerate}
\end{prop}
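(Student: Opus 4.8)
The plan is to realize the quantifier-free type space as a space of invariant measures via the standard duality between measure algebras and Boolean-algebra homomorphisms, decorated with the group action. First I would recall that a quantifier-free type $p \in \tS^\qf_\kappa(\PMP_\Gamma)$ is determined by the values it assigns to the atomic formulas, which, since the only predicate in $\cL_\PrA$ other than $d$ is $\mu$, amounts to the values $\mu\bigl(\bigcap_{i} \gamma_i(a_i)^{\eps_i}\bigr)$ for finite tuples of variables $a_i$, group elements $\gamma_i$, and signs $\eps_i$ (here $a^0 = a$, $a^1 = \neg a$). A realization of $p$ is an enumeration $(a_j : j < \kappa)$ densely generating a model $\cY \models \PMP_\Gamma$. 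The assignment $j \mapsto a_j$, extended to Boolean combinations of the $\gamma(a_j)$, gives a measure-preserving Boolean homomorphism from the free $\Gamma$-Boolean algebra on $\kappa$ generators into $\MALG(\cY)$, which is exactly the data of a $\Gamma$-invariant finitely additive measure on the clopen algebra of $(2^\kappa)^\Gamma$; by Carathéodory it extends uniquely to a Radon (equivalently Baire) probability measure $\nu = \Phi(p) \in \cM_\Gamma((2^\kappa)^\Gamma)$, the invariance being forced by the fact that the $\gamma_i$ act by automorphisms. The map $\Phi$ is clearly well-defined, injective (a quantifier-free type is determined by the $\mu$-values it assigns, which are precisely $\nu$-integrals of clopen functions), affine (both the convex structure on $\tS^\qf_\kappa$ and that on $\cM_\Gamma$ come from convex combination of the defining linear functionals), and surjective (any $\nu \in \cM_\Gamma((2^\kappa)^\Gamma)$ yields a measure-preserving $\Gamma$-system $\MALG((2^\kappa)^\Gamma, \nu)$ together with a canonical dense enumeration, hence a point of $\tS^\qf_\kappa$). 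Continuity in both directions is immediate since both spaces carry the weak$^*$ topology induced by pairing with clopen (equivalently, simple) functions, and a compact-Hausdorff continuous bijection is a homeomorphism. The statement about extreme points then follows by combining $\Phi$ being an affine homeomorphism with \autoref{p:simplex-of-invariant-measures}: the extreme points of $\cM_\Gamma((2^\kappa)^\Gamma)$ are exactly the ergodic measures.

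For part \autoref{i:p:PMPG:models}, I would argue as follows. Let $\cY \models \PMP_\Gamma$ and let $(a_j : j < \kappa)$ be an enumeration densely generating $\cY$, with quantifier-free type $p$ and $\nu = \Phi(p)$. By the construction of $\Phi$ in part \autoref{i:p:PMPG:Sqf}, the assignment of the $j$-th coordinate projection $\pi_j \in \MALG((2^\kappa)^\Gamma, \nu)$ to $a_j$ extends to a measure-preserving $\Gamma$-equivariant embedding of the $\Gamma$-subalgebra of $\MALG((2^\kappa)^\Gamma, \nu)$ generated by the $\pi_j$ into $\MALG(\cY)$; since the $\pi_j$ generate the Borel $\sigma$-algebra of $(2^\kappa)^\Gamma$ up to $\nu$-null sets, and the $a_j$ densely generate $\cY$, this embedding is in fact an isomorphism $\MALG((2^\kappa)^\Gamma, \cB, \nu) \cong \MALG(\cY)$ of $\Gamma$-systems. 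The final equivalence is then immediate: $\cY$ is ergodic iff its measure algebra has no non-trivial $\Gamma$-fixed points, iff $\nu$ is an ergodic measure, iff (by the last sentence of part \autoref{i:p:PMPG:Sqf}) $p$ is an extreme point of $\tS^\qf_\kappa(\PMP_\Gamma)$.

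The main obstacle — or rather the point requiring the most care — is the bookkeeping in part \autoref{i:p:PMPG:Sqf} connecting the purely syntactic object (a state on the quantifier-free affine formula space, i.e.\ an assignment of $\mu$-values to Boolean combinations of the $\gamma_i(x_j)$) with the measure-theoretic object (a $\Gamma$-invariant Radon measure on $(2^\kappa)^\Gamma$). One must check that the consistency conditions built into $\PMP_\Gamma$ (the Boolean algebra axioms, finite additivity of $\mu$, the relations $\gamma_1\gamma_2 = \gamma_1 \circ \gamma_2$, and that each $\gamma$ is a Boolean embedding) translate \emph{exactly} into the statement that the associated finitely additive set function on clopens of $(2^\kappa)^\Gamma$ is a well-defined $\Gamma$-invariant content, and that the Kadison-style duality for the convex structure (\autoref{prop:KadisonHahnBanachImplicationDuality} and the identification of $\tS^\qf_\kappa$ with a state space) matches the weak$^*$ convex structure on measures. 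Once this dictionary is set up cleanly, the topological statements are formal, and the extreme-point correspondence is handed to us by \autoref{p:simplex-of-invariant-measures}. I would also remark that in the countable case $\kappa \le \aleph_0$ the space $(2^\kappa)^\Gamma$ is a compact metrizable space, so there is no subtlety between Baire and Borel measures; for uncountable $\kappa$ one works with Baire measures throughout, which is consistent with the conventions of \autoref{sec:choquet-theory}.
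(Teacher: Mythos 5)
Your proposal is correct and follows essentially the same route as the paper: define $\Phi$ by evaluating $p$ on the atomic formulas corresponding to cylinder sets, verify it yields an invariant Radon measure, obtain surjectivity by building the shift system from a given $\nu$, quote the simplex of invariant measures for the extreme-point correspondence, and prove part (ii) by extending the generator assignment to a measure-algebra isomorphism via density of the clopen algebra. The only cosmetic difference is that the paper sidesteps your Carathéodory/free-Boolean-algebra bookkeeping by realizing $p$ as a tuple $\bar a$ in a model and observing that $\Phi(p)$ is simply the pushforward of $\mu$ under the equivariant map $\Psi_{\bar a}$, which yields well-definedness, additivity, and invariance in one stroke.
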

\begin{proof}
  \ref{i:p:PMPG:Sqf} For finite subsets $F \sub \Gamma$, $I \sub \kappa$ and a function $\eps \colon F \times I \to 2$, define the cylinder set:
  \begin{equation}
    \label{eq:PMPG-cylinder}
    C_\eps = \set{z \in (2^\kappa)^\Gamma : z\rest_{I \times F} = \eps},
  \end{equation}
  where we identify $(2^\kappa)^\Gamma$ with $2^{\kappa \times \Gamma}$.
  Consider the map $\Phi \colon \tS_\kappa^\qf(\PMP_\Gamma) \to \cM_\Gamma((2^\kappa)^\Gamma)$ defined by
  \begin{equation*}
    \Phi(p(\bar x))(C_\eps) = \mu\big(\bigcap_{(i, \gamma) \in I \times F} \gamma \cdot x_i^{\eps_i}\big)^p.
  \end{equation*}
  Perhaps the easiest way to see that $\Phi(p)$ defines a finitely additive measure on the clopen subsets of $(2^\kappa)^\Gamma$ is to realize $p$ as a tuple $\bar a$ in a model $\cY = (Y, \cY, \mu)$ and observe that $\Phi(p)$ is the pushforward of $\mu$ by the $\Gamma$-equivariant map
  \begin{equation}
    \label{eq:PMPG-pushf}
    \Psi_{\bar a} \colon Y \to (2^\kappa)^\Gamma, \quad \Psi_{\bar a}(y) = z, \quad \text{where} \quad z(\gamma)(i) = 0 \iff y \in \gamma \cdot a_i.
  \end{equation}
  The details are similar to the ones in the proof of \autoref{th:prob-alg} and we omit them.
  This also shows that $\Phi(p)$ is $\Gamma$-invariant. As continuous functions $(2^\kappa)^\Gamma \to \R$ can be uniformly approximated by continuous functions with finite image, we see that $\Phi(p)$ defines a bounded linear functional on $C((2^\kappa)^\Gamma)$, so a Radon probability measure. It is also clear that $\Phi$ is continuous and affine. To see that it is surjective, let $\nu \in \cM_\Gamma((2^\kappa)^\Gamma)$. Then the left shift $\Gamma\actson ((2^\kappa)^\Gamma, \cB, \nu)$, where $\cB$ denotes the Borel $\sigma$-algebra, is a measure-preserving system, so a model of $\PMP_\Gamma$. For $i \in \kappa$, let
  \begin{equation*}
    a_i = \set{z \in (2^\kappa)^\Gamma : z(1_\Gamma)(i) = 0}
  \end{equation*}
  and set $p = \tp^\qf(\bar a)$. Then it is easy to check that $\Phi(p) = \nu$.

  Finally, by \autoref{p:simplex-of-invariant-measures}, the extreme points of the simplex of invariant measures on a compact space are precisely the ergodic measures.

  \ref{i:p:PMPG:models} Let $\cY\models\PMP_\Gamma$ and let $\bar a\in \cY^\kappa$ densely generate $\cY$ (i.e., $\cY$ is equal to the closed substructure generated by $\bar a$). Let also $p=\tp^\qf(\bar a)$ and $\nu=\Phi(p)$. Then, conversely to what we did above, the map $a_i\mapsto \set{z\in (2^\kappa)^\Gamma : z(1_\Gamma)(i) = 0}$ extends uniquely to a $\Gamma$-invariant measure algebra isomorphism from $\cY$ to $\MALG((2^\kappa)^\Gamma, \nu)$. The map is surjective because its image includes the clopen algebra of $(2^\kappa)^\Gamma$, which is dense in $\MALG((2^\kappa)^\Gamma, \nu)$ by regularity.
\end{proof}

\begin{theorem}
  \label{th:PMPG-simplicial}
  Let $\Gamma$ be a countable group. Then the following hold:
  \begin{enumerate}
  \item \label{i:th:PMPG:ergodic} The extremal models of $\PMP_\Gamma$ are the ergodic systems.
  \item \label{i:th:PMPG:simplicial} The theory $\PMP_\Gamma$ is simplicial.
  \end{enumerate}
\end{theorem}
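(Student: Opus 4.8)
The plan is to deduce both items from the structure theory already developed, using \autoref{p:PMPG-qf} and the criterion \autoref{prop:SimplicialCriterionQF}. For \autoref{i:th:PMPG:ergodic}, one direction is essentially \autoref{p:PMPG-qf}\ref{i:p:PMPG:models}: if $M$ is an extremal model, then for a dense enumeration $\bar a$ of $M$ the affine type $\tp^\aff(\bar a)$ is extreme, hence so is its quantifier-free reduct $\rho^\qf_\kappa(\tp^\aff(\bar a))$ (variable restriction maps preserve extreme points, \autoref{l:extreme-transitivity}\ref{l:extreme-transitivity:extreme-points} applied to $\rho^\qf_\kappa$, which is continuous affine surjective), so by \autoref{p:PMPG-qf}\ref{i:p:PMPG:Sqf}--\ref{i:p:PMPG:models} the system $M$ is ergodic. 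Conversely, suppose $M$ is ergodic but not extremal, say $\tp^\aff(\bar a)=\half p_1+\half p_2$ with $p_1\ne p_2$ for some dense enumeration $\bar a$; I would realize $p_1$ and $p_2$ in models $N_1,N_2$ and compare quantifier-free parts. The point is that the quantifier-free type of $\bar a$ is already extreme (ergodicity, via \autoref{p:PMPG-qf}), so $\rho^\qf(p_1)=\rho^\qf(p_2)=\rho^\qf(\tp^\aff(\bar a))$; since $\bar a$ enumerates a model, $p_1,p_2\in\tS^{\fM,\aff}_\kappa(T)$, and by \autoref{l:rho-injective-Smod} the map $\rho^\qf_\kappa$ is injective on $\tS^{\fM,\aff}_\kappa(T)$, forcing $p_1=p_2$, a contradiction. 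Hence ergodic systems are exactly the extremal models. (Equivalently, this is just \autoref{lem:ExtremeTypeProjectionQF} combined with \autoref{p:PMPG-qf}.)

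For \autoref{i:th:PMPG:simplicial}, I would verify the two hypotheses of \autoref{prop:SimplicialCriterionQF}. The language $\cL_\Gamma$ is separable since $\Gamma$ is countable. The first hypothesis is that $\tS^\qf_n(\PMP_\Gamma)$ is a simplex for every $n$: by \autoref{p:PMPG-qf}\ref{i:p:PMPG:Sqf} this space is affinely homeomorphic to $\cM_\Gamma((2^n)^\Gamma)$, which is a simplex (indeed a nonempty one, as the shift always admits invariant measures) by \autoref{p:simplex-of-invariant-measures}. The second hypothesis is that $\rho^\qf_\bN(p)\in\cE^\qf_\bN(\PMP_\Gamma)$ for every $p\in\cE^\fM_\bN(\PMP_\Gamma)$: this is precisely the content of the easy direction of \autoref{p:PMPG-qf}\ref{i:p:PMPG:models} combined with the extreme-points-are-ergodic-measures statement of \ref{i:p:PMPG:Sqf}, or alternatively it follows from \autoref{i:th:PMPG:ergodic} together with the observation that an extremal model enumerated by $p$ is ergodic, whence its quantifier-free type corresponds to an ergodic measure, which is an extreme point of $\tS^\qf_\bN(\PMP_\Gamma)\cong\cM_\Gamma((2^\bN)^\Gamma)$. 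With both hypotheses in hand, \autoref{prop:SimplicialCriterionQF} gives that $\PMP_\Gamma$ is simplicial.

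The routine parts are the (already-cited) facts that shift systems carry invariant measures and that $\cM_\Gamma(Z)$ is a simplex; these are quoted as \autoref{p:simplex-of-invariant-measures}. The only genuinely delicate point is making sure the correspondence between quantifier-free types and invariant measures in \autoref{p:PMPG-qf} is used correctly at the level of \emph{extreme} points and of types enumerating models — but both of these have already been packaged into \autoref{p:PMPG-qf} and into the general lemmas \autoref{lem:ExtremeTypeProjectionQF} and \autoref{l:rho-injective-Smod}, so there is no real obstacle remaining; the proof is essentially an assembly of these ingredients.
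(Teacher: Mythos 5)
Your assembly of the ingredients is mostly right: the direction ``ergodic $\Rightarrow$ extremal'' (via \autoref{lem:ExtremeTypeProjectionQF}, i.e.\ \autoref{l:rho-injective-Smod} plus the fact that the face generated by a model-enumerating type consists of model-enumerating types) and the application of \autoref{prop:SimplicialCriterionQF} are exactly what the paper does. But there is a genuine gap in the direction ``extremal $\Rightarrow$ ergodic''. You claim that if $\tp^\aff(\bar a)$ is extreme then so is its quantifier-free reduct $\rho^\qf_\kappa(\tp^\aff(\bar a))$, citing \autoref{l:extreme-transitivity}~\ref{l:extreme-transitivity:extreme-points}. That lemma only gives $\pi(\cE(X))\supseteq\cE(Y)$ for a continuous affine surjection $\pi$: every extreme point of the target is hit by \emph{some} extreme point of the source, not that the image of an extreme point is extreme (a triangle projecting onto a segment sends its apex to an interior point). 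The parenthetical ``variable restriction maps preserve extreme points'' is a true fact (\autoref{prop:ExtremeTypeTwoSteps}), but it is special to variable restriction maps and is proved using quantifiers; $\rho^\qf_\kappa$ is not such a map, and the implication ``$p\in\cE^\fM_x(T)\Rightarrow\rho^\qf_x(p)\in\cE^\qf_x(T)$'' is precisely hypothesis (ii) of \autoref{prop:SimplicialCriterionQF}, which the paper treats as something to be \emph{verified} for each theory rather than as automatic. Since you also invoke this implication to check that hypothesis in part (ii), the gap propagates there.

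The missing argument is direct and does not go through type spaces at all: if $\cY$ is not ergodic, pick a $\Gamma$-fixed $a\in\cY$ with $a\neq\bZero,\bOne$; the normalized restrictions $\cY_1,\cY_2$ of $\cY$ to $a$ and $\neg a$ are again models of $\PMP_\Gamma$, and $\cY\cong\mu(a)\cY_1\oplus(1-\mu(a))\cY_2$, so that $\tp^\aff(a)=\mu(a)\,\tp^{\aff,\cY_1}(\bOne)+(1-\mu(a))\,\tp^{\aff,\cY_2}(\bZero)$ is a proper convex combination of two distinct types; hence $\cY$ is not extremal. With this contrapositive in hand, ``extremal $\Rightarrow$ ergodic'' holds, and your verification of the second hypothesis of \autoref{prop:SimplicialCriterionQF} then goes through exactly as in your ``alternatively'' clause (extremal model-enumerating types enumerate ergodic systems, whose quantifier-free types are ergodic measures, hence extreme).
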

\begin{proof}
  \ref{i:th:PMPG:ergodic} Suppose first that $\cY \models \PMP_\Gamma$ is ergodic. Let $\bar a\in \cY^\kappa$ be an enumeration of a dense subset of $\cY$ and let $q = \tp^\aff(\bar a) \in \tS^{\fM,\aff}_\kappa(\PMP_\Gamma)$. By \autoref{p:PMPG-qf}\ref{i:p:PMPG:models}, the projection $\rho^\qf_\kappa(q)\in \tS_\kappa^\qf(\PMP_\Gamma)$ is an extreme type. Thus, by \autoref{lem:ExtremeTypeProjectionQF}, $q$ is an extreme type, i.e., $\cY$ is an extremal model.

  Conversely, suppose that $\cY$ is not ergodic. Then there exists $a \in \cY$, $a \neq \bZero, \bOne$, which is fixed by $\Gamma$. We let $\cY_1$ and $\cY_2$ be the normalized restrictions of $\cY$ to $a$ and $\neg a$, respectively, and we note that $\cY_1$ and $\cY_2$ are models of $\PMP_\Gamma$ and that $\cY = \mu(a)\cY_1 \oplus (1-\mu(a)) \cY_2$. This implies that $\tp^\aff(a) = \mu(a) \tp^{\aff,\cY_1}(\bOne) + (1 - \mu(a)) \tp^{\aff,\cY_2}(\bZero)$, so $\tp^\aff(a)$ is not an extreme type and thus $\cY$ is not extremal.

  \ref{i:th:PMPG:simplicial} By \autoref{p:PMPG-qf}\ref{i:p:PMPG:Sqf} and \autoref{p:simplex-of-invariant-measures}, the quantifier-free type spaces of the theory $\PMP_\Gamma$ are simplices. Moreover, if $q\in\cE^\fM_\bN(\PMP_\Gamma)$ then, by \ref{i:th:PMPG:ergodic}, the model enumerated by $q$ is ergodic and hence, by \autoref{p:PMPG-qf}\ref{i:p:PMPG:models}, the projection $\rho^\qf_\bN(q)\in\tS_\bN^\qf(\PMP_\Gamma)$ is an extreme type. The hypotheses of \autoref{prop:SimplicialCriterionQF} are thus satisfied, which allow us to conclude that $\PMP_\Gamma$ is simplicial.
\end{proof}

As a consequence of our extremal decomposition result, \autoref{th:integral-decomposition}, we obtain a generalization of the classical ergodic decomposition theorem (see, e.g., \cite[Thm.~3.22]{Glasner2003}) to arbitrary probability measure-preserving systems (i.e., not necessarily standard) in terms of their dual actions.

\begin{cor}\label{cor:decomposition-PMP}
  Every model of $\PMP_\Gamma$ is a direct integral of ergodic models.
\end{cor}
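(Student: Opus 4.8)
The proof is essentially an immediate consequence of the results already established, so the plan is to simply invoke the extremal decomposition theorem together with the identification of the extremal models of $\PMP_\Gamma$. First I would observe that by \autoref{th:PMPG-simplicial}\ref{i:th:PMPG:simplicial}, the theory $\PMP_\Gamma$ is simplicial. Therefore \autoref{th:integral-decomposition} applies: every model $\cY \models \PMP_\Gamma$ is isomorphic to a direct integral $\int_\Omega^\oplus \cY_\omega \ud \mu_\Omega(\omega)$ of a measurable field of extremal models of $\PMP_\Gamma$.

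The only remaining point is to translate ``extremal model'' into ``ergodic system'', and this is exactly the content of \autoref{th:PMPG-simplicial}\ref{i:th:PMPG:ergodic}: the extremal models of $\PMP_\Gamma$ are precisely the ergodic systems. So each $\cY_\omega$ is an ergodic model, which gives the statement. If one wishes, one can add that in the separable case (i.e., when $\cY$ is the measure algebra of a standard probability space) the probability space $\Omega$ can be taken standard as well, by the corresponding clause of \autoref{th:integral-decomposition}, since $\cL_\Gamma$ is a countable (hence separable) language.

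There is no real obstacle here — the work has all been done in establishing that $\PMP_\Gamma$ is simplicial (via the transfer principle \autoref{prop:SimplicialCriterionQF}) and in identifying its extremal models, both accomplished in \autoref{th:PMPG-simplicial}. The ``main step'' is therefore just the correct citation of the general extremal decomposition theorem and the dictionary between extremality and ergodicity; the proof is one or two lines.

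\begin{proof}
  By \autoref{th:PMPG-simplicial}\ref{i:th:PMPG:simplicial}, $\PMP_\Gamma$ is a simplicial theory, so by the extremal decomposition theorem (\autoref{th:integral-decomposition}), every model $\cY$ of $\PMP_\Gamma$ is isomorphic to a direct integral $\int_\Omega^\oplus \cY_\omega \ud \mu_\Omega(\omega)$ of a measurable field of extremal models of $\PMP_\Gamma$. By \autoref{th:PMPG-simplicial}\ref{i:th:PMPG:ergodic}, these extremal models are precisely the ergodic systems of $\Gamma$. When $\cY$ is separable (equivalently, is the measure algebra of a standard probability space), $\Omega$ may be taken to be a standard probability space, as $\cL_\Gamma$ is a separable language.
\end{proof}
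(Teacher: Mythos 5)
Your proof is correct and follows exactly the paper's intended route: the corollary is stated in the paper as an immediate consequence of the extremal decomposition theorem (\autoref{th:integral-decomposition}) applied to the simplicial theory $\PMP_\Gamma$, combined with the identification of extremal models as ergodic systems from \autoref{th:PMPG-simplicial}. The additional remark about standard probability spaces in the separable case is also accurate.
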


We recall that a model $\cX$ of $\PMP_\Gamma$ \emph{admits almost invariant sets} if there exists a sequence $(a_n)_n$ of elements of $\cX$ such that the measures $\mu(a_n)$ are bounded away from $0$ and $1$, and $\lim_n \mu(\gamma \cdot a_n \sdiff a_n) = 0$ for all $\gamma \in \Gamma$. The system $\cX$ is called \emph{strongly ergodic} if it does not admit almost invariant sets. By \cite[Prop.~2.7]{IbarluciaTsankov}, $\cX$ is strongly ergodic if and only if every model of $\Th^\cont(\cX)$ is ergodic. A countable group $\Gamma$ has \emph{property (T)} if all of its ergodic actions are strongly ergodic. This is not the original definition but it is equivalent by \cite{Schmidt1981, Connes1980}. By \cite{Glasner1997}, it is also equivalent to $\cM_\Gamma((2^\omega)^\Gamma)$ being a Bauer simplex.
\begin{cor}
  \label{c:PMPG-strongly-ergodic-propT}
  Let $\Gamma$ be a countable group and let $\cX$ be an ergodic model of $\PMP_\Gamma$. Then:
  \begin{enumerate}
  \item \label{i:se:extremal-models} $\Th^\aff(\cX)$ is simplicial and its extremal models are the ergodic models of $\Th^\cont(\cX)$.
  \item \label{i:se:str-erg} If $\cX$ is strongly ergodic, then $\Th^\aff(\cX)$ is a Bauer theory.
  \item \label{i:se:not-str-erg} If $\cX$ is not strongly ergodic, then $\Th^\aff(\cX)$ is a Poulsen theory.
  \item \label{i:se:propT} $\Gamma$ has property (T) if and only if $\PMP_\Gamma$ is a Bauer theory.
  \end{enumerate}
\end{cor}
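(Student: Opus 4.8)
The plan is to take the four items in turn, reducing everything to three facts already in hand: that $\PMP_\Gamma$ is simplicial with extremal models exactly the ergodic systems (\autoref{th:PMPG-simplicial}), the Bauer/Poulsen machinery of Sections~\ref{sec:Bauer-theories} and~\ref{sec:Poulsen}, and the characterization $\cX$ strongly ergodic $\iff$ every model of $\Th^\cont(\cX)$ is ergodic (\cite[Prop.~2.7]{IbarluciaTsankov}). For \ref{i:se:extremal-models}: since $\cX$ is ergodic it is an extremal model of $\PMP_\Gamma$, so $\tS^\aff_0(\Th^\aff(\cX))$ is a (singleton, hence) face of $\tS^\aff_0(\PMP_\Gamma)$; thus $\Th^\aff(\cX)$ is an extreme completion of $\PMP_\Gamma$ and is simplicial by \autoref{p:face-simplicial}. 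By \autoref{rem:ExtremeRestrictionTwoTheories}, a structure is an extremal model of $\Th^\aff(\cX)$ iff it is an extremal model of $\PMP_\Gamma$ satisfying $\Th^\aff(\cX)$, i.e.\ (by \autoref{th:PMPG-simplicial}) an ergodic $\Gamma$-system affinely equivalent to $\cX$; and two affinely equivalent extremal models of a simplicial theory are elementarily equivalent in continuous logic (\autoref{cor:general-simplicial-theories}\ref{i:cor:general-simplicial-extensions} and the corollary following it), while conversely an ergodic model of $\Th^\cont(\cX)$ is affinely equivalent to $\cX$. So the extremal models of $\Th^\aff(\cX)$ are exactly the ergodic models of $\Th^\cont(\cX)$.

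For \ref{i:se:str-erg}: if $\cX$ is strongly ergodic then by \cite{IbarluciaTsankov} every model of $\Th^\cont(\cX)$ is ergodic, so by \ref{i:se:extremal-models} the extremal models of $\Th^\aff(\cX)$ are exactly the models of the continuous theory $\Th^\cont(\cX)$ — an elementary class; hence $\cE_x(\Th^\aff(\cX))$ is closed for every $x$ by \autoref{c:closed-extreme-types}, and together with simpliciality this is precisely being a Bauer theory. For \ref{i:se:not-str-erg}: if $\cX$ is not strongly ergodic, choose (by \cite{IbarluciaTsankov}) a model $\cX'\models\Th^\cont(\cX)$ that is not ergodic; then $\cX'\models\Th^\aff(\cX)$, while $\cX'$ is not an extremal model of $\PMP_\Gamma$ (a non-ergodic $\Gamma$-system is not extremal), hence not an extremal model of $\Th^\aff(\cX)$, even though it models $(\Th^\aff(\cX))_\ext\subseteq\Th^\cont(\cX)$. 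So the extremal models of $\Th^\aff(\cX)$ are not an elementary class, $\cE_x(\Th^\aff(\cX))$ is not closed (\autoref{c:closed-extreme-types}), and $\Th^\aff(\cX)$ is not Bauer. Since a non-strongly-ergodic $\cX$ is non-trivial (the two-point algebra carries no almost invariant sets), $\Th^\aff(\cX)$ is complete, simplicial and non-degenerate, so the dichotomy \autoref{th:dichotomy-simplicial-theories} forces it to be Poulsen.

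For \ref{i:se:propT}: $\PMP_\Gamma$ is simplicial, so it is Bauer iff $\cE_x(\PMP_\Gamma)$ is closed for all $x$, iff (\autoref{c:closed-extreme-types}) every model of $(\PMP_\Gamma)_\ext$ is an extremal model of $\PMP_\Gamma$, i.e.\ ergodic. If this holds, then for each ergodic $\cX$ we have $(\PMP_\Gamma)_\ext\subseteq\Th^\cont(\cX)$ (as $\cX$ is extremal), so every model of $\Th^\cont(\cX)$ is ergodic and $\cX$ is strongly ergodic; since $\cX$ ranges over all ergodic systems, $\Gamma$ has property~(T). Conversely, assume $\Gamma$ has property~(T) and fix a Kazhdan pair $(F,\eps)$ with $F\subseteq\Gamma$ finite. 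For every ergodic $\Gamma$-system the Koopman representation on $L^2_0$ has no invariant vectors, so applying the Kazhdan inequality to the unit vectors $(\chi_a-\mu(a)\bOne)/\|\chi_a-\mu(a)\bOne\|_2$ (and using $d(\gamma a,a)=\mu(\gamma a\sdiff a)$, $d(a,\bZero)=\mu(a)$, $d(a,\bOne)=1-\mu(a)$) shows that the single continuous condition
\[
  \sup_a\Bigl[\min\bigl(d(a,\bZero),d(a,\bOne)\bigr)-\tfrac{2}{\eps^2}\,\max_{\gamma\in F} d(\gamma a,a)\Bigr]\le 0
\]
holds in every ergodic system, hence lies in $(\PMP_\Gamma)_\ext$; and any model of it has no $\Gamma$-invariant element of measure strictly between $0$ and $1$, so is ergodic. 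Thus every model of $(\PMP_\Gamma)_\ext$ is ergodic, and $\PMP_\Gamma$ is Bauer. (One could also try to route the forward direction through Glasner's characterization that $\Gamma$ has (T) iff $\cM_\Gamma((2^\omega)^\Gamma)\cong\tS^\qf_\omega(\PMP_\Gamma)$ is a Bauer simplex, but passing from the quantifier-free type space to the full type spaces is not automatic, which is why I prefer the argument above.)

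The step I expect to be the real obstacle is the forward direction of \ref{i:se:propT}: property~(T) literally asserts only that each \emph{individual} ergodic action is strongly ergodic, whereas making the extremal models of $\PMP_\Gamma$ an elementary class requires one continuous condition valid simultaneously across \emph{all} ergodic actions — equivalently, a fixed finite $F$ and $\eps>0$ giving a spectral gap for every ergodic action at once. This uniform spectral-gap reformulation of Kazhdan's property is the external input I would quote (from \cite{Bekka2008}); granting it, what remains is the routine verification that the displayed sentence is expressible in $\cL_\Gamma$ (via $\mu=d(\cdot,\bZero)$, $\mu(\gamma a\sdiff a)=d(\gamma a,a)$) and that it indeed pins down ergodicity.
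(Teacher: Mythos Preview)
Your argument for \ref{i:se:extremal-models}--\ref{i:se:not-str-erg} is correct and follows the paper's proof essentially verbatim: extreme completion of a simplicial theory (\autoref{p:face-simplicial}), the characterization of extremal models via \autoref{th:PMPG-simplicial}, the upgrade from affine to continuous equivalence for extremal models of a simplicial theory, the strong ergodicity characterization from \cite{IbarluciaTsankov}, \autoref{c:closed-extreme-types}, and the dichotomy \autoref{th:dichotomy-simplicial-theories}.

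For \ref{i:se:propT} there is a small difference: the paper simply quotes \cite[Prop.~2.8]{IbarluciaTsankov}, which states directly that $\Gamma$ has property~(T) iff the ergodic $\Gamma$-systems form an elementary class in continuous logic, and then combines this with \autoref{c:closed-extreme-types}. You instead reprove that proposition by exhibiting an explicit continuous sentence built from a Kazhdan pair. Your computation is correct (the inequality $\mu(a)(1-\mu(a))\ge\tfrac12\min(\mu(a),1-\mu(a))$ gives exactly the constant $2/\eps^2$), and your direction ``Bauer $\Rightarrow$ (T)'' is also fine. Your closing worry, however, is misplaced: the existence of a \emph{single} pair $(F,\eps)$ working uniformly across \emph{all} unitary representations without invariant vectors is not a reformulation but the very definition of a Kazhdan pair, and the equivalence of property~(T) with the existence of such a pair is standard (\cite[Ch.~1]{Bekka2008}). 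So nothing is missing there; you have simply unfolded the content of the citation the paper invokes.
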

\begin{proof}
  \autoref{i:se:extremal-models} The theory $\Th^\aff(\cX)$ is simplicial because it is an extreme completion of the simplicial theory $\PMP_\Gamma$ (\autoref{p:face-simplicial}). Every ergodic model of $\Th^\cont(\cX)$ is an ergodic, hence extremal, model of $\PMP_\Gamma$, and thus also an extremal model of the completion $\Th^\aff(\cX)$. Conversely, since $\cX$ is ergodic, $\Th^\aff(\cX)$ is an extreme completion of $\PMP_\Gamma$ and so every extremal model of $\Th^\aff(\cX)$ is an extremal, hence ergodic, model of the $\PMP_\Gamma$. Moreover, by \autoref{cor:general-simplicial-theories}\autoref{i:cor:general-simplicial-extensions}, and \autoref{prop:ExtremalJEP}, any extremal model of $\Th^\aff(\cX)$ satisfies $\Th^\cont(\cX)$.

  \autoref{i:se:str-erg} This follows from the previous point, \autoref{c:closed-extreme-types} and the characterization of strong ergodicity recalled above.

  \autoref{i:se:not-str-erg} Similarly, $\Th^\aff(\cX)$ is not Bauer by \autoref{c:closed-extreme-types}. As it is simplicial and complete, it is Poulsen by \autoref{th:dichotomy-simplicial-theories}.

  \autoref{i:se:propT} This follows from \autoref{th:PMPG-simplicial}, \autoref{c:closed-extreme-types}, and the fact that a group has property (T) if and only if the ergodic models of $\PMP_\Gamma$ form an elementary class in continuous logic \cite[Prop.~2.8]{IbarluciaTsankov}.
\end{proof}

The theory $\PMP_\Gamma$ is never complete if $\Gamma$ is non-trivial. A basic reason for this is that the sets of fixed points (in $Y$) of the elements of $\Gamma$ are definable and their measures can be expressed by affine formulas. The notion of an IRS that we recall below serves to capture this information. Let $\Sub(\Gamma)$ denote the compact space of subgroups of $\Gamma$ (seen as a subspace of $2^\Gamma$) and let
\begin{equation*}
  \IRS(\Gamma) = \cM_\Gamma(\Sub(\Gamma)),
\end{equation*}
where the action $\Gamma \actson \Sub(\Gamma)$ is by conjugation. An \emph{invariant random subgroup} (\emph{IRS}, for short) of $\Gamma$ is an element of $\IRS(\Gamma)$. The convex set $\IRS(\Gamma)$ is a simplex and its extreme points are the ergodic IRSs. Some examples of ergodic IRSs are the Dirac measures on normal subgroups and uniform measures on conjugacy classes of finite index subgroups. Many groups also admit atomless IRSs.

The relation between measure-preserving systems and IRSs is the following: if $\Gamma \actson (Y, \cB, \mu)$ is a measure-preserving action on a probability space and we denote by $\Stab \colon Y \to \Sub(\Gamma)$ the stabilizer map given by
\begin{equation*}
  \Stab(y) = \set{\gamma \in \Gamma : \gamma \cdot y = y},
\end{equation*}
then $\Stab_*\mu$ is an IRS of $\Gamma$, called the \emph{stabilizer IRS} of the action. It was proved in \cite{Abert2014} that, conversely, any IRS can be realized as the stabilizer IRS of some measure-preserving system. An action is called \emph{free} if its IRS is the Dirac measure on the trivial subgroup of $\Gamma$, equivalently, if for all $\gamma \neq 1_\Gamma$, $\mu(\set{y \in Y : \gamma \cdot y = y}) = 0$.

For reasons of quantifier elimination, it will be convenient to augment the language with constant symbols $\Fix_\gamma$ for every $\gamma \in \Gamma$. Each $\Fix_\gamma$ is interpreted as $\set{y \in Y : \gamma \cdot y = y}$, or, in terms of the measure algebra,
\begin{equation*}
  \Fix_\gamma = \bigcup \set{a \in \cY : \forall b \sub a, \ \gamma \cdot b = b}.
\end{equation*}
This an affine definitional expansion of $\PMP_\Gamma$, as follows from the proof of \cite[Lemma~3.15]{Giraud2019p}, so we continue to denote it by $\PMP_\Gamma$.

With this expanded language, it is clear that the stabilizer IRS of the action is part of its quantifier-free theory. Indeed, for finite subsets $F_1, F_2 \sub \Gamma$, denote
\begin{equation*}
  D_{F_1, F_2} \coloneqq \set{H \in \Sub(\Gamma) : F_1 \sub H, F_2 \cap H = \emptyset}
\end{equation*}
and note that if $\theta$ is the stabilizer IRS of $\cY$, we have that
\begin{equation}
  \label{eq:formula-IRS}
  \theta\big(D_{F_1, F_2}\big) = \mu\big(\bigcap_{\gamma \in F_1} \Fix_\gamma \cap \bigcap_{\gamma \in F_2} \neg \Fix_\gamma\big)^\cY.
\end{equation}
For $\theta \in \IRS(\Gamma)$, we denote by $\PMP_\theta$ the affine theory of measure-preserving systems of $\Gamma$ with stabilizer IRS equal to $\theta$.

\begin{prop}
  \label{p:PMP-theta-face}
  If $\theta$ is an ergodic IRS, then $\PMP_\theta$ is a closed face of $\PMP_\Gamma$.
\end{prop}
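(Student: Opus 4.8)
The plan is to exhibit $\tS^\aff_0(\PMP_\theta)$ as the preimage of an extreme point of the simplex $\IRS(\Gamma)$ under a $\tau$-continuous, affine map; recall that asserting $\PMP_\theta$ is a face of $\PMP_\Gamma$ means precisely that $\tS^\aff_0(\PMP_\theta)$ is a (necessarily closed, and required to be non-empty) face of $\tS^\aff_0(\PMP_\Gamma)$.

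First I would construct the \emph{stabilizer-IRS map} $\Theta\colon \tS^\aff_0(\PMP_\Gamma) \to \IRS(\Gamma)$. Since each $\Fix_\gamma$ is a constant symbol of the expanded language, for any finite $G \subseteq \Gamma$ the element $\bigcap_{\gamma \in G} \Fix_\gamma$ is given by a closed $\cL_\Gamma$-term, and $\mu\bigl( \bigcap_{\gamma \in G} \Fix_\gamma \bigr) = d\bigl( \bigcap_{\gamma \in G} \Fix_\gamma, \bZero \bigr)$ is an atomic affine sentence. By inclusion--exclusion in the probability algebra,
\begin{equation*}
  \mu\Bigl( \bigcap_{\gamma \in F_1} \Fix_\gamma \cap \bigcap_{\gamma \in F_2} \neg\Fix_\gamma \Bigr) = \sum_{F \subseteq F_2} (-1)^{|F|}\, \mu\Bigl( \bigcap_{\gamma \in F_1 \cup F} \Fix_\gamma \Bigr)
\end{equation*}
is an affine sentence, which I denote by $\varphi_{F_1, F_2}$. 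Given $p \in \tS^\aff_0(\PMP_\Gamma)$, I would choose any model $\cY \models p$ and set $\Theta(p) = \Stab_* \mu_\cY$, a $\Gamma$-invariant probability measure on $\Sub(\Gamma)$ (conjugation-invariance follows from $\Fix_{\gamma h \gamma^{-1}} = \gamma \cdot \Fix_h$ and the fact that each $\gamma$ is measure-preserving). By \autoref{eq:formula-IRS}, $\Theta(p)(D_{F_1,F_2}) = \varphi_{F_1,F_2}(\cY) = \varphi_{F_1,F_2}(p)$ depends only on $p$, so $\Theta$ is well defined.

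Next I would verify that $\Theta$ is $\tau$-continuous and affine. The sets $D_{F_1, F_2}$ are clopen in $\Sub(\Gamma)$, and the linear span of their characteristic functions is a subalgebra of $C(\Sub(\Gamma))$ that contains the constants and separates points, hence is dense by the Stone--Weierstrass theorem. For $f = \chi_{D_{F_1,F_2}}$ the map $p \mapsto \int f \, \ud\Theta(p) = \varphi_{F_1,F_2}(p)$ is $\tau$-continuous and affine, being the evaluation of an affine sentence. Since each such map is bounded by $\|f\|_\infty$ and $\Theta(p)$ is a probability measure, $p \mapsto \int f \, \ud\Theta(p)$ is $\tau$-continuous and affine for every $f \in C(\Sub(\Gamma))$, which is exactly the assertion that $\Theta$ is continuous into the weak$^*$ topology of $\cM(\Sub(\Gamma))$ and affine.

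Finally, by the definition of $\PMP_\theta$ together with \autoref{eq:formula-IRS}, one has $\tS^\aff_0(\PMP_\theta) = \Theta^{-1}(\{\theta\})$, which is $\tau$-closed, and non-empty because every IRS -- in particular $\theta$ -- is realized as a stabilizer IRS by \cite{Abert2014}. Since $\theta$ is ergodic, it is an extreme point of the simplex $\IRS(\Gamma)$ by \autoref{p:simplex-of-invariant-measures}; hence if $p = \lambda p_1 + (1-\lambda) p_2$ with $p_1, p_2 \in \tS^\aff_0(\PMP_\Gamma)$, $0 < \lambda < 1$, and $\Theta(p) = \theta$, then $\theta = \lambda \Theta(p_1) + (1-\lambda)\Theta(p_2)$ with $\Theta(p_i) \in \IRS(\Gamma)$, forcing $\Theta(p_1) = \Theta(p_2) = \theta$, i.e., $p_1, p_2 \in \tS^\aff_0(\PMP_\theta)$. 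This gives the face property. The only genuinely delicate point is the well-definedness and affine-continuity of $\Theta$; once that is in place, the face property is formal and comes entirely from $\theta$ being an extreme point of $\IRS(\Gamma)$.
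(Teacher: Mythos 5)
Your proof is correct and follows essentially the same route as the paper: the paper's proof also defines the stabilizer-IRS map $\tS^\aff_0(\PMP_\Gamma) \to \IRS(\Gamma)$ via \autoref{eq:formula-IRS} and observes that $\tS^\aff_0(\PMP_\theta)$ is the preimage of the extreme point $\theta$ under this continuous affine map, hence a closed face. You have merely filled in the routine verifications (well-definedness, affineness via inclusion--exclusion, continuity, non-emptiness) that the paper leaves implicit.
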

\begin{proof}
  There is a natural continuous affine map $\tS^\aff_0(\PMP_\Gamma) \to \IRS(\Gamma)$ defined by \autoref{eq:formula-IRS}, and $\tS^\aff_0(\PMP_\theta)$ is  the preimage of the extreme point $\theta$ by this map, so it must be a closed face.
\end{proof}

\begin{cor}
  \label{c:IRS-simplicial}
  If $\theta$ is an ergodic IRS, then the theory $\PMP_\theta$ is simplicial.
\end{cor}
\begin{proof}
  This follows from \autoref{p:face-simplicial} and \autoref{th:PMPG-simplicial}.
\end{proof}

Next we will use the results of Giraud to see that for a hyperfinite IRS $\theta$, the theory $\PMP_\theta$ is complete.
A measure-preserving action $\Gamma \actson (Y, \cB, \mu)$ is called \emph{hyperfinite} if its orbit equivalence relation is hyperfinite: i.e., an increasing union of equivalence relations with finite classes. Equivalently, the action is hyperfinite if for every finite $S \sub \Gamma$ and every $\eps > 0$, there exists a finite subgroup $G \leq \Aut(Y, \mu)$ such that
\begin{equation*}
  \mu(\set{y \in Y : S \cdot y \sub G \cdot y}) > 1 - \eps.
\end{equation*}
It is a classical theorem of Ornstein and Weiss~\cite{Ornstein1980} that every action of an amenable group is hyperfinite and it is also well-known that free actions of non-amenable groups are never hyperfinite (see, e.g., \cite{Kechris2004} for more details). However, there do exist hyperfinite actions that do not factor through an action of an amenable group.

It is a result of Elek~\cite{Elek2012} that whether an action is hyperfinite or not only depends on its stabilizer IRS. We call an IRS $\theta$ \emph{hyperfinite} if every (equivalently, some) action with stabilizer IRS $\theta$ is hyperfinite.  We will say that an IRS $\theta$ is \emph{nowhere of finite index} if for $\theta$-a.e.\ $H$, $[\Gamma : H] = \infty$.

\begin{theorem}[Giraud~\cite{Giraud2019p}]
  \label{th:Giraud}
  Let $\theta$ be a hyperfinite IRS of a countable group $\Gamma$ that is nowhere of finite index. Then the theory $\PMP_\theta$ is complete in continuous logic and eliminates quantifiers in the language augmented by the constants $\Fix_\gamma$.
\end{theorem}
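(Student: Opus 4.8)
The plan is to prove quantifier elimination first; completeness will then follow essentially for free. Work throughout in the language $\cL_\Gamma$ augmented by the constants $\Fix_\gamma$. The starting point is that, in this language, $\PMP_\theta$ is a \emph{universal} theory: the axioms of $\PMP_\Gamma$ are universal, the identities $\gamma\cdot\Fix_h=\Fix_{\gamma h\gamma^{-1}}$ and $\Fix_g\cap\Fix_h\subseteq\Fix_{gh}$ are quantifier-free, and the requirement that the stabilizer IRS be exactly $\theta$ is, by \autoref{eq:formula-IRS}, the quantifier-free scheme $\mu\bigl(\bigcap_{\gamma\in F_1}\Fix_\gamma\cap\bigcap_{\gamma\in F_2}\neg\Fix_\gamma\bigr)=\theta(D_{F_1,F_2})$ ranging over finite $F_1,F_2\subseteq\Gamma$. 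Consequently every substructure of a model of $\PMP_\theta$ is again a model, and the substructure generated by the empty tuple --- the measure subalgebra generated by all the $\Fix_\gamma$ together with its restricted $\Gamma$-action --- has a quantifier-free type completely determined by $\theta$. In particular all models of $\PMP_\theta$ satisfy the same quantifier-free sentences, so once quantifier elimination is established, completeness is immediate.

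For quantifier elimination I would invoke Robinson's test in its standard form: a universal theory eliminates quantifiers as soon as (i) its class of models has the amalgamation property and (ii) every one of its models is existentially closed among models of the theory; a routine back-and-forth between $\aleph_0$-saturated models then shows that any isomorphism between substructures is partial elementary. The proof thus reduces to these two assertions. Assertion (i) is that given models $\cX\supseteq\cA\subseteq\cY$ of $\PMP_\theta$, they embed over $\cA$ into a common model of $\PMP_\theta$. Assertion (ii) is that any quantifier-free configuration realizable in some extension of a model $\cX\models\PMP_\theta$ is already realizable, approximately, inside $\cX$.

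The real content lies in (i) and (ii), and this is where both hypotheses on $\theta$ are used. Here I would import the local structure theory of hyperfinite $\Gamma$-systems, in the spirit of Ornstein--Weiss and Connes--Feldman--Weiss, and concretely through Giraud's presentation of such systems by graphings and their finite approximations. Two features are decisive: first, the orbit equivalence relation of a model of $\PMP_\theta$ is, off a set of arbitrarily small measure, exhausted by finite subrelations coming from finite subgroups of its automorphism group; and second, because $\theta$ is nowhere of finite index, the equivalence classes are almost surely infinite, so there is always room to spread configurations out. For amalgamation one approximates $\cX$ and $\cY$ over $\cA$ by finite-group-equivariant, finitely-many-coordinate pictures and amalgamates them fibrewise over the corresponding picture of $\cA$ --- a fibred product of finite probability spaces carrying compatible finite group actions --- and then passes to the limit; the IRS is preserved since the $\Fix_\gamma$-data already lives in $\cA$. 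For existential closedness one uses a Rokhlin/tiling-type lemma for hyperfinite equivalence relations: the required finite configuration can be produced approximately inside the given model by locating an approximately independent finite ``tile'' of the prescribed combinatorial type, which exists precisely because classes are infinite and the relation is hyperfinite.

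The main obstacle is assertion (ii), with (i) a close second: both require turning the asymptotic, measure-theoretic notion of hyperfiniteness into a finitary approximation scheme robust enough to realize prescribed quantifier-free conditions up to $\varepsilon$, and this is the technical heart of Giraud's argument. Once (i) and (ii) are in place, the passage to quantifier elimination and to completeness is purely formal, using only the universality of $\PMP_\theta$ in the expanded language and standard saturation and back-and-forth machinery.
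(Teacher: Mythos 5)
First, a contextual point: the paper does not prove this statement at all --- it is quoted from Giraud \cite{Giraud2019p} (with \autoref{rem:Giraud} adjusting the atomlessness axiom to the nowhere-finite-index hypothesis), so there is no internal argument to compare yours against. Judged on its own terms, your proposal is an outline rather than a proof: the two substantive assertions (amalgamation and existential closedness) carry all of the content, and you do not establish them but defer them to ``Giraud's presentation by graphings'' and an unspecified Rokhlin/tiling lemma. That is essentially re-citing the source for the heart of the matter.

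There is also a genuine gap in the formal reduction. Your starting claim --- that $\PMP_\theta$ in the language with the constants $\Fix_\gamma$ is a \emph{universal} theory, so that substructures of models are models and Robinson's test may be run with amalgamation over models --- is false. The axioms you list (equivariance of the $\Fix_\gamma$, the scheme \autoref{eq:formula-IRS}, and even the universal statement that every $b\subseteq\Fix_\gamma$ is fixed by $\gamma$) do not force $\Fix_\gamma$ to be the \emph{full} fixed-point set: maximality of $\Fix_\gamma$ is a genuinely $\forall\exists$ condition, which is precisely why the expansion is only an \emph{affine} definitional expansion and why adding these constants is not vacuous for quantifier elimination. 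Concretely, take $\theta=\delta_{\set{1_\Gamma}}$ with $\Gamma$ infinite amenable, and let a nontrivial normal subgroup of infinite index act trivially on an otherwise free system; naming $\Fix_\gamma=\bZero$ for $\gamma\neq 1_\Gamma$ satisfies every universal axiom you wrote down, yet the stabilizer IRS of the system is not $\theta$. Relatedly, the stabilizer IRS is not inherited by subsystems (the trivial subalgebra of a free action has stabilizer IRS $\delta_\Gamma$), so substructures of models of $\PMP_\theta$ need not be models. Consequently Robinson's test must be verified over \emph{arbitrary} common substructures $\cA$, in which the named constants need not be the fixed-point sets of $\cA$ viewed as a standalone system; both your amalgamation statement and your existential-closedness statement have to be formulated and proved at that level of generality, and the versions over models that you state do not suffice. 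The one part of your reduction that does go through as written is the deduction of completeness from quantifier elimination, since the quantifier-free type of the empty tuple is pinned down by $\theta$ via \autoref{eq:formula-IRS}.
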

\begin{remark}
  \label{rem:Giraud}
  The condition that $\theta$ is nowhere of finite index is not present in \cite{Giraud2019p} but there is an axiom stating that the measure algebra is atomless. As this axiom is not affine, we omit it and then it is implied by our condition that the stabilizer IRS is nowhere of finite index.
\end{remark}

In order to explain what happens in affine logic and describe the type spaces, we will need to introduce some further notation. If $\Gamma \actson Z$ is an action on a compact space by homeomorphisms, we let
\begin{equation*}
  A_\Gamma(Z) = \set{(H, z) \in \Sub(\Gamma) \times Z : H \cdot z = z}
\end{equation*}
and we note that $A_\Gamma(Z)$ is a closed, $\Gamma$-invariant subspace of $\Sub(\Gamma) \times Z$. If $\theta \in \IRS(\Gamma)$, we let
$$\fJ_\theta(Z) = \set{\nu \in \cM_\Gamma(A_\Gamma(Z)) : \pi_*(\nu) = \theta},$$
where $\pi\colon A_\Gamma(Z)\to \Sub(\Gamma)$ is the projection to the first coordinate. Note that when $\theta = \delta_{\set{1_\Gamma}}$, $\fJ_\theta(Z) \cong \cM_\Gamma(Z)$.

\begin{theorem}
  \label{th:PMP-hyperfinite}
  Let $\Gamma$ be a countable group and let $\theta$ be a hyperfinite, ergodic IRS of $\Gamma$ that is nowhere of finite index. Then the following hold:
  \begin{enumerate}
  \item \label{i:PMP-hf-complete} The theory $\PMP_\theta$ is a complete, simplicial, affine theory with affine quantifier elimination in the language augmented by the constants $\Fix_\gamma$.
  \item \label{i:PMP-hf-ext-models} The extremal models of $\PMP_\theta$ are precisely its ergodic models.
  \item \label{i:PMP-hf-types} For every cardinal $\kappa$, $\tS^\aff_\kappa(\PMP_\theta) \cong \fJ_\theta((2^\kappa)^\Gamma)$.
  \item \label{i:PMP-hf-Poulsen} $\PMP_\theta$ is a Poulsen theory.
  \end{enumerate}
\end{theorem}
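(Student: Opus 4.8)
The plan is to assemble \autoref{th:PMP-hyperfinite} from results already established in the excerpt, using Giraud's quantifier elimination theorem (\autoref{th:Giraud}) as the main external input. For \autoref{i:PMP-hf-complete}, completeness of $\PMP_\theta$ as an affine theory follows immediately from \autoref{th:Giraud}, since completeness in continuous logic implies completeness in affine logic (the affine part of a complete continuous theory is complete). Simpliciality is \autoref{c:IRS-simplicial}. Affine quantifier elimination then follows from \autoref{lem:Continuous-vs-AffineQE}: Giraud's theorem says $\PMP_\theta$ eliminates quantifiers in full continuous logic (in the expanded language with the constants $\Fix_\gamma$), hence it has affine quantifier elimination in that language.

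For \autoref{i:PMP-hf-ext-models}, I would argue as in \autoref{c:PMPG-strongly-ergodic-propT}\autoref{i:se:extremal-models}: since $\theta$ is ergodic, $\PMP_\theta$ is an extreme completion of $\PMP_\Gamma$ (by \autoref{p:PMP-theta-face}), so every extremal model of $\PMP_\theta$ is an extremal model of $\PMP_\Gamma$, which by \autoref{th:PMPG-simplicial}\autoref{i:th:PMPG:ergodic} means it is ergodic. Conversely, an ergodic model of $\PMP_\theta$ is an ergodic model of $\PMP_\Gamma$, hence extremal in $\PMP_\Gamma$, and since $\PMP_\theta$ is a face, it is extremal in $\PMP_\theta$ as well. (One should note the constants $\Fix_\gamma$ are affinely definable, so adding them does not change the class of models or the notion of extremality.)

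For \autoref{i:PMP-hf-types}, the idea is that by affine quantifier elimination, $\tS^\aff_\kappa(\PMP_\theta) \cong \tS^{\qf}_\kappa(\PMP_\theta)$ via the map $\rho^\qf_\kappa$, so it suffices to identify the quantifier-free type space. A quantifier-free type of a $\kappa$-tuple $\bar a$ records the joint distribution of the Boolean combinations of the $\gamma\cdot a_i$ and the $\Fix_\gamma$. Realizing such a type in a concrete measure-preserving system $\Gamma\actson(Y,\mu)$ and using the stabilizer map together with the coding map $\Psi_{\bar a}\colon Y\to(2^\kappa)^\Gamma$ of \autoref{eq:PMPG-pushf}, one obtains a $\Gamma$-equivariant map $Y\to \Sub(\Gamma)\times(2^\kappa)^\Gamma$ whose image lies in $A_\Gamma((2^\kappa)^\Gamma)$ (the orbit of each point is fixed by its stabilizer), and whose pushforward projects to $\theta$ on the $\Sub(\Gamma)$ coordinate. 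This defines a continuous affine map $\Phi\colon \tS^\qf_\kappa(\PMP_\theta)\to \fJ_\theta((2^\kappa)^\Gamma)$. The inverse direction assigns to $\nu\in\fJ_\theta((2^\kappa)^\Gamma)$ the shift system on $A_\Gamma((2^\kappa)^\Gamma)$ with measure $\nu$, together with the obvious generating tuple and the observation that the fixed-point sets of the $\gamma$ are exactly the preimages of $\{(H,z): \gamma\in H\}$; one checks these two constructions are mutually inverse. This is the step I expect to be the most delicate, essentially because one must carefully verify that the $\Fix_\gamma$ are interpreted correctly under the correspondence and that $\Phi$ is a bijection (surjectivity of $\Phi$ amounts to realizing every such $\nu$ as a quantifier-free type, which the shift construction provides).

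For \autoref{i:PMP-hf-Poulsen}, given \autoref{i:PMP-hf-complete} and \autoref{th:dichotomy-simplicial-theories}, it suffices to show $\PMP_\theta$ is not a Bauer theory, i.e., that its extreme types are not closed — equivalently, by \autoref{c:closed-extreme-types}, that the ergodic models do not form an elementary class, which holds precisely when some (equivalently every) model of $\PMP_\theta$ is not strongly ergodic. Since $\theta$ is hyperfinite, any model with stabilizer IRS $\theta$ has hyperfinite orbit equivalence relation, and a hyperfinite measure-preserving action with infinite orbits almost everywhere (which follows from $\theta$ being nowhere of finite index) is never strongly ergodic — this is a standard fact from ergodic theory (a hyperfinite aperiodic equivalence relation admits asymptotically invariant sequences; see the references in \autoref{sec:PMP}). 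Hence $\PMP_\theta$ is not Bauer, and being non-degenerate, complete, and simplicial, it is Poulsen by the dichotomy theorem. Alternatively, one can invoke \autoref{th:Poulsen-complete}: since the shift system on an atomless hyperfinite orbit equivalence relation arises as a direct integral over an atomless space (the ergodic decomposition), it has the convex realization property, and then condition (2) of \autoref{th:Poulsen-complete} gives the conclusion. The main obstacle across the whole argument is \autoref{i:PMP-hf-types}, and specifically getting the bookkeeping with the $\Fix_\gamma$ constants right so that $\Phi$ is genuinely a homeomorphism of convex sets.
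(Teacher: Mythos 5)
Parts \autoref{i:PMP-hf-complete} and \autoref{i:PMP-hf-ext-models} of your argument coincide with the paper's proof (completeness and affine quantifier elimination from \autoref{th:Giraud}, simpliciality from \autoref{c:IRS-simplicial}, and the face argument via \autoref{p:PMP-theta-face} and \autoref{th:PMPG-simplicial}). For \autoref{i:PMP-hf-Poulsen} the paper is more direct: since $\PMP_\theta$ is complete in continuous logic by \autoref{th:Giraud}, the equivalence of conditions (1) and (5) in \autoref{th:Poulsen-complete} gives Poulsen immediately, with no appeal to the ergodic-theoretic fact that hyperfinite aperiodic actions are never strongly ergodic. Your dichotomy route is workable but imports that external fact unnecessarily.

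The genuine gap is in \autoref{i:PMP-hf-types}, precisely at the point you flagged as delicate: surjectivity of $\Phi$. Your proposed inverse assigns to $\nu\in\fJ_\theta((2^\kappa)^\Gamma)$ the shift system on $\bigl(A_\Gamma((2^\kappa)^\Gamma),\nu\bigr)$ and asserts that the fixed-point set of $\gamma$ in this system is the preimage of $\set{(H,z):\gamma\in H}$. This is false in general: the stabilizer of a point $(H,z)$ under the diagonal action (conjugation on the first coordinate, shift on the second) is $N_\Gamma(H)\cap\Stab(z)$, which contains $H$ but can be strictly larger. For instance, if $\theta$ is concentrated on a normal subgroup $N$ of infinite index and $\nu=\theta\otimes\delta_{z_0}$ with $z_0$ a constant configuration, then every point of the support is fixed by all of $\Gamma$, so the stabilizer IRS of your system is $\delta_\Gamma$, not $\theta$; the system is not a model of $\PMP_\theta$ and the constants $\Fix_\gamma$ are misinterpreted. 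Consequently your construction does not realize an arbitrary $\nu\in\fJ_\theta((2^\kappa)^\Gamma)$ as $\Phi$ of a type of $\PMP_\theta$. The paper repairs exactly this by forming the relatively independent joining of $\bigl(A_\Gamma((2^\kappa)^\Gamma),\nu\bigr)$ with an ergodic system $\cW$ whose stabilizer IRS is exactly $\theta$ (the Ab\'ert--Glasner--Vir\'ag construction), over the common factor $(\Sub(\Gamma),\theta)$; on the joining $\set{(w,H,z):\Stab(w)=H}$ the stabilizer of $(w,H,z)$ is forced down to $\Stab(w)=H$, so the resulting system is a model of $\PMP_\theta$ whose type maps to $\nu$. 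Without this (or some equivalent device that pins the stabilizers down to $H$), surjectivity, and hence the identification $\tS^\aff_\kappa(\PMP_\theta)\cong\fJ_\theta((2^\kappa)^\Gamma)$, is not established.
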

\begin{proof}
  \ref{i:PMP-hf-complete} Completeness of the theory follows from \autoref{th:Giraud}, and that it is simplicial was proved in \autoref{c:IRS-simplicial}. For affine quantifier elimination, we observe that if an affine theory $T$ eliminates quantifiers in continuous logic, as in our case, then it also eliminates quantifiers in affine logic: as atomic formulas separate points in $\tS^\cont_n(T)$, they also separate points in $\tS^\aff_n(T)$, which is enough for affine quantifier elimination.

  \ref{i:PMP-hf-ext-models} As $\PMP_\theta$ is a face of $\PMP_\Gamma$, $\tS^\aff_n(\PMP_\theta)$ is a closed face of $\tS^\aff_n(\PMP_\Gamma)$ for every $n$, so the extremal models of $\PMP_\theta$ are precisely the extremal models of $\PMP_\Gamma$ which are also models of $\PMP_\theta$. Now the claim follows from \autoref{th:PMPG-simplicial}.

  \ref{i:PMP-hf-types} We define a map $\Phi \colon \tS^\aff_n(\PMP_\theta) \to \fJ_\theta((2^\kappa)^\Gamma)$ as follows. For a type $p \in \tS^\aff_n(\PMP_\theta)$, we take a realization $\bar a$ of $p$ in a model $\cY = (Y,\cB,\mu)$ and we define a map
  \begin{equation*}
    \Theta_{\bar a} \colon Y \to \Sub(\Gamma) \times (2^\kappa)^\Gamma, \quad
    \Theta_{\bar a}(y) = (\Stab(y), \Psi_{\bar a}(y)),
  \end{equation*}
  where $\Psi_{\bar a}$ is defined as in \autoref{eq:PMPG-pushf}. Finally, we set $\Phi(p) = {\Theta_{\bar a}}_*\mu$. We claim that $\Phi$ is an affine homeomorphism.

  First, it is clear that $\Phi$ is well-defined, that is, it does not depend on the realization $\bar a$. Indeed, ${\Theta_{\bar a}}_*\mu$ only depends on the measures of the elements of the subalgebra of $\cY$ generated by $\bar a$ and $\set{\Fix_\gamma : \gamma \in \Gamma}$, whose isomorphism type is determined by the (quantifier-free) type of $\bar a$. Also, the measure ${\Theta_{\bar a}}_* \mu$ belongs to $\fJ_\theta((2^\kappa)^\Gamma)$: it is $\Gamma$-invariant because $\Theta_{\bar a}$ is $\Gamma$-equivariant and it concentrates on $A_\Gamma((2^\kappa)^\Gamma)$ because $\Stab(y) \cdot \Psi_{\bar a}(y) = \Psi_{\bar a}(\Stab(y) \cdot y) = \Psi_{\bar a}(y)$ for $\mu$-a.e.\ $y$.

  To see that $\Phi$ is continuous, note that it can be alternatively defined as follows. For finite subsets $F \sub \Gamma$, $I \sub \kappa$ and a function $\eps \colon F \times I \to 2$, define the cylinder set $C_\eps$ as in \autoref{eq:PMPG-cylinder}. Let $F_1, F_2$ be finite subsets of $\Gamma$. Then if we denote $\Phi(p(\bar x)) = \nu$, we have:
  \begin{equation}
    \label{eq:PMPth:Phi-alternative}
    \nu\big( D_{F_1, F_2} \times C_\eps\big)
    = \mu\big(\bigcap_{\gamma \in F_1} \Fix_\gamma \cap \bigcap_{\gamma \in F_2} \neg \Fix_\gamma
    \cap \bigcap_{(i, \gamma) \in I \times F} \gamma \cdot x_i^{\eps_i} \big)^p.
  \end{equation}
  It is also clear $\Phi$ is affine. Injectivity follows from quantifier elimination and the fact that atomic formulas as on the right-hand side of \autoref{eq:PMPth:Phi-alternative} determine a quantifier-free type.

  It remains to check that $\Phi$ is surjective. Let $\nu \in \fJ_\theta((2^\kappa)^\Gamma)$. Let $\cW = (W, \lambda)$ be an ergodic system with IRS $\theta$, as constructed in \cite[Prop.~13]{Abert2014}. The systems $(A_\Gamma((2^\kappa)^\Gamma), \nu)$ and $\cW$ have $(\Sub(\Gamma), \theta)$ as a common factor: the map $\pi \colon A_\Gamma((2^\kappa)^\Gamma) \to \Sub(\Gamma)$ is given by projection on the first coordinate and the map $W \to \Sub(\Gamma)$ is the stabilizer map. We let $\cY$ be the relatively independent joining of these two systems with respect to their common factor (see \cite[Ch.~6]{Glasner2003}). We can realize $\cY = (Y, \mu)$ with
  \begin{equation*}
    Y = \set{(w, H, z) \in W \times A_\Gamma : \Stab(w) = H}
  \end{equation*}
  and $\mu$ an invariant measure on $Y$ with marginals $\lambda$ and $\nu$. First, we claim that $\cY \models \PMP_\theta$. For this it suffices to see that for all $(w, H, z) \in Y$,
  \begin{equation*}
    \Stab((w, H, z)) = \Stab(w).
  \end{equation*}
  The left to right inclusion is clear. For the other, note that if $\gamma \in \Stab(w)$, then $\gamma \in H$ and, by the definition of $A_\Gamma$, $\gamma \cdot z = z$. Now we let
  \begin{equation*}
    a_i = \set{(w, H, z) \in Y : z(1_\Gamma)(i) = 0}
  \end{equation*}
  and observe that $\Phi(\tp^\aff(\bar a)) = \nu$. Indeed, one sees readily that the map $\Psi_{\bar a}\colon Y\to (2^\kappa)^\Gamma$ defined as in \autoref{eq:PMPG-cylinder} satisfies $\Psi_{\bar a}((w,H,z)) = z$, and thus the corresponding map $\Theta_{\bar a}\colon Y\to\Sub(\Gamma)\times (2^\kappa)^\Gamma$ is given by $\Theta_{\bar a}((w,H,z)) = (\Stab(w),z)= (H,z)$. In other words, $\Theta_{\bar a}$ is the factor map from $Y$ to $A_\Gamma((2^\kappa)^\Gamma)$, and we have ${\Theta_{\bar a}}_*\mu = \nu$.

  \ref{i:PMP-hf-Poulsen} This follows from \autoref{th:Giraud} and \autoref{th:Poulsen-complete}.
\end{proof}

\begin{example} When $\Gamma$ is a countably infinite, amenable group, the preceding theorem applies, in particular, to the theory $\FPMP_\Gamma$ of free measure-preserving actions of $\Gamma$ (i.e., corresponding to $\theta=\delta_{\set{1_\Gamma}}$), which is the model completion of the theory $\PMP_\Gamma$, both in continuous and affine logic.
\end{example}

\begin{remark}
  \label{rem:finite-Gamma}
  It remains to see what happens when $\theta$ is of finite index with positive measure. Then, if $\theta$ is  ergodic, it concentrates on the conjugacy class of a subgroup $H \leq \Gamma$ of finite index. There is a unique ergodic system with this IRS, namely the transitive action $\Gamma \actson \Gamma/H$ equipped with the uniform measure, and the corresponding affine theory is absolutely categorical and Bauer. The situation is very similar to the one of the pure measure algebra, which corresponds to the case where $H = \Gamma$. We leave the details to the reader.
\end{remark}

\begin{prop}
  \label{p:completions-amenable}
  Let $\Gamma$ be a countable, amenable group. Then $\tS^\aff_0(\PMP_\Gamma) \cong \IRS(\Gamma)$.
\end{prop}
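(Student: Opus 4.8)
The plan is to show that the natural continuous affine map $\Phi\colon \tS^\aff_0(\PMP_\Gamma)\to\IRS(\Gamma)$ of \autoref{p:PMP-theta-face} — which sends a completion $T$ to the stabilizer IRS of any model of $T$, computed via \autoref{eq:formula-IRS} — is a bijection. Since $\cL_\Gamma$ is countable, $\tS^\aff_0(\PMP_\Gamma)$ is a metrizable compact convex set, and by \autoref{th:PMPG-simplicial} it is a simplex; likewise $\IRS(\Gamma)=\cM_\Gamma(\Sub(\Gamma))$ is a metrizable simplex by \autoref{p:simplex-of-invariant-measures}. A continuous affine bijection between compact convex sets is automatically an affine homeomorphism, so it will remain to prove surjectivity and injectivity. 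Surjectivity is the easy part: by \cite{Abert2014} every ergodic IRS of $\Gamma$ is the stabilizer IRS of some ergodic measure-preserving system, so $\Phi$ hits every extreme point of $\IRS(\Gamma)$, and the image of $\Phi$, being compact and convex, is then all of $\IRS(\Gamma)$ by Krein--Milman.

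The substantive part is injectivity, which I would deduce from the corresponding statement on extreme points. First, $\Phi$ maps $\cE_0(\PMP_\Gamma)$ into $\cE(\IRS(\Gamma))$: by \autoref{th:ExtremalModelTypes} and \autoref{th:PMPG-simplicial}\autoref{i:th:PMPG:ergodic} an extreme completion is the affine theory of an ergodic system, whose stabilizer IRS is ergodic, hence extreme by \autoref{p:simplex-of-invariant-measures}. Conversely, $\Phi$ is injective on extreme points: if $T_1,T_2$ are extreme completions with $\Phi(T_1)=\Phi(T_2)=\theta$, then each $T_i$ is a completion of $\PMP_\theta$ (it has a model with stabilizer IRS $\theta$, and $\PMP_\theta$ is contained in the affine theory of any such model), so it suffices to know that $\PMP_\theta$ is complete for every \emph{ergodic} $\theta$. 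Here amenability is used: by Ornstein--Weiss \cite{Ornstein1980} every measure-preserving action of $\Gamma$ is hyperfinite, so $\theta$ is a hyperfinite IRS; and since $H\mapsto[\Gamma:H]$ is a conjugation-invariant Borel function, ergodicity of $\theta$ forces the index to be a.s.\ a constant $n\in\{1,2,\dots,\infty\}$. If $n=\infty$, then $\theta$ is nowhere of finite index and $\PMP_\theta$ is complete by \autoref{th:Giraud} (equivalently \autoref{th:PMP-hyperfinite}\autoref{i:PMP-hf-complete}); if $n<\infty$, then $\theta$ is the uniform measure on a finite conjugacy class of a finite-index subgroup $H$, and by \autoref{rem:finite-Gamma} together with \autoref{p:compact-absol-cat} the theory $\PMP_\theta=\Th^\aff(\Gamma\actson\Gamma/H)$ is absolutely categorical, in particular complete. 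Thus $\Phi$ restricts to a continuous bijection between the Polish spaces $\cE_0(\PMP_\Gamma)$ and $\cE(\IRS(\Gamma))$ (both $G_\delta$ by \autoref{l:extreme-Gdelta}), hence a Borel isomorphism by the Lusin--Suslin theorem \cite{Kechris1995}.

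Finally I would upgrade this to injectivity of $\Phi$ by a routine Choquet argument. Given $p,q$ with $\Phi(p)=\Phi(q)$, pick boundary measures $\mu_p,\mu_q$ with barycenters $p,q$ (\autoref{l:choquet-order}); by \autoref{th:Bishop-dL} they are carried by $\cE_0(\PMP_\Gamma)$, so $\Phi_*\mu_p,\Phi_*\mu_q$ are carried by $\cE(\IRS(\Gamma))$, and a probability measure carried by the extreme points is a boundary measure (it satisfies $\mu(f)=\mu(\hat f)$ for $f\in C(X)$ by the last assertion of \autoref{p:Mokobodzki}). Their barycenters are both $\Phi(p)$, so by uniqueness of boundary measures in the simplex $\IRS(\Gamma)$ (\autoref{th:Choquet-Meyer}) we get $\Phi_*\mu_p=\Phi_*\mu_q$; pushing this equality back along the Borel isomorphism $\Phi\rest_{\cE_0(\PMP_\Gamma)}$ gives $\mu_p=\mu_q$, whence $p=q$. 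I expect the main obstacle to be concentrated in the second paragraph — namely, invoking Giraud's completeness theorem (via Ornstein--Weiss) to pin down $\PMP_\theta$ for every ergodic $\theta$, and not forgetting the finite-index ergodic IRSs; the surrounding Choquet-theoretic bookkeeping is soft and is only available because the countability of $\Gamma$ makes everything metrizable. (Alternatively, one can run the argument directly via ergodic decompositions: using \autoref{cor:decomposition-PMP} and \autoref{th:Los}, $\Th^\aff(M)$ is the barycenter of the pushforward of the decomposition measure under $\omega\mapsto\PMP_{\theta_\omega}$, which depends only on the distribution of the ergodic stabilizer IRSs $\theta_\omega$, hence only on $\Phi(\Th^\aff(M))$.)
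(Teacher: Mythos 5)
Your proof is correct, but it takes a genuinely different route from the paper's, which is much shorter: the paper defines the \emph{inverse} map $\theta\mapsto\PMP_\theta$ directly on all of $\IRS(\Gamma)$, notes that it lands in $\tS^\aff_0(\PMP_\Gamma)$ because $\PMP_\theta$ is complete (by \autoref{th:Giraud} and \autoref{rem:finite-Gamma}), and gets surjectivity in one line since any $\cX\models\PMP_\Gamma$ with IRS $\theta$ satisfies $\Th^\aff(\cX)=\PMP_\theta$ by that same completeness; a continuous affine bijection of compact convex sets is then automatically an affine homeomorphism. You instead work with the forward map $\Phi$, establish a bijection only between the \emph{extreme} points, and globalize via uniqueness of boundary measures in the two simplices together with Lusin--Suslin. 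Your detour is not wasted: it only requires completeness of $\PMP_\theta$ for \emph{ergodic} $\theta$, which is exactly the case covered by Giraud's theorem (infinite index) and \autoref{rem:finite-Gamma} (finite index, where your normal-core argument correctly shows an ergodic finite-index IRS is atomic), whereas the paper's one-line surjectivity implicitly invokes completeness of $\PMP_\theta$ for an arbitrary IRS, including ones mixing finite- and infinite-index parts, a case neither cited result addresses verbatim. The price you pay is the extra Choquet-theoretic and descriptive-set-theoretic bookkeeping (boundary measures, Mokobodzki, Borel isomorphism of the $G_\delta$ extreme boundaries), all of which you execute correctly and all of which is available only because countability of $\Gamma$ makes everything metrizable --- a point you rightly flag. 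Your parenthetical alternative via \autoref{cor:decomposition-PMP} is essentially a repackaging of the same globalization step.
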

\begin{proof}
  Consider the map $\IRS(\Gamma) \to \tS^\aff_0(\PMP_\Gamma)$, $\theta \mapsto \PMP_\theta$. By \autoref{th:Giraud} and \autoref{rem:finite-Gamma}, it takes values in $\tS^\aff_0(\PMP_\Gamma)$ and it is continuous, injective, and affine. To see that it is surjective, let $\cX \models \PMP_\Gamma$ and let $\theta$ be the IRS of $\cX$. Then $\cX \models \PMP_\theta$ and by completeness of $\PMP_\theta$, it is equal to $\Th^\aff(\cX)$.
\end{proof}

\begin{example}
  \label{ex:PMP-Z}
  Let $\Z$ denote the group of integers.
  The theory $\PMP_\Z$ is simplicial (by \autoref{th:PMPG-simplicial}) but it is neither Bauer nor Poulsen. Indeed, it is not Bauer by \autoref{c:PMPG-strongly-ergodic-propT} (as $\Z$ does not have property (T)), and it is not Poulsen because $\IRS(\Z)$ has only countably many extreme points (the Dirac measures on the subgroups of $\Z$), so it is not a Poulsen simplex.
\end{example}


\section{Tracial von Neumann algebras}
\label{sec:tracial-von-neumann}

Recall that a \emph{von Neumann algebra} $M$ is a unital, self-adjoint subalgebra of the algebra of bounded operators $B(\cH)$ of some complex Hilbert space $\cH$, closed in the strong or, equivalently, the weak operator topology. We will denote by $M^1$ the unit ball of $M$ in the operator norm. A \emph{faithful, normal tracial state} (or just a \emph{trace}) on $M$ is a linear functional $\tau \colon M \to \bC$ which satisfies:
\begin{itemize}
\item $\tau$ is \emph{positive}, i.e., satisfies $\tau(a^*a) \geq 0$ for all $a \in M$;
\item $\tau\rest_{M^1}$ is continuous for the weak operator topology;
\item $\tau$ is \emph{faithful}, i.e., $\tau(a^*a) = 0$ implies that $a = 0$;
\item $\tau(\bOne) = 1$;
\item for all $a, b \in M$, $\tau(ab) = \tau(ba)$.
\end{itemize}
A \emph{tracial von Neumann algebra} is a pair $(M, \tau)$, where $M$ is a von Neumann algebra and $\tau$ is a trace on $M$. A von Neumann algebra is called a \emph{factor} if its center is trivial. A factor admitting a trace is called \emph{finite} and in that case, the trace is unique.
Commutative tracial von Neumann algebras are of the form $(L^\infty(X), \mu)$, where $(X,\mu)$ is a probability space. The measure $\mu$ defines a trace on $L^\infty(X)$ by integrating. Apart from the operator norm $\nm{\cdot}$, there is a variety of other norms on a tracial von Neumann algebra; two that we will use are defined by:
\begin{itemize}
\item $\nm{a}_2 = \tau(a^*a)^{1/2}$;
\item $\nm{a}_1 = \tau(|a|)$.
\end{itemize}
Here $|a| = (a^*a)^{1/2}$ and we recall that as $a^*a$ is a self-adjoint operator on a Hilbert space, we have available the continuous functional calculus that allows us to make sense of the square root. The algebra $M$ is not complete with respect to either of these two norms. However, we have the following basic fact.
\begin{lemma}
  \label{l:vN-top-same}
  Let $(M, \tau)$ be a tracial von Neumann algebra with $M \sub B(\cH)$. Then on $M^1$ the norms $\nm{\cdot}_1$ and $\nm{\cdot}_2$ are complete and equivalent and the topology they define coincides with the strong operator topology.
\end{lemma}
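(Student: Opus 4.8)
The plan is to establish the three assertions -- mutual equivalence of $\nm{\cdot}_1$ and $\nm{\cdot}_2$, completeness, and coincidence with the strong operator topology -- all restricted to the bounded set $M^1$, where the trace behaves well. First I would record two elementary inequalities. Cauchy--Schwarz for the sesquilinear form $\langle x,y\rangle = \tau(y^*x)$ applied to $x=|a|$, $y=\bOne$ gives $\nm{a}_1=\tau(|a|)\le \tau(|a|^2)^{1/2}=\nm{a}_2$ on all of $M$. In the other direction, conjugating the inequality $|a|\le\nm{a}\,\bOne$ by $|a|^{1/2}$ gives $|a|^2\le\nm{a}\,|a|$, and positivity of $\tau$ then yields $\nm{a}_2^2=\tau(|a|^2)\le\nm{a}\,\tau(|a|)=\nm{a}\,\nm{a}_1\le\nm{a}_1$ for $a\in M^1$. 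Hence on $M^1$ one has $\nm{a-b}_1\le\nm{a-b}_2\le (2\nm{a-b}_1)^{1/2}$ (the constant absorbing $a-b\in 2M^1$), so the two metrics induce the same topology and have the same Cauchy sequences; in particular $(M^1,\nm{\cdot}_1)$ is complete iff $(M^1,\nm{\cdot}_2)$ is, and it suffices to work with $\nm{\cdot}_2$.

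For completeness I would pass to the GNS Hilbert space $L^2(M,\tau)$ of the faithful normal trace: $a\mapsto\hat a$ is a $\nm{\cdot}_2$-isometric embedding of $M$ with dense image, so it is enough to show the image of $M^1$ is closed in $L^2(M,\tau)$. Given $a_n\in M^1$ with $\hat a_n\to\eta$, I use that $M^1$ is weak-operator compact in $B(\cH)$ (it is a WOT-closed subset of the WOT-compact unit ball of $B(\cH)$) to extract a subnet $a_{n_i}$ converging WOT to some $a\in M^1$. Since $\tau\rest_{M^1}$ is WOT-continuous by hypothesis, for every $b\in M$ we get $\tau(b^*a_{n_i})\to\tau(b^*a)$, while $\langle\hat a_n,\hat b\rangle=\tau(b^*a_n)\to\langle\eta,\hat b\rangle$; comparing gives $\langle\eta,\hat b\rangle=\langle\hat a,\hat b\rangle$ for all $b$, so $\eta=\hat a\in M^1$. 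A closed subset of a complete metric space is complete, which settles completeness for $\nm{\cdot}_2$ and hence for $\nm{\cdot}_1$.

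It remains to identify the $\nm{\cdot}_2$-topology on $M^1$ with the strong operator topology coming from $M\sub B(\cH)$. One inclusion is immediate: if $a_i\to a$ in the SOT with $a_i,a\in M^1$, then $(a_i-a)^*(a_i-a)\to 0$ in the SOT, hence in the WOT, hence $\nm{a_i-a}_2^2=\tau\big((a_i-a)^*(a_i-a)\big)\to 0$ by WOT-continuity of $\tau$ on bounded sets, so the $\nm{\cdot}_2$-topology is coarser than the SOT. For the reverse inclusion I would first treat the standard representation on $L^2(M,\tau)$, where the commuting right action $\rho(b)\hat a=\widehat{ab}$ satisfies $\nm{\rho(b)}=\nm{b}$, giving $\nm{a\hat b}_{L^2}=\nm{\rho(b)\hat a}_{L^2}\le\nm{b}\,\nm{a}_2$; approximating an arbitrary $\zeta\in L^2(M,\tau)$ by vectors $\hat b$ with $b\in M$ shows every SOT-seminorm $a\mapsto\nm{a\zeta}$ is $\nm{\cdot}_2$-continuous on $M^1$, so there the SOT equals the $\nm{\cdot}_2$-topology. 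To transfer this to the given $\cH$, I would invoke that on bounded subsets of $M$ the SOT coincides with the $\sigma$-strong topology, which is intrinsic to $M$ (it is generated by the seminorms $a\mapsto\phi(a^*a)^{1/2}$ with $\phi$ a normal positive functional, each such $\phi$ being of the form $a\mapsto\sum_n\langle a\zeta_n,\zeta_n\rangle$ for an $\ell^2$-sequence of vectors in the ambient Hilbert space); hence the SOT on $M^1$ is independent of the faithful normal representation and therefore equals the $\nm{\cdot}_2$-topology.

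I expect this last step -- the reverse inclusion $\mathrm{SOT}\sub\nm{\cdot}_2\text{-topology}$ -- to be the main obstacle, since it is the only point requiring structural input beyond the defining axioms of a trace, namely the representation-independence of the $\sigma$-strong topology on bounded sets (equivalently, that every normal state on $M\sub B(\cH)$ is a countable convex combination of vector states). These are standard facts and could alternatively just be cited, together with the omnibus statement that on bounded sets the SOT, the $\sigma$-strong and strong-$*$ topologies, and the $\nm{\cdot}_1$- and $\nm{\cdot}_2$-topologies all agree.
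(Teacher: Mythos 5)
Your proof is correct. The only part of this lemma that the paper proves rather than cites is the equivalence of $\nm{\cdot}_1$ and $\nm{\cdot}_2$ on $M^1$, and there your argument coincides with the paper's: Cauchy--Schwarz gives $\nm{a}_1=\tau(|a|\bOne)\le\nm{a}_2$, and writing $\tau(|a|^2)=\tau(|a|^{1/2}|a|\,|a|^{1/2})\le\nm{a}\,\tau(|a|)\le\nm{a}_1$ gives the reverse estimate on the unit ball. For completeness and for the coincidence with the strong operator topology the paper simply refers to (the proof of) Takesaki, Prop.~III.5.3, whereas you supply full arguments; these are sound and are essentially the standard ones. In the completeness step, the WOT-compactness of $M^1$ together with the WOT-continuity of $x\mapsto\tau(b^*x)$ on bounded sets (normality plus WOT-continuity of left multiplication by a fixed operator) does show that the image of $M^1$ is $\nm{\cdot}_2$-closed in $L^2(M,\tau)$, hence complete. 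For the topology, the inclusion $\mathrm{SOT}$-convergence $\Rightarrow$ $\nm{\cdot}_2$-convergence is immediate from normality as you say, and the reverse direction via the right action in the standard form, using $\tau(b^*a^*ab)=\tau(a\,bb^*a^*)\le\nm{b}^2\nm{a}_2^2$, combined with the representation-independence of the $\sigma$-strong topology on bounded sets, is exactly the content of the cited result. You correctly flag that this last transfer is the one place requiring structural input beyond the trace axioms (normal states being countable sums of vector states); since the paper itself outsources precisely this point to a reference, doing the same would be entirely acceptable.
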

\begin{proof}
  For the fact that $\nm{\cdot}_2$ defines the strong operator topology and the completeness of the norm, see (the proof of) \cite[Prop.~III.5.3]{Takesaki1979}. For the equivalence of the two norms, first note that by the Cauchy--Schwarz inequality, we have
  \begin{equation*}
    \nm{a}_1 = \tau(|a| \bOne) \leq \tau(|a|^2)^{1/2}\tau(\bOne)^{1/2} = \nm{a}_2.
  \end{equation*}
  For the other direction, recall that in an arbitrary \Cstar-algebra, for all $x, y$ with $x \geq 0$, we have that $y^*xy \geq 0$. We have then:
  \begin{equation*}
    \nm{a}_2^2 = \tau(|a|^2) = \tau(|a|^{1/2}|a||a|^{1/2}) \leq \nm{a} \tau(|a|^{1/2}|a|^{1/2}) \leq \nm{a}_1. \qedhere
  \end{equation*}
\end{proof}

Let $A$ be a unital \Cstar-algebra. A \emph{tracial state} on $A$ is a positive linear functional $\tau$ that satisfies $\tau(\bOne) = 1$ and $\tau(ab) = \tau(ba)$ for all $a, b \in A$. We denote by $\cT(A)$ the compact convex set of all traces on $A$, equipped with the weak$^*$ topology. We have the following important fact (see \cite[Thm.~3.1.18]{Sakai1971}).
\begin{prop}
  \label{p:traces-simplex}
  Let $A$ be a unital \Cstar-algebra. Then $\cT(A)$ is a simplex.
\end{prop}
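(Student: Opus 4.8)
The plan is to reduce the statement to the commutative case, where it is classical, by passing to the enveloping von Neumann algebra and exploiting the canonical center-valued trace of a finite von Neumann algebra. First I would record the trivial point that $\cT(A)$ is a weak$^*$-closed convex subset of the state space $\tS(A)$ — the identity $\tau(xy)=\tau(yx)$ is preserved under weak$^*$-limits — so that $\cT(A)$ is a compact convex set and the notion of simplex from \autoref{sec:simplices} applies (if $\cT(A)=\emptyset$ there is nothing to prove).

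Next comes the reduction step. Each $\tau\in\cT(A)$ extends uniquely to a normal state $\bar\tau$ on the enveloping von Neumann algebra $A^{**}$, and by Kaplansky's density theorem together with the separate weak$^*$-continuity of multiplication on bounded sets, the trace identity passes to $A^{**}$; conversely every normal tracial state on $A^{**}$ restricts to one on $A$, and the two operations are mutually inverse. Since a properly infinite von Neumann algebra carries no nonzero normal trace, writing $z\in Z(A^{**})$ for the central projection onto the finite part, every $\bar\tau$ is concentrated on the finite von Neumann algebra $M:=zA^{**}$. Thus $\cT(A)$ is affinely isomorphic to the set of normal tracial states of $M$.

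Now I would invoke the canonical center-valued trace $\mathrm{Tr}\colon M\to Z(M)$, the unique normal $Z(M)$-bimodule projection onto the center that is tracial, and observe that every normal tracial state $\sigma$ of $M$ satisfies $\sigma=(\sigma\rest_{Z(M)})\circ\mathrm{Tr}$, because $\sigma-(\sigma\rest_{Z(M)})\circ\mathrm{Tr}$ is a normal tracial functional vanishing on $Z(M)$, hence $0$ by faithfulness of $\mathrm{Tr}$. Consequently restriction to $Z(M)$ gives an affine bijection from the normal tracial states of $M$ onto the normal states of the abelian von Neumann algebra $Z(M)$. The abelian case is classical: $Z(M)\cong L^\infty(X,\mu)$, its normal functionals are $L^1(X,\mu)$, and the base of the positive cone of $L^1(X,\mu)$ is a simplex because $L^1(X,\mu)$ is an abstract $L$-space (its norm is additive on positives); equivalently, the associated order unit space satisfies the Riesz decomposition property of \autoref{thm:SimplexRiesz}. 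Transporting the order structure back through the affine identifications above — that is, identifying $A_{sa}$ modulo the tracially null elements with a (dense, order-) subspace of $Z(M)_{sa}$ — yields that the order unit space attached to $\cT(A)$ has the weak Riesz interpolation property of \autoref{thm:SimplexRiesz}, which is precisely what it means for the compact convex set $\cT(A)$ to be a simplex.

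The main obstacle is making this chain of affine identifications compatible with the convex-geometric structure: the maps $\cT(A)\to\{\text{normal tracial states of }M\}\to\{\text{normal states of }Z(M)\}$ are plainly affine bijections, but to conclude the simplex property for the compact convex set $\cT(A)$ one must transfer either its topology or, more robustly, the order structure of $\cA(\cT(A))$, and verify that the order thereby induced on the appropriate quotient of $A_{sa}$ coincides with the one it inherits inside $Z(M)_{sa}$ — i.e.\ that $\tau(a)\ge0$ for all $\tau\in\cT(A)$ holds exactly when $\mathrm{Tr}(za)\ge0$ in $Z(M)$. Equivalently, one may run the whole argument through the Choquet--Meyer theorem (\autoref{th:Choquet-Meyer}), in which case the crux becomes the uniqueness of the central decomposition of a trace, governed by the abelian algebra $Z(M)$ of its GNS von Neumann algebra. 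Either way, the substance is these standard operator-algebraic facts, which is exactly the content of the cited result of Sakai, and beyond checking them there is nothing further to do.
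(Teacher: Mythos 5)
Your argument is correct in substance, but note that the paper does not give a proof of this proposition at all: it simply cites \cite[Thm.~3.1.18]{Sakai1971}, and what you have written is essentially the standard argument behind that citation (pass to $A^{**}$, cut to the finite central summand $M=zA^{**}$, factor every normal tracial state through the center-valued trace, and land in the predual of the abelian algebra $Z(M)$, which is an $L$-space and hence a vector lattice). The transfer issue you worry about at the end is genuinely unproblematic if you work on the dual (cone) side: by the definition of simplex used in \autoref{sec:simplices}, it suffices that the positive cone of $\cA(\cT(A))^*$ be a lattice cone, and that cone is generated by $\cT(A)$, hence identifies with the cone of positive tracial functionals on $A$. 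Your chain of bijections is an order isomorphism of this cone onto $L^1(X,\mu)^+$, because a normal tracial functional of the form $\omega\circ\mathrm{Tr}$ on $M$ is positive if and only if $\omega$ is positive on $Z(M)$ ($\mathrm{Tr}$ is a positive map fixing the center). Since $L^1(X,\mu)$ is a Riesz space, the cone is a lattice and $\cT(A)$ is a simplex; no topological transfer is needed.

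One justification should be repaired. The identity $\sigma=(\sigma\rest_{Z(M)})\circ\mathrm{Tr}$ for a normal tracial state $\sigma$ of $M$ does not follow from faithfulness of $\mathrm{Tr}$: a tracial normal functional vanishing on $Z(M)$ is not zero \emph{for that reason}. The correct tool is the Dixmier averaging theorem (equivalently, the uniqueness clause in the construction of the center-valued trace): $\mathrm{Tr}(x)$ lies in the norm-closed convex hull of $\bigl\{uxu^* : u \text{ unitary}\bigr\}$, on which any tracial functional $\sigma$ is constant, whence $\sigma(x)=\sigma(\mathrm{Tr}(x))$ for all $x\in M$. With that substitution the proof is complete.
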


The following lemma collects some standard facts about tracial representations that we will need. The proofs can be found for example in \cite{Dixmier1969} for a more general situation but we have also provided them below for the convenience of the reader. If $A$ is a subset of $B(\cH)$, we denote by $A'$ the \emph{commutant} of $A$, i.e., the set of operators in $B(\cH)$ that commute with all elements of $A$. If $B$ is a \Cstar-algebra, we denote by $B^\op$ the opposite algebra of $B$.
We also recall that every injective homomorphism of \Cstar-algebras is isometric.
\begin{lemma}
  \label{l:tracial-rep-Cstar}
  Let $A$ be a unital \Cstar-algebra and let $\tau \in \cT(A)$. Let $\cI_\tau = \set{a \in A : \tau(a^*a) = 0}$. Then the following hold:
  \begin{enumerate}
  \item \label{i:ltr:ideal} $\cI_\tau$ is a closed ideal of $A$.
  \item \label{i:ltr:ip} The sesquilinear form $\ip{a, b} \coloneqq \tau(b^*a)$ is positive definite on $B \coloneqq A/\cI_\tau$.
  \item \label{i:ltr:rep} Denote by $\cH_\tau$ the completion of $B$ with respect to the Hilbert space norm $\nm{a}_2 = \ip{a, a}^{1/2}$. Then the left multiplication $b \mapsto ab$ of $B$ extends to a faithful representation $\lambda \colon B \to B(\cH_\tau)$. Similarly, the right multiplication $b \mapsto ba$ extends to a faithful representation $\rho \colon B^\op \to B(\cH_\tau)$.
  \item \label{i:ltr:factor} Denote by $M_\lambda$ the closure of $\lambda(B)$ in $B(\cH_\tau)$ in the weak operator topology, and similarly for $M_\rho$. Then $\tau$ extends by continuity to $M_\lambda$ and $M_\rho$, so that $(M_\lambda, \tau)$, $(M_\rho, \tau)$ are tracial von Neumann algebras, $M_\lambda' = M_\rho$, $M_\rho' = M_\lambda$, and they are factors if and only if $\tau$ is an extreme point of $\cT(A)$.
  \end{enumerate}
\end{lemma}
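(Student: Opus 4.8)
The plan is to run the Gelfand--Naimark--Segal construction for $\tau$ and then read off the commutation theorem and the factor criterion. For (i), closedness of $\cI_\tau$ is immediate from continuity of $a\mapsto\tau(a^*a)$, and the Cauchy--Schwarz inequality $|\tau(b^*a)|\le\tau(b^*b)^{1/2}\tau(a^*a)^{1/2}$ shows that $\cI_\tau$ is a left ideal; that it is also a right ideal follows from traciality, via $\tau\bigl((ab)^*(ab)\bigr)=\tau(a^*abb^*)\le\|bb^*\|\,\tau(a^*a)=0$ for $a\in\cI_\tau$. For (ii), the same Cauchy--Schwarz estimate shows that $\ip{a,b}=\tau(b^*a)$ descends to $B=A/\cI_\tau$, and positive definiteness on $B$ is precisely the definition of $\cI_\tau$.

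For (iii): since $\cI_\tau$ is a two-sided ideal, $\lambda(a)\colon[b]\mapsto[ab]$ and $\rho(a)\colon[b]\mapsto[ba]$ are well defined on $B$, and traciality gives the bounds $\|\lambda(a)[b]\|_2^2=\tau(b^*a^*ab)=\tau(a^*abb^*)\le\|a\|^2\|[b]\|_2^2$ and symmetrically for $\rho$, so both extend to bounded operators on $\cH_\tau$. A short inner-product computation shows that $\lambda$ is a unital $*$-homomorphism of $B$, that $\rho$ is a unital $*$-homomorphism of $B^\op$, and that $\lambda(B)$ and $\rho(B)$ commute. With $\xi=[\bOne]$ one has $\lambda(B)\xi=\rho(B)\xi=B$ dense in $\cH_\tau$, so $\xi$ is cyclic for each; applied to $\lambda(a)\xi=[a]$ this yields faithfulness of $\lambda$, and likewise of $\rho$. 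For (iv): the map $J[a]=[a^*]$ is a well-defined conjugate-linear isometric involution of $\cH_\tau$ (because $\tau(aa^*)=\tau(a^*a)$), satisfies $J\lambda(a)J=\rho(a^*)$, hence $JM_\lambda J=M_\rho$. The vector state $x\mapsto\ip{x\xi,\xi}$ is a normal unital state on $M_\lambda$ extending $\tau$, and it is tracial because $\xi$ is a trace vector, $\ip{\lambda(ab)\xi,\xi}=\tau(ab)=\tau(ba)=\ip{\lambda(ba)\xi,\xi}$, the trace identity passing to the weak closure by the usual Kaplansky-density argument; similarly for $M_\rho$. Since $\xi$ is cyclic for $M_\rho$, it is separating for $M_\rho'\supseteq M_\lambda$, which gives faithfulness of $\tau$ on $M_\lambda$ (symmetrically on $M_\rho$); thus $(M_\lambda,\tau)$ and $(M_\rho,\tau)$ are tracial von Neumann algebras. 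The commutation relations $M_\lambda'=M_\rho$ and $M_\rho'=M_\lambda$ are the classical commutation theorem in the presence of a cyclic tracing vector; I would cite \cite{Dixmier1969} (or \cite{Takesaki1979}) rather than reproving it.

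For the factor criterion, suppose first $\tau$ is not extreme, say $\tau=\half(\tau_1+\tau_2)$ with $\tau_1\ne\tau_2$ in $\cT(A)$. Then $\tau_1\le 2\tau$, so the form $(a,b)\mapsto\tau_1(b^*a)$ is bounded on $\cH_\tau$ and is represented by a positive $T\in B(\cH_\tau)$ with $\tau_1(b^*a)=\ip{T[a],[b]}$. A direct check shows $T$ commutes with $\lambda(A)$, so $T\in M_\lambda'=M_\rho$; using that $\tau_1$ is \emph{tracial} one checks $\ip{T\rho(c)[a],[b]}=\tau_1(b^*ac)=\ip{\rho(c)T[a],[b]}$, so $T$ commutes with $\rho(A)$ as well, i.e.\ $T\in M_\rho'=M_\lambda$; hence $T\in Z(M_\lambda)$. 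If $T$ were scalar we would get $\tau_1=\tau$ (both being states), contradicting $\tau_1\ne\tau_2$, so $Z(M_\lambda)$ is nontrivial and $M_\lambda$ is not a factor. Conversely, if $M_\lambda$ is not a factor, choose a central projection $p$ with $0<\tau(p)<1$ (possible since $\tau$ is faithful on $M_\lambda$); then $\tau_1=\tau(\,\cdot\,p)/\tau(p)$ and $\tau_2=\tau(\,\cdot\,(\bOne-p))/\tau(\bOne-p)$, restricted to $A$ through $\lambda$, are traces on $A$ with $\tau=\tau(p)\tau_1+\tau(\bOne-p)\tau_2$, and they are distinct because their Radon--Nikodym operators are $p/\tau(p)$ and $(\bOne-p)/\tau(\bOne-p)$, which differ, so already differ on some element of $A$ by the separating-vector argument; hence $\tau$ is not extreme. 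Finally $M_\rho=JM_\lambda J$ with $J$ a $*$-anti-isomorphism, so $M_\rho$ is a factor exactly when $M_\lambda$ is.

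The main obstacle is the commutation theorem $M_\lambda'=M_\rho$, which I would take from the cited references rather than reproduce; within the self-contained part, the subtlest point is verifying that traciality of $\tau_1$ forces the Radon--Nikodym operator $T$ into $M_\lambda$ in addition to $M_\lambda'$, which is exactly what places it in the center and links extremality to factoriality.
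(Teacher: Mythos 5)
Your proof is correct and follows essentially the same route as the paper's: the GNS construction for $\tau$, the commutation theorem $M_\lambda'=M_\rho$ cited from the literature, a Radon--Nikodym operator $T\in M_\lambda'\cap M_\rho'=Z(M_\lambda)$ to show non-extremality forces a nontrivial center, and cutting by a central projection for the converse. The only (harmless) variations are your explicit use of the conjugation $J$ and your separate left/right treatment of the ideal property of $\cI_\tau$; you also verify faithfulness of the extended trace via the separating vector, a point the paper leaves implicit.
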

\begin{proof}
  \ref{i:ltr:ideal} We note that if $x \in A$ is positive, then $\tau(x) = 0$ implies that $\tau(x^2) = 0$. Indeed, we may as well suppose that $\nm{x} \leq 1$, so $x^2 \leq x$, and the claim follows from the positivity of $\tau$. Now if $a \in \cI_\tau$ or $b \in \cI_\tau$, using the Cauchy--Schwarz inequality, we have that
  \begin{equation*}
    |\tau((ab)^*ab)| = |\tau(b^*a^*ab)| = |\tau((b^*b)^* a^*a)| \leq |\tau((b^*b)^2) \tau((a^*a)^2)|^{1/2} = 0,
  \end{equation*}
  so $ab \in \cI_\tau$.

  \ref{i:ltr:ip} The form $\ip{\cdot, \cdot}$ is positive semi-definite because $\tau \geq 0$ and it is definite because of the definition of $\cI_\tau$.

  \ref{i:ltr:rep} We have to check that for all $a, b \in B$, $\nm{ab}_2 \leq \nm{a} \nm{b}_2$ and $\nm{ab}_2 \leq \nm{b} \nm{a}_2$. We have:
  \begin{equation*}
    \nm{ab}_2^2 = \tau\big( (ab)^*ab \big) = \tau(b^*a^*ab) \leq \tau \big( b^*(\nm{a}^2\bOne) b \big) = \nm{a}^2\nm{b}_2^2.
  \end{equation*}
  The other inequality is similar, using the fact that $\tau\big( (ab)^*ab \big) = \tau\big( ab (ab)^* \big)$.
  Faithfulness follows from the fact that the map $B \to \cH_\tau$, $a \mapsto a \cdot \bOne$ is injective.

  \ref{i:ltr:factor} By von Neumann's bi-commutant theorem, we have that $M_\lambda = \lambda(B)''$ and $M_\rho = \rho(B)''$, so they are von Neumann algebras and $B$ is strongly dense in both. We have that for $a \in B$, $\tau(a) = \ip{a \bOne, \bOne}$, so $\tau$ extends to a positive, weakly continuous functional on $M_\lambda$ given by the same formula. That $\tau$ is tracial follows from continuity: for a fixed $a \in B$, the set
  \begin{equation*}
    \set{b \in M_\lambda : \tau(ab) = \tau(ba)} = \set{b \in M_\lambda : \ip{b\bOne, a^*} = \ip{ba, \bOne}}
  \end{equation*}
  is weakly closed and contains $B$, so it is equal to $M_\lambda$. Applying this a second time, we get that the identity holds for all $a, b \in M_\lambda$. Similarly for $M_\rho$. The fact that $M_\lambda' = M_\rho$ and $M_\rho' = M_\lambda$ follows from \cite[Part~I, Ch.~5, Thm.~1]{Dixmier1981}.

  Suppose now that $M_\lambda$ is not a factor and let $z \in M_\lambda$ be a non-trivial central projection. Define $\tau_1(a) = \frac{1}{\tau(z)} \tau(az)$ and $\tau_2(a) = \frac{1}{1-\tau(z)} \tau(a(\bOne - z))$ for all $a \in A$. Then $\tau_1$ and $\tau_2$ are traces and $\tau = \tau(z) \tau_1 + (1-\tau(z))\tau_2$. To see that $\tau_1$ and $\tau_2$ are distinct, note that if $(a_n)$ is a sequence of elements of $B$ converging to $z$ in the weak operator topology, then $\tau_1(a_n) \to \tau(z)$ and $\tau_2(a_n) \to 0$. The argument for $M_\rho$ is similar.

  Finally, suppose that $\tau$ is not extreme and $\tau = \half \tau_1 + \half \tau_2$ with $\tau_1, \tau_2$ distinct elements of $\cT(A)$. Note that $\ip{x, y}_1 \coloneqq \tau_1(y^*x)$ defines a positive sesquilinar form on $B$ such that $\ip{x, x}_1 \leq 2 \ip{x, x}$, so it extends to a positive sesquilinear form on $\cH_\tau$. Thus we can define an operator $T \in B(\cH_\tau)$ by $\ip{Tx, y} = \ip{x, y}_1$ and it satisfies $0 \leq T \leq 2 \cdot \bOne_{\cH_\tau}$. We show that $T \in M_\lambda'$. For $a, x, y \in B$, we have:
  \begin{equation*}
    \ip{(T \lambda(a))x, y} = \tau_1(y^*ax) = \ip{Tx, a^*y} = \ip{Tx, \lambda(a)^*y} = \ip{\lambda(a)Tx, y}.
  \end{equation*}
  As $B$ is dense in $\cH_\tau$, we obtain that $T\lambda(a) = \lambda(a)T$ everywhere. Similarly, $T \in M_\rho'$. Now it only remains to notice that $M_\lambda' \cap M_\rho' = M_\rho \cap M_\lambda$ is the common center of $M_\lambda$ and $M_\rho$ and that $T$ is not a multiple of the identity because $\tau_1$ is not a multiple of $\tau$.
\end{proof}

In continuous logic, tracial von Neumann algebras have been axiomatized (see, for example, \cite{Farah2014}) and there is considerable literature on their model theory. What is perhaps most relevant for us is that, under a mild non-triviality condition, no completion of the continuous theory of tracial von Neumann algebra has quantifier elimination \cite{Farah2023ppp} and, except possibly for very strange counterexamples, no completion is model complete \cite{Farah2023pp}.

Here we consider the theory of tracial von Neumann algebras in a richer (but equivalent) language which will allow us to use functional calculus directly. An element $x$ of a \Cstar-algebra is called a \emph{contraction} if $\nm{x} \leq 1$. For a cardinal $\kappa$, we let $\cC_\kappa$ denote the universal unital \Cstar-algebra
\begin{equation*}
  \mathrm{C}^\ast \gen{(\scx_i : i < \kappa) : \nm{\scx_i} \leq 1},
\end{equation*}
generated by $\kappa$ contractions. More concretely, $\cC_\kappa$ is the completion of the free $\ast$-algebra of non-commutative $\ast$-polynomials $P(\bar\scx)$ in the variables $(\scx_i : i < \kappa)$ with respect to the norm
\begin{equation*}
  \nm{P(\bar\scx)} \coloneqq \sup \set{\nm{P(\bar T)} : T_i \in B(\cH), \nm{T_i} \leq 1},
\end{equation*}
where $\cH$ is some infinite-dimensional Hilbert space. See \cite{Blackadar1985} for more details on universal \Cstar-algebras. The algebra $\cC_\kappa$ has the following universal property: for every \Cstar-algebra $A$ and any tuple of contractions $(v_i : i < \kappa)$ in $A$ there exists a unique homomorphism $\cC_\kappa \to A$ sending $\scx_i$ to $v_i$ for every $i$. If $u \in \cC_\kappa$, we denote by $u(\bar v)$ the image of $u$ under this homomorphism.

We consider a language $\cL_\TvN$ consisting of the following symbols. For every $n$, we include $n$-ary function symbols $u$ for every contraction $u \in \cC_n$. We also include a distance predicate $d(\cdot, \cdot)$ and a complex-valued predicate $\tau$ for the trace. (A complex-valued predicate is represented by two real-valued predicates, for the real and imaginary parts.) Given a tracial von Neumann algebra $M$, we see its unit ball $M^1$ as an $\cL_\TvN$-structure in the natural way, with $d$ interpreted according to the metric axiom below.

We define the affine theory $\TvN$ by the following universal axioms.
\begin{description}
\item[Algebraic axioms]
  For every $\ell, k \in \N$ and contractions $\bar v \in (\cC_\ell)^k$, $u \in \cC_k$, we include the axiom
  \begin{equation*}
    u(\bar v)(\bar x) = u(\bar v(\bar x)),
  \end{equation*}
  where $\bar x = (x_0, \ldots, x_{\ell-1})$ are variables, the element $u(\bar v) \in \cC_\ell$ is as defined above, and the right-hand side is composition of function symbols.
  These axioms allow us to have unambiguous terms for all non-commutative $\ast$-polynomials (of norm $\leq 1$) and also for more complicated functions such as $|x| = (x^*x)^{1/2}$.

\item[Trace axioms] $\tau$ is linear: $\tau(\lambda_1 x_1 + \lambda_2 x_2) = \lambda_1 \tau(x_1) + \lambda_2 \tau(x_2)$ for all $\lambda_1, \lambda_2 \in \bC$ with $|\lambda_1| + |\lambda_2| \leq 1$; positive: $\tau(x^*x) \geq 0$; tracial: $\tau(xy) = \tau(yx)$; and normalized: $\tau(\bOne) = 1$.

\item[Metric] $d(x, y) = \tau(|x - y|)$.
\end{description}

Note that with these axioms, every atomic formula is equivalent to a formula of the form $\Re \tau(u(\bar x))$, where $u \in \cC_k$ is a contraction. We will also slightly abuse notation and use (complex-valued) formulas $\tau(u(\bar x))$ without the restriction $\nm{u} \leq 1$; they can simply be interpreted as $\nm{u} \tau\big(\frac{u}{\nm{u}}(\bar x)\big)$. With this convention, every (complex-valued) quantifier-free formula is equivalent to one of the form $\tau(u(\bar x))$ for some $u \in \cC_k$.

\begin{prop}
  \label{p:axioms-vN}
  The axioms above axiomatize the class of (unit balls of) tracial von Neumann algebras.
\end{prop}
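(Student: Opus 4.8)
The plan is to prove the two inclusions separately. One direction is that the unit ball of every tracial von Neumann algebra, with the natural interpretations, is a model of $\TvN$; this is essentially a verification. The other, and substantial, direction is that every model $M\models\TvN$ is isomorphic, as an $\cL_\TvN$-structure, to the unit ball of a tracial von Neumann algebra. For this I would reconstruct a (pre-)\Cstar-algebra $A$ with $A^1=M$ carrying a faithful tracial state, pass to its GNS representation to obtain a tracial von Neumann algebra $M_\lambda$ via \autoref{l:tracial-rep-Cstar}, and then identify $M$ with $M_\lambda^1$ using the Kaplansky density theorem together with the completeness of $M$.

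\textbf{Easy direction.} Let $(N,\tau)$ be a tracial von Neumann algebra and equip $N^1$ with the interpretations described above. For each $n$ and each contraction $u\in\cC_n$, the universal property of $\cC_n$ gives, for $\bar v\in (N^1)^n$, an element $u(\bar v)\in N$; a $\ast$-homomorphism maps contractions to contractions, so $u(\bar v)\in N^1$, the operations are well defined, and the algebraic axioms reduce to identities that already hold in the $\cC_n$. The trace axioms hold by the definition of a faithful normal tracial state, and the metric axiom holds by the very definition $d(x,y)=\tau(|x-y|)=\|x-y\|_1$. Finally, $N^1$ is complete in $d$ by \autoref{l:vN-top-same} (on the unit ball, $\|\cdot\|_1$ is equivalent to $\|\cdot\|_2$, which is complete). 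Hence $N^1\models\TvN$.

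\textbf{Reconstruction and GNS.} Let $M\models\TvN$. First, for $0<|\lambda|\le 1$ the operation $(\lambda\scx_0)^M\colon M\to M$ scales $d$ by the factor $|\lambda|$ — compute $d((\lambda\scx_0)^M m,(\lambda\scx_0)^M m')$ via the contraction $\tfrac12(\lambda\scx_0-\lambda\scx_1)\in\cC_2$ and the metric axiom — and is in particular injective. Using this, form the set $A$ of formal products $r\cdot m$ with $r>0$, $m\in M$, identifying $r\cdot m$ with $r'\cdot m'$ when they agree after a common rescaling back into $M$, and transport the $\ast$-algebra operations and the involution from the $\cL_\TvN$-operations; since every \Cstar-polynomial identity among contractions holds in the $\cC_n$, and hence in $M$, this makes $A$ a well-defined unital $\ast$-algebra containing $M=\{1\cdot m\}$. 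Define $\|a\|=\inf\{r>0: a\in r\cdot M\}$; a routine check, using that the $\cC_n$ carry the universal \Cstar-norm, shows $\|\cdot\|$ is a \Cstar-norm with $\{a\in A:\|a\|\le 1\}=M$. Extend $\tau$ to $A$ by $\tau(r\cdot m)=r\,\tau^M(m)$: it is a tracial state, and it is faithful, since if $\tau(a^*a)=0$ we may take $a\in M$, whence $\tau(|a|)\le\tau(a^*a)^{1/2}=0$ by Cauchy--Schwarz, so $d(a,0)=0$ and $a=0$. Now pass to the operator-norm completion $\bar A$ (to which $\tau$ extends, being norm-continuous) — or run the GNS construction directly on the pre-\Cstar-algebra $A$ — and apply \autoref{l:tracial-rep-Cstar} to $(\bar A,\tau)$ to obtain a tracial von Neumann algebra $(M_\lambda,\bar\tau)$ and a representation $\lambda$ with $\lambda(A)$ weakly dense in $M_\lambda$. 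The map $\lambda\rest_M\colon M\to M_\lambda^1$ is injective because $\tau$ is faithful on $A$; it is a metric embedding since $d(x,y)=\tau(|x-y|)=\bar\tau(|\lambda(x)-\lambda(y)|)=\|\lambda(x)-\lambda(y)\|_1$; and it preserves all $\cL_\TvN$-operations, because $M=A^1$ is closed under each contraction-operation $u\in\cC_n$, so $u(\bar v)\in M$ for $\bar v\in M^n$, and $\lambda(u(\bar v))=u(\lambda(\bar v))$ as $u(\bar v)$ is a norm-limit of $\ast$-polynomials and $\lambda$ is norm-continuous. Thus $\lambda\rest_M$ is an $\cL_\TvN$-embedding $M\hookrightarrow M_\lambda^1$. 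Surjectivity: $\lambda(M)=\lambda(A^1)$ is $\|\cdot\|_2$-dense in $M_\lambda^1$ by the Kaplansky density theorem, while $M$ is complete in $d=\|\cdot\|_1$, hence (on the unit ball, by \autoref{l:vN-top-same}) complete, and therefore closed, in $\|\cdot\|_2$; a dense closed subset is everything, so $\lambda(M)=M_\lambda^1$ and $M\cong M_\lambda^1$ as $\cL_\TvN$-structures.

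\textbf{Main obstacle.} The delicate part is the reconstruction of $A$ together with the identification $A^1=M$: one must verify carefully that the rescaling construction yields a genuine (pre-)\Cstar-algebra, and this is exactly where including function symbols for all of $\cC_n$ pays off, since every norm computation can be pushed back to the universal algebras. Equally essential is the use of the completeness axiom of the metric structure (via Kaplansky density) to exclude ``incomplete'' would-be models — such as the unit ball of a commutative \Cstar-algebra with a trace of non-closed range — and to pin $M$ down as the full strong-operator unit ball of $M_\lambda$ rather than a proper dense subalgebra. The facts \autoref{p:traces-simplex} and \autoref{l:tracial-rep-Cstar} are used as black boxes; the remainder is bookkeeping.
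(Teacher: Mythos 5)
Your proof is correct and follows essentially the same route as the paper: rescale the unit ball to a pre-\Cstar-algebra, apply the GNS construction of \autoref{l:tracial-rep-Cstar}, and recover the full unit ball via Kaplansky density together with metric completeness. The one step you label ``routine'' that actually requires an idea is the \Cstar-identity for the rescaled norm, which the paper handles via the approximate polar decomposition $\scx_1 = u(\scx_1^*\scx_1)^{1/4}$ in $\cC_1$.
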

\begin{proof}
  Let $(M, \tau)$ be a tracial von Neumann algebra. Then the algebraic axioms are satisfied (for all $\bar x$ in $M^1$) because of the universal property of $\cC_k$ and the fact that $M$ is a \Cstar-algebra. The trace axioms are satisfied because $\tau$ is a trace, and the metric axiom is simply the definition of $d$.

  Conversely, let $M^1$ be a model of the theory. For every $r > 0$, we let $M^r$ be a copy of $M^1$ and denote by $J_r \colon M^1 \to M^r$ the corresponding bijection. For $r < s$, we consider the map $I_{r,s}\colon M^r \to M^s$, $a \mapsto J_s((r/s)J_r^{-1}(a))$. We will identify $M^r$ (as a set) with its image in $M^s$, and we let $M = \bigcup_{r > 0} M^r$ be the direct limit of these inclusions. Note that $\bigcap_{r > 0} M^r = \set{\bZero}$.

  For $a \in M^r\sub M$ and a non-zero $\lambda \in \bC$, we define the scalar multiplication $\lambda a = J_{|\lambda|r}((\lambda/|\lambda|) J_r^{-1}(a)) \in M^{|\lambda|r}\sub M$. Notice that this does not depend on $r$. We also define $0a = \bZero$. Similarly, for $a,b\in M^r\sub M$ we define addition by $a+b = J_{2r}((1/2)J_r^{-1}(a) + (1/2)J_r^{-1}(b))$, multiplication by $ab = J_{r^2}(J_r^{-1}(a)J_r^{-1}(b))$, and the involution by $a^* = J_r(J_r^{-1}(a)^*)$. We also define the trace by $\tau(a) = r\tau(J_r^{-1}(a))$. Finally, for $a \in M$, we define $\nm{a} = \inf \set{r > 0 : a \in M^r}$.

  The algebraic axioms imply that $M$ is a $\ast$-algebra. Since for all $r, s \in \R^+$, $(r+s)^{-1}(r \scx_1 + s \scx_2)$ is a contraction in $\cC_2$, $M^1$ is closed under this polynomial, which implies that the norm satisfies the triangle inequality. Homogeneity of the norm is also clear.
  As $M^1$ is closed under multiplication, it follows from the definition that $\nm{ab} \leq \nm{a}\nm{b}$ for all $a, b \in M$, so $M$ is a normed algebra. To check the \Cstar-identity, let $a \in M$ with $\nm{a^*a} \leq 1$ in order to show that $\nm{a} \leq 1$. From the approximate polar decomposition in $\cC_1$ (see \cite[Prop.~II.3.2.1]{Blackadar2006}), there exists $u \in \cC_1$ with $\nm{u} \leq 1$ such that $\scx_1 = u(\scx_1^*\scx_1)^{1/4}$. Note that $(\scx_1^*\scx_1)^{1/8}$ is also a contraction in $\cC_1$, so $1 \geq \nm{(a^*aa^*a)^{1/8}} = \nm{(a^*a)^{1/4}}$. Thus we have that
  \begin{equation*}
    \nm{a} = \nm{u(a)(a^*a)^{1/4}} \leq \nm{u(a)}\nm{(a^*a)^{1/4}} \leq 1.
  \end{equation*}

  Let $A$ denote the completion of $M$ in the $\nm{\cdot}$ norm. Then $A$ is a \Cstar-algebra and the trace axioms imply that $\tau$ is a trace on $A$. The metric axiom implies that $\tau$ is faithful. Now \autoref{l:tracial-rep-Cstar} gives us a faithful representation $\lambda \colon A \to B(\cH_\tau)$. Let $M_\lambda$ be the von Neumann algebra generated by $\lambda(A)$. As $\lambda$ is isometric, we can just consider $A$ (and thus $M$) as a subalgebra of $M_\lambda$.

  Next we show that $M^1$ is closed in $M_\lambda^1$ in the strong operator topology of $B(\cH_\tau)$. Let $a_i \to a$ strongly with $a_i \in M^1$ and $a \in M_\lambda^1$. Then, in particular, $a_i \cdot \bOne \to a \cdot \bOne$ and thus $\nm{a_i - a}_2 \to 0$. By \autoref{l:vN-top-same}, $\nm{a_i - a}_1 \to 0$, so $(a_i)$ is $d$-Cauchy. By the completeness of $M^1$, we obtain that $a \in M^1$. The Kaplansky density theorem implies that $M^1$ is dense in $M_\lambda^1$, so $M = A = M_\lambda$ is a von Neumann algebra. The function $\tau$ is a trace on $M$ and it is weakly continuous on $M^1$ by \autoref{l:vN-top-same} and \cite[Thm.~II.2.6]{Takesaki1979}.

  We finally note that the interpretation of the function symbols $u \in \cC_k$ is the intended one. Fix $\bar a \in (M^1)^k$ and denote by $\Phi_{\bar a} \colon \cC_k \to A$ the homomorphism defined by $\Phi_{\bar a}(\bar \scx) = \bar a$. Let $u^A$ denote the interpretation of the symbol $u$ in $M^1$. Using scalar multiplication, we can define a function $u^A\colon M^1\to A$ for all $u \in \cC_k$, and not only for contractions.
  Our goal is to show that $\Phi_{\bar a}(u) = u^A(\bar a)$ for all $u$. This holds for $u = \scx_1 + \scx_2, \scx_1 \scx_2, \scx_1^*, \lambda \scx_1$ by definition. Now it follows from the algebraic axioms that it holds for all non-commutative $*$-polynomials. On the other hand, by definition, if $\nm{u} \leq 1$, then $\nm{u^A(\bar a)} \leq 1$. By applying this to $u/\nm{u}$, we obtain that $\nm{u^A(\bar a)} \leq \nm{u}$. So if $P$ is a polynomial in the variables $\bar \scx$ with $\nm{P - u} \leq \eps$, we have
  \begin{equation*}
    \begin{split}
      \nm{\Phi_{\bar a}(u) - u^A(\bar a)} &\leq \nm{\Phi_{\bar a}(u) - \Phi_{\bar a}(P)} + \nm{P^A(\bar a) - u^A(\bar a)} \\
                                          &= \nm{\Phi_{\bar a}(u-P)} + \nm{(P-u)^A(\bar a)} \leq 2 \eps
    \end{split}
  \end{equation*}
  and we are done.
\end{proof}

Next we show that the quantifier-free types of the theory $\TvN$ can be identified with the traces of the universal unital \Cstar-algebras $\cC_\kappa$. As each $\cC_n$ is separable, this implies, in particular, that $\TvN$ has a separable language.

\begin{prop}\label{p:vN-qftp}
  The following hold:
  \begin{enumerate}
  \item\label{i:p:vN:Sqf} For every cardinal $\kappa$, the map $\Phi \colon \tS_\kappa^\qf(\TvN) \to \cT(\cC_\kappa)$ given by
    \begin{equation*}
      \Phi(p(\bar x))(u) = \tau(u(\bar x))^p.
    \end{equation*}
    is an affine homeomorphism.
  \item \label{i:p:vN:models} Let $(M,\sigma)$ be a tracial von Neumann algebra and let $p\in \tS_\kappa^\qf(\TvN)$ be the quantifier-free type of an enumeration of the unit ball $M^1$ (or of a subset densely generating $M^1$). If $\tau=\Phi(p)$, then $(M,\sigma)$ is isomorphic to the von Neumann algebra $(M_\lambda, \tau)$ as constructed in \autoref{l:tracial-rep-Cstar}\ref{i:ltr:factor}, for $A=\cC_\kappa$.
    In particular, $M$ is a factor if and only if $p$ is an extreme type.
  \end{enumerate}
\end{prop}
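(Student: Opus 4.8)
The plan is to establish the two parts essentially in one stroke, by unwinding the definitions and using the machinery already developed for $\PMP_\Gamma$ (Sections \ref{sec:SimplicialTheory} and the present section) and the general facts about traces and tracial representations gathered above. For \ref{i:p:vN:Sqf}, I would first check that $\Phi$ is well-defined: given a quantifier-free type $p$, realized by a tuple $\bar a$ in the unit ball of a tracial von Neumann algebra $(M,\sigma)$, the universal property of $\cC_\kappa$ produces a unique homomorphism $\Phi_{\bar a}\colon\cC_\kappa\to M$ with $\Phi_{\bar a}(\bar\scx)=\bar a$, and then $u\mapsto\sigma(\Phi_{\bar a}(u))$ is a trace on $\cC_\kappa$ depending only on the values $\tau(u(\bar x))^p$ (by the identification of interpretations of function symbols with $\Phi_{\bar a}$, as in the last paragraph of the proof of \autoref{p:axioms-vN}). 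Conversely, given any $\tau\in\cT(\cC_\kappa)$, \autoref{l:tracial-rep-Cstar} yields a tracial von Neumann algebra $(M_\lambda,\tau)$ containing the image of $\cC_\kappa$; taking $\bar a$ to be the image of $\bar\scx$, which lies in $M_\lambda^1$ since the $\scx_i$ are contractions and $\lambda$ is isometric, gives a quantifier-free type $p$ with $\Phi(p)=\tau$. Continuity and affineness of $\Phi$ are immediate from the definition, since each coordinate functional $\tau(u(\bar x))$ corresponds to a (complex-valued) quantifier-free formula. Injectivity follows from the fact that every quantifier-free formula is equivalent, modulo $\TvN$, to one of the form $\Re\tau(u(\bar x))$ with $u\in\cC_k$ a contraction: two quantifier-free types with the same $\Phi$-image agree on all such formulas, hence coincide.

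For \ref{i:p:vN:models}, I would take $(M,\sigma)$ together with a tuple $\bar a$ densely generating $M^1$ (as a $\ast$-algebra closed in the strong topology), set $p=\tp^\qf(\bar a)$ and $\tau=\Phi(p)$. The homomorphism $\Phi_{\bar a}\colon\cC_\kappa\to M$ has dense image, so it factors through the quotient $\cC_\kappa/\cI_\tau$ and, by faithfulness of $\sigma$ and the construction in \autoref{l:tracial-rep-Cstar}, extends to an isomorphism of the tracial von Neumann algebra generated by the image onto $(M,\sigma)$; comparing with the GNS-type construction $(M_\lambda,\tau)$ of \autoref{l:tracial-rep-Cstar}\ref{i:ltr:factor} identifies $(M,\sigma)\cong(M_\lambda,\tau)$, carrying $\bar a$ to the image of $\bar\scx$. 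The ``in particular'' clause then follows: by \autoref{l:tracial-rep-Cstar}\ref{i:ltr:factor}, $M_\lambda$ is a factor exactly when $\tau$ is an extreme point of $\cT(\cC_\kappa)$, and since $\Phi$ is an affine homeomorphism, this is equivalent to $p$ being an extreme point of $\tS^\qf_\kappa(\TvN)$, i.e., an extreme quantifier-free type.

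The main point requiring care — the ``hard part'' — is the verification that $\Phi$ is genuinely a bijection of \emph{convex} sets and that the correspondence between extreme traces and factors is faithfully transported, i.e., that no information is lost when passing from the \Cstar-algebra $\cC_\kappa$ to the von Neumann-algebraic completion. This is exactly what \autoref{l:tracial-rep-Cstar}\ref{i:ltr:factor} provides (via \cite{Dixmier1981} for the commutant identities), but one should double-check that the trace on $\cC_\kappa$ is recovered as the restriction of the normal trace on $M_\lambda$, so that the round trip $\tau\mapsto(M_\lambda,\tau)\mapsto\tp^\qf$ of the image of $\bar\scx$ returns $\Phi^{-1}(\tau)$, and that the density arguments (Kaplansky density, as used in \autoref{p:axioms-vN}) are correctly invoked to ensure that the tuple $\bar\scx$'s image really does densely generate $M_\lambda^1$. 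A secondary technical subtlety, parallel to \autoref{p:PMPG-qf}, is that for surjectivity one must ensure the realizing tuple lands in the \emph{unit ball}; this is automatic here because the $\scx_i$ are norm-one contractions in $\cC_\kappa$ and $\lambda$ is isometric. Once these points are settled, both parts of the proposition follow formally.
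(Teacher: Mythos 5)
Your proposal is correct and follows essentially the same route as the paper: well-definedness and injectivity from the fact that quantifier-free formulas reduce to $\Re\tau(u(\bar x))$, surjectivity via the GNS-type construction of \autoref{l:tracial-rep-Cstar}\ref{i:ltr:factor} applied to a given $\tau\in\cT(\cC_\kappa)$, the isomorphism $(M,\sigma)\cong(M_\lambda,\tau)$ induced by $\lambda(\bar\scx/\cI_\tau)\mapsto\bar a$ for part (ii), and the transport of extreme points through the affine homeomorphism $\Phi$ for the factor characterization. The additional details you supply (the role of the homomorphism $\Phi_{\bar a}$, the unit-ball issue for the realizing tuple) are correct elaborations of steps the paper treats as immediate.
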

\begin{proof}
  First, we observe that the trace axioms imply that $\Phi$ takes values in $\cT(\cC_\kappa)$. It is also obvious that it is continuous, affine, and injective. To see that it is surjective, let $\tau \in \cT(\cC_\kappa)$. Construct the Hilbert space $\cH_\tau$ and the tracial von Neumann algebra $(M_\lambda, \tau)$ as in \autoref{l:tracial-rep-Cstar}\autoref{i:ltr:factor}. It then follows from the definition that $\Phi(\tp^\qf(\lambda(\bar \scx/\cI_\tau))) = \tau$. This proves \autoref{i:p:vN:Sqf}.

  For \autoref{i:p:vN:models}, it is enough to observe that if $\bar a$ is the enumeration of $M$ for which $p=\tp^\aff(\bar a)$, then the map $\lambda(\bar\scx/\cI_\tau) \mapsto \bar a$ extends to an isomorphism ${M_\lambda \to M}$. Finally, as per \autoref{l:tracial-rep-Cstar}\autoref{i:ltr:factor}, the von Neumann algebra $M=M_\lambda$ is a factor if and only if $\tau$ is an extreme point of $\cT(\cC_\kappa)$, i.e., if and only if $p$ is an extreme type.
\end{proof}

\begin{prop}\label{p:factors-elementary-class}
  The class of finite von Neumann factors is axiomatizable in continuous logic.
\end{prop}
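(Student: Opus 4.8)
The plan is to identify the finite von Neumann factors with the extremal models of $\TvN$, and then to show that this class is closed under the operations that characterize elementary classes in continuous logic.

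First I would record that the extremal models of $\TvN$ are precisely the (unit balls of) finite von Neumann factors, in the same spirit as the corresponding statement for $\PMP_\Gamma$ in \autoref{th:PMPG-simplicial}. If $M\models\TvN$ has non-trivial centre, pick a central projection $z$ with $0<\tau(z)<1$; the central decomposition $M\cong zMz\oplus(1-z)M(1-z)$ identifies $M^1$, \emph{as an $\cL_\TvN$-structure}, with the convex combination $\tau(z)\,(zMz)^1\oplus\bigl(1-\tau(z)\bigr)\,\bigl((1-z)M(1-z)\bigr)^1$: the function symbols act coordinatewise, and both $\tau$ and the metric $d=\tau(|{\cdot}-{\cdot}|)$ are the corresponding convex combinations of the normalised traces. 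Hence the type of $z$ is a proper convex combination of two distinct types (one with $\tau(x)=1$, one with $\tau(x)=0$), so $M$ is not extremal. Conversely, if $M$ is a finite factor and $\bar a$ enumerates a dense subset of $M^1$, then $\rho^\qf_\bN(\tp^\aff(\bar a))\in\cE^\qf_\bN(\TvN)$ by \autoref{p:vN-qftp}\ref{i:p:vN:models}, so $\tp^\aff(\bar a)\in\cE^\fM_\bN(\TvN)$ by \autoref{lem:ExtremeTypeProjectionQF}, and therefore every finite restriction of $\tp^\aff(\bar a)$ is extreme by \autoref{l:ExtremeTypesInfinite}; since $\bar a$ is dense, $M$ realises only extreme types, i.e.\ $M$ is extremal.

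Next, by \autoref{c:closed-extreme-types} it suffices to prove that the finite factors form an elementary class in continuous logic, and for this I would use the standard criterion that a class of metric structures is axiomatizable if and only if it is closed under isomorphism, ultraproducts and ultraroots. Closure of the class of $\TvN$-models under ultraproducts is immediate from \L o\'s's theorem, the axioms of \autoref{p:axioms-vN} being universal; the one genuinely operator-algebraic point is that a tracial ultraproduct $\prod_\cU M_i$ of finite factors has trivial centre. Given $a=(a_i)_\cU$ in the centre we may assume $\tau(a)=0$; if $\nm{a}_2>0$ then $\nm{a_i-\tau(a_i)\bOne}_2$ is bounded below on a set in $\cU$, and Dixmier's averaging theorem in each factor $M_i$ produces a convex combination of unitary conjugates of $a_i$ lying within $\nm{\cdot}$-distance (hence $\nm{\cdot}_2$-distance) $o(1)$ of $\tau(a_i)\bOne$; at least one unitary $w_i$ occurring in that combination must satisfy $\nm{[w_i,a_i]}_2$ bounded below, so $w=(w_i)_\cU$ does not commute with $a$, a contradiction. (Note that no uniform bound on the number of averaging unitaries is needed: one selects a single $w_i$ per index.) Closure under ultraroots is routine: if $M\preceq^\cont M^\cU$ with $M^\cU$ a finite factor, then $M\models\TvN$, and a central $a\in M^1$ makes the definable predicate $\sup_{x\in M^1}\nm{[a,x]}_2$ vanish in $M$, hence also in $M^\cU$ by elementarity, so $a\in Z(M^\cU)=\bC\bOne$; thus $Z(M)=\bC\bOne$ and $M$ is a finite factor.

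I expect the only non-routine ingredient to be the computation that the centre of the tracial ultraproduct of factors is trivial, which rests on the classical Dixmier averaging theorem; the remaining steps are bookkeeping with structures already available. A variant packaging — deducing the statement from \autoref{prop:SimplicialCriterionQF} (so that $\TvN$ is simplicial) together with the identification above, or exhibiting directly a $\forall\exists$ Dixmier-averaging axiom scheme — is also possible, but the ultraproduct/ultraroot route is the most economical and sidesteps any uniform averaging estimate.
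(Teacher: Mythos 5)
Your argument is correct, but it takes a genuinely different route from the paper: the paper's ``proof'' of \autoref{p:factors-elementary-class} is a one-line citation to \cite[Prop.~3.4]{Farah2014} and \cite{Goldbring2023}, where the class of finite factors is axiomatized \emph{explicitly} by a Dixmier-averaging axiom scheme (a $\forall\exists$-condition asserting that some bounded convex combination of unitary conjugates of $x$ is close to $\tau(x)\bOne$, which requires a quantitative, uniform version of the Dixmier property). Your semantic proof via the ultraproduct/ultraroot characterization of elementary classes trades that explicit scheme for the qualitative Dixmier theorem: as you note, selecting a single badly-commuting unitary $w_i$ per index sidesteps any uniform bound on the number of averaging unitaries, and the convexity estimate $\max_j\nm{w_ja_iw_j^*-a_i}_2\geq\nm{\sum_j\lambda_j w_ja_iw_j^*-a_i}_2$ together with unitary invariance of $\nm{\cdot}_2$ makes that selection legitimate. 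The price is that you get axiomatizability without exhibiting axioms; the cited references' approach buys an explicit $\forall\exists$-axiomatization (useful elsewhere, e.g.\ for preservation results), at the cost of the quantitative averaging estimate. Two small remarks on presentation: your first paragraph (identifying extremal models of $\TvN$ with finite factors) and the appeal to \autoref{c:closed-extreme-types} are not needed for this proposition --- they reprove \autoref{th:vN-simplicial}\autoref{i:th:vN:factors}, which in the paper comes \emph{after} and partly depends on the present statement, and ``axiomatizable in continuous logic'' already means ``forms an elementary class,'' so the reduction is vacuous; the entire content of the proof is your second paragraph. That second paragraph is sound, including the ultraroot step, where elementarity transfers the vanishing of $\sup_x\nm{[a,x]}_2$ from $M$ to $M^\cU$.
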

\begin{proof}
  See, e.g., \cite[Prop.~3.4]{Farah2014} or \cite[\textsection2]{Goldbring2023}.
\end{proof}

With the same strategy as in the proof of \autoref{th:PMPG-simplicial}, we now establish the main result of this section.

\begin{theorem}
  The following hold:
  \label{th:vN-simplicial}
  \begin{enumerate}
  \item\label{i:th:vN:factors} The extremal models of $\TvN$ are precisely the finite factors.
  \item\label{i:th:vN:Bauer} $\TvN$ is a Bauer theory.
  \end{enumerate}
\end{theorem}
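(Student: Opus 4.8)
The plan is to mirror the structure of the proof of \autoref{th:PMPG-simplicial} and \autoref{c:PMPG-strongly-ergodic-propT}, using the quantifier-free analysis already established in \autoref{p:vN-qftp} together with the abstract criteria we have developed for Bauer theories.

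For part \autoref{i:th:vN:factors}, I would argue as follows. Suppose first that $(M,\tau)$ is a finite factor. Let $\bar a \in (M^1)^\kappa$ enumerate a dense subset of $M^1$ and let $q = \tp^\aff(\bar a) \in \tS^{\fM,\aff}_\kappa(\TvN)$. By \autoref{p:vN-qftp}\autoref{i:p:vN:models}, the quantifier-free reduct $\rho^\qf_\kappa(q) \in \tS^\qf_\kappa(\TvN)$ corresponds under $\Phi$ to the trace $\tau$ on $\cC_\kappa$, which is extreme in $\cT(\cC_\kappa)$ precisely because $M$ is a factor; hence $\rho^\qf_\kappa(q)$ is an extreme quantifier-free type. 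By \autoref{lem:ExtremeTypeProjectionQF}, $q$ is an extreme type, so $M^1$ is an extremal model. Conversely, if $(M,\tau)$ is not a factor, pick a non-trivial central projection $z \in M$ and form the two tracial von Neumann algebras obtained by restricting and renormalizing the trace to $z$ and to $\bOne - z$; as in the proof of \autoref{th:PMPG-simplicial}\autoref{i:th:PMPG:ergodic}, this exhibits $M^1$ as a non-trivial convex combination $\tau(z) (M_1)^1 \oplus (1-\tau(z)) (M_2)^1$ of models of $\TvN$, and the type of $z$ is a proper convex combination of two distinct types, so $M^1$ is not extremal. (Here one should check that $(M_i)^1$ is indeed a model of $\TvN$, which follows from \autoref{p:axioms-vN} since $pMp$, with its normalized trace, is a tracial von Neumann algebra.)

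For part \autoref{i:th:vN:Bauer}, the key input is that $\tS^\qf_n(\TvN) \cong \cT(\cC_n)$ by \autoref{p:vN-qftp}\autoref{i:p:vN:Sqf}, and $\cT(\cC_n)$ is a simplex by \autoref{p:traces-simplex}. Combined with part \autoref{i:th:vN:factors} — which gives that for $q \in \cE^\fM_\bN(\TvN)$ the projection $\rho^\qf_\bN(q)$ corresponds to a factorial trace, hence is extreme in $\tS^\qf_\bN(\TvN)$ — the hypotheses of \autoref{prop:SimplicialCriterionQF} are satisfied, so $\TvN$ is simplicial. It remains to see that the extreme type spaces $\cE_x(\TvN)$ are closed. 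For this I would use \autoref{c:closed-extreme-types}: by part \autoref{i:th:vN:factors} and \autoref{p:factors-elementary-class}, the extremal models of $\TvN$ are exactly the unit balls of finite factors, which form an elementary class in continuous logic (one must note that the continuous theory axiomatizing finite factors in the language $\cL_\TvN$, via the trace and the algebraic function symbols, is indeed a continuous logic theory whose models are precisely the extremal models of $\TvN$; this uses that $\TvN$ has a separable language and that $\TvN_\ext$ is interdefinable with the standard continuous axiomatization). Hence $\cE_x(\TvN)$ is closed, and together with simpliciality this says exactly that $\TvN$ is a Bauer theory.

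The main obstacle I anticipate is the verification that the extremal models of $\TvN$ form an elementary class in continuous logic, i.e., the careful matching of the function-symbol-heavy language $\cL_\TvN$ with the usual continuous axiomatization of finite factors from \autoref{p:factors-elementary-class}. One must confirm that the property ``$M$ has trivial center'' is expressible as a continuous condition in $\cL_\TvN$ (which it is: the center being trivial is equivalent to $\sup_x \inf_y \|[x,y]\|_2 \geq $ a suitable positive bound, or more precisely to a condition controlling the distance of central elements to scalars, all expressible via $\tau$ and the algebraic terms), and that this condition cuts out precisely the extremal models identified in part \autoref{i:th:vN:factors}. Once this is in place, the reduction to \autoref{c:closed-extreme-types} and \autoref{prop:SimplicialCriterionQF} is routine. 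An alternative, avoiding \autoref{p:factors-elementary-class} entirely, would be to verify directly that $\rho^\aff_x(\tS^\cont_x(\TvN_\ext)) = \cE_x(\TvN)$ via \autoref{p:iota-ScText} and an approximation argument, but invoking the known axiomatizability of factors is cleaner.
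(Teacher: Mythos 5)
Your proposal is correct and follows essentially the same route as the paper: part (i) via \autoref{p:vN-qftp}, extremality of factorial traces, and \autoref{lem:ExtremeTypeProjectionQF} in one direction and the central-projection convex decomposition in the other; part (ii) via \autoref{prop:SimplicialCriterionQF} together with \autoref{p:traces-simplex}, and then \autoref{p:factors-elementary-class} with \autoref{c:closed-extreme-types} for closedness of the extreme types. The language-matching concern you raise at the end is legitimate but is resolved exactly as you indicate (the factor condition is expressible in $\cL_\TvN$ via $\tau$ and the algebraic terms), and the paper simply cites \autoref{p:factors-elementary-class} without dwelling on it.
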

\begin{proof}
  \autoref{i:th:vN:factors} Let $(M,\tau)$ be a finite factor, and let $q \in \tS^{\fM,\aff}_\kappa(\TvN)$ be the type of an enumeration of a dense subset of its unit ball. Then by \autoref{p:vN-qftp}\ref{i:p:vN:models}, the projection $\rho^\qf_\kappa(q)\in \tS_\kappa^\qf(\TvN)$ is an extreme type. By \autoref{lem:ExtremeTypeProjectionQF}, $q$ is an extreme type, i.e., $M^1$ is an extremal model of $\TvN$.

  Conversely, suppose $(M, \tau)$ is a tracial von Neumann algebra with a non-trivial center and let $z$ be a non-trivial central projection. Denote by $M_1$ and $M_2$ the algebras $zM$ and $(1-z)M$, with units $z$ and $1-z$, respectively, and traces appropriately normalized. Then $M_1, M_2$ are tracial von Neumann algebras, and as $\cL_\TvN$-structures their unit balls satisfy $M^1 \cong \tau(z) (M_1)^1 \oplus \tau(\bOne - z) (M_2)^1$. To see this, observe that if $a \in M^1$, then $za \in (M_1)^1$ and $(\bOne-z)a \in (M_2)^1$, and conversely, if $a_i \in (M_i)^1$, then $za_1 + (\bOne - z)a_2 \in M^1$. So $\tp^\aff(z) = \tau(z) \tp^{\aff,M_1} (\bOne) + \tau(\bOne - z) \tp^{\aff,M_2} (\bZero)$ is not extreme and thus $M$ is not an extremal model.

  \autoref{i:th:vN:Bauer} We first use \autoref{prop:SimplicialCriterionQF}. By \autoref{p:vN-qftp}\autoref{i:p:vN:Sqf} and \autoref{p:traces-simplex}, the type spaces $\tS_n^\qf(\TvN)$ are simplices. On the other hand, if $q\in\cE^\fM_\bN(\TvN)$, then, by \autoref{i:th:vN:factors}, the von Neumann algebra corresponding to the model enumerated by $q$ is a factor, and by \autoref{p:vN-qftp}\autoref{i:p:vN:models}, the quantifier-free type $\rho_\bN^\qf(q)$ is extreme. We conclude that $\TvN$ is simplicial. That it is a Bauer theory then follows from \autoref{p:factors-elementary-class} and \autoref{c:closed-extreme-types}.
\end{proof}

\begin{remark}
  \label{rem:CF_n-Poulsen}
  It is interesting to contrast \autoref{th:vN-simplicial} with the recent result by Orovitz, Slutsky, and Vigdorovich~\cite{Orovitz2023p} that $\cT(\mathrm{C}^\ast(\bF_n))$ (for $n \geq 2$) is a Poulsen simplex. We do not know whether the same is true for the simplices of the traces on the algebras $\cC_n$ (which are our quantifier-free types), but in any case, \autoref{th:vN-simplicial} says that the simplices of full types (with quantifiers) are Bauer.
\end{remark}

\begin{cor}
  \label{c:factors-affine-reduction}
  The continuous logic theory of finite von Neumann factors has affine reduction.
\end{cor}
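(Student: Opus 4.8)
\emph{Proof proposal.} The plan is to deduce this directly from the structural results established in the preceding subsection, with essentially no further work. By \autoref{th:vN-simplicial}, the affine theory $\TvN$ is a Bauer theory, and its extremal models are precisely the unit balls of finite von Neumann factors. Consequently the continuous logic theory of finite factors coincides with $\TvN_\ext$, the common continuous logic theory of the extremal models of $\TvN$ in the sense of \autoref{df:Text}; indeed, since $\TvN$ is Bauer, the extreme type spaces $\cE_x(\TvN)$ are closed, so by \autoref{c:closed-extreme-types} the extremal models of $\TvN$ form an elementary class in continuous logic, and that class is exactly the class of finite factors by \autoref{th:vN-simplicial}\autoref{i:th:vN:factors} (which can also be seen via \autoref{p:factors-elementary-class}).

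It then suffices to invoke \autoref{th:Bauer-correspondence-T}: if $T$ is a Bauer theory, then $T_\ext$ has affine reduction. Applying this with $T = \TvN$ yields that $\TvN_\ext$, i.e., the continuous logic theory of finite von Neumann factors, has affine reduction, which is the assertion. (As a byproduct one also gets the natural affine homeomorphisms $\tS^\aff_x(\TvN) \cong \cM(\tS^\cont_x(\TvN_\ext))$ and, via \autoref{th:Bauer-correspondence-Q}\autoref{i:models-of-Q-aff}, that every model of $\TvN$ is a direct integral of finite factors — this is the content of the factor decomposition theorem in the tracial case, now without any separability hypothesis.)

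There is no genuine obstacle at this stage: the entire difficulty was absorbed into \autoref{th:vN-simplicial}, and upstream of that into the affine axiomatization (\autoref{p:axioms-vN}), the identification of quantifier-free types with traces on the universal \Cstar-algebras $\cC_\kappa$ (\autoref{p:vN-qftp}), the fact that $\cT(\cC_n)$ is a simplex (\autoref{p:traces-simplex}), and the transfer principle \autoref{prop:SimplicialCriterionQF} together with \autoref{p:factors-elementary-class}. The only point one should take a moment to articulate in writing is precisely the identification of $\TvN_\ext$ with the continuous theory of finite factors, as sketched above; everything else is a one-line citation of \autoref{th:Bauer-correspondence-T}.
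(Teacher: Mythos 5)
Your proposal is correct and is exactly the argument the paper intends (the corollary is stated without proof precisely because it is the combination of \autoref{th:vN-simplicial}\autoref{i:th:vN:factors}--\autoref{i:th:vN:Bauer} with \autoref{th:Bauer-correspondence-T}). The identification of $\TvN_\ext$ with the continuous theory of finite factors is immediate from \autoref{df:Text} once the extremal models are known to be the finite factors, so the whole thing is a one-line citation as you say.
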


Finally, as in the ergodic-theoretic case, we obtain a generalization of the classical integral decomposition theorem into factors, for arbitrary (i.e., not necessarily separable) tracial von Neumann algebras. Note that the classical result is not limited to the tracial case; see \cite[Thm.~8.21]{Takesaki1979}.

\begin{cor}
  \label{c:decomposition-vN}
  Every tracial von Neumann algebra decomposes as a direct integral of factors.
\end{cor}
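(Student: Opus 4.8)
The plan is to read this off directly from the extremal decomposition theorem. Given a tracial von Neumann algebra $(M,\tau)$, its unit ball $M^1$ is a model of the affine theory $\TvN$ by \autoref{p:axioms-vN}. Since $\TvN$ is simplicial (\autoref{th:vN-simplicial}), \autoref{th:integral-decomposition} provides a probability space $(\Omega,\cB_\Omega,\mu_\Omega)$ and a non-degenerate measurable field $(M_\omega:\omega\in\Omega)$ of extremal models of $\TvN$ with $M^1\cong\int_\Omega^\oplus M_\omega\ud\mu_\Omega(\omega)$; and by the first part of \autoref{th:vN-simplicial}, each $M_\omega$ is the unit ball of a finite von Neumann factor $(N_\omega,\tau_\omega)$.

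It then remains to observe that this $\cL_\TvN$-direct integral is, as an $\cL_\TvN$-structure, the unit ball of a tracial von Neumann algebra, and that this algebra is exactly the one deserving the name ``direct integral of the field $(N_\omega)$''. For the first point: the direct integral is again a model of the affine (indeed universal) theory $\TvN$, so by \autoref{p:axioms-vN} it equals $P^1$ for some tracial von Neumann algebra $(P,\tau_P)$, and by the construction carried out in the proof of \autoref{p:axioms-vN} the pair $(P,\tau_P)$ is recovered from the $\cL_\TvN$-structure $P^1$ (the function symbols for contractions in the $\cC_n$, together with the rescaling of the balls, recover the full $*$-algebra structure, the operator norm, and the trace). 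Hence $M^1\cong P^1$ as $\cL_\TvN$-structures forces $(M,\tau)\cong(P,\tau_P)$. For the second point: unwinding the definition of the direct integral, $P$ consists of the measurable sections $\omega\mapsto f(\omega)\in N_\omega$ (up to a.e.\ equality), equipped with the pointwise $*$-algebra operations, the operator norm given by the essential supremum of $\omega\mapsto\nm{f(\omega)}$, and the trace $\tau_P(f)=\int_\Omega\tau_\omega\bigl(f(\omega)\bigr)\ud\mu_\Omega(\omega)$ --- which is precisely the classical notion of a direct integral of the tracial von Neumann algebras $(N_\omega,\tau_\omega)$.

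The proof thus has no genuine analytic content: all the work is already contained in \autoref{th:vN-simplicial} (establishing that $\TvN$ is simplicial with the finite factors as its extremal models) and in \autoref{th:integral-decomposition}. The only mildly delicate point, which I would spell out carefully, is the identification of the model-theoretic direct integral of the $\cL_\TvN$-structures $M_\omega$ with the operator-algebraic direct integral; this uses \autoref{th:Los} together with the fact that the predicates $\tau$ and $d$ and the function symbols for $\cC_n$-contractions are interpreted pointwise (for functions) or by integration (for predicates), so that the von Neumann structure of $P$ matches the fibrewise structure of the $N_\omega$. I would also remark that when $M$ has separable predual one may take $\Omega$ to be a standard probability space, by the last assertion of \autoref{th:integral-decomposition}, thereby recovering the classical reduction theory of tracial von Neumann algebras into II$_1$ and type I factors.
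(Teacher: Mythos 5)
Your proof is correct and follows exactly the route the paper intends: the corollary is an immediate consequence of \autoref{th:integral-decomposition} applied to the simplicial theory $\TvN$, together with the identification of the extremal models as the finite factors in \autoref{th:vN-simplicial} (the paper states the corollary without further argument). Your additional care in matching the model-theoretic direct integral of the $\cL_\TvN$-structures with the classical operator-algebraic direct integral, via \autoref{p:axioms-vN} and \autoref{th:Los}, is a reasonable and accurate elaboration of what the paper leaves implicit.
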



\newpage
\bibliographystyle{affine-logic}

\addtocontents{toc}{\protect\vspace{10pt}}
\bibliography{affine-logic}
\end{document}